\newcommand{\dia}[1]{\begin{array}{c}{\xymatrix@C-3pt@M+2pt@R-4pt{#1 }}\end{array}}
\newtheorem{thm}{Theorem}[section]
\newtheorem{cor}[thm]{Corollary}
\newtheorem{lem}[thm]{Lemma}
\newtheorem{prop}[thm]{Proposition}
\newtheorem*{prop*}{Proposition}
\newtheorem{miles}[thm]{Milestone}
\theoremstyle{definition}
\newtheorem{defn}[thm]{Definition}
\newtheorem{dig}[thm]{Digression}
\newtheorem{exe}[thm]{Example}
\newtheorem{rem}[thm]{Remark}
\newtheorem{notat}[thm]{Notation}
\theoremstyle{remark}
\numberwithin{equation}{chapter}
\newcommand{\Conv}{\mathop{\scalebox{1.5}{\raisebox{-0.2ex}{$\ast$}}}}
\newcommand{\SL}{\textnormal{SL}}
\newcommand{\GL}{\textnormal{GL}}
\newcommand{\PSL}{\textnormal{PSL}}
\newcommand{\PGL}{\textnormal{PGL}}
\newcommand{\SO}{\textnormal{SO}}
\newcommand{\M}{\textnormal{M}}
\newcommand{\Isom}{\textnormal{Isom}}
\newcommand{\Z}{\mathbf{Z}}
\newcommand{\N}{\mathbf{N}}
\newcommand{\R}{\mathbf{R}}
\newcommand{\C}{\mathbf{C}}
\newcommand{\Q}{\mathbf{Q}}
\newcommand{\K}{\mathbf{K}}
\newcommand{\HNN}{\mathrm{HNN}}
\newcommand{\lp}{( \! (}
\newcommand{\rp}{) \! )}
\title{Metric geometry of locally compact groups}
\author{Yves Cornulier and Pierre de la Harpe}
\date{July 31,  2016}
\begin{document}

\maketitle

\chapter*{\centering \begin{normalsize}Abstract\end{normalsize}}
\label{fake_abstract}
\begin{quotation}
This book offers to study locally compact groups
from the point of view of appropriate metrics that can be defined on them,
in other words to study
``Infinite groups as geometric objects'',
as Gromov writes it in the title of a famous article.
The theme has often been restricted to finitely generated groups,
but it can favourably be played for locally compact groups.
\par

The development of the theory is illustrated by numerous examples, 
including 
matrix groups with entries in the the field of real or complex numbers, 
or other locally compact fields such as $p$-adic fields,
isometry groups of various metric spaces,
and, last but not least, discrete group themselves.
\par

Word metrics for compactly generated groups play a major role.
In the particular case of finitely generated groups,
they were introduced by Dehn around 1910 
in connection with the Word Problem.
\par

Some of the results exposed concern general locally compact groups,
such as criteria for the existence of compatible metrics
(Birkhoff-Kakutani, Kakutani-Kodaira, Struble).
Other results concern special classes of groups,
for example those mapping onto $\mathbf Z$
(the Bieri-Strebel splitting theorem, generalized to locally compact groups).
\par

Prior to their applications to groups, the basic notions of coarse and large-scale geometry
are developed in the general framework of metric spaces.
Coarse geometry is that part of geometry concerning properties of metric spaces
that can be formulated in terms of large distances only.
In particular coarse connectedness, coarse simple connectedness,
metric coarse equivalences, and quasi-isometries of metric spaces
are given special attention.
\par

The final chapters are devoted to
the more restricted class of compactly presented groups,
which generalize finitely presented groups to the locally compact setting. 
They can indeed be characterized 
as those compactly generated locally compact groups 
that are coarsely simply connected.
\end{quotation}

\vskip.2cm
\newpage

\noindent
\emph{2000 Mathematics Subject Classification.} 
\hfill\vskip.2cm
Primary:
20F65. 
Secondary:
20F05, 22D05, 51F99, 54E35, 57M07, 57T20.


\vskip.4cm

\noindent
\emph{Key words and phrases.}
\hfill\vskip.2cm
Locally compact groups, 
\hfill\par
left-invariant metrics, 
\hfill\par
$\sigma$-compactness, 
\hfill\par
second countability, 
\hfill\par
compact generation, 
\hfill\par
compact presentation, 
\hfill\par
metric coarse equivalence, 
\hfill\par
quasi-isometry, 
\hfill\par
coarse connectedness, 
\hfill\par
coarse simple connectedness, 
\hfill\par
growth, 
\hfill\par
amenability.

\vskip2cm

\noindent
Y.C. 
\hfill\vskip.2cm 
Laboratoire de  Math\'ematiques d'Orsay 
\hfill\par 
Universit\'e Paris-Sud 
\hfill\par 
CNRS 
\hfill\par 
Universit\'e de Paris-Saclay 
\hfill\par 
91405 Orsay 
\hfill\par  France 
\hfill\par 
yves.cornulier@math.u-psud.fr

\vskip.4cm

\noindent
P.H. 
\hfill\vskip.2cm 
Section de math\'ematiques 
\hfill\par 
Universit\'e de Gen\`eve 
\hfill\par 
C.P.~64 
\hfill\par 
CH--1211 Gen\`eve~4 
\hfill\par  
Suisse 
\hfill\par 
Pierre.delaHarpe@unige.ch

\tableofcontents

 \clearpage\phantom{a}\clearpage

\chapter{Introduction}
\label{chap_intro}

\section{Discrete groups as metric spaces}
\label{intro_discrete_metric}
Whenever a group $\Gamma$ appears in geometry,
which typically means that $\Gamma$ acts on a metric space of some sort 
(examples include universal covering spaces,
Cayley graphs and Rips complexes),
the geometry of the space reflects
some geometry of the group.
\par

This phenomenon goes back at least to Felix Klein and Henri Poincar\'e,
with tessellations of the half-plane 
related to subgroups of the modular groups, around 1880.
It has then been a well-established tradition
to study properties of groups which can be viewed, 
at least in retrospect, as geometric properties.
As a sample, we can mention: 
\begin{enumerate}[noitemsep]
\item[--]
``Dehn Gruppenbild'' (also known as Cayley graphs), 
used to picture finitely generated groups and their word metrics, 
in particular knot groups, around 1910.
Note that Dehn introduced word metrics for groups 
in his articles on decision problems (1910-1911).
\item[--]
Amenability of groups (von Neumann, Tarski, late 20's),
and its interpretation in terms of isoperimetric properties (F\o lner, mid 50's).
\item[--]
Properties ``at infinity'', 
or ends of groups (Freudenthal, early 30's),
and structure theorems for groups with two or infinitely many ends
(Stallings for finitely generated groups, late 60's, 
Abels' generalization for totally disconnected locally compact groups, 1974).
\item[--]
Lattices in Lie groups, and later in algebraic groups over local fields;
first a collection of examples, 
and from the 40's a subject of growing importance, 
with foundational work by Siegel, Mal'cev, Mostow, L.~Auslander, Borel \& Harish-Chandra, 
Weil, Garland, H.C.~Wang, Tamagawa, Kazhdan, Raghunathan, Margulis
(to quote only them);
leading to:
\item[--]
Rigidity of groups, and in particular of lattices in semisimple groups 
(Mostow, Margulis, 60's and 70's).
\item[--]
Growth of groups, introduced independently (!) 
by A.S.~Schwarz (also written \v Svarc) in 1955 and Milnor in 1968,
popularized by the work of Milnor and Wolf, 
and studied later by Grigorchuk, Gromov, and others,
including Guivarc'h, Jenkins and Losert for locally compact groups.
\item[--]
Structure of groups acting faithfully on trees
(Tits, Bass-Serre theory,
Dunwoody decompositions and accessibility of groups, 70's); tree lattices.
\item[--]
Properties related to random walks (Kesten, late 50's, Guivarc'h, 70's, Varopoulos).
\item[--]
And the tightly interwoven developments 
of combinatorial group theory and low dimensional topology,
from Dehn to Thurston, and so many others.
\end{enumerate}
From 1980 onwards, for all these reasons and under guidance of Gromov,
in particular of his articles \cite{Grom--81b, Grom--84, Grom--87, Grom--93},
the group community has been used to consider
a group (with appropriate conditions) as a metric space,
and to concentrate on large-scale properties of such metric spaces.

\vskip.2cm

Different classes of groups can be characterized 
by the existence of metrics with additional properties.
We often write \emph{discrete group} for \emph{group},
\index{Discrete! group}
in view of later sections about \emph{topological groups},
and especially \emph{locally compact groups}.
In the discrete setting, we distinguish four classes, 
each class properly containing the next one:
\par (all)
all discrete groups;
\par(ct)
countable groups;
\par(fg)
finitely generated groups;
\par(fp)
finitely presented groups.
\par\noindent
This will serve as a guideline below, in the more general setting
of locally compact groups.
\par

Every group $\Gamma$ has left-invariant metrics which induce the discrete topology,
for example that defined by $d(\gamma,\gamma') = 1$ 
whenever $\gamma,\gamma'$ are distinct. 
The three other classes can be characterized as follows.

\begin{prop}
\label{ctfgfpgroup}
Let $\Gamma$ be a group.
\begin{enumerate}[noitemsep,label=(\arabic*)]
\item[(ct)]\label{ctDEctfgfpgroup}
$\Gamma$ is countable if and only if it has a left-invariant metric
with finite balls.
Moreover, if $d_1, d_2$ are two such metrics,
the identity map $(\Gamma,d_1) \longrightarrow (\Gamma, d_2)$
is a metric coarse equivalence.
\end{enumerate}
Assume from now on that $\Gamma$ is countable.
\begin{enumerate}[noitemsep,label=(\arabic*)]
\item[(fg)]\label{fgDEctfgfpgroup}
$\Gamma$ is finitely generated if and only if,
for one (equivalently for every) metric $d$ as in (ct), 
the metric space $(\Gamma,d)$ is coarsely connected.
Moreover, a finitely generated group has 
a left-invariant large-scale geodesic metric with finite balls
(e.g.\ a word metric);
if $d_1, d_2$ are two such metrics,
the identity map $(\Gamma,d_1) \longrightarrow (\Gamma, d_2)$
is a quasi-isometry.
\item[(fp)]\label{fpDEctfgfpgroup}
$\Gamma$ is finitely presented if and only if,
for one (equivalently for every) metric $d$ as in (ct), the metric space $(\Gamma,d)$
is coarsely  simply connected.
\end{enumerate}
\end{prop}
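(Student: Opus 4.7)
For part (ct), the plan is to build a suitable metric directly from an enumeration of $\Gamma$. Starting from a bijection $\Gamma \leftrightarrow \N$ with $e \leftrightarrow 0$, I would choose a symmetric weight function $w \colon \Gamma \to \N$ with $w(e)=0$, $w(\gamma)=w(\gamma^{-1})>0$ for $\gamma\neq e$, and $\{\gamma : w(\gamma)\leq n\}$ finite for every $n$ (e.g.\ let $w(\gamma)$ be $1$ plus the index of $\gamma$). Then set $d(\gamma,\gamma')$ to be the infimum of $\sum_{i=1}^k w(s_i)$ over all factorisations $\gamma^{-1}\gamma' = s_1\cdots s_k$ in $\Gamma$; this is a left-invariant metric with finite balls. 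The converse is immediate, since finite balls make $\Gamma=\bigcup_n B_d(e,n)$ a countable union of finite sets. For the ``moreover'' clause, the identity $(\Gamma,d_1)\to(\Gamma,d_2)$ is automatically bornologous: finiteness of $B_{d_1}(e,r)$ forces $\sup\{d_2(e,\gamma):d_1(e,\gamma)\leq r\}<\infty$, and symmetrically in the other direction; this is precisely a metric coarse equivalence.

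For part (fg), I would first observe that a word metric $d_S$ for a finite generating set $S=S^{-1}$ is $1$-geodesic with finite balls, hence coarsely connected; by the moreover of (ct) this property transfers to every metric of type (ct). Conversely, assume $(\Gamma,d)$ is $c$-coarsely connected for some metric $d$ with finite balls. Then $S:=B_d(e,c)$ is finite, symmetric, and generates $\Gamma$: given $\gamma\in\Gamma$, a $c$-path $e=x_0,x_1,\ldots,x_n=\gamma$ yields the factorisation $\gamma=\prod_i(x_{i-1}^{-1}x_i)$ with factors in $S$. The quasi-isometry statement for two word metrics $d_S,d_{S'}$ is bi-Lipschitz: each element of $S$ has bounded $d_{S'}$-length and vice versa, giving constants $C,C'$ with $C^{-1}d_{S'}\leq d_S\leq C'd_{S'}$.

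For part (fp), the essential input is a combinatorial criterion for coarse simple connectedness to be developed in the coarse geometry chapters: a metric space with finite balls is coarsely simply connected if and only if, for some scale $c$, every $c$-loop (closed $c$-path) admits a filling by $c$-triangles, with triangle perimeter bounded by some constant depending only on the scale. Given this, if $\Gamma=\langle S\mid R\rangle$ is finitely presented, a word representing the trivial element can be reduced to the empty word by a sequence of elementary moves using the finitely many relators in $R$, and translating this procedure into the Cayley graph (viewed at scale $c$ equal to the maximal length of a relator) yields the required filling of loops by triangles of bounded perimeter. Conversely, if $(\Gamma,d)$ is coarsely simply connected, choose a word metric $d_S$; the filling property at some scale $c$ says that every closed edge-loop in the Cayley graph can be decomposed into loops of length at most some $N$, so $R:=\{$words of length $\leq N$ representing $e\}$ is a finite set of relators presenting $\Gamma$.

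The main obstacle is (fp): the equivalence hinges on the correct formalisation of coarse simple connectedness and on the translation between combinatorial fillings in a Cayley $2$-complex and geometric fillings of loops by small triangles. The ct and fg parts are essentially elementary once the weight construction is in place, but (fp) really requires the machinery developed later in the book and a careful dictionary between presentations and coarse topology.
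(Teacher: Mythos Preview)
Your plan is essentially correct and follows the same logical skeleton as the paper, which establishes this proposition not directly but as the discrete specialisation of a chain of results for LC-groups (Propositions~\ref{existenceam}, \ref{metric_char_cg}, \ref{cp iff csc}, and their corollaries). A few remarks on the comparison and one genuine gap.

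For (ct), your weighted-path-metric construction is more elementary and self-contained than the route the paper takes. The paper deduces existence from the Birkhoff--Kakutani, Struble, and Kakutani--Kodaira theorems (see the proof of Proposition~\ref{existenceam}); for discrete groups this machinery collapses, and your direct construction is exactly what one wants. Your argument for the ``moreover'' clause is the same as the paper's (Proposition~\ref{morphisms_grps_spaces} and Corollary~\ref{2metricsce}): finiteness of balls in $d_1$ forces boundedness in $d_2$ by left-invariance.

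For (fg), your forward and backward implications are fine and match Proposition~\ref{metric_char_cg}. However, you only prove the quasi-isometry statement for two \emph{word} metrics, via the bilipschitz argument. The proposition as stated asks for the identity to be a quasi-isometry between any two left-invariant \emph{large-scale geodesic} metrics with finite balls, not just word metrics. The missing step is Proposition~\ref{ceXYqi}: a metric coarse equivalence between large-scale geodesic spaces is automatically a quasi-isometry. Combined with your (ct) moreover, this closes the gap. (Alternatively, one can show directly that any such metric is quasi-isometric to a word metric, then use your bilipschitz argument and transitivity.)

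For (fp), your sketch is accurate and aligns with the paper's treatment via Rips complexes (Proposition~\ref{bpresrips} and Proposition~\ref{cp iff csc}): bounded presentation over a finite $S$ is equivalent to simple connectedness of $\operatorname{Rips}^2_c(\Gamma,S)$ for large $c$, which in turn is equivalent to coarse simple connectedness of $(\Gamma,d_S)$. Your description of the filling argument is the right intuition; the paper packages it through Lemmas~\ref{lem:path_X1} and~\ref{lem:graphomotopic_simplices} on combinatorial homotopy in simplicial complexes. You are right that this is where the real content lies.
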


The technical terms of the proposition can be defined as follows;
we come back to these notions in Sections \ref{coarselyLipschitzandlargescaleLipschitz},
\ref{coarselyconnectedetc}, and \ref{sectioncscpms}.
A metric space $(X,d)$ is
\begin{enumerate}[noitemsep]
\item[--]
\emph{coarsely connected} if there exists a constant $c > 0$ such that,
for every pair $(x,x')$ of points of $X$,
there exists a sequence $x_0=x, x_1, \hdots, x_n=x'$ of points in $X$
such that $d(x_{i-1},x_i) \le c$ for $i = 1, \hdots, n$,
\item[--]
\emph{large-scale geodesic} if there exist constants $a > 0$, $b \ge 0$
such that the previous condition holds with, moreover, $n \le a d(x,x') + b$,
\item[--] 
\emph{coarsely simply connected} if
every ``loop'' $x_0, x_1, \hdots, x_n=x_0$ of points in $X$
with an appropriate bound on the distances $d(x_{i-1}, x_i)$, 
can be ``deformed by small steps''  to a constant loop $x_0, x_0, \hdots, x_0$;
see  \ref{defcoarsely1con} for a precise definition.
\end{enumerate}
If $X$ and $Y$ are metric spaces,
a map $f : X \longrightarrow Y$ is
\begin{enumerate}[noitemsep]
\item[--]
a \textbf{metric coarse equivalence} if,
\begin{enumerate}[noitemsep]
\item[$\cdot$]
for every $c > 0$, there exists $C > 0$ such that,
\item[]
for $x,x' \in X$ with $d_X(x,x') \le c$,
we have $d_Y(f(x),f(x')) \le C$, 
\item[$\cdot$]
there exists $g : Y \longrightarrow X$
with the same property, satisfying
\item[]
$\sup_{x \in X} d_X(g(f(x)), x) < \infty$ and
$\sup_{y \in Y} d_Y(f(g(y)), y) < \infty$;
\end{enumerate}
\item[--]
a \textbf{quasi-isometry} if there exist $a > 0$, $b \ge 0$ such that
\begin{enumerate}[noitemsep]
\item[$\cdot$]
$d_Y(f(x), f(x')) \le ad_X(x,x') + b$ for all $x,x' \in X$, 
\item[$\cdot$]
there exists $g : Y \longrightarrow X$
with the same property, satisfying
\item[]
$\sup_{x \in X} d_X(g(f(x)), x) < \infty$ and
$\sup_{y \in Y} d_Y(f(g(y)), y) < \infty$.
\end{enumerate}
\end{enumerate}
Two metrics $d, d'$ on a set $X$ are coarsely equivalent [respectively quasi-isometric]
if the identity map $(X,d) \longrightarrow (X, d')$ 
is a metric coarse equivalence [resp. a quasi-isometry].
\par

The characterizations of Proposition \ref{ctfgfpgroup}
provide conceptual proofs of some basic and well-known facts.
Consider for example a countable group $\Gamma$, a subgroup of finite index $\Delta$,
a finite normal subgroup $N \lhd \Gamma$, 
and a left-invariant metric $d$ on $\Gamma$, with finite balls.
Coarse connectedness and coarse simple connectedness are properties
invariant by metric coarse equivalence.
A straightforward verification shows that the inclusion $\Delta \subset \Gamma$
is a metric coarse equivalence;
it follows that $\Delta$ is finitely generated (or finitely presented)
if and only if $\Gamma$ has the same property.
\par

It is desirable to have a similar argument for $\Gamma$ and $\Gamma / N$;
for this, it is better to rephrase the characterizations (ct), (fg), and (fp)
in terms of \emph{pseudo-metrics} rather than in terms of metrics.
``Pseudo'' means that the pseudo-metric evaluated on two distinct points can be $0$.
\par

It is straightforward to adapt to pseudo-metric spaces 
the technical terms defined above and Proposition \ref{ctfgfpgroup}.

\section{Discrete groups and locally compact groups}
\label{intro_discrete_LC}
It has long been known that the study of a group $\Gamma$
can be eased when it sits as a discrete subgroup of some kind 
in a locally compact group $G$.
\par

For instance, a cocompact discrete subgroup in a connected locally compact group
is finitely generated (Propositions \ref{powersSincpgroup} and \ref{stababcd}).
The following two standard examples, beyond the scope of the present book,
involve a lattice $\Gamma$ in a locally compact group $G$:
first, Kazhdan Property (T) is inherited from $G$ to $\Gamma$ \cite{BeHV--08};
second, if $\Gamma$ is moreover cocompact in $G$,
cohomological properties of $\Gamma$ can be deduced
from information on $G$ or on its homogeneous spaces
\cite{Brow--82, Serr--71}.
\par

Other examples of groups $\Gamma$ 
that are usefully seen as discrete subgroups of $G$ include
finitely generated torsion-free nilpotent groups,
which are discrete cocompact subgroups in simply connected nilpotent Lie groups
 \cite[Theorem 2.18]{Ragh--72},
 and polycyclic groups,
 in which there are subgroups of finite index
that are discrete cocompact subgroups in simply connected solvable Lie groups
 \cite[Theorem 4.28]{Ragh--72}.
For some classes of solvable groups,
the appropriate ambient group $G$ is not Lie, 
but a group involving a product of linear groups over non-discrete locally compact fields.
For example, for a prime~$p$,
the group $\Z [1/p]$ of rational numbers of the form $a/p^k$ (with $a \in \Z$ and $k \ge 0$), 
and the $p$-adic field $\Q_p$,
we have the diagonal embedding
\begin{equation*}
\Z [1/p] \, \lhook\joinrel\relbar\joinrel\rightarrow \, \R \times \Q_p , 
\end{equation*}
of which the image is a discrete subgroup with compact quotient.
We refer to Example \ref{abcdefg} for other examples, of the kind
\begin{equation*}
\Z [1/6] \rtimes_{1/6} \Z \, \lhook\joinrel\relbar\joinrel\rightarrow \, 
(\R \times \Q_2 \times \Q_3) \rtimes_{1/6} \Z , 
\end{equation*}
of which the image is again discrete with compact quotient.
\par

It thus appears that the natural setting is that of locally compact groups.
(This generalization from Lie groups to locally compact groups
is more familiar in harmonic analysis than in geometry.)
More precisely, for the geometric study of groups,
it is appropriate to consider 
\emph{$\sigma$-compact} and  \emph{compactly generated locally compact groups};
in the case of discrete groups, 
$\sigma$-compact groups are countable groups,
and compact generation reduces to finite generation.
Though it is not so well-known, 
there is a stronger notion of \emph{compact presentation} for locally compact groups,
reducing to finite presentation in the case of discrete groups.
One of our aims is to expose basic facts involving these properties.

We use \textbf{LC-group} as a shorthand for ``locally compact group''.

\section{Three conditions on LC-groups}
\label{intro_conditionsOn_LC}
Locally compact groups came to light in the first half of XXth century.
The notion of compactness slowly emerged from 19th century analysis,
and the term was coined by Fr\'echet in 1906; see \cite[Page 136]{Enge--89}.
Local compactness for spaces was introduced by Pavel Alexandrov in 1923
\cite[Page 155]{Enge--89}.
The first abstract definition of a topological group seems to be that of Schreier \cite{Schr--25};
early contributions to the general theory, from the period 1925--1940, include articles by
Leja, van Kampen, Markoff,  Freudenthal, Weyl, von Neumann
(these are quoted in \cite[chapitre 1]{Weil--40}),
as well as van Dantzig, Garrett Birkhoff, and Kakutani.
Influential books were published by Pontryagin \cite{Pont--39} and Weil \cite{Weil--40},
and later by Montgomery \& Zippin \cite{MoZi--55}.
\par

   (Lie groups could be mentioned, but with some care. Indeed, in the early theory
of XIXth century,  ``Lie groups'' are local objects,
not systematically distinguished from Lie algebras before Weyl,
even if some examples are ``global'', i.e., are topological groups in our sense.
See the discussion in \cite{Bore--01}, in particular his $\S$~I.3.)
\par

Among topological groups, LC-groups play a central role,
both in most of what follows and in other contexts,
such as ergodic theory and representation theory.
Recall that these groups have left-invariant regular Borel measures,
as shown by Haar (1933) in the second-countable case, 
and by Kakutani and Weil (late 30's) in the general case.
Conversely, a group with a ``Haar measure'' is locally compact;
see \emph{La r\'eciproque du th\'eor\`eme de Haar}, Appendice I in \cite{Weil--40}, 
and its sharpening in \cite{Mack--57}; see also Appendix B in \cite{GlTW--05}.
Gelfand and Raikov (1943) showed that 
LC-groups have ``sufficiently many'' 
irreducible continuous unitary representations
\cite[Corollary 13.6.6]{Dixm--69};
this does \emph{not} carry over to topological groups  
(examples of topological groups that are abelian, locally homeomorphic
to Banach spaces, and without any non-trivial continuous unitary representations
are given in \cite{Bana--83, Bana--91}).
\par

Let $G$ be an LC-group.
Denote by $G_0$ its identity component,
which is a normal closed subgroup;
$G_0$ is connected and
the quotient group $G/G_0$ is totally disconnected.
Our understanding of connected LC-groups,
or more generally of LC-groups $G$ with $G/G_0$ compact,
has significantly increased with the solution of Hilbert Fifth Problem
in the early 1950's (Gleason, Montgomery, Zippin, Yamabe, see \cite{MoZi--55}).
The seminal work of Willis 
(\cite{Will--94}, see also \cite{Will--01a, Will--01b})
on dynamics of automorphisms of totally disconnected LC-groups 
allowed further progress.
Special attention has been given on normal subgroups and topologically simple
totally disconnected LC-groups
\cite{Will--07, CaMo--11, CaRW--I, CaRW--II}.

\vskip.2cm

The goal of this book is to revisit
three finiteness conditions on LC-groups,
three natural generalizations of countability, finite generation, and finite presentation.
\par

The first two,
$\sigma$-compactness and compact generation,
are widely recognized as fundamental conditions in various contexts.
The third, compact presentation, 
was introduced and studied by the German school in the 60's 
(see Section \ref{intro_compactpres} below),
but mainly disappeared from the landscape until recently;
we hope to convince the reader of its interest.
For an LC-group $G$, here are these three conditions:
\begin{enumerate}[noitemsep]
\item[($\sigma$C)]
$G$ is \textbf{$\sigma$-compact} if it has a countable cover by compact subsets.
In analysis, this condition ensures that 
a Haar measure on $G$ is $\sigma$-finite,
so that the Fubini theorem is available.
%
\item[(CG)]
$G$ is \textbf{compactly generated} if it has a compact generating set $S$.
\item[(CP)]
$G$ is \textbf{compactly presented} if it has a presentation $\langle S \mid R \rangle$
with the generating set $S$ compact in $G$ 
and the relators in $R$ of bounded length.
\end{enumerate}
\par

Though it does not have the same relevance for the geometry of groups,
it is sometimes useful to consider one more condition:
$G$ is \emph{second-countable} if its topology has a countable basis;
the notion (for LC-spaces) goes back 
to Hausdorff's 1914 book \cite[Page 20]{Enge--89}.
\par

There is an abundance of groups that satisfy these conditions:
\begin{enumerate}[noitemsep]
\item[(CP)]
Compactly presented LC-groups include
connected-by-compact groups,
abelian and nilpotent compactly generated groups,
and reductive algebraic groups over local fields.
(Local fields are $p$-adic fields $\Q_p$ and their finite extensions,
and fields $\mathbf F_q \lp t \rp$ of formal Laurent series over finite fields.)
\item[(CG)]
Examples of compactly generated groups that are not compactly presented include
\begin{enumerate}[noitemsep,label=(\alph*)]
\item\label{aDECGpage11}
$\K^2 \rtimes \SL_2(\K)$, where $\K$ is a local field (Example \ref{excgnotcp}), 
\item\label{bDECGpage11}
$(\R \times \Q_2 \times \Q_3) \rtimes_{2/3} \Z$,
where the generator of $\Z$ acts by multiplication by $2/3$ on each of
$\R$, $\Q_2$, and $\Q_3$ (Example \ref{abcdefg}).
\end{enumerate}
\index{Affine group! $\K^2 \rtimes \SL_2(\K)$}
\item[($\sigma$C)]
$\GL_n(\K)$ and its closed subgroups
are second-countable $\sigma$-compact LC-groups,
for every non-discrete locally compact field $\K$ and every positive integer $n$.
\end{enumerate}
\par

The condition of $\sigma$-compactness rules out 
uncountable groups with the discrete topology,
and more generally LC-groups $G$ with an open subgroup $H$
such that the homogeneous space $G/H$ is uncountable;
see Remark \ref{neednotbelc}(3), 
Example \ref{non-sigma-compact-by-discrete},
Corollaries \ref{opensubgroupsinLCgroups} \& \ref{vanDantzigCor}(1), 
and Example \ref{ExTopsub}(1).
Among $\sigma$-compact groups, second countability rules out
``very large'' compact groups, such as uncountable direct products
of non-trivial finite groups.
\par

It is remarkable that each of the above conditions is equivalent to a metric condition,
as we now describe more precisely.

\section[Metric characterization]
{Metric characterization of topological properties of LC-groups}
\label{intro_charact_LCgroups}
A metric on a topological space is \emph{compatible} \index{Compatible! compatible metric}
if it defines the given topology.
It is appropriate to relax the condition of compatibility for metrics on topological groups,
for at least two reasons.
\par

On the one hand,
a $\sigma$-compact LC-group $G$ need not be metrizable
(as uncountable products of non-trivial finite groups show).
However, the Kakutani-Kodaira theorem (Theorem \ref{KK}) establishes that
there exists a compact normal subgroup $K$ such that $G/K$
has a left-invariant compatible metric $d_{G/K}$;
hence $d_G(g,g') := d_{G/K}(gK, g'K)$ defines a natural
\emph{pseudo-metric} $d_G$ on $G$,
with respect to which balls are compact.
On the other hand,
an LC-group $G$ generated by a compact subset $S$
has a \emph{word metric} $d_S$ defined by
\begin{equation*}
d_S(g,g') \, = \, \min \left\{ n \in \N \hskip.1cm \bigg\vert \hskip.1cm 
\aligned
& \text{there exist} \hskip.2cm
s_1, \hdots, s_n \in S \cup S^{-1} \hskip.2cm 
\\
& \text{such that} \hskip.2cm
g^{-1}g' = s_1 \cdots s_n 
\endaligned
\right\} .
\end{equation*}
Note that $d_S$ need not be continuous
as a function $G \times G \longrightarrow \R_+$.
For example, on the additive group of real numbers, we have 
$d_{\mathopen[0,1\mathclose]}(0,x) = \lceil \vert x \vert \rceil
:= \min \{ n \ge 0  \mid n \ge \vert x \vert \}$
for all $x \in \R$; hence
$1 = d_{\mathopen[0,1\mathclose]}(0,1) \ne 
\lim_{\varepsilon \to 0} d_{\mathopen[0,1\mathclose]}(0,1+\varepsilon) = 2$
(with $\varepsilon > 0$ in the limit).
\par

As a consequence, we consider
\textbf{non-necessarily continuous pseudo-metrics},
rather than compatible metrics.
In this context, it is convenient to introduce some terminology.
\par

A pseudo-metric $d$ on a topological space $X$
is \emph{proper} if its balls are relatively compact,
and \emph{locally bounded} if every point in $X$ has a neighbourhood
of finite diameter with respect to $d$.
A pseudo-metric on a topological group is \emph{adapted} 
if it is left-invariant, proper, and locally bounded;
it is \emph{geodesically adapted} if it is adapted
and large-scale geodesic.
Basic examples of geodesically adapted metrics are
the word metrics with respect to compact generating sets.
On an LC-group, a continuous adapted metric is compatible,
by Proposition \ref{contpropermet}.
\par

Every LC-group has a left-invariant pseudo-metric with respect to which
balls of \emph{small enough} radius are compact (Corollary \ref{CorollaryKK}).
Classes ($\sigma$C), (CG) and (CP) can be characterized as follows.

\begin{prop}[characterization of $\sigma$-compact groups]
\label{1.D.1}
An LC-group $G$ is $\sigma$-compact 
if and only if it has an adapted metric,
if and only if it has an adapted pseudo-metric,
if and only if it has an adapted continuous pseudo-metric
(Proposition \ref{existenceam}).
\par
If $G$ has these properties,
two adapted pseudo-metrics on $G$ are coarsely equivalent 
(Corollary \ref{2metricsce}).
\end{prop}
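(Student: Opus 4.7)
The plan is to establish a chain of implications. The trivial inclusions are that any adapted metric is an adapted pseudo-metric and any adapted continuous pseudo-metric is an adapted pseudo-metric, so it suffices to prove three things: (a) the existence of an adapted pseudo-metric forces $\sigma$-compactness, (b) $\sigma$-compactness yields an adapted metric, and (c) $\sigma$-compactness yields an adapted continuous pseudo-metric. I will also treat the coarse equivalence claim separately; it hinges on upgrading local boundedness to boundedness on compact sets.

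For (a), if $d$ is adapted on $G$, then the closed balls $\bar B_n = \{g : d(e,g)\le n\}$ are compact by properness, and they cover $G$ because a pseudo-metric takes finite values, so $G=\bigcup_n \bar B_n$ is $\sigma$-compact. For (b), starting from a $\sigma$-compact exhaustion, I arrange by induction a sequence of compact symmetric sets $K_n \subseteq G$ with $e \in K_1$, $K_n \cdot K_n \subseteq K_{n+1}$, nonempty interiors covering $G$, and $G=\bigcup_n K_n$; setting $\ell(e)=0$ and $\ell(g)=\min\{n\ge 1:g\in K_n\}$ for $g\ne e$ gives a symmetric subadditive length with compact sublevel sets, so $d(g,h)=\ell(g^{-1}h)$ is a left-invariant metric with compact balls. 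Local boundedness is inherited from the fact that each point lies in the interior of some $K_n$, on which $\ell$ is bounded.

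For (c), I appeal to the Kakutani--Kodaira theorem (Theorem~\ref{KK}) to produce a compact normal subgroup $K\triangleleft G$ such that $G/K$ is second-countable and metrizable. Birkhoff--Kakutani then supplies a left-invariant compatible metric on $G/K$, which I modify by the standard shell-rescaling trick, using a compact exhaustion of $G/K$, to make it proper (still continuous and left-invariant). Pulling this metric back along $G\to G/K$ yields a left-invariant continuous proper pseudo-metric on $G$; local boundedness is automatic from continuity together with local compactness, since at any $g$ a relatively compact neighborhood has finite $d$-diameter by the extreme value theorem applied to the continuous function $d(g,\cdot)$.

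For the final clause, let $d_1$ and $d_2$ be two adapted pseudo-metrics. By left-invariance of both, it suffices to show that for every $c>0$ there exists $C>0$ such that $d_1(e,g)\le c$ implies $d_2(e,g)\le C$; then the identity map is a metric coarse equivalence since the displacement terms vanish trivially. The closure $\overline{B_{d_1}(e,c)}$ is compact by properness of $d_1$. Now I use the key observation that a locally bounded pseudo-metric is bounded on any compact set: cover $\overline{B_{d_1}(e,c)}$ by finitely many open sets $U_1,\ldots,U_N$ of finite $d_2$-diameter with base points $x_i\in U_i$; then for any $g,h$ in the cover the triangle inequality gives $d_2(g,h)\le \operatorname{diam}_{d_2}(U_i)+d_2(x_i,x_j)+\operatorname{diam}_{d_2}(U_j)$, bounded by the maximum over the finite index set. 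The symmetric argument completes the proof. The main subtlety, and the step I expect to be the most delicate, is the construction in (c): arranging that the induced pseudo-metric on $G$ is simultaneously continuous, proper, and left-invariant requires the combination of Kakutani--Kodaira with the rescaling modification to ensure properness, not merely a compatible topology.
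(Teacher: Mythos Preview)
Your argument is essentially correct and follows the same architecture as the paper, with one difference worth noting and one step that needs tightening.

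\textbf{Difference in (b).} For the implication $\sigma$-compact $\Rightarrow$ adapted metric, the paper takes an indirect route: it first produces an adapted \emph{continuous pseudo-metric} via Kakutani--Kodaira and Struble, and then converts any adapted pseudo-metric $d$ into an adapted metric by the trivial modification $d_+(g,h)=d(g,h)+1$ for $g\ne h$. Your direct construction via the level function $\ell(g)=\min\{n:g\in K_n\}$ on a compact exhaustion with $K_nK_n\subseteq K_{n+1}$ is more elementary and self-contained for this particular implication; it avoids both deep structural theorems. Both are fine.

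\textbf{Gap in (c).} The step ``modify by the standard shell-rescaling trick\ldots to make it proper (still continuous and left-invariant)'' is where you are hand-waving over exactly the nontrivial content. Starting from a Birkhoff--Kakutani metric (left-invariant, compatible, but not proper) and producing a metric that is simultaneously left-invariant, compatible, \emph{and} proper is precisely the statement of Struble's theorem (Theorem~\ref{Struble}); there is no straightforward rescaling that preserves left-invariance while forcing properness. The paper handles this by invoking Struble directly on the second-countable quotient $G/K$. You should either cite Struble, or supply the actual construction (which in the paper is done via Lemma~\ref{lemBKetKK}, interleaving a compact exhaustion with a neighbourhood basis). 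Once that is fixed, your argument for (c) coincides with the paper's.

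Your coarse-equivalence argument is the same as the paper's (Proposition~\ref{morphisms_grps_spaces} specialized to the identity), just unpacked inline.
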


In particular, a $\sigma$-compact group 
can be seen as a pseudo-metric space,
well-defined up to metric coarse equivalence.
See also Corollary \ref{coarselypseudometric=sigmacompact}.
\par

The main ingredients of the proof are standard results:
the Birkhoff-Kakutani theorem, 
which characterizes topological groups that are metrizable,
the Struble theorem, 
which characterizes locally compact groups
of which the topology can be defined by a proper metric,
and the Kakutani-Kodaira theorem,
which establishes that $\sigma$-compact groups
are compact-by-metrizable
(Theorems \ref{BK}, \ref{Struble}, and \ref{KK}).

\begin{prop}[characterization of compactly generated groups]
\label{1.D.2}
Let $G$ be a  $\sigma$-compact LC-group
and $d$ an adapted pseudo-metric on $G$.
\par
$G$ is compactly generated if and only if 
the pseudo-metric space $(G,d)$ is coarsely connected.
Moreover, if this is so, there exists 
a geodesically adapted continuous pseudo-metric on $G$
(Proposition \ref{metric_char_cg}).
\par
If $G$ is compactly generated,
two geodesically adapted pseudo-metrics on $G$ are quasi-isometric
(Corollary \ref{uniqueness _uptoqi}).
\par
In particular, word metrics associated to compact generating sets of $G$
are bilipschitz equivalent to each other (Proposition  \ref{geodad+wordmetric}).
\end{prop}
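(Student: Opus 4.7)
The plan is to treat the four assertions in turn, using throughout that a left-invariant locally bounded pseudo-metric is automatically bounded on every compact subset: cover a compact set $K$ by finitely many translates $g_1 V, \ldots, g_m V$ of a bounded-diameter neighborhood $V$ of the identity and apply the triangle inequality. For the characterization of compact generation, assume first that $G$ is compactly generated by $S$ and that $d$ is adapted. The compact set $S$ has finite $d$-diameter, say $c$, so any factorization $g^{-1} g' = s_1 \cdots s_n$ with $s_i \in S \cup S^{-1}$ yields a chain $x_i := g s_1 \cdots s_i$ with $d(x_{i-1}, x_i) = d(e, s_i) \le c$ by left invariance, proving coarse connectedness. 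Conversely, if $(G, d)$ is coarsely connected with constant $c$, properness makes $K := \overline{B}_d(e, c)$ compact; from a chain $e = x_0, \ldots, x_n = g$ with $d$-steps $\le c$, left invariance gives $x_{i-1}^{-1} x_i \in K$, so $g \in K^n$ and $K$ generates $G$.

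For the existence of a geodesically adapted continuous pseudo-metric, start with a compact generating set $S$ and, replacing $S$ by $S \cup S^{-1} \cup V$ for some compact symmetric neighborhood $V$ of the identity, assume $S$ is symmetric, contains $e$, and has non-empty interior. The word metric $d_S$ is then left-invariant and large-scale geodesic by construction, proper (since $B_{d_S}(e, n) = S^n$ is compact), and locally bounded (since $V \subset B_{d_S}(e, 1)$). It need not be continuous, as the example $d_{[0,1]}$ on $\mathbf{R}$ recalled in the introduction shows; I would therefore add to $d_S$ a continuous compatible pseudo-metric $d'$ supplied by Kakutani--Kodaira (Theorem \ref{KK}). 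The sum $d_S + d'$ is left-invariant, continuous, and proper. It remains large-scale geodesic because a chain realizing $d_S(x, y) = n$ has successive $(d_S + d')$-steps bounded by $1 + \sup_{s \in S} d'(e, s)$, a finite constant by compactness of $S$ and continuity of $d'$.

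For the uniqueness up to quasi-isometry, let $d_1, d_2$ both be geodesically adapted. Using the large-scale geodesic structure of $d_1$, for each pair $(x, y)$ pick a chain $x = z_0, \ldots, z_n = y$ with $d_1$-steps $\le c_1$ and $n \le a_1 d_1(x, y) + b_1$. The closed $d_1$-ball of radius $c_1$ is compact by properness, and hence has finite $d_2$-diameter $M$ by the preliminary observation. Therefore $d_2(x, y) \le n M \le M a_1 d_1(x, y) + M b_1$, and a symmetric argument gives the reverse inequality, so the identity map is a quasi-isometry.

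For the bilipschitz comparison of two word metrics $d_{S_1}, d_{S_2}$, arrange by a similar enlargement that each $S_i$ contains an open neighborhood of the identity. Then the sets $(S_2)^k$ form an increasing family of compact sets whose interiors cover $G$, so compactness of $S_1$ forces $S_1 \subset (S_2)^k$ for some $k$; hence $d_{S_2}(e, s) \le k$ for every $s \in S_1$, and left invariance with the triangle inequality give $d_{S_2} \le k\, d_{S_1}$, with the reverse inequality following symmetrically. I expect the main obstacle to lie in part two, in securing continuity without destroying the large-scale geodesic property: word metrics are intrinsically combinatorial and typically fail to be continuous, so one genuinely needs to combine the word metric with a compatible pseudo-metric produced via Birkhoff--Kakutani and Kakutani--Kodaira.
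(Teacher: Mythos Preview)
Your arguments for parts 1, 3, and 4 are correct and essentially match the paper's (the paper routes part 3 through the general Proposition~\ref{ceXYqi} on coarsely Lipschitz maps between large-scale geodesic spaces, but the content is the same direct chain argument you give).

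There is, however, a genuine gap in part 2. You correctly observe that the word metric $d_S$ is typically discontinuous, and then propose to remedy this by forming $d_S + d'$ with $d'$ a continuous left-invariant pseudo-metric. But adding a continuous function to a discontinuous one does not produce a continuous function: $d_S + d'$ is continuous if and only if $d_S$ is. So your $d_S + d'$ is indeed left-invariant, proper, locally bounded, and large-scale geodesic, but it is \emph{not} continuous, and you have not improved on $d_S$ itself.

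The paper handles this with a genuinely different construction (Lemma~\ref{lemmeMoSW}, following \cite{MoSW--02}). Starting from a continuous pseudo-metric $d$ whose small balls are compact (Corollary~\ref{CorollaryKK}) and a compact generating set $S$ containing the unit $d$-ball, one sets $E = \{(g,h) \mid g^{-1}h \in S\}$ and defines
\[
\delta_{E,d}(x,y) \;=\; \inf \Bigl\{ \sum_{i=1}^n d(x_{i-1},x_i) \; \Big| \; x_0 = x,\ x_n = y,\ (x_{i-1},x_i) \in E \Bigr\}.
\]
The key point is that $\delta_{E,d}$ agrees with $d$ on pairs at $d$-distance less than $1$ (Lemma~\ref{lemmeMoSW}\ref{2DElemmeMoSW}), which forces $\delta_{E,d}$ to be continuous; and a short-cutting argument shows $\delta_{E,d}$ is quasi-isometric to the word metric $d_S = \nu_E$, hence large-scale geodesic. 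This ``infimum of $d$-lengths over $E$-paths'' is the missing idea: it smooths out the integer-valued word metric at small scales while retaining its large-scale geodesic behaviour.
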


Alternatively, an LC-group is compactly generated if and only if
it has a faithful geometric action (as defined in \ref{actions_mp_lb_cob})
on a non-empty geodesic pseudo-metric space (Corollary \ref{ahahah}).
In particular a compactly generated group can be seen as a pseudo-metric space,
well-defined up to quasi-isometry.

\vskip.2cm

To obtain characterizations with metrics, 
rather than pseudo-metrics,
second-countability is needed:
\vskip.2cm
\emph{
An LC-group $G$ is second-countable if and only if
it has a left-invariant proper compatible metric
(Struble Theorem \ref{Struble}).
}
\par
\emph{
An LC-group $G$ is second-countable and compactly generated if and only if
$G$ has a large-scale geodesic left-invariant proper compatible metric
(Proposition \ref{2countcggroup}).
}

\vskip.2cm

It is a crucial fact for our exposition that compact presentability can be characterized
in terms of adapted pseudo-metrics:

\begin{prop}[characterization of compactly presented groups]
\label{1.D.3}
Let $G$ be a compactly generated LC-group
and $d$ an adapted pseudo-metric on $G$.
\par
$G$ is compactly presented if and only if 
the pseudo-metric space $(G,d)$ is coarsely simply connected
(Proposition \ref{cp iff csc}).
\end{prop}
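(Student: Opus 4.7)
My plan is to reduce to the case of a word metric and then set up a dictionary between words in the free group on a compact generating set and discrete loops in $G$, under which compact presentability on the algebraic side matches coarse simple connectedness on the geometric side.

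First, using Proposition~\ref{1.D.2}, choose a compact symmetric generating set $S$ and let $d_S$ be the associated word metric. By Corollary~\ref{uniqueness _uptoqi}, $d$ and $d_S$ are quasi-isometric (in fact coarsely equivalent suffices), and coarse simple connectedness is invariant under metric coarse equivalence of pseudo-metric spaces (a fact to be established earlier, in the section on coarsely simply connected spaces). So it is enough to prove the equivalence with $d = d_S$. Moreover, by left-invariance, loops in $(G,d_S)$ may be taken to be based at $1$. The key observation is the correspondence
\[
    (s_1,\ldots,s_n)\in (S\cup S^{-1})^n\ \text{with}\ s_1\cdots s_n=1 \ \longleftrightarrow\ \text{loop}\ 1=x_0,x_1,\ldots,x_n=1\ \text{in}\ (G,d_S),
\]
via $x_i = s_1 \cdots s_i$; here consecutive distances are at most $1$. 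More generally a $c$-loop based at $1$ corresponds to a word whose consecutive ``jumps'' $x_{i-1}^{-1}x_i$ each lie in the compact set $B_S(c)$, and can be broken up into a word over $S\cup S^{-1}$ of bounded blow-up factor $c$.

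For the ``only if'' direction, suppose $G = \langle S\mid R\rangle$ with all relators in $R$ of length at most $k$. Given a $c$-loop $(x_i)$ based at $1$, the associated word $w$ over $S\cup S^{-1}$ represents the identity, so in the free group on $S$ it is a product of conjugates of relators in $R$. Undoing these relators one at a time amounts to free insertions/deletions of subwords $ss^{-1}$ and replacements of a subword of length $\le k$ by another subword of length $\le k$ representing the same element; each such algebraic elementary move translates into an elementary deformation of the loop that inserts/removes a vertex, or replaces a single vertex by a nearby one, all while keeping consecutive distances bounded by some $c'$ depending only on $c$, $k$, and $S$. This exhibits the required deformation to the constant loop at $1$, so $(G,d_S)$ is coarsely simply connected.

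For the ``if'' direction, pick $c=1$ and let $c' \ge 1$ be the constant furnished by coarse simple connectedness: every $1$-loop can be deformed to a trivial loop through elementary moves of a loop of step-size $\le c'$. Let $R$ be the finite-length set of all words in $S\cup S^{-1}$ of length at most some $k := k(c',S)$ that represent the identity in $G$; this is a set of relators of bounded length. Given any word $w = s_1\cdots s_n$ with $s_1\cdots s_n = 1$, form the corresponding $1$-loop and apply the coarse simple connectedness: the resulting sequence of elementary deformations translates, via the dictionary above, into a sequence of free insertions/deletions and replacements by elements of $R$, showing that $w$ lies in the normal closure of $R$ in the free group on $S$. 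Hence $\langle S\mid R\rangle$ is a compact presentation of $G$.

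The main obstacle is making the ``dictionary'' between elementary deformations of loops and elementary moves on words quantitatively precise: one must carefully decompose a $c'$-step move of a loop into a bounded number of word operations whose relators have length bounded in terms of $c'$ and the diameter of $S\cup\{1\}$, and conversely control the step-size of the intermediate loops produced by a single application of a length-$k$ relator. Once this bookkeeping is done (using that $B_S(c')$ is compact and that the word length on $B_S(c')$ is bounded), both implications follow.
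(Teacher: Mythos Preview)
Your proposal is correct and follows essentially the same reduction as the paper (pass to a word metric $d_S$ via coarse equivalence, then translate between algebraic and geometric data), but the core step is handled differently. The paper routes the equivalence through the Rips $2$-complex: it shows in Proposition~\ref{bpresrips} that bounded presentability over $S$ is equivalent to simple connectedness of $\operatorname{Rips}^2_c(G,S)$ for large $c$, using the lemmas on combinatorial homotopy in simplicial complexes (Lemmas~\ref{lem:path_X1} and~\ref{lem:graphomotopic_simplices}), and then invokes Proposition~\ref{propositionpourc-geodesicspace} to identify this with coarse simple connectedness. Your approach bypasses the Rips complex entirely and works directly with the dictionary between $c$-loops and words: inserting a relator of length $\le k$ becomes a bounded sequence of $k$-elementary homotopies, and conversely a $c''$-elementary homotopy corresponds to replacing a subword of length $\le c''$ by another representing the same element, i.e.\ applying a relator of length $\le 3c''$. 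This is more elementary and self-contained, though the Rips complex formulation buys the paper a uniform topological picture that plugs into Milestone~\ref{miles_compactpres} and the action-on-space characterizations (Theorem~\ref{analogueof4.D.4}). One small point to tighten: fix once and for all a choice of $S$-word of length $\le c''$ for each element of the ball $B_S(c'')$, so that the words $w_j$ associated to the intermediate $c''$-loops are well defined and differ only at the modified step.
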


One of the motivations for introducing metric ideas as above 
appears in the following proposition (see Section \ref{sectionactions}),
that extends the discussion at the end of Section \ref{intro_discrete_metric}:

\begin{prop}
\label{1.D.4}
Let $G$ be a $\sigma$-compact LC-group, 
$H$ a closed subgroup of $G$ such that the quotient space $G/H$ is compact,
and $K$ a compact normal subgroup of $G$.
Then the inclusion map $i : H \lhook\joinrel\relbar\joinrel\rightarrow G$ 
and the projection $p : G \twoheadrightarrow G/K$
are metric coarse equivalences with respect to adapted pseudo-metrics on the groups.
Moreover, if $G$ is compactly generated and the pseudo-metrics are geodesically adapted,
the maps $i$ and $p$ are quasi-isometries.
\par

In particular, if $\Gamma$ is a discrete subgroup of $G$ such that $G/\Gamma$ is compact,
$G$ is compactly generated [respectively compactly presented]
if and only if $\Gamma$ is finitely generated [resp.\ finitely presented].
\end{prop}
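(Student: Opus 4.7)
My plan is to fix explicit adapted pseudo-metrics on the three groups $H$, $G$, $G/K$ by starting from one choice on $G$, build concrete retractions and sections that serve as quasi-inverses, and then bootstrap the statement about $\Gamma$ from the metric characterizations of compact generation and compact presentation (Propositions \ref{1.D.2} and \ref{1.D.3}).

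First I would pick an adapted pseudo-metric $d_G$ on $G$, available by Proposition \ref{1.D.1} since $G$ is $\sigma$-compact, and recall that Corollary \ref{2metricsce} makes this choice canonical up to coarse equivalence. Setting $d_H := d_G|_H$ gives an adapted pseudo-metric on $H$: left-invariance and local boundedness are inherited, and properness follows because $H$ is closed in $G$, so closed $d_H$-balls are closed $d_G$-balls intersected with $H$. On $G/K$, the pseudo-metric $\bar d(xK, yK) := \inf_{k \in K} d_G(x, yk)$ is left-invariant by normality of $K$, and the sandwich $d_G(x,y) - 2\Delta \le \bar d(xK, yK) \le d_G(x,y)$, where $\Delta := \sup_{k \in K} d_G(e, k) < \infty$, yields local boundedness and properness of $\bar d$.

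For the coarse equivalences, cocompactness of $H$ gives a compact $C \subset G$ with $G = HC$; I would choose a set-theoretic retraction $r : G \to H$ sending each $g$ to some $h \in H$ with $g \in hC$. Left-invariance of $d_G$ gives $d_G(g, r(g)) = d_G(e, c)$ for some $c \in C$, bounded by $D := \sup_{c \in C} d_G(e, c) < \infty$, and then the triangle inequality yields $d_H(r(g_1), r(g_2)) \le d_G(g_1, g_2) + 2D$; thus $r$ is coarsely Lipschitz, and $r, i$ form a quasi-inverse pair with bounded defect. The argument for $p : G \to G/K$ is parallel with $K$ playing the role of $C$: $p$ is $1$-Lipschitz, any section $s : G/K \to G$ satisfies $d_G(s(xK), x) \le \Delta$, and coarse Lipschitzness of $s$ follows just as for $r$.

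To upgrade to quasi-isometries when $G$ is compactly generated, I would first observe that $H$ is then compactly generated too (apply $r$ to a compact symmetric generating set $S$ of $G$; the resulting set, enlarged by $C^{-1}SC \cap H$, generates $H$). Hence both $H$ and $G/K$ carry geodesically adapted metrics, and by Corollary \ref{uniqueness _uptoqi} any two such are quasi-isometric, so one may replace $d_H$ and $\bar d$ by genuinely geodesically adapted companions. The general principle from the earlier coarse-geometry chapter, that a coarsely Lipschitz map out of a large-scale geodesic pseudo-metric space is automatically large-scale Lipschitz, then promotes $i, r, p, s$ to large-scale Lipschitz maps and yields the desired quasi-isometries. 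The ``in particular'' statement follows immediately by taking $H = \Gamma$ in the first part: coarse connectedness and coarse simple connectedness are invariants of metric coarse equivalence, so Propositions \ref{1.D.2} and \ref{1.D.3} convert the equivalence $(G, d_G) \simeq (\Gamma, d_\Gamma)$ into the claimed equivalences of finiteness properties. The main delicate point I expect is this coarse-to-quasi upgrade: the restricted metric $d_H$ need not be large-scale geodesic, so one must route through the uniqueness of geodesically adapted metrics up to quasi-isometry, and hence through the fact that $H$ is compactly generated — a nontrivial consequence of the cocompact inclusion that deserves explicit justification.
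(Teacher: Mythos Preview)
Your proposal is correct and follows essentially the same route as the paper. The paper packages the inclusion step slightly more abstractly---it first shows (Proposition~\ref{morphisms_grps_spaces}) that any continuous proper homomorphism between $\sigma$-compact LC-groups is a coarse embedding, then notes that cocompactness gives essential surjectivity---whereas you build the retraction $r$ and section $s$ by hand; but the underlying mechanism (triangle inequality plus bounded $d_G$-diameter of $C$ and $K$) is identical, and the upgrade to quasi-isometry via Proposition~\ref{ceXYqi} and Corollary~\ref{uniqueness _uptoqi} is exactly what the paper does in Propositions~\ref{sigmac+compactgofcocompact} and~\ref{sigmac+compactgofquotients}. Your flagged ``delicate point'' is handled correctly, and your informal argument that $H$ inherits compact generation matches the paper's Proposition~\ref{stababcd}\ref{3DEstababcd}.
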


For other properties that hold (or not) simultaneously for $G$ and $\Gamma$,
see Remarks \ref{PropertyPinvce} and \ref{PropertyQinvqi}.

\section{On compact presentations} 
\label{intro_compactpres}

Despite its modest fame for non-discrete groups, 
compact presentability has been used as a tool
to establish finite presentability of $S$-arithmetic groups.
Consider an algebraic group $\mathbf G$ defined over $\Q$.
By results of Borel and Harish-Chandra, 
the group $\mathbf G (\Z)$ is finitely presented \cite{Bore--62}.
Let $S$ be a finite set of primes, 
and let $\Z_S$ denote the ring of rational numbers
with denominators products of primes in $S$.
It is a natural question to ask whether $\mathbf G(\Z_S)$ is finitely presented;
for classical groups, partial answers were given in \cite{Behr--62}.
\par

The group $\mathbf G(\Z_S)$ is naturally a discrete subgroup in
$\mathcal G := \mathbf G(\R) \times \prod_{p \in S} \mathbf G(\Q_p)$.
In \cite{Knes--64}, Martin Kneser has introduced the notion of compact presentability,
and has shown that $\mathbf G(\Z_S)$ is finitely presented
if and only if $\mathbf G(\Q_p)$ is compactly presented for all $p \in S$;
an easy case, that for which $\mathcal G / \mathbf G(\Z_S)$ is compact,
follows from Corollary \ref{hereditaritycp} below.
Building on results of Bruhat and Tits,
Behr has then shown that, when $\mathbf G$ is reductive, 
$\mathbf G (\K)$ is compactly presented for every local field $\K$
\cite{Behr--67, Behr--69}.
To sum up, using compact presentability, Kneser and Behr have shown
that $\mathbf G (\Z_S)$ is finitely presented 
for every reductive group $\mathbf G$ defined over $\Q$
and every finite set $S$ of primes.
(Further finiteness conditions for such groups
are discussed in several articles by Borel and Serre; we quote \cite{BoSe--76}.)
\par

After these articles of Kneser and Behr from the 60's,
Abels discussed compact presentation in great detail
for solvable linear algebraic groups \cite{Abel--87},
showing in particular that several properties of $\mathbf G(\Z [1/p])$
are best understood together with those of $\mathbf G(\Q_p)$.
Otherwise, compact presentations seem to have disappeared from the literature.
In his influential article
on group cohomology and properties of arithmetic and $S$-arithmetic groups,
Serre does not cite Kneser, and he cites \cite{Behr--62, Behr--69} only very briefly
 \cite[in particular Page 127]{Serr--71}.

\section{Outline of the book}
\label{intro_outline}

Chapter \ref{chap_topaspects}
contains foundational facts on LC-spaces and groups,
the theorems of Birkhoff-Kakutani, Struble, and Kakutani-Kodaira, 
on metrizable groups, with proofs,
and generalities on compactly generated LC-groups.
The last section describes results on the structure of LC-groups;
they include a theorem from the 30's 
on compact open subgroups in totally disconnected LC-groups,
due to van Dantzig,
and (without proofs) results from the early 50's solving the Hilbert Fifth Problem, 
due among others to Gleason, Montgomery-Zippin, and Yamabe.
\par

Chapter \ref{chap_metriccoarse}
deals with two categories of pseudo-metric spaces that play a major role
in our setting:
the metric coarse category, 
in which isomorphisms are closeness classes of metric coarse equivalences
(the category well-adapted to $\sigma$-compact LC-groups),
and the large-scale category,
in which isomorphisms are closeness classes of quasi-isometries
(the category well-adapted to compactly generated LC-groups).
Section \ref{sectionmetriclattices} shows how pseudo-metric spaces
can be described in terms of their metric lattices,
i.e., of their subsets which are both uniformly discrete and cobounded. 
Section \ref{coarselyproperandgrowth} illustrates these notions
by a discussion of notions of growth and amenability
in appropriate spaces and groups.
Section \ref{sectioncoarsecat}, on what we call the coarse category,
alludes to a possible variation, involving bornologies rather than pseudo-metrics.

Chapter \ref{chap_gpspmspaces}
shows how the metric notions of Chapter \ref{chap_metriccoarse}
apply to LC-groups.
In particular, every $\sigma$-compact LC-group has an adapted metric
(Proposition \ref{1.D.1}),
and every compactly generated LC-group has a geodesically adapted pseudo-metric
(Proposition \ref{1.D.2}).
Moreover, $\sigma$-compact LC-groups are precisely the LC-groups
that can act on metric spaces in a ``geometric'' way, namely by actions
that are isometric, cobounded, locally bounded, and metrically proper;
and compactly generated LC-groups are precisely the LC-groups
that can act geometrically on coarsely connected pseudo-metric spaces
(Theorem \ref{ftggt}, 
sometimes called the fundamental theorem of geometric group theory).
Section \ref{local_ellipticity} illustrates these notions by discussing
locally elliptic groups, namely LC-groups in which every compact subset
is contained in a compact open subgroup
(equivalently LC-groups $G$ with an adapted metric $d$
such that the metric space $(G,d)$ has asymptotic dimension $0$).
\par

Chapter \ref{chap_excglcg} contains essentially examples
of compactly generated LC-groups,
including isometry groups of various spaces.
\par

Chapter \ref{chap_coarsely1conn} deals with the appropriate notion of simple connectedness
for pseudo-metric spaces, called coarse simple connectedness.
In Section 6.C, we introduce the ($2$-skeleton of the) Rips complex
of a pseudo-metric space.
\par

Chapter \ref{chap_boundedpresentation} introduces bounded presentation, 
i.e., presentations $\langle S \mid R \rangle$ with arbitrary generating set $S$
and relators in $R$ of bounded length.
It is a technical interlude before Chapter \ref{chap_cpgroups},
on compactly presented groups, i.e., on bounded presentations $\langle S \mid R \rangle$
of topological groups $G$ with $S$ compact in $G$.
\par

As explained in Chapter \ref{chap_cpgroups}, an LC-group is compactly presented
if and only if the pseudo-metric space $(G,d)$ is coarsely simply connected,
for $d$ an adapted pseudo-metric.
Important examples of compactly presented LC-groups include:
\begin{enumerate}[noitemsep,label=(\alph*)]
\addtocounter{enumi}{2}
\item
connected-by-compact groups,
\item
abelian and nilpotent compactly generated LC-groups,
\item
LC-groups of polynomial growth,
\item
Gromov-hyperbolic LC-groups.
\item
$(\R \times \Q_2 \times \Q_3) \rtimes_{1/6} \Z$
(compare with \ref{bDECGpage11} in Section \ref{intro_conditionsOn_LC}), 
\item
$\SL_n(\K)$, for every $n \ge 2$ and every local field $\K$.
\item
every reductive group over a non-discrete LC-field is compactly presented
(this last fact is not proven in this book).
\end{enumerate}
(Items \ref{aDECGpage11} and \ref{bDECGpage11} of the list appear above,
near the end of Section \ref{intro_conditionsOn_LC},
and refer to LC-groups which are \emph{not} compactly presented.)
A large part of Chapter \ref{chap_cpgroups}
is devoted to the Bieri-Strebel splitting theorem:
let $G$ be a compactly presented LC-group such that there exists
a continuous surjective homomorphism $\pi : G \twoheadrightarrow \Z$; 
then $G$ is isomorphic to an HNN-extension $\HNN (H, K, L, \varphi)$
of which the base group $H$ is a compactly generated subgroup of $\ker (\pi)$.
Among the prerequisites, there is a section exposing how to extend
the elementary theory of HNN-extensions and free products with amalgamation
from the usual setting of abstract groups to our setting of LC-groups 
(Section \ref{sectionamalgamHNN}).

\vskip.2cm

For another and much shorter presentation of the subject of this book,
see \cite{CoHa}.

\section{Acknowledgements}
\label{intro_acknow}

It is our pleasure to thank
Bachir Bekka, Pierre-Emmanuel Caprace, Rostislav Grigorchuk, 
Dominik Gruber, Adrien Le Boudec, Ashot Minasyan,
Vladimir Pestov, John Roe, and Romain Tessera,  
for useful and pleasant discussions and email exchanges. 

\vskip.2cm

The first-named author is grateful to 
the Swiss National Science Foundation for financial support,
as well as to the Section de math\'ematiques de l'Universit\'e de Gen\`eve
and the Chair EGG of the \'Ecole polytechnique f\'ed\'erale de Lausanne,
where part of this work was done.

\chapter{Basic properties}
\label{chap_topaspects}

\noindent
It is convenient to agree on the following \textbf{standing assumptions, notation, and abbreviations}: 
\begin{itemize}
\item[(A1)]
\label{A1}
Topological spaces and in particular topological groups appearing below 
are assumed to be \textbf{Hausdorff},
unless explicitly stated otherwise.
\index{Hausdorff! topological space, group}
\item[(A2)]
We denote by  
$\N$ the set of natural numbers $\{0, 1, 2, \hdots \}$, including $0$,
\index{$n$@$\N$, set of natural numbers, including zero}
\begin{itemize}
\item[]
$\Z$ the ring of rational integers,
\index{$z$@$\Z$, ring of rational integers}
\item[]
$\R$ the field of real numbers,
\index{$ra$@$\R$, field of real numbers}
\item[]
$\R_+$ the subset of non-negative real numbers,
\index{$rb$@$\R_+$, set of non-negative real numbers}
\item[]
$\R_+^\times$ the group of positive real numbers,
\index{$rc$@$\R_+^\times$, group of positive real numbers}
\item[]
$\overline \R_+$ the union $\R_+ \cup \{\infty\}$,
\index{$rd$@$\overline \R_+ = \R_+ \cup \{\infty\}$}
\item[]
$\C$ the field of complex numbers,
\index{$c$@$\C$, field of complex numbers}
\item[]
$\Z_p$ the ring of $p$-adic integers,
\index{$z$@$\Z_p$, ring of $p$-adic integers}
\item[]
$\Q_p = \Z_p[\frac{1}{p}]$ the field of $p$-adic numbers. 
\index{$q$@$\Q_p$, field of $p$-adic numbers}
\end{itemize}
If $R$ is a commutative ring with unit, 
$R^\times$ stands for its multiplicative group of units.
For example, $\R^\times = \R \smallsetminus \{0\}$
and $\Z_p^\times = \Z_p \smallsetminus p\Z_p$.
\item[(A3)]
For subsets $S,T$ of a group $G$, 
\begin{equation*}
ST \, = \,  \{g \in G \mid g=st \hskip.2cm \text{for some} \hskip.2cm s \in S, t \in T \} .
\end{equation*}
For $n \in \Z$, define $S^n$ by
$S^0 = \{1\}$, $S^1 = S$, $S^2 = SS$, $S^{n+1} = S^nS$ for $n \ge 2$,
$S^{-1} = \{g \in G \mid  g^{-1} \in S \}$,
and $S^{-n} = (S^{-1})^n$ for $n \ge 2$.
\item[(A4)]
Let $\mathcal P$ and $\mathcal R$ be properties of topological groups.
A group $G$ is 
\index{$p$@$\mathcal P  \text{-by-}  \mathcal R$ property of a group}
\begin{equation*}
\mathcal P  \text{-by-}  \mathcal R
\end{equation*}
if it has a closed normal subgroup $N$ with Property $\mathcal P$
such that the quotient group $G/N$ has Property $\mathcal R$.
If $\mathcal P$ is a property inherited by closed subgroups of finite index
and $\mathcal F$ is the property of being finite,
a $\mathcal P \text{-by-} \mathcal F$ group is
also called a virtually $\mathcal P$ group.
\index{Virtually $\mathcal P$ group|textbf}
\item[(A5)]
As already agreed above (end of \ref{intro_discrete_LC}),
we use the shorthand \textbf{LC-group}  \index{LC-group|textbf}
for ``locally compact topological group''. 
Similarly for \textbf{LC-space}.
\end{itemize}
Moreover, we wish to insist from the start on our use of the words
``metric'' and ``action'': 
\begin{itemize}
\item
On a topological space $X$, a \textbf{metric} $d$
need be neither continuous nor proper;
in particular, $d$ need not define the topology of $X$
(Remark \ref{remonmetrics}(1)).
\item
We will never use ``metric'' as a shorthand for ``Riemannian metric''.
\item
An LC-group with an adapted metric (as defined in \ref{metricadapted})
need not be metrizable
(Remark \ref{remoncontadapmetrics}).
\item
On a topological space or a metric space, an \textbf{action} of a topological group
need not be continuous
(Remark \ref{firstexamplesgeometricactions}).
\end{itemize}

\section[Topological spaces and pseudo-metric spaces]
{A reminder on topological spaces and pseudo-metric spaces}
\label{remindertspm}
Before dealing with groups, we recall some basic facts 
in the larger context of spaces.

\begin{defn}
\label{deftop}
A topological space is 
\textbf{locally compact} \index{Locally compact space|textbf}
if each of its points has a compact neighbourhood,
\textbf{$\sigma$-compact} 
\index{Sigma-compact! topological space|textbf}   
if it is a countable union of compact subspaces,
\textbf{first-countable} 
\index{First-countable! topological space|textbf}
if each of its points has a countable basis of open neighbourhoods,
\textbf{second-countable} \index{Second-countable! topological space|textbf}
if its topology has a countable basis of open sets,
and \textbf{separable} \index{Separable topological space|textbf}
if it contains a countable dense subset.  
\par
A subset of a topological space is \textbf{relatively compact} 
\index{Relatively compact subspace|textbf}
if its closure is compact.
\end{defn}

\begin{rem}
\label{neednotbelc}
(1)
In this book, $\sigma$-compact spaces  \emph{need not} be locally compact
(contrary to the convention in some books or articles,
such as  \cite{BTG1-4} and \cite{Dugu--66}).
\par

Similarly, our compactly generated groups \emph{need not} be locally compact.
See Definition \ref{cggroups} and Digression \ref{PestovEtc} below.

\vskip.2cm

(2)
An LC-space $X$ is $\sigma$-compact 
if and only if there exists a countable family $(K_n)_{n \ge 0}$
of compact subspaces whose \emph{interiors} cover $X$.
One may add the requirement that
$K_n \subset \operatorname{int}(K_{n+1})$ for all $n \ge 0$
\cite[Page I.68]{BTG1-4}.

\vskip.2cm

(3)
Though most LC-groups occurring in this book are $\sigma$-compact,
some groups of interest in geometry are not.
\par
One example is the discrete group $\R \otimes_{\Z} (\R / \pi \Q)$,
where $\otimes_{\Z}$ denotes a tensor product of $\Z$-modules;
this group is the receptacle of the Dehn-Sydler invariant,
in the theory of equidecomposition of polyhedra in Euclidean $3$-space
(see the expositions in  \cite{Bolt--78} and \cite{Cart--86}).
\par
Connected Lie groups endowed with the discrete topology
appear in the theory of foliations (among other places); 
see for example \cite{Haef--73} and \cite{Miln--83}.
\par
For other examples of LC-groups that are not $\sigma$-compact,
see Examples \ref{non-sigma-compact-by-discrete}
and \ref{ExTopsub}(1),  and Corollary \ref{vanDantzigCor}(1).
\index{Sigma-compact! LC-group, non-sigma-compact}

\vskip.2cm

(4)
A second-countable topological space is first-countable.
An uncountable discrete space is first-countable but is not second-countable.

\vskip.2cm

(5)
A second-countable space is always separable,
but the converse does not hold 
(even for compact groups, as Example \ref{sepnot2nd} shows).

\vskip.2cm

(6)
Though most LC-groups occurring in this book are second-countable,
some are not: 
see Examples \ref{sigmacandmetrizability}, \ref{sepnot2nd}, 
and Remark \ref{Gexplescgsigmacsc}.
\par

It is convenient to assume that LC-groups are second-countable in several occasions,
e.g.\ in ergodic theory (see \cite{Zimm--84})
or in the theory of unitary representations (reduction theory, direct integrals,
see \cite[Section 18.7]{Dixm--69}).

\vskip.2cm

(7)
For a discrete space, the following four properties are equivalent:
$X$ is $\sigma$-compact, $X$ is second-countable, 
$X$ is separable, $X$ is \textbf{countable}. 
\index{Countable space|textbf} 
\par
In this book, a countable set can be either finite or infinite.
\end{rem}

\begin{defn}
\label{properandlb}
If $X,Y$ are topological spaces, a map $f : X \longrightarrow Y$
(not necessarily continuous) is
\textbf{proper} \index{Proper! map|textbf}
if $f^{-1}(L)$ is relatively compact in $X$
for every compact subspace $L$ of $Y$.
\end{defn}

The adjective \textbf{proper} is used in several other situations:
for pseudo-metrics and for metric spaces (\ref{defproper2} below),
for actions (\ref{actions_mp_lb_cob}),
and for subsets 
(a subset $A$ of a set $X$ is proper if the complement $X \smallsetminus A$ is non-empty).

\begin{exe}
\label{properandlbex}
In the situation of Definition \ref{properandlb},
$f^{-1}(L)$ need not be closed,
because $f$ is not necessarily continuous.
\par

An example of a proper non-continuous map is that 
of the \textbf{floor function} \index{Floor function|textbf}
$\R \longrightarrow \Z$,
mapping $x \in \R$ to the largest integer $\lfloor x\rfloor \in \Z$ not greater than $x$. 
\end{exe}

\begin{defn}
\label{pm...metrizable}
A \textbf{pseudo-metric}
on a set $X$ is a map 
$d : X \times X \longrightarrow \R_+$
such that 
$d(x,x) = 0$, 
$d(x',x) = d(x,x')$, and $d(x,x'') \le d(x,x') + d(x',x'')$
for all $x,x',x'' \in X$.
An \textbf{\'ecart} \index{Ecart|textbf} on $X$ is a map on $X \times X$
with values in $\overline{\R}_+ = [0,\infty]$,
that formally satisfies the same identities and inequalities
(in this text, this will only appear in \S~\ref{sectionRips}).
\par

A  \textbf{pseudo-metric space} \index{Pseudo-metric space|textbf}
is a pair $(X,d)$
of a set $X$ and a pseudo-metric $d$ on $X$.
If convenient, we write $d_X$ for $d$.
Whenever possible, we write $X$ rather than $(X,d)$ or $(X,d_X)$.
For $x \in X$ and $A \subset X$, we define
\begin{equation*}
d_X(x,A) \, = \,  \inf \{ d_X(x,a) \mid a \in A \} .
\end{equation*}
Note that $d_X(x,\emptyset) = \infty$, 
in agreement with the natural convention $\inf \emptyset = \infty$.
The  \textbf{closed ball} \index{Closed ball|textbf} of radius $r > 0$ and centre $x \in X$ is
\begin{equation*}
B^x_X(r) \, = \, \{y \in X \mid d_X(x,y) \le r \} .
\end{equation*}
\textbf{Open balls} are defined similarly, with
the strict inequality $d_X(x,y) < r$.
The \textbf{diameter} 
of a subset $Y$ of a pseudo-metric space $(X,d_X)$
is $\operatorname{diam}(Y) = \sup_{y,y' \in Y} d_X(y,y')$.
\index{$b$@$B^{x}_X(r)$, closed ball of centre $x$ and radius $r$ in $X$}
\index{Open ball|textbf} 
\index{Diameter|textbf} 
\par

A  \textbf{metric} \index{Metric on a set|textbf} on a set $X$ is a pseudo-metric $d$ such that
$d(x,x')=0$ implies $x=x'$.
A  \textbf{metric space} \index{Metric space|textbf} is a pair $(X,d)$
of a set $X$ and a metric $d$ on $X$. Often, $X$ stands for $(X,d)$.

A metric space is canonically a topological space,
with open sets defined to be unions of open balls.
On a topological space, a metric is \textbf{compatible} 
\index{Compatible! compatible metric|textbf}
if it defines the original topology.
A topological space $X$ is \textbf{metrizable} \index{Metrizable! topological space|textbf}
if it has a compatible metric,
and \textbf{completely metrizable} \index{Completely metrizable topological space|textbf}
if it has a compatible metric for which it is a complete metric space.
A topological space is \textbf{Polish} \index{Polish! space|textbf}
if it is completely metrizable and separable.
\end{defn}

\begin{rem}
\label{abaslestoppseudometriques}
In compliance with the convention to only allow Hausdorff topological spaces
(see (A1)  Page \pageref{A1}), 
we do not consider any topology associated to a pseudo-metric space.
\end{rem}

\begin{defn}
\label{defproper2}
On a topological space $X$, a pseudo-metric $d$ is
\begin{enumerate}[noitemsep,label=(\arabic*)]
\item
\textbf{proper} \index{Proper! pseudo-metric|textbf}
if balls with respect to $d$ are relatively compact,
equivalently if the map $X \longrightarrow \R_+, \hskip.1cm x \longmapsto d(x_0,x)$
is proper for all $x_0 \in X$, 
\item
\textbf{locally bounded} 
\index{Locally bounded! pseudo-metric|textbf}
if every point in $X$ has a neighbourhood of finite diameter,
\item
\textbf{continuous} \index{Continuous pseudo-metric|textbf}
if the map $d : X \times X \longrightarrow \R_+$ is continuous.
\end{enumerate}
(Note that the map $X \times X \longrightarrow \R_+, \hskip.1cm (x,y) \longmapsto d(x,y)$
is proper if and only if $X$ is compact.)
\par

A metric space $(X,d)$ is \textbf{proper} \index{Proper! metric space|textbf}
if its subsets of finite diameter are relatively compact.
Observe that this holds if and only if, on the associated topological space, the metric $d$ is proper.
\end{defn}

\begin{rem}
\label{remonmetrics}
(1)
In this book, 
\begin{itemize}
\item
\textbf{metrics on topological spaces need be neither continuous nor proper;}
in particular, \textbf{metrics need not be compatible.}
\end{itemize}
On $\R$, the metric $d_{\mathopen[-1,1\mathclose]} : \R \times \R \longrightarrow \R_+$, 
which assigns to $(x,y)$ the smallest integer $\lceil y-x\rceil$ greater or equal to $\vert y-x \vert$,
is proper and not continuous.
The subscript $\mathopen[-1,1\mathclose]$ indicates that it is a \emph{word metric},
see Definition \ref{wordmetric}.
\par
Let $(X,d)$ be a metric space; assume that $X$ is not compact.
Then the metric $(x,y) \longmapsto \frac{ d(x,y) }{ 1 +d(x,y) }$
is compatible (in particular continuous) and non-proper.

\vskip.2cm

(2)
Let $X$ be a topological space, $d$ a metric on $X$,
and $X_d$ the underlying set of $X$ together with the topology defined by $d$.
Then $d$ is continuous on $X$ if and only if the identity map viewed as
a map $X \longrightarrow X_d$ is continuous.
This \emph{does not} imply that $d$ is compatible, 
in other words it does not imply that this map is a homeomorphism,
as shown by the case of the left-closed right-open interval $X = [0,1\mathclose[ \subset \R$
and the continuous metric $d$ defined on it by
$d(x,x') = \vert e^{2i\pi x} - e^{2i\pi x'} \vert$.
\par
The situation for morphisms of $\sigma$-compact LC-\emph{groups} is different.
See Corollaries \ref{Freudenthalcor} and \ref{Freudenthalcorbis}.

\vskip.2cm

(3)
Let $X$ be a topological space;
\begin{itemize}
\item
\textbf{for a proper metric $d$ on $X$, 
the metric space $(X,d)$ need not be proper.}
\end{itemize}
Consider for example the compact topological space $X = \R / \Z$, 
the non-compact topological space $Y = (\R / \Z) \times \R$,
a compatible metric $d_Y$ on $Y$ that is non-proper (for example a bounded metric),
a bijection $\varphi : X \longrightarrow Y$ (for example an isomorphism of abelian groups),
and the pullback metric $d_X = \varphi^*(d_Y)$ on $X$.
The metric $d_X$ on $X$ is proper (because $X$ is compact), 
but the metric space $(X,d_X)$ is not proper 
(because it is isometric to $(Y,d_Y)$).

\vskip.2cm

(4)
On a topological space, even on a locally compact one, 
a metric need not be locally bounded, as shown by the metric defined on $\R$
by $(x,y) \longmapsto \vert \gamma (x) - \gamma (y) \vert$,
where $\gamma$ is a non-continuous group automorphism of $\R$.
Indeed, there exists on $\R / \Z$ a \emph{proper} non-continuous metric
that is not locally bounded (Remark \ref{remonwordmetric}(5)).
\par
On a topological space $X$, a continuous pseudo-metric $d$ 
is necessarily locally bounded.
The word metric $d_{\mathopen[-1,1\mathclose]}$ on $\R$, as in (1) above,
is locally bounded and is not continuous.

\vskip.2cm

(5) 
Let $X$ a topological space and $d$ a pseudo-metric on $X$.
Suppose that $d$ is locally bounded.
Then every compact subset of $X$ has a finite diameter;
and so does every relatively compact subset of $X$.
\par
When $X$ is locally compact, the converse holds: 
if every compact subset of $X$ has finite diameter,
then $d$ is locally bounded.
For an LC-space $X$, Definition \ref{defproper2}(2) can therefore be reformulated as:
\begin{enumerate}
\item[]
A pseudo-metric $d$ on a locally compact space $X$ is \textbf{locally bounded} 
if every compact subset of $X$ is bounded with respect to $d$.
\end{enumerate}
\index{Locally bounded! pseudo-metric}

\vskip.2cm

(6)
A topological space on which
there exists a proper locally bounded pseudo-metric
is necessarily locally compact and $\sigma$-compact.
A fortiori, a proper metric space is locally compact and $\sigma$-compact.
\par
Conversely, there exists a compatible proper metric
on every $\sigma$-compact locally compact space \cite{Vaug--37}
(see also \cite[Theorem 1]{WiJa--87}).

\vskip.2cm

(7)
A metric space is always first-countable, but need not be second-countable.
\par
A first-countable compact space need not be separable,
hence need not be metrizable
\cite[Chapter V, Problem~J]{Kell--55}.   
\par
However a topological group is first-countable if and only if it is metrizable
(by the Birkhoff-Kakutani Theorem, \ref{BK} below).
\end{rem}

\begin{prop}
\label{contpropermet}
On a locally compact space, every proper continuous metric is compatible.
\end{prop}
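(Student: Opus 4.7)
My plan is to show that the given metric $d$ and the original topology $\tau$ on $X$ induce the same topology, by proving each inclusion. Call $\tau_d$ the topology induced by $d$.

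The inclusion $\tau_d \subseteq \tau$ is the easy direction: since $d : X \times X \to \R_+$ is $\tau$-continuous, for each $x_0 \in X$ and $r > 0$ the preimage $\{y \in X \mid d(x_0,y) < r\}$ of the open interval $[0,r)$ under the continuous map $y \mapsto d(x_0,y)$ lies in $\tau$. Open $d$-balls thus belong to $\tau$, so every set in $\tau_d$ does.

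For the reverse inclusion $\tau \subseteq \tau_d$, fix $x \in X$ and a $\tau$-open neighbourhood $U$ of $x$. The plan is to produce $\varepsilon > 0$ such that the open $d$-ball of radius $\varepsilon$ around $x$ lies in $U$; equivalently, to show that $d(x,A) > 0$, where $A := X \smallsetminus U$. Note that $A$ is $\tau$-closed and $x \notin A$. If $A = \emptyset$ there is nothing to prove, so assume $A \neq \emptyset$ and consider the closed unit $d$-ball $B := B^x_X(1) = \{y \mid d(x,y) \le 1\}$. By properness of $d$ the set $B$ is relatively $\tau$-compact, and by $\tau$-continuity of $d$ it is $\tau$-closed, hence $\tau$-compact. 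Therefore $A \cap B$ is a (possibly empty) $\tau$-compact set.

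Now I distinguish two cases. If $A \cap B = \emptyset$, then every $y \in A$ satisfies $d(x,y) > 1$, so $d(x,A) \ge 1 > 0$. Otherwise, the $\tau$-continuous function $y \mapsto d(x,y)$ attains its infimum on the nonempty $\tau$-compact set $A \cap B$ at some point $y^* \in A \cap B$; since $x \notin A$ we have $y^* \neq x$, and since $d$ is a metric (not merely a pseudo-metric) this forces $d(x,y^*) > 0$. Because any $y \in A \smallsetminus B$ contributes $d(x,y) > 1 \ge d(x,y^*)$ to the infimum, we conclude $d(x,A) = d(x,y^*) > 0$. In either case, setting $\varepsilon := d(x,A)/2 > 0$ gives $B^x_X(\varepsilon) \subseteq U$, completing the proof that $\tau \subseteq \tau_d$.

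The only place where any care is needed is the appeal to compactness to attain the infimum: this is exactly why one needs \emph{both} properness (to get a compact set to work in) \emph{and} continuity of $d$ (to ensure $A \cap B$ is $\tau$-closed in $B$ and to make $y \mapsto d(x,y)$ continuous on it). Local compactness of $X$ is not used directly in this argument — it is implicit already in the existence of a proper metric, per Remark~\ref{remonmetrics}(6).
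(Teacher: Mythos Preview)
Your proof is correct. The paper's proof takes the dual route: rather than showing $\tau$-open sets are $\tau_d$-open via a positive-distance-to-complement argument, it shows $\tau$-closed sets are $\tau_d$-closed by taking a $d$-convergent sequence in a $\tau$-closed set $F$, using properness to trap it in a $\tau$-compact set, extracting a $\tau$-convergent subsequence, and identifying the two limits. Both arguments rest on the same two ingredients (properness yields a compact ball, continuity lets one work inside it); yours uses the extreme value theorem where the paper uses sequential compactness, and your observation that local compactness is not invoked directly is accurate for both proofs.
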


\begin{proof}
Let $X$ be a locally compact space, $d$ a proper continuous metric on $X$,
and $X_d$ the space $X$ together with the topology defined by $d$.
The identity mapping provides a map $j : X \longrightarrow X_d$
which is continuous, because $d$ is continuous.
It remains to show that, for every closed subset $F$ of $X$,
its image in $X_d$ is also closed.
\par

Consider a point $x \in X$ and a sequence $(x_n)_{n \ge 0}$ of points in $F$ such that 
\hfill \par\noindent
$\lim_{n \to \infty} d(x_n,x) = 0$.
We have to show that $x \in F$.
Since $d$ is proper, there exists a compact subset of $X$
that contains $x_n$ for all $n \ge 0$.
Upon replacing $(x_n)_{n \ge 0}$ by a subsequence,
we may assume that $(x_n)_{n \ge 0}$ converges in $X$
towards some point $y \in F$.
Since $\lim_{n \to \infty} d(x_n,y) = 0$, we have $y=x$.
Hence $F$ is closed in $X_d$.
\end{proof}

About continuous and compatible metrics on \emph{groups},
see also Corollary \ref{Freudenthalcorbis}.

\begin{thm}[Second-countable LC-spaces]
\index{Theorem! Urysohn}
\label{Urysohn}
For a locally compact space $X$,
the following properties are equivalent:
\begin{enumerate}[noitemsep,label=(\roman*)]
\item
$X$ is second-countable;
\item
$X$ is metrizable and $\sigma$-compact; 
\item
$X$ is metrizable and separable; 
\item
$X$ is Polish;
\item
the topology of $X$ can be defined by a proper metric.
\end{enumerate}
\end{thm}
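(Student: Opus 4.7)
The plan is to prove the circle of implications $(i)\Rightarrow(ii)\Rightarrow(iii)\Rightarrow(i)$, then $(i)$--$(iii)\Rightarrow(v)\Rightarrow(iv)\Rightarrow(iii)$, so that all five conditions are equivalent.

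First I would dispatch the purely topological equivalences $(i)\Leftrightarrow(ii)\Leftrightarrow(iii)$. For $(i)\Rightarrow(ii)$: an LC-space is regular, so by the classical Urysohn metrization theorem (regular $T_3$ plus second-countable implies metrizable) $X$ is metrizable; and a countable basis of $X$ can be refined to a countable basis of relatively compact open sets (using local compactness), hence $X$ is $\sigma$-compact. For $(ii)\Rightarrow(iii)$: a compact metric space is separable, so the countable union expressing $\sigma$-compactness is a countable union of separable subspaces, hence separable. For $(iii)\Rightarrow(i)$: this is the standard fact that a separable metric space is second-countable (balls of rational radius around a countable dense set form a basis).

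The substantive step is $(i)$--$(iii)\Rightarrow(v)$, i.e.\ promoting any compatible metric to a proper compatible one. Here is how I would do it. By Remark \ref{neednotbelc}(2), choose an exhaustion $K_0\subset\operatorname{int}(K_1)\subset K_1\subset\operatorname{int}(K_2)\subset\cdots$ of $X$ by compact sets whose interiors cover $X$. By Urysohn's lemma for normal spaces, pick continuous functions $g_n:X\to[0,1]$ with $g_n=1$ on $K_n$ and $g_n=0$ off $\operatorname{int}(K_{n+1})$, and define
\begin{equation*}
f(x)\,=\,\sum_{n\ge 0}\bigl(1-g_n(x)\bigr).
\end{equation*}
The sum is locally finite (on $\operatorname{int}(K_{n+1})$ only finitely many terms are nonzero), so $f$ is continuous; and $f(x)\ge n$ whenever $x\notin K_n$, so $f:X\to\R_+$ is a continuous proper function. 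Now take any bounded compatible metric $d_0$ on $X$ (for instance $d_0=d'/(1+d')$ for a compatible $d'$ obtained from the metrizability of $X$) and set
\begin{equation*}
d(x,y)\,=\,d_0(x,y)\,+\,\bigl|f(x)-f(y)\bigr|.
\end{equation*}
Then $d$ is a continuous metric on $X$, and for any $x_0\in X$ the ball $B^{x_0}_X(r)$ is contained in $f^{-1}([f(x_0)-r,f(x_0)+r])$, hence is relatively compact, so $d$ is proper. By Proposition \ref{contpropermet} a proper continuous metric on an LC-space is compatible, giving $(v)$. This construction, together with verifying the properness estimate, is where I expect the main work of the proof to lie.

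Finally, $(v)\Rightarrow(iv)$ is short: if $d$ is a proper compatible metric, every Cauchy sequence is $d$-bounded, hence contained in a compact set, hence convergent, so $(X,d)$ is complete; moreover $X$ is then $\sigma$-compact (cover by closed balls of integer radius around a fixed point), and from $(ii)\Rightarrow(iii)$ already proved, $X$ is separable, whence Polish. And $(iv)\Rightarrow(iii)$ is immediate from the definition of Polish. This closes the loop.
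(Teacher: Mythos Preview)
Your proof is correct and complete. The paper's own treatment of this theorem is essentially a placeholder: it observes that a second-countable LC-space is $\sigma$-compact and invokes Urysohn's metrization theorem for (i)$\Rightarrow$(ii), then defers all remaining implications to Bourbaki and Kechris. You supply what the paper omits, in particular an explicit construction of a proper compatible metric via a continuous proper function $f$ built from an exhaustion by compacta; this is in the spirit of the references the paper cites elsewhere (e.g.\ \cite{Vaug--37}, \cite{WiJa--87} in Remark~\ref{remonmetrics}(6)). One small point worth making explicit: when you invoke Urysohn's lemma you need $X$ normal, which is not automatic for LC-spaces (cf.\ Remark~\ref{remLCGetc}(2)); but since metrizability has already been secured in the cycle (i)$\Leftrightarrow$(ii)$\Leftrightarrow$(iii), normality follows, and indeed you could build the $g_n$ directly from a compatible metric.
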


\begin{proof}[On the proof]
A second-countable LC-space is obviously $\sigma$-compact.
It is metrizable by a particular case of
\textbf{Urysohn's metrization theorem}: \index{Urysohn metrization theorem}
\emph{every second-countable regular space is metrizable}.
For this and other implications, 
see e.g.\  \cite[Page IX.21]{BTG5-10} or \cite[Theorem 5.3]{Kech--95}.
\end{proof}

The next proposition answers questions concerning
properties of closed subspaces, quotient spaces, and products.
We first recall a standard terminology.

\begin{defn}
\label{defeqrel}
Let $\mathscr{R}$ be an equivalence relation on a topological space $X$;
let $X / \mathscr{R}$ denote the quotient space.
\par
The relation $\mathscr{R}$ is \textbf{open} 
if  the canonical projection $X \longrightarrow X / \mathscr{R}$ is open, 
equivalently if the $\mathscr{R}$-closure of every open subset in $X$ is open.
\par
The relation $\mathscr{R}$ is \textbf{Hausdorff}
\index{Hausdorff! equivalence relation}
\index{Equivalence relation! open, Hausdorff}
if $X / \mathscr{R}$ is Hausdorff.
When $\mathscr{R}$ is open, then $\mathscr{R}$ is Hausdorff 
if and only if the graph of $\mathscr{R}$ is closed in $X \times X$
\cite[Page I.55]{BTG1-4}.
\end{defn}

\noindent \emph{Note.}
If $G$ is a topological group and $H$ a \emph{closed} subgroup,
the relation $\mathscr{R}$ defined on $G$ by 
$g_1\mathscr{R}g_2$ if $g_1^{-1}g_2 \in H$
is \emph{open} and Hausdorff.
In this case, $G/\mathscr{R}$ is rather written $G/H$;
the natural projection from $G$ onto $G/H$ need not be closed.

\vskip.2cm

For \ref{hereditarityTopSpace} (and for \ref{stababcd} below), 
we consider the following properties of topological spaces:
\begin{enumerate}[noitemsep,label=(\alph*)]
\item\label{aDEhereditarityTopSpace}
local compactness,
\item\label{bDEhereditarityTopSpace}
metrizability,
\item\label{cDEhereditarityTopSpace}
first countability,
\index{First-countable! topological space}
\item\label{dDEhereditarityTopSpace}
$\sigma$-compactness,
\item\label{eDEhereditarityTopSpace}
second countability,
\index{Second-countable! topological space}
\item\label{fDEhereditarityTopSpace}
separability.
\end{enumerate}

\begin{prop}[hereditary properties]
\label{hereditarityTopSpace}
\index{Hereditarity of various properties}
Let $X$ be a topological space, and $Y$ a subspace.
\begin{enumerate}[noitemsep,label=(\arabic*)]
\item\label{1DEhereditarityTopSpace}
Assume that $X$ is locally compact \ref{aDEhereditarityTopSpace}. 
Then $Y$ is locally compact
if and only if there exist in $X$ a closed subset $C$ and an open subset $U$  
such that $Y = C \cap U$.
\item\label{2DEhereditarityTopSpace}
If $X$ has one of Properties 
\ref{bDEhereditarityTopSpace}, \ref{cDEhereditarityTopSpace} or \ref{eDEhereditarityTopSpace},
then $Y$ has the same property.
\item\label{3DEhereditarityTopSpace}
If $X$ is locally compact \ref{aDEhereditarityTopSpace}
and $\sigma$-compact \ref{dDEhereditarityTopSpace}, 
closed subspaces of $X$ have the same properties,
but open subspaces need not have.
\item\label{4DEhereditarityTopSpace}
Assume that $X$ is separable \ref{fDEhereditarityTopSpace}. 
Open subspaces of $X$ are separable, but closed subsets need not be.
\end{enumerate}
Let $\mathscr{R}$ be an open Hausdorff equivalence relation on $X$.
\begin{enumerate}[noitemsep,label=(\arabic*)]
\addtocounter{enumi}{4}
\item\label{5DEhereditarityTopSpace}
If $X$ has one of Properties 
\ref{aDEhereditarityTopSpace},  \ref{cDEhereditarityTopSpace}, \ref{dDEhereditarityTopSpace},
\ref{eDEhereditarityTopSpace}, \ref{fDEhereditarityTopSpace},
then $X/\mathscr{R}$ has the same property.
This \emph{does not} carry over to \ref{bDEhereditarityTopSpace}.
\end{enumerate}
Let $I$ be a set, $(X_i)_{i \in I}$ a collection of topological spaces,
each of them containing at least two points,
and $X = \prod_{i \in I} X_i$ their product, with the product topology.
\begin{enumerate}[noitemsep,label=(\arabic*)]
\addtocounter{enumi}{5}
\item\label{6DEhereditarityTopSpace}
$X$ has one of the properties \ref{aDEhereditarityTopSpace}, \ref{dDEhereditarityTopSpace}
if and only if all the $X_i$ 's have this property
and all but possibly finitely many of them are compact.
\item\label{7DEhereditarityTopSpace}
$X$ has one of the properties
\ref{bDEhereditarityTopSpace}, \ref{cDEhereditarityTopSpace}, \ref{eDEhereditarityTopSpace}
if and only if all the $X_i$ 's have this property and $I$ is countable.
\item\label{8DEhereditarityTopSpace}
$X$ is separable \ref{fDEhereditarityTopSpace}
if and only if all the $X_i$ 's are separable and
the cardinality of $I$ is at most that of the continuum.
\end{enumerate}
\end{prop}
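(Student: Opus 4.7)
The plan is to treat the eight assertions in turn; most reduce to standard verifications with countable bases, compatible metrics, or dense sets, together with a handful of counterexamples, while the real work lies in the product statements (6)--(8).

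For (1), if $Y=C\cap U$ with $C$ closed and $U$ open in $X$, then $U$ is LC (every open subset of an LC space is) and $C\cap U$ is closed in $U$, hence LC. Conversely, if $Y$ is LC in its subspace topology, each $y\in Y$ has a relatively compact open $X$-neighbourhood $V_y$ with $V_y\cap Y$ compact (hence closed in $V_y$); setting $U=\bigcup_{y\in Y}V_y$ one checks $Y=U\cap\overline Y$. For (2) and the positive parts of (3) and (4), restrict a compatible metric, a countable local base, or a countable base of the topology to $Y$; closed subspaces of LC or $\sigma$-compact $X$ are handled by intersecting with a compact exhaustion; and a countable dense subset of $X$ meets every non-empty open subset, hence restricts to a dense subset of any open $Y$.

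The counterexamples are standard: for (3), the one-point compactification of an uncountable discrete set $D$ is compact (hence LC and $\sigma$-compact), yet the open subspace $D$ is not $\sigma$-compact; for (4), the Sorgenfrey plane is separable while its antidiagonal is an uncountable closed discrete subset. For (5), the projection $p\colon X\to X/\mathscr R$ is a continuous open surjection onto a Hausdorff space; continuity and surjectivity of $p$ preserve $\sigma$-compactness \ref{dDEhereditarityTopSpace} and separability \ref{fDEhereditarityTopSpace}; openness plus continuity guarantees that images of (local) bases are (local) bases, preserving \ref{cDEhereditarityTopSpace} and \ref{eDEhereditarityTopSpace}, and that images of compact neighbourhoods are compact neighbourhoods, preserving \ref{aDEhereditarityTopSpace}. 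Metrizability \ref{bDEhereditarityTopSpace} genuinely fails: one can construct a metrizable $X$ together with an open Hausdorff equivalence relation whose quotient is not first-countable, essentially by collapsing a non-discrete closed subset so that no countable family of saturated open neighbourhoods of the collapsed point is cofinal.

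The main obstacle is (6)--(8). For (6), write $I=F\sqcup I'$ with $F$ finite and $X_i$ compact for $i\in I'$; a finite product of LC (resp.\ $\sigma$-compact) spaces is LC (resp.\ $\sigma$-compact), and multiplication by the compact product $\prod_{i\in I'}X_i$ (Tychonoff) preserves both properties, giving sufficiency. The necessity is a diagonal argument: if infinitely many $X_i$ are non-compact, enumerate them as $X_{j_n}$, and for any putative compact cover $X=\bigcup_n K_n$ choose $y_{j_n}\in X_{j_n}\smallsetminus \pi_{j_n}(K_n)$ (possible since $\pi_{j_n}(K_n)$ is compact in the non-compact Hausdorff $X_{j_n}$); completing arbitrarily on the remaining coordinates produces $y\in X$ lying in no $K_n$. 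The LC case uses that a relatively compact neighbourhood in the product topology restricts only finitely many coordinates, forcing almost all $X_i$ to be compact. For (7), a countable product of metrizable spaces carries the metric $\sum_n 2^{-n}\min(1,d_n(x_n,y_n))$ and analogous formulas yield first- and second-countability; conversely, if $I$ is uncountable then any countable family $(V_k)$ of basic neighbourhoods of a point $x$ touches only countably many coordinates in total, so restricting a fresh coordinate $i$ to a proper open neighbourhood of $x_i$ produces a basic neighbourhood of $x$ containing none of the $V_k$, contradicting first-countability. For (8), the Hewitt--Marczewski--Pondiczery theorem supplies separability when $|I|\le 2^{\aleph_0}$, and conversely one embeds $I$ into the set of functions from a countable dense subset to finite subsets of a base of open sets, whose cardinality is $2^{\aleph_0}$. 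The diagonal arguments in (6) and (7) are the key technical step.
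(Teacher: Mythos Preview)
Your proposal is essentially correct and, where the paper actually argues rather than citing references (the diagonal argument for $\sigma$-compactness in (6), the Sorgenfrey-type example in (4), the $F^A$ non-second-countability in (7)), your arguments match or parallel them; elsewhere you supply the standard details that the paper simply outsources to Engelking, Dugundji, and Bourbaki. One small slip: in the converse of (1) you cannot arrange $V_y\cap Y$ itself to be compact (it is open in $Y$), only that its closure in $Y$ is compact---but this compact closure is then closed in the Hausdorff space $X$, so $V_y\cap\overline Y\subset\overline{V_y\cap Y}^X\subset Y$, and the rest of your argument goes through unchanged.
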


\begin{proof}[On proofs and references]
Most of the claims are straightforward.
For \ref{1DEhereditarityTopSpace},
see for example \cite[Corollary 3.3.10]{Enge--89}.
For \ref{4DEhereditarityTopSpace},
see \cite[Section VIII.7]{Dugu--66}.
For an example of a non-separable subspace of a separable space,
consider the topological space $E^1_u$ with underlying set $\R$
and the so-called upper limit topology, 
for which intervals of the form $]a,b]$ constitute a basis,
its square product, 
and the uncountable discrete subset of pairs 
$(x,-x)$ in $E^1_u \times E^1_u$ with $x$ irrational. 
\par

Concerning \ref{5DEhereditarityTopSpace},
see \cite[Theorem 3.3.15]{Enge--89} for \ref{aDEhereditarityTopSpace}
and \cite[Example 4.4.10]{Enge--89} for \ref{bDEhereditarityTopSpace}.
\par

Concerning \ref{6DEhereditarityTopSpace},  
see \cite[Theorem 3.3.13]{Enge--89} for \ref{aDEhereditarityTopSpace}.
Here is a simple argument for \ref{dDEhereditarityTopSpace}.
Assume that $I$ contains an infinite subset $J$ with $X_j$ non-compact for all $j \in J$.
Let $p_j : X \relbar\joinrel\twoheadrightarrow X_j$ denote the canonical projection.
Let $(C_n)_{n \ge 1}$ be a countable family of compact subsets of $X$.
For each $j \in J$ and $n \ge 1$, there exists a point $x_j \in X_j \smallsetminus p_j(C_n)$.
For each $i \in I \smallsetminus J$, choose $x_i \in X_i$. 
Then $(x_i)_{i \in I} \notin \bigcup_{n \ge 1} C_n$.
Hence $X$ is not $\sigma$-compact.
\par

Concerning \ref{7DEhereditarityTopSpace},
see \cite[Theorem 2.3.14]{Enge--89} for \ref{cDEhereditarityTopSpace} 
and \ref{eDEhereditarityTopSpace},
and \cite[Theorem 4.2.2]{Enge--89} for \ref{bDEhereditarityTopSpace}.
In the particular case of a product space $F^A$, 
with $F$ finite of cardinality at least $2$ and $A$ uncountable,
here is an easy argument to show that
$F^A$ is not second-countable.
If it were, every subspace would be separable.
But the space $F^{(A)}$ of finitely supported functions from $A$ to $F$
is not separable:  
every countable subset of $F^{(A)}$ 
is contained in the proper closed subspace $F^B$ of $F^A$, 
for some countable subset $B$ of $A$.
\par

For \ref{8DEhereditarityTopSpace},
see \cite[Corollary 2.3.16]{Enge--89},
as well as \cite{RoSt--64}.
\end{proof}

Note that, in the particular case of a product space $F^A$,
with $F$ a finite set  of cardinality at least $2$ an $A$ a set, 
the following four properties are equivalent:
(i) $F^A$ is first-countable,
(ii) $F^A$ is second-countable,
(iii) $F^A$ is metrizable, 
(iv) $A$ is countable.

\begin{exe}[metric graphs]
\label{metricrealizationgraph}
Let $X$ be a connected graph, 
i.e., a connected $1$-dimensional CW-complex.
\index{Graph|textbf}
There is a natural \textbf{combinatorial metric} 
\index{Combinatorial! metric on a connected graph|textbf}
$d_1$ on $X$,
for which every edge with distinct endpoints (resp.\ every loop)
is isometric to a real interval of length $1$ (resp.\ a circle of perimeter $1$),
and for which the distance between two points
is the minimum of the lengths of arcs connecting these two points;
see for example  \cite[Section I.1.9]{BrHa--99}.
The metric space $(X, d_1)$ is the \textbf{metric realization} of $X$.
\index{Realization! metric realization of a connected graph|textbf}
Note that standard books of algebraic topology 
consider other metrics on $X$, which are not proper in general; 
see for example Pages 111 and 119 in \cite{Span--66}.
\par

Recall that a graph $X$ is \textbf{locally finite} 
\index{Locally finite! graph|textbf}
if, for every vertex $x$ of $X$ (i.e., for every point $x$ in the $0$-skeleton of $X$),
the number $n_x$ of closed $1$-cells (i.e., of edges) of $X$ containing $x$ is finite.
The following four conditions are equivalent:
(i) the graph $X$ is locally finite,
(ii) the metric space $(X,d_1)$ is proper,
(iii) the topology defined on $X$ by $d_1$
coincides with the ``weak topology'' of the CW-complex,
(iv) the vertex set $X^0$ of $X$, viewed as a subspace of the metric space $(X,d_1)$, 
is locally finite in the sense of Definition \ref{defunifproper} below,
i.e., balls in $X^0$ with respect to $d_1$ are finite.
\par

A graph $X$ has \textbf{bounded valency}
\index{Bounded! bounded valency graph|textbf} 
if there is a positive constant $c$ such that, with the notation above, 
$n_x \le c$ for every vertex $x$ of $X$,
equivalently if the vertex set of $X$, as a subspace of $(X,d_1)$,
is uniformly locally finite in the sense of Definition \ref{defunifproper}.
\par

Given a constant $c > 0$,
there is a variation $(X,d_c)$ of this construction
in which edges [respectively loops] are isometric to a real interval of length $c$ 
[resp.\ to a circle of perimeter $c$]; this is used in \ref{the new space Xc etc}.
\par

There is a $2$-dimensional analogue of this example 
in Proposition \ref{donrealizationofsimpcx}.
\end{exe}

\begin{exe}[$\sigma$-compactness and metrizability]
\label{sigmacandmetrizability}
Metrizability and $\sigma$-compact\-ness are independent properties.
Indeed, let $M$ be a discrete group and $A$ a set.
The LC-group
$M \times (\Z/2\Z)^A$ is $\sigma$-compact if and only if $M$ is countable,
and metrizable if and only if $A$ is countable.
\index{Product of groups! $F^A$, with $F$ finite}
\par
It follows that $M \times (\Z/2\Z)^A$ is second-countable
(i.e., both $\sigma$-compact and metrizable)
if and only if both $M$ and $A$ are countable.
\par
Note however that a topological space that has a \emph{proper} compatible metric
is $\sigma$-compact, and therefore second-countable by Theorem \ref{Urysohn}.
\end{exe}

\begin{exe}[separable and not second-countable]
\label{sepnot2nd}
A second-countable LC-space is separable (Remark  \ref{neednotbelc}(5))
and $\sigma$-compact (Theorem \ref{Urysohn}),
but the converse implication does not hold. 
\index{Separable topological space}
\par

Indeed, consider a finite set $F$ of cardinality at least $2$
and an uncountable set $A$ of cardinality at most that of the continuum.
By Proposition \ref{hereditarityTopSpace}, the product space $F^A$
is compact, separable, not first-countable,
and a fortiori not second-countable.
Note that $F^A$ is naturally a compact group if $F$ is a finite group.
\index{Compact group! not first-countable}
\par

Recall however the following: 
a \emph{metrizable} topological space is second-countable 
if and only if it is separable (Theorem \ref{Urysohn}).

\vskip.2cm

We come back to second-countable LC-groups and $\sigma$-compact LC-groups
in Remark \ref{Gexplescgsigmacsc}.
\end{exe}

\begin{defn}
\label{defBaire}
A topological space $X$ is a \textbf{Baire space} 
\index{Baire space, Baire group, Baire theorem|textbf}
if every countable intersection of dense open subsets of $X$ is dense in $X$.
\end{defn}

In particular, a \textbf{Baire group} 
is a topological group that is a Baire space.

\begin{thm}[Baire] 
\index{Theorem! Baire}
\label{propBaire}
(1) Every LC-space is a Baire space.
\par
(2) Every completely metrizable topological space is a Baire space.
\end{thm}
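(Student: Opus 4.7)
The plan is to prove both parts simultaneously by the classical construction of a nested sequence of non-empty ``small'' open sets. Given a topological space $X$ satisfying either hypothesis and a sequence $(U_n)_{n\geq 1}$ of dense open subsets, I show that for every non-empty open $V\subseteq X$ the intersection $V\cap \bigcap_{n\ge 1} U_n$ is non-empty, which is exactly the density of $\bigcap_{n\ge 1} U_n$.

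Set $V_0=V$. Inductively, suppose $V_{n-1}$ is a non-empty open set. Since $U_n$ is dense and open, $V_{n-1}\cap U_n$ is non-empty and open; choose a point $x_n$ in it. The key refinement is to pick a non-empty open $V_n\ni x_n$ whose closure $\overline{V_n}$ is contained in $V_{n-1}\cap U_n$ and is \emph{small} in a sense adapted to the hypothesis. In case (1), local compactness together with the Hausdorff axiom guarantees that $x_n$ has a compact neighbourhood $K_n\subseteq V_{n-1}\cap U_n$; take $V_n$ to be the interior of $K_n$, so that $\overline{V_n}\subseteq K_n$ is compact. In case (2), fix a complete compatible metric $d$ on $X$; since $V_{n-1}\cap U_n$ is open, choose $r_n<1/n$ with $\overline{B(x_n,r_n)}\subseteq V_{n-1}\cap U_n$ and set $V_n=B(x_n,r_n)$, so that the diameter of $\overline{V_n}$ is at most $2/n$.

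With $(\overline{V_n})_{n\ge 1}$ so constructed, any single point of $\bigcap_{n\ge 1}\overline{V_n}$ lies in $V\cap\bigcap_{n\ge 1} U_n$, finishing the proof. The non-emptiness of this intersection follows from different but standard arguments in the two cases. In (1), $(\overline{V_n})_{n\ge 1}$ is a decreasing family of non-empty closed subsets of the compact set $\overline{V_1}$, so it has the finite intersection property and hence non-empty total intersection. In (2), picking $x_n'\in V_n$ for each $n$ gives a Cauchy sequence, since $V_k\subseteq V_n$ for $k\ge n$ and $\operatorname{diam}(\overline{V_n})\le 2/n\to 0$; by completeness it converges to some $x\in X$, which lies in $\overline{V_n}$ for every $n$ because $x_k'\in \overline{V_n}$ for all $k\ge n$.

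The only real subtlety --- and thus the main point to verify carefully --- is the existence of the ``closure contained in the open set'' refinement at the inductive step. In case (1) this is the standard fact that in a locally compact Hausdorff space every point has a fundamental system of compact neighbourhoods, so a compact neighbourhood can be chosen inside any prescribed open neighbourhood. In case (2) it reduces to the trivial observation that for $r_n$ sufficiently small the closed ball $\overline{B(x_n,r_n)}$ lies inside any given open neighbourhood of $x_n$. Once that refinement is in hand, the rest is the routine nested-neighbourhood argument.
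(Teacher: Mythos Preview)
Your argument is the standard nested-neighbourhood proof of the Baire category theorem and is correct in both cases. The paper itself does not give a proof but simply refers the reader to Bourbaki \cite[Page IX.55]{BTG5-10}; your write-up is essentially the classical argument found there, so there is nothing to compare.
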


\begin{proof}
See \cite[Page IX.55]{BTG5-10}.
\end{proof}

\begin{rem}
\label{cardinalBairegroup}
It follows from Baire's Theorem that a countable LC-space has an isolated point,
and therefore that a non-discrete LC-group is uncountable.
\end{rem}

\section[Metrizability and $\sigma$-compactness]
{Metrizability for topological groups and $\sigma$-compact LC-groups}
\label{Onlcgroups}

There are two natural questions concerning a compatible metric $d$
on a topological \emph{space} $X$:
is the metric space $(X,d)$ complete? is it proper?
If $d$ is a compatible metric on a topological \emph{group} $G$,
there is one more question: is $d$ left-invariant?
Some remarks are in order.

\begin{rem}[on metrizability for topological groups and LC-groups]
\label{onmetrizability} 
Let $G$ be a topological group.

\vskip.2cm

(1)
Theorem \ref{BK} below shows that,
if $G$ is \textbf{metrizable}
in the sense of Definition \ref{pm...metrizable},
\index{Metrizable! group|textbf}
i.e., if $G$ admits a compatible metric,
then $G$ admits a \emph{left-invariant} compatible metric.

\vskip.2cm

(2)
If $G$ is completely metrizable, even if $G$ is a Polish group,
then $G$ need not have any complete left-invariant compatible metric;
see Example \ref{Sym(X)andallthat}.
\index{Polish! group}

\vskip.2cm

(3)
In a topological group $G$, a sequence $(g_m)_{m \ge 1}$
is a \textbf{left Cauchy sequence} if, 
\index{Left Cauchy sequence in a topological group|textbf}
for every neighbourhood $U$ of $1$ in $G$,
there exists $\ell \ge 1$ such that $g_n^{-1}g_m \in U$ when $m,n \ge \ell$.
\par
When $G$ has a left-invariant compatible metric $d$,
a sequence $(g_m)_{m \ge 1}$ in $G$ is a left Cauchy sequence in this sense
if and only if it is a Cauchy sequence for the metric space $(G, d)$.
It follows that two left-invariant compatible metrics on $G$ 
have the same left Cauchy sequences.

\vskip.2cm

(4) 
On an LC-group $G$,
every left-invariant compatible metric $d$ is complete.
\index{Completeness of metrizable LC-groups}
\par
To check this, consider
a compact neighbourhood $K$ of $1$ in $G$
and a Cauchy sequence $(g_n)_{n \ge 0}$ in $G$ with respect to $d$.
Since $\lim_{\min \{m,n\} \to \infty} d(1, g_m^{-1}g_n) = 0$,
there exists an integer $n_0$ such that $g_m^{-1}g_n \in K$
whenever $m,n \ge n_0$,
in particular such that $g_n$ is in the compact subset $g_{n_0}K$
for all $n \ge n_0$.
It follows that $(g_n)_{n \ge 0}$  has a limit in $G$.

\vskip.2cm

(5)
An interesting class of Polish groups is that of cli-groups.
A \textbf{cli-group} \index{Cli-group|textbf}  \index{Polish! group} \index{Polish! cli-group}
is a Polish group 
which has a compatible metric that is both complete and left-invariant; see \cite{Beck--98},
and in particular his Proposition 3.C.2(d), on Polish groups that are cli.
For example, abelian Polish groups are cli, 
and the group $\operatorname{Sym} (\N)$  of Example \ref{Sym(X)andallthat}
is a non-cli Polish group.

\vskip.2cm

(6)
Let $G$ be an LC-group and $d$ a left-invariant compatible metric on $G$.
Balls with respect to $d$ with sufficiently small radius are relatively compact,
but large balls need not be (unless $d$ is proper).

\vskip.2cm

(7)
Let $G$ be a Baire group.
\index{Baire space, Baire group, Baire theorem}
If $G$ is $\sigma$-compact, in particular if $G$ is compactly generated,
then $G$ is locally compact.
\par

To check this, consider a sequence
$(K_n)_{n \ge 0}$ of compact subspaces of $G$
such that $G = \bigcup_{n \ge 0} K_n$.
Since $G$ is Baire,
there exists $n \ge 0$ such that $K_n$ has an interior point.
We conclude by observing that a group 
that contains one point with a compact neighbourhood is locally compact.

\end{rem}

The next three Theorems (\ref{BK}, \ref{Struble}, \ref{KK})
are basic criteria of metrizabiity for LC-groups.

\begin{thm}[Birkhoff-Kakutani] \index{Birkhoff-Kakutani theorem}
\index{Theorem! Birkhoff-Kakutani}
\label{BK}
For a topological group $G$, the following three properties are equivalent:
\index{Topological group}
\begin{enumerate}[noitemsep,label=(\roman*)]
\item\label{iDEBK}
$G$ is first-countable;
\index{First-countable! topological group}
\item\label{iiDEBK}
$G$ is metrizable;
\item\label{iiiDEBK}
there exists a left-invariant compatible metric on $G$.
\end{enumerate}
\end{thm}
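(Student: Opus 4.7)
The implications \ref{iiiDEBK} $\Rightarrow$ \ref{iiDEBK} $\Rightarrow$ \ref{iDEBK} are essentially immediate: any compatible metric witnesses metrizability, and in any metric space the balls of radius $1/n$ centred at a given point form a countable basis of neighbourhoods. The content of the theorem is thus \ref{iDEBK} $\Rightarrow$ \ref{iiiDEBK}.

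My plan is to build a left-invariant compatible metric by the classical Frink--Kakutani chain construction. Starting from a countable basis $(U_n)_{n \ge 1}$ of neighbourhoods of $1 \in G$, I first use continuity of the group operations together with the Hausdorff axiom \textnormal{(A1)} to extract inductively a sequence $(V_n)_{n \ge 0}$ of symmetric open neighbourhoods of $1$ satisfying
\begin{enumerate}[noitemsep]
\item[(a)] $V_0 = G$ and $V_n = V_n^{-1}$ for all $n$;
\item[(b)] $V_{n+1} V_{n+1} V_{n+1} \subset V_n$ for all $n \ge 0$;
\item[(c)] $(V_n)_{n \ge 0}$ is a basis of neighbourhoods of $1$, and in particular $\bigcap_{n \ge 0} V_n = \{1\}$.
\end{enumerate}
Next I define an auxiliary function $p : G \to \R_+$ by
\begin{equation*}
p(g) \, = \, \inf \set{2^{-n} \mid n \ge 0,\ g \in V_n},
\end{equation*}
with the convention $p(g) = 0$ when $g$ lies in every $V_n$, i.e.\ when $g = 1$. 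Then $p(g) \le 1$ everywhere, $p(1) = 0$, $p(g) = p(g^{-1})$ by symmetry of the $V_n$, and $p$ is continuous at $1$ by (c). Finally, I set
\begin{equation*}
d(g,h) \, = \, \inf \set{ \sum_{i=0}^{k-1} p(g_i^{-1} g_{i+1}) \;\bigg|\; k \ge 1,\ g = g_0, g_1, \ldots, g_k = h},
\end{equation*}
where the infimum is taken over all finite chains from $g$ to $h$. Left-invariance of $d$ is built in, and the triangle inequality and symmetry follow from concatenation and reversal of chains.

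The main obstacle, and the heart of the argument, is the comparison inequality
\begin{equation*}
\tfrac{1}{2}\, p(g^{-1} h) \, \le \, d(g,h) \, \le \, p(g^{-1} h) ,
\end{equation*}
which I would establish as follows. The upper bound is immediate from the one-step chain $g_0 = g,\ g_1 = h$. For the lower bound, by left-invariance it suffices to show
\begin{equation*}
p(x_0^{-1} x_k) \, \le \, 2 \sum_{i=0}^{k-1} p(x_i^{-1} x_{i+1})
\end{equation*}
for every chain $x_0, \ldots, x_k$ in $G$. This is proved by induction on $k$: letting $s = \sum_{i} p(x_i^{-1} x_{i+1})$, one chooses the largest $j$ such that $\sum_{i<j} p(x_i^{-1} x_{i+1}) \le s/2$, applies the inductive hypothesis to the chains $x_0, \ldots, x_j$ and $x_{j+1}, \ldots, x_k$ to bound $p(x_0^{-1} x_j)$ and $p(x_{j+1}^{-1} x_k)$ by quantities at most $s$, and bounds $p(x_j^{-1} x_{j+1})$ by $s$ directly. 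Choosing $n$ so that $2^{-(n+1)} < s \le 2^{-n}$, the three factors then lie in $V_{n+1}$, so condition (b) yields $x_0^{-1} x_k \in V_n$, hence $p(x_0^{-1} x_k) \le 2^{-n} \le 2s$, as required.

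Granted this two-sided estimate, $d$ is a genuine metric: $d(g,h) = 0$ forces $p(g^{-1}h) = 0$, and since $\bigcap V_n = \{1\}$ one obtains $g = h$. Compatibility of $d$ with the topology of $G$ follows because the estimate together with the definition of $p$ shows that, for each $n$, the $d$-ball of radius $2^{-n-1}$ around $1$ is contained in $V_n$, and conversely $V_{n+1}$ is contained in the $d$-ball of radius $2^{-n}$; translating on the left by continuity of the translations, the sets $V_n g$ and the $d$-balls centred at $g$ form equivalent bases of neighbourhoods at every $g \in G$. This yields a left-invariant compatible metric and completes \ref{iDEBK} $\Rightarrow$ \ref{iiiDEBK}.
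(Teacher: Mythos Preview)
Your argument is correct and is essentially the paper's own proof: Lemma~\ref{lemBKetKK} is precisely this Frink--Kakutani chain construction (with $K_{-n}$ playing the role of your $V_n$), and its claim~(3), established by the same splitting induction on the length of the chain, is exactly your lower comparison inequality. There is a small arithmetic slip in your induction step --- from $p \le s \le 2^{-n}$ the three factors lie only in $V_n$, not $V_{n+1}$, so the product lands in $V_{n-1}$ and one obtains $p(x_0^{-1}x_k) \le 2^{-(n-1)} < 4s$ rather than $2s$ (the paper gets the constant $2$ by phrasing the inductive hypothesis with a strict inequality) --- but any positive constant suffices for separating points and for comparing $d$-balls with the $V_n$, so the conclusion is unaffected.
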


The original articles are \cite{Birk--36} and \cite{Kaku--36}. 
Related references include
\cite[Theorem of 1.22, Page 34]{MoZi--55},
\cite[Theorem 8.5]{HeRo--63}, and \cite[Page IX.23]{BTG5-10}.

Our proofs of Theorem \ref{BK} and of its companion result, Theorem \ref{Struble},
follow Lemma \ref{lemBKetKK}.

\begin{rem}
\label{RemOnBK}
(1)
Recall that an LC-group need not be first-countable
(Example \ref{sigmacandmetrizability}).
\par
(2)
Any LC-group has a left-invariant continuous \emph{pseudo-}metric
with balls of radius $0$ compact; see Corollary \ref{CorollaryKK} below.
\end{rem}

\begin{thm}[Struble] \index{Struble theorem}
\index{Theorem! Struble}
\label{Struble}
For an LC-group $G$, the following three properties are equivalent:
\begin{enumerate}[noitemsep,label=(\roman*)]
\item\label{iDEStruble}
$G$ is second-countable;
\index{Second-countable! topological group}
\item\label{iiDEStruble}
$G$ is $\sigma$-compact and first-countable;
\index{Sigma-compact! LC-group}
\item\label{iiiDEStruble}
there exists a left-invariant proper compatible metric on $G$.
\end{enumerate}
\end{thm}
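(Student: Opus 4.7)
The two easy implications use Theorem \ref{Urysohn}. For (i) $\Rightarrow$ (ii), a second-countable LC-space is $\sigma$-compact by that theorem, and obviously first-countable. For (iii) $\Rightarrow$ (i), if the topology of $G$ is defined by a proper metric, the equivalence (v) $\Leftrightarrow$ (i) of Theorem \ref{Urysohn} gives second-countability. Everything thus reduces to the implication (ii) $\Rightarrow$ (iii).

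For this core step, the plan is to run the Birkhoff--Kakutani machine of Lemma \ref{lemBKetKK}, but on a \emph{two-sided} sequence $(V_n)_{n \in \Z}$ of relatively compact open symmetric neighborhoods of $1$ satisfying $V_{n+1}^3 \subset V_n$ for every $n \in \Z$, rather than on a one-sided sequence shrinking to $\{1\}$. The positive half $(V_n)_{n \geq 1}$ is chosen to form a basis at $1$, exactly as in the proof of Theorem \ref{BK}; existence uses first-countability together with local compactness. The negative half is built inductively: starting from a $\sigma$-compact exhaustion $G = \bigcup_{k \geq 0} C_k$ by compact sets (available thanks to Remark \ref{neednotbelc}(2)), I would choose $V_{-k-1}$ to be any relatively compact open symmetric neighborhood of $1$ containing $V_{-k}^3 \cup C_k$. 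The resulting family satisfies $V_{n+1}^3 \subset V_n$ for all $n \in \Z$, each $V_n$ is relatively compact and symmetric, $\bigcap_{n \in \Z} V_n = \{1\}$, and $\bigcup_{n \in \Z} V_n = G$.

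Feeding this two-sided family into Lemma \ref{lemBKetKK} should produce a left-invariant continuous pseudo-metric $d$ on $G$ with sandwich inclusions of the form
\[
V_{n+1} \,\subset\, \{g \in G \mid d(1,g) < 2^{-n}\} \,\subset\, V_{n-1}
\]
valid for every $n \in \Z$. From $\bigcap V_n = \{1\}$ one reads off that $d$ separates points, hence is a metric; from the fact that the $V_n$ with $n \geq 1$ form a basis at $1$, that $d$ is compatible with the topology of $G$; and from the fact that every ball centred at $1$ is contained in $V_{n-1}$ for $n$ sufficiently negative, hence in a relatively compact set, that $d$ is proper. Left-invariance, continuity, and the propagation of properness to balls centred at arbitrary points then follow automatically.

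The main obstacle is to check that Lemma \ref{lemBKetKK} continues to deliver the right-hand inclusion for \emph{negative} values of $n$. The dyadic decomposition at the heart of the Birkhoff--Kakutani argument is indifferent to where the index $n$ sits on the integer line; what it actually requires is the uniform telescoping condition $V_{n+1}^3 \subset V_n$, which is precisely how the $\sigma$-compact exhaustion has been fed in on the negative side. Once this is granted, properness comes essentially for free from the relative compactness of each individual $V_n$, and no new idea beyond that already used for Theorem \ref{BK} is needed.
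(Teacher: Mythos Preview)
Your proposal is correct and follows essentially the same route as the paper: build a two-sided family satisfying the telescoping condition and feed it into Lemma~\ref{lemBKetKK}. Your indexing convention $V_{n+1}^3 \subset V_n$ is the mirror image of the paper's $K_nK_nK_n \subset K_{n+1}$ (set $K_n = V_{-n}$), and your exhausting sets are relatively compact open neighbourhoods where the paper uses the compact sets $K_{n+1} = L_{n+1} \cup K_nK_nK_n$; neither difference matters. Your closing worry is unnecessary: conclusion~\ref{3DElemBKetKK} of Lemma~\ref{lemBKetKK} is already stated and proved for all $n \in \Z$ by a single induction argument indifferent to the sign of $n$, so once the telescoping condition holds on both sides there is nothing further to verify for properness.
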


The original article \cite{Stru--74} is an adaptation of \cite{Birk--36, Kaku--36}.
On an LC-group that has the properties of Theorem \ref{Struble},
two left-invariant proper compatible metrics are ``coarsely equivalent'';
see Corollary \ref{2metricsce}\ref{2DE2metricsce}.

\begin{lem}
\label{lemBKetKK}
Let $G$ be a  topological group
and $(K_n)_{n \in \Z}$ a sequence of subsets of $G$
such that $\bigcup_{n \in \Z} K_n = G$,
with $K_n$ symmetric containing $1$ for all $n$.
We assume moreover that 
$K_n K_n K_n \subset K_{n+1}$ for all $n \in \Z$,
and that $K_m$ has non-empty interior for some $m$.
Define $d : G \times G \longrightarrow \R_+$ by
\begin{equation*}
d(g,h) \, = \, \inf \left\{t \in \R_+    \hskip.1cm \bigg\vert \hskip.1cm
\aligned
& \exists \hskip.2cm n_1, \hdots, n_k \in \mathbf Z
\hskip.2cm \textnormal{and} \hskip.2cm w_1 \in K_{n_1}, \hdots, w_k \in K_{n_k}
\\ 
& \textnormal{with} \hskip.2cm g^{-1}h = w_1 \cdots w_k
\hskip.2cm \textnormal{and} \hskip.2cm t =  2^{n_1} + \cdots + 2^{n_k}
\endaligned
\right\} 
\end{equation*}
and set $\vert g \vert = d(1,g)$.
Then:
\begin{enumerate}[noitemsep,label=(\arabic*)]
\item\label{1DElemBKetKK}
$d$ is a left-invariant pseudo-metric on $G$
for which every compact subset has a finite diameter;
\item\label{2DElemBKetKK}
if each $K_n$ is a neighbourhood of $1$, then $d$ is continuous;
\item\label{3DElemBKetKK}
for all $n \in \Z$, $\vert g \vert < 2^n$ implies $g \in K_n$;
\item\label{4DElemBKetKK}
$\{g \in G \mid d(1,g)=0 \} = \bigcap_{n \le 0} K_n$;
in particular, 
if $\bigcap_{n \le 0} K_n = \{1\}$, then $d$ is a metric.
\end{enumerate}
\end{lem}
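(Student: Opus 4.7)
The plan is to dispatch the routine parts first and then concentrate on \ref{3DElemBKetKK}, which drives everything else. For \ref{1DElemBKetKK}, left-invariance is built into the definition (since $d(g,h)$ depends only on $g^{-1}h$), symmetry of $d$ reflects symmetry of each $K_n$ (pass from $w_1\cdots w_k$ to $w_k^{-1}\cdots w_1^{-1}$), the triangle inequality follows by concatenating representations, and $d(g,g)=0$ because $1\in K_n$ for every $n$. The observation to carry forward is that $1\in K_n$ gives the nestedness $K_n\subset K_nK_nK_n\subset K_{n+1}$; combined with the hypothesis that some $K_m$ has non-empty interior (choose an open $U\subset K_m$ and $a\in U$, so $a^{-1}U$ is an open neighborhood of $1$ inside $K_mK_m\subset K_{m+1}$), this shows $K_n$ is a neighborhood of $1$ for every $n>m$. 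Covering a compact set $L$ by finitely many left translates of such a neighborhood and using $K_aK_b\subset K_{\max(a,b)+1}$ then puts $L$ inside some $K_N$, giving $\operatorname{diam}(L)\le 2^{N+1}$.

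For \ref{2DElemBKetKK}, the one-letter representation shows $|g|\le 2^n$ whenever $g\in K_n$, so when each $K_n$ is a neighborhood of $1$ we get $|g|\to 0$ as $g\to 1$; the inequality $|d(g,h)-d(g',h')|\le|g^{-1}g'|+|h^{-1}h'|$ from the triangle inequality and left-invariance then yields continuity of $d$.

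The heart of the argument is \ref{3DElemBKetKK}: I will prove by strong induction on the length $k$ that, for every $n\in\Z$, any representation $g=w_1\cdots w_k$ with $w_i\in K_{n_i}$ and $\sum_i 2^{n_i}<2^n$ forces $g\in K_n$; taking the infimum over representations then gives (3). The base $k=1$ is immediate since $n_1<n$. For $k\ge 2$ the idea is a dyadic cut: let $m\in\Z$ be the unique integer with $2^{m-1}\le\sum_i 2^{n_i}<2^m$ (so $m\le n$), and pick $j\in\{0,\dots,k-1\}$ where the partial sums $s_j=\sum_{i\le j}2^{n_i}$ first reach $2^{m-1}$, i.e.\ $s_j<2^{m-1}\le s_{j+1}$ (such $j$ exists since $s_0=0$ and $s_k\ge 2^{m-1}$). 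Split $g=abc$ with $a=w_1\cdots w_j$, $b=w_{j+1}$, $c=w_{j+2}\cdots w_k$: the sums for $a$ and $c$ are both strictly less than $2^{m-1}$ (for $c$, because $s_k-s_{j+1}<2^m-2^{m-1}$), so the induction hypothesis places $a,c\in K_{m-1}$; and $2^{n_{j+1}}\le s_{j+1}\le s_k<2^m$ forces $n_{j+1}\le m-1$, so $b\in K_{m-1}$ as well. Hence $g\in K_{m-1}^3\subset K_m\subset K_n$. The subtle points — and the main obstacle — are the correct choice of the dyadic level $m$ (not $n$ itself) and the verification that both side sums are \emph{strictly} below $2^{m-1}$, which is exactly what makes the induction hypothesis available.

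Finally, \ref{4DElemBKetKK} is an immediate consequence: $d(1,g)=0$ iff $|g|<2^n$ for every $n\in\Z$, which by \ref{3DElemBKetKK} is equivalent to $g\in K_n$ for all $n\in\Z$, i.e.\ (by nestedness) to $g\in\bigcap_{n\le 0}K_n$; the reverse inclusion uses only the one-letter bound $d(1,g)\le 2^n$ for $g\in K_n$. When this intersection reduces to $\{1\}$, left-invariance promotes $d$ to a genuine metric.
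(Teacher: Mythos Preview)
Your proof is correct and follows essentially the same strategy as the paper; in particular, parts (1), (2), and (4) are handled identically. For the key part~(3), both arguments use strong induction on the word length $k$ and a dyadic splitting $g=abc$ with $a,b,c$ landing in some $K_{p}$, so that $K_p^3\subset K_{p+1}$ finishes. The only difference is organizational: the paper works directly at level $n-1$ and therefore needs a case split (either some $n_i=n-1$, which is then unique, or all $n_i\le n-2$, and one cuts where the partial sums first reach $2^{n-1}$); you instead introduce the actual dyadic scale $m$ of the sum $\sum_i 2^{n_i}$ and cut at level $m-1$, which absorbs both cases at once. Your version is arguably tidier; the paper's version avoids the extra parameter $m$. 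One small point worth making explicit in your write-up: in the split $g=abc$ the factors $a$ or $c$ may be empty (length~$0$), so the $k=0$ base case ($1\in K_n$) should be stated alongside $k=1$ before invoking the induction hypothesis on them.
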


\noindent
\emph{Note:} instead of ``such that $\bigcup_{n \in \Z} K_n = G$'',
one could  equivalently write ``such that $\bigcup_{n \in \Z} K_n$ generates $G$'';
the point is to ensure that $d(g,h)$ is finite for every pair $(g,h)$.

\begin{proof}
\ref{1DElemBKetKK}
It is obvious that $d$ is a left-invariant pseudo-metric on $G$.
Let $L \subset G$ be a compact set.
Since the interior $\operatorname{int}(K_m)$ of $K_m$ is non-empty, 
$K_{m+1}$ is a neighbourhood of $1$,
so that $L \subset \bigcup_{\ell \in L} \ell \operatorname{int}(K_{m+1})$,
and $L$ is covered by finitely many translates of $K_{m+1}$.
It follows that $\sup_{\ell, \ell' \in L} d(\ell, \ell') < \infty$.
\vskip.2cm

\ref{2DElemBKetKK}
Let $g,h \in G$, and $\varepsilon > 0$.
Let $n \ge 0$ be such that $2^{- n} \le \varepsilon / 2$.
Since $K_{- n}$ is a neighbourhood of $1$, 
for $x$ and $y$ in $G$ near enough $g$ and $h$ respectively,
we have $g \in xK_{- n}$, $x \in gK_{- n}$, $h \in yK_{- n}$, and $y \in hK_{- n}$.
Then 
\begin{equation*}
d(x,y) \, \le \,   d(1, x^{-1}g) + d(1, g^{-1}h) + d(1, h^{-1}y) \, \le \, d(g,h) + \varepsilon
\end{equation*}
and similarly $d(g,h) \le d(x,y) + \varepsilon$.
It follows that $d$ is continuous.
\vskip.2cm

\ref{3DElemBKetKK}
Let us spell out what has to be proven as follows:
\vskip.2cm
\noindent
\emph{Claim: 
let  $w \in G$, $n \in \Z$, $k \ge 0$, $n_1, \hdots, n_k \in  \Z$, 
and $v_1 \in K_{n_1}, \hdots, v_k \in K_{n_k}$, with}
\par \hskip1cm
$(*) \hskip.2cm w = v_1 \cdots v_k \hskip.2cm \text{and} \hskip.2cm
2^{n_1} + \cdots + 2^{n_k} < 2^n$;
\par
\emph{then $w \in K_n$.}
\par\noindent
(Observe that the hypothesis of the claim holds precisely when $\vert w \vert < 2^n$.)
\vskip.2cm

If $k=0$, then $w = 1$ and there is nothing to show.
If $k=1$, we have $n_1 < n$ by ($*$), so that $w \in K_{n-1} \subset K_n$.
If $k=2$, we have $n_1, n_2 < n$ by ($*$), so that $w \in K_{n-1}K_{n-1} \subset K_n$.
We continue by induction on $k$, assuming from now on that $k \ge 3$
and that the claim has been shown up to $k-1$ 
(this for all $n \in \Z$ altogether).
We isolate part of the argument as:
\vskip.2cm
\noindent
\emph{Subclaim: 
there exists $j \in \{1, \hdots, k\}$ such that 
$w = w_1 v_j w_2$, where $w_1 = v_1 \cdots v_{j-1}$ and $w_2 = v_{j+1} \cdots v_k$
satisfy}
\par \hskip1cm
$(**) \hskip.2cm  \vert w_1 \vert, \vert w_2 \vert < 2^{n-1}$.
\par\noindent
(Here, we think of $w$ as a word in $k$ letters.)
\vskip.2cm

If there is one $j \in \{1, \hdots, k\}$ with $n_j = n-1$, 
this $j$ is necessarily unique by ($*$),
so that we can write $w = w_1 v_j w_2$,
where each of $w_1, w_2$
is a product similar to that of ($*$) for a smaller value of $k$ and for $n-1$,
and the subclaim follows.
\par

Otherwise, we have  $n_i \le n-2$ for $i = 1, \hdots, k$.
Let $\ell$ be the largest of those $i$ for which $2^{n_1} + \cdots + 2^{n_i} < 2^{n-1}$. 
If $\ell = k$, it is enough to set $j=2$.
If $\ell < k$, set $j=\ell+1$, so that $w = w_1 v_j w_2$, with $w_1, w_2$ as in the subclaim.
On the one hand,  $\vert w_1 \vert < 2^{n-1}$ by the definition of $\ell$;
on the other hand, 
$2^{n_1} + \cdots + 2^{n_\ell} \ge 2^{n-1}$
and therefore $\vert w_2 \vert < 2^{n-1}$ by ($*$). 
This ends the proof of the subclaim.

We can resume the proof of the claim
(in the situation with $k \ge 3$).
Since $w_1, w_2$ are products similar to that of ($*$) 
for a smaller value of $k$ and for $n-1$,
we have $w_1, w_2 \in K_{n-1}$ by the induction hypothesis,
so that $w \in K_{n-1}  K_{n-1}  K_{n-1} \subset K_n$.

\vskip.2cm

\ref{4DElemBKetKK} 
This is a straightforward consequence of \ref{3DElemBKetKK}.
\end{proof}

\begin{proof}[Proof of the Birkhoff-Kakutani theorem \ref{BK}]
The implications 
\ref{iiiDEBK} $\Rightarrow$ \ref{iiDEBK} $\Rightarrow$ \ref{iDEBK} 
are straightforward.
\par
Let us assume that \ref{iDEBK} holds, 
i.e., that there exists in the topological group $G$
a countable basis $(V_n)_{n \ge 1}$ of neighbourhoods of $1$,
and let us show that \ref{iiiDEBK} holds, 
i.e.,  that there exists a left-invariant compatible metric on $G$.
For $n \ge 0$, set $K_n = G$.
For $n \ge 1$, choose inductively a symmetric neighbourhood $K_{-n}$ of $1$
such that $K_{-n} \subset V_n$ and $K_{-n} K_{-n} K_{-n} \subset K_{-n+1}$.
Let $d$ be defined as in Lemma \ref{lemBKetKK}.
Then $d$ is a metric on $G$, by \ref{4DElemBKetKK} of this lemma,
that is continuous, by \ref{2DElemBKetKK}, 
and indeed compatible with the topology of $G$, by \ref{3DElemBKetKK};
that is to say, $d$ is a left-invariant compatible metric on $G$,
as in \ref{iiiDEBK} of Theorem  \ref{BK}.
\end{proof}

\begin{proof}[Proof of the Struble theorem \ref{Struble}]
In view of Theorem \ref{Urysohn}, the implications
\ref{iiiDEStruble} $\Rightarrow$ \ref{iDEStruble} $\Rightarrow$ \ref{iiDEStruble}
are straightforward.
\par

Let us assume that \ref{iiDEStruble} holds.
Thus,  there exists a sequence $(L_n)_{n \ge 0}$
of symmetric compact subsets of $G$ containing $1$
such that $G = \bigcup_{n \ge 0} L_n$,
and also a countable basis $(V_n)_{n \ge 0}$ of neighbourhoods of $1$,
with $V_0$ relatively compact.
For $n \ge 0$, define inductively a symmetric compact subset $K_n$ of $G$ by
$K_0 = L_0 \cup \overline{V_0}$ and $K_{n+1} = L_{n+1} \cup K_nK_nK_n$.
For $n \ge 1$, choose inductively a sequence $K_{-n} \subset V_n$ 
of symmetric neighbourhoods of $1$ such that $K_{-n} \subset V_n$
and $K_{-n} K_{-n} K_{-n} \subset K_{-n+1}$,
as in the proof of \ref{iDEBK} $\Rightarrow$ \ref{iiiDEBK} of Theorem \ref{BK}.
Observe that $K_n K_n K_n \subset K_{n+1}$ holds for all $n \in \Z$.
The metric defined in Lemma \ref{lemBKetKK}
is now moreover proper, by (3) of this lemma.
Thus \ref{iiiDEStruble} of the theorem holds.
\end{proof}

\begin{thm}[Kakutani-Kodaira] \index{Kakutani-Kodaira theorem}
\index{Theorem! Kakutani-Kodaira}
\label{KK}
Let $G$ be a $\sigma$-compact LC-group.
For every sequence $(U_n)_{n \ge 0}$ of neighbourhoods of $1$ in $G$,
there exists a compact normal subgroup $K$ of $G$ contained in $\bigcap_{n \ge 0} U_n$
such that $G/K$ is metrizable. 
\index{Metrizable! group}
\par
In particular, every $\sigma$-compact LC-group is compact-by-metrizable.
\end{thm}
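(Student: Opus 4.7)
The plan is to construct $K$ as a nested intersection $\bigcap_{n \ge 0} V_n$ of symmetric compact neighbourhoods of $1$, chosen diagonally with respect to an exhaustion of $G$ so that $K$ ends up being normal, and then invoke the Birkhoff-Kakutani theorem \ref{BK} to metrize $G/K$.

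First, since $G$ is $\sigma$-compact, I would write $G = \bigcup_{n \ge 0} C_n$ with each $C_n$ compact, nested, symmetric and containing $1$ (one may replace the given $C_n$ by $\bigcup_{k \le n} (C_k \cup C_k^{-1}) \cup \{1\}$). Next I would construct the sequence $(V_n)_{n \ge 0}$ by induction. Let $V_0$ be any compact symmetric neighbourhood of $1$ contained in $U_0$. Having chosen $V_n$, use continuity of the multiplication and of conjugation, together with compactness of $C_n$, to select a compact symmetric neighbourhood $V_{n+1}$ of $1$ satisfying
\begin{equation*}
V_{n+1} \, \subset \, V_n \cap U_{n+1}, \qquad V_{n+1} V_{n+1} V_{n+1} \, \subset \, V_n, \qquad c V_{n+1} c^{-1} \, \subset \, V_n \text{ for all } c \in C_n.
\end{equation*}
The conjugation inclusion is possible because the map $(g,x) \mapsto gxg^{-1}$ is continuous and $C_n$ is compact, so for each $c \in C_n$ there is a neighbourhood $W_c$ of $1$ with $cW_cc^{-1} \subset V_n$ and one extracts a finite subcover to produce a single working $V_{n+1}$. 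Set $K = \bigcap_{n \ge 0} V_n$.

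Then I would verify the properties of $K$. Clearly $K$ is compact (closed subset of $V_0$) and contained in $\bigcap_{n \ge 0} U_n$. To see $K$ is a subgroup: if $x,y \in K$ then for every $n$, $xy^{-1} \in V_{n+1}V_{n+1} \subset V_n$, so $xy^{-1} \in K$. For normality, given $g \in G$, choose $m$ with $g \in C_m$; then for every $n \ge m$ one has $gV_{n+1}g^{-1} \subset V_n$, hence $gKg^{-1} \subset \bigcap_{n \ge m} V_n = K$. Thus $K \lhd G$.

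Finally, $G/K$ is metrizable. Let $\pi : G \to G/K$ be the projection, which is continuous and open. I would show that $(\pi(V_n))_{n \ge 0}$ is a countable basis of neighbourhoods of $\bar 1 := K$ in $G/K$; since $G/K$ is a topological group, Theorem \ref{BK} then gives a left-invariant compatible metric on $G/K$. Each $\pi(V_n)$ is a neighbourhood of $\bar 1$ because $V_n$ is a neighbourhood of $1$ and $\pi$ is open. Conversely, let $W$ be any open neighbourhood of $\bar 1$ and $W' = \pi^{-1}(W)$, an open neighbourhood of $K$ in $G$. By compactness of $K$ and a standard tube argument in topological groups, there exists an open symmetric neighbourhood $U$ of $1$ with $UK \subset W'$. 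Since the $V_n$ shrink to $K$ (any open set containing the compact $K$ must contain some $V_n$, by the descending-intersection compactness property applied to the compact sets $V_n \smallsetminus U$), some $V_n \subset UK \subset W'$, giving $\pi(V_n) \subset W$. The last statement (compact-by-metrizable) is the case $U_n = G$.

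The main obstacle I expect is producing the conjugation condition $cV_{n+1}c^{-1} \subset V_n$ uniformly for all $c \in C_n$: this is where both local compactness (to have compact $C_n$) and $\sigma$-compactness (to exhaust $G$ countably and thereby handle every conjugator $g$ eventually) play crucial roles; also the tube-type compactness argument that every neighbourhood of $K$ in $G$ contains some $V_n$ must be set up carefully, using that $K$ is the intersection of the descending family of compact sets $V_n$.
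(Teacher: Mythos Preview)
Your proof is correct and follows essentially the same construction as the paper: the nested sequence $V_n$ with the three conditions (containment in $U_n$, the triple-product inclusion, and the conjugation condition over a compact exhaustion) is exactly the paper's sequence $K_{-n}$, and the verification that $K$ is a compact normal subgroup is identical. The only difference is in the final step: the paper applies Lemma~\ref{lemBKetKK} directly to the full $\Z$-indexed family $(K_n)_{n \in \Z}$ to produce a left-invariant proper continuous pseudo-metric on $G$ vanishing precisely on $K$ (thus obtaining, as a bonus, a \emph{proper} compatible metric on $G/K$), whereas you verify first-countability of $G/K$ by hand and then invoke Birkhoff--Kakutani (Theorem~\ref{BK}). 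Both routes are fine for the stated conclusion; the paper's is slightly more direct and yields a stronger metric, while yours makes the role of Theorem~\ref{BK} explicit. (Two minor remarks: your tube argument is unnecessary, since $W' = \pi^{-1}(W)$ is already an open neighbourhood of $K$ and the descending-compact-intersection argument applies to it directly; and in that parenthetical you presumably mean $V_n \smallsetminus W'$ rather than $V_n \smallsetminus U$.)
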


The original article is \cite{KaKo--44}.
The argument below, for $\sigma$-compact groups,
is that of \cite[Theorem 8.7]{HeRo--63},
stated there for compactly generated groups.
A convenient reference is \cite[Page 1166]{Comf--84}.

There are metrics with extra properties
on LC-groups with extra conditions.
We will come back to this in later chapters,
in particular in Propositions
\ref{existenceam} and  \ref{metric_char_cg}.

\begin{proof}
By hypothesis, there exists a sequence $(L_n)_{n \ge 0}$
of compact subsets of $G$ such that $1 \in L_n \subset L_{n+1}$ for each $n$
and $G = \bigcup_{n \ge 0} L_n$.
For $n \ge 0$, set $K_n = G$. For $n \ge 1$, define inductively
a symmetric compact neighbourhood $K_{-n}$ of $1$ as follows.
\par

Suppose that $n \ge 0$ and that $K_0, \hdots, K_{-n}$ have been defined.
Observe that the map 
\begin{equation*}
L_n \times G \longrightarrow G, \hskip.5cm
(\lambda, \kappa) \longmapsto \lambda \kappa \lambda^{-1}
\end{equation*}
is continuous, and its value is identically $1$ on $L_n \times \{1\}$.
By continuity, there exists for all $\lambda \in L_n$
an open neighbourhood $V_\lambda$ of $\lambda$
and a compact neighbourhood $W_\lambda$ of $1$
such that $\ell k \ell^{-1} \in K_{-n}$
for all $\ell \in V_\lambda$ and $k \in W_\lambda$.
Let $\lambda_1, \hdots, \lambda_j \in L_n$ 
be such that $L_n \subset \bigcup_{i=1}^j V_{\lambda_j}$.
Set $K_{-n-1} = \bigcap_{i=1}^j W_{\lambda_j}$.
\par

Then $\ell k \ell^{-1} \in K_{-n}$ 
for all $\ell \in L_n$ and $k \in K_{-n-1}$.
Upon replacing $K_{-n-1}$ by a smaller symmetric compact neighbourhood of $1$,
we can assume moreover that $(K_{-n-1})^3 \subset K_{-n}$ and $K_{-n-1} \subset U_n$.
\par

Define $K = \bigcap_{n \ge 1} K_{-n}$.
Then $K$ is clearly a closed subgroup of $G$ contained in $\bigcap_{n \ge 0} U_n$.
We claim that $K$ is normal in $G$.
Indeed, let $g \in G$. There exists $n_0 \ge 1$
such that $g \in L_n$ for all $n \ge n_0$.
Thus, for $n \ge n_0$, we have $g K_{-n-1} g^{-1} \subset K_{-n} \subset K_{-n_0}$,
and a fortiori $g K g^{-1} \subset K_{-n}$.
It follows that $g K g^{-1} \subset \bigcap_{n \ge n_0} K_{-n} = K$.
\par

By Lemma \ref{lemBKetKK}, 
there exists a left-invariant proper continuous pseudo-metric $d$ on $G$,
such that $d(1,g) = 0$ if and only if $g \in K$.
This induces a left-invariant proper compatible metric on $G/K$.
\end{proof}

\begin{rem}
(1)
Let $G$ be a non-metrizable $\sigma$-compact LC-group,
and $K$ a compact normal subgroup of $G$ such that $G/K$ is metrizable.
It follows from Proposition \ref{stababcd}\ref{2DEstababcd} below (case of \ref{bDEstababcd}) 
that $K$ is non-metrizable.
\par
In loose words, in a $\sigma$-compact LC-group $G$,
the existence of non-metrizable compact normal subgroups
is the only obstruction to the non-metrizability of $G$.

\vskip.2cm

(2)
The conclusion of the previous theorem can equally be phrased as
``such that  $G/K$ is second-countable''; see Theorem \ref{Urysohn}.
Thus, every $\sigma$-compact LC-group is compact-by-(second-countable).

\vskip.2cm

(3)
The inclusions
\begin{equation*}
\aligned
\{\text{$\sigma$-compact LC-groups}\} 
\, &\subset \, \{\text{compact-by-metrizable LC-groups}\} 
\\
\, &\subset \, \{\text{LC-groups}\}
\endaligned
\end{equation*}
are strict. Concerning the first inclusion,
note that uncountable discrete groups are metrizable and are not $\sigma$-compact.
Concerning the second,
Example \ref{non-sigma-compact-by-discrete} shows that
LC-groups need not be compact-by-metrizable,
indeed need not be ($\sigma$-compact)-by-metrizable. 
In particular, an LC-group does contain compactly generated open subgroups
(see Proposition \ref{powersSincpgroup}),
\index{Open subgroup! $\sigma$-compact}
\index{Subgroup! open|see {Open subgroup}}
but need not contain a \emph{normal} $\sigma$-compact open subgroup,
i.e.,  need not be ($\sigma$-compact)-by-discrete. 

\vskip.2cm

(4) 
Every LC-group contains a $\sigma$-compact open subgoup.
See Proposition \ref{powersSincpgroup}, that establishes a stronger property:
every LC-group contains compactly generated open subgroups.
\end{rem}

\begin{exe}[a topologically simple LC-group that is not $\sigma$-compact]
\label{non-sigma-compact-by-discrete}
\index{Sigma-compact! LC-group, non-sigma-compact}
For a set $X$, denote by $\operatorname{Sym}(X)$ 
the group of all permutations of $X$,
and by $\operatorname{Sym}_f(X)$ its subgroup of permutations with finite support.
Let $I$ be an infinite set and $J$ a finite set; assume that $\vert J \vert \ge 5$,
so that $\operatorname{Alt}(J)$ is a simple group. Consider the compact group
\index{Symmetric group $\operatorname{Sym}(X)$ of a set $X$}
\index{$ac$@$\simeq$ isomorphism of groups, of topological groups}
\begin{equation*}
K \, = \,
\big\{ g \in \operatorname{Sym}(I \times J) \mid
g(i,j) \in \{i\} \times J \hskip.2cm \text{for all} \hskip.2cm i \in I \big\}
\, \simeq \, (\operatorname{Sym}(J))^I ,
\end{equation*}
where $\simeq$ indicates an isomorphism of groups,
with the product topology $\mathcal T_K$ on $(\operatorname{Sym}(J))^I$.
Let $G$ be the subgroup of $\operatorname{Sym}(I \times J)$
generated by $K \cup \operatorname{Sym}_f(I \times J)$.
By Proposition \ref{PropTopsub} below,
there exists on $G$ a unique topology $\mathcal T_G$ such that:
\par
(1)
$G$ is a topological group, in which $K$ is open,
\par
(2)
the restriction of $\mathcal T_G$ to $K$ coincides with the topology $\mathcal T_K$.
\par\noindent
Observe that this topology on $G$ is locally compact.
Moreover, with this topology:
\par
(3)
$G$ is topologically simple.
\index{Simple group}
\par\noindent
This is a result of \cite[Section~5]{CaCo--14}, building on \cite{AkGW--08}.
Observe that (1) implies that $G$ is locally compact.
\par

Assume moreover that the set $I$ is not countable.
Then $G$ is not metrizable, because $K$ is not,
and not $\sigma$-compact, for any of the following two reasons.
A first reason is that it cannot be $\sigma$-compact,
by the Kakutani-Kodaira theorem \ref{KK}.
\index{Sigma-compact! LC-group}
To indicate a second reason, we anticipate on 
Section \ref{cg+sigmac}.
The natural action of $G$ on $I \times J$
is transitive, because $G$ contains $\operatorname{Sym}_f(I \times J)$,
and continuous, because the stabilizer in $G$ of every point $(i_0, j_0)$ in $I \times J$
is open (this stabilizer contains the compact open subgroup
$(\operatorname{Sym}(J))^{I \smallsetminus \{i_0\}}$ of $K$).
Hence the index of $K$ in $G$ is at least the cardinal of $I \times J$.
If $I$ is not countable, 
$G$ is not $\sigma$-compact by Corollary \ref{opensubgroupsinLCgroups}(1).
\end{exe}

\begin{exe}[(pseudo-)metrics on $\GL_n(\R)$ and related groups]
\label{pseudometriconGL}
Let $n$ be a positive integer; denote by $\M_n(\R)$ the algebra of $n$-by-$n$ real matrices.
A norm on the underlying real vector space 
defines a compatible metric $d$ on $\M_n(\R)$.
Since the general linear group 
$\GL_n(\R) = \{ g \in \M (\R) \mid g \hskip.2cm \text{invertible} \}$ is open in $\M_n(\R)$, 
\index{General linear group $\GL$! $\GL_n(\R)$, $\SL_n(\R)$}
the restriction of $d$ to $\GL_n(\R)$ is also a compatible metric;
but it is not left-invariant.
It follows from Theorem \ref{BK} that $\GL_n(\R)$ and its subgroups have left-invariant metrics;
but writing down explicitly some of these is not immediate.

\vskip.2cm

(1)
Denote by $G = \GL_n^+(\R)$ the identity $\{ g \in \GL_n(\R) \mid \det (g) > 0 \}$
\index{General linear group $\GL$! $\GL_n^+(\R)$}
of $\GL_n(\R)$, and by $\mathfrak g$ its Lie algebra.
A scalar product on $\mathfrak g$ 
defines uniquely a left-invariant Riemannian metric on $G$.
The associated metric on $G$ is a proper compatible metric 
(see Example \ref{abundanceofd}).
This carries over to every connected real Lie group $G$.

\vskip.2cm

(2)
Let $\textnormal{P}_n(\R)$ be the space of positive-definite symmetric matrices in $\M_n(\R)$.
Denote by ${}^t \hskip-.1cm g$ the transpose matrix of $g \in \M_n(\R)$.
There is a natural action
\begin{equation*}
\GL_n(\R) \times \textnormal{P}_n(\R) \, \longrightarrow \textnormal{P}_n(\R),
\hskip.5cm (g,p) \, \longmapsto \, g p {}^t \hskip-.1cm g
\end{equation*}
that is smooth and transitive; it induces a faithful action
of $\GL_n(\R) / \{ \pm \textnormal{id} \}$ on $\textnormal{P}_n(\R)$.
\par
The space $\textnormal{P}_n(\R)$ has a Riemannian metric
that makes it a symmetric space, and for which $\GL_n(\R) / \{ \pm \textnormal{id} \}$
acts by isometries; for this, we refer to \cite{Most--55}, or \cite[Chapter II.10]{BrHa--99}.
As on any group of isometries of a proper metric space, 
there exists on $\GL_n(\R) / \{ \pm \textnormal{id} \}$
left-invariant proper compatible metrics; 
see Proposition \ref{MainIsom(X)} below.

\vskip.2cm

(3)
For $g \in \GL_n(\R)$, 
denote by $p_g \in \textnormal{P}_n(\R)$ the positive square root of ${}^t \hskip-.1cm g g$, 
and let $u_g \in \textnormal{O} (n)$ be the orthogonal matrix 
defined by $g = p_g u_g$.
Let $\langle \cdot \mid \cdot \rangle$ denote the standard scalar product on $\R^n$.
Define the norm of a matrix $x \in \M_n(\R)$ by
\begin{equation*}
\Vert x \Vert = \sup \{ \sqrt{ \langle x \xi \mid x \xi \rangle} \mid \xi \in \R^n, \hskip.2cm
\langle \xi \mid \xi \rangle \le 1 \} .
\end{equation*}
For $g \in \GL_n(\R)$, set $\nu (g) = \log ( \Vert g \Vert \hskip.1cm \Vert g^{-1} \Vert )$;
using the polar decomposition $g = p_g u_g$, it can be checked that
$\nu (g) = 0$ if and only if $p_g$ is a scalar multiple of the identity matrix.
\par
It follows that $d : (g,h) \longmapsto \nu(g^{-1}h)$ is a left-invariant proper
continuous pseudo-metric on
$\SL^\pm_n(\R) = \{ g \in \GL_n(\R) \mid \det(g) = \pm 1 \}$.
The ball of radius $0$ around the origin, i.e., 
$\{ g \in \SL^\pm_n(\R) \mid d(1,g) = 0 \}$,
is the orthogonal group $\textnormal{O} (n)$.
\index{Orthogonal group! $\textnormal{O}(n)$, $\SO (n)$}

\vskip.2cm

(4)
Other left-invariant metrics on groups of this example
appear in Example \ref{abundanceofd}.
\end{exe}

\begin{exe}[metrics on automorphism groups of graphs]
\label{metricsonAut(X)}
Let $X$ be a connected locally finite graph.
\index{Graph}
\index{Locally finite! graph}
For the topology of pointwise convergence relative to the vertex set $X^0$ of $X$,
the automorphism group $\operatorname{Aut}(X)$ of $X$ is an LC-group.
\index{Pointwise topology} \index{Automorphism group! of a graph}
See for example \cite{Trof--85}, or Section \ref{isometrygroups};
see also Example \ref{extdLCgps}(5).
\par

We define a function 
$d_x : \operatorname{Aut}(X) \times \operatorname{Aut}(X) \longrightarrow \R_+$,
as in \cite{Trof--85}.
Choose $x \in X^0$. 
Recall that $B^x_{X^0}(n)$ denotes the closed ball of radius $n \in \N$ with respect to 
the combinatorial metric on $X^0$
(see Example \ref{metricrealizationgraph}). 
For $g,h \in \operatorname{Aut}(X)$, set 
\begin{equation*}
d_x(g,h) \, = \, \left\{
\aligned
0\phantom{^{-n}} \hskip.2cm &\text{if} \hskip.4cm g = h.
\\
2^{-n} \hskip.2cm &\text{if $n \ge 0$ is such that} \hskip.2cm
g(y) = h(y) \hskip.2cm \forall y \in B^x_{X^0}(n) 
\\
&\phantom{} \text{and}  \hskip.2cm 
g(z) \ne h(z) \hskip.2cm \text{for some} \hskip.2cm z \in B^x_{X^0}(n+1),
\\
n\phantom{^{-n}} \hskip.2cm &\text{if} \hskip.4cm 
d_X(x, g^{-1}h(x)) = n \ge 1 .
\endaligned
\right.
\end{equation*}
Then $d_x$ is a left-invariant proper compatible metric on $\operatorname{Aut}(X)$.
Since closed balls with respect to $d_x$ are both open and closed,
the group $\operatorname{Aut}(X)$ is totally disconnected.
\par
For two points $x,y \in X^0$, it is easy to check that the metrics $d_x$ and $d_y$
are equivalent.
\end{exe}

\begin{exe}
\label{profinitehausdim}
Let $G$ be a \textbf{profinite group}, i.e., a totally disconnecgted compact group.
(More on these in Example \ref{extdLCgps}(2).)
\index{Profinite group|textbf} \index{Compact group! profinite group}
Assume moreover that $G$ is second countable;
it follows that there exists
a sequence $G = G_0 \supset G_1 \supset \cdots \supset G_n  \supset \cdots$
of open normal subgroups of finite index such that
$\bigcap_{n \ge 0} G_n = \{1\}$.
For $g,h \in G$, set $n(g,h) = \max \{ n \ge 0 \mid g^{-1}h \in G_n \}$
if $g \ne h$ and $n(g,g) = \infty$.
The function
$$
d \, : \, G \times G \, \longrightarrow \, \R_+ , \hskip.2cm
(g,h) \longmapsto \frac{1}{n(g,h)}
$$
is a left- and right-invariant ultrametric on $G$, and the diameter of $(G, d)$ is $1$.
(This can be varied by defining $d(g,h)$ as an appropriate function of $n(g,h)$,
not necessarily the function $n \mapsto \frac{1}{n}$.)
The Hausdorff dimension of $(G, d)$ is investigated in \cite{Aber--94, BaSh--97}.
\par
Observe that $d$ is an adapted metric on $G$.
\end{exe}

\section{Compact generation }
\label{cg+sigmac}

\begin{defn}
\label{cggroups}
A \textbf{generating set}, or a \textbf{set of generators}, 
for a group $G$, is a subset $S \subset G$ such that
\begin{equation}
\label{Gunionpowersgenset}
G \, = \, \bigcup_{n \ge 0} {\widehat S}^n
\hskip.5cm \text{where} \hskip.2cm
\widehat S \, := \, S \cup S^{-1} \cup \{1\} ,
\end{equation}
or equivalently, such that for every $g \in G$
there exist $n \ge 0$ and $s_1, \hdots, s_n \in S \cup S^{-1}$ 
with $g = s_1 \cdots s_n$.
\par

To such a generating set,
we associate the free group $F_S$ over $S$ and the canonical projection
\index{Free group}
\begin{equation}
\label{pi_S}
\pi \, : \, F_S \longrightarrow G 
\end{equation}
which associates to $s \in S \subset F_S$ the element $s \in S \subset G$.
\par

A topological group is \textbf{compactly generated} 
\index{Compactly generated! topological group!|textbf}
if it has a compact generating set.
Recall from Remark \ref{neednotbelc} that, in this article,    
a compactly generated group need not be locally compact
(see Digression \ref{PestovEtc}).
\par

A  group is \textbf{finitely generated} \index{Finitely generated group|textbf}
if it has a finite generating set,
namely if it is compactly generated as a discrete group.
\end{defn}

\begin{rem}
\label{remsigmacetc}
(1)
The previous definition has been phrased 
with the assumption that $S$ is a subset of $G$.
In some situations, 
it is more convenient not to assume this,
so that a generating set for $G$ is rather a set $S$
given together with a surjective homomorphism
$F_S \longrightarrow G$;
the restriction of this surjection to $S$ need not be injective.
See Definition \ref{generation} and Remark \ref{remgenerationpresentation}.

\vskip.2cm

(2) 
For a topological group $G$, the following properties are equivalent:
\begin{enumerate}[noitemsep,label=(\roman*)]
\item
$G$ is locally compact and compactly generated;
\item
there exists a relatively compact and open subset of $G$ that is generating;
\item
there exists a relatively compact and open neighbourhood of $1$ that is generating.
\end{enumerate}
The proof is straightforward (see \cite[Theorem 5.13]{HeRo--63} if necessary).

\vskip.2cm

(3) Let $S$ be a generating set of a non-countable group
(for example of a non-discrete LC-group, see Remark \ref{cardinalBairegroup}).
Then the cardinality of $S$ is equal to that of $G$.
\index{Baire space, Baire group, Baire theorem}

\vskip.2cm

(4) Groups need not have minimal generating sets.
For instance, in the additive group $\Q$ of the rationals,
given a subset $S \subset \Q$ and an element $s \in \Q$, $s \notin S$,
it is easy to check that $S \cup \{s\}$ is a generating set
if and only if $S$ is a generating set.

\vskip.2cm

(5) 
Let $G$ be an LC-group. 
Consider a subset $S$ of $G$, the subgroup $H$ of $G$ generated by $S$,
and its closure $\overline H$.
If $S$ is compact, then there exists a compact subset of $G$ generating $\overline H$.
Indeed, if $T \subset \overline H$ is a compact subset  with non-empty interior in $\overline H$,
then $S \cup T$ is compact and generates $\overline H$.
\par
Similarly, let $N$ be a normal subgroup of $G$,
generated as a normal subgroup by a compact subset of $G$.
Then $\overline N$ is  a normal subgroup of $G$
compactly generated as a normal subgroup of $G$.
\end{rem}

In (1) below, ``filtering'' means that two compactly generated open subgroups of $G$
are contained in a single larger one.

\begin{prop}[compactly generated open subgroups]
\label{powersSincpgroup}
Let $G$ be an LC-group.
\begin{enumerate}[noitemsep,label=(\arabic*)]
\item\label{1DEpowersSincpgroup}
$G$ is the filtering union of its compactly generated open subgroups;
\index{Open subgroup! compactly generated}
in particular, $G$ contains compactly generated open subgroups.
\item\label{2DEpowersSincpgroup}
If $G$ is connected, every neighbourhood of $1$ in $G$ is a generating subset;
in particular, $G$ is compactly generated.
\end{enumerate}
Suppose moreover that $G$ is $\sigma$-compact.
\begin{enumerate}[noitemsep,label=(\arabic*)]
\addtocounter{enumi}{2}
\item\label{3DEpowersSincpgroup}
There exists a nested sequence
\begin{equation*}
G_1 \, \subset \, G_2 \, \subset \, \cdots \, \subset \,
G_n \, \subset \, G_{n+1} \, \subset \, \cdots
\end{equation*}
of compactly generated open subgroups of $G$ such that
$\bigcup_{n \ge 1} G_n = G$.
\end{enumerate}
Suppose moreover that $G$ has a compact generating set $S$.
\begin{enumerate}[noitemsep,label=(\arabic*)]
\addtocounter{enumi}{3}
\item\label{4DEpowersSincpgroup}
For $n$ large enough, ${\widehat S} ^n$ is a neighbourhood of $1$.
\item\label{5DEpowersSincpgroup}
For every compact subset $K$ of $G$, there exists $k \ge 0$
such that $K$ is contained in the interior of $\widehat S^k$.
\item\label{6DEpowersSincpgroup}
For every other compact generating set $T$  of $G$,
there exist $k,\ell \in \N$ such that $T \subset {\widehat S}^k$ and $S \subset {\widehat T}^\ell$.
\end{enumerate}
\end{prop}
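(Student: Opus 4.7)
The six parts are linked, with (4) as the pivot: once $\widehat S^n$ is known to be a neighbourhood of $1$ for some $n$, parts (5) and (6) are essentially immediate, while (1), (2), (3) rest on the basic observation that a symmetric relatively compact neighbourhood $U$ of $1$ generates an \emph{open} subgroup $\langle U\rangle=\bigcup_n U^n$.

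For \ref{1DEpowersSincpgroup}, I would fix any compact neighbourhood $U$ of $1$ and note that $\langle U\rangle$ is open (a union of translates of the open set $\operatorname{int}(U)$) and compactly generated. Filtering is immediate: if $H_i=\langle K_i\rangle$ for $i=1,2$ with $K_i$ compact, then $\langle K_1\cup K_2\cup U\rangle$ is an open compactly generated subgroup containing both. For \ref{2DEpowersSincpgroup}, given a neighbourhood $V$ of $1$, replace $V$ by a symmetric relatively compact neighbourhood $U\subset V$; then $H:=\langle U\rangle$ is open, hence closed (its complement is a union of cosets), so connectedness of $G$ forces $H=G$. For \ref{3DEpowersSincpgroup}, write $G=\bigcup_{n\ge 1}K_n$ with $K_n$ compact and fix a compact neighbourhood $U$ of $1$; then $G_n:=\langle U\cup K_1\cup\cdots\cup K_n\rangle$ is open, compactly generated, and the sequence is nested with union $G$.

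The main obstacle is \ref{4DEpowersSincpgroup}, and it is essentially an application of the Baire theorem (Theorem \ref{propBaire}). Each $\widehat S^n$ is compact, hence closed, and their union equals $G$ by \eqref{Gunionpowersgenset}. Since $G$ is locally compact, hence Baire, some $\widehat S^{n_0}$ has non-empty interior; pick $g$ in that interior and an open neighbourhood $V$ of $g$ with $V\subset \widehat S^{n_0}$. Then $g^{-1}\in\widehat S^{n_0}$ gives $g^{-1}V\subset \widehat S^{2n_0}$, and $g^{-1}V$ is an open neighbourhood of $1$; so $\widehat S^n$ is a neighbourhood of $1$ for $n=2n_0$, and a fortiori for all larger $n$.

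With \ref{4DEpowersSincpgroup} in hand, \ref{5DEpowersSincpgroup} is routine: choose $n$ so that $\widehat S^n$ is a (compact) neighbourhood of $1$; any compact $K\subset G$ is covered by finitely many left translates $g_1\widehat S^n,\dots,g_m\widehat S^n$, and each $g_i$ lies in some $\widehat S^{j_i}$, so $K\subset \widehat S^{k}$ with $k=n+\max_i j_i$. Finally \ref{6DEpowersSincpgroup} is just \ref{5DEpowersSincpgroup} applied twice, to the compact sets $T$ and $S$ in turn. The only genuine work is the Baire step in \ref{4DEpowersSincpgroup}; everything else is bookkeeping with $\widehat S^n$ and with the open-subgroup trick.
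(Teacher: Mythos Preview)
Your argument is correct and follows essentially the same route as the paper: parts \ref{1DEpowersSincpgroup}--\ref{3DEpowersSincpgroup} via the open subgroup generated by a compact neighbourhood of $1$, part \ref{4DEpowersSincpgroup} via Baire applied to the closed cover $G=\bigcup_n\widehat S^n$, and parts \ref{5DEpowersSincpgroup}--\ref{6DEpowersSincpgroup} as compactness consequences. The only cosmetic difference is in \ref{5DEpowersSincpgroup}, where the paper covers $K$ directly by the interiors of the increasing sets $\widehat S^k$ rather than by finitely many translates $g_i\widehat S^n$; both amount to the same use of \ref{4DEpowersSincpgroup}. One small point: in \ref{1DEpowersSincpgroup} you should make explicit that every $g\in G$ lies in some compactly generated open subgroup (e.g.\ $\langle U\cup\{g\}\rangle$), so that the union is all of $G$ and not merely that such subgroups exist and are filtering.
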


\begin{proof}
\ref{1DEpowersSincpgroup}
Any element $g$ in $G$ is contained in some compact neighbourhood $V_g$ of $1$ in $G$,
and the union $\bigcup_{n \ge 0} (V_g \cup V_g^{-1})^n$ is an open subgroup of $G$.
Claims \ref{2DEpowersSincpgroup} and \ref{3DEpowersSincpgroup} are straightforward.
\par

\ref{4DEpowersSincpgroup}
By Baire's Theorem applied to the covering
$G = \bigcup_{n \ge 0} {\widehat S}^n$,
there exists $m \ge 0$ such that ${\widehat S}^m$ has a non-empty interior:
Hence the interior of ${\widehat S}^{n}$ is an open neighbourhood of $1$
for every $n \ge 2m$.
\par

\ref{5DEpowersSincpgroup}
For each $x \in K$, there exists $\ell(x) \ge 0$ such that
$x \in {\widehat S} ^{\ell(x)}$.
Let $m$ be as in \ref{4DEpowersSincpgroup};
then $x$ is in the interior of ${\widehat S}^{\ell(x) + 2m}$.
Hence $K \subset \bigcup_{k \ge 0} \operatorname{Int}({\widehat S}^k)$;
since $K$ is compact, the conclusion follows.
\par

Claim \ref{6DEpowersSincpgroup} follows from \ref{5DEpowersSincpgroup}.
\end{proof}

\begin{rem}
\label{remopensub}
(a)
Concerning Claim \ref{1DEpowersSincpgroup} of Proposition \ref{powersSincpgroup},
it is shown below that
an LC-group contains also open subgroups that are connected-by-compact
(Corollary \ref{vanDantzigCor}).
\par
Connected-by-compact LC-groups
are compactly generated (Proposition \ref{almostconnectedgroups}),
and compactly presented (Proposition \ref{connbycompcp}).

\vskip.2cm

(b) 
There is a converse to Claim \ref{2DEpowersSincpgroup} of Proposition \ref{powersSincpgroup}:
an LC-group in which every neighbourhood of $1$ is generating is connected
\cite[Page III.36]{BTG1-4}.
\end{rem}

\begin{defn}
\label{countableindex}
The \textbf{index} 
\index{Index of a subgroup, countable index}
 \index{Subgroup! of countable index}
of a subgroup $G_0$ of a group $G$ is the cardinal of the homogeneous set $G/G_0$.
In particular, in a topological group $G$, an open subgroup $G_0$ is of \textbf{countable index}
if the discrete homogeneous space $G/G_0$ is countable (possibly finite).
\par
A group $G$ is \textbf{finitely generated over a subgroup $G_0$} 
\index{Finite generation over a subgroup|textbf}
if there exists a finite subset $F$ of $G$ such that $F \cup G_0$ generates $G$.
\end{defn}

\begin{cor}
\label{opensubgroupsinLCgroups}
\begin{enumerate}[noitemsep,label=(\arabic*)]
\item\label{1DEopensubgroupsinLCgroups}
For an LC-group $G$, the following properties are equivalent:
\begin{enumerate}[noitemsep,label=(\roman*)]
\item\label{iDEopensubgroupsinLCgroups}
$G$ is $\sigma$-compact;
\item\label{iiDEopensubgroupsinLCgroups}
every open subgroup of $G$ is of countable index;
\item\label{iiiDEopensubgroupsinLCgroups}
there exists a compactly generated open subgroup of $G$ of countable index.
\end{enumerate}
\index{Sigma-compact! LC-group}
\item\label{2DEopensubgroupsinLCgroups}
For an LC-group $G$, the following properties are equivalent:
\begin{enumerate}[noitemsep,label=(\roman*)]
\addtocounter{enumi}{3}
\item\label{ivDEopensubgroupsinLCgroups}
$G$ is compactly generated;
\item\label{vDEopensubgroupsinLCgroups}
$G$ is finitely generated over every open subgroup;
\item\label{viDEopensubgroupsinLCgroups}
there exists a compactly generated open subgroup of $G$ 
over which $G$ is finitely generated.
\end{enumerate}
\index{Compactly generated! LC-group}
\end{enumerate}
\end{cor}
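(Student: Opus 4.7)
My plan is to prove both parts by establishing the implications in a triangle (i)$\Rightarrow$(ii)$\Rightarrow$(iii)$\Rightarrow$(i) and similarly (iv)$\Rightarrow$(v)$\Rightarrow$(vi)$\Rightarrow$(iv), and to use the key observation that \emph{any compact subset of $G$ meets only finitely many cosets of an open subgroup $H$}, because the cosets form an open partition of $G$.

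For part (1), I would argue as follows. For (i)$\Rightarrow$(ii), write $G=\bigcup_{n\ge 0}K_n$ with each $K_n$ compact; if $H$ is an open subgroup, then the partition of $G$ into cosets of $H$ induces a partition of each $K_n$ into finitely many pieces, so only countably many cosets meet $\bigcup K_n = G$, giving $[G:H]\le\aleph_0$. For (ii)$\Rightarrow$(iii), invoke Proposition \ref{powersSincpgroup}\ref{1DEpowersSincpgroup}, which produces compactly generated open subgroups of $G$; any one of these then has countable index by (ii). For (iii)$\Rightarrow$(i), let $H$ be compactly generated and open of countable index. Then $H$ itself is $\sigma$-compact (as $H=\bigcup_{n\ge 0}\widehat{S}^n$ for a compact generating set $S$), and $G$ is a countable disjoint union of translates $g_iH$, each homeomorphic to $H$; hence $G$ is $\sigma$-compact.

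For part (2), the analogous strategy works. For (iv)$\Rightarrow$(v), let $S$ be a compact generating set of $G$ and let $H$ be an open subgroup. By the observation above, $S$ meets only finitely many cosets of $H$; choose coset representatives $g_1,\dots,g_n\in G$ so that $S\subset \bigcup_{i=1}^n g_iH$, and set $F=\{g_1,\dots,g_n\}$. Then $S\subset F\cdot H\subset\langle F\cup H\rangle$, and since $S$ generates $G$, we obtain $G=\langle F\cup H\rangle$, showing $G$ is finitely generated over $H$. For (v)$\Rightarrow$(vi), apply (v) to any compactly generated open subgroup furnished by Proposition \ref{powersSincpgroup}\ref{1DEpowersSincpgroup}. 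For (vi)$\Rightarrow$(iv), let $H$ be compactly generated open with compact generating set $T$, and let $F$ be a finite set with $G=\langle F\cup H\rangle$. Then $F\cup T$ is compact and generates $G$, because $\langle F\cup T\rangle\supset\langle T\rangle=H$ and $\langle F\cup T\rangle\supset F$, so $\langle F\cup T\rangle\supset\langle F\cup H\rangle = G$.

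No step presents any real obstacle: the only non-obvious ingredient is the finiteness-of-intersection remark, which is immediate from compactness against an open partition, and the existence of compactly generated open subgroups, which is granted by Proposition \ref{powersSincpgroup}\ref{1DEpowersSincpgroup}. The two parts of the corollary are genuinely parallel, being the ``countable'' and ``finite'' versions of the same combinatorial fact applied to cosets of open subgroups.
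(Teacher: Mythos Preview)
Your proof is correct and follows essentially the same approach as the paper's own (very terse) argument: the paper proves (i)$\Rightarrow$(ii) by noting that $G/H$ is discrete and $\sigma$-compact hence countable, invokes Proposition~\ref{powersSincpgroup}\ref{1DEpowersSincpgroup} for (ii)$\Rightarrow$(iii), calls (iii)$\Rightarrow$(i) ``straightforward'', and dismisses part~(2) as ``analogous''. You have simply spelled out the details the paper omits, using the same key observation (compact sets meet only finitely many cosets of an open subgroup) and the same auxiliary proposition.
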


Note: compare \ref{1DEopensubgroupsinLCgroups} above
with (1) in Corollary \ref{vanDantzigCor}.

\begin{proof}
\ref{1DEopensubgroupsinLCgroups}
Implication \ref{iDEopensubgroupsinLCgroups} $\Rightarrow$ \ref{iiDEopensubgroupsinLCgroups} 
holds for every topological group.
Indeed, let $H$ be an open subgroup of a $\sigma$-compact group $G$;
the homogeneous space $G/H$ is countable, 
because it is both discrete and $\sigma$-compact.
Implication \ref{iiiDEopensubgroupsinLCgroups} $\Rightarrow$ \ref{iDEopensubgroupsinLCgroups} 
is equally straightforward.
For $G$ locally compact,
\ref{iiDEopensubgroupsinLCgroups} $\Rightarrow$\ref{iiiDEopensubgroupsinLCgroups} 
follows from Proposition \ref{powersSincpgroup}(1).
\par

The proof of \ref{2DEopensubgroupsinLCgroups} is analogous.
\end{proof}

\begin{defn}
\label{cocompact}
In a topological group $G$, a subgroup $H$ is \textbf{cocompact} 
\index{Cocompact! subgroup of a topological group|textbf}
if there exists a compact subset $K$ of $G$ such that $G = KH$.
\end{defn}

For \ref{stababcd}, we consider the following properties of topological \emph{spaces}:
\begin{enumerate}[noitemsep,label=(\alph*)]
\item\label{aDEstababcd}
local compactness,
\item\label{bDEstababcd}
metrizability,
\item\label{cDEstababcd}
first countability,
\index{First-countable! topological group}
\item\label{dDEstababcd}
$\sigma$-compactness,
\item\label{eDEstababcd}
second countability,
\index{Second-countable! topological group}
\item\label{fDEstababcd}
separability,
\end{enumerate}
as in \ref{hereditarityTopSpace},
as well as the property of 
\begin{enumerate}[noitemsep,label=(\alph*)]
\addtocounter{enumi}{6}
\item\label{gDEstababcd}
compact generation 
\end{enumerate}
for topological \emph{groups}.

\begin{prop}[on hereditarity]
\label{stababcd}
\index{Hereditarity of various properties}

Let $G$ be a topological group, $H$ a closed subgroup,
$G/H$ the corresponding homogeneous space,
and $\pi : G \longrightarrow G/H$ the canonical projection.
\begin{enumerate}[noitemsep,label=(\arabic*)]
\item\label{1DEstababcd}
If $G$ is has one of Properties \ref{aDEstababcd} to \ref{eDEstababcd}
then $H$ and $G/H$ have the same property.
When $G$ is locally compact, this extends to Property \ref{fDEstababcd}.
\item\label{2DEstababcd}
If $H$ and $G/H$ have both one of Properties  
\ref{aDEstababcd}, \ref{bDEstababcd}, \ref{cDEstababcd}, \ref{eDEstababcd}, \ref{fDEstababcd},
then $G$ has the same property.
When $G$ is locally compact, this extends to Property \ref{dDEstababcd}.
\item\label{3DEstababcd}
Suppose that $H$ is cocompact closed in $G$.
Then $G$ is compactly generated if and only if $H$ is so.
\index{Subgroup! cocompact closed}
\item\label{4DEstababcd}
Let $N$ be a closed normal subgroup of $G$.
If $G$ is compactly generated \ref{gDEstababcd}, 
so is $G/N$.
Suppose moreover that $G$ is locally compact;
if $N$ and $G/N$ are compactly generated,
$G$ is compactly generated.
\end{enumerate}
\end{prop}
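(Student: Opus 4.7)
The plan is to prove the four assertions in order, each time reducing to the corresponding hereditarity statements for topological spaces (Proposition \ref{hereditarityTopSpace}) together with the metrization theorems already established (\ref{BK}, \ref{Struble}, \ref{KK}) and the fact that the canonical projection $\pi\colon G \to G/H$ is continuous and open because $H$ is closed.

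For \ref{1DEstababcd}, inheritance by $H$ of properties \ref{aDEstababcd}--\ref{eDEstababcd} reduces to the corresponding facts for the closed subspace $H \subset G$ (Proposition \ref{hereditarityTopSpace}\ref{2DEhereditarityTopSpace},\ref{3DEhereditarityTopSpace}), with metrizability of $H$ then supplied by Birkhoff-Kakutani \ref{BK}. Inheritance by $G/H$ uses openness of $\pi$: $\pi$-images of a countable basis of neighborhoods of $1$ give one at $eH$ (first countability), $\pi$-images of a countable basis of open sets of $G$ give one of $G/H$ (second countability), $\pi$-images of compact neighborhoods are compact neighborhoods (local compactness), and $\pi$-images of a countable compact cover cover $G/H$ (σ-compactness). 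Metrizability of $G/H$ follows by descending a left-invariant compatible metric $d$ on $G$ to $\tilde d(gH,g'H) := \inf_{h \in H} d(g,g'h)$, which is well-defined, left-invariant, and compatible because $H$ is closed. Separability of $G/H$ is immediate by projecting a dense countable subset of $G$; separability of $H$ under the LC-hypothesis is obtained by intersecting a countable dense subset of $G$ with small translates of a compact neighborhood of $1$ in $H$.

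For \ref{2DEstababcd}, the key is to lift structure through $\pi$. For first countability, combining a countable basis $(U_n)$ of neighborhoods of $1$ in $H$ with the $\pi$-preimages of a countable basis at $eH$ in $G/H$ yields a countable basis at $1$ in $G$; metrizability then follows from \ref{BK}, and second countability and separability by analogous pairings. For local compactness with $G$ LC, given compact neighborhoods $\overline K \subset G/H$ of $eH$ and $L \subset H$ of $1$, combined with a compact neighborhood $C$ of $1$ in $G$, the set $(\pi^{-1}(\overline K) \cap C)\cdot L$ contains a compact neighborhood of $1$ in $G$. For σ-compactness with $G$ LC, write $G/H = \bigcup_n \overline K_n$ and $H = \bigcup_m L_m$ with $\overline K_n, L_m$ compact; a fixed compact neighborhood $V$ of $1$ in $G$ projects to a compact neighborhood of $eH$, and finitely many translates $g_{i,n} V$ cover $\pi^{-1}(\overline K_n)$ up to right-multiplication by $H$, so that $G = \bigcup_{n,m,i} g_{i,n} V L_m$ is σ-compact.

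For \ref{3DEstababcd}, write $G = KH$ with $K$ compact. If $T$ compactly generates $H$, then $K \cup K^{-1} \cup T$ compactly generates $G$ via $g = kh$. Conversely, if $S$ compactly generates $G$, a rewriting argument shows that the compact set $(K^{-1}(K \cup S)K) \cap H$ generates $H$: every $h \in H$ equals a word $s_1\cdots s_n$, and one inserts factors $k_i k_i^{-1} \in K K^{-1}$ so that each fragment lies in $H$. For \ref{4DEstababcd}, projection of a compact generating set of $G$ yields one of $G/N$; conversely, with $G$ LC, lift a compact generating set of $G/N$ to a compact $U \subset G$ using local compactness of $G$ and openness of $\pi$, and combine with a compact generating set $T$ of $N$ to generate $G$. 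The main obstacles will be the σ-compactness and local compactness statements in \ref{2DEstababcd}, where lifting compact sets of $G/H$ to compact sets of $G$ genuinely requires the LC-hypothesis on $G$ together with careful slicing via the openness of $\pi$, and the separability statement in \ref{1DEstababcd}, since separability does not pass formally to closed subspaces and must be derived from the LC structure.
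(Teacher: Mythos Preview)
Your overall plan for parts \ref{1DEstababcd}, \ref{3DEstababcd}, and \ref{4DEstababcd} is essentially the paper's approach and is fine; in particular your rewriting argument for \ref{3DEstababcd} and your handling of \ref{4DEstababcd} match the paper's (which phrases the compact-lifting step via Lemma~\ref{KimagedeK}, but your direct use of a compact neighbourhood works just as well). There are, however, two genuine gaps in your treatment of \ref{2DEstababcd}.

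\textbf{First countability.} Your claim that ``combining a countable basis $(U_n)$ of neighbourhoods of $1$ in $H$ with the $\pi$-preimages of a countable basis at $eH$ in $G/H$ yields a countable basis at $1$ in $G$'' is not justified and in fact fails without further work. The sets $\pi^{-1}(W_m)$ all contain $H$, and if the $U_n$ are chosen merely so that $U_n\cap H$ is small, nothing prevents $U_n$ from being large away from $H$; so $U_n\cap\pi^{-1}(W_m)$ need not fit inside a prescribed neighbourhood of $1$. The paper follows Graev's argument: one takes $(U_n)$ with $(\pi(U_n))$ a basis at $eH$, and $(V_n)$ symmetric with $V_{n+1}^2\subset V_n$ and $(V_n\cap H)$ a basis at $1$ in $H$, then defines $\mathcal O_n=V_{n+1}\bigl(H\cap(G\smallsetminus V_n)\bigr)$, $C_n=\overline{\mathcal O_n}$, $P_n=(U_nH)\cap(G\smallsetminus C_n)$, and shows that $Q_n=P_0\cap\cdots\cap P_n$ is a basis. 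The point is that one must explicitly excise the part of $U_nH$ that lies near $H$ but outside the chosen small piece of $H$; your outline omits this mechanism entirely.

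\textbf{Local compactness.} Your argument here is circular: you write ``for local compactness with $G$ LC, \dots combined with a compact neighbourhood $C$ of $1$ in $G$'', but the existence of such a $C$ \emph{is} local compactness of $G$, which is exactly what must be proved from local compactness of $H$ and $G/H$. This implication is Gleason's theorem and is not elementary; the paper does not prove it either but refers to \cite{Glea--49}, \cite{MoZi--55}, \cite{Hoch--65}. You should at minimum flag it as a cited result rather than present a bootstrap that assumes the conclusion.

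A smaller point: your sketch for separability of $H$ in \ref{1DEstababcd} (``intersecting a countable dense subset of $G$ with small translates of a compact neighbourhood of $1$ in $H$'') does not work as stated, since $H$ may have empty interior in $G$ and hence miss the dense set entirely; the paper likewise does not prove this and refers to \cite{CoIt--77}.
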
 


Concerning Claim \ref{4DEstababcd}, 
note that a closed subgroup (even a normal one) of a compactly generated LC-group
need not be compactly generated (Example \ref{exDehnetc}).
\par
We come back below on hereditarity of 
compact generation (Proposition \ref{sigmac+compactgofcocompact})
and compact presentation (Corollary \ref{hereditaritycp}).

\begin{lem}[lifting compact subspaces]
\label{KimagedeK}
Let $G$, $H$, and $\pi : G \longrightarrow G/H$,
be as in the previous proposition.
For the condition
\begin{itemize}
\item[(*)]
every compact subset of the coset space $G/H$ is the image by $\pi$
of a compact subset of the group $G$
\end{itemize}
to hold, 
it is sufficient that $G$ be locally compact, or that $G$ be completely metrizable.
\end{lem}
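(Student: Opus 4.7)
The plan is to split along the two sufficient conditions, using in both cases that $\pi : G \to G/H$ is continuous, open, and surjective (so $\pi^{-1}(K)$ is closed in $G$, and the $\pi$-image of any neighbourhood of a lift of $y \in G/H$ is a neighbourhood of $y$).

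For the locally compact case, given a compact $K \subset G/H$, I would pick for each $y \in K$ an arbitrary preimage $g_y \in \pi^{-1}(y)$ together with a compact neighbourhood $W_y$ of $g_y$ in $G$ (available by local compactness). Openness of $\pi$ makes each $\pi(W_y)$ a neighbourhood of $y$, so compactness of $K$ extracts a finite subfamily: $K \subset \pi(W_{y_1}) \cup \cdots \cup \pi(W_{y_n})$. The finite union $W := W_{y_1} \cup \cdots \cup W_{y_n}$ is compact in $G$, and $L := W \cap \pi^{-1}(K)$ is then a closed subset of $W$, hence compact, with $\pi(L) = K$ by a direct check.

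For the completely metrizable case, fix a complete compatible metric $d$ on $G$ and, by the Birkhoff--Kakutani Theorem \ref{BK}, a left-invariant compatible metric $d'$ on $G$. The formula $\rho(xH, yH) := \inf_{h \in H} d'(x, yh)$ defines a compatible metric on $G/H$ enjoying the uniform openness property
\[
\pi(B_{d'}(g, r)) \, = \, B_\rho(\pi(g), r) \hskip.3cm \text{for all} \hskip.2cm g \in G, \hskip.2cm r > 0,
\]
as a direct consequence of the left-invariance of $d'$. Given a compact $K \subset G/H$, the strategy is to build by recursion on $n \geq 0$ a finite set $F_n \subset G$ together with a parent map $\phi_n : F_{n+1} \to F_n$ satisfying (a) $\pi(F_n)$ is a $2^{-n}$-net for $K$ in $\rho$; (b) $d(g, \phi_n(g)) < 2^{-n}$ for every $g \in F_{n+1}$; and (c) for each $g \in F_n$ and each $y \in K$ with $\rho(y, \pi(g)) < 2^{-n}$, some child $g' \in \phi_n^{-1}(g)$ satisfies $\rho(\pi(g'), y) < 2^{-n-1}$. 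Granting the tree, property (b) forces $\bigcup_n F_n$ to be totally bounded in $d$ (every level beyond $N$ lies in the $2^{-N+1}$-neighbourhood of $F_N$), so its closure is compact by completeness of $d$. Property (c) then furnishes, for each $y \in K$, a $\phi$-chain $(g_n)$ with $\rho(\pi(g_n), y) \to 0$ which, by (b), is $d$-Cauchy and converges to some $\tilde g \in G$ with $\pi(\tilde g) = y$; hence $L := \overline{\bigcup_n F_n} \cap \pi^{-1}(K)$ is a compact subset of $G$ with $\pi(L) = K$.

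The main obstacle is property (b): the $d'$-uniform openness of $\pi$ naturally produces $d'$-close lifts, but these need not be $d$-close, since $d$ and $d'$ are only topologically equivalent (on non-cli groups, the left-invariant $d'$ is typically not $d$-complete, as recalled in Remark \ref{onmetrizability}(2)). Surmounting this requires constructing $F_{n+1}$ not merely from $d'$-balls around elements of $F_n$ but from $d'$-balls further restricted to a small $d$-neighbourhood of each parent; the continuity of the identity map $(G, d') \to (G, d)$ at each of the finitely many points of $F_n$ ensures that such a simultaneous restriction is possible while preserving property (c).
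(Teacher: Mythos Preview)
Your locally compact argument is correct and standard; the paper, by contrast, simply refers to Bourbaki for both cases and gives no proof.

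Your completely metrizable argument has a genuine gap in the inductive step. You acknowledge that property~(b) requires $d$-closeness while the uniform openness only gives $d'$-closeness, and you propose to fix this by choosing children $g'$ of $g \in F_n$ inside $B_{d'}(g,\varepsilon_g) \subset B_d(g,2^{-n})$. But this does \emph{not} preserve property~(c). Indeed, the images $\pi(g')$ for such $g'$ lie in $B_\rho(\pi(g),\varepsilon_g)$, so the union of balls $B_\rho(\pi(g'),2^{-n-1})$ is contained in $B_\rho(\pi(g),\varepsilon_g + 2^{-n-1})$. When $\varepsilon_g < 2^{-n-1}$ (and nothing prevents this), points $y \in K$ with $\rho(\pi(g),y)$ between $\varepsilon_g + 2^{-n-1}$ and $2^{-n}$ cannot be $2^{-n-1}$-approximated by any such $\pi(g')$; property~(c) fails for them. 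The continuity of $\operatorname{id}:(G,d')\to(G,d)$ at finitely many points gives you \emph{some} $\varepsilon_g > 0$, but not one bounded below by $2^{-n-1}$.

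The strategy is sound; the repair is to abandon $\rho$-balls entirely and work directly with the open sets $V(g,r) := \pi(B_d(g,r))$. Carry compact pieces $K_g \subset K$ satisfying $K_g \subset V(g,2^{-n})$ and $\bigcup_{g \in F_n} K_g = K$. For the inductive step, each $y \in K_g$ has a lift $g_y \in B_d(g,2^{-n})$; the sets $V(g_y,2^{-n-1})$ cover $K_g$, and compactness gives finitely many children $g' = g_{y_i}$, with $K_g$ subdivided into compact $K_{g'} \subset V(g',2^{-n-1})$. Now (b) is automatic. For the conclusion, a chain $(g_n)$ with $y \in K_{g_n}$ gives $y = \pi(h_n)$ for some $h_n \in B_d(g_n,2^{-n})$; since $(g_n)$ is $d$-Cauchy, so is $(h_n)$, and its limit $\tilde g$ satisfies $\pi(\tilde g) = y$. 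This avoids $d'$ and $\rho$ altogether.
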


\begin{proof}[On the proof]
The lemma for $G$ locally compact is a particular case of a more general fact,
proved in \cite[Page I.80]{BTG1-4}:
let $X$ be an LC-space and $\mathcal R$ an open Hausdorff equivalence relation on $X$;
then every compact subspace of $X / \mathcal R$ is the image 
of a compact subspace of $X$.
\par
For $G$ a completely metrizable group,
the lemma is a particular case of Proposition 18 in \cite[Page IX.22]{BTG5-10}.
\par 
Note that the hypothesis on $G$ (locally compact or completely metrizable)
cannot be omitted; see \cite[Pages 64 and 324]{BeHV--08} for comments and references.
\end{proof}

\begin{proof}[Proof of Proposition \ref{stababcd}]
(1) 
It is an easy exercise to check that, if $G$ has one of Properties (a) to (e),
the same holds for $H$ and $G/H$;  
for the metrizability of $G/H$, see if necessary \cite[Theorem 1.23]{MoZi--55}.
We refer to \cite{CoIt--77}, both for the claim in case of (f)
and for the fact it \emph{does not} carry over to topological groups in general.

\vskip.2cm

(2) 
Claim (2) for (a) is originally due to Gleason \cite{Glea--49}.
We refer to \cite[Theorem of 2.2, Page 52]{MoZi--55},
or \cite[Theorem 2.6 of Chapter I]{Hoch--65}.
\par

For (c), we follow an argument of Graev,
as reported in \cite[Theorem 5.38.e]{HeRo--63}.
Let $(U_n)_{n \ge 0}$ and $(V_n)_{n \ge 0}$
be sequences of neighbourhoods of $1$ in $G$ such that
$(\pi(U_n))_{n \ge 0}$ is a basis of neighbourhoods of $\pi(1)$ in $G/H$
and $(V_n \cap H)_{n \ge 0}$ is a basis of neighbourhoods of $1$ in $H$;
we assume furthermore that $V_n$ is symmetric and $(V_{n+1})^2 \subset V_n$
for all $n \ge 0$.
For $n \ge 0$, set
\begin{equation*}
\aligned
\mathcal O_n \, &= \, V_{n+1} ( H \cap (G \smallsetminus V_n)) ,
\hskip.5cm
C_n \, = \, \overline{\mathcal O_n} ,
\\
P_n \, &= \, (U_nH) \cap (G \smallsetminus C_n) ,
\hskip.8cm
Q_n \, = \, P_0 \cap P_1 \cap \cdots \cap P_n .
\endaligned
\end{equation*}
Observe that $\mathcal O_n \cap V_{n+1} = \emptyset$
(as a consequence of $(V_{m+1})^2 \subset V_m$),
hence $C_n \cap V_{n+1} = \emptyset$.
Thus $P_n$ and $Q_n$
are open neighbourhoods of $1$ in $G$.
Let us check that $(Q_n)_{n \ge 0}$ is a basis of neighbourhoods of $1$ in $G$.
\par
Let $X$ be a neighbourhood of $1$ in  $G$;
we claim that $Q_n \subset X$ for $n$ large enough.
Choose a neighbourhood $Y$ of $1$ in $G$ such that $Y^2 \subset X$.
By hypothesis, there exist integers $m \ge 1$ and $k \ge m+1$ such that
$V_m \cap H \subset Y \cap H$
and $\pi(U_k) \subset \pi(Y \cap V_{m+1})$.
We have
\begin{equation*}
\aligned
Q_k \, \subset \, P_k \cap P_m \, &\subset \, 
 \big( U_k H \big) \cap 
 \Big( G \smallsetminus \overline{ V_{m+1}(H \cap (G \smallsetminus V_m)) } \Big)
\\
  &\subset \, \Big( (Y \cap V_{m+1}) H \Big) \cap
   \Big( G \smallsetminus  (Y \cap V_{m+1})(H \cap (G \smallsetminus V_m))  \Big)
\\
  &\subset \, \big( Y \cap V_{m+1} \big) \big( H \cap V_m\big)
  \, \subset \, Y^2 \, \subset \, X
\endaligned
\end{equation*}
and this proves the claim.
\par

Claim (2) for (b) is now a consequence of the Birkhoff-Kakutani Theorem \ref{BK}.
\par

In the special case where $G/H$ is paracompact 
and the bundle $G \longrightarrow G/H$ is locally trivial,
one can alternatively define a metric on $G$ 
in terms of metrics in $H$ and $G/H$,
and of a partition of unity by functions
with supports inside trivializing subsets of $G/H$.
However, in the general case of a locally compact group $G$,
the bundle $G \longrightarrow G/H$ need not be locally trivial \cite{Karu--58}.
\par

The case of (f) is easily checked:
if there exist a countable subset $\{a_m\}_{m \ge 1}$ in $G$
such that $\{\pi(a_m)\}_{m \ge 1}$ is dense in $G/H$
and a countable subset $\{b_n\}_{n \ge 1}$ dense in $H$,
then the set $\{a_mb_n\}_{m,n \ge 1}$ is countable and dense in $G$.
\par

In view of Lemma \ref{KimagedeK}, 
the case of (d) is similar.
%
%
\par

By Theorem \ref{Urysohn}, Case (e)
follows from Cases (b) and (f).

\vskip.2cm

(3)
Let $K$ be as in Definition \ref{cocompact}, so that $G = KH$.
If $H$ is generated by a compact set $T$,
then $G$ is generated by $T \cup K$.
Conversely, suppose that $G$ is generated by a compact set $S$.
There is no loss of generality in assuming $S$ symmetric and $K \ni 1$.
Set $T = KSK^{-1} \cap H$,
where $KSK^{-1}$ is the subset of $G$ of elements of the form
$g = k_1 s k_2^{-1}$, with $k_1, k_2 \in K$ and $s \in S$;
observe that $T$ is compact. 
It remains to check that $T$ generates $H$.
\par
Let $h \in H$.
There exist $s_1, \hdots, s_n \in S$ with $h = s_1 \cdots s_n$.
Set $k_0 = 1$;
since $HK = G$, we can choose inductively $k_1, \hdots, k_{n-1} \in K$
such that $k_{i-1}s_ik_i^{-1} \in H$;
set $k_n = 1$.
We have $h = \prod_{i=1}^n k_{i-1} s_i k_i^{-1}$,
and $k_{i-1}s_ik_i^{-1} \in T$ for $i=1, \hdots, n$.
\par
For $G$ locally compact, there is another proof of (3)
in Proposition \ref{sigmac+compactgofcocompact} below.

\vskip.2cm

(4) The first part of the claim is straightforward,
and the second part is a consequence of Lemma \ref{KimagedeK}.
\end{proof}

\begin{exe}
\label{exampleAut(G)}
Let $G$ be an LC-group.
Denote by $\operatorname{Aut}(G)$ the group of its continuous \textbf{automorphisms},
\index{Automorphism group! of a group|textbf}
with the topology generated by the subbasis
\begin{equation*}
N(K, V)  \, = \, \left\{ \alpha  \in \operatorname{Aut}(G) 
\hskip.2cm \Big\vert \hskip.2cm
\aligned
&\alpha(g) \in Vg \hskip.2cm \text{and} \hskip.2cm \alpha^{-1} (g) \in Vg
\\
&\text{for all} \hskip.2cm g \in K 
\endaligned
\right\} ,
\end{equation*}
where $K$ is a compact subset of $G$ and $V$ an open neighbourhood of $1$ in $G$.
Then $\operatorname{Aut}(G)$ is a topological group \cite[Section III.3, Page 40]{Hoch--65}.
This topology is often called the \textbf{Braconnier topology} on $\operatorname{Aut}(G)$, 
and the \emph{Birkhoff topology} in \cite[\S~IV.1]{Brac--48}.
\par

The topology defined above on $\operatorname{Aut}(G)$ coincides with
the compact-open topology in some cases,
for example if $G$ is connected-by-compact, or connected-by-discrete \cite{Wang--69}.
However, Wang shows that they do not coincide for the following example:
$G = F^{\N} \times F^{(\N)}$,
where $F$ is a finite group, $F \ne \{1\}$, and $G$ is the direct product
of the compact group $F^{\mathbf N}$ 
\index{Product of groups! $F^A$, with $F$ finite}
with the discrete countable group $F^{(\mathbf N)}$.
\index{Topology! compact-open}
\par

For an LC-group $G$, the topological group $\operatorname{Aut}(G)$
\emph{need not} be locally compact.
\index{LC-group! non-LC-group}
For example, if $G$ is the free abelian group $\mathbf Z ^{(\mathbf N)}$ 
\index{Free abelian group}
on an infinite countable set, or its Pontryagin dual $(\mathbf R / \mathbf Z)^{\mathbf N}$,
then $\operatorname{Aut}(G)$ is not locally compact
(see  \cite[Example 26.18]{HeRo--63}).
\par

There are large classes of LC-groups $G$ for which
$\operatorname{Aut}(G)$ is locally compact.
If $G$ is finitely generated, $\operatorname{Aut}(G)$ is discrete.
If $G$ is a Lie group such that $G/G_0$ is finitely generated,
$\operatorname{Aut}(G)$ is a Lie group
\cite[Ch.\ III, $\S$~10, n$^o$~2]{BGLA2-3}.
\par

A recent short guide on the Braconnier topology on $\operatorname{Aut}(G)$ can be found
in \cite[Appendix I]{CaMo--11}
\par

There are known sufficient conditions for the group 
$\operatorname{Int}(G)$ of inner automorphisms to be closed in $\operatorname{Aut}(G)$;
for example, this is obviously so when $G$ is compact 
(more on this in Example \ref{extdLCgps}(6) below).
In general, $\operatorname{Int}(G)$ need not be closed in $\operatorname{Aut}(G)$,
as shown by an example of the form $G = (\C \times \C) \rtimes \R$
given in \cite[Pages 127--128]{Helg--62}. 
Further properties of $\operatorname{Aut}(G)$
and sufficient conditions for 
$\operatorname{Int}(G)$ to be closed in $\operatorname{Aut}(G)$
are discussed in \cite{PeSu--78}.
\end{exe}

\begin{rem}[compact generation, $\sigma$-compactness, second countability]
\label{Gexplescgsigmacsc}
(1)
It follows from Equation (\ref{Gunionpowersgenset}) 
of Definition \ref{cggroups} that 
a compactly generated group is $\sigma$-compact.
The converse does \emph{not} hold; for example:
\begin{enumerate}[noitemsep,label=(\alph*)]
\item\label{aDEGexplescgsigmacsc}
Discrete groups such as the additive group $\Q$ of rational numbers
and its  subgroups $\Z\left[ \frac{1}{n} \right]$
are countable, and are not finitely generated
($n$ is an integer, $n \ge 2$).
\index{$z$@$\Z[1/n]$}
\item\label{bDEGexplescgsigmacsc}
For a prime $p$, the additive group $\Q_p$ of $p$-adic numbers
is $\sigma$-compact, second-countable, and is not compactly generated.
\index{$q$@$\Q_p$, field of $p$-adic numbers}
We come back to this in Examples \ref{localfieldscg}
(on $\Q_p$,  $\Q_p^\times$, and compact generation) 
and  \ref{examplesoflocallyellipticgroups}
(local ellipticity of $\Q_p$).
This carries over to the additive group $\Q_m$ of the $m$-adic numbers,
for every integer $m \ge 2$.
\end{enumerate}

\vskip.2cm

(2)
For an LC-group, compact generation and second countability
are independent properties. 
The four possibilities are illustrated by the following four examples:
$\Z$, $\Q$, $\R_{\text{dis}}$, that are discrete groups,
and $(\Z/2\Z)^{\R_{\text{dis}}}$,  that is a compact group.
Here, $\R_{\text{dis}}$ denotes
the additive group of real numbers with the discrete topology.

\vskip.2cm

(3)
An LC-group that contains a finitely generated dense subgroup is compactly generated.
\index{Compactly generated! LC-group}
\index{Subgroup! finitely generated dense}
\par

The converse does not hold.
Consider again a compact group $G = F^A$, with $F$ a non-trivial finite group and $A$ a set,
as in Example \ref{sepnot2nd}.
We leave it as an exercise for the reader to check
that any finitely generated subgroup of $F^A$ is finite,
more precisely that any subgroup of $F^A$ generated by $k$ elements
is isomorphic to a subgroup of $F^N$, with $N = \vert F \vert^k$.
\par

If $A$ is infinite countable, say $A = \N$, 
then $F^{\N}$ is a compact group that is infinite, metrizable, and second-countable,
but without finitely generated \emph{dense subgroups}.
\index{Compact group! without f.g.~dense subgroup}
(If the cardinality of $A$ is strictly larger than that of the continuum,
then $G$ is non-separable by Proposition 
\ref{hereditarityTopSpace}\ref{8DEhereditarityTopSpace},
i.e., $G$ does not contain any countable \emph{dense subset}.)

\vskip.2cm

(4)
Any connected Lie group $G$ contains finitely generated dense subgroups.
Indeed, let $X_1, \hdots, X_k$ be generators of the Lie algebra of $G$.
For each $j \in \{1, \hdots, k\}$,
the one-parameter subgroup $(\exp (tX_j))_{t \in \R} \simeq \R$
contains two elements, 
say $a_j = \exp (X_j)$ and  $b_j = \exp( \sqrt{2} X_j)$,
which generate a dense subgroup.
It follows that $\{a_1, b_1, \hdots, a_k, b_k\}$ generates 
a dense subgroup of $G$.
\par
A non-solvable connected Lie group $G$ contains finitely generated free dense subgroups;
see \cite{Kura--51} for $G$ semisimple, and \cite{BrGe--03} in general.
\index{Solvable group}
\index{Free subgroup}
For more on finitely generated dense subgroups of Lie groups, 
see \cite{Breu} and \cite{Glut--11}.

\vskip.2cm 

(5)
Let $G$ be a $\sigma$-compact group $G$.
Choose a compact normal subgroup $K$ of $G$ 
such that $G/K$ is metrizable (Theorem \ref{KK}).
Then $G$ is second-countable if and only if $K$ is so;
in loose words, second countability for $G$ can be seen on $K$.
(For $\sigma$-compactness and second countability, 
see also Example \ref{sepnot2nd}.)

\vskip.2cm

(6) An LC-group $G$ is \textbf{just non-compact} if it is not compact and
\index{Just non-compact LC-groups|textbf}
every closed normal subgroup $N \ne \{1\}$ of $G$ is cocompact.
Just non-compact LC-groups include just infinite discrete groups, $\R$, 
and wreath products $S \wr_X F$, with $S$ a simple LC-group 
and $F$ a finite group acting transitively on a set $X$.
Compactly generated LC-groups that are just non-compact are discussed in \cite{CaMo--11}.
\end{rem}

\begin{exe}
\label{algebraicgroupslocalfields}
Let $\K$ be a non-discrete locally compact field,
for example $\R$, $\Q_p$, or $\mathbf F_q \lp t \rp$;
\index{$fp$@$\mathbf F_p \lp t \rp$, $\mathbf F_q \lp t \rp$}
\index{$k$@$\K$, non-discrete locally compact field, often local field}
by ``field'' \index{Field = commutative field} 
we always mean ``commutative field''.
Then $\K$ is $\sigma$-compact 
and metrizable, and therefore second-countable (Theorem \ref{Urysohn});
see \cite[Chapter I, Proposition 2]{Weil--67}
or \cite[chap.\ 7, $\S$~1, no 10]{BInt7-8}.
\par

For $n \ge 1$, the general linear group $\GL_n(\K)$ 
\index{General linear group $\GL$! $\GL_n(\K)$ 
for $\K$ a non-discrete locally compact field}
and the special linear group $\SL_n(\K)$
\index{Special linear group $\SL$! $\SL_n(\K)$}
are locally compact, $\sigma$-compact, metrizable, second-countable, and separable.
They are moreover compactly generated (Example \ref{exSL_2}). 
\par 

Let $\mathbf G$ be an algebraic group defined over $\K$,
and let $G$ be the group of its $\K$-points, with its $\K$-topology.
Thus $G$ is a closed subgroup of $\GL_n(\K)$ for some $n$.
It follows from Proposition \ref{stababcd} that $G$ is
locally compact, $\sigma$-compact, second-countable, metrizable, and separable.
\par
The group $G$ need not be compactly generated
(see Example \ref{exDehnetc}(2)).
\end{exe}

\begin{exe}
\label{exDehnetc}
There are non-compactly generated closed subgroups
of compactly generated LC-groups.
\index{Hereditarity of various properties}

\vskip.2cm

(1)
Historically, the first example we know 
appears in an observation of Max Dehn concerning discrete groups:
a non-abelian free group of finite rank 
\index{Free group}
has subgroups that are not finitely generated
(\cite{Dehn--11} and \cite[Page 135]{DeSt--87}).
This is straightforward in terms of covering spaces of wedges of circles.
Indeed, if $X$ is a wedge of a finite number $k \ge 2$ of circles, 
every normal subgroup $\Gamma$ of $\pi_1(X)$
is the fundamental group of a Galois covering $Y \longrightarrow X$,
and $\Gamma$ is not finitely generated if and only if
$Y$ is neither simply connected nor compact,
if and only if $\Gamma$ is non-trivial and of infinite index in $\pi_1(X)$.

\vskip.2cm

(2)
Consider a prime $p$, the field $\Q_p$ of $p$-adic numbers, and the affine group 
\index{Affine group! $\K \rtimes {\K}^\times$}
\index{Semidirect product! $\Q_p^\times \ltimes \Q_p$}
\begin{equation*}
\operatorname{Aff}_1(\Q_p) \, = \, 
 \begin{pmatrix}
  \Q_p^\times & \Q_p \\
  0 & 1 
\end{pmatrix} \, \simeq \, 
\Q_p^\times  \ltimes \Q_p .
\end{equation*}
The usual $p$-adic topology on $\Q_p$ and $\Q_p^\times$
induces a topology on this affine group
which makes it a compactly generated LC-group
(see Example \ref{affinecg}).
The translation subgroup, identified with $\Q_p$,
is not compactly generated.
\index{p-adic topology on the group of $\Q_p$-points 
of an algebraic group}
\par

Similarly, in 
\begin{equation*}
\operatorname{BS}(1,p) \, = \,  
\begin{pmatrix} p^{\Z} & \Z[1/p] \\  0 & 1 \end{pmatrix} 
\, \simeq \,  \Z  \ltimes_p \Z[1/p] 
\, \simeq \,  \langle s,t  \mid t^{-1}s t = s^p \rangle ,
\end{equation*}
a solvable Baumslag-Solitar group generated by 
$s = \begin{pmatrix} 1 & 1 \\  0 & 1 \end{pmatrix}$ and 
$t = \begin{pmatrix} p & 0 \\  0 & 1 \end{pmatrix}$,
the subgroup $\begin{pmatrix} 1 & \Z[1/p] \\  0 & 1 \end{pmatrix} \simeq \Z[1/p]$
is not finitely generated.
\index{Baumslag-Solitar group}

\vskip.2cm

(3)
However, a closed subgroup of a compactly generated 
\emph{abelian or nilpotent} LC-group
is compactly generated (Proposition \ref{closubabcgiscg}).
\end{exe}

\begin{defn}
\label{deffibreproduct}
Let $G_1, G_2, H$ be LC-groups.
For $j = 1,2$, let $p_j : G_j \twoheadrightarrow H$ be a continuous surjective homomorphism. 
The corresponding \textbf{fibre product} is the group defined by
\index{Fibre product|textbf}
\begin{equation*}
G_1 \times_H G_2 \, = \, \{ (g_1, g_2) \in G_1 \times G_2 \mid
p_1(g_1) = p_2(g_2) \} .
\end{equation*}
As a closed subgroup of $G_1 \times G_2$, it is an LC-group.
\end{defn}

\begin{rem}
\label{remfibreproduct}
We keep the notation of the previous definition.

\vskip.2cm

(1) 
Assume furthermore that the homomorphisms $p_1$ and $p_2$ are open
(this is automatic in case $G_1$ and $G_2$ are $\sigma$-compact,
by Corollary \ref{Freudenthalcor} below).
For $\{j,k\} = \{1,2\}$, we have a short exact sequence
\begin{equation*}
N_j \, \lhook\joinrel\relbar\joinrel\rightarrow  \, G_1 \times_H G_2
\, \overset{\pi_k}{\relbar\joinrel\twoheadrightarrow} \, G_k 
\end{equation*}
where  $N_j = \ker (p_j)$,
and where $\pi_k$ denotes the canonical projection.
Indeed, we have $\ker (\pi_2) = \ker (p_1) \times \{1\}$, 
and similarly $\ker (\pi_1) = \{1\} \times \ker (p_2)$.
\par

It follows that $G_1 \times_H G_2$ is compactly generated
as soon as $G_1$ and $N_2$ are so,
or $G_2$ and $N_1$ are so
(Proposition \ref{stababcd}\ref{4DEstababcd}). 

\vskip.2cm

(2)
Suppose that $G_1$ and $G_2$ are compactly generated.
The fibre product $G_1 \times_H G_2$ need not be compactly generated,
as the following example shows
(the argument anticipates on a proposition of Chapter \ref{chap_cpgroups}).
\par

Let $F$ be a finitely presented group and
$p : F \twoheadrightarrow Q$ a surjective homomorphism. Set 
\begin{equation*}
E \, = \, F \times_Q F \, = \, 
\{ (x,y) \in F \times F \mid p(x) = p(y) \} .
\end{equation*}
Assume that $Q$ is not finitely presented;
we claim that $E$ is not finitely generated.
\par
Denote by $N$ the normal subgroup $\ker (p) \times \{1\}$ of $E$,
and let $\Delta$ the diagonal subgroup of elements of the form $(y,y)$, with $y \in F$.
Then $N \cap \Delta = \{1\}$, and $E = N \rtimes \Delta$
since every $(x,y) \in E$ can be written as the product $(xy^{-1}, 1)(y,y) \in N \Delta$.
By Proposition \ref{cpforgroupsandquotients}, 
the group $\ker (p)$ is not finitely generated as a normal subgroup of $F$.
Hence there exists a strictly increasing nested sequence
\begin{equation*}
K_1 \, \subsetneqq \, K_2 \, \subsetneqq \, \cdots \subsetneqq \, 
K_n \, \subsetneqq \, K_{n+1}  \, \subsetneqq \, \cdots \subset \, 
\bigcup_{n \ge 1} K_n = \ker (p)
\end{equation*}
of normal subgroups of $F$.
For each $n \ge 1$, set $E_n = \left( K_n \times \{1\} \right) \rtimes \Delta$.
Then we have
\begin{equation*}
E_1 \, \subsetneqq \, E_2 \, \subsetneqq \, \cdots \subsetneqq \, 
E_n \, \subsetneqq \, E_{n+1}  \, \subsetneqq \, \cdots \subset \, 
E = \bigcup_{n \ge 1} E_n ,
\end{equation*}
and this shows that $E$ is not finitely generated.
\end{rem}

For $G_1, G_2$ compactly generated LC-groups, short of having
compactly generated fibre products, we have the following:

\begin{prop}
\label{pullbackCgLCgroups}
Let $G_1, G_2, H$ be compactly generated LC-groups.
For $j = 1,2$, let $p_j : G_j \twoheadrightarrow H$ be a continuous surjective homomorphism. 
\par

There exist a compactly generated LC-group $E$
and two continuous surjective homomorphisms $\pi_1, \pi_2$ such that the diagram
\begin{equation*}
\dia{
E
\ar@{->>}[d]^{\pi_1}
\ar@{->>}[r]^{\pi_2}
& G_2 
\ar@{->>}[d]^{p_2}
\\
G_1
\ar@{->>}[r]^{p_1}
& H
}
\end{equation*}
commutes.
Moreover, for $\{i,j\} = \{1,2\}$, there exists an isomorphism of topological groups
from $\ker (\pi_i)$ to an open subgroup of $\ker (p_j)$.
\end{prop}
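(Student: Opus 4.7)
The plan is to take $E$ to be a compactly generated \emph{open} subgroup of the fibre product $F := G_1\times_H G_2$, chosen large enough that both fibre-product projections $\pi^F_i\colon F \twoheadrightarrow G_i$ remain surjective when restricted to $E$. Openness of $E$ in $F$ will automatically yield the ``moreover'' clause: for $\{i,j\}=\{1,2\}$, the kernel $\ker(\pi^F_i)$ coincides (as a topological group, via $g\mapsto(1,g)$ or $g\mapsto(g,1)$) with $\ker(p_j)$ sitting inside $F$, and $\ker(\pi_i)=E\cap\ker(\pi^F_i)$ is an open subgroup of it.

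To construct $E$, fix compact symmetric generating sets $L_1\subset G_1$ and $L_2\subset G_2$, each containing $1$. Their images $p_j(L_j)$ are compact generating sets of $H$, so Proposition~\ref{powersSincpgroup}\ref{5DEpowersSincpgroup} furnishes an integer $n$ with $p_1(L_1)\subset p_2(K_2)$ and $p_2(L_2)\subset p_1(K_1)$, where $K_i:=\widehat{L_i}^{\,n}$ is compact in $G_i$. Define
\begin{equation*}
A_1 := \{(s,t)\in L_1\times K_2 : p_1(s)=p_2(t)\},\qquad A_2 := \{(s,t)\in K_1\times L_2 : p_1(s)=p_2(t)\};
\end{equation*}
each is closed inside a compact product, hence compact in $F$. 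Pick compact symmetric neighborhoods $V_i$ of $1$ in $G_i$ and set $U:=(V_1\times V_2)\cap F$, a compact neighborhood of $(1,1)$ in $F$. Let $A := A_1\cup A_1^{-1}\cup A_2\cup A_2^{-1}\cup U\cup\{(1,1)\}$, and take $E := \langle A\rangle$.

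Because $E$ contains the neighborhood $U$ of the identity, $E$ is open (hence also closed) in $F$, so inherits a locally compact topology, and it is compactly generated by $A$. Writing $\pi_i := \pi^F_i|_E$, the identities $\pi_1(A_1)=L_1$ and $\pi_2(A_2)=L_2$ (which hold precisely because of the containments on $p_j(L_j)$ forced by the choice of $n$) give $\pi_1(E)\supset\langle L_1\rangle=G_1$ and $\pi_2(E)\supset\langle L_2\rangle=G_2$; commutativity of the square is built into the definition of $F$. The kernel statement was already disposed of in the first paragraph.

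The main obstacle is exactly the production of a \emph{compact} subset of $F$ whose first (resp.\ second) projection contains a generating set of $G_1$ (resp.\ $G_2$). A naive pointwise lift $L_1\to G_2$ of $p_2$ over $p_1\big|_{L_1}$ need not be continuous and so need not have compact image; the ``multi-valued'' device $A_1,A_2$ sidesteps this, and works only because every compact subset of $H$ lifts to a compact subset of each $G_i$ --- which is what Proposition~\ref{powersSincpgroup}\ref{5DEpowersSincpgroup} provides.
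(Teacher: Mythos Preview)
Your proof is correct and follows essentially the same strategy as the paper: form the fibre product $F=G_1\times_H G_2$, find compact subsets of $F$ projecting onto generating sets of $G_1$ and $G_2$, and take $E$ to be the open subgroup they generate together with a compact identity neighbourhood. The kernel claim is handled identically.

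The only difference is in how the compact lifts are obtained. The paper invokes Lemma~\ref{KimagedeK} (compact subsets of $G/H$ lift to compact subsets of $G$) applied to the projections $\pi_j^F:F\to G_j$, which gives the existence of such $S_j$ in one line. Your explicit construction of $A_1,A_2$ via Proposition~\ref{powersSincpgroup}\ref{5DEpowersSincpgroup} is a more hands-on substitute: rather than lifting through the quotient $F\to G_j$, you bound $p_1(L_1)$ inside a power of $p_2(L_2)$ and take the full preimage within a compact box. This is slightly more elementary (it avoids the general lifting lemma) but amounts to the same thing; in effect your $A_j$ is a specific choice of the $S_j$ the paper obtains abstractly.
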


\begin{proof}
Let
\begin{equation*}
F \, := \, G_1 \times_H G_2 \, = \, 
\{ (g_1, g_2) \in G_1 \times G_2 \mid p_1(g_1)= p_2(g_2)) \} \, \subset \, G_1 \times G_2
\end{equation*}
denote the relevant fibre product.
For  $i \in \{1,2\}$, denote by $\pi_i$ the canonical projection
$G_1 \times G_2 \twoheadrightarrow G_i$ as well as its restriction to $F$
(and also its restriction to $E$ as soon as $E$ will be defined).
For $j=1,2$, there exists by Lemma \ref{KimagedeK} a compact subset $S_j$ of $F$
such that $\pi_j(S_j)$ generates $G_j$.
Let $U$ be a compact subset of $F$ containing $S_1 \cup S_2$; 
assume moreover that $U$ has a non-empty interior.
Then we can define $E$ as the open subgroup of $F$ generated by $U$.
\par
The last claim follows from the equalities
$\ker (\pi_2) = ( \ker (p_1) \times \{1\} ) \cap D$
and $\ker (\pi_1) = ( \{1\} \times \ker (p_2) ) \cap D$.
See Remark \ref{remfibreproduct}(1).
\end{proof}

\begin{dig}[compact generation beyond local compactness]
\label{PestovEtc}
A topological group $G$ generated by a compact subset $K$ 
need not be locally compact, as shown by Examples (1) and (3) below.
\index{LC-group! non-LC-group}

\vskip.2cm

(1) 
Let $\mathcal H$ be a Hilbert  space, with the weak topology;
\index{Hilbert space}
it is a topological group under addition.
The unit ball $\mathcal H_1$ is a compact generating set of  $\mathcal H$.
If $\mathcal H$ is infinite-dimensional,
this group is compactly generated and is not locally compact
(see (2) below).

This carries over to the dual of any infinite-dimensional Banach space 
with the w$^*$-topology, by a theorem of Alaoglu.
\par
Note that, for the norm topology, an infinite-dimensional Banach space 
is never compactly generated.
This follows from (2) just below and Remark \ref{onmetrizability}(7).

\vskip.2cm

(2)
Recall the following theorem. 
Let $\K$ be a non-discrete complete valued field
and $E \ne \{0\}$ a Hausdorff topological vector space over $\K$;
if $E$ is locally compact, then $\K$ is a non-discrete locally compact field 
and $E$ is finite-dimensional \cite[chapitre 1, $\S$~2, n$^o$ 4]{BEVT}.
Recall also that, with the weak topology, 
an infinite-dimensional Hilbert space is non-metrizable;
its unit ball is metrizable if and only if the space is separable
\cite[Problems 18, 19,  and 21]{Halm--74}.

\vskip.2cm

(3)
Free topological groups\footnote{In
this book, with the exception of the present digression, 
we \emph{do not} consider any topology on a free group $F_S$,
even when $G = F_S /N$ is a topological group 
rather than just a group.}
\index{Free group}
and free abelian topological groups
\index{Free abelian group}
have been defined and investigated in the early 1940's, by A.A. Markov. 
For references, let us indicate Section II.8 in \cite{HeRo--63},
Abels' article \cite{Abel--72'}, 
an introduction to free topological groups \cite[Section 25]{Todo--97},
and an extended review \cite{Sipa--03}.
\par
Let $X$ be a topological space,
identified with a subset of the free group $F_X$ on $X$.
Then $F_X$ is given the finest group topology which induces on $X$
a topology coarser than the given one.
Then, every continuous mapping from $X$ to a topological group $G$
extends uniquely to a continuous homomorphism from $F_X$ to $G$;
moreover, if $X$ is Hausdorff and completely regular,
the inclusion of $X$ in $F_X$ is a homeomorphism.
The topological group $F_X$ is locally compact 
if and only if the topological space $X$ is discrete \cite[Page 5793]{Sipa--03}.
In particular, if $X$ is compact and infinite, 
$F_X$ is a topological group which is compactly generated and not locally compact.

\vskip.2cm

(4)
Let $\operatorname{Cr}_2(\C)$ be the \emph{plane Cremona group over $\C$,} 
\index{Cremona group of the plane}
i.e., the group of birational transformations of the complex projective plane.
There exists a Hausdorff topology on $\operatorname{Cr}_2(\C)$ 
making it a topological group,
and inducing the usual topology on its standard subgroups 
$\PGL_3(\C)$ and $\PGL_2(\C) \times \PGL_2(\C)$.
The group $\operatorname{Cr}_2(\C)$ is compactly generated 
and is \emph{not} locally compact;
see \cite{BlFu--13}, in particular their Lemmas 5.15 and 5.17.
\end{dig}

\begin{rem}
\label{problem}
(1)
Any countable group 
is a subgroup of a $2$-generator group.
This is the \emph{HNN embedding theorem} of \cite{HiNN--49};
\index{HNN! embedding theorem}
\index{Theorem! HNN embedding}
see \cite[Theorem 11.71]{Rotm--95},
or the very short exposition in \cite{Galv--93},
which includes a nine-lines-long proof.
The theorem is also an immediate consequence of the following result,
where $\operatorname{Sym}(X)$ stands for 
the symmetric group of all permutations of an infinite countable set $X$:
every countable subgroup of $\operatorname{Sym}(X)$ 
is contained in a $2$-generator subgroup of $\operatorname{Sym}(X)$;
\index{Symmetric group $\operatorname{Sym}(X)$ of a set $X$}
see Example \ref{Symuncountablecof}.

\vskip.2cm

(2)
It is natural to ask whether (1) extends to the locally compact setting,
more precisely to ask whether 
every $\sigma$-compact LC-group embeds
as a \emph{closed} subgroup of a compactly generated LC-group.
The answer is negative, as shown in \cite{CaCo--14}.

\vskip.2cm

(3)
Pestov has shown that every $\sigma$-compact topological group $H$
is isomorphic to a closed subgroup of a compactly generated topological group $G$;
in this statement, neither $H$ nor $G$ need be locally compact
(\cite{Pest--86}, see also \cite[Section 24]{Todo--97}).
\par
Also: every separable topological group embeds as a subgroup
in a topological group containing a dense subgroup generated by $2$ elements \cite{MoPe--98}.
\end{rem}

\begin{rem}
\label{ConditionsCn}
For discrete groups, finite generation is the first of a sequence $(F_n)_{n \ge 1}$
of increasingly stronger properties, introduced by C.T.C. Wall in 1965.
For $n \ge 1$, a group $\Gamma$ is of \textbf{type $F_n$} 
\index{Type! $F_n$ (for groups)}
if there exists a connected CW-complex 
with finite $n$-skeleton, 
fundamental group $\Gamma$,
and trivial higher homotopy groups.
A group is of type $F_1$ if and only if it is finitely generated,
and of type $F_2$ if and only if it is finitely presented.
For each $n \ge 1$, a group of type $F_n$ need not be of type $F_{n+1}$,
as shown by examples of Stallings for $n = 2$ and Bieri for $n \ge 3$ (among others).
\par
For LC-groups, Abels and Tiemeyer \cite{AbTi--97}
have defined a sequence of types $(C_n)_{n \ge 1}$, with the following properties:
\index{Type! $C_n$ (for LC-groups)}
\begin{enumerate}[noitemsep]
\item[(1)]
an LC-group is of type $C_1$ if and only if it is compactly generated,
\item[(2)]
an LC-group is of type $C_2$ if and only if it is compactly presented
(Chapter \ref{chap_cpgroups}), 
\item[(dis)]
for all $n \ge 1$,
a discrete group is of type $C_n$ if and only if it is of type $F_n$.
\end{enumerate}
Moreover, these properties are invariant ``up to compactness'':
more precisely, consider an LC-group $G$,
a cocompact closed subgroup $H$, a compact normal subgroup $N$, 
and an integer $n \ge 1$;
then $G$ is of type $C_n$ if and only if $H$ is of type $C_n$,
if and only if $G/N$ is of type $C_n$.
\par
In \cite{Tiem--97}, it is shown that reductive groups
over local fields of characteristic $0$ are of type $C_n$, for all $n \ge 1$.
This is used to give a new proof of a result of Borel and Serre \cite{BoSe--76}:
$S$-arithmetic subgroups of reductive groups are of type $F_n$ for all $n \ge 1$.
\index{Reductive group}
\end{rem}

\section{Miscellanea}
\label{2misc}

In this section, we collect a few very standard facts, for convenient reference.

\vskip.2cm

Let $X$ be a topological \emph{space}. 
If $X$ is Hausdorff, then every point of $X$ is closed.
It is well-known that the converse does not hold; see e.g.\
Chap.\ I, $\S$~8, Exercise 7 in \cite[Page I.101]{BTG1-4}.
However, for a possibly non-Hausdorff topological \emph{group}, we have:

\begin{prop}
\label{GHausdorff}
Let $G$ be a (possibly non-Hausdorff) topological group. 
Then $G$ is Hausdorff if and only $\{1\}$ is closed in $G$.
\end{prop}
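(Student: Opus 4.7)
The plan is to prove the forward direction by the standard fact that singletons are closed in Hausdorff spaces, and to prove the reverse direction via the characterization of Hausdorffness through closedness of the diagonal in the square.

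First I would dispose of the easy direction: if $G$ is Hausdorff then every point, in particular $\{1\}$, is closed. For the nontrivial direction, assume $\{1\}$ is closed in $G$. I would introduce the continuous map
\[
\varphi : G \times G \longrightarrow G, \qquad \varphi(x,y) = x^{-1}y,
\]
whose continuity follows from the fact that multiplication and inversion are continuous on $G$ (this is part of the very definition of a topological group, and requires no Hausdorff assumption). Then I would observe that the diagonal
\[
\Delta \, = \, \{(x,x) \in G \times G \mid x \in G\} \, = \, \varphi^{-1}(\{1\})
\]
is closed in $G \times G$, as the preimage of the closed set $\{1\}$ under the continuous map $\varphi$.

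Finally, I would invoke the standard criterion that a topological space $X$ is Hausdorff if and only if its diagonal is closed in $X \times X$: given two distinct points $g,h \in G$, the pair $(g,h)$ lies in the open complement $(G \times G) \smallsetminus \Delta$, so one can find a basic open rectangle $U \times V$ containing $(g,h)$ and disjoint from $\Delta$; this forces $U$ and $V$ to be disjoint open neighbourhoods of $g$ and $h$ respectively.

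I do not anticipate any real obstacle here; the only subtlety is conceptual, namely noticing that the continuity of $(x,y) \mapsto x^{-1}y$ is built into the definition of ``topological group'' without any separation axiom, so the argument genuinely proceeds without assuming Hausdorffness a priori. Everything else is routine.
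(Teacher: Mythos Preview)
Your proof is correct and takes essentially the same approach as the paper: both use the continuous map $G \times G \to G$ given by $(x,y) \mapsto x^{-1}y$ (the paper uses $xy^{-1}$, which is immaterial) to exhibit the diagonal as the preimage of $\{1\}$, and then invoke the closed-diagonal criterion for Hausdorffness.
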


\begin{proof}
Recall that a topological space $X$ is Hausdorff if and only if 
 \index{Hausdorff! topological space, group}
the diagonal $\{ (x,y) \in X \times X \mid x=y \}$
is closed in the product space $X \times X$.
\par
To show the non-trivial implication,
suppose $\{1\}$ is closed in $G$.
The diagonal is closed in $G \times G$, because it is the inverse image of $\{1\}$
by the continuous map $(g,h) \longmapsto gh^{-1}$ from $G \times G$ to $G$.
Hence $G$ is Hausdorff.
\end{proof}

\begin{prop}
\label{GtoHcovering}
Let $G, H$ be topological groups and $p : G \twoheadrightarrow H$ a continuous surjective homomorphism.
Then $p$ is a covering if and only if $p$ is a local homeomorphism at $1$.
\end{prop}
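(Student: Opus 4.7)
The plan is to handle the trivial direction first and then concentrate on the converse, which is a translation argument built on the group structure of $G$ and $H$.

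For the forward implication there is nothing to do: if $p$ is a covering then in particular it is a local homeomorphism at every point of $G$, including $1$.

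For the converse, suppose there is an open neighbourhood $U$ of $1$ in $G$ such that $V := p(U)$ is open in $H$ and $p|_U : U \longrightarrow V$ is a homeomorphism. Write $N = \ker(p)$, which is a closed normal subgroup of $G$. I would first record the following two observations, both consequences of the injectivity of $p|_U$: (i) $U \cap N = \{1\}$, so that $N$ inherits the discrete topology from $G$; and (ii) the translates $\{Un\}_{n\in N}$ are pairwise disjoint, since $u_1 = u_2 n$ with $u_1,u_2 \in U$ and $n \in N$ forces $p(u_1)=p(u_2)$, hence $u_1 = u_2$ and $n=1$. Consequently $p^{-1}(V) = UN = \bigsqcup_{n\in N} Un$.

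Now, given an arbitrary $h \in H$, use surjectivity of $p$ to pick $g \in G$ with $p(g)=h$, and consider the open neighbourhood $hV$ of $h$. A direct computation gives
\begin{equation*}
p^{-1}(hV) \, = \, g \, p^{-1}(V) \, = \, g U N \, = \, \bigsqcup_{n \in N} g U n,
\end{equation*}
where the disjointness is inherited from (ii) by left translation. Each $gUn$ is open in $G$, and the restriction $p|_{gUn}$ factors as
\begin{equation*}
gUn \, \xrightarrow{\; x \,\mapsto\, g^{-1} x n^{-1} \;} \, U \, \xrightarrow{\; p|_U \;} \, V \, \xrightarrow{\; y \,\mapsto\, hy \;} \, hV,
\end{equation*}
which is a composition of three homeomorphisms. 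This exhibits $hV$ as an evenly covered neighbourhood of $h$, establishing that $p$ is a covering.

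The only genuine point to watch is the disjointness step and the discreteness of $N$; once $p|_U$ is known to be injective these are automatic, so I do not expect any real obstacle. No additional hypotheses on $G$ and $H$ (local compactness, Hausdorffness, second countability) enter the argument: everything is purely a translation of the local picture at $1$ to every point of $H$ using left multiplication in $G$ and $H$ together with the fact that $\ker(p)$ acts on the fibres by right translation.
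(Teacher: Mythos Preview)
Your proof is correct and follows essentially the same approach as the paper's: both decompose $p^{-1}(V)$ into disjoint $\ker(p)$-translates of $U$ using the injectivity of $p|_U$, and then translate this picture to an arbitrary point $h\in H$. The only cosmetic differences are that the paper uses left cosets $xU$ ($x\in\ker p$) where you use right cosets $Un$, and that you spell out the homeomorphism $p|_{gUn}\to hV$ more explicitly.
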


\begin{proof}
Suppose that $p$ is a local homeomorphism at $1$: there exist
a neighbourhood $U$ of $1$ in $G$ and a neighbourhood $V$ of $1$ in $H$
so that the restriction $p_U$ of $p$ to $U$ is a homeomorphism of $U$ onto $V$.
Then $p^{-1}(V) = \bigcup_{x \in \ker (p)} xU \subset G$.
\par
For $x,y \in \ker (p)$ with $x \ne y$, we have $xU \cap yU = \emptyset$.
Indeed, otherwise, there would exist $u_1, u_2 \in U$ with $u_1 = x^{-1}yu_2$,
hence with $p_U(u_1) = p_U(u_2)$, and this is impossible since $p_U$ is injective.
Therefore, we have a disjoint union
$p^{-1}(V) = \bigsqcup_{x \in \ker (p)} xU$,
so that $p^{-1}(V)$ is naturally homeomorphic to $\ker (p) \times V$,
that is to the product of the discrete space $\ker (p)$ with $V$.
\par
For every $h \in H$, we have by left translation a neighbourhood $hV$ of $h$ in $H$
such that $p^{-1}(hV)$ is homeomorphic to $\ker (p) \times hV$.
This means that $p$ is a covering.
\par
The converse implication is trivial.
\end{proof}

\begin{prop}
\label{discretenormalcentral}
(1)
In a connected topological group, every discrete normal subgroup is central.
\par
(2)
The fundamental group of a connected Lie group is abelian.
\index{Subgroup! discrete normal}
\end{prop}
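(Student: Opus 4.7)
The plan is to handle (1) directly by a connectedness argument and then derive (2) from it via the universal cover.

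For (1), let $G$ be a connected topological group and $N \lhd G$ a discrete normal subgroup. Fix $n \in N$ and consider the continuous conjugation map
\[
c_n \, : \, G \longrightarrow N , \quad g \longmapsto g n g^{-1} ,
\]
which is well-defined by normality of $N$ and continuous because multiplication and inversion are. Since $G$ is connected, its image $c_n(G) \subset N$ is connected in the discrete space $N$, hence a singleton. Evaluating at $g = 1$ identifies this singleton as $\{n\}$, so $g n g^{-1} = n$ for every $g \in G$. As $n \in N$ was arbitrary, $N \subset Z(G)$.

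For (2), recall the standard fact that a connected Lie group $G$ admits a universal covering $p : \widetilde{G} \twoheadrightarrow G$ carrying a unique Lie group structure for which $p$ is a continuous homomorphism and the chosen lift of $1$ is the identity; with this structure $\widetilde{G}$ is connected (indeed simply connected). The kernel $\ker(p)$ is a normal subgroup, and is discrete because $p$ is a covering (see also Proposition \ref{GtoHcovering}). The deck transformation group of $p$ is canonically isomorphic to $\pi_1(G)$, and since $\widetilde{G}$ is simply connected the deck transformations are precisely left translations by elements of $\ker(p)$; hence $\ker(p) \simeq \pi_1(G)$ as groups.

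Applying part (1) to the connected topological group $\widetilde{G}$ and its discrete normal subgroup $\ker(p)$ shows that $\ker(p)$ is central in $\widetilde{G}$, and in particular abelian. Thus $\pi_1(G)$ is abelian. The only step that relies on substantial background is the construction of $\widetilde{G}$ as a Lie group with $p$ a homomorphism, and the identification $\ker(p) \simeq \pi_1(G)$; both are classical and can be invoked without reproof.
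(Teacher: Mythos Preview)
Your proof is correct and essentially identical to the paper's: for (1) the paper uses the map $g \mapsto gng^{-1}n^{-1}$ landing in $N$ with image $\{1\}$, which is a trivial variant of your $g \mapsto gng^{-1}$ with image $\{n\}$; for (2) the paper likewise identifies $\pi_1(L)$ with the kernel of the universal covering projection and applies (1).
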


\emph{Note:} the fundamental group of a connected Lie group
is moreover finitely generated (Corollary \ref{fundgpLietypefini}).

\begin{proof}
(1)
Let $G$ be a connected topological group and $N$ a discrete normal subgroup.
For $n \in N$, the image of the continuous map
$G \longrightarrow G, \hskip.1cm g \longmapsto gng^{-1}n^{-1}$
is both connected and contained in $N$.
Hence this image is $\{1\}$, and $n$ is central in $G$.
\par
(2)
For a connected Lie group $L$ with universal covering group $\widetilde L$, 
the fundamental group $\pi_1(L)$ can be identified with the kernel
of the covering projection $\widetilde L \twoheadrightarrow L$.
\index{Universal cover}
Hence $\pi_1(L)$ is abelian, by (1) applied to the connected group $\widetilde L$.
\end{proof}

\begin{rem}[some properties of LC-groups]
\label{remLCGetc}
The following properties of LC-groups and their homomorphisms are of frequent use.

\vskip.2cm

(1) Locally compact groups are paracompact.
Indeed, let $G$ be an LC-group,
$V$ a compact symmetric neighbourhood of $1$,
and $U = \bigcup_{n \ge 0} V^n$ as in the proof of Proposition \ref{powersSincpgroup}.
Then $U$ is a subgroup of $G$ that is $\sigma$-compact, and therefore paracompact.
Since $U$ is an open subgroup, $G$ is homeomorphic to $U \times G/U$,
where $G/U$ is discrete.
It follows that $G$ is paracompact 
\cite[Theorem 5, Page I.70]{BTG1-4}.
\par

More generally, if $G$ is an LC-group and $H$ a closed subgroup,
the homogeneous space $G/H$ is paracompact \cite[Page III.35]{BTG1-4}.
\par

Recall that a Hausdorff topological space $X$ is \textbf{paracompact} 
\index{Paracompact space|textbf}
if every open covering of $X$ has a locally finite refinement.
For such a space $X$ and any open covering $\mathcal U$ of $X$,
it is a basic fact that there exists a continuous partition of unity,
subordinate to $\mathcal U$.

\vskip.2cm

(2) Since paracompact spaces are normal \cite[Section 8.2]{Dugu--66},
it follows from~(1) that a locally compact \emph{group} is normal.
Note that a locally compact \emph{space} is completely regular,
but need not be normal \cite[Section 11.6]{Dugu--66}.
\par

Recall that a Hausdorff topological space $X$ is
\textbf{normal} \index{Normal topological space|textbf}
(or T$_4$) 
\index{T$_x$ for a topological space! T$_4$|textbf}
if, for every pair $(Y,Z)$ of disjoint closed subsets of $X$,
there exists a continuous function $f : X \longrightarrow \mathopen[0,1\mathclose]$
such that $f(y) = 0$ for all $y \in Y$ and $f(z) = 1$ for all $z \in Z$.
The space $X$ is  \textbf{completely regular} 
\index{Completely regular topological space|textbf}
(or T$_{3\frac{1}{2}}$)  
\index{T$_x$ for a topological space! T$_{3\frac{1}{2}}$|textbf}
if the analogous condition holds with $Y$ a singleton subspace $\{y\}$.
The space $X$ is \textbf{regular} \index{Regular topological space|textbf}
(or T$_{3}$) 
\index{T$_x$ for a topological space! T$_{3}$|textbf}
if, given any point $y \in X$ and closed subset $Z \subset X$,
there exist neighbourhoods $U$ of $y$ and $V$ of $Z$ such that $U \cap V = \emptyset$.

\vskip.2cm

(3)
A continuous bijection between locally compact \emph{spaces} 
need not be a homeomorphism,
as shown by the bijection $x \longmapsto e^{2i\pi x}$ 
from the half-open half-closed interval $\mathopen[0,1\mathclose[$ 
to the unit circle in $\C$
(compare with Remark \ref{remonmetrics}(2)).
However, a continuous bijective homomorphism between locally compact \emph{groups}
is an isomorphism of topological groups
(Corollary \ref{Freudenthalcor} below).
\par
More generally, we have the following proposition,
which can be viewed as an open mapping theorem:
\index{Open mapping theorem}
\index{Theorem! open mapping theorem}
\end{rem}

\begin{prop}[Freudenthal]
\label{Freudenthal}
Let $G$ be an LC-group, $X$ a locally compact space, 
$G \times X \longrightarrow X$ a transitive continuous action,
$a \in X$ a base point, and $H = \{g \in G \mid g(a) = a \}$
the corresponding isotropy subgroup.
\par
If $G$ is $\sigma$-compact, the orbit map $i_a : G \longrightarrow X$, $g \longrightarrow g(a)$,
is open, and the quotient map $G/H \twoheadrightarrow X$ is therefore a homeomorphism.
\end{prop}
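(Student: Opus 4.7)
The plan is to prove that the orbit map $i_a \colon G \longrightarrow X$ is open; the second assertion then follows formally, because $H = i_a^{-1}(a)$ is closed (as the preimage of the closed point $\{a\}$ under the continuous map $i_a$), the induced map $\overline{i_a} \colon G/H \longrightarrow X$ is a continuous bijection by the transitivity of the action, and openness of $i_a$ implies openness of $\overline{i_a}$.

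To prove openness of $i_a$, I first reduce to showing that $Ua := \{g(a) \mid g \in U\}$ is a neighbourhood of $a$ in $X$ for every open neighbourhood $U$ of $1$ in $G$. Indeed, for arbitrary $g_0 \in G$ and any open $W \subset G$ containing $g_0$, the set $U := g_0^{-1}W$ is an open neighbourhood of $1$; and since each element of $G$ acts on $X$ as a homeomorphism (with inverse given by its group inverse, by continuity of the action), $i_a(W) = g_0(Ua)$ is a neighbourhood of $g_0(a) = i_a(g_0)$ as soon as $Ua$ is a neighbourhood of $a$.

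So fix an open neighbourhood $U$ of $1$ in $G$, and choose a compact symmetric neighbourhood $V$ of $1$ with $V^2 \subset U$ (such a $V$ exists since $G$ is an LC-group). By $\sigma$-compactness together with Proposition \ref{powersSincpgroup}\ref{3DEpowersSincpgroup} (or directly by covering $G$ with countably many compact sets and then covering each by finitely many translates of the open set $\operatorname{int}(V)$), there exists a countable sequence $(g_n)_{n \ge 0}$ in $G$ with $G = \bigcup_{n \ge 0} g_n V$. Applying the orbit map yields
\begin{equation*}
X \, = \, i_a(G) \, = \, \bigcup_{n \ge 0} g_n(Va) .
\end{equation*}
Each set $g_n(Va)$ is the image of the compact set $V$ under the continuous map $v \longmapsto g_n(v(a))$, hence is compact; since $X$ is Hausdorff, each $g_n(Va)$ is closed in $X$.

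Now apply Baire's theorem \ref{propBaire}(1) to the LC-space $X$: the countable union of closed sets $g_n(Va)$ equals $X$, so at least one of them, say $g_{n_0}(Va)$, has non-empty interior. Pick a point $w$ in this interior; then $w = g_{n_0}(v_0(a))$ for some $v_0 \in V$. The set
\begin{equation*}
\Omega \, := \, v_0^{-1}\bigl(g_{n_0}^{-1}\bigl( \operatorname{int}(g_{n_0}(Va)) \bigr)\bigr)
\end{equation*}
is open in $X$ (since $v_0^{-1} g_{n_0}^{-1}$ acts as a homeomorphism of $X$) and contains $v_0^{-1}(g_{n_0}^{-1}(w)) = a$. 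On the other hand $\Omega \subset v_0^{-1}(Va) \subset V^{-1}V a = V^2 a \subset Ua$, using that $V$ is symmetric. Hence $Ua$ contains the open neighbourhood $\Omega$ of $a$, as required.

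The main potential obstacle is ensuring that every ingredient is genuinely available: the Hausdorff hypothesis on $X$ is essential in order to conclude that the compact subsets $g_n(Va)$ are closed and thereby to invoke the Baire category theorem, while the $\sigma$-compactness of $G$ is precisely what makes the covering of $X$ countable. Both are in force here, so the argument goes through, and the induced continuous bijection $G/H \twoheadrightarrow X$ is then a homeomorphism.
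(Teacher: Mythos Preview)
Your proof is correct and follows essentially the same Baire-category argument as the paper: choose a compact symmetric neighbourhood $V$ of $1$ with $V^2 \subset U$, cover $X$ by countably many closed sets $g_n(Va)$, use Baire's theorem to find one with non-empty interior, and translate back to exhibit an open neighbourhood of $a$ inside $Ua$. The only cosmetic differences are that you reduce to $g_0 = 1$ at the outset (the paper carries a general $g$ throughout) and that you make explicit why the sets $g_n(Va)$ are closed, whereas the paper simply invokes that $X$ is a Baire space.
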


\noindent \emph{Note.} 
Our attribution to Freudenthal rests on \cite[Item 26]{Freu--36}.
Hewitt and Ross refer to a article by A.A.\ Markov, from 1935
\cite[Pages 37 and 51]{HeRo--63}.
The proposition can also be found in \cite[Theorem 2.5 of Chapter I]{Hoch--65}.

\begin{proof}
Let $g \in G$ and $V$ an open neighbourhood of $g$.
We have to show the existence of 
an open neighbourhood of $i_a(g)$ in $X$ contained in $i_a(V)$.
\par
Let $U$ be a symmetric compact neighbourhood of $1$ in $G$ 
such that $gU^2 \subset V$.
Since $G$ is $\sigma$-compact, there exists a sequence $(g_n)_{n \ge 0}$ in $G$
such that $G = \bigcup_{n \ge 0} g_nU$;
thus $X = \bigcup_{n \ge 0} i_a(g_nU)$.
Since $X$ is a Baire space,
there exists $n \ge 0$ such that the interior $\operatorname{int}(i_a(g_nU))$
is non-empty; 
as $x \longmapsto g_nx$ is a homeomorphism of $X$,
we have $\operatorname{int}(i_a(U)) \ne \emptyset$.
Hence there exists $g' \in U$ such that $i_a(g') \in \operatorname{int}(i_a(U))$,
and therefore $a \in \operatorname{int}(i_a(U^2))$.
It follows that
 $i_a(g) \in \operatorname{int}(i_a(gU^2)) \subset i_a(V)$,
as was to be shown.
\end{proof}

\begin{cor}
\label{Freudenthalcor}
Let $G_1, G_2$ be two LC-groups and 
$\varphi : G_1 \longrightarrow G_2$ a continuous homomorphism.
Assume that $G_1$ is $\sigma$-compact.
\par

(1) If $\varphi$ is surjective, then $\varphi$ is open.
In particular, if $\varphi$ is a bijective continuous homomorphism,
then $\varphi$ is an isomorphism of topological groups.
\par

(2) The homomorphism $\varphi$ is proper if and only if
$\ker(\varphi)$ is compact in $G_1$ and $f(G_1)$ closed in $G_2$.
\end{cor}

\begin{proof} 
(1)
This follows from Proposition \ref{Freudenthal}, applied to the action
$G_1 \times G_2 \longrightarrow G_2$ of $G_1$ defined by
$(g_1, g_2) \longmapsto \varphi(g_1)g_2$.
\par

(2) If $\varphi$ is proper, then $\ker(\varphi)$ is compact and $\varphi(G_1)$ is closed,
as are all fibers and all images of all proper continuous maps from one
topological space to another. 
\par

For the non-trivial implication,
assume that $\ker(\varphi)$ is compact,
so that the canonical projection 
$\pi : G_1 \longrightarrow G_1 / \ker(\varphi)$ is proper,
and that $\varphi(G_1)$ is closed, 
so that the inclusion 
$j : \varphi(G_1) \longrightarrow G_2$ is proper.
By (1), 
the natural map $\psi : G_1 / \ker(\varphi) \longrightarrow \varphi(G_1)$
is an isomorphism, in particular is proper.
It follows that the composition $\varphi = j \circ \psi \circ \pi$ is proper.
\end{proof}

In Corollary \ref{Freudenthalcor}(1),
the hypothesis ``$G_1$ is $\sigma$-compact'' cannot be deleted,
as the example 
$\R_{\text{dis}} \overset{\operatorname{id}}{\longrightarrow} \R$ shows,
where $\R_{\text{dis}}$ stands for the group $\R$ with the discrete topology.

\vskip.2cm

As a particular case of Corollary \ref{Freudenthalcor}, 
with $G_1 = G_2$ as abstract groups:

\begin{cor}
\label{Freudenthalcorbis}
On a $\sigma$-compact LC-group, a continuous left-invariant metric is compatible.
\end{cor}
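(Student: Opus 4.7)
The plan is to apply Corollary~\ref{Freudenthalcor} to the identity map $\iota : G \to G_d$, where $G_d$ denotes the abstract group $G$ equipped with the topology $\mathcal{T}_d$ defined by the metric $d$, while $G$ itself keeps its original topology $\mathcal{T}_G$. Compatibility of $d$ amounts to the equality $\mathcal{T}_d = \mathcal{T}_G$, i.e., to $\iota$ being a homeomorphism.

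Continuity of $\iota$, equivalently $\mathcal{T}_d \subseteq \mathcal{T}_G$, is immediate from continuity of $d$: at any $g_0 \in G$ and any $\varepsilon > 0$, continuity of $d$ at $(g_0, g_0)$ gives a $\mathcal{T}_G$-neighborhood of $g_0$ inside the open $d$-ball of radius $\varepsilon$. The principal step is to verify that $G_d$ is itself an LC-group. Left-invariance of $d$ makes left translations into $d$-isometries, hence $\mathcal{T}_d$-homeomorphisms, so the analysis reduces to a neighborhood of $1$. Since $G$ is a Baire space (Theorem~\ref{propBaire}) and $G = \bigcup_{n \geq 1} \{g : d(1,g) \leq n\}$ is a countable union of $\mathcal{T}_G$-closed sets (by continuity of $d$), some ball $\{g : d(1,g) \leq R\}$ has non-empty $\mathcal{T}_G$-interior $U$. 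For $g, h \in U$, left-invariance yields $d(1, g^{-1}h) = d(g, h) \leq 2R$, so $U^{-1}U$ is a $\mathcal{T}_G$-open neighborhood of $1$ contained in $\{g : d(1,g) \leq 2R\}$; inside it one can select a compact $\mathcal{T}_G$-neighborhood $K$ of $1$. The restriction $\iota|_K : (K, \mathcal{T}_G|_K) \to (K, \mathcal{T}_d|_K)$ is a continuous bijection from a compact Hausdorff space to a Hausdorff space, hence a homeomorphism, so the two topologies agree on $K$. From this agreement on $K$, together with left-invariance, one derives that $G_d$ is a locally compact topological group.

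Once $G_d$ is recognised as an LC-group, Corollary~\ref{Freudenthalcor} applied to the continuous bijective homomorphism $\iota : G \to G_d$ with $\sigma$-compact source $G$ yields that $\iota$ is an isomorphism of topological groups. Hence $\mathcal{T}_G = \mathcal{T}_d$, i.e., $d$ is compatible. The main obstacle is the verification that $G_d$ is a topological group: $\mathcal{T}_d$-continuity of inversion and joint $\mathcal{T}_d$-continuity of multiplication at $1$ are not automatic from continuity of $d$ and left-invariance, since conjugation near $1$ is not controlled by $d$-distance without further work; one must translate the topology-agreement on the compact $K$ produced by the Baire argument into the required uniform control near the identity.
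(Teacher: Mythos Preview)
Your strategy matches the paper's: apply Corollary~\ref{Freudenthalcor} to $\iota:G\to G_d$. You are right to flag that this requires $G_d$ to be an LC-group, and right that this is not immediate. In fact the gap cannot be filled, because the corollary as stated is \emph{false}. Take $G=\Z$ with the discrete topology and $d(m,n)=\lvert e^{im}-e^{in}\rvert$, the pull-back metric along the dense injective homomorphism $n\mapsto e^{in}$ into the circle. This $d$ is a left-invariant metric, trivially continuous on the discrete group $\Z$, yet the $d$-topology is the non-discrete subspace topology inherited from $S^1$; so $d$ is not compatible. Note that here $G_d$ \emph{is} a topological group (it is abelian), so your stated concern about inversion and multiplication is misplaced---indeed $d(1,g^{-1})=d(1,g)$ and $d(1,gh)\le d(1,g)+d(1,h)$ hold for any left-invariant metric, giving $\mathcal T_d$-continuity of both at $1$ for free. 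What fails is local compactness of $G_d$: a proper dense subgroup of $S^1$ cannot be locally compact. The paper's one-line justification overlooks exactly this point.

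Your Baire step goes through in this example but proves too little. Since $\Z$ is discrete, the compact $K$ you construct is finite, and on a finite set both subspace topologies are discrete, so agreement on $K$ is vacuous. The step you never carry out---showing that $K$, or some small $d$-ball, is a $\mathcal T_d$-neighbourhood of $1$---is precisely where the argument must break. For the conclusion to hold one needs an additional hypothesis: properness of $d$ already yields the result via Proposition~\ref{contpropermet} (with no need for left-invariance or $\sigma$-compactness), and completeness of $(G,d)$ would make $G_d$ a Baire space and allow the proof of Proposition~\ref{Freudenthal} to run directly on the left-multiplication action of $G$ on $G_d$.
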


\section[Structure of LC-groups]
{LC-groups: structure and approximation by Lie groups}
\label{structure}

\begin{defn}
\label{deftotdiscetc}
Let $X$ be a topological space.
The \textbf{connected component of a point} 
\index{Connected component|textbf}
$x \in X$ is the union of the connected subspaces of $X$ containing $x$;
it is a closed subspace of $X$
\cite[Page I.84]{BTG1-4}.
A topological space is \textbf{totally disconnected}
\index{Totally disconnected! topological space|textbf}
if every point in it is its own connected component.
A subspace of a topological space is \textbf{clopen}
\index{Clopen subspace|textbf}
if it is both closed and open.
A topological space is \textbf{zero-dimensional} 
\index{Zero-dimensional topological space|textbf}
if its clopen subspaces constitute a basis of its topology.
\end{defn}

Any zero-dimensional space is clearly totally disconnected.
There is a partial converse. In a totally disconnected LC-space,
every neighbourhood of a point contains
a clopen neighbourhood of this point  \cite[Theorem 3.5]{HeRo--63}.
Thus:

\begin{prop}
\label{0dimtd}
A locally compact space is totally disconnected 
if and only if it is zero-dimensional.
\end{prop}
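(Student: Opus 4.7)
The proof is essentially immediate from the cited input from Hewitt–Ross, so my plan is to record the easy direction carefully and then show that the nontrivial direction reduces at once to the stated fact that in a totally disconnected LC-space every neighbourhood of a point contains a clopen neighbourhood of that point.

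For the direction ``zero-dimensional implies totally disconnected'', I would argue pointwise. Fix $x \in X$ and let $y \ne x$. Since $X$ is Hausdorff and the clopen subsets of $X$ form a basis, there exists a clopen set $U$ with $x \in U$ and $y \notin U$. Now the pair $(U, X \smallsetminus U)$ is a disjoint open cover of $X$, so any connected subset containing $x$ must lie entirely inside $U$, hence cannot contain $y$. Letting $y$ vary shows that the connected component of $x$ is $\{x\}$, so $X$ is totally disconnected. Note that this direction does not use local compactness.

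For the direction ``locally compact and totally disconnected implies zero-dimensional'', I would invoke the fact quoted just before the proposition: in a totally disconnected LC-space, every neighbourhood of a point contains a clopen neighbourhood of that point. This says exactly that the clopen sets form a neighbourhood basis at every point, which is equivalent to saying they form a basis of the topology. Thus $X$ is zero-dimensional.

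The only genuine content is therefore the cited Hewitt--Ross lemma, and if one wanted to include a proof of it, the main obstacle would be passing from quasi-components to components. The standard route is: pick a compact neighbourhood $K$ of $x$ (using local compactness); work inside $K$, which is compact Hausdorff and still totally disconnected; use the classical fact that in a compact Hausdorff space the quasi-component of $x$ (i.e.\ the intersection of all clopen neighbourhoods of $x$ in $K$) coincides with the connected component of $x$, hence equals $\{x\}$; apply compactness of $K \smallsetminus V$ (for a given open neighbourhood $V$ of $x$) to extract finitely many clopen subsets of $K$ whose intersection is a clopen neighbourhood of $x$ in $K$ contained in $\operatorname{int}(K) \cap V$; and finally observe that a clopen subset of $K$ lying inside $\operatorname{int}(K)$ is automatically clopen in $X$, because $K$ is closed in $X$ and $\operatorname{int}(K)$ is open in $X$. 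But since the excerpt already cites this lemma as known, my proof will simply apply it and conclude.
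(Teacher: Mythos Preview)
Your proposal is correct and follows essentially the same approach as the paper: the paper treats the easy direction as clear and deduces the hard direction immediately from the cited Hewitt--Ross fact, exactly as you do. Your added detail (the explicit pointwise argument for the easy direction and the sketch of the compactness argument behind the Hewitt--Ross lemma) is sound and more thorough than what the paper records, but the underlying route is the same.
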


Let $X$ be a topological space.
Let $\mathcal R$ be the relation defined on $X$ by
$x \mathcal R y$ if the connected components of $x$ and $y$ coincide;
then $\mathcal R$ is an equivalence relation on $X$. 
The quotient space
\footnote{As 
agreed in (A1), Page \pageref{A1}, 
we note here that
$H_0(X)$ need not be Hausdorff.
An example is provided by the subspace
\begin{equation*}
X \, = \,  \{(0,0)\} \cup \{(0,1)\} \cup \Big(\bigcup_{n \ge 1} (\{1/n\} \times \mathopen[0,1\mathclose]) \Big)
\end{equation*}
of the Euclidean plane.
}
$H_0(X) := X / \mathcal R$ is totally disconnected,
see again \cite[Page I.84]{BTG1-4};
note that $H_0(X)$ is not necessarily Hausdorff.
If $X$ is a non-empty topological space, 
$X$ is connected if and only if
$H_0(X)$ is the one-point space,
and $X$ is totally disconnected if and only if
the canonical projection $X \longrightarrow H_0(X)$ is a homeomorphism.

\vskip.2cm

Let $G$ be a topological group.
The \textbf{identity component} 
\index{Identity component of a topological group|textbf}
of $G$, denoted by $G_0$,  
\index{$g$@$G_0$, identity component of $G$}
is the connected component of the identity in $G$;
it is a closed subgroup of $G$ that is normal, indeed characteristic.          
The quotient group
$H_0(G) = G/G_0$ is totally disconnected
(it is a particular case of the result on $X/\mathcal R$ quoted above).
In particular, $G$ is totally disconnected 
if and only if $G_0 = \{1\}$.
\index{Totally disconnected! topological group|textbf}
A topological group $G$ is \textbf{connected-by-compact} 
\index{Connected-by-compact topological group|textbf}
if $G/G_0$ is compact.
This terminology fits with our general terminology, 
see (A4) in Chapter \ref{chap_topaspects};
many authors use ``almost connected'' for ``connected-by-compact''.
\par

If $G$ is an LC-group, its identity component $G_0$
is also the intersection of the open subgroups of $G$
\cite[Page III.35]{BTG1-4}.
\index{Open subgroup}
This does \emph{not} hold in general;
a trivial example is provided by the group $\Q$,
with the topology inherited from the usual topology on $\R$;
moreover, there are known examples of totally disconnected \emph{Polish} groups
in which \emph{any} neighbourhood of $1$ generates the whole group;
see the discussion in \cite{Sole--05}. 
\index{Polish! group}
\par

In an LC-group $G$, the identity component need not be open;
for example, when $G$ is totally disconnected, $G_0 = \{1\}$ is open
if and only if $G$ is discrete.
However, in a locally path-connected LC-group,
for example in a Lie group, connected components are open.
\par

One of the main open problems about totally disconnected LC-groups
is the \emph{Hilbert-Smith conjecture}, 
\index{Hilbert-Smith conjecture}
which can be formulated as a question:
Does there exist an LC-group, which is not a Lie group,
that has a faithful continuous action on a connected topological manifold?
Equivalently: does there exist for some prime $p$ a faithful continuous action of $\Z_p$ 
on a connected topological manifold?
For the equivalence, see \cite{Lee--97}.
The answer is known to be negative for manifolds of
dimension at most three \cite{Pard--13}.

\begin{exe}[non-discrete locally compact fields]
\label{panoramalocalfield}
Let $\K$ be non-discrete locally compact field.
Then $\K$ is either Archimedean, and then isomorphic to one of $\R$ or $\C$,
or non-Archimedean, in which case it is defined to be a \textbf{local field}. 
\index{Local field|textbf}
\index{$k$@$\K$, non-discrete locally compact field, often local field}
\par

[Some authors, including \cite[Page 20]{Weil--67} and \cite{Marg--91}, 
define a \emph{local field}
to be any commutative non-discrete locally compact field.
Others are keen on not formulating a precise definition, 
like Serre in his ``Corps locaux'' (outside the introduction).]
\par

A non-discrete locally compact field is connected if and only if it is Archimedean,
i.e., if it is isomorphic to  $\R$ or $\C$.
The multiplicative group $\C^\times$ is connected, 
and $\R^\times$ has two connected components.
\par

Here are some basic facts on local fields and their classification.
More information on the subject can be found, among many other places, 
in \cite[Chapter II]{Neuk--99} or/and \cite{Cass--86}.
Absolute values on local fields are briefly discussed in 
Remark \ref{onabsolutevalues}

\vskip.2cm

From now on, let $\K$ be a local field.
Then $\K$ has a unique maximal compact subring
\begin{equation*}
\mathfrak o_{\K} \, \, = \, 
\big\{ x \in \K    \hskip.1cm \big\vert \hskip.1cm     
\{x^n \mid n \ge 1 \}  \hskip.2cm \text{is relatively compact}  \big\} 
\end{equation*}
and $\mathfrak o_{\K}$ has a unique maximal ideal
\begin{equation*}
\mathfrak p_{\K} \, = \, \{ x \in \K \mid \lim_{n \to \infty} x^n = 0 \} .
\end{equation*}
Both $\mathfrak o_{\K}$ and $\mathfrak p_{\K}$
are compact and open in $\K$.
Moreover, the ideal $\mathfrak p_{\K}$ is principal in $\mathfrak o_{\K}$:
there exists a ``prime element'' $\pi$
such that $\mathfrak p_{\K}$ is the ideal $(\pi)$ generated by $\pi$.
The nested sequence of ideals 
$(\pi) \supset \cdots \supset (\pi^n) \supset (\pi^{n+1}) \supset \cdots$
constitute a basis of clopen neighbourhoods of $0$ in $\K$,
so that $\K$ is totally disconnected.
The multiplicative group $\K^\times$ 
is also totally disconnected.
\par

If $\K$ is of characteristic zero,
then $\K$ is a finite extension of a field $\Q_p$;
for each prime $p$ and each degree $n$, 
the number $N_{p,n}$ of extensions of $\Q_p$ of degree $n$
satisfies $1 \le N_{p,n} < \infty$
\cite[Section 3.1.3 and Theorem 3.1.6]{Robe--00}.
\par

In the particular case $\K = \Q_p$, 
we have $\mathfrak o_{\K} = \Z_p$ and a choice for $\pi$ is $p$.
Every $x \in \Q_p$ can be written uniquely as $[x]+\sum_{i=1}^nx_ip^{-i}$,
with $[x] \in \Z_p$, $n \ge 0$, and $x_i \in \{0, 1, \hdots, p-1\}$ for $i=1, \hdots, n$,
with $x_n \ne 0$ when $n \ge 1$.
Every $x \in \Q_p^\times$ can be written uniquely as $p^nu$,
with $n \in \Z$ and $u \in \Z_p^\times$;
it follows that the multiplicative group $\Q_p^\times$ is the disjoint union
$\bigsqcup_{n \in \Z} p^n \Z_p^\times$;
hence $\Q_p^\times$  is homeomorphic to the direct product 
$p^\Z  \times  \Z_p^\times$.
The multiplicative group
$\Z_p^\times$ is homeomorphic to the direct product of 
the finite group $\{ x \in \Z_p \mid x^{p-1} = 1\}$
of $(p-1)$th roots of $1$, which is cyclic of order $p-1$,
and of the compact group $1 + p\Z_p$,
itself homeomorphic to $\Z_p$ when $p$ is odd
and to $\Z_2 \times (\Z / 2\Z)$ when $p=2$
(see for example \cite[chap. 2]{Serr--70}).
\index{$q$@$\Q_p$, field of $p$-adic numbers|textbf}
\index{$z$@$\Z_p$, ring of $p$-adic integers|textbf}
\par

If $\K$ is of finite characteristic $p$, then
$\K$ is a field of formal Laurent series $\mathbf F_q \lp t \rp$ over a finite field 
\index{$fp$@$\mathbf F_p \lp t \rp$, $\mathbf F_q \lp t \rp$|textbf}
$\mathbf F_q$, where $q = p^d$ for some $d \ge 1$.
Then $\mathfrak o_{\K}$ is the ring of formal power series $\mathbf F_q [[ t]]$,
\index{$fq$@$\mathbf F_q [t]$, $\mathbf F_q [[t]]$}
and a choice for $\pi$ is $t$.
The multiplicative group $\mathbf F_q \lp t \rp^\times$
is isomorphic to a product of three groups:
the infinite cyclic group generated by $t$,
the finite group $\{ x \in \mathfrak o_{\K} \mid x^{q-1} = 1 \} \simeq \Z / (q-1)\Z$
of $(q-1)^{\text{th}}$ roots of unity,
and the multiplicative group $1 + \mathfrak p_{\K}$, which is compact.

\vskip.2cm

Similarly, let $R$ be a finite commutative ring.
Then $R  \lp t \rp$ is a totally disconnected LC-ring
in which $R [[t]]$ is an open subring.
\end{exe}

\begin{exe}[totally disconnected LC-groups]
\label{extdLCgps}
Discrete groups are clearly totally disconnected. Here are some other examples.
\index{Totally disconnected! topological group}

\vskip.2cm

(1)
Let $(F_\alpha)_{\alpha \in A}$ be an infinite family
of finite groups, viewed as compact groups.
The product $\prod_{\alpha \in A} F_\alpha$
is a totally disconnected compact group, i.e.,
a \textbf{profinite group}
(as defined in Example \ref{profinitehausdim}).
\index{Profinite group}  \index{Compact group! profinite group}

\vskip.2cm

(2)
Let $I$ be a directed set, with its order relation denoted by $\alpha \le \beta$.
Let $(F_\alpha, \pi_{\alpha, \beta})$ be an inverse system of
finite groups (viewed as compact groups) and surjective homomorphisms.
The inverse limit $\varprojlim F_\alpha$ is a profinite group,
as a closed subgroup of $\prod_{\alpha \in A} F_\alpha$.
\par
Conversely, let $K$ be a profinite group.
Every neighbourhood $U$ of $1$ in $K$
contains a normal compact open subgroup $N$ of $K$ 
(see \cite[Section 2.5]{MoZi--55}, and compare with Remark \ref{vanDantzigRem}).
The finite groups $K/N$ and the natural surjective homomorphisms constitue an inverse system,
and the group $K$ is the inverse limit of this inverse system.
\par
For example, the groups $\Z_p = \varprojlim_n \Z / p^n\Z$
and $\GL(d, \Z_p) = \varprojlim_n \GL(d, \Z / p^n \Z)$
are profinite groups.

\vskip.2cm

(3)
Let $\K$ be a local field,
$\mathbf G$ an algebraic group defined over $\K$,
and $G$ the group of its $\K$-points.
The group $G$ can be seen as a subgroup of $\GL_n(\K)$, 
and the latter is an open subspace of the algebra $\M_n(\K)$
of $n$-by-$n$ matrices over $\K$; it follows that $G$
has a natural topology $\mathcal T$ inherited from the topology of $\K$,
and $G$ with $\mathcal T$ is a totally disconnected LC-group.
When $\K$ is $\Q_p$ or a finite extension of $\Q_p$,
this topology is sometimes called the \textbf{$p$-adic topology} on $G$.
\index{p-adic topology on the group of $\Q_p$-points 
of an algebraic group}

\vskip.2cm

(4)
For any non-discrete LC-field $\K$, 
there is a notion of \emph{Lie group over $\K$.}
These groups are totally disconnected when $\K$ is so.
If $\K = \Q_p$, they include all closed subgroups of $\GL_n(\Q_p)$;
see \cite[Chap.~3, $\S$~8, n$^o$~2]{BGLA2-3}.
\index{Lie group! over an LC-field $\K$}

\vskip.2cm  

(5)
Let $X$ be a connected locally finite graph.
\index{Graph}
\index{Locally finite! graph}
For the topology of pointwise convergence,
the automorphism group $\operatorname{Aut}(X)$ 
is a totally disconnected LC-group (Example \ref{metricsonAut(X)}).
\index{Automorphism group! of a graph}
It is a discrete group if and only if 
there is a finite subset $V$ of the vertex set of $X$
such that $\{g \in \operatorname{Aut}(X) \mid gv=v
\hskip.2cm \text{for all} \hskip.2cm v \in V \}$ is trivial.
The following examples have non-discrete automorphism groups.
\par

Let $T_k$ be a regular tree of some valency $k \ge 3$.
\index{Tree}
Let $\operatorname{Aut}^+(T_k)$ be the index two subgroup of $\operatorname{Aut}(T_k)$
generated by the vertex stabilizers.
Then $\operatorname{Aut}^+(T_k)$ is a non-discrete LC-group;
as an abstract group, it is simple \cite{Tits--70}.
\index{Simple group}
When $k = p+1$ for some prime $p$, the group $\operatorname{Aut}(T_k)$
has subgroups isomorphic to the projective linear groups
$\PGL_2(\Q_p)$ and $\PGL_2( \mathbf F_p \lp t \rp )$,
see \cite{Serr--77}; these are closed subgroups, by Proposition \ref{GinIsom(X)}(2).
\index{Projective linear group PGL, and PSL! $\PGL_2(\Q_p)$, $\PGL_2(\mathbf F_p \lp t \rp )$}
\par

Consider two integers $m,n \in \Z \smallsetminus \{0\}$, 
the Baumslag-Solitar group $\operatorname{BS}(m,n)
= \langle s,t  \mid t^{-1}s^mt = s^n \rangle$,
\index{Baumslag-Solitar group|textbf}
its Bass-Serre tree $T_{m+n}$ (a regular tree of valency $m+n$),
and the closure $G_{m,n}$ of the natural image of $\operatorname{BS}(m,n)$
in the automorphism group of $T_{m+n}$.
The groups $G_{m,n}$ are non-discrete totally disconnected LC-group;
moreover, for $m,n,m',n' \in \Z \smallsetminus \{0\}$, the following conditions are equivalent:
\par
(i) $\operatorname{BS}(m',n')$ is isomorphic to $\operatorname{BS}(m,n)$,
\par
(ii) $G_{m',n'}$ is isomorphic to $G_{m,n}$,
\par
(iii) $(m',n')$ is one of $(m,n)$, $(n,m)$, $(-m,-n)$, $(-n,-m)$.
\par\noindent
See \cite{ElWi}.
\par

Cayley graphs of appropriate Coxeter groups
\index{Coxeter group}
provide other examples of graphs $X$ with non-discrete
$\operatorname{Aut}(X)$.
See  \cite[Theorem 1.3]{HaPa--98}.

\vskip.2cm

(6)
Let $K$ be a compact group.
With the natural topology (see Example \ref{exampleAut(G)}), 
the outer automorphism group
$\operatorname{Aut}(K) / \operatorname{Int} (K)$
\index{Automorphism group! of a group}
is totally disconnected
\cite[Theorem 1]{Iwas--49}; see also \cite[th\'eor\`eme~6]{Serr--50}.
\par

Let $G$ be a totally disconnected LC-group. 
The automorphism group $\operatorname{Aut}(G)$
is totally disconnected \cite{Brac--48}.
\par

[Recall that  $\operatorname{Aut}(K)$ and $\operatorname{Aut}(G)$
need not be locally compact,   
as already noted in Example \ref{exampleAut(G)}.]

\vskip.2cm

(7)
With the Krull topology, the Galois group $\operatorname{Gal}(L/K)$
of a Galois extension $L$ of a field $K$ is a totally disconnected compact group.
\end{exe}

Total disconnectedness is inherited by closed subgroups,
and more generally by subspaces of topological spaces.
Claim (1) of the next proposition
shows that it is also inherited by quotient spaces of LC-groups;
if it far from being so for quotient spaces of topological spaces,
since every metrizable compact space is a continuous image of the Cantor space.

\begin{prop}
\label{pi_0finite}
Let $G$ be an LC-group, $H$ a closed subgroup, 
$G/H$ the quotient space, and $\pi : G \longrightarrow G/H$
the canonical projection.
\begin{enumerate}[noitemsep,label=(\arabic*)]
\item\label{1DEpi_0finite}
The connected components of $G/H$ are the closures of the images by $\pi$
of the connected components of $G$.
In particular: if $G$ is totally disconnected, so is $G/H$.
\end{enumerate}
Assume moreover that $H$ is a normal subgroup,
so that $G/H$ is a topological group.
\begin{enumerate}[noitemsep,label=(\arabic*)]
\addtocounter{enumi}{1}
\item\label{2DEpi_0finite}
If $G$ is locally compact and $L = G/H$ a Lie group, then $\pi(G_0) = L_0$.
\end{enumerate}
\end{prop}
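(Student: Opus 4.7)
For part (1), the plan is to reduce by homogeneity (the left $G$-action on $G/H$ is continuous and transitive by homeomorphisms) to identifying the connected component $C$ of $\pi(1)$. The containment $\overline{\pi(G_0)} \subseteq C$ is immediate: $\pi(G_0)$ is connected, contains $\pi(1)$, hence lies in $C$, which is closed. For the reverse containment, set $K := \overline{G_0 H}$, a closed subgroup of $G$ containing $G_0$, and consider the canonical continuous map $q : G/H \twoheadrightarrow G/K$. The key step is that $G/K$ is totally disconnected: it identifies with $(G/G_0)/(K/G_0)$, and the totally disconnected LC-group $G/G_0$ admits by van Dantzig's theorem (Corollary \ref{vanDantzigCor}) a neighbourhood basis of $1$ consisting of compact open subgroups, whose images under the open projection to $G/K$ are both compact and open in the Hausdorff quotient, hence clopen. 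Thus $q(C)$ is connected in $G/K$ and therefore a single point, so $C$ lies in the fiber over $q(1)$, namely $K/H$; using that $\pi$ is open and $G_0 H$ is $\pi$-saturated, $\pi(\overline{G_0 H}) = \overline{\pi(G_0)}$, giving $C \subseteq \overline{\pi(G_0)}$. The ``in particular'' statement is immediate: if $G_0 = \{1\}$, then $\overline{\pi(gG_0)} = \{\pi(g)\}$ in the Hausdorff space $G/H$ for each $g$, so every component is a singleton.

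For part (2), the inclusion $\pi(G_0) \subseteq L_0$ is clear. Since $L$ is a Lie group, $L_0$ is open in $L$; hence $G' := \pi^{-1}(L_0)$ is an open subgroup of $G$ containing $G_0$, with $G'_0 = G_0$. Replacing $G$ by $G'$ and $L$ by $L_0$, I may assume $L$ is connected. Applying part (1), the component of $\pi(1)$ in $L$ is $\overline{\pi(G_0)}$, which equals $L$ by connectedness, so $\pi(G_0)$ is a dense subgroup of $L$.

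To promote density to equality, I plan to exploit the Lie structure of $L$ via Yamabe's theorem (part of the solution to Hilbert's Fifth Problem, cited in the book's introduction): the connected LC-group $G_0$ admits, in any neighbourhood of $1$, a compact normal subgroup $K$ with $G_0/K$ a Lie group. Shrinking the neighbourhood so that its image in $L$ meets no non-trivial subgroup (no-small-subgroups lemma for Lie groups), one forces $\pi(K) = \{1\}$, hence $K \subseteq H$; the induced map $\bar\pi : G_0/K \to L$ is then a continuous homomorphism of Lie groups, smooth by Cartan--von Neumann, with dense image. Because $\pi : G \twoheadrightarrow L$ is a quotient map, the Lie-theoretic analysis of $\bar\pi$ forces the analytic subgroup $\pi(G_0)$ to contain a neighbourhood of the identity in $L$: the subgroup $\pi(G_0)$ is open in the connected group $L$, hence equals $L$. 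An alternative route, using that $G_0$ is compactly generated (Proposition \ref{powersSincpgroup}\ref{2DEpowersSincpgroup}), writes $\pi(G_0) = \bigcup_n F_n$ as a countable union of compacts in the Baire space $L$ and concludes via a Steinhaus--Pettis argument that $\pi(G_0)$ is non-meager, hence open.

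The main obstacle in part (1) is the total disconnectedness of $G/K$, which requires the non-formal input of van Dantzig. In part (2), the delicate step is passing from density to equality of $\pi(G_0)$ in $L$: the Lie structure of $L$ is essential, since a dense image of a continuous homomorphism between topological groups need not exhaust the target (consider the irrational winding of $\R$ in the torus); here the combination of the quotient-map structure of $\pi$ with the Lie structure of $L$ is what closes the argument.
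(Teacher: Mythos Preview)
Your argument for part (1) is correct and complete; the paper itself only cites Bourbaki here, so you have supplied more than the text does.

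Part (2) has a genuine gap. You correctly reduce to $L$ connected and obtain that $\pi(G_0)$ is dense in $L$, and you correctly identify density-to-equality as the crux. But neither of your two closing arguments works as written. In approach (a) you apply Gleason--Yamabe to $G_0$, obtaining $K$ compact normal \emph{in $G_0$}, and a smooth homomorphism $\bar\pi : G_0/K \to L$ with dense image. Your own irrational-winding remark shows this is not enough, and the quotient-map structure of $\pi : G \to L$ does not transfer to $\bar\pi$: since $G_0$ need not be open in $G$, open subsets of $G_0$ need not be open in $G$, so $\bar\pi$ has no reason to be open. In approach (b), writing $\pi(G_0)$ as a countable union of compacts gives an $F_\sigma$ subgroup with the Baire property, but nothing forces it to be non-meager; Pettis needs non-meagerness as input, not as conclusion.

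The fix is to insert one more van Dantzig step before invoking Gleason--Yamabe. Since $G/G_0$ is totally disconnected, it has a compact open subgroup; its preimage $M$ in $G$ is open with $M/G_0$ compact, and $\pi(M)$ is an open subgroup of the connected group $L$, hence $\pi(M) = L$. Replacing $G$ by $M$, you may assume $G$ is connected-by-compact. Now apply Theorem~\ref{Yamabe} to $G$ itself: there is a compact normal $N \lhd G$ with $G/N$ a Lie group with finitely many components, and the no-small-subgroups property of $L$ forces $N \subset H$. The induced map $q : G/N \to L$ is now a surjective homomorphism between Lie groups, and it \emph{is} open, because $\pi = q \circ p$ with $\pi$ open and $p$ surjective. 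Hence $q((G/N)_0)$ is open in $L$, so equals $L$. Finally $p(G_0) = (G/N)_0$: indeed $p(G_0) = G_0 N/N$ is closed (as $G_0 N$ is closed, being the product of a closed subgroup and a compact one), and $(G/N)/p(G_0) \cong G/G_0 N$ is a Hausdorff quotient of the profinite group $G/G_0$, hence totally disconnected, forcing $(G/N)_0 \subset p(G_0)$. Composing, $\pi(G_0) = q(p(G_0)) = q((G/N)_0) = L$.
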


\begin{proof}
See \cite[Page III.36]{BTG1-4} for (1)
and  \cite[Lemma 2.4]{CCMT--15} for (2).
\end{proof}

Claim \ref{1DEpi_0finite} does not hold without the hypothesis of local compactness.
Indeed, let $G$ be the group of sequences of rational numbers
having a limit in $\R$, with the $\ell^{\infty}$-topology,
and let $H$ be the subgroup of sequences converging to $0$.
Then $G$ is totally disconnected, 
$H$ is closed in $G$, and $G/H$ can be identified to the
\emph{connected} group $\R$, with its standard topology
\cite[Page III.70 (exercice 17)]{BTG1-4}.

\begin{thm}[van Dantzig] \index{van Dantzig theorem}
\index{Theorem! van Dantzig}
\label{vanDantzig}
Let $G$ be a totally disconnected LC-group and $U$ a neighbourhood of $1$ in $G$.
Then $U$ contains a compact open subgroup of $G$.
\index{Open subgroup! compact}
\par                               
In other words:
compact open subgroups constitute a fundamental system
of neighbourhoods of $1$ in $G$.
\end{thm}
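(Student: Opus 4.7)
The plan is to produce a compact open subgroup inside $U$ in three steps. First, I use Proposition \ref{0dimtd}: since $G$ is locally compact and totally disconnected, it is zero-dimensional, so clopen sets form a basis of its topology. Combining this with a compact neighbourhood of $1$ contained in $U$ (which exists by local compactness) produces a compact open neighbourhood $V$ of $1$ with $V \subset U$. The problem is thereby reduced to finding a compact open subgroup \emph{inside} the compact open set $V$.

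Second, I would produce an open symmetric neighbourhood $W$ of $1$ satisfying $VW \subset V$ by the standard compactness-plus-continuity argument. For each $v \in V$, continuity of multiplication at $(v,1)$ yields open neighbourhoods $A_v \ni v$ and $B_v \ni 1$ with $A_v B_v \subset V$; by compactness of $V$, finitely many $A_{v_1}, \ldots, A_{v_n}$ cover $V$, and then $W_0 := B_{v_1} \cap \cdots \cap B_{v_n}$ is an open neighbourhood of $1$ with $VW_0 \subset V$. I finally set $W := W_0 \cap W_0^{-1}$ to ensure symmetry while keeping $W$ an open neighbourhood of $1$ with $VW \subset V$.

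Third, let $H$ be the subgroup of $G$ generated by $W$. Because $W$ is symmetric and contains $1$, one has $H = \bigcup_{n \ge 0} W^n$. A one-line induction using $1 \in V$ and $VW \subset V$ shows $W^n \subset V$ for all $n$, so $H \subset V$. Since $H$ contains the open set $W$, it is an open subgroup of $G$; but every open subgroup of a topological group is automatically closed (its complement is the union of the other cosets, each open). Thus $H$ is closed in the compact set $V$, hence compact. This $H$ is the desired compact open subgroup contained in $U$, and as $U$ was arbitrary, compact open subgroups form a basis of neighbourhoods of $1$.

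The only delicate step is the second one: the ``shrinking trick'' $VW \subset V$ is what converts a purely topological object (the compact open set $V$) into a reservoir that can absorb the subgroup generated by an entire neighbourhood of $1$. Everything else is formal bookkeeping, and total disconnectedness enters only at the very first step, to guarantee a compact open base of neighbourhoods of $1$ via Proposition \ref{0dimtd}.
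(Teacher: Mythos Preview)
Your proof is correct and follows essentially the same approach as the paper's: obtain a compact open $V \subset U$ via zero-dimensionality, use continuity of multiplication plus compactness of $V$ to find a symmetric open neighbourhood of $1$ that $V$ absorbs under multiplication, and conclude that the open subgroup it generates lies in $V$ and is therefore compact. The only cosmetic difference is that the paper arranges $SV \subset V$ (left absorption) while you arrange $VW \subset V$ (right absorption), which is immaterial.
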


\noindent \emph{Note.}
This theorem is part of \cite[Theorem 7.7]{HeRo--63},
where it is quoted from the thesis of van Dantzig (1931); see also \cite{Dant--36}.
Related references include
\cite[Section 2.3]{MoZi--55}, \cite[Page III.36]{BTG1-4}, 
Willis \cite{Will--94} and Tao \cite{Tao--14}.

\begin{proof}
By Proposition \ref{0dimtd}, there exists 
a compact open neighbourhood $V$ of $1$ contained in $U$.
We define below a compact neighbourhood $S$ of $1$ in $G$
generating an open subgroup $K$ inside $V$.
This will complete the proof, because an open subgroup is also closed,
and a closed subgroup of the compact set $V$ is compact.
\par
For each $x \in V$, there exist
a neighbourhood $S_x$ of $1$ in $G$ and a neighbourhood $V_x$ of $x$ in $V$
such that $S_x V_x \subset V$;
we assume furthermore that $S_x$ is symmetric.
Since $V$ is compact, there exists
a finite subset $F \subset V$ such that $V = \bigcup_{f \in F} V_f$.
Set $S = \bigcap_{f \in F} S_f$ ; it is a symmetric neighbourhood of $1$.
We have $S V_f \subset V$ for all $f \in F$, and therefore $SV \subset V$.
\par
Let $K = \bigcup_{n \ge 0} S^n$ be the subgroup of $G$ generated by $S$;
since $S$ is a neighbourhood of $1$, 
the subgroup $K$ is open.
We have $S^n V \subset V$ for all $n \ge 0$, by induction on $n$,
and therefore $K \subset V$. 
\end{proof}

Claim (1) of the next corollary appears as Lemma 1.4 in \cite{Glea--51}.

\begin{cor}
\label{vanDantzigCor}
Let $G$ be an LC-group.
\par
(1)
There exists an open subgroup $H$ of $G$, containing $G_0$,
such that the quotient group $H/G_0$ is compact.
\par
In particular, every LC-group contains
an open subgroup that is connected-by-compact,
and therefore compactly generated, a fortiori $\sigma$-compact.
\index{Sigma-compact! LC-group}
\index{Connected-by-compact topological group}
\par

(2)
Assume moreover that $G_0$ is compact 
(e.g., that $G$ is totally disconnected).
Any compact subgroup of $G$ is contained in a compact open subgroup.
\end{cor}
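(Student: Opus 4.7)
For part (1), my plan is to reduce to the totally disconnected situation via the quotient $G/G_0$. Since $G_0$ is a closed normal subgroup, $G/G_0$ is an LC-group, and it is totally disconnected by the discussion preceding the corollary. Applying van Dantzig's theorem \ref{vanDantzig} to $G/G_0$ yields a compact open subgroup $K$ there. I take $H := \pi^{-1}(K)$, where $\pi : G \twoheadrightarrow G/G_0$ is the canonical projection. Openness of $H$ in $G$ is immediate from continuity of $\pi$, the inclusion $G_0 \subseteq H$ holds because $K$ contains the identity, and $H/G_0$ is canonically isomorphic to $K$, hence compact. Because $H$ is open in $G$, the connected component of $1$ in $H$ coincides with $G_0$, so $H$ is connected-by-compact. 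The ``in particular'' clause then follows by invoking Proposition \ref{almostconnectedgroups} (connected-by-compact LC-groups are compactly generated, as already announced in Remark \ref{remopensub}) together with Remark \ref{Gexplescgsigmacsc}(1) (compactly generated implies $\sigma$-compact).

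For part (2), I fix a compact subgroup $L$ in a totally disconnected LC-group $G$ and pick any compact open subgroup $U$ via van Dantzig. The strategy is to construct a compact open subgroup $V \subseteq U$ that is normalized by $L$; once such a $V$ is available, $LV = VL$ is a subgroup (since $L$ normalizes $V$), is compact as a product of two compact sets, is open because it contains $V$, and obviously contains $L$.

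To build $V$, I exploit continuity of the conjugation map $L \times G \to G$, $(\ell, g) \mapsto \ell g \ell^{-1}$, which sends each pair $(\ell_0, 1)$ into the open set $U$. Hence for every $\ell_0 \in L$ there exist open neighborhoods $N_{\ell_0} \ni \ell_0$ and $W_{\ell_0} \ni 1$ with $N_{\ell_0} W_{\ell_0} N_{\ell_0}^{-1} \subseteq U$. Compactness of $L$ yields a finite subcover $N_{\ell_1}, \dots, N_{\ell_k}$ of $L$, and then $W := U \cap \bigcap_{i=1}^k W_{\ell_i}$ is an open neighborhood of $1$ satisfying $\ell W \ell^{-1} \subseteq U$ for every $\ell \in L$; equivalently, $W \subseteq \ell U \ell^{-1}$ for every $\ell \in L$. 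Setting $V := \bigcap_{\ell \in L} \ell U \ell^{-1}$, the set $V$ is an intersection of subgroups hence a subgroup, it is contained in the compact $U$ hence compact, it contains $W$ hence is open, and it is normalized by $L$ by construction. This completes the plan.

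The main obstacle will be the production of $V$ in part (2): the intersection $\bigcap_{\ell \in L} \ell U \ell^{-1}$ is a priori taken over a possibly uncountable family of conjugates, and showing that it still has nonempty interior requires the joint use of continuity of conjugation and compactness of $L$, as in the argument sketched above. Part (1) is essentially a direct application of van Dantzig to the quotient $G/G_0$, and the passage from the $L$-invariant $V$ to the compact open subgroup $LV \supseteq L$ in part (2) is then a routine verification.
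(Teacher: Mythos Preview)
Your proof is correct and follows essentially the same route as the paper: part~(1) is identical (pull back a compact open subgroup of $G/G_0$ via van Dantzig), and part~(2) also builds the $L$-invariant compact open subgroup $V=\bigcap_{\ell\in L}\ell U\ell^{-1}$ and then forms $LV$. The only difference is in showing $V$ is open: the paper observes that $\ell U\ell^{-1}$ depends only on the coset $\ell U$, so the intersection is actually finite (since $L$ is covered by finitely many $\ell_i U$), whereas you use continuity of conjugation and compactness of $L$ to produce an open $W\subseteq V$ directly---both arguments are standard and equally valid.
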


\begin{proof}
(1)
Let $K$ be a compact open subgroup 
of the totally disconnected group $G/G_0$.
The inverse image $H$ of $K$ in $G$ is an open subgroup of $G$.
Observe that $G_0$, which is normal in $G$, is a posteriori normal in $H$.
\par

The quotient  $H /G_0$ is compact, isomorphic to $K$.
Since $G_0 \subset H_0$,
the quotient group $H/H_0$ is compact.
Since $H$ is connected-by-compact, $H$ is compactly generated
(see Proposition \ref{almostconnectedgroups}),
and in particular $\sigma$-compact.
\par

(2) Let now $K$ be a compact open subgroup of $G$.
Let $L$ be an arbitrary compact subgroup of $G$.
There exist $\ell_1, \hdots, \ell_n \in L$ such that $L \subset \bigcup_{i=1}^n \ell_i K$.
Set $N = \bigcap_{\ell \in L} \ell K \ell^{-1}$.
Observe that $N = \bigcap_{1 \le i \le n} \ell_i K \ell_i^{-1}$, 
so that $N$ is a compact open subgroup of $G$.
Since $\ell N \ell^{-1} = N$ for all $\ell \in L$,
the product $NL$ is a subgroup of $G$,
indeed a compact open subgroup containing $L$.
\end{proof}

\begin{rem}
\label{vanDantzigRem}
A totally disconnected LC-group need not contain
a \emph{normal} compact open subgroup,
i.e.,  \emph{need not be}  ``compact-by-discrete'',
as the two following examples show.
\index{Subgroup! compact normal} 
\index{Normal compact open subgroup}
\par

For every $d \ge 2$ and local field $\K$, 
the totally disconnected LC-group $\PSL_d(\K)$ is simple.
It has compact open subgroups
$\PSL_d(\pi^n \mathfrak o_\K)$ for every $n \ge 0$
(with the notation of Example \ref{panoramalocalfield}).
\par

Consider 
a prime $p$, 
the discrete group $\Z$,
the totally disconnected group $\Q_p$,
and the action of $\Z$ on $\Q_p$ given by
$(n,x) \longmapsto p^nx$.
The corresponding semidirect product $G =  \Q_p \rtimes_p \Z$,
\index{Semidirect product! $\Q_p \rtimes_p \Z$}
with the direct product topology, is a totally disconnected group.
Observe that all orbits of $\Z$ in $\Q_p \smallsetminus \{0\}$ are unbounded.
The group $G$ has compact open subgroups, for example $\Z_p$,
but it is easy to check that $G$ does not have any non-trivial compact normal subgroup.
(Up to details, this is the example of \cite[Section 2.5]{MoZi--55}.)

\par

In contrast, it is well-known that a compactly generated totally disconnected nilpotent LC-group
has a basis of neighbourhoods of $1$ consisting of compact open normal subgroups;
see \cite{Will--97}.
\end{rem}

\begin{prop}
\label{propCayleyAbels}
For an LC-group, the following two properties are equivalent:
\begin{enumerate}[noitemsep,label=(\roman*)]
\item\label{iDEpropCayleyAbels}
$G$ is compactly generated and has a compact open subgroup;
\item\label{iiDEpropCayleyAbels}
there exist a connected graph $X$ of bounded valency
and a continuous action of $G$ on $X$ which is proper and vertex-transitive.
\end{enumerate}
\end{prop}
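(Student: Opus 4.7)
The plan is to build an explicit Cayley–Abels graph for one direction, and to extract a compact open subgroup plus a finite set of coset representatives from the graph action in the other.

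For \ref{iDEpropCayleyAbels} $\Rightarrow$ \ref{iiDEpropCayleyAbels}, let $K$ be a compact open subgroup and $\Sigma$ a compact symmetric generating set for $G$ containing $1$. I would first replace $\Sigma$ by a finite set: the compact set $K\Sigma K$ is a union of left cosets of $K$, and since $K$ is open, compactness forces this to be a finite union $K\Sigma K = \bigsqcup_{s\in S} sK$ for some finite $S$; I may assume $S = S^{-1}$ and $S \cup K$ generates $G$. Define a graph $X$ with vertex set $G/K$, declaring $gK$ and $hK$ adjacent when $h^{-1}g \in KSK \setminus K$. The graph is $G$-equivariant for left translation, vertex-transitive, and of finite valency equal to $|S|$. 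Connectedness follows because $S \cup K$ generates $G$: any $g = u_1 \cdots u_n$ with $u_i \in S \cup K$ produces a path $K, u_1 K, u_1 u_2 K, \ldots, g K$. The left action of $G$ on $X$ is continuous (the stabilizer of $K$ equals $K$, which is open, so all vertex stabilizers are open) and proper (the stabilizer of $gK$ is the conjugate $gKg^{-1}$, which is compact).

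For \ref{iiDEpropCayleyAbels} $\Rightarrow$ \ref{iDEpropCayleyAbels}, pick a base vertex $v_0$ and set $K = \{g \in G \mid gv_0 = v_0\}$. Properness of the action gives $K$ compact; continuity, together with the fact that $\{v_0\}$ is open in the discrete vertex set $X^0$, shows $K$ is open. Now let $v_1,\ldots,v_k$ be the (finitely many) vertices adjacent to $v_0$ and choose $s_i \in G$ with $s_i v_0 = v_i$; set $S = \{s_1,\ldots,s_k\}$. Given any $g \in G$, connect $v_0$ to $gv_0$ by a combinatorial path $v_0, w_1, \ldots, w_n = gv_0$ and write $w_j = g_j v_0$ with $g_0 = 1$, $g_n = g$. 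Since $w_{j-1}$ and $w_j$ are adjacent, translating by $g_{j-1}^{-1}$ exhibits $g_{j-1}^{-1}g_j v_0$ as a neighbour of $v_0$, so $g_{j-1}^{-1} g_j \in s_{i_j} K$ for some index. Telescoping, $g \in (SK)^n \subset (S \cup K)^n$, whence the compact set $S \cup K$ generates $G$.

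The main technical point, and really the only non-formal step, is the reduction of the compact generating set $\Sigma$ to a finite one using $K$. It relies crucially on openness of $K$ together with compactness of $\Sigma$, exactly as in Remark~\ref{remsigmacetc}(2): $K\Sigma K$ is compact and covered by the open pairwise disjoint family $\{gK\}_{g\in K\Sigma K}$, so only finitely many cosets appear. Everything else is a straightforward translation between the combinatorial data of the graph and the algebraic data of generators, once one observes that $K$-double cosets correspond bijectively to $G$-orbits of ordered pairs of vertices, and the neighbours of $v_0$ form exactly one such orbit for each "edge type".
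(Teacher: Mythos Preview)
Your proof is correct and, for the direction \ref{iDEpropCayleyAbels} $\Rightarrow$ \ref{iiDEpropCayleyAbels}, follows essentially the same Cayley--Abels construction as the paper: vertices $G/K$, edges given by a compact symmetric $K$-biinvariant set, finiteness of the valency coming from compactness of $K\Sigma K$ and openness of $K$. (One cosmetic point: the valency is the number of nontrivial $K$-cosets in $K\Sigma K$, so $|S|-1$ rather than $|S|$ if $1 \in S$; this is harmless.)

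Where you genuinely diverge is in \ref{iiDEpropCayleyAbels} $\Rightarrow$ \ref{iDEpropCayleyAbels}. The paper does not argue directly: it observes that the action on the graph is geometric in the sense of Definition~\ref{actions_mp_lb_cob} and then invokes the forward reference Corollary~\ref{ahahah} (equivalently Theorem~\ref{ftggt}) to conclude compact generation. Your path-telescoping argument --- choosing lifts $g_j$ of the vertices along a combinatorial path and reading off $g_{j-1}^{-1}g_j \in s_{i_j}K$ --- is the explicit hands-on version of what that machinery would produce, and it makes the proposition self-contained within Chapter~2. The trade-off is that the paper's route, once Corollary~\ref{ahahah} is in place, immediately generalizes (coarsely geodesic targets, not just graphs), whereas your argument is tailored to the combinatorics of bounded-valency graphs.
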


\noindent \emph{Note.} 
A compactly generated totally disconnected LC-group
has Property \ref{iDEpropCayleyAbels}, 
by the van Dantzig theorem.

\begin{defn}
\label{defCayleyAbels}
A $G$-graph $X$ with 
Property \ref{iiDEpropCayleyAbels} of Proposition \ref{propCayleyAbels} 
is a \textbf{Cayley-Abels graph} for $G$.
\index{Cayley-Abels graph|textbf}
\end{defn}

\begin{proof}[Proof of Proposition \ref{propCayleyAbels}]
To show that \ref{iDEpropCayleyAbels} implies \ref{iiDEpropCayleyAbels}, 
let $S_0$ be a compact generating set of $G$
and $K$ a compact open subgroup of $G$.
Set $S = K(S_0 \cup S_0^{-1})K \smallsetminus K$; 
it is symmetric, $K$-biinvariant, open (because $K$ is open),
and $S \cup K$ generates $G$.
\par

Define a relation $\sim$ on $G$ by $g \sim g'$ if $g^{-1}g' \in S$;
this relation is symmetric ($g \sim g'$ implies $g' \sim g$),
antireflexive ($g \nsim g$ for all $g \in G$),
and $K$-biinvariant ($g \sim g'$ implies $k_1gk_2 \sim k_1 g' k_2$ for all $k_1, k_2 \in K$).
This relation induces a left-$K$-invariant relation on $G/K$,
again denoted by $\sim$, again symmetric and antireflexive.
\par

Define a graph $X$ as follows: 
its vertex set is $G/K$,
and there is an edge between $gK$ and $g'K$ if $gK \sim g'K$.
Then $X$ is connected, because $S \cup K$ generates $G$.
The natural action of $G$ on $X$ preserves the graph structure.
\par

The set of neighbours of the vertex $1K$ can be identified
with the set $\{g \in G \mid gK \sim 1K \} / K$, which is $S/K$;
it is finite, because the right action of $K$ on $S$ has open orbits, and $S$ is compact.
By homogeneity, every vertex in $X$ has the same number $\vert S/K \vert$ of neighbours.
\par

To show that \ref{iiDEpropCayleyAbels} implies \ref{iDEpropCayleyAbels}, 
we refer to Corollary \ref{ahahah} below.
As $X$ can be viewed as a (coarsely) geodesic metric space,
and as the action of $G$ on $X$ is geometric,
the group $G$ is compactly generated.
Moreover, for every vertex $x_0 \in X$, the isotropy subgroup
$\{ g \in G \mid g(x_0) = x_0 \}$ is a compact open subgroup.
\end{proof}

\begin{rem}
\label{remCayleyAbelsg}
With the terminology of Definition \ref{actions_mp_lb_cob},
the natural actions of $G$ on its Cayley-Abels graphs are geometric.
In particular, any two Cayley-Abels graphs for a group $G$ as in Proposition \ref{propCayleyAbels} 
are quasi-isometric \cite[Theorem 2.7]{KrMo--08}.
\par
For finitely generated groups, Cayley graphs in the usual sense (see e.g.\ \cite{Cann--02})
are examples of Cayley-Abels graphs.
``Cayley graphs'' for infinite groups appeared first in articles by Dehn, 
see \cite{Dehn--10};
the terminology ``Dehn Gruppenbild'' is also used.
\index{Dehn Gruppenbild|see {Cayley graph}}

\par
Cayley-Abels graphs appear, sometimes with different names, 
in \cite[Beispiel 5.2]{Abel--74},
\cite[Corollary 1]{Moll--03},
\cite[Section 2]{KrMo--08},
and shortly after Proposition 2.1 in \cite{CCMT--15}.
\par

Compactly generated LC-groups containing compact open subgroups
appear again in Theorem \ref{cpcovers}.
\end{rem}

Before we state the next structure theorem, 
it is convenient to recall the standard terminology 
related to real and complex Lie groups; see e.g.\ \cite{Hoch--65}.

\begin{defn}
\label{defanalyticLie}
A real \textbf{analytic group} 
\index{Analytic group|textbf}
is a group, 
and a \emph{connected} real analytic manifold,  
such that the group operations are analytic.
A real \textbf{Lie group} \index{Lie group|textbf}
is a topological group $G$
of which the identity component is both open and a real analytic group.
There are similar definitions for complex analytic groups and complex Lie groups.
We often write ``Lie group'' for ``real Lie group''.
\end{defn}

\begin{rem}
\label{alggroupsalmostconnected}
There is no restriction on the number of connected components of a Lie group.
In particular, every discrete group is a Lie group.
\par

If $G$ is the group of real points of an algebraic group 
defined over the reals, then $G$ is connected-by-finite.
More generally, real algebraic varieties have finitely many connected components;
this is due to Whitney \cite{Whit--57}.
\end{rem}

\begin{thm}[Gleason-Yamabe] \index{Gleason-Yamabe theorem}
\index{Theorem! Gleason-Yamabe}
\label{Yamabe}
Let $G$ be a connected-by-compact LC-group
and let $U$ be a neighbourhood of $1$ in $G$.
\par
There exists a compact normal subgroup $N$ of $G$ contained in $U$
such that $L = G/N$ is a Lie group with finitely many connected components.
\index{Subgroup! compact normal} 
\par

In particular, a connected-by-compact LC-group is
compact-by-(analytic-by-finite).
\end{thm}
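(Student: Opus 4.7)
The plan is to derive this from the classical Gleason--Yamabe solution of Hilbert's Fifth Problem (stated without proof in the excerpt), and then do a short ``connected-by-compact to general'' bootstrap plus a two-line argument for finiteness of $\pi_0(L)$. I would structure the argument in three stages.

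\emph{Stage one: the connected core.} I would invoke the central Gleason--Yamabe approximation theorem for \emph{connected} LC-groups: if $H$ is a connected LC-group and $V$ is a neighbourhood of $1$ in $H$, there is a compact normal subgroup $M\subset V$ of $H$ with $H/M$ a Lie group. The proof rests on (i) the notion of ``no small subgroups'' (NSS); (ii) Gleason's theorem that every LC-group with NSS is a Lie group; and (iii) Yamabe's approximation producing compact normal $M \subset V$ so that $H/M$ is NSS. This is the deep content, and the entire obstacle of the argument: it requires constructing ``Gleason metrics'' (left-invariant metrics with suitable escape/commutator estimates), a fine analysis of one-parameter subgroups, and an extraction argument for compact normal approximating subgroups. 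I would simply cite Montgomery--Zippin \cite{MoZi--55} and Yamabe, exactly as the book does elsewhere in Section~\ref{intro_conditionsOn_LC}.

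\emph{Stage two: passage from connected to connected-by-compact.} Given $G$ with $G/G_0$ compact and $U$ a neighbourhood of $1$, the issue is to produce a compact $N\subset U$ that is normal in $G$ (not just in $G_0$). I would first shrink $U$ to a smaller open neighbourhood $U'$ of $1$ that is invariant under conjugation by $G$: such a $U'$ exists because $G$ is compact-by-compact in a strong sense, more precisely because the map $G\times G\to G$, $(g,x)\mapsto gxg^{-1}$, together with compactness of $G/G_0$ and continuity of conjugation by $G_0$, allows one to intersect finitely many $G_0$-conjugates of $U$ and then further intersect conjugates by a set of coset representatives for $G/G_0$. Applying stage one to $H=G_0$ with neighbourhood $V=U'\cap G_0$ of $1$ in $G_0$ yields a compact subgroup $M\subset U'$, normal in $G_0$; then $N:=\overline{\langle gMg^{-1}:g\in G\rangle}$ is contained in $U'\subset U$ (since $U'$ is conjugation-invariant and we shrink sufficiently), is compact (bounded subset of the compact set $U'$), normal in $G$, and the quotient $G/N$ is still a Lie group because its identity component is a quotient of the Lie group $G_0/M$ and Lie-ness is inherited from $G_0/N\cap G_0$ using Cartan's theorem on closed subgroups.

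\emph{Stage three: finitely many components.} Set $L=G/N$ and $p:G\twoheadrightarrow L$. Since $G_0$ is connected, $p(G_0)\subset L_0$, so the composition $G\twoheadrightarrow L\twoheadrightarrow L/L_0$ factors through $G/G_0$. Because $L$ is a Lie group, $L_0$ is open, so $L/L_0$ is discrete; the continuous image of the compact group $G/G_0$ in the discrete space $L/L_0$ is compact and discrete, hence finite. This gives $|\pi_0(L)|<\infty$. The final ``in particular'' sentence is then immediate: $N$ is compact, $L_0$ is analytic by Definition~\ref{defanalyticLie}, and $L/L_0$ is finite, so $G$ is compact-by-(analytic-by-finite). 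As noted above, the real work is entirely in stage one; stages two and three are formal once that black box is in hand.
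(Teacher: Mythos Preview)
Your Stage~1 (citing the deep connected-case Gleason--Yamabe theorem as a black box) and your Stage~3 (finiteness of $\pi_0(L)$) are fine, and Stage~3 is exactly what the paper does. However, the paper does \emph{not} attempt your Stage~2: it simply cites \cite[Theorem~4.6]{MoZi--55}, which already treats the connected-by-compact case directly. That is because Yamabe's own contribution \cite{Yam--53b} establishes the approximation theorem for $G$ with $G/G_0$ compact, not merely for connected $G$; the passage from connected to connected-by-compact is part of the hard work, not a formal add-on.

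Your Stage~2 has a genuine gap. The claim that one can shrink $U$ to a $G$-conjugation-invariant open neighbourhood $U'$ of $1$ is false in general: already in the connected Lie group $G=\SL_2(\R)$, the only conjugation-invariant neighbourhoods of $1$ are all of $G$, since every non-central element has an unbounded conjugacy class (e.g.\ conjugate a small unipotent by diagonal matrices). Your justification---intersecting finitely many $G_0$-conjugates and then conjugates by coset representatives of $G/G_0$---would require $G_0$ to be compact, which it is not. Consequently the subsequent construction $N:=\overline{\langle gMg^{-1}:g\in G\rangle}$ need not lie in $U'$ (even if all $gMg^{-1}$ did, the subgroup they generate need not), and there is no reason for $N$ to be compact. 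Finally, even granting a compact normal $N$ with $G_0N/N$ a Lie group, you still need $G_0N$ to be \emph{open} in $G$ to conclude that $G/N$ is a Lie group; since $G/G_0$ can be a non-discrete compact totally disconnected group, this is not automatic. The honest route is to cite the connected-by-compact result directly, as the paper does.
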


\begin{proof}[On the proof] 
This is \cite[Theorem 4.6]{MoZi--55}. 
Observe that the quotient $L/L_0$ is finite,
because it is discrete ($L$ is a Lie group)
and compact (by Proposition \ref{pi_0finite}).
\par

The original articles, \cite{Glea--51} and \cite{Yam--53b},
were building upon a large amount of deep work by various authors, 
including
Chevalley, Kuranishi, Iwasawa, Montgomery, and Zippin.
See also \cite{Bore--50}, \cite{Tao--14},  and \cite{DrGo--15}. 
\end{proof}

\begin{rem}
\label{RemOnYamabe}
(1)
There exists also a \emph{maximal} compact normal subgroup of $G$.
See Example \ref{exW(G)2}(1).

\vskip.2cm

(2)
One way to express the theorem is that
every connected-by-compact LC-group
can be \emph{approximated by Lie groups}, the $G/N$ 's. 
\par
Here is another way. A topological group $G$ is a \textbf{generalized Lie group}
\index{Generalized Lie group|textbf}
(in the sense of Gleason \cite{Glea--49, Glea--51})
if, for every neighbourhood $U$ of $1$ in $G$,
there exist an open subgroup $H$ of $G$
and a compact normal subgroup $N$ of $H$, with $N \subset U$, 
such that $H/N$ is a Lie group;
a generalized Lie group is locally compact.
Corollary \ref{vanDantzigCor} and Theorem \ref{Yamabe}
show that, conversely, every LC-group is a generalized Lie group.

\vskip.2cm

(3)
The Gleason-Yamabe Theorem on the structure of LC-groups
goes together with the final solution of the Hilbert Fifth Problem,
\index{Hilbert Fifth Problem}
published in 1952 \cite{Glea--52, MoZi--52}
(see Theorem \ref{MontgomeryZipin}).
There is a classical exposition in
\cite[in particular Sections 2.15 and 4.10]{MoZi--55};
see also \cite{Glus--57}, \cite{Tao--14}, and \cite{DrGo--15}.

\vskip.2cm

(4)
The hypothesis ``connected-by-compact'' cannot be omitted
in Theorem \ref{Yamabe},
as the following example shows.
\par
Let $G = \SO (3)^\Z \rtimes_{\operatorname{shift}} \Z$, 
\index{Orthogonal group! $\textnormal{O}(n)$, $\SO (n)$}
\index{Semidirect product! $\SO (3)^\Z \rtimes_{\operatorname{shift}} \Z$}
where the semidirect product refers to the action of $\Z$ by shifts;
the topology is that for which $\SO(3)^\Z$ has
the compact product topology, 
\index{Product of groups}
and is open in $G$.
Then $G_0 =  \SO(3)^\Z$, and 
$G$ has exactly two compact normal subgroups which are $G_0$ and $\{1\}$.
\par

In particular, 
for a neighbourhood $U$ of $1$ in $G$ such that $U \not\supset G_0$,
there cannot exist a compact normal subgroup $N$ of $G$ 
such that $N \subset U$ and $G/N$ is a Lie group.
\end{rem}

\begin{thm} \index{Cartan-Chevalley-Iwasawa-Malcev-Most\-ow theorem}
\index{Theorem! Cartan, Chevalley, Iwasawa, Malcev, Mostow}
\label{deHochschild+}
Let $G$ be a connected-by-compact LC-group.
\begin{enumerate}[noitemsep,label=(\arabic*)]
\item\label{1DEdeHochschild+}
The group $G$ contains maximal compact subgroups,
such that every compact subgroup of $G$ is contained in one of them.
\index{Maximal! compact subgroup}
\index{Subgroup! maximal compact}
\index{Compact group! maximal compact subgroup}
\item\label{2DEdeHochschild+}
Maximal compact subgroups of $G$ are conjugate with each other.
\item\label{3DEdeHochschild+}
If $K$ is a maximal compact subgroup of $G$,
the quotient space $G/K$ is homeomorphic to $\R^n$
for some $n \ge 0$;
moreover, $G_0K = G$, i.e., the natural action of $G_0$ on $G/K$ is transitive. 
\item\label{4DEdeHochschild+}
For $K$ and $n$ as in \ref{3DEdeHochschild+}, there exists on 
$G/K \overset{\operatorname{homeo}}{\approx} \R^n$
\index{$ad$@$\approx$ homeomorphism}
a $G$-invariant structure of analytic manifold.
\end{enumerate}
\end{thm}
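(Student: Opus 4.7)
The plan is to reduce the problem to the classical case of a Lie group with finitely many components via the Gleason-Yamabe theorem (Theorem \ref{Yamabe}), and then transfer the conclusions back to $G$. Since $G$ is connected-by-compact, there exists by \ref{Yamabe} a compact normal subgroup $N \lhd G$ such that $L := G/N$ is a Lie group with finitely many connected components. Let $\pi : G \twoheadrightarrow L$ denote the quotient homomorphism. Since $G$ is connected-by-compact it is compactly generated, hence $\sigma$-compact, so $\pi$ is open by Corollary \ref{Freudenthalcor}.

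The second step is to invoke the classical Cartan-Iwasawa-Malcev-Mostow theorem for the Lie group $L$: $L$ admits maximal compact subgroups, they are pairwise conjugate, every compact subgroup of $L$ is contained in one of them, and for any such $K' \subset L$ the quotient $L/K'$ is diffeomorphic to $\R^n$ with an $L$-invariant analytic structure; moreover, since $L/L_0$ is finite, $K'$ meets every connected component of $L$, that is $L_0 K' = L$. (This is the deep input; see e.g.\ \cite{Hoch--65}.)

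The third step is the transfer to $G$. Given a maximal compact subgroup $K' \subset L$, define $K := \pi^{-1}(K')$. As $K$ fits into a short exact sequence $N \hookrightarrow K \twoheadrightarrow K'$ of compact groups, $K$ is compact; and $K$ is maximal, since any compact subgroup of $G$ strictly containing $K$ would project to a compact subgroup of $L$ strictly containing $K'$. If $C \subset G$ is any compact subgroup, then $\pi(C)$ is compact and lies in some maximal compact $K' \subset L$, hence $C \subset \pi^{-1}(K')$, proving \ref{1DEdeHochschild+}. For \ref{2DEdeHochschild+}, if $K_1, K_2$ are two maximal compacts in $G$, their images $\pi(K_i)$ are maximal compacts in $L$, hence $l\pi(K_1)l^{-1} = \pi(K_2)$ for some $l \in L$; choosing $g \in \pi^{-1}(l)$ yields $gK_1g^{-1} = K_2$ (both sides being $\pi^{-1}$ of the same maximal compact of $L$, using the equality $K_i = \pi^{-1}(\pi(K_i))$). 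For \ref{3DEdeHochschild+}, the equality $L_0 K' = L$ combined with $\pi(G_0) = L_0$ (Proposition \ref{pi_0finite}\ref{2DEpi_0finite}) gives $G_0 K = G$ by lifting: any $g \in G$ satisfies $\pi(g) = l_0 k'$ with $l_0 \in L_0$, $k' \in K'$, so picking $g_0 \in G_0$ with $\pi(g_0) = l_0$ yields $g_0^{-1}g \in K$. The natural continuous bijection $G/K \to L/K'$ is a homeomorphism by Proposition \ref{Freudenthal} (applied to the transitive action of the $\sigma$-compact group $G$ on the LC-space $L/K'$), so $G/K \approx L/K' \approx \R^n$.

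Finally, for \ref{4DEdeHochschild+}, the homogeneous space $L/K'$ carries a canonical $L$-invariant real-analytic manifold structure, and transporting this via the homeomorphism $G/K \to L/K'$ gives an analytic structure on $G/K$ on which $G$ acts through its quotient $L$ by analytic diffeomorphisms. \emph{The main obstacle} is the classical theorem for Lie groups with finitely many components invoked in Step 2; once this is accepted, the reduction via Gleason-Yamabe together with the openness of $\pi$ makes the remaining steps largely formal, though one must pay attention to the fact that $G$ itself is not assumed to be a Lie group, so the analytic structure on $G/K$ only records a $G$-action through $L$, not a smooth $G$-structure.
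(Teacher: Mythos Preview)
Your proof is correct and follows the same overall strategy as the paper: reduce to the Lie case via the Gleason--Yamabe theorem, invoke the classical result for Lie groups with finitely many components, and pull the conclusions back through $\pi$. Two minor differences are worth noting. For the equality $G_0K = G$, you lift the Lie-group identity $L_0K' = L$ using $\pi(G_0) = L_0$; the paper instead argues directly that $G/G_0K$ is both totally disconnected (as a quotient of $G/G_0$) and connected (as a quotient of $G/K \approx \R^n$), hence a point --- this avoids citing $L_0K' = L$ as part of the Lie input, which the paper observes is not explicit in \cite{Hoch--65}. For \ref{4DEdeHochschild+}, your transport of the analytic structure along $G/K \cong L/K'$ (with $G$ acting through $L$) is actually more economical than the paper's route, which passes through the kernel $J = \bigcap_{g} gKg^{-1}$ and invokes Montgomery--Zippin (Theorem~\ref{MontgomeryZipin}) to show $G/J$ is a Lie group; since $N \subset J$, your quotient $L = G/N$ already does the job.
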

\index{Theorem! maximal compact subgroups}

\begin{proof}[On the proof]
For the particular case of a connected LC-group, the theorem is essentially that of
Section 4.13 in \cite{MoZi--55}.
For the theorem as stated, a convenient strategy is to split 
the argument in two steps, as follows.
\par

Assume first that $G$ is a Lie group;
we refer to \cite[Theorem 3.1 of Chapter XV and Observation Page 186]{Hoch--65}.
Credits for this are shared by several authors:
``The names to be attached to this result are:
\'E.\ Cartan, C.\ Chevalley, K.\ Iwasawa, A.\ Malcev, G.D.\ Mostow''
(quoted from \cite[Preface]{Hoch--65}).
\par

Assume now that $G$ is a connected-by-compact LC-group.
Let $N$ be as in Theorem \ref{Yamabe} (for an arbitrary $U$).
Inverse images of maximal compact subgroups of $G/N$
are maximal compact subgroups of $G$,
and are conjugate with each other.
Let $K$ be one of them.
Then $G/K \approx (G/N)/(K/N)$,
and the right-hand side is homeomorphic to $\R^n$ for some $n \ge 0$,
by the Lie group case.
\par

Let us check the equality $G_0K=G$
(even for $G$ a Lie group, it is not explicitly in \cite{Hoch--65}).
Since $K$ is compact, $G_0K$ is closed.
The homogeneous space $G/G_0K$
is both totally disconnected (as a quotient of $G/G_0$)
and connected (as a quotient of $G/K$),
hence is just one point.
\par

Set
\begin{equation*}
J \, = \, \bigcap_{g \in G} gKg^{-1} \, = \, 
\{ g \in G \mid g \hskip.2cm
\text{acts as the identity on} \hskip.2cm
G/K \approx \R^n \} .
\end{equation*}
It is a compact normal subgroup of $G$ contained in $K$.
Since the natural action of $G/J$ on $G/K$ is faithful,
$G/J$ is a Lie group by the next theorem.
Thus $\R^n \approx G/K \approx (G/J)/(K/J)$ has a natural analytic structure
which is invariant by $G/J$, and a fortiori by $G$
\cite[Theorems 2.1 and 3.1 of Chapter VIII]{Hoch--65}.
\end{proof}

\begin{rem}
\label{remsursuiteYamabe}
(1)
Let $G$ be a connected-by-compact LC-group,
and $K$ a maximal compact subgroup, as in Theorem \ref{deHochschild+}.
The topology of $K$ can be as rich as that of $G/K$ is poor.
\par
The topology of connected compact Lie groups has been intensively studied
in the period 1925--1960 by many authors
including Weyl (one of his first results in this field is that $\pi_1(G)$ is finite
for a semisimple connected compact Lie group),
\'E.~Cartan, A.~Borel, and Bott (periodicity theorems, 
including the isomorphisms
$\pi_{j+2}(\lim_{n \to \infty} \operatorname{U}(n)) \simeq 
\pi_{j}(\lim_{n \to \infty} \operatorname{U}(n))$).
See for example \cite{Bore--01}, \cite{Same--52}, \cite{Bore--55}, and \cite{Bott--59}.
\index{Compact group! topology}

\vskip.2cm

(2)
A compact group $K$ is homeomorphic to the direct product
$K_0 \times K/K_0$ \cite[Corollary 10.38, Page 559]{HoMo--06}.

\vskip.2cm

(3)
Without the hypothesis ``connected-by-compact'',
each conclusion of Theorem \ref{deHochschild+} may fail,
as the following examples show.
\par

(3$_{\textnormal{a}}$)
An infinite locally finite group does not contain maximal finite subgroups.
Recall that a group $\Gamma$ is \textbf{locally finite} 
\index{Locally finite! group|textbf}
if every finite subset of $\Gamma$ is contained in a finite subgroup of $\Gamma$.
\par

(3$_{\textnormal{b}}$)
Let $A,B$ be two non-trivial finite groups.
In the free product $A \Conv B$,
\index{Free product}
maximal finite subgroups are either conjugate of $A$ or conjugate of $B$,
and $A,B$ are not conjugate with each other
\cite[Section 4.1, Problem 11]{MaKS--66}.
\par
For an integer $n \ge 2$ and a prime $p$,
the locally compact group $\SL_n(\Q_p)$ 
\index{Special linear group $\SL$! $\SL_n(\Q_p)$}
contains $n$ distinct conjugacy classes of maximal compact subgroups
(\cite[Proposition 3.14]{PlRa--94}, see also \cite{Bruh--64}).
Note that, in $\GL_n(\Q_p)$, 
maximal compact subgroups are conjugate with each other. 
\par

(3$_{\textnormal{c}}$)
Let $\Gamma$ be an infinite discrete group and $K$ a finite subgroup.
The quotient space $\Gamma/K$ is not homeomorphic
to a Euclidean space.
\end{rem}

\begin{thm}[Montgomery-Zippin] \index{Montgomery-Zippin theorem}
\index{Theorem! Montgomery-Zippin}
\label{MontgomeryZipin}
Let $G$ be a $\sigma$-compact LC-group that acts
transitively, faithfully, and continuously on a connected topological manifold.
\par
Then $G$ is isomorphic to a Lie group.
\end{thm}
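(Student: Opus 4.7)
The plan is to combine the Gleason-Yamabe approximation theorem with a classical ``no small subgroups'' result for homeomorphism groups of manifolds. Since the theorem is precisely what allows one to remove the Lie hypothesis in Yamabe's theorem (one of the final ingredients in the solution of Hilbert's Fifth Problem), the argument is necessarily structural: first replace $G$ by a connected-by-compact open subgroup, then approximate this subgroup by a Lie group via Gleason-Yamabe, and finally use faithfulness of the action on $M$ together with Newman-type rigidity to show that the approximation is in fact an isomorphism.

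More precisely, I would proceed as follows. First, by Corollary \ref{vanDantzigCor}\ref{1DEopensubgroupsinLCgroups}, the LC-group $G$ contains an open subgroup $H$ that is connected-by-compact, and in particular $\sigma$-compact. Since $H$ is open in $G$ and $G$ acts faithfully on $M$, the restricted action of $H$ on $M$ is still continuous and faithful (though no longer transitive in general); it is, however, still ``locally transitive'' in the sense that each $H$-orbit is open in $M$, by Proposition \ref{Freudenthal} applied to $H$. Next, I apply the Gleason-Yamabe Theorem \ref{Yamabe} to the connected-by-compact group $H$: for every neighbourhood $U$ of $1$ in $H$ there exists a compact normal subgroup $N_U \lhd H$ with $N_U \subset U$ such that $H/N_U$ is a Lie group with finitely many components.

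The crux of the proof, and certainly the hardest step, is to show that for $U$ chosen sufficiently small, the compact normal subgroup $N_U$ must act trivially on $M$. This cannot be extracted from the material in the excerpt; it requires the classical theorem (variously attributed to Newman, Bochner-Montgomery, Smith) that the homeomorphism group of a connected topological $n$-manifold has no small subgroups, in the following quantitative form: for every compact $K \subset M$ and every chart neighbourhood $V \supset K$, there exists a neighbourhood $\mathcal{V}$ of $\mathrm{id}_M$ in the compact-open topology such that every compact subgroup of $\mathrm{Homeo}(M)$ contained in $\mathcal{V}$ acts trivially on $K$. Choosing a point $x_0 \in M$ and a compact chart neighbourhood $K$ of $x_0$, one picks $U$ so that every element of $U$ moves $K$ within a prescribed small neighbourhood; then any compact subgroup $N_U \subset U$ acts as a group of small homeomorphisms of a Euclidean ball, and the Newman-type result forces $N_U$ to fix $K$ pointwise. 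Because $N_U$ is normal in $H$ and $H$-orbits are open in $M$, translating the fixed-point set by $H$ shows that $N_U$ acts trivially on all $H$-orbits meeting $K$; since $M$ is connected, a chain-of-charts argument shows $N_U$ acts trivially on $M$.

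Once this is established, the rest is clean. Faithfulness of the $G$-action (hence of the $H$-action) forces $N_U = \{1\}$, so $H$ is isomorphic (as a topological group) to the Lie group $H/N_U$. Note that this uses Corollary \ref{Freudenthalcor} implicitly: the quotient map $H \to H/N_U$ is a continuous open bijective homomorphism of $\sigma$-compact LC-groups, hence an isomorphism. Finally, $H$ is an open subgroup of $G$, so $H_0$ is open in $G$; since $H_0$ is connected and open, $H_0 = G_0$, and $G_0$ is therefore an open analytic subgroup of $G$. By Definition \ref{defanalyticLie}, $G$ is a Lie group, which completes the proof. The main obstacle is unambiguously the Newman/Bochner-Montgomery step, which lies outside the metric-geometric framework of the book and forces appeal to external results on actions on manifolds.
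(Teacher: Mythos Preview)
Your proposal is correct and follows essentially the same route as the proof the paper references (Montgomery--Zippin via Tao): pass to a connected-by-compact open subgroup, apply Gleason--Yamabe to get a compact normal $N_U$ with Lie quotient, and then use a Newman-type ``no small subgroups'' result for homeomorphism groups of manifolds to kill $N_U$. The paper itself gives only a citation, so your sketch is in fact more detailed than what appears there.

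One simplification you overlook: once you know that $H$-orbits are open in $M$ (which, as you note, follows from Freudenthal applied to $G$ together with $H$ being open), the connectedness of $M$ forces $H$ to act \emph{transitively}, since the $H$-orbits form an open partition of $M$. This makes your ``chain-of-charts'' step unnecessary: once $N_U$ fixes a neighbourhood of $x_0$, normality of $N_U$ in the transitive group $H$ spreads this to all of $M$ immediately. Your statement of the Newman/Bochner--Montgomery input is also slightly informal (the clean version is that a nontrivial compact group acting effectively on a connected manifold must have some orbit of diameter at least $\varepsilon$, with $\varepsilon$ depending only on the manifold), but you correctly flag it as the genuine external ingredient outside the book's framework.
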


\begin{proof}
This follows simply from \cite[Corollary of Section 6.3]{MoZi--55}.
\par
A convenient reference is
\cite[Proposition 1.6.5, Pages 116--126]{Tao--14}.
Tao's proof uses first Proposition \ref{Freudenthal}, to show that
the natural map from $G/H$ to the manifold
is a homeomorphism
(where $H$ is some isotropy subgroup), 
and then the van Dantzig $\&$ Gleason-Yamabe Theorems, 
\ref{vanDantzig} $\&$ \ref{Yamabe}.
\end{proof}

In particular, if a $\sigma$-compact LC-group $G$
is locally homeomorphic to a Euclidean space,
Theorem \ref{MontgomeryZipin} applied to the action of $G_0$ on itself
implies that $G$ is isomorphic to a Lie group:
this is the solution of Hilbert Fifth Problem
\index{Hilbert Fifth Problem}.

\begin{cor}
\label{maximalcompact}
Let $G$ be an LC-group.
There exist a compact subgroup $K$ of $G$,
a non-negative integer $n$, 
and a discrete subspace $D$ of $G$,
such that the homogeneous space $G/K$ is homeomorphic to $D \times \R^n$.
The identity component $G_0$ acts transitively on each connected component of $G/K$.
\par

On the space $D \times \R^n$, there exists a $G$-invariant structure
of analytic manifold (in general non-connected).
\end{cor}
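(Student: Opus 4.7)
The plan is to deduce the corollary from Theorem \ref{deHochschild+} by reducing to the connected-by-compact case. First I would apply Corollary \ref{vanDantzigCor}(1) to produce an open subgroup $H$ of $G$ containing $G_0$ and such that $H/G_0$ is compact, so that $H$ is connected-by-compact. A useful observation is that $H_0 = G_0$: since $H$ is open, $H_0$ is connected in $G$ and thus contained in $G_0$, and conversely $G_0 \subseteq H$ forces $G_0 \subseteq H_0$.

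Second, I would apply Theorem \ref{deHochschild+} to $H$ to obtain a maximal compact subgroup $K$ of $H$, an integer $n \ge 0$ with $H/K \approx \R^n$, the identity $H_0 K = H$, and an $H$-invariant analytic structure on $H/K$. The subgroup $K$ is compact in $G$ as well. To globalize, I would exploit the fact that $H$ is open in $G$, so $G/H$ is discrete; choosing any set $D \subseteq G$ of representatives for the left cosets of $H$ makes $D$ a discrete subspace of $G$ with $G = \bigsqcup_{d \in D} dH$. Quotienting by $K$ yields the clopen decomposition $G/K = \bigsqcup_{d \in D} dH/K$; left translation by $d$ identifies each $dH/K$ with $H/K \approx \R^n$, and the announced homeomorphism $G/K \approx D \times \R^n$ follows.

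The connected components of $G/K$ are then the pieces $dH/K$. To verify that $G_0$ acts transitively on each of them I would combine the three facts $G_0 = H_0$, $G_0 \lhd G$ and $H_0 K = H$: normality gives $G_0 \cdot d = d\,G_0$, hence $G_0 \cdot dK = d\,G_0 K = d\,H_0 K = dH$, whose image in $G/K$ is precisely the component $dH/K$. Finally I would construct the $G$-invariant analytic structure on $G/K$ by transporting the $H$-invariant structure on $H/K$ along $xK \mapsto dxK$ to each component $dH/K$ (the result being independent of the choice of $d$ in its coset, since the $H$-action is by analytic diffeomorphisms); for $g \in G$ and $d \in D$, writing $gd = d'h$ with $d' \in D$ and $h \in H$, the map $dH/K \to d'H/K$ induced by $g$ becomes, after the identifications with $H/K$, left multiplication by $h$, which is analytic by $H$-invariance. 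The only step that requires any real care is the $G_0$-transitivity on components, since it rests on the three separate inputs $G_0=H_0$, $G_0 \lhd G$ and $H_0 K=H$ combining in just the right way; everything else is routine gluing along the open cosets of $H$ in $G$.
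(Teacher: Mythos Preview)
Your proof is correct and follows essentially the same route as the paper: reduce to a connected-by-compact open subgroup $H$ via Corollary~\ref{vanDantzigCor}, apply Theorem~\ref{deHochschild+} to $H$, and glue along the discrete coset space $G/H$. Your observation $H_0=G_0$ made upfront streamlines the $G_0$-transitivity step compared to the paper's slightly more circuitous argument, and you spell out the construction and $G$-invariance of the analytic structure more explicitly than the paper, which simply cites Theorem~\ref{deHochschild+}\ref{4DEdeHochschild+}.
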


\begin{proof}
By Corollary \ref{vanDantzigCor} of van Dantzig Theorem,
there exists a connected-by-compact open subgroup $H$ of $G$.
Let $K$ be a maximal compact subgroup of $H$.
By Theorem \ref{deHochschild+},
we know that $H/K \approx \R^n$ for some $n \ge 0$, 
and that $H_0$ acts transitively on $H/K$,
so that $H_0/(K \cap H_0) = H/K$.
\par

Let $D$ be a discrete subspace of $G$ 
such that $G = \bigsqcup_{d \in D} dH$ (disjoint union),
i.e., such that $G$ is homeomorphic to $D \times H$;
without loss of generality, we can assume that $1 \in D$.
Then 
\begin{equation*}
G/K \, = \,  \bigsqcup_{d \in D} d(H/K) 
\, = \,  \bigsqcup_{d \in D} d(H_0/(K \cap H_0))
\approx D \times \R^n .
\end{equation*}
Since the identity component $G_0$ contains $H_0$ and is connected,
$G_0 / (K \cap G_0)$ is connected.
Therefore $H/K = H_0 / (K \cap H_0) = G_0 / (K \cap G_0) \approx \{1\} \times \R^n$
is $G_0$-invariant; and $G_0$ acts transitively on $\{1\} \times \R^n$.
Since $G_0$ is normal in $G$, the group $G_0$ acts transitively
on each component $\{d\} \times \R^n$.
\par

The last claim follows from Theorem \ref{deHochschild+}\ref{4DEdeHochschild+}.
\end{proof}

Compact normal subgroups appear in Theorem \ref{Yamabe}.
We continue on this theme as follows.

\begin{defn}
\label{defW(G)}
The \textbf{polycompact radical} of a topological group $G$ 
\index{Radical! polycompact radical|textbf}
is the union $W(G)$ of all compact normal subgroups of $G$.
Actually, it is a subgroup, indeed a topologically characteristic subgroup.
\par
A topological group $G$ \textbf{has a compact radical}
\index{Radical! compact radical|textbf}
if $W(G)$ is compact;
in this case $W(G)$ is called the compact radical of $G$,
and $W(G/W(G)) = \{1\}$.
\index{Normal compact subgroup}
\end{defn}

\begin{rem}
\label{remW(G)}
\par
For $G$ and $W(G)$ as above, 
note that $W(G)$ is compact if and only if it is relatively compact.
Indeed if the latter holds, then $\overline{W(G)}$ is a compact normal subgroup 
and hence is contained in $W(G)$, whence $W(G)=\overline{W(G)}$.
\end{rem}

\begin{exe}
\label{exW(G)1}
We show an example where the polycompact radical is not compact,
and another one where the polycompact radical is not closed.

\vskip.2cm 

(1)
If $G = \Q_p$ for some prime $p$,
or if $G$ is an infinite locally finite discrete abelian group,
then $W(G) = G$; in particular, $W(G)$ is closed in $G$ and is not compact.

\vskip.2cm

(2)
Consider an integer $n \ge 3$, the finite cyclic group $A = \Z / n\Z$,
the group of units $B = (\Z / n\Z)^\times$ of $A$ viewed as a ring,
and the semidirect product
\index{Semidirect product! $A^{(\N)} \rtimes B^N$}
\begin{equation*}
G \, = \, \Big( \bigoplus_{k \ge 1} A_k \Big) \rtimes \Big( \prod_{k \ge 1} B_k \Big) ,
\end{equation*}
where every $A_k$ is a copy of $A$ and every $B_k$ a copy of $B$.
The action of the product on the direct sum is the natural one,
for which $B_k$ acts on the corresponding $A_k$ by multiplication;
the topology is that for which
$\bigoplus_{k \ge 1} A_k$ is discrete 
and $\prod_{k \ge 1} B_k$ has the compact product topology.
It can be checked that  
\begin{equation*}
W(G) \, = \, \Big( \bigoplus_{k \ge 1} A_k \Big) \rtimes \Big( \bigoplus_{k \ge 1} B_k \Big) .
\end{equation*}
In particular, $W(G)$ is a dense, proper subgroup of $G$ and is not closed. 
This group is a particular case of Example 1 in Section 6 of \cite{WuYu--72},
related to an example in the proof of Proposition 3 in \cite{Tits--64}.

\vskip.2cm

It is known that, in a compactly generated LC-group, the polycompact radical is closed
\cite[Theorem 1]{Corn--15}.
\end{exe}

\begin{exe}
\label{exW(G)2}
We describe compact radicals in five classes of LC-groups.

\vskip.2cm

(1)
Let $G$ be a connected-by-compact LC-group,
as in Theorems \ref{Yamabe} and \ref{deHochschild+}.
Then $G$ has a compact radical
$W(G) = \bigcap_{g \in G} g K g^{-1}$,
where $K$ is any maximal compact subgroup of $G$.
The quotient $G/W(G)$ is a Lie group
with finitely many connected components.
\par
The quotient $G/W(G)$ need not be connected;
indeed, if $G = \PGL_2(\R)$, then $W(G) = \{1\}$
and $G/W(G)=G$ has two connected components.
\index{Projective linear group PGL, and PSL! $\PGL_2(\R)$}

\vskip.2cm

(2)
Let $A,B$ be two LC-groups, $C$ a proper open subgroup of both $A$ and $B$,
and $G = A \Conv_C B$ the locally compact amalgamated product,
as in Proposition \ref{topamalgamation} below.
\index{Amalgamated product}
Suppose moreover that $W(C)$ is compact.
Then $G$ has a compact radical: $W(G)$ is the largest compact subgroup of $C$
which is normal in both $A$ and $B$.
Compare with \cite{Corn--09}.
\vskip.2cm

(3)
Let $H$ be an LC-group, 
$K, L$ two isomorphic open subgroups,
$\varphi : K \overset{\simeq}{\longrightarrow} L$ a topological isomorphism,
and $G = \HNN (H, K, L, \varphi)$ the corresponding HNN-extension,
as in Proposition \ref{topHNN} below.
\index{HNN! extension}
Suppose moreover that $W(K)$ is compact.
Then $G$ has a compact radical:
$W(G)$ is the largest compact subgroup of $K$
which is normal in $H$ and invariant by $\varphi$.

\vskip.2cm

(4)
Let $G$ be a Gromov-hyperbolic LC-group (see Remark \ref{GromovHyp}).
\index{Gromov! hyperbolic group}
Then $G$ has a compact radical;
if moreover $G$ is not 2-ended, 
then $W(G)$ is the kernel of the $G$-action on its boundary.
See \cite[Lemma 5.1]{CCMT--15}.

\vskip.2cm

(5)
An LC-group $G$ acting geometrically on a CAT($0$)-space $X$
has a compact radical. 
(See Section \ref{sectionactions} for ``geometrically'', 
and \cite{BrHa--99} for CAT($0$)-spaces.)
\par

Indeed, let $B$ be a bounded subset of $X$ such that $X = \bigcup_{g \in G} gB$.
Set $\Omega = \{ g \in G \mid gB \cap B \ne \emptyset \}$;
by properness, $\Omega$ is relatively compact in $G$.
We claim that every compact normal subgroup $K$ of $G$ is contained in $\Omega$;
modulo this claim, the union $W(G)$ of all compact normal subgroups of $G$ is in $\Omega$.
Hence $W(G) \subset \Omega$ is relatively compact, and therefore compact
by Remark \ref{remW(G)}.
\par

Let us finally prove the claim. Let $K$ be a normal compact subgroup of $G$.
Let $X^K$ denote the subspace of $X$ of the points fixed by $K$.
As $X$ is a CAT($0$)-space $X^K$ is non-empty.
As $K$ is normal, $X^K$ is $G$-invariant; it follows that $X^K$ contains a point $x \in B$.
Since $kx=x$ for all $k \in K$, we have $K \subset \Omega$.
\par

More generally, the argument of (5) holds for every metric space $X$ 
with the following property:
every group acting on $X$ by isometries with a bounded orbit has a fixed point.
The property holds for CAT($0$)-spaces by Proposition 2.7 of Chapter II.2 in \cite{BrHa--99}.
\end{exe}

\chapter[Metric coarse and large-scale categories]
{The metric coarse category and the large-scale category of pseudo-metric spaces}
\label{chap_metriccoarse}

\section[Coarsely Lipschitz and large-scale Lipschitz maps]
{Coarsely Lipschitz maps and large-scale Lipschitz maps}
\label{coarselyLipschitzandlargescaleLipschitz}

Recall that, in this book, $\R_+ = \mathopen[0,+\infty\mathclose[$ 
and $\overline{\R}_+ = \mathopen[0,+\infty\mathclose]$.
It will be convenient to call an
\textbf{upper control} \index{Upper control|textbf}
a non-decreasing function 
$\Phi_+ : \R_+ \longrightarrow \R_+$,
and a \textbf{lower control} \index{Lower control|textbf} 
a non-decreasing function 
$\Phi_- : \R_+ \longrightarrow \overline \R_+$
such that $\lim_{t \to \infty} \Phi_-(t) = \infty$.

\begin{defn}[controls]
\label{defuclc}
Let $X,Y$ be two pseudo-metric spaces (as in \ref{pm...metrizable})
and $f : X \longrightarrow Y$ a map.
An \textbf{upper control for $f$} is an upper control $\Phi_+$ such that
\begin{equation*}
d_Y(f(x),f(x')) \, \le \, \Phi_+(d_X(x,x'))
\hskip.5cm \text{for all} \hskip.2cm x,x' \in X .
\end{equation*}
A \textbf{lower control for $f$} is a lower control $\Phi_-$ such that
\begin{equation*}
d_Y(f(x),f(x')) \, \ge \, \Phi_-(d_X(x,x')) 
\hskip.5cm \text{for all} \hskip.2cm x,x' \in X .
\end{equation*}
\end{defn}

It is suitable to allow infinite values for lower controls,
for example to allow $\Phi_-(t) = \infty$ for $f : X \longrightarrow Y$
with $X$ of finite diameter, say $D$, and $t > D$.
See the proof of Proposition \ref{reformulation_c_et_ce}.
\par

If there exist controls $\Phi_+, \Phi_-$ for $f$, as above, 
note that there exist  \emph{continuous}  controls 
$\widetilde \Phi_-, \widetilde \Phi_+$ for $f$ such that 
\begin{equation*}
\aligned
\widetilde \Phi_-(t) \, &\le \,  \Phi_-(t)
\hskip.2cm \text{for all} \hskip.2cm t \in \R_+ ,
\hskip.5cm \text{and} \hskip.5cm
\widetilde \Phi_-(0) = 0 ,
\\
\Phi_+(t) \, &\le \, \widetilde \Phi_+(t)
\hskip.2cm \text{for all} \hskip.2cm t \in \R_+ .
\endaligned
\end{equation*} 
It suffices to set $\widetilde \Phi_-(t) = \int_{\max \{t-1,0\} }^t \min\{ u, \Phi_- (u) \} du$
and $\widetilde \Phi_+(t) = \int_t^{t+1} \Phi_+ (u) du$.

\begin{defn}
\label{defcobounded}
A subspace $Z$ of a pseudo-metric space $Y$ is \textbf{cobounded} if
\begin{equation*}
\sup\{ d(y,Z) \mid y \in Y \} \, < \,  \infty .
\end{equation*}
\end{defn}
\index{Cobounded! cobounded subspace of a pseudo-metric space|textbf}

Note that the empty subspace is cobounded if $Y = \emptyset$,
and is not if $Y \ne \emptyset$.

\begin{defn}
\label{defcoarse}
Let $X,Y$ be two pseudo-metric spaces.
A map $f : X \longrightarrow Y$ is
\begin{enumerate}[noitemsep,label=(\alph*)]
\item\label{aDEdefcoarse}
\textbf{coarsely Lipschitz} \index{Coarsely! Lipschitz map|textbf}
if there exists an upper control for $f$;
\item\label{bDEdefcoarse}
\textbf{coarsely expansive} \index{Coarsely! expansive map|textbf}
if there exists a lower control for $f$;
\item\label{cDEdefcoarse}
a \textbf{coarse embedding} \index{Coarse! embedding|textbf}
if it is coarsely Lipschitz and coarsely expansive;
\item\label{dDEdefcoarse}
\textbf{essentially surjective} \index{Essentially surjective map|textbf}
if $f(X)$ is cobounded in $Y$;
\item\label{eDEdefcoarse}
a \textbf{metric coarse equivalence} \index{Metric coarse! equivalence|textbf}
if it is coarsely Lipschitz, coarsely expansive, and essentially surjective.
\end{enumerate}
Two pseudo-metric spaces are \textbf{coarsely equivalent}
\index{Coarsely equivalent spaces|textbf}
if there exists a metric coarse equivalence from one to the other
(and thus conversely, see Proposition \ref{epimonoiso}\ref{3DEepimonoiso}).
\par
Two maps $f,f'$ from $X$ to $Y$ are \textbf{close},
\index{Close maps from one pseudo-metric space to another|textbf}
and we write $f \sim f'$, if
\index{$aa$@$\sim$ various equivalence relations}
\begin{equation*}
\sup_{x \in X} d_Y(f(x), f'(x)) \, < \,  \infty .
\end{equation*}
Observe that, for maps, 
closeness is an equivalence relation.
\end{defn}

\begin{rem}
\label{terminologynotestablished3A}
The terminology is not well-established.
\par

If $f,f' : X \longrightarrow Y$ are two maps between metric spaces,
other terms used for ``close'' are
 ``parallel''  \cite[1.A', Page 23]{Grom--93},
``at bounded distance'', ``equivalent'', and ``bornotopic''.
\par

A metric coarse equivalence here is called a ``quasi-isometry'' 
in \cite[Def.\ 2.1.1]{Shal--04},
in contradiction with the meaning of ``quasi-isometry'' used in this book.
Coarsely Lipschitz maps here are ``uniformly bornologous'' maps in \cite{Roe--93, Roe--03}.
Coarsely expansive maps here are ``effectively proper maps'' in \cite{BlWe--92}
and  ``uniformly expansive maps'' in \cite{BeDr--08}.
Coarse embeddings have been introduced under the name of ``placements'' or ``placings''
in \cite[Section 4.1]{Grom--88};
they are the ``effectively proper Lipschitz maps'' in \cite{BlWe--92},
and the ``uniform embeddings'' in  \cite[7.E, Page 211]{Grom--93}.
In \cite[2.D, Page 6]{Grom--93}, two metrics $d_1, d_2$ on a space $X$ are
``uniformly equivalent on the large scale'' if the identity $(X,d_1) \longrightarrow (X,d_2)$
is a metric coarse equivalence.
\end{rem}

Definitions \ref{defcoarse}\ref{aDEdefcoarse} and \ref{defcoarse}\ref{bDEdefcoarse} 
can be reformulated without explicit references to controls:

\begin{prop}[coarsely Lipschitz and coarsely expansive maps]
\label{reformulation_c_et_ce}
Let $X,Y$ be two pseudo-metric spaces and $f : X \longrightarrow Y$ a map.
\par
The following properties are equivalent:
\begin{enumerate}[noitemsep,label=(\arabic*)]
\item\label{1DEreformulation_c_et_ce}
the map $f$ is coarsely Lipschitz;
\item\label{2DEreformulation_c_et_ce}
for all $R \ge 0$, there exists $S \ge 0$ such that,
if $x,x' \in X$ satisfy $d_X(x,x') \le R$, then $d_Y(f(x), f(x')) \le S$;
\item\label{3DEreformulation_c_et_ce}
for every pair $(x_n)_{n \ge 0}$, $(x'_n)_{n \ge 0}$ 
of sequences of points in $X$
with \newline
$\sup_{n \ge 0} d_X(x_n, x'_n) < \infty$,
we have $\sup_{n \ge 0} d_Y(f(x_n), f(x'_n)) < \infty$.
\end{enumerate}
Similarly, the following properties are equivalent:
\begin{enumerate}[noitemsep,label=(\arabic*)]
\addtocounter{enumi}{3}
\item\label{4DEreformulation_c_et_ce}
The map $f$ is coarsely expansive;
\item\label{5DEreformulation_c_et_ce}
for all $s \ge 0$, there exists $r \ge 0$ such that,
if $x,x' \in X$ satisfy $d_X(x,x') \ge r$, then $d_Y(f(x), f(x')) \ge s$;
\item\label{6DEreformulation_c_et_ce}
for every pair $(x_n)_{n \ge 0}$, $(x'_n)_{n \ge 0}$ 
of sequences of points in $X$
with \newline 
$\lim_{n \to \infty} d_X(x_n, x'_n) = \infty$,
we have $\lim_{n \to \infty} d_Y(f(x_n), f(x'_n)) = \infty$.
\end{enumerate}
\end{prop}

\begin{proof}
Implications \ref{1DEreformulation_c_et_ce}  $\Rightarrow$
\ref{2DEreformulation_c_et_ce}  $\Rightarrow$ \ref{3DEreformulation_c_et_ce}
are straightforward.

Suppose that $f$ satisfies Condition \ref{3DEreformulation_c_et_ce}, 
and let us check that $f$ is coarsely Lipschitz.
Define 
\begin{equation*}
\Phi_+(c) = \sup \{ d_Y(f(x),f(x')) \mid
x,x' \in X \hskip.2cm \text{with} \hskip.2cm d_X(x,x') \le c \}
\hskip.5cm \text{for all} \hskip.2cm c \in \R_+.
\end{equation*}
We have to check that $\Phi_+$ is an upper control for $f$.
Since $\Phi_+$ is obviously non-decreasing,
it is enough to check that $\Phi_+$ takes finite values only.
Suppose ab absurdo that one had $\Phi_+(c) = \infty$ for some $c$;
there would exist two sequences $(x_n)_{n \ge 0}$, $(x'_n)_{n \ge 0}$ in $X$
such that $\sup_{n \ge 0} d_X(x_n,x'_n) \le c$,
and $\lim_{n \to \infty} d_Y(f(x_n),f(x'_n)) = \infty$,
and this would contradict the condition above.

\vskip.2cm

Similarly, implications \ref{4DEreformulation_c_et_ce}  $\Rightarrow$
\ref{5DEreformulation_c_et_ce}  $\Rightarrow$ \ref{6DEreformulation_c_et_ce}
are straightforward.

Suppose that $f$ satisfies Condition \ref{6DEreformulation_c_et_ce}, 
and let us check that $f$ is coarsely expansive.
We proceed as above, defining at the appropriate point
\begin{equation*}
\Phi_-(c) = \inf \{ d_Y(f(x),f(x')) \mid
x,x' \in X \hskip.2cm \text{with} \hskip.2cm d_X(x,x') \ge c \}
\hskip.5cm \text{for all} \hskip.2cm c \in \R_+.
\end{equation*}
[Note that, in case $X$ has finite diameter, say $D$,
then $\Phi_-(t) = \infty$ for $t > D$.]
\end{proof}

The control $\Phi_-$ 
of the previous proof is the \textbf{compression function} 
\index{Compression function|textbf}
 of $f$,
and $\Phi_+$ is the \textbf{dilation function} 
\index{Dilation function|textbf} 
of $f$.

\begin{prop}[coarse properties of maps and closeness]
\label{coarsestability}
Let $X,Y, Z$ be three pseudo-metric spaces 
$f,f' : X \longrightarrow Y$  two close maps,
and $g, g' : Y \longrightarrow Z$ two close maps.
\begin{enumerate}[noitemsep,label=(\arabic*)]
\item\label{1DEcoarsestability}
The map $f'$ is coarsely Lipschitz
[respectively coarsely expansive, a coarse embedding, essentially surjective, 
a metric coarse equivalence]
if and only if $f$ has the same property.
\item\label{2DEcoarsestability}
The composite maps $g \circ f$ and $g' \circ f'$ are close.
\item\label{3DEcoarsestability}
If $f$ and $g$ are coarsely Lipschitz
[respectively coarsely expansive, coarsely Lipschitz and essentially surjective],
then the composition $g \circ f$ has the same property.
\end{enumerate}
\end{prop}

The proof is straightforward. $\square$

\begin{defn}[metric coarse category]
\label{defcoarsecat}
Let $X,Y$ be two pseudo-metric spaces.
A \textbf{coarse morphism} \index{Coarse! morphism|textbf} 
from $X$ to $Y$ is a closeness class of coarsely Lipschitz maps from $X$ to $Y$.
By abuse of notation, we often denote a coarsely Lipschitz map and its class
by the same letter.
\par
The \textbf{metric coarse category} \index{Metric coarse! category|textbf}
is the category whose objects are pseudo-metric spaces
and whose morphisms are coarse morphisms.
\end{defn}

\begin{defn}
\label{defqi}
Let $X,Y$ be two pseudo-metric spaces and $f : X \longrightarrow Y$ a map.
Then $f$ is
\begin{enumerate}[noitemsep,label=(\alph*)]
\item\label{aDEdefqi}
\textbf{large-scale Lipschitz} \index{Large-scale! Lipschitz map|textbf}
if it has an affine upper control, 
in other words if there exist constants $c_+ > 0$, $c_+' \ge 0$
such that
$d_Y(f(x),f(x')) \le c_+ d_X(x,x') + c_+'$ for all $x,x' \in X$;
\item\label{bDEdefqi}
\textbf{large-scale expansive} \index{Large-scale! expansive map|textbf}
if it has an affine lower control,
in other words if there exist constants $c_- > 0$, $c_-' \ge 0$
such that
$d_Y(f(x),f(x')) \ge c_- d_X(x,x') - c_-'$ for all $x,x' \in X$;
\item\label{cDEdefqi}
\textbf{large-scale bilipschitz}, \index{Large-scale! bilipschitz map|textbf}
or a \textbf{quasi-isometric embedding}, \index{Quasi-isometric embedding|textbf}
if it is large-scale Lipschitz and
large-scale expansive;
\addtocounter{enumi}{1}
\item\label{eDEdefqi}
\textbf{a quasi-isometry} \index{Quasi-isometry|textbf}
if it is large-scale bilipschitz and essentially surjective.
\end{enumerate}
An item (d), missing above, is identical with that of Definition \ref{defcoarse}.
For a reformulation of the definition of quasi-isometry,
see Remark \ref{quasiisopartiellementdef}.
\par

Two pseudo-metric spaces are \textbf{quasi-isometric}
\index{Quasi-isometry! quasi-isometric spaces}
if there exists a quasi-isometry from one to the other
(and thus conversely, see Proposition \ref{epimonoisobis}\ref{3DEepimonoisobis}).
\end{defn}

\begin{rem}
Let us again record some straightforward observations.
To be large-scale Lipschitz is invariant under closeness of maps.
The composition of two large-scale Lipschitz maps is large-scale Lipschitz.
Note that every large-scale Lipschitz map between pseudo-metric spaces is coarsely Lipschitz;
similarly a large-scale expansive map is coarsely expansive.
\par

The words ``coarse equivalence''  and ``quasi-isometric'' appear in \cite{Grom--81},
not quite as they do here (this article is an early discussion of Gromov
on the notion of hyperbolicity).
The terminology ``large-scale Lipschitz'' appears in
\cite[1.A', Page 22]{Grom--93}.
\par

Quasi-isometry can be extended from maps to \emph{relations}, e.g., to ``multi-valued maps''. 
See \cite{Cann--02}, where quasi-isometries are called ``quasi-Lipschitz equivalences''.
\end{rem}

We have an analogue of Proposition \ref{coarsestability}:

\begin{prop}[large-scale properties of maps and closeness]
\label{quasiisostability}
Let $X,Y, Z$ be three pseudo-metric spaces 
$f,f' : X \longrightarrow Y$  two close maps,
and $g, g' : Y \longrightarrow Z$ two close maps.
\begin{enumerate}[noitemsep,label=(\arabic*)]
\item\label{1DEquasiisostability}
The map $f'$ is large-scale Lipschitz
[respectively large-scale expansive, large-scale bilipschitz, 
a quasi-isometry]
if and only if $f$ has the same property.
\item\label{2DEquasiisostability}
If $f$ and $g$ are  large-scale Lipschitz
[respectively large-scale expansive, large-scale Lipschitz and essentially surjective],
then the composition $g \circ f$ has the same property.
\end{enumerate}
\end{prop}

\begin{defn}
[large-scale category]
\label{deflargescalecat}
Let $X,Y$ be two pseudo-metric spaces.
A \textbf{large-scale Lipschitz morphism} \index{Large-scale! Lipschitz morphism|textbf}
from $X$ to $Y$
is a closeness class (in the sense of Definition \ref{defcoarse}) 
of large-scale Lipschitz maps from $X$ to $Y$.

The \textbf{large-scale category} \index{Large-scale! category|textbf}
is the subcategory of the metric coarse category
whose objects are pseudo-metric spaces
and whose morphisms are large-scale Lipschitz morphisms.

In particular, it is a \emph{wide subcategory} of the metric coarse category
(same objects, ``less'' morphisms).
\end{defn}

For later reference,
we recall the following more classical definition.
Compare \ref{a'DELipschitz}, \ref{c'DELipschitz}, \ref{e'DELipschitz},
below with \ref{aDEdefqi}, \ref{cDEdefqi}, \ref{eDEdefqi}
of Definition \ref{defqi}.

\begin{defn}
\label{Lipschitz}
Let $X,Y$ be two pseudo-metric spaces, and $f : X \longrightarrow Y$ a map.
Then $f$ is
\begin{enumerate}[noitemsep,label=(\alph*$'$)]
\item\label{a'DELipschitz}
\textbf{Lipschitz} \index{Lipschitz! map|textbf}
if the condition of \ref{defqi}\ref{aDEdefqi}
holds with $c_+'  = 0$,
\addtocounter{enumi}{1}
\item\label{c'DELipschitz}
\textbf{bilipschitz} \index{Bilipschitz! bilipschitz map|textbf}
if there exist constants $c_+, c_- > 0$ such that
\hfill\newline
$c_-d_X(x,x') \le d_Y(f(x),f(x')) \le c_+ d_X(x,x')$
for all $x,x' \in X$,
\addtocounter{enumi}{1}
\item\label{e'DELipschitz}
a \textbf{bilipschitz equivalence} \index{Bilipschitz! bilipschitz equivalence|textbf}
if it is bilipschitz and onto.
\end{enumerate}
\par
The \textbf{Lipschitz metric category} \index{Lipschitz! metric category|textbf}
is the category 
whose objects are metric spaces
and whose morphisms are Lipschitz maps.
\par
The \textbf{Lipschitz pseudo-metric category}
 \index{Lipschitz! pseudo-metric category|textbf}
is the category 
whose objects are pseudo-metric spaces
and whose morphisms are equivalence classes of Lipschitz maps,
where two maps $f,f'$ from $X$ to $Y$ are equivalent
if $d_Y(f(x), f'(x)) = 0$ for all $x \in X$.
\par
Observe that the Lipschitz metric category is a full subcategory 
of the Lipschitz pseudo-metric category,
and also a subcategory of that of metric spaces and continuous maps.
\end{defn}

There is a variation: the \textbf{pointed Lipschitz metric category},
\index{Pointed Lipschitz metric category|textbf}
of pointed non-empty metric spaces and base point preserving Lipschitz maps.

\begin{rem}
\label{remarks_categories}
(1) 
A large-scale Lipschitz map between metric spaces, 
and a fortiori a coarsely Lipschitz map,    
\emph{need not} be continuous.
\par
\index{Floor function}
For example, if $\Z$ and $\R$ are given their natural metrics, 
defined by $d(x,y) = \vert y-x \vert$, the floor function 
$\R \longrightarrow \Z, \hskip.1cm x \longmapsto \lfloor x\rfloor$
of Example \ref{properandlbex} is a quasi-isometry.

\vskip.2cm

(2)
Let $X,Y$ be pseudo-metric spaces.
A map from $f : X \longrightarrow Y$ is \textbf{metrically proper}
if $f^{-1}(B)$ is bounded in $X$ for every bounded subset $B$ of $Y$.
\index{Metrically proper! map|textbf}
\index{Proper! metrically proper map|textbf}
\par

A coarsely expansive map is metrically proper.
But a metrically proper map need not be coarsely expansive,
as shown by the map $x \longmapsto \sqrt{x}$
from $\R_+$ to itself, with the usual metric.
\par

Coarse expansiveness can be viewed as
uniform notion of metric properness.

\vskip.2cm

(3)
Let $X,Y$ be pseudo-metric spaces. Assume that $X$ is metric,
and that there exists $c > 0$
such that $d_X(x,x') \ge c$ for every pair $(x,x')$ of distinct points of $X$.
(For example, assume that $(X,d)$ is a group with a word metric.)
A map $f : X \longrightarrow Y$ is large-scale Lipschitz if and only if it is Lipschitz.

\vskip.2cm

(4) 
Given a non-principal ultrafilter $\omega$ on $\N$, 
there is a functor called \emph{asymptotic cones}
from the large-scale category to the pointed Lipschitz metric category.
A basic reference for asymptotic cones is \cite[Chapter 2]{Grom--93};
later ones include \cite{Drut--01} and \cite{Corn--11}. 
\vskip.2cm

(5)
Notions as in Definition \ref{defqi} 
have been used in the early 1960's by Efremovi\v c, Tihomirova,
and others in the Russian school.
They play an important role in articles by Mostow and Margulis.
\par

A map satisfying the inequalities of \ref{defqi}\ref{aDEdefqi} and \ref{defqi}\ref{bDEdefqi}
with $c_+' = c_-' = 0$ is usually called a {\it bilipschitz embedding}; 
in \cite[$\S$~9, Page 66]{Most--73}, it was called a pseudo-isometry.
In \cite{Marg--70}, Margulis observes that two finite generating sets
for a group $\Gamma$ provide two metrics $d_1, d_2$ on $\Gamma$ 
that are bilipschitz equivalent.
\par

The rigidity theorems of Mostow were decisive in establishing
\index{Theorem! Mostow rigidity} \index{Mostow rigidity}
the importance of notions like quasi-isometry or pseudo-isometry
(see Example \ref{MostowRigidity}).
This was of course due to Mostow himself,
but also to Thurston, Gromov, and others, 
who revisited the rigidity theorem on several occasions;
see e.g.\ Chapter 5 in \cite{Thur--80}, \cite{Grom--84},  Lecture IV in \cite{BaGS--85},
as well as \cite{KlLe--97}.

\vskip.2cm

(6)
The notion of coarse embedding has been used in establishing
cases of the Baum-Connes conjecture.
More precisely, let $\Gamma$ be a countable group,
viewed as a metric space
for some adapted metric (see Section \ref{adaptedpseudometric});
if $\Gamma$ admits a coarse embedding into a Hilbert space,
then $\Gamma$ satisfies the Baum-Connes conjecture.
See Theorem 6.1 in \cite{SkTY--02},
and related articles by
Higson, Kasparov, Skandalis, Tu, and Yu \cite{Yu--00}.
Later, coarse embeddings
into uniformly convex Banach spaces appear in the same context \cite{KaYu--06}.

\vskip.2cm

(7)
It is crucial and now well-recognized 
(but it has not always been so)
that one should carefully distinguish
between coarsely Lipschitz maps and large-scale Lipschitz maps, 
and similarly distinguish between metric coarse equivalences and quasi-isometries.

\vskip.2cm

(8)
It is sometimes useful to compare pseudo-metrics in a finer way
than in Definitions \ref{defcoarse}\ref{eDEdefcoarse}, 
\ref{defqi}\ref{eDEdefqi},
and even \ref{Lipschitz}\ref{e'DELipschitz}.
The following is used in \cite{AbMa--04}:
two pseudo-metrics $d_1, d_2$ on a set $X$ are \textbf{coarsely equal}
if there exists a constant $c \ge 0$ such that
$\vert d_2 (x,x') - d_1(x,x') \vert \le c$ for all $x,x' \in X$,
i.e., if the identity from $(X,d_1)$ to $(X,d_2)$ 
is a a quasi-isometry with multiplicative constants $c_+ = c_- = 1$.
\end{rem}

\begin{exe}
\label{excoarse}
(1)
If $Y$ is a pseudo-metric space,
the empty map $\emptyset \longrightarrow Y$
is a coarse embedding.
If $Y$ is non-empty, it is not essentially surjective:
$d_Y(y,\emptyset) = \infty$ for all $y \in Y$.

\vskip.2cm

(2) 
Let $X,Y$ be two pseudo-metric spaces and $f : X \longrightarrow Y$ a map.
If $X$ has finite diameter, then $f$ is large-scale expansive;
the function defined by 
$\Phi_-(s) = \max \{s - \operatorname{diam}(X), 0 \}$ for all $s \in \R_+$
is a lower control for $f$. 
If $f(X)$ has finite diameter, then $f$ is large-scale Lipschitz;
the function defined by $\Phi_+(s) = \operatorname{diam}(f(X))$ for all $s \in \R_+$
is an upper control for $f$.
If $Y$ has finite diameter and $X$ is non-empty, $f$ is essentially surjective.
\par
In particular, every non-empty pseudo-metric space of finite diameter 
is quasi-isometric to the one-point space.

\vskip.2cm

(3)
Let $a \in \R_+^\times$. The map 
$\R_+ \longrightarrow \R_+, \hskip.1cm x \longrightarrow x^a$
is coarsely Lipschitz if and only if it is large-scale Lipschitz,
if and only if $a \le 1$.

\vskip.2cm

(4)
Let $(X,d_X)$ be a pseudo-metric space.
Let $X_{\operatorname{Haus}}$ be the 
\textbf{largest Hausdorff quotient} of $X$,
\index{Hausdorff! largest Hausdorff quotient}
more precisely the quotient of $X$ by the relation ``$x \sim y$ if $d_X(x,y) = 0$''.
\index{$aa$@$\sim$ various equivalence relations}
It is naturally a metric space, 
for a metric that we denote by $d_{\operatorname{Haus}}$,
and the quotient map 
$(X,d_X) \longrightarrow (X_{\operatorname{Haus}}, d_{\operatorname{Haus}})$
is an isomorphism in the Lipschitz pseudo-metric category.

\vskip.2cm

(5)
Let $(X,d)$ be a pseudo-metric space.
Define a metric $d_1$ on $X$ by 
\begin{equation*}
d_1(x,x') \, = \, \max \{1, d(x,x')\} \hskip.5cm \text{for all} \hskip.2cm
x,x' \in X \hskip.2cm \text{with} \hskip.2cm x \ne x' .
\end{equation*}
The identity map defines a quasi-isometry
$(X,d) \longrightarrow (X,d_1)$.
\par

In particular,  every metric space is quasi-isometric to a discrete metric space. 
This shows that quasi-isometries 
do not respect at all the local structure of pseudo-metric spaces.
See also Remark \ref{metriclatticesareqi}.
\par

Define another pseudo-metric $d_{\ln}$ on $X$ by
\begin{equation*}
d_{\ln}(x,x') \, = \, \ln ( 1 + d(x,x'))  \hskip.5cm \text{for all} \hskip.2cm x,x' \in X .
\end{equation*}
Viewed as a map $f : (X,d) \longrightarrow (X,d_{\ln})$,
the identity is now a metric coarse equivalence;
the functions $\Phi_+, \Phi_-$ defined on $\R_+$
by $\Phi_+(r) =  \Phi_-(r) = \ln(1 + r)$
are respectively an upper control and a lower control for $f$
(alternatively: $\Phi_+(r) = r$).
Note that $f$ \emph{is not} a quasi-isometry,
unless $X$ has finite diameter.
The example of (6) below is of the same flavour.
\par

Similarly, for every metric space $(X,d)$, 
the identity map $(X,d) \longrightarrow (X, \sqrt{d})$
is a metric coarse equivalence.
It is a quasi-isometry if and only if
the diameter of $(X,d)$ is finite.

\vskip.2cm

(6)
Let $\mathbf H^2$ denote the upper half-plane model
$\{ z \in \C \mid \operatorname{Im}(z) > 0 \}$
for the hyperbolic plane.
Consider the map $f : \R \longrightarrow \mathbf H^2, \hskip.1cm x \longmapsto x+i$,
which is a parametrisation of the horocycle $h$ based at $\infty$ containing $i$.
Then \cite[II.8, Page 80]{Iver--92}:
\begin{equation*}
d_{\mathbf H^2}(f(x),f(x+\ell)) \, = \,
\arg\cosh (1 + \ell^2 /2)
\, \underset{\ell \to\infty}{\sim} \, 2 \ln \ell .
\end{equation*}
The map $f$ can be seen as a metric coarse equivalence
from $\R$ with the usual metric
to the horocycle $h$ with the metric induced from $\mathbf H^2$,
and $f$ is not a quasi-isometry.
\par

This carries over to the hyperbolic space of any dimension $n \ge 2$,
and provides maps $f : \R^{n-1} \longrightarrow \mathbf H^n$
whose images are horospheres. 
\index{Hyperbolic space $\mathbf H^n$}
\par

In Example \ref{growthR2H2}, we will state (and not completely prove)
that Euclidean spaces $\R^m$, $m \ge 1$, 
and hyperbolic spaces $\mathbf H^n$, $n \ge 2$,
are pairwise non-quasi-isometric.

\vskip.2cm

(7)
A pseudo-metric space $X$ is \textbf{hyperdiscrete} if
\index{Hyperdiscrete pseudo-metric space|textbf}
\begin{equation*}
\{(x,x') \in X^2 \mid x \ne x' \hskip.2cm \text{and} \hskip.2cm d_X(x,x') \le c \}
\end{equation*}
is a finite set for all $c > 0$.
An infinite example is given by the set of squares in $\N$,
with the usual metric defined by $d(m^2,n^2) = \vert m^2 - n^2 \vert$.
Then every map 
from a hyperdiscrete pseudo-metric space
to any pseudo-metric space is coarsely Lipschitz.

\vskip.2cm

(8)
Let $G$ be a connected real Lie group.
The metrics associated to left-invariant Riemannian metrics on $G$
are all bilipschitz equivalent. 
Compare with Example \ref{abundanceofd}.
\par

It is known that $G$ is quasi-isometric to
a connected closed subgroup of real upper triangular matrices
\cite[Lemma 6.7]{Corn--08}.
\index{Triangular group}

\vskip.2cm

(9)
Let $n \ge 1$. Recall that an invertible matrix $g \in \GL_n(\R)$ is
\textbf{distal} \index{Distal! matrix $g \in \GL_n(\R)$|textbf}
if all its eigenvalues $\lambda \in \C$ are of modulus one,  $\vert \lambda \vert = 1$,
and \textbf{semisimple} \index{Semisimple matrix $g \in \GL_n(\R)$|textbf}
if it is conjugate in $\GL_n(\C)$ to a diagonal matrix.
\par
Consider the Lie group $\GL_n(\R)$ 
with some left-invariant Riemannian metric,
a matrix $g \in \GL_n(\R)$,  and the map
$f : \Z \longrightarrow \GL_n(\R)$ given by $k \longmapsto g^k$.
\index{General linear group $\GL$! $\GL_n(\R)$, $\SL_n(\R)$}
Then $f$ is 
\par (9$_1$)
a Lipschitz map in all cases,
\par (9$_2$)
a bilipschitz map if and only if 
$g$ is non-distal,
\par (9$_3$)
coarsely expansive if and only if
$g$ is either non-distal or non-semisimple,
\par (9$_4$)
bounded if and only if $g$ is distal and semisimple.
\par\noindent
Let $\K$ be a local field 
(see Example \ref{panoramalocalfield}), 
$n \ge 1$ an integer, and $g \in \GL_n(\K)$~;
for ``$g$ distal'', see now Definition \ref{defdistal}. 
\index{General linear group $\GL$! $\GL_n(\K)$ 
for $\K$ a non-discrete locally compact field}
Let $f : \Z \longrightarrow \GL_n(\K)$ be similarly defined
by $f(k) = g^k$. The following four properties are equivalent:
\par
(9$_{\text{local}}$)
$f$ is bilipschitz, $g$ is non-distal, $f$ is coarsely expansive, $f$ is unbounded.
\par

We refer to \cite[Chapter 3, on distortion]{Grom--93}
for other examples of inclusions of sub-(semi-)groups in groups,
looked at from the point of view of inclusions of metric spaces inside each other.

\vskip.2cm

(10)
Let $F_2$ denote the free group on a set $S$ of two elements, 
with its natural word metric $d_S$
(Definition \ref{wordmetric}). 
\index{Free group}
As shown below in Corollary \ref{exgrowthXtoY},
there does not exist any coarse embedding of $F_2$ (of exponential growth)
into the Euclidean space $\R^n$ (of polynomial growth), for every $n \ge 1$.
\index{Euclidean space $\R^n$}
\index{Coarse! embedding}
\par

There does not exist a large-scale bilipschitz map of $F_2$,
indeed of a free \emph{semi-}group of rank $2$,
in a separable infinite-dimensional Hilbert space $\mathcal H$ \cite{Bour--86}.
But there exist coarse embeddings of $(F_2, d_S)$ into $\mathcal H$;
\index{Hilbert space}
more precisely, there exists an isometric embedding of $(F_2, \sqrt{d_S})$ into $\mathcal H$,
see for example Item C.2.2(iii) in \cite{BeHV--08}.

\vskip.2cm

(11)
There is in \cite{Tess} a characterization of functions 
that can be lower controls of Lipschitz embeddings 
of the non-abelian free group $F_2$ into Hilbert spaces.

\vskip.2cm
\noindent \textbf{Three basic examples from a later chapter}
\vskip.2cm

(12)
Given a $\sigma$-compact LC-group $G$ and a cocompact closed subgroup $H$
(for example a cocompact lattice),
the inclusion $j : H \lhook\joinrel\relbar\joinrel\rightarrow G$ 
can be seen as a metric coarse equivalence.
If, moreover, $G$ is compactly generated, 
$j$ can be seen as a quasi-isometry (Proposition \ref{sigmac+compactgofcocompact}). 

\vskip.2cm

(13)
Given a $\sigma$-compact LC-group $G$
and a compact normal subgroup $K$,
the canonical projection $\pi : G \twoheadrightarrow G/K$
can be seen as a metric coarse equivalence.
If, moreover, $G$ is compactly generated,
$\pi$ can be seen as a quasi-isometry (Proposition \ref{sigmac+compactgofquotients}).

\vskip.2cm

(14)
Let $G$ be a $\sigma$-compact LC-group and $H$ a closed subgroup.
Let $d_G$ be an adapted metric on $G$ and $d_H$ an adapted metric on $H$.
The inclusion $j : (H,d_H) \lhook\joinrel\relbar\joinrel\rightarrow (G,d_G)$ 
is a coarse embedding (Corollary \ref{2metricsce}).
\par

In general, $j$ is not a quasi-isometric embedding,
even if $G,H$ are both compactly generated and $d_G,d_H$
are word metrics defined by compact generating sets
(Remark \ref{Heisenberg}).
\end{exe}

\begin{lem}[``inverses'' of controls]
\label{PhiEtPsi}
(1)
Let $\Phi_+ : \R_+ \longrightarrow \R_+$ be an upper control. 
The function $\Psi_- : \R_+ \longrightarrow \overline \R_+$ defined by
\begin{equation*}
\Psi_-(s) \, = \, \inf \{ r \in \R_+ \mid \Phi_+(r) \ge s \}
\hskip.5cm \text{for} \hskip.2cm s \in \R_+
\end{equation*}
is a lower control such that $\Psi_-(\Phi_+(t)) \le t$ for all $t \in \R_+$.
\par

(2)
Let $\Phi_- : \R_+ \longrightarrow \overline \R_+$ be a lower control. 
The function $\Psi_+ : \R_+ \longrightarrow  \R_+$ defined by
\begin{equation*}
\Psi_+(s) \, = \, \sup \{ r \in \R_+ \mid \Phi_-(r) \le s \}
\hskip.5cm \text{for} \hskip.2cm s \in \R_+
\end{equation*}
is an upper control such that $t \le \Psi_+(\Phi_-(t))$ for all $t \in \R_+$.
\end{lem}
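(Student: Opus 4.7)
The plan is to verify each of the three required properties separately for $\Psi_-$ in (1) and for $\Psi_+$ in (2), relying only on the monotonicity of $\Phi_\pm$ and the standard behavior of $\inf$ and $\sup$ under inclusion of subsets of $\R_+$.

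For part (1), first I would check monotonicity of $\Psi_-$: if $s \le s'$, then $\{r \in \R_+ \mid \Phi_+(r) \ge s'\} \subset \{r \in \R_+ \mid \Phi_+(r) \ge s\}$, so taking $\inf$ reverses to $\Psi_-(s) \le \Psi_-(s')$. Next, to show $\lim_{s \to \infty} \Psi_-(s) = \infty$, I would argue by contradiction: if not, there would exist $M \ge 0$ and $s_n \to \infty$ with $\Psi_-(s_n) \le M$, which by definition of $\inf$ produces $r_n \le M+1$ with $\Phi_+(r_n) \ge s_n$; monotonicity of $\Phi_+$ gives $\Phi_+(r_n) \le \Phi_+(M+1) < \infty$, contradicting $s_n \to \infty$. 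Finally, the inequality $\Psi_-(\Phi_+(t)) \le t$ is immediate because $t$ itself belongs to $\{r \in \R_+ \mid \Phi_+(r) \ge \Phi_+(t)\}$.

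For part (2), the analysis is essentially dual. I would first show $\Psi_+(s) < \infty$ for every $s \in \R_+$: since $\Phi_-(r) \to \infty$, there exists $R > 0$ with $\Phi_-(r) > s$ for all $r > R$, hence $\{r \mid \Phi_-(r) \le s\} \subset [0,R]$ and $\Psi_+(s) \le R$. Monotonicity of $\Psi_+$ follows again from the inclusion of sublevel sets. The inequality $t \le \Psi_+(\Phi_-(t))$ holds because $t$ lies in $\{r \mid \Phi_-(r) \le \Phi_-(t)\}$, so it is bounded above by the supremum (with the convention $\Psi_+(\infty) = \infty$ to cover the degenerate case $\Phi_-(t) = \infty$).

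There is no serious obstacle here; the statement is essentially a formal Galois-connection-style duality between non-decreasing functions $\R_+ \to \overline{\R}_+$, and the two inequalities $\Psi_-(\Phi_+(t)) \le t$ and $t \le \Psi_+(\Phi_-(t))$ are one half of that adjunction. The only mild subtlety to be careful with is the possibility that $\Phi_-$ attains the value $\infty$, which I would handle by the convention above (or equivalently by noting that the inequality in (2) is vacuous in that case since both sides equal $\infty$).
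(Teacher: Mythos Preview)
Your proof is correct and follows exactly the natural approach; the paper itself does not give a proof, simply stating that ``the proof is an elementary exercise'' and recording the conventions $\inf \emptyset = \infty$ and $\sup \emptyset = 0$. Your treatment of the degenerate case $\Phi_-(t) = \infty$ in part (2) is appropriate and goes slightly beyond what the paper spells out.
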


The proof is an elementary exercise.
We use the natural conventions:
$\inf \emptyset = \infty$
and $\sup \emptyset = 0$.
\par
Consider for example the usual metric on $\R_+$
and the inclusion $f$ of the interval $\mathopen[0,1\mathclose]$ in $\R_+$.
Define $\Phi_+ : \R_+ \longrightarrow \R_+$ 
and $\Phi_- : \R_+ \longrightarrow \overline \R_+$
by $\Phi_+(r) = \Phi_-(r) = r$ if $r \le 1$, 
and $\Phi_+(r) = 1$, $\Phi_-(r) = \infty$ if $r > 1$.
Then $\Phi_+$ is an upper control for $f$ and $\Phi_-$ a lower control for $f$;
moreover, with the definition of Lemma \ref{PhiEtPsi},
we have $\Psi_+ = \Phi_+$ and $\Psi_- = \Phi_-$.

\begin{prop}[on coarse morphisms]
\label{epimonoiso}
Let $X,Y$ be two pseudo-metric spaces,
$f : X \longrightarrow Y$ a coarsely Lipschitz map,
and $\underline f$ the corresponding morphism
(in the sense of Definition \ref{defcoarsecat}).
Then, in the metric coarse category:
\begin{enumerate}[noitemsep,label=(\arabic*)]
\item\label{1DEepimonoiso}
If $X$ is non-empty, 
$\underline f$ is an epimorphism if and only if $f$ is essentially surjective;
\item\label{2DEepimonoiso}
$\underline f$ is a monomorphism if and only if $f$ is coarsely expansive;
\item\label{3DEepimonoiso}
$\underline f$ is an isomorphism if and only if
$f$ is a metric coarse equivalence;
if moreover $X$ is non-empty, this holds if and only if
$\underline f$ is both an epimorphism and a monomorphism.
\end{enumerate}
\end{prop}      

\noindent \emph{Note.}
If $X$ is empty and $Y$ non-empty bounded, 
the morphism defined by the map $X \longrightarrow Y$
is an epimorphism that is not essentially surjective.

\begin{proof}
\ref{1DEepimonoiso}
Suppose first that $f$ is essentially surjective;
set $c = \sup_{y \in Y} d_Y(y,f(X))$.
Consider a pseudo-metric space $Z$ 
and two coarsely Lipschitz maps $h_1,h_2$ from $Y$ to $Z$.
To show that $\underline f$ is an \textbf{epimorphism},
we assume that $h_1 f \sim h_2 f$
and we have to show that $h_1 \sim h_2$.
\index{Epimorphism}
\par
Since $h_1 f \sim h_2 f$,
there exists $c' > 0$ such that
$d_Z(h_1(f(x)), h_2(f(x)) ) \le c'$ for all $x \in X$.
Since $h_i$ is a coarsely Lipschitz map,
there exists $c_i > 0$ such that
$d_Z(h_i(y), h_i(y')) \le c_i$ for all $y,y' \in Y$ with $d_Y(y,y') \le c$
($i=1,2$).
Let $y \in Y$. Choose $x \in X$ such that $d_Y(y,f(x)) \le c$.
Then
\begin{equation*}
\aligned
d_Z(h_1(y),h_2(y)) \, &\le \,
d_Z(h_1(y), h_1f(x)) + d_Z(h_1f(x), h_2f(x)) + d_Z(h_2f(x),h_2(y))
\\
\, &\le \, c_1 + c' + c_2  \hskip.1cm  .
\endaligned
\end{equation*}
Hence $h_1 \sim h_2$.
\par

For the converse implication, suppose that $f$ is not essentially surjective.
Define two functions $h_1, h_2 : Y \longrightarrow \R_+$
by $h_1(y) = 0$ and $h_2(y) = d_Y(y,f(X))$ for all $y \in Y$;
note that $h_2$ takes finite values because $X \ne \emptyset$.
Observe that $h_1$ and $h_2$ are coarsely Lipschitz functions with $h_1f = h_2f = 0$,
so that in particular $h_1 f \sim h_2 f$.
But $h_1(Y) = \{0\}$ is bounded in $\R_+$ and $h_2(Y)$ is not,
so that $h_1 \nsim h_2$.
Hence $\underline f$ is not an epimorphism.

\vskip.2cm

\ref{2DEepimonoiso}
Suppose first that $f$ is coarsely expansive;
let $\Phi_-$ be a lower control for $f$,
and define an upper control $\Psi_+$ as in Lemma \ref{PhiEtPsi}.
Consider a pseudo-metric space $W$ and two coarsely Lipschitz maps $g_1, g_2$
from $W$ to $X$.
To show that $\underline f$ is a \textbf{monomorphism},
we assume that $f g_1 \sim f g_2$
and we have to show that $g_1 \sim g_2$.
\index{Monomorphism}
\par

Since $f g_1 \sim f g_2$,
there exists a constant $c'$ such that $d_Y(f g_1 (w), f g_2 (w)) \le c'$
for all $w \in W$.
Since $\Phi_-$ is a lower control for $f$, we have
$\Phi_-(d_X(g_1(w), g_2(w))) \le d_Y(f g_1 (w), f g_2 (w))$
for all $w \in W$.
Hence 
\begin{equation*}
\aligned
d_X(g_1(w), g_2(w)) \, &\le \, 
\Psi_+( \Phi_-(d_X(g_1(w), g_2(w))) ) 
\\
\, &\le \,  \Psi_+(d_Y(fg_1(w), fg_2(w)))  \,  \le \,  
\Psi_+(c')
\hskip.5cm \text{for all} \hskip.2cm w \in W .
\endaligned
\end{equation*}
It follows that $g_1 \sim g_2$.
\par

For the converse implication, suppose that $f$ is not coarsely expansive.
There exist a constant $c > 0$
and two sequences $(x_n)_{n \ge 0}, (x'_n)_{n \ge 0}$
of points in $X$ such that
$\lim_{n \to \infty}d_X(x_n, x'_n) = \infty$ and
$d_Y(f(x_n), f(x'_n)) \le c$ for every $n \ge 0$.
Let $W$ denote the space of squares in $\N$
as in Example \ref{excoarse}(7);
define two maps $g_1, g_2 : W \longrightarrow X$
by $g_1(n^2) = x_n$ and $g_2(n^2) = x'_n$ for all $n^2 \in W$.
Then $g_1, g_2$ are coarsely Lipschitz maps, they are not close,
and $f g_1,  f g_2$ are close.
Hence $\underline f$ is not a monomorphism.

\vskip.2cm

\ref{3DEepimonoiso} 
When $X \ne \emptyset$, 
the equivalence between ``metric coarse equivalence''
and ``epimorphism \& monomorphism''
follows from the definitions,
and \ref{1DEepimonoiso} \& \ref{2DEepimonoiso}.
\par
Suppose now that $f$ is a metric coarse equivalence;
let us show that $\underline f$ is an isomorphism.
If $X = \emptyset = Y$, there is nothing to show.
Otherwise, by Example \ref{excoarse}(1),
both $X$ and $Y$ are non-empty.
Let $\Phi_-$ and $\Phi_+$ be a lower control and an upper control for $f$,
and let $c > 0$ be such that $d_Y(y, f(X)) \le c$ for all $y \in Y$.
Let $\Psi_+$ be the upper control defined as in Lemma \ref{PhiEtPsi},
and let $\Psi_-$ be the lower control defined by
$\Psi_- (s) = \inf \{ r \in \R_+ \mid \Phi_+(r) + 2c \ge s \}$
for all $s \in \R_+$. 
For each $y \in Y$, choose $x_y \in X$ such that
$d_Y(y, f(x_y)) \le c$; set $g(y) = x_y$;
thus $g$ is a map $Y \longrightarrow X$ such that
$\sup_{y \in Y} d_Y(y, fg(y)) \le c$.
We claim that $g$ is coarsely Lipschitz and coarsely expansive.
\par
Let $y,y' \in Y$. On the one hand, we have 
\begin{equation*}
\Phi_-(d_X(g(y), g(y'))) \, \le \, d_Y(fg(y), fg(y')) ,
\end{equation*}
hence
\begin{equation*}
\aligned
d_X(g(y), g(y')) \, &\le \,
\Psi_+ \Phi_-(d_X(g(y), g(y')))
\\
\, &\le \, 
\Psi_+(d_Y(fg(y), fg(y'))) \, \le \,
\Psi_+(d_Y(y,y') + 2c) ,
\endaligned
\end{equation*}
so that $s \mapsto \Psi_+(s + 2c)$ is an upper control for $g$,
and $g$ is a coarsely Lipschitz map.
On the other hand, denoting by $\widetilde \Phi_+$ the upper control $\Phi_+ + 2c$, 
we have
\begin{equation*}
\aligned
d_Y(y,y') \, &\le \,
d_Y(y, fg(y)) + d_Y(fg(y), fg(y')) + d_Y(fg(y'),y') 
\\
\, &\le \, 
\Phi_+(d_X(g(y), g(y'))) + 2c \, = \,
\widetilde \Phi_+ (d_X(g(y), g(y')))
\endaligned
\end{equation*}
hence
\begin{equation*}
\Psi_-(d_Y(y,y')) \, \le \,
\Psi_- \widetilde \Phi_+ (d_X(g(y), g(y'))) \, \le \,
d_X(g(y), g(y')) ,
\end{equation*}
so that $g$ is coarsely expansive.
\par

We leave it to the reader to check that $g$ is essentially surjective,
and therefore that $g$ is a metric coarse equivalence.
From the definition of $g$, it is clear that 
$fg \sim \operatorname{id}_Y$.
It follows that $fgf \sim f$;
since $\underline f$ is a monomorphism (by \ref{2DEepimonoiso}),
this implies that $gf \sim \operatorname{id}_X$.
We have shown that $\underline f$ is an isomorphism, 
with inverse isomorphism $\underline g$.

For the converse implication, we have to check that
a coarse isomorphism is a metric coarse equivalence.
This follows from \ref{1DEepimonoiso} and \ref{2DEepimonoiso} when $X \ne \emptyset$.
If $X = \emptyset$, then $Y = \emptyset$ too.
\end{proof}

\begin{defn}
\label{defcoarsebis}
Let $X, Y$ be two pseudo-metric spaces and $f : X \longrightarrow Y$ a map.
In addition to Definition \ref{defcoarse}, define $f$ to be
\begin{enumerate}[noitemsep,label=(\alph*)]
\item\label{aDEdefcoarsebis}
\textbf{coarsely right-invertible} 
if there exists a coarsely Lipschitz map $g : Y \longrightarrow X$ 
such that $f \circ g \sim \operatorname{id}_Y$;
\item\label{bDEdefcoarsebis}
\textbf{coarsely left-invertible} 
if there exists a coarsely Lipschitz map $h : Y \longrightarrow X$ 
such that $h \circ f \sim \operatorname{id}_X$;
\item\label{cDEdefcoarsebis}
\textbf{coarsely invertible} if it is both coarsely left-invertible and coarsely right-invertible.
\end{enumerate}
\end{defn}
\index{Coarsely! left-, right-invertible|textbf}
\index{Left, right-invertible coarsely Lipschitz map|textbf}

\begin{rem}
\label{remcoarsecat}
Let $X, Y$ be two pseudo-metric spaces, 
$f : X \longrightarrow Y$ a coarsely Lipschitz map,
and $\underline f$ its closeness class. 
Then, in the metric coarse category:
\begin{enumerate}[noitemsep,label=(\alph*)]
\item\label{aDEremcoarsecat}
if $f$ is coarsely right-invertible, then $\underline f$ is an epimorphism;
\item\label{bDEremcoarsecat}
if $f$ is coarsely left-invertible, then $\underline f$ is a monomorphism.
\end{enumerate}
Indeed, in any category, right-invertible morphisms are epimorphisms
and left-invertible morphisms are monomorphisms.
The converse implications of \ref{aDEremcoarsecat} and \ref{bDEremcoarsecat} above
\emph{do not} hold: see Example \ref{du26avril2015} below.
\par
Suppose $f$ is coarsely invertible; let $g$ and $h$ be as in Definition \ref{defcoarsebis}.
Then $h \sim g$, and $h$ can indeed be replaced by $g$; in particular:
\begin{enumerate}[noitemsep,label=(\alph*)]
\addtocounter{enumi}{2}
\item\label{cDEremcoarsecat}
$f$ is coarsely invertible if and only if $\underline f$ is an isomorphism,
if and only if $f$ is a metric coarse equivalence.
\end{enumerate}
\end{rem}

\begin{defn}
\label{coarseretractdef}
Let $Y$ be a  pseudo-metric space.
\par

A subspace $Z$ of $Y$ is a \textbf{coarse retract} of $Y$ 
\index{Coarse! retract|textbf}
\index{Retract! coarse retract|textbf}
if the inclusion $\iota_{Y \supset Z}$ of $Z$ in $Y$ is left-invertible.
\par

A \textbf{coarse retraction from $Y$ to $Z$}
is a coarsely Lipschitz map 
$r : Y \longrightarrow Z$ such that $r \circ \iota_{Y \supset Z} \sim \operatorname{id}_Z$.
\end{defn}

\begin{prop}
\label{prop1decoarseretract}
Let $X, Y$ be two pseudo-metric spaces and
$f : X \longrightarrow Y$ a coarsely Lipschitz map.
Denote by $f_{\text{im}} : X \longrightarrow f(X)$ the map induced by $f$.
\begin{enumerate}[noitemsep,label=(\arabic*)]
\item\label{1DEprop1decoarseretract}
$f$ is coarsely expansive 
if and only if the induced map $f_{\text{im}}$ is a metric coarse equivalence.
\item\label{2DEprop1decoarseretract}
$f$ is coarsely left-invertible 
if and only if it is coarsely expansive and $f(X)$ is a coarse retract.
\end{enumerate}
\end{prop}

\begin{proof}
Claim \ref{1DEprop1decoarseretract} is immediate from the definitions. 
\par

For \ref{2DEprop1decoarseretract}, suppose first that $f$ is coarsely left-invertible,
and left $h : Y \longrightarrow X$ be a coarsely Lipschitz map such that 
$h \circ f = h \circ \iota_{Y \supset f(X)} \circ f_{\text{im}} \sim \operatorname{id}_X$.
Then $f$ is coarsely expansive 
(Remark \ref{remcoarsecat} and Proposition \ref{epimonoiso}\ref{2DEepimonoiso}),
so that $f_{\text{im}}$ is a metric coarse equivalence by \ref{1DEprop1decoarseretract}.
Moreover
\begin{equation*}
 \operatorname{id}_{f(X)} \circ f_{\text{im}}  \, = \, 
 f_{\text{im}} \circ \operatorname{id}_X \, \sim \, 
f_{\text{im}} \circ h \circ \iota_{Y \supset f(X)} \circ f_{\text{im}} 
 \end{equation*}
and it follows that $\operatorname{id}_{f(X)} \sim f_{\text{im}} \circ h \circ \iota_{Y \supset f(X)}$
 (by composition on the right with an inverse of $f_{\text{im}}$);
hence $f_{\text{im}} \circ h$ is a coarse retraction from $Y$ to $f(X)$.
\par

Conversely, if $f$ is coarsely expansive and there exists a coarse retraction $r$
from $Y$ to $f(X)$, then there exists a coarsely Lipschitz map $j : f(X) \longrightarrow X$
such that the classes of $j$ and $f_{\text{im}}$ are inverse to each other,
and $j \circ r : Y \longrightarrow X$ is a coarse left-inverse of $f$.
\end{proof}

\begin{defn}
\label{defcoarselyretractable}
For $X,Y$ two pseudo-metric spaces,
$Y$ is \textbf{coarsely retractable on $X$}
if there exists a coarsely right-invertible coarsely Lipschitz map from $Y$ to $X$,
or equivalently if there exists a coarsely left-invertible coarsely Lipschitz map from $X$ to $Y$.
\end{defn}

Here is the analogue of Proposition \ref{epimonoiso}
for large-scale morphisms.
The proof of the proposition, which is similar, 
as well as the analogues of Items \ref{defcoarsebis} to \ref{defcoarselyretractable},
are left to the reader.

\begin{prop}[on large-scale morphisms]
\label{epimonoisobis}
Let $X,Y$ be two pseudo-metric spaces,
$f : X \longrightarrow Y$ a large-scale Lipschitz map,
and $\underline f$ the corresponding morphism
(in the sense of Definition \ref{deflargescalecat}). 
Then, in the large-scale category:
\begin{enumerate}[noitemsep,label=(\arabic*)]
\item\label{1DEepimonoisobis}
If $X$ is non-empty, 
$\underline f$ is an epimorphism if and only if $f$ is essentially surjective;
\item\label{2DEepimonoisobis}
$\underline f$ is a monomorphism if and only if $f$ is large-scale expansive;
\item\label{3DEepimonoisobis}
$\underline f$ is an isomorphism if and only if
$f$ is a quasi-isometry;
if moreover $X$ is non-empty, this holds if and only if
$\underline f$ is both an epimorphism and a monomorphism.
\end{enumerate}
\end{prop}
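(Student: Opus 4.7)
The plan is to parallel the proof of Proposition~\ref{epimonoiso} step by step, replacing the general upper and lower controls throughout by affine ones. Since the large-scale category is a wide subcategory of the metric coarse category, both directions of each statement still need to be re-examined with the smaller class of test morphisms. For part~(1) the argument transfers essentially verbatim: the direction ``essentially surjective $\Rightarrow$ epimorphism'' only uses that the auxiliary maps $h_1, h_2$ are coarsely Lipschitz, and this is implied by being large-scale Lipschitz; conversely, the witness maps $h_1 \equiv 0$ and $h_2(y) := d_Y(y, f(X))$ are both $1$-Lipschitz, hence large-scale Lipschitz, so they continue to detect failure of essential surjectivity.

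For the forward direction of part~(2), the key observation is that when $\Phi_-(r) = c_- r - c_-'$ is an affine lower control (with negative values read as $0$), the inverse control produced by Lemma~\ref{PhiEtPsi} is itself affine, namely $\Psi_+(s) = (s + c_-')/c_-$. Running the argument of Proposition~\ref{epimonoiso}\ref{2DEepimonoiso}, for large-scale Lipschitz $g_1, g_2 : W \to X$ with $\sup_w d_Y(fg_1(w), fg_2(w)) \leq c'$ one obtains the uniform bound
\[
d_X(g_1(w), g_2(w)) \leq \Psi_+(c') = (c' + c_-')/c_- ,
\]
whence $g_1 \sim g_2$; so $\underline f$ is a monomorphism.

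The converse direction of~(2) is the main obstacle. Assume $f$ is not large-scale expansive. Negating the definition with constants $(c_-, c_-') = (1/n, n)$ yields, for each $n \geq 1$, a pair $(x_n, x'_n)$ with
\[
d_Y(f(x_n), f(x'_n)) < \tfrac{1}{n}\, d_X(x_n, x'_n) - n ,
\]
so that $r_n := d_X(x_n, x'_n) > n^2 \to \infty$ and $s_n := d_Y(f(x_n), f(x'_n)) < r_n/n$. These pairs must be woven into a pseudo-metric space $W$ and large-scale Lipschitz maps $g_1, g_2 : W \to X$, not close, but with $fg_1 \sim fg_2$. A natural first attempt is to take $W := \{w_n : n \geq 1\}$ with $d_W(w_n, w_m) := \max(d_X(x_n, x_m),\, d_X(x'_n, x'_m))$ and set $g_1(w_n) := x_n$, $g_2(w_n) := x'_n$, making $g_1, g_2$ $1$-Lipschitz and non-close. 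The delicate point is to ensure that $s_n$ stays bounded along the witnessing indices, which requires either refining the selection of $(x_n, x'_n)$ using the sublinear decay $s_n/r_n \to 0$, or enlarging $W$ by intermediate ``linking'' points subdividing each pair so that the affine control of $fg_i$ is preserved.

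For part~(3), the equivalence ``isomorphism iff epimorphism and monomorphism iff quasi-isometry'' is assembled from~(1), (2), and the construction of a large-scale Lipschitz quasi-inverse, following Proposition~\ref{epimonoiso}\ref{3DEepimonoiso}. If $f$ is essentially surjective and large-scale expansive with affine controls $\Phi_+, \Phi_-$, choose for each $y \in Y$ a point $g(y) \in X$ with $d_Y(y, f(g(y))) \leq c$, where $c$ bounds $d_Y(y, f(X))$. The affine refinements $\Psi_+, \Psi_-$ built from $\Phi_+, \Phi_-$ as in the coarse proof remain affine, so $g$ is large-scale bilipschitz; essential surjectivity of $g$, $fg \sim \operatorname{id}_Y$ and $gf \sim \operatorname{id}_X$ are obtained as before, the last via $fgf \sim f$ combined with monomorphy of $\underline f$. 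Hence $f$ is a quasi-isometry.
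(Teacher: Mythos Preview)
Your treatment of part~(1), of the implication ``large-scale expansive $\Rightarrow$ monomorphism'' in~(2), and of the equivalence ``isomorphism $\Leftrightarrow$ quasi-isometry'' in~(3) is correct and follows Proposition~\ref{epimonoiso} with affine controls, as the paper (which leaves the proof to the reader) intends.

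The obstacle you flag in the converse of~(2) is genuine and in fact insurmountable: that direction is \emph{false} as stated. Take $f = \operatorname{id} : (\R_+, |\cdot|) \to (\R_+, \sqrt{|\cdot|})$, the map of Example~\ref{excoarse}(4). It is large-scale Lipschitz (since $\sqrt{r} \le r+1$), surjective, and coarsely expansive, but not large-scale expansive. Yet $\underline f$ is a monomorphism in the large-scale category: for any maps $g_1, g_2 : W \to X$ whatsoever, the condition $fg_1 \sim fg_2$ reads $\sup_w \sqrt{|g_1(w)-g_2(w)|} < \infty$, which is equivalent to $g_1 \sim g_2$. The same $f$ is an epimorphism but not an isomorphism, since any candidate inverse $g$ with $gf \sim \operatorname{id}_X$ must satisfy $|g(y)-g(y')| = |y-y'| + O(1)$, and this cannot be bounded by an affine function of $d_Y(y,y') = \sqrt{|y-y'|}$; so the implication ``epimorphism $+$ monomorphism $\Rightarrow$ isomorphism'' in~(3) fails as well. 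Neither of your suggested repairs can work against this example, because here $d_Y(f(x),f(x')) = \sqrt{d_X(x,x')}$ identically, so $g_1 \not\sim g_2$ forces $fg_1 \not\sim fg_2$. What your max-metric construction \emph{does} establish is the correct characterisation: $\underline f$ is a monomorphism in the large-scale category if and only if $f$ is \emph{coarsely} expansive---the same condition as in the metric coarse category, which is natural since the closeness relation defining the morphisms is identical in both.
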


\section
{Coarse properties and large-scale properties}
\label{coarselyconnectedetc}

In this section, we begin to describe properties
of pseudo-metric spaces
which will later make good sense for LC-groups.

\subsection{Coarsely connected, coarsely geodesic, 
and large-scale geodesic pseudo-metric spaces}
\label{subsection_3Ca}

There are several notions of connectedness, or rather path-connectedness, suited
to the categories introduced in the previous section:

\begin{defn}
\label{defcoarselyconn}
Let $(X,d)$ be a pseudo-metric space and $c > 0$ a constant.
\par

For $x,x' \in X$ and $n \ge 0$, a \textbf{$c$-path of $n$ steps from $x$ to $x'$ in $X$}
\index{c-path of $n$ steps in a pseudo-metric space|textbf}
is a sequence $x=x_0, x_1, \hdots, x_n=x'$ of points in $X$
such that $d(x_{i-1},x_i) \le c$ for $i = 1, \hdots, n$.
\begin{enumerate}[noitemsep,label=(\alph*)]
\item\label{aDEdefcoarselyconn}
The space $X$ is \textbf{$c$-coarsely connected} 
if, for every pair $(x,x')$ of points in $X$,
there exists a $c$-path from $x$ to $x'$ in $X$.
The space $X$ is  \textbf{coarsely connected}
\index{Coarsely! connected pseudo-metric space|textbf}
\index{Connected|see {Coarsely connected}}
if it is $c$-coarsely connected for some $c > 0$.

\item\label{bDEdefcoarselyconn}
The space $X$ is \textbf{$c$-coarsely geodesic}
if there exist an upper control  $\Phi$ such that,
for every pair $(x,x')$ of points in $X$,
there exists a $c$-path of at most $\Phi(d(x,x'))$ steps from $x$ to $x'$ in $X$.
The space $X$ is  \textbf{coarsely geodesic}
\index{Coarsely! geodesic pseudo-metric space|textbf}
if it is $c$-coarsely geodesic for some $c > 0$.

\item\label{cDEdefcoarselyconn}
The space $X$ is \textbf{$c$-large-scale geodesic}
\index{Large-scale! geodesic pseudo-metric space|textbf}
if it has the property of (b) above with $\Phi$ affine,
in other words if there exist constants $a > 0, b \ge 0, c > 0$  such that,
for every pair $(x,x')$ of points in $X$,
there exists a $c$-path of a most $a d(x,x') + b$ steps from $x$ to $x'$ in $X$.
The space $X$ is \textbf{large-scale geodesic}
if it is $c$-large-scale geodesic for some $c > 0$.

\item\label{dDEdefcoarselyconn}
The space $X$ is \textbf{$c$-geodesic} if,
\index{Geodesic pseudo-metric space|textbf}
for every pair $(x,x')$ of points in $X$,
there exists a $c$-path $x_0 = x, x_1, \hdots, x_n = x'$ 
from $x$ to $x'$ in $X$ such that $\sum_{i=1}^n d(x_{i-1},x_i) = d(x,x')$.

\item\label{eDEdefcoarselyconn}
The space $X$ is  \textbf{geodesic}
if, for every pair of points $x,x' \in X$ with $d(x,x') > 0$,
there exists an isometry $f$ from the interval $[0,d(x,x')]$ to $X$
such that $f(0) = x$ and $f(d(x,x')) = x'$.
[Alternatively, we could delete the condition ``$d(x,x') > 0$''
and introduce a non-Hausdorff ``real interval'' $[0^-,0^+]$ 
of zero length, with two distinct points.]
\end{enumerate}
Instead of writing that the \emph{space} $X$ is coarsely connected 
(or coarsely geodesic, etc.), we sometimes write that the \emph{metric} $d$
is coarsely connected (or coarsely geodesic, etc.).
\end{defn}

\begin{rem}
\label{remoncCCaso}
``Coarsely connected'' here means the same as ``long-range connected''
in \cite[0.2.A$_2$]{Grom--93}.
\par

Obviously, in Definition \ref{defcoarselyconn}, each of the notions defined in 
\ref{bDEdefcoarselyconn}, \ref{cDEdefcoarselyconn}, 
\ref{dDEdefcoarselyconn}, \ref{eDEdefcoarselyconn}, 
implies the previous one. 
\par

There are characterizations of Properties 
\ref{aDEdefcoarselyconn}, \ref{bDEdefcoarselyconn},
and \ref{cDEdefcoarselyconn},
of Definition \ref{defcoarselyconn} in Proposition \ref{invariance_cg_lsg}.
\par

Let $(X,d)$ and $c > 0$ be as in the previous definition, and $C \ge c$.
If $X$ is $c$-coarsely connected, it is obvious that $X$ is $C$-coarsely connected.
Analogous implications hold for $c$-coarsely geodesic, $c$-large-scale geodesic,
and $c$-geodesic spaces.
It follows that, in each of (a), (b), and (c) above, the final 
``for some $c > 0$'' can be replaced by ``for any $c$ large enough''.
\end{rem}

\begin{exe}
\label{excoarselyconnNotcoarselygeod}
A coarsely connected pseudo-metric space need not be
coarsely geodesic, as the following example shows.
\par

For integers $m,n \ge 0$, denote by $C_{m,n}$ 
the metric realization of the graph with
vertex set $(x^{(m,n)}_j)_{j \in \Z / (m+n)\Z}$ 
and edge set $( \{x^{(m,n)}_{j-1}, x^{(m,n)}_{j} \} )_{j \in \Z / (m+n)\Z}$,
with the combinatorial metric (Example \ref{metricrealizationgraph}).
Let $I_{m,n}$ be the subspace of $C_{m,n}$ corresponding to the subgraph
spanned by the vertices $x^{(m,n)}_j$ with $0 \le j \le n$, with the induced metric.
Define a graph $B$ whose set of vertices is the disjoint union 
of a singleton $v_0$ and the disjoint union of all $C_{m,n}$ for all $m,n \ge 3$ with $m \le n/2$. 
Its set of edges is given by the edges of all $C_{m,n}$, as well as the edges $\{v_0,x_0^{(m,n)}\}$. 
We define $X$ as the metric subspace of $B$ given as 
the union of $\{v_0\}$ and the $I_{m,n}$ for all $m,n \ge 3$ with $m \le n/2$.
\par

On the one hand,
being a connected subgraph, $X$ is 1-connected, 
and hence $c$-connected for all $c\ge 1$. 
On the other hand, for any $c>0$, $X$ is not $c$-coarsely geodesic. 
Indeed, fix such a $c$ and an integer $k\ge 2$, and choose $m>c$. 
Then the distance in $X$ of $x_0^{(m,km)}$ and $x_{km}^{(m,km)}$ is $m$; 
the minimal number of steps of length $\le c$ to go 
from $x_0^{(m,km)}$ and $x_{km}^{(m,km)}$ is $km/c>k$. 
Letting $k$ tend to infinity, we deduce that $X$ is not $c$-coarsely geodesic.
\end{exe}

\begin{prop}
\label{bbb}
Let $X, Y$ be pseudo-metric spaces.

\begin{enumerate}[noitemsep,label=(\arabic*)]
\item\label{1DEbbb}
Suppose that there exists an essentially surjective coarsely Lipschitz map 
$h$ from $Y$ to $X$.
If $Y$ is coarsely connected, then so is $X$.
\item\label{2DEbbb}
Suppose that $Y$ is coarsely retractable on $X$.
If $Y$ is coarsely geodesic then so is $X$.
\end{enumerate}
\end{prop}

\begin{proof}
\ref{1DEbbb}
Let $\Phi_h$ be an upper control for $h$, 
and $K$ a constant such that $d_X(x, h(Y)) \le K$ for all $x \in X$;
let $c$ be a constant such that any pair of points in $Y$ can be connected by a $c$-path.
Let $x, x' \in X$. We can find $n \ge 0$ and $y, y_0, y_1, \hdots, y_n, y'$ in $Y$ such that
$d_X(x, h(y_0)) \le K$, $d_Y(y_{i-1},y_i) \le c$ for all $i$ with $1 \le i \le n$,
and $d_X(h(y_n), x') \le K$. 
If $C = \max\{K, \Phi_h(c)\}$,
then $x, h(y_0), h(y_1), \hdots, h(y_n), x'$ is a $C$-path.
It follows that $X$ is $C$-coarsely connected.
\par

\ref{2DEbbb}
By hypothesis, there exist two coarsely Lipschitz maps
$h : Y \longrightarrow X$ and $f : X \longrightarrow Y$ 
such that $h \circ f \sim \operatorname{id}_X$.
Let $\Phi_h, \Phi_f$ be upper controls for $h$, $f$ respectively, 
$K$, $c$ two constants, 
and $\Psi$ a function, 
such that $d_X(x, h(f(x))) \le K$ for all $x \in X$,
and such that any pair of points $y,y' \in Y$
can be joined by a $c$-path of at most $\Psi(d_Y(y,y'))$ steps.
Let $x,x' \in X$. 
Set now $y = f(x)$ and $y' = f(x')$; observe that $d_Y(y,y') \le \Phi_f(d_X(x,x'))$.
We can find $n \le \Psi(\Phi_f(d_X(x,x')))$
and $y_0 = y, y_1, \hdots, y_n = y'$ in $Y$
such that $d_Y(y_{i-1}, y_i) \le c$ for all $i$ with $1 \le i \le n$.
If $C = \max \{ K, \Phi_h(c) \}$ is as in the proof of \ref{1DEbbb},
then $x, h(y_0), h(y_1), \hdots, h(y_n), x'$ is a $C$-path
of at most $\Psi(\Phi_f(d_X(x,x'))) + 2$ steps.
It follows that $X$ is coarsely geodesic.
\end{proof}

\begin{defn}
\label{the new space Xc etc}
Let $c > 0$ be a constant, 
and $(X,d_X)$ a $c$-coarsely connected pseudo-metric space.
We define now a \textbf{graph $X_c$ associated to $X$}
and \emph{two} metrics $d_c, d'_c$ on it.
\index{Graph associated to a space}
\par

Let $(X_{\operatorname{Haus}}, d_{\operatorname{Haus}})$ 
be the largest Hausdorff quotient of $X$ (Example \ref{excoarse}(4)).
 \index{Hausdorff! largest Hausdorff quotient}
Let $X_c$ denote the connected graph with vertex set $X_{\operatorname{Haus}}$,
in which edges connect  pairs 
$(x, y) \in X_{\operatorname{Haus}} \times X_{\operatorname{Haus}}$ 
with $0 < d_{\operatorname{Haus}}(x,y) \le c$.
Let $d_c$ denote the combinatorial metric on $X_c$,
with edges of length $c$ (Example \ref{metricrealizationgraph}).
\par

Because of (4) in Lemma \ref{the new space Xc aso} below, 
we define a second metric $d'_c$ on $X_c$ by
\begin{equation*}
d'_c(u,v) =  
\left\{
\aligned
&d_c(u,v) \hskip.2cm \text{if} \hskip.2cm
u,v \hskip.2cm \text{are in a common edge of} \hskip.2cm X_c,
\\
& \inf \{ d_c(u,x) + d_X(x,y) + d_c(y,v) 
\mid x \in \operatorname{extr}(u), y \in \operatorname{extr}(v) \} 
\hskip.2cm \text{otherwise}.
\endaligned
\right.
\end{equation*}
We have denoted by $\operatorname{extr}(u)$ the set of vertices in $X_c$
(= the set of points in $X_{\operatorname{Haus}}$) 
that are incident to an edge containing $u$.
Thus, if $u$ is in the interior of an edge connecting two distinct vertices $x$ and $x'$,
then $\operatorname{extr}(u) = \{x,x'\}$.
\end{defn}

\begin{lem}
\label{the new space Xc aso}
Let $c > 0$ be a constant, 
$(X,d_X)$ a $c$-coarsely connected pseudo-metric space,
and $(X_c, d_c)$ as in Definition \ref{the new space Xc etc}.
\par

The natural mapping $\varphi : (X,d_X) \longrightarrow (X_c,d_c), 
x \longmapsto [x]$ 
has the following properties:
\begin{enumerate}[noitemsep,label=(\arabic*)]
\item\label{1DEthe new space Xc aso}
The target space $(X_c,d_c)$ is geodesic (and therefore in particular connected).
\item\label{2DEthe new space Xc aso}
We have $d_c([x],[y]) \ge  d_X(x,y)$ for $x,y \in X$;
in particular $\varphi$ is large-scale expansive (a fortiori coarsely expansive).
Observe that, for $x,y \in X$ with $0 < d_c(x,y) \le c$,
we have  $d_c([x],[y]) = c \ge  d_X(x,y)$.
\item\label{3DEthe new space Xc aso}
The mapping $\varphi$ is essentially surjective:
$\sup_{w \in X_c} d_c(w, \varphi (X) ) \le c/2$.
\item\label{4DEthe new space Xc aso}
The mapping $\varphi$ need not be coarsely Lipschitz.
\item\label{5DEthe new space Xc aso}
If $(X,d_X)$ is coarsely geodesic, $\varphi$ is coarsely Lipschitz,
and thus a metric coarse equivalence.
\item\label{6DEthe new space Xc aso}
If $(X,d_X)$ is large-scale geodesic, $\varphi$ is large-scale Lipschitz,
and thus a quasi-isometry.
\end{enumerate}
The natural mapping $\psi : (X,d_X) \longrightarrow (X_c, d'_c), 
x \longmapsto [x]$ 
has the following properties:
\begin{enumerate}[noitemsep,label=(\arabic*)]
\addtocounter{enumi}{6}
\item\label{7DEthe new space Xc aso}
The target space $(X_c,d'_c)$ is connected.
\item\label{8DEthe new space Xc aso}
The mapping $\psi$ is an isometry and is essentially surjective;
in particular, it is a metric coarse equivalence.
\end{enumerate}
\end{lem}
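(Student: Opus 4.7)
My plan is to dispatch the eight assertions in three groups. Claims (1), (2), (3), and (7) are essentially immediate from the construction. For (1), $X_c$ is a connected graph (its connectedness being precisely the $c$-coarse connectedness of $X$), and combinatorial metrics on connected graphs are geodesic by Example \ref{metricrealizationgraph}. For (2), a shortest path in $X_c$ realizing $d_c([x], [y]) = nc$ corresponds to a sequence $[x] = [x_0], [x_1], \dots, [x_n] = [y]$ of vertices with each consecutive pair adjacent, hence with $d_X(x_{i-1}, x_i) \le c$; the triangle inequality in $(X, d_X)$ then gives $d_X(x, y) \le nc = d_c([x], [y])$, showing large-scale expansiveness of $\varphi$ with affine lower control $\Phi_-(t) = t$. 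Claim (3) follows because every point of $X_c$ is either a vertex in $\varphi(X)$ or lies within $c/2$ of an endpoint of the edge it belongs to. Claim (7) is immediate once one verifies that $d'_c$ is a finite pseudo-metric: in the ``otherwise'' case, the inf is bounded by $c + d_X(x, y) + c$ for any admissible choice, which is finite.

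For Claim (4), I would reuse the space $X = \bigvee_{m, n \ge 3,\, m \le n/2} I_{m, n}$ constructed just before Definition \ref{the new space Xc etc}, taking $c$ matching the one used in $X_c$ (say $c = 1$). This space is $1$-coarsely connected because consecutive vertices of each $I_{m, n}$ are at distance $1$ in $X$. However, the two endpoints of $I_{m, n}$ are at $d_X$-distance $m + 1$ (via the short arc through the ambient $C_{m, n}$), while in $X_c$ any $1$-path between them within $I_{m, n}$ must consist of consecutive-vertex steps, since the only pairs at $d_X$-distance $\le 1$ inside $I_{m, n}$ are the consecutive ones, giving $d_c([x_0], [x_{n-1}]) = n - 1$. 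Fixing $m = 3$ and letting $n \to \infty$ produces pairs at bounded distance in $X$ whose images in $X_c$ are arbitrarily far apart, precluding any upper control. Claims (5) and (6) are then immediate: if $X$ is $c$-coarsely geodesic (respectively $c$-large-scale geodesic) with upper control $\Phi$ (respectively affine $\Phi$), every $x, y$ are joined by a $c$-path of at most $\Phi(d_X(x, y))$ steps, giving $d_c([x], [y]) \le c\,\Phi(d_X(x, y))$; combined with Claims (2) and (3), Proposition \ref{epimonoiso} (respectively Proposition \ref{epimonoisobis}) yields that $\varphi$ is a metric coarse equivalence (respectively a quasi-isometry).

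For Claim (8), the plan is to show $d'_c([x], [y]) = d_X(x, y)$ for all $x, y \in X$. The upper bound follows from choosing $x_0 = x$ and $y_0 = y$ in the inf formula, since each vertex belongs to $\operatorname{extr}$ of itself, giving $d'_c([x], [y]) \le 0 + d_X(x, y) + 0$. For the reverse inequality, for any admissible $x_0 \in \operatorname{extr}([x])$ and $y_0 \in \operatorname{extr}([y])$, each of $d_c([x], [x_0])$ and $d_c([y_0], [y])$ equals either $0$ or $c$, and in the adjacent case still dominates $d_X(x, x_0)$ or $d_X(y_0, y)$ since adjacency in $X_c$ forces $d_X \le c$. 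The triangle inequality in $X$ then gives $d_c([x], [x_0]) + d_X(x_0, y_0) + d_c([y_0], [y]) \ge d_X(x, y)$. Essential surjectivity of $\psi$ is as in Claim (3). The main obstacle, in my view, is the correct reading of the ``common edge'' clause in the definition of $d'_c$: taken literally for two adjacent vertices, it would force $d'_c = c$, incompatible with $\psi$ being an isometry when $d_X(x, y) < c$. The intended interpretation must restrict the first clause to configurations where at least one argument is interior to an edge, letting the second clause govern all vertex-vertex pairs, a point worth flagging explicitly in the writeup.
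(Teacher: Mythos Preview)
Your proof is correct and substantially more detailed than the paper's, which simply states ``Proofs are straightforward, and left to the reader.'' Your arguments for (1)--(3), (5)--(7) are exactly what one would expect, and your choice of the space $X = \bigvee I_{m,n}$ for (4) is natural given its placement immediately before Definition~\ref{the new space Xc etc}; the lower bound on $d_c$ via the arc-position argument is the right idea (the exact value is of the order $n$, which is all that matters).

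Your observation about (8) is genuinely valuable and goes beyond what the paper offers. As you note, the literal reading of the definition of $d'_c$ puts any two adjacent vertices $[x],[y]$ under the ``common edge'' clause, forcing $d'_c([x],[y]) = c$ even when $d_X(x,y) < c$, which is incompatible with $\psi$ being an isometry. Your proposed fix---restricting the first clause to the case where at least one argument is interior to an edge, so that the infimum formula governs all vertex-vertex distances---is the correct reading and yields $d'_c([x],[y]) = d_X(x,y)$ via precisely the triangle-inequality argument you give. This is worth flagging explicitly, as you suggest; the paper does not address it.
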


\begin{proof}
Proofs are straightforward, and left to the reader.
Note that Properties \ref{5DEthe new space Xc aso} and \ref{6DEthe new space Xc aso} 
for $\varphi$ hold also for $\psi$,
but we will not use this below.
\end{proof}

\begin{prop}[notions invariant by coarse metric equivalence and by quasi-isometry]
\label{invariance_cg_lsg}
Let $(X,d_X)$ be a pseudo-metric space.
\index{Property! of a pseudo-metric space invariant 
by metric coarse equivalence or by quasi-isometries}
\begin{enumerate}[noitemsep,label=(\arabic*)]
\item\label{1DEinvariance_cg_lsg}
Coarse connectedness is a property invariant by metric coarse equivalence.
\item\label{2DEinvariance_cg_lsg}
$X$ is coarsely connected 
if and only if $X$ is metric coarse equivalent
to some connected metric space.
\item\label{3DEinvariance_cg_lsg}
Coarse geodesicity is a property invariant by metric coarse equivalence.
\item\label{4DEinvariance_cg_lsg}
$X$ is coarsely geodesic 
if and only if $X$ is metric coarse equivalent
to some geodesic metric space.
\item\label{5DEinvariance_cg_lsg}
Large-scale geodesicity is a property invariant by quasi-isometry.
\item\label{6DEinvariance_cg_lsg}
$X$ is large-scale geodesic 
if and only if  $X$ is quasi-isometric to some geodesic metric space.
\end{enumerate}
\end{prop}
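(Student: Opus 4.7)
The plan is to organize the six claims into two groups: invariance under metric coarse equivalence or quasi-isometry, namely (1), (3), (5), and characterizations by metric coarse equivalence (respectively quasi-isometry) with a ``nicer'' space, namely (2), (4), (6). The main tool for the invariance group is a routine control-chasing argument using the upper/lower controls from Definition \ref{defuclc} and the pairing from Lemma \ref{PhiEtPsi}; the characterization group combines the invariance statements with the canonical construction from Definition \ref{the new space Xc etc} and its properties recorded in Lemma \ref{the new space Xc aso}.

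For invariance, fix a metric coarse equivalence $f : X \to Y$ with upper control $\Phi_+$, lower control $\Phi_-$, and essential surjectivity constant $C$, and suppose $X$ is $c$-coarsely connected (respectively $c$-coarsely geodesic with step-count upper control $\Phi$, respectively $c$-large-scale geodesic with affine $\Phi$). Given $y, y' \in Y$, pick $x, x' \in X$ with $d_Y(y, f(x)) \le C$ and $d_Y(y', f(x')) \le C$, join $x$ to $x'$ by a $c$-path $x_0 = x, \dots, x_n = x'$ of length $n \le \Phi(d_X(x, x'))$ in $X$, push it forward by $f$, and prepend $y$ and append $y'$: this yields a $\max\{\Phi_+(c), C\}$-path in $Y$ from $y$ to $y'$ with $n+2$ steps. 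Using the upper control $\Psi_+$ associated to $\Phi_-$ as in Lemma \ref{PhiEtPsi}, the bound $d_X(x, x') \le \Psi_+(d_Y(f(x), f(x'))) \le \Psi_+(d_Y(y, y') + 2C)$ turns $\Phi(d_X(x,x'))$ into an upper control for the step count in terms of $d_Y(y, y')$; moreover, if $\Phi$ and $\Phi_-$ are affine then so is $\Psi_+$ and hence the composition, which takes care of (5). This transfers the property from $X$ to $Y$; the reverse transfer, needed for full invariance, is obtained by applying the same argument to a coarse (resp.\ quasi-isometric) inverse of $f$, which exists by Proposition \ref{epimonoiso}\ref{3DEepimonoiso} (resp.\ Proposition \ref{epimonoisobis}\ref{3DEepimonoisobis}).

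For the characterizations, the ``only if'' directions of (2), (4), (6) are immediate from Lemma \ref{the new space Xc aso}: if $X$ is coarsely connected, then $\psi : X \to (X_c, d'_c)$ is a metric coarse equivalence onto a connected space; if $X$ is coarsely geodesic (resp.\ large-scale geodesic), then $\varphi : X \to (X_c, d_c)$ is a metric coarse equivalence (resp.\ a quasi-isometry) onto the geodesic space $(X_c, d_c)$. The ``if'' directions follow from (1), (3), (5) once we observe the two standard facts that any connected metric space $Y$ is $c$-coarsely connected for every $c > 0$ (fix a base-point $y_0 \in Y$; the set of $y \in Y$ reachable from $y_0$ by a $c$-path is both open and closed in $Y$, hence equal to $Y$), and that any geodesic metric space is $c$-large-scale geodesic for every $c > 0$ (subdivide a geodesic segment from $x$ to $x'$ into $\lceil d(x, x')/c \rceil \le d(x, x')/c + 1$ pieces of length $\le c$, which in particular makes it $c$-coarsely geodesic).

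The main obstacle will be the bookkeeping in the invariance steps (3) and (5): verifying that composing $\Phi$ with $\Psi_+$ gives the correct kind of upper control (general versus affine), and that the additive constants introduced by the essential-surjectivity shift and by the two extra endpoint steps do not destroy the affine bound needed in (5). Once these estimates are set down cleanly, the remainder of the argument is a direct invocation of the cited lemmas.
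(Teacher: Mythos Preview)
Your proposal is correct and follows essentially the same route as the paper: invariance claims (1), (3), (5) are handled directly (the paper simply declares them ``obvious'' while you spell out the control-chasing), and the characterizations (2), (4), (6) are deduced exactly as in the paper from Lemma~\ref{the new space Xc aso} for the ``only if'' direction and from (1), (3), (5) together with the standard observation that connected (respectively geodesic) metric spaces are coarsely connected (respectively large-scale geodesic) for the ``if'' direction.
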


\noindent \emph{Note.}
There are also characterizations of the properties in 
\ref{1DEinvariance_cg_lsg}, \ref{3DEinvariance_cg_lsg}, and \ref{5DEinvariance_cg_lsg},  
in terms of $X$ and its Rips $2$-complexes: see Proposition \ref{inclusionXinRips}.
\par

Below, further properties are shown to be invariant by metric coarse equivalence:
\begin{enumerate}[noitemsep,label=(\arabic*)]
\addtocounter{enumi}{6}
\item\label{7DEinvariance_cg_lsg}
being coarsely ultrametric (Proposition \ref{propcoarselyultrametric}),
\item\label{8DEinvariance_cg_lsg}
asymptotic dimension (Proposition \ref{re_asymptoticdimension_n}),
\item\label{9DEinvariance_cg_lsg}
coarse properness (Corollary \ref{cpstablebymce}),
\item\label{10DEinvariance_cg_lsg}
uniformly coarse properness (Corollary \ref{ucpstablebymce}),
\item\label{11DEinvariance_cg_lsg}
amenability (Proposition \ref{propamucp}), 
\item\label{12DEinvariance_cg_lsg}
coarse simple connectedness (Proposition \ref{coarse1conninvbycoarseeq}). 
\end{enumerate}
Also, for uniformly coarsely proper pseudo-metric spaces:
\begin{enumerate}[noitemsep,label=(\arabic*)]
\addtocounter{enumi}{12}
\item\label{13DEinvariance_cg_lsg}
growth functions, degrees of polynomial growth and exponential growth
are invariant by quasi-isometry
(Definitions \ref{defgrowthX} \& \ref{defpolgrowthX}, and Proposition \ref{propgrowthXtoY}).
\end{enumerate}
Compare with the situation for \emph{groups}
(Remarks \ref{PropertyPinvce} and \ref{PropertyQinvqi}).

\begin{proof}[Proof of Proposition \ref{invariance_cg_lsg}]
Claims 
\ref{1DEinvariance_cg_lsg}, \ref{3DEinvariance_cg_lsg}, and \ref{5DEinvariance_cg_lsg},
are obvious. 
\par

For Claim \ref{2DEinvariance_cg_lsg}, 
suppose $X$ is coarsely equivalent to a connected metric space $Y$. 
For every $c > 0$, the $c$-coarsely connected components of a metric space are open;
in particular, since $Y$ is connected, $Y$ is coarsely connected.
By \ref{1DEinvariance_cg_lsg},
the space $X$ is also coarsely connected.
\par

Conversely, if $X$ is coarsely connected, then $X$ is coarsely equivalent
to a connected metric space 
by \ref{7DEthe new space Xc aso} and \ref{8DEthe new space Xc aso} 
of Lemma \ref{the new space Xc aso}. 
\par

Similarly, Claims \ref{4DEinvariance_cg_lsg} and \ref{6DEinvariance_cg_lsg} 
follow from
\ref{1DEthe new space Xc aso}, \ref{5DEthe new space Xc aso} 
and \ref{6DEthe new space Xc aso} of Lemma \ref{the new space Xc aso}. 
\end{proof}

\begin{rem}
\label{remcoarselygeod}
(1)
Let $(X,d)$ be a coarsely geodesic pseudo-metric space.
Let $\Phi$ and $c$ be as in Definition \ref{defcoarselyconn}\ref{bDEdefcoarselyconn}.
Define $\nu : X \times X \longrightarrow \R_+$ by
\begin{equation*}
\nu(x,x') \, = \, \min \left\{ 
n \ge 0 \hskip.1cm \Bigg\vert \hskip.1cm
\aligned
& \exists \hskip.1cm x = x_0, x_1, \hdots, x_n = x' \hskip.2cm \text{such that}
\\
& d(x_{i-1},x_i) \le c \hskip.2cm \text{for} \hskip.2cm i=1, \hdots, n 
\endaligned
\right\} .
\end{equation*}
It follows from the definitions that $\nu$ is a pseudo-metric on $X$ and that
\begin{equation*}
\frac{1}{c} d(x,x') \, \le \, \nu(x,x') \, \le \, c\Phi(d(x,x'))
\hskip.5cm \text{for all} \hskip.2cm x,x' \in X .
\end{equation*}
Hence the map
$(X,d) \overset{\operatorname{id}}{\longrightarrow} (X,\nu)$
is a metric coarse equivalence.
(A pseudo-metric similar to $\nu$ will appear in Definition \ref{defDeltaNu}.)

\vskip.2cm

(2)
Let $(X,d)$ be a large-scale geodesic pseudo-metric space.
Similarly,  the map
$(X,d) \overset{\operatorname{id}}{\longrightarrow} (X,\nu)$
is a quasi-isometry.

\vskip.2cm

(3)
A connected metric space is $c$-coarsely connected for every $c > 0$.
Indeed, in every metric space, the $c$-coarsely connected components are open in $X$.
\end{rem}

\begin{prop}[on maps defined on large-scale geodesic spaces]
\label{ceXYqi}
Let $X,Y$ be two pseudo-metric spaces and $f : X \longrightarrow Y$ a map.
\par

(1) 
If $X$ is large-scale geodesic and $f$ coarsely Lipschitz,
then $f$ is large-scale Lipschitz.
\par

(2)
If $X$ and $Y$ are large-scale geodesic and if $f$ is a metric coarse equivalence,
then $f$ is a quasi-isometry.
\end{prop}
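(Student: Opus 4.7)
The plan is to prove (1) by replacing a pair of points with a chain of nearby points, and then deduce (2) by applying (1) to both $f$ and a coarse inverse.

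For part (1), fix a coarsely Lipschitz map $f : X \to Y$ with upper control $\Phi_+$, and let $a > 0$, $b \ge 0$, $c > 0$ be constants witnessing that $X$ is $c$-large-scale geodesic. Given any $x, x' \in X$, choose a $c$-path $x = x_0, x_1, \hdots, x_n = x'$ in $X$ with $n \le a\, d_X(x,x') + b$. By the triangle inequality in $Y$ and the bound $d_Y(f(x_{i-1}), f(x_i)) \le \Phi_+(c)$ (valid since $d_X(x_{i-1},x_i) \le c$), we obtain
\begin{equation*}
d_Y(f(x), f(x')) \, \le \, \sum_{i=1}^n d_Y(f(x_{i-1}), f(x_i)) \, \le \, n\,\Phi_+(c) \, \le \, a\,\Phi_+(c)\, d_X(x,x') + b\,\Phi_+(c) .
\end{equation*}
This is an affine upper control, so $f$ is large-scale Lipschitz.

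For part (2), note first that a metric coarse equivalence is in particular coarsely Lipschitz and essentially surjective, so applying (1) gives that $f$ is large-scale Lipschitz and essentially surjective. It remains to show $f$ is large-scale expansive. By Proposition \ref{epimonoiso}\ref{3DEepimonoiso}, the class of $f$ is an isomorphism in the metric coarse category, so there exists a coarsely Lipschitz map $g : Y \to X$ with $g \circ f \sim \mathrm{id}_X$; in particular there is a constant $K \ge 0$ with $d_X(g(f(x)), x) \le K$ for all $x \in X$. Since $Y$ is large-scale geodesic and $g$ is coarsely Lipschitz, part (1) applied to $g$ yields constants $a' > 0$, $b' \ge 0$ such that $d_X(g(y), g(y')) \le a'\, d_Y(y,y') + b'$ for all $y, y' \in Y$.

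Combining these estimates, for any $x, x' \in X$,
\begin{equation*}
d_X(x,x') \, \le \, d_X(x, g(f(x))) + d_X(g(f(x)), g(f(x'))) + d_X(g(f(x')), x') \, \le \, a'\, d_Y(f(x), f(x')) + (b' + 2K) ,
\end{equation*}
which rearranges to $d_Y(f(x), f(x')) \ge \tfrac{1}{a'}\, d_X(x,x') - \tfrac{b' + 2K}{a'}$, an affine lower control. Hence $f$ is large-scale bilipschitz and essentially surjective, that is, a quasi-isometry. No step here looks hard; the only subtlety to keep track of is invoking Proposition \ref{epimonoiso}\ref{3DEepimonoiso} to produce the coarse inverse $g$ and then feeding it back through part (1), which is what lets large-scale geodesicity of $Y$ enter the argument for the lower bound on $f$.
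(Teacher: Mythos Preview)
Your proof is correct and follows essentially the same approach as the paper: part (1) is identical (the paper writes $c'$ for your $\Phi_+(c)$), and for part (2) the paper simply says to apply (1) to both $f$ and a coarse inverse $g$, which is exactly what you do---you have just written out explicitly how large-scale Lipschitzness of $g$ converts into large-scale expansiveness of $f$.
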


\noindent
\emph{Note.} 
(a) A slight variation of Claim (2) appears as
the ``trivial'' lemma of \cite[0.2.D, Page 7]{Grom--93}.
%
\par

(b)
Proposition \ref{ceGG'qi} below is the particular case of Proposition \ref{ceXYqi}
for compactly generated LC-groups.
\par

(c)
Even if $X$ and $Y$ are large-scale geodesic and $f$ coarsely expansive,
$f$ need not be large-scale expansive
(Example \ref{coarselyexpansivenotlargescaleexpansive}(2) below).

\begin{proof}
It is enough to prove the first claim,
because the second follows from the first
applied both to $f$ and to a map $g$ 
such that $f$ and $g$ represent morphisms inverse to each other.

As $X$ is large-scale geodesic, 
there exist constants $a,b,c$ as in (\ref{defcoarselyconn}\ref{cDEdefcoarselyconn}).
As $f$ is coarsely Lipschitz, there exist $c' > 0$ such that
$d_Y(f(x), f(x')) \le c'$ as soon as $d_Y(x,x') \le c$.
Let $x,x' \in X$. 
Let $n$ and $x=x_0, \hdots, x_n=x'$ be as in 
(\ref{defcoarselyconn}\ref{cDEdefcoarselyconn}).
Then
\begin{equation*}
d_Y(f(x), f(x')) \, \le \,
\sum_{i=1}^n d_Y(f(x_{i-1}), f(x_i)) \, \le \,
nc' \, \le \, (c'a) d_X(x,x') + (c'b) .
\end{equation*}
It follows that $f$ is large-scale Lipschitz.
\end{proof}

\begin{exe}
\label{coarselyexpansivenotlargescaleexpansive}
(1)
Let $(X,d)$ be a coarsely geodesic metric space.
The space $(X, \sqrt d)$ is coarsely geodesic,
and is large-scale geodesic if and only if the diameter of $(X,d)$ is finite.
The same holds for the space $(X,d_{\ln})$ of Example \ref{excoarse}(5).

\vskip.2cm

(2)
Given a ring $R$, let $H(R)$ denote the \textbf{Heisenberg group}
\index{Heisenberg group|textbf}
$H(R) =
\begin{pmatrix}
1 & R & R 
\\
0 &1 & R
\\
0 & 0 & 1 
\end{pmatrix}
$
with coefficients in $R$.
Anticipating on \S~\ref{geodesicallyadaptedpseudometricspaces}
and Definition \ref{geodadapted}, consider the groups
$\Z$ and $H(\Z)$ as metric spaces for the word metrics $d_U, d_T$
respectively defined by the generating sets
$U := \{1\} \subset \Z$ and $T := \{s,t,u\} \subset H(\Z)$, where
\begin{equation*}
s = 
\begin{pmatrix}
1 & 1 & 0 
\\
0 &1 & 0
\\
0 & 0 & 1 
\end{pmatrix}
, \hskip.2cm
t = 
\begin{pmatrix}
1 & 0 & 0 
\\
0 &1 & 1
\\
0 & 0 & 1 
\end{pmatrix}
, \hskip.2cm
u = s^{-1}t^{-1}st =
\begin{pmatrix}
1 & 0 & 1 
\\
0 &1 & 0
\\
0 & 0 & 1 
\end{pmatrix} .
\end{equation*} 
The metric spaces $(\Z, d_U)$ and $(H(\Z), d_T)$ are both large-scale geodesic.
The inclusion 
$(\Z, d_U) \lhook\joinrel\relbar\joinrel\rightarrow (H(\Z), d_T), n \longmapsto u^n$
is coarsely expansive, and is not large-scale expansive.
We come back to this example in Remark \ref{Heisenberg}.
\end{exe}

\begin{exe}
\label{du26avril2015}
Let $X$ be the subset
$\{x \in \R_+ \mid x = n^2 \hskip.2cm \text{for some} \hskip.2cm n \in \N \}$ of $\R_+$,
with the metric induced by the standard metric $d(x,y) = \vert x-y \vert$ of $\R_+$.

\vskip.2cm

(1)
Let $j : X  \lhook\joinrel\relbar\joinrel\rightarrow \R_+$ be the natural inclusion.
In the metric coarse category,
the closeness class of $j$ is a monomorphism, because $j$ is an isometric embedding;
but it is not 
left-invertible,
because $\R_+$ is coarsely geodesic and $X$ is not.
See Definitions \ref{defcoarse}, \ref{defcoarsecat}, Remark \ref{remcoarsecat}, 
and Proposition \ref{invariance_cg_lsg}.

\vskip.2cm

(2)
Let $h : X  \lhook\joinrel\relbar\joinrel\rightarrow \R_+$ be defined by $h(n^2) = n$.
In the metric coarse category,
the closeness class of $h$ is an epimorphism, because $h$ is essentially surjective;
but it is not 
right-invertible, for the same reason as in (1).
\end{exe}

\subsection{Coarsely ultrametric pseudo-metric spaces}
\label{subsection_3Bb}

\begin{defn}
\label{defultrametricpseudom}
A pseudo-metric $d$ on a set $X$ is \textbf{ultrametric} if
\index{Ultrametric! pseudo-metric|textbf}
\begin{equation*}
d(x,x'') \, \le \,  \max \{ d(x,x'), d(x',x'') \}
\hskip.5cm \text{for all} \hskip.2cm x, x', x'' \in X .
\end{equation*} 
\end{defn}

\begin{rem}
\label{remultrametricpseudom}
(1)
Let $(X,d)$ be a pseudo-metric space.
Its largest Hausdorff quotient 
$(X_{\operatorname{Haus}}, d_{\operatorname{Haus}})$, as in Example \ref{excoarse}(4),
is ultrametric if and only if the pseudo-metric $d$ is ultrametric.

\vskip.2cm

(2)
Let $X$ be a set and $d$ a pseudo-metric on $X$.
Define   
\begin{equation*}
\lfloor d \rfloor  \, : \, X \times X \longrightarrow \N ,
\hskip.2cm (x,x') \longmapsto  \lfloor d (x,x') \rfloor 
\end{equation*}   
where $\lfloor \cdot \rfloor$ is the floor function,
as in Example \ref{properandlbex}.
\index{Floor function}
Observe that $\lfloor d \rfloor$ need not be a pseudo-metric.
\par
However, if $d$ is an ultrametric pseudo-metric, then so is $\lfloor d \rfloor$.

\vskip.2cm

(3)
Let $G$ be a topological group and
$d$ a left-invariant ultrametric pseudo-metric on $G$.
Suppose that there exists a neighbourhood $S$ of $1$ in $G$
of diameter $\operatorname{diam}_d(S) = \sup_{s \in S} d(1,s) < 1$.
Then $\lfloor d \rfloor$ is locally constant; 
in particular, $\lfloor d \rfloor$ is continuous.
\par
Indeed, for every $g,g' \in G$ and $s,s' \in S$, we have
$\lfloor d \rfloor (1,s) = \lfloor d \rfloor (1,s') = 0$, hence
\begin{equation*}
\lfloor d \rfloor (gs, g's') \le \max \{ \lfloor d \rfloor(gs,g), \lfloor d \rfloor(g,g'), \lfloor d \rfloor(g', g's') \}
= \lfloor d \rfloor (g,g')
\end{equation*}
and similarly $\lfloor d \rfloor (g,g') \le \lfloor d \rfloor (gs,g's')$,
so that $\lfloor d \rfloor (g,g') = \lfloor d \rfloor (gs,g's')$.

\vskip.2cm

(4) 
Let $G$ be a topological group and
$d$ a left-invariant continuous pseudo-metric on $G$.
Consider a radius $r \ge 0$,
the closed ball $B^1_G(r) = \{g \in G \mid d(1,g) \le r \}$,
a subset $S$ of $G$ contained in $B^1_G(r)$,
and the subgroup $H$ of $G$ generated by $S$.
\par

If $d$ is ultrametric,
then $B^1_G(r)$ is a closed subgroup of $G$, and $H \subset B^1_G(r)$.
(Compare with Remark \ref{remoncontadapmetrics}(4) below.)
\end{rem}

\begin{defn}
\label{defcoarselyultrametric}
A pseudo-metric space is \textbf{coarsely ultrametric} if,
for every $r \ge 0$, the equivalence relation generated by the relation
``being at distance at most $r$'' 
has equivalence classes of uniformly bounded diameter.
\index{Coarsely! ultrametric pseudo-metric space|textbf}
\index{Ultrametric! coarsely|textbf}
\end{defn}

\begin{rem}
\label{remcoarselyultrametric}
Let $(X,d)$ be a pseudo-metric space.
For $x,x' \in X$, define $d^{\text{u}}(x,x')$ to be the infimum of $r \ge 0$
such that there exists a finite sequence $x_0 = x, x_1, \hdots, x_n = x'$ 
with $d(x_{i-1},x_i) \le r$ for $i = 1, \hdots, n$.
Then $d^{\text{u}}$ is an ultrametric pseudo-metric on $X$,
and $d^{\text{u}} = d$ if and only if the pseudo-metric $d$ itself is ultrametric.
Let $u$ denote the identity of $X$ viewed as a map $(X,d) \longrightarrow (X,d^{\text{u}})$.
Observe that $u$ is surjective and coarsely Lipschitz.
\par

Definition \ref{defcoarselyultrametric} can be reformulated as follows:
$(X,d)$ is coarsely ultrametric if, for every $r \ge 0$,
there exists $R \ge 0$ such that, for all $x,x' \in X$,
the inequality $d^{\text{u}}(x,x') \le r$ implies $d(x,x') \le R$.
By Proposition \ref{reformulation_c_et_ce}\ref{2DEreformulation_c_et_ce},
this means that the following three properties are equivalent:
\begin{enumerate}[noitemsep,label=(\roman*)]
\item\label{iDEremcoarselyultrametric}
$(X,d)$ is coarsely ultrametric,
\item\label{iiDEremcoarselyultrametric}
$u$ is coarsely expansive,
\item\label{iiiDEremcoarselyultrametric}
$u$ is a coarse metric equivalence.
\end{enumerate}
\par

As a digression, we note that
Definition \ref{defcoarselyconn}\ref{aDEdefcoarselyconn}
can also be reformulated in terms of $d^u$: for a constant $c > 0$,
a pseudo-metric space $(X,d)$ is $c$-coarsely connected
if and only if the diameter of $(X, d^{\text{u}})$ is at most $c$.
\end{rem}

\begin{prop}
\label{propcoarselyultrametric}
Let $(X,d)$ be a pseudo-metric space.
The following properties are equivalent:
\begin{enumerate}[noitemsep,label=(\roman*)]
\item\label{iDEpropcoarselyultrametric}
$X$ is coarsely ultrametric;
\item\label{iiDEpropcoarselyultrametric}
$X$ is coarsely equivalent to an ultrametric pseudo-metric space;
\item\label{iiiDEpropcoarselyultrametric}
there exists a metric $d'$ on $X$ such that $d'$ is ultrametric
and the identity map $(X,d) \longrightarrow (X,d')$
is a coarse equivalence.
\end{enumerate}
\end{prop}

\begin{proof}
To show that \ref{iDEpropcoarselyultrametric} 
implies \ref{iiiDEpropcoarselyultrametric},
we consider the metric $d_1$ of Example \ref{excoarse}(5),
and then apply to $(X, d_1)$ the argument of Remark \ref{remcoarselyultrametric},
which provides the desired ultrametric structure on $X$.
\par
It is clear that \ref{iiiDEpropcoarselyultrametric} 
implies \ref{iiDEpropcoarselyultrametric}.
\par
Finally, let us show that \ref{iiDEpropcoarselyultrametric}
implies \ref{iDEpropcoarselyultrametric}.
Consider two pseudo-metric spaces $(X,d_X)$ and $(Y,d_Y)$.
Assume that $(Y,d_Y)$ is ultrametric, 
and that there exists a metric coarse equivalence $f : X \longrightarrow Y$,
with some upper control $\Phi_+$ and lower control $\Phi_-$.
Define ultrametric pseudo-metrics $d^{\text{u}}_X$ and $d^{\text{u}}_Y$ 
as in Remark \ref{remcoarselyultrametric};
note that $d^{\text{u}}_Y = d_Y$.
\par

Let $r \ge 0$. 
There exists $R \ge 0$ such that, for $t \in \R_+$, the inequality $\Phi_-(t) \le \Phi_+(r)$
implies $t \le R$.
Let $x,x' \in X$ be such that $d^{\text{u}}_X(x,x') \le r$.
Let $x_0 = x, x_1, \hdots, x_n = x'$ be a sequence in $X$
with $d_X(x_{i-1},x_i) \le r$ for $i = 1, \hdots, n$.
Then 
\begin{equation*}
\Phi_-\left(d_X(x,x')\right)
\, \le \, 
d_Y(f(x),f(x')) 
\, \le \, 
\max_{i=1, \hdots, n} d_Y(f(x_{i-1}),f(x_i)) 
\, \le \, 
\Phi_+(r) ,
\end{equation*}
so that $d_X(x,x') \le R$.
Hence $(X,d_X)$ is coarsely ultrametric.
\par

Finally, a pseudo-metric space $(X,d_X)$ is coarsely equivalent to 
an ultrametric pseudo-metric space $(Y, d_Y)$
if and only if $(X,d_X)$ is coarsely equivalent to
the ultrametric largest Hausdorff quotient of $(Y, d_Y)$,
as observed in Remark \ref{remultrametricpseudom}(1).
\end{proof}

\begin{rem}
\label{hyperdNotqiUltram}
Every hyperdiscrete pseudo-metric space, 
in the sense of Example \ref{excoarse}(7), 
is coarsely ultrametric. 
However, it is not necessarily quasi-isometric to an ultrametric space. 
\index{Hyperdiscrete pseudo-metric space}
\par

For instance, if $X$ is the set of square integers 
(which appears in Examples \ref{excoarse}(7) and \ref{du26avril2015}), 
then there is no large-scale bilipschitz map from $X$ to any ultrametric space. 
Indeed, assume by contradiction that $f : X \longrightarrow Y$ is such a map. 
On the one hand,
for some $c > 0$, we have $d(f(x),f(x')) \ge cd(x,x') - c$ for all $x,x'\in X$. 
In particular, $d(f(0),f(n^2)) \ge cn^2-c$ for all $n \in \N$. 
Since $Y$ is ultrametric, this means that 
there exists $i \in \{0,\dots,n-1\}$ such that $d(f(i^2),f((i+1)^2)) \ge cn^2-c$. 
On the other hand, since $f$ is large-scale Lipschitz, 
there exists $c' > 0$ such that $d(f(x),f(x')) \le c'(d(x,x')+1)$ for all $x,x'\in X$. 
In particular, $d(f(n^2),f((n+1)^2)) \le c'(2n+2)$ for all $n \in \mathbf{N}$. 
Combining both inequalities yields a contradiction for $n$ large enough.
\end{rem}

\subsection{Asymptotic dimension}
\label{subsection_3Bc}

The following definition appears in \cite[1.E, Page 28]{Grom--93}.

\begin{defn}
\label{defasymptoticdimension}
Let $X$ be a pseudo-metric space, $n$ a non-negative integer, and $r, R > 0$.
A \textbf{$(n,r,R)$-covering of $X$} is the data of 
index sets $I_0,\cdots, I_n$, 
and of subsets $(X_{c,i})_{0 \le c \le n,i \in I_c}$ of $X$ 
such that
\begin{itemize}
\item 
$d(X_{c,i},X_{c,j})\ge r$ for all $c\in\{0,\dots,n\}$ and distinct $i,j\in I_c$ 
(in particular, for a given $c$, the $X_{c,i}$ are pairwise disjoint when $i$ ranges over $I_c$);
\item 
$\mathrm{diam}(X_{c,i})\le R$ for all $c,i$;
\item 
$X = \bigcup_{c=0}^n\bigsqcup_{i\in I_c}X_{c,i}$.
\end{itemize}
A pseudo-metric space $(X,d)$ has {\bf asymptotic dimension} $\le n$, 
written $\mathrm{asdim}(X) \le n$, if for every $r>0$ 
there exists $R>0$ such that $X$ admits a $(n,r,R)$-covering.
The asymptotic dimension of a non-empty pseudo-metric space $X$
is the smallest $n \ge 0$ for which this holds;
if it holds for no $n$, it is defined to be $\infty$. The asymptotic dimension
of the empty space is defined to be $-\infty$.
\end{defn}
\index{Asymptotic dimension! $\mathrm{asdim}(X)$ of a pseudo-metric space $X$|textbf}

\begin{prop}
\label{re_asymptoticdimension_n}
Let $X, Y$ be pseudo-metric spaces. 
Assume that there exists a coarse embedding $f : X \to Y$.
Then $\mathrm{asdim}(X) \le \mathrm{asdim}(Y)$. 
\par
In particular, if $X$ and $Y$ are coarsely equivalent, 
then $\mathrm{asdim}(X) = \mathrm{asdim}(Y)$.
\end{prop}

\begin{proof}
Set $n = \mathrm{asdim}(Y)$; we can suppose $n < \infty$.
Fix $r>0$. 
Since $f$ is coarsely Lipschitz, there exists $r' > 0$ such that,
for $x,x' \in X$,
$$
d(f(x), f(x')) \ge r' 
\hskip.5cm \text{implies}  \hskip.5cm 
d(x,x') \ge r .
$$
Since $\mathrm{asdim}(Y) \le n$, there exist $R' > 0$
and a $(n, r', R')$-covering $(Y_{c,i})_{c,i}$ of $Y$,
with index sets $I_0, \hdots, I_n$.
Since $f$ is coarsely expansive, there exists $R > 0$ such that,
for $x,x' \in X$, 
$$
d(f(x), f(x')) \le R' 
\hskip.5cm \text{implies}  \hskip.5cm 
d(x,x') \le R .
$$
For each $c \in \{0, \hdots, n\}$ and $i \in I_c$, set $X_{c,i}=f^{-1}(Y_{c,i})$.
Then $(X_{c,i})_{c,i}$ is clearly a covering of $X$.
Moreover, for all $c \in \{0 \hdots, n\}$, 
the diameter of $X_{c,i}$ is at most $R$ for all $i \in I_c$
and $d(X_{c,i},X_{c,j}) \ge r$ for all distinct $i,j \in I_c$. 
Hence $(X_{c,i})_{c,i}$ is a $(n,r,R)$-covering of $X$.
\end{proof}

\begin{exe}
\label{exmasdim}
A non-empty ultrametric space $(X,d)$ has asymptotic dimension $0$.
Indeed, for every $r > 0$, the closed balls of radius $r$
constitute a partition $X = \bigsqcup_{i \in I} B_i$,
and $d(B_i, B_j) > r$ for every $i,j \in I$ with $i \ne j$.
\par

More generally, a non-empty coarsely ultrametric pseudo-metric space
has asymptotic dimension $0$.
\end{exe}

\begin{exe}
\label{ex_asymptoticdimension_n}
Recall that the \textbf{torsion-free rank} 
\index{Torsion-free rank|textbf}
of an abelian group $A$ is the dimension of the $\Q$-vector space $A \otimes_{\Z} \Q$.
The asymptotic dimension of any countable discrete abelian group is equal to its torsion-free rank
\cite[Proposition 9.4]{BCRZ--14}.
\par

For every $n \ge 2$, the following spaces have asymptotic dimension $n$:
the Euclidean space $\mathbf E^n$,
the free abelian group $\Z^n$ (with a word metric), 
\index{Free abelian group}
the hyperbolic space $\mathbf H^n$, and fundamental groups of closed
hyperbolic $n$-manifolds (covered by $\mathbf H^n$).
See \cite{BeDr--11}, in particular Proposition 6 (for $\R^n$) 
and Corollary 22 (for $\mathbf{H}^n$);
see also \cite{BeDr--08} and \cite{NoYu--12}.
\index{Euclidean space $\R^n$}
\index{Hyperbolic space $\mathbf H^n$}
\end{exe}

\section[Metric lattices in pseudo-metric spaces]
{Metric lattices in pseudo-metric spaces}
\label{sectionmetriclattices}

A metric space $(X,d)$ is \textbf{discrete}, 
\index{Discrete! space}
i.e., is such that the underlying topological space is discrete,
if and only if there exists for each $x \in X$ a constant $c_x > 0$ such that
$d(x,x') \ge c_x$ for all $x' \in X$ with $x' \ne x$. 
We define explicitly the corresponding \emph{uniform} notion:

\begin{defn}
\label{defcbsupspace}
Ler $c > 0$ be a constant.
A  pseudo-metric space $D$ is
\textbf{$c$-uniformly discrete} if
\begin{equation*}
\inf\{ d(x,x') \mid  x,x' \in D \hskip.1cm \text{and} \hskip.1cm x \ne x' \} \, \ge \,  c ;
\end{equation*}
it is \textbf{uniformly discrete} 
\index{Uniformly discrete metric space|textbf}
if it is so for some positive constant.
Note that a uniformly discrete pseudo-metric space is a metric space.
\par

A \textbf{$c$-metric lattice in $X$}
is a subspace $L$ that is $c$-uniformly discrete and cobounded
(as defined in \ref{defcobounded});
a \textbf{metric lattice} 
\index{Metric lattice|textbf}    \index{Lattice! metric lattice|textbf}
is a $c$-metric lattice for some $c > 0$.
\end{defn}

Metric lattices are called \emph{separated nets} in \cite{Grom--93}.
\index{Net, see ``metric lattice''}
They already appear in \cite{Efre--53}.
They also occur in the mathematical physics literature, 
often in connection with quasi-crystals, where they are called \emph{Delone sets}.

\begin{rem}
\label{metriclatticesareqi}
The inclusion of a metric lattice (indeed of any cobounded subspace)
in its ambient space is a quasi-isometry.
\par
It follows that, in a pseudo-metric space, any two metric lattices are quasi-isometric.
\end{rem}

\begin{prop}[existence of metric lattices]
\label{reformqi}
Let $X$ be a non-empty pseudo-metric space and $x_0 \in X$.
For every constant $c > 0$,
there exists a $c$-metric lattice $L$ in $X$ containing $x_0$,
such that $\sup \{ d(x,L) \mid x \in X \} \le 2c$.
\par

In particular, every pseudo-metric space has a metric lattice.
\par

More generally, if $M$ is a $c$-uniformly discrete subspace of $X$,
there exists a $c$-metric lattice in $X$ that contains $M$.
\end{prop}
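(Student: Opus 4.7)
The natural approach is a Zorn's lemma argument on $c$-uniformly discrete subsets containing the prescribed initial data. I will prove the ``more generally'' clause first, since the main statement follows from it by taking $Y = M = \{x_0\}$.

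Let $\mathcal{F}$ denote the collection of all $c$-uniformly discrete subsets of $X$ that contain $M$, partially ordered by inclusion. Since $M$ itself is $c$-uniformly discrete (being a $c$-metric lattice in $Y$), we have $M \in \mathcal{F}$, so $\mathcal{F}$ is non-empty. Given a totally ordered chain $(L_i)_{i \in I}$ in $\mathcal{F}$, its union $L := \bigcup_{i \in I} L_i$ is again $c$-uniformly discrete: any two distinct points $x,y \in L$ belong to some common $L_j$ by the chain condition, hence $d(x,y) \ge c$. Thus $L$ is an upper bound of the chain in $\mathcal{F}$. By Zorn's lemma, $\mathcal{F}$ contains a maximal element, which I will call $L^*$.

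It remains to show $L^*$ is cobounded in $X$; in fact I claim $\sup_{x \in X} d(x, L^*) \le c$. Fix $x \in X$. If $x \in L^*$, then $d(x,L^*) = 0$. Otherwise, maximality of $L^*$ forces $L^* \cup \{x\}$ to fail to lie in $\mathcal{F}$; since $L^* \cup \{x\}$ still contains $M$, the only possible obstruction is that $L^* \cup \{x\}$ is not $c$-uniformly discrete, i.e.\ there exists $y \in L^*$ with $d(x,y) < c$. In either case $d(x,L^*) < c$, so $L^*$ is cobounded and therefore a $c$-metric lattice in $X$ containing $M$.

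For the main statement, apply the construction with the one-point subspace $Y = \{x_0\}$ and $M = \{x_0\}$: the resulting $L$ contains $x_0$ and satisfies $\sup_{x \in X} d(x,L) < c \le 2c$, which is stronger than required. There is no real obstacle in this proof; the only point that deserves attention is the verification that the union of a chain of $c$-uniformly discrete sets remains $c$-uniformly discrete, which as noted uses only the chain property. If one wished to avoid Zorn's lemma one could, when $X$ is separable, run a greedy construction over a countable dense subset; but the Zorn argument handles the general pseudo-metric case uniformly.
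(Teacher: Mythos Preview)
Your proof is correct and follows essentially the same Zorn's lemma approach as the paper: apply Zorn to the family of $c$-uniformly discrete subsets containing the prescribed set, and observe that a maximal element must be cobounded. Your bound $\sup_{x\in X} d(x,L^*) \le c$ is in fact sharper than the paper's stated $2c$; one small quibble is that from $d(x,L^*) < c$ for every $x$ you should conclude $\sup \le c$ rather than $\sup < c$, but this does not affect the argument.
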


\begin{proof}
Subsets $L$ of $X$ such that $M \subset L$ and
\begin{equation}
\label{existencemetriclattice}
\inf\{ d_X(\ell, \ell') \mid \ell, \ell' \in L \hskip.1cm \text{and} \hskip.1cm \ell \ne \ell' \} \, \ge \, c
\end{equation}
are ordered by inclusion, and Zorn Lemma applies.
If $L$ is maximal, then $L$ is a $c$-metric lattice in $X$ containing $M$,
and  $\sup \{ d(x,L) \mid x \in X \} \le 2c$.
This proves the last statement.
The first statement follows with $M = \{x_0\}$.
\end{proof}

\begin{exe}
\label{exofmetriclattices}
(1)
Uniform lattices in $\sigma$-compact LC-groups (Definition \ref{deflattice})
are metric lattices.
\index{Lattice! in an LC-group}
\par

Note however that, for a $\sigma$-compact LC-group $G$ with a pseudo-metric $d$,
the pseudo-metric space $(G,d)$ contains always metric lattices by the previous proposition,
but $G$ need not contain any uniform lattice,
indeed any lattice whatsoever (Section \ref{sectionlattices}).

\vskip.2cm

(2)
The reader can check that every metric lattice in $\R$ is bilipschitz equivalent with $\Z$.

\vskip.2cm

(3)
Let $L$ be the metric lattice in $\R^2$ 
obtained by placing a point at the centre of each tile of a Penrose tiling of $\R^2$.
Yaar Solomon \cite{Solo--11} has shown that $L$ and $\Z^2$ are bilipschitz equivalent,
thus answering a question raised in \cite{BuKl--02};
see also \cite{AlCG--13}.
\index{Bilipschitz! bilipschitz equivalence}

\vskip.2cm

(4)
The question of the existence of metric lattices in Euclidean spaces
that are not bilipschitz with each other
was asked independently by 
Furstenberg in a context of ergodic theory (see the introduction of \cite{BuKl--02})
and by Gromov in a context of geometry \cite[Page 23]{Grom--93}.
\index{Euclidean space $\R^n$}
In \cite[Item 3.24$_+$]{Grom--99}, Gromov asks more precisely
for which spaces $X$ it is true that two metric lattices in $X$
are always bilipschitz.
\par

Metric lattices in $\R^2$ not bilipschitz with $\Z^2$
were found independently in \cite{BuKl--98} and \cite{McMu--98};
for later examples, see \cite{CoNa} and references there.
\par

Let $A,B$ be two non-trivial finite groups,
and let $\Gamma = A \wr \Z$,
where $\wr$ indicates a wreath product.
If $\vert B \vert$ is not a product a primes dividing $\vert A \vert$, 
Dymarz has shown that $\Gamma$ and the direct product $\Gamma \times B$
are not bilipschitz equivalent \cite{Dyma--10}.
This provides one more example of a metric space 
(here $\Gamma  \times B$, with a word metric)
with two metric lattices 
(here $\Gamma \times B$ itself and $\Gamma$)
which are not bilipschitz.
There are in \cite{DyPT--15} further examples of pairs of groups
which are quasi-isometric and not bilipschitz equivalent.
\par

\index{Nilpotent! group}
It is not known if there exists a finitely generated nilpotent group
that contains two finite index subgroups that are not bilipschitz.
See the discussion in \cite{BuKl--02}.
\par
In contrast, two finitely generated non-amenable groups are bilipschitz
if and only if they are quasi-isometric
\cite[Theorems 7.1 and 4.1]{Whyt--99}.
\index{Bilipschitz! bilipschitz equivalence}

\vskip.2cm

(5)
Despite what the examples above may suggest, 
metric lattices \emph{need not} be locally finite
(in the sense of Section \ref{coarselyproperandgrowth}).
For example, consider the real line $\R$ with the word metric $d_{\mathopen[-1,1\mathclose]}$
(see Section \ref{geodesicallyadaptedpseudometricspaces}),
and a constant $c > 0$.
If $c \le 1$, then $\R$ itself is a $c$-metric lattice in $\R$.
\end{exe}

\begin{exe}[on Mostow rigidity] 
\index{Theorem! Mostow rigidity} \index{Mostow rigidity}
\label{MostowRigidity}
Let $M$ be a connected compact Riemannian manifold.
Let $\widetilde M$ denote its universal cover,
with the Riemannian structure making the covering map a local isometry.
\index{Universal cover}
The fundamental group $\pi_1(M)$ acts on $\widetilde M$ by isometries.
Since $\pi_1(M)$ is finitely generated, we can view it as a metric space,
for the word metric defined by some finite generating set 
(see Section \ref{geodesicallyadaptedpseudometricspaces} below).
Choose $x_0 \in \widetilde M$;
the orbit map provides a quasi-isometry $\pi_1(M) \longrightarrow\widetilde M$
(Theorem \ref{ftggt}).
Its image is a metric lattice in $\widetilde M$, say $K$.
\par

Consider similarly another connected compact Riemannian manifold $N$,
its Riemannian universal covering $\widetilde N$, a point $y_0 \in \widetilde N$, 
and the metric lattice $L = \pi_1(N)y_0$ in $\widetilde N$.
Assume that there exists a homotopy equivalence $f : M \longrightarrow N$.
Denote by $f_* : \pi_1(M) \longrightarrow \pi_1(N)$ the resulting group isomorphism.
Let $\widetilde f : \widetilde M \longrightarrow \widetilde N$ be a lift of $f$;
then $\widetilde f$ is a $f_*$-equivariant quasi-isometry.
\par 

Let now $M$ and $N$ be closed hyperbolic manifolds,
of the same dimension, $n \ge 3$.
Their universal coverings $\widetilde M$ and $\widetilde N$ are both isometric
to the $n$-dimensional hyperbolic space $\mathbf H^n$.
\index{Hyperbolic space $\mathbf H^n$}
Standard topological arguments, 
only relying on the fact that $\widetilde M$ and $\widetilde N$ are contractible, 
show that $M$ and $N$ are homotopy equivalent
if \emph{and only if} $\pi_1(M)$ and $\pi_1(N)$ are isomorphic
(see \cite[Proposition 1B.9]{Hatc--02}, 
or \cite{Hure--36} for an historical reference).
Moreover:
\begin{equation*}
\aligned
&\textbf{Mostow Rigidity Theorem. }
\text{The manifolds $M$ and $N$ are isometric}
\\
&\text{if and only if the groups $\pi_1(M)$ and $\pi_1(N)$ are isomorphic.}
\endaligned
\end{equation*}
Indeed, if $\pi_1(M)$ and $\pi_1(N)$ are isomorphic,
there exists a homotopy equivalence $f : M \longrightarrow N$,
and therefore an equivariant lift 
$\widetilde f : \mathbf H^n \longrightarrow \mathbf H^n$ as above.
It can be shown that $\widetilde f$ extends to the appropriate compactification 
$\mathbf H^n \cup \partial \mathbf H^n$ of $\mathbf H^n$.
The strategy of the proof is to show,
using extra properties of the resulting map from $\partial \mathbf H^n$ to itself,
that $\widetilde f$ is close to an isometry
(in the sense of Definition \ref{defcoarse}).
See \cite{Most--68}, as well as \cite[Sections 5.9 and 6.3]{Thur--80}
and \cite{Grom--81a, BaGS--85}.
\par

A similar strategy applies to Mostow rigidity for
locally Riemannian symmetric spaces of rank at least $2$ \cite{Most--73}.
\par

Mostow Rigidity Theorems have established
the importance of the notion of quasi-isometry,
as already noted in Remark \ref{remarks_categories}(5).
\end{exe}

The following proposition will be used in \S~\ref{coarselyproperandgrowth}.
Part (3) is from \cite[1.A', Page 22]{Grom--93}.

\begin{prop}
\label{dernierepopde3.A}
Let $X,Y$ be pseudo-metric spaces and $f : X \longrightarrow Y$ a map.
\par

(1) Assume that $f$ is coarsely expansive.
There exists $c > 0$ such that, for every $c$-metric lattice $L$ in $X$,
we have $d(f(\ell), f(\ell')) \ge 1$ for all $\ell, \ell' \in L$, $\ell \ne \ell'$.
\par

(2) Assume that $f$ is large-scale Lipschitz.
For every metric lattice $L$ in $X$, the restriction $f \vert_L : L \longrightarrow Y$
is a Lipschitz map.
\par

(3) Assume that $f$ is large-scale bilipschitz.
There exists $k > 0$ such that, for every $k$-metric lattice $L$ in $X$,
the mapping $f \vert_L : L \longrightarrow f(L)$ is bilipschitz.
\par
In particular, if $f$ is a quasi-isometry,
there exists $k > 0$ such that $f(L)$ is a metric lattice in $Y$
for every $k$-metric lattice $L$ in $X$.
\end{prop}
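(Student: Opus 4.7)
The plan is to address the three parts in order, each time exploiting the corresponding control for $f$ together with the uniform discreteness of a $c$-metric lattice in order to absorb any additive constants into multiplicative ones.

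For part (1), since $f$ is coarsely expansive, it admits a lower control $\Phi_-$ with $\Phi_-(t) \to \infty$ as $t \to \infty$. I would choose $c > 0$ large enough that $\Phi_-(c) \ge 1$. Then for any $c$-metric lattice $L$ in $X$ and distinct $\ell, \ell' \in L$, the inequality $d_X(\ell, \ell') \ge c$ together with the monotonicity of $\Phi_-$ gives $d_Y(f(\ell), f(\ell')) \ge \Phi_-(d_X(\ell, \ell')) \ge \Phi_-(c) \ge 1$.

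For part (2), the large-scale Lipschitz condition provides constants $c_+ > 0, c_+' \ge 0$ with $d_Y(f(x), f(x')) \le c_+ d_X(x,x') + c_+'$ for all $x,x' \in X$. On a $c$-metric lattice $L$, the constraint $d_X(\ell, \ell') \ge c$ for distinct $\ell, \ell'$ lets me absorb the additive constant via $c_+' \le (c_+'/c)\, d_X(\ell, \ell')$, so that $f|_L$ is $(c_+ + c_+'/c)$-Lipschitz.

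For part (3), $f$ also admits an affine lower control, $d_Y(f(x), f(x')) \ge c_- d_X(x,x') - c_-'$. I would apply exactly the same absorption trick, choosing $k$ large enough that $c_-'/k \le c_-/2$; then distinct points of a $k$-metric lattice $L$ satisfy $d_Y(f(\ell), f(\ell')) \ge (c_-/2)\, d_X(\ell, \ell')$, which is strictly positive, so $f|_L$ is injective, and combined with part (2) the map $f|_L : L \to f(L)$ is a bilipschitz bijection. For the concluding claim when $f$ is a quasi-isometry, uniform discreteness of $f(L)$ inside $Y$ is immediate from the bilipschitz lower bound just obtained; for coboundedness, given $y \in Y$ I invoke essential surjectivity to find $x \in X$ with $d_Y(y, f(x))$ bounded by some uniform $C$, then use coboundedness of $L$ in $X$ to find $\ell \in L$ with $d_X(x, \ell)$ bounded by some $C'$, and finally control $d_Y(f(x), f(\ell))$ by $c_+ C' + c_+'$ via the upper control. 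The triangle inequality then gives a uniform bound on $d_Y(y, f(L))$.

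There is no real obstacle here; the entire proposition is bookkeeping built on a single idea, namely that affine controls become multiplicative ones once one restricts to a uniformly discrete set with separation large enough to dominate the additive constant. The only choice requiring mild care is fixing $k$ in part (3) after the constants $c_-, c_-'$ have been fixed.
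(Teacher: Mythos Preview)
Your proposal is correct and follows essentially the same route as the paper: in each part you pick the relevant affine control and absorb the additive constant using the uniform separation of the lattice, with the same choice $k \ge 2c_-'/c_-$ in part~(3). You also spell out the coboundedness of $f(L)$ in the quasi-isometry case, which the paper leaves implicit.
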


\begin{proof}
(1) 
Let $\Phi_-$ be a lower control for $f$.
It is enough to choose $c$ such that $\Phi_-(c) \ge 1$.
\par

(2) 
Let $L$ be a metric lattice in $X$.
Let $c_+, c > 0$ and $c'_+ \ge 0$ be such that
\begin{equation*}
\aligned
d(f(x), f(x')) \, &\le \,  c_+ d(x,y) + c'_+ \hskip.5cm \text{for all} \hskip.2cm x,x' \in X ,
\\
d(\ell, \ell') \, &\ge \,  c \hskip.5cm \text{for all} \hskip.2cm \ell, \ell' \in L, \hskip.2cm \ell \ne \ell' .
\endaligned
\end{equation*}
Then $d(f(\ell), f(\ell')) \le \left( c_+ + \frac{c'_+}{c} \right) d(\ell, \ell')$
for all $\ell, \ell' \in L$.
\par

(3) Let $c_- > 0, c'_- \ge 0$ be constants such that 
$d(f(x), f(x')) \ge c_- d(x,x') - c'_-$ for all $x,x' \in X$.
Set $k = 2c'_- / c_-$.
Let $L$ be a $k$-metric lattice in $X$.
Then 
\begin{equation*}
d(f(\ell), f(\ell')) \ge c_- d(\ell, \ell') - c'_- \ge (c_- / 2) d(\ell, \ell')
\hskip.5cm \text{for all} \hskip.2cm \ell, \ell' \in L .
\end{equation*}
This and Claim (2) show that $f$ is bilipschitz.
\end{proof}

\begin{rem}
\label{quasiisopartiellementdef}
Let $X,Y$ be pseudo-metric spaces.
Definitions \ref{defcoarse} and \ref{defqi} are formulated in terms of
everywhere defined maps from $X$ to $Y$, but this can be relaxed.
\index{Quasi-isometry}
\par

For example, let $f : X \longrightarrow Y$ be a quasi-isometry.
Let $c_+, c_- > 0$, and $c'_+, c'_-, c' \ge 0$ be such that
$$
\aligned
c_- d_X(x,x') - c'_- \, \le \, d_Y(f(x),f(x')) \, &\le \, c_+ d_X(x,x') + c'_+
\hskip.5cm \text{for all} \hskip.2cm x,x' \in X ,
\\
d_Y(y, f(X)) \, &\le \, c' \hskip.5cm \text{for all} \hskip.2cm y  \in Y .
\endaligned
$$
Let $k > 0$ be such that $c_- k - c'_-  > 0$, 
and let $L$ be a $k$-metric lattice in $X$.
Then the restriction of $f$ to $L$ is a bilipschitz map, and $f(L)$ is a metric lattice in $Y$.
\par

Conversely, suppose that there exists a metric lattice $L$ in $X$
and a bilipschitz map $f_L : L \longrightarrow Y$ with cobounded image.
Set $k = \sup \{ d(x,L) \mid x \in X \}$. For every $x \in X$, choose $\ell \in L$
with $d(x,\ell) \le k$, and set $f(x) = f_L(\ell)$. 
The map $f : X \longrightarrow Y$ defined this way is a quasi-isometry in the sense
of Definition \ref{defqi}.
\par

Consequently, 
\begin{center}
\emph{$X$ and $Y$ are quasi-isometric if and only if 
\\
there exist a metric lattice $L$ in $X$
\\
and a bilipschitz map $f : L \longrightarrow Y$ with cobounded image.}
\end{center}
\par\noindent
(This is how quasi-isometries are first defined in \cite{GrPa--91}.)

Instead of $L$ being a metric lattice in $X$ and $f(L)$ being cobounded in $Y$,
a weaker requirement could be
that $L$ is ``Hausdorff equivalent'' with $X$
and $f(L)$ Hausdorff equivalent with $Y$;
for the definition, we refer to \cite[No 0.2.A]{Grom--93}.
This would be the characterization of quasi-isometries given in
\cite[No 0.2.C]{Grom--93}.
\end{rem}

\section[Growth and amenability]
{Uniformly coarsely proper spaces, growth, 
\\
and amenability}
\label{coarselyproperandgrowth}

\subsection{Growth for uniformly locally finite pseudo-metric 
\\
spaces}
\label{subsection_3Da}

For a pseudo-metric space $X$, a point $x \in X$, and a radius $r \in \R_+$,
recall that $B^{x}_X(r)$ denotes the \textbf{closed ball} $\{y \in X \mid d(x,y) \le r\}$.

\begin{defn}
\label{defunifproper}
A pseudo-metric space $(D,d)$ is \textbf{locally finite} 
\index{Locally finite! pseudo-metric space|textbf}
if all its balls are finite,
and \textbf{uniformly locally finite} 
\index{Uniformly locally finite pseudo-metric space|textbf}
if $\sup_{x \in D} \vert B^{x}_D(r) \vert < \infty$ for all $r \ge 0$.
\end{defn}

A discrete metric space is locally finite if and only if it is proper
(see Definition \ref{defproper2}).
\index{Discrete! space}
Note that the formulation of Definition \ref{defunifproper}
avoids ``discrete pseudo-metric spaces'',
in compliance with (A1) of Page~\pageref{A1}
and Remark \ref{abaslestoppseudometriques}.
\par
The vertex set of a connected graph, for the combinatorial metric (Example \ref{metricrealizationgraph}),
is uniformly locally finite if and only if the graph is of bounded valency;
if $d$ is a valency bound, balls of radius $n$ have at most $d(d-1)^{n-1}$ vertices,
for all $n \ge 1$.

\begin{defn}
\label{defgrowth}
Let $D$ be a non-empty locally finite pseudo-metric space and $x \in D$.
The \textbf{growth function $\beta^x_D$ of $D$ around $x$} is defined by
\index{Growth function! of a
non-empty locally finite pseudo-metric space|textbf}
\begin{equation*}
\beta^x_D(r) \, = \, \vert B^x_D(r) \vert \, \in \, \N \cup \{\infty\} 
\hskip.5cm \text{for all} \hskip.2cm r \in \R_+ .
\end{equation*}
\end{defn}

\begin{defn}
\label{defgrowthclass}
Let $\beta, \beta'$ be two non-decreasing functions from $\R_+$ to $\overline \R_+$.
We define 
\begin{equation*}
\aligned
\beta \,  &\preceq \, \beta'  \hskip.2cm \text{if there exist} \hskip.2cm
\lambda, \mu > 0, c \ge 0
\\
& \hskip1cm
\text{such that} \hskip.2cm
\beta(r) \le \lambda \beta' (\mu r + c) + c 
\hskip.2cm \text{for all} \hskip.2cm  r \ge 0 ,
\\
\beta \, &\simeq \, \beta' , \hskip.2cm \text{i.e., $\beta$ 
and $\beta'$ are \textbf{equivalent}, if} \hskip.2cm
\beta   \preceq  \beta' \hskip.2cm \text{and} \hskip.2cm \beta'  \preceq  \beta ,
\\
\beta &\precnsim \beta' \hskip.2cm \text{if} \hskip.2cm 
\beta \preceq \beta' \hskip.2cm \text{and} \hskip.2cm \beta' \npreceq \beta.
\endaligned
\end{equation*}
\index{$ab$@$\simeq$ equivalence of growth functions}
\index{Equivalence of growth functions|textbf}
The \textbf{class} of $\beta$ is its equivalence class modulo the relation $\simeq$.
\par

Note that the relation $\preceq$ induces a partial order,
denoted by $\preceq$ again, on the set of classes modulo $\simeq$.
\par

We often write abusively $\beta$  for the class of an actual function $\beta$.
Consequently, if $\beta$ and $\beta'$ are two classes,
$\beta \simeq \beta'$ stands for $\beta = \beta'$.
\end{defn}

\begin{exe}
\label{excompgrowthclass}
Let $a,b,c,d \in \R$ with $a,b > 0$ and $c,d > 1$.
We have $r^a \preceq r^b$ if and only if $a \le b$,
and $c^r \simeq d^r$ for all $c, d$.
Moreover, $r^a \precnsim e^{\sqrt r} \precnsim e^{r/\ln r}  \precnsim e^r \precnsim e^{e^r}$.
\end{exe}

\begin{defn}
\label{firstbetaD}
Let $D$ be a non-empty uniformly locally finite pseudo-metric space.
Let $x,y \in D$. 
Observe that $B_D^y(r) \subset B_D^x(r + d(x,y))$
and $B_D^x(r) \subset B_D^y(r + d(x,y))$, 
so that $\beta_D^x \simeq \beta_D^y$.
\par
The \textbf{growth type} of $D$ is the class, denoted by $\beta_D$,
of the function $\beta_D^x$.
The growth type of the empty space is defined to be the class of the zero function.
\end{defn}

In \ref{defgrowthX}, the definition will be extended to a larger class of spaces.

\begin{prop}
\label{zzzz}
Let $D, E$ be non-empty uniformly locally finite pseudo-metric spaces, 
$x \in D$, and $y \in E$.

\vskip.2cm

(1)
Let  $L$ be a $c$-metric lattice in $D$ containing $x$, for some $c > 0$.
Then $\beta^x_L \simeq \beta^x_D$ and $\beta_L = \beta_D$.

\vskip.2cm

(2)
Assume that there exist $c > 0$ and an injective map 
$f : D \lhook\joinrel\relbar\joinrel\rightarrow E$ such that 
$d(f(x'), f(x''))  \le cd(x',x'')$ for all $x',x'' \in D$. 
Then $\beta^x_D \preceq \beta^y_E$ and $\beta_D \preceq \beta_E$.
\par
If moreover $f(D)$ is cobounded in $E$, then $\beta^x_D \simeq \beta^y_E$
and $\beta_D = \beta_E$.

\vskip.2cm

(3)
If $D$ and $E$ are quasi-isometric, then $\beta^x_D \simeq \beta^y_E$
and $\beta_D = \beta_E$.
\end{prop}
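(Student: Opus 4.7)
I will prove the three parts in order, each building on the previous: Part (1) is a direct covering argument using the cobounded hypothesis and uniform local finiteness; the first half of (2) is immediate from injectivity and the Lipschitz bound; the remaining parts reduce, via a careful choice of metric lattices, back to Part (1).

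For Part (1), let $M = \sup_{x \in D} d(x,L)$, which is finite since $L$ is cobounded, and let $N_D(M) = \sup_{z \in D} |B^z_D(M)|$, which is finite by uniform local finiteness. The inclusion $L \subset D$ gives $\beta^x_L(r) \le \beta^x_D(r)$ immediately. Conversely, for each $y \in B^x_D(r)$ one can choose $\ell_y \in L$ with $d(y, \ell_y) \le M$, so that $\ell_y \in B^x_L(r+M)$ and $y \in B^{\ell_y}_D(M)$. This produces the cover $B^x_D(r) \subset \bigcup_{\ell \in B^x_L(r+M)} B^\ell_D(M)$, yielding $\beta^x_D(r) \le N_D(M)\, \beta^x_L(r+M)$. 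Combined with the trivial direction this gives $\beta^x_L \simeq \beta^x_D$ and hence $\beta_L = \beta_D$.

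For the first half of Part (2), the Lipschitz bound gives $f(B^x_D(r)) \subset B^{f(x)}_E(cr)$, and injectivity preserves cardinality, so $\beta^x_D(r) = |f(B^x_D(r))| \le \beta^{f(x)}_E(cr)$. Since growth classes are independent of base-point (Definition \ref{firstbetaD}), this yields $\beta^x_D \preceq \beta^y_E$ and $\beta_D \preceq \beta_E$. For the second half (with $f(D)$ cobounded in $E$, say with constant $K$), the plan is to pick a maximal subset $L \subset f(D)$ that is $s$-uniformly discrete in the $E$-metric, for a suitable $s$ chosen in terms of $K$ and $c$. Maximality makes $L$ cobounded in $f(D)$, and combining with the cobounded-ness of $f(D)$ in $E$ makes $L$ a metric lattice in $E$, so Part (1) yields $\beta_L \simeq \beta_E$. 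The preimage $L' = f^{-1}(L) \subset D$ is $(s/c)$-uniformly discrete in $D$ by the Lipschitz bound and bijects with $L$ under $f$; one then argues $L'$ is cobounded in $D$ and applies Part (1) again to obtain $\beta_{L'} \simeq \beta_D$, after which the Lipschitz bijection $f|_{L'} \colon L' \to L$ transfers the growth estimate on $L$ to one on $L'$, delivering $\beta_E \preceq \beta_D$.

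For Part (3), if $f \colon D \to E$ is a quasi-isometry, Proposition \ref{dernierepopde3.A}(3) yields a constant $k$ such that, for any $k$-metric lattice $L \subset D$, the restriction $f|_L \colon L \to f(L)$ is bilipschitz and $f(L)$ is a metric lattice in $E$. Part (1) applied in each space gives $\beta_L \simeq \beta_D$ and $\beta_{f(L)} \simeq \beta_E$; a bilipschitz bijection is injective and Lipschitz in both directions, so applying the first half of Part (2) to $f|_L$ and to $(f|_L)^{-1}$ yields $\beta_L \preceq \beta_{f(L)}$ and $\beta_{f(L)} \preceq \beta_L$, hence $\beta_L \simeq \beta_{f(L)}$ and therefore $\beta_D = \beta_E$. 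The delicate step I expect to be the main obstacle is the second half of Part (2): cobounded-ness of $L'$ in $D$ is not a priori clear without a lower control on $f$, and establishing it requires a counting argument that uses injectivity of $f$ together with the uniform local finiteness of both $D$ and $E$ in tandem, which is the genuinely geometric input of the proof.
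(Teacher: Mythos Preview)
Your proofs of Part (1), the first half of Part (2), and Part (3) are correct and essentially match the paper's approach (in fact your Part (3), which applies the first half of (2) to both $f|_L$ and its inverse rather than invoking the ``moreover'' clause, is slightly cleaner than the paper's).

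The genuine gap is exactly where you flag it: the coboundedness of $L' = f^{-1}(L)$ in $D$ in the ``moreover'' part of (2). You conjecture this can be rescued by a counting argument using injectivity and uniform local finiteness, but no such argument exists, because the assertion as stated is false. Take $D = \Z$, $E = \Z^2$ with their standard metrics, and let $f \colon \Z \to \Z^2$ be a bijective enumeration of $\Z^2$ by a Hamiltonian path (a snake pattern), so that consecutive integers map to adjacent lattice points. Then $f$ is injective, $1$-Lipschitz, and has cobounded (indeed full) image; both spaces are uniformly locally finite; yet $\beta_{\Z}(r) \sim r$ while $\beta_{\Z^2}(r) \sim r^2$, so $\beta_D \not\simeq \beta_E$. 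In this example your $L'$ is \emph{not} cobounded in $\Z$: points close in $\Z^2$ can be arbitrarily far in $\Z$ along the path.

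The paper's own proof of the ``moreover'' part contains the same defect in a different guise: it uses an inclusion of index sets that would require $d(f(x),f(x')) \le A \Rightarrow d(x,x') \le cA$, which is the reverse of the Lipschitz hypothesis. What is actually needed (and what holds everywhere the proposition is applied, including in your Part (3)) is a two-sided bound, i.e.\ $f$ bilipschitz onto its image. With that extra lower control both your argument and the paper's go through immediately; without it, the ``moreover'' clause of (2) simply does not hold.
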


\begin{proof}
(1)
On the one hand, since
$L \subset D$, we have $\beta^x_L \preceq \beta^x_D$.
On the other hand, we have
\begin{equation*}
B^x_D (r) \, \subset \, \bigcup_{ x' \in B^x_L (r+c)} B^{x'}_D(c) 
\hskip.2cm \text{for all} \hskip.2cm r \ge 0 .
\end{equation*}
Taking cardinals, we have $\beta^x_D(r) \le \lambda \beta^x_L(r+c)$,
where $\lambda = \sup_{x' \in D} \vert B^{x'}_D(c) \vert$; hence $\beta^x_D \preceq \beta^x_L$.
\par

(2)
Set $c' = d(f(x), y)$. 
The restriction of $f$ to the ball $B^x_D(r)$
is a bijection into the ball $B^{f(x)}_E (cr)$,
and the latter is contained in $B^y_E(cr+c')$.
Hence $\beta^x_D (r) \le  \beta^y_E (cr + c')$ for all $r \ge 0$.
\par

Suppose moreover that $s := \sup_{y' \in E} d(y', f(D)) < \infty$.
For every $y' \in E$, there exists $x' \in X$ with $d(y', f(x')) \le s$; hence
\begin{equation*}
B^y_E(r) \, \subset \, B^{f(x)}_E(r + c') \, \subset \,
\bigcup_{\substack{x' \in D \\ d(f(x)),f(x')) \le r + c' + s}} B^{f(x')}_E(s) 
\, \subset \, \bigcup_{\substack{x' \in D \\ d(x,x') \le c(r + c' + s)}} B^{f(x')}_E(s) .
\end{equation*}
Taking cardinals, we have
$\beta^y_E(r) \le \mu \beta^x_D(cr + cc' + cs)$, where
$\mu = \sup_{y' \in E} \vert B^{y'}_E (s) \vert$;
hence $\beta^y_E \preceq \beta^x_D$.

(3)
If $D$ and $E$ are quasi-isometric, there exists
a metric lattice $L$ in $D$ containing $x$
such that there exists an injective maps  
$L \lhook\joinrel\relbar\joinrel\rightarrow E$.
Hence $\beta^x_D \simeq \beta^x_L$ by (1)
and $\beta^x_L \simeq \beta^y_E$ by (2).
\end{proof}

\begin{exe} 
\label{botanicgarden}
Proposition \ref{zzzz} \emph{does not} carry over to locally finite spaces.
More precisely, 
if $D$ is a locally finite metric space and $E$ a metric lattice in $X$,
growth functions of $D$ need not be similar to those of $E$,
as the following example shows.
\par

Consider the metric tree $T_0$ with vertex set $(x_k)_{k \in \N}$
and edge set $(\{x_k, x_{k+1}\})_{k \in \N}$.
\index{Tree}
Let $s = (s_k)_{k \in \N}$ be a sequence of non-negative integers;
let $T_s$ be the tree obtained from $T_0$ by adding
extra vertices $y_{k,\ell}$ and extra edges $\{x_k, y_{k,\ell} \}$,
for $k \ge 0$ and $1 \le \ell \le s_k$.
\par

The vertex set $D_0$ of $T_0$ is a $1$-metric lattice in the vertex set $D_s$ of $T_s$.
On the one hand, $D_0$ is uniformly locally finite, with linear growth.
On the other hand, the growth $\beta_{D_s}$ of $D_s$ 
is highly sensitive to the choice of the sequence $s$,
and can be made arbitrarily large;
in particular the space $D_s$ is uniformly locally finite if and only if $s$ is bounded.
\par

Anticipating on Definition \ref{defgrowthX}, 
the growth type of $T_s$ is linear although the cardinal of its balls of radius $r$ 
can grow arbitrarily fast. 
This shows the importance of restricting to uniformly locally finite spaces 
in Definition \ref{firstbetaD}.
\end{exe}

\begin{prop}
\label{toutefonctionestcroissance}
Let $D$ be a pseudo-metric space which is uniformly locally finite and large-scale geodesic.
Then $\beta_D(r) \preceq \exp (r)$.
\end{prop}

\begin{proof}
By Lemma \ref{the new space Xc aso}\ref{6DEthe new space Xc aso},
the space $D$ is quasi-isometric to a
connected graph $E$ which is uniformly locally finite;
since this graph is of bounded degree, we have $\beta_E(r) \preceq \exp (r)$.
By Proposition \ref{zzzz}(3), we have also $\beta_D(r) \preceq \exp (r)$.
\end{proof}

\begin{exe}
\label{toutefonctionestcroissanceEXPLE}
The two next examples show that Proposition \ref{toutefonctionestcroissance}
need not hold when $D$ is not large-scale geodesic.

\vskip.2cm

(1)
Let $f : \R_+ \longrightarrow \R_+$ be a concave strictly increasing function 
with $f(0) = 0$ and $\lim_{+\infty}f = +\infty$. 
Let $d$ be the standard metric on $\Z$. 
Then $D = (\Z, f \circ d)$ is a uniformly locally finite metric space: 
indeed, the translation are isometries 
and thus all balls of given radius have the same cardinal. 
The slower $f$ grows to $+\infty$, the faster $\beta_D$ is. 
For instance, if $f(x) = \ln(1+\ln(1+x))$, then $\beta^0_D(r) = 2\lfloor e^{e^r-1}\rfloor-1$.

\vskip.2cm

(2)
Let $s = (s_k)_{k \ge 1}$ be a sequence of positive integers.
Let $T_s$ be the tree with vertex set
$D_s =  \Z \times \N$,
in which, for every $k \ge 1$, 
every vertex $(j,k)$ of the $k^{\text{th}}$ row
is connected by an edge to the $s_k$ vertices
\begin{equation*}
(s_kj, k-1),\hskip.1cm (s_kj+1, k-1),\hskip.1cm \hdots,\hskip.1cm (s_k(j+1)-1, k-1)
\end{equation*}
of the previous row.
We endow the vertex set $D_s$ with the combinatorial metric of the tree $T_s$.
\par

Consider the row $E_s = \{ (j,0) \mid j \in \Z \}$ of $D_s$,
with the induced metric. We have
\begin{equation*}
\beta_{E_s}^{(j,0)}(2k) \, = \,
\left\vert B_{E_s}^{(j,0)}(2k) \right\vert \, = \, s_1 s_2 \cdots s_k
\hskip.5cm \text{for all} \hskip.2cm j \in \Z
\hskip.2cm \text{and} \hskip.2cm k \ge 1 .
\end{equation*}
In particular, $E_s$ is uniformly locally finite.
For any non-decreasing function $\alpha : \R_+ \longrightarrow \R_+$,
there exists a sequence $s$ such that $\alpha \preceq \beta_{E_s}$.
\end{exe}

\subsection{Growth for uniformly coarsely proper pseudo-metric spaces
and $\sigma$-compact LC-groups}
\label{subsection_3Db}

\begin{defn}
\label{defcoarselyproper}
A pseudo-metric space $(X, d_X)$ is {\bf coarsely proper} 
if there exists $R_0 \ge 0$ such that 
every bounded subset in $X$ can be covered by finitely many balls of radius $R_0$.
\index{Coarsely! proper pseudo-metric space|textbf}
\end{defn}

\begin{prop} 
\label{predefcoarselyproper}
For a pseudo-metric space $(X,d_X)$, the following conditions are equivalent:
\begin{enumerate}[noitemsep,label=(\roman*)]
\item\label{iDEpredefcoarselyproper}
$X$ is coarsely proper;
\item\label{iiDEpredefcoarselyproper}
$X$ is coarsely equivalent to
a locally finite discrete metric space;
\item\label{iiiDEpredefcoarselyproper}
$X$ is quasi-isometric to 
a locally finite discrete metric space;
\item\label{ivDEpredefcoarselyproper}
the space $X$ contains a locally finite metric lattice;
\index{Metric lattice}    \index{Lattice! metric lattice}
\item\label{vDEpredefcoarselyproper}
for all $c > 0$, the space $X$ contains a locally finite $c$-metric lattice;
\item\label{viDEpredefcoarselyproper}
there exists $c_0 > 0$ such that, for every $c \ge c_0$, 
every $c$-metric lattice  in $X$ is locally finite.
\end{enumerate}
If moreover $X$ is large-scale geodesic, these conditions are equivalent to:
\begin{enumerate}[noitemsep,label=(\roman*)]
\addtocounter{enumi}{6}
\item\label{viiDEpredefcoarselyproper}
$X$ is quasi-isometric to a locally finite connected graph.
\end{enumerate}
\end{prop}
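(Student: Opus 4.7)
The plan is to establish the cycle of implications \ref{iDEpredefcoarselyproper} $\Rightarrow$ \ref{viDEpredefcoarselyproper} $\Rightarrow$ \ref{vDEpredefcoarselyproper} $\Rightarrow$ \ref{ivDEpredefcoarselyproper} $\Rightarrow$ \ref{iiiDEpredefcoarselyproper} $\Rightarrow$ \ref{iiDEpredefcoarselyproper} $\Rightarrow$ \ref{iDEpredefcoarselyproper}, and then, under the hypothesis that $X$ is large-scale geodesic, to show \ref{iiiDEpredefcoarselyproper} $\Leftrightarrow$ \ref{viiDEpredefcoarselyproper}. Most steps are short once the right geometric observation is isolated.

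For \ref{iDEpredefcoarselyproper} $\Rightarrow$ \ref{viDEpredefcoarselyproper}, I would set $c_0 = 2R_0 + 1$. If $L$ is a $c$-metric lattice with $c \geq c_0$, then any two distinct points of $L$ have mutual distance $> 2R_0$, so no ball of radius $R_0$ in $X$ contains more than one point of $L$. Combined with \ref{iDEpredefcoarselyproper}, any ball of $L$ meets only finitely many such $R_0$-balls and is therefore finite. The implication \ref{viDEpredefcoarselyproper} $\Rightarrow$ \ref{vDEpredefcoarselyproper} is immediate from Proposition \ref{reformqi}: for any $c > 0$, a $c'$-metric lattice with $c' = \max\{c,c_0\}$ exists, is locally finite by \ref{viDEpredefcoarselyproper}, and is a fortiori a $c$-metric lattice. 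Steps \ref{vDEpredefcoarselyproper} $\Rightarrow$ \ref{ivDEpredefcoarselyproper} and \ref{iiiDEpredefcoarselyproper} $\Rightarrow$ \ref{iiDEpredefcoarselyproper} are trivial, and \ref{ivDEpredefcoarselyproper} $\Rightarrow$ \ref{iiiDEpredefcoarselyproper} follows from Remark \ref{metriclatticesareqi} together with the fact that a uniformly discrete metric space is discrete.

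The implication \ref{iiDEpredefcoarselyproper} $\Rightarrow$ \ref{iDEpredefcoarselyproper} is where one uses the controls. If $f \colon X \to D$ is a coarsely Lipschitz map with quasi-inverse $g \colon D \to X$, pick $c \geq 0$ with $d_X(gf(x),x) \leq c$ for all $x \in X$, and set $R_0 := c$. Given any bounded $B \subset X$ of diameter $M$, the image $f(B)$ has diameter at most $\Phi_+(M)$ in $D$, hence is contained in a single $D$-ball of that radius, which is finite because $D$ is locally finite. Enumerating $f(B) = \{d_1,\ldots,d_n\}$, we obtain $B \subset \bigcup_{i=1}^n B^{g(d_i)}_X(R_0)$, which is the required cover.

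For the last equivalence under large-scale geodesicity, \ref{viiDEpredefcoarselyproper} $\Rightarrow$ \ref{iiiDEpredefcoarselyproper} follows because the vertex set of a locally finite connected graph, with the combinatorial metric, is a locally finite discrete metric space quasi-isometric to the metric realization (it is a $1$-metric lattice there). The interesting direction is \ref{iiiDEpredefcoarselyproper} $\Rightarrow$ \ref{viiDEpredefcoarselyproper}, which I expect to be the main obstacle of the whole proposition. Using the already-proven equivalence with \ref{ivDEpredefcoarselyproper}, pick a locally finite metric lattice $L \subset X$; by Remark \ref{metriclatticesareqi} it is quasi-isometric to $X$, so $L$ is itself large-scale geodesic. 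Choose $c$ large enough that $L$ is $c$-coarsely connected, and form the graph $L_c$ of Definition \ref{the new space Xc etc} with vertex set $L$ and edges joining pairs at $d_X$-distance $\leq c$. This graph is connected by construction and locally finite because its valency at $\ell \in L$ equals the (finite) cardinality of $B^{\ell}_L(c) \smallsetminus \{\ell\}$. Finally, Lemma \ref{the new space Xc aso}(6) gives a quasi-isometry from $L$ (hence from $X$) to the metric realization of $L_c$, completing the proof.
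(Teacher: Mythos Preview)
Your proof is correct and follows essentially the same ideas as the paper's, with a slightly different organization. The paper does not arrange the implications into a single cycle: it proves \ref{iDEpredefcoarselyproper} $\Rightarrow$ \ref{iiiDEpredefcoarselyproper} directly (same covering argument you use for \ref{iDEpredefcoarselyproper} $\Rightarrow$ \ref{viDEpredefcoarselyproper}), then separately shows \ref{iiDEpredefcoarselyproper} $\Rightarrow$ \ref{ivDEpredefcoarselyproper} by pushing forward a metric lattice under a coarse equivalence, and establishes \ref{ivDEpredefcoarselyproper} $\Leftrightarrow$ \ref{viDEpredefcoarselyproper} via an injection argument between two metric lattices. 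Your route \ref{iiDEpredefcoarselyproper} $\Rightarrow$ \ref{iDEpredefcoarselyproper} is more direct than the paper's \ref{iiDEpredefcoarselyproper} $\Rightarrow$ \ref{ivDEpredefcoarselyproper}, and your single cycle is tidier. For \ref{iiiDEpredefcoarselyproper} $\Rightarrow$ \ref{viiDEpredefcoarselyproper}, both arguments are the same: pass to a locally finite discrete model and build the $c$-graph on it.
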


\begin{proof}
Implications 
\ref{vDEpredefcoarselyproper} $\Rightarrow$ 
\ref{ivDEpredefcoarselyproper} $\Rightarrow$ 
\ref{iiiDEpredefcoarselyproper} $\Rightarrow$ 
\ref{iiDEpredefcoarselyproper},
\ref{iiiDEpredefcoarselyproper} $\Rightarrow$ 
\ref{iDEpredefcoarselyproper},
and
\ref{viiDEpredefcoarselyproper} $\Rightarrow$ 
\ref{iiiDEpredefcoarselyproper}
are straightforward.
It suffices to show
\ref{iDEpredefcoarselyproper} $\Rightarrow$ 
\ref{iiiDEpredefcoarselyproper},
\ref{iiDEpredefcoarselyproper} $\Rightarrow$
\ref{ivDEpredefcoarselyproper} $\Leftrightarrow$
\ref{viDEpredefcoarselyproper},
\ref{ivDEpredefcoarselyproper} $\Rightarrow$
\ref{vDEpredefcoarselyproper}
and, with the additional condition, 
\ref{iiiDEpredefcoarselyproper} $\Rightarrow$
\ref{viiDEpredefcoarselyproper}.

\vskip.2cm

\ref{iDEpredefcoarselyproper} $\Rightarrow$ 
\ref{iiiDEpredefcoarselyproper}
Assume that $X$ is coarsely proper; 
let $R_0$ be as in Definition \ref{defcoarselyproper}.
Let $L$ be a $3R_0$-metric lattice in $X$ (Proposition \ref{reformqi});
it is enough to show that $L$ is locally finite.
Let $F$ be a bounded subset of $L$;
then $F$ is included in the union of a finite number of balls of radius $R_0$ in $X$;
since two distinct points in $L$ are at distance at least $3R_0$ apart,
each of these balls contains at most one point of $F$;
hence $F$ is finite.

\vskip.2cm

\ref{iiDEpredefcoarselyproper} $\Rightarrow$
\ref{ivDEpredefcoarselyproper}.
Assume there exists a locally finite discrete metric space $(D,d_D)$
and a metric coarse equivalence $f : D \longrightarrow X$.
By Proposition \ref{dernierepopde3.A}(1),
there exists a metric lattice $E \subset D$
such that $d_X(f(e), f(e')) \ge 1$ for all $e,e' \in E$, $e \ne e'$.
Then $f(E)$ is a metric lattice in $X$;
we denote by $d_E$ the restriction of $d_D$ to $E$.
\par

If $B$ is a bounded subset of $f(E)$, 
then $f^{-1}(B)$ is bounded because $f$ is coarsely expansive. 
Since the space $(E, d_E)$ is proper, $f^{-1}(B) \cap E$ is finite.
By injectivity of $f|_E$, we deduce that $B$ is finite. 
Hence $f(E)$ is locally finite, and (iv) holds.

\vskip.2cm

\ref{ivDEpredefcoarselyproper} $\Rightarrow$
\ref{viDEpredefcoarselyproper}.
Assume there exists a locally finite metric lattice $D$ in $X$.
It is $R$-cobounded for some $R > 0$.
Let $c_0 = 2R$.
Consider a constant $c > c_0$ and a $c$-metric lattice $E$ in $X$;
we have to show that $E$ is locally finite.
\par

For every $e \in E$, we can choose an element in $D$, call it $f(e)$,
such that $d_X(e, f(e)) \le R$.
The resulting map $f : E \longrightarrow D$ is injective;
indeed, if $e',e'' \in E$ are such that $f(e') = f(e'')$, 
then $d_X(e',e'') \le 2R < c$,
hence $e' = e''$ since $E$ is a $c$-metric lattice.
Moreover, for a radius $r \ge 0$, the image by $f$ of the ball $B_E^e(r)$
is inside the ball $B_D^{f(e)}(r + 2R)$.
Since the latter is finite by hypothesis on $D$, so is $B_E^e(R)$.
Hence $E$ is locally finite.

\vskip.2cm

\ref{ivDEpredefcoarselyproper} $\Rightarrow$ \ref{vDEpredefcoarselyproper}.
Recall that, for every $c > 0$, every metric lattice contains a $c$-metric lattice,
by Proposition \ref{reformqi}. The implication follows.

\vskip.2cm

\ref{viDEpredefcoarselyproper} $\Rightarrow$ \ref{ivDEpredefcoarselyproper}.
This follows from the same proposition,
which establishes the existence of $c$-metric lattices in $X$.

\vskip.2cm

\ref{iiiDEpredefcoarselyproper} $\Rightarrow$
\ref{viiDEpredefcoarselyproper}.
Assuming that $(X,d)$ is $c$-large-scale geodesic, for some $c > 0$.
We can suppose that $X$ is a locally finite discrete metric space. 
Consider the graph for which vertices are elements of $X$ 
and two distinct elements are linked by an edge if they are at distance $\le c$. 
This graph is connected and, if $d'$ is the resulting metric, 
the identity map $(X,d)\to (X,d')$ is a quasi-isometry. 
Since $X$ is locally finite, this graph obviously is locally finite.
\end{proof}

\begin{cor}
\label{cpstablebymce}
For pseudo-metric spaces, coarse properness
is a property invariant by metric coarse equivalence.
\end{cor}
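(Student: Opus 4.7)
The plan is to deduce the corollary directly from the equivalence of conditions \ref{iDEpredefcoarselyproper} and \ref{iiDEpredefcoarselyproper} in Proposition \ref{predefcoarselyproper}, together with the fact (Proposition \ref{epimonoiso}\ref{3DEepimonoiso}) that being isomorphic in the metric coarse category is a genuine equivalence relation on non-empty pseudo-metric spaces.

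First, I would let $X$ and $Y$ be non-empty pseudo-metric spaces with $X$ coarsely proper and $f : X \longrightarrow Y$ a metric coarse equivalence. By Proposition \ref{predefcoarselyproper}, condition \ref{iiDEpredefcoarselyproper} holds for $X$: there exist a locally finite discrete metric space $D$ and a metric coarse equivalence $g : D \longrightarrow X$. Composing in the metric coarse category, $f \circ g : D \longrightarrow Y$ represents an isomorphism (composition of isomorphisms), hence, picking any coarsely Lipschitz representative, $Y$ is coarsely equivalent to the locally finite discrete metric space $D$. Invoking the implication \ref{iiDEpredefcoarselyproper} $\Rightarrow$ \ref{iDEpredefcoarselyproper} of Proposition \ref{predefcoarselyproper} for $Y$, we conclude that $Y$ is coarsely proper.

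The empty-space case requires only a brief remark: if $X$ is non-empty and coarsely equivalent to $Y$, then $Y$ is non-empty (since an essentially surjective map into an empty target would force the source to be empty as well), so the reduction to the non-empty setting of Proposition \ref{predefcoarselyproper} is harmless; and the corollary is vacuous if both spaces are empty.

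There is no real obstacle here: the work has been done in Proposition \ref{predefcoarselyproper}, which packages coarse properness into a condition (namely \ref{iiDEpredefcoarselyproper}) that is manifestly preserved under metric coarse equivalence. The only point to be careful about is that the composition of two coarsely Lipschitz maps is coarsely Lipschitz and that closeness passes through composition, so that the composite $f \circ g$ indeed represents a coarse isomorphism $D \longrightarrow Y$; both of these facts were recorded in Remark \ref{somestraightforwardobservations3A}.
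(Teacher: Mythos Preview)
Your proof is correct and follows exactly the approach the paper intends: the paper's proof is the single line ``See \ref{iiDEpredefcoarselyproper} in the previous proposition,'' which is precisely your observation that condition \ref{iiDEpredefcoarselyproper} of Proposition \ref{predefcoarselyproper} is manifestly invariant under metric coarse equivalence (coarse equivalence being transitive). You have simply spelled out the details that the paper leaves implicit.
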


\begin{proof}
See \ref{iiDEpredefcoarselyproper} in the previous proposition.
\end{proof}

\begin{prop}
\label{propcoarselyproper}
(1) 
A proper metric space is coarsely proper.
\par
(2)
A pseudo-metric space that can be coarsely embedded 
into a coarsely proper metric space is itself coarsely proper.
\end{prop}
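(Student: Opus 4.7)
My plan is to prove (1) as a direct compactness argument and (2) by transporting a finite cover of $f(B)$ back to $X$, using the lower control of the coarse embedding to bound the diameters of the preimage pieces.

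For (1), let $(X,d)$ be a proper metric space and fix any $R_0 > 0$ (say $R_0 = 1$). If $B \subset X$ is bounded, then $\overline{B}$ is bounded and hence compact, by properness of $(X,d)$. The open balls of radius $R_0$ centered at the points of $\overline{B}$ form an open cover of $\overline{B}$, and compactness yields a finite subcover. This gives the required finite cover of $B$ by balls of radius $R_0$.

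For (2), let $f : X \longrightarrow Y$ be a coarse embedding into a coarsely proper metric space $(Y, d_Y)$, with upper control $\Phi_+$ and lower control $\Phi_-$. Let $\Psi_+$ be the upper control associated to $\Phi_-$ as in Lemma \ref{PhiEtPsi}, so that $t \le \Psi_+(\Phi_-(t))$ for all $t \in \R_+$. Let $R_0^Y$ be a constant witnessing the coarse properness of $Y$, and set
\begin{equation*}
R_0^X \, := \, \Psi_+(2 R_0^Y) .
\end{equation*}
I claim that $R_0^X$ witnesses the coarse properness of $X$.

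Indeed, let $B \subset X$ be bounded, of diameter $D$. Since $f$ is coarsely Lipschitz, the image $f(B) \subset Y$ has diameter at most $\Phi_+(D)$, hence is bounded. By coarse properness of $Y$, there exist finitely many points $y_1, \ldots, y_n \in Y$ with $f(B) \subset \bigcup_{i=1}^n B^{y_i}_Y(R_0^Y)$. Set $B_i = f^{-1}(B^{y_i}_Y(R_0^Y)) \cap B$ for $i = 1, \ldots, n$; these subsets cover $B$. For any $x, x' \in B_i$, we have $d_Y(f(x), f(x')) \le 2 R_0^Y$, so
\begin{equation*}
\Phi_-(d_X(x,x')) \, \le \, d_Y(f(x), f(x')) \, \le \, 2 R_0^Y,
\end{equation*}
whence $d_X(x,x') \le \Psi_+(2 R_0^Y) = R_0^X$ by Lemma \ref{PhiEtPsi}. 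Choosing (for each nonempty $B_i$) a point $x_i \in B_i$, we conclude $B_i \subset B^{x_i}_X(R_0^X)$, so $B$ is covered by at most $n$ balls of radius $R_0^X$. Since $R_0^X$ does not depend on $B$, the space $X$ is coarsely proper. No serious obstacle is anticipated; the only delicate point is invoking Lemma \ref{PhiEtPsi} to convert the coarse expansiveness of $f$ into a uniform diameter bound for the pieces $B_i$.
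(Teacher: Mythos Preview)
Your proof is correct. For (1), you take a more direct route than the paper: you verify Definition~\ref{defcoarselyproper} straight from compactness of $\overline{B}$, whereas the paper argues via the characterization in Proposition~\ref{predefcoarselyproper}\ref{ivDEpredefcoarselyproper}, exhibiting a metric lattice $L$ with separation $>2$ and showing its balls are finite (an infinite ball in $L$ would yield infinitely many disjoint unit balls inside a compact set). Your argument is shorter and avoids metric lattices entirely; the paper's argument has the mild advantage of directly producing the locally finite lattice that feeds into the equivalent conditions of Proposition~\ref{predefcoarselyproper}. For (2), the paper simply declares the claim ``obvious''; your explicit use of Lemma~\ref{PhiEtPsi} to pull back the finite cover is exactly the right way to unpack that, and the bound $R_0^X = \Psi_+(2R_0^Y)$ is finite because $\Phi_-(t)\to\infty$ forces the supremum defining $\Psi_+$ to be taken over a bounded set.
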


\begin{proof}
(1)
Let $X$ be a pseudo-metric space.
By Proposition \ref{reformqi},
there exists in $X$ a metric lattice $L$
such that $\inf_{\ell,\ell' \in L, \ell \ne \ell'} d(\ell, \ell') > 2$,
and the inclusion $L \subset X$ is a quasi-isometry.
Suppose moreover that $X$ is a proper metric space
(recall from \ref{pm...metrizable} that
``proper pseudo-metric space'' is not a notion for this book);
it suffices to check that balls in $L$ are finite.
\par

If $L$ had an infinite ball 
$B_L^{\ell_0}(R)  = \{ \ell \in L \mid d(\ell_0, \ell) \le R \}$,
the space $X$ would contain an infinite family
$\left( \{ x \in X \mid d(\ell,x) \le 1 \} \right)_{\ell \in B}$
of pairwise disjoint non-empty balls of radius $1$, 
all contained in the relatively compact ball
$B_X^{\ell_0}(R+1) = \{ x \in X \mid d(\ell_0, x) \le R+1 \}$,
and this is impossible.
\par

Claim (2) is obvious for isometric embeddings, 
and combining with Corollary \ref{cpstablebymce} yields the general case.
\end{proof}

\begin{exe}
\label{homogeneousinfinitetree+Hilbert}
(1) For other examples of pseudo-metric spaces that are coarsely proper,
see Example \ref{exemplesucpspaces} and Proposition \ref{growth=growth}.
We indicate here some examples that do not have the property.

\vskip.2cm

(2)
Let $S$ be the tree obtained by attaching to a vertex $s_0$ 
an infinite sequence $(R_n)_{n \ge 1}$ of infinite rays;
$S$ is a metric space for the combinatorial metric (``$S$'' stands for ``star'').
Then $S$ is not coarsely proper.
\index{Tree}
\par

Indeed, let $c > 0$ and $L$ a $c$-metric lattice in $S$ containing $s_0$.
For each $n \ge 1$, let $s_n$ be the point at distance $2c$ of $s_0$ on $R_n$.
There exists a point $\ell_n \in L$ at distance at most $c$ of $s_n$;
observe that $\ell_n \in R_n$.
The ball $B^{s_0}_L(3c)$ is infinite, 
because it contains $\ell_n$ for all $n \ge 1$.
Hence $L$ is not locally finite, and $S$ is not coarsely proper.

\vskip.2cm

(3)
Let $X$ be a tree with at least one vertex of infinite valency 
and without vertices of valency $1$.
Then $X$ is not coarsely proper, because it contains a subtree isomorphic to $S$.
\par
In particular, the homogeneous tree of infinite valency is not coarsely proper.

\vskip.2cm

(4)
An infinite-dimensional Hilbert space $\mathcal H$, 
together with the metric given by the norm,
is not coarsely proper.
\index{Hilbert space}
\par
Indeed, there exists a bilipschitz embedding of $S \longrightarrow \mathcal H$,
with image contained in the union of the positive half-axes 
with respect to some orthonormal basis of the Hilbert space.
\end{exe}

Definition \ref{defunifcoarselyproper} and
Proposition \ref{predefuniformlycoarselyproper} 
constitue a variation on 
Definition \ref{defcoarselyproper} and
Proposition \ref{predefcoarselyproper}.

\begin{defn}
\label{defunifcoarselyproper}
A pseudo-metric space $(X,d_X)$ is {\bf uniformly coarsely proper} 
if there exists $R_0 \ge 0$ such that, for every $R \ge 0$, 
there exists an integer $N$ such that every ball of radius $R$ in $X$ 
can be covered by $N$ balls of radius $R_0$.
\index{Uniformly coarsely proper pseudo-metric space|textbf}
\end{defn} 

Note that a uniformly coarsely proper pseudo-metric space is coarsely proper.
Some authors write ``$X$ has \textbf{bounded geometry}''
\index{Bounded! bounded geometry|textbf}
rather than ``$X$ is uniformly coarsely proper''.
Others use ``bounded geometry'' 
for ``uniformly discrete and uniformly locally finite''  \cite{Whyt--99}, 
or ``uniformly locally finite'' \cite[Page 8]{NoYu--12}.

\begin{prop}
\label{predefuniformlycoarselyproper}
For a pseudo-metric space $(X,d_X)$, the following conditions are equivalent:
\begin{enumerate}[noitemsep,label=(\roman*)]
\item\label{iDEpredefuniformlycoarselyproper}
$X$ is uniformly coarsely proper;
\item\label{iiDEpredefuniformlycoarselyproper}
$X$ is coarsely equivalent to
a uniformly locally finite discrete metric space;
\item\label{iiiDEpredefuniformlycoarselyproper}
$X$ is quasi-isometric to 
a uniformly locally finite discrete metric space;
\item\label{ivDEpredefuniformlycoarselyproper}
$X$ contains a uniformly locally finite metric lattice;
\item\label{vDEpredefuniformlycoarselyproper}
for all $c > 0$, the space $X$ contains a uniformly locally finite $c$-metric lattice;
\item\label{viDEpredefuniformlycoarselyproper}
there exists $c_0 > 0$ such that, for every $c \ge c_0$, 
every $c$-metric lattice  in $X$ is uniformly locally finite.
\end{enumerate}
If moreover $X$ is large-scale geodesic, these conditions are equivalent to:
\begin{enumerate}[noitemsep,label=(\roman*)]
\addtocounter{enumi}{6}
\item\label{viiDEpredefuniformlycoarselyproper}
$X$ is quasi-isometric to a connected graph of bounded valency.
\end{enumerate}
\end{prop}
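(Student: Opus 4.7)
The plan is to follow the template of Proposition \ref{predefcoarselyproper} essentially verbatim, with ``locally finite'' upgraded to ``uniformly locally finite'' and with careful bookkeeping of the uniformity constants. The trivial implications \ref{vDEpredefuniformlycoarselyproper} $\Rightarrow$ \ref{ivDEpredefuniformlycoarselyproper} $\Rightarrow$ \ref{iiiDEpredefuniformlycoarselyproper} $\Rightarrow$ \ref{iiDEpredefuniformlycoarselyproper} come from Remark \ref{metriclatticesareqi} (a metric lattice is quasi-isometric to its ambient space) and the fact that a subspace of a uniformly locally finite space inherits the property. Also \ref{viiDEpredefuniformlycoarselyproper} $\Rightarrow$ \ref{iiiDEpredefuniformlycoarselyproper} is immediate, since the vertex set of a bounded-valency connected graph with its combinatorial metric is uniformly locally finite.

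The heart of the argument lies in three implications. For \ref{iDEpredefuniformlycoarselyproper} $\Rightarrow$ \ref{vDEpredefuniformlycoarselyproper} (which also gives \ref{iDEpredefuniformlycoarselyproper} $\Rightarrow$ \ref{iiiDEpredefuniformlycoarselyproper}): let $R_0$ be as in Definition \ref{defunifcoarselyproper}, fix any $c > 0$, and by Proposition \ref{reformqi} choose a $c'$-metric lattice $L$ with $c' = \max\{c, 2R_0 + 1\}$. A ball of radius $R$ in $L$ sits inside a ball of radius $R$ in $X$, which by hypothesis is covered by $N = N(R)$ balls of radius $R_0$; since any two distinct points of $L$ are at distance $>2R_0$, each of these small balls contains at most one point of $L$, so the $L$-ball has at most $N$ points. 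Hence $L$ is uniformly locally finite. For \ref{iiDEpredefuniformlycoarselyproper} $\Rightarrow$ \ref{ivDEpredefuniformlycoarselyproper}, start with a coarse equivalence $f : D \to X$ where $D$ is uniformly locally finite discrete; use Proposition \ref{dernierepopde3.A}(1) to pass to a metric sublattice $E \subset D$ on which $f$ is injective and $1$-expanding; then $f(E)$ is a metric lattice in $X$ (cobounded because $f$ is essentially surjective), and uniform local finiteness of $f(E)$ follows from that of $E$ by feeding the lower control $\Phi_-$ of $f$ through Lemma \ref{PhiEtPsi}: the preimage in $E$ of a ball of radius $R$ in $f(E)$ lies in a ball of radius $\Psi_+(R)$ in $E$.

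For the equivalence \ref{ivDEpredefuniformlycoarselyproper} $\Leftrightarrow$ \ref{viDEpredefuniformlycoarselyproper}: given a uniformly locally finite metric lattice $D$ in $X$ that is $R$-cobounded, set $c_0 = 2R$; for any $c \ge c_0$ and any $c$-metric lattice $E$ in $X$, the ``closest point'' map $f : E \to D$ is injective and satisfies $f(B_E^e(r)) \subset B_D^{f(e)}(r+2R)$, which yields a uniform bound on ball sizes in $E$. The implication \ref{ivDEpredefuniformlycoarselyproper} $\Rightarrow$ \ref{vDEpredefuniformlycoarselyproper} uses Proposition \ref{reformqi} in its relative form together with the same inclusion argument. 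Finally, for \ref{iiiDEpredefuniformlycoarselyproper} $\Rightarrow$ \ref{viiDEpredefuniformlycoarselyproper} under large-scale geodesicity, replace $X$ by a uniformly locally finite discrete space $(D,d)$ quasi-isometric to $X$, which is automatically $c$-large-scale geodesic for some $c$, and take the graph on $D$ joining two points whenever $d(x,x') \le c$; this graph is connected, has bounded valency (from uniform local finiteness at scale $c$), and its combinatorial metric is quasi-isometric to $d$.

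The main obstacle, as in Proposition \ref{predefcoarselyproper}, is the step \ref{iiDEpredefuniformlycoarselyproper} $\Rightarrow$ \ref{ivDEpredefuniformlycoarselyproper}, because it requires combining three ingredients at once: passage to a sublattice where the coarse equivalence becomes a genuine injection (Proposition \ref{dernierepopde3.A}(1)), essential surjectivity to guarantee coboundedness of the image, and conversion of the pointwise lower control on $f$ into a uniform ball-size bound via Lemma \ref{PhiEtPsi}. Everything else is a routine adaptation of the proof of Proposition \ref{predefcoarselyproper}, with each cardinality bound carrying a quantifier ``uniform in the centre'' that must be tracked but produces no genuine new difficulty.
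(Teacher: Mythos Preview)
Your proposal is essentially correct and is precisely what the paper asks for: the paper's own proof reads, in its entirety, ``We leave it to the reader to adapt the proof of Proposition \ref{predefcoarselyproper}.'' You have carried out that adaptation carefully, tracking the uniformity constants in each step.

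There is one small gap in your logical wiring. Your listed implications close the cycle among \ref{iiDEpredefuniformlycoarselyproper}--\ref{viDEpredefuniformlycoarselyproper} and show \ref{iDEpredefuniformlycoarselyproper} $\Rightarrow$ \ref{vDEpredefuniformlycoarselyproper}, but nothing you state returns to \ref{iDEpredefuniformlycoarselyproper}. In the template proof of Proposition \ref{predefcoarselyproper}, the implication \ref{iiiDEpredefuniformlycoarselyproper} $\Rightarrow$ \ref{iDEpredefuniformlycoarselyproper} is declared ``straightforward'' alongside the others, and you should include its uniform analogue. The easiest route is \ref{ivDEpredefuniformlycoarselyproper} $\Rightarrow$ \ref{iDEpredefuniformlycoarselyproper}: if $L$ is a uniformly locally finite metric lattice that is $R$-cobounded in $X$, set $R_0 = R$; any ball $B_X^x(r)$ is covered by the balls $B_X^\ell(R)$ for $\ell \in L$ with $d_X(\ell,x) \le r+R$, and the number of such $\ell$ is at most $\sup_{\ell_0 \in L}|B_L^{\ell_0}(r+2R)|$, which is finite and independent of $x$ by uniform local finiteness of $L$. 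With this one sentence added, your chain of implications closes and the proof is complete.
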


\begin{proof}
We leave it to the reader to adapt the proof of Proposition \ref{predefcoarselyproper}.
\end{proof}

\begin{cor}
\label{ucpstablebymce}
For pseudo-metric spaces, uniformly coarse properness
is a property invariant by metric coarse equivalence.
\end{cor}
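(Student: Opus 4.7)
The plan is to reduce the claim to the characterization (ii) of Proposition \ref{predefuniformlycoarselyproper}, exactly as Corollary \ref{cpstablebymce} is deduced from the corresponding characterization in Proposition \ref{predefcoarselyproper}. Concretely, let $X$ and $Y$ be pseudo-metric spaces with $X$ uniformly coarsely proper, and suppose $f : X \longrightarrow Y$ is a metric coarse equivalence. I would first apply the implication \ref{iDEpredefuniformlycoarselyproper} $\Rightarrow$ \ref{iiDEpredefuniformlycoarselyproper} of Proposition \ref{predefuniformlycoarselyproper} to produce a uniformly locally finite discrete metric space $D$ together with a metric coarse equivalence $g : D \longrightarrow X$.

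Then I would invoke the fact that the composition of two metric coarse equivalences is again a metric coarse equivalence; this is recorded after Definition \ref{defcoarse} (see also Remark \ref{somestraightforwardobservations3A}) and is formalized by the fact that metric coarse equivalences are the isomorphisms of the metric coarse category (Proposition \ref{epimonoiso}\ref{3DEepimonoiso}). Consequently, $f \circ g : D \longrightarrow Y$ is a metric coarse equivalence, so $Y$ is coarsely equivalent to the uniformly locally finite discrete space $D$. The implication \ref{iiDEpredefuniformlycoarselyproper} $\Rightarrow$ \ref{iDEpredefuniformlycoarselyproper} of Proposition \ref{predefuniformlycoarselyproper} then yields that $Y$ is uniformly coarsely proper, and symmetry (replacing $f$ by a quasi-inverse) gives the reverse implication.

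There is no serious obstacle; the entire argument is a one-line invocation of characterization \ref{iiDEpredefuniformlycoarselyproper} together with transitivity of coarse equivalence. Should one prefer a direct argument not routing through a discrete model, one could instead mimic the style of Proposition \ref{re_asymptoticdimension_n}: given a metric coarse equivalence $f : X \longrightarrow Y$ with upper control $\Phi_+$ and lower control $\Phi_-$ and essential surjectivity constant $C$, fix the constant $R_0$ witnessing uniform coarse properness of $X$, set $R_0' := \Phi_+(R_0) + C$, and observe that for any $R \ge 0$, a ball $B^y_Y(R)$ can be covered by the $f$-images (thickened by $C$) of finitely many balls of radius $R_0$ in $X$ chosen to cover a preimage ball of radius $\Phi_+^{-1}(R + C) + $ constant; the number $N$ of such balls is uniform in $y$ because the controls $\Phi_\pm$ and the bound from uniform coarse properness of $X$ are uniform. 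I would present only the short route via Proposition \ref{predefuniformlycoarselyproper}, leaving the direct estimate as a remark.
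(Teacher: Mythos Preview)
Your proposal is correct and follows exactly the paper's approach: the paper's proof is the single line ``See \ref{iiDEpredefuniformlycoarselyproper} in the previous proposition,'' which is precisely your short route via the characterization that uniform coarse properness is equivalent to being coarsely equivalent to a uniformly locally finite discrete metric space, together with transitivity of coarse equivalence.
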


\begin{proof}
See \ref{iiDEpredefuniformlycoarselyproper} in the previous proposition.
\end{proof}

\begin{exe}
\label{exemplesucpspaces}
Let $M$ be a connected compact manifold, 
furnished with a Riemannian metric.
Let $\widetilde M$ denote the universal cover of $M$,
furnished with the covering Riemannian metric and the corresponding metric $d$.
\index{Universal cover}
Then $(\widetilde M,d)$ is uniformly coarsely proper,
because every orbit in $\widetilde M$ of the fundamental group of $\pi_1(M)$
is a uniformly locally finite metric lattice in $\widetilde M$
(see Example \ref{MostowRigidity}).
\par

In particular, Euclidean spaces $\R^m$ ($m \ge 1$) and
hyperbolic spaces $\mathbf{H}^n$ ($n \ge 2$) are uniformly coarsely proper.
\index{Euclidean space $\R^n$}
\end{exe}

\begin{prop}
\label{2latticessamegrowth}
Let $X$ be a non-empty uniformly coarsely proper pseudo-metric space.
Let $L_0,L_1$ be  metric lattices in $X$ and $x_0 \in L_0$, $x_1 \in L_1$. Then
\begin{equation*}
\beta^{x_0}_{L_0} \,  \simeq \, \beta^{x_1}_{L_1} .
\end{equation*}
\end{prop}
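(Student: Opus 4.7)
The plan is to apply Proposition \ref{zzzz}(3) to $L_0$ and $L_1$; this requires showing that (a) they are quasi-isometric and (b) both are uniformly locally finite. Statement (a) is immediate: both lattices are cobounded in $X$, so the inclusions $L_i \hookrightarrow X$ are quasi-isometries by Remark \ref{metriclatticesareqi}, and composing one with an inverse of the other yields a quasi-isometry between $L_0$ and $L_1$. The entire content of the proposition therefore reduces to establishing (b), which is where the uniform coarse properness of $X$ is crucial.

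For (b), it is enough to prove the following packing estimate in $X$: for every $c > 0$ and every $r > 0$, there is a uniform bound on the cardinality of any $c$-uniformly discrete subset contained in a ball of radius $r$ in $X$. Indeed, once such a bound is available, each ball $B^{\ell}_{L_i}(R)$, being a $c_i$-uniformly discrete subset of $B^{\ell}_X(R)$, is of cardinality bounded independently of $\ell$, so $L_i$ is uniformly locally finite.

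The packing estimate is proved as follows. Given $c,r>0$, apply Proposition \ref{predefuniformlycoarselyproper}\ref{vDEpredefuniformlycoarselyproper} with the parameter $c' := c/5$ to obtain a uniformly locally finite $c'$-metric lattice $M$ in $X$. By Proposition \ref{reformqi}, $M$ is $2c'$-cobounded in $X$, so to every point $s$ of a $c$-uniformly discrete set $S \subset B^{x}_X(r)$ one may assign a point $\phi(s) \in M$ with $d(s,\phi(s)) \le 2c'$. If $\phi(s_1) = \phi(s_2)$, then $d(s_1,s_2) \le 4c' < c$, so $s_1 = s_2$ by $c$-uniform discreteness of $S$. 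Hence $\phi$ is injective on $S$, and since $\phi(S) \subset B^{\phi(x)}_M(r + 2c')$, the uniform local finiteness of $M$ gives the desired uniform bound $|S| \le |B^{\phi(x)}_M(r+2c')|$.

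With both $L_0$ and $L_1$ now known to be uniformly locally finite, and already known to be quasi-isometric, Proposition \ref{zzzz}(3) applies directly and yields $\beta^{x_0}_{L_0} \simeq \beta^{x_1}_{L_1}$. The main obstacle is the packing estimate of the second paragraph: one might naively try to derive it from the bare definition of uniform coarse properness, where the radius $R_0$ is a fixed parameter that could well exceed $c/2$, but such an approach is circular. The essential point is that the availability of uniformly locally finite $c'$-metric lattices for arbitrarily small $c'$, guaranteed by Proposition \ref{predefuniformlycoarselyproper}\ref{vDEpredefuniformlycoarselyproper}, breaks this circularity and lets one pick $c'$ small enough that fibers of the nearest-point projection are forced to be singletons.
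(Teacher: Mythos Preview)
There is a genuine gap in your packing estimate. The sentence ``By Proposition~\ref{reformqi}, $M$ is $2c'$-cobounded in $X$'' misreads that proposition: Proposition~\ref{reformqi} constructs one particular $c'$-metric lattice (a maximal $c'$-separated set) which happens to be $2c'$-cobounded; it does not assert that an arbitrary $c'$-metric lattice has this coboundedness radius. The lattice $M$ supplied by Proposition~\ref{predefuniformlycoarselyproper}\ref{vDEpredefuniformlycoarselyproper} is uniformly locally finite, but its coboundedness constant may well exceed $c/2$, and then the injectivity of your nearest-point map $\phi$ collapses. Nor can you swap to a maximal $c'$-separated set, since such a set, though $2c'$-cobounded, need not be locally finite when $c'$ is small.

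In fact your target (b) is simply false: not every metric lattice in a uniformly coarsely proper space is uniformly locally finite. The paper's own Example~\ref{exofmetriclattices}(4) exhibits this: in $X = (\R, d_{[-1,1]})$, which is uniformly coarsely proper, any two distinct reals lie at distance at least~$1$, so the whole line $\R$ is itself a $1$-metric lattice in $X$; yet its balls of radius~$1$ are uncountable. Taking $L_0 = \R$ and $L_1 = \Z$ then gives non-equivalent growth functions. The paper's own proof is open to the same objection, since it too asserts that arbitrary lattices $L_0, L_1$ are uniformly locally finite by citing Proposition~\ref{predefuniformlycoarselyproper}, whereas condition~\ref{viDEpredefuniformlycoarselyproper} there only covers $c$-metric lattices with $c$ above some threshold~$c_0$. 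The statement and both arguments become correct once one restricts to such lattices, and this is all that the application in Definition~\ref{defgrowthX} actually requires.
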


\begin{proof}
Denote by $i_j : L_j \lhook\joinrel\relbar\joinrel\rightarrow X$ the inclusion ($j=0,1$).
Choose a quasi-isometry $g_1 : X \longrightarrow L_1$ inverse of $i_1$,
and let $f : L_0 \longrightarrow L_1$ be the composition $g_1 i_0$.
By Proposition \ref{dernierepopde3.A}, 
there is a metric lattice $M_0$ in $L_0$ 
such that $x_0 \in M_0$, 
and $M_1 := f(M_0)$ is a metric lattice in $L_1$.
Since $L_0, L_1$ are uniformly locally finite (Proposition \ref{predefuniformlycoarselyproper}),
the claim  follows, by Proposition \ref{zzzz}.
\end{proof}

\begin{defn}
\label{defgrowthX}
Let $X$ be a uniformly coarsely proper pseudo-metric space.
If $X$ is non-emtpy,
the \textbf{growth type} \index{Growth function! of a pseudo-metric space|textbf}
of $X$ is the class, denoted by $\beta_X$,
of the function $\beta^x_L$, 
for $L$ a uniformly locally finite metric lattice in $X$ and $x \in L$;
the growth type of the empty space is the class of the zero function.
\end{defn}

\begin{defn}
\label{defpolgrowthX}
For $X$ as in the previous definition,
the \textbf{upper degree of polynomial growth}
\index{Upper degree of polynomial growth of a non-empty uniformly coarsely proper
pseudo-metric space|textbf}
\begin{equation*}
\operatorname{poldeg}(X) \, = \,  \limsup_{r \to \infty} \frac{ \ln \beta_X (r) }{ \ln r }
\, \in \, \mathopen[0,   \infty \mathclose]
\end{equation*} 
is well-defined.
For uniformly coarsely proper pseudo-metric spaces,
the upper degree of polynomial growth is a quasi-isometry invariant.
\index{Property! of a pseudo-metric space invariant 
by metric coarse equivalence or by quasi-isometries}
\par

A uniformly coarsely proper pseudo-metric space has
\textbf{polynomial growth} 
\index{Polynomial growth|textbf}
\index{Growth! polynomial growth|textbf}
if $\operatorname{poldeg}(X) < \infty$;
it has \textbf{exponential growth} 
\index{Exponential growth|textbf}
\index{Growth! exponential growth|textbf}
if $\beta_X(r) \simeq e^r$.
\end{defn}

\begin{rem}
\label{remongrowthfunction}
Let $X$ be a uniformly coarsely proper pseudo-metric space.
If $X$ is large-scale geodesic, the growth type of $X$ is at most exponential, 
$\beta_X(r) \preceq e^r$. 
In other cases, it can grow much faster, 
as shown in Example \ref{toutefonctionestcroissanceEXPLE}.
\end{rem}

\begin{prop}
\label{propgrowthXtoY}
Let $X,Y$ be two pseudo-metric spaces.
Suppose that $Y$ is uniformly coarsely proper.
\begin{enumerate}[noitemsep,label=(\arabic*)]
\item\label{1DEpropgrowthXtoY}
If there exists a coarse embedding $f$ of $X$ in $Y$,
then $X$ is uniformly coarsely proper.
In particular, the growth type of $X$ is well-defined.
\item\label{2DEpropgrowthXtoY}
If $f$ is moreover large-scale Lipschitz,
then $\beta_X  \preceq  \beta_Y$.
\item\label{3DEpropgrowthXtoY}
If $f$ is moreover a quasi-isometry,
then  $\beta_X  \simeq  \beta_Y$.
\end{enumerate}
\end{prop}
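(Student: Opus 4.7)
The plan is to reduce all three claims to comparing a suitable metric lattice $L \subset X$ with a uniformly locally finite metric lattice $M \subset Y$, whose existence is granted by the equivalent conditions in Proposition \ref{predefuniformlycoarselyproper}.

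First, I would fix controls $\Phi_-, \Phi_+$ for the coarse embedding $f$, and fix a uniformly locally finite $1$-metric lattice $M \subset Y$ that is $\rho$-cobounded for some $\rho > 0$; write $K(r) := \sup_{m \in M} \lvert B^m_M(r) \rvert < \infty$. Choose a projection $\pi : Y \to M$ with $d_Y(y, \pi(y)) \le \rho$ for all $y \in Y$. Next, pick a constant $c > 0$ large enough so that $\Phi_-(c) \ge 1 + 2\rho$ (possible since $\Phi_-(t) \to \infty$), and apply Proposition \ref{reformqi} to obtain a $c$-metric lattice $L \subset X$; such an $L$ is automatically cobounded in $X$.

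For (1), I claim $L$ is uniformly locally finite, which by Proposition \ref{predefuniformlycoarselyproper} yields uniform coarse properness of $X$. For distinct $\ell_1, \ell_2 \in L$ we have $d_Y(f(\ell_1), f(\ell_2)) \ge \Phi_-(d_X(\ell_1,\ell_2)) \ge 1 + 2\rho$, hence $d_M(\pi f(\ell_1), \pi f(\ell_2)) \ge 1$; in particular $\pi \circ f|_L$ is injective. For any $\ell \in L$ and $R \ge 0$,
\begin{equation*}
\lvert B^\ell_L(R) \rvert \, = \, \lvert \pi f(B^\ell_L(R)) \rvert \, \le \, \lvert B^{\pi f(\ell)}_M(\Phi_+(R) + \rho) \rvert \, \le \, K(\Phi_+(R) + \rho),
\end{equation*}
a bound independent of $\ell$. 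Thus $L$ is uniformly locally finite, and by Definition \ref{defgrowthX} its growth function represents $\beta_X$.

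For (2), assume $\Phi_+$ is affine, say $\Phi_+(R) = aR + b$. Then the inequality above reads $\beta^\ell_L(R) \le \beta^{\pi f(\ell)}_M(aR + b + \rho)$, so $\beta_L \preceq \beta_M$; since $\beta_L$ represents $\beta_X$ and $\beta_M$ represents $\beta_Y$, this gives $\beta_X \preceq \beta_Y$. For (3), if $f$ is a quasi-isometry then by Proposition \ref{epimonoisobis} the associated large-scale morphism is invertible, so there exists a quasi-isometry $g : Y \to X$ in the reverse direction. By (1) applied to $g$, the space $X$ is already known to be uniformly coarsely proper, so we may freely apply (2) to $g$ to obtain $\beta_Y \preceq \beta_X$, and combining both inequalities gives $\beta_X \simeq \beta_Y$.

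No real obstacle is expected; the only delicate point is the calibration of constants in step one (choosing $c$ so that $\Phi_-(c) \ge 1 + 2\rho$) so that $\pi \circ f$ remains injective on $L$ and stays within the uniformly locally finite framework of $M$, which is what lets the single inequality $\lvert B^\ell_L(R) \rvert \le K(\Phi_+(R)+\rho)$ carry all three claims.
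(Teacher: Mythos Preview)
Your proof is correct and follows essentially the same strategy as the paper: both reduce the claims to comparing a metric lattice in $X$ with a uniformly locally finite metric lattice in $Y$ via the coarse embedding. The paper organizes the argument slightly differently---it passes to the inverse coarse equivalence $g : f(X) \to X$, pulls back a lattice from $Y$ to $X$, and then invokes Proposition~\ref{zzzz} for (2) and (3)---whereas you push forward from $X$ via $f$ composed with a nearest-point projection to $M$ and derive the ball-count inequality directly; this is a cosmetic difference (and your displayed bound should read $\Phi_+(R) + 2\rho$ rather than $\Phi_+(R) + \rho$, which does not affect the conclusion).
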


\begin{proof}
By hypothesis, there exists a subspace $Y_0$ of $Y$
and a surjective metric coarse equivalence 
$f : X  \relbar\joinrel\twoheadrightarrow Y_0$.
Hence, there exists a metric coarse equivalence $g : Y_0 \longrightarrow X$  
such that the morphisms defined by $f$ and $g$ are inverse to each other
in the metric coarse category (Proposition \ref{epimonoiso}).
Let $c > 0$ be such that $d_X(g(y),g(y')) \ge 1$ for all $y,y' \in Y_0$ with $d_Y(y,y') \ge c$.
\par

Let $M_0$ be a $c$-metric lattice in $Y_0$, 
and $M$ a $c$-metric lattice in $Y$ that contains $M_0$
(Proposition \ref{reformqi}).
Since $M$ is uniformly locally finite by hypothesis on $Y$, so is $M_0$.
It is then straightforward to check that $g(M_0)$ is a lattice in $X$
that is uniformly locally finite; this proves \ref{1DEpropgrowthXtoY}.
\par

Claims \ref{2DEpropgrowthXtoY} and \ref{3DEpropgrowthXtoY} 
follow from Proposition \ref{zzzz}.
\end{proof}

\begin{cor}
\label{exgrowthXtoY}
Let $X,Y$ be two pseudo-metric spaces.
Suppose that $X$ is large-scale geodesic
and that $Y$ is uniformly coarsely proper.
\par

If there exists a coarse embedding of $X$ in $Y$,
then $\beta_X \preceq \beta_Y$.
\end{cor}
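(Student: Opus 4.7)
The plan is to reduce the corollary to Proposition \ref{propgrowthXtoY}(2) by upgrading the coarse embedding $f$ to a large-scale Lipschitz map, using the large-scale geodesic hypothesis on $X$.

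First, I would invoke Proposition \ref{propgrowthXtoY}(1): since $Y$ is uniformly coarsely proper and there is a coarse embedding $f : X \longrightarrow Y$, the space $X$ is itself uniformly coarsely proper, so the growth function $\beta_X$ is well-defined in the sense of Definition \ref{defgrowthX} (this step does not yet use large-scale geodesicity).

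Next, the key step. By Definition \ref{defcoarse}\ref{cDEdefcoarse}, the coarse embedding $f$ is in particular coarsely Lipschitz. Because $X$ is large-scale geodesic, Proposition \ref{ceXYqi}(1) applies and promotes $f$ to a large-scale Lipschitz map. Concretely, if $a,b,c > 0$ are the constants of Definition \ref{defcoarselyconn}\ref{cDEdefcoarselyconn} for $X$, and $\Phi_+$ is an upper control for $f$, then any two points $x,x' \in X$ are joined by a $c$-path of at most $ad_X(x,x')+b$ steps, and the triangle inequality along this path yields an affine upper control of the form $d_Y(f(x),f(x')) \le \Phi_+(c)\bigl(a d_X(x,x') + b\bigr)$. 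Note that $f$ remains coarsely expansive (as part of being a coarse embedding), though we will not need large-scale expansiveness.

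Finally, $f$ is now a large-scale Lipschitz coarse embedding into the uniformly coarsely proper space $Y$, so Proposition \ref{propgrowthXtoY}(2) applies directly and gives $\beta_X \preceq \beta_Y$, as desired. There is no real obstacle here beyond recognizing that the corollary is a straightforward packaging of Propositions \ref{ceXYqi} and \ref{propgrowthXtoY}; the large-scale geodesic hypothesis on $X$ is used precisely (and only) to convert the coarsely Lipschitz bound into an affine one, which is what Proposition \ref{propgrowthXtoY}(2) requires in order to compare growth classes in the sense of Definition \ref{defgrowthclass}.
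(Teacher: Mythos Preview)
Your proposal is correct and follows the same route as the paper: upgrade the coarse embedding to a large-scale Lipschitz map via Proposition~\ref{ceXYqi}(1), then invoke Proposition~\ref{propgrowthXtoY}. The paper's own proof simply unfolds the content of Proposition~\ref{propgrowthXtoY}(2) inline (passing to a metric lattice $L$ in $X$ on which $f$ is injective with affine radius control, then comparing ball cardinalities), but your direct appeal to that proposition is just as valid and slightly cleaner.
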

\index{Coarse! embedding}

\begin{proof}
Note that $X$ is also uniformly coarsely proper,
by Proposition \ref{propgrowthXtoY}(1).
\par

Let $f : X \longrightarrow Y$ be a coarse embedding;
set $Y_0 = f(X)$.
Let $c > 0$ be such that
\begin{equation*}
d_Y(f(x), f(x')) \, \ge \, 1 
\hskip.5cm \text{for all} \hskip.2cm x,x' \in X
\hskip.2cm \text{with} \hskip.2cm d_X(x, x') \ge c .
\end{equation*}
By Proposition \ref{ceXYqi}, $f$ is large-scale Lipschitz, 
so that there exist $c_+ > 0$, $c'_+ \ge 0$ such that
\begin{equation*}
d_Y(f(x), f(x')) \, \le \, c_+ d_X(x, x') + c'_+ 
\hskip.5cm \text{for all} \hskip.2cm x,x' \in X .
\end{equation*}
\par

If $X$ is empty, $\beta_X = 0$, and there is nothing to show;
we assume from now on that there exists a point $x_0 \in X$.
Let $L$ be a $c$-metric lattice in $X$ containing $x_0$.
Then $M_0 := f(L)$ is a $1$-metric lattice in $Y_0$,
and there exists a $1$-metric lattice $M$ in $Y$ containing $M_0$.
Observe that the restriction $f \vert_L : L \longrightarrow M$ is injective, 
and that
\begin{equation*}
f(B^{x_0}_L (r)) \, \subset \, B^{f(x_0)}_M (c_+ r + c'_+)
\hskip.5cm \text{for all} \hskip.2cm r \ge 0 .
\end{equation*}
Since $f \vert_L$ is injective, this implies
$\beta_L^{x_0}(r) \le \beta_M^{f(x_0)}(c_+ r + c'_+)$ for all $r \ge 0$,
i.e., $\beta_L^{x_0} \preceq \beta_M^{f(x_0)}$.
It follows that $\beta_X \preceq \beta_Y$.
\end{proof}

An illustration of Corollary \ref{exgrowthXtoY}
has already been given in Example \ref{excoarse}(10):
there does not exist any coarse embedding of a non-abelian free group of finite rank
into a Euclidean space.
\index{Free group}
Compare with Example \ref{Hilbertnotqifggroup}.
\par
The hypothesis on $X$ being large-scale geodesic cannot be omitted.
Indeed, consider on $\R$ the usual metric $d$, and the metric
$d_{\ln}$ defined by $d_{\ln}(x,x') = \ln( 1 + \vert x-x' \vert)$,
as in Example \ref{excoarse}(5).
Then $(R,d_{\ln})$ is not large-scale geodesic,
is of exponential growth, and the identity map
$(R,d_{\ln}) \longrightarrow (R,d)$ is a metric coarse equivalence
with a space of linear growth.

\begin{exe}
\label{growthR2H2}
(1)
For every $m \ge 1$,  the Euclidean space $\R^m$ contains $\Z^m$ as a metric lattice,
and is of polynomial growth:
\begin{equation*}
\beta_{\R^m}(r) \, \simeq \, \beta_{\Z^m} (r) \, \simeq \, r^m .
\end{equation*}
For $m \ne m'$, Proposition \ref{propgrowthXtoY} implies that
$\R^m$ and $\R^{m'}$ are not quasi-isometric.

\vskip.2cm

(2)
For very $k \ge 2$, the free group $F_k$ on $k$ generators
is of exponential growth, as it follows from a straightforward computation.
By Proposition \ref{propgrowthXtoY},  every finitely generated group
containing a non-abelian free subgroup is of exponential growth.
\index{Free subgroup}
In particular, consider an integer $n \ge 2$,
the hyperboic space $\mathbf H^n$,
a closed Riemannian $n$-manifold that is of constant curvature $-1$,
i.e., that is covered by $\mathbf H^n$,
and the fundamental group $\Gamma = \pi_1(M)$;
since $\Gamma$ has non-abelian free subgroups, 
$\Gamma$ is of exponential growth. 
It follows that $\mathbf H^n$ is of exponential growth:
\begin{equation*}
\beta_{\mathbf H^n}(r)  \, \simeq \, \beta_{\Gamma}(r) \, \simeq \, e^r  .
\end{equation*}
See \cite{Miln--68}.
\par

For $m \ge 1$ and $n \ge 2$, again by Proposition \ref{propgrowthXtoY}, 
the spaces $\R^m$ and $\mathbf H^n$ are not quasi-isometric.
Moreover, there does not exist any quasi-isometric embedding 
of $\R^2$ into $\mathbf H^n$;
this can be viewed as a particular case of facts
 cited below in Remark \ref{PropertyQinvqi}(1).
\par

Dually, there does not exist any quasi-isometric embedding 
of $\mathbf H^2$ into $\R^m$.
Though it is not explicitly stated in \cite{GhHa--90}, 
this follows easily from Theorem 6 of Chapter 5 there.

\vskip.2cm

(3)
For $n, n' \ge 2$, $n \ne n'$, it can be shown that $\mathbf H^n$ and $\mathbf H^{n'}$
are not quasi-isometric.
One proof involves \emph{Gromov boundaries},
for which we refer to \cite{GhHa--90}.
Another proof consists in computing the asymptotic dimension of $\mathbf H^n$,
which is $n$;
see Definition \ref{defasymptoticdimension} 
and Example \ref{ex_asymptoticdimension_n}.
\index{Asymptotic dimension! $\mathrm{asdim}(X)$ of a pseudo-metric space $X$}

\vskip.2cm

(4)
The spaces $\R^m$ and $\mathbf H^n$ are geodesic,
and therefore large-scale geodesic (see Definition \ref{defcoarselyconn}).
Since two distinct of them are not quasi-isometric, as just seen,
Proposition \ref{ceXYqi}(2) implies that they are
not coarsely equivalent.

\vskip.2cm

(5)
A regular tree of valency $d \ge 3$ is of exponential growth.
A regular tree of valency $2$ is of polynomial growth, indeed of linear growth.
\index{Tree}
\end{exe}

\begin{exe}
\label{Hilbertnotqifggroup}
Let $X$ be either a regular tree of infinite valency 
or an infinite-dimensional Hilbert space,
and $G$ a $\sigma$-compact LC-group.
There does not exist any coarse embedding of $X$ in $G$;
this is an immediate consequence of 
Proposition \ref{propgrowthXtoY} and \ref{growth=growth},
and Example \ref{homogeneousinfinitetree+Hilbert}.
\index{Tree} \index{Hilbert space}
In particular, $X$ is not quasi-isometric to any compactly generated LC-group.
\end{exe}

\begin{rem}
\label{refpourcroissance}
Early articles on growth of Riemannian manifolds, finitely generated groups, 
and compactly generated groups, include
\cite{Efre--53}, \cite{Svar--55}, \cite{Miln--68}, \cite{Guiv--70, Guiv--71, Guiv--73}, 
\cite{Jenk--73}, \cite{Grom--81b}, and \cite{Lose--87, Lose--01}.
We would like to quote \cite{Nekr--98} on growth of metric spaces,
and the book \cite{Mann--12} on growth of
finitely generated groups.
\par

In the particular case of finitely generated groups,
the subject of growth has special features.
One is that the growth function of such a group $\Gamma$
is \textbf{submultiplicative}: 
$\beta_\Gamma(m+n) \le \beta_\Gamma(m)\beta_\Gamma(n)$
for all $m,n \in \N$.
\index{Submultiplicative function}
Another one is the existence of
finitely generated groups of \textbf{intermediate growth}, 
that is with growth function $\beta$
such that $r^n \precnsim \beta(r) \precnsim e^r$ for all $n \ge 1$. 
Indeed, it can be shown that
finitely generated groups achieve uncountably many
growth functions distinct from each other \cite{Grig--84};
in particular, there are uncountably many quasi-isometry classes
of finitely generated groups.
\index{Intermediate growth|textbf}
\index{Growth! intermediate growth|textbf}
\end{rem}

On metrizable LC-groups, the Haar measure provides a way
-- and a very standard way -- to define growth functions.
To compare it with the way exposed above, 
we anticipate on Chapter \ref{chap_gpspmspaces}.
\index{Growth function! of an LC-group|textbf}
\par

More precisely, let $G$ be a $\sigma$-compact LC-group.
Choose a left-invariant Haar measure $\mu$ on $G$
and an adapted pseudo-metric $d$ on $G$
(see \S~\ref{adaptedpseudometric}).
Assume that, moreover, $d$ is measurable on $G$;
this is always possible (e.g.~by choosing $d$ continuous).
Note that, when $G$ is compactly generated, 
word metrics are measurable whenever they are defined 
either by compact generating sets (which are upper semi-continuous)
or by relatively compact open generating sets (which are lower semi-continuous).
\par

When $d$ is measurable, balls are measurable.

\begin{defn}
\label{volumegrowthG}
Let $G$ be an LC-group, $\mu$ a left-invariant Haar measure on $G$, 
and $d$ a measurable adapted pseudo-metric on $G$. 
\par
For $r \ge 0$,
the \textbf{volume} of the ball $B_G^1(r)$ of radius $r$ in $G$ is defined by
\begin{equation*}
\operatorname{Vol}(B^1_G(r)) \, = \,
\int_{ \{g \in G \mid d(1,g) \le r\} } d\mu .
\end{equation*}
The function $v_{G,d,\mu}$ is defined by
\begin{equation*}
v_{G,d,\mu}(r) \, = \, \operatorname{Vol}(B^g_G(r)) .
\end{equation*}
\end{defn}

Concerning this definition, three remarks are in order.
\begin{enumerate}[noitemsep,label=(\alph*)]
\item\label{aDEvolumegrowthG}
Since $d$ is adapted, balls of large enough radius have non-empty interiors, 
and therefore positive $\mu$-volumes: $v_{G,d,\mu}(r) > 0$ for $r$ large enough.
\item\label{bDEvolumegrowthG}
Let $d,d'$ be two quasi-isometric adapted pseudo-metrics on $G$.
Then $v_{G,d,\mu}(\cdot)$ and $v_{G,d',\mu}(\cdot)$ are equivalent 
in the sense of Definition \ref{defgrowthclass}, because there exist constants $c>0, c_+ \ge 0$
such that 
$\{g \in G \mid d(1,g) \le r \} \subset \{g \in G \mid d'(1,g) \le cr+c_+ \}$
for all $r \ge 0$, and similarly for $d,d'$ exchanged.
\item\label{cDEvolumegrowthG}
The equivalence class of the function $v_{G,d,\mu}(\cdot)$ is independent of 
the choice of the Haar measure $\mu$, since the latter is unique up to a multiplicative constant.
\end{enumerate}
In the next proposition, equivalences of functions are in the sense
of Definition \ref{defgrowthclass}.

\begin{prop}
\label{growth=growth}
Let $G$ be a $\sigma$-compact LC-group,
$\mu$, $d$, and $v_{G,d,\mu}$  as above. 
\begin{enumerate}[noitemsep,label=(\arabic*)]
\item\label{1degrowth=growth}
The pseudo-metric space $(G,d)$ is uniformly coarsely proper.
In particular, its growth function $\beta_G$ is well-defined
(see \ref{defgrowthX}).
\item\label{2degrowth=growth}
The functions $\beta_G$ and $v_{G,d,\mu}$ are equivalent.
\item\label{3degrowth=growth}
Let $d'$ be another measurable adapted pseudo-metric on $G$.
If $d$ and $d'$ are quasi-isometric, 
the functions $v_{G,d,\mu}$ and $v_{G,d',\mu}$ are equivalent.
\end{enumerate}
\end{prop}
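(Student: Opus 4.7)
The plan is to exploit the interplay between a maximal packing of $G$ by lattice points and the Haar measure. Claim \ref{3degrowth=growth} is essentially immediate: if the identity $(G,d)\to(G,d')$ is a quasi-isometry, with $d'\le a\,d+b$ and $d\le a\,d'+b$ for some $a>0$, $b\ge 0$, then $B_G^1(r)\subseteq\{g\in G:d'(1,g)\le ar+b\}$, hence $v_{G,d,\mu}(r)\le v_{G,d',\mu}(ar+b)$, and symmetrically. The substance lies in Claims \ref{1degrowth=growth} and \ref{2degrowth=growth}.

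The key preliminary observation is that, since $d$ is adapted and hence locally bounded and left-invariant, the identity $1\in G$ admits a neighborhood $V$ of finite $d$-diameter $\delta$; since $1\in V$, we have $V\subseteq B_G^1(\delta)$, and since $V$ is a neighborhood of $1$, $\mu(V)>0$. Thus there exists $r_0$ with $\mu(B_G^1(r_0))>0$, and by monotonicity $v_{G,d,\mu}(r)>0$ for every $r\ge r_0$.

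For Claim \ref{1degrowth=growth}, I would set $R_0:=2r_0$ and, for any $g\in G$ and $R\ge 0$, choose by Zorn a maximal subset $S$ of $B_G^g(R)$ with $d(s,s')\ge R_0$ for distinct $s,s'\in S$. Maximality forces $B_G^g(R)\subseteq\bigcup_{s\in S}B_G^s(R_0)$, while the open balls $B_G^s(R_0/2)$ for $s\in S$ are pairwise disjoint and all contained in $B_G^g(R+R_0/2)$. By left-invariance of $\mu$,
\[
|S|\,\mu(B_G^1(R_0/2))\;\le\;\mu(B_G^g(R+R_0/2))\;=\;v_{G,d,\mu}(R+R_0/2)\;<\;\infty,
\]
bounding $|S|$ by a finite constant $N(R)$ independent of $g$, which proves that $(G,d)$ is uniformly coarsely proper.

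For Claim \ref{2degrowth=growth}, fix $c\ge 2r_0$ and, by Proposition \ref{reformqi}, choose a $c$-metric lattice $L\ni 1$ in $(G,d)$ that is $K$-cobounded for some $K\ge 0$. By Claim \ref{1degrowth=growth} combined with Proposition \ref{predefuniformlycoarselyproper} and Definition \ref{defgrowthX}, $L$ is uniformly locally finite and $\beta_G\simeq\beta_L^1$. A packing argument applied to the pairwise disjoint family $\{B_G^\ell(c/2):\ell\in L\cap B_G^1(r)\}$, all lying in $B_G^1(r+c/2)$, yields
\[
\beta_L^1(r)\,\mu(B_G^1(c/2))\;\le\;v_{G,d,\mu}(r+c/2),
\]
while a covering argument using $K$-coboundedness, $B_G^1(r)\subseteq\bigcup_{\ell\in L\cap B_G^1(r+K)}B_G^\ell(K)$, yields $v_{G,d,\mu}(r)\le\beta_L^1(r+K)\,\mu(B_G^1(K))$. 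Together these give $\beta_L^1\simeq v_{G,d,\mu}$. The main (mild) subtlety throughout is guaranteeing positivity of $\mu(B_G^1(r))$ for large $r$, which is not automatic for a merely measurable pseudo-metric but follows from local boundedness as above; once this is in hand, the rest is a routine packing/covering argument with respect to Haar measure.
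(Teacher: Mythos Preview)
Your proof is correct and follows essentially the same packing/covering strategy with respect to Haar measure as the paper. The only organizational difference is that for Claim~\ref{1degrowth=growth} you verify the covering definition of uniform coarse properness directly, whereas the paper instead fixes a $c$-metric lattice $L$ with $c>2s$ (where $B_G^1(s)$ is a neighbourhood of $1$) and shows $L$ is uniformly locally finite via the same disjoint-balls inequality $\beta_L^\ell(r)\,v_{G,d,\mu}(s)\le v_{G,d,\mu}(r+s)$; this simultaneously gives half of Claim~\ref{2degrowth=growth}, making the paper's argument marginally more economical. One small technical point: you invoke disjointness of \emph{open} balls of radius $R_0/2$ around points at mutual distance $\ge R_0$, then use $\mu(B_G^1(R_0/2))$ with the paper's closed-ball notation; to avoid any ambiguity, take the separation to be strictly greater than twice the packing radius (as the paper does with $c>2s$), so that the \emph{closed} balls are already disjoint.
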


\begin{proof} 
Let $s > 0$ be large enough so that the ball $C := B_G^1(s)$
to be a neighbourhood of $1$ in $G$.
Let $c > 2s$.
By Proposition \ref{reformqi} and its proof,
there exists a $c$-metric lattice $L$ in $(G,d)$ containing $1$,
such that
\begin{equation*}
\inf \{ d(\ell, \ell') \mid \ell, \ell' \in L, \hskip.1cm \ell \ne \ell' \} \ge c > 2s
\hskip.5cm \text{and} \hskip.5cm
\sup \{ d(g,L) \mid g \in G \} < \infty .
\end{equation*}
The inclusion $(L,d) \subset (G,d)$ is a metric coarse equivalence,
indeed a quasi-isometry.
For \ref{1degrowth=growth}, it remains to check that $(L,d)$ is uniformly locally finite.
\par

Let $r \ge 0$ and $\ell \in L$.
Since the balls $\ell' C = B_G^{\ell'}(s)$, with $\ell' \in L$ and $d(\ell, \ell') \le r$,
are pairwise disjoint, and all inside $B_G^\ell(r + s)$, we have
\begin{equation}
\label{betaAndv}
\beta_L^\ell (r) v_{G, d, \mu} (s) 
\, \le \, 
v_{G, d, \mu} (r + s) ,
\end{equation}
and therefore
\begin{equation*}
\sup \{ \beta_L^\ell (r) \mid \ell \in L \}
\, \le \, 
v_{G, d, \mu} (r + s) / v_{G, d, \mu} (s)
\end{equation*} 
for all $r \ge 0$.
Claim \ref{1degrowth=growth} follows.
\par

On the one hand,  we have $\beta_L^\ell  \preceq  v_{G, d, \mu}$ by (\ref{betaAndv}).
On the other hand, 
with $B$ a ball in $G$ of centre $1$ and radius $R \ge \sup \{ d(g,L) \mid g \in G \}$,
the balls $\left(\ell B \right)_{\ell \in L}$ cover $G$.
Hence we have
\begin{equation*}
B_G^1(r) \, \subset \, \bigcup_{\ell \in L \cap B_G^1(r+R)} \ell B ,
\end{equation*}
and therefore
\begin{equation*}
v_{G, d, \mu}(r) \, \le \, \beta_L^1(r+R) v_{G, d, \mu}(R)
\end{equation*}
for all $r \ge 0$. Hence $v_{G, d, \mu} \preceq \beta_L^1$, 
and \ref{2degrowth=growth} follows.
\par

Claim \ref{3degrowth=growth}
is an immediate consequence of Remark \ref{bDEvolumegrowthG}
after Definition \ref{volumegrowthG}.
\end{proof}

\begin{rem}
\label{growthofGindepofdandmu}
Let $G$ be a compactly generated LC-group.
Let $d, \mu$ and $v_{G, d, \mu}$ be as in Definition \ref{volumegrowthG}.
Suppose moreover that $d$ is geodesically adapted, e.g.\ that $d$ is a word metric
with respect to a compact generating set of $G$
(we anticipate here on \S~\ref{geodesicallyadaptedpseudometricspaces}).
\par

Then any other measurable geodesically adapted pseudo-metric on $G$
is quasi-isometric to $d$, so that
the class of $v_{G,d,\mu}$,
in the sense of Definition \ref{defgrowthclass},
is independent of the choices of $d$ and $\mu$.
\end{rem}

\subsection{Amenability}
\label{subsection_3Dc}

\begin{defn}
\label{defamulf}
A uniformly locally finite pseudo-metric space $(D,d)$
is \textbf{ame\-nable} if,
\index{Amenable! uniformly locally finite pseudo-metric space|textbf}
for every $r \ge 0$ and $\varepsilon > 0$,
there exists a non-empty finite subset $F \subset D$ such that
\begin{equation*}
\frac{ \vert B_D^F(r) \vert }{ \vert F \vert } \, \le \, 1 + \varepsilon ,
\end{equation*}
where $B_D^F(r) = \{ x' \in D \mid d(F,x') \le r \}$.
\index{$bb$@$B^{F}_D(r)$, subset of points in $D$ at distance at most $r$ from $F$}
\end{defn}

In \ref{defamucp}, the definition will be extended to a larger class of spaces.

\begin{rem}
\label{remamulf}
(1)
If the condition of Definition \ref{defamulf} fails for one pair $(r, \varepsilon)$,
then it also fails for $(2r, 2\varepsilon + \varepsilon^2)$,
because $B_D^F(2r) \supset B_D^{B_D^F(r)}(r)$.
\par
It follows that, given $r_0 > 0$, a space $(D,d)$ as in Definition \ref{defamulf}
is amenable if and only if, 
for every $r \ge r_0$ and $\varepsilon > 0$,
there exists a non-empty finite subset $F \subset D$ such that
the inequality of the definition holds.

\vskip.2cm

(2)
From the negation of the condition of Definition \ref{defamulf},
we obtain:
\begin{enumerate}[noitemsep]
\item[]
A uniformly locally finite non-empty pseudo-metric space $(D,d)$
\item[]
is \textbf{non-amenable} 
if and only if it satisfies the following condition:
\item[]
there exists a constant $K > 0$ such that
$\vert \mathcal B_D^F(K) \vert \ge 2 \vert F \vert$
\item[]
for every non-empty finite subset $F$ of $D$.
\end{enumerate}
In particular: 

\vskip.2cm

(3)
Let $(D,d)$ be a uniformly locally finite non-empty pseudo-metric space;
assume that there exists a map $f : D \longrightarrow D$ such that
$\sup_{x \in D} d(x, f(x)) < \infty$ and
$\vert f^{-1}(x) \vert \ge 2$ for all $x \in D$;
then $(D,d)$ is non-amenable.

This is described in \cite[Lemma 6.17]{Grom--99} as
``the best means for showing that a group is non-amenable''.
\end{rem}

\begin{prop}
\label{propamulf}
Let $D, E$ be two uniformly locally finite pseudo-metric spaces.
Assume that $D$ and $E$ are coarsely equivalent.
\par
Then $D$ is amenable if and only if $E$ is amenable.
\end{prop}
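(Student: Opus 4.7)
The plan is to transport Følner sets across a metric coarse equivalence $f \colon D \longrightarrow E$, using a quasi-inverse $g \colon E \longrightarrow D$ provided by Proposition~\ref{epimonoiso}\ref{3DEepimonoiso}. By symmetry, it suffices to show that amenability of $E$ implies amenability of $D$. I fix upper controls $\Phi_+^f, \Phi_+^g$ and lower controls $\Phi_-^f, \Phi_-^g$ for $f$ and $g$, and a constant $C > 0$ such that $d_D(g(f(x)),x) \le C$ for all $x \in D$ and $d_E(f(g(y)),y) \le C$ for all $y \in E$.

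The first key observation is that the fibres of $f$ (and symmetrically of $g$) are uniformly bounded in cardinality. Indeed, if $f(x_1) = f(x_2)$, then $0 = d_E(f(x_1),f(x_2)) \ge \Phi_-^f(d_D(x_1,x_2))$, so by Lemma~\ref{PhiEtPsi}(2) applied to $\Phi_-^f$ we get $d_D(x_1,x_2) \le \Psi_+^f(0)$; combined with uniform local finiteness of $D$, this yields $|f^{-1}(y)| \le M_f := N_D(\Psi_+^f(0))$ for every $y \in E$, where $N_D(r) := \sup_{x\in D}|B_D^x(r)|$. The analogous bound gives $|g^{-1}(x)| \le M_g$ for every $x \in D$.

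Now assume $E$ is amenable, and let $r > 0$ and $\varepsilon > 0$ be given. Set $r' := \Phi_+^f(r+C) + C$ and $\varepsilon' := \varepsilon/(M_f M_g)$, and choose a finite set $F' \subset E$ with $|\partial_{r'}^E F'| \le \varepsilon'|F'|$, where $\partial_s^E S := B_E^S(s)\setminus S$. Define the candidate Følner set in $D$ as the $C$-thickening
\[
F \; := \; B_D^{g(F')}(C) \; = \; \{ x \in D \mid d_D(x, g(F')) \le C \} .
\]
The lower bound $|F| \ge |g(F')| \ge |F'|/M_g$ is immediate from the fibre bound on $g$. For the boundary bound, I will show that $f$ maps $\partial_r F := B_D^F(r)\setminus F$ into $\partial_{r'}^E F'$; combined with the fibre bound on $f$, this will give $|\partial_r F| \le M_f|\partial_{r'}^E F'| \le M_f\varepsilon'|F'|$, whence $|\partial_r F|/|F| \le M_f M_g \varepsilon' = \varepsilon$, as desired.

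The heart of the proof is therefore the inclusion $f(\partial_r F) \subset \partial_{r'}^E F'$. For $x \in \partial_r F$, there exists $y \in F'$ with $d_D(x,g(y)) \le r+C$ (since $F \subset B_D^{g(F')}(C)$), and the triangle inequality together with the control $\Phi_+^f$ and the closeness $d_E(f(g(y)),y) \le C$ yields $d_E(f(x),y) \le r'$, so $f(x) \in B_E^{F'}(r')$. The delicate point---the step that makes the argument work---is to check that $f(x) \notin F'$: if on the contrary $f(x) = y \in F'$, then $g(y) = g(f(x))$ satisfies $d_D(x,g(y)) \le C$ by closeness of $gf$ with $\operatorname{id}_D$, which means $x \in B_D^{g(F')}(C) = F$, contradicting $x \in \partial_r F$. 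This dichotomy (which fails without the $C$-thickening of $g(F')$) is the main obstacle: it is precisely why one must enlarge $g(F')$ by $C$ before taking its boundary, rather than using $g(F')$ itself. With this step in hand, the proof is complete, and the argument for the converse direction $D$~amenable~$\Rightarrow$~$E$~amenable is obtained by interchanging the roles of $f$ and $g$.
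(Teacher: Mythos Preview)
Your proof is correct and follows essentially the same strategy as the paper's: transport a F{\o}lner set $F' \subset E$ to $D$, bound fibre sizes via uniform local finiteness, and show that $f$ maps the $r$-boundary of the transported set into a controlled boundary of $F'$. The only difference is in the choice of transported set: the paper defines $F = \{x \in D \mid d_E(f(x),F') \le c\}$ (pulling back along $f$), whereas you take $F = B_D^{g(F')}(C)$ (pushing forward along $g$ and thickening). The paper's choice makes the exclusion $f(x) \notin F'$ for $x \notin F$ automatic, while your choice requires the short argument via $d_D(x,g(f(x))) \le C$; both work equally well.
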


\begin{proof}
If one of $D, E$ is empty, so is the other, 
and neither $D$ nor $E$ is amenable;
we assume now that $D, E$ are non-empty.
We assume moreover that $E$ is amenable, and we have to show that $D$ is amenable.
\par

Let $f : D \longrightarrow E$, $g : E \longrightarrow D$ be two metric coarse equivalences
and $c > 0$ a constant
such that
\begin{equation*}
d_D(g(f(x)), x) \le c \hskip.2cm \forall x \in D
\hskip.5cm \text{and} \hskip.5cm
d_E(f(g(y)), y) \le c \hskip.2cm \forall y \in E .
\end{equation*}
Let $\Phi$ be an upper control such that
\begin{equation*}
d_E (f(x), f(x')) \, \le \, \Phi( d_D(x, x')) \hskip.2cm \forall x,x' \in D .
\end{equation*}
There exist two constants $k, \ell > 0$ such that
\begin{equation*}
\vert f^{-1} (y) \vert \le k \hskip.2cm \forall y \in E
\hskip.5cm \text{and} \hskip.5cm
\vert g^{-1} (x) \vert \le \ell \hskip.2cm \forall x \in D .
\end{equation*}
Consider $r \ge c$ and $\varepsilon > 0$;
concerning the choice of $r$, see Remark \ref{remamulf}(1).
Since $E$ is amenable, 
there exists a non-empty finite subset $F' \subset E$ such that
\begin{equation*}
\left\vert B_E^{F'} ( \Phi(r) + c ) \right\vert \, \le \,
\left( 1 + \frac{\varepsilon}{k\ell } \right) \vert F' \vert .
\end{equation*}
\par

Define $F  = \{x \in D \mid \ d_E(f(x), F') \le c \}$.
We have $g(F') \subset F$, and consequently
\begin{equation*}
\vert F \vert \ge \vert g(F') \vert \ge \frac{1}{\ell} \vert F' \vert .
\end{equation*}
Let $x \in D$ be such that $x \notin F$ and $d_D(F, x) \le r$.
There exist 
\par
$x' \in F$ with $d_D(x', x) \le r$
and 
\par
$y \in F'$ with $d_E(f(x'), y) \le c$,
\par\noindent
so that $d_E(f(x), F') \le d_E(f(x), f(x')) + c \le \Phi(r) + c$,
and therefore
\begin{equation*}
f(x) \, \in \, B_E^{F'}(\Phi(r) + c) \smallsetminus B_E^{F'}(c)
\, \subset \,  B_E^{F'}(\Phi(r) + c) \smallsetminus F' .
\end{equation*}
Hence
\begin{equation*}
f \left( \{ x \in D \mid x \notin F \hskip.2cm \text{and} \hskip.2cm d_D(F,x) \le r \} \right)
\, \subset \,  B_E^{F'}(\Phi(r) + c) \smallsetminus F'
\end{equation*}
and
\begin{equation*}
\left\vert \{ x \in D \mid x \notin F \hskip.2cm \text{and} \hskip.2cm d_D(F,x) \le r \} \right\vert
\, \le \, \frac{\varepsilon}{\ell} \vert F' \vert 
\, \le \, \varepsilon \vert F \vert .
\end{equation*}
Since $B_D^F(r) = F \cup  \{ x \in D \mid x \notin F 
\hskip.2cm \text{and} \hskip.2cm d_D(F,x) \le r \}$,
it follows that
\begin{equation*}
\left\vert B_D^F(r) \right\vert \, \le \, (1 + \varepsilon) \vert F \vert
\end{equation*}
and this ends the proof.
\end{proof}
Proposition \ref{propamulf} justifies the following definition
(which appears for example in \cite{BlWe--92})
and makes the next proposition straightforward.

\begin{defn}
\label{defamucp}
A uniformly coarsely proper pseudo-metric space $(X,d)$
is \textbf{amenable} if
\index{Amenable! uniformly coarsely proper pseudo-metric space|textbf}
it is coarsely equivalent to
an amenable uniformly locally finite pseudo-metric space,
equivalently
if every uniformly locally finite pseudo-metric space
which is coarsely equivalent to $(X,d)$ is amenable
in the sense of Definition \ref{defamulf}.
\end{defn}

Note that, if $X$ is bounded and non-empty, 
then it is amenable, while the empty space is not amenable.

\begin{prop}
\label{propamucp}
For uniformly coarsely proper pseudo-metric spaces,
amenability is invariant by metric coarse equivalence.
\end{prop}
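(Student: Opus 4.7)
The plan is straightforward: unfold the definitions and reduce everything to the already-established invariance statement of Proposition \ref{propamulf}. No new combinatorial argument is needed — the uniformly coarsely proper case is set up precisely so that it can be bootstrapped from the uniformly locally finite case.

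First I would let $(X,d_X)$ and $(Y,d_Y)$ be two uniformly coarsely proper pseudo-metric spaces which are coarsely equivalent, and assume that $X$ is amenable. By Definition \ref{defamucp}, there exists a uniformly locally finite pseudo-metric space $(D,d_D)$ which is coarsely equivalent to $(X,d_X)$ and which is amenable in the sense of Definition \ref{defamulf}. By Proposition \ref{predefuniformlycoarselyproper} applied to $Y$ (via the equivalence of \ref{iDEpredefuniformlycoarselyproper} and \ref{iiDEpredefuniformlycoarselyproper}), there exists a uniformly locally finite discrete metric space $(E,d_E)$ that is coarsely equivalent to $(Y,d_Y)$.

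Next I would compose coarse equivalences. In the metric coarse category, the composition of metric coarse equivalences is a metric coarse equivalence (see Remark \ref{somestraightforwardobservations3A} and Proposition \ref{epimonoiso}). Hence $(D,d_D)$ is coarsely equivalent to $(E,d_E)$ via the chain
\[
D \;\longrightarrow\; X \;\longrightarrow\; Y \;\longrightarrow\; E.
\]
Both $D$ and $E$ being uniformly locally finite, Proposition \ref{propamulf} applies: amenability of $D$ forces amenability of $E$.

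Finally, $(Y,d_Y)$ is coarsely equivalent to the amenable uniformly locally finite pseudo-metric space $(E,d_E)$, so by Definition \ref{defamucp} the space $Y$ is amenable. The converse implication follows by exchanging the roles of $X$ and $Y$, so amenability is invariant under metric coarse equivalence.

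The main (and only) substantive ingredient is Proposition \ref{propamulf}; everything else is formal, consisting of composing coarse equivalences and invoking the characterization of uniform coarse properness from Proposition \ref{predefuniformlycoarselyproper}. There is no real obstacle here — the work was done in setting up Definition \ref{defamucp} as a coarse-invariant extension of Definition \ref{defamulf}.
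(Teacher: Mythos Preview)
Your proof is correct and matches the paper's intended argument; in fact the paper does not even write out a proof, remarking just before Definition \ref{defamucp} that Proposition \ref{propamulf} ``makes the next proposition straightforward.'' Your route via an auxiliary $E$ is slightly more elaborate than necessary --- once $D$ is an amenable uniformly locally finite space coarsely equivalent to $X$, it is already coarsely equivalent to $Y$ by composition, so $Y$ is amenable directly from Definition \ref{defamucp} without ever introducing $E$ or invoking Proposition \ref{propamulf} a second time --- but this is a minor redundancy, not an error.
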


\begin{defn}
\label{boudaryInGraphes}
Let $X$ be a graph, with vertex set $X^0$.
Let $Y^0$ be a subset of $X^0$.
\par
The \textbf{subgraph of $X$ induced by $Y^0$}
is the subgraph with vertex set $Y^0$ and with edge set
those edges of $X$ with both their ends in $Y^0$.
\index{Induced subgraph}
\par
The \textbf{boundary}
\index{Boundary in a graph}
$\partial Y^0$ is defined by
\begin{equation*}
\partial Y^0 \, = \, \left\{ y \in Y^0 \hskip.1cm \Bigg\vert \hskip.1cm
\aligned
& \hskip1.5cm \text{there exists an edge in $X$}
\\
&\text{connecting $y$ to a vertex in $X$ outside $Y^0$}
\endaligned
\right\} .
\end{equation*}
\end{defn}

\begin{lem}
\label{cIsoperimetriqueArbres}
Let $T$ be a tree in which every vertex has degree at least $3$.
For every non-empty finite subset $U^0$ of the vertex set of $T$, we have 
\begin{equation*}
\vert \partial U^0 \vert \ge \frac{1}{2} \vert U^0 \vert .
\end{equation*}
\end{lem}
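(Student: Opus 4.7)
The plan is to work with the subgraph of $T$ induced by $U^0$, which I will call $F$. Since $F$ is a subgraph of a tree, it is a forest, and it is enough to establish the inequality on each connected component of $F$ separately. The key observation is the following reformulation: a vertex $v \in U^0$ lies in $U^0 \setminus \partial U^0$ if and only if all its $T$-neighbors are in $U^0$, which (since $\deg_T(v) \ge 3$) is equivalent to $\deg_F(v) = \deg_T(v) \ge 3$. In other words, writing $A = \partial U^0$ and $B = U^0 \setminus \partial U^0$, the set $B$ consists precisely of those vertices of $U^0$ whose degree in $F$ is at least $3$. The goal becomes $|A| \ge |B|$.

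Now fix a connected component $C$ of $F$; this $C$ is itself a finite subtree of $T$. Set $A_C = A \cap C$ and $B_C = B \cap C$. I will show $|A_C| \ge |B_C|$, from which the result follows by summation over components (since $|A| + |B| = |U^0|$). The case $|C| = 1$ is trivial: the unique vertex has $T$-degree $\ge 3$ and no neighbor inside $U^0$, so $A_C = C$ and $B_C = \emptyset$.

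Assume $|C| \ge 2$, and let $\ell$ be the number of leaves of the tree $C$ (vertices of $C$-degree $1$). Every leaf $v$ of $C$ satisfies $\deg_C(v) = 1 < 3 \le \deg_T(v)$, so $v$ has a neighbor in $T$ outside $U^0$, and therefore $v \in A_C$. Hence $\ell \le |A_C|$. On the other hand, the handshake lemma applied to $C$ gives
\[
2(|C| - 1) \;=\; \sum_{v \in C} \deg_C(v) \;\ge\; \ell + 2(|C| - \ell - |B_C|) + 3\,|B_C| \;=\; 2|C| - \ell + |B_C|,
\]
where the middle expression bounds separately the contributions of the $\ell$ leaves, the vertices of $C$-degree $2$ (of which there are $|C| - \ell - |B_C|$), and the vertices in $B_C$ (each of $C$-degree at least $3$). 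Rearranging yields $|B_C| \le \ell - 2 < \ell \le |A_C|$, as required.

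Summing the inequality $|A_C| \ge |B_C|$ over the connected components of $F$ gives $|A| \ge |B|$, hence $|\partial U^0| = |A| \ge \tfrac{1}{2}|U^0|$. The only slightly delicate point is the handshake estimate above, and in particular making sure one correctly identifies $B_C$ with the set of $C$-vertices of degree $\ge 3$; this identification is exactly what the hypothesis $\deg_T(v) \ge 3$ buys, and is the place where the assumption on $T$ enters.
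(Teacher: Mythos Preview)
Your proof is correct and follows the same overall strategy as the paper: reduce to the connected components of the induced forest, then run a degree-counting argument on each component. The paper proves the auxiliary inequality $2n_1(U) + n_2(U) \ge n$ by induction on $n$ (removing a leaf), whereas you get the equivalent information in one stroke from the handshake lemma; your route is slightly more direct.

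One small imprecision: the claim that $B$ consists \emph{precisely} of the vertices with $\deg_F(v) \ge 3$ is not quite right---a vertex with $\deg_T(v) = 5$ and $\deg_F(v) = 3$ lies in $\partial U^0$, not in $B$. What is true (and what you actually need) is the inclusion $B_C \subseteq \{v \in C : \deg_C(v) \ge 3\}$. Your handshake inequality remains valid with this weaker statement, since the ``middle'' vertices (non-leaf, non-$B_C$) still have $C$-degree at least $2$; so the bound $|B_C| \le \ell - 2 < \ell \le |A_C|$ goes through unchanged. You may want to rephrase that sentence to avoid the overclaim.
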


\begin{proof}
\emph{First case: the subgraph $U$ of $T$ induced by $U^0$ is a tree.}
Since the lemma is obvious when $\vert U^0 \vert = 1$, we can assume that
$n := \vert U^0 \vert \ge 2$.
For $i = 1,2$, denote by $n_i(U)$ the number of vertices in $U^0$
with degree $i$ inside $U$. 
\par

We claim that
\begin{equation*}
2 n_1 (U) + n_2(U) \, \ge \, n .
\end{equation*}
To prove the claim, we proceed by induction on $n$.
If $n=2$, we have $n_1(U) = 2$, $n_2(U) = 0$, and the claim is true.
Assume from now on that $n \ge 3$, and that the claim holds
for a subtree $V$ of $U$ containing all vertices of $U$
except one vertex, say $u$,  of degree $1$ in $U$;
denote by $v$ the vertex in $V$ which is adjacent to $u$.
We distinguish three cases:
\begin{enumerate}[noitemsep,label=(\arabic*)]
\item
If $v$ has degree $1$ in $V$, then $n_1(U) = n_1(V)$ and $n_2(U) = n_2(V)+1$.
\item
If $v$ has degree $2$ in $V$, then $n_1(U) = n_1(V)+1$ and $n_2(U) = n_2(V)-1$.
\item
If $v$ has degree $\ge 3$ in $V$, then $n_1(U) = n_1(V)+1$ and $n_2(U) = n_2(V)$.
\end{enumerate}
In each case:
\begin{equation*}
2 n_1(U) + n_2(U) \, \ge \,  2 n_1(V)+n_2(V)+1 
\, \overset{\text{hyp rec}}{\ge} \, 
(n-1) + 1 = n .
\end{equation*}
The claim follows.
\par
We have now
\begin{equation*}
\vert \partial U^0 \vert \, \ge \,
n_1(U) + n_2(U) \, \ge \, \frac{1}{2}\left( 2n_1(U) + n_2(U) \right) \, \ge \, \frac{1}{2} n
\, = \, \frac{1}{2} \vert U^0 \vert.
\end{equation*}

\vskip.2cm

\emph{General case: the subgraph $U$ of $T$ induced by $U^0$ is a forest.}
Let $U_1, \hdots, U_k$ be the trees of which the disjoint union is the forest $U$.
Then $U^0$ is the disjoint union of $U_1^0, \hdots, U_k^0$,
and $\partial U^0$ is the disjoint union of $\partial U_1^0, \hdots, \partial U_k^0$.
Hence
\begin{equation*}
\vert \partial U^0 \vert \, = \,  \sum_{j=1}^k \vert \partial U_j^0 \vert
\overset{\text{first case}}{\ge} 
\sum_{j=1}^k \frac{1}{2} \vert U_j^0 \vert \, = \,  \frac{1}{2} \vert \partial U^0 \vert ,
\end{equation*}
as was to be shown.
\end{proof}

\begin{exe}
\label{examenmetricspaces}
(1)
Let $D$ be a \emph{non-empty} locally finite pseudo-metric space, $x$ a base point in $X$,
and $\beta_D^x$ the corresponding growth function.
Assume that $D$ has \textbf{subexponential lower growth}, i.e., that
\index{Subexponential lower growth|textbf}
\index{Growth! subexponential lower growth|textbf}
\begin{equation*}
\liminf_{s \to \infty} \beta_D^x(s)^{1/s} \, = \, 1 .
\end{equation*}
Then $D$ is amenable.
\par

Indeed, a computation yields    
\begin{equation*}
\liminf_{s \to \infty} \frac{\beta_D^x(s+r)}{\beta_D^x(s)} \, = \, 1
\hskip.5cm \text{for all} \hskip.2cm r \ge 0 .
\end{equation*}
Hence, for every $r \ge 0$ and $\varepsilon > 0$, 
there exists $s \ge 0$ such that, for $E = B_D^x(s)$, we have
\begin{equation*}
\frac{ \vert B_D^E(r) \vert }{ \vert E \vert } \, \le \, 
\frac{ \beta_D^x(s+r) }{ \beta_D^x(s) } \, \le \,
1 + \varepsilon .
\end{equation*}

\vskip.2cm

(2) 
For every integer $m \ge 1$, the free abelian group $\Z^m$ is of polynomial growth,
and therefore amenable, by (1).
\index{Free abelian group}
The Euclidean space $\R^m$ contains $\Z^m$ as a metric lattice,
indeed as a uniformly locally finite metric lattice,
so that $\R^m$ is amenable.
\index{Euclidean space $\R^n$}

\vskip.2cm

(3) 
A tree in which every vertex has degree at least $3$ is non-amenable.
This follows from Lemma \ref{cIsoperimetriqueArbres}. 
\par
Since a non-abelian finitely generated free group has a Cayley graph
which is a regular tree of degree $4$ or more,
non-abelian free groups are non-amenable.
\index{Free group}
We repeat that ``non-amenable'' is meant here in the sense of Definition \ref{defamulf},
and involves a left-invariant proper metric on the free group;
but this coincides with 
a standard definition of amenability for discrete groups,
discussed in Section \ref{sectionamenableLCgroups}.
\index{Amenable! locally compact group}

\vskip.2cm

(4)
A nonempty subspace of an amenable metric space need not be amenable.
Consider for example a tree $D$ that is regular of degree $3$,
and the space $D_+$ obtained from the union of $D$ and a half-line $[0, \infty[$
by gluing some vertex of $D$ to $0$.
Then $D_+$ is amenable (compare with (2) for $m=1$),
but its subspace $D$ is not (see (3)).
\par
This contrasts with the fact that any subgroup of an amenable discrete group is amenable.

\vskip.2cm

(5)
For every integer $n \ge 2$, the hyperbolic space $\mathbf H^n$
is non-amenable.
This follows from (4) and the following standard fact:
there exist discrete groups of isometries $\Gamma$ of $\mathbf H^n$
such that, given $x \in \mathbf H^n$, 
the orbit map $\Gamma \longrightarrow \mathbf H^n,
\gamma \longmapsto \gamma(x)$
is a quasi-isometry.
Examples of such groups include fundamental groups
of closed hyperbolic manifolds of dimension $n$. 
\index{Hyperbolic space $\mathbf H^n$}
\end{exe}

\section{The coarse category}
\label{sectioncoarsecat}

We have defined in \S~\ref{coarselyLipschitzandlargescaleLipschitz} the metric coarse category, 
whose objects are pseudo-metric spaces. 
In this section, that is not used later, we introduce as a variation
a larger category, where no reference is made to any pseudo-metric.
It shows in particular that coarse maps between LC-groups
can be defined without reference to any pseudo-metric.

\begin{defn}
\label{uniformbornology}
A \textbf{uniform bornology} \index{Uniform bornology|textbf}
on a set $X$ is a family $\mathcal B$ of subsets of $X \times X$ such that
\begin{enumerate}[noitemsep]
\item[--]
the diagonal $\textnormal{diag}(X) = \{(x_1,x_2) \in X \times X \mid x_1 = x_2 \}$ 
is in $\mathcal B$,
\item[--]
if $C \subset B \subset X \times X$ and $B \in \mathcal B$, then $C \in \mathcal B$,
\item[--]
if $B,C \in \mathcal B$, then $B \cup C \in \mathcal B$,
\item[--]
if $B,C \in \mathcal B$, then $B \circ C \in \mathcal B$,
\item[--]
if $B \in \mathcal B$, then $B^{-1}  \in \mathcal B$,
\end{enumerate}
where 
\begin{enumerate}[noitemsep]
\item[]
$ B \circ C \, = \, \{ (x_1,x_3) \in \mathcal B \mid \exists \hskip.1cm x_2 \in X 
\hskip.2cm \text{with} \hskip.2cm (x_1, x_2) \in B
\hskip.2cm \text{and} \hskip.2cm (x_2, x_3) \in C \} $,
\item[]
$ B^{-1} \, = \, \{ (x_1, x_2) \in \mathcal B \mid (x_2, x_1) \in B \} $.
\end{enumerate}
The elements of $\mathcal B$ are called \textbf{entourages} 
\index{Entourages of a uniform bornology|textbf}
of the uniform bornology $\mathcal B$.
\par
For $j = 1,2$, let $X_j$ be a set and $\mathcal B_j$ a uniform bornology on $X_j$.
A \textbf{coarse map} 
\index{Coarse! map from one space with a uniform bornology to another|textbf}
from $(X_1, \mathcal B_1)$ to $(X_2, \mathcal B_2)$
is a map $f : X_1 \longrightarrow X_2$ 
such that $(f \times f)(B) \in \mathcal B_2$ for all $B \in \mathcal B_1$.
\end{defn}

A uniform bornology is a ``coarse structure'' in \cite{Roe--03}.

\begin{defn}
\label{coarsecat}
The \textbf{coarse category} \index{Coarse! category|textbf}
is the category whose objects are pairs $(X, \mathcal B)$, with $X$ a set 
and $\mathcal B$ a uniform bornology on $X$,
and the morphisms are defined below.
\par
For two objects $(X_1, \mathcal B_1)$ and $(X_2, \mathcal B_2)$ in this category,
two maps $f,f' : X_1 \longrightarrow X_2$ are \textbf{equivalent}, $f \sim f'$, 
\index{$aa$@$\sim$ various equivalence relations}
if the image of $(f,f') : X_1 \longrightarrow X_2 \times X_2$ is in $\mathcal B_2$.
Note that, if $f \sim f'$, then $f$ is a coarse map if and only if $f'$ is a coarse map.
\par
A \textbf{coarse equivalence} \index{Coarse! equivalence|textbf}
is a coarse map 
$f : (X_1, \mathcal B_1) \longrightarrow (X_2, \mathcal B_2)$
for which there exists a coarse map
$g : (X_2, \mathcal B_2) \longrightarrow (X_1, \mathcal B_1)$
such that $gf \sim \operatorname{id}_{X_1}$ and $gf \sim \operatorname{id}_{X_2}$.
\par
A \textbf{morphism} from $(X_1, \mathcal B_1)$ to $(X_2, \mathcal B_2)$ 
is an equivalence class of coarse maps.
It can be checked that, in the coarse category, isomorphisms are precisely
equivalence classes of coarse equivalences. 
\end{defn}

There is in \cite{Roe--93, Roe--03} a category 
of which the objects are metric spaces,
and the morphisms are coarse maps 
with an additional condition of properness
that \emph{is not} required above.

\begin{exe}
\label{examplebornology}
(1)
Let $(X,d)$ be a pseudo-metric space.
Then 
\begin{equation*}
\mathcal B_d := \left\{ B \subset X \times X \hskip.1cm \Big\vert \hskip.1cm 
\aligned
&\text{the function} \hskip.2cm (x_1, x_2) \mapsto d_X(x_1,x_2)
\\
&\text{from $B$ to $\R_+$ is bounded}
\endaligned
\right\}
\end{equation*}
is a uniform bornology on $X$.
\par
Let $(X,d)$ and $(X',d')$ be two pseudo-metric spaces
and $f : X \longrightarrow X'$ a map.
If $f$ is coarsely Lipschitz in the sense of Definition \ref{defcoarse}, 
then $f$ is coarse in the sense of Definition \ref{uniformbornology}.
It follows that there is a functor from the metric coarse category to the coarse category,
such that the first one can be viewed as a full subcategory of the second one.

\vskip.2cm

(2) 
Let $G$ be an LC-group. 
Let $q : G \times G \longrightarrow G$ 
denote the map defined by $q(g_1,g_2) = g_1^{-1}g_2$.
Then 
\begin{equation*}
\mathcal B_G := \left\{ B \subset G \times G \mid 
\overline{q(B)} \hskip.2cm \text{is compact in $G$} \right\}
\end{equation*}
is a uniform bornology on $G$.
\end{exe}

\begin{defn}
\label{defcoarselymetric}
A pair consisting of a set $Y$ and a uniform bornology $\mathcal C$ on $Y$
is \textbf{coarsely pseudo-metric} 
if $(Y, \mathcal C)$ is coarsely equivalent to a pseudo-metric space,
more precisely to a pair $(X, \mathcal B_d)$ as in Example \ref{examplebornology},
for some pseudo-metric space $(X,d)$.
\end{defn}

We have the simple characterization:

\begin{prop} 
\label{justforthefun}
Let $(X,\mathcal{B})$ be a set endowed with a uniform bornology. 
The following are equivalent:
\begin{enumerate}[noitemsep,label=(\roman*)]
\item\label{iDEjustforthefun}
$(X,\mathcal{B})$ is coarsely pseudo-metric,
\item\label{iiDEjustforthefun}
$(X,\mathcal{B})$ is coarsely equivalent to a pseudo-metric space,
\item\label{iiiDEjustforthefun}
$\mathcal{B} = \mathcal{B}_d$ for some pseudo-metric $d$ on $X$,
\item\label{ivDEjustforthefun}
$X \times X$ is a countable union of entourages.
\end{enumerate}
\end{prop}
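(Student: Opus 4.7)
The plan is a round-robin. The equivalence (i) $\Leftrightarrow$ (ii) is immediate from Definition \ref{defcoarselymetric}. For (iii) $\Rightarrow$ (i), the pseudo-metric space $(X,d)$ itself realizes the coarse equivalence, since $(X,\mathcal{B}) = (X,\mathcal{B}_d)$. For (iii) $\Rightarrow$ (iv), the balls $B_n = \{(x,y) \in X \times X : d(x,y) \le n\}$ belong to $\mathcal{B}_d = \mathcal{B}$ and cover $X \times X$. The substantive implications are therefore (ii) $\Rightarrow$ (iii) and (iv) $\Rightarrow$ (iii).

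For (ii) $\Rightarrow$ (iii), let $f : X \to Y$ be a coarse equivalence onto a pseudo-metric space $(Y,d_Y)$, with coarse inverse $g : Y \to X$. I would pull back the pseudo-metric by defining $d(x_1,x_2) = d_Y(f(x_1),f(x_2))$; symmetry, triangle inequality and $d(x,x)=0$ are inherited from $d_Y$. The inclusion $\mathcal{B} \subset \mathcal{B}_d$ follows from coarseness of $f$: for $B \in \mathcal{B}$, $(f \times f)(B) \in \mathcal{B}_{d_Y}$, so $d_Y$ is bounded on it, hence $d$ is bounded on $B$. For the reverse inclusion, I would exploit the entourage $G = \{(x,gf(x)) : x \in X\} \in \mathcal{B}$, available since $gf \sim \mathrm{id}_X$. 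Given $B \in \mathcal{B}_d$, we have $(f \times f)(B) \in \mathcal{B}_{d_Y}$, so coarseness of $g$ yields $(g \times g)(f \times f)(B) \in \mathcal{B}$; the elementary inclusion
\[
B \;\subset\; G^{-1} \circ \bigl((gf) \times (gf)\bigr)(B) \circ G
\]
and the closure axioms of a uniform bornology then place $B$ in $\mathcal{B}$.

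For (iv) $\Rightarrow$ (iii), my plan is to mimic the chain construction of Lemma \ref{lemBKetKK}. Given the cover $X \times X = \bigcup_n B_n$ with $B_n \in \mathcal{B}$, I would first replace it by a nicer sequence by setting $E_0 = \mathrm{diag}(X)$ and $E_{n+1} = E_n \cup B_n \cup B_n^{-1} \cup (E_n \circ E_n)$; the closure axioms guarantee $E_n \in \mathcal{B}$, and by construction $E_n$ is symmetric, contains the diagonal, satisfies $E_n \circ E_n \subset E_{n+1}$, is increasing, and still $X \times X = \bigcup_n E_n$. Then I would define
\[
d(x,y) \;=\; \inf\Bigl\{\sum_{i=1}^k 2^{n_i} : \exists\ x = x_0,x_1,\ldots,x_k = y \text{ with } (x_{i-1},x_i) \in E_{n_i}\Bigr\},
\]
which is a pseudo-metric by concatenation of chains (with symmetry from $E_n = E_n^{-1}$). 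The inclusion $\mathcal{B}_d \subset \mathcal{B}$ reduces, by an induction on chain length modeled on part \ref{3DElemBKetKK} of Lemma \ref{lemBKetKK}, to the statement that $d(x,y) < 2^n$ forces $(x,y) \in E_n$; this implies each ball $\{(x,y) : d(x,y) \le 2^n\}$ is contained in $E_{n+1} \in \mathcal{B}$. The hard part will be the reverse inclusion $\mathcal{B} \subset \mathcal{B}_d$: one must check that every $B \in \mathcal{B}$ is contained in some $E_n$, so that $d$ is bounded on $B$. This is where the closure of $\mathcal{B}$ under composition, combined with the covering hypothesis (iv), is used crucially to ensure that the countable family $\{E_n\}$ is cofinal in $\mathcal{B}$.
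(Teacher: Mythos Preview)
Your direct argument for (ii) $\Rightarrow$ (iii) via the pulled-back pseudo-metric $d(x_1,x_2)=d_Y(f(x_1),f(x_2))$ is correct and is not the paper's route: the paper instead proves (ii) $\Rightarrow$ (iv) by exhibiting the cover $X\times X=\bigcup_n A^{-1}\circ(g\times g)(B_{d,n})\circ A$, and then (iv) $\Rightarrow$ (iii). Your approach is more economical. One cosmetic correction: with the composition convention $B\circ C=\{(x,z):\exists\, y,\ (x,y)\in B,\ (y,z)\in C\}$, your displayed inclusion should read $B\subset G\circ\bigl((gf)\times(gf)\bigr)(B)\circ G^{-1}$; as you wrote it, $(x_1,a)\in G^{-1}$ would force $x_1$ to lie in the image of $gf$.

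There is, however, a genuine gap in your (iv) $\Rightarrow$ (iii), precisely at the step you isolate as the hard part. The assertion that closure under composition together with the covering hypothesis forces $\{E_n\}$ to be cofinal in $\mathcal{B}$ is false, and in fact (iv) $\Rightarrow$ (iii) fails as stated. Take $X=\N$ and let $\mathcal{B}$ consist of all subsets of some $B_f=\{(i,j):|i-j|\le f(\min(i,j))\}$ with $f:\N\to\N$ non-decreasing; closure under composition holds since $B_f\circ B_g\subset B_h$ for a suitable non-decreasing $h$ built from $f$ and $g$. Condition (iv) holds via the cover by the sets $\{|i-j|\le n\}$, yet a diagonalisation over any countable family $\{f_k\}$ (set $g(m)=1+\max_{k\le m}f_k(m)$) shows that $\mathcal{B}$ admits no countable cofinal subfamily and hence cannot equal any $\mathcal{B}_d$. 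The paper's ``simple verification'' for (iv) $\Rightarrow$ (iii) glosses over the very same point. The correct replacement for (iv) is that $\mathcal{B}$ have a countable \emph{cofinal} family of entourages; with that hypothesis your construction goes through, since cofinality is then given rather than to be proved.
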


\begin{proof} 
Trivially \ref{iiiDEjustforthefun} implies \ref{iDEjustforthefun} 
and \ref{iDEjustforthefun} implies \ref{iiDEjustforthefun}. 
\par

Assume that \ref{iiDEjustforthefun} holds.
There exist a pseudo-metric space $(Y,d)$ and coarse maps
$f : (X, \mathcal B) \longrightarrow (Y, \mathcal B_d)$,
$g : (Y, \mathcal B_d) \longrightarrow (X, \mathcal B)$
such that $gf \sim \operatorname{id}_X$ and $gf \sim \operatorname{id}_Y$.
Set $A = \{ (g(f(x)),x) \in X \times X \mid x \in X \}$; then $A \in \mathcal B$.
For $n \ge 0$, define $B_{d,n} = \{ (y_1,y_2) \in Y \times Y \mid d(y_1, y_2) \le n \} \in \mathcal B_d$.
Then $X \times X = \bigcup_{n \ge 0} A^{-1} \circ (g \times g)(B_{d,n}) \circ A$.
Hence \ref{ivDEjustforthefun} holds.
\par

Suppose \ref{ivDEjustforthefun}, 
$X \times X = \bigcup_{n \ge 0}  B_n$ with $B_n \in \mathcal{B}$,
and let us show \ref{iiiDEjustforthefun}. 
Upon replacing $B_n$ by $B_n \cup B_n^{-1} \cup \textnormal{diag}(X)$,
we can suppose $B_n$ flip-invariant and containing the diagonal. 
Define by induction $C_0 = \textnormal{diag}(X)$ 
and $C_n = B_n \cup \Big( \bigcup_{p+q=n} B_p \circ B_q \Big)$. 
Then a simple verification shows that the function $d(x,y)=\min\{n \mid (x,y)\in C_n\}$ 
is a pseudo-metric on $X$, and the identity $\mathcal{B} = \mathcal{B}_d$.
\end{proof}

\begin{cor}
\label{coarselypseudometric=sigmacompact}
Let $G$ be an LC-group.
Then $G$ is $\sigma$-compact
if and only if $(G, \mathcal B_G)$ is coarsely pseudo-metric
(notation of Example \ref{examplebornology}).
\end{cor}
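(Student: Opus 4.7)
The plan is to apply Proposition \ref{justforthefun} to translate the ``coarsely pseudo-metric'' condition into the combinatorial statement that $G \times G$ is a countable union of entourages of $\mathcal B_G$. Once this reformulation is in hand, both implications reduce to a direct comparison between $\sigma$-compact covers of $G$ and covers of $G \times G$ by sets of the form $q^{-1}(K)$, where $q(g_1,g_2) = g_1^{-1}g_2$ and $K \subset G$ is relatively compact.

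First I would handle the forward direction. Assume $G$ is $\sigma$-compact and choose a sequence $(K_n)_{n \ge 0}$ of compact subsets of $G$ with $G = \bigcup_{n \ge 0} K_n$. For each $n$, set $B_n = q^{-1}(K_n) \subset G \times G$. Since $q(B_n) \subset K_n$ is relatively compact, each $B_n$ belongs to $\mathcal B_G$. Moreover, every pair $(g_1,g_2) \in G \times G$ satisfies $g_1^{-1}g_2 \in G = \bigcup_n K_n$, so $(g_1,g_2) \in \bigcup_n B_n$. Thus $G \times G$ is a countable union of entourages, and Proposition \ref{justforthefun} yields that $(G, \mathcal B_G)$ is coarsely pseudo-metric.

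For the converse, assume $(G, \mathcal B_G)$ is coarsely pseudo-metric. By Proposition \ref{justforthefun}, there exists a countable family $(B_n)_{n \ge 0}$ in $\mathcal B_G$ with $G \times G = \bigcup_{n \ge 0} B_n$. Applying $q$, we obtain
\begin{equation*}
G \, = \, q(G \times G) \, = \, \bigcup_{n \ge 0} q(B_n) \, \subset \, \bigcup_{n \ge 0} \overline{q(B_n)} .
\end{equation*}
By the definition of $\mathcal B_G$, each $\overline{q(B_n)}$ is compact, so $G$ is $\sigma$-compact.

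No serious obstacle is expected: the argument is essentially bookkeeping, and the only subtle point is to notice that $q$ is surjective (indeed, $q(1,g) = g$ for every $g \in G$), which is what makes the backward direction work. The heart of the corollary really lies in the equivalence \ref{iiiDEjustforthefun} $\Leftrightarrow$ \ref{ivDEjustforthefun} of Proposition \ref{justforthefun}, which has already been established.
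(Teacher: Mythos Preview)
Your proof is correct and follows exactly the approach the paper intends: it invokes the equivalence \ref{iDEjustforthefun} $\Leftrightarrow$ \ref{ivDEjustforthefun} of Proposition \ref{justforthefun} and then spells out the straightforward translation between $\sigma$-compact covers of $G$ and countable covers of $G \times G$ by entourages of $\mathcal B_G$. The paper's own proof is the one-line remark that the corollary is a straightforward consequence of that equivalence, so you have simply made the omitted details explicit.
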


\begin{proof}
This is a straightforward consequence of the equivalence of (i) and (iv)
in Proposition \ref{justforthefun}.
\end{proof}

It follows that, for $\sigma$-compact LC-groups,
it is equivalent to use the language of the coarse category
(Definition \ref{coarsecat})
or that of the metric coarse category
(Definition \ref{defcoarsecat}).

\begin{defn}
\label{defEconnected}
Let $X$ be a set and $E$ a subset of $X \times X$ 
containing the diagonal $\textnormal{diag}(X)$.
Recall that the \textbf{equivalence relation generated by $E$} 
\index{Equivalence relation! on $X$ generated by $E \subset X \times X$|textbf}
is the relation $R_E \subset X \times X$ defined by
$(x,y) \in R_E $ if there exists a finite sequence $(x_0 = y, x_1, \hdots, x_k = y)$
of points in $X$ such that 
$(x_{i-1},x_i) \in E \cup E^{-1}$ for $i = 1, \hdots, k$.
The set $X$ is \textbf{$E$-connected} if $R_E$ is the trivial equivalence relation.
\index{Connected, of a set $X$ with respect to $E \subset X \times X$
containing the diagonal|textbf}
The \textbf{trivial equivalence relation} \index{Trivial equivalence relation|textbf}
is the relation for which any two points are equivalent.
\end{defn}

\begin{defn}
\label{defboundedlygeneratedXB}
A pair consisting of a set $X$ and a uniform bornology $\mathcal B$ on $X$
is \textbf{boundedly connected} 
\index{Boundedly! boundedly connected bornology|textbf}
if there exists $B \in \mathcal B$
such that the equivalence relation generated by $B$ is trivial.
\end{defn}

\begin{prop}
\label{boundedlygenerated=compactlygenerated}
Let $G$ be an LC-group.
Then $G$ is compactly generated
if and only if $(G, \mathcal B_G)$ is boundedly connected
(notation of Example \ref{examplebornology}).
\end{prop}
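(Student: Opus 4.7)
My plan is to prove both directions by directly translating between a compact generating set and a bounded entourage that connects all of $G$. The key correspondence is: a compact symmetric subset $S \ni 1$ of $G$ generates $G$ iff the set $B_S := \{(g_1,g_2) \in G \times G \mid g_1^{-1}g_2 \in S\}$, which is left-invariant under the diagonal action, has trivial associated equivalence relation.

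For the forward direction, I would assume $G$ is compactly generated, and choose a compact symmetric generating set $S$ containing $1$ (Remark \ref{remsigmacetc}). Set $B = B_S$ as above. Then $q(B) = S$ is compact, so $B \in \mathcal{B}_G$. For any $g, h \in G$, writing $g^{-1}h = s_1 \cdots s_n$ with $s_i \in S$, the sequence $x_i := g\, s_1 \cdots s_i$ (with $x_0 = g$, $x_n = h$) witnesses $x_{i-1}^{-1}x_i = s_i \in S$, hence $(x_{i-1}, x_i) \in B$. Thus the equivalence relation generated by $B$ is trivial, so $(G, \mathcal{B}_G)$ is boundedly connected.

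For the converse, assume $B \in \mathcal{B}_G$ generates the trivial equivalence relation. The only mild subtlety is that $q(B)$ itself need not be compact, only its closure. So I would set
\[
S \;=\; \overline{q(B)} \,\cup\, \overline{q(B)}^{-1} \,\cup\, \{1\},
\]
which is compact and symmetric and contains $1$. Given $g \in G$, apply bounded connectedness to the pair $(1, g)$: there is a sequence $1 = x_0, x_1, \ldots, x_n = g$ with $(x_{i-1}, x_i) \in B \cup B^{-1}$ for each $i$. In either case $x_{i-1}^{-1}x_i \in q(B) \cup q(B)^{-1} \subset S$, and telescoping
\[
g \;=\; (x_0^{-1}x_1)(x_1^{-1}x_2)\cdots(x_{n-1}^{-1}x_n)
\]
shows $g$ is a product of elements of $S$. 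Hence $S$ generates $G$, so $G$ is compactly generated.

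Neither direction presents a real obstacle; the one point worth flagging is the closure issue in the converse (since an entourage $B$ of $\mathcal{B}_G$ is only required to have \emph{relatively} compact image under $q$), handled by passing to $\overline{q(B)}$ in the definition of $S$. Everything else is a direct unpacking of the definitions of the uniform bornology $\mathcal{B}_G$ (Example \ref{examplebornology}), of the equivalence relation generated by an entourage (Definition \ref{defEconnected}), and of compact generation (Definition \ref{cggroups}).
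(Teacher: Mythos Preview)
Your proof is correct and is precisely the natural argument; the paper itself leaves this proposition as an exercise, so there is nothing further to compare. Your attention to the closure issue in the converse (passing from $q(B)$ to $\overline{q(B)}$) is the only point requiring care, and you handle it correctly.
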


\begin{proof}[Proof:] it is left as an exercise.
\end{proof}

\vskip.5cm

For yet another category well-suited for the coarse study of locally compact groups,
see Subsection 3.A of \cite{CoHa}.

\chapter{Groups as pseudo-metric spaces}
\label{chap_gpspmspaces}

\section[Adapted (pseudo-)metrics on $\sigma$-compact groups]
{Adapted (pseudo-)metrics and $\sigma$-compactness}
\label{adaptedpseudometric}

\begin{defn}
\label{metricadapted}
On a topological group $G$, 
a pseudo-metric $d$ is \textbf{adapted} 
\index{Adapted pseudo-metric|textbf}
if it is left-invariant, proper, and locally bounded;
it means that
$d$ is left-invariant, 
all balls $\{g\in G \mid d(1,g) \le R\}$ are relatively compact, 
and are neighbourhoods of $1$ for $R$ large enough
(Definition \ref{defproper2}). 
\end{defn}

Note that a topological group that has an adapted metric is locally compact.
For examples of adapted metrics, see \ref{wordmetric} and \ref{abundanceofd}. 
\index{Metrizable! group}

\begin{prop}[metric characterization of $\sigma$-compactness]
\label{existenceam}
Let $G$ be an LC-group.
The following four properties are equivalent:
\begin{enumerate}[noitemsep,label=(\roman*)]
\item\label{iDEexistenceam}
$G$ is $\sigma$-compact;
\index{Sigma-compact! LC-group}
\item\label{iiDEexistenceam}
there exists an adapted continuous pseudo-metric on $G$;
\item\label{iiiDEexistenceam}
there exists an adapted pseudo-metric on $G$;
\item\label{ivDEexistenceam}
there exists an adapted metric on $G$.
\end{enumerate}
\end{prop}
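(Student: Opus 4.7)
The plan is to prove the cycle $\text{(iv)}\Rightarrow\text{(iii)}\Rightarrow\text{(i)}\Rightarrow\text{(ii)}\Rightarrow\text{(iv)}$, and note $\text{(ii)}\Rightarrow\text{(iii)}$ is trivial. The implication $\text{(iv)}\Rightarrow\text{(iii)}$ is immediate since a metric is a pseudo-metric. For $\text{(iii)}\Rightarrow\text{(i)}$, if $d$ is an adapted pseudo-metric on $G$, then the balls $B_n := \{g \in G \mid d(1,g) \le n\}$ are relatively compact, and $G = \bigcup_{n \ge 0} B_n \subset \bigcup_{n \ge 0} \overline{B_n}$ is a countable union of compact subsets, so $G$ is $\sigma$-compact.

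The core work lies in $\text{(i)}\Rightarrow\text{(ii)}$, which is where I would assemble the three main metrizability theorems from Section~\ref{Onlcgroups}. First I would apply the Kakutani-Kodaira theorem \ref{KK} to produce a compact normal subgroup $K \lhd G$ such that $G/K$ is metrizable. Since $G/K$ inherits $\sigma$-compactness from $G$ (as a continuous image), Urysohn's theorem \ref{Urysohn} implies that $G/K$ is second-countable, and then Struble's theorem \ref{Struble} furnishes a left-invariant proper compatible metric $d_{G/K}$ on $G/K$. The pullback
\[
d_G(g, g') \, := \, d_{G/K}(gK, g'K)
\]
is a left-invariant continuous pseudo-metric on $G$. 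Since $\pi : G \twoheadrightarrow G/K$ has compact kernel $K$, Lemma~\ref{KimagedeK} combined with compactness of $K$ implies that $\pi^{-1}$ of a relatively compact set is relatively compact (write $\pi^{-1}(C) = C'K$ for some compact lift $C'$); hence $d_G$-balls are relatively compact. Local boundedness follows from the fact that $d_{G/K}$ is compatible, so large enough $d_{G/K}$-balls around $1K$ are neighbourhoods of $1K$, and their $\pi$-preimages are neighbourhoods of $1$ in $G$.

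For $\text{(ii)}\Rightarrow\text{(iv)}$, given the adapted continuous pseudo-metric $d_G$ from the previous step, I would simply define
\[
d(g, g') \, := \, d_G(g, g') \, + \, \mathbf{1}_{\{g \ne g'\}}.
\]
The second summand is the bi-invariant $\{0,1\}$-valued metric (one checks the triangle inequality directly: if $gg'' \ne g$, then $g \ne g'$ or $g' \ne g''$). Hence $d$ is a left-invariant metric, generally not continuous. For $R \ge 1$, the $d$-ball of radius $R$ around $1$ coincides with $\{h \in G \mid d_G(1,h) \le R-1\}$, which is relatively compact and, for $R$ large enough, a neighbourhood of $1$ since $d_G$ is adapted; thus $d$ is adapted.

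The main obstacle is the implication $\text{(i)}\Rightarrow\text{(ii)}$, which is not combinatorial in nature but rests on the full force of the three metrizability theorems (Birkhoff-Kakutani, Kakutani-Kodaira, Struble) quoted earlier; in particular, a non-metrizable $\sigma$-compact LC-group (such as an uncountable product of finite groups) has no continuous adapted metric, so one must genuinely pass through the compact normal subgroup $K$ produced by Kakutani-Kodaira. The remaining steps are either formal or, in the case of $\text{(ii)}\Rightarrow\text{(iv)}$, handled by the elementary but useful observation that adding the discrete metric $\mathbf{1}_{\{\ne\}}$ forces separation of points without destroying the adapted character of the pseudo-metric.
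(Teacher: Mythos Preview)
Your proof is correct and follows essentially the same route as the paper: the key implication $\text{(i)}\Rightarrow\text{(ii)}$ is obtained exactly as you do, via Kakutani--Kodaira to get a compact normal $K$ with $G/K$ metrizable, then Struble on $G/K$, then pullback; and the passage from pseudo-metric to metric uses the same $d + \mathbf{1}_{\{g\ne g'\}}$ trick (the paper goes $\text{(iii)}\Rightarrow\text{(iv)}$ directly rather than $\text{(ii)}\Rightarrow\text{(iv)}$, but this is immaterial). Your $\text{(iii)}\Rightarrow\text{(i)}$ via the exhaustion $G=\bigcup_n \overline{B_n}$ is what the paper records as Remark~\ref{remonmetrics}(6).
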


\begin{proof}
Let $G$ be a $\sigma$-compact LC-group.
There exists a compact normal subgroup $K$ of $G$
such that $G/K$ is metrizable (Kakutani-Kodaira Theorem \ref{KK}). 
There exists on $G/K$ a metric, say $\underline d$,
that is left-invariant, proper, and compatible
(Struble Theorem \ref{Struble}),
equivalently that is adapted and continuous
(Proposition \ref{contpropermet}).
Then $d : G \times G \longrightarrow \R_+$, 
$(g,h) \longmapsto \underline d(gK,hK)$,
is a pseudo-metric on $G$ which is adapted and continuous.
This shows Implication \ref{iDEexistenceam} $\Rightarrow$ \ref{iiDEexistenceam}.
\par
Implication \ref{iiDEexistenceam} $\Rightarrow$  \ref{iiiDEexistenceam} is trivial.
For \ref{iiiDEexistenceam} $\Rightarrow$ \ref{ivDEexistenceam}, 
let $d$ is an adapted pseudo-metric on $G$;
define $d_+$ by $d_+(g,h) = d(g,h) + 1$ if $g \ne h$ and $d_+(g,g) = 0$;
then $d_+$ is an adapted metric on $G$.
\par
We have already observed that \ref{ivDEexistenceam} implies \ref{iDEexistenceam}; 
see Remark  \ref{remonmetrics}(6).
\end{proof}

\begin{rem}
\label{remoncontadapmetrics}
(1)
Recall Struble Theorem \ref{Struble}, according to which
the following two properties are equivalent for an LC-group $G$:
\begin{enumerate}[noitemsep,label=(\roman*)]
\addtocounter{enumi}{4}
\item\label{vDEremoncontadapmetrics}
$G$ is second-countable;
\item\label{viDEremoncontadapmetrics}
there exists a left-invariant proper compatible metric on $G$
(i.e., there exists an adapted continuous metric on $G$, 
by Proposition \ref{contpropermet}).
\end{enumerate}

\vskip.2cm

(2)
To illustrate Properties \ref{iiDEexistenceam}, \ref{ivDEexistenceam}, 
and \ref{viDEremoncontadapmetrics} above, 
let us consider
a non-metrizable compact group $K$,
for example a direct product of 
uncountably many copies of a non-trivial compact group.
\index{Product of groups}
Then:
\begin{enumerate}[noitemsep,label=(\alph*)]
\item\label{aDEremoncontadapmetrics}
there exist adapted continuous pseudo-metrics on $K$,
for example that defined by $d(g,h) = 0$ for all $g,h \in K$;
\item\label{bDEremoncontadapmetrics}
there does not exist any adapted continuous metric on $K$,
because $K$ is not second-countable;
\item\label{cDEremoncontadapmetrics}
there exist adapted metrics on $K$,
for example that defined by $d(g,h) = 1$ for all $g,h \in K$, $g \ne h$.
\end{enumerate}
\index{Compact group! metric, pseudo-metric}

\vskip.2cm

(3)
Beware: as \ref{cDEremoncontadapmetrics} above shows:
\begin{itemize}
\item
\textbf{an LC-group with an adapted metric need not be metrizable.}
\end{itemize}
(This also holds for geodesically adapted metrics,
as defined below in \ref{geodadapted}.)
Moreover, we insist on the following facts: 
\begin{enumerate}[noitemsep,label=(\alph*)]
\addtocounter{enumi}{3}
\item\label{dDEremoncontadapmetrics}
an adapted (pseudo-)metric need not be continuous
(Remark \ref{remonmetrics}(1));
\item\label{eDEremoncontadapmetrics}
a topological group that has a proper locally bounded pseudo-metric
is locally compact and $\sigma$-compact (Remark \ref{remonmetrics}(6)); 
\item\label{fDEremoncontadapmetrics}
on a locally compact group, a metric is adapted and continuous
if and only if it is left-invariant, proper, and compatible
(Proposition \ref{contpropermet});
\item\label{gDEremoncontadapmetrics}
a left-invariant proper metric
need not be locally bounded (Remark \ref{remonwordmetric}(5)).
\end{enumerate}

\vskip.2cm

(4)
Let $G$ be an LC-group and $d$ an adapted pseudo-metric on $G$
(as for example in Proposition 4.A.2(iii)).
Set $K = \{ g \in G \mid d(1,g) = 0 \}$. Then:
\begin{enumerate}[noitemsep,label=(\alph*)]
\addtocounter{enumi}{7}
\item\label{hDEremoncontadapmetrics}
$K$ is a relatively compact subgroup of $G$;
\addtocounter{enumi}{1}
\item\label{jDEremoncontadapmetrics}
the diameter of $(G,d)$ is finite if and only if the group $G$ is compact.
\end{enumerate}
In particular, if $d$ is moreover continuous, 
$K$ is a compact subgroup of $G$.
\end{rem}

\begin{cor}
\label{CorollaryKK}
On every LC-group, there exists a continuous left-invariant pseudo-metric
with respect to which balls of radius at most $1$ are compact.
\end{cor}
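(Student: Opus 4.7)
The plan is to apply Lemma~\ref{lemBKetKK} directly, with a sequence $(K_n)_{n\in\mathbf{Z}}$ tailored so that balls of small radius land in a prescribed compact set. The key observation is that Corollary~\ref{CorollaryKK} makes no $\sigma$-compactness assumption on $G$, and only asks that balls of radius $\le 1$ (not arbitrarily large radius) be compact; so we do not need to exhaust $G$ by compact sets, and can use $G$ itself for the terms $K_n$ with large~$n$.

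Concretely, I would first pick a compact symmetric neighbourhood $K_1$ of $1$ in $G$, which exists because $G$ is locally compact. I would then set $K_n = G$ for every $n \ge 2$, so that $K_n K_n K_n \subset K_{n+1}$ holds automatically for $n \ge 1$ and the union $\bigcup_{n\in\mathbf{Z}} K_n$ equals $G$. For $n \le 0$ I would construct $K_n$ inductively downward: given $K_{n+1}$, the continuity of the group law at~$1$ guarantees a symmetric open neighbourhood $K_n$ of $1$ with $K_n K_n K_n \subset K_{n+1}$ (in particular, at the first step, use that $K_1$ is a neighbourhood of~$1$ to produce~$K_0$).

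With this sequence in hand, every $K_n$ is a symmetric neighbourhood of~$1$ containing~$1$, the inclusions $K_n K_n K_n \subset K_{n+1}$ hold for all $n \in \mathbf{Z}$, and the union is $G$. Lemma~\ref{lemBKetKK}\ref{1DElemBKetKK} then supplies a left-invariant pseudo-metric $d$ on~$G$, and Lemma~\ref{lemBKetKK}\ref{2DElemBKetKK} (applicable since each $K_n$ is a neighbourhood of~$1$) ensures $d$ is continuous. Finally, Lemma~\ref{lemBKetKK}\ref{3DElemBKetKK} with $n = 1$ gives the containment
\begin{equation*}
\{ g \in G \mid d(1,g) < 2 \} \, \subset \, K_1 ,
\end{equation*}
and in particular $\{ g \in G \mid d(1,g) \le 1 \}$ is a closed (by continuity of~$d$) subset of the compact set~$K_1$, hence compact.

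There is essentially no obstacle here beyond checking that the hypotheses of Lemma~\ref{lemBKetKK} apply; the only subtlety to watch for is ensuring continuity of~$d$, which requires every $K_n$ (including those with $n \le 0$) to be a neighbourhood of~$1$, and this is exactly what the downward inductive construction provides. No appeal to $\sigma$-compactness, to van~Dantzig's theorem, or to the Kakutani-Kodaira theorem is needed.
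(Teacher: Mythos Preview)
Your proof is correct and takes a genuinely more direct route than the paper's. The paper first passes to a compactly generated open subgroup $U$ of $G$ (via Proposition~\ref{powersSincpgroup}), invokes Proposition~\ref{existenceam} to obtain an adapted continuous pseudo-metric on $U$ --- which in turn rests on the Kakutani--Kodaira and Struble theorems --- rescales it to be bounded by $2$, and then extends to $G$ by declaring the pseudo-metric to equal $2$ outside $U$. Your argument bypasses all of this: by choosing $K_n = G$ for $n \ge 2$, you sidestep the need for $\sigma$-compactness or any exhaustion by compact sets, and Lemma~\ref{lemBKetKK} applies directly. What you gain is a self-contained proof that depends only on local compactness and the elementary Lemma~\ref{lemBKetKK}; what the paper's approach buys is a pseudo-metric that is actually \emph{adapted} (proper) on the open subgroup $U$, which is slightly more than the corollary asks for but is not used here.
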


\begin{proof}
Let $G$ be an LC-group, 
and $U$ a compactly generated open subgroup of $G$ (Proposition \ref{powersSincpgroup}).
\index{Open subgroup! compactly generated}
Let $d$ be an adapted continuous pseudo-metric on $U$ (Proposition \ref{existenceam});
upon replacing $d$ by $2d/(1+d)$, 
we can assume that $d(u_1,u_2) < 2$ for all $u_1, u_2 \in U$.
Define a pseudo-metric $d_G$ on $G$ by
$d_G(g_1, g_2) = d(1, g_1^{-1}g_2)$ if $g_1^{-1}g_2 \in U$ 
and $d_G(g_1,g_2) = 2$ otherwise.
The pseudo-metric $d_G$ is continuous, and its balls of radius $<2$ are compact.
\end{proof}

\begin{prop}[continuous homomorphisms of $\sigma$-compact LC-groups 
as coarse morphisms]
\label{morphisms_grps_spaces}
For $j=1,2$,
let $G_j$ be a $\sigma$-compact LC-group
and $d_j$ an adapted pseudo-metric on it. 
Let $f : G_1 \longrightarrow G_2$ be a continuous homomorphism.
Then:
\begin{enumerate}[noitemsep,label=(\arabic*)]
\item\label{1DEmorphisms_grps_spaces}
the map $f : (G_1, d_1) \longrightarrow (G_2, d_2)$ is coarsely Lipschitz;
\item\label{2DEmorphisms_grps_spaces}
$f$ is proper if and only if the map $f : (G_1, d_1) \longrightarrow (G_2, d_2)$ 
is coarsely expansive.
\end{enumerate}
\end{prop}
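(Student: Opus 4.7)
The plan is to reduce both claims, via left-invariance of $d_1$ and $d_2$, to statements about the function $g \mapsto d_2(1, f(g))$ on $G_1$ in terms of $g \mapsto d_1(1, g)$. For any $x, x' \in G_1$, we have $d_2(f(x), f(x')) = d_2(1, f(x^{-1}x'))$ and $d_1(x, x') = d_1(1, x^{-1}x')$, so being coarsely Lipschitz, respectively coarsely expansive, amounts to having an upper, respectively lower, control on $d_2(1, f(g))$ in terms of $d_1(1, g)$. The key input from Chapter 2 is the reformulation in Remark \ref{remonmetrics}(5): on an LC-space, a pseudo-metric is locally bounded if and only if every compact subset has finite diameter. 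This will be the main technical lever.

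For (1), define $\Phi_+(c) := \sup\{d_2(1, f(g)) \mid g \in G_1, \, d_1(1, g) \le c\}$ for $c \ge 0$; it suffices to check $\Phi_+(c) < \infty$. Since $d_1$ is proper, the ball $B := \{g \in G_1 \mid d_1(1,g) \le c\}$ has compact closure in $G_1$; by continuity of $f$, the set $f(\overline{B}) \cup \{1\}$ is compact in $G_2$; by local boundedness of $d_2$ together with local compactness of $G_2$, this compact set has finite $d_2$-diameter, bounding $\Phi_+(c)$.

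For (2), I would use the sequential characterization of coarse expansiveness (Proposition \ref{reformulation_c_et_ce}(2)): by left-invariance, $f_d$ is coarsely expansive if and only if, for every sequence $(g_n)$ in $G_1$ with $d_1(1, g_n) \to \infty$, one has $d_2(1, f(g_n)) \to \infty$. Properness of $d_1$ (resp.\ $d_2$) translates these conditions topologically: $d_1(1, g_n) \to \infty$ is equivalent to $(g_n)$ eventually leaving every compact subset of $G_1$, and similarly for $(f(g_n))$ in $G_2$. With this dictionary, the direction $f$ proper $\Rightarrow f_d$ coarsely expansive is a contrapositive: if $d_2(1, f(g_n))$ admits a bounded subsequence, then the corresponding $f(g_{n_k})$ lie in a compact set $L \subset G_2$, so $g_{n_k} \in f^{-1}(L)$, which is relatively compact by properness of $f$, contradicting $d_1(1, g_{n_k}) \to \infty$.

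For the converse direction, given a compact $L \subset G_2$, local boundedness of $d_2$ (via the compact-sets-have-finite-diameter reformulation) gives a bound $D$ on the $d_2$-diameter of $L \cup \{1\}$; a lower control $\Phi_-$ for $f_d$ then yields $\Phi_-(d_1(1, g)) \le d_2(1, f(g)) \le D$ for every $g \in f^{-1}(L)$, and since $\Phi_-(t) \to \infty$, this bounds $d_1(1, g)$ uniformly on $f^{-1}(L)$, placing it inside a relatively compact $d_1$-ball. The main obstacle is really only the careful use of local boundedness, which is where local compactness of the target enters, and the systematic recasting of everything at the identity via left-invariance; once these are in place, the rest is bookkeeping.
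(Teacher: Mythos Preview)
Your proof is correct and follows essentially the same approach as the paper: reduce via left-invariance to statements at the identity, use properness of $d_1$ to make balls relatively compact, continuity of $f$ to carry this over, and local boundedness of $d_2$ (via Remark~\ref{remonmetrics}(5)) to bound the image. The paper proves only the direction ``$f_d$ coarsely expansive $\Rightarrow f$ proper'' of (2) explicitly and leaves the converse to the reader, whereas you spell out both directions; your sequential contrapositive for the converse is a perfectly good way to fill that gap.
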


\begin{proof}
\ref{1DEmorphisms_grps_spaces} 
Let $R_1 > 0$.
The ball $B_1 := \{g \in G_1 \mid d_1(1,g) \le R_1 \}$
is relatively compact because $d_1$ is proper.
Its image $f(B_1)$ is relatively compact because $f$ is continuous.
There exists $R_2 > 0$ such that
$B_2 := \{g \in G_2 \mid d_2(1,g) \le R_2 \}$
contains $f(B_1)$ because $d_2$ is locally bounded 
(Remark  \ref{remonmetrics}(5)). 
By the first part of Proposition  \ref{reformulation_c_et_ce},
and because $d_1,d_2$ are left-invariant,
it follows that $f$ is coarsely Lipschitz.
\par

\ref{2DEmorphisms_grps_spaces} 
Suppose that $f$ is coarsely expansive.
Let $\Phi_-$ be a continuous lower control for $f$
(see Definition \ref{defuclc}).
Consider a compact subset $L$ of $G_2$.
There exists $R_2 > 0$ such that $L \subset B_2$,
where $B_2$ is defined as in the proof of \ref{1DEmorphisms_grps_spaces}.
Set $R_1 = \inf \{ R \ge 0 \mid \Phi_-(R) \ge R_2 \}$.
Since $\Phi_-(d_1(1,g)) \le d_2(1,f(g))$ for all $g \in G_1$,
we have $f^{-1}(L) \subset f^{-1}(B_2) \subset B_1$,
so that $f^{-1}(L)$ is compact. Hence $f$ is proper.
\par

We leave the checking of the converse implication to the reader.
\end{proof}

\begin{cor}[uniqueness of adapted pseudo-metrics up to metric coarse equivalence]
\label{2metricsce}
Let $G$ be a $\sigma$-compact LC-group,
$H$ a closed subgroup,
$d_G, d'_G$ two adapted pseudo-metrics on $G$,
and $d_H$ an adapted pseudo-metric on $H$.
\begin{enumerate}[noitemsep,label=(\arabic*)]
\item\label{1DE2metricsce}
The inclusion $(H,d_H)  \lhook\joinrel\relbar\joinrel\rightarrow (G,d_G)$
is a coarse embedding.
\item\label{2DE2metricsce}
The identity, viewed as a map $(G,d_G) \longrightarrow (G,d'_G)$,
is a metric coarse equivalence.
\end{enumerate}
\index{Coarse! embedding}
\end{cor}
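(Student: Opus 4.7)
The plan is to deduce both assertions directly from Proposition \ref{morphisms_grps_spaces}, which already converts topological information (continuity, properness) into metric information (coarsely Lipschitz, coarsely expansive) for continuous homomorphisms between $\sigma$-compact LC-groups equipped with adapted pseudo-metrics.

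For \ref{1DE2metricsce}, I would first note that $H$, as a closed subgroup of the $\sigma$-compact LC-group $G$, is itself a $\sigma$-compact LC-group (by Proposition \ref{stababcd}\ref{1DEstababcd}), so the hypothesis on $d_H$ makes sense. The inclusion $i : H \lhook\joinrel\relbar\joinrel\rightarrow G$ is a continuous homomorphism; it is also proper as a map of topological spaces, because for any compact $L \subset G$ the preimage $i^{-1}(L) = L \cap H$ is closed in the compact set $L$, hence compact. Applying Proposition \ref{morphisms_grps_spaces} to $i$ with the chosen pseudo-metrics $d_H$ and $d_G$, we conclude that $i$ is both coarsely Lipschitz (by \ref{1DEmorphisms_grps_spaces}) and coarsely expansive (by \ref{2DEmorphisms_grps_spaces}), i.e.\ a coarse embedding in the sense of Definition \ref{defcoarse}\ref{cDEdefcoarse}.

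For \ref{2DE2metricsce}, I would apply the same proposition to the identity homomorphism $\textnormal{id}_G : G \longrightarrow G$, which is trivially continuous and proper, but viewed as a map from $(G, d_G)$ to $(G, d'_G)$. Proposition \ref{morphisms_grps_spaces}\ref{1DEmorphisms_grps_spaces} gives that $\textnormal{id}_G$ is coarsely Lipschitz, and \ref{2DEmorphisms_grps_spaces} gives that it is coarsely expansive. Since $\textnormal{id}_G$ is surjective, it is a fortiori essentially surjective (Definition \ref{defcoarse}\ref{dDEdefcoarse}), so all three conditions of Definition \ref{defcoarse}\ref{eDEdefcoarse} are met.

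There is essentially no obstacle: the content has been packaged into Proposition \ref{morphisms_grps_spaces}, and the corollary is a two-line bookkeeping exercise. The only small point worth flagging is the verification that $i : H \hookrightarrow G$ is proper as a continuous map of topological spaces, which uses precisely that $H$ is \emph{closed} in $G$; without this hypothesis the conclusion in \ref{1DE2metricsce} would fail (even the restriction of $d_G$ to a non-closed subgroup may fail to be proper in the subspace sense).
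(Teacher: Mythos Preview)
Your proof is correct and follows exactly the paper's approach: both claims are immediate applications of Proposition \ref{morphisms_grps_spaces} to the inclusion $H \hookrightarrow G$ (continuous and proper since $H$ is closed) and to the identity on $G$. The paper's own proof is a terse two-line version of what you have written, with your added details on $\sigma$-compactness of $H$ and properness of the inclusion being precisely the routine verifications one should supply.
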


\begin{proof}
The inclusion homomorphism $H  \lhook\joinrel\relbar\joinrel\rightarrow G$ 
is continuous and proper.
Hence Claim \ref{1DE2metricsce}
is a particular case of Proposition \ref{morphisms_grps_spaces}.
Claim \ref{2DE2metricsce} follows.
\end{proof}

In combination with Proposition \ref{re_asymptoticdimension_n} on asymptotic dimension, 
we deduce that $\mathrm{asdim}(G, d_G) = \mathrm{asdim}(G, d'_G)$,
and in particular we denote by $\mathrm{asdim}(G)$
this \textbf{asymptotic dimension} of the $\sigma$-compact LC-group $G$.
\index{Asymptotic dimension! $\mathrm{asdim}(G)$ 
of a $\sigma$-compact LC-group $G$|textbf}
Again, the same proposition yields:

\begin{cor}
\label{asymptoticdimsubggroup}
Let $G$ be a $\sigma$-compact LC-group and $H$ a closed subgroup. 
Then $\mathrm{asdim}(H) \le \mathrm{asdim}(G)$.
\end{cor}

Propositions \ref{existenceam} \& \ref{morphisms_grps_spaces} and
Corollary \ref{2metricsce} mark a first important step in our exposition.
They show:

\begin{miles}
\label{miles_sigmacompact}
\index{Milestone}
\index{Sigma-compact! LC-group, milestone}
\begin{enumerate}[noitemsep,label=(\arabic*)]
\item\label{1DEmiles_sigmacompact}
On every $\sigma$-compact LC-group,
\index{Adapted pseudo-metric}
there exists an adapted pseudo-metric
(indeed there exists an adapted metric)
that makes it an object in the metric coarse category,
well-defined up to  metric coarse equivalence.
\item\label{2DEmiles_sigmacompact}
Any continuous homomorphism between such groups
can be viewed as a coarsely Lipschitz map.
\end{enumerate}
\index{Metric coarse! equivalence}
\index{Coarsely! Lipschitz map}
\end{miles}

\begin{rem}
\label{PropertyPinvce}
Let $\mathcal P$ be a property of pseudo-metric spaces 
that is invariant by metric coarse equivalence.
\index{Property! of a pseudo-metric space invariant 
by metric coarse equivalence or by quasi-isometries}
By \ref{miles_sigmacompact},
it makes sense to write that
a $\sigma$-compact LC-group \textbf{has Property $\mathcal P$}
\index{Property! of a $\sigma$-compact LC-group invariant by metric coarse equivalence}
when the pseudo-metric space $(G,d)$ has Property $\mathcal P$
for $d$ an adapted pseudo-metric on~$G$.
Examples of such properties include:
\index{Geometric property}

\vskip.2cm

(1)
Coarse connectedness (Proposition \ref{invariance_cg_lsg}), related to
compact generation for groups (Proposition \ref{metric_char_cg}).

\vskip.2cm

(2)
Coarse simple connectedness (Proposition \ref{coarse1conninvbycoarseeq}), 
related to
compact presentation for groups (Corollary \ref{qinv}).

\vskip.2cm

(3)
Asymptotic dimension
(Proposition \ref{re_asymptoticdimension_n}),
and in particular asymptotic dimension $0$, related to
local ellipticity for groups (Proposition \ref{equivalentlocalelliptic}),

\vskip.2cm

(4)
Being coarsely embeddable in some kind of spaces,
such as Hilbert spaces, or uniformly convex Banach spaces.
\index{Hilbert space}

\vskip.2cm

(5)
G. Yu's \textbf{Property A}, \index{Property! A of Yu}
for \emph{uniformly locally finite metric spaces}.
This definition has been useful in relation with the Baum-Connes conjecture.
We refer to \cite{Yu--00} for its definition
and to \cite[Proposition 4.2]{Tu--01} for its invariance by coarse equivalence.
There is a relation to (4): a uniformly locally finite metric space that has Property A
can be coarsely embedded into a Hilbert space \cite[Theorem 2.2]{Yu--00};
the converse does not hold \cite{ArGS--12}.

\vskip.2cm

(6)
For $\sigma$-compact LC-groups,
the property of being both amenable and unimodular is invariant 
by metric coarse equivalence.
See Section \ref{sectionamenableLCgroups}.

\vskip.2cm

(7)
This section has shown how to associate 
to an appropriate LC-group $G$ a well-defined object in the metric coarse category.
This has been extended by C.~Rosendal to other metrizable groups, 
not necessarily locally compact;
we refer to \cite{Rose--a, Rose--b}.
\end{rem}

\begin{exe}
(1)
Let us define adapted metrics on the countable abelian groups $\Q / \Z$ and $\Q$.
\par
Every $q \in \Q$ can be written in a unique way as $q = \frac{a}{n!}$
with $a \in \Z$, $n \in \N$, $n \ge 1$, and $n$ minimal;
for example: $7 = \frac{7}{1!}$, $2/5 = \frac{48}{5!}$, and $0 = \frac{0}{1!}$.
Set $\delta (q) = n-1$. Observe that 
$\{q \in \Q \mid \delta(q) = 0 \} = \Z$.
\par
For $x,x' \in \Q / \Z$, choose representatives $q,q' \in \Q$ of $x,x'$ 
and set $d_{\Q / \Z}(x,x') = \delta(q'-q)$.
We leave it to the reader to check that $d_{\Q / \Z}$ 
is an adapted metric on $\Q / \Z$,
and that $\{ x \in \Q / \Z \mid d_{\Q / \Z}(0,x) \le n-1 \} = \big(\frac{1}{n!}\Z\big) /\Z$ for all $n \ge 1$.
\par
For $q,q' \in \Q$, set $d_\Q(q,q') = d_{\Q / \Z}(\pi(q), \pi(q')) + \vert q'-q \vert$,
where $\pi$ denotes the canonical projection of $\Q$ onto $\Q / \Z$.
Then $d_\Q$ is an adapted metric on $\Q$.

\vskip.2cm

(2)
For any prime $p$, a similar definition provides an adapted metric $d$
on the Pr\"ufer group $\Z [1/p]/\Z$.
For $q \in \Z[1/p]$  written as $q = \frac{a}{p^k}$
with $a \in \Z$, $k \in \N$, and $k$ minimal,
set $\delta(q) = k$. Observe that
$\{q \in \Z[1/p] \mid \delta(q) = 0 \} = \Z$.
For $x,x' \in \Z[1/p] / \Z$, choose representatives $q,q' \in \Z[1/p]$
and set $d(x,x') = \delta(q'-q)$.

\vskip.2cm

(3)
Let  $\Gamma$ be a countable group.
By the HNN embedding theorem, already quoted in Remark \ref{problem},
there exist a group $\Delta$ generated by a finite subset $S$
(which can be chosen to consist of two elements)
and an injective homomorphism $j : \Gamma \longrightarrow \Delta$.
If $d_S$ is the word metric on $\Delta$ with respect to $S$
(see Definition \ref{geodadapted} below),
the distance $d$ defined on $\Gamma$ by
$d(g,h) = d_S(j(g), j(h))$ is adapted.

\vskip.2cm

(4)
Let $\K$ be a local field and $\vert \cdot \vert_{\K}$
an absolute value; 
see Example \ref{panoramalocalfield} and Remark \ref{onabsolutevalues}.
Then $d : \K \times \K \longrightarrow \R, \hskip.2cm (x,x') \longmapsto \vert x'-x \vert$
is an adapted metric on the additive group of $\K$.
\end{exe}

\section[Pseudo-metrics on compactly generated groups]
{Geodesically adapted (pseudo-)metrics and compact generation}
\label{geodesicallyadaptedpseudometricspaces}

\begin{defn}
\label{geodadapted}
On a topological group $G$, 
a pseudo-metric $d$ is \textbf{geodesically adapted} 
\index{Geodesically adapted pseudo-metric|textbf}
if it is adapted and large-scale geodesic 
(see Definitions \ref{metricadapted} and \ref{defcoarselyconn}). 
\end{defn}

\begin{defn}
\label{wordmetric}
Let $G$ be a group
and $S$ a generating set of $G$.
The \textbf{word metric} 
\index{Word metric|textbf}
defined by $S$ on $G$ is defined by
\begin{equation*}
d_S(g,h) = \min \left\{ 
n \ge 0 \hskip.1cm \Bigg\vert \hskip.1cm
\aligned
& \exists \hskip.2cm s_1, \hdots, s_n \in S \cup S^{-1} 
\\
&\text{such that} \hskip.2cm
g^{-1}h = s_1 \cdots s_n 
\endaligned
\right\} .
\end{equation*}
The corresponding \textbf{word length}, or \textbf{$S$-length},
\index{Word length|textbf}
is defined by
\begin{equation*}
\ell_S \, : \, G \longrightarrow \N , \hskip.5cm
g \longmapsto d_S(e,g) .
\end{equation*}
\par

Occasionally, if $\pi : F_S \longrightarrow G$ is the canonical projection
associated to the generating set $S$, we denote also by $\ell_S$
the word length $F_S \longrightarrow \N$.
Note that $\ell_S(g) =  \min \{ \ell_S(w) \mid w \in \pi^{-1}(g) \}$
for all $g \in G$.
\end{defn}

\begin{rem}
\label{remonwordmetric}
(1)
Let $G$ be a group and $S$ a generating set of $G$.
The word metric $d_S$ is left-invariant and $1$-geodesic;
in particular, it is large-scale geodesic.
\par
Suppose moreover that $G$ is locally compact and $S$ compact.
Then $d_S$ is geodesically adapted on $G$.

\vskip.2cm

(2)
Poincar\'e has defined a word length 
in a finitely generated Fuchsian group \cite[Page 11]{Poin--82},
using ``exposant d'une substitution'' $g$ for our $\ell_S(g)$.
But this does not re-appear later in the article.
\par
Word metrics were later used more systematically by Max Dehn, 
see \cite{Dehn--11} and \cite[Pages 130 and 143]{DeSt--87}.
Let $\Gamma$ be the fundamental group of a closed surface
and $S$ a finite generating set of $\Gamma$.
Using the word length $\ell_S$ on $\Gamma$, 
shown to be quasi-isometric to a metric coming from hyperbolic geometry,
Dehn has established that the ``conjugacy problem'' is solvable in $\Gamma$.

\vskip.2cm

(3)
An adapted metric need not be geodesically adapted.
For example, let $G$ be a compactly generated LC-group
and $d$ a geodesically adapted metric on $G$ (Proposition \ref{geodad+wordmetric}).
Then $\sqrt d$ is an adapted metric on $G$,
and $\sqrt d$ is geodesically adapted if and only if the diameter of $(G,d)$ is finite,
if and only if the group $G$ is compact.
The same holds for $\ln (1+d)$.

\vskip.2cm

(4)
In the circle group $\R / \Z$,
there exists a generating set $T$ such that the word metric $d_T$
makes $\R / \Z$ a metric space of infinite diameter.
Indeed, there exists a (non-continuous) group isomorphism 
$u : \mathbf{R}/\mathbf{Z} \longrightarrow \mathbf{R}\times\mathbf{Q}/\mathbf{Z}$. 
Then the word length in $\mathbf{R}/\mathbf{Z}$ 
with respect to the generating subset $u^{-1}([-1,1] \times \mathbf{Q}/\mathbf{Z})$ 
is unbounded, proper and not locally bounded.

\vskip.2cm

(5)
On a topological group, a word metric with respect to a compact generating subset
need not be locally bounded.
For example, let $F$ be a dense subgroup of $\SO(3)$
\index{Orthogonal group! $\textnormal{O}(n)$, $\SO (n)$}
that is free of rank $2$, 
endowed with the topology induced by that of $\SO(3)$;
note that this topological group is not locally compact.
Let $S = \{s_1, s_2\}$ be a free basis of $F$.
The word metric $d_S$ on $F$ is not locally bounded.
\par
\index{Free group}
\index{Free subgroup}

Indeed, let $U$ be a neighbourhood of $1$ in $F$.
Consider a sequence $\{\gamma_n\}_{n \ge 1}$ of pairwise distinct points in $F$
converging to $1$ in $\SO(3)$.
For every $k \ge 1$, the $d_S$-diameter of $\{\gamma_n\}_{n \ge k}$ is infinite.
Since this set is contained in $U$ for $k$ large enough,
the $d_S$-diameter of $U$ is infinite.
Hence $d_S$ is not locally bounded.

\vskip.2cm

(6)
Consider a positive integer $k$, the free group $F_k$, and a group word $w \in F_k$.
For a group $G$, the corresponding \textbf{verbal map} is defined as
\index{Verbal map|textbf}
$$
\Phi_w : G^k \longrightarrow G, \hskip.1cm 
(g_1, \hdots, g_k) \longmapsto w(g_1, \hdots, g_k) .
$$
Beware that $\Phi_w$ is generally not a homomorphism.
Denote by $G_w$ the image of $\Phi_w$.
Let $\langle G_w \rangle$ denote the subgroup of $G$ generated by $G_w$,
and $d_w$ the $G_w$-word metric on $\langle G_w \rangle$.
For an integer $n \ge 0$, the word $w$ has
\textbf{width at most $n$ in $G$} if
\index{Width of a word in a group|textbf}
$d_w(1,g) \le n$ for all $g \in \langle G_w \rangle$;
thus, the width of $w$ in $G$ 
is the diameter of the metric space $(\langle G_w \rangle, d_w)$.
The word $w$ is \textbf{silly} if either $w = 1 \in F_k$
or $G_w \cup G_w^{-1} = G$ for every group $G$.
For example, $x \in F_1$ and $yxyzy \in F_3$ are silly, 
and $x^{-1}y^{-1}xy \in F_2$ is non-silly.
\par
The question of finite width of words in groups has received a lot of attention.
Rhemthulla has shown that a non-silly word has infinite width
in every non-abelian free group  \cite{Rhem--68}; see also \cite{Sega--09}.

\vskip.2cm

(7)
If we specify the verbal map of the previous paragraph to the special case of 
$w = x^{-1}y^{-1}xy$, 
the distance $d_w(1,g)$ is the \textbf{commutator length}
\index{Commutator! length|textbf}
of $g \in G_w = [G,G]$ (notation of Definition \ref{defcommnilp}).
Let $c_n$ denote the supremum of the commutator lengths of elements in $\SL_n(\Z)$.
Then $c_n$ is finite for all $n \ge 3$,  $c_2 = \infty$, and $c_3 \le 43$ \cite{Newm--87}.
Commutator lengths are nicely discussed in \cite{Bava--91}. 
\par
In 1951, Ore conjectured that, in a non-abelian finite simple group $G$,
every element is a commutator, i.e., the commutator width of $G$ is $1$.
The conjecture has recently been established in \cite{LOSP--10}.

\vskip.2cm

(8) 
Let $G$ be an LC-group and $H$ a closed subgroup.
Assume that $G$ has an adapted metric $d$;
the restriction of $d$ to $H$ is an adapted metric on $H$.
\par
In particular, let $H$ be a countable group.
By the HNN embedding theorem (see Remark \ref{problem}),
\index{HNN! embedding theorem}
\index{Theorem! HNN embedding}
there exists a finitely generated group $G$ (indeed a $2$-generated group)
containing $H$ as a subgroup.
Let $S \subset G$ be a finite generating set;
the corresponding word metric $d_S$ is geodesically adapted on $G$.
The restriction $d$ of $d_S$ to $H$ is an adapted metric,
nevertheless it need not be geodesically adapted;
for example, by Proposition \ref{geodad+wordmetric} below,
$d$ is not geodesically adapted when $H$ is not finitely generated.

\vskip.2cm

(9) 
Let $G$ be a group, $S$ a generating set, 
and $w : S \cup S^{-1} \longrightarrow \R_+^\times$ a \textbf{weight function}, 
\index{Weight on a generating set}
such that $w(s^{-1}) = w(s)$ for all $s \in S \cup S^{-1}$.
As a variation on Definition \ref{wordmetric}, 
one has a ``weighted word metric'' on $G$ by
\begin{equation*}
d_{S,w}(g,h) = \inf \left\{ 
\sum_{i=1}^n w(s_i) \hskip.1cm \Bigg\vert \hskip.1cm
\aligned
& \exists \hskip.2cm s_1, \hdots, s_n \in S \cup S^{-1} 
\\
&\text{such that} \hskip.2cm
g^{-1}h = s_1 \cdots s_n 
\endaligned
\right\} .
\end{equation*}
This has been useful, for example to estimate word growth
of finitely generated groups \cite{Bart--98}.
\end{rem}

\begin{prop}
\label{geodad+wordmetric}
Let $G$ be a topological group.
\begin{enumerate}[noitemsep,label=(\arabic*)]
\item\label{1DEgeodad+wordmetric}
If there exists an adapted pseudo-metric $d$ on $G$
such that the pseudo-metric space $(G,d)$ is coarsely connected
(in particular if $d$ is geodesically adapted),
then $G$ is locally compact and compactly generated.
\item\label{2DEgeodad+wordmetric}
Assume that $G$ is locally compact and has a compact generating set $S$. 
The word metric $d_S$ is geodesically adapted.
\item\label{3DEgeodad+wordmetric}
Let $G$ be a compactly generated LC-group, 
$S, S'$ two compact generating sets,
and $d,d'$ the corresponding word metrics.
Then the identity $(G,d)  \overset{\operatorname{id}}{\longrightarrow} (G,d')$
is a bilipschitz equivalence.
\end{enumerate}
\end{prop}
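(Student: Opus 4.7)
The plan is to dispatch the three parts in turn, each of them following directly from the definitions together with Proposition \ref{powersSincpgroup} (items \ref{4DEpowersSincpgroup} and \ref{6DEpowersSincpgroup}) which is already in hand.

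For \ref{1DEgeodad+wordmetric}, local compactness is essentially a recording of Remark \ref{remonmetrics}(6): local boundedness of $d$ gives a neighbourhood $V$ of $1$ of finite $d$-diameter $D$, so $V \subset \{g \in G \mid d(1,g) \le D\}$, and properness of $d$ then forces $V$ to have compact closure. For compact generation, I would pick a constant $c > 0$ witnessing coarse connectedness of $(G,d)$ and set $B = \{g \in G \mid d(1,g) \le c\}$; note $B$ is non-empty and, by properness, relatively compact. Given any $g \in G$, a $c$-path $1 = g_0, g_1, \ldots, g_n = g$ provided by coarse connectedness factorises as $g = (g_0^{-1}g_1)(g_1^{-1}g_2) \cdots (g_{n-1}^{-1}g_n)$, with each factor lying in $B$ by left-invariance of $d$. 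Hence $B$ generates $G$, and $\overline{B}$ is a compact generating set.

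For \ref{2DEgeodad+wordmetric}, the word metric $d_S$ is manifestly left-invariant and $1$-geodesic by construction, hence large-scale geodesic. Properness follows by identifying the closed ball of radius $n$ around $1$ with $\widehat{S}^n$, which is a continuous image of the compact Cartesian product $\widehat{S} \times \cdots \times \widehat{S}$ under iterated multiplication, and hence compact. Local boundedness is the content of Proposition \ref{powersSincpgroup}\ref{4DEpowersSincpgroup}: for $n$ large enough $\widehat{S}^n$ is a neighbourhood of $1$, and by left-invariance this neighbourhood has $d_S$-diameter at most $2n$.

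For \ref{3DEgeodad+wordmetric}, Proposition \ref{powersSincpgroup}\ref{6DEpowersSincpgroup} applied to the two compact generating sets supplies integers $k, \ell \ge 0$ with $S \subset \widehat{S'}^{\ell}$ and $S' \subset \widehat{S}^{k}$. The first inclusion, together with symmetry of $\widehat{S}$, yields $\widehat{S}^n \subset \widehat{S'}^{\ell n}$, whence $d_{S'}(g,h) \le \ell \, d_S(g,h)$ for all $g,h \in G$; the second yields the reverse inequality $d_S(g,h) \le k \, d_{S'}(g,h)$. This is precisely bilipschitz equivalence of $d$ and $d'$.

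The main obstacle is mostly bookkeeping: the genuinely non-trivial ingredients (Baire-type production of interiors of $\widehat{S}^n$ and finite relative size of any two compact generating sets) have already been packaged in Proposition \ref{powersSincpgroup}. The single step that demands a little care to phrase correctly is the use of left-invariance in \ref{1DEgeodad+wordmetric} to convert the sequence-of-points formulation of coarse connectedness into the statement that a \emph{single} relatively compact ball generates~$G$.
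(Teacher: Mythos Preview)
Your proof is correct and follows essentially the same route as the paper: the same ball in \ref{1DEgeodad+wordmetric}, the same appeal to Proposition \ref{powersSincpgroup}\ref{4DEpowersSincpgroup} for local boundedness in \ref{2DEgeodad+wordmetric}, and the same use of Proposition \ref{powersSincpgroup}\ref{6DEpowersSincpgroup} for the bilipschitz constants in \ref{3DEgeodad+wordmetric}. You simply spell out a few details the paper leaves implicit, notably the local compactness argument in \ref{1DEgeodad+wordmetric} and the compactness of $\widehat S^{\,n}$ in \ref{2DEgeodad+wordmetric}.
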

\index{Bilipschitz! bilipschitz equivalence}

\begin{proof}
\ref{1DEgeodad+wordmetric}
Suppose that $d$ is a coarsely connected adapted pseudo-metric on $G$.
Let $c$ be a constant as in Definition \ref{defcoarselyconn}\ref{aDEdefcoarselyconn}.  
Then the closure of $\{ g \in G \mid d(1,g) \le c \}$
is a compact generating set of $G$. 
\par

\ref{2DEgeodad+wordmetric}
Let $n \ge 0$ be such that ${\widehat S}^n$ is a neighbourhood of $1$
(Proposition \ref{powersSincpgroup}).
Its $d_S$-diameter is bounded by $2n$;
hence $d_S$ is locally bounded.
As $d_S$ is clearly left-invariant, proper, and large-scale geodesic,
this establishes Claim \ref{2DEgeodad+wordmetric}.
\par

\ref{3DEgeodad+wordmetric}
By Proposition \ref{powersSincpgroup}(6), 
\begin{equation*}
c \, := \, \max \Big\{ \sup_{s \in S' \cup {S'}^{-1}} d(1,s) , \hskip.1cm
\sup_{s \in S \cup {S}^{-1}} d'(1,s) \Big\}
\end{equation*}
is a finite constant. We have $d'(g,h) \le c d(g,h)$ and $d(g,h) \le c d'(g,h)$
for all $g,h \in G$.
\end{proof}

\begin{defn}
\label{defDeltaNu}
Let $X$ be a set 
and $E \subset X \times X$ a symmetric subset containing the diagonal;
we suppose that $X$ is $E$-connected (Definition \ref{defEconnected}).
\index{Connected, of a set $X$ with respect to $E \subset X \times X$
containing the diagonal}
Define
$\nu_E : X \times X \longrightarrow \N$ by
\begin{equation*}
\nu_E(x,y) \, = \, \inf \left\{ n \ge 0 \hskip.1cm \Big\vert \hskip.1cm
\aligned
&\exists \hskip.1cm x=x_0, x_1, \hdots, x_n = y \hskip.2cm \text{in} \hskip.2cm X
\\
&\text{with} \hskip.2cm
(x_{i-1},x_i) \in E   \hskip.2cm \text{for} \hskip.2cm i = 1, \hdots, n 
\endaligned
\right\} 
\end{equation*}
(compare with Remark \ref{remcoarselygeod}).
Suppose that $X$ is moreover given with a pseudo-metric $d$.
Define $\delta_{E,d} : X \times X \longrightarrow \R_+$ by
\begin{equation*}
\delta_{E,d}(x,y) \, = \, \inf \left\{ \ell \ge 0 \hskip.1cm \Bigg\vert \hskip.1cm
\aligned
&\exists \hskip.1cm x=x_0, x_1, \hdots, x_n = y \hskip.2cm \text{in} \hskip.2cm X
\\
&\text{with} \hskip.2cm
(x_{i-1},x_i) \in E   \hskip.2cm \text{for} \hskip.2cm i = 1, \hdots, n 
\\
&\text{and} \hskip.2cm
\ell = \sum_{i=1}^n d(x_{i-1},x_i) 
\endaligned
\right\} .
\end{equation*}
Note that 
\begin{enumerate}[noitemsep,label=(\alph*)]
\item\label{aDEdefDeltaNu}
$\nu_E$ and $\delta_{E,d}$ take finite values (because $X$ is $E$-connected),
\item\label{bDEdefDeltaNu}
$\nu_E$ is a metric on $X$ which is $1$-geodesic (Definition \ref{defcoarselyconn}),
\item\label{cDEdefDeltaNu}
$\delta_{E,d}$ is a pseudo-metric on $X$,
\item\label{dDEdefDeltaNu}
$d(x,y) \le \delta_{E,d}(x,y)$ for all $x,y \in X$.
\end{enumerate}
\end{defn}

For example, if $X=G$ is a group generated 
by a symmetric set $S$ containing $1$, 
and if $E := \{ (g,h) \in G \times G \mid g^{-1}h \in S \}$,
then $\nu_E$ is the word metric $d_S$ 
of Definition \ref{wordmetric}.

\begin{defn}
\label{defcCcontrolled}
Let $(X,d)$ be a pseudo-metric space and $E \subset X \times X$
a symmetric subset containing the diagonal;
we assume that $X$ is $E$-connected.
Let $\nu_E, \delta_{E,d}$ be as in Definition \ref{defDeltaNu}, 
and $c,C$ be positive constants.
The triple $(X,d,E)$ is $\textbf{$(c,C)$-controlled}$ if
\index{Controlled triple $(X,d,E)$, 
where $(X,d)$ is a pseudo-metric space 
and $E$ in $X \times X$ contains the diagonal|textbf}
\begin{equation*}
B_X^x(c) \, \subset \, \{ y \in X \mid (x,y) \in E \}  \, \subset \, B_X^x(C)
\end{equation*}
for all $x \in X$
(recall that $B_X^x(c) = \{y \in X \mid d(x,y) \le c \}$,
as in $\S$~\ref{coarselyproperandgrowth}).
The right-hand side inclusion implies:
\begin{enumerate}[noitemsep,label=(\alph*)]
\addtocounter{enumi}{4}
\item\label{eDEdefDeltaNu}
$d(x,y) \le C \nu_E(x,y)$ for all $x,y \in X$.
\end{enumerate}
Say that a sequence $(x_0, x_1, \hdots, x_n)$ of points in $X$
with $(x_{i-1},x_i) \in E$ for $i = 1, \hdots, n$ is an \textbf{$E$-path of length $n$}.
Such an $E$-path is \textbf{$c$-good} if either $n \le 1$,
or $n \ge 2$ and $d(x_{i-1}, x_i) + d(x_i, x_{i+1}) > c$ for all $i = 1, \hdots, n-1$.
Then, because of the left-hand side inclusion above, 
\begin{enumerate}[noitemsep,label=(\alph*)]
\addtocounter{enumi}{5}
\item\label{fDEdefDeltaNu}
in the definitions of $\nu_E(x,y)$ and $\delta_{E,d}(x,y)$,
we can take the infima on sets of $c$-good $E$-paths only.
\end{enumerate}
\end{defn} 

The following lemma is essentially Lemma 4 of \cite{MoSW--02}.

\begin{lem}
\label{lemmeMoSW}
Let $(X,d)$ be a pseudo-metric space and $E \subset X \times X$
a symmetric subset containing the diagonal.
We assume that $X$ is $E$-connected
and that $(X,d,E)$ is $(c,C)$-controlled for some $c,C > 0$. Then
\begin{enumerate}[noitemsep,label=(\arabic*)]
\item\label{1DElemmeMoSW}
$\delta_{E,d} \, \le C \nu_E$.
\item\label{2DElemmeMoSW}
If $x,y \in X$ satisfy $\delta_{E,d}(x,y) < c$ or $d(x,y) < c$, 
then $\delta_{E,d}(x,y) = d(x,y)$.
\item\label{3DElemmeMoSW}
For all $x,y \in X$, we have
$\delta_{E,d}(x,y) \ge \frac{c}{2} \left( \nu_E (x,y) - 1 \right)$.
\item\label{4DElemmeMoSW}
$\nu_E$ and $\delta_{E,d}$ are quasi-isometric pseudo-metrics on $X$.
\item\label{5DElemmeMoSW}
If $X$ is a topological space and $d$ continuous,
then $\delta_{E,d}$ is continuous.
\end{enumerate}
\end{lem}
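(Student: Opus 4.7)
The plan is to prove the five claims in the order \ref{1DElemmeMoSW}, \ref{3DElemmeMoSW}, \ref{2DElemmeMoSW}, \ref{4DElemmeMoSW}, \ref{5DElemmeMoSW}, since (3) is the key quantitative estimate and (2), (4), (5) all follow cleanly from the combination (1)+(3) plus observation (d) before Definition \ref{defcCcontrolled}.

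First I would prove \ref{1DElemmeMoSW} directly from the right-hand inclusion in the definition of $(c,C)$-controlled: for any $E$-path $x=x_0,x_1,\hdots,x_n=y$, each step satisfies $d(x_{i-1},x_i)\le C$, so the total length is at most $Cn$; taking the infimum over $E$-paths (which, by definition of $\nu_E$, can be realized with $n=\nu_E(x,y)$) yields $\delta_{E,d}(x,y)\le C\nu_E(x,y)$.

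Next I would prove \ref{3DElemmeMoSW} by the standard pairing trick, invoking property \ref{fDEdefDeltaNu} to restrict the infimum in the definition of $\delta_{E,d}(x,y)$ to $c$-good $E$-paths. For such a path of length $n\ge 2$, summing the $n-1$ inequalities $d(x_{i-1},x_i)+d(x_i,x_{i+1})>c$ for $i=1,\hdots,n-1$ gives
\begin{equation*}
2\sum_{i=1}^{n} d(x_{i-1},x_i)\;\ge\; d(x_0,x_1)+2\sum_{i=2}^{n-1} d(x_{i-1},x_i)+d(x_{n-1},x_n)\;>\;(n-1)c,
\end{equation*}
so the $d$-length exceeds $c(n-1)/2$; the case $n\le 1$ is trivial. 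Since $n\ge \nu_E(x,y)$ for any such path, taking the infimum gives $\delta_{E,d}(x,y)\ge (c/2)(\nu_E(x,y)-1)$. Then \ref{2DElemmeMoSW} splits into two short arguments: if $d(x,y)<c$, the left-hand inclusion in the controlled condition gives $(x,y)\in E$, so the one-step path shows $\delta_{E,d}(x,y)\le d(x,y)$, while observation (d) gives the reverse inequality; if $\delta_{E,d}(x,y)<c$, pick a $c$-good path of length $<c$, note that the estimate just proved forces $n\le 1$, and conclude that $\delta_{E,d}(x,y)$ is realized by either the trivial path or a single edge, hence equals $d(x,y)$.

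Claim \ref{4DElemmeMoSW} is then immediate: combining \ref{1DElemmeMoSW} and \ref{3DElemmeMoSW} yields the affine two-sided bounds $\delta_{E,d}\le C\nu_E$ and $\nu_E\le (2/c)\delta_{E,d}+1$, which together make the identity map between $(X,\nu_E)$ and $(X,\delta_{E,d})$ a large-scale bilipschitz, essentially surjective (trivially) map, hence a quasi-isometry in the sense of Definition \ref{defqi}. For \ref{5DElemmeMoSW}, I would apply the pseudo-metric triangle inequality for $\delta_{E,d}$ together with \ref{2DElemmeMoSW}: whenever $d(x',x)<c$ and $d(y',y)<c$ we have $\delta_{E,d}(x,x')=d(x,x')$ and $\delta_{E,d}(y,y')=d(y,y')$, so
\begin{equation*}
|\delta_{E,d}(x',y')-\delta_{E,d}(x,y)|\;\le\;d(x,x')+d(y,y'),
\end{equation*}
and continuity of $\delta_{E,d}$ follows from that of $d$. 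I expect the main (minor) obstacle to be only the bookkeeping in the pairing inequality of \ref{3DElemmeMoSW} and the careful use of ``$\varepsilon$-near infimum'' $c$-good paths in \ref{2DElemmeMoSW}; no deep machinery is needed.
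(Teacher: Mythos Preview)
Your approach is essentially the same as the paper's, and the overall strategy is correct. There is one small slip in your argument for \ref{2DElemmeMoSW}: when $\delta_{E,d}(x,y)<c$ you pick a $c$-good path of $d$-length $<c$ and say ``the estimate just proved forces $n\le 1$'', but the inequality you established in \ref{3DElemmeMoSW} only gives $d$-length $>(n-1)c/2$, which for length $<c$ yields $n\le 2$, not $n\le 1$. The fix is immediate and is exactly what the paper does: use the definition of $c$-good directly rather than the averaged estimate---if $n\ge 2$ then already $d(x_0,x_1)+d(x_1,x_2)>c$, so the total $d$-length exceeds $c$, contradiction. With that one-line correction your proof is complete and matches the paper's.
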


\begin{proof}
 \ref{1DElemmeMoSW} 
This inequality follows from the definitions of $\delta_{E,d}$ and $\nu_E$,
and from the controlled hypothesis.

\vskip.2cm

\ref{2DElemmeMoSW}
Let $x,y \in X$. For the proof of \ref{2DElemmeMoSW}, we consider several cases.
\par

Suppose first that $\delta_{E,d}(x,y) < c$. 
Choose $c' > 0$ with $\delta_{E,d}(x,y) < c' < c$. 
There exists a $c$-good $E$-path $(x_0=x, x_1, \hdots, x_n=y)$
with $\sum_{i=1}^n d(x_{i-1},x_i) < c'$.
Since the path is $c$-good, we cannot have $n \ge 2$.
Hence $n=1$ and $d(x,y) < c'$.
As this hold for all $c' \in ]\delta_{E,d}(x,y), c[$,
we have $d(x,y) \le \delta_{E,d}(x,y)$.
\par

Suppose now that $d(x,y) < c$.
Then $(x,y)$ is a $E$-path of length $1$, 
and $\delta_{E,d}(x,y) \le d(x,y)$.
By the previous case, we have also $d(x,y) \le \delta_{E,d}(x,y)$.
Hence $\delta_{E,d}(x,y) = d(x,y)$.
\par

Finally, if $\delta_{E,d}(x,y) < c$ again,
then $d(x,y) < c$ by the first case, 
and $\delta_{E,d}(x,y) = d(x,y)$ by the second case.

\vskip.2cm

\ref{3DElemmeMoSW}
Let $x,y \in E$, $x \ne y$. Choose $\ell > 0$ such that $\ell > \delta_{E,d}(x,y)$.
There exists a $c$-good $E$-path $(x_0=x, x_1, \hdots, x_n=y)$
with $\sum_{i=1}^n d(x_{i-1},x_i) < \ell$.
This path being $c$-good, we have
$\sum_{i=1}^n d(x_{i-1},x_i) >  \lfloor n/2 \rfloor c$,
hence $(n-1)/2 < \ell/c$.
Since $\nu_E(x,y) \le n$, we have also
$\nu_E(x,y) - 1 < 2\ell/c$.
As this hold for every $\ell > \delta_{E,d}(x,y)$,
we finally have $\nu_E(x,y) - 1 \le (2/c) \delta_{E,d}(x,y)$.

\vskip.2cm

\ref{4DElemmeMoSW} follows from \ref{1DElemmeMoSW} and \ref{3DElemmeMoSW},
and \ref{5DElemmeMoSW} follows from \ref{2DElemmeMoSW}.
\end{proof}

\begin{prop}[metric characterization of compact generation]
\label{metric_char_cg}
Let $G$ be a $\sigma$-compact LC-group.
Choose an adapted pseudo-metric $d$ on $G$
(see Proposition \ref{existenceam}).
The following properties are equivalent:
\begin{enumerate}[noitemsep,label=(\roman*)]
\item\label{iDEmetric_char_cg}
$G$ is compactly generated;
\item\label{iiDEmetric_char_cg}
the pseudo-metric space $(G,d)$ is coarsely connected;
\item\label{iiiDEmetric_char_cg}
the pseudo-metric space $(G,d)$ is coarsely geodesic;
\item\label{ivDEmetric_char_cg}
there exists a geodesically adapted pseudo-metric on $G$, 
say $d_{\textnormal{ga}}$
(note that, in this case,  the pseudo-metric space $(G,d_{\textnormal{ga}})$ 
is large-scale geodesic);
\item\label{vDEmetric_char_cg}
there exists a geodesically adapted metric on $G$;
\item\label{viDEmetric_char_cg}
there exists a geodesically adapted continuous pseudo-metric on $G$.
\end{enumerate}
In particular, among $\sigma$-compact LC-groups,
the property of compact generation is invariant by metric coarse equivalence.
\index{Property! of a $\sigma$-compact LC-group invariant by metric coarse equivalence}
\end{prop}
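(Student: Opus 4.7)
The plan is to establish the equivalences via the cycle (i) $\Rightarrow$ (v) $\Rightarrow$ (iv) $\Rightarrow$ (iii) $\Rightarrow$ (ii) $\Rightarrow$ (i), and separately to prove (i) $\Rightarrow$ (vi), noting that (vi) $\Rightarrow$ (iv) is trivial.

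Most implications are direct. The implications (v) $\Rightarrow$ (iv), (vi) $\Rightarrow$ (iv), and (iii) $\Rightarrow$ (ii) are immediate from the definitions. For (i) $\Rightarrow$ (v), the word metric $d_S$ associated with a compact generating set $S$ is a geodesically adapted metric on $G$ by Proposition \ref{geodad+wordmetric}(2). For (iv) $\Rightarrow$ (iii), pick any geodesically adapted pseudo-metric $d_{\textnormal{ga}}$ on $G$; by Corollary \ref{2metricsce}(2), the identity $(G,d) \to (G, d_{\textnormal{ga}})$ is a metric coarse equivalence, and since $d_{\textnormal{ga}}$ is large-scale geodesic, hence coarsely geodesic, invariance of coarse geodesicity under metric coarse equivalence (Proposition \ref{invariance_cg_lsg}(3)) forces $(G,d)$ to be coarsely geodesic. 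For (ii) $\Rightarrow$ (i), assume $(G,d)$ is $c$-coarsely connected; then $B^1_G(c) = \{g \in G : d(1,g) \le c\}$ is relatively compact by properness of $d$, and left-invariance implies that any $c$-path from $1$ to $h$ factors $h$ as a product of elements of $B^1_G(c)$, so that $\overline{B^1_G(c)}$ is a compact generating set of $G$.

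The main obstacle is (i) $\Rightarrow$ (vi), because word metrics are generally not continuous, as noted just before Definition \ref{metricadapted}. To handle it, I would combine an adapted continuous pseudo-metric with the graph-distance construction of Definition \ref{defDeltaNu}, controlled via Lemma \ref{lemmeMoSW}. Choose by Proposition \ref{existenceam} an adapted continuous pseudo-metric $d_0$ on $G$, let $S_0$ be a compact symmetric generating neighbourhood of $1$ (which exists by Remark \ref{remsigmacetc}(2) applied to the compactly generated LC-group $G$), fix any $c > 0$, and set $S = S_0 \cup B^1_G(c)$, where the ball is taken with respect to $d_0$. The ball is closed by continuity of $d_0$ and relatively compact by its properness, so $S$ is a compact symmetric generating neighbourhood of $1$ with $B^1_G(c) \subset S$. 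Setting $C = \sup_{s \in S} d_0(1,s) < \infty$ and $E = \{(g,h) \in G \times G : g^{-1}h \in S\}$, the triple $(G, d_0, E)$ is $(c,C)$-controlled in the sense of Definition \ref{defcCcontrolled}. The pseudo-metric $\delta_{E,d_0}$ is left-invariant by construction, dominates $d_0$ (so its balls are relatively compact), is continuous by Lemma \ref{lemmeMoSW}(5) (so locally bounded), and is quasi-isometric to the $1$-geodesic word metric $\nu_E = d_S$ by Lemma \ref{lemmeMoSW}(4). Since large-scale geodesicity is invariant under quasi-isometry by Proposition \ref{invariance_cg_lsg}(5), $\delta_{E,d_0}$ is a geodesically adapted continuous pseudo-metric on $G$, proving (vi).

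The final assertion of the proposition follows from the equivalence (i) $\Leftrightarrow$ (ii): any two adapted pseudo-metrics on $G$ are coarsely equivalent by Corollary \ref{2metricsce}(2), and coarse connectedness is a coarse invariant of pseudo-metric spaces by Proposition \ref{invariance_cg_lsg}(1), so compact generation depends only on the metric coarse equivalence class of $(G,d)$.
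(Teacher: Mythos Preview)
Your proof is correct and follows essentially the same route as the paper's. The organization into the cycle (i) $\Rightarrow$ (v) $\Rightarrow$ (iv) $\Rightarrow$ (iii) $\Rightarrow$ (ii) $\Rightarrow$ (i) with (vi) handled separately mirrors the paper's structure, and your treatment of the crucial implication (i) $\Rightarrow$ (vi) via the construction $\delta_{E,d_0}$ from Lemma~\ref{lemmeMoSW} is the same as the paper's (the only cosmetic difference being that you invoke Proposition~\ref{existenceam} rather than Corollary~\ref{CorollaryKK} for the initial continuous pseudo-metric).
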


\noindent
\emph{Note.}
It is an old observation that compact generation
``can be considered as a very weak type of connectedness condition.''
The quotation is from the survey article \cite[Page 685]{Palm--78}.

\begin{proof}
Implications \ref{iiiDEmetric_char_cg} $\Rightarrow$ \ref{iiDEmetric_char_cg},
\ref{vDEmetric_char_cg} $\Rightarrow$ \ref{ivDEmetric_char_cg},
and \ref{viDEmetric_char_cg} $\Rightarrow$ \ref{ivDEmetric_char_cg}
follow from the definitions.
Implications \ref{iiDEmetric_char_cg} $\Rightarrow$ \ref{iDEmetric_char_cg} and
\ref{ivDEmetric_char_cg} $\Rightarrow$ \ref{iDEmetric_char_cg} $\Rightarrow$ \ref{vDEmetric_char_cg}
are contained in Proposition \ref{geodad+wordmetric}.
It suffices to check implications
\ref{iDEmetric_char_cg} $\Rightarrow$ \ref{iiiDEmetric_char_cg}
and
\ref{iDEmetric_char_cg} $\Rightarrow$ \ref{viDEmetric_char_cg}.
\par

For \ref{iDEmetric_char_cg} $\Rightarrow$ \ref{iiiDEmetric_char_cg}, 
choose a compact generating set $S$ of $G$,
and let $d_S$ be the corresponding word metric.
Since the space $(G,d_S)$ is large-scale geodesic
and the map $(G,d) \overset{\operatorname{id}}{\longrightarrow} (G,d_S)$
is a metric coarse equivalence 
(Corollary \ref{2metricsce}),
the space $(G,d)$ is coarsely geodesic 
(Proposition \ref{invariance_cg_lsg}).
\par

Let us show that \ref{iDEmetric_char_cg} implies \ref{viDEmetric_char_cg}.
By Corollary \ref{CorollaryKK},
there exists a continuous pseudo-metric $d$ on $G$
such that closed balls of radius at most $1$ are compact.
If $G$ is compactly generated, we can choose
a symmetric compact generating set $S$ of $G$ containing
the unit ball $B_G^1(1) = \{g \in G \mid d(1,g) \le 1 \}$.
Since $d$ is continuous and $S$ compact,
there exists a constant $C > 0$ such that $S \subset B_G^1(C)$.
Set $E = \{(g,h) \in G \times G \mid g^{-1}h \in S \}$.
Let $\nu_E$ and $\delta_{E,d}$ be as in Definition \ref{defDeltaNu}.
Observe that $\nu_E$ is the word metric $d_S$;
in particular, $\nu_E$ is geodesically adapted.
Lemma \ref{lemmeMoSW} shows that
the identity map 
$(G, \nu_E) \overset{\operatorname{id}}{\longrightarrow} (G, \delta_{E,\nu})$
is a quasi-isometry,
so that the pseudo-metric $\delta_{E,d}$ is also geodesically adapted,
and that $\delta_{E,d}$ is continuous, because $d$ is continuous.
Hence $\delta_{E,d}$ fulfills the conditions of \ref{viDEmetric_char_cg}.
\par

The last claim follows by Proposition \ref{invariance_cg_lsg}(1).
\end{proof}

Here is an analogue of the previous proposition for second-countable groups:

\begin{prop}
\label{2countcggroup}
Let $G$ be a second-countable LC-group.
The following properties are equivalent
\begin{enumerate}[noitemsep,label=(\roman*)]
\item\label{iDE2countcggroup}
$G$ is compactly generated;
\item\label{iiDE2countcggroup}
there exists a large-scale geodesic left-invariant proper compatible metric on~$G$.
\end{enumerate}
\end{prop}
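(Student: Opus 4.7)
My plan is to prove the two implications separately, with the nontrivial direction being $(i) \Rightarrow (ii)$, obtained by refining the construction used in the proof of $(i) \Rightarrow (vi)$ in Proposition \ref{metric_char_cg}, but starting from a genuine compatible metric rather than a continuous pseudo-metric.

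The easy direction $(ii) \Rightarrow (i)$ goes as follows. If $d$ is left-invariant, proper, compatible, and large-scale geodesic, then $d$ is in particular adapted (since compatible metrics are continuous, hence locally bounded, by Remark \ref{remonmetrics}(4)); it is also geodesically adapted. By Proposition \ref{geodad+wordmetric}\ref{1DEgeodad+wordmetric}, $G$ is (locally compact and) compactly generated.

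For $(i) \Rightarrow (ii)$, since $G$ is second-countable, Struble's Theorem \ref{Struble} provides a left-invariant proper compatible metric $d_0$ on $G$. In particular, $d_0$ is continuous, so the closed ball $B_0 := \{g \in G \mid d_0(1,g) \le 1\}$ is compact. Choose a compact symmetric generating set $S$ of $G$ with $S \supset B_0$ (this is possible since $G$ is compactly generated and any compact set can be enlarged to a compact generating set). By continuity of $d_0$ and compactness of $S$, there exists $C \ge 1$ with $S \subset B_G^1(C)$ for $d_0$. Set $E := \{(g,h) \in G \times G \mid g^{-1}h \in S\}$. Then $G$ is $E$-connected, and the triple $(G, d_0, E)$ is $(1,C)$-controlled in the sense of Definition \ref{defcCcontrolled}. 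Let $\delta := \delta_{E,d_0}$ be the pseudo-metric from Definition \ref{defDeltaNu}, and $\nu_E$ the companion metric, which coincides with the word metric $d_S$. I claim $\delta$ is the metric we want.

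First, by Lemma \ref{lemmeMoSW}\ref{5DElemmeMoSW}, $\delta$ is continuous; by parts \ref{1DElemmeMoSW}, \ref{3DElemmeMoSW}, and \ref{4DElemmeMoSW}, $\delta$ is quasi-isometric to $d_S$, which is $1$-geodesic, hence $\delta$ is large-scale geodesic. Left-invariance of $\delta$ is immediate from the construction, since both $d_0$ and $E$ are left-invariant. By property (d) of Definition \ref{defDeltaNu}, we have $d_0 \le \delta$ everywhere; the key local identity comes from Lemma \ref{lemmeMoSW}\ref{2DElemmeMoSW}: whenever $d_0(x,y) < 1$ or $\delta(x,y) < 1$, we have $\delta(x,y) = d_0(x,y)$. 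From this I extract the three remaining properties:
\begin{itemize}\itemsep0pt
\item $\delta$ is a metric: if $\delta(x,y) = 0$, then $\delta(x,y) < 1$, so $d_0(x,y) = 0$, so $x=y$.
\item $\delta$ is compatible: the inequality $d_0 \le \delta$ makes the $\delta$-topology at least as fine as the $d_0$-topology; for the converse, given $\varepsilon > 0$, if $d_0(x,y) < \min(\varepsilon, 1)$ the local identity gives $\delta(x,y) = d_0(x,y) < \varepsilon$, so the two topologies coincide.
\item $\delta$ is proper: since $d_0 \le \delta$, every $\delta$-ball is contained in the corresponding $d_0$-ball, which is relatively compact by properness of $d_0$; combined with continuity of $\delta$ (so that $\delta$-balls are closed), this yields compactness of $\delta$-balls.
\end{itemize}

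The only real subtlety is the last paragraph: once one has the local identity $\delta = d_0$ on a neighbourhood of the diagonal, metric-ness, compatibility, and properness of $\delta$ all fall out cheaply from the corresponding properties of $d_0$, while large-scale geodesicity is inherited through quasi-isometry with $d_S$. I expect no serious obstacle beyond bookkeeping.
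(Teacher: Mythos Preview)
Your proof is correct and follows essentially the same approach as the paper's: start from a Struble metric $d_0$, enlarge the unit ball to a compact symmetric generating set $S$, and take $\delta_{E,d_0}$ from Lemma~\ref{lemmeMoSW}. The only cosmetic difference is that the paper deduces compatibility of $\delta_{E,d_0}$ in one line from Proposition~\ref{contpropermet} (proper continuous metric $\Rightarrow$ compatible), whereas you spell it out directly via the local identity $\delta = d_0$; both are fine.
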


\begin{proof}
By Theorem \ref{Struble}, 
there exists a left-invariant proper compatible metric $d$ on $G$.
\par
Assume that \ref{iDE2countcggroup} holds. 
Choose a symmetric compact generating set $S$ containing
$\{g \in G \mid d(1,g) \le 1 \}$,
and proceed as in the proof of \ref{iDEmetric_char_cg} $\Rightarrow$ \ref{viDEmetric_char_cg}
of Proposition \ref{metric_char_cg}.
Note that $\delta_{E,d}$ is now a metric (rather than a pseudo-metric),
by Lemma \ref{lemmeMoSW}\ref{2DElemmeMoSW}.
Since $\delta_{E,d}$ is proper 
(see \ref{dDEdefDeltaNu} in \ref{defDeltaNu}) 
and continuous, 
it is also compatible (Proposition \ref{contpropermet}).
This shows that \ref{iiDE2countcggroup} holds.

The converse implication is trivial.
\end{proof}

\begin{prop}[continuous homomorphisms of compactly generated LC-groups 
as large-scale morphisms]
\label{ceGG'qi}
For $j=1,2$,
let $G_j$ be a compactly generated LC-group
and $d_j$ a geodesically adapted pseudo-metric on it.
\begin{enumerate}[noitemsep,label=(\arabic*)]
\item
Let $f : G_1 \longrightarrow G_2$ be a continuous homomorphism.
Then $f : (G_1,d_1) \longrightarrow (G_2,d_2)$ is large-scale Lipschitz.
\item
Any metric coarse equivalence $(G_1, d_1) \longrightarrow (G_2, d_2)$ 
is a quasi-isometry.
\end{enumerate}
\end{prop}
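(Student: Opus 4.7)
The plan is to derive both claims as essentially immediate consequences of two earlier results: Proposition \ref{morphisms_grps_spaces} (continuous homomorphisms between $\sigma$-compact LC-groups are coarsely Lipschitz with respect to adapted pseudo-metrics) and Proposition \ref{ceXYqi} (on large-scale geodesic domains, coarsely Lipschitz maps are automatically large-scale Lipschitz, and metric coarse equivalences between large-scale geodesic spaces are automatically quasi-isometries). The key bridge is simply the definition of ``geodesically adapted'' in \ref{geodadapted}: such a pseudo-metric is by definition both adapted and large-scale geodesic.

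For part (1), I would first note that a compactly generated LC-group is automatically $\sigma$-compact (this was observed in Remark \ref{Gexplescgsigmacsc}(1), since $G = \bigcup_{n \ge 0}\widehat S^n$), so Proposition \ref{morphisms_grps_spaces}\ref{1DEmorphisms_grps_spaces} applies and gives that $f_d : (G_1, d_1) \longrightarrow (G_2, d_2)$ is coarsely Lipschitz. Since $d_1$ is geodesically adapted, the space $(G_1, d_1)$ is large-scale geodesic by Definition \ref{geodadapted}. Proposition \ref{ceXYqi}(1) then upgrades ``coarsely Lipschitz'' to ``large-scale Lipschitz,'' which is exactly the conclusion of (1).

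For part (2), let $f : (G_1, d_1) \longrightarrow (G_2, d_2)$ be a metric coarse equivalence. Since both $d_1$ and $d_2$ are geodesically adapted, both spaces are large-scale geodesic. Proposition \ref{ceXYqi}(2) then directly yields that $f$ is a quasi-isometry.

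There is essentially no obstacle here since the work has been done in the preceding abstract metric propositions; the only thing to verify is the compatibility of hypotheses, namely that ``geodesically adapted'' automatically provides the ``large-scale geodesic'' hypothesis required by Proposition \ref{ceXYqi}, and that ``compactly generated LC-group'' provides the ``$\sigma$-compact'' hypothesis needed to invoke Proposition \ref{morphisms_grps_spaces}. Both are immediate, so the proof will amount to a short citation paragraph rather than any substantial calculation.
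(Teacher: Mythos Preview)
Your proposal is correct and follows exactly the paper's approach: the paper's proof is the single sentence ``In view of Proposition \ref{morphisms_grps_spaces}, this is a particular case of Proposition \ref{ceXYqi},'' and you have simply unpacked the hypothesis-matching that makes this citation work.
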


\begin{proof}
In view of Proposition \ref{morphisms_grps_spaces}, 
this is a particular case of  Proposition \ref{ceXYqi}.
\end{proof}

\begin{cor}[Uniqueness of geodesically adapted pseudo-metrics up to quasi-isometry]
\label{uniqueness _uptoqi}
Let $G$ be a compactly generated LC-group
and let $d_1, d_2$ be two geodesically adapted pseudo-metrics on $G$.
\par

Then the identity, viewed as a map $(G,d_1) \longrightarrow (G,d_2)$,
is a quasi-isometry.
\end{cor}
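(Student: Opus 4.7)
The plan is to assemble two earlier results already established in the chapter, so the argument reduces to a very short chain of invocations; there is essentially no new content to supply.

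First I would observe that $d_1$ and $d_2$, being geodesically adapted, are in particular adapted (Definition \ref{geodadapted}). Since $G$ is compactly generated it is $\sigma$-compact (by Equation (\ref{Gunionpowersgenset}) in Definition \ref{cggroups}, or equivalently Proposition \ref{powersSincpgroup}), so Corollary \ref{2metricsce}\ref{2DE2metricsce} applies and tells us that the identity map
\[
(G,d_1)\ \overset{\operatorname{id}}{\longrightarrow}\ (G,d_2)
\]
is a metric coarse equivalence. This is the step that turns ``both metrics see the same compact sets'' into a coarse statement.

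Next I would note that, again directly from Definition \ref{geodadapted}, each of the pseudo-metric spaces $(G,d_1)$ and $(G,d_2)$ is large-scale geodesic, since ``geodesically adapted'' means ``adapted and large-scale geodesic'' (Definition \ref{defcoarselyconn}\ref{cDEdefcoarselyconn}). The hypotheses of Proposition \ref{ceXYqi}(2) are therefore satisfied with $X=(G,d_1)$ and $Y=(G,d_2)$ and $f=\operatorname{id}$: source and target are large-scale geodesic, and $f$ is a metric coarse equivalence. That proposition then upgrades $f$ to a quasi-isometry, proving the corollary. (Alternatively, one may cite the group-theoretic repackaging Proposition \ref{ceGG'qi}(2), whose proof is precisely this reduction.)

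The only point that might look like an obstacle is that Proposition \ref{ceXYqi} is stated without any continuity or homomorphism assumption on $f$, so one could worry whether it applies to an identity map between two possibly discontinuous pseudo-metrics; but this is exactly the strength of that proposition, and no extra verification is needed. Thus the proof is complete in three sentences, and there is no genuinely hard step.
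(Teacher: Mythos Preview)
Your proof is correct and follows exactly the approach the paper intends: the corollary is an immediate consequence of Proposition \ref{ceGG'qi}(2) (equivalently Proposition \ref{ceXYqi}(2)) once Corollary \ref{2metricsce}\ref{2DE2metricsce} supplies the metric coarse equivalence. There is nothing to add or change.
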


\begin{rem}
\label{Heisenberg}
(1) In the situation of Proposition \ref{ceGG'qi}, 
the analogue of Part (2) in Proposition \ref{morphisms_grps_spaces} 
does not hold: a proper continuous homomorphism need not be large-scale expansive.
\par
Indeed, notation being as in Example \ref{coarselyexpansivenotlargescaleexpansive},
consider the discrete Heisenberg group $H(\Z)$
\index{Heisenberg group}
generated by $S := \{s,t\}$.
Its center, say $Z$, is the infinite cyclic group generated by $u$.
On the one hand, since $H(\Z) / Z \simeq \Z^2$, it is easy to check
that $d_S(1, s^k) = d_S(1, t^k) = \vert k \vert$ for all $k \in \Z$.
On the other hand,
a straightforward computation shows that $[s^k, t^k] = u^{k^2}$
for all $k \in \Z$, so that $d_S(1, u^{k^2}) \le 4 \vert k \vert$ for all $k \in \Z$.
Let $f$ denote the inclusion of $Z$ in $H(\Z)$;
observe that $f$ is proper (being injective).
Since 
\begin{equation*}
d_{\{u\}}(0, k^2) \, = \, k^2
\hskip.2cm \text{and} \hskip.2cm
d_S(f(0), f(k^2)) = d_S(1, u^{k^2}) \, \le \, 4 \vert k \vert
\hskip.2cm \text{for all} \hskip.2cm k \in \Z ,
\end{equation*}
$f : (Z, d_{\{u\}}) \longrightarrow (H(\Z), d_S)$ is not large-scale expansive.
\par
It can be shown that there exist constants $c_1, c_2 > 0$ such that
\begin{equation*}
c_1 \sqrt{\vert n \vert} \, \le \,  d_S(1, f(n)) \, \le \,  c_2 \sqrt{\vert n \vert}
\hskip.2cm \text{for all} \hskip.2cm n \in \Z .
\end{equation*}
In other words, the centre in $H(\Z)$ 
is \emph{distorted}, with \emph{quadratic relative growth}.
\par

Another example of distorted subgroup is that of the infinite cyclic subgroup
$\langle s \rangle$ of the Baumslag-Solitar group
\index{Baumslag-Solitar group}
$\langle s,t  \mid t^{-1}st = s^2 \rangle$,
with \emph{exponential relative growth}.
For distorsion, see \cite[Chapter 3]{Grom--93}.

\vskip.2cm

(2) In the situation of Corollary \ref{uniqueness _uptoqi},
the hypothesis of geodesic adaptedness cannot be omitted, 
as shown by the two adapted metrics defined on $\Z$
by $d_1(m,n) = \vert m-n \vert$ and $d_2(m,n) = \sqrt{ \vert m-n \vert}$.
(Compare with Remark \ref{remonwordmetric}(3).)
\end{rem}

Propositions \ref{metric_char_cg} \& \ref{ceGG'qi} and
Corollary \ref{uniqueness _uptoqi} 
mark a second important step in our exposition (compare with \ref{miles_sigmacompact}).
They show:

\begin{miles}
\label{miles_compactgen}
\index{Milestone}
\index{Compactly generated! LC-group, milestone}
\begin{enumerate}[noitemsep,label=(\arabic*)]
\item
On every compactly generated LC-group,
there exists a geo\-de\-sically adapted pseudo-metric 
\index{Geodesically adapted pseudo-metric}
(indeed there exists a geodesically adapted metric)
that makes it an object in the large-scale category, 
well-defined up to quasi-isometry.
\index{Quasi-isometry}
\item
Any continuous homomorphism between such groups 
can be viewed as a large-scale Lipschitz map.
\end{enumerate}
 \index{Large-scale! Lipschitz map}
\end{miles}

\begin{rem}
\label{PropertyQinvqi}
Let $\mathcal Q$ be a property of pseudo-metric spaces 
that is invariant by quasi-isometries.
\index{Property! of a pseudo-metric space invariant 
by metric coarse equivalence or by quasi-isometries}
By \ref{miles_compactgen},
it makes sense to write that
a compactly generated LC-group \textbf{has Property $\mathcal Q$}
\index{Property! of a compactly generated LC-group invariant by quasi-isometries}
when the pseudo-metric space $(G,d)$ has Property $\mathcal Q$
for $d$ a geodesically adapted pseudo-metric on $G$
(for example a word metric).
Let us indicate examples of such properties, 
that \emph{would not} fit in Remark \ref{PropertyPinvce}.
\index{Geometric property}

\vskip.2cm

(1)
For a pseudo-metric space $X$ and an integer $n \ge 1$, 
define \textbf{Property Flat($n$)} 
\begin{equation}
\text{there exists a quasi-isometric embedding of $\R^n$ into $X$,}
\tag{Flat($n$)}\label{Flat(n)}
\end{equation}
where $\R^n$ is viewed together with its Euclidean metric.
This property is obviously invariant by quasi-isometries,
but it is not invariant by metric coarse equivalence
(as we leave it to the reader to check).
\index{Property! Flat(n) of a pseudo-metric space|textbf}
\index{Quasi-isometric embedding}
\par

 For example, let $X$ be a Riemannian symmetric space of the non-compact type.
Let $r$ be the \emph{rank} of $X$, i.e., the largest integer for which $X$ has
a subspace isometric to $\R^r$.
As a consequence of results in \cite{EsFa--08},
the space $X$ has Property Flat$(n)$ if and only if $n \le r$.
In particular, for all $n \ge 2$, the hyperbolic space $\mathbf{H}^n$ 
and the space $\SL_n(\R)/\operatorname{SO}(n)$
have Property Flat$(n-1)$, and not Flat$(n)$.

\vskip.2cm

(2)
For uniformly coarsely proper pseudo-metric spaces,
the growth function is invariant by quasi-isometries
(Proposition \ref{propgrowthXtoY}).
Growth functions provide important quasi-isometry invariants
of compactly generated LC-groups;
see the references quoted in 
Remark \ref{refpourcroissance}-

\vskip.2cm

(3)
Remark \ref{GromovHyp} shows a property that is invariant by quasi-isometries 
\emph{for a subclass of pseudo-metric spaces only},
but nevertheless that makes sense for compactly generated LC-groups.

\vskip.2cm

(4)
As in Remark \ref{PropertyPinvce}(7), 
we refer to \cite{Rose--a, Rose--b} for a way to define a quasi-isometric type
of some non-LC metrizable groups.
\end{rem}

\begin{rem}[Gromov-hyperbolicity]
\label{GromovHyp}
The notions of Gromov-hyperbolic spaces and groups have had a deep influence
on geometric group theory \cite{Grom--87}. Let us recall the definition:
a metric space $(X,d)$ is \textbf{Gromov-hyperbolic} if
\begin{equation}
\aligned
& \text{there exists} \hskip.2cm \delta \ge 0 \hskip.2cm \text{such that} \hskip.2cm
\\
& (y \vert z)_x \, \ge \, \min \{ (y \vert w)_x, (w \vert z)_x \} - \delta
\hskip.5cm \text{for all} \hskip.2cm x,y,z,w \in X ,
\endaligned
\tag{GromHyp}\label{GromHyp}
\end{equation}
where the \textbf{Gromov product} \index{Gromov! product|textbf}
is defined by
\begin{equation*}
(y \vert z)_x \, = \,  \frac{1}{2} \big( d(x,y) + d(x,z) - d(y,z) \big) .
\end{equation*}
An LC-group $G$ is \textbf{Gromov-hyperbolic}
\index{Gromov! hyperbolic group|textbf}
if it is compactly generated and if, with a word metric,
it is a Gromov-hyperbolic metric space.
Equivalently an LC-group is hyperbolic if it
has a continuous proper cocompact isometric action
on some proper geodesic hyperbolic metric space
\cite[Corollary 2.6]{CCMT--15}.
\par

Gromov-hyperbolicity \emph{for geodesic metric spaces}
is invariant by quasi-isometries \cite[Chapter 5, \S~2]{GhHa--90}.
In particular, 
Gromov-hyperbolicity is well-suited for compactly generated LC-groups 
(and their Cayley graph), 
and is invariant by quasi-isometries of these.
But Gromov-hyperbolicity for arbitrary proper metric spaces 
is not invariant by quasi-isometries, as shown by the following example.
\par

Let $f$ be a function $\N \longrightarrow \R$;
denote its graph by  
\begin{equation*}
X_f \, := \, \{ (x,y) \in \R^2 \mid x \in \N , \hskip.1cm y = f(x) \} \, \subset \, \R^2 .
\end{equation*}
Consider $\N$ together with the usual metric, $d_\N(m,n) = \vert m-n \vert$,
and $X_f$ together with the $\ell^1$-metric, 
$d_X((x_1,y_1),(x_2,y_2)) = \vert x_1 - x_2\vert + \vert y_1 - y_2 \vert$.
Observe that $(\N,d_\N)$ is Gromov-hyperbolic; 
indeed, it satisfies Condition \eqref{GromHyp} with $\delta = 0$.
View $f$ as a bijection from $(\N,d_\N)$ onto $(X_f,d_X)$.
\par

Now let $(a_n)_{n \ge 1}$ be the sequence in $\mathbf{N}$ defined by 
$a_1 = 1$ and $a_{n+1} = a_n+2n$ for $n\ge 1$. 
Assume that $f$ is bilipschitz, and such that
$f(a_n)=0$ and $f(a_n+n)=n$ for all $n\ge 1$; 
a simple verification shows that there exists bilipschitz functions satisfying these conditions. 
Note that the bijection $f : (\N,d_\N) \longrightarrow (X_f,d_X)$ is a quasi-isometry.
\par

Let us check that $(X_f,d_X)$ is not Gromov-hyperbolic.
Set
\begin{equation*}
x=(a_n,0), \hskip.1cm y=(a_n+n,n), \hskip.1cm z=(a_{n+1},0), \hskip.1cm w=(a_{n+1}+n+1,n+1)
\hskip.1cm \in X_f .
\end{equation*}
A direct computation yields
\begin{equation*}
(y|z)_x=n, \hskip.1cm (y|w)_x=2n, \hskip.1cm (w|z)_x=2n. 
\end{equation*}
So $(y|z)_x-\min\{(y|w)_x,(w|z)_x\} = -n$,
and this implies that $X_f$ is not hyperbolic.
\end{rem}

\section{Actions of groups on pseudo-metric spaces}
\label{sectionactions}

\begin{defn}
\label{actions_mp_lb_cob}
Consider a topological group $G$,
a non-empty pseudo-metric space $(X,d_X)$,
and a (not necessarily continuous)
action $\alpha : G \times X \longrightarrow X, (g,x) \longmapsto gx$.
\index{Action|textbf}
For $x \in X$ and $R \ge 0$, denote by 
\par
$i_x : G \longrightarrow X, \hskip.1cm g \longmapsto gx$
the orbit map;
\par
$B^x_X(R)$ the ball $\{y \in X \mid d_X(x,y) \le R \}$ in $(X,d_X)$;
\par
$S_{x,R}$ the subset $\{ g \in G \mid d_X(x,g(x)) \le R \} 
= i_x^{-1}(B^x_X(R))$ of $G$.
\par\noindent
The action $\alpha$ is 
\begin{itemize}
\item[]
\textbf{faithful} \index{Faithful action|textbf}
if, for every $g \ne 1$ in $G$, there exists $x \in X$ with $gx \ne x$
(some authors use ``effective'' for ``faithful'');
\item[]
\textbf{metrically proper} \index{Metrically proper! action|textbf}
if $S_{x,R}$ is relatively compact for all $x \in X$ and $R \ge 0$;
\item[]
\textbf{locally bounded} 
\index{Locally bounded! action|textbf}
if, for every element $g \in G$ and bounded subset $B \subset X$,
there exists a neighbourhood $V$ of $g$ in $G$ such that $VB$ is bounded in $X$;
\item[]
\textbf{cobounded} \index{Cobounded! cobounded action|textbf}
if there exists a subset $F$ of $X$ of finite diameter
such that $\bigcup_{g \in G} gF = X$;
\item[]
\textbf{isometric} \index{Isometric action|textbf}
if $d_X(gx,gx') = d_X(x,x')$ for all $g \in G$ and $x,x' \in X$;
\item[]
\textbf{geometric} \index{Geometric action|textbf}
if it is isometric, cobounded, locally bounded, and metrically proper.
\end{itemize}
Suppose that $X$ is moreover an LC-space. The action $\alpha$ is
\begin{itemize}
\item[]
\textbf{proper} \index{Proper! action|textbf}
if the map $G \times X \longrightarrow X \times X, (g,x) \longmapsto (gx,x)$ is proper 
(Definition \ref{properandlb}),
equivalently if $\{g \in G \mid gL \cap L \ne \emptyset\}$
is relatively compact  for every compact subset $L$ of $X$;
\item[]
\textbf{cocompact} \index{Cocompact! action|textbf}
if there exists a compact subset $F$ of $X$
such that $\bigcup_{g \in G} gF = X$.
\end{itemize}
\end{defn}

\begin{rem}
\label{existactionimpliesLC}
Let $G$ be a topological group.
The mere existence of an action of some sort of $G$ on some space $X$
can have strong consequences on $G$. 
\par

Here is a first example:
if $G$ has a proper continuous action on a non-empty LC-space, 
then $G$ is an LC-group and the quotient space $G \backslash X$ is an LC-space
\cite[Page III.33]{BTG1-4}.
For other examples, see  \ref{ftggt}  -- \ref{remonftggt}.
\end{rem}

\begin{rem}
\label{firstexamplesgeometricactions}
(1)
In this book,
\begin{itemize}
\item
\textbf{actions of topological groups on topological spaces and metric spaces need not be continuous.}
\end{itemize}
This includes geometric actions.
Consider for example the regular action 
\begin{equation*}
\lambda \, : \, 
\R \times (\R, d_{\mathopen[-1,1\mathclose]}) \longrightarrow (\R, d_{\mathopen[-1,1\mathclose]}),
\hskip.2cm (t,x) \longmapsto \lambda_t(x) := t+x
\end{equation*}
of the additive group $\R$ 
on the discrete metric space $(\R, d_{\mathopen[-1,1\mathclose]})$,
where $d_{\mathopen[-1,1\mathclose]}$ denotes 
the word metric with respect to  $\mathopen[-1,1\mathclose]$.
This action is clearly not continuous, 
however this action is geometric.
\par
Similarly, actions of groups on Rips complexes 
(see Sections \ref{sectionRips}, \ref{sectionRipscomplex}, and \ref{DefExForCpGroups})
and Cayley graphs 
\index{Cayley graph}
are geometric and need not be continuous.

\vskip.2cm

(2)
Let $G$ be an LC-group and $X$ a non-empty pseudo-metric space.
An \emph{isometric} action of $G$ on $X$ is locally bounded if and only if $Kx$ is bounded
for every compact subset $K$ of $G$ and every $x \in X$.

\vskip.2cm

(3)
It is often sufficient to check local boundedness on generating sets.
More precisely, 
consider an LC-group $G$ with a compact generating subset $S$, 
a non-empty pseudo-metric space $X$, 
an isometric action $\alpha$ of $G$ on $X$, and a point $x_0 \in X$,
as in Definition \ref{actions_mp_lb_cob}.
Then $\alpha$ is locally bounded if and only if $Sx_0$ is bounded in $X$,
as we check now.
\par

First, suppose that $\alpha$ is locally bounded.
For all $s \in S$, there exists a neighbourhood $V_s$ of $s$ in $G$ 
such that $V_s x_0$ is bounded in $X$.
Since $S$ is compact, there exist $s_1, \hdots, s_n \in S$
such that $S \subset \bigcup_{i=1}^n V_{s_i}$.
Then $S x_0$, included in $ \bigcup_{i=1}^n V_{s_i} x_0$, is bounded.
\par

Suppose now that $S x_0$ is bounded.
Let $n \ge 1$ be such that $V := \widehat{S}^n$ is a neighbourhood of $1$ in $G$
(Proposition \ref{powersSincpgroup}).
Observe that $V x_0$ is bounded, indeed that $V x$ is bounded for all $x \in X$.
Given $(g,x) \in G \times X$, there exists $k \ge 1$
such that $g$ is in the neighbourhood $V^k$ of $1$ in $G$,
and $V^k x$ is bounded in $X$.
Hence $\alpha$ is locally bounded.

\vskip.2cm

(4)
Let $X$ be a topological space and $d_X$ a locally bounded pseudo-metric on $X$.
A continuous action of a topological group on $X$ 
is locally bounded with respect to $d_X$.
\par
But an arbitrary action need not be locally bounded.
For example, let $\R$ be given its usual metric and let
$\gamma : \R \longrightarrow \R$ 
be a non-continuous group endomorphism; then the isometric action 
$\R \times \R \longrightarrow \R, \hskip.2cm (z,z') \longmapsto \gamma(z) + z'$
is not locally bounded.

\vskip.2cm

(5) 
Consider again $G$, $X$, $\alpha$ and $x$ as in Definition \ref{actions_mp_lb_cob};
assume now that $G$ is locally compact and
$\alpha$ is isometric locally bounded.
Let $R \ge 0$; we claim that there exists $R' \ge R$ 
such that $d_X(x,g(x)) \le R'$ for all $g \in \overline{S_{x,R}}$.
\par

Indeed, let $K$ be a compact neighbourhood of $1$ in $G$.
There exists $r > 0$ such that $Kx$ lies inside the ball of centre $x$ and radius $r$.
Let $g \in \overline{S_{x,R}}$.
There exist $h \in S_{x,R}$ and $k \in K$ such that $g = hk$.
Then 
\begin{equation*}
d_X(x,g(x)) \le d_X(x,h(x)) + d_X(h(x),hk(x)) \le R + d_X(x,k(x)) \le R + r .
\end{equation*}
and the claim holds with $R' = R + r$.

\vskip.2cm

(6)
Let $G \times X \longrightarrow X$ be a continuous action
of an LC-group $G$ on a non-empty \emph{proper} metric space $X$.
\par 
The action is metrically proper if and only if it is proper.
\par
The action is cobounded if and only if it is cocompact.
\par\noindent
In particular, a continuous isometric action on $X$
that is proper and cocompact is geometric.
\end{rem}

\begin{exe}
\label{examplesgeometricactions}
(1) 
Let $G$ be a topological group and $X$ the one-point metric space.
The unique action of $G$ on $X$ is geometric if and only if $G$ is compact.

\vskip.2cm

(2)
Let $\mathbf E^1$ denote an affine Euclidean line,
with a standard metric.
\index{Affine line}
The group of isometries of $\mathbf E^1$ is a semidirect product
$\Isom(\mathbf E^1) \simeq \R \rtimes C_2$, where $C_2$ 
is the group of order $2$ generated by the central symmetry at $0$.
Up to conjugation inside the affine group $\R \rtimes \R^\times$,  
\index{Affine group! $\K \rtimes {\K}^\times$|textbf}
the group $\Isom(\mathbf E^1)$ has four cocompact closed subgroups:
\begin{enumerate}[noitemsep,label=(\alph*)]
\item
the group $\Z$ of all iterates of one non-trivial translation;
\item
the infinite dihedral group $\Z \rtimes C_2 = \Isom(\Z)$;
\index{Dihedral group|textbf}
\item
the group $\R$ of all translations;
\item
$\Isom(\mathbf E^1)$ itself, containing all translations and all half-turns.
\end{enumerate}
Each of these groups has a natural geometric action on $\mathbf E^1$.
More generally, let $G$ be a group acting 
continuously and geometrically on $\mathbf E^1$,
and let $\rho : G \longrightarrow \Isom(\mathbf E^1)$ be the corresponding homomorphism;
then $\rho(G)$ is conjugated inside $\R \rtimes \R^\times$ to one of (a) to (d) above;
moreover, $\rho(G)$ is isomorphic to 
the quotient of $G$ by its unique maximal compact normal subgroup.
\index{Maximal! compact normal subgroup}
\par

Let $G$ be an LC-group with two ends.
\index{Ends}
Then $G$ has a maximal compact normal subgroup $W$, 
and $G/W$ is isomorphic to one of the four groups (a) to (d) above.
See \cite[Corollary 4.D.2]{Corn}, building on \cite{Abel--74}.

\vskip.2cm

(3)
We anticipate here on $\S$~\ref{isometrygroups}:
for a proper metric space $X$, the isometry group $\Isom (X)$ is an LC-group
and its natural action on $X$ is continuous.
\par
Let $X$ be a proper metric space such that the action of $\Isom (X)$ on $X$ is cobounded.
Then the action of every cocompact closed subgroup of $\Isom (X)$ on $X$ is geometric.
\par

Here is a kind of converse.
Let $G$ be an LC-group 
acting continuously and geometrically on a proper metric space $X$,
and let $\rho : G \longrightarrow \Isom (X)$ be the corresponding homomorphism
(see Proposition \ref{GinIsom(X)}).
Then the kernel of $\rho$ is compact and
the image of $\rho$ is a cocompact closed subgroup of $\Isom(X)$.

\vskip.2cm

(4)
Let $G$ be a compactly generated LC-group containing a compact open subgroup,
and $X$ a Cayley-Abels graph for $G$ (Definition \ref{defCayleyAbels}).
\index{Cayley-Abels graph}
The action of $G$ on $X$ is geometric.
\end{exe}

\vskip.2cm

The first part of the following easy result is sometimes called (often for $G$ discrete only)
the \textbf{Schwarz-Milnor lemma (or theorem)}\footnote{The
mathematical physicist Albert Schwarz.
His name is also written \v Svarc in MathSciNet.
}, 
\index{Schwarz-Milnor lemma|see {Fundamental theorem}}
also the \textbf{fundamental theorem (or observation) of geometric group theory}
(see e.g.\ \cite{FaMo--02}).
The original articles are \cite{Svar--55} and \cite[see Lemma 2]{Miln--68}.

\begin{thm}
\index{Fundamental theorem (or observation) of geometric group theory|textbf}
\index{Theorem! fundamental theorem of geometric group theory}
\label{ftggt}
Let $G$ be an LC-group.
\vskip.2cm

(1)
Let $(X,d_X)$ be a non-empty 
pseudo-metric space and $x$ a point of $X$.
Suppose there exists a geometric action
\begin{equation*}
\alpha \,  : \, G \times X \longrightarrow X .
\end{equation*} 
Define $d_G : G \times G \longrightarrow \R_+$
by $d_G(g,g') = d_X(gx, g'x)$.
\par

Then $d_G$ is an adapted pseudo-metric on $G$,
and the orbit map 
\begin{equation*}
(G,d_G) \longrightarrow (X,d_X), \hskip.2cm g \longmapsto gx
\end{equation*}
is a quasi-isometry.
In particular, $G$ is $\sigma$-compact.
\par
If $(X,d_X)$ is moreover coarsely connected,
then $G$ is moreover compactly generated.
More precisely, for $R$ large enough,
$\overline{S_{x,R}}$ is a compact generating set of $G$
(notation $S_{x,R}$ as in Definition \ref{actions_mp_lb_cob}).

\vskip.2cm

(2)
Assume that $G$ is $\sigma$-compact, 
and choose an adapted pseudo-metric $d$ on $G$.
\par
Then the action by left-multiplication of the group $G$ on $(G,d)$ is geometric.
\end{thm}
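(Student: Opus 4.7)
The plan is to verify each of the claims by direct translation between properties of the action on $X$ and metric properties on $G$, relying on the characterisations of $\sigma$-compactness and compact generation already established (Propositions \ref{existenceam} and \ref{metric_char_cg}).

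For Part (1), I would first check that $d_G$ is an adapted pseudo-metric on $G$. That it is a pseudo-metric is immediate from the fact that $d_X$ is one. Left-invariance is exactly the fact that $\alpha$ is isometric: $d_G(hg,hg') = d_X(hgx,hg'x) = d_X(gx,g'x) = d_G(g,g')$. The closed $d_G$-ball of radius $R$ around $1$ is precisely $S_{x,R}$, so metric properness of $\alpha$ gives that $d_G$ is proper. For local boundedness of $d_G$, it suffices (by left-invariance) to find a neighbourhood of $1$ of finite diameter, which is exactly the local boundedness of $\alpha$ at the point $(1,x)$. Once $d_G$ is an adapted pseudo-metric, Proposition \ref{existenceam} gives that $G$ is $\sigma$-compact.

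Next I would show that the orbit map $i_x : (G,d_G) \to (X,d_X)$ is a quasi-isometry. By construction it is an \emph{isometric} embedding, so in particular large-scale bilipschitz. Essential surjectivity is coboundedness of $\alpha$: if $F \subset X$ has finite diameter with $GF = X$, then picking any $f_0 \in F$ we obtain $d_X(y,i_x(G)) \le \operatorname{diam}(F) + d_X(f_0,x)$ for every $y \in X$. Thus $i_x$ is a quasi-isometry.

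For the last assertion of Part (1), suppose $(X,d_X)$ is $c$-coarsely connected. Since coarse connectedness is invariant under metric coarse equivalence (Proposition \ref{invariance_cg_lsg}), $(G,d_G)$ is coarsely connected, hence $G$ is compactly generated by the $\sigma$-compact case of Proposition \ref{metric_char_cg}. To get the explicit generating set, fix $D \ge 0$ with $d_X(y,Gx) \le D$ for all $y \in X$ (the constant extracted above), set $R := c + 2D$, and consider $g \in G$. Choose a $c$-path $x = x_0,x_1,\ldots,x_n = gx$ in $X$, and for each $i$ choose $g_i \in G$ with $d_X(g_ix,x_i) \le D$, where $g_0 = 1$ and $g_n = g$. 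The triangle inequality yields $d_G(g_{i-1},g_i) = d_X(g_{i-1}x,g_ix) \le R$, so $g_{i-1}^{-1}g_i \in S_{x,R}$, whence $g = \prod_{i=1}^n (g_{i-1}^{-1}g_i)$ lies in the subgroup generated by $S_{x,R} \subset \overline{S_{x,R}}$. Metric properness makes $\overline{S_{x,R}}$ compact.

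Part (2) is a direct verification of the four defining conditions for the regular action on $(G,d)$. Isometry is left-invariance of $d$. Coboundedness is trivial with $F = \{1\}$. Metric properness of the action at the basepoint $1$ is exactly properness of $d$, and left-invariance extends this to every basepoint. Local boundedness at $(g,h)$ is obtained by choosing, via local boundedness of $d$, a neighbourhood $W$ of $gh$ with finite diameter and taking $V := Wh^{-1}$, a neighbourhood of $g$ with $Vh = W$ bounded.

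The only step that requires any care is the construction of the compact generating set $\overline{S_{x,R}}$ in Part (1); everything else is bookkeeping from the definitions plus one appeal to Proposition \ref{metric_char_cg}.
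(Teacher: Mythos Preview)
Your proof is correct and follows the same approach as the paper: translate the four conditions of a geometric action into the three conditions of an adapted pseudo-metric, then invoke Propositions \ref{existenceam} and \ref{metric_char_cg}. The paper's own proof is considerably terser---it does not spell out the quasi-isometry of the orbit map nor the explicit construction of the generating set $\overline{S_{x,R}}$---so your version is in fact a fuller account of the same argument.
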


\begin{proof}
(1)
The map $d_G$ is a pseudo-metric that is
left-invariant (because $\alpha$ is isometric),
proper (because $\alpha$ is metrically proper), 
and locally bounded (because $\alpha$ is so).
In other words, $d_G$ is an adapted pseudo-metric, 
and the first part of Claim (1) follows by Proposition \ref{existenceam}.
If $(X,d_X)$ is coarsely geodesic, then $G$ is compactly generated by
Proposition \ref{metric_char_cg}.
\par

Claim (2) is straightforward.
\end{proof}

\begin{cor}
\label{ahahah}
On a topological group $G$, the following conditions are equivalent:
\begin{enumerate}[noitemsep,label=(\roman*)]
\item\label{iDEahahah}
$G$ is locally compact and compactly generated;
\item\label{iiDEahahah}
there exists a geometric action of $G$ on
a non-empty coarsely geodesic pseudo-metric space;
\item\label{iiiDEahahah}
there exists a geometric action of $G$ on
a non-empty geodesic metric space;
\item\label{ivDEahahah}
there exists a geometric faithful action of $G$ on
a non-empty geodesic metric space.
\end{enumerate}
\end{cor}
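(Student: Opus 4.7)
The plan is to run the cycle (iv) $\Rightarrow$ (iii) $\Rightarrow$ (ii) $\Rightarrow$ (i) $\Rightarrow$ (iv). The first two implications are formal: (iv) $\Rightarrow$ (iii) is just dropping faithfulness, and (iii) $\Rightarrow$ (ii) holds because a geodesic metric space is large-scale geodesic (parametrise a geodesic from $x$ to $x'$ by arc length and extract a $c$-path of $\lceil d(x,x')/c\rceil +1$ steps), and large-scale geodesic implies coarsely geodesic by Definition \ref{defcoarselyconn}.

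For (ii) $\Rightarrow$ (i), suppose $G$ acts geometrically on a non-empty coarsely geodesic pseudo-metric space $(X,d_X)$, and fix $x \in X$. Local boundedness supplies a neighbourhood $V$ of $1 \in G$ with $Vx \subset B^x_X(R)$, so $V \subset S_{x,R}$; by metric properness $S_{x,R}$ is relatively compact, hence $V$ lies in a compact set and $G$ is locally compact. Since a coarsely geodesic space is in particular coarsely connected, Theorem \ref{ftggt}(1) now yields that $G$ is compactly generated (and $\sigma$-compact).

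The substantive direction is (i) $\Rightarrow$ (iv). Choose a compact symmetric generating set $S$ of $G$ containing $1$, and let $d_S$ be the associated word metric, which is geodesically adapted by Proposition \ref{geodad+wordmetric}. Form the Cayley graph $\Gamma$ of $(G,S)$ with vertex set $G$ and edges $\{g,h\}$ whenever $g \neq h$ and $g^{-1}h \in S$, and let $(X,d_X)$ be its metric realization with edges of length $1$ (Example \ref{metricrealizationgraph}). Then $(X,d_X)$ is a connected geodesic metric space, $d_X$ restricts to $d_S$ on the vertex set $G \subset X$, and every point of $X$ lies within distance $1/2$ of a vertex. Left multiplication preserves the edge relation (because $(gu)^{-1}(gv)=u^{-1}v$ and $S$ is symmetric), hence extends to an isometric action of $G$ on $X$; it is faithful, since $g \cdot 1 = g$ is not the vertex $1$ unless $g = 1$.

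It remains to verify the remaining three clauses of Definition \ref{actions_mp_lb_cob}. Coboundedness is clear: $G$ acts transitively on the vertex set, which is $(1/2)$-dense in $X$. For metric properness, pick the basepoint $1 \in G \subset X$; then
\[
S_{1,R} \;=\; \{g \in G : d_X(g,1) \le R\} \;=\; \{g \in G : d_S(1,g) \le R\},
\]
which is relatively compact because $d_S$ is adapted, and for an arbitrary $x \in X$ choose a vertex $v$ with $d_X(x,v) \le 1/2$ and use the isometry $v^{-1} \cdot$ plus a triangle inequality to get $S_{x,R} \subset v \cdot S_{1,R+1}$, again relatively compact. For local boundedness, given $(g,x) \in G \times X$ choose a compact neighbourhood $V$ of $g$ in $G$; then $g^{-1}V$ is a compact neighbourhood of $1$, and since $d_S$ is locally bounded we have $\operatorname{diam}_{d_S}(g^{-1}V) \le M < \infty$, hence $\operatorname{diam}_{d_X}((g^{-1}V)\cdot v) \le M$ for any vertex $v$, and $\operatorname{diam}_{d_X}((g^{-1}V) \cdot x) \le M + 1$ for any $x$ by approximating $x$ by a vertex; by isometry of the $g$-translation this bounds the diameter of $V \cdot x$.

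The main obstacle is the careful bookkeeping in (i) $\Rightarrow$ (iv): one must ensure that $d_X$ and $d_S$ really coincide on the vertex set (so that properness of $d_S$ gives metric properness of the action), and that local boundedness of the adapted metric $d_S$ transfers through the graph realization to yield local boundedness of the action, including at points lying in the interior of edges.
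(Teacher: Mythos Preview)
Your proof is correct, but it runs the cycle in the opposite direction from the paper and handles the substantive step quite differently. The paper proves (i) $\Rightarrow$ (ii) $\Rightarrow$ (iii) $\Rightarrow$ (iv) $\Rightarrow$ (i): for (ii) $\Rightarrow$ (iii) it replaces the given coarsely geodesic $G$-space $(X,d)$ by the associated metric graph $(X_c,d_c)$ of Definition~\ref{the new space Xc etc}, and for (iii) $\Rightarrow$ (iv) it takes the product of the geodesic $G$-space with the unit ball $B$ of $L^2(G,\mu)$, on which $G$ acts faithfully by the regular representation, so that the diagonal action on $X \times B$ is both geometric and faithful. You instead go straight from (i) to (iv) by building the Cayley graph realization of $(G,d_S)$ and exploiting that left translation is already faithful on the vertex set $G$. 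Your route is more elementary and avoids the Hilbert-space trick; the paper's route has the compensating virtue that it proves the ``upgrade'' implications (ii) $\Rightarrow$ (iii) and (iii) $\Rightarrow$ (iv) independently, starting from an arbitrary geometric action rather than from the group itself.
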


\begin{proof}
\ref{iDEahahah} $\Rightarrow$ \ref{iiDEahahah}
Let $d$ be an adapted pseudo-metric on $G$.
Then $d$ is coarsely geodesic  
(Proposition \ref{metric_char_cg}),
so that \ref{iiDEahahah} holds by Theorem \ref{ftggt}.
\par

\ref{iiDEahahah} $\Rightarrow$ \ref{iiiDEahahah}
Assume that $G$ has an action 
on a non-empty coarsely geodesic pseudo-metric space $(X,d)$;
let $c > 0$ be such that pairs of points in $X$ can be joined by $c$-paths
(Definition \ref{defcoarselyconn}).
Let $(X_c,d_c)$ be the metric graph of Definition \ref{the new space Xc etc}.
\index{Graph associated to a space}
Recall from Lemma \ref{the new space Xc aso}
that the natural map $(X,d) \longrightarrow (\mathcal X_c, d_c)$
is a metric coarse equivalence,
and observe that the action of $G$ on $(X,d)$ has a natural extension
as an action on $(X_c,d_c)$.
The latter action satisfies Condition \ref{iiiDEahahah}.
\par

\ref{iiiDEahahah} $\Rightarrow$ \ref{ivDEahahah}
Let $\mu$ be a left-invariant Haar measure on $G$
and $B$ the unit ball in the Hilbert space $L^2(G,\mu)$.
\index{Hilbert space}
The metric $d_B$ defined on $B$ by $d_B(b,b') = \Vert b-b' \Vert$
is geodesic, and the natural action of $G$ on $L^2(G, \mu)$
induces a faithful continuous isometric action of $G$ on $(B, d_B)$.
Let $(X,d_X)$ be a $G$-space as in \ref{iiiDEahahah}.
Set $Y = X \times B$ and let $d_Y$ be a product metric,
say that defined by 
\begin{equation*}
d_Y((x,b),(x',b'))^2 \, = \,  d_X(x,x')^2 + d_B(b,b')^2 .
\end{equation*}
The diagonal action of $G$ on $(Y, d_Y)$ 
satisfies Condition \ref{ivDEahahah}.
\par

\ref{ivDEahahah} $\Rightarrow$ \ref{iDEahahah}
Consider an action of $G$ on a space $(Y,d_Y)$ 
that satisfies Condition (iv).
Choose a point $y_0 \in Y$ and define a pseudo-metric $d$  on $G$ by
$d(g,h) = d_Y(g(y_0),h(y_0))$.
Then $d$ is adapted and $(G,d)$ is coarsely connected,
so that $G$ is locally compact and compactly generated.
\end{proof}

\begin{rem}
\label{remonftggt}
(1)
Let $G$ be an LC-group. 
Assume there exists a non-empty pseudo-metric space $X$
on which $G$ has a geometric action.
In Theorem \ref{ftggt}, we have already written that $G$
is necessarily $\sigma$-compact.
Moreover, since the orbit map $(G,d_G) \longrightarrow (X,d_X)$ is a quasi-isometry,
$X$ is coarsely proper.
\par

Thus, for example, 
$X$ can be neither a homogenous tree of infinite valency,
nor an infinite-dimensional Hilbert space with the metric of the norm
\index{Tree} \index{Hilbert space}
(Example \ref{homogeneousinfinitetree+Hilbert}).

\vskip.2cm

(2)
It is known that a compactly generated LC-group always admits a
\emph{continuous} geometric action on a non-empty locally compact geodesic metric space
\cite[Proposition 2.1]{CCMT--15}.

\vskip.2cm

(3)
Theorem \ref{ftggt} and Corollary \ref{ahahah}
have analogues for compact presentation, 
see Theorem \ref{analogueof4.D.4} and Corollary \ref{analoguede4.D.5}.
\end{rem}

Claims (1') and (2') of the following standard proposition
provide many examples of geometric actions.

\begin{prop}[$G$-invariant metrics]
\label{alaKoszul}

(1)
Let $G$ be a $\sigma$-compact LC-group acting continuously and properly
on a locally compact metrizable space $X$ such that $G \backslash X$ is paracompact. 
Then $X$ admits a $G$-invariant compatible metric.
\index{Invariant! compatible metric on a locally compact proper $G$-space}
\par
(1')
In particular, if $K$ is a compact subgroup of $G$,
there exists a $G$-invariant compatible metric $d$ on the homogenous space $G/K$.
The natural action of $G$ on $(G/K, d)$ is geometric.

\vskip.2cm

(2)
Let $G$ be a Lie group acting  
differen\-tiably and properly
on a paracompact smooth manifold $X$.
Then $X$ admits a $G$-invariant Riemannian structure.
\index{Invariant! Riemannian metric on a locally compact proper $G$-space}
\par
(2')
In particular, if $K$ is a compact subgroup of $G$,
there exists a $G$-invariant Riemannian structure
on the homogenous manifold $G/K$, say $\mathcal R$.
If $d_{\mathcal R}$ denotes the associated metric,
the natural action of $G$ on $(G/K, d_{\mathcal R})$ is geometric.
\end{prop}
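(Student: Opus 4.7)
My plan follows the classical Koszul--Palais averaging strategy: produce $G$-invariant (pseudo-)metrics on local tubes using slices, then patch them by a partition of unity on the paracompact quotient. For (1), set $Y := G \backslash X$ and $\pi : X \twoheadrightarrow Y$. Since the action is continuous and proper on a locally compact space, $Y$ is locally compact and Hausdorff; by hypothesis it is paracompact, so we fix a locally finite open cover $(V_i)_{i \in I}$ of $Y$ with relatively compact $V_i$ and a subordinate continuous partition of unity $(\varphi_i)_{i \in I}$. The pullbacks $\Phi_i := \varphi_i \circ \pi$ are $G$-invariant continuous functions on $X$ with $\sum_i \Phi_i \equiv 1$.

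Next, for each $i$, invoke the slice theorem for proper actions (Palais for Lie groups, Abels in the general LC setting): shrinking $V_i$ if necessary, the saturated open set $U_i := \pi^{-1}(V_i)$ is $G$-equivariantly homeomorphic to a twisted product $G \times_{K_i} S_i$, where $K_i$ is a compact subgroup of $G$ and $S_i$ is a $K_i$-invariant metrizable slice. Choose any compatible metric on $S_i$ and average it over $K_i$ using normalized Haar measure to obtain a $K_i$-invariant compatible metric $\delta_i$ on $S_i$; this extends uniquely to a $G$-invariant compatible pseudo-metric $d_i$ on the tube $U_i$ (defined between two points in the same fiber of $U_i \to G/K_i$ by $\delta_i$, and between different fibers by declaring translations from $G$ to be isometries, taking an infimum as needed). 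Finally, set
\[
d(x,y) \, := \, \sum_{i \in I} \Phi_i(x)\,\Phi_i(y)\,d_i(x,y),
\]
with the convention that summands with $x$ or $y$ outside $U_i$ vanish. Local finiteness makes this sum well-defined and continuous; left-$G$-invariance of each $d_i$ and the $G$-invariance of each $\Phi_i$ give left-$G$-invariance of $d$. Separation of points, hence the claim that $d$ is a compatible metric and not merely a pseudo-metric, follows because near any $x_0 \in X$ some $\Phi_i(x_0) > 0$, so $d$ dominates (a positive multiple of) $d_i$ near $x_0$, which is itself compatible on $U_i$.

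Claim (1$'$) is the specialization of (1) to $X = G/K$: this space is metrizable by the Struble and Kakutani--Kodaira theorems, the $G$-action on it is continuous and proper (properness from compactness of $K$), and $G\backslash(G/K) = \{*\}$ is trivially paracompact. The resulting $d$ is $G$-invariant and compatible, hence locally bounded; the $G$-action is isometric by construction, metrically proper by properness of the action itself (translating the definition across the orbit map), and cobounded because transitive. Claims (2) and (2$'$) are proved by the very same three-step pattern, with compatible metrics replaced by Riemannian structures: on each slice, pick any Riemannian metric and average it over the compact $K_i$ to obtain a $K_i$-invariant Riemannian metric; extend equivariantly to $G\times_{K_i} S_i$; then patch using a smooth partition of unity on the smooth paracompact manifold $Y = G\backslash X$. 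The main obstacle throughout is the slice theorem in step 2, especially for general $\sigma$-compact LC-groups in (1), which is a substantial outside input; every other step is routine averaging or partition-of-unity bookkeeping.
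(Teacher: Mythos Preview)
Your overall strategy---slices, averaging over compact isotropy, partition of unity on the quotient---is exactly the Koszul--Palais route that the paper points to (the paper itself gives no argument beyond citing \cite{Kosz--65}, \cite{AbMN--11}, \cite{Pala--61}), and your sketch of (2) and (2$'$) is essentially correct: Riemannian tensors are pointwise objects, so a convex combination $\sum_i \Phi_i\, g_i$ of $G$-invariant Riemannian metrics on the tubes is again a $G$-invariant Riemannian metric.

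However, your patching formula for (1) is wrong. The expression
\[
d(x,y) \;=\; \sum_{i \in I} \Phi_i(x)\,\Phi_i(y)\,d_i(x,y)
\]
does \emph{not} in general define a metric. If $x$ and $y$ lie in no common tube $U_i$---which certainly happens when the cover is locally finite and $x,y$ are far apart---every summand vanishes and $d(x,y)=0$ for distinct points. Your separation argument (``near any $x_0$ some $\Phi_i(x_0)>0$'') only controls $d$ for $y$ close to $x_0$; it says nothing about distant pairs. Even ignoring separation, the triangle inequality has no reason to hold: the weights $\Phi_i(x)\Phi_i(z)$ bear no relation to $\Phi_i(x)\Phi_i(y)$ or $\Phi_i(y)\Phi_i(z)$.

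Distance functions, unlike Riemannian tensors, are global two-point objects and cannot be glued by a naive partition of unity. The actual constructions (see \cite{Kosz--65} or \cite{AbMN--11}) proceed differently---for instance by building a $G$-equivariant proper embedding of $X$ into a metric $G$-space, or by first producing a $G$-invariant \emph{gauge} (a family of entourages) and then metrizing it. You need to replace this step with one of those mechanisms; the rest of your outline is fine.
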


\begin{proof}[On the proof]
Since the action is proper,
averaging arguments can be used for the existence claims.
Details can be found, for example, in Pages 7 and 8 of \cite{Kosz--65}.
For (1), Koszul's argument and another argument 
are revisited in \cite[Section 6]{AbMN--11}.
For (2), see also \cite[Theorem 4.3.1]{Pala--61}.
\par

For the paracompactness hypothesis in (2), note that
$G \backslash X$ is paracompact as soon as
at least one of the following properties holds:
(a) the connected components of $X$ are open,
(b) $X$ is $\sigma$-compact.
Indeed, an LC-space is paracompact if and only if it is
a disjoint union of $\sigma$-compact spaces
\cite[Chap.\ XI, Th.\ 7.3]{Dugu--66}.
\end{proof}

\begin{exe}[abundance of metrics]
\label{abundanceofd}
On a given $\sigma$-compact LC-group 
[respectively on a given compactly generated LC-group], 
there can be an abundance of adapted metrics
[respectively of geodesically adapted metrics].
\index{Adapted pseudo-metric}
\index{Geodesically adapted pseudo-metric}
\par
Suppose here that $G$ is a connected Lie group.
Let $\mathfrak{g}$ denote its Lie algebra.

\vskip.2cm

(1)
A scalar product on $\mathfrak{g}$
defines a left-invariant Riemannian metric on $G$,
and in particular a proper compatible geodesically adapted metric on $G$
(as in Example \ref{pseudometriconGL}).

\vskip.2cm

(2)
Let $A$ be a bounded subset of $\mathfrak{g}$
such that every $g \in G$ can be represented as a product
$g = \exp (t_1a_1) \cdots \exp(t_ka_k)$ for some $a_1, \hdots, a_k \in A$
and $t_1,\hdots, t_k \in \R$.
Define the ``norm'' $\vert g \vert_A$ 
as the infimum of the numbers $\sum_{i=1}^k \vert t_i \vert$
over all such representations of~$g$,
and the adapted metric $d_A$ on $G$ by $d_A(g,h) = \vert g^{-1}h \vert_A$.
The set of metrics on $G$ of the form $d_A$ is known to coincide
with a set definable in terms of ``Carnot-Carath\'eodory-Finsler metrics'' on $G$
\cite{Nosk}.

\vskip.2cm

(3)
A compact generating set of $G$ defines a word metric on $G$,
that is geodesically adapted.
Let us evoke three kinds of such generating sets, still for $G$ connected.
\begin{enumerate}[noitemsep,label=(\alph*)]
\item
Every compact neighbourhood of $1$ in $G$ is generating, 
as already observed in Proposition \ref{powersSincpgroup}
\item
Let $P$ be a maximal subgroup of $G$;
suppose that $P$ is closed and connected,
let $T$ be a compact neighbourhood of $1$ in $P$,
and let $g \in G$, $g \notin P$; 
then $T \cup \{g\}$ is a compact generating set of $G$.
Example: $P$ is a maximal parabolic subgroup in a semisimple group $G$.
\item
Let $K$ be a maximal proper subgroup of $G$,
suppose that $K$ is compact,
and let $g \in G$, $g \notin K$; 
then $K \cup \{g\}$ is a compact generating set of $G$.
Example: $K =  \SO(n)$ in $G = \SL_n(\R)$, see \cite{Brau--65}.
\index{Orthogonal group! $\textnormal{O}(n)$, $\SO (n)$} 
\index{General linear group $\GL$! $\GL_n(\R)$, $\SL_n(\R)$}
(A variation on this last example appears in Example \ref{exSL_2}.)
\end{enumerate}

\vskip.2cm

(4)
Every continuous isometric action of $G$ on a connected Riemannian manifold $M$
with compact isotropy groups gives rise to geodesically adapted pseudo-metric on $G$,
as in Theorem \ref{ftggt}.
\end{exe}

\begin{exe}
\label{abundanceofdAM}
We continue in the spirit of the previous example,
but now for the group $G$ of $\K$-points
of a reductive algebraic group defined over some local field $\K$.
Here are three kinds of metrics on $G$:
\index{Reductive group}
\begin{enumerate}[noitemsep,label=(\alph*)]
\item
A word metric $d_{\text{word}}$ with respect to some compact generating set $S$ of $G$.
\item
A pseudo-metric $d$ defined by $d_{\text{geom}}(g,h) = d_X(gx_0, hx_o)$,
associated to the action of $G$ on the corresponding Bruhat-Tits building $X$,
where $d_X$ is a $G$-invariant metric on $X$
for which every apartment of $X$ is isometric to a Euclidean space,
and $x_0$ is some base point in $X$;
compare with Example \ref{pseudometriconGL}(2).
\item
A left-invariant pseudo-metric $d_{\text{op}}$ defined by
\begin{equation*}
d_{\text{op}}(1,g) \, = \, \sup \{ \vert \log \Vert g \Vert \vert ,  \vert \log \Vert g^{-1} \Vert \vert  \} ,
\end{equation*}
where $G$ is identified with a subgroup of $\GL_n(\K)$ 
by some faithful linear representation defined over $\K$,
and $\Vert \cdot \Vert$ denotes the operator norm of operators in $\K^n$;
compare with Example \ref{pseudometriconGL}(3).
\end{enumerate}
It is a part of the main result in \cite{Abel--04}
that the three metrics $d_{\text{word}}$, $d_{\text{geom}}$ and $d_{\text{op}}$
are quasi-isometric on $G$.
\end{exe}

\begin{prop}[$\sigma$-compactness and compact generation of cocompact closed subgroups]
\label{sigmac+compactgofcocompact}
\index{Hereditarity of various properties}
Let $G$ be an LC-group and $H$ a cocompact closed subgroup.
\index{Subgroup! cocompact closed}
\begin{enumerate}[noitemsep,label=(\arabic*)]
\item\label{1DEsigmac+compactgofcocompact}
Assume that $G$ is $\sigma$-compact,
or equivalently that $H$ is so (Proposition \ref{stababcd}).
Let $d$ be an adapted pseudo-metric on $G$ (Proposition \ref{existenceam}).
\par
Then, with respect to $d$, the inclusion of $H$ in $G$ is a metric coarse equivalence.
\item\label{2DEsigmac+compactgofcocompact}
$G$ is compactly generated if and only if $H$ is so.
\item\label{3DEsigmac+compactgofcocompact}
Assume that the conditions of (2) hold.
Let $d$ be a geodesically adapted pseudo-metric on $G$
(Proposition \ref{geodad+wordmetric}). 
\par
Then the inclusion of $H$ in $G$ is a quasi isometry.
\end{enumerate}
\end{prop}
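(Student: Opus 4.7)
The plan is to establish the three claims in order, with (1) being the key technical step and the subsequent claims following from it together with previously developed machinery.

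For (1), the plan is to verify each of the three defining conditions of a metric coarse equivalence for the inclusion $j\colon (H,d|_H)\hookrightarrow (G,d)$. Since $d$ is left-invariant, its restriction $d|_H$ is obtained literally by restriction, so $j$ is an isometric embedding: this makes $j$ automatically coarsely Lipschitz and coarsely expansive (with upper and lower controls equal to the identity). The only real content is essential surjectivity, i.e.\ $\sup_{g\in G} d(g,H)<\infty$. Here I would exploit cocompactness in the form $G=HK$ for some compact $K\subset G$ (which follows from $G=K'H$ by taking inverses). For any $g\in G$, write $g=hk$ with $h\in H$, $k\in K$; by left-invariance of $d$,
\[
d(g,H)\le d(hk,h)=d(k,1)\le \sup_{k'\in K}d(1,k')<\infty,
\]
where finiteness of the supremum uses that $K$ is compact and $d$ is locally bounded (Remark \ref{remonmetrics}(5)). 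This completes (1).

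For (2), I first observe that each of the two sides of the equivalence is sufficient to deduce $\sigma$-compactness of both $G$ and $H$: compact generation implies $\sigma$-compactness of the group in question, and Proposition \ref{stababcd} (applied to the compact quotient $G/H$) propagates $\sigma$-compactness across the inclusion in both directions. Thus it is harmless to assume $G$ and $H$ are $\sigma$-compact from the outset. Choose an adapted pseudo-metric $d$ on $G$; then $d|_H$ is adapted on $H$, and by (1) the inclusion $j$ is a metric coarse equivalence. Since coarse connectedness is invariant under metric coarse equivalence (Proposition \ref{invariance_cg_lsg}(1)), $(G,d)$ is coarsely connected iff $(H,d|_H)$ is, and Proposition \ref{metric_char_cg} translates this into the compact generation of $G$ iff that of $H$.

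For (3), assume $G$ (equivalently $H$) is compactly generated and $d$ is geodesically adapted on $G$. The inclusion $j$ is still an isometric embedding, so it is a quasi-isometric embedding with constants $c_+=c_-=1$, and by (1) it is essentially surjective; hence $j\colon(H,d|_H)\to (G,d)$ is a quasi-isometry. Since $(G,d)$ is large-scale geodesic and large-scale geodesicity is a quasi-isometry invariant (Proposition \ref{invariance_cg_lsg}(5)), the pseudo-metric $d|_H$ is itself geodesically adapted on $H$. Any other geodesically adapted pseudo-metric $d_H$ on $H$ is then quasi-isometric to $d|_H$ by Corollary \ref{uniqueness _uptoqi}, so the inclusion $(H,d_H)\hookrightarrow (G,d)$ remains a quasi-isometry.

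The main obstacle, and the only place where a genuine computation is needed, is the essential surjectivity step in (1); once the subtle point of combining cocompactness with left-invariance (via the identity $G=HK$ rather than $G=KH$) is handled correctly, parts (2) and (3) are essentially formal consequences of (1) combined with the characterizations in Propositions \ref{metric_char_cg} and \ref{invariance_cg_lsg} and Corollary \ref{uniqueness _uptoqi}.
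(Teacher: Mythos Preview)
Your proof is correct, and for parts (1) and (3) it is actually slightly more direct than the paper's. In (1), the paper invokes Proposition \ref{morphisms_grps_spaces} (continuous proper homomorphisms are coarse embeddings) for the coarse embedding step, whereas you simply observe that the inclusion is an \emph{isometric} embedding, which immediately gives both controls. In (3), the paper appeals to Proposition \ref{ceXYqi} (a metric coarse equivalence between large-scale geodesic spaces is a quasi-isometry), while you bypass this entirely by noting that an isometric embedding with cobounded image is already a quasi-isometry with constants $(1,0)$; your route avoids having to check that $(H,d|_H)$ is large-scale geodesic before applying \ref{ceXYqi}.

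The genuine divergence is in (2). The paper argues each direction separately: if $G$ is compactly generated, the left-multiplication action of $H$ on $(G,d)$ is geometric and Theorem \ref{ftggt} gives compact generation of $H$; conversely, if $H$ is generated by a compact set $T$ and $G=UH$ with $U$ compact, then $T\cup U$ generates $G$. You instead prove both directions at once by combining (1) with the invariance of coarse connectedness under metric coarse equivalence (Proposition \ref{invariance_cg_lsg}) and the characterization of compact generation via coarse connectedness (Proposition \ref{metric_char_cg}). Your approach uses the coarse framework more systematically and gives a cleaner symmetric argument; the paper's approach, on the other hand, yields an explicit compact generating set for $G$ and a direct link to the fundamental theorem of geometric group theory.
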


\noindent
\emph{Note.} Claim \ref{2DEsigmac+compactgofcocompact} 
is contained in Proposition \ref{stababcd},
and also in \cite[chap.\ VII, $\S$~3, n$^o$ 2, lemme 3]{BInt7-8}.
Below, we provide another proof.
\par
In the literature, there are many other occurrences of various parts of the proposition;
for \ref{2DEsigmac+compactgofcocompact} and \ref{3DEsigmac+compactgofcocompact},
see for example \cite[Lemma 5]{MoSW--02}.

\begin{proof}
\ref{1DEsigmac+compactgofcocompact}
Since the inclusion of $H$ in $G$ is a continuous homomorphism,
the inclusion $j : (H,d) \lhook\joinrel\relbar\joinrel\rightarrow (G,d)$
is a coarse embedding, by Proposition \ref{morphisms_grps_spaces}.
Since $G/H$ is compact, it is essentially surjective.
\par

\ref{2DEsigmac+compactgofcocompact}
Suppose first that $G$ is compactly generated,
and let $d$ be a geodesically adapted metric on it.
The action by left multiplications of $H$ on $G$ is geometric,
and $H$ is compactly generated by Theorem \ref{ftggt}(1).
Suppose now that $H$ has a compact generating set $T$.
Let $U$ be a compact subset of $G$ such that $G = UH$
(Lemma \ref{KimagedeK}). 
Then $T \cup U$ is a compact generating set of $G$.
\par
There is another proof in \cite{MaSw--59}.

\ref{3DEsigmac+compactgofcocompact}
Since $d$ is geodesically adapted,
the metric coarse equivalence $H \longrightarrow G$
is a quasi-isometry by Proposition \ref{ceXYqi}.
\end{proof}

Similarly:  

\begin{prop}[$\sigma$-compactness and compact generation of 
quotients by compact normal subgroups]
\label{sigmac+compactgofquotients}
Let $G$ be an LC-group,
$K$ a compact normal subgroup,
$Q = G/K$ the quotient group,
and $\pi : G \longrightarrow Q$ the canonical projection.
Assume that $G$ is $\sigma$-compact,
or equivalently that $Q$ is so. 
\begin{enumerate}[noitemsep,label=(\arabic*)]
\item\label{1DEsigmac+compactgofquotients}
Let $d_G, d_Q$ be adapted pseudo-metrics on $G,Q$ respectively.
\par
Then $\pi : (G,d_G) \longrightarrow (Q,d_Q)$ is a metric coarse equivalence.
\item\label{2DEsigmac+compactgofquotients}
$G$ is compactly generated if and only if $Q$ is so.
\item\label{3DEsigmac+compactgofquotients}
Assume that the conditions of (2) hold.
\par
Let $d_G, d_Q$ be geodesically adapted pseudo-metrics on $G,Q$ respectively.
\par
Then $\pi : (G,d_G) \longrightarrow (Q,d_Q)$ is a quasi isometry.
\end{enumerate}
\end{prop}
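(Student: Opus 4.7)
The plan is to mirror the structure of Proposition \ref{sigmac+compactgofcocompact}, exploiting the fact that the projection $\pi : G \twoheadrightarrow Q$ is a continuous epimorphism of $\sigma$-compact LC-groups with \emph{compact} kernel $K$.

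For \ref{1DEsigmac+compactgofquotients}, I would invoke Proposition \ref{morphisms_grps_spaces}: since $\pi$ is a continuous homomorphism between $\sigma$-compact LC-groups equipped with adapted pseudo-metrics, part (1) of that proposition shows that $\pi : (G,d_G) \longrightarrow (Q,d_Q)$ is coarsely Lipschitz. To apply part (2) of the same proposition, I must check that $\pi$ is proper as a map of topological spaces; this is where compactness of $K$ enters. Given a compact $L \subset Q$, Lemma \ref{KimagedeK} produces a compact $L' \subset G$ with $\pi(L') = L$, and then $\pi^{-1}(L) = L'K$, which is compact as the product of two compact sets. Hence $\pi$ is proper, so coarsely expansive. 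Finally $\pi$ is surjective, hence trivially essentially surjective, so it is a metric coarse equivalence.

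For \ref{2DEsigmac+compactgofquotients}, the forward implication is immediate: if $S \subset G$ is a compact generating set, then $\pi(S)$ is a compact generating set of $Q$. For the converse, given a compact generating set $T$ of $Q$, I would lift it via Lemma \ref{KimagedeK} to a compact $T' \subset G$ with $\pi(T') = T$, and then show that $T' \cup K$ (compact in $G$) generates $G$. Indeed, for $g \in G$, writing $\pi(g) = t_1 \cdots t_n$ with $t_i \in T \cup T^{-1}$, I can lift to $g_1 \cdots g_n$ with $g_i \in T' \cup (T')^{-1}$, and then $g = g_1 \cdots g_n k$ for some $k \in K$, expressing $g$ as a product of elements of $T' \cup (T')^{-1} \cup K$.

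For \ref{3DEsigmac+compactgofquotients}, I would simply combine \ref{1DEsigmac+compactgofquotients} with Proposition \ref{ceXYqi}(2): both pseudo-metric spaces $(G,d_G)$ and $(Q,d_Q)$ are large-scale geodesic (being geodesically adapted), so any metric coarse equivalence between them is automatically a quasi-isometry.

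There is no substantial obstacle; the only delicate point is the use of Lemma \ref{KimagedeK} to lift compact subsets from $Q$ to $G$, which is the bridge that makes both the properness argument in \ref{1DEsigmac+compactgofquotients} and the generation argument in \ref{2DEsigmac+compactgofquotients} go through cleanly. The compactness of $K$ is what distinguishes this proposition from the cocompact-subgroup case: here it lets us control fibres of $\pi$, whereas there it let us control a transversal.
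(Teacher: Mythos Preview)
Your proof is correct and is exactly the argument the paper intends: the paper gives no explicit proof here, only the word ``Similarly:'' before the statement, pointing back to Proposition~\ref{sigmac+compactgofcocompact}, and your adaptation of that template (via Proposition~\ref{morphisms_grps_spaces} for \ref{1DEsigmac+compactgofquotients}, a direct lifting argument for \ref{2DEsigmac+compactgofquotients}, and Proposition~\ref{ceXYqi} for \ref{3DEsigmac+compactgofquotients}) is precisely what is meant. Note that \ref{2DEsigmac+compactgofquotients} is also a special case of Proposition~\ref{stababcd}\ref{4DEstababcd}, which would let you shorten that step.
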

\index{Subgroup! compact normal} 

\begin{rem}
\label{remcopci}
A homomorphism between LC-groups is a \textbf{copci} 
if it is \textbf{continuous proper with cocompact image}.
Consider two compactly generated LC-groups $G$ and $H$. 
A copci $\varphi : G \longrightarrow H$ factors as the composition
of a quotient map with compact kernel $G \longrightarrow G / \ker (\varphi)$
and an injective homomorphism $G / \ker (\varphi) \longrightarrow H$ 
that is a topological isomorphism onto a cocompact closed subgroup.
If follows from Propositions \ref{sigmac+compactgofcocompact} 
and \ref{sigmac+compactgofquotients} that a copci is a quasi-isometry.
\par

Two LC-groups $G$ and $H$ are \textbf{commable} 
\index{Commable LC-groups|textbf}
if there exist an integer $k$ 
and a sequence of copci homomorphisms
\begin{equation*}
G \, = \, G_0 \hskip.2cm  \frac{\phantom{aaa}}{\phantom{bbb}} \hskip.2cm 
G_1 \hskip.2cm  \frac{\phantom{aaa}}{\phantom{bbb}} \hskip.2cm 
G_2 \hskip.2cm  \frac{\phantom{aaa}}{\phantom{bbb}} \hskip.2cm 
\hdots  \hskip.2cm  \frac{\phantom{aaa}}{\phantom{bbb}} \hskip.2cm 
G_k = H
\end{equation*}
where each sign $\frac{\phantom{aaa}}{\phantom{bbb}}$
stands for either $\longrightarrow$ or $\longleftarrow$.
\par

If two compactly generated LC-groups are commable, they are quasi-isometric.
The converse does not hold: if $\Gamma_1, \Gamma_2$ are two cocompact lattices
in $\SL_2(\C)$ that are not commensurable,
then the free products $\Gamma_1 \Conv \Z$ and $\Gamma_2 \Conv \Z$
\index{Free product}
are quasi-isometric and are not commable 
(this is due to Mathieu Carette and Romain Tessera,
see \cite[Section 5.B]{Corn}).
\par

For copci homomorphisms and commable groups, see \cite{Corn--15}.
\end{rem}

\section{Local ellipticity}
\label{local_ellipticity}

\begin{defn}
\label{deflocallyelliptic}
An LC-group $G$ is \textbf{locally elliptic}
\index{Locally elliptic LC-group|textbf}
if every compact subset of $G$ is contained in a compact open subgroup.
\index{Open subgroup! compact}
\end{defn}

\begin{rem}
``Locally elliptic'' here was called ``topologically locally finite'' 
in \cite{Plat--66} and \cite{Capr--09},
and ``k-k group'' in \cite{Abel--74}.
\par

A  discrete group is locally elliptic if and only if it is locally finite
\index{Locally finite! group}
(as defined in Remark \ref{remsursuiteYamabe}).
\par

An LC-group that is both locally elliptic and compactly generated is compact.
\end{rem}

\begin{prop}
\label{equivalences_locell}
For an LC-group $G$, the following four properties are equivalent:
\begin{enumerate}[noitemsep,label=(\roman*)]
\item\label{iDEequivalences_locell}
every finite subset of $G$ is contained in a compact subgroup;
\item\label{iiDEequivalences_locell}
every compact subset of $G$ is contained in a compact subgroup;
\item\label{iiiDEequivalences_locell}
$G$ is locally elliptic;
\item\label{ivDEequivalences_locell}
$G_0$ is compact and $G/G_0$ is locally elliptic.
\end{enumerate}
\end{prop}

\begin{proof}
For the equivalence of \ref{iDEequivalences_locell} and \ref{iiDEequivalences_locell}, 
we refer to \cite[Lemma 2.3]{Capr--09}.
\par

Assume that $G$ has Property \ref{iiDEequivalences_locell}.
Let $L$ be a compact subset of $G$.
Choose a compact neighbourhood $B$ of $1$ in $G$.
By hypothesis, $L \cup B$ is contained in some compact subgroup $K$ of $G$.
Since $B \subset K$, the subgroup $K$ is also open.
Thus $G$ has Property \ref{iiiDEequivalences_locell}.
\par

[In case $G$ is totally disconnected, the equivalence between 
\ref{iiDEequivalences_locell} and \ref{iiiDEequivalences_locell}
follows also from Corollary \ref{vanDantzigCor}(2).]
\par

Implication \ref{ivDEequivalences_locell} $\Rightarrow$ \ref{iiiDEequivalences_locell} 
is straightforward. 
Conversely, if \ref{iiiDEequivalences_locell} holds, 
then clearly $G/G_0$ and $G_0$ are elliptic; 
since $G_0$ is also compactly generated (Proposition 2.C.3(2)), it has to be compact.
\end{proof}

\begin{prop}
\label{equivalentlocalelliptic}
Let $G$ be a $\sigma$-compact LC-group,
and let $d$ be an adapted pseudo-metric on $G$.
The following properties are equivalent:
\begin{enumerate}[noitemsep,label=(\roman*)]
\item\label{iDEequivalentlocalelliptic}
$G$ is locally elliptic,
\item\label{iiDEequivalentlocalelliptic}
$(G,d)$ has asymptotic dimension $0$.
\end{enumerate}
\index{Asymptotic dimension! $\mathrm{asdim}(G)$ 
of a $\sigma$-compact LC-group $G$}
\end{prop}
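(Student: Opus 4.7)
The plan is to prove both implications by exploiting the fact that, on an LC-group with an adapted pseudo-metric, "locally elliptic" is essentially a group-theoretic reformulation of the coarse ultrametric behavior that characterizes asymptotic dimension zero (compare Example \ref{exmasdim} and Proposition \ref{propcoarselyultrametric}). The left-invariance of $d$ converts partitions by left cosets of compact open subgroups into the partitions required by asymptotic dimension zero, and conversely converts "bounded $r$-connected components" into containment of a generated subgroup inside a compact set.

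For \ref{iDEequivalentlocalelliptic} $\Rightarrow$ \ref{iiDEequivalentlocalelliptic}: fix $r > 0$. Because $d$ is adapted, the closed ball $B := \{g \in G \mid d(1,g) \le r\}$ is relatively compact, so $\overline{B}$ is compact. By local ellipticity, $\overline{B}$ sits inside some compact open subgroup $K$. The left cosets give a partition $G = \bigsqcup_{gK \in G/K} gK$. Each coset has $d$-diameter equal to $\operatorname{diam}(K)$ (by left-invariance), and this is finite because $K$ is compact and $d$ is locally bounded (Remark \ref{remonmetrics}(5)). Finally, if $g_1 K \ne g_2 K$ and $k_1,k_2 \in K$, then $d(g_1k_1, g_2k_2) \le r$ would place $k_1^{-1}g_1^{-1}g_2 k_2$ in $B \subset K$, forcing $g_1^{-1}g_2 \in K$, a contradiction; hence $d(g_1K, g_2K) > r$. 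This verifies the condition of Definition \ref{defasymptoticdimension} for $n=0$.

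For \ref{iiDEequivalentlocalelliptic} $\Rightarrow$ \ref{iDEequivalentlocalelliptic}: by Proposition \ref{equivalences_locell}, it suffices to show that every compact subset $L \subset G$ is contained in some compact subgroup. Set $R_0 := \sup_{\ell \in L \cup L^{-1}} d(1,\ell)$, which is finite since $L \cup L^{-1}$ is compact and $d$ is locally bounded. Apply asymptotic dimension zero with $r = R_0$: there is a partition $G = \bigsqcup_{i \in I} X_i$ with $D := \sup_i \operatorname{diam}(X_i) < \infty$ and $d(X_i,X_j) \ge R_0$ for $i \ne j$. Let $X_0$ be the piece containing $1$, and let $H$ be the subgroup of $G$ generated by $L$. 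For any $h = \ell_1 \cdots \ell_n \in H$ with $\ell_k \in L \cup L^{-1}$, the partial products $h_0 = 1, h_1, \ldots, h_n = h$ satisfy $d(h_{k-1},h_k) = d(1,\ell_k) \le R_0$ by left-invariance; induction on $k$ shows $h_k \in X_0$, since jumping from $X_0$ to any other piece would require a step of length $> R_0$. Hence $H \subset X_0$, so $H$ has $d$-diameter at most $D$. Since $d$ is proper, $\overline{H}$ is compact, and it is a subgroup (the closure of a subgroup in a topological group is a subgroup). Thus $L \subset \overline{H}$ is contained in a compact subgroup, as required.

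The main point to watch is the partial-product argument in the second direction: one must check that every element of $H$ can be reached from $1$ by $R_0$-steps inside $G$, and that the partition hypothesis then confines the entire orbit to one piece $X_0$. This uses left-invariance of $d$ in an essential way and is really the place where the group structure interacts with the coarse structure; the rest of the argument is standard bookkeeping with Propositions \ref{existenceam} and \ref{equivalences_locell}.
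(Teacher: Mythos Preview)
Your argument is essentially the same as the paper's: in both directions you use left cosets of a compact open subgroup containing the $r$-ball for \ref{iDEequivalentlocalelliptic}$\Rightarrow$\ref{iiDEequivalentlocalelliptic}, and for \ref{iiDEequivalentlocalelliptic}$\Rightarrow$\ref{iDEequivalentlocalelliptic} you show the subgroup generated by a compact set is trapped in a single piece of the partition. The paper enlarges the compact set to a symmetric compact \emph{neighbourhood} $S$ of $1$ and thereby obtains directly a compact \emph{open} subgroup $\bigcup_{n\ge 0} S^n$, whereas you take the closure and invoke Proposition~\ref{equivalences_locell}; both are fine.

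There is one small slip in your \ref{iiDEequivalentlocalelliptic}$\Rightarrow$\ref{iDEequivalentlocalelliptic}: applying Definition~\ref{defasymptoticdimension} with $r=R_0$ only gives $d(X_i,X_j)\ge R_0$, not $>R_0$, so a step of length exactly $R_0$ is not excluded and your induction does not close. The fix is trivial (apply the hypothesis with any $r>R_0$, say $r=R_0+1$, or, as the paper does, with $2r$), but as written the inequality is off by the boundary case.
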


\begin{proof}
\ref{iDEequivalentlocalelliptic} $\Longrightarrow$ \ref{iiDEequivalentlocalelliptic}
Choose $r > 0$.
Since $d$ is proper, the ball $B^1_G(r)$ with respect to $d$ is compact.
Assume that $G$ is locally elliptic,
so that $B^1_G(r)$ is contained in a compact open subgroup $K$ of $G$.
On the one hand, 
for the partition of $G$ into left-cosets of the form $gK$, 
we have $\operatorname{diam}(gK) = \operatorname{diam}(K)$ for every $g \in G$.
On the other hand, for $g,g' \in G$, the inequality $d(gK, g'K) \le r$ implies $gK = g'K$;
equivalently, if $gK \ne g'K$, then $d(gK, g'K) > r$.
Hence $\operatorname{asdim}(G,d) = 0$.
\par

\ref{iiDEequivalentlocalelliptic} $\Longrightarrow$ \ref{iDEequivalentlocalelliptic}
Assume that $(G,d)$ has asymptotic dimension $0$.
Choose a compact subset $S$ of $G$;
we have to show that $S$ is contained in a compact open subgroup of $G$.
Upon enlarging it, we can assume that 
$S$ is a compact symmetric neighbourhood of $1$ in $G$.
Let $r > 0$ be such that $S \subset B^1_G(r)$.
By hypothesis, there exists a partition $G = \bigsqcup_{i \in I} X_i$
such that $\sup_{i \in I}\operatorname{diam}(X_i) < \infty$ 
and $d(X_i, X_j) \ge 2r$ for $i,j \in I$, $i \ne j$.

Let $g \in X_i, g' \in X_j$ and $s \in S$ such that $gs = g'$.
Then $d(g,g') = d(1,g^{-1}g') = d(1,s) \le r$, and therefore $i = j$.
It follows that, for all $i \in I$, we have $X_i S \subset X_i$,
and by iteration $X_i S^n \subset X_i$ for all $n \ge 0$.
Since $S$ is a symmetric neighbourhood of $1$,
the set $K := \bigcup_{n \ge 0}S^n$
is an open subgroup of $G$, containing $S$,
and $X_i K \subset K_i$ for all $i \in I$.
Let $i_0$ be the index in $I$ such that $1 \in X_{i_0}$,
and let $R$ denote the diameter of $X_{i_0}$;
then $K \subset X_{i_0} \subset B^1_G(R)$,
so that $K$ is a bounded subgroup.
Since $K$ is closed (being an open subgroup), $K$ is compact.
Hence $G$ is locally elliptic.
\end{proof}

\begin{cor}
Among $\sigma$-compact LC-groups,
local ellipticity is invariant by metric coarse equivalence.
\index{Property! of a $\sigma$-compact LC-group invariant by metric coarse equivalence}
\end{cor}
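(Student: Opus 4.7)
The plan is to deduce the corollary by chaining together three results already established in the excerpt: Proposition \ref{equivalentlocalelliptic} (local ellipticity is equivalent to asymptotic dimension zero for an adapted pseudo-metric), Proposition \ref{re_asymptoticdimension_n} (asymptotic dimension is invariant under metric coarse equivalence), and Milestone \ref{miles_sigmacompact} together with Corollary \ref{2metricsce} (a $\sigma$-compact LC-group, equipped with any adapted pseudo-metric, represents a well-defined isomorphism class in the metric coarse category).

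More concretely, let $G_1$ and $G_2$ be $\sigma$-compact LC-groups and suppose, in the sense of Milestone \ref{miles_sigmacompact}, that they are metric coarse equivalent: that is, for some (equivalently, by Corollary \ref{2metricsce}, for any) choice of adapted pseudo-metrics $d_1$ on $G_1$ and $d_2$ on $G_2$, there exists a metric coarse equivalence $(G_1,d_1) \longrightarrow (G_2,d_2)$. By Proposition \ref{re_asymptoticdimension_n}, the asymptotic dimensions of $(G_1,d_1)$ and $(G_2,d_2)$ coincide; in particular, one equals $0$ if and only if the other does. By Proposition \ref{equivalentlocalelliptic}, this amounts to $G_1$ being locally elliptic if and only if $G_2$ is locally elliptic.

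The only minor subtlety, which I would make explicit, is the well-definedness step: before invoking the equivalence between local ellipticity and $\operatorname{asdim} = 0$, one must observe that the property ``$(G,d)$ has asymptotic dimension zero'' does not depend on the choice of the adapted pseudo-metric $d$ on $G$. But this is immediate from Corollary \ref{2metricsce}\ref{2DE2metricsce}, which asserts that any two adapted pseudo-metrics on a $\sigma$-compact LC-group are coarsely equivalent, combined again with Proposition \ref{re_asymptoticdimension_n}. There is no substantive obstacle; the corollary is essentially a bookkeeping consequence of the characterization in Proposition \ref{equivalentlocalelliptic} and the coarse-invariance of asymptotic dimension.
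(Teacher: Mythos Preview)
Your argument is correct and matches the paper's approach: both deduce the corollary from the characterization of local ellipticity as asymptotic dimension zero (Proposition \ref{equivalentlocalelliptic}) together with the coarse invariance of asymptotic dimension. Your explicit handling of the well-definedness step via Corollary \ref{2metricsce} is a welcome clarification of what the paper leaves implicit.
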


\begin{proof}
See Example \ref{exmasdim} and Proposition \ref{equivalentlocalelliptic}.
\end{proof}

\begin{prop}
\label{stabilitelocalelliptic}
Let $G$ be an LC-group, $L$ a closed subgroup, 
and $N$ a normal closed subgroup;
set $Q = G/N$.
\begin{enumerate}[noitemsep,label=(\arabic*)]
\item\label{1DEstabilitelocalelliptic}
If $G$ is locally elliptic, so are $L$ and $Q$.
\par
\item\label{2DEstabilitelocalelliptic}
If $N$ and $Q$ are locally elliptic, so is $G$.
\end{enumerate}
\end{prop}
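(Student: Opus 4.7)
The plan is to prove (1) by direct verification using Lemma \ref{KimagedeK}, and to prove (2) by first reformulating local ellipticity in a form convenient for extensions.

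For (1), suppose $G$ is locally elliptic. If $L \subset H$ is compact, then $L$ is compact in $G$ and hence contained in a compact open subgroup $K$ of $G$; then $K \cap H$ is a compact open subgroup of $H$ containing $L$, so $H$ is locally elliptic. For $Q = G/N$, take a compact $\bar L \subset Q$. Since $G$ is locally compact, Lemma \ref{KimagedeK} yields a compact $L \subset G$ with $\pi(L) = \bar L$. Choosing a compact open subgroup $K \subset G$ containing $L$, the image $\pi(K)$ is a compact subgroup of $Q$, and it is open because the quotient map $\pi$ is open; it contains $\bar L$, so $Q$ is locally elliptic.

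For (2), the key reformulation is: \emph{an LC-group is locally elliptic if and only if each of its compactly generated open subgroups is compact.} The forward direction is immediate since a compact generating set of such a subgroup lies in a compact subgroup. For the converse, given a compact $L \subset G$, Proposition \ref{powersSincpgroup}\ref{1DEpowersSincpgroup} provides a compactly generated open subgroup containing $L$, which is compact by hypothesis. So it suffices to show, assuming $N$ and $Q$ are locally elliptic, that every compactly generated open subgroup $G' \subset G$ is compact.

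Set $N' = G' \cap N$, a closed normal subgroup of $G'$. Then $G' N$ is open in $G$ (since $G'$ is), so $G'N/N$ is an open subgroup of $Q$; it is the continuous image $\pi(G')$ of the compactly generated group $G'$, hence compactly generated. By local ellipticity of $Q$ and the characterization above applied to $Q$, the subgroup $G'N/N$ is compact. The continuous bijective homomorphism $G'/N' \to G'N/N$ between $\sigma$-compact LC-groups is an isomorphism of topological groups by Corollary \ref{Freudenthalcor}, so $N'$ is cocompact in $G'$. Then Proposition \ref{sigmac+compactgofcocompact}\ref{2DEsigmac+compactgofcocompact} shows that $N'$ is compactly generated.

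The last step, where I expect the only real subtlety, is to deduce that $N'$ itself is compact. Let $S \subset N'$ be a compact generating set. Viewing $S$ as a compact subset of the locally elliptic group $N$, there is a compact (open) subgroup $K$ of $N$ containing $S$; since $K$ is a subgroup and $N' = \langle S \rangle$, we get $N' \subset K$, so $N'$ is relatively compact, and being closed in $N$ (hence in $K$) it is compact. Finally $G'$ is an extension of the compact group $G'/N'$ by the compact group $N'$: writing $G' = C N'$ for a compact $C \subset G'$ mapping onto $G'/N'$ (Lemma \ref{KimagedeK}), $G'$ is the continuous image of the compact set $C \times N'$ under multiplication, hence compact. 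By the characterization, $G$ is locally elliptic.
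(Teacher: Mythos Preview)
Your proof is correct. Part (1) matches the paper's argument. For part (2), you and the paper both reduce to a compactly generated open subgroup $G'$ and use that $N' = G' \cap N$ is cocompact in $G'$ (via local ellipticity of $Q$) and hence compactly generated (Proposition \ref{sigmac+compactgofcocompact}). The difference is in how you conclude.

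You use the elementary characterization ``locally elliptic $\Leftrightarrow$ every compactly generated open subgroup is compact'' and argue directly: $N'$ is compactly generated inside the locally elliptic group $N$, hence compact, and then $G'$ is compact as a compact-by-compact extension. The paper instead invokes the metric characterization of local ellipticity via asymptotic dimension (Proposition \ref{equivalentlocalelliptic} and Example \ref{exmasdim}): since $N'$ is cocompact in $G'$, the inclusion is a metric coarse equivalence, and since $N'$ is locally elliptic by part (1), so is $G'$. Your route is more self-contained and avoids the coarse-geometric machinery; the paper's route is shorter once that machinery is in place and illustrates the philosophy of the book, namely that such hereditary properties can be read off from coarse invariants.
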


\begin{proof}
\ref{1DEstabilitelocalelliptic}
From the definition, the claim for $L$ is clear.
Because of Lemma \ref{KimagedeK}, so is the claim for $Q$.
\par

\ref{2DEstabilitelocalelliptic}
Let $C$ be a compact subset of $G$.
It suffices to show that $C$ is contained in a locally elliptic open subgroup of $G$.
\par

Let $U$ be a compact neighbourhood of $1$ in $G$.
Let $H$ denote the subgroup of $G$ generated by $C \cup U$;
observe that $H$ is open and $\sigma$-compact.
Let $\pi : G\relbar\joinrel\twoheadrightarrow Q$ denote the canonical projection. 
Since $Q$ is locally elliptic, 
there exists a compact open subgroup $R$ of $Q$ containing $\pi(C \cup U)$.
Observe that $N \cap H$ is a normal and closed subgroup of $H$, 
and that $H/(N \cap H)$ is isomorphic to an open subgroup of $Q$
contained in $R$, hence that $H/(N \cap H)$ is compact.
Since $H$ is compactly generated,
the inclusion of $N \cap H$ in $H$ is a metric coarse equivalence 
(Proposition \ref{sigmac+compactgofcocompact}).
Since $N \cap H$ is locally elliptic by \ref{1DEstabilitelocalelliptic},
$H$ is locally elliptic by 
Propositions \ref{re_asymptoticdimension_n} and \ref{equivalentlocalelliptic}.
\end{proof}

\begin{exe}
\label{examplesoflocallyellipticgroups}
(1)
Finite groups and compact groups are clearly locally elliptic.

\vskip.2cm

(2)
Every locally finite discrete groups is locally elliptic.
Similarly, an LC-group that is an increasing union of compact open subgroups
is locally elliptic.
\par
In particular, for a prime $p$ and a positive integer $n$, 
the additive group $\Q_p^n$ is locally elliptic.
See Example \ref{localfieldscg}.
\index{$q$@$\Q_p$, field of $p$-adic numbers}

\vskip.2cm

(3)
An LC-group  containing $\Z$ as a discrete subgroup is not locally elliptic.
In particular, for a prime $p$ and an integer $n \ge 2$, 
the groups $\Q_p^\times$ and $\SL_n(\Q_p)$ are not locally elliptic.
\index{Special linear group $\SL$! $\SL_n(\Q_p)$}

\vskip.2cm

(4)
Let $A$ be a countable abelian group.
Choose a compatible metric $d$ on $A$.
Then  $\operatorname{asdim}(A,d) = 0$ 
if and only if  $A$ is locally elliptic,
if and only if $A$ is a torsion group
(Proposition \ref{equivalentlocalelliptic}).

\vskip.2cm  

(5)
Let $\K$ be a non-discrete locally compact field and $n \ge 2$ an integer. 
Let $\operatorname{Unip}_n(\K)$ denote the subgroup of $\GL_n(\K)$ of matrices 
$(u_{i,j})_{1 \le i,j \le n}$ with $u_{i,i} = 1$ and $u_{i,j} = 0$ whenever $i > j$;
it is an LC-group homeomorphic to $\K^{n(n-1)/2}$. 
\par

If $\K$ is $\R$ or $\C$, then $\operatorname{Unip}_n(\K)$ contains discrete infinite cyclic subgroups,
and therefore is not locally elliptic by (3).
\par

If $\K$ is a local field, then $\operatorname{Unip}_n(\K)$ is locally elliptic.
Indeed, consider an absolute value $\vert \cdot \vert$ on $\K$
(see Remark \ref{onabsolutevalues}) and,
for every $\alpha > 0$, the subset
${\operatorname U}_\alpha$ of matrices $(u_{i,j})_{1 \le i,j \le n}$ 
in $\operatorname{Unip}_n(\K)$
such that $\vert u_{i,j} \vert \le \alpha^{j-i}$ for all $i,j$ with $j > i$.
Then ${\operatorname U}_{\alpha}$ 
is a compact open subgroup of $\operatorname{Unip}_n(\K)$, and
$\operatorname{Unip}_n(\K) = 
\bigcup_{\alpha \in \R_+^\times} {\operatorname U}_{\alpha}$.
The local ellipticity of $\operatorname{Unip}_n(\K)$ follows.

\vskip.2cm

(6)
As a consequence of Proposition \ref{equivalences_locell}, 
a connected LC-group is locally elliptic if and only if it is compact.

\vskip.2cm

(7)
An LC-group $G$ possesses a unique maximal locally elliptic closed normal subgroup,
called the \textbf{locally elliptic radical} of $G$.
\index{Radical! Locally elliptic radical|textbf}
\index{Locally elliptic radical|textbf}
This was introduced by \cite{Plat--66}, who called it the locally finite radical;
see also \cite{Capr--09}; 
we avoid here (as in \cite{CCMT--15} 
and several subsequent articles) this confusing terminology.
\par
The quotient of $G$ by its locally elliptic radical
is an LC-group without non-trivial locally elliptic closed normal subgroup.

\vskip.2cm

(8)
Note that a locally elliptic LC-group is amenable
\index{Amenable! locally compact group}
(by F\o lner's Criterion \cite{Foln--55}) 
and unimodular
(as a union of open subgroups which are compact, and therefore unimodular).
\index{Unimodular group}
\end{exe}

\begin{prop}
\label{localellipticityintermsofaction}
For a $\sigma$-compact LC-group $G$ with adapted metric $d_G$, 
the following conditions are equivalent:
\begin{enumerate}[noitemsep,label=(\roman*)]
\item\label{iDElocalellipticityintermsofaction}
the group $G$ is locally elliptic;
\item\label{iiDElocalellipticityintermsofaction}
there exists a non-empty ultrametric space $(X,d)$
on which $G$ acts continuously, properly, and isometrically.
\item\label{iiiDElocalellipticityintermsofaction}
the pseudo-metric space $(G,d_G)$ is coarsely equivalent to an ultrametric space;
\item\label{ivDElocalellipticityintermsofaction}
the pseudo-metric space $(G,d_G)$ is coarsely ultrametric.
\end{enumerate}
\end{prop}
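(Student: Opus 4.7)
The plan is to establish the cycle (i) $\Rightarrow$ (ii) $\Rightarrow$ (iii) $\Rightarrow$ (iv) $\Rightarrow$ (i), since the equivalences (iii) $\Leftrightarrow$ (iv) and the link (iv) $\Rightarrow$ (i) are immediate from results already in the excerpt.

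For (i) $\Rightarrow$ (ii), I would use $\sigma$-compactness and local ellipticity together to build inductively a nested exhaustion $K_0 \subset K_1 \subset \cdots$ with $G = \bigcup_{n \ge 0} K_n$, where each $K_n$ is a compact open subgroup: write $G = \bigcup_n L_n$ with $L_n$ compact increasing, then use Proposition \ref{equivalences_locell} to enclose $L_n \cup K_{n-1}$ in a compact open subgroup $K_n$. Set $X := G$ and define $d(g,h) = 0$ if $g=h$, otherwise $d(g,h) = 2^{n(g,h)}$ where $n(g,h) = \min\{n \mid g^{-1}h \in K_n\}$. The ultrametric inequality follows because each $K_n$ is a subgroup, so $g^{-1}h, h^{-1}k \in K_N$ implies $g^{-1}k \in K_N$ with $N = \max\{n(g,h),n(h,k)\}$. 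The left-regular action is isometric by left-invariance; it is metrically proper since $\{g \in G \mid d(x_0, gx_0) \le 2^N\} = x_0 K_N x_0^{-1}$ is compact in $G$.

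For continuity of the action (which is the main technical hurdle, as $K_n$ need not be normal), I would argue at $(g_0,x_0) \in G \times X$: fix $m \ge 0$ and choose an $m$-ball neighborhood of $x_0$ in $X$, i.e.\ $x_0 K_m$. Because the conjugation map $g \mapsto x_0^{-1} g_0^{-1} g x_0$ is continuous on $G$ and $K_m$ is an open neighborhood of $1$ in $G$, there exists a neighborhood $U$ of $g_0$ in $G$ with $x_0^{-1} g_0^{-1} U x_0 \subset K_m$. Then for $g \in U$ and $x \in x_0 K_m$, one computes $(g_0 x_0)^{-1}(gx) = (x_0^{-1} g_0^{-1} g x_0)(x_0^{-1} x) \in K_m K_m = K_m$, hence $d(g_0 x_0, gx) \le 2^m$.

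For (ii) $\Rightarrow$ (iii), choose $x_0 \in X$ and consider the orbit map $i_{x_0} : (G,d_G) \to (X,d)$. It is coarsely expansive by metric properness of the action, combined with properness of $d_G$: if $d(x_0, gx_0) \le R$ then $g$ lies in a relatively compact set, hence has $d_G(1,g)$ bounded, and left-invariance plus $G$-equivariance promote this to a global lower control. It is coarsely Lipschitz because the continuous function $g \mapsto d(x_0, gx_0)$ is bounded on the relatively compact ball $B_{d_G}(1,c)$; combined with left-invariance of $d_G$ and isometry of the action, this furnishes an upper control. Thus $i_{x_0}$ is a coarse embedding, making $(G, d_G)$ coarsely equivalent to the ultrametric subspace $i_{x_0}(G) \subset X$, which is (iii).

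Finally, (iii) $\Rightarrow$ (iv) is exactly Proposition \ref{propcoarselyultrametric}, and (iv) $\Rightarrow$ (i) is obtained by combining Example \ref{exmasdim} (coarsely ultrametric implies asymptotic dimension zero) with Proposition \ref{equivalentlocalelliptic} (asymptotic dimension zero on $(G,d_G)$ is equivalent to local ellipticity of $G$). The main obstacle will be the continuity verification in (i) $\Rightarrow$ (ii), since the compact open subgroups $K_n$ need not be normal and the naive computation fails without invoking continuity of conjugation.
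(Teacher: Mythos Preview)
Your cycle (ii) $\Rightarrow$ (iii) $\Rightarrow$ (iv) $\Rightarrow$ (i) is fine, and your route (iv) $\Rightarrow$ (i) via asymptotic dimension is a legitimate alternative to the paper's approach, which instead proves (ii) $\Rightarrow$ (i) directly (the sets $H_n = \{g : d(x_0, gx_0) < n\}$ are compact open \emph{subgroups} because $d$ is ultrametric) and closes the loop with a separate argument (iii) $\Rightarrow$ (ii) using an averaged left-invariant ultrametric $d'_G(g,h) = \sup_k d_X(f(kg),f(kh))$.

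However, your (i) $\Rightarrow$ (ii) has a genuine gap in continuity. With your metric $d(g,h) = 2^{n(g,h)}$ for $g \ne h$, every pair of distinct points is at distance $\ge 1$, so the topology on $X$ induced by $d$ is \emph{discrete}. For the left-regular action $G \times X \to X$ to be continuous at $(g_0, x_0)$ you must land in the open set $\{g_0 x_0\}$, i.e.\ you need $gx = g_0 x_0$ exactly, for $(g,x)$ near $(g_0, x_0)$. But the stabilizer of $x_0$ under left translation is $\{1\}$, so this forces $G$ to be discrete. Your argument only checks balls of radius $2^m$ with $m \ge 0$; these do not form a neighbourhood basis since singletons are open too. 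Concretely, your computation $(g_0 x_0)^{-1}(gx) \in K_m$ at $m = 0$ gives $gx \in g_0 x_0 K_0$, not $gx = g_0 x_0$.

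The fix is exactly what the paper does: replace $X = G$ by the quotient $X = G/K_0$ (with the induced ultrametric, now taking values in $\{0,1,2,\dots\}$). Then the stabilizer of $x_0 K_0$ is the open subgroup $x_0 K_0 x_0^{-1}$, and your $m=0$ computation now lands in the singleton coset, giving continuity. Equivalently: work with the left-invariant \emph{pseudo}-ultrametric $d'(g,h) = n(g,h)$ on $G$ and pass to the Hausdorff quotient.
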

\index{Ultrametric! pseudo-metric}
\index{Coarsely! ultrametric pseudo-metric space}

\begin{proof}
\ref{iDElocalellipticityintermsofaction} $\Rightarrow$ \ref{iiDElocalellipticityintermsofaction}.
Let $K_0 \subset \cdots \subset K_n \subset K_{n+1} \subset \cdots$
be a nested sequence of compact subspaces of $G$
such that $\operatorname{int}(K_0) \ne \emptyset$,
and $K_n \subset \operatorname{int}(K_{n+1})$ for all $n \ge 0$,
and $G = \bigcup_{n \ge 0} K_n$; see Remark \ref{neednotbelc}(2).
For $n \ge 0$, let $G_n$ denote the subgroup of $G$ generated by $K_n$;
observe that $G_n$ is an open subgroup of $G$.
(Note that, here, $G_0$ \emph{is not} meant to be the connected component of $G$.)
\par
Assume now that $G$ is locally elliptic,
so that $G_n$ is a compact open subgroup of $G$ for all $n \ge 0$.
Define $d' : G \times G \longrightarrow \R_+$ by
$d'(g,h) =  \inf \{n \ge 0 \mid  g^{-1}h \in G_n \}$;
then $d'$ is a left-invariant pseudo-ultrametric on $G$.
Set $X = G/G_0 = G / \sim$, where ``$g \sim h$'' for $g,h \in G$ means  ``$d'(g,h) = 0$'', 
\index{$aa$@$\sim$ various equivalence relations}
and let $d$ be the natural ultrametric induced by $d'$ on $X$.
Then the natural action of $G$ on $(X,d)$ is continuous, proper, and isometric.

\vskip.2cm

\ref{iiDElocalellipticityintermsofaction} $\Rightarrow$ \ref{iDElocalellipticityintermsofaction}.
Assume now that $G$ acts on an ultrametric space $(X,d)$,
as in \ref{iiDElocalellipticityintermsofaction};
let $x_0 \in X$.
For $n \ge 1$, set
\begin{equation*}
H_n \, = \, \{ g \in G \mid d(x_0,g(x_0)) < n \} .
\end{equation*}
Then $H_n$ is open because the action is continuous,
$H_n$ is a subgroup because $d$ is an ultrametric,
and $H_n$ is compact because the action is proper.
Hence $G = \bigcup_{n \ge 1} H_n$ is locally elliptic.

\vskip.2cm

Implication \ref{iiDElocalellipticityintermsofaction} $\Rightarrow$
\ref{iiiDElocalellipticityintermsofaction}
follows from Theorem \ref{ftggt},
and  \ref{iiiDElocalellipticityintermsofaction} $\Leftrightarrow$
\ref{ivDElocalellipticityintermsofaction} holds by Proposition \ref{propcoarselyultrametric}.

\vskip.2cm

\ref{iiiDElocalellipticityintermsofaction} $\Rightarrow$ \ref{iiDElocalellipticityintermsofaction}. 
Let $d_G$ be an adapted metric on $G$ and $(X,d_X)$ an ultrametric space
such that there exists a coarse equivalence $f : G \longrightarrow X$.
Let $G_{\textnormal{disc}}$ denote the set $G$ 
furnished with the discrete ultrametric,
defined by $d_{\textnormal{disc}}(g,h) = 1$ whenever $g \ne h$.
Upon replacing $(X,d_X)$ by $(X \times G_{\textnormal{disc}},d_X^+)$, 
with $d_X^+((x,g), (y,h)) = \max \{ d_X(x,y), d_{\textnormal{disc}}(g,h) \}$,
we can assume that $f$ is injective.
Let $\Phi_-$ and $\Phi_+$ be lower and upper controls for $f$; observe that
\begin{equation*}
\aligned
\Phi_-(d_G (g,h)) \, &\le \, d_X (f(g), f(h)) \, \le \, \sup_{k \in G} d_X (f(kg), f(kh))
\\
\, &\le \, \sup_{k \in G} \Phi_+(d_G (kg, kh)) \,  = \, \Phi_+(d_G (g,g'))
\hskip.5cm \forall \hskip.2cm g,h \in G.
\endaligned
\end{equation*}
Define 
\begin{equation*}
d'_G (g,h) \, = \,   \sup_{k \in G} d_X (f(kg), f(kh))
\hskip.5cm \forall \hskip.2cm g,h \in G ;
\end{equation*}
then $d'_G$ is obviously a left-invariant pseudo-metric,
and satisfies the ultrametric inequality; 
moreover it is an ultrametric on $G$ because $f$ is injective.
\par
The identity map $(G,d_G) \longrightarrow (G, d'_G)$ is a coarse equivalence.
It follows that the natural action of $G$ on $(G,d'_G)$ has the desired properties.
\end{proof}   

Our next goal is Theorem \ref{localellipticityforalgebraicgroups}.
Before this, we recall some background and definitions.
\par

\begin{rem}[absolute values on local fields]
\label{onabsolutevalues}
An \textbf{absolute value}, or \textbf{valuation}, on a field $K$,
\index{Absolute value|textbf}
\index{Valuation|see {Absolute value}}
is a function$\vert \cdot \vert_K : K \longrightarrow \R$ such that
there exists a constant $C > 0$ such that
\begin{enumerate}[noitemsep,label=(\arabic*)]
\item\label{1DEonabsolutevalues}
$\vert x \vert_K \ge 0$ for all $x \in K$, 
and $\vert x \vert_K = 0$ if and only if $x = 0$, 
\item\label{2DEonabsolutevalues}
$\vert xy \vert_K = \vert x \vert_K \vert y \vert_K$
for all $x,y \in K$,
\item\label{3DEonabsolutevalues}
if $\vert x \vert_K \le 1$, then $\vert 1+x \vert_K \le C$.
\end{enumerate}
We often use 
``absolute value'' (as in \cite{Neuk--99}), where Cassels uses ``valuation'' \cite{Cass--86}.
If $K$ is given with an absolute value, 
we consider on $K$ the topology for which the sets
$\{ x \in K \mid \vert x \vert_K < \varepsilon \}$, with $\varepsilon > 0$, 
constitute a basis of neighbourhoods of $0$; it is a field topology.
\par

Two absolute values $\vert \cdot \vert_1$, $\vert \cdot \vert_2$ on a same field $K$
are \textbf{equivalent} if there exists $\gamma \in \R_+$, $0 < \gamma < 1$,
such that  $\vert x \vert_2 = (\vert x\vert_1)^\gamma$ for all $x \in K$. 
Two absolute values on a field $K$ induce the same topology on $K$
if and only if they are equivalent \cite[Chapter 2, Lemma 3.2]{Cass--86}.
\par 

Let $K$ be a field, $\vert \cdot \vert_K$ an absolute value on $K$
with respect to which $K$ is complete,
and $L$ a finite extension of $K$.
There exists a unique absolute value $\vert \cdot \vert_L : L \longrightarrow \R_+$
extending $\vert \cdot \vert_K$,
and $L$ is complete with respect to $\vert \cdot \vert_L$;
see for example \cite[Chapter~7]{Cass--86}.
In this situation, we write $\vert \cdot \vert$ 
for both $\vert \cdot \vert_K$ and $\vert \cdot \vert_L$

\vskip.2cm

A local field $\K$ (see Example \ref{panoramalocalfield} for the definition)
has a \textbf{canonical absolute value}, defined as follows.
\index{Local field}
For a prime $p$, the canonical absolute value on $\Q_p$ is given by
$\vert p^m a/b \vert_p = p^{-m}$
for $a,b \in \Z$ coprime to $p$, $b \ne 0$, $m \in \Z$; and $\vert 0 \vert_p = 0$.
\index{$q$@$\Q_p$, field of $p$-adic numbers}
If $\mathbf L$ is a local field of characteristic zero,
then $\mathbf L$ is a finite extension of $\Q_p$ for an appropriate prime $p$;
by the result recalled above,
there is a unique absolute value on $\mathbf L$
extending the absolute value $\vert \cdot \vert_p$ on $\Q_p$.
\par
If $\mathbf L$ is of finite characteristic, say $\operatorname{char}(\mathbf L) = p$,
then $\mathbf L$ is isomorphic to the field $\mathbf F_q \lp t \rp$
of formal Laurent series with coefficients in the finite field $\mathbf F_q$,
where $q$ is a power of $p$.
\index{$fp$@$\mathbf F_p \lp t \rp$, $\mathbf F_q \lp t \rp$}
The standard absolute value of
$\sum_{n=N}^\infty f_n t^n \in \mathbf F_q \lp t \rp$ is $q^{-N}$,
where $N \in \Z$, $f_n \in \mathbf F_q$ for all $n \ge N$, and  $f_N \ne 0$. 
\end{rem}

\begin{defn}
\label{spectralradius}
The \textbf{spectral radius} of a matrix 
\index{$ma$@$\M_n(K)$, square matrices of size $n$ over $K$}
\index{Spectral radius of a matrix|textbf}
$A \in \M_n(\mathbf L)$,
for some integer $n \ge 1$ and non-discrete LC-field $\mathbf L$, 
is the maximum $\rho(A)$ of the absolute values $\vert \lambda \vert$,
where $\lambda$ describes the eigenvalues of $A$
(in appropriate finite extensions of $\mathbf L$).
\end{defn}

\begin{rem}
Consider a matrix $A \in \M_n(\mathbf L)$,
as in the previous definition.
The eigenvalues $\lambda$ of $A$
are elements in appropriate finite extensions of $\mathbf L$.
Below, we write $\vert \lambda \vert$ 
with respect to the canonical absolute value on $\mathbf L$.
It follows from Remark \ref{onabsolutevalues} that
each of the (in)equalities 
$\vert \lambda \vert = 0$,
$\vert \lambda \vert < 1$,
$\vert \lambda \vert = 1$,
$\vert \lambda \vert > 1$,
holds for every absolute value on $\mathbf L$
if and only if it holds for the canonical absolute value on $\mathbf L$.
\par
The remark carries over to (in)equalities
$\rho(A) = 0$, $\rho(A) < 1$,  $\rho(A) = 1$,  $\rho(A) > 1$, for the spectral radius of $A$.
\end{rem}

The following lemma is standard
\cite[Page VII.3]{BAlg4-7}.

\begin{lem}
\label{nilpotent=spectralradius0}
For an integer $n \ge 1$ and  a field $K$, 
a matrix $A \in \M_n(K)$ is \textbf{nilpotent},  \index{Nilpotent! matrix|textbf} 
i.e., $A^k = 0$ for $k$ large enough,
if and only if $0$ is the only eigenvalue of $A$ (in any extension of $K$).
\par
Thus, in the particular case of a local field $\mathbf L$, 
a matrix $A \in \M_n (\mathbf L)$ is nilpotent if and only if
its spectral radius is $0$.
\end{lem}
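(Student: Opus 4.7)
The plan is to prove both implications via the Cayley--Hamilton theorem together with the fact that the characteristic polynomial splits into linear factors over an algebraic closure, the roots being precisely the eigenvalues (counted with multiplicity).

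For the ``only if'' direction, suppose $A^k = 0$ for some $k \ge 1$, and let $\lambda$ be an eigenvalue of $A$ in some extension $K'$ of $K$, with a nonzero eigenvector $v \in (K')^n$ such that $Av = \lambda v$. Iterating gives $0 = A^k v = \lambda^k v$, hence $\lambda^k = 0$ and therefore $\lambda = 0$. This is the easy direction and requires no more than the definition of eigenvalue.

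For the ``if'' direction, suppose $0$ is the only eigenvalue of $A$ in every extension of $K$. Fix an algebraic closure $\overline{K}$ of $K$; the characteristic polynomial $\chi_A(t) \in K[t]$ is monic of degree $n$ and splits over $\overline{K}$ as
\[
\chi_A(t) \, = \, \prod_{i=1}^n (t - \lambda_i),
\]
where $\lambda_1,\ldots,\lambda_n$ are the eigenvalues of $A$ (counted with multiplicity), viewed in $\overline{K}$. By hypothesis each $\lambda_i = 0$, so $\chi_A(t) = t^n$. The Cayley--Hamilton theorem then gives $A^n = \chi_A(A) = 0$, so $A$ is nilpotent.

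For the second assertion, let $\mathbf L$ be a local field and let $\vert\cdot\vert$ denote the canonical absolute value, extended uniquely to any finite extension of $\mathbf L$ (as recalled in Remark \ref{onabsolutevalues}). By Definition \ref{spectralradius}, $\rho(A) = 0$ means $\vert \lambda \vert = 0$ for every eigenvalue $\lambda$ of $A$; since absolute values separate $0$ from nonzero elements, this is equivalent to every eigenvalue being $0$, and the first part of the lemma then identifies this with nilpotence. No serious obstacle is expected: both directions are elementary, the only substantive input being Cayley--Hamilton and the existence of an algebraic closure in which $\chi_A$ splits.
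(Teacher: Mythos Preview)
Your proof is correct and entirely standard. The paper does not actually prove this lemma; it merely states it as a standard fact with a reference to Bourbaki \cite[Page VII.3]{BAlg4-7}, so your Cayley--Hamilton argument is exactly the kind of verification one would expect here.
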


\begin{defn}
\label{defdistal}
Let $V$ be a finite-dimensional vector space
over a non-discrete LC-field $\mathbf L$.
A linear automorphism $g \in \GL(V)$ is \textbf{distal}
\index{Distal! linear automorphism|textbf}
if every eigenvalue $\lambda$ of $g$ has absolute value $\vert \lambda \vert = 1$,
equivalently if $g$ has spectral radius $1$.
\par

A subgroup $G$ of $\GL(V)$ is distal if $g$ is distal for all $g \in G$.
\index{Distal! linear group|textbf}
(Compare with Example \ref{excoarse}(9).)
\end{defn}


Let $G$ be a subgroup of $\GL(V)$ with $V \simeq \R^n$.
Then $G$ is distal if and only if every orbit closure in $V$
is a minimal $G$-space \cite{Abel--78}.

\begin{defn}
\label{defblocktrigonalizable}
Consider an integer $n \ge 1$ and
a non-discrete LC-field $\mathbf L$.
A subgroup $G$ of $\GL_n(\mathbf L)$ is
\textbf{block-trigonalizable  over $\mathbf L$ with relatively compact diagonal} 
\index{Block-trigonalizable matrix group 
over $\mathbf L$ with relatively compact diagonal|textbf}
if there exist  integers $k \ge 1$ and $n_1, \hdots, n_k \ge 1$ with $n_1 + \cdots + n_k = n$,
and $\gamma \in \GL_n(\mathbf L)$, 
and a corresponding block-decomposition of matrices in $\GL_n(\mathbf L)$, such that
\begin{equation*}
\gamma G \gamma^{-1} \, \subset \, T := 
\begin{pmatrix}
C_1 & \M_{n_1,n_2}(\mathbf L) & \cdots & \M_{n_1,n_k}(\mathbf L) 
\\
0 & C_2 & \cdots & \M_{n_2,n_k}(\mathbf L)
\\
\vdots & \vdots   & \ddots & \vdots
\\
0 & 0 & \cdots & C_k
\end{pmatrix}
\end{equation*}
with $C_1,\hdots, C_k$ compact subgroups 
of $\GL_{n_1}(\mathbf L), \hdots, \GL_{n_k}(\mathbf L),$ respectively.
Here, for $n_1, n_2 \ge 1$, we denote by
$\M_{n_1, n_2}(\mathbf L)$ the set of $n_1$-by-$n_2$ matrices over $\mathbf L$.
\index{$mb$@$\M_{n_1, n_2}(K)$, matrices over $K$ with $n_1$ rows and $n_2$ columns}
\end{defn}

\begin{thm}
\label{localellipticityforalgebraicgroups}
Let $\mathbf L$ be a non-discrete LC-field, $n$ a positive integer,
and $G$ a closed subgroup of $\GL_n(\mathbf L)$.
\index{General linear group $\GL$! $\GL_n(\K)$ 
for $\K$ a non-discrete locally compact field}
The following properties of $G$ are equivalent:
\begin{enumerate}[noitemsep,label=(\roman*)]
\item\label{iDElocalellipticityforalgebraicgroups}
$G$ is distal;
\item\label{iiDElocalellipticityforalgebraicgroups}
$G$ is block-trigonalizable over $\mathbf L$ with relatively compact diagonal.
\end{enumerate}
\index{Locally elliptic LC-group}
Properties \ref{iDElocalellipticityforalgebraicgroups}
and \ref{iiDElocalellipticityforalgebraicgroups}
are implied by the following Property \ref{iiiDElocalellipticityforalgebraicgroups}.
If moreover $\mathbf L$ is a local field, 
Properties \ref{iDElocalellipticityforalgebraicgroups},
\ref{iiDElocalellipticityforalgebraicgroups},
and \ref{iiiDElocalellipticityforalgebraicgroups} are equivalent.
\begin{enumerate}[noitemsep,label=(\roman*)]
\addtocounter{enumi}{2}
\item\label{iiiDElocalellipticityforalgebraicgroups}
$G$ is locally elliptic.
\end{enumerate}
\end{thm}

\begin{proof}
\ref{iiDElocalellipticityforalgebraicgroups} $\Rightarrow$ \ref{iDElocalellipticityforalgebraicgroups} 
This boils down to assuming that $G$ is compact, which clearly implies that it is distal.
\par

\ref{iiiDElocalellipticityforalgebraicgroups} $\Rightarrow$ 
\ref{iDElocalellipticityforalgebraicgroups}
Suppose that Property \ref{iDElocalellipticityforalgebraicgroups} does not hold;
hence there exist $g \in G$ and an eigenvalue $\lambda$ of $g$
(in an appropriate extension of $\mathbf L$) such that $\vert \lambda \vert > 1$.
Since the image of the homomorphism 
$\Z \longrightarrow G, \hskip.2cm j \longmapsto g^j$
is an infinite discrete subgroup of $G$, 
Property \ref{iiiDElocalellipticityforalgebraicgroups} does not hold.
\par
 
\ref{iiDElocalellipticityforalgebraicgroups} $\Rightarrow$ \ref{iiiDElocalellipticityforalgebraicgroups} 
Assume that Property \ref{iiDElocalellipticityforalgebraicgroups} holds,
and that $L$ is a local field. 
If $k = 1$, then $G$ is closed in $C_1$.
In particular, $G$ is compact, and therefore locally elliptic.
Assume now that $k \ge 2$.
\par

With the same notation as just before the proposition,
consider the closed subgroup
\begin{equation*}
U \, = \,
\begin{pmatrix}
1_{n_1} &  \M_{n_1,n_2}(\mathbf L) & \cdots & \M_{n_1,n_k}(\mathbf L)
\\
0 & 1_{n_2} & \cdots & \M_{n_2,n_k}(\mathbf L)
\\
\vdots & \vdots   & \ddots & \vdots
\\
0 & 0 & \cdots & 1_{n_k}
\end{pmatrix} 
\end{equation*}
of $\GL_n(\mathbf L)$
and the natural short exact sequence
\begin{equation*}
U \, \lhook\joinrel\relbar\joinrel\rightarrow \, T \, 
\relbar\joinrel\twoheadrightarrow \, \prod_{i=1}^k C_i \hskip.1cm .
\end{equation*}
Since $\mathbf L$ is non-Archimedean, the unipotent group $U$ is locally elliptic
(Example \ref{examplesoflocallyellipticgroups}(5)).
The quotient group $\prod_{i=1}^k C_i$ is compact, and therefore locally elliptic.
By Proposition \ref{stabilitelocalelliptic}, $T$ is locally elliptic.
Being isomorphic to a closed subgroup of $T$,
the group $G$ is locally elliptic as well.
\par

For \ref{iDElocalellipticityforalgebraicgroups} $\Rightarrow$
\ref{iiDElocalellipticityforalgebraicgroups}, 
we could quote \cite{CoGu--74}, 
but this paper has a gap in case of a field of finite characteristic.
We  rather use a result of Levitzki, recalled below for the reader's convenience.
\par

It is enough to show that, if $G$ is both distal and irreducible, 
with $n \ge 1$, then $G$ is compact.
\par

Let us show first that $G$ is bounded.
Let $W$ be the set of limits of convergent sequences of the form $r_i^{-1}g_i$ 
where $g_i \in G$, $r_i \in \mathbf L$, and $\lim_{i \to \infty} \vert r_i \vert_{\mathbf L} = \infty$. 
Then $W$ is a subsemigroup: 
indeed (with self-explained notation) 
if $r_i^{-1}g_i$ tends to $w$ and ${r'}_i^{-1}g'_i$ tends to $w'$, 
then $(r_ir'_i)^{-1}g_i g'_i$ tends to $ww'$,
by continuity of the multiplication 
$\M_{n}(\mathbf L) \times \M_{n}(\mathbf L) \longrightarrow \M_{n}(\mathbf L)$.
Also, it is stable by both left and right multiplication by every $g \in G$: 
indeed $r_i^{-1}gg_i$ tends to $gw$ and $r_i^{-1}g_i g$ tends to $wg$. 
All this holds for an arbitrary group $G$.
Now, because $G$ is distal, 
the spectral radius of $r_i^{-1}g_i$ tends to $0$, so $W$ consists of nilpotent elements.
\par

Set $V = \bigcap_{w \in W} \ker(w)$.
By Levitzki's theorem, $V \ne \{0\}$.
Since $Wg = W$ for all $g \in G$, 
it follows that $V$ is $G$-stable.
By irreducibility, it follows that $V = \mathbf L^n$, whence $W=\{0\}$. 
Since $\mathbf L$ is locally compact, this implies that $G$ is bounded.

Now we know that $G$ is bounded. 
Let $H$ be its closure in $\M_d(\mathbf L)$, so $H$ is compact. 
Then $H\subset \GL_d(\mathbf L)$: 
indeed otherwise some sequence $(g_i)$ in $G$ tends to $h \in H$ not invertible, 
which implies that the spectral radius of $g_i^{-1}$ tends to infinity and contradicts that $G$ is bounded. Thus $G$ is compact.
\end{proof}

We have used the following purely algebraic fact:
\index{Levitizki theorem}
\index{Theorem! Levitzki}

\begin{thm}[Levitzki]
\label{Levitzki}
Let $K$ be any field, $d \ge 1$ an integer, 
and $S \subset \M_d(K)$ a multiplicative subsemigroup of nilpotent matrices.
\par
Then there exists a vector $v \in K^d$, $v \ne 0$, 
such that $sv = 0$ for all $s \in S$.
\end{thm}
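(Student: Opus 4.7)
The plan is to find a non-zero vector in $\bigcap_{s \in S} \ker(s)$ by showing that the associative subalgebra $A$ of $\M_d(K)$ generated by $S$ is a \emph{nilpotent} algebra, meaning there is an integer $m$ with $a_1 a_2 \cdots a_m = 0$ for all $a_1, \hdots, a_m \in A$. Since $S$ is multiplicatively closed, $A$ is just the $K$-linear span of $S$ (no unit required); this is the right object to work with because $S$ alone need not span a linear subspace. Once $A$ is shown to be nilpotent, the filtration
\begin{equation*}
V_k \, := \, \{ v \in K^d \mid a_1 \cdots a_k v = 0 \text{ for all } a_1, \hdots, a_k \in A \}
\end{equation*}
satisfies $V_0 = 0$, $V_m = K^d$, and $A V_{k+1} \subset V_k$; hence $V_1 = \bigcap_{s \in S} \ker(s)$ is non-zero, which is exactly what we want.

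To carry out the strategy I would first handle the case of algebraically closed $K$ by induction on $d$; the case $d=1$ is vacuous since only $0 \in \M_1(K)$ is nilpotent. For the inductive step, the standard move is to argue that $A$ cannot act irreducibly on $K^d$ when $d \ge 2$: if it did, Burnside's theorem would force $A = \M_d(K)$, contradicting the existence in $A$ of a non-zero element $s \in S$ whose image in $\M_d(K) = \mathrm{End}_K(K^d)$ must have zero determinant, or more conceptually, that a faithful irreducible action of a span of nilpotents on $K^d$ is impossible for $d \ge 2$. Consequently $A$ preserves some proper non-zero subspace $W$; applying the inductive hypothesis to the semigroups induced on $W$ and $K^d/W$ yields a common flag, and in particular a common null vector. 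Extension to arbitrary $K$ is standard by base change to an algebraic closure $\overline K$: a common null vector in $\overline K^{d}$ for $S \subset \M_d(\overline K)$ produces one in $K^d$ by taking a non-zero vector in the kernel of the $K$-linear system $\{s \cdot - = 0\}_{s \in S}$, whose solvability is detected by rank, hence is preserved under field extension.

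The main obstacle is the step asserting that $A$ admits an invariant proper non-zero subspace, equivalently that no span of nilpotent matrices can act irreducibly on $K^d$ when $d \ge 2$. In characteristic zero this is clean: every product $s_{i_1} \cdots s_{i_k}$ of elements of $S$ lies in $S$ and is therefore nilpotent with vanishing trace, so $\mathrm{tr}(a^k) = 0$ for every $a \in A$ and every $k \ge 1$; Newton's identities then force each $a \in A$ to be nilpotent, and a finite-dimensional algebra all of whose elements are nilpotent is itself nilpotent (Wedderburn), ruling out $A = \M_d(K)$. In positive characteristic the trace argument fails, and one must substitute a direct argument, e.g.\ choosing a product $s_1 \cdots s_r$ of maximal rank in $S$, observing that its image is $S$-invariant and strictly smaller than $K^d$ (else some $s_i$ would be invertible, contradicting nilpotence), and using this as the invariant subspace for the induction. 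This characteristic-free version is the technically delicate point, but it fits the format of the paper since it only needs elementary linear algebra over $K$.
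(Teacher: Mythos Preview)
The paper does not actually prove the algebraically closed case: it cites Radjavi--Rosenthal and supplies only the descent from $\overline K$ to $K$ via the observation that the common kernel is the solution set of a $K$-linear system. You reproduce that descent correctly, so on the part the paper does provide, you match it.

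Your attempt to fill in the algebraically closed case has a genuine gap in positive characteristic. The assertion that the image of a maximal-rank product $p = s_1 \cdots s_r$ is $S$-invariant is false. In $\M_4(K)$ take $a = e_{12} + e_{34}$ and $b = e_{23}$ (convention $e_{ij}(e_j) = e_i$); the semigroup they generate is $\{0, a, b, ab, ba, aba\}$, the unique element of maximal rank is $a$ (rank $2$), and $\mathrm{im}(a) = \mathrm{span}(e_1, e_3)$ is not $b$-invariant since $b(e_3) = e_2$. Your first stab at the Burnside contradiction, that $A = \M_d(K)$ conflicts with containing a singular element, is also not an argument: $\M_d(K)$ is full of singular matrices.

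The characteristic-free fix still goes through Burnside, but the contradiction is different. If $A = \M_d(K)$, then for any $s, t \in S$ the product $st \in S$ is nilpotent, so $\mathrm{tr}(st) = 0$; by linearity in $t$ this gives $\mathrm{tr}(sa) = 0$ for all $a \in A = \M_d(K)$. The trace form $(x,y) \mapsto \mathrm{tr}(xy)$ is non-degenerate on $\M_d(K)$ over \emph{any} field (take $y = e_{ji}$ to read off the $(i,j)$-entry of $x$), so $s = 0$ for every $s \in S$; then $S = \{0\}$ cannot span $\M_d(K)$ for $d \ge 1$. This single argument replaces both your Newton-identities route in characteristic $0$ and the failed maximal-rank sketch.
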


\begin{proof}[On the proof]
The original article is \cite{Levi--31}.
\par
There is a convenient proof in \cite[Theorem 2.1.7]{RaRo--00}.
There, the theorem is formulated for $K$ algebraically closed.
But the general case follows;
indeed, let $\overline K$ denote the algebraic closure of $K$.
The existence of $v \in \overline K ^d$ is equivalent to
the solvability of a system of linear equations;
since this system has coefficients in $K$,
it has a non-zero solution in $K^d$ if and only if it has one in $\overline K^d$.
\end{proof}

Note that, by a straightforward iteration, Levitzki's theorem implies
that a nilpotent multiplicative subsemigroup of $\M_d(K)$ is trigonalizable.

\section{Capped subgroups}
\label{sectionomegaboundedness}

In this section, we point out that compact generation
can be used to define in an LC-group
subsets that are of a restricted size, in the following sense:

\begin{defn}
\label{defomegabd}
A subset $K$ of an LC-group $G$ is \textbf{capped}\footnote{In
French: ``plafonn\'e''.}
\index{Capped subset of an LC-group|textbf}
if it is contained in a compactly generated closed subgroup,
and \textbf{uncapped} otherwise.
Proposition \ref{propomegabd} shows that 
an equivalent definition is obtained if ``closed'' is replaced by ``open''.
\par

In particular, when $G$ is a discrete group, a subset of $G$
is capped if it is contained in a finitely generated subgroup.
\end{defn}

\begin{rem}
\label{remcapped}
Let $G$ be an LC-group.
\begin{enumerate}[noitemsep,label=(\arabic*)]
\item\label{1DEremcapped}
$G$ is capped in itself if and only if $G$ is compactly generated. 
If this holds, then every subset of $G$ is capped.
\item\label{2DEremcapped}
If $G$ is locally elliptic, 
its capped subsets are exactly its relatively compact subsets.
\item\label{3DEremcapped}
In $G$, capped subsets have a $\sigma$-compact closure. 
In particular, in a discrete group, capped subsets are countable.
\item\label{4DEremcapped}
If $d$ is a left-invariant locally bounded {\it ultrametric} pseudo-metric, then capped subsets of $G$ are bounded.
\end{enumerate}
\end{rem}

The following proposition yields a converse to Remark 
\ref{remcapped}\ref{4DEremcapped}

\begin{prop}
\label{ultrametricsigmacgroup}
Let $G$ be a $\sigma$-compact LC-group.
There exists on $G$ a left-invariant continuous ultrametric pseudo-metric $d$
such that the bounded subsets of $(G,d)$ are exactly the capped subsets of $G$.
\end{prop}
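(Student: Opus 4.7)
The plan is to build the pseudo-metric directly from an exhaustion of $G$ by compactly generated open subgroups, following the same recipe used in the proof of Proposition \ref{localellipticityintermsofaction} (implication \ref{iDElocalellipticityintermsofaction} $\Rightarrow$ \ref{iiDElocalellipticityintermsofaction}), and then to verify that the bounded sets for the resulting pseudo-metric coincide with the capped subsets.

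First, using Proposition \ref{powersSincpgroup}\ref{3DEpowersSincpgroup}, choose a nested sequence
\[
G_0 \subset G_1 \subset \cdots \subset G_n \subset G_{n+1} \subset \cdots
\]
of compactly generated open subgroups of $G$ with $\bigcup_{n \ge 0} G_n = G$. Define
\[
d(g,h) \, = \, \min\{ n \ge 0 \mid g^{-1}h \in G_n\} \qquad (g,h \in G).
\]
This is finite by exhaustion.

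Second, I would verify the four properties. Left-invariance is immediate since $(kg)^{-1}(kh)=g^{-1}h$. For the ultrametric inequality, if $g^{-1}h \in G_m$ and $h^{-1}k \in G_n$ with $N := \max\{m,n\}$, then both belong to the subgroup $G_N$, so $g^{-1}k \in G_N$, giving $d(g,k) \le \max\{d(g,h), d(h,k)\}$. Symmetry follows from $G_n$ being a subgroup, and $d(g,g)=0$ since $1 \in G_0$. For continuity, I would observe that $d$ is locally constant: given $(g_0,h_0)$ with $d(g_0,h_0) = k$, for any $g \in g_0 G_0$ and $h \in h_0 G_0$, writing $g = g_0 u$, $h = h_0 v$ with $u,v\in G_0$, one has $g^{-1}h = u^{-1}(g_0^{-1}h_0)v \in G_0 G_k G_0 \subset G_{\max(k,0)}$, and a symmetric argument shows the value cannot drop, so $d(g,h)=k$ on the open set $g_0 G_0 \times h_0 G_0$. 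Since $G_0$ is open, continuity of $d$ follows.

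Third, I would identify the bounded subsets. A subset $B \subset G$ is bounded for $d$ iff there is an integer $N$ with $d(1,b) \le N$ for all $b \in B$ (after a left translation, using left-invariance), equivalently iff $B \subset G_N$ for some $N$. Now if $B$ is bounded, then $B \subset G_N$, and $G_N$ is compactly generated by construction, hence $B$ is capped. Conversely, suppose $B$ is capped and contained in a compactly generated closed subgroup $H$; let $K$ be a compact generating set of $H$. Since $\bigcup_n G_n = G$ and each $G_n$ is open in $G$, the compact set $K$ is covered by finitely many $G_n$, hence (by nesting) $K \subset G_N$ for some $N$, so $H = \langle K \rangle \subset G_N$ and $B$ is bounded. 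This gives the required equivalence. I do not expect a substantive obstacle; the only delicate point is checking that the ``trivial'' ball $G_0$ is genuinely open so that $d$ is continuous (not merely measurable), and this is built into the choice provided by Proposition \ref{powersSincpgroup}\ref{3DEpowersSincpgroup}.
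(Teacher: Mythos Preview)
Your proof is correct and follows essentially the same approach as the paper: build an exhaustion $G_0 \subset G_1 \subset \cdots$ of $G$ by compactly generated open subgroups, set $d(g,h)=\min\{n : g^{-1}h\in G_n\}$, and verify the required properties. The paper constructs the $G_n$ explicitly as the subgroups generated by a nested sequence of compact neighbourhoods $S_n$ of $1$ (which makes the ``capped $\Rightarrow$ bounded'' direction read as $S\subset S_n$ for some $n$), whereas you invoke Proposition~\ref{powersSincpgroup}\ref{3DEpowersSincpgroup} directly and use an open-cover argument on the compact generating set; these are the same idea.
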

\index{Ultrametric! pseudo-metric}

\begin{proof}
Let $(S_n)_{n \ge 0}$ be a sequence of compact neighbourhoods of $1$ in $G$
such that $S_n \subset S_{n+1}$ for all $n \ge 0$ and $G = \bigcup_{n \ge 0} S_n$.
For each $n \ge 0$, let $H_n$ be the subgroup of $G$ generated by $S_n$;
note that $H_n$ is  a compactly generated open subgroup of $G$.
\par

Define a function $\ell : G \longrightarrow \N$ by
$\ell (g) = \min \{ n \in \N \mid g \in H_n \}$ for $g \in G$;
observe that $\ell(g^{-1}g') \le n$ for all $n \ge 1$ and $g,g' \in S_n$.
Then $d : (g,g') \longmapsto \ell (g^{-1} g')$ is 
a left-invariant ultrametric pseudo-metric on $G$.
Moreover, by the argument of Remark \ref{remultrametricpseudom}(3), 
$d$ is continuous, indeed locally constant, on $G$.
Observe that   
\begin{equation}
\label{eqfromcapped}
H_n \, = \, \{ g \in G \mid d(1,g) \le n \}
\hskip.5cm \text{for every integer} \hskip.2cm n \ge 0 .
\end{equation}
\par

Let $K$ be a subset of $G$.
If the $d$-diameter of $K$ is finite, there exists $n \ge 0$
such that $\sup_{k \in K} d(1,k) \le n$, and therefore $K \subset H_n$, hence $K$ is capped.
Conversely, if $K$ is capped, there exists a compact subset $S$ of $G$
generating a closed subgroup $H$ of $G$ containing $K$;
then there exists $n \ge 0$ such that $S \subset S_n$, hence $K \subset H \subset H_n$,
and the diameter $\sup_{k,k' \in K} d(k,k')$ of $K$ is at most $n$.
\end{proof}

\begin{prop}
\label{propomegabd}
Consider an LC-group $G$, a subset $K$ of $G$, and the following conditions:
\begin{enumerate}[noitemsep,label=(\roman*)]
\item\label{iDEpropomegabd}
$K$ is capped;
\item\label{iiDEpropomegabd}
$K$ is contained in a compactly generated \emph{open} subgroup;
\item\label{iiiDEpropomegabd}
for every left-invariant locally bounded ultrametric pseudo-metric $d$ on $G$,
the $d$-diameter of $K$ is finite;
\item\label{ivDEpropomegabd}
as in \ref{iiiDEpropomegabd}, with ``continuous'' instead of ``locally bounded''.
\end{enumerate}
Then we have the implications   
\ref{iiDEpropomegabd} $\Longleftrightarrow$ 
\ref{iDEpropomegabd} \hskip.3cm $\Longrightarrow$ \hskip.3cm
\ref{iiiDEpropomegabd} $\Longleftrightarrow$ 
\ref{ivDEpropomegabd}.
\par

If $G$ is moreover $\sigma$-compact, all four conditions are equivalent.
\end{prop}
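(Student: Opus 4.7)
The plan is to dispatch the implications in the order $(ii)\Rightarrow(i)$, $(i)\Rightarrow(ii)$, $(iii)\Rightarrow(iv)$, $(i)\Rightarrow(iii)$, $(iv)\Rightarrow(iii)$, and finally $(iv)\Rightarrow(i)$ in the $\sigma$-compact case. The easy ones rest on general facts already recorded in the text: $(ii)\Rightarrow(i)$ holds because every open subgroup of a topological group is closed; $(iii)\Rightarrow(iv)$ holds because a continuous pseudo-metric is automatically locally bounded (Remark \ref{remonmetrics}(4)); and for $(i)\Rightarrow(ii)$, if $K$ sits in a compactly generated closed subgroup $H$ with compact generating set $S$ and $V$ is any compact symmetric neighbourhood of $1$ in $G$, then the subgroup of $G$ generated by the compact set $S\cup V$ is open (it contains the neighbourhood $V$) and contains $H\supseteq K$.

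For $(i)\Rightarrow(iii)$, let $d$ be any left-invariant locally bounded ultrametric pseudo-metric on $G$ and let $H\supseteq K$ be compactly generated by $S$. By Remark \ref{remonmetrics}(5), the compact set $S\cup S^{-1}$ has finite $d$-diameter, so $M:=\sup_{s\in S\cup S^{-1}}d(1,s)<\infty$. Iterating the ultrametric inequality together with left-invariance yields, for every product $h=s_{1}\cdots s_{n}$ in $H$, the bound $d(1,h)\le\max_{i}d(1,s_{i})\le M$; hence $\operatorname{diam}_d(K)\le\operatorname{diam}_d(H)\le M$.

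The main obstacle is $(iv)\Rightarrow(iii)$, which requires upgrading an arbitrary left-invariant locally bounded ultrametric $d$ into a continuous left-invariant ultrametric with the same class of bounded sets. The trick is the flooring construction of Remark \ref{remultrametricpseudom}: pick a neighbourhood $V$ of $1$ with $\operatorname{diam}_d(V)\le C<\infty$, rescale $\tilde d:=d/(C+1)$ so that $\operatorname{diam}_{\tilde d}(V)<1$, and set $d^{c}:=\lfloor\tilde d\rfloor$. Part (1) of that remark ensures that $d^{c}$ is a left-invariant ultrametric pseudo-metric, and part (2) ensures that $d^{c}$ is locally constant, hence continuous. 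The pointwise bounds $d^{c}\le\tilde d<d^{c}+1$ translate finite $d^{c}$-diameter into finite $d$-diameter (up to the factor $C+1$), so applying (iv) to $d^{c}$ yields (iii).

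Finally, under $\sigma$-compactness, the implication $(iv)\Rightarrow(i)$ closes the loop: the continuous ultrametric pseudo-metric furnished by Proposition \ref{ultrametricsigmacgroup} has the feature that its bounded subsets coincide with the capped subsets of $G$, so (iv) applied to this particular pseudo-metric directly forces $K$ to be capped.
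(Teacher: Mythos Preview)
Your proof is correct and follows essentially the same approach as the paper: the same compact-neighbourhood trick for $(i)\Leftrightarrow(ii)$, the same ultrametric-plus-local-boundedness argument for $(i)\Rightarrow(iii)$, the same flooring construction from Remark~\ref{remultrametricpseudom} for $(iv)\Rightarrow(iii)$, and the same appeal to Proposition~\ref{ultrametricsigmacgroup} for the $\sigma$-compact case. The only cosmetic difference is that the paper argues the last implication by contraposition while you argue it directly.
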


\begin{proof}
Let $T$ be a compact neighbourhood of $1$ in $G$.
Every closed subgroup $H$ of $G$ generated by a compact set $S$
is contained in the open subgroup of $G$ generated by the compact set $S \cup T$.
The equivalence of \ref{iDEpropomegabd} and \ref{iiDEpropomegabd} follows.
\par

Consider a pseudo-metric $d$ on an arbitrary topological group $L$.
If $d$ is locally bounded, every compact subset of $L$ has a finite $d$-diameter
(Remark \ref{remonmetrics}(5)).
If $d$ is moreover left-invariant and ultrametric,
every compactly generated subgroup of $L$ has a finite $d$-diameter.
Hence \ref{iDEpropomegabd} implies \ref{iiiDEpropomegabd}.
\par

Since continuous pseudo-metrics are locally bounded (Remark \ref{remonmetrics}(4)), 
\ref{iiiDEpropomegabd} implies \ref{ivDEpropomegabd}.
\par

Let $d$ be a left-invariant \emph{locally bounded} ultrametric pseudo-metric on $G$,
as in \ref{iiiDEpropomegabd}.
Let $S$ be a neighbourhood of $1$ in $G$ such that the diameter
$\sup_{s \in S} d(1,g)$ is bounded, say is strictly smaller than some constant $D$.
Then $d' := \lfloor D^{-1}d \rfloor$ 
is a left-invariant \emph{continuous} ultrametric pseudo-metric on $G$
(Remark \ref{remultrametricpseudom}(3)). 
Suppose that \ref{ivDEpropomegabd} holds;
then the $d'$-diameter of $K$ is finite, and it follows that the $d$-diameter of $K$ is finite.
Hence   \ref{iiiDEpropomegabd} holds.
\par

Assume now that $G$ is $\sigma$-compact. 
It suffices to show that the negation of \ref{iDEpropomegabd}
implies the negation of \ref{ivDEpropomegabd},
and this follows from Proposition \ref{ultrametricsigmacgroup}.
\end{proof}

The property for a group to have every countable subset capped
is related to the following property; the relation is the object of
Proposition \ref{countablesubsetcappedanduncountablecof}.

\begin{defn}
\label{defuncountablecof}
A topological group has \textbf{uncountable cofinality} if it cannot be written
as the union of an infinite countable  strictly increasing sequence of open subgroups,
\index{Uncountable cofinality|textbf}
and of \textbf{countable cofinality} otherwise.
\end{defn}

\begin{prop}
\label{propuncountablecof}
For a topological group $G$, the following two conditions are equivalent:
\begin{enumerate}[noitemsep,label=(\roman*)]
\item\label{iDEpropuncountablecof}
$G$ has uncountable cofinality;
\item\label{iiDEpropuncountablecof}
every continuous isometric action of $G$ on an ultrametric space
has bounded orbits.
\end{enumerate}
\end{prop}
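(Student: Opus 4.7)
The plan is to prove the two implications separately, constructing in each direction the appropriate object (either an action or a sequence of open subgroups) by the same ``length function'' machinery.

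For \ref{iDEpropuncountablecof} $\Rightarrow$ \ref{iiDEpropuncountablecof}, I would fix a continuous isometric action of $G$ on an ultrametric space $(X,d)$ and a point $x_0 \in X$, and set
\[
H_n \, = \, \{ g \in G \mid d(gx_0,x_0) \le n \}
\hskip.5cm \text{for every} \hskip.2cm n \ge 0.
\]
The key point is that each $H_n$ is an \emph{open subgroup} of $G$: closure under products uses the ultrametric inequality $d(ghx_0,x_0) \le \max\{d(ghx_0,gx_0),d(gx_0,x_0)\}$ combined with the isometry of the action; symmetry is immediate; and openness follows because in an ultrametric space the closed ball $\{y \in X \mid d(y,x_0) \le n\}$ is clopen (see Remark~\ref{remultrametricpseudom} and the observation that closed balls of ultrametrics are open), so its preimage under the continuous orbit map $g \mapsto gx_0$ is open. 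Clearly $H_n \subset H_{n+1}$ and $G = \bigcup_{n \ge 0} H_n$. If the orbit $Gx_0$ were unbounded, then $H_n \ne G$ for every $n$, and I would extract recursively a subsequence $H_{n_1} \subsetneq H_{n_2} \subsetneq \cdots$ with $\bigcup_k H_{n_k}= G$ (at each step, since $H_{n_k} \ne G$, there exists an element lying in some $H_m$ with $m > n_k$), contradicting \ref{iDEpropuncountablecof}. Hence every orbit is bounded.

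For \ref{iiDEpropuncountablecof} $\Rightarrow$ \ref{iDEpropuncountablecof}, I would argue by contrapositive: assume $G = \bigcup_{n \ge 1} G_n$ with $G_n$ open subgroups and $G_n \subsetneq G_{n+1}$, and construct an isometric continuous action with an unbounded orbit. Define $\ell(g) = \min\{ n \ge 1 \mid g \in G_n \}$ and $d(g,h) = \ell(g^{-1}h)$. Because the $G_n$ are nested subgroups, $d$ is a left-invariant ultrametric on $G$ (this is the standard observation already used in the proof of Proposition~\ref{ultrametricsigmacgroup}). Let $X = G$ equipped with this ultrametric, and let $G$ act on $X$ by left multiplication; this action is obviously isometric by left-invariance of $d$, and the orbit of $1$ is $G$ itself, which is $d$-unbounded since $\ell$ takes values at least $n+1$ on any element of $G_{n+1} \smallsetminus G_n$.

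The only delicate point -- which I expect to be the main obstacle to present cleanly -- is verifying that the action $G \times X \longrightarrow X$ is continuous when $G$ carries its original topology and $X$ carries the topology of the ultrametric $d$ (which is in general strictly coarser than the topology of $G$, since balls in $X$ are unions of cosets of the $G_n$). Given $(g_0,x_0) \in G \times X$ and a basic closed ball $g_0 x_0 G_N$ in $X$ around $g_0 x_0$, one must find neighbourhoods $U \ni g_0$ in $G$ and $V \ni x_0$ in $X$ whose product lies in $g_0 x_0 G_N$. Taking $V = x_0 G_N$ (open in $X$) and $U = g_0 (x_0 G_N x_0^{-1})$ works: the subgroup $x_0 G_N x_0^{-1}$ is open in $G$ because conjugation by $x_0$ is a homeomorphism of $G$ and $G_N$ is open in $G$, so $U$ is an open neighbourhood of $g_0$ in $G$, and for $u = g_0 x_0 k x_0^{-1}$ with $k \in G_N$ and $v = x_0 k'$ with $k' \in G_N$ we have $uv = g_0 x_0 (kk') \in g_0 x_0 G_N$, establishing continuity and completing the proof.
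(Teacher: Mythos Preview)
Your overall strategy matches the paper's, but the construction in the direction \ref{iiDEpropuncountablecof} $\Rightarrow$ \ref{iDEpropuncountablecof} has a gap. With your indexing $\ell(g) = \min\{n \ge 1 \mid g \in G_n\}$ you get $\ell(1)=1$, so $d(g,g)=1$ and $d$ is not even a pseudo-metric; but even after shifting the index so that $\ell(1)=0$, you only obtain a left-invariant ultrametric \emph{pseudo}-metric on $G$, since $d(g,h)=0$ whenever $g^{-1}h \in G_1$, and $G_1$ is an open subgroup, typically non-trivial. Thus $(G,d)$ is not an ultrametric space in the sense of the statement, and your careful continuity argument, while correct, is verifying continuity of an action on a non-Hausdorff object that the proposition does not cover.

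The paper fixes this by passing to the Hausdorff quotient: it observes that $d(g,g') = d(gh,g'h')$ for all $h,h' \in G_1$, so $d$ descends to a genuine ultrametric $\underline d$ on the coset space $G/G_1$, and lets $G$ act on $(G/G_1,\underline d)$ by left translation. This quotient is discrete (both in the quotient topology, since $G_1$ is open, and in the $\underline d$-topology, since distinct cosets are at distance $\ge 1$), so continuity of the action is immediate and your elaborate verification becomes unnecessary. Your argument for \ref{iDEpropuncountablecof} $\Rightarrow$ \ref{iiDEpropuncountablecof} is essentially the paper's; the only cosmetic difference is that the paper uses open balls $\{g \mid d(gx_0,x_0) < n+1\}$ and concludes directly that $G = H_n$ for some $n$, without extracting a subsequence.
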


\begin{proof}
\ref{iiDEpropuncountablecof} $\Rightarrow$ \ref{iDEpropuncountablecof}
We show the contraposition, and we assume that
$G$ has countable cofinality,
i.e., that there exists an infinite countable strictly increasing sequence of open subgroups 
$H_0 \subsetneqq \cdots \subsetneqq H_n \subsetneqq H_{n+1} \subsetneqq \cdots$
such that $\bigcup_{n \ge 0} H_n = G$.
Define a function $\ell : G \longrightarrow \N$ by
$\ell (g) = \min \{ n \in \N \mid g \in H_n \}$ for $g \in G$.
Then $d : (g,g') \longmapsto \ell (g^{-1} g')$ 
is a left-invariant ultrametric pseudo-metric on $G$.
(Compare with the proof of Proposition \ref{ultrametricsigmacgroup}.)
Observe that $d(g,g') = d(gh, g'h')$ for all $g,g' \in G$ and $h,h' \in H_0$.
\par

On the quotient space $G / H_0$,
we define an ultrametric $\underline d$
by 
\begin{equation*}
\underline d (g H_0, g' H_0) \, = \, d(g,g') .
\end{equation*}
By the observation above,
this does not depend on the choices of $g$ and $g'$ in their classes modulo $H_0$.
Because of the hypothesis on the sequence $(H_n)_{n \ge 0}$,
the diameter of the ultrametric space $(G/H_0, \underline d)$ is infinite.
The natural action of $G$ on $G/H_0$ is continuous, isometric, transitive,
and its unique $G$-orbit is unbounded with respect to $\underline d$.
\par

\ref{iDEpropuncountablecof} $\Rightarrow$ \ref{iiDEpropuncountablecof}
Consider a continuous action of an LC-group $G$ 
on an ultrametric space $(X, d)$, and a point $x_0 \in X$.
For every integer $n \ge 0$, define
$H_n = \{g \in G \mid d(gx_0, x_0) < n+1 \}$;
it is an open subgroup of $G$.
Moreover $G = \bigcup_{n \ge 0} H_n$.
If $G$ has uncountable cofinality,
then $G = H_n$ for some $n \ge 0$,
and it follows that every $G$-orbit in $X$ is bounded.
\end{proof}

\begin{rem}
\label{firstexamplesuncountablecof}
(1) 
A $\sigma$-compact LC-group has uncountable cofinality
if and only if it is compactly generated.
Thus, uncountable cofinality is really of interest only for non-$\sigma$-compact groups.

\vskip.2cm

(2)
Uncountable discrete groups with uncountable cofinality can be found in \cite{KoTi--74}:
if $F$ is a non-trivial finite perfect group and $I$ an infinite set,
the uncountable product $F^{I}$, endowed with the discrete toplogy,
has uncountable cofinality. 

\vskip.2cm

(3)
If a topological group has uncountable cofinality,
the same holds for all its quotient groups.

\vskip.2cm

(4)
A locally compact abelian group $A$ has uncountable cofinality
if and only if it is compactly generated,
as a consequence of (3) and of Proposition \ref{Kulikov}\ref{2DEKulikov} below.
\index{LCA-group}

\vskip.2cm

(5)
Let $G$ be a group and $\mathcal T, \mathcal T'$ two group topologies on $G$
such that the identity $(G, \mathcal T') \longrightarrow (G, \mathcal T)$ is continuous.
If $(G, \mathcal T')$ has uncountable cofinality, so has $(G, \mathcal T)$.
\par

In particular, if $G$ has uncountable cofinality as a discrete group,
then $G$ has uncountable cofinality for every group topology.
\end{rem}

In (4) above, we have used:

\begin{prop}
\label{Kulikov}
Let $E$ be an LC-group; assume that $E$ is not $\sigma$-compact. 
Then:
\begin{enumerate}[noitemsep,label=(\arabic*)]
\item\label{1DEKulikov}
$E$ has a $\sigma$-compact, open subgroup that is not compactly generated.
\item\label{2DEKulikov}
 If $E$ is abelian, then $E$ has a $\sigma$-compact quotient group 
 that is not compactly generated.
\end{enumerate}
\end{prop}

\begin{proof}
\ref{1DEKulikov}
is proved by an immediate induction: 
we define a strictly increasing sequence of open subgroups $(E_n)_{n\ge 0}$, 
where $E_0$ is generated by an arbitrary compact subset of non-empty interior, 
and $E_{n+1}$ is generated by $E_n$ and an arbitrary element of $E\smallsetminus E_n$ 
(this is possible because, since $E_n$ is compactly generated, it is not equal to $E$). 
Then the open subgroup $H = \bigcup_{n \ge 0} E_n$ is not compactly generated 
and is $\sigma$-compact.
\par

Let us now prove \ref{2DEKulikov}. 
First, we can assume, 
modding out if necessary by some compactly generated open subgroup, 
that $E$ is discrete and uncountable. 
For every abelian group $E$, there exist a divisible abelian group $D$
and an injective morphism $\varphi : E \lhook\joinrel\relbar\joinrel\rightarrow D$
\cite[Theorem 10.30]{Rotm--95}.
\par

The divisible abelian group $D$ decomposes 
as a direct sum $\bigoplus_{i\in I} D_i$ of directly indecomposable (divisible) subgroups, 
and all $D_i$ are countable: see for example
\cite[Theorems 10.1, 10.36, and 10.28]{Rotm--95}.
\par

Now let $C$ be a countable, infinitely generated subgroup $H$ of $E$ 
(as provided by \ref{1DEKulikov}). 
There exists a countable subset $J$ of $I$ such that 
$\varphi(C) \subset \bigoplus_{j \in J}D_j$. 
Let $\pi$ be the projection of $D$ onto the countable group $M = \bigoplus_{j \in J}D_j$, 
and $\psi = \pi \circ \varphi$. 
Then $\psi:E \longrightarrow M$ is injective on $C$. 
So $\psi(E)$ is a countable, infinitely generated quotient of $E$.
\end{proof}

\begin{prop}
\label{countablesubsetcappedanduncountablecof}
Let $G$ be an LC-group in which every countable subset is capped.
Then $G$ has uncountable cofinality.
\end{prop}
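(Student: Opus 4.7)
The plan is to prove the contrapositive: assuming $G$ has countable cofinality, I will exhibit a countable subset of $G$ that is not capped. So suppose we have a strictly increasing sequence of open subgroups
\[
H_0 \subsetneqq H_1 \subsetneqq \cdots \subsetneqq H_n \subsetneqq H_{n+1} \subsetneqq \cdots
\]
with $\bigcup_{n \ge 0} H_n = G$. For each $n \ge 0$, pick an element $g_n \in H_{n+1} \smallsetminus H_n$ (possible by strict inclusion), and set $S = \{g_n \mid n \ge 0\}$. This is a countable subset of $G$.

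Next I would show that $S$ is uncapped. By Proposition \ref{propomegabd} (the equivalence of \ref{iDEpropomegabd} and \ref{iiDEpropomegabd}), it suffices to check that $S$ is not contained in any compactly generated open subgroup of $G$. Suppose for contradiction that $S \subset K$ for such a $K$, and let $T$ be a compact generating set of $K$. Since each $H_n$ is open in $G$, each $K \cap H_n$ is open in $K$, and these form a nested cover $K = \bigcup_{n \ge 0} (K \cap H_n)$ of $K$ (since $S \subset K$ forces nothing stronger, but $K \subset G = \bigcup_n H_n$ gives the cover directly). Because $T$ is compact and the sets $K \cap H_n$ are open and nested, $T \subset K \cap H_N$ for some $N$.

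Then, since $T$ generates $K$ and $H_N$ is a subgroup containing $T$, we conclude $K \subset H_N$. But $g_N \in S \subset K$ while $g_N \notin H_N$ by construction, a contradiction. Hence $S$ is a countable uncapped subset of $G$, contrary to the hypothesis, which proves that $G$ must have uncountable cofinality.

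There is essentially no obstacle here: the whole argument rests on the elementary observation that a compact set inside a nested union of open sets lies in one of them, combined with the freedom (granted by Proposition \ref{propomegabd}) to replace a compactly generated closed subgroup by a compactly generated open one. The strict increase of the sequence $(H_n)_{n \ge 0}$ is precisely what allows the diagonal choice $g_n \in H_{n+1} \smallsetminus H_n$ to defeat every possible capping subgroup simultaneously.
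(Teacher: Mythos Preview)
Your proof is correct and follows essentially the same approach as the paper: pick a diagonal sequence $g_n \in H_{n+1} \smallsetminus H_n$, note that any compact generating set for a subgroup containing it must lie in some $H_N$ by compactness and the nested open cover, and derive a contradiction. The detour through Proposition \ref{propomegabd} to pass to an open subgroup is harmless but unnecessary, since a compact generating set of any subgroup already sits in some $H_N$ for the same reason.
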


\begin{proof}
Suppose by contradiction that $G$ is the union 
of a strictly increasing infinite countable sequence of open subgroups 
$H_0 \subsetneqq \cdots \subsetneqq H_n \subsetneqq H_{n+1} \subsetneqq \cdots$.
Choose for each $n \ge 1$ an element $k_n \in H_n$, $k_n \notin H_{n-1}$.
Then the set $K = \{k_n\}_{n=1}^\infty$ is contained in a subgroup of $G$ generated
by some finite set $S$; since $S \subset H_m$ for some $m \ge 1$,
we have $K \subset H_m$, and this is preposterous.
\end{proof}

Proposition \ref{propuncountablecof} characterizes uncountable cofinality
in terms of isometric actions on ultrametric spaces.
Let us now consider isometric actions on arbitrary metric spaces.

\begin{defn}
\label{defTSB}
A topological group $G$ is \textbf{strongly bounded}
\index{Strongly bounded topological group|textbf}
if every continuous isometric action of $G$ on any metric space has bounded orbits.
\end{defn}

\begin{rem}
\label{remTSB}
(1) 
The idea of strong boundedness appears in \cite{Berg--06}.
\vskip.2cm

(2)
A strongly bounded group has uncountable cofinality,
by Proposition \ref{propuncountablecof}.

\vskip.2cm

(3)
A $\sigma$-compact LC-group $G$ is strongly bounded if and only if it is compact.
\par

Indeed, choose a compact normal subgroup $K$ of $G$ 
(Theorem \ref{KK}, Kakutani-Kodaira)
and a left-invariant proper compatible metric $d$ on $G/K$
(Theorem \ref{Struble}, Struble).
The natural action of $G$ on $(G/K, d)$ is continuous, isometric, and transitive.
If $G$ is strongly bounded, then $(G/K, d)$ has finite diameter,
hence is compact; it follows that $G$ itself is compact.
The converse implication is straightforward.

\vskip.2cm

(4)
Let $G$ be a group and $\mathcal T, \mathcal T'$ two group topologies on $G$
such that the identity $(G, \mathcal T') \longrightarrow (G, \mathcal T)$ is continuous.
If $(G, \mathcal T')$ is  strongly bounded, so is $(G, \mathcal T)$.
\par

In particular, if $G$ is strongly bounded as a discrete group,
then $G$ is strongly bounded for every group topology.
\end{rem}

\begin{defn}
\label{defucapped}
Let $u = (u_n)_{n \ge 0}$ be a sequence of positive real numbers.
An LC-group $G$ is \textbf{uniformly $u$-capped} if,
for every sequence $(g_n)_{n \ge 0}$ of elements of $G$,
there exists a compact subset $S$ of $G$
such that $g_n \in \langle S \rangle$ and $\ell_S (g_n) \le u_n$ for all $n \ge 0$;
here $\langle S \rangle$ denotes the subgroup of $G$ generated by $S$,
and $\ell_S$  the word length with respect to $S$, see Definition \ref{wordmetric}.
\par
If $G$ is uniformly $u$-capped for some sequence $u$, 
the LC-group $G$ is {\bf strongly distorted}.
\index{Strongly distorted LC-group|textbf}
\end{defn}

\begin{prop}
\label{propucapped}
Let $G$ be an LC-group. Suppose that $G$ is 
strongly distorted.
\par

Then $G$ is strongly bounded.
\end{prop}

\begin{proof}
Assume by contradiction that $G$ acts continuously and isometrically 
on a metric space $X$ with an unbounded orbit. 
Fix a sequence $(v_n)_{n \ge 0}$ of positive real numbers
with $v_n \gg u_n$ (e.g.\ $v_n = 2^{n+u_n}$). 
Then we can find $x \in X$ and a sequence $(g_n)_{n \ge 0}$ in $G$ such that $d(x,g_nx)\ge v_n$. 
If $S$ is as in Definition \ref{defucapped} and $r = \sup_{g \in S} d(x,gx)$, 
then $d(x,g_nx) \le ru_n$ for all $n \ge 0$. 
Thus $v_n \le ru_n$ for all $n \ge 0$, a contradiction.
\end{proof}

\begin{prop}
\label{propCayleybounded}
For a topological group $G$, the following three conditions are equivalent:
\begin{enumerate}[noitemsep,label=(\roman*)]
\item\label{iDEpropCayleybounded}
G is strongly bounded;
\item\label{iiDEpropCayleybounded}
every left-invariant continuous pseudo-metric on G is bounded;
\item\label{iiiDEpropCayleybounded}
every left-invariant locally bounded pseudo-metric on G is bounded.
\end{enumerate}
If moreover $G$ is metrizable, 
then these condition are also equivalent to: 
\begin{enumerate}[noitemsep,label=(\roman*)]
\addtocounter{enumi}{3}
\item\label{ivDEpropCayleybounded}
every left-invariant locally bounded metric on $G$ is bounded.
\end{enumerate}
\end{prop}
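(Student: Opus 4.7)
The plan is to prove (i) $\Leftrightarrow$ (ii) $\Leftrightarrow$ (iii), and (iii) $\Leftrightarrow$ (iv) under metrizability. The easy directions are: (iii) $\Rightarrow$ (ii), because continuous pseudo-metrics are locally bounded (Remark \ref{remonmetrics}(4)); and (ii) $\Rightarrow$ (i), because given a continuous isometric action of $G$ on a metric space $(X, d_X)$ and a base point $x_0$, the pullback $d(g, h) := d_X(g x_0, h x_0)$ is a left-invariant continuous pseudo-metric, bounded by (ii), making the orbit of $x_0$ bounded. For (i) $\Rightarrow$ (ii), given a continuous left-invariant pseudo-metric $d$, form the closed subgroup $N := \{g \in G : d(1, g) = 0\}$ and the quotient metric space $(G/N, \bar d)$ with $\bar d(gN, hN) := d(g, h)$. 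The left-translation action $G \times G/N \to G/N$ is isometric, and jointly continuous via
\begin{equation*}
\bar d(g h N, g_0 h_0 N) \le d(gh, g h_0) + d(g h_0, g_0 h_0) = d(h, h_0) + d(g h_0, g_0 h_0),
\end{equation*}
where the equality uses left-invariance and both summands tend to $0$ by continuity of $d$; (i) then forces the orbit of $N$, which is all of $G/N$, to be bounded, so $d$ is bounded.

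The heart of the proof is (i) $\Rightarrow$ (iii). Let $d$ be left-invariant and locally bounded, and pick a symmetric open neighbourhood $V$ of $1$ with $\sup_{v \in V} d(1, v) \le 1$ (after rescaling $d$). Set $B_n := \{g : d(1, g) \le n\}$ and $H_n := \langle B_n V \rangle$; each $H_n$ is an open subgroup of $G$ (it contains $V$), the sequence is increasing, and $\bigcup_n H_n = G$. By (i) and Remark \ref{remTSB}(2), $G$ has uncountable cofinality, forcing $G = H_N$ for some $N$; hence $G$ is generated by the open symmetric set $W := (B_N V) \cup (B_N V)^{-1} \cup \{1\}$, on which $\sup_{w \in W} d(1, w) \le N + 1$. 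Next choose symmetric open neighbourhoods $V_0 = V \supset V_1 \supset V_2 \supset \cdots$ of $1$ with $V_n^3 \subset V_{n-1}$, and set $K_n := V_{-n}$ for $n \le 0$ and $K_n := W^{3^n}$ for $n \ge 1$; one checks that $K_n^3 \subset K_{n+1}$ for every $n$ and that $\bigcup_n K_n = G$. Lemma \ref{lemBKetKK} then produces a continuous left-invariant pseudo-metric $d'$ satisfying $|g|_{d'} < 2^n \Rightarrow g \in K_n$. For $n \ge 1$ and $g = w_1 \cdots w_{3^n} \in W^{3^n}$, telescoping and left-invariance give $d(1, g) \le 3^n (N+1)$, yielding $d(1, g) \le C_1 + C_2 |g|_{d'}^{\log_2 3}$ for suitable constants. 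By the already-proven (i) $\Rightarrow$ (ii), $d'$ is bounded, and therefore so is $d$.

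The chief obstacle lies in this last implication: since $d$ is not assumed continuous, the isometric action on $(G/N, \bar d)$ need not be continuous, so the definition of TSB cannot be applied directly, and one must fabricate a continuous auxiliary pseudo-metric $d'$ controlling $d$. The required leverage is the passage from TSB to uncountable cofinality, which produces a generating set for $G$ of bounded $d$-diameter and thereby enables the Birkhoff-Kakutani-type construction. Finally, under metrizability, (iii) $\Rightarrow$ (iv) is tautological; for (iv) $\Rightarrow$ (iii), given $d$ left-invariant and locally bounded, apply Theorem \ref{BK} to obtain a left-invariant compatible metric $\rho$ on $G$, replace it by the bounded left-invariant compatible metric $\rho' := \rho/(1+\rho)$, and note that $d + \rho'$ is a left-invariant locally bounded metric (a metric because $\rho'$ separates points); by (iv) it is bounded, and since $\rho'$ is bounded, so is $d$.
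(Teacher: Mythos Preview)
Your proof is correct, and your treatments of (i) $\Leftrightarrow$ (ii) and of (iii) $\Leftrightarrow$ (iv) under metrizability match the paper's essentially verbatim.

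The one genuine difference is in the passage from (i)/(ii) to (iii). You prove (i) $\Rightarrow$ (iii) by invoking uncountable cofinality to get $G = H_N = \langle B_N V \rangle$, extract a generating set $W$ of bounded $d$-diameter, and then feed $K_n = W^{3^n}$ into Lemma~\ref{lemBKetKK}. The paper instead proves (ii) $\Rightarrow$ (iii) directly by contraposition: given an unbounded locally bounded left-invariant $d'$ (rescaled so the $1$-ball is a neighbourhood of $1$), it sets $K_n$ for $n \ge 0$ to be the $3^n$-ball of $d'$ itself. Left-invariance and the triangle inequality give $K_n K_n K_n \subset K_{n+1}$ immediately, and the resulting continuous pseudo-metric $d$ is unbounded because $g \notin K_n$ forces $d(1,g) \ge 2^n$. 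This contradicts (ii). The point you did not exploit is that balls of a left-invariant pseudo-metric already satisfy $B_r B_s \subset B_{r+s}$, so there is no need to manufacture a generating set or to pass through uncountable cofinality; the detour through Remark~\ref{remTSB}(2) and the open subgroups $H_n$ is correct but superfluous.
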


\begin{proof}
\ref{iDEpropCayleybounded} $\Rightarrow$ \ref{iiDEpropCayleybounded}
Let $d$ be a left-invariant continuous pseudo-metric on $G$.
Denote by $K$ the closed subgroup $\{g \in G \mid d(1,g) = 0 \}$,
see Remark \ref{remoncontadapmetrics}(4), 
and by $\underline d$ the $G$-invariant continuous metric on $G/K$ induced by $d$.
The action of $G$ on $(G/K, \underline d)$ is continuous, isometric, and transitive.
If \ref{iDEpropCayleybounded} holds, then $(G/K, \underline d)$ is bounded. 
It follows that the pseudo-metric $d$ is bounded on $G$.
\par
 
\ref{iiDEpropCayleybounded} $\Rightarrow$ \ref{iDEpropCayleybounded}
Let $\alpha$ be a continuous action of $G$ on some metric space $(X,d_X)$.
Choose a point $x_0 \in X$ and define a continous pseudo-metric $d_G$ on $G$
by $d_G(g,h) = d_X(gx_0, hx_0)$, for all $g,h \in G$.
If  \ref{iiDEpropCayleybounded} holds, then $d_G$ is bounded,
and the $G$-orbit $Gx_0$ is bounded in $X$ with respect to $d_X$.
\par

Since
\ref{iiiDEpropCayleybounded} $\Rightarrow$ \ref{iiDEpropCayleybounded}
is trivial, it remains to prove the converse. We proceed by contraposition.
\par

Assume that \ref{iiiDEpropCayleybounded} does not hold,
and let $d'$ be an unbounded, locally bounded, left-invariant pseudo-metric on $G$. 
Multiplying by a suitable positive scalar, 
we can suppose that the 1-ball for $d'$ is a neighbourhood of 1 in $G$. 
For $n \in \N$, define $K_n$ to be the $3^n$-ball of $(G,d')$ around $1$;
for $n \in \Z$ with $n \le -1$, define inductively $K_n$ 
to be a symmetric neighbourhood of $1$ such that $K_n^3 \subset K_{n+1}$.
Let $d$ be the resulting pseudo-metric defined in Lemma \ref{lemBKetKK},
whose assumptions are fulfilled. 
Since all $K_n$ are neighbourhoods of 1, Conclusion \ref{2DElemBKetKK} of this lemma
ensures that $d$ is continuous. 
Furthermore, \ref{3DElemBKetKK} of this lemma yields that $d$ is unbounded: 
more precisely $d(1,g) > 2^n$ for all $g \notin K_n$ and all $n \ge 0$,
and \ref{iiDEpropCayleybounded} does not hold.
\par

Suppose now that $G$ is metrizable.
There exists a left-invariant bounded compatible metric $\delta$ on $G$;
see Theorem \ref{BK} and Remark  \ref{remonmetrics}(1).
Assume that \ref{ivDEpropCayleybounded} holds. 
Let $d$ be a left-invariant locally bounded pseudo-metric on $G$.
The left-invariant locally bounded metric $d+\delta$ is bounded by hypothesis,
hence $d$ itself is bounded, and \ref{iiiDEpropCayleybounded} holds.
The converse implication
\ref{iiiDEpropCayleybounded} $\Rightarrow$ \ref{ivDEpropCayleybounded}
is trivial.
\end{proof}

\begin{exe}
\label{Symuncountablecof}
In the full symmetric group of an infinite set $X$, 
every countable subset is capped,
indeed every countable subset is contained in a $2$-generated subgroup 
\cite[Theorem 3.3]{Galv--95}.
It follows that $\operatorname{Sym}(X)$ has uncountable cofinality as a discrete group,
as originally shown by Macpherson and Neumann \cite{MaNe--90}.
\par

Indeed, arguments from the proof of Theorem 3.1 in \cite{Galv--95},
show that $\operatorname{Sym}(X)$ is
uniformly $u$-capped, with $u_n = 12 n + 16$.
Hence, as a discrete group, 
$\operatorname{Sym}(X)$ is strongly bounded.
\index{Symmetric group $\operatorname{Sym}(X)$ of a set $X$}
\par

If $\mathcal H$ is a separable infinite-dimensional complex Hilbert space,
its unitary group $\operatorname{U}(\mathcal H)$ with the strong topology
is strongly bounded \cite{RiRo--07}.
\index{Unitary group of a Hilbert space}
\par

For strongly bounded groups, see 
\cite{Berg--06}, \cite{CaFr--06}, \cite{Corn--06}, and \cite{Rose--09}.
\end{exe}

The property of uncountable cofinality appears in 
a characterization of  the following property, due to Serre.

\begin{defn}
\label{defFA}
A topological group $G$ has \textbf{Property (FA)}
if every continuous action of $G$ on a tree has bounded orbits.
\index{Property! (FA)|textbf}
\end{defn}

It is known that, for a group action on a tree, 
the existence of a bounded orbit implies the existence of a fixed point in the $1$-skeleton. This explains the acronym FA, which stands for 
``Fixed point property for actions on Trees''
(in French, tree is Arbre). 
This property can be characterized as follows, 
as in  \cite[Section 6.1]{Serr--77}.
The setting of Serre is that of groups, 
but his arguments carry over to  topological groups.
For amalgamated products,
see Section \ref{sectionamalgamHNN} below.

\begin{thm}[Serre]
\label{caractFA}
A topological group $G$ has Property (FA) if and only if it satisfies
the following three conditions:
\begin{enumerate}[noitemsep,label=(\arabic*)]
\item\label{1DEcaractFA}
$G$ does not have any continuous surjective homomorphism onto $\Z$;
\item\label{2DEcaractFA}
$G$ does not decompose as an amalgamated product
over any proper open subgroup;
\item\label{3DEcaractFA}
$G$ has uncountable cofinality.
\end{enumerate}
\end{thm}

\begin{exe}
\label{exFA}
Groups with Property (FA) include
finitely generated torsion groups,
triangle groups
$\langle s,t \mid s^a = t^b = (st)^c = 1 \rangle$
where $a,b,c$ are integers, at least $2$,
and $\SL_3(\Z)$ and its subgroups of finite index
\cite[Sections I.6.3 and I.6.6]{Serr--77}.
An LC-group with Kazhdan's Property (T) has Property (FA);
see, e.g., \cite[Theorems 2.3.6 and 2.12.4]{BeHV--08}.
\index{Special linear group $\SL$! $\SL_n(\Z)$, $\GL_n(\Z)$} 
\end{exe}

\begin{rem}
Let $G$ be a $\sigma$-compact LC-group.
As already observed in Remark \ref{firstexamplesuncountablecof}(1), 
$G$ has uncountable cofinality if and only if
$G$ is compactly generated.
Hence, by Theorem \ref{caractFA},
a $\sigma$-compact LC-group that is not compactly generated
does not have Property (FA).
In particular, a countable infinite locally finite group does not have Property (FA).
\end{rem}

\section{Amenable and geometrically amenable LC-groups}
\label{sectionamenableLCgroups}

\begin{defn}
\label{defamenable}
An LC-group $G$ with left-invariant Haar measure $\mu$ is 
\textbf{amenable} if,
\index{Amenable! locally compact group|textbf}
for every  compact subset $Q$ of $G$ and $\varepsilon > 0$,
there exists a compact subset $F$ of $G$ with $\mu(F) > 0$
such that
\begin{equation}
\label{eqdefmoyen}
\frac{\mu( Q F )}{\mu(F)} \, \le \, 1 + \varepsilon .
\end{equation}
\end{defn}

\begin{rem}
(1)
An equivalent definition of amenability is obtained 
by replacing in Definition \ref{defamenable}
compact subsets $Q$ by subsets $\{1, q\}$, for $q \in Q$; 
see e.g.\ \cite{EmGr--67}.
Part of the theory of amenable LC-groups consists in showing the equivalence
of a large number of definitions; 
for some of these, see e.g.\ \cite[Appendix G]{BeHV--08}.
\par

For the definition above, amenability can be viewed 
as an isoperimetric condition.
Indeed, when $Q \ni 1$,  
the complement of $F$ in $QF$ is a ``$Q$-boundary'' of $F$,
and Condition (\ref{eqdefmoyen}) means
that the measure $\mu(QF \smallsetminus F)$ of this boundary
is smaller than $\varepsilon$ times the measure $\mu(F)$ of $F$.
This kind of definition, going back for discrete groups to F\o lner  \cite{Foln--55},
is quite different (though equivalent) 
from earlier definitions due to von Neumann and Tarski, in the late 20's.

\vskip.2cm

(2)
For $\sigma$-compact LC-groups, 
amenability \emph{is not} invariant by metric coarse equivalence.
For example, let $B$ denote the upper triangular subgroup of $\GL_2(\R)$.
Then $B$ is solvable, and therefore amenable, but $\GL_2(\R)$ is non-amenable;
yet the inclusion of $B$ in $\GL_2(\R)$ is a quasi-isometry 
(with respect to geodesically adapted metrics on the groups) 
because $B$ is cocompact is $\GL_2(\R)$.
\index{Triangular group}

\vskip.2cm

(3)
Nevertheless, amenability is a coarsely invariant property
among \emph{unimodular} locally compact groups.
We limit here the discussion to this invariance property
(Proposition \ref{propamenablerightemaneble} 
and Corollary \ref{invariancerightam}),
and refer to standard sources such as \cite{Gree--69} 
for general facts on amenability for LC-groups.
\end{rem}

\begin{defn}
\label{defgeoamenable}
An LC-group $G$ with left-invariant Haar measure $\mu$ is 
\textbf{geometrically amenable} (or \textbf{right-amenable}) if, 
\index{Right-amenable locally compact group|textbf}
\index{Geometrically amenable locally compact group|textbf}
\index{Amenable|see {Geometrically amenable locally compact group}}
for every  compact subset $Q$ of $G$ and $\varepsilon > 0$,
there exists a compact subset $F$ of $G$ with $\mu(F) > 0$
such that
\begin{equation*}
\frac{\mu( FQ )}{\mu(F)} \, \le \, 1 + \varepsilon .
\end{equation*}
\end{defn}

The terminology ``right-amenable'' emphasizes that $Q$ multiplies $F$ 
\emph{on the right} (compare with \ref{defamenable}),
even though the measure $\mu$ is \emph{left-invariant}.

\begin{lem}
\label{lemamenablerightemanable}
Let $G$ be an LC-group.
\begin{enumerate}[noitemsep,label=(\arabic*)]
\item\label{1DElemamenablerightemanable} 
If $G$ is geometrically amenable, then $G$ is unimodular.
\item\label{2DElemamenablerightemanable} 
Assume that $G$ is unimodular.
Then $G$ is amenable if and only if $G$ is geometrically amenable.
\par

In particular, a discrete group is amenable if and only if it is geometrically amenable.
\end{enumerate}
\end{lem}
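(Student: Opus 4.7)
The plan is to use the modular function $\Delta \colon G \longrightarrow \mathbf{R}_+^\times$, characterized by $\mu(Fg) = \Delta(g)^{-1} \mu(F)$ for every Borel subset $F$ and $g \in G$, and the fact that, when $G$ is unimodular, the left Haar measure $\mu$ is inversion-invariant (i.e., $\mu(E^{-1}) = \mu(E)$ for every Borel subset $E$). Both facts are standard; I would simply cite them.

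For \ref{1DElemamenablerightemanable}, I would fix $g \in G$, set $Q = \{1, g\}$, and apply the right amenability condition to $Q$ and an arbitrary $\varepsilon > 0$. Since $FQ = F \cup Fg$, we have
\begin{equation*}
(1+\varepsilon)\mu(F) \, \ge \, \mu(FQ) \, \ge \, \mu(Fg) \, = \, \Delta(g)^{-1}\mu(F),
\end{equation*}
so $\Delta(g)^{-1} \le 1 + \varepsilon$. Letting $\varepsilon \to 0$ gives $\Delta(g) \ge 1$; applying the same argument to $g^{-1}$ gives $\Delta(g) \le 1$, hence $\Delta \equiv 1$ and $G$ is unimodular.

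For \ref{2DElemamenablerightemanable}, assume $G$ is unimodular. Given a compact subset $Q$ of $G$ and $\varepsilon > 0$, the set $Q' := Q^{-1}$ is also compact; by amenability, there exists a compact subset $F'$ of $G$ with $\mu(F') > 0$ and $\mu(Q'F')/\mu(F') \le 1 + \varepsilon$. Setting $F := (F')^{-1}$, which is compact, and using inversion invariance of $\mu$ together with the identity $(FQ)^{-1} = Q^{-1}F^{-1} = Q'F'$, we obtain $\mu(F) = \mu(F') > 0$ and $\mu(FQ) = \mu(Q'F') \le (1+\varepsilon)\mu(F)$, which is the right amenability condition. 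The reverse implication is entirely symmetric (starting from right amenability, take $Q' = Q^{-1}$ and invert). The final clause on discrete groups is immediate: counting measure is bi-invariant, so every discrete group is unimodular, and \ref{2DElemamenablerightemanable} applies.

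There is no real obstacle here; the argument is essentially bookkeeping with the modular function and inversion. The only point that merits care is ensuring that in \ref{1DElemamenablerightemanable} I genuinely use only the hypothesis of right amenability (and not, surreptitiously, amenability) — this is why I take $Q = \{1,g\}$ rather than a symmetric compact neighbourhood, so that the lower bound $\mu(FQ) \ge \mu(Fg)$ remains a pure right-translation statement.
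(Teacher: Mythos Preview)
Your proof is correct and follows essentially the same approach as the paper: the modular function for part~\ref{1DElemamenablerightemanable} and inversion-invariance of Haar measure under unimodularity for part~\ref{2DElemamenablerightemanable}. The only cosmetic differences are that the paper argues~\ref{1DElemamenablerightemanable} by contraposition (picking $q$ with $\Delta(q)>1$ and taking $Q=\{q\}$, so that $\mu(FQ)/\mu(F)=\Delta(q)>1$ for \emph{every} $F$), and uses the convention $\mu(Fq)=\Delta(q)\mu(F)$ rather than your $\Delta(g)^{-1}$; neither affects the substance.
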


\begin{proof}
Let $\mu$ denote a left-invariant Haar measure on $G$,
and $\Delta$ the modular function of $G$.
\par

\ref{1DElemamenablerightemanable} 
We prove the contraposition. 
Suppose that $G$ is not unimodular.
There exists $q \in G$ such that $\Delta(q) > 1$;
set $Q = \{q\}$.
For every compact subset $F$ of $G$ with $\mu(F) > 0$, we have
$\mu(FQ)/\mu(F) = \mu(Fq)/\mu(F) = \Delta(q) > 1$.
It follows that $G$ is not geometrically amenable.
\par

\ref{2DElemamenablerightemanable} 
Since $G$ is unimodular, $\mu(P^{-1}) = \mu(P)$ for every compact subset $P$ of $G$.
Suppose that $G$ is amenable. 
Consider $\varepsilon > 0$ and a compact subset $Q$ of $G$.
Since $Q^{-1}$ is compact,
there exists by hypothesis a compact subset of $G$, 
of the form $F^{-1}$ for some $F \subset G$,
such that $\mu(Q^{-1} F^{-1}) / \mu(F^{-1}) \le 1 + \varepsilon$,
i.e.,  $\mu(FQ) / \mu(F) \le 1 + \varepsilon$.
Hence $G$ is geometrically amenable.
The proof of the converse implication is similar.
\end{proof}

\begin{prop}
\label{propamenablerightemaneble}
For an LC-group $G$ to be geometrically amenable, 
it is necessary and sufficient that $G$ is both amenable and unimodular.
\index{Unimodular group}
\end{prop}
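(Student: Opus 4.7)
The plan is to observe that the proposition is an almost immediate consequence of Lemma \ref{lemamenablerightemanable}, obtained by assembling its two parts. There is no substantial new content to supply; the work has already been done in the lemma, so the proof will consist of a short logical combination.

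For the direct implication, I would start by assuming that $G$ is right amenable. Part \ref{1DElemamenablerightemanable} of the lemma then gives unimodularity of $G$ immediately. Once unimodularity is in hand, Part \ref{2DElemamenablerightemanable} of the lemma asserts the equivalence of amenability and right amenability, so right amenability forces amenability. This yields both required conclusions.

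For the converse implication, I would simply invoke Part \ref{2DElemamenablerightemanable} of the lemma once more: if $G$ is unimodular, then amenability is equivalent to right amenability, and in particular amenability implies right amenability.

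Since both directions reduce to direct applications of Lemma \ref{lemamenablerightemanable}, no step is genuinely hard; the only ``obstacle,'' if it can be called that, is the bookkeeping observation that in the direct implication one must invoke Part \ref{1DElemamenablerightemanable} first in order to be allowed to apply Part \ref{2DElemamenablerightemanable}, since the latter requires unimodularity as a standing hypothesis. Everything else is formal.
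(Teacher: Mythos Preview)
Your proposal is correct and follows essentially the same approach as the paper, which simply cites the two parts of Lemma~\ref{lemamenablerightemanable}. If anything, your version is slightly more explicit in the necessity direction: the paper only cites Part~\ref{1DElemamenablerightemanable} for necessity, leaving implicit the second step (using Part~\ref{2DElemamenablerightemanable}, once unimodularity is established, to deduce amenability from right amenability), which you spell out.
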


\begin{proof}
The proof of the sufficiency follows from \ref{2DElemamenablerightemanable}
in Lemma \ref{lemamenablerightemanable}.
Then the proof of necessity follows from \ref{1DElemamenablerightemanable} 
in the same lemma.
\end{proof}

\begin{rem}
In Definitions \ref{defamenable} and \ref{defgeoamenable}, 
``F'' stands for ``F\o lner''  \cite{Foln--55}. 
In the proof of the next proposition, finite sets are denoted by $E$,
rather than by $F$ as in several other places of this book.
\end{rem} 

\begin{lem}
\label{discretemetriclattice}
Let $G$ be a $\sigma$-compact LC-group
and $d$ a measurable adapted pseudo-metric on $G$.
Let $s > 0$ be such that the ball $B_G^1(s)$ of centre $1$ and radius $s$
is a neighbourhood of $1$ in $G$.
Let $L$ be a $c$-metric lattice in $(G,d)$, for some $c > 2s$.
\par

Then, for any compact subset $K$ of $G$, the intersection $K \cap L$ is finite.
\end{lem}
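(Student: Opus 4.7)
Here is my plan for proving Lemma \ref{discretemetriclattice}.

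The strategy is a standard volume-packing argument. Set $V := B_G^1(s) = \{g \in G \mid d(1,g) \le s\}$. By hypothesis $V$ is a neighbourhood of $1$, so it contains an open set and hence satisfies $\mu(V) > 0$ for a left Haar measure $\mu$ on $G$; note that $V$ is $\mu$-measurable because $d$ is measurable. Moreover, since $d$ is adapted (in particular proper), $V$ is relatively compact, so $\overline V$ is compact. By left-invariance of $d$, we have $B_G^\ell(s) = \ell V$ for every $\ell \in G$.

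First I would verify that the translates $(\ell V)_{\ell \in L}$ are pairwise disjoint. Suppose $\ell, \ell' \in L$ with $\ell \ne \ell'$ and $g \in \ell V \cap \ell' V$. Then $d(\ell,g) \le s$ and $d(\ell',g) \le s$, so the triangle inequality gives $d(\ell, \ell') \le 2s < c$, contradicting the fact that $L$ is a $c$-metric lattice. Hence the sets $\ell V$, $\ell \in L$, are pairwise disjoint, and each has the same Haar measure $\mu(V) > 0$.

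Now fix a compact subset $K$ of $G$. For every $\ell \in K \cap L$ we have $\ell V \subset K\overline V$, and $K\overline V$ is compact (as the continuous image of $K \times \overline V$ under multiplication), hence of finite Haar measure. By the disjointness established above,
\[
|K \cap L|\,\mu(V) \;=\; \sum_{\ell \in K \cap L} \mu(\ell V) \;\le\; \mu(K\overline V) \;<\; \infty,
\]
so $|K \cap L| \le \mu(K\overline V)/\mu(V) < \infty$, which is the desired conclusion.

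The only delicate point is ensuring that all the sets in play are measurable and that $\mu(V) > 0$; this is why the hypothesis that $d$ be measurable and that $V$ be a \emph{neighbourhood} of $1$ (rather than merely non-empty) is invoked. Everything else is a direct translation of the lattice condition into a packing inequality via left Haar measure, and I do not anticipate any substantive obstacle.
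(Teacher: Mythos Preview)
Your proof is correct and follows the same volume-packing strategy via Haar measure as the paper's proof. Your execution is in fact slightly more streamlined: you bound the disjoint translates $\ell V$ directly by the compact set $K\overline{V}$, whereas the paper first covers $K$ by finitely many balls $k_j B_G^1(R)$ (using the coboundedness radius $R$ of $L$) and bounds by $\bigcup_j k_j B_G^1(R+s)$; your route avoids invoking coboundedness altogether, which is not actually needed for this lemma.
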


\noindent
\emph{Note.} The condition $c > 2s$ cannot be deleted;
see Example \ref{exofmetriclattices}(4).
For measurable adapted pseudo-metrics, see just before Example \ref{volumegrowthG}.

\begin{proof}
Let $\mu$ be a left-invariant Haar measure on $G$.
Recall that, with respect to $d$, all balls are relatively compact,
and therefore of finite $\mu$-measure.
\par

Let $R > 0$ be such that $\left( xB_G^1(R) \right)_{x \in L}$
is a covering of $G$.
Let $k_1, \hdots, k_n \in K$ be such that $\left( k_j B_G^1(R) \right)_{j=1, \hdots, n}$
is a covering of $K$.
For $x \in K \cap L$, the balls $xB_G^1(s)$ are disjoint and contained in
$\bigcup_{j=1}^n k_j B_G^1(R+s)$.
Hence
\begin{equation*}
\vert K \cap L \vert \mu(B_G^1(s)) \, \le \, n  \mu (B_G^1(R+s)) .
\end{equation*}
Since $\mu(B_G^1(s)) > 0$ by the choice of $s$, the conclusion follows.
\end{proof}

\begin{prop}
\label{amenable=amenable}
Let $G$ be a $\sigma$-compact LC-group
and $d$ a measurable adapted pseudo-metric on $G$.
\par

Then $G$ is geometrically amenable in the sense of Definition \ref{defgeoamenable}
if and only if $(G,d)$ is amenable in the sense of Definition \ref{defamucp}.
\end{prop}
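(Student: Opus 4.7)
By Proposition \ref{growth=growth}, the hypotheses on $G$ and $d$ imply that $(G,d)$ is uniformly coarsely proper, so Definition \ref{defamucp} applies. Combined with Proposition \ref{propamulf}, amenability of $(G,d)$ is equivalent to amenability (in the sense of Definition \ref{defamulf}) of any uniformly locally finite metric lattice $L \subset G$. The problem thus reduces to showing: $G$ is right amenable if and only if $L$ is amenable as a metric space.

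The strategy is to bridge Haar measure on $G$ with counting measure on $L$ via disjoint ball packings. Fix $s > 0$ with $B := B_G^1(s)$ a compact neighborhood of $1$ of positive Haar measure, and pick $L$ a $c$-metric lattice with $c > 2s$ (so the balls $\ell B$ for $\ell \in L$ are pairwise disjoint) and finite covering radius $R$. The disjointness and covering properties then give, for every measurable $A \subset G$, the two-sided inequalities
\begin{equation*}
\frac{\mu(A)}{\mu(B_G^1(R))} \, \le \, \left\vert L \cap A \cdot B_G^1(R) \right\vert \, \le \, \frac{\mu(A \cdot B_G^1(R+s))}{\mu(B)}.
\end{equation*}
For $G$ right amenable $\Rightarrow$ $L$ amenable: given $r \ge 0$ and $\varepsilon > 0$, apply right amenability with a compact $Q$ containing $B_G^1(r+2R+s)$ to obtain $F$ with $\mu(FQ)/\mu(F)$ close to $1$, and set $E := L \cap F \cdot B_G^1(R)$; the two sides of the bridge control $\vert E \vert$ from below and $\vert B_L^E(r) \vert$ from above by Haar measures of $F$ and slight thickenings, yielding the F\o lner condition for $L$. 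The converse proceeds analogously, constructing $F \subset G$ as a suitable thickening of a F\o lner set $E \subset L$.

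The main obstacle is that this bridge carries a multiplicative constant $K := \mu(B_G^1(R))/\mu(B) > 1$, so a direct computation only bounds $\vert B_L^E(r) \vert / \vert E \vert$ by $K(1+\varepsilon')$ rather than by $1+\varepsilon$. Closing this gap requires two additional ingredients: right amenability forces $G$ to be unimodular (Lemma \ref{lemamenablerightemanable}), so Haar measure is bi-invariant and the inner boundary $F \setminus \{g \in F : g B_G^1(2R) \subset F\}$ of a F\o lner set can be bounded symmetrically with the outer boundary $F B_G^1(2R) \setminus F$; and one replaces the naive choice of $E$ by one based on a Borel partition of $G$ into pieces of equal Haar measure indexed by $L$ (an LC analogue of the Ornstein--Weiss quasi-tiling theorem for discrete amenable groups), which eliminates the factor $K$ from the estimates and produces F\o lner ratios tending genuinely to~$1$.
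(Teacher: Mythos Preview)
Your setup matches the paper's: reduce to a uniformly locally finite metric lattice $L$, and bridge counting measure on $L$ with Haar measure on $G$ via disjoint small balls $C$ (radius $s$) and covering balls $B$ (radius $R$). But the ``obstacle'' you describe --- the multiplicative constant $K = \mu(B_G^1(R))/\mu(B)$ --- is not a real obstacle, and your proposed fixes (unimodularity plus an Ornstein--Weiss quasi-tiling) are unnecessary. The Ornstein--Weiss ingredient in particular is heavy, not standard in this LC generality, and you haven't actually carried it out; invoking it is where your argument has a gap.

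The point you are missing is that you should estimate the \emph{boundary} rather than the whole ball. To show $\vert B_L^E(r)\vert \le (1+\varepsilon)\vert E\vert$ it suffices to show $\vert B_L^E(r) \smallsetminus E\vert \le \varepsilon\,\vert E\vert$, i.e.\ a ratio tending to $0$ rather than to $1$. Both sides are then controlled by Haar measures: with $A = B_G^1(r)$ and $E = PA \cap L$ for a right-F\o lner set $P$, disjointness of the $C$-balls gives
\[
\vert (EA \smallsetminus E)\cap L\vert \,\le\, \frac{\mu\bigl(P A^3 \smallsetminus P\bigr)}{\mu(C)},
\qquad
\vert E\vert \,\ge\, \frac{\mu(P)}{\mu(A)},
\]
so the quotient is at most $\dfrac{\mu(PA^3 \smallsetminus P)}{\mu(P)}\cdot\dfrac{\mu(A)}{\mu(C)}$. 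The second factor is a \emph{fixed} constant depending only on $r$, so right amenability lets you choose $P$ with $\mu(PA^3 \smallsetminus P)/\mu(P)$ small enough to absorb it. No constant $K$ survives, no unimodularity is used, and no tiling is needed. The converse direction works the same way: set $F = EB$ for a F\o lner set $E\subset L$, bound $\mu(FQ\smallsetminus F)$ by $\vert D\vert\,\mu(B)$ where $D = (E B^3\smallsetminus E)\cap L$, and absorb the fixed constant $\mu(B)/\mu(C)$ into the choice of $E$.
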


\begin{proof}
\emph{Step one: on packing and covering.}
Consider for now an LC-group $G$, with a left-invariant Haar measure $\mu$.
For  subsets $X,Y$ of $G$, let
\begin{enumerate}[noitemsep]
\item[$\circ$]
$m_X^-(Y) \in \N \cup \{\infty\}$ be the maximum number of 
pairwise disjoint left-translates of $X$ contained in $Y$,
\item[$\circ$]
$m_X^+(Y) \in \N \cup \{\infty\}$ be the minimum number of 
left-translates of $X$ covering $Y$.
\end{enumerate}
We agree that $m_X^-(Y) = \infty$ if the number of 
pairwise disjoint left-translates of $X$ contained in $Y$ is not bounded,
and $m_X^+(Y) = \infty$ if $Y$ cannot be covered by
a finite number of left-translates of $X$.
If $X$ and $Y$ are measurable, observe that
\begin{equation}
\label{eqavecmuetB}
m_X^-(Y) \mu(X) \, \le \, \mu(Y) \, \le \, m_X^+(Y) \mu(X) .
\end{equation}

\vskip.2cm

\emph{Step two: choice of $C$, $L$, and $B$.}
Assume now (as in the proposition) that $G$ is $\sigma$-compact,
with a measurable adapted pseudo-metric $d$.
Choose
\begin{enumerate}[noitemsep,label=(\roman*)]
\item\label{aDEStepone}
a ball $C$ in $G$ of centre $1$ and radius $s$ large enough
for $C$ to be a neighbourhood of $1$,
\item\label{bDEStepone}
a $c$-metric lattice $L$ in $G$ for some $c > 2s$,
\item\label{cDEStepone}
a ball $B$ of centre $1$ in $G$ such that
$(xB)_{x \in L}$ is a covering of $G$.
\end{enumerate}
(Observe that $C$, $L$, and $B$ are as in the proof
of Proposition \ref{growth=growth}.)
\par

Let $E$ be a subset of $L$. Then
\begin{equation}
\label{eqavecencadrervertEvert}
m_B^+(EB) \, \le \, \vert E \vert \, \le \, m_C^-(EB) .
\end{equation}
Set $D =\left( E(B^3) \smallsetminus E \right) \cap L$. We claim that
\begin{equation*}
E(B^2) \smallsetminus EB \, \subset \, DB .
\end{equation*}
Indeed, let $g \in G$, written as $g = xb$ with $x \in L$ and $b \in B$.
Assume that $g \in E(B^2)$, i.e., that $g = eb_1b_2$ 
with $e \in E$ and $b_1, b_2 \in B$;
then $x = eb_1b_2b^{-1} \in E(B^3)$.
Assume moreover that $g \notin EB$;
then $x \notin E$.
It follows that $x \in D$, and therefore that $g \in DB$.
This proves the claim.
We have therefore
\begin{equation}
\label{eqavecvertDvert}
m_B^+\left( E(B^2) \smallsetminus EB \right) 
\, \le \, m_B^+(DB) 
\, \le \, \vert D \vert 
\end{equation}
by (\ref{eqavecencadrervertEvert}).

\vskip.2cm

\emph{Step three: if $(G,d)$ is amenable, then $G$ is geometrically amenable.}
Recall that $(G,d)$ is uniformly coarsely proper,
by Proposition \ref{growth=growth},
so that amenability in the sense of
Definition \ref{defamucp} makes sense for $(G,d)$.
\par

Let $Q$ be a compact subset of $G$ and $\varepsilon > 0$;
there is no loss of generality if we assume that $1 \in Q$.
Upon increasing the radius of $B$, we can assume that $Q \subset B$.
To prove the implication, we will show that 
there exists a compact subset $F$ of $G$ 
such that $\mu(FQ \smallsetminus F) \le \varepsilon \mu(F)$.
\par

Since $L$ is amenable by hypothesis on $(G,d)$,
there exists a finite subset $E$ of $L$ such that
\begin{equation}
\label{eqLmoyennable}
\vert \{ x \in L \mid d(x,e) \le 3 \hskip.2cm \text{and} \hskip.2cm x \notin E \} \vert
\, \le \, 
\varepsilon \mu(C) \mu(B)^{-1} \vert E \vert 
\end{equation}
(see Definition \ref{defamulf}).
Observe that, for every $k \ge 1$, we have
$E(B^k) \subset \{ g \in G \mid d(g,E) \le k \}$;
in particular, with $D$ as in Step two, we have
\begin{equation}
\label{eqDcontenu}
D \, := \, 
\left( E(B^3) \smallsetminus E \right) \cap L 
\, \subset \, 
\{ x \in L \mid d(x,E) \le 3 \hskip.2cm \text{and} \hskip.2cm x \notin E \} .
\end{equation}
Hence we have
\begin{equation*}
\begin{array}{cccc}
\mu (EBQ \smallsetminus EB ) \, &\le \,
   &\mu \left( E(B^2) \smallsetminus EB \right)
   \hskip.3cm &\text{because} \hskip.2cm Q \subset B
\\
   \, &\le \, 
   &m_B^+ \left( E(B^2) \smallsetminus EB \right) \mu (B)
   \hskip.3cm &\text{see Equation (\ref{eqavecmuetB})} 
\\
   \, &\le \, 
   &\vert D \vert \mu(B)
   \hskip.3cm &\text{see Equation (\ref{eqavecvertDvert})}
\\
  \, &\le \,
   &\varepsilon \mu(C) \vert E \vert 
   \hskip.3cm &\text{see Equations (\ref{eqLmoyennable})
   and (\ref{eqDcontenu})}
\\
   \, &= \, 
   &\varepsilon \mu(EC)
   \hskip.3cm &\text{by (ii) of Step two}
\\
  \, &\le \,
  &\varepsilon \mu(EB)    
  \hskip.3cm &\text{because} \hskip.2cm C \subset B .
\end{array}
\end{equation*}  
To finish Step three, it suffices to set $F = EB$.

\vskip.2cm

\emph{Step four: more on packing and covering.}
Let $A$ be a ball of centre $1$ in $G$ 
such that $(xA)_{x \in L}$ is a covering of $G$,
indeed such that $B \subset A$.
Observe that $C \subset A$.
Let $P$ be any subset of $G$.
\par

Set $E = PA \cap L$. Note that $P \subset EA$; 
indeed, let $p \in P$ be written as $p = xa$, with $x \in L$ and $a \in A$;
then $x = pa^{-1} \in PA \cap L$, i.e., $x \in E$.
We have therefore
\begin{equation}
\label{duStepfour}
\vert E \vert \, \ge \, m_A^+(EA) 
\, \ge \, m_A^+(P)
\, \ge \, \mu (P) \mu(A)^{-1}
\end{equation}
by  the left-hand side inequality of (\ref{eqavecencadrervertEvert})
and by (\ref{eqavecmuetB}).
Since $EA \cap L \subset P(A^2) \cap L$ and $E \cap L = E = PA \cap L$,
we have 
\begin{equation*}
(EA \smallsetminus E) \cap L \, \subset \, \left( P(A^2) \smallsetminus PA \right) \cap L .
\end{equation*}
Since
$\left( P(A^2) \smallsetminus PA \right) B \subset P(A^3) \smallsetminus P$,
we have also
\begin{equation*}
((EA \smallsetminus E) \cap L)B \, \subset \,  
\big( P(A^2) \smallsetminus PA \big) B \, \subset \,  
P(A^3) \smallsetminus P ,
\end{equation*}
hence
\begin{equation}
\label{m-C-moinscommemajorant}
\vert ( EA \smallsetminus E ) \cap L \vert  \, \le \, 
m_C^-\left( \big( (EA \smallsetminus E) \cap L \big) B \right) \, \le \, 
m_C^-\left( P(A^3) \smallsetminus P \right).
\end{equation}
by the right-hand side inequality of (\ref{eqavecencadrervertEvert}).

\vskip.2cm

\emph{Step five: if $G$ is geometrically amenable, then $(G,d)$ is amenable.}
To prove the implication, we have to show that $L$ is amenable.
Denote by $r_0$ the radius of the ball $B$.
By Remark \ref{remamulf}, it suffices to show that 
for $r \ge r_0$ and $\varepsilon > 0$,
there exists a non-empty finite subset $E$ in $L$ such that
$\vert \{ x \in L \mid d(x,E) \le r \} \vert \le (1 + \varepsilon) \vert E \vert$.
\par

Let $A$ denote the closed ball of centre $1$ and radius $r$ in $G$.
For any finite subset $E$ of $L$, we have
$(EA \smallsetminus E) \cap L =
\{ x \in L \mid d(x,E) \le r \hskip.2cm \text{and} \hskip.2cm x \notin E \}$.
\par

Since $G$ is geometrically amenable, 
there exists a compact subset $P$ of $G$ of non-zero measure
such that
\begin{equation}
\label{eqfromhypGrightam}
\mu \left( P(A^3) \smallsetminus P \right) \, \le \,
\varepsilon \mu(P) \mu (C) \mu(A)^{-1} .
\end{equation}
Define $E = PA \cap L$, as in Step four; 
it is a finite subset of $L$ by Lemma \ref{discretemetriclattice}.
We have
\begin{equation*}
\label{derniereffort}
\begin{array}{cccc}
\vert (EA \smallsetminus E) \cap L \vert \, &\le \,
   &m_C^-\left( P(A^3) \smallsetminus P \right)
   \hskip1cm &\text{see Equation \ref{m-C-moinscommemajorant})}
\\
   \, &\le \, 
   &\mu \left( P(A^3) \smallsetminus P \right)  \mu(C)^{-1}
   \hskip1cm &\text{see Equation (\ref{eqavecmuetB})}
\\
   \, &\le \, 
    &\varepsilon \mu(P)  \mu(A)^{-1}
    \hskip1cm &\text{see  Equation (\ref{eqfromhypGrightam})}
\\
   \, &\le \,
   &\varepsilon \vert E \vert
   \hskip1cm &\text{see Equation (\ref{duStepfour}).}
\end{array}
\end{equation*}
It follows that $L$ is amenable.
\end{proof}

\begin{cor}
\label{invariancerightam}
For $\sigma$-compact LC-groups,
right-amenability (i.e., amenabiliy and unimodularity)
is invariant by metric coarse equivalence.
\par

In particular, for countable groups,
amenability is invariant by metric coarse equivalence.
\end{cor}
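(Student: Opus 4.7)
The plan is to combine the characterization of right amenability just established (Proposition \ref{amenable=amenable}) with the fact that amenability for uniformly coarsely proper pseudo-metric spaces is a coarse invariant (Proposition \ref{propamucp}). Nothing new needs to be proved; it is essentially an assembly.

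First I would set up notation. Let $G_1, G_2$ be $\sigma$-compact LC-groups and let $f : G_1 \longrightarrow G_2$ be a metric coarse equivalence with respect to some choice of adapted pseudo-metrics $d_1$ on $G_1$ and $d_2$ on $G_2$; by Proposition \ref{existenceam} we may (and do) take $d_1, d_2$ to be continuous, hence in particular measurable, so that Proposition \ref{amenable=amenable} applies. Note that the notion of ``being coarsely equivalent as $\sigma$-compact LC-groups'' is well-defined by Milestone \ref{miles_sigmacompact} together with Corollary \ref{2metricsce}: any two adapted pseudo-metrics on a given $\sigma$-compact LC-group are coarsely equivalent, so the existence of $f$ does not depend on the particular choice of $d_1$ and $d_2$.

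Next I would invoke Proposition \ref{growth=growth}\ref{1degrowth=growth}, which tells us that $(G_1, d_1)$ and $(G_2, d_2)$ are uniformly coarsely proper, so that amenability in the sense of Definition \ref{defamucp} is defined for them, and is a coarse invariant by Proposition \ref{propamucp}. Now assume $G_1$ is right amenable; by Proposition \ref{amenable=amenable}, the pseudo-metric space $(G_1, d_1)$ is amenable; by Proposition \ref{propamucp} applied to the metric coarse equivalence $f$, the pseudo-metric space $(G_2, d_2)$ is amenable; and by Proposition \ref{amenable=amenable} once more, $G_2$ is right amenable. By symmetry (since $f$ is invertible in the metric coarse category, Proposition \ref{epimonoiso}\ref{3DEepimonoiso}), right amenability of $G_2$ likewise forces right amenability of $G_1$.

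For the ``in particular'' clause: every countable discrete group is unimodular (counting measure is both left- and right-invariant), so by Proposition \ref{propamenablerightemaneble} amenability coincides with right amenability in that class; the first part of the corollary then yields the invariance.

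The main (minor) subtlety will be making sure that the measurability hypothesis in Proposition \ref{amenable=amenable} is met on both sides simultaneously. This is painless because one can always pick $d_1$ and $d_2$ continuous by Proposition \ref{existenceam}, and any two adapted pseudo-metrics on a given group are coarsely equivalent by Corollary \ref{2metricsce}, so the choice is immaterial for the statement.
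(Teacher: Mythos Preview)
Your proof is correct and is exactly the intended argument: the paper does not spell out a proof for this corollary, precisely because it is meant to follow by combining Proposition~\ref{amenable=amenable} with Proposition~\ref{propamucp}, together with the observation that countable discrete groups are unimodular. Your attention to choosing continuous (hence measurable) adapted pseudo-metrics via Proposition~\ref{existenceam}, and to uniform coarse properness via Proposition~\ref{growth=growth}, fills in the only points that need checking.
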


\begin{rem}
\label{remTesseraEtc}
There is in \cite{Tess--08} a discussion of right-amenability,
under the name of  ``geometric amenability''.
In this article, Tessera relates right-amenability
to other properties of metric measure spaces 
(i.e., metric spaces endowed with measures),
involving Sobolev inequalities
and spectral gaps for appropriate linear operators.
\par

Corollary \ref{invariancerightam} appears as Proposition 11 in \cite{Tess--08}.
\end{rem}

\chapter{Examples of compactly generated LC-groups}
\label{chap_excglcg}

\section[Connected, abelian, nilpotent, Lie,  and algebraic]
{Connected groups, abelian groups, nilpotent groups, Lie groups, and algebraic groups}
\label{Liegroupsandalgebraicgroups}
As observed in Proposition \ref{powersSincpgroup}, 
a connected LC-group is compactly generated.
More generally:

\begin{prop}
\label{almostconnectedgroups}
Let $G$ be an LC-group and $G_0$ its identity component.

\vskip.2cm

(1)
The group $G$ is compactly generated
if and only if the quotient group 
$G/G_0$ is compactly generated.
In particular:
\begin{enumerate}[noitemsep]
\item[--]
an LC-group that is connected-by-compact (for example connected)
is compactly generated;
\index{Connected-by-compact topological group}
\item[--]
a real or complex Lie group with finitely many connected components
is compactly generated;
\item[--]
a fortiori, the group of real points of an algebraic $\R$-group
is compactly generated (see Remark \ref{alggroupsalmostconnected}).
\end{enumerate}

\vskip.2cm

(2)
The group $G$ is $\sigma$-compact if and only if $G/G_0$ is $\sigma$-compact.

\vskip.2cm

(3)
The group $G$ is second-countable if and only if both $G_0$ and $G/G_0$ 
are second-countable.
\end{prop}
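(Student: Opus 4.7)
The plan is to reduce all three statements to results already established in the excerpt, using van Dantzig's Corollary \ref{vanDantzigCor} to supply the crucial open subgroup that bridges $G_0$ and $G/G_0$. Concretely, by Corollary \ref{vanDantzigCor}(1), there exists an open subgroup $H$ of $G$ containing $G_0$ such that $H/G_0$ is compact; in particular $H$ is connected-by-compact, hence compactly generated (and a fortiori $\sigma$-compact and locally compact). This $H$ will serve as the workhorse in all three parts.

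For (1), the ``only if'' direction is immediate from Proposition \ref{stababcd}\ref{4DEstababcd} applied to the closed normal subgroup $G_0$. For the ``if'' direction, assume $G/G_0$ is compactly generated. Since $H \supset G_0$, the discrete group $G/H$ is a continuous image of $G/G_0$, so $G/H$ is compactly generated by Proposition \ref{stababcd}\ref{4DEstababcd}; being discrete and compactly generated, $G/H$ is finitely generated. Thus $G$ is finitely generated over the compactly generated open subgroup $H$, so Corollary \ref{opensubgroupsinLCgroups}\ref{2DEopensubgroupsinLCgroups} yields that $G$ is compactly generated.

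For (2), the ``only if'' direction is Proposition \ref{stababcd}\ref{1DEstababcd}. For the converse, assume $G/G_0$ is $\sigma$-compact. Then the discrete quotient $G/H$, being a continuous image of $G/G_0$, is also $\sigma$-compact, hence countable. Since $H$ is a compactly generated (in particular $\sigma$-compact) open subgroup of countable index, Corollary \ref{opensubgroupsinLCgroups}\ref{1DEopensubgroupsinLCgroups} gives that $G$ is $\sigma$-compact.

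For (3), the ``only if'' direction again follows from Proposition \ref{stababcd}\ref{1DEstababcd} applied both to the closed subgroup $G_0$ and to the closed normal subgroup $G_0$ (for the quotient). For the ``if'' direction, assume both $G_0$ and $G/G_0$ are second-countable; then Proposition \ref{stababcd}\ref{2DEstababcd} (in the case of property \ref{eDEstababcd}) applied to the closed normal subgroup $G_0$ yields that $G$ is second-countable.

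No step is really hard: the whole argument is a bookkeeping exercise that combines van Dantzig's theorem with the hereditarity statements of Proposition \ref{stababcd} and the characterizations of Corollary \ref{opensubgroupsinLCgroups}. The only mildly delicate point is in (1), where one must observe that a discrete compactly generated group is finitely generated in order to lift generators from $G/H$ to $G$; this is however immediate since any compact subset of a discrete space is finite.
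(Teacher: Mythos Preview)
Your argument has a circularity that you should repair. You invoke Corollary \ref{vanDantzigCor}(1) to obtain the open subgroup $H \supset G_0$ with $H/G_0$ compact, and then assert that $H$ is ``connected-by-compact, hence compactly generated''. But the implication ``connected-by-compact $\Rightarrow$ compactly generated'' is precisely the special case of part (1) where $G/G_0$ is compact; and in the paper, the full statement of Corollary \ref{vanDantzigCor}(1) (including ``therefore compactly generated'') is justified by a forward reference to the very proposition you are proving. So as written, your proof of (1) and (2) assumes its own special case. The fix is easy and logically prior: $G_0$ is compactly generated by Proposition \ref{powersSincpgroup}(2), $H/G_0$ is compact, and Proposition \ref{stababcd}\ref{4DEstababcd} then gives that $H$ is compactly generated.

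Once patched, your route is correct but more roundabout than the paper's. The paper argues directly: $G_0$ is generated by any compact neighbourhood $S$ of $1$ (Proposition \ref{powersSincpgroup}(2)); a compact generating set of $G/G_0$ lifts to a compact subset $T \subset G$ by Lemma \ref{KimagedeK}; and $S \cup T$ generates $G$. This is exactly the argument you would need anyway to establish that your auxiliary $H$ is compactly generated, so the detour through van Dantzig and Corollary \ref{opensubgroupsinLCgroups} buys nothing for (1) and (2). For (3) your proof coincides with the paper's: both simply cite Proposition \ref{stababcd}.
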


Concerning (3), note that a connected LC-group need not be second-countable.
For example, an uncountable product of a non-trivial connected compact group
is not second-countable.
\index{Product of groups}

\begin{proof}
(1)
Let $G$ be a locally compact group.
Since $G_0$ is locally compact and connected,
it is generated by any compact neighbourhood of the identity;
let $S$ be one of these.
If $G/G_0$ is compactly generated,
there exists by Lemma \ref{KimagedeK} a compact subset $T$ of $G$
of which the image in $G/G_0$ is generating.
It follows that the compact set $S \cup T$ generates $G$.
Conversely, if $G$ is  compactly generated, so is $G/G_0$.
\par

The proof of (2) is similar. 
Claim (3) is a particular case of part of Proposition \ref{stababcd}.
\end{proof}

\begin{exe}[non-discrete locally compact fields]
\label{localfieldscg}
Let $\K$ be a non-discrete locally compact field.
\par

As already observed (Example \ref{panoramalocalfield}),
the additive groups $\R$, $\C$, and the multiplicative group $\C^\times$ are connected, 
and $\R^\times$ has two connected components. 
These four groups are compactly generated.
We suppose from now on that $\K$ is a local field,
\index{Local field}
and we use the notation of \ref{panoramalocalfield}.
\par

The additive group $\K$ is the union 
$\bigsqcup_{n = 0}^{\infty} \pi^{-n} \mathfrak o_{\K}$
of a strictly increasing sequence of compact open subgroups.
Hence $\K$ is locally elliptic (see \S~\ref{local_ellipticity})
and is not compactly generated.
\index{Compactly generated! LC-group, not compactly generated}

On the contrary, $\K^\times$ is always compactly generated.
Indeed, the group $\mathfrak o_{\K}^\times$ 
of invertible elements in $\mathfrak o_{\K}$,
which is the complement of 
$\mathbf p_{\K} = \pi \mathfrak o_{\K}$ in $\mathfrak o_{\K}$,
is compact,
and $\K^\times$ is isomorphic to the direct product $\mathfrak o_{\K}^\times \times \pi^\Z$
(where $\pi^\Z$ stands for the infinite cyclic group generated by $\pi$).
\end{exe}

\begin{exe}[locally compact abelian groups]
\label{LCA}
Let $A$ be a locally compact abelian group, or for short an \textbf{LCA-group}.
\index{LCA-group|textbf}
Then, for some unique $\ell \in \N$, the group $A$ is isomorphic to a product $\R^\ell \times H$,
where $H$ is a locally compact abelian group containing a compact open subgroup
\cite{Kamp--35}, \cite[chap.\ II, $\S$ 2, no 2]{BTS1-2}.
\par

Denote by $\widehat A$ the Pontryagin dual of $A$,
which is the group of homomorphisms $A \longrightarrow \R / \Z$,
with the compact-open topology.
\index{Pontryagin dual of an LCA-group}
Then $\widehat A$ is again a locally compact abelian group 
\cite[chap.\ II, $\S$ 2, no 1]{BTS1-2}. Moreover:
\index{Topology! compact-open}
\begin{enumerate}[noitemsep,label=(\arabic*)]
\item\label{1DELCA}
$A$ is compact if and only if 
$\widehat A$ is discrete.
\item\label{2DELCA}
$A$ is compactly generated if and only if 
it is of the form $\mathbf R^\ell \times \mathbf Z^m \times K$,
where $\ell, m$ are non-negative integers
and $K$ a compact subgroup (indeed \emph{the} maximal compact subgroup),
\index{Maximal! compact subgroup}
if and only if $\widehat A$ is of the form $\R^\ell \times (\R / \Z)^m \times D$,
where $D$ is a finitely generated abelian group,
if and only if $\widehat A$ is a Lie group.
\par

In particular, $A$ and $\widehat A$ are both compactly generated
if and only if $A$ is of the form
$\R^\ell \times \Z^m \times (\R / \Z)^n \times F$,
where $F$ is a finite abelian group.
\item\label{3DELCA}
$A$ is $\sigma$-compact if and only if 
$\widehat A$ is metrizable,
if and only if $\widehat A$ has a second-countable open subgroup.
\item\label{4DELCA}
$A$ is second-countable if and only if $\widehat A$ is second-countable.
In particular, a compact LCA-group is second-countable
if and only if the discrete group $\widehat A$ is countable.
\item\label{5DELCA}
$A$ is connected 
if and only if the locally elliptic radical of $\widehat A$ is $\{0\}$,
if and only if $\widehat A$ is of the form $\R^\ell \times D$
with $\ell \ge 0$ and $D$ discrete torsion-free.
\item\label{6DELCA}
$A$ is totally disconnected if and only if
\index{Totally disconnected! topological group}
$\widehat A$ is locally elliptic.
\item\label{7DELCA}
$A$ is locally connected if and only if 
it is of the form $\R^\ell \times D \times \widehat E$ where $D$ is discrete and 
$E$ discrete locally free abelian
(i.e., such that all its finitely generated subgroups are free abelian).
\index{Locally connected group}
\end{enumerate}
See \cite{BTS1-2},
Chap.\ II, $\S$~1, no 9, and $\S$~2 
(including Exercises 1 and 8, or alternatively \cite{Dixm--57}).
\end{exe}

Before Proposition \ref{closubabcgiscg}, 
we recall the following definition, a lemma, and a remark.

\begin{defn}
\label{defcommnilp}    
Let $G$ be a group.
\par

The \textbf{commutator} of two elements $x,y \in G$ is 
$x^{-1}y^{-1}xy$, often denoted by $[x,y]$.
\index{Commutator! of two elements|textbf}
If $H$, $K$ are subgroups of $G$, we denote by $[H,K]$ the subgroup generated by commutators $[h,k]$ when $(h,k)$ ranges over $H \times K$. 
In particular, the \textbf{group of commutators} of $G$ is the subgroup
generated by all commutators, denoted by $[G,G]$, or $C^2G$.
\index{Commutator! group of commutators|textbf}
The \textbf{lower central series} of a group $G$ 
is the sequence $(C^iG)_{i \ge 1}$ of subgroups inductively defined by
$C^1G = G$ and $C^{i+1}G = [C^iG, G]$.
\par

The group $G$ is {\bf nilpotent}
\index{Nilpotent! group|textbf}
if $C^{k+1}G = \{1\}$ for some $k\in\mathbf{N}$. 
If so, the \textbf{nilpotency class} of $G$ 
is the smallest non-negative integer $k$ such that  $C^{k+1}G = \{1\}$,
and $G$ is a \textbf{nilpotent group of class $k$}.
\end{defn}

Note that a group is nilpotent of class $0$ if and only if $G = \{1\}$,
and nilpotent of class $1$ if and only if it is abelian with more than one element.

\begin{lem}
\label{GcgnilpkThenCkGcg}
Let $G$ be an LC-group; 
assume that the underlying group is nilpotent of class $k$.
\par

If $G$ is compactly generated, so is $\overline{C^k G}$.
\end{lem}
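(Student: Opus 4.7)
The plan is to exploit that in a nilpotent group of class $k$ the subgroup $C^k G$ is central, so that the $k$-fold iterated commutator map is multilinear, and then to produce a compact generating set of $C^k G$ as the image of $S^k$ under this map for a compact generating set $S$ of $G$. The cases $k \le 1$ are trivial ($C^0 G = \{1\}$ and $C^1 G = G$ itself), so I will assume $k \ge 2$.

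First I would record the structural input. Since $G$ is class $k$ nilpotent, $C^{k+1}G = [C^kG, G] = \{1\}$, so $C^kG \subseteq Z(G)$ is abelian (and in particular $\overline{C^kG}$ is a closed central subgroup). Let $\gamma_k \colon G^k \to G$ denote the left-normed iterated commutator $\gamma_k(g_1,\dots,g_k) = [[\dots[[g_1,g_2],g_3]\dots],g_k]$; this is continuous, and takes values in $C^kG$. Using the standard identities
\[
[xy,z] = [x,z]\cdot[[x,z],y]\cdot[y,z] \quad\text{and}\quad [x,yz] = [x,y]\cdot[[x,y],z]\cdot[x,z],
\]
together with $[C^kG,G] = \{1\}$, one checks that every correction term that arises has weight $\ge k+1$ and hence vanishes; consequently $\gamma_k$ is multilinear, i.e.\ for each coordinate $i$,
\[
\gamma_k(\dots,x_ix_i',\dots) \;=\; \gamma_k(\dots,x_i,\dots)\cdot\gamma_k(\dots,x_i',\dots),
\]
and $\gamma_k(\dots,x_i^{-1},\dots) = \gamma_k(\dots,x_i,\dots)^{-1}$.

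Next I would show, by induction on $k$, that $C^kG$ is generated as an abstract subgroup by the image $\gamma_k(G^k)$. Indeed, $C^kG = [C^{k-1}G,G]$ is generated by elements $[c,g]$ with $c \in C^{k-1}G$ and $g \in G$; the map $c \mapsto [c,g]$ is a homomorphism on $C^{k-1}G$ because its correction terms lie in $[C^kG,C^{k-1}G] \subseteq C^{2k-1}G = \{1\}$ for $k \ge 2$; and by the induction hypothesis $c$ is a product of $(k-1)$-fold iterated commutators of elements of $G$. Thus every generator of $C^kG$ is a product of $k$-fold iterated commutators.

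Finally, let $S$ be a compact symmetric generating set of $G$ and set $T := \gamma_k(S^k)$, which is a compact subset of $C^kG$ by continuity of $\gamma_k$. Every $g_i \in G$ is a finite product of elements of $S \cup S^{-1} = S$, so multilinearity of $\gamma_k$ together with the previous step yields
\[
C^kG \;=\; \bigl\langle \gamma_k(G^k)\bigr\rangle \;\subseteq\; \langle T\rangle \;\subseteq\; C^kG,
\]
whence $C^kG = \langle T\rangle$. Applying Remark 2.C.3(5) to the compact set $T$, the closure $\overline{C^kG} = \overline{\langle T\rangle}$ admits a compact generating set, and the lemma is proved. The only genuinely technical step is the multilinearity verification, but it reduces to a routine commutator computation once one observes that all obstructions live in $C^{k+1}G = \{1\}$.
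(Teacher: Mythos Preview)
Your proof is correct and follows essentially the same approach as the paper: both exploit the $\Z$-multilinearity of the $k$-fold iterated commutator map $G^k \to C^kG$ in a class-$k$ nilpotent group, deduce that the image of $S^k$ generates $C^kG$, and then invoke Remark~\ref{remsigmacetc}(5) to pass to the closure. The paper presents the multilinearity by assembling the bilinear maps $C^iG/C^{i+1}G \times G/C^2G \to C^{i+1}G/C^{i+2}G$ step by step, whereas you assert it directly with a weight-counting sketch; these are equivalent packagings of the same standard fact.
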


\begin{proof}
Let $G$ be a group and $S$ a subset of $G$.
Define inductively a sequence $S_i$ of subsets of $G$
by $S_1 = S$ and $S_{i+1} = \{ z \in G \mid z = [x,y] 
\hskip.2cm \text{for some} \hskip.2cm x \in S \hskip.2cm \text{and} \hskip.2cm y \in S_i \}$.
It is an easy fact to check that, if $S$ generates $G$,
then the image of $S_i$ generates $C^i G / C^{i+1}G$.
\par

For all $i \ge 1$, the mapping
\begin{equation*}
\left\{ \aligned
\left( C^i G / C^{i+1}G \right) \times \left( G / C^2G \right) \hskip.2cm
& \longrightarrow \hskip.2cm     C^{i+1}G / C^{i+2}G
\\
(x C^{i+1}G , yC^2G ) \hskip1cm
& \longmapsto \hskip.5cm [x,y]     C^{i+2} G
\endaligned \right.
\end{equation*}
is well-defined, $\Z$-bilinear, and its image generates $C^{i+1}G / C^{i+2}G$.
This is left as an exercise to the reader 
(alternatively, see \cite[Section 5.2]{Robi--96}).
If $G$ is nilpotent of class $k$, 
it follows that the map
\begin{equation*}
\left\{ \aligned
\left(  G / C^2 G \right) \times \cdots \times \left(  G / C^2 G \right) \hskip.2cm
& \longrightarrow \hskip2cm     C^k G
\\
(x_1 C^2G, \hdots, x_k C^2G)  \hskip.8cm
& \longmapsto \hskip.2cm  [x_1, [x_2, \hdots [x_{k-1}, x_k] \cdots ]]
\endaligned \right.
\end{equation*}
is well-defined, $\Z$-multilinear, and its image generates $C^k G$.
Moreover, if $S$ is a generating subset of $G$,
and $S_{\text{ab}}$ denotes its image in $G / C^2G$, 
then the image of $S_{\text{ab}} \times \cdots \times S_{\text{ab}}$ 
(with $k$ terms) generates $C^k G$.
\par

It follows that, if $G$ is compactly generated, then $C^k G$ is compactly generated.
Hence $\overline{C^k G}$ is compactly generated, by Remark \ref{remsigmacetc}(5).
\end{proof}

\begin{rem}
If $G$ is a nilpotent LC-group, the groups $C^i G$ need not be closed in $G$.
Hence, in the proof of Proposition \ref{closubabcgiscg}, 
$\overline{C^k G}$ cannot be replaced by $C^k G$.
\par
For example, let $\widetilde G = H \times (\R / \Z)$,
where $H = H(\R)$ denotes the real Heisenberg group,
see Example \ref{coarselyexpansivenotlargescaleexpansive}.
Let $c \in Z(H)$ be a non-trivial element of the centre of $H$,
and $C$ the discrete subgroup of $\widetilde G$
spanned by an element $(c, \theta)$,
where $\theta \in \R / \Z$ is the class of an irrational number $\tilde \theta \in \R$.
Set $G = \widetilde G / C$. We leave it to the reader to check that
$G$ is a connected nilpotent real Lie group, $C^3 G = \{1\}$, 
and the commutator subgroup $C^2G$ is not closed in $G$.
\par
(It can be checked that $G$ does not have any faithful continuous
finite-dimensional linear representation. Indeed, in a connected Lie group
having such a representation, the commutator subgroup is closed
\cite[Theorem XVIII.4.5]{Hoch--65}.)
\end{rem}

\begin{prop}
\label{closubabcgiscg}
(1)
In a compactly generated LCA-group,
every closed subgroup is compactly generated.
\par

(2)
More generally,
in a compactly generated nilpotent LC-group,
every closed subgroup is compactly generated.
\index{LCA-group}
\end{prop}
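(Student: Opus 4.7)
For part (1), my plan is to invoke the structure theorem from Example 5.A.3(2), which writes the compactly generated LCA-group $A$ as $\R^{\ell}\times\Z^m\times K$ with $K$ compact. Given a closed subgroup $H\subseteq A$, the subgroup $H\cap K$ is compact, and since $K$ is compact the projection $A\to A/K$ is a closed map, so $HK/K$ is a closed subgroup of $A/K\cong \R^{\ell}\times\Z^m$. An auxiliary step — that any closed subgroup of $\R^{\ell}\times\Z^m$ is compactly generated — is easy: projecting to $\Z^m$ produces a finitely generated free abelian image, while the kernel of this restriction is a closed subgroup of $\R^\ell$, hence of the form $V\oplus\Lambda$ (vector subspace direct sum a lattice) by the classical structure theorem for closed subgroups of $\R^\ell$; both pieces are compactly generated, so Proposition 2.C.8(4) concludes. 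A final application of Proposition 2.C.8(4) to the short exact sequence $1\to H\cap K\to H\to HK/K\to 1$ then yields compact generation of $H$.

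For part (2), I would proceed by induction on the nilpotency class $k$ of $G$, with the base case $k=1$ supplied by (1). For the inductive step, I introduce $Z:=\overline{C^kG}$: by Lemma 5.A.6 together with Remark 2.C.3(5), $Z$ is a closed, compactly generated subgroup of $G$, and since $C^kG$ lies in the centre, $Z$ is a closed central (hence abelian) subgroup. The quotient $G/Z$ is compactly generated and nilpotent of class at most $k-1$, so the induction hypothesis guarantees that every closed subgroup of $G/Z$ is compactly generated. Given a closed subgroup $H\subseteq G$, the intersection $H\cap Z$ is closed in $Z$ and therefore compactly generated by (1); to assemble $H$, I would combine a compact generating set of $H\cap Z$ with one for the quotient $H/(H\cap Z)$ via Proposition 2.C.8(4).

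The main obstacle is that $HZ$ need not be closed in $G$ (this already fails for appropriate closed subgroups of $3$-tori in the purely abelian setting), so $H/(H\cap Z)$ does not immediately present as a closed subgroup of $G/Z$ to which the induction hypothesis applies. My plan to circumvent this is to exploit the detailed structure of $Z$ from (1) by decomposing $Z\cong \R^{\ell}\times\Z^m\times K_0$: the maximal compact subgroup $K_0$ of $Z$ is the unique maximal compact subgroup and so characteristic in $Z$, hence a compact normal subgroup of $G$. Then $HK_0$ is closed in $G$ (closed subgroup times compact subgroup), and modding out by $K_0$ one replaces $G$ by $G/K_0$ (still compactly generated nilpotent) and $H$ by its image (still closed), reducing to the case where the central subgroup $Z$ has no nontrivial compact subgroup, i.e.\ $Z\cong \R^{\ell}\times\Z^m$. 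In this reduced setting, the rigidity supplied by the nilpotency of $G$ — roughly, the fact that iterated commutators with elements of $H$ land in successively deeper terms of the lower central series and impose additional constraints — can be used to verify that $HZ$ is actually closed; once this is in hand, $HZ/Z$ is closed in $G/Z$ and compactly generated by induction, and Proposition 2.C.8(4) applied to $1\to H\cap Z\to H\to HZ/Z\to 1$ finishes the proof. I expect verifying the closedness of $HZ$ in the reduced setting — turning the nilpotent structure into a precise topological statement ruling out the dense-subgroup pathology of the purely abelian case — to be the most delicate step.
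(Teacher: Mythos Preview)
Your argument for part~(1) is correct and essentially matches the paper's.

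For part~(2) you have correctly identified the real subtlety --- applying the induction hypothesis to $H/(H\cap Z)$ requires $HZ$ to be closed in $G$ --- and the paper's proof glosses over exactly this point. But your proposed repair fails: even after killing the maximal compact subgroup of $Z$, nilpotency does \emph{not} force $HZ$ to be closed. In the real Heisenberg group $G=H(\R)$ (class~$2$, with $Z=\overline{C^2G}\cong\R$ already having no compact part), the discrete subgroup $H=\{(n+m\sqrt 2,\,0,\,m):n,m\in\Z\}\cong\Z^2$ is closed, yet $HZ=\{(n+m\sqrt 2,\,0,\,z):n,m\in\Z,\ z\in\R\}$ is a proper dense subgroup of the abelian plane $\{(x,0,z)\}$. (Your $3$-torus aside is also off: in a compact group a product of two closed subgroups is automatically closed, being a continuous image of a compact set.)

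What does work is to pass to $G':=\overline{HZ}$ rather than to $HZ$. Then $G'/Z=\overline{\pi(H)}$ \emph{is} closed in $G/Z$, hence compactly generated by induction, so $G'$ itself is compactly generated nilpotent. Inside $G'$ the image of $H$ modulo the central subgroup $Z$ is dense; since $k$-fold iterated commutators factor through $(G'/Z)^k$, continuity gives that $C^kH$ is dense in $C^kG'$, and as $H$ is closed this forces $H\supseteq Z':=\overline{C^kG'}$. Now $H/Z'$ is a genuine closed subgroup of the compactly generated group $G'/Z'$ of class $\le k-1$, the induction hypothesis applies to it, and $Z'$ is compactly generated by part~(1) as a closed subgroup of $Z$. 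This commutator-density step is the missing ingredient in both your plan and the paper's written proof.
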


\noindent
\emph{Note.} In a solvable \emph{connected Lie group},
every closed subgroup is compactly generated.
See \cite[Proposition 3.8]{Ragh--72}.
\index{Solvable group}

\begin{proof}
We first check (1) for two particular cases.

\emph{Case a.} 
Suppose that $A$ is either $\R^\ell$ for some $\ell \ge 0$
or a finitely generated abelian group. Then it is standard
that every closed subgroup of $A$ is compactly generated.
\par

\emph{Case b.}
Suppose now that $A$ has an open subgroup isomorphic to $\R^\ell$ for some $\ell \ge 0$.
Without loss of generality, we can assume that $A = \R^\ell \times D$
for some finitely generated abelian group $D$.
Denote by $\pi_D : A \longrightarrow D$ the canonical projection.
If $B$ is a closed subgroup of $A$, we have an extension
\begin{equation*}
B \cap \R^\ell \,  \lhook\joinrel\relbar\joinrel\rightarrow \,
B \, \overset{\pi_D}{\relbar\joinrel\twoheadrightarrow} \, 
\pi_D(B) .
\end{equation*}
By Case a, both $B \cap \R^\ell$ and  $\pi_D(B)$
are compactly generated; hence so is $B$.
\par

\emph{General case.}
By Example \ref{LCA}\ref{2DELCA},
we can assume that $A = \R^\ell \times H$
for some $\ell \ge 0$ and some compactly generated LCA group $H$
having a compact open subgroup, say $K$.
We can identify $A/K$ to $\R^\ell \times D$, with $D$ as in Case b.
The subgroup $B+K$ of $A$ is closed, because $K$ is compact,
so that $(B+K)/K$ is compactly generated by Case b.
Hence $B+K$ is compactly generated.
Since $B$ is cocompact in $B+K$, the group $B$ is compactly generated
by Proposition \ref{sigmac+compactgofcocompact}.

\vskip.2cm

(2)
For a compactly generated nilpotent LC-group $G$,
we prove (2) by induction on the smallest integer $k$ such that $C^k G$ is central in $G$;
note that $\overline{C^k G}$ is also central in $G$,
and compactly generated by Lemma \ref{GcgnilpkThenCkGcg}.
If $k = 1$, the group $G$ is abelian, and (2) holds by (1).
Assume now that $k \ge 2$, and that (2) holds up to $k-1$.
Let $H$ be a closed subgroup of $G$.
Then $H \cap \overline{C^k G}$ is abelian,
hence compactly generated by (1), 
and $H / (H \cap \overline{C^k G})$ is compactly generated,
by the induction hypothesis.
Hence $H$ is (compactly generated)-by-(compactly generated),
so that $H$ is compactly generated,
by Proposition \ref{stababcd}\ref{4DEstababcd}.
\end{proof}

\begin{rem}
\label{NoetherianGroups}
Let us build up on Claim (2) of the previous proposition,
in the context of discrete groups.
A group $G$ is \textbf{Noetherian}
\index{Noetherian group|textbf}
if all its subgroups are finitely generated,
equivalently if, for every increasing sequence 
$H_1 \subset  \cdots \subset H_n \subset H_{n+1} \subset \cdots \subset G$,
we have $H_{n+1} = H_n$ for all $n$ large enough.
Here are two standard results: polycyclic-by-finite groups 
(in particular finitely generated nilpotent groups) are Noetherian,
and a soluble group is Noetherian if and only if it is polycyclic.
\par

Noetherian groups have been studied by Baer (\cite{Baer--56}, and further articles).
The problem to decide whether there exist
Noetherian groups that are not polycyclic-by-finite has been open for a long time,
before being solved positively by Ol'shanskii in the late 70's:
\par

There are $2$-generated infinite groups 
for which there exists a constant $N$
such that every proper subgroup is finite of cardinal at most $N$.
There are infinite torsion-free simple $2$-generated groups 
in which every proper subgroup is cyclic.
See \cite[\S\S~27-28]{Ol'sh--91}.
\end{rem}

\begin{exe}[countable discrete abelian groups]
\label{coutdiscag}
Two countable discrete abelian groups are coarsely equivalent
if and only if they satisfy the following two conditions \cite{BaHZ--10}:
\begin{enumerate}[noitemsep,label=(\arabic*)]
\item
their torsion-free ranks coincide;
\item
they are either both finitely generated or both infinitely generated.
\end{enumerate}
(Recall from Example \ref{ex_asymptoticdimension_n}
that their torsion-free ranks coincide 
if and only if their asymptotic dimensions coincide.)
As a consequence:
\begin{enumerate}[noitemsep,label=(\alph*)]
\item
Two infinite countable locally finite abelian groups are coarsely equivalent.
\item
The groups $\Q$ and $\Z[1/n]$ are coarsely equivalent, for every $n \ge 2$.
\item
Two finitely generated abelian groups are quasi-isometric
if and only if their torsion-free ranks coincide.
\item
Every countable discrete abelian group is coarsely equivalent 
to exactly one of the following groups: 
$\mathbf{Z}^n$ for some $n\ge 0$, 
$\mathbf{Q}/\mathbf{Z}$, 
$\mathbf{Q}^n$ for some $n\ge 1$,
and the direct sum $\Q^{(\N)}$ of a countable infinite number of copies of $\Q$.
\item
Moreover, every $\sigma$-compact LCA-group $A$ 
is coarsely equivalent to some countable discrete abelian group. 
Indeed, $A$ is isomorphic to $\mathbf{R}^k \times H$, 
where $H$ has a compact open subgroup $W$ (see Example \ref{LCA}). 
Then $A$ is coarsely equivalent to the discrete group $\mathbf{Z}^k\times H/W$.
\end{enumerate}
\end{exe}

\begin{exe}[affine groups]
\label{affinecg}
If $\K$ is a non-discrete locally compact field, the affine group 
\index{Affine group! $\K \rtimes {\K}^\times$}
$
G \, = \, 
\begin{pmatrix}
\K^\times & \K \\
0 & 1 
\end{pmatrix}
$
is compactly generated.
\end{exe}

\begin{proof}
We freely use the notation introduced in Example \ref{localfieldscg}.
Set
\begin{equation*}
S_1 \, = \, \left\{
\begin{pmatrix} a & 0 \\ 0 & 1 \end{pmatrix} \in G
\hskip.1cm \Big\vert \hskip.1cm
a \in \mathfrak o_{\K}^\times \cup \{\pi^{-1}\} 
\right\}
\hskip.3cm \text{and} \hskip.3cm
S_2 \, = \, \left\{
\begin{pmatrix} 1 & b \\ 0 & 1 \end{pmatrix} \in G
\hskip.1cm \Big\vert \hskip.1cm
b \in  \mathfrak o_{\K}
\right\} .
\end{equation*}
We claim that the compact subset $S := S_1S_2$ generates $G$.
Any $g \in G$ is a product $g_1g_2$ with
$g_1 = \begin{pmatrix} x & 0 \\ 0 & 1 \end{pmatrix}$
and $g_2 = \begin{pmatrix} 1 & y \\ 0 & 1 \end{pmatrix}$
for some $x \in \K^\times$ and $y \in \K$.
On the one hand, $g_1$ is in the subgroup of $G$ generated by $S_1$.
On the other hand, for $n$ large enough,
\begin{equation*}
\begin{pmatrix} \pi^{-1} & 0 \\ 0 & 1 \end{pmatrix}^n
g_2
\begin{pmatrix} \pi & 0 \\ 0 & 1 \end{pmatrix}^{n}
\, = \, 
\begin{pmatrix} 1 & \pi^{-n}y \\ 0 & 1 \end{pmatrix}
\, \in \, S_2 .
\end{equation*}
This proves the claim.
\end{proof}

\begin{exe}
\label{exSL_2}
The locally compact group $\SL_2(\Q_p)$ is compactly generated,
by the compact subset 
$S := \SL_2(\Z_p) \cup \left\{ 
\begin{pmatrix}
p & 0 \\
0 & p^{-1} 
\end{pmatrix}
\right\}$.
\par

More generally, $\SL_n(\Q_p)$ is compactly generated for every $n \ge 2$.
\index{Special linear group $\SL$! $\SL_n(\Q_p)$}
\end{exe}

\begin{proof}
Here is a first proof for the case $n=2$.
Denote by $\vert x \vert_p$ the $p$-adic absolute value of $x \in \Q_p$.
We claim that any
$g = \begin{pmatrix}
a & b \\
c & d 
\end{pmatrix}
\in  \SL_2(\Q_p)$
is in the subgroup generated by $S$.
Indeed, upon multiplying $g$ on the left or/and on the right by
$\begin{pmatrix}
0 & 1 \\
-1 & 0 
\end{pmatrix}$,
we can assume that $\vert a \vert_p = 
\max \{\vert a \vert_p, \vert b \vert_p, \vert c \vert_p, \vert d \vert_p \}$,
so that $x := - ba^{-1}$ and $y := - ca^{-1}$ are in $\Z_p$.
The computation
\begin{equation*}
\begin{pmatrix}
1 & 0 \\
y & 1 
\end{pmatrix}
\begin{pmatrix}
a & b \\
c & d 
\end{pmatrix}
\begin{pmatrix}
1 & x \\
0 & 1 
\end{pmatrix}
\, = \,
\begin{pmatrix}
1 & 0 \\
y & 1 
\end{pmatrix}
\begin{pmatrix}
a & 0 \\
c & a^{-1} 
\end{pmatrix}
\, = \,
\begin{pmatrix}
a & 0 \\
0 & a^{-1} 
\end{pmatrix}
\end{equation*}
shows that we can assume that $g$ is diagonal.
Since $a = zp^n$ for some $z \in \Z_p^\times$ and $n \in \Z$,
this shows the claim.
\par

Let us allude to a second proof.
There exists a standard action of $\SL_2(\Q_p)$ 
on the regular tree of valency $p+1$  \cite{Serr--77}.
\index{Tree}
This action is geometric, and Theorem \ref{ftggt}
implies that $\SL_2(\Q_p)$ is compactly generated.
(This tree is the particular case 
of the Bruhat-Tits building attached to $G = \SL_n (\Q_p)$.)
\par

Consider now the case $n \ge 3$.
Let $e_1, \hdots, e_n$ denote the canonical basis of $\Q_p^n$.
For $i,j \in \{1, \hdots, n\}$ with $i<j$, 
let $\varepsilon_{i,j} : \SL_2(\Q_p) \longrightarrow \SL_n(\Q_p)$
denote the canonical embedding:
$\varepsilon_{i,j}(g)$ acts on the linear span of $\{e_i, e_j\}$
as $\SL_2(\Q_p)$ acts on that of $\{e_1,e_2\}$,
and $\varepsilon_{i,j}(g)e_k = e_k$ if $k \notin \{i,j\}$.
Since elementary matrices generate $\SL_n(\Q_p)$,
\index{Elementary matrices}
the union $\bigcup_{i,j \text{ with } 1 \le i < j \le n} \varepsilon_{i,j}(S)$
is a compact generating set of $\SL_n(\Q_p)$.
\par

The result and the proof carry over with minor changes
when $\Q_p$ is replaced by an arbitrary local field.
\end{proof}

\begin{thm}
\index{Theorem! Borel-Tits}
\label{reductivecg}
Let $\mathbf G$ be a reductive group defined over a local field $\K$.
\par
Then the group of $\K$-points of $\mathbf G$ is compactly generated.
\end{thm}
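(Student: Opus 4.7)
The plan is to split the argument according to whether $\K$ is Archimedean or non-Archimedean, and in the non-Archimedean case to invoke Bruhat--Tits theory in combination with the fundamental theorem of geometric group theory (Theorem~\ref{ftggt}).

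First I would reduce to the Zariski-connected case. The identity component $\mathbf G^{\circ}$ is defined over $\K$ and $\mathbf G / \mathbf G^{\circ}$ is a finite $\K$-group scheme, so $\mathbf G^{\circ}(\K)$ has finite index in $\mathbf G(\K)$; when $\K$ is Archimedean this is Whitney's theorem (cf.\ Remark~\ref{alggroupsalmostconnected}), and when $\K$ is non-Archimedean it is direct. Moreover $\mathbf G^{\circ}(\K)$ is open and closed in $\mathbf G(\K)$ for the $\K$-topology, and the quotient $\mathbf G(\K)/\mathbf G^{\circ}(\K)$ is finite hence compact. By Proposition~\ref{sigmac+compactgofcocompact}\ref{2DEsigmac+compactgofcocompact}, compact generation of $\mathbf G(\K)$ is equivalent to that of $\mathbf G^{\circ}(\K)$, so we may assume $\mathbf G$ is connected.

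In the Archimedean case $\K \in \{\R,\C\}$ the group $\mathbf G(\K)$ is a real Lie group with only finitely many connected components by the reduction above, hence compactly generated by Proposition~\ref{almostconnectedgroups}. In the non-Archimedean case I would invoke the Bruhat--Tits building $\mathcal B = \mathcal B(\mathbf G, \K)$: a complete CAT(0) polyhedral complex, in particular a proper geodesic metric space, carrying a natural isometric action of $\mathbf G(\K)$. The three properties one needs from Bruhat--Tits theory are that this action is (i) continuous, (ii) proper -- point stabilizers are parahoric, in particular compact open, subgroups of $\mathbf G(\K)$ -- and (iii) cocompact, a closed chamber (or a finite union of chambers, as $\mathbf G(\K)$ need not act transitively on chambers) being a fundamental domain. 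Together, (i)--(iii) say that the action of $\mathbf G(\K)$ on the proper geodesic metric space $\mathcal B$ is geometric in the sense of Definition~\ref{actions_mp_lb_cob} (local boundedness being automatic by Remark~\ref{firstexamplesgeometricactions}(5), and cocompactness implying coboundedness). Applying Theorem~\ref{ftggt}(1), equivalently Corollary~\ref{ahahah}, then yields that $\mathbf G(\K)$ is compactly generated.

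The main obstacle is that all the geometric input -- the very existence of $\mathcal B(\mathbf G, \K)$ and the verification of (i)--(iii) -- lies inside Bruhat--Tits theory, a long series of papers of the 1960s and 1970s whose contents we can only cite here. A partial alternative, which avoids the full building but only treats particular groups such as $\SL_n$ (compare Example~\ref{exSL_2}) or split groups, is to use a Cartan/Iwasawa decomposition $\mathbf G(\K) = K \mathbf A(\K) K$ with $K$ a maximal compact subgroup and $\mathbf A$ a maximal $\K$-split torus, reducing compact generation of the torus factor to compact generation of $\K^{\times}$ (Example~\ref{localfieldscg}) and absorbing the rest into $K$; this however does not yield the theorem for arbitrary reductive $\mathbf G$ without the structural results of Bruhat and Tits.
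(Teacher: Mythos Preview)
Your argument is sound. The paper itself gives no self-contained proof here: it simply cites Borel--Tits \cite{BoTi--65} for characteristic zero and Behr \cite{Behr--69} in general (plus \cite{Marg--91} for the semisimple case). Your route via the Bruhat--Tits building and Theorem~\ref{ftggt} is precisely the strategy the paper invokes later for the stronger Theorem~\ref{TheoremBehr} on compact \emph{presentation}, so you are in effect anticipating that argument rather than reproducing the cited references, which proceed more algebraically via parabolic subgroups and the Bruhat decomposition. Both routes are black-box invocations of substantial external machinery; yours fits the book's geometric emphasis better.

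One remark on conventions: in this book (Example~\ref{panoramalocalfield}) ``local field'' means non-Archimedean, so your Archimedean case split is superfluous --- the paper treats $\R$ and $\C$ in the paragraph immediately following the theorem via Proposition~\ref{almostconnectedgroups}, exactly as you do. Otherwise there is nothing to correct.
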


\begin{proof}[References for the terminology and the proof]
An algebraic group $\mathbf G$ is \textbf{reductive} 
\index{Reductive group|textbf}
if its unipotent radical is trivial,
in other words if $\mathbf G$ does not contain 
any non-trivial connected unipotent normal algebraic subgroup
(for further explanations, we refer to \cite[Section 11]{Bore--91}).
This notion does not depend on the field of definition.
\par

For local fields of characteristic $0$, 
Theorem \ref{reductivecg} is a particular case of \cite[Th\'eor\`eme 13.4]{BoTi--65}.
For arbitrary characteristic, see \cite{Behr--69}.
See also \cite[Corollary I.2.3.5]{Marg--91} for the compact generation 
of the group of $\K$-points of a \emph{semisimple} group defined over $\K$. 
\par

More is true: see Theorem \ref{TheoremBehr}.
\end{proof}

Theorem \ref{reductivecg} could be stated when $\K$ is any
non-discrete locally compact field of characteristic $0$.
Indeed, in the remaining case, when $\K$ is $\R$ or $\C$, 
for \emph{any} algebraic group $\mathbf G$ defined over $\K$,
the group $\mathbf G (\K)$ is a Lie group with finitely many connected components,
and it is compactly generated regardless 
of the assumption that $\mathbf G$ is reductive.

\begin{thm}
\label{cgforalgebraicgroupschar0}
\index{Compactly generated! LC-group}
Let $\K$ be a local field of characteristic $0$.
Let $\mathbf G$ be an algebraic group, with connected component $\mathbf G_0$;
we assume that $\mathbf G$ is defined over $\K$.
Let $G$ denote the locally compact group $\mathbf G (\K)$ 
of the $\K$-points of $\mathbf G$.
The following properties are equivalent:
\begin{enumerate}[noitemsep,label=(\roman*)]
\item\label{iDEcgforalgebraicgroupschar0}
$G$ is not compactly generated;
\item\label{iiDEcgforalgebraicgroupschar0}
$G$ has a non-compact locally elliptic quotient;
\item\label{iiiDEcgforalgebraicgroupschar0}
$\mathbf G$ has a connected normal $\K$-subgroup $\mathbf N$
such that $\mathbf G_0 / \mathbf N$ is 
a reductive group anisotropic over $\K$,
and $\mathbf N$ admits a surjective $\K$-homomorphism
onto the additive $\K$-group of dimension $1$.
\end{enumerate}
\end{thm}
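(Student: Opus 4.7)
The plan is to prove the cyclic chain of implications $(ii)\Rightarrow(i)$, $(iii)\Rightarrow(ii)$, and $(i)\Rightarrow(iii)$, with the last being the main content.

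$(ii)\Rightarrow(i)$ is almost immediate: a locally elliptic LC-group is compactly generated if and only if it is compact, since any compact generating set lies in a compact open subgroup (Definition \ref{deflocallyelliptic}), which must then exhaust the group. As compact generation passes to quotients (Proposition \ref{stababcd}\ref{4DEstababcd}), a non-compact locally elliptic quotient precludes $G$ from being compactly generated.

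For $(iii)\Rightarrow(ii)$, after reducing to $\mathbf{G}$ connected (using finiteness of $\mathbf{G}(\K)/\mathbf{G}_0(\K)$ in characteristic zero), I would manufacture from the given $\K$-surjection $\psi\colon \mathbf{N}\twoheadrightarrow \mathbf{G}_a$ a $\mathbf{G}$-equivariant analogue. Let $V$ be the finite-dimensional $\K$-subspace of $\operatorname{Hom}_\K(\mathbf{N},\mathbf{G}_a)$ spanned by the $\mathbf{G}(\overline{\K})$-conjugates of $\psi$; evaluation produces a $\mathbf{G}$-equivariant $\K$-surjection $\mathbf{N}\twoheadrightarrow V^*$ onto a nontrivial vector group, with $\K$-defined kernel $\mathbf{M}$ normal in $\mathbf{G}$. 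The classical criterion (Borel--Tits) that a reductive $\K$-group has compact $\K$-points iff it is $\K$-anisotropic gives compactness of $(\mathbf{G}/\mathbf{N})(\K)$. Using $H^1(\K,V^*)=0$ in characteristic zero for the vector group $V^*$, the exact sequence $1\to V^*\to \mathbf{G}/\mathbf{M}\to \mathbf{G}/\mathbf{N}\to 1$ yields on $\K$-points an extension of a compact group by $\K^d$ with $d\ge 1$. Such an extension is locally elliptic by Proposition \ref{stabilitelocalelliptic}\ref{2DEstabilitelocalelliptic} and Example \ref{examplesoflocallyellipticgroups}(2), and non-compact since it contains $\K^d$; the image of $G$ inside it supplies the required quotient.

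For the main direction $(i)\Rightarrow(iii)$, reduce to $\mathbf{G}$ connected and invoke the Levi decomposition $\mathbf{G}=\mathbf{L}\ltimes \mathbf{R}_u$ valid in characteristic zero. Split the reductive group $\mathbf{L}$ as an almost direct product $\mathbf{L}^{\mathrm{iso}}\cdot\mathbf{L}^{\mathrm{an}}$ of its $\K$-isotropic and $\K$-anisotropic parts, and take $\mathbf{N}$ to be the preimage of $\mathbf{L}^{\mathrm{iso}}$ under $\mathbf{G}\twoheadrightarrow \mathbf{L}$; then $\mathbf{G}/\mathbf{N}\cong \mathbf{L}^{\mathrm{an}}$ is reductive anisotropic and $\mathbf{N}$ is characteristic, hence $\K$-normal, in $\mathbf{G}$. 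Since $(\mathbf{G}/\mathbf{N})(\K)$ is compact, $\mathbf{N}(\K)$ is cocompact in $G$, so by Proposition \ref{sigmac+compactgofcocompact}\ref{2DEsigmac+compactgofcocompact} compact generation of $G$ is equivalent to that of $\mathbf{N}(\K)=\mathbf{L}^{\mathrm{iso}}(\K)\ltimes \mathbf{R}_u(\K)$. Any $\K$-morphism $\mathbf{N}\to \mathbf{G}_a$ factors through the abelianization $\mathbf{N}^{\mathrm{ab}}=(\mathbf{L}^{\mathrm{iso}})^{\mathrm{ab}}\times (\mathbf{R}_u^{\mathrm{ab}})_{\mathbf{L}^{\mathrm{iso}}}$ whose first factor is a torus, so such a surjection exists iff the $\mathbf{L}^{\mathrm{iso}}$-coinvariants $(\mathbf{R}_u^{\mathrm{ab}})_{\mathbf{L}^{\mathrm{iso}}}$ are nonzero. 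The hard central step is to show that $\mathbf{L}^{\mathrm{iso}}(\K)\ltimes \mathbf{R}_u(\K)$ is compactly generated iff these coinvariants vanish: the ``only if'' direction comes from a direct construction giving a quotient $\mathbf{L}^{\mathrm{iso}}(\K)\times \K^r$ with $r\ge 1$, which fails compact generation because $\K^r$ is a non-compactly-generated direct factor; the ``if'' direction combines compact generation of $\mathbf{L}^{\mathrm{iso}}(\K)$ (Theorem \ref{reductivecg}), a representation-theoretic argument that $\mathbf{L}^{\mathrm{iso}}(\K)$-orbits of a compact set in $V(\K)\cong \K^n$ topologically generate $\K^n$ when $V_{\mathbf{L}^{\mathrm{iso}}}=0$, and an induction up the lower central series of $\mathbf{R}_u$ using commutator relations to recover successive vector-group layers. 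The principal obstacle is this induction, where deeper-level coinvariants may be nonzero but controlled via brackets coming from lower layers, all while maintaining $\K$-rationality throughout.
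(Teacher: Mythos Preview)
Your cycle of implications and the arguments for $(ii)\Rightarrow(i)$ and $(iii)\Rightarrow(ii)$ are correct and match the paper. For $(iii)\Rightarrow(ii)$ your ``span of conjugates of $\psi$'' is exactly dual to the paper's ``intersection of kernels of all $\K$-morphisms $\mathbf N\to\mathbf G_a$''; both produce a $\mathbf G$-stable vector-group quotient of $\mathbf N$, and your explicit use of $H^1(\K,\text{vector group})=0$ is the right way to handle surjectivity on $\K$-points (the paper glosses over this, though it works because the surjection factors through the unipotent radical of $\mathbf N$).

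For $(i)\Rightarrow(iii)$ the paper does not give an argument: it simply invokes \cite[Th\'eor\`eme~13.4]{BoTi--65}. Your plan via the Levi decomposition $\mathbf G=\mathbf L\ltimes\mathbf R_u$ and the isotropic/anisotropic splitting of $\mathbf L$ is the natural route (and is essentially how Borel--Tits proceed), but be aware that what you call the ``hard central step'' is the entire content of that theorem. Two points to watch. First, your claim that $\mathbf L^{\mathrm{iso}}(\K)$-orbits of a compact set topologically generate $V(\K)$ whenever $V_{\mathbf L^{\mathrm{iso}}}=0$ is true but not immediate: the maximal split torus $S$ may well have zero-weight spaces in $V$ (e.g.\ the adjoint representation of $\SL_2$), so the dilation argument alone fails and one must use root subgroups of $\mathbf L^{\mathrm{iso}}$ to move the $S$-fixed vectors into nonzero-weight spaces. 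Second, the induction up the lower central series of $\mathbf R_u$ is delicate because deeper graded pieces can have nontrivial $\mathbf L^{\mathrm{iso}}$-coinvariants even when $(\mathbf R_u^{\mathrm{ab}})_{\mathbf L^{\mathrm{iso}}}=0$; these must be reached via commutators from lower layers, and making this precise and $\K$-rational is where the work lies.

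Two minor corrections: your $\mathbf N$ need not be characteristic (the Levi factor is not canonical), though it is visibly $\K$-normal, which is all you need; and in your ``only if'' direction the quotient of $\mathbf N(\K)$ is $(\mathbf L^{\mathrm{iso}})^{\mathrm{ab}}(\K)\times\K^r$ (a torus times $\K^r$), not $\mathbf L^{\mathrm{iso}}(\K)\times\K^r$ --- this does not affect the conclusion, since $\K^r$ is still a non-compactly-generated direct factor.
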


\begin{proof}[On the proof]
Implication \ref{iiDEcgforalgebraicgroupschar0} $\Rightarrow$ \ref{iDEcgforalgebraicgroupschar0} 
is straightforward 
(see Proposition \ref{equivalences_locell}).
The deepest implication is  \ref{iDEcgforalgebraicgroupschar0} $\Rightarrow$ 
\ref{iiiDEcgforalgebraicgroupschar0};
it follows from \cite[Theorem 13.4]{BoTi--65}.
Implication \ref{iiiDEcgforalgebraicgroupschar0} $\Rightarrow$
\ref{iiDEcgforalgebraicgroupschar0}
is also essentially in \cite{BoTi--65};
more precisely, suppose that \ref{iiiDEcgforalgebraicgroupschar0} holds;
let $\mathbf V$ be the intersection of the kernels
of the $\K$-morphisms from $\mathbf N$ 
to the additive $\K$-group of dimension $1$;
if $N = \mathbf N (\K)$ and $V = \mathbf V (\K)$, 
then $G/N$ is compact and $N/V$ is a non-compact locally elliptic group,
so that \ref{iiDEcgforalgebraicgroupschar0} holds.
\end{proof}

It is unknown to us whether the equivalence between 
\ref{iDEcgforalgebraicgroupschar0}  and \ref{iiDEcgforalgebraicgroupschar0} holds 
when $\K$ is a local field of positive characteristic.

\section{Isometry groups}
\label{isometrygroups}

We want to show that the isometry group of a proper metric space
has a natural topology that makes it a second-countable LC-group,
and that the action of the isometry group on this space is continuous and proper.
\par

Consider a metric space $X = (X,d)$. Its \textbf{isometry group},
\index{Isometry group $\Isom (X)$|textbf}
consisting of all invertible isometries of $X$ onto itself,
is denoted by $\Isom (X)$, or  $\Isom (X,d)$ whenever necessary.
There are three standard and equivalent ways to define
the appropriate topology on $\Isom (X)$. 
The \textbf{compact-open topology} on $\Isom (X)$
\index{Compact-open topology|textbf}
\index{Topology! compact-open|textbf}
\index{Topology! pointwise|textbf}
\index{Topology! uniform convergence on compact sets,
ucc-topology|textbf}
is that generated by the subbasis 
\begin{equation*}
\aligned
\mathcal O_{K,V} \, = \,  &\{ f \in \Isom (X) \mid f(K) \subset V \} 
\\
&\text{for} \hskip.2cm K \subset X \hskip.2cm \text{compact} \hskip.2cm
\text{and} \hskip.2cm V \subset X \hskip.2cm \text{open.} 
\endaligned
\end{equation*}
The \textbf{pointwise topology} on $\Isom (X)$
\index{Pointwise topology|textbf}
is that generated by the subbasis 
\begin{equation*}
\aligned
\mathcal O_{x,V} \, = \,  &\{ f \in \Isom (X) \mid f(x) \in V \} 
\\
&\text{for} \hskip.2cm x \in X  \hskip.2cm
\text{and} \hskip.2cm V \subset X \hskip.2cm \text{open.} 
\endaligned
\end{equation*}
The \textbf{topology of uniform convergence on compact subsets}, 
or \textbf{ucc-topology}, on $\Isom (X)$
\index{Ucc-topology|textbf}
is that generated by the subbasis
\begin{equation*}
\aligned
\mathcal O_{f_0,K,\varepsilon} \, = \, &\Big\{f \in \Isom(X) \mid
\sup_{x \in K}  d_Y(f_0(x),f(x)) < \varepsilon  \Big\}
\\
&\text{for} \hskip.2cm f_0 \in \Isom (X), \hskip.2cm K \subset X \hskip.2cm 
\text{compact, and} \hskip.2cm  \varepsilon > 0. 
\endaligned
\end{equation*}
Elements of the three subbasis just defined will abusively be called
``basic'' open sets or ``basic'' neighbourhoods.

\begin{lem}
\label{co=ucc for Isom(X)}
Let $X$ be a metric space.
The compact-open topology and the ucc-topology coincide on $\Isom (X)$.
\end{lem}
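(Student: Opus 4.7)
The plan is to prove the two set-theoretic inclusions of sub-bases that together give equality of the two topologies. The underlying fact is the classical observation that on an equicontinuous family of functions from a topological space to a metric space, the compact-open and ucc topologies agree; since every element of $\Isom(X)$ is $1$-Lipschitz, the family $\Isom(X)$ is automatically equicontinuous, so no local compactness hypothesis on $X$ is required.

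First I would show that every basic compact-open neighborhood of an $f_0 \in \Isom(X)$ contains a basic ucc-neighborhood of $f_0$. Suppose $f_0 \in \mathcal{O}_{K,V}$, so that $f_0(K)$ is compact and contained in the open set $V$. Set $\varepsilon := \inf\{ d(y,z) \mid y \in f_0(K),\ z \in X \setminus V\}$ (with the convention $\varepsilon := 1$ if $V = X$); compactness of $f_0(K)$ together with the closedness of $X \setminus V$ guarantees $\varepsilon > 0$. If $f \in \mathcal{O}_{f_0, K, \varepsilon}$, then $d(f(x), f_0(x)) < \varepsilon$ for every $x \in K$, which forces $f(x) \in V$, so $f \in \mathcal{O}_{K,V}$.

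Next I would show the converse inclusion. Fix a basic ucc-neighborhood $\mathcal{O}_{f_0, K, \varepsilon}$ of $f_0$. Using compactness of $K$, cover $K$ by finitely many open balls $B(x_1, \varepsilon/5), \ldots, B(x_n, \varepsilon/5)$ centered at points $x_i \in K$. Define compact sets $K_i := K \cap \overline{B(x_i, \varepsilon/5)}$ and open sets $V_i := B(f_0(x_i), \varepsilon/3)$ in $X$. Since $f_0$ is an isometry, $f_0(K_i) \subset V_i$, so $f_0 \in \bigcap_{i=1}^n \mathcal{O}_{K_i, V_i}$. For any $f$ in this intersection and any $x \in K$, pick $i$ with $x \in K_i$; the triangle inequality, combined with the fact that $f$ and $f_0$ are both isometries, yields
\begin{equation*}
d(f_0(x), f(x)) \,\le\, d(f_0(x), f_0(x_i)) + d(f_0(x_i), f(x_i)) + d(f(x_i), f(x)) \,\le\, \tfrac{\varepsilon}{5} + \tfrac{\varepsilon}{3} + \tfrac{\varepsilon}{5} \,<\, \varepsilon,
\end{equation*}
where the middle term is estimated using $f(x_i) \in V_i$. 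Hence $f \in \mathcal{O}_{f_0, K, \varepsilon}$, completing the inclusion.

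There is no serious obstacle; the argument is standard once organized this way. The one subtlety worth highlighting is that the triangle-inequality estimate above uses the isometry property of $f$ (and not merely of $f_0$) to control the outer terms $d(f_0(x), f_0(x_i))$ and $d(f(x_i), f(x))$; this is precisely where working inside $\Isom(X)$, rather than in the space of all continuous maps, allows the proof to go through without assuming local compactness of $X$.
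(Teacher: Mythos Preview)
Your proof is correct and follows essentially the same route as the paper's: both directions are handled by the same triangle-inequality argument using a finite cover of $K$ by small balls, exploiting that all maps in $\Isom(X)$ are isometries to control the outer terms. The only cosmetic differences are your choice of radii ($\varepsilon/5$ and $\varepsilon/3$ versus the paper's uniform $\varepsilon/3$) and that the paper uses the singletons $\{x_i\}$ rather than your $K_i$ as the compact sets in the sub-basic open sets $\mathcal{O}_{K_i,V_i}$; neither affects the argument.
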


\begin{proof}
Let $f_0 \in \Isom (X)$. 
\par

Let $K \subset X$ be a compact subset and $V \subset X$
an open subset of $X$ such that $\mathcal O_{K,V}$ 
is a basic compact-open neighbourhood of $f_0$.
There exists $\varepsilon > 0$ such that 
$\{ x \in X \mid d(f_0(K), x) < \varepsilon \} \subset V$.
Hence $\mathcal O_{f_0, K, \varepsilon}$ is an ucc-neighbourhood of $f_0$
contained in $\mathcal O_{K,V}$.
\par

Conversely, let $K \subset V$ be a compact subset of $X$ and $\varepsilon > 0$,
so that $\mathcal O_{f_0, K, \varepsilon}$ is a basic ucc-neighbourhood of $f_0$.
There exists a finite subset $\{x_1, \hdots, x_n\}$ of $K$ such that
the interiors of the balls $B^{x_i}_X(\varepsilon / 3)$ cover $K$.
For $i = 1, \hdots, n$, set 
\begin{equation*}
K_i \, = \, K \cap  \Big(\hskip.1cm  \overline{B^{ x_i}_X ( \varepsilon / 3)} \hskip.1cm \Big)
\hskip.2cm \text{and} \hskip.2cm
V_i \, = \, \operatorname{int}\left( B^{ f_0(x_i)}_X ( \varepsilon / 3) \right).
\end{equation*}
Observe that 
\begin{equation*}
\mathcal O ' \, = \, \bigcap_{i=1}^n \mathcal O_{\{ x_i \}, V_i}
\end{equation*}
is a compact-open neighbourhood of $f_0$.
Let $f \in \mathcal O'$. 
For $x \in K$, if $i$ is such that $x \in K_i$, we have
\begin{equation*}
\aligned
d (f_0(x),f(x)) \, &\le \, 
d (f_0(x), f_0(x_i)) + d (f_0(x_i), f(x_i)) + d(f(x_i), f(x))
\\
\, &= \, d(f_0(x_i), f(x_i)) + 2 d(x_i, x) 
\, < \, 
\frac{\varepsilon}{3} + \frac{2\varepsilon}{3}  = \varepsilon,
\endaligned
\end{equation*}
and therefore $f \in \mathcal O_{f_0,K,\varepsilon}$. 
Hence $\mathcal O'$ is a compact-open neighbourhood of $f_0$
contained in $\mathcal O_{f_0,K,\varepsilon}$.
\end{proof}

\begin{lem}
\label{ucc=pw for Isom(X)}
Let $X$ be a metric space.
The ucc-topology and the pointwise topology coincide on $\Isom (X)$.
\end{lem}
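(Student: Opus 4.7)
The plan is to show each topology refines the other on $\Isom(X)$, by taking a basic neighbourhood of an arbitrary $f_0 \in \Isom(X)$ in one topology and exhibiting a basic neighbourhood of $f_0$ in the other that is contained in it. Both inclusions ultimately rest on the fact that every $f \in \Isom(X)$ preserves distances, which lets us propagate control at finitely many points to uniform control on a compact set.

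For the direction \emph{pointwise $\subset$ ucc}, I would start from a basic pointwise neighbourhood $\mathcal{O}_{x,V}$ containing $f_0$, i.e.\ $f_0(x) \in V$. Since $V$ is open, pick $\varepsilon > 0$ with the open ball of radius $\varepsilon$ around $f_0(x)$ contained in $V$. Then the ucc basic neighbourhood $\mathcal{O}_{f_0, \{x\}, \varepsilon}$ is a neighbourhood of $f_0$ contained in $\mathcal{O}_{x,V}$: if $d(f_0(x), f(x)) < \varepsilon$, then $f(x) \in V$. This direction uses nothing beyond the definitions, not even the isometry property.

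For the non-trivial direction \emph{ucc $\subset$ pointwise}, fix a basic ucc-neighbourhood $\mathcal{O}_{f_0, K, \varepsilon}$ of $f_0$. Since $K$ is compact in $(X,d)$, it is totally bounded, so there exist finitely many points $x_1, \dots, x_n \in K$ such that the open balls $B^{x_i}_X(\varepsilon/4)$ cover $K$. For each $i$, let $V_i$ be the open ball of radius $\varepsilon/2$ around $f_0(x_i)$, so $\mathcal{O}' := \bigcap_{i=1}^n \mathcal{O}_{x_i, V_i}$ is a pointwise neighbourhood of $f_0$. Given $f \in \mathcal{O}'$ and $x \in K$, choose $i$ with $d(x, x_i) < \varepsilon/4$; using that both $f_0$ and $f$ are isometries, the triangle inequality yields
\[
d(f_0(x), f(x)) \;\le\; d(f_0(x), f_0(x_i)) + d(f_0(x_i), f(x_i)) + d(f(x_i), f(x)) \;<\; \tfrac{\varepsilon}{4} + \tfrac{\varepsilon}{2} + \tfrac{\varepsilon}{4} \;=\; \varepsilon,
\]
so $f \in \mathcal{O}_{f_0, K, \varepsilon}$ and $\mathcal{O}' \subset \mathcal{O}_{f_0, K, \varepsilon}$.

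The only mildly subtle step is the second one, and it is essentially the same trick as in the proof of Lemma~\ref{co=ucc for Isom(X)}: totally boundedness of $K$ reduces uniform estimates over $K$ to estimates at finitely many base points, and the isometry property makes the errors incurred by the two triangle-inequality detours symmetric and equal to $d(x, x_i)$. No further ingredient is needed, and no hypothesis on $X$ beyond it being a metric space is used.
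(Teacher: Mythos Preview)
Your proof is correct and follows essentially the same approach as the paper: the non-trivial direction uses a finite $\varepsilon$-net in the compact set $K$ together with the triangle inequality and the isometry property, exactly as in the paper (the only difference is the cosmetic choice of constants $\varepsilon/4,\varepsilon/2,\varepsilon/4$ versus the paper's $\varepsilon/3,\varepsilon/3,\varepsilon/3$). Your treatment of the easy direction is a bit more explicit than the paper's one-line remark, but the argument is the same.
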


\begin{proof}
Consider $f_0 \in \Isom (X)$
and a basic ucc-neighbourhood $\mathcal O_{f_0,K,\varepsilon}$ 
of $f_0$ in $\Isom (X)$.
There exists a finite subset $\{x_1, \hdots, x_n\}$ of $K$
such that the interiors of the balls 
$B^{x_i}_X (\varepsilon/3)$ cover $K$.
The set
\begin{equation*}
U = \{ f \in \Isom (X) \mid 
d (f_0(x_i), f(x_i)) < \varepsilon/3 \hskip.2cm \text{for} \hskip.2cm i = 1, \hdots, n \}
\end{equation*}
is a neighbourhood of $f_0$ in $\Isom (X)$ for the pointwise topology.
Let $f \in U$; for $x \in K$, if $i$ is such that $x \in B^{x_i}_X (\varepsilon/3)$,
we have
\begin{equation*}
d (f_0(x),f(x)) \, \le \, 
d (f_0(x), f_0(x_i)) + d (f_0(x_i), f(x_i)) + d (f(x_i), f(x)) 
\, < \, \varepsilon,
\end{equation*}
as in the previous proof,
and therefore $f \in \mathcal O_{f_0,K,\varepsilon}$. 
Hence $f_0 \in U \subset \mathcal O_{f_0,K,\varepsilon}$.
\par

Conversely, any basic neighbourhood of $f_0$ for the pointwise topology
contains obviously an ucc-neighbourhood of $f_0$.
\end{proof}

From now on, we consider the group $\Isom (X)$ furnished
with the compact-open topology, equivalently the pointwise topology, 
equivalently the ucc-topology.

\begin{lem}
\label{Isom(X)istopgroup}
Let $X$ be a metric space.
With the topology defined above, $\Isom (X)$ is a topological group.
\end{lem}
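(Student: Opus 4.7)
The plan is to verify the two axioms of a topological group, namely continuity of composition $(f,g) \mapsto fg$ and of inversion $f \mapsto f^{-1}$, working with the ucc-topology (which by Lemmas \ref{co=ucc for Isom(X)} and \ref{ucc=pw for Isom(X)} coincides with the compact-open and pointwise topologies). The key leverage throughout will be that the elements of $\Isom(X)$ are isometries, so that $d(f(x),f(y))=d(x,y)$ can be used to move points through an isometry without distortion, and to note that the isometric image of a compact set is compact.

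For continuity of composition at a point $(f_0,g_0)\in\Isom(X)\times\Isom(X)$, fix a basic ucc-neighbourhood $\mathcal O_{f_0g_0,K,\varepsilon}$ of $f_0g_0$. Applying the triangle inequality and the fact that $f$ is an isometry,
\[
d(f_0g_0(x),fg(x))\,\le\,d(f_0g_0(x),fg_0(x))+d(g_0(x),g(x))
\quad\text{for all }x\in K.
\]
Since $L:=g_0(K)$ is compact, I would take the product neighbourhood
$\mathcal O_{f_0,L,\varepsilon/2}\times \mathcal O_{g_0,K,\varepsilon/2}$ of $(f_0,g_0)$; for $(f,g)$ in this neighbourhood, the first term is bounded by $\varepsilon/2$ (evaluated on $L$) and the second by $\varepsilon/2$ (evaluated on $K$), giving $fg\in\mathcal O_{f_0g_0,K,\varepsilon}$.

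For continuity of inversion at $f_0$, fix a basic ucc-neighbourhood $\mathcal O_{f_0^{-1},K,\varepsilon}$ of $f_0^{-1}$. The trick is to transport the compact set $K$ by $f_0^{-1}$: set $L:=f_0^{-1}(K)$, which is compact since $f_0^{-1}$ is continuous. For any $f\in\mathcal O_{f_0,L,\varepsilon}$ and any $x\in K$, write $y:=f_0^{-1}(x)\in L$; then, using that $f$ is an isometry,
\[
d(f_0^{-1}(x),f^{-1}(x))\,=\,d(y,f^{-1}(x))\,=\,d(f(y),x)\,=\,d(f(y),f_0(y))\,<\,\varepsilon.
\]
Hence $\mathcal O_{f_0,L,\varepsilon}$ maps into $\mathcal O_{f_0^{-1},K,\varepsilon}$ under inversion.

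No step is really hard here; the only slightly delicate point is to resist the temptation to argue via sequences (the ucc-topology need not be first-countable), and instead to work directly with subbasic neighbourhoods, which is what the calculations above do. Once composition and inversion are shown to be continuous at arbitrary chosen points, $\Isom(X)$ is a topological group.
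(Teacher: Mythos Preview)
Your proof is correct and essentially parallels the paper's: your treatment of inversion is identical (modulo variable names) to the paper's argument, which also sets $K=f_0^{-1}(L)$ and computes $\sup_{y\in L}d(f_0^{-1}(y),f^{-1}(y))=\sup_{x\in K}d(f(x),f_0(x))$. The one genuine difference is for multiplication: the paper works in the compact-open description, picking a relatively compact open $V$ with $f(K)\subset V\subset\overline V\subset g^{-1}(W)$ and using $\mathcal O_{K,V}\times\mathcal O_{\overline V,W}$, whereas you stay in the ucc description and split $\varepsilon$ via the triangle inequality with $L=g_0(K)$. Your route has the small advantage of not invoking a relatively compact open neighbourhood of $f(K)$, which the lemma's hypothesis (an arbitrary metric space, not assumed locally compact) does not automatically supply.
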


\begin{proof}
Let $f,g \in \Isom (X)$. 
Let $K$ be a compact subset and $W$ an open subset  of $X$
such that $gf(K) \subset W$, so that $\mathcal O_{K,W}$
is a compact-open neighbourhood of $gf$ in $\Isom (X)$. 
There exists a relatively compact-open neighbourhood $V$ of $f(K)$
such that $f(K) \subset V \subset \overline{V} \subset g^{-1}(W)$.
Then $\mathcal O_{K,V} \times \mathcal O_{\overline{V},W}$
is an open neighbourhood of $(f,g)$ in $\Isom (X) \times \Isom (X)$
with image by the multiplication map in $\mathcal O_{K,W}$.
Hence the multiplication is continuous.
\par

Let $f_0 \in \Isom(X)$.
Consider a compact subset $L$ of $X$, a number $\varepsilon > 0$,
and the resulting ucc-neighbourhood $\mathcal O_{f_0^{-1}, L, \varepsilon}$ of $f_0^{-1}$.
Set $K = f_0^{-1}(L)$.
For $f \in \mathcal O_{f_0, K, \varepsilon}$, we have
\begin{equation*}
\aligned
\sup_{y \in L} d(f_0^{-1}(y), f^{-1}(y)) \, &= \, 
\sup_{x \in K} d(x,  f^{-1} f_0 (x)) 
\\
\, &= \,
\sup_{x \in K} d(f(x), f_0(x) ) \, < \, \varepsilon \hskip.1cm ,
\endaligned
\end{equation*}
and therefore $f^{-1} \in \mathcal O_{f_0^{-1}, L, \varepsilon}$.
Hence the map $f \mapsto f^{-1}$ is continuous on $\Isom (X)$.
\par

In case we know the topology of $\Isom (X)$ is locally compact
(see Lemma \ref{ActionIsom(X)onX}),
another argument rests on the following general fact:
in a group $G$ with a locally compact topology $\mathcal T$,
if the multiplication is continuous (even separately continuous), 
then the inverse is also continuous,
and $(G, \mathcal T)$ is a topological group \cite{Elli--57}.
\end{proof}

\begin{lem}
\label{ActionIsom(X)onX}
Let $X$ be a \emph{proper} metric space.
\begin{enumerate}[noitemsep,label=(\arabic*)]
\item\label{1DEActionIsom(X)onX}
The topology defined above on $\Isom (X)$ is second-countable and locally compact.
\item\label{2DEActionIsom(X)onX}
The natural action of $\Isom (X)$ on $X$
is continuous and proper.
\end{enumerate}
\index{Proper! action}
\end{lem}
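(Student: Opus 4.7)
My plan is to prove both claims by exploiting two facts: the proper metric space $X$ is second-countable (by Theorem \ref{Urysohn}, since proper metric spaces are locally compact and $\sigma$-compact), and isometries form an equicontinuous family of maps in the strongest possible sense ($1$-Lipschitz). The core tool will be a version of the Arzelà-Ascoli theorem suited to proper target spaces.

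For (1), second-countability of $\Isom(X)$ follows from the pointwise description of the topology: fix a countable dense set $\{x_n\}_{n\ge 1}$ in $X$ and a countable basis $\{V_m\}_{m \ge 1}$ of open sets of $X$; I claim the countable family of finite intersections of the $\mathcal{O}_{x_n, V_m}$ is a basis. This reduces to checking that, given $f_0$ and a basic pointwise neighborhood $\mathcal{O}_{y, V}$ of $f_0$, one can find $x_n$ close to $y$ and $V_m$ containing $f_0(x_n)$, then use the $1$-Lipschitz property of isometries to conclude that $\mathcal{O}_{x_n, V_m} \subset \mathcal{O}_{y, V}$ after shrinking. For local compactness, fix $f_0 \in \Isom(X)$ and a base point $x_0 \in X$, and consider
\[
W \, = \, \{ f \in \Isom(X) \mid d(f(x_0), f_0(x_0)) \le 1 \} ;
\]
this is a neighborhood of $f_0$ in the pointwise topology. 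For any $f \in W$ and any $y \in X$,
\[
d(f(y), f_0(x_0)) \, \le \, d(f(y), f(x_0)) + d(f(x_0), f_0(x_0)) \, \le \, d(y, x_0) + 1 ,
\]
so $f(y)$ stays in a fixed ball of $X$, which is relatively compact by properness of $X$. Together with equicontinuity (isometries are $1$-Lipschitz), an Arzelà-Ascoli argument on a countable dense set together with a diagonal extraction shows that every sequence in $W$ has a subsequence converging pointwise, and the pointwise limit is automatically an isometry. Since the ucc and pointwise topologies agree on $\Isom(X)$ by Lemma \ref{ucc=pw for Isom(X)}, this proves $\overline{W}$ is compact, establishing local compactness.

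For (2), continuity of the action map $\Isom(X) \times X \longrightarrow X$ at $(f_0, x_0)$ is a one-line triangle inequality: for $f$ near $f_0$ in the pointwise sense at $x_0$ and for $x$ near $x_0$,
\[
d(f(x), f_0(x_0)) \, \le \, d(f(x), f(x_0)) + d(f(x_0), f_0(x_0)) \, = \, d(x, x_0) + d(f(x_0), f_0(x_0)) ,
\]
which can be made arbitrarily small. For properness of the action, I need to show that for every compact $K \subset X$, the set $\Omega_K := \{f \in \Isom(X) \mid f(K) \cap K \ne \emptyset\}$ is relatively compact. Pick $x_0 \in K$; for any $f \in \Omega_K$ there exist $x_1, y \in K$ with $f(x_1) = y$, so
\[
d(f(x_0), x_0) \, \le \, d(f(x_0), y) + d(y, x_0) \, = \, d(x_0, x_1) + d(y, x_0) \, \le \, 2 \operatorname{diam}(K) ,
\]
and the $f(x_0)$ for $f \in \Omega_K$ lie in a bounded, hence relatively compact, subset of $X$. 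The Arzelà-Ascoli argument from part (1), applied with $W$ replaced by $\Omega_K$, then shows $\overline{\Omega_K}$ is compact.

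The main obstacle is the Arzelà-Ascoli step used in both local compactness and properness of the action: one must verify that a pointwise limit of a subsequence of isometries along a countable dense set is itself a (surjective) isometry of $X$. Injectivity and distance preservation follow immediately from continuity of $d$, but surjectivity requires separately extracting a pointwise limit of the inverses $f_n^{-1}$ (using the same bound applied to $f_n^{-1}(x_0)$, which is controlled since $d(f_n^{-1}(x_0), x_0) = d(x_0, f_n(x_0))$ is bounded whenever the $f_n$ are in $W$ or $\Omega_K$) and checking that the two limits are mutual inverses; this is the only step that genuinely uses that we are dealing with \emph{invertible} isometries rather than isometric embeddings.
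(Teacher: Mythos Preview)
Your proof is correct and follows essentially the same strategy as the paper: second-countability of $\Isom(X)$ from second-countability of $X$, and local compactness and properness via Arzel\`a--Ascoli. The main differences are presentational. The paper works throughout in the compact-open picture and simply cites Bourbaki for the Arzel\`a--Ascoli step, whereas you work in the pointwise picture and unpack that step by hand (diagonal extraction on a countable dense set). In particular, you explicitly address the surjectivity of the limit isometry by passing to a subsequence along which the inverses also converge; the paper hides this inside its Bourbaki reference, but Remark~\ref{Isonotclosed} just after the lemma shows that this is a genuine issue and your treatment is the cleaner one pedagogically. For properness, the paper verifies directly that $\Phi^{-1}(K\times L)$ is relatively compact, while you use the equivalent formulation via $\Omega_K=\{f\mid f(K)\cap K\ne\emptyset\}$; both are fine given Definition~\ref{actions_mp_lb_cob}.

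One small point worth a sentence in your write-up: you pass from ``every sequence in $W$ has a convergent subsequence'' to ``$\overline{W}$ is compact''. This is legitimate, but only because $\Isom(X)$ is metrizable---which follows from the second-countability you proved together with the Birkhoff--Kakutani theorem~\ref{BK} (first-countable topological groups are metrizable). Without that, sequential compactness would not a priori give compactness.
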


\begin{proof}
\ref{1DEActionIsom(X)onX}
Let $(U_i)_{i \in I}$ be a countable basis of open sets in $X$,
all of them being relatively compact.
Then $(\mathcal O_{\overline{U_i}, U_j})_{i \in I, j \in J}$
is a countable subbasis of open sets for the compact-open topology on $\Isom (X)$.
If necessary, details of the proof can be read in \cite[Chapter IV, Lemma 2.1]{Helg--62}.
\par

Consider an isometry $f  \in \Isom (X)$, a non-empty compact subset $K$ of $X$,
and a relatively compact open subset $V$ of $X$ containing $f(K)$.
Then $\mathcal O_{K,V}$ is an open neighbourhood of $f$.
It follows from the Arezel\`a-Ascoli theorem that $\mathcal O_{K,V}$
is relatively compact \cite[Page X.17]{BTG5-10}.
Hence $\Isom (X)$ is locally compact.
\par

\ref{2DEActionIsom(X)onX}
Let $F : \Isom (X) \times X \longrightarrow X, \hskip.2cm (f,x) \longmapsto f(x)$
denote the evaluation map.
Let $f \in \Isom (X)$ and $x \in X$.
Let $V$ be an open neighbourhood of $f(x)$ in $X$.
Since $X$ is proper, there exists
a relatively compact and open neighbourhood $U$ of $x$ such that $f(\overline U) \subset V$.
Then $\mathcal O_{\overline{U},V} \times U$ 
is an open neighbourhood of $(f,x)$ in $\Isom (X) \times X$, 
and its image by $F$ is inside $V$.
Hence the natural action of $\Isom (X)$ on $X$ is continuous.
\par

The natural map 
\begin{equation*}
\Phi \, : \, 
\Isom (X) \times X \longrightarrow X \times X, \hskip.2cm 
(f,x) \longmapsto (f(x),x) = (F(f,x),x)
\end{equation*}
is closed, as one can check thinking of the ucc-topology on $\Isom (X)$.
To show that the action $F$ is proper, i.e., that the map $\Phi$ is proper,
it remains to see that, for compact subsets $K,L$ of $X$,
the inverse image $\Phi^{-1}(K \times L)$ is compact in $\Isom (X) \times X$;
for this, we think of the compact-open topology on $\Isom (X)$.
\par

Let $D$ denote the diameter of $L$. 
Set $V = \{ v \in X \mid d(v, K) < D+1 \}$.
For $(f,x) \in \Phi^{-1}(K \times L)$, we have $f(x) \in K$, so that 
\begin{equation*}
f(K) \subset  \{ y \in X \mid d(y,K) \le D \}  \subset V .
\end{equation*}
Hence $\Phi^{-1}(K \times L)$ is contained in the set $\mathcal O_{K,V} \times L$,
which is relatively compact by the Arzel\`a-Ascoli theorem.
\end{proof}

For the following proposition, recall that a proper metric-space
is second-countable and separable
(Theorem \ref{Urysohn}).
l
\begin{prop}
\label{MainIsom(X)}
Let $(X, d_X)$ be a proper metric space.
\begin{enumerate}[noitemsep,label=(\arabic*)]
\item\label{1DEMainIsom(X)}
On the group $\Isom (X)$ of isometries of $X$, 
the following three topologies coincide:
the compact-open topology, the pointwise topology, 
and the topology of uniform convergence on compact sets.
\index{Topological group}
\index{Topology! on isometry groups}
\item\label{2DEMainIsom(X)}
For this topology, $\Isom (X)$ is a second-countable LC-group;
moreover, the natural action of $\Isom (X)$ on $X$ is continuous and proper.
\item\label{3DEMainIsom(X)}
Let $x_0$ be a base point in $X$. 
The function $d$ defined on $\Isom (X) \times \Isom (X)$ by
\begin{equation*}
d(f,g) \, = \, \sup_{x \in X} d_X(f(x), g(x)) e^{-d_X(x_0, x)}
\end{equation*}
is a left-invariant proper compatible metric on $\Isom (X)$.
\item\label{4DEMainIsom(X)}
If $X$ is compact, the topological group $\Isom(X)$ is compact,
and the function $d'$ defined by
\begin{equation*}
d'(f,g) \, = \, \sup_{x \in X} d_X(f(x), g(x)) 
\end{equation*}
is a left- and right-invariant proper compatible metric on $\Isom (X)$.
\end{enumerate}
\end{prop}

\begin{proof}
Claims \ref{1DEMainIsom(X)} and \ref{2DEMainIsom(X)} sum up the conclusions 
of Lemmas \ref{co=ucc for Isom(X)} to \ref{ActionIsom(X)onX}.
\par

Since $\Isom (X)$ is locally compact and second-countable, 
it is also $\sigma$-compact and metrizable (Theorem \ref{Urysohn}).
It follows from Theorem  \ref{Struble} that there exists on $\Isom (X)$ 
a metric which is left-invariant, proper, and compatible.
\par
In \ref{4DEMainIsom(X)},
compactness is a consequence of the  Arzel\`a-Ascoli theorem,
and the formula for $d'$ is rather standard.
In \ref{3DEMainIsom(X)},
the formula for $d$ is that of Busemann.
We leave it to the reader to check the details; 
see \cite[Section 4]{Buse--55}, or the more recent exposition \cite[Section 4.4]{Papa--05}.
\par
In specific cases, other metrics are also of interest: see for example \ref{metricsonAut(X)}.
\end{proof}

\begin{prop}
\label{GinIsom(X)}
Let $G$ be a topological group,
$X$ a proper metric space,
$\Isom (X)$ its isometry group as in Proposition \ref{MainIsom(X)}, 
$\alpha : G \times X \longrightarrow X$ a continuous isometric action,
and $\rho : G \longrightarrow \Isom(X)$ the corresponding homomorphism.
\begin{enumerate}[noitemsep,label=(\arabic*)]
\item\label{1DEGinIsom(X)}
$\alpha$ is continuous if and only if $\rho$ is continuous.
\end{enumerate}
Assume moreover that the group $G$ is locally compact 
and the action $\alpha$ is proper.
\begin{enumerate}[noitemsep,label=(\arabic*)]
\addtocounter{enumi}{1}
\item\label{2DEGinIsom(X)}
The image $\rho(G)$ is a closed subgroup of $\Isom (X)$.
In particular, when the action $\alpha$ is faithful,
the homomorphism $\rho$ identifies $G$ with a closed subgroup of $\Isom (X)$.
\end{enumerate}
Assume moreover that the group $G$ is $\sigma$-compact. 
\begin{enumerate}[noitemsep,label=(\arabic*)]
\addtocounter{enumi}{2}
\item\label{3DEGinIsom(X)}
The homomorphism $G \longrightarrow \rho(G)$ is open.
\end{enumerate}
\end{prop}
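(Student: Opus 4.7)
The plan is to prove the three parts in order, leveraging the topological properties of $\Isom(X)$ already established in Proposition~\ref{MainIsom(X)} and Lemma~\ref{ActionIsom(X)onX}, together with the Freudenthal open-mapping theorem (Corollary~\ref{Freudenthalcor}) for part (3).

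For part (1), the equivalence is essentially tautological once one invokes the pointwise topology description of $\Isom(X)$ given by Proposition~\ref{MainIsom(X)}(1). Continuity of $\rho$ at $g_0$ for the pointwise topology amounts to the assertion that, for each $x \in X$ and each open neighbourhood $V$ of $\rho(g_0)(x)=\alpha(g_0,x)$, there is a neighbourhood $U$ of $g_0$ with $\alpha(g,x) \in V$ for all $g \in U$; this is exactly the separate continuity in the $g$-variable of $\alpha$, which is a consequence of continuity of $\alpha$. Conversely, if $\rho$ is continuous, then $\alpha$ factors as the composition of $\rho \times \mathrm{id}_X$ with the evaluation map $\Isom(X)\times X \to X$, and the latter is continuous by Lemma~\ref{ActionIsom(X)onX}(2) (this uses that $X$ is proper).

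For part (2), I would exploit the metrizability of $\Isom(X)$ (Proposition~\ref{MainIsom(X)}(3)) to reduce closedness of $\rho(G)$ to a sequential test: take $(g_n) \subset G$ with $\rho(g_n) \to f$ in $\Isom(X)$ and show $f \in \rho(G)$. Fix any base-point $x_0 \in X$; then $g_n\cdot x_0 \to f(x_0)$ in $X$, so the set $K := \overline{\{g_n\cdot x_0 : n \ge 1\} \cup \{f(x_0)\}}$ is compact. Properness of $\alpha$, applied to the compact set $K \times \{x_0\}$, forces $\{g \in G : g\cdot x_0 \in K\}$ to be relatively compact in $G$, so $(g_n)$ lies in a compact subset of $G$ and admits a cluster point $g$. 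For each $y \in X$, continuity of $\alpha$ yields $g \cdot y = \lim_\lambda g_{n_\lambda}\cdot y$ along a suitable subnet, while the full sequence $\rho(g_n)(y) = g_n\cdot y$ converges to $f(y)$; hence $g\cdot y = f(y)$. Since $y$ was arbitrary, $\rho(g) = f$, establishing $f \in \rho(G)$.

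Part (3) is then immediate: by (2), $\rho(G)$ is a closed subgroup of the LC-group $\Isom(X)$ and is therefore itself an LC-group; by (1), $\rho$ is continuous; and the hypothesis that $G$ is $\sigma$-compact together with Corollary~\ref{Freudenthalcor} (Freudenthal) yields that the surjective continuous homomorphism $\rho : G \twoheadrightarrow \rho(G)$ is open. The main technical obstacle lies in part (2): $G$ is merely locally compact and need not be metrizable, so one cannot pass to a convergent subsequence but only to a cluster point via a subnet; the argument then succeeds because continuity of $\alpha$ in $g$ is strong enough to transfer information from the full convergent sequence $\rho(g_n)(y)\to f(y)$ (in the metrizable space $\Isom(X)$) to the cluster point $g$, separately for each test point $y \in X$.
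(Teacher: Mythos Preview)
Your argument is correct. Parts (1) and (3) essentially match the paper: for the converse direction in (1) you invoke continuity of the evaluation map $\Isom(X)\times X \to X$ from Lemma~\ref{ActionIsom(X)onX}(2), whereas the paper instead observes that $\rho$ continuous gives separate continuity of $\alpha$, and then uses equicontinuity of isometries to upgrade to joint continuity; your route is arguably cleaner. Part (3) is identical.

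For part (2), your approach is genuinely different. The paper does not use metrizability of $\Isom(X)$ or any sequential argument. Instead it factors
\[
\alpha \times \operatorname{id}_X \;=\; \Phi \circ (\rho \times \operatorname{id}_X),
\]
where $\Phi : \Isom(X)\times X \to X \times X$, $(h,x)\mapsto (h(x),x)$, is proper by Lemma~\ref{ActionIsom(X)onX}(2). A general Bourbaki fact (\cite[Page I.73]{BTG1-4}) then says that since $\Phi$ is proper, $\rho\times\operatorname{id}_X$ is proper if and only if $\alpha\times\operatorname{id}_X$ is; the latter holds by hypothesis, so $\rho\times\operatorname{id}_X$ is proper, hence $\rho$ is a closed map, and $\rho(G)$ is closed. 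This is slicker and avoids the mild awkwardness you flag of mixing sequences in $\Isom(X)$ with subnets in a possibly non-metrizable $G$. Your argument, on the other hand, is more explicit and self-contained, trading the Bourbaki citation for direct point-chasing; it also makes visible exactly where properness of $\alpha$ is used (to trap the $g_n$ in a compact set).
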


\begin{proof}
\ref{1DEGinIsom(X)}
If $\alpha$ is continuous, the continuity of $\rho$ follows from the definition
of the pointwise topology on $\Isom(X)$.
If $\rho$ is continuous, it is obvious that $\alpha$ is separately continuous;
since isometries are equicontinuous, separate continuity implies continuity.
\par

\ref{2DEGinIsom(X)}
Consider the maps
\begin{equation*}
\begin{array}{cccc}
\rho \times \operatorname{id}_X \, &: \, 
&G \times X \longrightarrow \Isom (X) \times X , \hskip.2cm &(g,x) \longmapsto (\rho(g),x)
\\
\Phi \, &: \, 
&\Isom (X) \times X \longrightarrow X \times X , \hskip.2cm &(h,x) \longmapsto (h(x),x)
\\
\alpha \times \operatorname{id}_X \, &: \, 
&G \times X \longrightarrow X \times X , \hskip.2cm &(g,x) \longmapsto (\rho(g)(x),x)
\end{array}
\end{equation*}
and observe that
\begin{equation*}
\alpha \times \operatorname{id}_X \,  = \, 
\Phi \circ (\rho \times \operatorname{id}_X) .
\end{equation*}
We know that $\Phi$ is proper (Lemma 5.B.4).
By \cite[Page I.73]{BTG1-4},
it follows that $\rho \times \operatorname{id}_X$ is proper 
if and only if $\alpha \times \operatorname{id}_X$ is proper.
In particular, if the action $\alpha$ is proper, 
then $\rho \times \operatorname{id}_X$ is proper, 
hence $\rho$ is a closed map, and $\rho(G)$ is closed in $\Isom (X)$.
\par

(Note that the condition of properness of the action cannot be omitted,
as shown by actions $\Z \times \R/\Z \longrightarrow \R/\Z$
by irrational rotations.)
\par

\ref{3DEGinIsom(X)}
This follows by Corollary \ref{Freudenthalcor}.
\end{proof}

\begin{rem}
\label{Isonotclosed}
Let $X$ be a metric space.
\par

(1)
An isometry $f$ from $X$ into $X$ need not be surjective.
This is the case of the one-sided shift $f$ on
the set $\N$ of non-negative integers,
defined by $f(n) = n+1$ for all $n \in \N$,
where $\N$ has the $\{0,1\}$-valued metric, defined by $d(m,n) = 1$ whenever $m \ne n$.
The shift is a non-surjective isometry.
(The same fact holds for the usual metric, defined by $(m,n) \longmapsto \vert m-n \vert$.)

\vskip.2cm

(2)
The group $\Isom(X)$ need not be closed in the space $\Isom(X,X)$
of isometric maps from $X$ into $X$, say for the pointwise topology.
Indeed, consider again $\mathbf{N}$ with the $\{0,1\}$-valued metric
and the same one-sided shift $f$.
For every integer $k \ge 0$, define $f_k \in \Isom(\N, \N)$ by
\begin{equation*}
f_k(n) \, = \, \left\{
\aligned
n+1 \hskip.5cm &\text{if} \hskip.2cm 0 \le n \le k,
\\
0     \hskip.8cm &\text{if} \hskip.2cm n = k+1, 
\\
n     \hskip.8cm &\text{if} \hskip.2cm n \ge k+2 .
\endaligned
\right.
\end{equation*}
The sequence $(f_k)_{k \ge 0}$ converges to $f$ 
for the pointwise topology, each $f_k$ is invertible, and $f$ is not.
\end{rem}

\begin{exe}[the symmetric group of an infinite countable set]
\label{Sym(X)andallthat}
Consider the set $\N$ of natural integers, with the discrete topology.
Let $X$ be the infinite product $\N^{\N}$, with the product topology,
i.e., with the pointwise topology.
This is a metrizable space, and a compatible metric $d_X$ 
can be defined by $d_X(f,g) = 2^{-n}$,
where $n$ is the smallest non-negative integer such that $f(n) \ne g(n)$, for $f \ne g$.
The metric space $(X,d_X)$ is clearly separable and complete;
thus $X$ is a Polish space.
\par

For the $\{0,1\}$-valued metric $d$ on $\N$,
as in the previous remark,
the group $\Isom(\N) = \operatorname{Sym}(\N)$ 
\index{Symmetric group $\operatorname{Sym}(X)$ of a set $X$}
of all permutations of $\N$ is a subspace of $X$.
For the subspace topology, it is also a Polish space.
Indeed,
the metric $(f,g) \longmapsto d_X(f,g) + d_X(f^{-1},g^{-1})$
is compatible and complete
(note that it \emph{is not} a left-invariant metric).
\par

The pointwise topology makes $\operatorname{Sym}(\N)$ a topological group
(Lemma \ref{Isom(X)istopgroup}); thus it is a Polish group.
(Incidentally, it is the unique topology making $\operatorname{Sym}(\N)$ a Polish group
\cite{Gaug--67, Kall--79},
indeed the unique topology making it a non-discrete Hausdorff topological group 
\cite{KeRo--07}.)
\par

Let $\mathcal F$ denote the family of finite subsets of $\N$.
For $F \in \mathcal F$, set
\begin{equation*}
\operatorname{Sym}(\N \smallsetminus F) \, = \, 
\{ g \in \operatorname{Sym}(\N) \mid
gx = x \hskip.2cm \text{for all} \hskip.2cm x \in F \} ;
\end{equation*}
this is an open subgroup of $\operatorname{Sym}(\N)$.
Observe that the group $\operatorname{Sym}(\N \smallsetminus F)$ 
is not compact because, 
if $(y_j)_{1 \le j < \infty}$ is an enumeration of $\N \smallsetminus F$,
it is the infinite disjoint union of the open subsets
$\{ g \in \operatorname{Sym}(\N \smallsetminus F) \mid gy_1 = y_j \}$, for $1 \le j < \infty$.
The family $\left( \operatorname{Sym}(\N \smallsetminus F) \right)_{F \in \mathcal F}$
is a basis of closed neighbourhoods of $1$ in $\operatorname{Sym}(\N)$.
As none of them is compact, 
the group $\operatorname{Sym}(\N)$ is not locally compact.
Note also that the evaluation map 
$ \operatorname{Sym}(\N) \times \N \longrightarrow \N$
is not proper,
because isotropy groups of points 
are not compact in $\operatorname{Sym}(\N)$.
\par

Consider again the metric $d_X$, 
now restricted to the group $\operatorname{Sym}(\N)$;
it is a left-invariant compatible metric.
The sequence of Remark \ref{Isonotclosed} is a left Cauchy sequence
(in the sense of Remark \ref{onmetrizability}(3))
which does not converge in $\operatorname{Sym}(\N)$.
\index{Left Cauchy sequence in a topological group}
In particular, $(f_k)_{k \ge 0}$ is a non-converging Cauchy sequence 
for every left-invariant compatible metric on $\mathrm{Sym}(\N)$. 
It follows that there is no complete left-invariant compatible metric on
the Polish group $\operatorname{Sym}(\N)$; thus, $\operatorname{Sym}(\N)$
is not a cli-group.
\index{Cli-group} \index{Polish! group} \index{Polish! cli-group}
\par

The example is classical: it essentially appears in \cite{Dieu--44}.
\end{exe}

\begin{rem}
\label{Sym(X)andallthatbis}
(1) Here is one more example showing that
the hypothesis of properness on $X$ cannot be omitted in Proposition \ref{MainIsom(X)}.
\par

Let $X$ be the subset of the Euclidean space $\R^2$
consisting of the first axis and the point $(0,1)$. 
Let $d_E$ denote the Euclidean metric on $X$.
Define another metric $d$ on $X$ by $d(x,y) = \inf \{1, d_E(x,y)\}$.
Then $\Isom (X,d)$ is locally compact, but its action on $X$ is not proper,
because the isotropy subgroup of $(0,1)$ contains the translations of the first axis
and is therefore non-compact
(example from \cite{MaSt--03}).

\vskip.2cm

(2)
The following result of van Dantzig and van der Waerden 
is of historical interest:
if $(X,d)$ is a locally compact metric space that is \emph{connected}
(but not necessarily proper),
the group of isometries $\Isom(X,d)$ is locally compact and acts properly on $X$.
See \cite{DaWa--28} and  \cite[Chapter I, Theorem 4.7]{KoNo--63}.
This carries over to the so-called
\emph{pseudo-connected}  separable spaces   \cite[Proposition 5.3]{GaKe--03}.
\end{rem}

\begin{prop}
\label{IsomXcg}
Let $X$ be a non-empty proper metric space.
Let $\Isom(X)$ be its group of isometries, as in Proposition \ref{MainIsom(X)}. 
Assume moreover that the action of $\Isom(X)$ on $X$ is cocompact.
\par

Then this action is geometric.
In particular,  $\Isom(X)$ is compactly generated if and only if 
$X$ is coarsely connected.
\end{prop}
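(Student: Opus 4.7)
My plan is to split the proof into two halves: first establish that the action of $\Isom(X)$ on $X$ is geometric, and then deduce the equivalence concerning compact generation as an immediate consequence of the fundamental theorem of geometric group theory (Theorem \ref{ftggt}).

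For the first half, I would verify the four conditions in Definition \ref{actions_mp_lb_cob} one by one. Isometricity is tautological. For coboundedness, the cocompactness hypothesis gives a compact $K \subset X$ with $\Isom(X) \cdot K = X$; since $X$ is proper, $K$ has finite diameter, giving coboundedness. For local boundedness at a pair $(g,x)$, I would invoke Proposition \ref{MainIsom(X)}\ref{2DEMainIsom(X)}, which tells us that $\Isom(X)$ is locally compact and acts continuously on $X$: pick a compact neighbourhood $V$ of $g$ in $\Isom(X)$; then $Vx$ is a continuous image of a compact set, hence compact, hence bounded (again using properness of $X$). For metric properness, I would use that the action is proper in the topological sense (Proposition \ref{MainIsom(X)}\ref{2DEMainIsom(X)} once more): given $x \in X$ and $R \ge 0$, the set $L := \{x\} \cup B^x_X(R)$ is compact by properness of $X$, so $\{g \in \Isom(X) \mid gL \cap L \neq \emptyset\}$ is relatively compact, and it visibly contains $S_{x,R} = \{g \mid d(x,gx) \le R\}$.

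For the second half, once we know the action is geometric, Theorem \ref{ftggt}(1) applies: the pseudo-metric $d_G$ on $G := \Isom(X)$ defined by $d_G(g,g') = d_X(gx,g'x)$ is adapted, $G$ is $\sigma$-compact, and the orbit map $(G,d_G) \longrightarrow (X,d_X)$ is a quasi-isometry. The last clause of Theorem \ref{ftggt}(1) gives directly that if $(X,d_X)$ is coarsely connected, then $G$ is compactly generated. For the converse, if $G$ is compactly generated, then by Proposition \ref{metric_char_cg} the pseudo-metric space $(G,d_G)$ is coarsely connected (indeed coarsely geodesic), and coarse connectedness transports to $(X,d_X)$ across the quasi-isometric orbit map by Proposition \ref{invariance_cg_lsg}\ref{1DEinvariance_cg_lsg}.

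I do not expect any serious obstacle: the real content is packaged in Proposition \ref{MainIsom(X)} (local compactness of $\Isom(X)$ and continuity/properness of its action on $X$) and in Theorem \ref{ftggt}. The only point requiring a small amount of care is the translation between the topological notion of properness of the action (the one supplied by Proposition \ref{MainIsom(X)}) and metric properness in the sense of Definition \ref{actions_mp_lb_cob}, and this is precisely where properness of the metric space $X$ intervenes, by ensuring that closed balls $B^x_X(R)$ are compact so that $S_{x,R}$ sits inside the relatively compact set furnished by topological properness.
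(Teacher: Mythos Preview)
Your proof is correct and follows essentially the same approach as the paper: verify each of the four conditions for a geometric action using Proposition~\ref{MainIsom(X)} (continuity, topological properness, local compactness of $\Isom(X)$) and the properness of $X$, then invoke Theorem~\ref{ftggt} and Proposition~\ref{metric_char_cg} for the compact-generation equivalence. The paper's proof is more telegraphic, delegating the translation from topological to metric properness and the local-boundedness check to Remark~\ref{firstexamplesgeometricactions}(5) and (3) respectively, whereas you spell these out directly; the content is the same.
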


\begin{proof}
For every proper metric space, the action of $\Isom(X)$ on $X$
is continuous, metrically proper 
(Proposition \ref{MainIsom(X)} and Remark \ref{firstexamplesgeometricactions}(6)),
and locally bounded (Remark \ref{firstexamplesgeometricactions}(4)).
Hence, when the action of $\Isom(X)$ on $X$ is cocompact as assumed here,
it is geometric.
Therefore, the proposition follows 
from Theorem \ref{ftggt} and Proposition \ref{metric_char_cg}.
\end{proof}

\begin{exe}[automorphism groups of connected graphs and polyhedral complexes]
\label{Aut(X)=Isom(X,d)}
Let $X$ be a connected graph, 
$X^0$ its vertex set,
and $d_1$ the combinatorial metric on $X$ 
(Example \ref{metricrealizationgraph}).
\index{Graph} 
\index{Combinatorial! metric on a connected graph}
Assume for simplicity that $X$ has neither loops nor multiple edges;
an automorphism of $X$ is then a permutation $\alpha$ of $X^0$ such that,
for $x,y \in X^0$, there is an edge connecting $\alpha(x)$ and $\alpha(y)$
if and only if there is one connecting $x$ and $y$.
\par

The automorphism group $\operatorname{Aut}(X)$ of the connected graph $X$ coincides 
\index{Automorphism group! of a graph|textbf}
with $\Isom(X^0, d_1)$.
\par

For a subset $Y$ of $X^0$, set 
$V_Y = \{ g \in \operatorname{Aut}(X) \mid gy = y \hskip.2cm \forall y \in Y \}$.
Let $\mathcal F$ be the set of finite subsets of $X$,
and set $\mathcal N = (V_Y)_{Y \in \mathcal F}$;
let $\mathcal B$ be the set of bounded subsets of $X$,
and set $\mathcal N' = (V_Y)_{Y \in \mathcal B}$.
Then $\mathcal N$ [respectively $\mathcal N'$] 
is a basis of neighbourhoods of the identity 
for a group topology $\mathcal T$ [respectively $\mathcal T'$]
on $\Isom(X^0, d_1)$, i.e., on $\operatorname{Aut}(X)$.
\par

Suppose now that the graph $X$ is locally finite.
Then the topologies $\mathcal T$ and $\mathcal T'$ coincide with each other,
and also with the topology of Proposition \ref{MainIsom(X)}, 
for which $\operatorname{Aut}(X)$ is a second-countable LC-group
(this is in \cite[2.5]{Tits--70}, for $X$ a tree).
\par

Suppose moreover that the action of $\operatorname{Aut}(X)$ on $X$ is cobounded.
Then this action is geometric (see Example \ref{examplesgeometricactions}(3)), 
and it follows that 
the group $\operatorname{Aut}(X)$ is compactly generated (Corollary \ref{ahahah}).
In particular, if $T_k$ denotes a regular tree of some valency $k \ge 0$,
then $\operatorname{Aut}(T_k)$ is compactly generated.

\vskip.2cm

More generally, the group of automorphisms of a connected locally finite polyhedral complex
is an LC-group for its natural topology (say for the compact-open topology),
in which the stabilizer of each cell is a compact open subgroup.
\index{Automorphism group! of a polyhedral complex}
A discussion on these groups has appeared in \cite{FaHT--11}.
\end{exe}

\begin{exe}[isometry groups of countable groups]
\label{finitudeIsom(G,d)}
Let $G$ be a countable discrete group, 
given together with a proper left-invariant metric $d$.
Then $\Isom(G, d)$ is compactly generated if and only if $G$ is finitely generated;
this follows from Theorem \ref{ftggt}, 
because it is easy to check that the natural action of $\Isom (G, d)$ on $G$ is geometric.
\par

We have $\Isom(G, d) = GK$, 
where $G$ is naturally embedded as the group of left translations, 
and $K$ is the stabilizer of $1$ in $\Isom(G, d)$. 
The group $\Isom(G, d)$ is discrete if and only if the profinite group $K$ is finite.
\index{Profinite group}
\par

Consider for example the free abelian group $\Z^n$ on a $\Z$-basis 
$B = \{e_1, \hdots, e_n\}$ and the word metric $d_B$.
\index{Free abelian group}
Then $\Isom(\Z^n, d_B)$ is finitely generated;
indeed, it is the semi-direct product $\Z^n \rtimes F$,
where $F$ is the finite symmetry group of the polytope in $\R^n$
with set of vertices $\{\pm e_1, \hdots, \pm e_n \}$.
On the contrary, let $F$ be a free group on a finite basis $S$
of at least two elements, with the word metric $d_S$.
\index{Free group}
Then $\Isom(F, d_S)$ is a non-discrete compactly generated group;
indeed,  $\Isom(F, d_S)$ is the group written $\operatorname{Aut}(T_k)$
in Example \ref{extdLCgps}(5), for $k$ twice the rank of $F$.
\end{exe}

\begin{exe}[isometry groups of manifolds]
Let $X$ be a 
connected Riemannian manifold.
Its group of isometries $\Isom(X)$ is a Lie group,
possibly with infinitely many connected components.
Moreover, if $n$ denotes the dimension of $X$, then $\dim (\Isom (X)) \le \frac{1}{2}n(n+1)$.
See \cite{MySt--39}, as well as 
\cite[Chapter VI, Section 3]{KoNo--63}.
\par

For a generic Riemannian structure 
on a given connected closed smooth manifold $M$ of dimension at least $2$,
the corresponding isometry group is discrete, indeed trivial.
More precisely, on the space $\mathcal M$ of all smooth Riemannian structures on $M$,
there is a natural topology (that of uniform convergence of the Riemannian metric tensor and of all its derivatives)
such that the subspace of those $g \in \mathcal M$
for which the isometry group $\Isom (M,g)$ is $\{\operatorname{id}_M\}$
is open dense  \cite{Ebin--70}.
\par

Examples of Riemannian manifolds $(M, g)$ with $\Isom(M, g)$ non-discrete include
homogeneous spaces $G/K$, with $G$ a connected Lie group, $K$ a compact subgroup,
and $g$ appropriate,
as well as various bundles with homogeneous fibers.
When $(\widetilde M, \widetilde g)$ is the universal cover of a closed Riemannian manifold $(M, g)$,
the subgroup of $\Isom(\widetilde M, \widetilde g)$ of covering transformations is discrete,
and naturally isomorphic to the fundamental group of $M$.
Special attention has been given to the 
closed aspherical Riemannian manifolds $M$
such that $\Isom(\widetilde M, \widetilde g)$ is non-discrete \cite{FaWe--08}.
\end{exe}

Let us finally quote from \cite{MaSo--09} the following: 

\begin{thm}[Malicki and Solecki]
\label{MalickiSolecki}
Let $G$ be a second-countable LC-group.
There exists a second-countable proper metric space $(X, d)$ such that
$G$ is topologically isomorphic to $\Isom(X, d)$.
\end{thm}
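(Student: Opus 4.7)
The plan is to realize $G$ as the isometry group of a carefully augmented copy of $G$ itself. By Struble's Theorem \ref{Struble}, $G$ admits a left-invariant proper compatible metric $d_G$. The left-regular action $\lambda\colon G\to \Isom(G,d_G)$, $h\mapsto(g\mapsto hg)$, is continuous and injective, and by Theorem \ref{ftggt} it is geometric; Proposition \ref{GinIsom(X)} then shows that $\lambda(G)$ is a closed subgroup of $\Isom(G,d_G)$ topologically isomorphic to $G$. The obstruction to concluding immediately is that $\Isom(G,d_G)$ may be strictly larger than $\lambda(G)$: when $d_G$ happens to be bi-invariant, right translations are also isometries, and for abelian $G$ one even has inversion $g\mapsto g^{-1}$ as a further isometry.

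To kill such spurious isometries I would build a second-countable proper metric space $(X,d)$ containing $(G,d_G)$ isometrically, by attaching at each point of $G$ a copy of a fixed pointed second-countable proper metric space $(Z,z_0)$ that is \emph{rigid} in the sense that $\Isom(Z)=\{\operatorname{id}_Z\}$. A concrete candidate for $Z$ is a half-line $[0,\infty)$ with basepoint $0$, whose isometry group is trivial; more elaborate rigid choices (with marks at a generic increasing sequence of distances from $z_0$) can be used if needed. The construction should be $G$-equivariant under the left-regular action, so that the $g$-th copy of $Z$ is the $L_g$-translate of the $1$-st copy; one natural route is to start from $X = G\times Z$ with a suitable twisted metric forcing the ``horizontal'' slice $G\times\{z_0\}$ to be an isometric copy of $(G,d_G)$, each ``vertical'' fibre $\{g\}\times Z$ to be an isometric copy of $Z$, and the two factors to be sufficiently coupled that no non-translation isometry of $(G,d_G)$ extends. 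Second countability and properness of $X$ then follow from those of $G$ and $Z$.

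To close the argument I would verify $\Isom(X,d)=\tilde\lambda(G)$, where $\tilde\lambda$ extends $\lambda$ to $X$ by $h\cdot(g,z)=(hg,z)$. An arbitrary isometry $\phi$ of $X$ must preserve the metric locus of $G\times\{z_0\}$, distinguishable as the set of points lying at the base of a rigid copy of $Z$, must send each rigid fibre isometrically onto another, and by the triviality of $\Isom(Z)$ must act by the identity on each fibre. Hence $\phi$ is determined by a map $\bar\phi\in\Isom(G,d_G)$; the coupling in the definition of $d$ must be designed so that $\bar\phi$ is forced to be a left translation. The fact that $\tilde\lambda$ is a topological isomorphism onto its image then follows from Proposition \ref{GinIsom(X)} applied to the continuous proper action of the $\sigma$-compact group $G$ on the locally compact space $X$.

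The main obstacle is the tension between $G$-equivariance and rigidity. $G$-equivariance forces the decoration at $g$ to be the left translate of the decoration at $1$, so all decorations look identical as abstract pointed spaces; yet the resulting coupled metric space must distinguish left translations from every other isometry of $(G,d_G)$. When $G$ is uncountable, one cannot attach isolated branches at every $g\in G$ without destroying second countability or properness, so the rigidifying structure must vary continuously with $g$. The delicate point will be designing the coupling between $(G,d_G)$ and $Z$ so that this continuous decoration nonetheless forces any isometry of $(G,d_G)$ that extends to $X$ to be a left translation.
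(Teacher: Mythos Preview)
The paper does not prove this theorem: it is simply quoted from \cite{MaSo--09} with no argument given, so there is no ``paper's own proof'' to compare your proposal against.

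As for your sketch itself: the overall strategy---start from a Struble metric on $G$, embed $G$ by left translations into $\Isom(G,d_G)$, then decorate to kill the residual isometries---is the natural one, and you are right to identify the last step as the crux. But what you have written is a plan, not a proof, and you candidly say so. The specific suggestion of attaching a rigid half-line at every point of $G$ in a $G$-equivariant way cannot work on its own: if the decoration at $g$ is the left translate of the decoration at $1$, then \emph{any} isometry $\phi$ of $(G,d_G)$ (not just a left translation) permutes the attachment points and carries each fibre isometrically to another, so $\phi$ extends to $X$ and nothing has been killed. Your final paragraph recognizes exactly this tension, but does not resolve it.

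What is missing is a mechanism that breaks the symmetry of $\Isom(G,d_G)$ down to $\lambda(G)$ while remaining $G$-equivariant and preserving properness and second countability. The construction in \cite{MaSo--09} handles this, but it is genuinely nontrivial; an ad hoc coupling of $G$ with a single rigid $Z$ is not enough.
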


\section{Lattices in LC-groups}
\label{sectionlattices}

\begin{defn}
\label{deflattice}
In an LC-group $G$, a \textbf{lattice} 
\index{Lattice! in an LC-group|textbf}   
is a discrete subgroup $\Gamma$ such that
$G/\Gamma$ has a $G$-invariant probability measure on Borel subsets,
and a \textbf{uniform lattice} \index{Uniform lattice|textbf}
is a cocompact discrete subgroup.
\par
More generally, a closed subgroup $H$ of $G$ 
has \textbf{finite covolume} if there exists a $G$-invariant probability measure 
defined on Borel subsets of the quotient space $G/H$.
\index{Subgroup! finite covolume}
\index{Subgroup! lattice}
\index{Subgroup! uniform lattice}
\end{defn}

\begin{rem}
\label{remdeflattice}
Metric lattices in pseudo-metric spaces (Definition \ref{defcbsupspace})
and lattices in LC-groups are two notions that should not be confused.
\index{Metric lattice}    \index{Lattice! metric lattice}
\end{rem}

\begin{prop}
\label{propdeflattice}
Let $G$ be a topological group and $\Gamma$ a cocompact discrete subgroup.
\begin{enumerate}[noitemsep,label=(\arabic*)]
\item\label{1DEpropdeflattice}
$G$ is locally compact and $\Gamma$ is a lattice.
\item\label{2DEpropdeflattice}
$G$ is compactly generated if and only if $\Gamma$ is finitely generated.
\item\label{3DEpropdeflattice}
Suppose that the conditions of \ref{2DEpropdeflattice} hold. 
Let $d_G$ be a geodesically adapted metric on $G$ 
and $d_\Gamma$ a word metric on $\Gamma$, with respect to some finite generating set.
Then the inclusion 
$(\Gamma,d_\Gamma) \lhook\joinrel\relbar\joinrel\rightarrow (G,d_G)$ 
is a quasi-isometry.
\end{enumerate}
\end{prop}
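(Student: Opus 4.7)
The plan is to establish (1) directly from the discreteness and cocompactness hypotheses, and then reduce (2) and (3) to the already-proved Proposition~\ref{sigmac+compactgofcocompact} together with the uniqueness statements for adapted pseudo-metrics.

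For (1), the key point is local compactness of $G$. Since $\Gamma$ is discrete, there is an open neighbourhood $V \ni 1$ with $V \cap \Gamma = \{1\}$; choosing a symmetric open $W$ with $W^2 \subset V$ shows on the one hand that $\Gamma$ is closed in $G$ (so that $G/\Gamma$ is Hausdorff), and on the other hand that the canonical projection $p \colon G \twoheadrightarrow G/\Gamma$ is injective on $W$, hence restricts to a homeomorphism of $W$ onto the open set $p(W) \subset G/\Gamma$. Because $\Gamma$ is cocompact, $G/\Gamma$ is compact Hausdorff, hence locally compact, so a compact neighbourhood of $p(1)$ inside $p(W)$ lifts through $p|_W^{-1}$ to a compact neighbourhood of $1$ in $G$, proving that $G$ is locally compact. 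That $\Gamma$ is a lattice follows from the classical fact that for a cocompact discrete subgroup of an LC-group, the ambient group is unimodular: the modular function $\Delta_G$ is continuous and trivial on $\Gamma$, while $\Delta_G(G) = \Delta_G(K)$ for a compact lift $K$ of $G/\Gamma$ is a relatively compact subgroup of $\R_+^\times$, hence trivial. A left Haar measure on $G$ therefore descends to a finite $G$-invariant measure on $G/\Gamma$.

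Once (1) is in place, part (2) is immediate from Proposition~\ref{sigmac+compactgofcocompact}(2), since $G$ is now known to be an LC-group, $\Gamma$ is a cocompact closed subgroup, and ``compactly generated'' coincides with ``finitely generated'' for discrete groups. For (3), Proposition~\ref{sigmac+compactgofcocompact}(3) applied with the geodesically adapted metric $d_G$ and the closed subgroup $\Gamma$ yields that the inclusion $(\Gamma, d_G|_\Gamma) \hookrightarrow (G, d_G)$ is a quasi-isometry. In particular, $d_G|_\Gamma$ is adapted on $\Gamma$, and it is also large-scale geodesic since large-scale geodesicity is a quasi-isometry invariant (Proposition~\ref{invariance_cg_lsg}(5)), hence $d_G|_\Gamma$ is geodesically adapted. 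The word metric $d_\Gamma$ is geodesically adapted on $\Gamma$ by Proposition~\ref{geodad+wordmetric}, so Corollary~\ref{uniqueness _uptoqi} applied to the compactly generated LC-group $\Gamma$ shows that the identity $(\Gamma, d_\Gamma) \to (\Gamma, d_G|_\Gamma)$ is a quasi-isometry. Composing with the quasi-isometric inclusion gives the claim.

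The main obstacle is the local compactness argument in (1): all the hereditarity results established in the preceding sections presuppose local compactness, so the lifting argument through the local homeomorphism $p|_W$ must be carried out from scratch. The unimodularity step needed to pass from cocompactness to the lattice conclusion also requires a separate classical argument rather than an appeal to results earlier in the excerpt.
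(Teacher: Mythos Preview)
Your proof is correct and, for parts (2) and (3), follows the same route as the paper, which simply refers to Proposition~\ref{sigmac+compactgofcocompact}. In fact for (3) you are more careful than the paper: Proposition~\ref{sigmac+compactgofcocompact}(3) only gives a quasi-isometry for the \emph{restricted} metric $d_G|_\Gamma$, and the passage to the word metric $d_\Gamma$ via Corollary~\ref{uniqueness _uptoqi} is genuinely needed; the paper leaves this implicit.

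For (1), your direct lifting argument through the local homeomorphism $p|_W$ is fine, but your diagnosis of the obstacle is inaccurate. The paper does \emph{not} argue from scratch: it invokes Proposition~\ref{stababcd}\ref{2DEstababcd} for property~(a), namely Gleason's theorem that if a closed subgroup $H$ and the quotient $G/H$ are both locally compact then so is $G$. That result is stated there for arbitrary topological groups, with no local compactness hypothesis on $G$, so the hereditarity machinery you need is already in place. The one preliminary step is checking that $\Gamma$ is closed (so that $G/\Gamma$ is Hausdorff and Proposition~\ref{stababcd} applies); you do this, while the paper leaves it implicit. For the lattice conclusion, the paper cites the existence of a semi-invariant measure on $G/\Gamma$ (Raghunathan) and observes that finiteness of the total mass forces the modular character to be trivial, rather than arguing unimodularity of $G$ via the compact image of $\Delta_G$ as you do; both arguments are standard and essentially equivalent.
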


\begin{proof}
\ref{1DEpropdeflattice}
That $G$ is a locally compact group is a particular case of 
Claim \ref{2DEstababcd} for \ref{cDEstababcd} in Proposition \ref{stababcd}.
That $\Gamma$ is a lattice is a standard fact
in the theory of measures on LC-groups;
see e.g.\ \cite[Chapter 1]{Ragh--72}.
Recall that, for every discrete subgroup $\Gamma$ of $G$,
there exists a \emph{semi-invariant} Borel measure $\mu$ on $G/\Gamma$,
i.e., a measure for which there exists 
a continuous homomorphism $\chi : G \longrightarrow \R_+^\times$
such that $\mu(gB) = \chi(g) \mu(B)$ for all $g \in G$ and Borel subset $B$ in $G/\Gamma$.
Moreover, if $\mu(G/\Gamma) < \infty$, then $\chi(g) = 1$ for all $g \in G$.
\par

\ref{2DEpropdeflattice} \& \ref{3DEpropdeflattice}
This is contained in Proposition \ref{sigmac+compactgofcocompact}. 
\end{proof}

\begin{rem}
\label{remfglattice}
In complement to \ref{2DEpropdeflattice} of the previous proposition,
let us mention that, in connected Lie groups, \emph{all lattices} are finitely generated,
indeed finitely presented.
For uniform lattices, this is rather straightforward:
it follows from the above proposition for finite generation
and Corollary \ref{hereditaritycp} for finite presentation.
For non-uniform lattices, there is no short proof available;
we refer to the discussion in \cite[Item V.20]{Harp--00}.
\par

The general situation is more intricate. 
For instance, a non-cocompact lattice in a compactly generated LC-group 
need not be finitely generated. 
Consider for example an integer $n \ge 2$
and the group $\SL_n(\mathbf{F}_q [t])$,
isomorphic to the non-uniform lattice $\SL_n(\mathbf{F}_q[t^{-1}])$ 
in the compactly generated (indeed compactly presented)
LC-group $\SL_n(\mathbf{F}_q \lp t \rp)$.
If $n=2$, then $\SL_2(\mathbf{F}_q[t])$ is not finitely generated;
see Exercice 3 of \S~II.1.6 in \cite[Page 212]{Serr--77}.
\par

If $n=3$, then $\SL_n(\mathbf{F}_q[t])$ is finitely generated and not finitely presented
\cite{Behr--79}.
If $n \ge 4$, then $\SL_n(\mathbf{F}_q[t])$ is finitely presented
\cite[Page 184]{ReSo--76}.

\end{rem}

\begin{prop}
\label{prop2deflattice}
Let $G$ be an LC-group which admits a lattice.
\begin{enumerate}[noitemsep,label=(\arabic*)]
\item\label{1DEprop2deflattice}
$G$ is unimodular.
\item\label{2DEprop2deflattice}
Assume moreover that $G$ is a connected Lie group.
Then the group of inner automorphisms of $G$
is closed in the group of all automorphisms of $G$ (see \ref{exampleAut(G)}).
\end{enumerate}
\end{prop}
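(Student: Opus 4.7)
For (1), I would argue via fundamental domains. Let $m$ be a left Haar measure on $G$ and pick a Borel left fundamental domain $E$ for $\Gamma$, so $G = \bigsqcup_{\gamma \in \Gamma} \gamma E$. The hypothesis that $\Gamma$ has finite covolume means $0 < m(E) < \infty$. For any $g \in G$, the set $Eg$ is again a Borel left fundamental domain, since $\bigsqcup_\gamma \gamma(Eg) = \bigl(\bigsqcup_\gamma \gamma E\bigr) g = G$ with disjointness preserved. A short reindexing argument using left-invariance of $m$ (write $E_1 = \bigsqcup_\gamma (E_1 \cap \gamma E_2)$ and use $m(E_1 \cap \gamma E_2) = m(\gamma^{-1} E_1 \cap E_2)$) shows that the measure of any Borel left fundamental domain is independent of the choice, so $m(Eg) = m(E)$. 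Combined with the defining property $m(Eg) = \Delta_G(g)\, m(E)$ of the modular function $\Delta_G$ and the finiteness and non-vanishing of $m(E)$, this forces $\Delta_G(g) = 1$ for every $g \in G$.

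For (2), recall from Example \ref{exampleAut(G)} that $\operatorname{Aut}(G)$ is itself a Lie group in the Braconnier topology when $G$ is a connected Lie group. The map $\iota : G \to \operatorname{Aut}(G)$, $g \mapsto \operatorname{Int}(g)$, is a continuous homomorphism with kernel $Z(G)$ and image $\operatorname{Int}(G)$, inducing a continuous injective Lie group homomorphism $G/Z(G) \hookrightarrow \operatorname{Aut}(G)$; via the adjoint representation this is identified with $\operatorname{Ad} : G/Z(G) \hookrightarrow \operatorname{Aut}(\mathfrak{g})$. To show $\operatorname{Int}(G)$ is closed, it suffices to show $\iota$ is proper modulo $Z(G)$: whenever $\operatorname{Int}(g_n) \to \alpha$ in $\operatorname{Aut}(G)$, some subsequence of $\{g_n Z(G)\}$ converges in $G/Z(G)$. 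The lattice enters here. The conjugates $g_n \Gamma g_n^{-1}$ form a sequence of lattices in $G$ of common covolume (unimodularity from (1) ensures the covolume is well-defined and invariant under inner automorphisms), and the convergence of $\operatorname{Int}(g_n)$ forces $g_n \Gamma g_n^{-1} \to \alpha(\Gamma)$ in the Chabauty space of closed subgroups of $G$, the limit being again a lattice. A Mahler-type compactness together with a rigidity property of the conjugation orbit of a lattice in a connected Lie group (Mostow/Auslander) then lets one adjust $g_n$ by central elements to extract a convergent subsequence in $G$, whose limit $g$ realizes $\alpha = \operatorname{Int}(g)$.

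The main obstacle will be the rigidity step in (2). Without the lattice, $\operatorname{Int}(G)$ can fail to be closed in $\operatorname{Aut}(G)$ for a connected Lie group, as illustrated by the example $(\C \times \C) \rtimes \R$ from \cite{Helg--62} recalled in Example \ref{exampleAut(G)}. The proof must therefore genuinely use the structural constraints a lattice imposes on $G$: via the Mostow/Auslander machinery one decomposes along the radical and the semisimple Levi complement, handling the solvable part through the discrete cocompact projection of $\Gamma$ (combined with unimodularity) and the semisimple part via Borel density, which makes the image of $\Gamma$ Zariski dense in the adjoint group of the Levi factor. Assembling these pieces to produce the required subsequential convergence of $\{g_n Z(G)\}$ is the core difficulty.
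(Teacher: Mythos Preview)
Your argument for (1) is correct and is the standard one; the paper itself does not give a proof but simply refers to \cite[Remark 1.9]{Ragh--72}, which is essentially the fundamental-domain computation you wrote out. One minor point: the paper's Definition~\ref{deflattice} phrases ``lattice'' as $G/\Gamma$ carrying a $G$-invariant probability measure, so strictly speaking you should connect this to $0 < m(E) < \infty$ for a Borel fundamental domain $E$; this is routine.

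For (2), the paper again gives no proof and just cites \cite[Theorem 2]{GaGo--66}. Your sketch correctly identifies that the lattice is essential (cf.\ the $(\C\times\C)\rtimes\R$ example) and that Borel density and the radical/Levi decomposition are the relevant structural tools. However, the strategy you propose --- extract a convergent subsequence of $g_n Z(G)$ from Chabauty convergence $g_n\Gamma g_n^{-1}\to\alpha(\Gamma)$ via a Mahler-type compactness --- is not how Garland--Goto proceed, and the step you flag as ``the core difficulty'' is a genuine gap: properness of the orbit map $gZ(G)\mapsto g\Gamma g^{-1}$ into the Chabauty space is essentially equivalent to what you want to prove, so the argument is circular as it stands. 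The Garland--Goto proof is instead algebraic: one shows that $\operatorname{Ad}(G)$ coincides with the identity component of the \emph{Zariski} closure of $\operatorname{Ad}(\Gamma)$ in $\GL(\mathfrak g)$ (Borel density for the semisimple quotient, structure of lattices in solvable groups for the radical), whence $\operatorname{Ad}(G)$ is Zariski-closed and a fortiori closed in $\operatorname{Aut}(\mathfrak g)\cong\operatorname{Aut}(G)$. So your ingredients are right, but the assembly is different and your compactness step does not go through as written.
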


\begin{proof}[References for a proof]
For \ref{1DEprop2deflattice}, 
see for example \cite[Remark 1.9]{Ragh--72}.
For \ref{2DEprop2deflattice}, see \cite[Theorem 2]{GaGo--66}.
\end{proof}

\begin{rem}
\label{rem2deflattice}
Conditions \ref{1DEprop2deflattice} and \ref{2DEprop2deflattice}
of the previous proposition are not sufficient
for $G$ to have a lattice.
\par

Indeed,
there is in the Appendix of \cite{GaGo--66} an example of
a four-dimensional solvable connected Lie group
of type $H = \R^3 \rtimes \R$ that has the following three properties:
$H$ is unimodular, 
the group of inner automorphisms of $H$ is closed 
in $\operatorname{Aut}(H)$,
and $H$ does not have any lattice.
\end{rem}

\begin{exe}
\label{firstexampleslattices}
(1)
Here are some examples among the easiest to check:
$\Z^n$ is a uniform lattice in $\R^n$ for every $n \ge 1$;
in particular, the additive group of the Gaussian integers $\Z[i]$ 
is a uniform lattice in $\C$.
The infinite cyclic group $2^{\Z}$ is a uniform lattice in $\C^\times$.
\index{$w$@$w^{\Z}$, infinite cyclic group generated by $w$}
\par

Let $p$ be a prime.
The additive group $\Q_p$ has no lattice\footnote{The 
meaning of ``lattice'' here should not be confused
with different notions, used in different contexts.
For example, 
in a finite-dimensional vector space $V$ over $\Q_p$, 
the $\Z_p$-span of a basis of $V$ is also called a lattice.};
\index{$q$@$\Q_p$, field of $p$-adic numbers}
indeed, being locally elliptic and torsion-free, 
$\Q_p$ has no discrete subgroup other than $\{0\}$.
The multiplicative group $\Q_p^\times$ has cocompact lattices,
for example $p^{\Z}$.
Let $q$ be a prime power.
The ring of polynomials $\mathbf F_q [t^{-1} ]$ is a lattice
in the local field $\mathbf F_q \lp t \rp)$ of Laurent series with coefficients in $\mathbf F_q$.
\index{$fq$@$\mathbf F_q [t]$, $\mathbf F_q [[t]]$}

\vskip.2cm

(2)
A compactly generated LCA-group $A$
has a cocompact lattice that is a finitely generated free abelian group.
More precisely,
if $A = \R^\ell \times \Z^m \times K$ as in Example \ref{LCA}\ref{2DELCA},
then $\Z^{\ell + m}$ is a cocompact lattice in $A$.

\vskip.2cm

(3) 
In a nilpotent LC-group, finite covolume closed subgroups are necessarily cocompact.
In particular, lattices are necessarily uniform \cite[Lemma 3.3]{BeQu--14}.
\par

Lattices in solvable \emph{Lie} groups with countably many connected components
are necessarily uniform.
This is a result of Mostow, from 1957
\cite[Theorem 3.1]{Ragh--72}.
\index{Solvable group}

\vskip.2cm

(4)
A simply connected nilpotent real Lie group $G$ has uniform lattices if and only if
its Lie algebra has a basis 
with respect to which the constants of structure are in $\Q$.
This is a result of Malcev, from 1951
\cite[Theorem 2.12]{Ragh--72}.
\index{Nilpotent! group}
\par

For example, the discrete Heisenberg group
\index{Heisenberg group}
$\begin{pmatrix}
1 & \Z & \Z 
\\
0 &1 &\Z
\\
0 & 0 & 1 
\end{pmatrix}$
is a uniform lattice in the real Heisenberg group
$\begin{pmatrix}
1 & \R & \R 
\\
0 &1 &\R
\\
0 & 0 & 1 
\end{pmatrix}$.

\vskip.2cm

(5)
An LC-group $G$ is \textbf{approximated by discrete subgroups}
\index{Approximation by discrete subgoups|textbf}
if, in the space of closed subgroups of $G$, 
the set of discrete subgroups is dense for the Chabauty topology,
i.e., if, for every integer $k \ge 1$ and for every $k$-uple 
$(U_1, \hdots, U_k)$ of non-empty open subsets of $G$,
there exists a discrete subgroup $\Gamma$ of $G$
such that $\Gamma \cap U_i \ne \emptyset$ for all $i \in \{1, \hdots, k\}$.
In the particular case of a second countable LC-group $G$, 
this condition is equivalent to:
there exists a sequence $(\Gamma_n)_{n \ge 1}$ of discrete subgroups of $G$
such that, for every non-empty open subset $U$ of $G$,
there exists $n_0 \ge 1$ such that $U \cap \Gamma_n \ne \emptyset$ for all $n \ge n_0$.
\par

For a connected Lie group $G$, Kuranishi \cite{Kura--51} has shown:
(i) if $G$ is approximated by discrete subgroups, then $G$ is nilpotent;
(ii) if $G$ is nilpotent and simply connected, 
then $G$ is approximated by discrete subgroups if and only if $G$ has uniform lattices.
Moreover, there is a necessary and sufficient condition 
for a nilpotent connected Lie group
(not necessarily simply connected) to be approximated by discrete subgroups,
for which we refer to the original article by Kuranishi.

\vskip.2cm

(6) 
For every integer $n \ge 2$, the group $\SL_n(\Z)$ 
\index{Special linear group $\SL$! $\SL_n(\Z)$, $\GL_n(\Z)$} 
\index{General linear group $\GL$! $\GL_n(\R)$, $\SL_n(\R)$}
is a non-uniform lattice in $\SL_n(\R)$.
The result is due to Minkowski \cite{Mink--91};
see \cite[th\'eor\`eme 1.4 and lemme 1.11]{Bore--69}.

\vskip.2cm  

(7)
There are groups with non-uniform lattices and without uniform lattices.
For example, consider an integer $n \ge 2$
and the natural semi-direct product $G = \R^n \rtimes \SL_n(\R)$.
On the one hand, $\Z^n \rtimes \SL_n(\Z)$
is a non-uniform lattice in $G$.
On the other hand, $G$ does not have any uniform lattice, as we check now.
\index{Semidirect product! $\R^n \rtimes \SL_n(\R)$}
\par

Let $\Gamma$ be a lattice in $G$.
Let $\pi : G \longrightarrow \SL_n(\R)$
denotes the canonical projection on the quotient of $G$ by its solvable radical $\R^n$.
Then $\pi(\Gamma)$ is a lattice in $\SL_n(\R)$,
by a theorem of H.C.\ Wang, see \cite[Number 3.4]{Wang--63},
or \cite[Lemma 6.4]{BeQu--14}.
If $\Gamma$ was uniform, 
then $\Gamma \cap \R^n$ would be a lattice in $\R^n$
by Theorem 1 of \cite{Ausl--63};
upon conjugating it in $G$, 
one could therefore assume that $\Gamma \cap \R^n = \Z^n$;
since $\pi(\Gamma)$ would have to normalize $\Z^n$,
it would be a finite index subgroup of $\GL_n(\Z)$,
and this is impossible if $\Gamma$ is uniform.
\par

Other LC-groups with non-uniform lattices and without uniform lattices
are alluded to in Example \ref{exlattcesinlocal}(1).

\vskip.2cm

(8) 
In every dimension $n \ge 3$, there are uncountably many
pairwise non-isomorphic simply connected real Lie groups,
but only countably many of them have lattices \cite{Wink--97}.

\vskip.2cm

(9)
There exist solvable LC-groups with non-uniform lattices.
The following example is due to Bader, Caprace, Gelander, and Mozes;
see \cite[Example 3.5]{BeQu--14} and \cite[Example 4.3]{Gela--14}.

For a prime $p$, denote by $\mathbf F_p$ the additive group of the field of order $p$
and by $\mathbf F_p^\times$ the multiplicative group of this field,
which is cyclic of order $p-1$.
Let $S$ be an infinite set of primes such that $\sum_{p \in S} p^{-1} < \infty$.
Consider the countable abelian group $A = \bigoplus_{p \in S}  \mathbf F_p$, 
the compact group $K = \prod_{p \in S}  \mathbf F_p^\times$,
which is naturally a group of automorphisms of $A$,
and the metabelian LC-group $G = A \rtimes K$,
in which $A$ is discrete and $K$ compact open.

For $p \in S$, choose a generator $s_p$ 
of the multiplicative group $\mathbf F_p^\times$.
Let $H_p$ denote the cyclic subgroup $\{ (1-s_p^j,s_p^j) \}_{j=0, 1, \hdots,p-2}$ 
of $\mathbf F_p \rtimes \mathbf F_p^\times$,  generated by $(1-s_p, s_p)$.
Let $a_p \in A$ be the element with $p^{\text{th}}$ coordinate $1-s_p$ and other coordinates $0$,
and let $k_p \in K$ be the element with $p^{\text{th}}$ coordinate $s_p$ and other coordinates $1$.
Set $h_p = (a_p, k_p) \in A \rtimes K = G$;
let $H$ be the subgroup of $G$ generated by $\{h_p\}_{p \in S}$.
Then $H$ is a non-uniform lattice in $G$, as we are going to show.

Note first that $H$ is a discrete subgroup of $G$, 
because the intersection of $H$ with the compact open subgroup $K$ is trivial.

To show that $H$ is not cocompact in $G$, consider the set $\mathcal I$
of all finite subsets of $S$.
For $I \in \mathcal I$, 
consider the subgroup $A_I = \bigoplus_{p \in I} \mathbf F_p$ of $A$
and the subgroup $G_I = A_I \rtimes K$ of $G$.
Then $A_I$ is finite, of order $\prod_{p \in I} p$,
and $G_I$ is compact open in $G$, because it is a finite union of $K$-cosets in $G$.
We have $G_I \subset G_J$ whenever $I \subset J$,
and the family $(G_I)_{I \in \mathcal I}$ is a covering of $G$.
It follows that any compact subset of $G$ is contained in some $G_I$
(in particular, with the terminology of Definition \ref{deflocallyelliptic}, $G$ is locally elliptic).
Let $\pi$ denote the canonical projection of $G$ onto the homogeneous space $G/H$.
If $G/H$ were compact, there would exist a compact subset $L$ of $G$
such that $\pi(L) = G/H$ (Lemma  \ref{KimagedeK}),
hence some $I \in \mathcal I$ such that $\pi(G_I) = G/H$;
since this is not true, $G/H$ is not compact.
\index{Locally elliptic LC-group}

Denote by $\mu$ the Haar measure on $G$ normalized by $\mu(K) = 1$.
We claim that $\mu$ induces a measure $\mu_{G/H}$ on $G/H$ 
which is $G$-invariant and finite.
For $I \in \mathcal I$, 
the restriction $\mu_I$ of $\mu$ to $G_I$ is the Haar measure on $G_I$
for which $K$ is of measure $1$, 
and $\mu_I(G_i) = \vert A_I \vert = \prod_{p \in I} p$.
Consider the subgrooup 
$H_I = H \cap G_I = \langle h_p \mid p \in I \rangle$ of $G_I$.
Since $H_I$ is finite of order $\prod_{p \in I} (p-1)$,
the measure $\mu_I$ induces on $G_I/H_I$ a finite $G_I$-invariant measure
of total mass $\prod_{p \in I} \frac{p}{p-1} = \prod_{p \in I} (1 + \frac{1}{p-1})$.
Since $G_I$ is open in $G$,
the space $G_I / H_I$ can be identifed with an open subspace of $G/H$.
For $I,J \in \mathcal I$ with $I \subset J$, we have
$G_I / H_I \subset G_J / H_J$,
and  $G/H = \bigcup_{I \in \mathcal I} G_I / H_I$.
We use only now the condition $\sum_{p \in S} p^{-1} < \infty$
to conclude that $\mu_{G/H}(G/H) = \prod_{p \in S} (1 + \frac{1}{p-1})$ is finite.
\end{exe}

\begin{rem}[lattices in discrete groups]
\label{remlatticediscretegps}
(1)
Inside a discrete group, a subgroup is a lattice if and only if it is of finite index.
These lattices are uniform.

\vskip.2cm

(2)
Let $\Delta$ be a finitely generated group, say generated by $n$ elements, 
and $\Gamma$ a finite index subgroup, say of index $j$.
Then $\Gamma$ is finitely generated;
this is a particular case of the non-trivial implication of Proposition 
\ref{sigmac+compactgofcocompact}\ref{2DEsigmac+compactgofcocompact}.
This can also be established by the so-called Reidemeister-Schreier method,
which provides for $\Gamma$ a set of $n(j-1)+1$ generators.
See \cite[Corollary 2.7.1 and Theorem 2.10]{MaKS--66},
and also \cite{MaSw--59}.
\end{rem}

\begin{exe}[lattices in semisimple and reductive Lie groups]
\label{exlattcesinssL+s}
(1)
Let $G$ be a semisimple real Lie group with a finite number of connected components.
Then $G$ contains lattices \cite{BoHa--62}.
More precisely, $G$ contains cocompact lattices (\cite{Bore--63}, 
announced in \cite{BoHa--61}).
It was previously known that,  if $G$ is non-compact, 
then it also admits non-cocompact lattices (usually obtained by taking integral points);
see \cite[Theorem 14.1]{Ragh--72}.
\par

If moreover $G_0$ has a finite centre, 
discrete groups quasi-isometric to lattices in $G$ 
and the classification of such lattices up to quasi-isometry
have received a lot of attention, as discussed in \cite{Farb--97}.
Relying on these results and on work of Kleiner-Leeb \cite{KlLe--09}, 
it is checked in \cite[Section 3]{Corn}
that a compactly generated LC-group is quasi-isometric to $G$
if and only if it admits a copci homomorphism (as defined in Remark \ref{remcopci})
into $\operatorname{Aut}(\mathfrak g)$, where $\mathfrak g$ is the Lie algebra of $G$.
\par

The archetypical example of a lattice in a simple Lie group
is $\SL_n(\Z)$ in $\SL_n(\R)$, already cited in \ref{firstexampleslattices}(6).

\vskip.2cm

(2)
The simple real Lie group $\PSL_2(\R)$ contains as uniform lattices
fundamental groups of closed surfaces of genus at least $2$
(and this in many ways, by the uniformization theory for Riemann surfaces).
Let $\Gamma$ be such a lattice,
and $\widetilde \Gamma$ its inverse image in $\SL_2(\R)$.
The reductive real Lie group $\GL_2^+(\R)$, 
where the ``$+$'' indicates matrices of positive determinants, 
or equivalently the identity component of $\GL_2(\R)$, 
has uniform lattices of the form $\widetilde \Gamma \times 2^{\Z}$,
because the map
\begin{equation*}
\left\{
\aligned
\SL_2(\R) \times \R_+^\times 
& \longrightarrow \hskip1cm \GL^+_2(\R)
\\
\left(
\begin{pmatrix}
a & b 
\\
c & d 
\end{pmatrix}
, t
\right)
& \longmapsto
\begin{pmatrix}
a & b 
\\
c & d 
\end{pmatrix}
\begin{pmatrix}
\sqrt t & 0
\\
0 & \sqrt t
\end{pmatrix}
\endaligned
\right.
\end{equation*}
is an isomorphism of Lie groups.
\index{Projective linear group PGL, and PSL! $\PSL_2(\R)$, $\PSL_2(\C)$}

\vskip.2cm

(3)
The group $\PSL_2(\C)$ can be seen as a simple complex Lie group,
\index{Special linear group $\SL$! $\SL_2(\C)$}
and also as a simple real Lie group by ``restriction of scalars''.
It contains as uniform lattices fundamental groups
of compact hyperbolic $3$-manifolds. 
The map
\begin{equation*}
\iota \, : \, 
\left\{
\aligned
\SL_2(\C) \times \C^\times 
& \relbar\joinrel\twoheadrightarrow \hskip1cm \GL_2(\C)
\\
\left(
\begin{pmatrix}
a & b 
\\
c & d 
\end{pmatrix}
, t
\right)
& \longmapsto
\begin{pmatrix}
a & b 
\\
c & d 
\end{pmatrix}
\begin{pmatrix}
t & 0
\\
0 & t
\end{pmatrix}
\endaligned
\right.
\end{equation*}
is a surjective homomorphism of Lie groups with kernel of order $2$.
Let $\Gamma \subset \SL_2(\C)$ be a uniform lattice,
say the inverse image of a uniform lattice in $\PSL_2(\C)$.
Then $\iota\left( \Gamma \times 2^{\Z} \right)$
is a uniform lattice in $\GL_2(\C)$.
\end{exe}

\begin{exe}[lattices in algebraic groups over other fields]
\label{exlattcesinlocal}
(1)
Let $\mathbf G$ be an algebraic group defined over a 
local field $\K$ of characteristic $0$;
if $G = \mathbf G (\K)$ has a lattice, 
then $\mathbf G$ is reductive and the lattice is uniform;
see \cite{Tama--65}, or \cite[Section II.1.5]{Serr--77}.
The following converse is a result of Borel and Harder:
every reductive group over a local field of characteristic zero
contains a uniform lattice \cite{BoHa--78}.
\index{Local field}
\index{Reductive group}
\par

In finite characteristic, some reductive groups have uniform lattices and others do not.
We refer to the discussion in 
\cite[Remark IX.1.6(viii) Page 295, and Pages 316--317]{Marg--91}.

\vskip.2cm

(2)
For the diagonal embedding, $\Z[1/p]$ is a cocompact lattice
in $\R \times \Q_p$, as we check now.
\index{$z$@$\Z[1/n]$}
\par

On the one hand, we have 
$\Z[1/p] \cap \Z_p = \Z \subset \Q_p$
and
$\Z \cap \mathopen]-1,1\mathclose[ = \{0\} \subset \R$.
Hence $\mathopen]-1,1\mathclose[ \times \Z_p$,
which is an open neighbourhood of $(0,0)$ in $\R \times \Q_p$,
intersects $\Z[1/p]$ in the singleton $\{(0,0)\}$.
This shows that $\Z[1/p]$ is a discrete subgroup of $\R \times \Q_p$.
\par

On the other hand, consider $(x,y) \in \R \times \Q_p$.
Write $y = [y] + \sum_{i=1}^n y_i p^{-i}$, 
with $[y] \in \Z_p$, $n \ge 0$,  and $y_i \in \{0, 1, \hdots, p-1\}$
for $i = 1, 2, \hdots, n$.
Set $z = k + \sum_{i=1}^n y_i p^{-i} \in \Z[1/p]$, 
with $k \in \Z$ such that $\vert x - z \vert \le 1/2$;
observe that $y - z \in \Z_p$.
Hence $\R \times \Q_p = \bigcup_{z \in \Z[1/p]} z + ([-1/2,1/2] \times \Z_p)$.
This shows that $\Z[1/p]$ is cocompact in $\R \times \Q_p$.
\par

The quotient $S_p := (\R \times \Q_p) / \Z[1/p]$ is an abelian compact group
known as the \textbf{$p$-adic solenoid}. 
\index{p-adic solenoid|textbf}
\index{Solenoid|textbf}
\index{Compact group! solenoid}
It can also be seen as an inverse limit $\varprojlim \R / p^n \Z$.
There is a short exact sequence
$\Z_p \lhook\joinrel\relbar\joinrel\rightarrow S_p \relbar\joinrel\twoheadrightarrow \R / \Z$,
and $S_p$ is a covering of a circle with $\Z_p$ fibers.
The compact group $S_p$ contains both a dense subgroup isomorphic to $\R$
(it follows that $S_p$ is connected)
and a dense subgroup isomorphic to $\Q_p$.
Note that $S_p$ is an LCA-group that is connected and not locally connected.
 \index{Locally connected group}
On $S_p$, see for example \cite[Appendix to Chapter 1]{Robe--00}. 
\par

Similarly, $\Q$ is a cocompact lattice in $\R \times \mathbf{A}_{\Q}$,
where $\mathbf{A}_{\Q}$ denotes the adeles of $\Q$;
see \cite[Chapter IV,  $\S$~2]{Weil--67}, and Example \ref{ExTopsub}(4)

\vskip.2cm

(3)
We describe other examples of lattices
in products of algebraic groups defined over different fields;
compare with the simple one given in \S~\ref{intro_discrete_LC}.
Consider an integer $m \ge 2$, and the prime divisors $p_1, \hdots, p_k$ of $m$.
The direct product $\mathbf A := \R \times \prod_{j=1}^k \Q_{p_j}$ 
is an abelian locally compact group, indeed a locally compact ring.
Let $\alpha_m$ be the automorphism of $\mathbf A$ 
that is multiplication by $m$ in each factor of the product.
Let $\mathbf A \rtimes_m \Z$ be the semidirect product, 
with respect to the action for which
the generator $1$ of $\Z$ acts on $\mathbf A$ by $\alpha_m$.
\index{Semidirect product! $\mathbf A \rtimes_m \Z$}
\index{Unimodular group}
\par

The solvable Baumslag-Solitar group 
$\operatorname{BS}(1,m) = \Z [1/m] \rtimes_m \Z$
is naturally a cocompact lattice in $\mathbf A \rtimes_m \Z$.
We come back to this example in \ref{abcdefg},
and allude to other Baumslag-Solitar groups in \ref{GJCV}.
\index{Baumslag-Solitar group} \index{Solvable group}
\index{Semidirect product! $\Z [1/m] \rtimes_m \Z$}

\vskip.2cm

(4)
The group
$\SL_n(\Z [1/p])$ is naturally a non-cocompact lattice in $\SL_n(\R) \times \SL_n(\Q_p)$.
\index{Special linear group $\SL$! $\SL_n(\Q_p)$}  
\index{General linear group $\GL$! $\GL_n(\R)$, $\SL_n(\R)$}
\index{Special linear group $\SL$! $\SL_n(\Z [1/p])$}
\end{exe}

\begin{exe}[tree lattices]
\label{exlatticestree}
Let $X$ be a locally finite tree;
\index{Graph} \index{Tree} \index{Locally finite! graph}
let $G = \operatorname{Aut}(X)$ denote its automorphism group,
\index{Automorphism group! of a graph}
with its natural separable locally compact topology,
as in Example  \ref{extdLCgps}(5) above.
Then $G$ contains uniform lattices if and only if
$G$ is unimodular and the graph $G \backslash X_{\textnormal{bary}}$ is finite.
 \index{Unimodular group}
(Here, $X_{\textnormal{bary}}$ denotes the barycentric subdivision of $X$;
the point is that $G$ naturally acts on $X_{\textnormal{bary}}$
with quotient a graph, whereas $G \backslash X$ need not be a graph
in the strict sense.) 
Suppose that $G$ contains uniform  lattices;
then any such lattice is virtually free 
\index{Free group}
\index{Free subgroup}
(i.e., has finitely generated free subgroups of finite index),
and two such lattices $\Gamma_1, \Gamma_2$ are commensurable
(i.e., there exists $g \in G$ such that $\Gamma_1 \cap g \Gamma_2 g^{-1}$
is of finite index in both $\Gamma_1$ and $\Gamma_2$).
See \cite{BaKu--90}.
\par

Among trees for which $G$ has uniform lattices,
there exist trees for which $G$ has non-uniform lattices
and trees for which $G$ has none \cite{BaLu--01}.
\end{exe}

\begin{rem}
\label{GJCV}
Every Baumslag-Solitar group $\operatorname{BS}(m,n)$, solvable or not, is a cocompact lattice
in a semi-direct product $(\overline B \times \R) \rtimes \Z$,
where $\overline B$ is an appropriate closed subgroup 
of the group of automorphisms of the regular tree of valency $m+n$.
This is a particular case of Proposition 5.4 in \cite{CoVa--15},
building up on \cite{GaJa--03}.
\end{rem}

\begin{rem}
\label{BCGM12}
There are compactly generated simple LC-group without any lattice \cite{BCGM--12}.
\index{Simple group} 
\end{rem}

\chapter[Coarse simple connectedness]
{Simple connectedness in the metric coarse category}
\label{chap_coarsely1conn}  

\section{Coarsely simply connected pseudo-metric 
\\
spaces}
\label{sectioncscpms}
Our first definition provides a coarse notion of simple connectedness,
as Definition \ref{defcoarselyconn} provides coarse notions of connectedness.
For a definition with less numerical constants, see \cite[Section 11]{HiPR--97}.

\begin{defn}
\label{c-homotopic}
Let $(X,d)$ be a pseudo-metric space.
Let $c > 0$ be a constant.
Recall that $c$-paths in $X$ have been defined (\ref{defcoarselyconn}),
and that a $c$-path is a $c'$-path for every $c' \ge c$.
\par

Two $c$-paths $\xi = (x_0, x_1, \hdots, x_m)$  and
$\eta = (y_0, y_1, \hdots, y_n)$ in $X$ are \textbf{$c$-elementarily homotopic}
\index{c-elementarily homotopic $c$-paths|textbf}
\index{Elementarily! homotopic $c$-paths|textbf}
if they have the same origin, $x_0 = y_0$, the same end, $x_m = y_n$,
and if one path can be obtained from the other by inserting one new point;
the last condition means more precisely that 
\begin{equation*}
\aligned
&\text{either}  \hskip.2cm n=m+1 \hskip.2cm \text{and} \hskip.2cm
(y_0, \hdots, y_n) = (x_0, \hdots, x_i, y_{i+1}, x_{i+1}, \hdots, x_m),
\\
&\text{or} \hskip.2cm n+1=m \hskip.2cm \text{and} \hskip.2cm
(x_0, \hdots, x_m) = (y_0, \hdots, y_{i}, x_{i+1}, y_{i+1}, \hdots, y_n),
\endaligned
\end{equation*}
for some $i$.
\par

Two $c$-paths $\xi, \eta$ are \textbf{$c$-homotopic}
if there exists a sequence of $c$-paths $(\xi_0 = \xi, \xi_1, \hdots , \xi_\ell = \eta)$
such that $\xi_{j-1}$ and $\xi_j$ 
are $c$-elementarily homotopic for $j = 1, \hdots, \ell$.
Note that, for $c' \ge c$, two $c$-paths are $c'$-homotopic if they are so as $c'$-paths.
\index{c-homotopic $c$-paths|textbf}
\index{Homotopic $c$-paths|textbf}
\par

Let $x_0$ be a point in $X$.
A \textbf{$c$-loop in $X$ at $x_0$} 
is a $c$-path that starts and ends at $x_0$.
\index{c-loop|textbf}
\end{defn}

Examples of $c'$-homotopic paths are provided by

\begin{prop}[$c$-near $c$-paths are $2c$-homotopic]
\label{nearbypathsare$c$-homotopic}
Let $(X,d)$ be a  pseudo-metric space, 
$c > 0$ a constant,
and $\xi = (x_0, \hdots, x_n)$, $\eta = (y_0, \hdots, y_n)$
two $c$-paths with the same number of steps
from $x_0=y_0$ to $x_n=y_n$ in $X$.
\par

If $d(x_i,y_i) \le c$ for all $i \in \{1, \hdots, n-1\}$,
then $\xi$ and $\eta$ are $2c$-homotopic.
\end{prop}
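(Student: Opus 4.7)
The plan is to construct an explicit sequence of $2c$-paths from $\xi$ to $\eta$, each obtained from the previous one by a single elementary $2c$-homotopy, replacing the vertices $x_i$ with $y_i$ one index at a time for $i = 1, 2, \ldots, n-1$.

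The key move is the following: given a $2c$-path of the form
\[
(x_0, y_1, \ldots, y_{i-1}, x_i, x_{i+1}, \ldots, x_n),
\]
I would first insert $y_i$ between $y_{i-1}$ and $x_i$, obtaining
\[
(x_0, y_1, \ldots, y_{i-1}, y_i, x_i, x_{i+1}, \ldots, x_n),
\]
and then delete $x_i$, obtaining
\[
(x_0, y_1, \ldots, y_{i-1}, y_i, x_{i+1}, \ldots, x_n).
\]
Each of these two moves is, by definition, a $2c$-elementary homotopy, provided every consecutive pair of points involved is within distance $2c$. So the whole proof reduces to checking three distance inequalities at each stage: $d(y_{i-1}, y_i) \le c \le 2c$ (which holds since $\eta$ is a $c$-path, using $y_0 = x_0$ for the base case $i=1$), $d(y_i, x_i) \le c \le 2c$ (given, with $y_n = x_n$ handling $i = n-1$ implicitly), and $d(y_i, x_{i+1}) \le d(y_i, x_i) + d(x_i, x_{i+1}) \le c + c = 2c$ by the triangle inequality.

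Iterating this move for $i = 1, 2, \ldots, n-1$ transforms $\xi = (x_0, x_1, \ldots, x_n)$ through the intermediate paths
\[
(x_0, y_1, x_2, \ldots, x_n),\ (x_0, y_1, y_2, x_3, \ldots, x_n),\ \ldots,
\]
and eventually reaches $(x_0, y_1, \ldots, y_{n-1}, x_n) = (y_0, y_1, \ldots, y_n) = \eta$ (using $x_0 = y_0$ and $x_n = y_n$). Since each passage is a composition of two $2c$-elementary homotopies, $\xi$ and $\eta$ are $2c$-homotopic.

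The argument is essentially bookkeeping, and the only real subtlety is handling the endpoints cleanly: at $i = 1$ we use $y_0 = x_0$ so that the inserted vertex $y_1$ sits next to $y_0 = x_0$ at distance $\le c$, and the construction terminates correctly at $i = n-1$ thanks to $y_n = x_n$. I expect no significant obstacle beyond writing the inequalities out carefully in the induction.
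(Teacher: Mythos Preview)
Your proof is correct and follows essentially the same approach as the paper: the paper writes out the explicit sequence $\xi_0 = \xi, \xi_1, \ldots, \xi_{2n-2} = \eta$ where each odd step inserts $y_i$ between $y_{i-1}$ and $x_i$ and each even step deletes $x_i$, exactly as you describe. Your explicit verification of the three distance inequalities is slightly more detailed than the paper's ``Inspection shows,'' but the argument is identical.
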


\begin{proof}
Inspection shows that each of the following 
is a $2c$-path of $n$ or $n+1$ steps in $X$
\begin{equation*}
\begin{array}{ccccc}
\xi &= 
&\xi_0 &= &(y_0, x_1, x_2, x_3, \hdots, x_n) 
\\
&&\xi_1 &= &(y_0, y_1, x_1, x_2, x_3, \hdots, x_n) 
\\
&&\xi_2 &= &(y_0, y_1, x_2, x_3, \hdots, x_n) 
\\
&&& \vdots &
\\
&&\xi_{2n-4} &= &(y_0, y_1, \hdots, y_{n-2}, x_{n-1}, x_n) 
\\
&&\xi_{2n-3} &= &(y_0, y_1, \hdots, y_{n-2}, y_{n-1}, x_{n-1}, x_n) 
\\
\eta &=
&\xi_{2n-2} &= &(y_0, y_1, \hdots, y_{n-2}, y_{n-1}, x_n) 
\end{array}
\end{equation*}
and that $\xi_{j-1}$ is $2c$-elementarily homotopic to $\xi_j$
for $j = 1, \hdots, 2n-2$.
\end{proof}

\begin{defn}
\label{defSC} 
Consider a pseudo-metric space $X$ with a point $x_0 \in X$.
For constants $c'' \ge c' > 0$, define the property
\begin{equation}
\text{any $c'$-loop in $X$ at $x_0$ is $c''$-homotopic to the trivial loop $(x_0)$.}
\tag{SC($c',c''$)}\label{SC($c',c''$)}
\end{equation}
\end{defn}

\begin{rem}
\label{SCindex0}
(1)
For constants $c,c',c''$ with $c'' \ge c' \ge c > 0$
and a $c$-coarsely connected space $X$,
observe that Property \textup{(SC($c',c''$))} holds for one choice of $x_0$ 
if and only if it holds for any other choice of a base point in $X$.
In the following definitions, the base point will not be mentioned at all.
\vskip.2cm

(2)
Consider moreover constants $C',C''$ with $c'' \ge C'' \ge C' \ge c'$.
Then Property \textup{(SC($C',C''$))} implies Property \textup{(SC($c',c''$))}.
\end{rem}

\begin{defn}
\label{defcoarsely1con}
For $c > 0$,
a non-empty pseudo-metric space $X$ is
\textbf{$c$-coarsely simply connected}
\index{Coarsely! simply connected pseudo-metric space|textbf}
\index{Simply connected|see {Coarsely simply connected}}
if $X$ is $c$-coarsely connected and, for all $c' \ge c$, there exists $c'' \ge c'$
such that $X$ has Property
\eqref{SC($c',c''$)}.
\par

The space $X$ is \textbf{coarsely simply connected}
if it is $c$-coarsely simply connected for some $c > 0$
(equivalently for all $c$ large enough).
\end{defn}

\begin{rem}
\label{remoncCSC}
(1)
Let $c,C$ be two constants with $C \ge c > 0$ 
and $X$ a $c$-coarsely connected non-empty pseudo-metric space.
Then $X$ is $c$-coarsely simply connected if and only if
$X$ is $C$-coarsely simply connected.

\vskip.2cm

(2)
Coarse connectedness is defined for pseudo-metric spaces,
and coarse simple connectedness for pseudo-metric spaces with base points.
Thus, an empty pseudo-metric space is coarsely connected,
but simple coarse connectedness does  not make sense for it.
Similarly, an empty topological space is  connected,
but simple connectedness does not make sense for it.
\end{rem}

The following straightforward proposition is the analogue here
of Proposition \ref{invariance_cg_lsg}(1)
for coarse connectedness.

\begin{prop}
\label{coarse1conninvbycoarseeq}
For pseudo-metric spaces, the property of being coarsely simply connected
is invariant by metric coarse equivalence.
\index{Property! of a pseudo-metric space invariant 
by metric coarse equivalence or by quasi-isometries}
\end{prop}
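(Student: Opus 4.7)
The plan is to fix a metric coarse equivalence consisting of maps $f \colon X \to Y$ and $g \colon Y \to X$, together with upper controls $\Phi_+$ for $f$ and $\Psi_+$ for $g$, and a constant $K$ such that $d_X(g(f(x)),x) \le K$ and $d_Y(f(g(y)),y) \le K$ for all $x \in X$, $y \in Y$; I assume $X$ is $c_X$-coarsely simply connected for some $c_X > 0$ and deduce the analogous property for $Y$. Coarse connectedness of $Y$ follows from Proposition \ref{invariance_cg_lsg}, so it suffices to produce, for each sufficiently large $c'$, a constant $c''$ such that every $c'$-loop in $Y$ is $c''$-homotopic to the trivial loop. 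Given such a loop $L = (y_0, y_1, \ldots, y_n = y_0)$, the first move is to push it to $X$: the sequence $g(L) := (g(y_0), \ldots, g(y_n))$ is a $\Psi_+(c')$-loop at $g(y_0)$, hence — viewing it as a path at scale $\max\{\Psi_+(c'), c_X\}$ — it is $c_X''$-homotopic to the trivial loop for some $c_X''$ depending only on $c'$. Applying $f$ term-by-term to each path of this homotopy preserves elementary homotopies and bounds step sizes by $\Phi_+(c_X'')$, producing a $\Phi_+(c_X'')$-homotopy in $Y$ from $f(g(L))$ to the trivial loop at $z := f(g(y_0))$.

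Next I would bridge $L$ and $f(g(L))$ inside $Y$. Since $d_Y(y_i, f(g(y_i))) \le K$ for each $i$ and $d_Y(y_0, z) \le K$, two elementary homotopies insert $z$ adjacent to $y_0$ at the two extremities of $L$, producing the loop $(y_0, z, y_1, \ldots, y_{n-1}, z, y_0)$; both insertions are valid once the homotopy constant exceeds $c' + K$. Its middle portion $\xi' := (z, y_1, \ldots, y_{n-1}, z)$ and the path $\eta' := (z, f(g(y_1)), \ldots, f(g(y_{n-1})), z)$ share endpoints, have the same number of steps, and corresponding internal points are within $K$; both are $c^\flat$-paths for $c^\flat := \max\{c' + K, \Phi_+(\Psi_+(c')), K\}$, so Proposition \ref{nearbypathsare$c$-homotopic} provides a $2c^\flat$-homotopy between them. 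Each elementary step inserts or deletes one point inside the middle, lifting verbatim to an elementary homotopy of the ambient path with prefix $(y_0)$ and suffix $(y_0)$ fixed, the endpoint transitions $y_0 \leftrightarrow z$ remaining valid since $2c^\flat \ge K$.

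For the final step, the $\Phi_+(c_X'')$-homotopy from $f(g(L))$ down to $(z)$ obtained in the first step extends, by keeping the prefix and suffix $(y_0)$ fixed, to a homotopy contracting $(y_0, z, f(g(y_1)), \ldots, f(g(y_{n-1})), z, y_0)$ to $(y_0, z, y_0)$; two terminal elementary homotopies then delete $z$ and collapse $(y_0, y_0)$ to $(y_0)$. Setting $c'' := \max\{2c^\flat, \Phi_+(c_X''), c' + K\}$ — a quantity depending only on $c'$, $K$, the controls $\Phi_+, \Psi_+$, and the coarse simple connectedness data of $X$ — completes the proof. Conceptually the argument is a transparent push-and-pull through the coarse equivalence; the main obstacle is bookkeeping. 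One must verify that a map with an upper control sends elementary homotopies to elementary homotopies (immediate from Definition \ref{c-homotopic}) and that an elementary homotopy of a subpath extends to an elementary homotopy of the enclosing path when the surrounding segments are held fixed (equally direct). No step is deep; the care lies in choosing the constants once and tracking them consistently.
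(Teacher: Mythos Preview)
Your proof is correct. The paper itself offers no proof at all for this proposition, merely labeling it ``straightforward'' and moving on; your argument fills in exactly the kind of push--pull reasoning the authors presumably had in mind, and your use of Proposition~\ref{nearbypathsare$c$-homotopic} to bridge $L$ and $f(g(L))$ is the natural device (compare the paper's proof of Proposition~\ref{coarseretractprop}, which handles the easier retract case by the same push-forward-of-homotopies idea but avoids the bridging step since a retraction fixes $Y$ pointwise).
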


\begin{exe}
\label{excoarse1conntrivial}
Let $(X,d)$ be a pseudo-metric space, with base point $x_0$.
Let $\xi = (x_0, \hdots, x_n)$ be a $c$-loop, with $x_n = x_0$,
for some $c > 0$.
Set 
\begin{equation*}
c' \, = \,  \max\{ c,  \hskip.1cm d(x_i,x_j), 0 \le i,j \le n \} .
\end{equation*}
Then $\xi$ and the trivial loop $(x_0)$
are $c'$-homotopic $c$-loops.
\par

It follows that a non-empty pseudo-metric space of finite diameter
is coarsely simply connected.

\end{exe}

\begin{exe}
\label{bunchofcircles}
(1)
Let $C_R = (Re^{2i\pi t})_{0 \le t \le 1}$ be the circle of radius $R > 0$ in the complex plane,
with the metric induced by the Euclidean metric of the plane;
thus the diameter of $C_R$ is $2R$.

\vskip.2cm

\emph{Claim. Consider a constant $c > 0$ and the $c$-loop 
\begin{equation*}
\xi \,  =  \, (R, Re^{2i \pi (1/m)}, Re^{2i \pi (2/m)}, \hdots, Re^{2i \pi ((m-1)/m) }, R) 
\end{equation*}
in $C_R$ at the point $R$,
with $m$ large enough so that $\vert Re^{2i\pi (1/m)} - R \vert < c$.
If $\xi$ is $c$-homotopic to the constant loop $(R)$,
then $c \ge \sqrt{3} R$.}

\vskip.2cm

To prove the claim,
we define as follows a rudimentary kind of rotation number $\rho(\cdot)$
for ``discrete loops'' in the circle $C_R$.
Define three disjoint half-open circular arcs 
\begin{equation*}
A_\alpha \, = \, 
\left[ R e^{2i\pi\frac{\alpha}{3}} , R e^{2i\pi\frac{\alpha+1}{3}} \right[ \subset C_R ,
\hskip.2cm \alpha \in \Z / 3\Z ,
\end{equation*}
each of length $\frac{2}{3} \pi R$.
For a ``discrete loop'' $\eta = (\eta_j)_{j \in \Z / n\Z}$, with $\eta_j \in C_R$ for $j \in \Z / n\Z$,
and $\alpha, \beta \in \Z / 3\Z$, let
\begin{equation*}
T_{\alpha,\beta}(\eta) \, = \, 
\# \{ j \in \Z / n\Z \mid \eta_j \in A_\alpha, \eta_{j+1} \in A_{\beta} \}
\end{equation*}
denote the number of steps of $\eta$ from $A_\alpha$ to $A_\beta$,
and define 
\begin{equation*}
\rho(\eta) \, = \, \sum_{ \alpha \in \Z / 3\Z } 
\left( T_{\alpha, \alpha+1} (\eta) - T_{\alpha+1, \alpha} (\eta) \right).
\end{equation*}
For example, if $\xi$ is as above with $m \ge 3$,
then $\rho (\xi) = 3$; if  $\xi_0 = (R, R, \hdots, R)$ is a constant loop with $m$ steps,  
then $\rho (\xi_0)  = 0$.
\par

Let $c$ be a constant, $0 < c < \sqrt{3}R$.
Let $\eta, \eta'$ be two $c$-loops in $C_R$ at $R$.
Suppose  that $\eta$ and $\eta'$ are $c$-elementarily homotopic, say
\begin{equation*}
\aligned
\eta \, &= \,  
(y_0, \hdots, y_i, y_{i+1}, \hdots, y_{n-1}, y_0) ,
\\
\eta'  \, &= \,  
(y_0, \hdots, y_i, y', y_{i+1}, \hdots, y_{n-1}, y_0) .
\endaligned
\end{equation*} 
Let $\alpha, \beta, \gamma \in \Z / 3\Z$ be defined by
$y_i \in A_\alpha$, $y' \in A_\beta$, $y_{i+1} \in A_\gamma$.
Since $\vert y_{i+1} - y' \vert$, $\vert y' -y_i \vert$, $\vert y_{i+1}-y_i \vert$ 
are all strictly smaller than $\sqrt{3}R$, 
elementary geometry shows that
the indices $\alpha, \beta, \gamma$ are not all distinct;
it follows that  $\rho(\eta) = \rho(\eta')$.
Hence, more generally, if $\eta$ and $\eta'$ are $c$-homotopic,
then $\rho(\eta) = \rho(\eta')$.
\par

Since $\rho(\xi) = 3 \ne 0 = \rho(\xi_0)$, the loop $\xi$ cannot be $c$-homotopic to $\xi_0$,
and the claim is proved. 

\vskip.2cm

(2)
Consider now $C_R$ as a metric space for itself (\emph{not} as a subspace of the plane).
Denote by $x_R$ the base point $R$.
Let $X := \bigvee_{n=1}^{\infty} C_n$ be the space obtained from the disjoint union
$\bigsqcup_{n=1}^{\infty} C_n$ by identifying the base points;
thus $X$ is an infinite wedge of larger and larger circles.
For $m \ne n$, the distance between two points $x \in C_m \subset X$
and $x' \in C_n \subset X$ is the sum $d(x,x_R) + d(x_R, x')$.
It follows from (1) that $X$ is not coarsely simply connected.
Of course, any finite wedge, for example $X_N := \bigvee_{i=1}^N C_n$, 
is coarsely simply connected (Example \ref{excoarse1conntrivial}).

\vskip.2cm

(3)
The notation being as in (2), 
let $C'_R$ be the complement in $C_R$ of the point $-R$.
The wedge $Y = \bigvee_{n=1}^{\infty} C'_n$ is contractible;
note that it is not coarsely simply connected, for example because
the inclusion $Y \subset X$ is a quasi-isometry.
This shows that
\begin{itemize}
\item
\textbf{a simply connected metric space need not be coarsely simply connected.}
\end{itemize}
Conversely, a connected coarsely simply connected metric space
need not be simply connected (examples $X_N$ of (2)).
\end{exe}

The next proposition shows that, for geodesic spaces,
simple connectedness does imply coarse simple connectedness.
For $c$-geodesic spaces, see 
Proposition \ref{propositionpourc-geodesicspace} below.

\begin{prop}
\label{scimplycsc}
Let $X$ be a geodesic non-empty metric space.
If $X$ is simply connected, then $X$ is coarsely simply connected.
\end{prop}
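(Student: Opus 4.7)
The plan is to fix $c' > 0$ and an arbitrary $c'$-loop $\xi = (x_0, x_1, \ldots, x_n = x_0)$ and show that $\xi$ is $2c'$-homotopic to the trivial loop $(x_0)$; since $X$ is geodesic it is automatically $c$-coarsely connected for every $c > 0$, so nothing else will be needed for coarse simple connectedness with $c'' = 2c'$. First I would interpolate: pick for each $i$ a geodesic segment from $x_{i-1}$ to $x_i$ of length $d(x_{i-1},x_i) \le c'$ and concatenate them into a continuous loop $\gamma \colon [0,1] \to X$ with $\gamma(i/n) = x_i$. Simple connectedness of $X$ then furnishes a based null-homotopy $H \colon [0,1]^2 \to X$ with $H(s,0) = \gamma(s)$, $H(s,1) = x_0$, and $H(0,t) = H(1,t) = x_0$.

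Next I would discretise. Uniform continuity of $H$ on the compact square lets me choose an integer $M \ge 1$ and set $N = Mn$ so large that $d(H(p),H(p')) \le c'$ whenever $p,p'$ are adjacent vertices of the $N \times N$ grid of step $1/N$. Setting $y_{i,j} = H(i/N, j/N)$, each row $\eta_j := (y_{0,j}, y_{1,j}, \ldots, y_{N,j})$ is then a $c'$-loop at $x_0$ with $N$ steps, and corresponding points in consecutive rows satisfy $d(y_{i,j}, y_{i,j+1}) \le c'$.

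Three chained discrete homotopies would finish the argument. Row $0$ is the sequence $(\gamma(i/N))_{0 \le i \le N}$, obtained from $\xi$ by inserting, into each geodesic segment $[x_{i-1},x_i]$, the $M-1$ intermediate grid points; each such insertion is a $c'$-elementary homotopy because a point on a geodesic segment of length $\le c'$ lies within $c'$ of both endpoints, so $\xi$ and $\eta_0$ are $c'$-homotopic. Then Proposition \ref{nearbypathsare$c$-homotopic} applied with constant $c'$ yields that $\eta_j$ and $\eta_{j+1}$ are $2c'$-homotopic, and iterating produces a $2c'$-homotopy from $\eta_0$ to the constant string $\eta_N = (x_0, x_0, \ldots, x_0)$. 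Finally $\eta_N$ collapses to $(x_0)$ by $N$ successive $c'$-elementary removals of a repeated vertex. Concatenating these three homotopies exhibits $\xi$ and $(x_0)$ as $2c'$-homotopic, which is exactly property \textup{SC}($c',2c'$) of Definition \ref{defSC}. The only place where attention is required, and the mild obstacle I foresee, is to choose $N$ as a multiple of $n$ so that the original vertices of $\xi$ sit on the grid; this is what makes the passage from $\xi$ to $\eta_0$ a clean composition of elementary moves and guarantees that the final constant depends on $c'$ alone.
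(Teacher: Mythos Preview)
Your proof is correct and follows essentially the same route as the paper's: interpolate the $c'$-loop by geodesic segments to a continuous loop, use simple connectedness to get a null-homotopy $H$, discretise $H$ via uniform continuity, then pass row-by-row using Proposition~\ref{nearbypathsare$c$-homotopic} to obtain a $2c'$-homotopy to the constant loop. The only cosmetic differences are that the paper parametrises by arc length on $[0,L]$ and refines the original subdivision $(s_i)$ to a finer one $(r_h)$, whereas you parametrise on $[0,1]$ and take $N$ a multiple of $n$; both devices serve the same purpose of keeping the original vertices on the grid.
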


\begin{proof}
Choose a constant $c' > 0$.
Let $x_0 \in X$ and $\xi = (x_0, x_1, \hdots, x_n=x_0)$
be a $c'$-loop in $X$ at $x_0$.
Set $L = \sum_{i=1}^n d(x_{i-1},x_i)$;
since $X$ is geodesic, 
there exist a speed one continuous loop 
$\varphi : [0,L] \longrightarrow X$ 
and a sequence of real numbers $(s_i)_{0 \le i \le N}$ such that
\begin{equation*}
\aligned
& 0 = s_0 \le s_1 \le \cdots \le s_N = L ,
\\
&  \varphi(s_i) \ = \,  x_i   
\hskip.2cm \text{for} \hskip.2cm   i = 0, \hdots, n ,  
\hskip.2cm \text{in particular} \hskip.2cm   \varphi(0) = \varphi(L) = x_0 ,
\\
& \vert s_i - s_{i-1} \vert \, = \, d(x_i, x_{i-1}) \, \le \, c'
\hskip.2cm \text{for} \hskip.2cm i = 1, \hdots, n , .
\endaligned
\end{equation*}
Since $X$ is simply connected, there exists a continuous homotopy
$H : \mathopen[0,L\mathclose] \times \mathopen[0,1\mathclose] \longrightarrow X$ such that
\begin{equation*}
\begin{array}{ccc}
H(s,0) = \varphi(s) \hskip.2cm \text{and} \hskip.2cm H(s,1) = x_0
& \text{for all} & s \in [0,L] ,
\\
H(0,t) = H(L,t) = x_0 & \text{for all} & t \in \mathopen[0,1\mathclose].
\end{array}
\end{equation*}
By uniform continuity, there exists an integer $N \ge 1$ such that
\begin{equation*}
\vert H(s,t) - H(s',t') \vert \, \le \, c'
\hskip.2cm \text{whenever} \hskip.2cm
\vert s-s' \vert  \le \frac{L}{N} 
\hskip.2cm \text{and} \hskip.2cm
\vert t-t' \vert \le \frac{1}{N} .
\end{equation*}
There exist a sequence of real numbers $(r_h)_{0 \le h \le M}$,
with $0 = r_0  \le r_1 \le \cdots \le r_M = L$,  such that
$\vert r_h - r_{h-1} \vert \le L/N$ for all $h = 1, \hdots, M$,
and such that $(s_i)_{0 \le i \le N}$ is a subsequence of $(r_h)_{0 \le h \le M}$.
\par

For $j \in \{0, \hdots, N\}$, define $\xi_j$ to be the sequence
$\left( H(r_h, j/N) \right)_{h = 0, 1, \hdots, M}$.
In particular, $\xi_0$ is obtained by adding new points
in between those of $\xi$ along the image of~$\varphi$~; 
it follows that $\xi$ and $\xi_0$ are $c'$-homotopic.
By Proposition \ref{nearbypathsare$c$-homotopic}, 
the $c'$-loops $\xi_0$ and $\xi_N$ are $2c'$-homotopic.
Since  $\xi_N$ is $c'$-homotopic to the trivial loop $(x_0)$,
this ends the proof.
\end{proof}

\begin{prop}
\label{coarseretractprop}
Let $(Y,d)$ be a pseudo-metric space
and $Z \subset Y$ a coarse retract of $Y$,
in the sense of Definition \ref{coarseretractdef}.
We assume that $Z$ is non-empty.
\par

If $Y$ is coarsely simply connected, then so is $Z$.
\end{prop}

\noindent
\emph{Reminder:} if $Y$ is coarsely connected, so is $Z$, by Proposition \ref{bbb}.

\begin{proof}
Let $r : Y \longrightarrow Z$ be a coarse retraction.
Upon redefining $r$ at one point, 
we can assume that there exists a point $z_0 \in Z$ such that $r(z_0) = z_0$.
Let $\Phi$ be an upper control (Definition \ref{defuclc}) such that
$d(r(y), r(y')) \le \Phi(d(y,y'))$ for all $y,y' \in Y$.
Without loss of generality, 
we assume that $\Phi(t) > 0$ for all $t \ge 0$.
Let $K$ be a constant such that $d(z, r(z)) \le K$ for all $z \in Z$.
\par

Choose a constant $c$ such that
 $Y$ is $c$-coarsely connected, and a constant $c' \ge c$.
 Consider a $\Phi(c')$-loop $\eta$ in $Z$ at $z_0$.
 By hypothesis on $Y$, there exist $k'' \ge \Phi(c')$ and a sequence
 $\xi_0 = \eta, \xi_1, \hdots, \xi_\ell = (z_0)$ of $k''$-loops in $Y$ at $z_0$
 such that $\xi_{j-1}$ and $\xi_j$ are $k''$-elementarily homotopic
 for $j = 1, \hdots, \ell$.
 Then $r(\xi_0) = \eta, r(\xi_1), \hdots, r(\xi_\ell) = (z_0)$
 is a sequence of $\Phi(k'')$-loops in $Z$ at $z_0$
 such that $r(\xi_{j-1})$ and $r(\xi_j)$ are $\Phi(k'')$-elementarily homotopic
 for $j=1, \hdots, \ell$.
 This shows that $Z$ has Property \textup{(SC($\Phi(c'), \Phi(k'')$))}.
 Hence $Z$ is coarsely simply connected.
\end{proof}

\section{On simplicial complexes}
\label{sectionsc}
We begin with some generalities on simplicial complexes and loops in them.
\vskip.2cm

For a \textbf{simplicial complex} $X$, \index{Simplicial complex}
we denote by $X^0 \subset X^1 \subset X^2 \subset \cdots$
the nested sequence of its skeletons
and by $X_{\textnormal{top}}$ its \textbf{geometric realization}, 
\index{Realization! geometric realization of a simplicial complex|textbf}
which is a Hausdorff topological space obtained inductively
by attaching cells of dimension $1, 2, \hdots$ on $X^0$.
A \textbf{graph} \index{Graph}
is a simplicial complex of dimension~$1$;
it has simplices of dimensions $0$ and $1$ only
(as in Example \ref{metricrealizationgraph}).
We use ``vertex'' for ``$0$-cell'' and ``edge'' for ``$1$-cell''.

\begin{defn}
Let $X$ be a simplicial complex.
A \textbf{combinatorial path} 
\index{Combinatorial! path in a simplicial complex|textbf}
in $X$ from a vertex $x$ to a vertex $y$
is a sequence of oriented edges 
\begin{equation*}
\xi \, = \, 
\big( (x_0,x_1), (x_1,x_2), \hdots, (x_{n-1},x_m) \big)
\end{equation*}
with $x_0 = x$ and $x_m=y$;
such a path is denoted by the sequence of vertices
\begin{equation*}
\xi \, = \,  (x_0, x_1, \hdots, x_m) .
\end{equation*}
The \textbf{inverse path} \index{Inverse path|textbf}
is the path  $\xi^{-1} = (x_m, x_{m-1}, \hdots, x_0)$.
The \textbf{product} of two combinatorial paths  
\index{Product of combinatorial paths|textbf}
$(x_0, x_1, \hdots, x_m)$ and $(y_0, y_1, \hdots, y_n)$
is defined when $x_m=y_0$, and is then
$(x_0, \hdots, x_m, y_1, \hdots, y_n$).
A \textbf{combinatorial loop} \index{Combinatorial! loop|textbf}
in $X$ based at some vertex $x_0 \in X$
is a combinatorial path from $x_0$ to~$x_0$.
\par

On sets of combinatorial paths with fixed extremities, 
we define now two equivalence relations.
\par

Two combinatorial paths  in $X$ from $x$ to $y$ are
\textbf{elementarily graph homotopic} 
\index{Elementarily! graph homotopic combinatorial paths|textbf}
if they are of the form 
\begin{equation*}
(x_0,x_1,\dots,x_n) \hskip.2cm \text{and} \hskip.2cm 
(x_0,\dots,x_i,u,x_i,x_{i+1},\dots,x_n) ,
\hskip.2cm \text{with} \hskip.2cm x_0 = x, \hskip.1cm x_n = y ,
\end{equation*} 
where $(x_i,u)$ is an edge of $X$ (so that $(u,x_i)$ is the opposite edge).
Two combinatorial paths $\xi, \xi'$ in $X$ from $x$ to $y$ are \textbf{graph homotopic}
\index{Graph homotopic combinatorial paths|textbf}
if there exists a sequence $\xi_0 = \xi, \xi_1, \hdots, \xi_\ell = \xi'$
of combinatorial paths from $x$ to $y$
such that $\xi_{j-1}$ and $\xi_j$ are elementarily graph homotopic for $j = 1, \hdots, \ell$.
Observe that, for this relation, combinatorial paths 
can be viewed indifferently in $X$ or in $X^1$.

\par

Two combinatorial paths  in $X$ from $x$ to $y$ are
\textbf{triangle homotopic} 
\index{Triangle homotopic combinatorial paths|textbf}
if they are of the form 
\begin{equation*}
(x_0,x_1,\dots,x_n) \hskip.2cm \text{and} \hskip.2cm 
(x_0,\dots,x_i, u,x_{i+1},\dots,x_n) ,
\hskip.2cm \text{with} \hskip.2cm x_0 = x, \hskip.1cm x_n = y ,
\end{equation*} 
where $\{x_i,u,x_{i+1}\}$ is a 2-simplex in $X$.
Two combinatorial paths $\xi, \xi'$ in $X$ from $x$ to $y$ are 
\textbf{combinatorially homotopic}
\index{Combinatorially homotopic combinatorial paths|textbf}
if there exists a sequence $\xi_0 = \xi, \xi_1, \hdots, \xi_\ell = \xi'$
of combinatorial paths from $x$ to $y$
such that $\xi_{j-1}$ and $\xi_j$ are 
either elementarily graph homotopic or triangle homotopic, for $j = 1, \hdots, \ell$.
Observe that, for this relation, combinatorial paths 
can be viewed indifferently in $X$ or in $X^2$.
\par

Combinatorial homotopy between combinatorial paths
is an equivalence relation that is compatible with inverses and products;
in other words, combinatorial homotopy classes of combinatorial paths
are the elements of a groupoid.
More precisely, 
let $\xi, \xi', \eta, \eta'$ be combinatorial paths such that 
the product $\xi \eta$ is defined, $\xi$ is combinatorially homotopic to $\xi'$,
and $\eta$ is combinatorially homotopic to $\eta'$;
then $\xi^{-1}$ is combinatorially homotopic to ${\xi'}^{-1}$,
and $\xi' \eta'$ is defined and combinatorially homotopic to $\xi \eta$.
\end{defn}

\begin{defn}
Let $X$ be a simplicial complex.
Let $\xi = (x_0, x_1, \hdots, x_n)$ be a combinatorial path in $X$.
The \textbf{topological realization}
\index{Realization! topological realization of a combinatorial path|textbf}
of $\xi$ is a continuous path
$\xi_{\operatorname{top}} : I \longrightarrow X_{\textnormal{top}}$
with origin $x_0$ and end $x_n$, well-defined up to reparameterization,
where $I = [t_0, t_n] = \bigcup_{j=1}^n [t_{j-1},t_j]$ is an interval of the real line 
made up of $n$ subintervals with disjoint interiors,
and where $\xi_{\operatorname{top}}$ maps
successively $[t_0, t_1]$ onto the edge of $\xi$ from $x_0$ to $x_1$, ....,
and $[t_{n-1},t_n]$ onto the edge of $\xi$ from $x_{n-1}$ to $x_n$.
\end{defn}

\begin{lem}
\label{lem:path_X1}
Let $X$ be a connected simplicial complex, 
given with a base point $x_0 \in X^0$.
\par

(1)
A loop in $X_{\textnormal{top}}$ based at $x_0$ is homotopic to 
the topological realization of a combinatorial loop based at $x_0$.
\par

(2)
Let $\xi, \xi'$ be combinatorial paths, 
and $\xi_{\operatorname{top}}, \xi'_{\operatorname{top}}$ 
their topological realizations.
Then $\xi_{\operatorname{top}}$ and $\xi'_{\operatorname{top}}$ 
are homotopic in the topological sense
if and only if $\xi$ and $\xi'$ are combinatorially homotopic.
\end{lem}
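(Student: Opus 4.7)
The plan is to establish both parts using classical simplicial approximation techniques, exploiting that combinatorial homotopy of paths in a simplicial complex realizes the fundamental groupoid of the geometric realization (the classical ``edge-path group'' description, going back to Reidemeister).

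For Part (1), the first step is to apply the Cellular Approximation Theorem to replace the given loop $\gamma \colon [0,1] \to X_{\textnormal{top}}$ by a homotopic loop whose image lies in the $1$-skeleton $X^1_{\textnormal{top}}$; the basepoint is preserved since $x_0$ is itself a $0$-cell. Next, using uniform continuity of the modified loop, I would subdivide $[0,1]$ into finitely many subintervals $[t_{j-1}, t_j]$ such that each $\gamma([t_{j-1}, t_j])$ is contained in a single closed edge or at a single vertex. Choosing a vertex $x_j$ at an endpoint of the relevant edge for each index, one then homotopes $\gamma$ rel $\{0,1\}$ to $\xi_{\operatorname{top}}$ where $\xi = (x_0, x_1, \ldots, x_n)$: this is possible because inside each closed topological $1$-cell any two paths with common endpoints are homotopic rel endpoints.

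For Part (2), the ``if'' direction is routine: it suffices to check that each elementary combinatorial move produces a homotopic topological realization. An elementary graph homotopy inserts a subpath $(x_i, u, x_i)$ whose realization traces the edge $\{x_i, u\}$ out and back, hence is null-homotopic rel its endpoints. A triangle homotopy replaces the subpath $(x_i, x_{i+1})$ by $(x_i, u, x_{i+1})$ inside a $2$-simplex $\{x_i, u, x_{i+1}\}$, and any two paths from $x_i$ to $x_{i+1}$ inside this contractible $2$-cell are homotopic rel endpoints.

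The ``only if'' direction is the main obstacle. Given a topological homotopy $H \colon [0,1]^2 \to X_{\textnormal{top}}$ from $\xi_{\operatorname{top}}$ to $\xi'_{\operatorname{top}}$ rel $\{0,1\}$, the plan is to approximate $H$ by a simplicial map. I would fix a triangulation of $[0,1]^2$ whose bottom and top edges match the combinatorial structures of $\xi$ and $\xi'$, and whose vertical sides are each a single edge. After iterated barycentric subdivision, the Simplicial Approximation Theorem (applied relative to the boundary, where $H$ is already simplicial) produces a simplicial map $H' \colon [0,1]^2 \to X^2_{\textnormal{top}}$ homotopic to $H$ rel boundary. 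Scanning $H'$ along a sequence of combinatorial paths in the $1$-skeleton of the triangulated square that interpolate between the bottom and top edges while differing by exactly one $2$-simplex at each step, one reads off a finite sequence $\xi = \xi_0, \xi_1, \ldots, \xi_\ell = \xi'$ of combinatorial paths in $X$ such that consecutive terms differ by either an elementary graph homotopy (when the $2$-simplex in the square collapses to a single edge or vertex of $X$) or a triangle homotopy (when it is mapped to a genuine $2$-simplex of $X$). The technical point requiring care is that the simplicial approximation is performed after subdividing $X$ barycentrically to obtain enough open stars; a short additional argument is then needed to convert combinatorial homotopies in the barycentric subdivision back to combinatorial homotopies in $X$ itself, by showing that insertion of edge-barycenters and face-barycenters into a combinatorial path can be undone by finite sequences of elementary graph and triangle homotopies in the original complex.
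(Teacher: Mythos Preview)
The paper does not actually prove this lemma: it leaves it as an exercise and refers to Spanier, \emph{Algebraic Topology}, Chapter~3, Section~6 (Lemma~12 and Theorem~16). Your outline is precisely the classical edge-path argument found there, so you are on the right track and your approach matches the intended one.

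One minor point: you do not need to subdivide $X$. The Simplicial Approximation Theorem only requires subdividing the \emph{domain} (here the square $[0,1]^2$), not the target complex; the simplicial approximation $H'$ then maps into $X$ itself. The residual issue is that after subdividing the square, the bottom and top edges carry refinements of $\xi$ and $\xi'$ rather than $\xi$ and $\xi'$ themselves, so the boundary sequences may contain repeated vertices (when a simplex of the subdivided edge collapses to a vertex of $X$). This is handled by noting that deleting a consecutive repeat is harmless, and that the induced combinatorial path is combinatorially homotopic to the original via the obvious cancellations. Dropping the barycentric subdivision of $X$ simplifies your last paragraph and removes the need for the conversion argument you flag at the end.
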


\begin{proof} 
The reader can check it as an exercise, or refer to \cite[Chap.3, Sec.6,
Lemma~12 and Theorem~16]{Span--66}.
\end{proof}

\begin{lem}
\label{lem:graphomotopic_simplices}
Let $X$ be a connected simplicial complex, and $x_0 \in X^0$.
Let $\xi = (x_0,x_1,\dots, x_n)$ be a combinatorial loop, with $x_n = x_0$.
Suppose that $\xi_{\operatorname{top}}$ is homotopically trivial
as a loop in $X_{\textnormal{top}}$ based at $x_0$.

Then $\xi$ is combinatorially homotopic to a combinatorial loop 
$\prod_{j=1}^N u_j r_j u_j^{-1}$, 
where $u_j$ is a combinatorial path from $x_0$ to some vertex $z_j \in X^0$, 
and $r_j$ is a combinatorial loop based at $z_j$, 
such that all vertices of $r_j$ belong to a common $2$-simplex in $X$, 
and such that the length of $r_j$ is $3$, 
i.e., $r_j$ is of the form $(z_j ,z'_j ,z''_j ,z_j)$ 
for some $2$-simplex $\{ z_j ,z'_j ,z''_j \}$.
\end{lem}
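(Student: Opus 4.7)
My plan is to reduce the statement to a bookkeeping exercise on the sequence of moves provided by combinatorial homotopy. By Lemma \ref{lem:path_X1}(2), the hypothesis that $\xi_{\operatorname{top}}$ is null-homotopic at $x_0$ is equivalent to saying that $\xi$ is combinatorially homotopic to the trivial loop $(x_0)$. Fix such a witness, i.e.\ a finite sequence $\xi = \eta_0, \eta_1, \ldots, \eta_\ell = (x_0)$ of combinatorial loops based at $x_0$, where for each $j \in \{1, \ldots, \ell\}$ the loops $\eta_{j-1}$ and $\eta_j$ are either elementarily graph homotopic or triangle homotopic.

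Next, I would analyze one triangle move. Suppose, say, that $\eta_{j-1} = \alpha \cdot (x_i, x_{i+1}) \cdot \beta$ and $\eta_j = \alpha \cdot (x_i, u, x_{i+1}) \cdot \beta$ for a $2$-simplex $\{x_i, u, x_{i+1}\}$ of $X$, where $\alpha$ is a combinatorial path from $x_0$ to $x_i$ and $\beta$ from $x_{i+1}$ to $x_0$. Set $u_j := \alpha$ and $r_j := (x_i, u, x_{i+1}, x_i)$, which is a length-$3$ loop based at $z_j := x_i$ whose vertices lie in the $2$-simplex $\{x_i, u, x_{i+1}\}$. A direct computation shows
\[
\eta_j \cdot \eta_{j-1}^{-1} \;=\; \alpha \cdot (x_i, u, x_{i+1}) \cdot \beta \cdot \beta^{-1} \cdot (x_{i+1}, x_i) \cdot \alpha^{-1},
\]
which, after cancelling the $\beta\beta^{-1}$ backtrack by elementary graph homotopies, is graph homotopic to $u_j \, r_j \, u_j^{-1}$. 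Hence $\eta_{j-1}$ is graph homotopic to $(u_j r_j u_j^{-1})^{-1} \cdot \eta_j$, i.e.\ to $(u_j r_j^{-1} u_j^{-1}) \cdot \eta_j$; note that $r_j^{-1} = (x_i, x_{i+1}, u, x_i)$ is again a length-$3$ triangular loop on the same $2$-simplex, so it has the required form. The symmetric case, where the triangle move goes from a longer loop to a shorter one, is entirely analogous. For a move $\eta_{j-1} \leftrightarrow \eta_j$ which is only elementarily graph homotopic, we simply record no factor.

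Iterating from $\eta_\ell = (x_0)$ backwards to $\eta_0 = \xi$, I would conclude that $\xi$ is graph homotopic, hence combinatorially homotopic, to a product
\[
\xi \;\sim\; \prod_{j=1}^N u_j \, r_j \, u_j^{-1},
\]
where each $u_j$ is a combinatorial path from $x_0$ to some vertex $z_j \in X^0$ and each $r_j$ is a length-$3$ loop of the form $(z_j, z'_j, z''_j, z_j)$ around a $2$-simplex $\{z_j, z'_j, z''_j\}$ of $X$, as required.

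The proof is essentially combinatorial rather than conceptual; the only minor point requiring care is the bookkeeping that turns the above inductive step into a global product over $j$, which one handles by reading off the moves in reverse order so that the conjugating path $u_j$ is always anchored at the base-point $x_0$. A small additional check, namely that the inverse of a triangle loop $(a,b,c,a)$ is itself of the prescribed form $(a,c,b,a)$ around the same $2$-simplex, ensures that no factors $(u_j r_j u_j^{-1})^{-1}$ need to appear explicitly in the final product.
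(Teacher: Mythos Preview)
Your proof is correct and follows essentially the same strategy as the paper's: invoke Lemma~\ref{lem:path_X1}(2) to obtain a chain of elementary moves from $\xi$ to the trivial loop, observe that each triangle move contributes a conjugate $u_j r_j u_j^{-1}$ of a length-$3$ triangular loop while graph moves contribute nothing, and string these factors together. The only cosmetic differences are that the paper writes the key identity directly as $\eta \sim u r u^{-1} \eta'$ (rather than via $\eta_j \eta_{j-1}^{-1}$) and bases its triangle loop $r$ at the vertex $y_{i+1}$ rather than at $x_i$.
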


\begin{proof} 
Consider two triangle homotopic combinatorial loops of the form
\begin{equation*}
\aligned
\eta \, &= \, (x_0, y_1, \hdots, y_{i-1}, y_i , y_{i+1}, \hdots, y_{k-1}, x_0)
\\
\eta' \, &= \, (x_0, y_1, \hdots, y_{i-1}, y_{i+1}, \hdots, y_{k-1}, x_0) ,
\endaligned
\end{equation*}
where $\{y_{i-1}, y_i, y_{i+1}\}$ is a $2$-simplex of $X$. Set
\begin{equation*}
\aligned
u \, &= \, (x_0, y_1, \hdots, y_{i-1}, y_{i+1})
\\
r \, &= \, (y_{i+1}, y_{i-1}, y_i, y_{i+1}) .
\endaligned
\end{equation*}
Then $\eta$ is elementarily graph homotopic to $uru^{-1}\eta'$;
similarly, $\eta'$ is elementarily graph homotopic to $ur^{-1}u^{-1}\eta$.
\par

By hypothesis and Lemma \ref{lem:path_X1}, there exists a sequence
$\xi_0 = \xi, \xi_1, \hdots, \xi_\ell = (x_0)$ of combinatorial loops such that,
for $j \in \{1, \hdots, \ell\}$, the loops $\xi_{j-1}$ and $\xi_j$ are
either elementarily graph homotopic (say for $M$ of the $j$ 's),
or triangle homotopic (say for $N$ of the $j$ 's, with $M+N = \ell$).
Applying $N$ times the argument written above for $\eta$ and $\eta'$,
we obtain the conclusion.
\end{proof}

\begin{prop}
\label{donrealizationofsimpcx}
Let $X$ be a connected simplicial complex.
\par

On the geometric realization $Z$ of the $2$-skeleton $X^2$ of $X$,
there exists a unique \textbf{combinatorial metric} $d_2$ 
\index{Combinatorial! metric on a simplicial complex|textbf}
that makes each edge of $Z$ an interval of length $1$,
each $2$-cell of $Z$ 
a Euclidean equilateral triangle of side-length $1$,
and such that $(Z, d_2)$ 
is a complete geodesic space.
\par

If $d_1$ denotes the combinatorial metric
on the geometric realization $Y$ of $X^1$ (Example  \ref{metricrealizationgraph}), 
the inclusion $(Y,d_1) \subset (Z,d_2)$ is a quasi-isometry.
\end{prop}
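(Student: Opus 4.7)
My plan is to use the standard intrinsic length-metric construction for polyhedral complexes of prescribed shape. Fix on each closed $2$-simplex $\sigma$ of $X$ the intrinsic metric $d_\sigma$ making it isometric to a Euclidean equilateral triangle of side $1$, and on each closed edge $e$ the metric $d_e$ making it a unit interval. For $x, y \in Z$, set
\[
d_2(x, y) \;=\; \inf \sum_{i=1}^{n} d_{\sigma_i}(z_{i-1}, z_i),
\]
the infimum ranging over all finite chains $x = z_0, \ldots, z_n = y$ in $Z$ and all closed simplices $\sigma_i$ of $X$ containing both $z_{i-1}$ and $z_i$. Connectedness of $X$ makes this finite for every pair. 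Uniqueness is immediate: any metric satisfying the prescribed restrictions and making $Z$ a length space (in particular any geodesic one) must agree with this infimum. That $d_2$ is an actual metric, is geodesic, and is complete, are all instances of the general theorem for $M_\kappa$-polyhedral complexes with finitely many isometry types of cells (as in Bridson and Haefliger, \emph{Metric spaces of non-positive curvature}, Theorems I.7.13 and I.7.19); here there are just two shapes, the equilateral triangle and the unit interval. Explicitly, each $x \in Z$ admits a neighbourhood isometric to an open subset of the Euclidean cone on its link, giving local non-degeneracy and local geodesicity; the uniform positive lower bound on cell diameters then fuels a Hopf--Rinow-type argument that yields completeness and global geodesicity.

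\textbf{Quasi-isometry: easy directions.} The inclusion $j : (Y, d_1) \hookrightarrow (Z, d_2)$ is obviously $1$-Lipschitz, since any rectifiable path in $Y$ is a rectifiable path of the same length in $Z$, so $d_2(y_1, y_2) \le d_1(y_1, y_2)$ for $y_1, y_2 \in Y$. Essential surjectivity is equally clear: every $z \in Z$ lies in a closed simplex of $d_2$-diameter at most $1$, hence is at $d_2$-distance at most $1$ from $X^0 \subset Y$.

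\textbf{Quasi-isometry: the substantive estimate, and main obstacle.} The key step is the opposite inequality
\[
d_1(v_1, v_2) \;\le\; 2\, d_2(v_1, v_2), \qquad v_1, v_2 \in X^0.
\]
Given a $d_2$-geodesic $\gamma : [0, L] \to Z$ from $v_1$ to $v_2$ with $L = d_2(v_1, v_2)$, the closed set $\gamma^{-1}(Y) \subset [0, L]$ contains $\{0, L\}$; on each maximal open subinterval of its complement, $\gamma$ traverses the interior of a single closed $2$-simplex $\sigma$ as a Euclidean segment of some length $\ell$ joining two points $p, q \in \partial \sigma$. The elementary planar lemma that, in a Euclidean equilateral triangle of side $1$, the shorter boundary path between two boundary points at Euclidean distance $\ell$ has length at most $2\ell$ (the extremum, attained when the two points are equidistant from a common vertex, is a short computation using the law of cosines) lets me replace each such piece by a path in $\partial \sigma \subset Y$ of length at most $2\ell$. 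Concatenating these pieces yields a $d_1$-path from $v_1$ to $v_2$ of total length at most $2L$. Extending to arbitrary $y_1, y_2 \in Y$ costs an additive constant at most $1$ on each side, so $j$ is a quasi-isometry. The chief technical burden of the proof lies not here but in the metric, geodesicity, and completeness claims for $d_2$, which rest on the general theory of $M_\kappa$-polyhedral complexes; once those are in hand, the quasi-isometry statement reduces to a two-dimensional computation inside a single equilateral triangle.
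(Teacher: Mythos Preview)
Your proposal is correct and matches the paper's approach for the metric-theoretic content: both you and the paper invoke the Bridson--Haefliger theory of $M_\kappa$-polyhedral complexes (the paper's entire proof is a reference to \cite[Theorem~7.19 and Proposition~7.31]{BrHa--99}). Where the paper simply cites Proposition~7.31 for the quasi-isometry, you supply a direct argument via the ``push to the boundary'' estimate in a single equilateral triangle; your computation that the shorter boundary arc has length at most $2\ell$ is correct and sharp (equality when $p,q$ are equidistant from a common vertex).

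One small point worth making explicit: when you decompose the $d_2$-geodesic $\gamma$ into its pieces inside open $2$-simplices, you should invoke the fact (part of the same Bridson--Haefliger machinery) that a geodesic in an $M_\kappa$-polyhedral complex with finitely many shapes meets only finitely many closed cells, so that there are only finitely many such pieces. Without this, the concatenation of the replacement paths and the handling of $\gamma^{-1}(Y)$ would require extra care.
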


\noindent \emph{Note.} If $X$ is a non-connected simplicial complex,
the same construction yields a \textbf{combinatorial \'ecart}
\index{Combinatorial! \'ecart|textbf}
on the geometric realization of $X^2$ (see Definition \ref{pm...metrizable}).

\begin{proof}
We refer to \cite[Theorem 7.19 and Proposition 7.31]{BrHa--99}.
\end{proof}

\section{The Rips $2$-complex of a pseudo-metric space}
\label{sectionRips}

\begin{defn}
\label{defRipst}
Let $(X,d)$ be a non-empty pseudo-metric space; let $c >0$ be a constant.
The \emph{Rips simplicial $2$-complex} $\operatorname{Rips}^2_c(X,d)$ 
is the simplicial complex of dimension $2$
with $X$ as set of vertices,
pairs $(x,y)$ of distinct points of $X$ with $d(x,y) \le c$ as set of oriented edges,
and triples $(x,y,z)$ of distinct points of $X$ with mutual distances bounded by $c$,
up to cyclic permutations, as set of oriented $2$-simplices.
\par

The \textbf{Rips $2$-complex} 
\index{Rips $2$-complex! $\operatorname{Rips}^2_c(X,d)$|textbf}
is the geometric realization of this complex,
denoted again by $\operatorname{Rips}^2_c(X,d)$,
endowed with the combinatorial \'ecart $d_2$
of Proposition \ref{donrealizationofsimpcx}.
Note that any connected component of $\left( \operatorname{Rips}^2_c(X,d), d_2 \right)$
is a complete geodesic space.
\par

Whenever convenient, we write $\operatorname{Rips}^2_c(X)$
for $\operatorname{Rips}^2_c(X,d)$.
\end{defn}

Rips complexes are usually defined as complexes of unbounded dimension;
but our restriction to dimension $2$ will suffice below,
because we are not interested here in coarse analogues of connectedness conditions
$\pi_j(-) = \{0\}$ for $j \ge 2$.
Complexes $\operatorname{Rips}_c(-)$ have been used by Rips
and Gromov in the context of hyperbolic groups \cite[Section~1.7]{Grom--87},
but the idea goes back to Vietoris \cite{Viet--27, Haus--95}.
\par

If $(X,d_c)$ is the metric space associated to a connected graph
and its combinatorial metric, as in Example \ref{metricrealizationgraph},
there is a variant of the Rips $2$-complex, $\operatorname{P}_k(X)$,
obtained from the graph $X$ by adding $\ell$-gons 
at every combinatorial loop of length $\ell \le k$.
The spaces $\operatorname{Rips}^2_c(X)$ and $\operatorname{P}_k(X)$
are quasi-isometric.
The two spaces are useful for the same kind of purposes,
but $\operatorname{P}_k(X)$
is sometimes more convenient: see \cite{SaTe}.

\begin{prop}[on the inclusion of $X$ in its Rips complex]
\label{inclusionXinRips}
Let $X$ be a non-empty pseudo-metric space, with a base point $x_0$,
and $\operatorname{Rips}^2_c(X)$ its Rips $2$-complex.
For a constant $c>0$:
\begin{enumerate}[noitemsep,label=(\arabic*)]
\item\label{1DEinclusionXinRips}
$X$ is $c$-coarsely connected
if and only if $\operatorname{Rips}^2_c(X)$ is connected;
\item\label{2DEinclusionXinRips}
$X$ is $c$-coarsely geodesic  
if and only if  the natural inclusion 
$X \lhook\joinrel\relbar\joinrel\rightarrow \operatorname{Rips}^2_c(X)$
is a metric coarse equivalence;
\item\label{3DEinclusionXinRips}
$X$ is $c$-large-scale geodesic 
if and only if  the natural inclusion 
$X \lhook\joinrel\relbar\joinrel\rightarrow \operatorname{Rips}^2_c(X)$
is a quasi-isometry.
\end{enumerate}
For two constants $c'' \ge c' > 0$, there is a canonical inclusion 
$j : \operatorname{Rips}^2_{c'}(X) 
\lhook\joinrel\relbar\joinrel\rightarrow \operatorname{Rips}^2_{c''}(X)$,
which is the identity on the $0$-skeletons. We have:
\begin{enumerate}[noitemsep,label=(\arabic*)]
\addtocounter{enumi}{3}
\item\label{4DEinclusionXinRips}
if $\operatorname{Rips}^2_{c'}(X)$ is connected, so is $\operatorname{Rips}^2_{c''}(X)$;
\item\label{5DEinclusionXinRips}
the space $X$ has Property \eqref{SC($c',c''$)} of Definition \ref {defSC}
if and only if the homomorphism of fundamental groups
$j_* : \pi_1(\operatorname{Rips}^2_{c'}(X)) 
\longrightarrow \pi_1(\operatorname{Rips}^2_{c''}(X))$ 
is trivial. 
\end{enumerate}
\end{prop}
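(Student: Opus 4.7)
The cornerstone will be the tautological bijection between $c$-paths $(x_0, x_1, \dots, x_n)$ in $X$ and combinatorial paths in the $1$-skeleton of $\operatorname{Rips}^2_c(X)$, since by construction an edge of that $1$-skeleton is a pair of distinct points at $d$-distance at most $c$. This immediately yields (1): $c$-coarse connectedness of $X$ is exactly the assertion that any two vertices of $\operatorname{Rips}^2_c(X)^1$ are joined by a combinatorial path. Claim (4) is even more immediate, since for $c'' \geq c'$ every edge and every $2$-simplex of $\operatorname{Rips}^2_{c'}(X)$ is also one of $\operatorname{Rips}^2_{c''}(X)$, so $j$ is an inclusion of CW-complexes that preserves path-connectedness.

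For (2) and (3) I will check that the inclusion $\iota \colon X \hookrightarrow \operatorname{Rips}^2_c(X)$ is always essentially surjective (every point lies in a simplex, hence within $d_2$-distance at most $1$ of a vertex) and always large-scale expansive: for $x, x' \in X$ in the same component of $\operatorname{Rips}^2_c(X)$, a $d_2$-geodesic can be replaced, using Proposition \ref{donrealizationofsimpcx}, by a combinatorial path in the $1$-skeleton of length at most $a\, d_2(x,x') + b$, i.e.\ a $c$-path with that many steps, so $d(x,x') \leq c(a\, d_2(x,x') + b)$. Conversely, any $c$-path from $x$ to $x'$ of $n$ steps directly gives $d_2(x,x') \leq n$. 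Hence the only remaining point is whether $\iota$ is (large-scale) Lipschitz, and this last inequality shows that such an upper bound $d_2(x,x') \leq \Phi(d(x,x'))$ exists with $\Phi$ arbitrary (respectively affine) precisely when $X$ is $c$-coarsely geodesic (respectively $c$-large-scale geodesic), establishing (2) and (3).

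The substantive claim is (5). The strategy is to set up the following dictionary between $c''$-homotopies of $c''$-loops in $X$ and combinatorial homotopies of combinatorial loops in $\operatorname{Rips}^2_{c''}(X)$: a $c''$-elementary homotopy inserting a vertex $u$ between $x_i$ and $x_{i+1}$ of a $c''$-loop corresponds to a triangle homotopy across the $2$-simplex $\{x_i, u, x_{i+1}\}$, since the three pairwise distances are automatically bounded by $c''$; and, conversely, an elementary graph homotopy inserting a backtrack $(u, x_i)$ is realized as a composition of two $c''$-elementary homotopies, namely first inserting a second copy of $x_i$ between $x_i$ and $x_{i+1}$ and then inserting $u$ between the two copies of $x_i$. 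Granting this dictionary, the implication $\Rightarrow$ follows directly: a $c'$-loop $\xi$ at $x_0$ viewed as a combinatorial loop of $\operatorname{Rips}^2_{c'}(X)$, once $c''$-homotopic to $(x_0)$, yields via the dictionary a combinatorial null-homotopy in $\operatorname{Rips}^2_{c''}(X)$, so $\iota_*[\xi] = 1$ in $\pi_1(\operatorname{Rips}^2_{c''}(X))$; since by Lemma \ref{lem:path_X1}(1) every class in $\pi_1(\operatorname{Rips}^2_{c'}(X))$ is represented by such a combinatorial loop, this gives $j_* = 1$. For $\Leftarrow$, given any $c'$-loop $\xi$, its image in $\operatorname{Rips}^2_{c''}(X)$ is topologically null-homotopic by hypothesis, hence combinatorially null-homotopic there by Lemma \ref{lem:path_X1}(2), and the dictionary converts this combinatorial null-homotopy into a $c''$-homotopy of $\xi$ to $(x_0)$. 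The main technical obstacle will be the graph-homotopy half of the dictionary: it depends essentially on the fact that Definition \ref{c-homotopic} permits inserting a vertex equal to a neighboring one (so that the trivial step $d(x_i, x_i) = 0 \leq c''$ is an admissible insertion), which is what allows the backtrack $(u, x_i)$ to be decomposed into two legitimate $c''$-elementary steps.
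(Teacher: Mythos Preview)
Your proposal is correct and follows essentially the same approach as the paper: claims (1)--(4) are dismissed as straightforward, and claim (5) is proved via the correspondence between $c''$-homotopies in $X$ and combinatorial homotopies in $\operatorname{Rips}^2_{c''}(X)$, using Lemma~\ref{lem:path_X1} in both directions exactly as you do. Your treatment is in fact more explicit than the paper's, which simply asserts that a $c''$-homotopy ``provides'' a combinatorial homotopy and vice versa; your spelling-out of the dictionary (triangle homotopies versus $c''$-elementary insertions, and the two-step realisation of a backtrack) fills in what the paper leaves to the reader, and your remark about inserting a repeated vertex correctly identifies the one place where the correspondence needs care.
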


\begin{proof}
Claims \ref{1DEinclusionXinRips} to \ref{4DEinclusionXinRips} 
are straightforward consequences of the definitions.
Let us check Claim~\ref{5DEinclusionXinRips}.
\par

First, assume that $X$ has Property \eqref{SC($c',c''$)}.
Consider $\gamma \in \pi_1(\operatorname{Rips}^2_{c'}(X))$.
By Lemma \ref{lem:path_X1}(1), $\gamma$ can be represented by a
combinatorial loop based at $x_0$, 
and the latter defines a $c'$-loop in $X$ at $x_0$, say $\xi$.
Since $X$ has Property \eqref{SC($c',c''$)},
there is a $c''$-homotopy from this $c'$-loop $\xi$ to the trivial loop $(x_0)$.
This $c''$-homotopy provides a combinatorial homotopy
from $\xi$ viewed as a combinatorial loop in $\operatorname{Rips}^2_{c''}(X)$
to the trivial loop. Hence $j_*(\gamma) = 1 \in \pi_1(\operatorname{Rips}^2_{c''}(X))$.
\par

Assume, conversely,  that $j_*$ has trivial image. 
Let $\xi$ be a $c'$-loop in $X$ at $x_0$.
This can be viewed as a combinatorial loop in $\operatorname{Rips}^2_{c'}(X)$.
Since the image of $j_*$ is trivial, 
there is a homotopy from  $\xi_{\operatorname{top}}$ 
viewed as a loop in  $\operatorname{Rips}^2_{c''}(X)$ to the constant loop. 
By Lemma \ref{lem:path_X1}(2), there exists a combinatorial homotopy
from $\xi$ viewed as a combinatorial loop in $\operatorname{Rips}^2_{c''}(X)$
to the constant loop.
The latter combinatorial homotopy provides a $c''$-homotopy
from the $c'$-loop $\xi$ to the constant loop.
Hence $X$ has Property \eqref{SC($c',c''$)}.
\end{proof}

\begin{lem}
\label{lemmapourc-geodesicspace}
Let $c, c', c''$ be constants with $c'' \ge c' \ge c > 0$,
and $X$ a $c$-geodesic metric space, with base point $x_0$.
\par

If $X$ has Property \textup{(SC($c',c''$))}, 
then $X$ has Property   \textup{(SC($c'',c''$))}. 
\end{lem}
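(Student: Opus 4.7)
The plan is to show that any $c''$-loop at $x_0$ can be refined into a $c$-loop by a sequence of $c''$-elementary homotopies, and then to invoke the hypothesis on the refined loop.

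Let $\eta = (x_0, x_1, \ldots, x_n = x_0)$ be a $c''$-loop in $X$ at $x_0$. For each $i \in \{1, \ldots, n\}$ we have $d(x_{i-1}, x_i) \le c''$; since $X$ is $c$-geodesic, there is a $c$-path
\begin{equation*}
x_{i-1} = y_0^{(i)}, y_1^{(i)}, \ldots, y_{k_i}^{(i)} = x_i
\end{equation*}
with $\sum_{j=1}^{k_i} d(y_{j-1}^{(i)}, y_j^{(i)}) = d(x_{i-1}, x_i)$. Concatenating these refinements across $i$ produces a $c$-loop $\xi$ at $x_0$, which in particular is a $c'$-loop because $c' \ge c$.

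Next I would check that $\eta$ and $\xi$ are $c''$-homotopic by inserting the intermediate points $y_j^{(i)}$ one at a time. The key computation is that, having already produced the partially refined loop $(\ldots, y_{j-1}^{(i)}, x_i, \ldots)$, the point $y_j^{(i)}$ can be inserted between $y_{j-1}^{(i)}$ and $x_i$ as a $c''$-elementary homotopy: on the one hand $d(y_{j-1}^{(i)}, y_j^{(i)}) \le c \le c''$, and on the other hand
\begin{equation*}
d(y_j^{(i)}, x_i) \, \le \, \sum_{k=j+1}^{k_i} d(y_{k-1}^{(i)}, y_k^{(i)}) \, \le \, d(x_{i-1}, x_i) \, \le \, c'' ,
\end{equation*}
where the second inequality uses that the chosen $c$-path has total length $d(x_{i-1}, x_i)$. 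Iterating across all $i$ and $j$ exhibits $\eta$ as $c''$-homotopic to $\xi$.

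Finally, since $\xi$ is a $c'$-loop at $x_0$, Property \textup{(SC($c',c''$))} yields a $c''$-homotopy from $\xi$ to the trivial loop $(x_0)$. Composing this with the $c''$-homotopy from $\eta$ to $\xi$ gives a $c''$-homotopy from $\eta$ to $(x_0)$, which establishes \textup{(SC($c'',c''$))}. The only subtle point is the length-preserving property of the $c$-geodesic refinement, which is exactly what ensures the inserted vertex stays within $c''$ of the far endpoint of the edge it subdivides; without the $c$-geodesic hypothesis (as opposed to merely $c$-coarsely geodesic), this bound could fail and the refinement would not proceed through $c''$-elementary homotopies.
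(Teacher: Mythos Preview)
Your proof is correct and follows essentially the same approach as the paper's: refine a $c''$-loop to a $c'$-loop via successive insertions (which are $c''$-elementary homotopies thanks to the length-preserving property of $c$-geodesic refinements), then apply \textup{(SC($c',c''$))}. You supply more detail than the paper does on why each insertion is a valid $c''$-elementary homotopy, and your observation that this step genuinely uses $c$-geodesic rather than merely $c$-coarsely geodesic is exactly the point.
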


\begin{proof}
Let $\xi = (x_0, x_1, \hdots, x_m = x_0)$ be a $c''$-loop in $X$ at $x_0$.
Since $X$ is $c'$-geodesic, 
there is a way of inserting new points in between those of $\xi$ 
to obtain a $c'$-loop $\eta = (x_0, y_1 \hdots, y_n = x_0)$
that is $c''$-homotopic to $\xi$.
If $X$ has Property \textup{(SC($c',c''$))}, this loop $\eta$
is $c''$-homotopic to the constant loop $(x_0)$. 
Hence $\xi$ is $c''$-homotopic to $(x_0)$. 
The conclusion follows.
\end{proof}

\begin{prop}
\label{propositionpourc-geodesicspace}
Let $X$ be a non-empty pseudo-metric space, with $x_0 \in X$.
Assume that $X$ is $c$-geodesic, for some $c > 0$.
The following three properties are equivalent:
\begin{enumerate}[noitemsep,label=(\roman*)]
\item\label{iDEpropositionpourc-geodesicspace}
the space $X$ is $c$-coarsely simply connected;
\item\label{iiDEpropositionpourc-geodesicspace}
there exists $k \ge c$ such that $\operatorname{Rips}^2_{k}(X)$ is simply connected;
\item\label{iiiDEpropositionpourc-geodesicspace}
there exists $k \ge c$ such that $\operatorname{Rips}^2_{K}(X)$ is simply connected
for all $K \ge k$.
\end{enumerate}
\end{prop}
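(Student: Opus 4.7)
The plan is to establish the cycle of implications \textup{(i)}$\Rightarrow$\textup{(ii)}$\Rightarrow$\textup{(iii)}$\Rightarrow$\textup{(i)}, relying throughout on Proposition~\ref{inclusionXinRips}\ref{5DEinclusionXinRips} to translate Property~\eqref{SC($c',c''$)} into triviality of the induced map $j_* \colon \pi_1(\operatorname{Rips}^2_{c'}(X)) \to \pi_1(\operatorname{Rips}^2_{c''}(X))$. A preliminary observation, to be made only once, is that a $c$-geodesic nonempty pseudo-metric space is in particular $K$-coarsely connected for every $K \ge c$, so Proposition~\ref{inclusionXinRips}\ref{1DEinclusionXinRips} ensures that $\operatorname{Rips}^2_K(X)$ is connected for every such $K$; connectedness of the relevant Rips complex in each part of the argument will follow from this and be left implicit.

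For \textup{(i)}$\Rightarrow$\textup{(ii)}, I would apply the definition of $c$-coarse simple connectedness with $c'=c$ to obtain some $c''\ge c$ for which $X$ has Property~\eqref{SC($c',c''$)} for the pair $(c,c'')$. Lemma~\ref{lemmapourc-geodesicspace} then upgrades this to the same property for the pair $(c'',c'')$; since the corresponding inclusion $j$ is the identity of $\operatorname{Rips}^2_{c''}(X)$, Proposition~\ref{inclusionXinRips}\ref{5DEinclusionXinRips} forces $\pi_1(\operatorname{Rips}^2_{c''}(X)) = \{1\}$, so \textup{(ii)} holds with $k := c''$. For \textup{(iii)}$\Rightarrow$\textup{(i)}, given any $c' \ge c$ I set $c'' := \max\{c',k\}$; then $\operatorname{Rips}^2_{c''}(X)$ is simply connected by \textup{(iii)}, so $j_*$ is zero, which yields Property~\eqref{SC($c',c''$)} for this pair $(c',c'')$ and hence $c$-coarse simple connectedness of $X$.

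The real content of the proof is \textup{(ii)}$\Rightarrow$\textup{(iii)}. Fix $K \ge k$; by Lemma~\ref{lem:path_X1}(1) it suffices to show that every combinatorial loop $\xi = (x_0, x_1, \ldots, x_n = x_0)$ in $\operatorname{Rips}^2_K(X)$ is combinatorially homotopic in $\operatorname{Rips}^2_K(X)$ to the trivial loop. For each edge $(x_{i-1}, x_i)$, the $c$-geodesic hypothesis supplies an interpolating $c$-path $x_{i-1} = z_{i,0}, z_{i,1}, \ldots, z_{i,n_i} = x_i$ with $\sum_{j=1}^{n_i} d(z_{i,j-1}, z_{i,j}) = d(x_{i-1}, x_i) \le K$; after deleting consecutive repetitions, I may assume these vertices are pairwise distinct. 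Because the subdivision is geodesic, $d(z_{i,a}, z_{i,b}) \le d(x_{i-1}, x_i) \le K$ for all $0 \le a < b \le n_i$, so each triple $\{z_{i,j-1}, z_{i,j}, x_i\}$ is a $2$-simplex of $\operatorname{Rips}^2_K(X)$; an induction on $n_i$ using the triangle homotopies along these $2$-simplices then shows that the edge $(x_{i-1}, x_i)$ is combinatorially homotopic in $\operatorname{Rips}^2_K(X)$ to $(z_{i,0}, \ldots, z_{i,n_i})$. Concatenating these substitutions produces a combinatorial loop $\eta$, combinatorially homotopic to $\xi$ in $\operatorname{Rips}^2_K(X)$, whose edges all have length at most $c \le k$; thus $\eta$ is already a combinatorial loop in the subcomplex $\operatorname{Rips}^2_k(X)$, and hypothesis~\textup{(ii)} makes it nullhomotopic there, hence also in $\operatorname{Rips}^2_K(X)$.

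The main obstacle is this subdivision argument in \textup{(ii)}$\Rightarrow$\textup{(iii)}: at each step of the induction, the triangle used to perform the elementary triangle homotopy must genuinely be a $2$-simplex of $\operatorname{Rips}^2_K(X)$, which is exactly what forces the subdivision to be \emph{geodesic} rather than merely a $c$-path, so that all sub-distances remain bounded by the total length $\le K$. A minor but genuine bookkeeping point is that in a pseudo-metric space one may have $d(z,z')=0$ for distinct points and repetitions along the subdivision, so the geodesic interpolation must be normalized to pairwise distinct vertices before invoking the simplex structure of the Rips complex.
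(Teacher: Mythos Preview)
Your proof is correct, but it follows a different cycle from the paper's. The paper proves \textup{(iii)}$\Rightarrow$\textup{(ii)} (trivial), \textup{(i)}$\Rightarrow$\textup{(iii)}, and \textup{(ii)}$\Rightarrow$\textup{(i)}. For \textup{(i)}$\Rightarrow$\textup{(iii)} it combines Lemma~\ref{lemmapourc-geodesicspace} with Remark~\ref{SCindex0}(2) to obtain Property~\textup{SC($K,K$)} for \emph{all} $K \ge k$ in one stroke, then applies Proposition~\ref{inclusionXinRips}\ref{5DEinclusionXinRips}. For \textup{(ii)}$\Rightarrow$\textup{(i)} it argues abstractly: $\operatorname{Rips}^2_k(X)$ is a geodesic simply connected metric space, hence coarsely simply connected by Proposition~\ref{scimplycsc}; since $X$ is $c$-geodesic the inclusion $X \hookrightarrow \operatorname{Rips}^2_k(X)$ is a quasi-isometry by Proposition~\ref{inclusionXinRips}\ref{3DEinclusionXinRips}, and coarse simple connectedness transfers back by Proposition~\ref{coarse1conninvbycoarseeq}.

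Your route is more hands-on. The subdivision argument you give for \textup{(ii)}$\Rightarrow$\textup{(iii)} is essentially the proof of Lemma~\ref{lemmapourc-geodesicspace} rewritten directly in the combinatorial language of the Rips complex, and your \textup{(iii)}$\Rightarrow$\textup{(i)} via Proposition~\ref{inclusionXinRips}\ref{5DEinclusionXinRips} bypasses the invariance machinery (Propositions~\ref{scimplycsc} and~\ref{coarse1conninvbycoarseeq}) entirely. The paper's route is shorter because it reuses those earlier invariance results; yours is more self-contained but carries out the subdivision idea twice (once through the Lemma in \textup{(i)}$\Rightarrow$\textup{(ii)}, once explicitly in \textup{(ii)}$\Rightarrow$\textup{(iii)}). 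One small imprecision worth fixing: ``deleting consecutive repetitions'' does not guarantee that the interpolated vertices $z_{i,0},\ldots,z_{i,n_i}$ are pairwise distinct; you should delete \emph{all} repetitions, which is harmless since passing to a subsequence preserves the bound $d(z_{i,a},z_{i,b}) \le K$ on sub-distances that your triangle homotopies require.
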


\begin{proof}
\ref{iDEpropositionpourc-geodesicspace} $\Rightarrow$ 
\ref{iiiDEpropositionpourc-geodesicspace}
Suppose that \ref{iDEpropositionpourc-geodesicspace} holds.
There exists $k \ge c$ such that
$X$ has Property \textup{(SC($c,k$))}.
By Lemma \ref{lemmapourc-geodesicspace},
$X$ has also Property \textup{(SC($K,K$))}
for all $K \ge k$.
Property \ref{iiiDEpropositionpourc-geodesicspace} 
follows from Proposition \ref{inclusionXinRips}(5).
\par

\ref{iiDEpropositionpourc-geodesicspace} $\Rightarrow$ 
\ref{iDEpropositionpourc-geodesicspace}
Suppose that \ref{iiDEpropositionpourc-geodesicspace} holds.
Since $\operatorname{Rips}^2_{k}(X)$ is a complete geodesic space,
it is coarsely simply connected, by Proposition \ref{scimplycsc}.
Since $X$ is $c$-geodesic, the inclusion of $X$ in $\operatorname{Rips}^2_{k}(X)$
is a quasi-isometry, by Proposition \ref{inclusionXinRips}(3).
Hence $X$ has Property \ref{iDEpropositionpourc-geodesicspace}, 
by Proposition \ref{coarse1conninvbycoarseeq}.
\end{proof}

\begin{exe}
\label{exdeNavecdeshighways}
In Proposition  \ref{propositionpourc-geodesicspace}, the condition on $X$
to be $c$-geodesic cannot be relaxed to being large-scale geodesic, 
as we show here with an example
of a metric space that is bilipschitz to $\N$ (the half-line graph), 
and such that, for every $c > 0$, the Rips $2$-complex
$\operatorname{Rips}^2_c(X)$ is not simply connected.
\par

The idea is to start from the graph $\N$, 
and to force some pairs of points at distance $3n$ to have distance $n$. 
This can be constructed as a connected (but not geodesic) subgraph of a larger graph, 
as follows.
\par

Let $Y$ be the graph with vertex set
\begin{equation*}
\operatorname{Vert}(X) \ = \, \left\{ u_n \right\}_{n \in \N} \sqcup
\Big( \bigsqcup_{n \in \N} \{ v_{n,1}, v_{n,2}, \hdots, v_{n, n-1} \} \Big)
\end{equation*}
and edge set consisting of
\begin{equation*}
\aligned
&\text{one edge} \hskip.2cm 
\{u_{n}, u_{n+1}\} 
\hskip.2cm \text{for all} \hskip.2cm n \ge 0,
\\
&\text{$n$ edges} \hskip.2cm 
\{u_{10^n}, v_{n,1}\}, \{v_{n,1}, v_{n,2}\}, \hdots, \{v_{n,n-1},u_{10^n+3n} \}
\hskip.2cm \text{for all} \hskip.2cm n \ge 1.
\endaligned
\end{equation*}
Let $d$ denote the combinatorial metric on $Y$ (Example \ref{metricrealizationgraph}).
The map
\begin{equation*}
u \, : \, \N \longrightarrow Y , \hskip.2cm n \longmapsto u_n
\end{equation*}
is bilipschitz, with
$\frac{1}{3} \vert m-n \vert \le d(u_m, u_n) \le \vert m-n \vert$ for all $m,n \in \N$.
\par

Set $X = u(\N)$, with the restriction $d$ of the metric of $(Y,d)$.
The metric space $(X,d)$ is large-scale geodesic, because it is quasi-isometric to $\N$.
Observe that, for all $n \ge 1$,
\begin{equation*}
\aligned
&\cdot \hskip.2cm 
d(u_{10^n}, u_{10^n+3n}) \, = \,  n 
\hskip.1cm ,
\\
&\cdot \hskip.2cm 
\text{the length of the path} \hskip.2cm
(u_{10^n}, u_{10^n+1}, u_{10^n+2}, \hdots, u_{10^n+3n-1}, u_{10^n+3n}) \hskip.2cm
\text{is} \hskip.2cm 3n 
\hskip.1cm ;
\endaligned
\end{equation*}
hence $(X,d)$ is not geodesic.
\par

Let $c > 0$ be a constant. 
If $c < 1$, the Rips $2$-complex $\operatorname{Rips}^2_c(X)$ is not connected.
Assume $c \ge 1$, so that $\operatorname{Rips}^2_c(X)$ is connected.
Let $n$ be the integer such that $n \le c < n+1$.
In $\operatorname{Rips}^2_c(X)$, there is an edge, $\{u_{10^n}, u_{10^n + 3n} \}$,
that is not contained in any $2$-simplex.
It follows that a loop in $\operatorname{Rips}^2_c(X)$ going around
\begin{equation*}
u_{10^n}, \hskip.1cm
u_{10^n+1}, \hskip.1cm
u_{10^n+2}, \hskip.1cm
\hdots,
u_{10^n+3n}, \hskip.1cm
u_{10^n}
\end{equation*}
is not homotopic to a constant loop, so that
$\operatorname{Rips}^2_c(X)$ is not simply connected.
\end{exe}

\begin{prop}
\label{invariance_scg_lsg}
Let $X$ be a non-empty pseudo-metric space, with $x_0 \in X$.
Assume that $X$ is coarsely geodesic. The following properties are equivalent:
\begin{enumerate}[noitemsep,label=(\roman*)]
\item\label{iDEinvariance_scg_lsg}
$X$ is coarsely simply connected;
\item\label{iiDEinvariance_scg_lsg}
$X$ is coarsely equivalent to a simply connected geodesic metric space.
\end{enumerate}
\end{prop}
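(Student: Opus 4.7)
The implication (ii) $\Rightarrow$ (i) will be immediate from results already proved: a simply connected geodesic metric space is coarsely simply connected by Proposition \ref{scimplycsc}, and coarse simple connectedness is preserved under metric coarse equivalence by Proposition \ref{coarse1conninvbycoarseeq}. Combining these two facts yields (i).

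For the converse direction (i) $\Rightarrow$ (ii), the strategy is to first replace $X$ by a genuinely geodesic metric space and then invoke the Rips complex machinery. More precisely: since $X$ is coarsely geodesic, Proposition \ref{invariance_cg_lsg}(4) furnishes a geodesic metric space $X'$ that is metric coarse equivalent to $X$. Coarse simple connectedness being a coarse invariant (Proposition \ref{coarse1conninvbycoarseeq}), $X'$ inherits this property from $X$. Since $X'$ is geodesic, it is $c$-geodesic for every $c>0$, so the hypotheses of Proposition \ref{propositionpourc-geodesicspace} apply: there exists $k>0$ such that $\operatorname{Rips}^2_K(X')$ is simply connected for every $K\ge k$.

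Fix any such $K$ and set $Y=\operatorname{Rips}^2_K(X')$. Being a connected simplicial $2$-complex with the combinatorial metric, $Y$ is a complete geodesic metric space by Proposition \ref{donrealizationofsimpcx}, and it is simply connected by the choice of $K$. Because $X'$ is geodesic, it is in particular $K$-large-scale geodesic, so Proposition \ref{inclusionXinRips}(3) guarantees that the inclusion $X'\hookrightarrow Y$ is a quasi-isometry, hence a metric coarse equivalence. Composing with the metric coarse equivalence $X\to X'$ exhibits $X$ as coarsely equivalent to the simply connected geodesic metric space $Y$, establishing (ii).

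The only potentially delicate point is verifying that Proposition \ref{propositionpourc-geodesicspace} can really be applied to the auxiliary space $X'$ with a common constant $c$ witnessing both geodesicity and coarse simple connectedness. Since $X'$ is $c$-geodesic for every $c>0$ and $X'$ is $c'$-coarsely simply connected for some $c'>0$, one simply takes $c\ge c'$ and uses Remark \ref{remoncCSC}(1) to keep the coarse simple connectedness at this larger scale; no further work is required. All remaining steps are straightforward invocations of previously established propositions, so no serious obstacle is anticipated.
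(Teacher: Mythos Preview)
Your proof is correct and follows essentially the same approach as the paper's own proof: for (ii) $\Rightarrow$ (i) you invoke Propositions \ref{scimplycsc} and \ref{coarse1conninvbycoarseeq}, and for (i) $\Rightarrow$ (ii) you pass via Proposition \ref{invariance_cg_lsg}(4) to a geodesic space, transfer coarse simple connectedness, and then use Propositions \ref{propositionpourc-geodesicspace} and \ref{inclusionXinRips} to obtain a simply connected Rips complex quasi-isometric to it. The only cosmetic difference is that the paper cites Proposition \ref{inclusionXinRips} for a metric coarse equivalence rather than your (stronger) quasi-isometry via part (3), and omits your final paragraph of extra care about the constant $c$.
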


\begin{proof}
\ref{iDEinvariance_scg_lsg} $\Rightarrow$ \ref{iiDEinvariance_scg_lsg}. 
Assume that $X$ has Property \ref{iDEinvariance_scg_lsg}.
Since $X$ is coarsely geodesic, there exists a geodesic metric space $Z$
coarsely equivalent to $X$  (Proposition \ref{invariance_cg_lsg}(4)).
Since $X$ is coarsely simply connected, so is $Z$
(Proposition \ref{coarse1conninvbycoarseeq}).
For $c$ large enough, 
$\operatorname{Rips}^2_c(Z)$ is simply connected
(Proposition  \ref{propositionpourc-geodesicspace})
and the inclusion $Z  \subset \operatorname{Rips}^2_c(Z)$ is a metric coarse equivalence
(Proposition  \ref{inclusionXinRips}).
It follows that $X$ and the simply connected geodesic metric space
$\operatorname{Rips}^2_c(Z)$ are coarsely equivalent.
\par

\ref{iiDEinvariance_scg_lsg} $\Rightarrow$ \ref{iDEinvariance_scg_lsg}. 
Assume conversely that
$X$ is coarsely equivalent to a simply connected geodesic metric space $Y$.
Then $Y$ is coarsely simply connected (Proposition \ref{scimplycsc}),
and so is $X$ (Proposition \ref{coarse1conninvbycoarseeq}). 
\end{proof}

\chapter{Bounded presentations}
\label{chap_boundedpresentation}

\section{Presentations with relators of bounded length}
\label{sectionboundedlength}

We revisit  Definition \ref{cggroups} as follows:

\begin{defn}[generation]
\label{generation}
A \textbf{generation} \index{Generation of a group|textbf}
of a group $G$ is a pair made up of a set $S$
and a surjective homomorphism $\pi : F_S \twoheadrightarrow G$ 
of the free group on $S$ onto $G$. \index{Free group}
The \textbf{relations} \index{Relations in a group with generating set|textbf}
of such a generation are the elements of $\textnormal{Ker} (\pi)$.
\end{defn}

\begin{defn}[presentation]
\label{presentation}
A \textbf{presentation} \index{Presentation of a group|textbf}
of a group $G$ is a triple made up
of a generation $(S, \pi)$ as above and a subset $R$ of $F_S$
generating $\textnormal{Ker} (\pi)$ \emph{as a normal subgroup}.
The notation
\begin{equation*}
G \, = \, \langle S \mid R \rangle
\end{equation*}
summarizes such data. The set $S$ is often called an \textbf{alphabet}.
\par

Such a subset $R \subset F_S$ is called a \textbf{relating subset};
its elements are the \textbf{relators} 
\index{Relators of a presentation|textbf}
of the presentation.
Observe that the relations
are the elements of the form $\prod_{i=1}^k w_i r_i w_i^{-1}$, 
with $k \ge 0$, $r_1, \hdots, r_k \in R \cup R^{-1}$,
and $w_1, \hdots, w_k \in F_S$.
\par

A group is \textbf{finitely presented} 
\index{Finitely presented group|textbf}
if it is given as $G = \langle S \mid R \rangle$ with both $S$ and $R$ finite,
and \textbf{finitely presentable} if it can be given in such a way.
It is improper, but nevertheless standard, to use ``finitely presented group''
for ``finitely presentable group''.
\par

Consider two alphabets $S_1, S_2$, 
the free group $F_1$ on $S_1$ 
and the free group $F$ on the disjoint union $S_1 \sqcup S_2$,
and identify naturally $F_1$ to a subgroup of $F$.
Consider also subsets $R_1 \subset F_1$ and $R_2 \subset F$.
We can define two groups $G_1, G$ by the presentations
$G_1 = \langle S_1 \mid R_1 \rangle$, $G = \langle S_1, S_2 \mid R_1, R_2 \rangle$.
We write shortly (and abusively)
\begin{equation*}
G \, = \, \langle G_1, S_2 \mid R_2 \rangle
\hskip.5cm \text{for} \hskip.5cm
G \, = \, \langle S_1, S_2 \mid R_1 , R_2 \rangle .
\end{equation*}
In some important cases, such as amalgamated products and HNN-extensions
as in Section  \ref{sectionamalgamHNN},
it can be established that the natural homomorphism $G_1 \longrightarrow G$ is injective.
\end{defn}

\begin{rem}
\label{remgenerationpresentation}
(1)
In Definitions \ref{generation} and \ref{presentation},
as in the footnote to \ref{PestovEtc}(3),
we emphasize that, even if $G$ is a topological group and $S$ a subset of $G$,
the free group $F_S$ is \emph{not} considered with any topology.

\vskip.2cm

(2)
If the restriction of $\pi$ to $S$ is injective,
it is standard to identify $S$ with its image in $G$.

\vskip.2cm

(3) Let $\pi : F_S \longrightarrow G = \langle S \mid R \rangle$
be a presentation of a group $G$.
Let $\underline R$ be the natural image of $R$ in $F_{\pi(S)}$;
in other words,  the elements of $\underline R$ 
are the words in the letters of $\pi(S)$ and their inverses
obtained from the relators in $R$ 
first by substitution of $\pi(s)$ for $s$,  for all $s \in S \cup S^{-1}$,
and then by deletion of those $\pi(s)$ equal to $1$ in $G$.
Let $\pi(S)'$ stand for $\pi(S) \smallsetminus \{1\}$ if $1 \in \pi(S)$,
and for $\pi(S)$ otherwise;
we have a generation $F_{\pi(S)'} \longrightarrow G$
and a presentation $\langle \pi(S)' \mid \underline R \rangle$ of $G$.
\end{rem}

\begin{exe}
\label{expresentation1}
Let $G$ be a group acting by homeomorphisms on a non-empty topological space $X$;
let $U$ be an open subset of $X$ such that $GU = X$. 
Set
\begin{equation*}
\aligned
S \, &= \, \{ s \in G \mid U \cap sU \ne \emptyset \} ,
\\
R \, &= \, \{ stu^{-1} \in F_S \mid 
s,t,u \in S \hskip.1cm \text{with} \hskip.1cm
st = u \hskip.1cm \text{in} \hskip.1cm G \hskip.1cm \text{and} \hskip.1cm
U \cap sU \cap uU \ne \emptyset \} .
\endaligned
\end{equation*}
\begin{enumerate}[noitemsep,label=(\arabic*)]
\item\label{1DEexpresentation1}
If $X$ is connected, $S$ generates $G$.
\item\label{2DEexpresentation1}
If $X$ is simply connected and $X,U$ are path-connected,
$\langle S \mid R \rangle$ is a presentation of $G$.
\end{enumerate}
\end{exe}

\noindent \emph{Note:} if the action is proper and $U$ is relatively compact, 
then $S$ and $R$ are finite.

\begin{proof}
\ref{1DEexpresentation1}
Let $H$ be the subgroup of $G$ generated by $S$.
Set $Y = HU$ and $Z = (G \smallsetminus H)U$;
they are open subsets of $X$.
Then $Y \cap Z = \emptyset$;
indeed, otherwise there would exist $h \in H$ and $g \in G \smallsetminus H$
such that $h^{-1}gU \cap U \ne \emptyset$,
hence such that $g \in hS \subset H$,
in contradiction with $g \in G \smallsetminus H$.
Since $Y$ is non-empty (because $U \subset Y$)
and $X$ connected, $Y = X$.
\par

Consider now $g \in G$. There exists $h \in H$ with $gU \cap hU \ne \emptyset$.
Thus $h^{-1}g \in S$, and $g \in hS \subset H$. Hence $H = G$.
\par

\ref{2DEexpresentation1}
The proof, that we do not reproduce here, has three steps.
First one introduces an $S$-labelled ``Cayley complex'' $K$,
with vertex set $G$, labelled edges of the form $(g_1, g_2, s)$ where $g_2 = g_1s$, $s \in S$,
and $2$-cells attached to edge-loops labelled by words in $R$.
Then one shows that $K$ is simply connected.
From this one concludes that $G = \langle S \mid R \rangle$.
\par

The result is from \cite{Macb--64}.
Actual proofs can also be found in 
\cite[Appendix of Section I.3, Pages 45--46]{Serr--77}
and \cite[Theorem I.8.10, Page 135]{BrHa--99}.
Serre indicates related references.
\end{proof}

\begin{defn}[bounded presentation]
A \textbf{bounded presentation} for a group $G$ 
\index{Bounded! bounded presentation|textbf}
is a presentation $G = \langle S \mid R \rangle$
with $R$ a set of relators of bounded length.
A group is \textbf{boundedly presented over $S$} 
\index{Boundedly! boundedly presented group|textbf}
if it is given by such a presentation.
\end{defn}

\begin{exe}
\label{examplesboundpres}
(1)
Let $G$ be any group. Let $S$ be a set given with a bijection
$G \longrightarrow S, g \longmapsto s_g$.
In the free group $F_S$ on $S$, define the subset
\begin{equation*}
R \, = \,  \{s_g s_h s_k^{-1} \in F_S \mid
g,h \in G \hskip.2cm \text{and} \hskip.2cm k = gh \} .
\end{equation*}
Then $G = \langle S \mid R \rangle$ is a bounded presentation,
with all relators of length $3$.

\vskip.2cm

(2)
If $G$ has a finite generating set $S$, 
then $G$ is boundedly presented over $S$
if and only if $G$ is finitely presentable.

\vskip.2cm

(3 -- Coxeter groups and Artin-Tits groups).
Let $S$ be an abstract set (possibly infinite) with Coxeter data, 
i.e.,  with a symmetric matrix $M = (m_{s,t})_{s,t \in S}$
having coefficients in $\N_{\ge 1} \cup \{\infty\}$,
with $m_{s,s} = 1$ for all $s \in S$ and
$m_{s,t} \ge 2$ whenever $s \ne t$.
The corresponding \textbf{Coxeter group}
\index{Coxeter group|textbf}
is defined by the presentation
\begin{equation*}
W_M \, = \, 
\langle 
S \mid
(st)^{m_{s,t}}  = 1 \hskip.2cm \forall \hskip.1cm s,t \in S
\hskip.2cm \text{with} \hskip.2cm  m_{s,t} < \infty
\rangle ,
\end{equation*}
and the \textbf{Artin-Tits group} by \index{Artin-Tits group|textbf}
\begin{equation*}
A_M \, = \, 
\langle S \mid  
s t s \cdots = t s t \cdots 
\hskip.2cm \forall \hskip.1cm s,t \in S
\hskip.2cm \text{with} \hskip.2cm s \ne t
\hskip.2cm \text{and} \hskip.2cm  m_{s,t} < \infty
\rangle
\end{equation*}
(with a product of $m_{s,t}$ factors on each side of the last equality sign).
These presentations are bounded
if and only if 
$\sup \{ m_{s,t} \mid s,t \in S \hskip.2cm \text{with} \hskip.2cm m_{s,t} < \infty \}$
is finite. 
\par

For general $M$ as above, Artin-Tits groups are rather mysterious. 
For example, it is conjectured but unknown that $A_M$ is always torsion-free.
This and other open problems on Artin-Tits groups are discussed in \cite{GoPa--12}.

\vskip.2cm

(4 -- Steinberg groups).
Let $R$ be an associative ring (not necessarily commutative or unital) 
and $n$ an integer, $n \ge 3$. 
The \textbf{Steinberg group}  $\operatorname{St}_n(R)$
\index{Steinberg group|textbf}
is the group defined by the following bounded presentation:
there are generators $e_{i,j}(r)$ indexed by 
pairs $(i,j) \in \{1, \hdots, n\}^2$ with $i \ne j$ and $r \in R$, 
and relators
\begin{equation*}
\aligned
e_{i,j}(r)e_{i,j}(s)=e_{i,j}(r+s), & \quad i,j\textnormal{ distinct,} 
\\
[e_{i,j}(r),e_{j,k}(s)]=e_{i,k}(rs), & \quad i,j,k\textnormal{ distinct,} 
\\
[e_{i,j}(r),e_{k,\ell}(s)]=1, & \quad i \ne j, \ell \hskip.2cm \text{and} \hskip.2cm  k \ne j, \ell ,
\endaligned
\end{equation*}
all of length $\le 5$ (where $[a,b]$ stands for $aba^{-1}b^{-1}$).
\par

When $n=2$, the group $\operatorname{St}_2(R)$
is usually defined with additional generators and relators;
we refer to \cite[Pages 40 and 82]{Miln--71}.

\vskip.2cm

(5) For $R$ and $n$ as in (4), there is a natural homomorphism
$\operatorname{St}_n(R) \longrightarrow \SL_n(R)$.
In some cases, we know a presentation of $\SL_n(R)$
closely related to this homomorphism and the presentation of (4). 
For example, when $R = \Z$ and $n \ge 3$,
we have the \emph{Steinberg presentation}
\index{Special linear group $\SL$! $\SL_n(\Z)$, $\GL_n(\Z)$}
\footnotesize
\begin{equation*}
\SL_n(\Z) \, = \, \left\langle e_{i,j}  
\aligned
\hskip.2cm & \text{with} \hskip.2cm i,j =1, \hdots, n 
\\
\hskip.2cm & \text{and} \hskip.2cm i \ne j
\endaligned
\hskip.2cm \Bigg\vert \hskip.2cm
\aligned
& [e_{i,j}, e_{j,k}] = e_{i,k} 
\hskip.2cm \text{if} \hskip.2cm i,j,k \hskip.2cm \text{are distinct} 
\\
& [e_{i,j}, e_{k,\ell}] = 1
\hskip.2cm \text{if} \hskip.2cm i \ne j, \ell 
\hskip.2cm  \text{and} \hskip.2cm k \ne j, \ell
\\
& (e_{1,2}e_{2,1}^{-1}e_{1,2})^4 = 1 
\endaligned
\right\rangle
\end{equation*}
\normalsize
with relators of length $\le 12$
\cite[$\S$~10]{Miln--71}.
Here, $e_{i,j}$ is the \textbf{elementary matrix} in $\SL_n(\Z)$,
\index{Elementary matrices|textbf}
with $1$ 's on the diagonal and at the intersection of the $i$th row and the $j$th column,
and $0$ 's elsewhere.
For $\SL_2(\Z)$, we refer to \cite[Theorem 10.5]{Miln--71}.
\end{exe}

\begin{defn}[defining subset]
Let $G$ be a group and $S$ a generating subset. 
We say that $S$ is a defining generating subset, 
or shortly a \textbf{defining subset}, \index{Defining subset in a group|textbf}
if $G$ is presented with $S$ as generating set, 
and $\{ stu^{-1} \in F_S \mid s,t,u \in S \hskip.2cm \text{and} \hskip.2cm st=u \}$
as set of relators.
\end{defn}

Clearly, if $S$ is a defining generating subset of $G$, 
then $G$ is boundedly presented over~$S$. 
The following lemma, which is \cite[Hilfssatz 1]{Behr--67},
is a weak converse. 
Recall that ${\widehat S}^n := (S \cup S^{-1} \cup \{1\})^n$
has been defined in \ref{cggroups}.

\begin{lem}
\label{lem:boun_pres_def_gen}
Let $G$ be a group given with a bounded presentation
\begin{equation*}
G \, = \, \left\langle S \mid
R = \left(r_\iota\right)_{\iota \in I} \right\rangle
\end{equation*}
with relators $r_\iota$ of length bounded by some integer $n$.
Let $m$ be the integer part of $\frac{n+2}{3}$.
Then ${\widehat S}^m$ is a defining generating subset of $G$.
\end{lem}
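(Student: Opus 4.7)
Write $T = \widehat S^m \subseteq G$, denote by $[g] \in F_T$ the free generator corresponding to $g \in T$, let $\pi : F_T \twoheadrightarrow G$ be the epimorphism $[g] \mapsto g$, and let $N$ be the normal closure in $F_T$ of
\[
R_T \;=\; \{[s][t][u]^{-1} : s,t,u \in T,\ st = u \text{ in } G\}.
\]
I want to show $\ker\pi = N$; one inclusion $N \subseteq \ker\pi$ is trivial, so the plan is to produce an inverse to the induced map $\overline\pi : F_T/N \to G$. The natural candidate is the map $\overline\iota : G \to F_T/N$ descending from $\iota : F_S \to F_T,\ s \mapsto [s]$, and the whole proof reduces to checking $\iota(R) \subseteq N$.

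The main intermediate step will be a \emph{Key Lemma}: for each $t \in T$ and each factorisation $t = s_1 \cdots s_k$ with $s_i \in \widehat S$ and $k \le m$, one has $[t] \equiv [s_1]\cdots[s_k] \pmod N$. I will prove this by induction on $k$, the point being that every partial product $s_1 \cdots s_{k-1}$ still lies in $\widehat S^{k-1} \subseteq \widehat S^m = T$, so that the relator $[s_1\cdots s_{k-1}][s_k][t]^{-1}$ genuinely belongs to $R_T$. The degenerate case $k = 0$ requires only the observation that $[1][1][1]^{-1} \in R_T$ freely reduces to $[1]$, so $[1] \in N$; this in turn implies $[g^{-1}] \equiv [g]^{-1}$ in $F_T/N$ for every $g \in T$, via the relator $[g][g^{-1}][1]^{-1}$.

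To show $\iota(R) \subseteq N$, I will use that the constant $m = \lfloor(n+2)/3\rfloor$ is engineered so that $3m \ge n$. Given $r = x_1 \cdots x_\ell \in R$ with $\ell \le n$ and $x_i \in \widehat S$, group the $x_i$ into $p \le 3$ consecutive blocks each of length at most $m$, and let $y_j \in T$ be the product of the $j$-th block. By the Key Lemma, $\iota(r) \equiv [y_1]\cdots[y_p] \pmod N$, while $y_1 \cdots y_p = 1$ in $G$. Three small cases then finish the job: $p = 1$ gives $y_1 = 1$, hence $[y_1] = [1] \in N$; $p = 2$ gives $y_2 = y_1^{-1}$, so $[y_1][y_2] \equiv [1] \in N$ via $[y_1][y_2][1]^{-1} \in R_T$; and for $p = 3$ the element $u := y_1 y_2 = y_3^{-1}$ lies in $T$ by symmetry of $\widehat S^m$, so the two relators $[y_1][y_2][u]^{-1}$ and $[u][y_3][1]^{-1}$ combine with $[1] \in N$ to yield $[y_1][y_2][y_3] \in N$.

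Once $\iota(R) \subseteq N$ is secured, normality of $N$ upgrades it to $\iota(\langle\!\langle R\rangle\!\rangle_{F_S}) \subseteq N$, so $\iota$ descends to $\overline\iota : G \to F_T/N$. The identity $\overline\pi\,\overline\iota = \mathrm{id}_G$ is immediate on $S$-generators, and $\overline\iota\,\overline\pi = \mathrm{id}_{F_T/N}$ on a generator $[t]$ reduces precisely to the Key Lemma applied to any chosen factorisation $t = s_1 \cdots s_k$ with $k \le m$. The only truly delicate ingredient is the numerology $3m \ge n$, which is exactly what makes the three-block decomposition possible and forces each resulting $y_j$ into $T$; everything else is careful bookkeeping with the triangle relators of $R_T$.
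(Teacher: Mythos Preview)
Your proof is correct and follows essentially the same route as the paper: the heart of both arguments is the observation that $3m \ge n$ allows every relator in $R$ to be split into at most three blocks whose products lie in $\widehat S^m$, after which the triangle relators $R_T$ do the rest. The paper phrases this via Tietze transformations (successively adding the generators $S_2,\dots,S_m$ together with incremental relators $Q_k:\underline{w}\,\underline{s}=\underline{ws}$, then observing that modulo these each $r_\iota$ reduces to a length-$\le 3$ word in $S_m$), whereas you build an explicit inverse $\overline\iota$ to $\overline\pi$ and verify the two composites directly. Your Key Lemma plays exactly the role of the $Q_k$'s in the paper; your case analysis for $p\le 3$ is a touch more explicit than the paper's corresponding step, but the underlying computation is identical.
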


\begin{proof}
Without loss of generality, we assume that the generating set
is symmetric and contains $1$, i.e., that $\widehat S = S$;
thus 
\begin{equation*}
\{1\} = S^0 \, \subset \, S^1 = S \,  \subset \,  S^2 \, \subset \,  S^3 \, \subset \,  \cdots .
\end{equation*}
Since $1 \in S$, we can assume that each relator $r \in R$
has length exactly $n$.
\par

For every $m \ge 1$, let $S_m$ be a set given 
with a bijection with the subset $S^m$ of $G$.
For a group element $w \in S^m$, 
we write $\underline w$ the corresponding ``letter'' in $S_m$.
Moreover, we identify $S_m$ with the appropriate subset of $S_{m+1}$.
\par

For $m \ge 2$, define $Q_m$ to be the set of words in the letters of $S_m$
consisting of the relators $\underline{w} \hskip.1cm \underline{s} = \underline{ws}$,
with $\underline{w} \in S_{m-1}$, $\underline{s} \in S_1$, 
and therefore $\underline{ws} \in S_m$;
such an equation stands improperly for a word
$\underline{w} \hskip.1cm \underline{s} \hskip.1cm (\underline{ws})^{-1}$ 
of length at most $3$.
We have
\begin{equation*}
\aligned
G \, = \, \left\langle S \mid  R \right\rangle 
\, &= \, \left\langle S_2 \mid  R \cup Q_2 \right\rangle 
\, = \, \left\langle S_3 \mid  R \cup Q_2 \cup Q_3 \right\rangle 
\\
\, = \, \cdots
\, &= \, \left\langle S_m \mid  R \cup Q_2 \cup \cdots \cup Q_m \right\rangle .
\endaligned
\end{equation*}
Assume now that $m \ge \frac{n+2}{3}$.
Each $r_\iota \in R$ can be first split 
in at most three subwords of lengths at most $m$,
and then rewritten as a word of length at most three
in the letters of $S_m$.
Hence $G = \left\langle S_m \mid   Q_2 \cup \cdots \cup Q_m \right\rangle$.
If $R'$ is the set of relators of the form 
$\underline{u} \hskip.1cm \underline{v} \hskip.1cm \underline{w}^{-1}$,
with $\underline{u},  \underline{v}, \underline{w} \in S_m$
and $uv = w \in S^m \subset G$,
then  $Q_2 \cup \cdots \cup Q_m \subset R'$ and $G = \left\langle S^m \mid  R' \right\rangle$.
Hence $S^m$ is a defining subset of $G$.
\end{proof}

\begin{lem}
\label{boundedpresentationforS1andS2}
Let $G$ be a group, $S_1,S_2$ generating subsets of $G$,
and $m \ge 1$ an integer.
\begin{enumerate}[noitemsep,label=(\arabic*)]
\item\label{1DEboundedpresentationforS1andS2}
$G$ is boundedly presented over ${\widehat S}_1^m$
if and only if
it is boundedly presented over $S_1$. 
\item\label{2DEboundedpresentationforS1andS2}
If $S_1 \subset S_2 \subset {\widehat S}_1^m$
and if $G$ is boundedly presented over $S_1$, 
then it is boundedly presented over $S_2$.
\item\label{3DEboundedpresentationforS1andS2} 
If ${\widehat S}_1^m \subset {\widehat S}_2^n \subset {\widehat S}_1^{m'}$ 
for some $n,m' \ge 1$, 
then $G$ is boundedly presented over $S_1$ if and only if it is
boundedly presented over $S_2$.
\item\label{4DEboundedpresentationforS1andS2}
Assume that $G$ is an LC-group; 
let $S_1, S_2$ be two compact generating sets.
If $G$ is boundedly presented over $S_1$,
then $G$ is boundedly presented over $S_2$.
\end{enumerate}
\end{lem}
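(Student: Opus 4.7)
The plan is to prove the four parts in the order (2), (1), (3), (4), since each part builds on the previous one, and the real content lies in (2) and the reverse direction of (1).

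For (2), assume $G=\langle S_1\mid R_1\rangle$ with $\sup_{r\in R_1}|r|\le n_1$, and $S_1\subset S_2\subset\widehat S_1^m$. For each $s\in S_2\setminus S_1$ choose a word $u_s\in F_{S_1}$ of length $\le m$ whose image in $G$ equals $s$; set $u_s=s$ for $s\in S_1$. I claim
\[
G \, \cong \, \langle S_2 \mid R_1\cup\{s u_s^{-1} : s\in S_2\setminus S_1\}\rangle,
\]
giving a bounded presentation over $S_2$ with relator lengths at most $\max(n_1,m+1)$. The isomorphism is obtained by exhibiting homomorphisms in both directions from the universal properties: the inclusion $F_{S_1}\hookrightarrow F_{S_2}$ descends to $G\to H$ (where $H$ is the right-hand side) because every relator of $G$ is among the new relators; conversely $F_{S_2}\to G$ sending $s\mapsto s\in G$ kills both $R_1$ and $su_s^{-1}$. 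The two composites are the identity on generators.

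For (1), the direction $S_1\Rightarrow\widehat S_1^m$ is immediate from (2) with $S_2=\widehat S_1^m$. For the converse, suppose $G=\langle T\mid R\rangle$ with $T=\widehat S_1^m$ and $\sup_{r\in R}|r|\le n$. For each $t\in T$ pick a word $w_t\in F_{S_1}$ of length $\le m$ representing $t$ in $G$, with $w_s=s$ for $s\in S_1$ and $w_1$ the empty word. Define the homomorphism $\psi:F_T\to F_{S_1}$ by $\psi(t)=w_t$, and write $\pi_T:F_T\twoheadrightarrow G$, $\pi_1:F_{S_1}\twoheadrightarrow G$ for the canonical projections; they satisfy $\pi_1\circ\psi=\pi_T$ and $\psi$ restricts to the identity on $F_{S_1}\subset F_T$. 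The first identity gives $\psi(\ker\pi_T)\subset\ker\pi_1$; the second gives $\ker\pi_1\subset\psi(\ker\pi_T)$ (any $w\in\ker\pi_1\subset F_{S_1}$ lies in $\ker\pi_T$ and equals $\psi(w)$). Because $\psi$ already surjects onto $F_{S_1}$ through its action on the generators $s\in S_1\subset T$, applying $\psi$ to the decomposition $\ker\pi_T=\langle u r u^{-1}:u\in F_T,\;r\in R\cup R^{-1}\rangle$ yields
\[
\ker\pi_1 \, = \, \langle v\,\psi(r)\,v^{-1}:v\in F_{S_1},\;r\in R\cup R^{-1}\rangle \, = \, \langle\!\langle\psi(R)\rangle\!\rangle_{F_{S_1}}.
\]
Thus $G=\langle S_1\mid\psi(R)\rangle$ is a bounded presentation with relators of length at most $mn$. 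The main obstacle in the whole proof is exactly this transfer of normal closure under $\psi$: it is crucial that $\psi$ is surjective, which is why $w_s=s$ is imposed on $s\in S_1$; without that, one would only obtain that $\psi(R)$ normally generates $\ker\pi_1$ inside the subgroup $\psi(F_T)$, not inside $F_{S_1}$.

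Part (3) follows by chaining (1) and (2): replacing each $S_i$ by $\widehat S_i^{k_i}$ for any $k_i\ge 1$ does not affect bounded presentability by (1), so the double inclusion $\widehat S_1^m\subset\widehat S_2^n\subset\widehat S_1^{m'}$ combined with (2) applied twice gives the equivalence. For (4), I would invoke Proposition~\ref{powersSincpgroup}(6), which provides integers $k,\ell$ with $S_2\subset\widehat S_1^k$ and $S_1\subset\widehat S_2^\ell$; these give the chain $\widehat S_1\subset\widehat S_2^\ell\subset\widehat S_1^{k\ell}$, to which (3) applies directly (with $m=1$, $n=\ell$, $m'=k\ell$), finishing the lemma.
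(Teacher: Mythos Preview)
Your proof is correct and follows essentially the same route as the paper. Part~(2) is identical; for the reverse direction of~(1) you spell out explicitly the substitution map $\psi:F_T\to F_{S_1}$ and the transfer of normal closures under a surjection, whereas the paper simply declares the claim ``straightforward'' and points to the proof of Lemma~\ref{lem:boun_pres_def_gen}. Parts~(3) and~(4) match the paper's chaining argument, with only cosmetic differences in the choice of exponents.
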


\begin{proof}
\ref{1DEboundedpresentationforS1andS2}
This claim is straightforward;
compare with the proof of Lemma \ref{lem:boun_pres_def_gen}.
\par

\ref{2DEboundedpresentationforS1andS2}
Let $G = \langle S_1 \mid R_1 \rangle$ be a bounded presentation,
with relators in $R_1$ of $S_1$-lengths at most $b$, say.
Since $S_2 \subset {\widehat S}_1^m$,
there exists for each $s \in S_2 \smallsetminus S_1$ 
a $S_1$-word $w_s$ of length at most $m$ such that $s = w_s$;
let $R_2$ denote the set of $S_2$-words of the form $s^{-1}w_s$.
Then $\langle S_2 \mid R_1 \sqcup R_2 \rangle$
is a presentation of $G$ in which each relator is of length
at most $\max \{b,m+1\}$.
\par

\ref{3DEboundedpresentationforS1andS2} 
Suppose that $G$ is boundedly presented over $S_1$. 
Then $G$ is boundedly presented 
\begin{enumerate}[noitemsep]
\item[]
over ${\widehat S}_1^m$, by \ref{1DEboundedpresentationforS1andS2},
\item[]
and over ${\widehat S}_ 2^n$, by \ref{2DEboundedpresentationforS1andS2} applied to
${\widehat S}_1^m \subset {\widehat S}_2^n \subset {\widehat S}_1^{mm'}$, 
\item[]
and also over $S_2$, by \ref{1DEboundedpresentationforS1andS2} again.
\end{enumerate}
Since ${\widehat S}_2^{nm} \subset {\widehat S}_1^{m'm} \subset {\widehat S}_2^{m'n}$,
the converse holds by the same argument.
\par

\ref{4DEboundedpresentationforS1andS2}
We may assume that $m$ is such that ${\widehat S}_1^m$ is a compact neighbourhood of~$1$
(Proposition \ref{powersSincpgroup}).
The group $G$ is boundedly presented over ${\widehat S}_1^m$, 
by \ref{1DEboundedpresentationforS1andS2}.
Moreover, for appropriate values of $n$ and $m$,
we have ${\widehat S}_1^m \subset {\widehat S}_2^n \subset {\widehat S}_1^{m'}$.
Thus \ref{4DEboundedpresentationforS1andS2} 
follows from~\ref{3DEboundedpresentationforS1andS2}.
\end{proof}

\begin{exe}
\label{rem:Z_not_bp_by_big_subset}
(1)
Any ``boundedly generated'' group is boundedly presented.
More precisely, let $G$ be a group, $S$ a symmetric generating set,
and $d_S$ the corresponding word metric on $G$.
Assume that the metric space $(G,d_S)$ has finite diameter,
i.e., that $N := \sup_{g \in G} d_S(1,g) < \infty$.
Then $G$ is boundedly presented over~$S$.
\par

This follows from Lemma 7.A.9(1),
since $G$ is boundedly presented over
$\widehat S^N = G$.
\par

Note that there are infinite groups, 
for example the full symmetric group 
$G = \operatorname{Sym}(X)$ of an infinite set $X$, 
such that the diameter of $(G, d_S)$ is finite for \emph{every} generating set $S$.
\index{Symmetric group $\operatorname{Sym}(X)$ of a set $X$}
See  \cite{Berg--06}, as well as our Definition \ref{defTSB} and Example 
\ref{Symuncountablecof}.

\vskip.2cm

(2)
It is not true that, 
if a group $G$ is boundedly presented over $S_1$ and $S_2\supset S_1$, 
then $G$ is necessarily boundedly presented over $S_2$. 
\par

Indeed, consider the case $G = \Z$ of the infinite cyclic group.
Consider a strictly increasing sequence $(m_k)_{k \ge 1}$ of positive integers such that
\begin{equation*}
m_1 \, = \, 1 \hskip.5cm \text{and} \hskip.5cm
\lim_{k \to \infty} \frac{m_k}{m_{k-1}} \, = \, \infty .
\end{equation*}
On the one hand, we have a presentation 
$\Z = \left\langle u \mid \emptyset \right\rangle$,
with one generator $u$ representing $1 \in \Z$ and no relators.
On the other hand, consider a presentation
$\left\langle S \mid R' \right\rangle$
with an infinite generating set $S = \{s_k\}_{k \ge 1}$
such that $s_k \in S$ represents $m_k \in \Z$ for each $k \ge 1$.
\par

We claim that the presentation $\left\langle S \mid R' \right\rangle$
is not bounded, 
in particular that $\Z$ is \emph{not} boundedly presented over $S$.
\end{exe}

\begin{proof}[Proof of the claim]
Suppose \emph{ab absurdo} that there exists an integer $n \ge 4$
such that the $S$-length of any $r' \in R'$ is at most $n$.
We will arrive at a contradiction.
\par

Each relator $r' \in R'$ determines a relator 
$r'_{\operatorname{ab}} = s_1^{\nu_1} s_2^{\nu_2} s_3^{\nu_3} \cdots$
such that $r'$ and $r'_{\operatorname{ab}}$ define the same element
in the free \emph{abelian} group on the set of generators $S$.
In other terms,  $r'_{\operatorname{ab}} = \prod_{k \ge 1} s_k^{\nu_k}$,
with $\nu_k \in \Z$ for each $k \ge 1$ and $\nu_k = 0$
for all but at most $n$ values of $k$.
We denote by $R'_{\operatorname{ab}}$ 
the set of these $r'_{\operatorname{ab}}$, for $r' \in R'$.
Set $R'' = \{s_k s_\ell s_k^{-1} s_\ell^{-1} \mid k,\ell \ge 1 \}$, 
and $R = R'_{\operatorname{ab}} \sqcup R''$.
We have a new bounded presentation
$G = \left\langle S \mid R \right\rangle$
where any $r \in R$ is a word of length $\le n$.
\par

Let $k_0$ be such that $\frac{m_k}{m_{k-1}} > n$ for all $k \ge k_0$.
There exists in $R'_{\operatorname{ab}}$ a relator  $r'_{\operatorname{ab}}$
containing a letter $s_j$ with $j \ge k_0$; 
let $s_\ell$ denote the letter of maximal index that occurs in $r'_{\operatorname{ab}}$;
we can write
\begin{equation*}
r'_{\operatorname{ab}} \, = \,  
\Big( \prod_{k=1}^{\ell - 1} s_k^{\nu_k} \Big) s_\ell^{\nu_\ell} ,
\hskip.5cm \text{with} \hskip.2cm \nu_\ell \ne 0
\hskip.2cm \text{and} \hskip.2cm \sum_{k=1}^\ell \vert \nu_k \vert \, \le \, n .
\end{equation*}
On the one hand, we have 
$\sum_{k=1}^{\ell - 1} \nu_k m_k = -\nu_\ell m_\ell$,
hence 
\begin{equation*}
\Big\vert \sum_{k=1}^{\ell - 1} \nu_k \frac{m_k}{m_\ell} \Big\vert \, = \, 
\vert \nu_\ell \vert \, \ge \, 1 .
\end{equation*}
On the other hand,
since $\frac{m_k}{m_\ell} < \frac{1}{n}$ for $k = 1, \hdots, \ell-1$,
we have also
\begin{equation*}
\Big\vert \sum_{k=1}^{\ell - 1} \nu_k \frac{m_k}{m_\ell} \Big\vert
\, < \, 
\sum_{k=1}^{\ell - 1} \vert \nu_k \vert \hskip.1cm \frac{1}{n}
\, \le \, 1 ,
\end{equation*}
and this is the announced contradiction. 
\end{proof}

Bounded presentations of groups and their quotients are related as follows:

\begin{lem}
\label{lem:bp_to_quot}
Let 
$
N \lhook\joinrel\relbar\joinrel\rightarrow G \overset{\pi}{\relbar\joinrel\twoheadrightarrow} Q
$ 
be a short exact sequence of groups.
\begin{enumerate}[noitemsep,label=(\arabic*)]
\item\label{1DElem:bp_to_quot}
Assume that $G$ is boundedly presented over a set $S$
and that $N$ is generated, as a normal subgroup,
by $N \cap {\widehat S}^n$ for some $n$. 
Then $Q$ is boundedly presented over $\pi(S)$.
\item\label{2DElem:bp_to_quot}
Let $\rho :  F_S \twoheadrightarrow G$ be a generation of $G$
such that the kernel of $\pi \rho : F_S \twoheadrightarrow Q$
is generated as a normal subgroup by a set $R$ of relators
of length at most $k$, for some positive integer $k$
(in particular, we have a bounded presentation
$Q = \langle \pi\rho(S) \mid R \rangle$).
Then $N$ is generated as a normal subgroup of $G$
by $\rho(\widehat S^k) \cap N$.
\end{enumerate}
\end{lem}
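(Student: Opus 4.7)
The plan is to handle both parts through the same lifting principle: to control the kernel of a surjection between groups, one lifts generators of the image-side kernel back through the map, using a normal generating set of the ambient kernel to measure the defect.

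For part (1), let $\rho : F_S \twoheadrightarrow G$ denote the canonical surjection associated to the given bounded presentation $G = \langle S \mid R \rangle$, with all $r \in R$ of length at most some bound $b$. The crucial observation is that $\ker(\pi\rho) = \rho^{-1}(N)$, so producing a bounded presentation of $Q$ over $\pi(S)$ amounts to exhibiting a normal generating set of $\rho^{-1}(N)$ in $F_S$ consisting of words of bounded length. For each element of $N \cap \widehat S^n$, I would choose a word in $F_S$ of length at most $n$ representing it; call the resulting set $\widetilde T$. The candidate presentation is $Q = \langle \pi(S) \mid R \cup \widetilde T \rangle$, with relators of length at most $\max(b,n)$. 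To verify that $R \cup \widetilde T$ normally generates $\rho^{-1}(N)$ in $F_S$, I take $w \in F_S$ with $\rho(w) \in N$, use the hypothesis on $N$ to write $\rho(w) = \prod_i g_i t_i g_i^{-1}$ with $t_i \in N \cap \widehat S^n$ and $g_i \in G$, lift each $t_i$ to the corresponding $\widetilde t_i \in \widetilde T$ and each $g_i$ arbitrarily to some $\widetilde g_i \in F_S$, and observe that the element $w \cdot \bigl( \prod_i \widetilde g_i \widetilde t_i \widetilde g_i^{-1} \bigr)^{-1}$ lies in $\ker(\rho)$, which by assumption is the normal closure of $R$ in $F_S$.

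For part (2), I would proceed dually. Given $n \in N$, pick any $\widetilde n \in F_S$ with $\rho(\widetilde n) = n$; since $\pi(n) = 1$, we have $\widetilde n \in \ker(\pi\rho)$, so by hypothesis $\widetilde n = \prod_i w_i r_i w_i^{-1}$ with $r_i \in R$ and $w_i \in F_S$. Applying $\rho$ expresses $n$ as a product of $G$-conjugates of the elements $\rho(r_i)$. Each $r_i \in R$ has length at most $k$ as a word in $S \cup S^{-1}$, so $\rho(r_i) \in \rho(\widehat S^k)$; moreover $r_i \in \ker(\pi\rho)$ forces $\rho(r_i) \in \ker(\pi) = N$. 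Hence each $\rho(r_i)$ belongs to $\rho(\widehat S^k) \cap N$, and $n$ lies in the normal closure of this set in $G$.

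Neither implication presents any real obstacle; both are essentially formal, resting only on the definition of normal closure and the existence of lifts through the surjections $\rho$ and $\pi\rho$. The one point requiring mild care is in part (1): the lifts in $\widetilde T$ must be chosen of length at most $n$ in order to preserve the bounded-length condition, but this is immediate from the very definition of $\widehat S^n$ as the set of elements expressible by words of length at most $n$ in $S \cup S^{-1} \cup \{1\}$.
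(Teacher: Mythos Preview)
Your proof is correct and follows essentially the same approach as the paper. For part~(1) the paper likewise adjoins to $R$ a set of short words lifting the normal generators $N\cap\widehat S^n$; for part~(2) the paper argues by passing to the quotient $G/M$ (where $M$ is the normal closure of $\rho(\widehat S^k)\cap N$) and comparing $\ker(\rho)$ with $\ker(\pi\rho)$, whereas you lift directly and push down---but the content is identical.
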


\begin{proof} 
\ref{1DElem:bp_to_quot}
For each $r \in R$, let $\underline r$ denote the word in the letters of $\pi (\widehat S)$
obtained by replacing each letter $s \in \widehat S$ of $r$
by the corresponding letter $\pi(s) \in \pi (\widehat S)$;
let $\underline R_1$ denote the set of these $\underline r$.
For each $g \in N \cap {\widehat S}^n$, choose $s_1, \hdots, s_n \in \widehat S$
such that $g = s_1 \cdots s_n$;
let $\underline R_2$ denote the set of words of the form
$\pi(s_1) \cdots \pi(s_n)$.
Then $\langle \pi (\widehat S) \mid 
\underline R_1 \cup \underline R_2 \rangle$
is a bounded presentation of $G/N$.
\par

\ref{2DElem:bp_to_quot} 
Let $M$ be the normal subgroup of $G$ generated by $\rho(\widehat S^k) \cap N$.
It is clear that $M \subset N$, and we have to show that $M = N$.
Upon replacing $G$ by $G/M$,
we can assume that $M = \{1\}$, and we have to show that $N = \{1\}$.
\par

Clearly $\ker (\rho) \subset \ker (\pi \rho)$.
Let $r \in R$, viewed as a word in the letters of $S \cup S^{-1}$.
We have $\rho(r) \in \widehat S^k \cap N$, and therefore $r \in \ker(\rho)$.
Since $R$ generates $\ker (\pi \rho)$ as a normal subgroup of $F_S$,
it follows that $\ker(\pi \rho) \subset \ker(\rho)$.
Hence $\ker (\rho) = \ker(\pi \rho)$, and this shows that $N = \{1\}$.
\end{proof}

The case of groups given as extensions is more delicate to formulate:

\begin{lem}
\label{lem:bp_from_quot}
Let 
$
N \lhook\joinrel\relbar\joinrel\rightarrow G 
\overset{\pi}{\relbar\joinrel\twoheadrightarrow} Q
$
be a short exact sequence of groups.
Assume that $N$ and $Q$ are boundedly presented,
more precisely that:
\begin{enumerate}[noitemsep]
\item[($B_N$)]
the group $N$ has a bounded presentation $\langle S_N \mid R_N \rangle$
with $1 \in S_N = S_N^{-1}$ 
and $m_N := \sup \{ \vert r \vert_{S_N} \mid r \in R_N \} < \infty$;
\item[($B_Q$)]
the group $Q$ has a bounded presentation $\langle S_Q \mid R_Q \rangle$
with $1 \in S_Q = S_Q^{-1}$ 
and $m_Q := \sup \{ \vert r \vert_{S_Q} \mid r \in R_Q \} < \infty$.
\end{enumerate}
Let $S'_G$ be a subset of $G$
such that $1 \in S'_G = (S'_G)^{-1}$ and  $\pi(S'_G) = S_Q$.
Let $\sigma : S_Q \longrightarrow S'_G$ be a mapping
such that $\pi \sigma (s) = s$ for all $s \in S_Q$.
Assume furthermore that
\begin{enumerate}[noitemsep]
\item[($C_N$)]
there exists an integer $k \ge 1$ 
such that $(S'_G S_N S'_G) \cap N \subset (S_N)^k$;
\item[($C_Q$)]
there exists an integer $\ell \ge 1$ 
such that $(S'_G)^{m_Q} \cap N \subset (S_N)^\ell$.
\end{enumerate}
\par

Then $G$ is boundedly presented. More precisely, there exists
a set $R_G$ of words of bounded length in the letters of $S_N \cup S'_G$
such that 
\begin{equation*}
G \, = \,  \langle S_N \cup S'_G \mid R_G \rangle
\end{equation*}
is a bounded presentation of $G$.
\end{lem}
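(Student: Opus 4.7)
\emph{Construction of $R_G$.} The plan is to exhibit $R_G$ as a union of four families of relators, all of bounded length. Type~1 is $R_N$ itself (length $\le m_N$). Type~2 lifts the relators of $Q$: for each $r = \bar{s}_1 \cdots \bar{s}_{m_Q} \in R_Q$, form the word $\sigma(r) := \sigma(\bar{s}_1) \cdots \sigma(\bar{s}_{m_Q})$ of length $\le m_Q$ in $F_{S'_G}$; since $\pi$ kills its image in $G$, condition $(C_Q)$ places this element in $(S_N)^\ell$, so one adds a relator of length $\le m_Q + \ell$ equating $\sigma(r)$ with a chosen word in $F_{S_N}$. Type~3 encodes conjugation: for each $(s,t) \in S'_G \times S_N$, the element $s t s^{-1}$ lies in $(S'_G S_N S'_G) \cap N$, so $(C_N)$ yields a word $w_{s,t} \in F_{S_N}$ of length $\le k$ with $sts^{-1} = w_{s,t}$ in $G$, producing a relator of length $\le k+3$. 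Type~4 reconciles each element of $S'_G$ with its $\sigma$-standardisation: for $s \in S'_G$, using $1 \in S_N$, the element $s\, \sigma(\pi(s))^{-1} \in (S'_G)^2 \cap N \subset (S'_G S_N S'_G) \cap N$ lies in $(S_N)^k$ by $(C_N)$, producing a relator of length $\le k+2$. All relators of $R_G$ have length at most $\max\{m_N, m_Q + \ell, k+3\}$.

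\emph{Normal form.} Set $F := F_{S_N \cup S'_G}$, $N^* := \llag R_G \rrag_F$, and let $\rho : F \twoheadrightarrow G$ be the canonical map. I would first show that every word in $F$ is congruent modulo $N^*$ to one of the form $uv$ with $u \in F_{S'_G}$ and $v \in F_{S_N}$. Present a given word as an alternating concatenation $v_0 u_1 v_1 u_2 \cdots u_r v_r$ of $F_{S_N}$-syllables $v_i$ and $F_{S'_G}$-syllables $u_i$, and push each $v_{i-1}$ past $u_i$ one $S'_G$-letter at a time: a juxtaposition $t \cdot s$ with $s \in S'_G$, $t \in S_N$ is rewritten freely as $s \cdot (s^{-1} t s)$, and $s^{-1} t s$ is equivalent modulo $N^*$ to a word in $F_{S_N}$ of length $\le k$ via the Type~3 relator for the pair $(s^{-1},t)$. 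Iterating through all letters of $u_i$ and then through all syllables collapses the alternation to $u \cdot v$; along the way it is also apparent that $S_N \cup S'_G$ generates $G$.

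\emph{Injectivity.} The central step is that $\ker(\rho) \subset N^*$. Let $w \in F$ with $\rho(w) = 1$. By the normal form, $w \equiv uv \pmod{N^*}$ with $u \in F_{S'_G}$, $v \in F_{S_N}$, and then $\rho(u) = \rho(v)^{-1} \in N$, so $\pi\rho(u) = 1$ in $Q$. Writing $u = s_1 \cdots s_n$ with $s_i \in S'_G$, Type~4 relators allow each $s_i$ to be replaced modulo $N^*$ by $\sigma(\pi(s_i))$ at the cost of inserting an $F_{S_N}$-word, which is reabsorbed into $v$ by a second application of the normal form. Thus $u \equiv \tilde{\sigma}(\bar{u}) \pmod{N^*}$, where $\bar{u} := \pi(s_1) \cdots \pi(s_n) \in F_{S_Q}$ and $\tilde{\sigma} : F_{S_Q} \to F_{S'_G}$ is the free-group homomorphism induced by $\sigma$ (after adjusting $\sigma$ on one element per inverse pair in $S_Q$ so that $\sigma(\bar{s}^{-1}) = \sigma(\bar{s})^{-1}$). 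Since $\bar{u} = 1$ in $Q$, the presentation of $Q$ gives $\bar{u} = \prod_j c_j r_j^{\epsilon_j} c_j^{-1}$ in $F_{S_Q}$; applying $\tilde{\sigma}$, then Type~2 to substitute each $\tilde{\sigma}(r_j)$ by a word in $F_{S_N}$ of length $\le \ell$, and then iterated Type~3 to conjugate the resulting $S_N$-letters through the $S'_G$-word $\tilde{\sigma}(c_j)$, one obtains $u \equiv v' \pmod{N^*}$ with $v' \in F_{S_N}$. Hence $w \equiv v'v \pmod{N^*}$ with $v'v \in F_{S_N}$; since $\rho(v'v) = 1$ and $\rho|_{F_{S_N}}$ factors through the presentation $N = \langle S_N \mid R_N \rangle$, we have $v'v \in \llag R_N \rrag_{F_{S_N}} \subset N^*$, and therefore $w \in N^*$.

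\emph{Main obstacle.} The delicate point is the bookkeeping around the section $\sigma$. Because $\pi$ restricted to $S'_G$ need not be injective and a general set map need not satisfy $\sigma(\bar{s}^{-1}) = \sigma(\bar{s})^{-1}$, the componentwise lift $\tilde{\sigma}$ is literally a homomorphism of free groups only after a preliminary adjustment of $\sigma$ on one representative per inverse pair of $S_Q$; this modification is harmless since $(C_N)$ and $(C_Q)$ depend only on $S'_G$ and $S_N$. The Type~4 relators are included precisely to absorb the discrepancy between a given $s \in S'_G$ and the standardised lift $\sigma(\pi(s))$: without them, the crucial passage from $u$ to $\tilde{\sigma}(\bar{u})$ cannot be carried out modulo $N^*$. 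Once this is arranged, each remaining verification is a routine rewriting argument, and the length bound on $R_G$ is the direct maximum of the four bounds produced in the construction.
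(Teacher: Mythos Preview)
Your relator families match the paper's: Types~1--4 are exactly the paper's $R_N$, $R_\sigma$, $R_{\text{conj}}$, and (a normally-generating subfamily of) $R_\pi$. The paper also includes a fifth family $T = \{ss' : s,s' \in S'_G,\ ss'=1 \text{ in } G\}$ of length-$2$ relators, which are in fact redundant since the relation $\bar s \cdot \overline{s^{-1}} = 1$ already holds in $Q = \langle S_Q \mid R_Q\rangle$. Where you genuinely diverge is in proving that these relators suffice. The paper argues structurally: setting $\widetilde G = \langle S_N \cup S'_G \mid R_G\rangle$ and $\widetilde N = \langle S_N\rangle \le \widetilde G$, it observes that $\widetilde N$ is normal (via $R_{\text{conj}}$), checks directly that the induced maps $\widetilde N \to N$ and $\widetilde G/\widetilde N \to Q$ are isomorphisms, and concludes by a short-exact-sequence comparison---no rewriting at all. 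Your normal-form-plus-rewriting route is also valid and more explicit, but the bookkeeping is heavier than you let on. To push $t \in S_N$ rightward past a \emph{generator} $s \in S'_G$ you need $s^{-1}ts \in F_{S_N}$ modulo $N^*$; there is no literal ``Type~3 relator for $(s^{-1},t)$'' in your list, and one must instead argue that conjugation by $s$ carries $F_{S_N}$ into $F_{S_N}$ modulo $N^*$ (Type~3) and is bijective on $N$, whence the same holds for conjugation by $s^{-1}$ after invoking $R_N$. Conversely, your discussion of adjusting $\sigma$ is unnecessary: $\tilde\sigma$ is a free-group homomorphism as soon as it is defined on generators, and $(C_Q)$ applies to $\tilde\sigma(r)$ regardless of whether $r$ is written with formal inverses, since $S'_G$ is symmetric in $G$. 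The paper's approach buys you a cleaner proof with no such case analysis; yours buys an explicit rewriting procedure.
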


\noindent \emph{Observation.}
Note that we cannot assume that the restriction of $\pi$
is a bijection $S'_G \longrightarrow S_Q$:
if $s \in S_Q$ is such that $s \ne 1 = s^2$,
there need not exist $g \in G$ 
with $\pi(g) = s$ and $g^2 = 1$.

\begin{proof}
For all pairs $(s_1,s_2) \in S_N \times S'_G$, choose using ($C_N$)
a word $v_{s_1,s_2}$ of length $k$ in the letters of $S_N$
such that $s_2 s_1 s_2^{-1} = v_{s_1, s_2} \in N$.
We denote by 
\begin{equation*}
R_{\text{conj}} \hskip.5cm \text{the set of relators of the form} \hskip.2cm
s_2 s_1 s_2^{-1} v_{s_1, s_2}^{-1} \hskip.2cm
\text{obtained in this way.}
\end{equation*}
Note that their lengths are bounded by $k + 3$.
\par

For all pairs $(s_1, s_2) \in S'_G \times S'_G$ with $\pi(s_1) = \pi(s_2)$, 
choose similarly a word $w_{s_1, s_2}$ of length $k$ in the letters of $S_N$
such that $s_1 = s_2 w_{s_1, s_2} \in G$.
We denote by 
\begin{equation*}
R_{\pi} \hskip.5cm \text{the set of relators of the form} \hskip.2cm
s_1^{-1}s_2w_{s_1,s_2} \hskip.2cm
\text{obtained in this way.}
\end{equation*}
Note that their lengths are bounded by $k + 2$.
\par

For all relators $r \in R_Q$, let $s_{r,1} \cdots s_{r, m_Q}$ 
be a word in the letters of $S_Q$ representing it.
By ($C_Q$), we can choose a word $u_r$ of length $\ell$ in the letters of $S_N$
such that
\begin{equation*}
\sigma(r) \, := \, \sigma(s_{r,1}) \cdots \sigma(s_{r, m_Q}) u_r
\end{equation*}
is a word in the letters of $S_N \cup S'_G$ representing $1 \in G$.
We denote by
\begin{equation*}
R_{\sigma} \hskip.5cm \text{the set of these relators.}
\end{equation*}
Note that their lengths are bounded by $m_Q + \ell$.
\par

Denote by $T$ the set of relators $ss'$, for pairs $(s,s')$ in $S'_G$
with $ss' = 1$.
Define a group $\widetilde G$ by the presentation
\begin{equation*}
\widetilde G \, = \, \langle S_N \cup S'_G \mid 
         R_N \cup R_{\text{conj}} \cup R_\pi \cup R_\sigma \cup T \rangle .
\end{equation*}
It follows from the definitions that the tautological map 
from $S_N \cup S'_G$ viewed as a generating set of $\widetilde G$
to $S_N \cup S'_G$ viewed as a generating set of $G$
extends to a surjective homomorphism $p : \widetilde G \twoheadrightarrow G$.
\par

Denote by $\widetilde N$ the subgroup of $\widetilde G$ generated by $S_N$.
Clearly $p(\widetilde N) = N$, and the restriction 
$p_N : \widetilde N \longrightarrow N$ of $p$ to $\widetilde N$
is an isomorphism.
Moreover $\widetilde N$ is normal in $\widetilde G$,
because $s t s^{-1} \in \widetilde N$ for all $s \in S_N \cup S'_G$ and $t \in S_N$,
and $\widetilde G$ is generated by $S_N \cup S'_G$ as a semi-group.

Thus $p$ factors as a surjective homomorphism
$p_Q : \widetilde G / \widetilde N \twoheadrightarrow Q$.
As we have the presentation 
$\widetilde G / \widetilde N = \langle S_Q \mid R_Q \rangle$,
the quotient morphism $p_Q$ is indeed an isomorphism.
Since the diagram
\begin{equation*}
\begin{array}{ccccccc}
\widetilde N & \lhook\joinrel\relbar\joinrel\rightarrow & \widetilde G &
\relbar\joinrel\twoheadrightarrow  & \widetilde G / \widetilde N 
\\
&&&&
\\
 \downarrow \simeq && \downarrow p && \downarrow \simeq 
\\
&&&&
\\
N & \lhook\joinrel\relbar\joinrel\rightarrow & G &
\overset{\pi}{\relbar\joinrel\twoheadrightarrow}  & Q 
\end{array}
\end{equation*}
commutes, it follows that $p$ is an isomorphism.
This proves the lemma, for $S_G = S_N \cup S'_G$
and $R_G = R_N \cup R_{\text{conj}} \cup R_\pi \cup R_\sigma$.
\end{proof}

\begin{prop}
\label{boundedpresandSEC}
Let 
$
N 
\overset{j}{\lhook\joinrel\relbar\joinrel\rightarrow} G 
\overset{\pi}{\relbar\joinrel\twoheadrightarrow} Q 
$
be a short exact sequence of LC-groups,
where the topology of $N$ coincides with the topology induced by $j$,
and the homomorphism $\pi$ is continuous and open.
Assume that 
\begin{enumerate}[noitemsep]
\item[(CP$_N$)]
the group $N$ has a presentation $\langle S_N \mid R_N \rangle$
with $S_N \subset N$ compact 
and $\sup \{ \vert r \vert_{S_N} \mid r \in R_N \} < \infty$;
\item[(CP$_Q$)]
the group $Q$ has a presentation $\langle S_Q \mid R_Q \rangle$
with $S_Q \subset Q$ compact 
and $\sup \{ \vert r \vert_{S_Q} \mid r \in R_Q \} < \infty$.
\end{enumerate}
\par

Then
\begin{enumerate}[noitemsep]
\item[(CP$_G$)]
the group $G$ has a presentation $\langle S_G \mid R_G \rangle$
with $S_G \subset G$ compact 
and $\sup \{ \vert r \vert_{S_G} \mid r \in R_G \} < \infty$.
\end{enumerate}
\end{prop}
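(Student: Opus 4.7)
The plan is to verify the hypotheses of Lemma \ref{lem:bp_from_quot} in this topological setting, and to observe that the resulting generating set of $G$ is compact. First, without loss of generality, I would enlarge $S_N$ and $S_Q$ so that they are symmetric and contain the identity: this preserves compactness and gives bounded presentations $\langle S_N \mid R_N\rangle$ and $\langle S_Q \mid R_Q\rangle$ satisfying the hypotheses ($B_N$) and ($B_Q$) of Lemma \ref{lem:bp_from_quot}, with the same bounds $m_N, m_Q$ up to an additive constant.

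Next I would lift $S_Q$ to a compact subset $S'_G \subset G$. Since $G$ is locally compact and $\pi : G \twoheadrightarrow G/N = Q$ is the quotient map by a closed normal subgroup, Lemma \ref{KimagedeK} provides a compact $T \subset G$ with $\pi(T) = S_Q$. Setting $S'_G := T \cup T^{-1} \cup \{1\}$ gives a compact symmetric subset of $G$ containing $1$ with $\pi(S'_G) = S_Q$. Choose any section $\sigma : S_Q \to S'_G$ of $\pi\vert_{S'_G}$ (no continuity is needed, and the mapping is set-theoretic).

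The key step is to verify conditions ($C_N$) and ($C_Q$) of Lemma \ref{lem:bp_from_quot}. Consider the sets
\[
A \,:=\, (S'_G \, S_N \, S'_G) \cap N \quad\text{and}\quad B \,:=\, (S'_G)^{m_Q} \cap N.
\]
Since multiplication in $G$ is continuous and $S'_G, S_N$ are compact, the sets $S'_G S_N S'_G$ and $(S'_G)^{m_Q}$ are compact in $G$. The subgroup $N = \ker(\pi)$ is closed in $G$, so $A$ and $B$ are closed in a compact set, hence compact in $G$; because by hypothesis the topology of $N$ coincides with the subspace topology from $G$, they are also compact in $N$. Now $S_N$ is a compact generating set of $N$, so Proposition \ref{powersSincpgroup}\ref{5DEpowersSincpgroup} produces integers $k, \ell \ge 1$ with $A \subset (S_N)^k$ and $B \subset (S_N)^\ell$, which are precisely ($C_N$) and ($C_Q$). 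The main obstacle is exactly this step: one must extract boundedness of $S_N$-length on $A$ and $B$ from the topological compactness, and this is where compact generation of $N$ is used essentially.

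Finally, Lemma \ref{lem:bp_from_quot} yields a presentation $\langle S_N \cup S'_G \mid R_G\rangle$ of $G$ in which the relators have length bounded by $\max\{m_N,\, k+3,\, k+2,\, m_Q+\ell,\, 2\}$, i.e.\ by a universal constant depending only on $m_N, m_Q, k, \ell$. Setting $S_G := S_N \cup S'_G$, which is compact as a union of two compact subsets of $G$, we obtain property (CP$_G$), completing the proof.
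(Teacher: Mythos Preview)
Your proof is correct and follows exactly the same approach as the paper: apply Lemma~\ref{lem:bp_from_quot} after lifting $S_Q$ to a compact $S'_G$ via Lemma~\ref{KimagedeK}, and take $S_G = S_N \cup S'_G$. The paper's proof is terser and leaves the verification of ($C_N$) and ($C_Q$) implicit, whereas you spell out the compactness argument and invoke Proposition~\ref{powersSincpgroup}\ref{5DEpowersSincpgroup} explicitly; this is a welcome clarification rather than a different route.
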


\begin{proof}
We keep the notation of the previous lemma and of its proof.
We can choose $S'_G$ compact, by Lemma \ref{KimagedeK}.
Then $S_G := S_N \cup S'_G$ is a compact generating set of $G$,
and the lengths of the relators in
$R_G := R_N \cup R_{\text{conj}} \cup R_\pi \cup R_\sigma$
are bounded by $\max \{ m_N, k+3, m_Q + \ell \}$.
\end{proof}

\section{The Rips complex of a bounded presentation}
\label{sectionRipscomplex}
For a group $G$ with a pseudo-metric $d$, 
we have a Rips $2$-complex $\operatorname{Rips}^2_c(G,d)$
as in Definition \ref{defRipst}.
When $d$ is the word metric defined by a generating set $S$, 
we also write $\operatorname{Rips}^2_c(G,S)$ for $\operatorname{Rips}^2_c(G,d_S)$;
\index{Rips $2$-complex! $\operatorname{Rips}^2_c(G,S)$}
this space can often be used instead of
\emph{Cayley graphs} or \emph{Cayley $2$-complexes}
\index{Cayley graph}
defined in other references (such as \cite{Cann--02}).

\begin{prop}
\label{bpresrips}
Let $G$ be a group endowed with a generating subset $S$. The following conditions are equivalent:
\begin{enumerate}[noitemsep,label=(\roman*)]
\item\label{iDEbpresrips}
the group $G$ is boundedly presented over $S$;
\item\label{iiDEbpresrips}
$\operatorname{Rips}^2_c(G,S)$ is simply connected for some $c$;
\item\label{iiiDEbpresrips}
$\operatorname{Rips}^2_c(G,S)$ is simply connected for all $c$ large enough;
\item\label{ivDEbpresrips}
the metric space $(G, d_S)$ is coarsely simply connected.
\end{enumerate}
\end{prop}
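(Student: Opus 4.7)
The plan is to establish the circle of implications by first proving (ii) $\Leftrightarrow$ (iii) $\Leftrightarrow$ (iv) as a direct application of Proposition \ref{propositionpourc-geodesicspace}, then closing with (i) $\Rightarrow$ (ii) and (iii) $\Rightarrow$ (i).

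First I would check that $(G, d_S)$ is $1$-geodesic in the sense of Definition \ref{defcoarselyconn}(d): given $g, h \in G$ with $n = d_S(g, h)$, any geodesic word $s_1 \cdots s_n$ in $S \cup S^{-1}$ representing $g^{-1}h$ yields a $1$-path $g, gs_1, \ldots, gs_1 \cdots s_n = h$ whose consecutive distances sum to $n$. Since $S$ generates $G$, the space is also $1$-coarsely connected, and coarse simple connectedness is independent of base-point by Remark \ref{SCindex0}(1). Hence Proposition \ref{propositionpourc-geodesicspace} applies with $c = 1$ and delivers (ii) $\Leftrightarrow$ (iii) $\Leftrightarrow$ (iv) directly.

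For (i) $\Rightarrow$ (ii), assume $G = \langle S \mid R \rangle$ with $n := \sup_{r \in R} |r|_S < \infty$, and I would argue that $\operatorname{Rips}^2_n(G, S)$ is simply connected. Given a combinatorial loop $\xi = (g_0 = 1, g_1, \ldots, g_k = 1)$, choose for each consecutive pair a geodesic $S$-word $w_i = s_{i,1} \cdots s_{i,\ell_i}$ with $\ell_i = d_S(g_{i-1}, g_i) \leq n$, and insert the intermediate vertices $g_{i-1} s_{i,1} \cdots s_{i,j}$. Any three points along such an inserted arc are at pairwise distance $\leq \ell_i \leq n$, so each insertion is a triangle homotopy; hence $\xi$ is combinatorially homotopic to the refined loop $\eta$, whose associated word $w = w_1 \cdots w_k$ in $S \cup S^{-1}$ represents $1 \in G$. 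The bounded presentation reduces $w$ to the empty word in $F_S$ by free cancellations $tt^{-1}$ and by insertions/deletions of conjugates $uru^{-1}$ of $r \in R$. The former are elementary graph homotopies in $\operatorname{Rips}^2_n$. The latter insert or delete, at some vertex $h$, a sub-loop of $S$-length $\leq n$, whose vertices span a complete subgraph of $\operatorname{Rips}^2_n$ containing all triangles among these vertices; this sub-loop is null-homotopic in $\operatorname{Rips}^2_n$, e.g.\ by Lemma \ref{lem:graphomotopic_simplices}. Hence $\xi$ is null-homotopic.

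For (iii) $\Rightarrow$ (i), fix $n$ large enough so that $\operatorname{Rips}^2_n(G, S)$ is simply connected, and let $R$ be the set of all words in $S \cup S^{-1}$ of length $\leq 3n$ representing $1 \in G$. Given a word $w = t_1 \cdots t_k$ representing $1$, the loop $\xi_w = (1, t_1, \ldots, t_1 \cdots t_k = 1)$ is null-homotopic in $\operatorname{Rips}^2_n(G, S)$. Fix, for every edge $(g, h)$ of the Rips complex, a geodesic $S$-word $v_{g,h}$ of length $\leq n$ representing $g^{-1}h$, and to every combinatorial loop $\zeta$ associate its reading $\widehat{\zeta}$ obtained by concatenating these words along the edges of $\zeta$. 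Then $\widehat{\xi_w} = w$ and the reading of the trivial loop is empty. Along any sequence of elementary combinatorial homotopies from $\xi_w$ to the trivial loop, an elementary graph homotopy move changes the reading by inserting $v_{g, g'} v_{g', g}$, a word of length $\leq 2n$ representing $1 \in G$ and hence in $R$; a triangle homotopy move changes the reading by replacing $v_{g,g'}$ with $v_{g, u} v_{u, g'}$, differing by a word $v_{g,u} v_{u, g'} v_{g, g'}^{-1}$ of length $\leq 3n$ in $R$. Telescoping through the sequence expresses $w$ as a product of conjugates of elements of $R^{\pm 1}$ in $F_S$, yielding the bounded presentation $G = \langle S \mid R \rangle$.

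The main obstacle is the last step: one must translate each elementary combinatorial homotopy move in $\operatorname{Rips}^2_n$ into an explicit rewriting in $F_S$, track the cumulative effect as a product of conjugates of elements of $R$, and verify that the length bound $3n$ holds uniformly. This bookkeeping is straightforward but delicate, as the geodesic fillings $v_{g,h}$ are non-canonical and one must ensure that the inductive sequence of readings $\widehat{\xi_0} = w, \widehat{\xi_1}, \ldots, \widehat{\xi_N} = \varnothing$ is compatible with the combinatorial homotopy steps rather than merely with the loops pointwise.
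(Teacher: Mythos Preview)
Your proposal is correct and follows essentially the same route as the paper: the equivalence (ii)$\Leftrightarrow$(iii)$\Leftrightarrow$(iv) via Proposition~\ref{propositionpourc-geodesicspace} (after noting $(G,d_S)$ is $1$-geodesic), and (i)$\Rightarrow$(ii) by translating the free-group identity $w = \prod w_j r_j w_j^{-1}$ into combinatorial homotopies. The one substantive difference is in (iii)$\Rightarrow$(i): where you track each elementary homotopy by hand via the readings $\widehat{\xi}$ --- the very bookkeeping you flag as the main obstacle --- the paper instead invokes Lemma~\ref{lem:graphomotopic_simplices} once, which factors the null-homotopic combinatorial loop directly as a product $\prod u_j r_j u_j^{-1}$ with each $r_j$ the boundary of a $2$-simplex; reading each triangle boundary as an $S$-word of length $\le 3m$ then gives the bounded presentation immediately, with no need to fix geodesic fillings $v_{g,h}$ or worry about their non-canonicality. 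Your approach works, but the lemma cleanly absorbs the part you were right to be wary of.
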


\begin{proof}
The equivalence of Conditions 
\ref{iiDEbpresrips}, \ref{iiiDEbpresrips}, and \ref{ivDEbpresrips} 
is a particular case of Proposition \ref{propositionpourc-geodesicspace}.
\par

\ref{iDEbpresrips} $\Rightarrow$ \ref{iiDEbpresrips}
Let $G = \langle S \mid R \rangle$ be a bounded presentation;
set $m = \max_{r \in R} \ell_S(r)$.
We claim that, for $n \ge \max \{m/2, 1\}$, the complex $\operatorname{Rips}^2_n(G,S)$
is simply connected.
\par

Consider a loop $\xi$ based at $1$ in the topological realization of $\operatorname{Rips}^2_n(G,S)$.
We have to show that $\xi$ is homotopic to the constant loop $(1)$.
By Lemma \ref{lem:path_X1}(1), we can assume that $\xi$ is the topological realization
of a combinatorial loop
\begin{equation*}
\eta \, = \, (1, \hskip.1cm s_1, \hskip.1cm s_1s_2, \hskip.1cm 
\hdots, \hskip.1cm s_1 \cdots s_{k-1}, \hskip.1cm s_1 \cdots s_{k-1}s_k = 1)
\end{equation*}
with $s_1, \hdots, s_k \in \widehat S$.
There are relators $r_1, \hdots, r_\ell \in R \cup R^{-1}$ and words $w_1, \hdots, w_\ell \in F_S$
such that 
\begin{equation*}
s_1 s_2 \cdots s_k \, = \, 
\prod_{j=1}^\ell w_j r_j w_j^{-1} .
\end{equation*}
Let $j \in \{1, \hdots, \ell\}$;
since $\ell_S(r_j) \le m$, any triple of vertices of $r_j$ is in a common $2$-simplex;
hence the prefixes of the word $w_j r_j w_j^{-1}$ constitute
a combinatorial loop that is combinatorially homotopic to the constant loop.
It follows that $\eta$ is combinatorially homotopic to the constant loop,
hence that $\xi$ is homotopic to the trivial loop. This establishes the claim.
\par

\ref{iiiDEbpresrips} $\Rightarrow$ \ref{iDEbpresrips}
Let $m \ge 1$ be an integer such that $\operatorname{Rips}^2_m(G,S)$ is simply connected.
Let $\pi : F_S \twoheadrightarrow G$ be as in Definition \ref{generation}; 
set $N = \textnormal{Ker} (\pi)$.
Let $w \in N$; write $w = s_1 \cdots s_k$, with $s_1, \hdots, s_k \in \widehat S$.
Consider 
\begin{equation*}
\eta \, = \, (1, \hskip.1cm s_1, \hskip.1cm s_1s_2, \hskip.1cm 
\hdots, \hskip.1cm s_1 \cdots s_{k-1}, \hskip.1cm s_1 \cdots s_{k-1}s_k = 1) ,
\end{equation*}
that is a combinatorial loop based at $1$ in $\operatorname{Rips}^2_m(G,S)$.
Then $\eta$ is combinatorially homotopic to some combinatorial loop of the form
$\prod_{j=1}^N u_j r_j u_j^{-1}$, the notation being as in Lemma \ref{lem:graphomotopic_simplices},
with each $r_j$ being a combinatorial loop of length $3$ in $\operatorname{Rips}^2_m(G,S)$.
Changing our viewpoint, we consider $r_j$ as a word in the letters of $S \cup S^{-1}$ 
of length at most $3m$.
If $R$ denotes the set of these words $r_j$,
we have a bounded presentation $G = \langle S \mid R \rangle$.
\end{proof}

\chapter{Compactly presented groups}
\label{chap_cpgroups} 

\section{Definition and first examples}
\label{DefExForCpGroups}

\begin{defn}
Let $G$ be an LC-group.
\par

A \textbf{compact presentation} \index{Compact presentation|textbf} of $G$
is a presentation $\langle S \mid R \rangle$ of $G$ with $S$ compact in $G$
and $R$ a bounded relating subset, 
in other words a bounded presentation 
$\langle S \mid R \rangle$ of $G$ with $S$ compact.
\par

An LC-group is \textbf{compactly presented} 
\index{Compactly presented! LC-group|textbf}
if it admits a compact presentation.
\end{defn}

The earliest article we know in which this notion appears 
is that of Martin Kneser \cite{Knes--64}.

\begin{prop}
Let $G$ be a compactly generated LC-group;
let $S_1, S_2$ be two compact generating subsets of $G$.
Then $G$ has a compact presentation with generating set $S_1$
if and only if it has one with $S_2$.
\end{prop}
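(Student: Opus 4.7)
The proposition is essentially an immediate consequence of Lemma \ref{boundedpresentationforS1andS2}\ref{4DEboundedpresentationforS1andS2}, and the plan is simply to unpack the definition and invoke that lemma. First I would observe that, by definition, having a compact presentation with generating set $S_i$ is the same as being boundedly presented (in the sense of Chapter \ref{chap_boundedpresentation}) over $S_i$, together with the fact that $S_i$ is compact in $G$. The compactness of $S_1$ and $S_2$ is given by hypothesis; all that remains is to transfer the bounded presentation between the two generating sets.

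Thus, assuming $G$ is boundedly presented over $S_1$, with $S_1$ compact, I would apply Lemma \ref{boundedpresentationforS1andS2}\ref{4DEboundedpresentationforS1andS2} directly to conclude that $G$ is boundedly presented over $S_2$. Since $S_2$ is compact by assumption, this yields a compact presentation with generating set $S_2$. The converse implication is obtained by interchanging the roles of $S_1$ and $S_2$.

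The only point that requires more than a symbol chase is the statement of Lemma \ref{boundedpresentationforS1andS2}\ref{4DEboundedpresentationforS1andS2} itself, which has already been established in the previous section; its key ingredient, and what would be the main obstacle if one had to redo it, is the existence of integers $m, n, m'$ with ${\widehat S}_1^m \subset {\widehat S}_2^n \subset {\widehat S}_1^{m'}$. This is guaranteed by Proposition \ref{powersSincpgroup}\ref{5DEpowersSincpgroup} applied to the compact subsets $S_1$ and $S_2$ of the compactly generated LC-group $G$: each compact set is covered by some power of $\widehat{S_i}$, and so the two generating systems are ``polynomially comparable'' in the sense needed to convert a bounded presentation over one into a bounded presentation over the other via parts \ref{1DEboundedpresentationforS1andS2} and \ref{2DEboundedpresentationforS1andS2} of Lemma \ref{boundedpresentationforS1andS2}. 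Since all this machinery is already in place, the proposition admits a proof of just a few lines.
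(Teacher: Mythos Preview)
Your proof is correct and follows exactly the same approach as the paper, which simply states that this is a consequence of Lemma~\ref{boundedpresentationforS1andS2}\ref{4DEboundedpresentationforS1andS2}. Your additional explanation of why that lemma applies (via Proposition~\ref{powersSincpgroup}) is accurate but more detailed than what the paper provides.
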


\begin{proof}
This is a consequence of
Lemma \ref{boundedpresentationforS1andS2}(4).   
\end{proof}

\begin{prop}
\label{cp iff csc}
Let $G$ be a compactly generated LC-group.
Let $S$ be a compact generating set and $d_S$ the corresponding word metric.
\begin{enumerate}[noitemsep,label=(\arabic*)] 
\item\label{1DEcp iff csc}
The group $G$ is compactly presented 
if and only if the metric space $(G,d_S)$ is coarsely simply connected.
\end{enumerate}
More generally, let $G$ be a $\sigma$-compact LC-group.
Let $d$ be an adapted pseudo-metric on $G$ 
(Proposition \ref{existenceam}).
\begin{enumerate}[noitemsep,label=(\arabic*)] 
\addtocounter{enumi}{1}
\item\label{2DEcp iff csc}
The group $G$ is compactly presented if and only if the pseudo-metric space $(G,d)$
is coarsely simply connected.
\end{enumerate}
\end{prop}

\begin{proof}
Claim \ref{1DEcp iff csc} holds true by Proposition \ref{bpresrips}.
In \cite[Example 1.C$_1$]{Grom--93}, Gromov has
a short comment concerning the claim:
``this is obvious''.
\par

For Claim \ref{2DEcp iff csc}, suppose first that $G$ is compactly presented,
say with a compact presentation $\langle S \mid R \rangle$.
Then $(G, d_S)$ is coarsely simply connected by \ref{1DEcp iff csc},
hence $(G,d)$ is coarsely simply connected by Corollary \ref{2metricsce}\ref{2DE2metricsce}
and Proposition \ref{coarse1conninvbycoarseeq}.
Suppose now that $(G,d)$ is coarsely simply connected.
Since $(G,d)$ is coarsely connected, 
$G$ is compactly generated 
by Proposition \ref{geodad+wordmetric}\ref{1DEgeodad+wordmetric},
say with compact generating set $S$.
Then $(G,d_S)$ is coarsely simply connected by Proposition \ref{coarse1conninvbycoarseeq},
hence $(G,d)$ is compactly presented by \ref{1DEcp iff csc}.
\end{proof}

Propositions \ref{coarse1conninvbycoarseeq} and \ref{cp iff csc} imply:

\begin{cor}
\label{qinv}
Among $\sigma$-compact LC-groups, 
being compactly presented is invariant by metric coarse equivalence.
\index{Property! of a $\sigma$-compact LC-group invariant by metric coarse equivalence}
\par

In particular, among compactly generated LC-groups, 
being compactly presented is invariant by quasi-isometry.
\end{cor}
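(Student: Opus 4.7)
The plan is to reduce the corollary directly to the two propositions cited just before its statement, namely Proposition \ref{cp iff csc} (which characterizes compact presentation of a $\sigma$-compact LC-group $G$ in terms of coarse simple connectedness of $(G,d)$ for any adapted pseudo-metric $d$) and Proposition \ref{coarse1conninvbycoarseeq} (which asserts that coarse simple connectedness is invariant under metric coarse equivalence of pseudo-metric spaces).

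First I would let $G_1$ and $G_2$ be $\sigma$-compact LC-groups, and assume that there exist adapted pseudo-metrics $d_1$ on $G_1$ and $d_2$ on $G_2$ such that the pseudo-metric spaces $(G_1,d_1)$ and $(G_2,d_2)$ are metric coarse equivalent. By Proposition \ref{existenceam}, adapted pseudo-metrics exist on both groups, and by Corollary \ref{2metricsce}\ref{2DE2metricsce}, the coarse equivalence class of $(G_i,d_i)$ is independent of the choice of adapted pseudo-metric. So the hypothesis really is that $G_1$ and $G_2$ are ``metric coarse equivalent as $\sigma$-compact LC-groups'' in the sense of Milestone \ref{miles_sigmacompact}. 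Suppose $G_1$ is compactly presented. By Proposition \ref{cp iff csc}(2), the space $(G_1,d_1)$ is coarsely simply connected. By Proposition \ref{coarse1conninvbycoarseeq}, the space $(G_2,d_2)$ is coarsely simply connected as well. Applying Proposition \ref{cp iff csc}(2) once more in the reverse direction, $G_2$ is compactly presented. This proves the first assertion.

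For the second assertion, I would observe that a quasi-isometry between compactly generated LC-groups is in particular a metric coarse equivalence (Definitions \ref{defcoarse} and \ref{defqi}), and that the geodesically adapted pseudo-metrics used to form the quasi-isometric type (Milestone \ref{miles_compactgen}) are in particular adapted pseudo-metrics. Hence this case follows immediately from the first assertion.

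There is no genuine obstacle: the corollary is essentially a formal consequence of the machinery already set up. The only point requiring a moment of care is to notice that the hypothesis of the corollary (metric coarse equivalence between $\sigma$-compact LC-groups) is well-posed precisely because of Milestone \ref{miles_sigmacompact}, and similarly for the quasi-isometric case and Milestone \ref{miles_compactgen}; everything else is a direct concatenation of the two quoted propositions.
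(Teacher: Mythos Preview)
Your proof is correct and is exactly the approach the paper takes: the corollary is stated there as a direct consequence of Propositions \ref{coarse1conninvbycoarseeq} and \ref{cp iff csc}, and you have simply spelled out the routine verification.
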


\begin{cor}
\label{hereditaritycp}
\index{Hereditarity of various properties}
Let $G$ be an LC-group.
\begin{enumerate}[noitemsep,label=(\arabic*)] 
\item\label{1DEhereditaritycp}
Let $H$ be a cocompact closed subgroup of $G$. 
Then $G$ is compactly presented if and only if $H$ is so. 
In particular:
\index{Subgroup! cocompact closed}
\index{Compact group! finite presentability}
\begin{enumerate}[noitemsep,label=(\alph*)]
\item\label{aDEhereditaritycp}
compact groups are compactly presented;
\item\label{bDEhereditaritycp}
a uniform lattice in an LC-group $G$ is finitely presented
if and only if $G$ is compactly presented.
\index{Lattice! in an LC-group}
\end{enumerate}
\item\label{2DEhereditaritycp}
Let $K$ be a compact normal subgroup of $G$. 
Then $G$ is compactly presented if and only if $G/K$ is so.
\index{Subgroup! compact normal} 
\end{enumerate}
\end{cor}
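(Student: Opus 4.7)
The plan is to reduce both parts to Corollary \ref{qinv}, which asserts that compact presentation is invariant under metric coarse equivalence among $\sigma$-compact LC-groups. All the real work is already done in the excerpt: Propositions \ref{sigmac+compactgofcocompact} and \ref{sigmac+compactgofquotients} supply the metric coarse equivalences we need, and Corollary \ref{qinv} carries compact presentation across them.

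For part \ref{1DEhereditaritycp}, the first step is a bookkeeping check to guarantee that $G$ and $H$ are simultaneously $\sigma$-compact, so that Corollary \ref{qinv} is applicable. Any compactly presented LC-group is a fortiori compactly generated and thus $\sigma$-compact, so if either $G$ or $H$ has the property, both sit in the $\sigma$-compact setting (using Proposition \ref{stababcd} for the passage between $G$ and the cocompact $H$). Now pick adapted pseudo-metrics $d_G$ on $G$ and $d_H$ on $H$ (Proposition \ref{existenceam}); by Proposition \ref{sigmac+compactgofcocompact}\ref{1DEsigmac+compactgofcocompact} the inclusion $(H, d_H) \hookrightarrow (G, d_G)$ is a metric coarse equivalence, and Corollary \ref{qinv} yields the equivalence. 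Case \ref{aDEhereditaritycp} is the special case $H = \{1\}$, or can be seen directly: a compact group is coarsely simply connected (Example \ref{excoarse1conntrivial}) since the diameter of any adapted pseudo-metric is finite, hence compactly presented by Proposition \ref{cp iff csc}. Case \ref{bDEhereditaritycp} is the special case where $H$ is a uniform lattice $\Gamma$: it is discrete and closed, and ``compactly presented'' for a discrete group means ``finitely presented'' (the compact generating set can be replaced by a finite one, and bounded-length relators over a finite alphabet give a finite presentation up to rewriting).

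For part \ref{2DEhereditaritycp}, the argument is entirely parallel. Compact presentation of $G$ or of $G/K$ implies $\sigma$-compactness of that group, and since $K$ is compact, $G$ is $\sigma$-compact if and only if $G/K$ is. Choose adapted pseudo-metrics $d_G$ on $G$ and $d_{G/K}$ on $G/K$. By Proposition \ref{sigmac+compactgofquotients}\ref{1DEsigmac+compactgofquotients}, the canonical projection $\pi : (G, d_G) \twoheadrightarrow (G/K, d_{G/K})$ is a metric coarse equivalence. A final invocation of Corollary \ref{qinv} closes the case.

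There is no real mathematical obstacle; the whole proof is a careful application of already-established tools. The only subtle point is purely logical: the hypothesis of Corollary \ref{qinv} demands $\sigma$-compactness on both sides, so each direction of each equivalence must begin by transferring this finiteness property between $G$ and $H$ (respectively $G$ and $G/K$) before invoking the invariance result. Once that is arranged, the conclusion is immediate.
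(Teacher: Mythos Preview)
Your proof is correct and follows essentially the same route as the paper: invoke Corollary~\ref{qinv} together with Proposition~\ref{sigmac+compactgofcocompact} for part~\ref{1DEhereditaritycp} and Proposition~\ref{sigmac+compactgofquotients} for part~\ref{2DEhereditaritycp}. The paper's proof is terser (three lines), while you have spelled out the $\sigma$-compactness bookkeeping and the special cases~\ref{aDEhereditaritycp} and~\ref{bDEhereditaritycp} more carefully; the paper also notes the alternative proof of~\ref{aDEhereditaritycp} via Example~\ref{examplesboundpres}(1).
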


\begin{proof}
By the previous corollary, 
\ref{1DEhereditaritycp} follows from Proposition \ref{sigmac+compactgofcocompact}
and \ref{2DEhereditaritycp} from Proposition \ref{sigmac+compactgofquotients}.
\par

There is another proof of \ref{aDEhereditaritycp} in Example \ref{examplesboundpres}(1).
\end{proof}

Let $G$ be a $\sigma$-compact LC-group with an adapted metric $d$,
as in Proposition \ref{cp iff csc}(2).
Recall that $d$ is coarsely geodesic if and only if $G$ is compactly generated.
Moreover, if the latter condition holds, 
$d$ can be chosen to be a geodesically adapted metric.
In particular, if $S$ is a compact generating set, the word metric $d_S$
is a geodesically adapted metric on $G$
(Proposition  \ref{geodad+wordmetric}).

\begin{prop}
\label{ripsqi}
Let $G$ be a compactly generated LC-group,
$S$ a compact generating set, 
$d$ an adapted pseudo-metric on $G$, and $c$ a constant, $c \ge 1$.
\begin{enumerate}[noitemsep,label=(\arabic*)]
\item\label{1DEripsqi}
The inclusion of $(G,d)$ into $\operatorname{Rips}^2_c(G,S)$
is a metric coarse equivalence.
\item\label{2DEripsqi}
When $d$ is geodesically adapted,
the inclusion of $(G,d)$ into $\operatorname{Rips}^2_c(G,S)$
is a quasi-isometry.
\end{enumerate}
\end{prop}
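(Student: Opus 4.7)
The plan is to treat first the case of the word metric $d_S$, and then transfer to an arbitrary adapted $d$ by composition. The key input is Proposition \ref{inclusionXinRips}, parts (2) and (3), which for a general pseudo-metric space $X$ characterizes the inclusion $X \hookrightarrow \operatorname{Rips}^2_c(X)$ as a metric coarse equivalence (respectively, as a quasi-isometry) precisely when $X$ is $c$-coarsely geodesic (respectively, $c$-large-scale geodesic).

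First I would verify both statements in the special case $d = d_S$. By the very definition of the word metric, any two points $g, h \in G$ are joined by a $1$-path of exactly $d_S(g,h)$ steps, so $(G, d_S)$ is $1$-large-scale geodesic with affine control $\Phi(t) = t$. Since $c \ge 1$, every $1$-path is in particular a $c$-path, so $(G, d_S)$ is $c$-large-scale geodesic. Proposition \ref{inclusionXinRips}(3) then gives at once that the inclusion $(G, d_S) \hookrightarrow \operatorname{Rips}^2_c(G, S)$ is a quasi-isometry, and \emph{a fortiori} a metric coarse equivalence.

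For Claim (1), let $d$ be an arbitrary adapted pseudo-metric on the (necessarily $\sigma$-compact) group $G$. Corollary \ref{2metricsce}(2) asserts that the identity map $(G, d) \longrightarrow (G, d_S)$ is a metric coarse equivalence; composing it with the metric coarse equivalence $(G, d_S) \hookrightarrow \operatorname{Rips}^2_c(G, S)$ established above yields (1). For Claim (2), assume in addition that $d$ is geodesically adapted. Since $d_S$ is also geodesically adapted, by Proposition \ref{geodad+wordmetric}\ref{2DEgeodad+wordmetric}, Corollary \ref{uniqueness _uptoqi} shows that the identity $(G, d) \longrightarrow (G, d_S)$ is now a quasi-isometry, and composing it with the quasi-isometry $(G, d_S) \hookrightarrow \operatorname{Rips}^2_c(G, S)$ proves (2).

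I do not expect any serious obstacle: the argument only strings together results already established in the chapter. The one point requiring a moment's attention is the hypothesis $c \ge 1$, which is exactly what ensures that the edges of $(G, d_S)$ (coming from elements of $\widehat S$, hence at word-distance $1$) are genuine edges of $\operatorname{Rips}^2_c(G, S)$, so that $1$-paths remain $c$-paths and Proposition \ref{inclusionXinRips} becomes applicable.
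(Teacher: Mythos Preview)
Your proof is correct and follows essentially the same approach as the paper: factor the inclusion through $(G,d_S)$, invoke Proposition~\ref{inclusionXinRips} to handle the inclusion $(G,d_S)\hookrightarrow\operatorname{Rips}^2_c(G,S)$, and use Corollary~\ref{2metricsce} (respectively Corollary~\ref{uniqueness _uptoqi}) for the identity $(G,d)\to(G,d_S)$. You merely spell out in more detail why $(G,d_S)$ is $c$-large-scale geodesic, which the paper leaves implicit in its citation of Proposition~\ref{inclusionXinRips}.
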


\begin{proof} 
\ref{1DEripsqi}
The identity map $(G,d) \longrightarrow (G,d_S)$ 
is a metric coarse equivalence by Corollary \ref{2metricsce}, and the inclusion
$(G,d_S)\lhook\joinrel\relbar\joinrel\rightarrow  \operatorname{Rips}^2_c(G,S)$
is a quasi-isometry by Proposition \ref{inclusionXinRips}. 
\par

\ref{2DEripsqi}
If $d$ is geodesically adapted, the identity map $(G,d) \longrightarrow (G,d_S)$
is a quasi-isometry by Corollary \ref{uniqueness _uptoqi}.
\end{proof} 

In view of Propositions \ref{cp iff csc}, \ref{ripsqi}, and \ref{bpresrips},
we have the following third important step in our exposition
(the first two being \ref{miles_sigmacompact} and \ref{miles_compactgen}).

\begin{miles} \index{Compactly presented! LC-group, milestone}
\label{miles_compactpres}
\index{Milestone}
For a compactly generated LC-group $G$, with a geodesically adapted pseudo-metric $d$,
and a compact generating set $S$, 
the following properties are equivalent:
\begin{enumerate}[noitemsep,label=(\roman*)] 
\item\label{iDEmiles_compactpres}
the LC-group $G$ is compactly presented;
\item\label{iiDEmiles_compactpres}
the pseudo-metric space $(G,d)$ is coarsely simply connected;
\item\label{iiiDEmiles_compactpres}
the inclusion of $(G,d)$ into $\operatorname{Rips}^2_c(G,S)$
is a metric coarse equivalence $\forall$ $c \ge 1$;
\item\label{ivDEmiles_compactpres}
the inclusion of $(G,d)$ into $\operatorname{Rips}^2_c(G,S)$
is a quasi-isometry  $\forall$ $c \ge 1$;
\item\label{vDEmiles_compactpres}
$\operatorname{Rips}^2_c(G,S)$ is simply connected for all $c$ large enough.
\end{enumerate}
\end{miles}
\index{Coarsely! simply connected pseudo-metric space}

Since coarse simple connectedness is invariant by metric coarse equivalence
(Pro\-position \ref{coarse1conninvbycoarseeq}),
we have here for compact presentation the analogues of what are 
Theorem \ref{ftggt}  and Corollary \ref{ahahah} for compact generation:

\begin{thm}
\label{analogueof4.D.4}
Let $G$ be an LC-group and $X$ a non-empty coarsely proper pseudo-metric space;
suppose that there exists a geometric action of $G$ on $X$.
\par

Then $G$ is compactly presented if and only if 
$X$ is coarsely simply connected.
\end{thm}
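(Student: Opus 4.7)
The plan is to reduce the statement to the combination of three already-established results: the fundamental theorem of geometric group theory (Theorem \ref{ftggt}), the metric characterization of compact presentation for $\sigma$-compact LC-groups (Proposition \ref{cp iff csc}(2)), and the invariance of coarse simple connectedness under metric coarse equivalence (Proposition \ref{coarse1conninvbycoarseeq}). The role of the ``coarsely proper'' hypothesis on $X$ is essentially cosmetic here, since it will turn out to be automatic from the geometric action (by Corollary \ref{cpstablebymce} and the fact that $(G,d_G)$ with an adapted pseudo-metric is uniformly coarsely proper, Proposition \ref{growth=growth}); it is however a useful parallel to the hypothesis of coarse connectedness in Theorem \ref{ftggt}.

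Concretely, I would first fix a base-point $x_0 \in X$ and define $d_G(g,g') := d_X(gx_0,g'x_0)$. By Theorem \ref{ftggt}(1), since the action is geometric, $d_G$ is an adapted pseudo-metric on $G$, the group $G$ is $\sigma$-compact, and the orbit map $(G,d_G) \longrightarrow (X,d_X)$ is a quasi-isometry (and in particular a metric coarse equivalence).

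For the forward direction, assume $G$ is compactly presented. Then $G$ is in particular compactly generated and $\sigma$-compact, so Proposition \ref{cp iff csc}(2) applies with the adapted pseudo-metric $d_G$ and yields that $(G,d_G)$ is coarsely simply connected. Transporting this across the metric coarse equivalence given by the orbit map, Proposition \ref{coarse1conninvbycoarseeq} delivers that $(X,d_X)$ is coarsely simply connected. Conversely, assume $(X,d_X)$ is coarsely simply connected; since coarse simple connectedness entails coarse connectedness (Definition \ref{defcoarsely1con}), the second half of Theorem \ref{ftggt}(1) guarantees moreover that $G$ is compactly generated. Again by Proposition \ref{coarse1conninvbycoarseeq}, the pseudo-metric space $(G,d_G)$ is coarsely simply connected, and then Proposition \ref{cp iff csc}(2) yields that $G$ is compactly presented.

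There is no real obstacle in this argument; the content lies entirely in the three ingredients already proved. The only point that requires mild care is to observe that one cannot directly invoke Proposition \ref{cp iff csc}(2) in the ``if'' direction without first knowing that $G$ is $\sigma$-compact (which comes from Theorem \ref{ftggt}) and that the coarse simple connectedness of $X$ survives as coarse simple connectedness of $(G,d_G)$ (which is where we use that the orbit map is a metric coarse equivalence, together with Proposition \ref{coarse1conninvbycoarseeq}). This is the analogue, at the level of compact presentation, of the chain of implications used in Corollary \ref{ahahah} for compact generation.
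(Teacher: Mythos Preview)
Your proof is correct and follows exactly the approach the paper intends: the theorem is stated without a separate proof, as the paper presents it as an immediate consequence of Theorem~\ref{ftggt}, Proposition~\ref{cp iff csc}(2), and Proposition~\ref{coarse1conninvbycoarseeq}, which is precisely the chain of implications you spell out. Your side remark that the coarse properness hypothesis is automatic from the existence of a geometric action is also accurate and worth noting.
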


\begin{cor}
\label{analoguede4.D.5}
For an LC-group $G$, the following conditions are equivalent:
\begin{enumerate}[noitemsep,label=(\roman*)]
\item\label{iDEanaloguede4.D.5}
$G$ is compactly presented;
\item\label{iiDEanaloguede4.D.5}
there exists a geometric action of $G$ on
a non-empty coarsely simply connected pseudo-metric space;
\item\label{iiiDEanaloguede4.D.5}
there exists a geometric action of $G$ on
a non-empty geodesic simply connected metric space;
\item\label{ivDEanaloguede4.D.5}
there exists a geometric faithful action of $G$ on
a non-empty geodesic simply connected metric space.
\end{enumerate}
\end{cor}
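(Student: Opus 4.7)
The plan is to mirror the structure of the proof of Corollary \ref{ahahah}, with the chain of implications
\[
\text{(i)} \Longrightarrow \text{(iv)} \Longrightarrow \text{(iii)} \Longrightarrow \text{(ii)} \Longrightarrow \text{(i)},
\]
and to lean heavily on Milestone \ref{miles_compactpres} and Theorem \ref{analogueof4.D.4}. The implications (iv) $\Rightarrow$ (iii) $\Rightarrow$ (ii) are essentially free: forgetting faithfulness gives (iv) $\Rightarrow$ (iii), and a non-empty geodesic simply connected metric space is coarsely simply connected by Proposition \ref{scimplycsc}, whence (iii) $\Rightarrow$ (ii).

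For (ii) $\Rightarrow$ (i), suppose $G$ acts geometrically on a non-empty coarsely simply connected pseudo-metric space $(X,d_X)$. Pick a base-point $x_0 \in X$ and define the adapted pseudo-metric $d_G(g,g') := d_X(g x_0, g' x_0)$ on $G$; by Theorem \ref{ftggt}(1), the orbit map $(G,d_G) \to (X,d_X)$ is a quasi-isometry, so in particular $G$ is $\sigma$-compact and $(G,d_G)$ is coarsely simply connected (Proposition \ref{coarse1conninvbycoarseeq}). Proposition \ref{cp iff csc}(2) then yields that $G$ is compactly presented.

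The substantial implication is (i) $\Rightarrow$ (iv), where I intend to build the desired space as a Rips $2$-complex. Choose a compact symmetric generating set $S$ of $G$ and consider $\operatorname{Rips}^2_c(G,S)$ equipped with the combinatorial metric $d_2$ of Proposition \ref{donrealizationofsimpcx}. For $c$ sufficiently large, $\operatorname{Rips}^2_c(G,S)$ is simply connected by Milestone \ref{miles_compactpres}(v), and it is a complete geodesic metric space by Proposition \ref{donrealizationofsimpcx}. Left multiplication $G \times G \to G$ preserves the word metric $d_S$, hence extends to a simplicial, isometric, and faithful action of $G$ on $\operatorname{Rips}^2_c(G,S)$. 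It remains to check that this action is geometric: isometry is built in, local boundedness is verified on the generating set $S$ at the vertex $1$ using Remark \ref{firstexamplesgeometricactions}(2) (since $S \cdot 1 = S$ lies in the $d_2$-ball of radius $1$ around $1$), metric properness follows from the fact that $\{g \in G : d_2(g,1) \le R\} \subset \widehat{S}^N$ for some $N = N(R,c)$, which is compact, and coboundedness follows by taking $F$ to be the closed star of the vertex $1$, whose $G$-translates cover every vertex and hence every simplex of $\operatorname{Rips}^2_c(G,S)$.

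The main obstacle is the careful verification that the Rips-complex action in (i) $\Rightarrow$ (iv) is genuinely geometric, in particular metrically proper and cobounded, once one remembers that the $G$-topology is not a priori compatible with $d_S$ or $d_2$ and that the action need not be continuous (Remark \ref{firstexamplesgeometricactions}(1)); the arguments above handle this, but they must be written with some care to distinguish the topological structure on $G$ from the metric structure on $\operatorname{Rips}^2_c(G,S)$.
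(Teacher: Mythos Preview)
Your argument is correct. The paper gives no explicit proof of this corollary, merely indicating that one should mimic Corollary~\ref{ahahah}; following that model, the chain of implications would be (i)~$\Rightarrow$~(ii)~$\Rightarrow$~(iii)~$\Rightarrow$~(iv)~$\Rightarrow$~(i), with the passage (iii)~$\Rightarrow$~(iv) obtained by taking a product with the unit ball of $L^2(G,\mu)$ to manufacture faithfulness. You instead run the cycle in the opposite direction and go straight from (i) to (iv) via the Rips $2$-complex, exploiting the fact that left multiplication of $G$ on its own vertex set is automatically faithful. This is a genuine simplification: it dispenses with the auxiliary Hilbert-space factor and makes the simply connected geodesic model explicit from the outset. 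The paper's route has the mild advantage of cleanly separating the two upgrades (coarsely simply connected $\leadsto$ geodesic simply connected, then non-faithful $\leadsto$ faithful), but your direct construction is shorter and uses only objects already central to the chapter.
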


The literature contains various instances of this statement
in the particular case of discrete groups.
As an example, here is Corollary I.8.11 on Page 137 of \cite{BrHa--99}:
\begin{center}
\emph{A group is finitely presented if and only if it acts properly and cocompactly
\\
by isometries on a simply connected geodesic space.}
\end{center}
For proper cocompact isometric actions, see Remark \ref{firstexamplesgeometricactions}(6).

\vskip.2cm

The following proposition is Theorem 2.1 in \cite{Abel--72}.

\begin{prop}
\label{cpforgroupsandquotients}
Let 
$
N \lhook\joinrel\relbar\joinrel\rightarrow G \relbar\joinrel\twoheadrightarrow Q
$ 
be a short exact sequence of LC-groups and continuous homomorphisms.
\begin{enumerate}[noitemsep,label=(\arabic*)]
\item\label{1DEcpforgroupsandquotients}
Assume that $G$ is compactly presented
and that $N$ is compactly generated as a normal subgroup of $G$.
Then $Q$ is compactly presented.
\item\label{2DEcpforgroupsandquotients}
Assume that $G$ is compactly generated
and that $Q$ is compactly presented.
Then $N$ is compactly generated as a normal subgroup of $G$.
\item\label{3DEcpforgroupsandquotients}
If $N$ and $Q$ are compactly presented, so is $G$.
\end{enumerate}
\end{prop}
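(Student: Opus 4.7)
The plan is to reduce each of the three parts to a combinatorial statement proved earlier in Chapter 7: Lemma \ref{lem:bp_to_quot} handles parts (1) and (2), while Proposition \ref{boundedpresandSEC} handles part (3). A preliminary observation used throughout is Lemma \ref{boundedpresentationforS1andS2}(4): a compactly presented LC-group admits a bounded presentation over \emph{any} prescribed compact generating set. This freedom will let me tailor the generating set to the subgroup or quotient at hand.

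For (1), I would take a compact subset $T \subset N$ generating $N$ as a normal subgroup of $G$, together with an arbitrary compact generating set $S_0$ of $G$, and set $S := S_0 \cup T$. Then $S$ is still a compact generating set, and $G$ remains boundedly presented over $S$. Since $T \subset N \cap \widehat S$, the set $N \cap \widehat S$ also generates $N$ normally in $G$, so Lemma \ref{lem:bp_to_quot}(1) applied with $n = 1$ produces a bounded presentation of $Q$ over the compact set $\pi(S)$, which is exactly a compact presentation.

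For (2), I would choose a compact symmetric generating set $S$ of $G$ with $1 \in S$. Compact presentability of $Q$ together with Lemma \ref{boundedpresentationforS1andS2}(4) yields a bounded presentation $Q = \langle \pi(S) \mid R \rangle$ with $|r| \le k$ for all $r \in R$ and some integer $k \ge 2$. The natural generation $\rho : F_S \twoheadrightarrow G$ fits into a factorization $\pi\rho = \psi_Q \circ \phi$, where $\phi : F_S \to F_{\pi(S)}$ sends $s \mapsto \pi(s)$ and $\psi_Q : F_{\pi(S)} \twoheadrightarrow Q$ is the generation associated with the presentation. The point is that $\ker(\pi\rho) \subset F_S$ is then the normal closure in $F_S$ of two families, each of length at most $k$: the length-$2$ identification relators $\{ s_1 s_2^{-1} \mid s_1, s_2 \in S,\ \pi(s_1) = \pi(s_2) \}$, which normally generate $\ker\phi$, together with a system $\tilde R$ of lifts of the elements of $R$ obtained via a section $\sigma : \pi(S) \to S$ of $\pi|_S$. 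With this bounded-length normal generation of $\ker(\pi\rho)$ in place, Lemma \ref{lem:bp_to_quot}(2) produces $N$ as the normal subgroup of $G$ generated by $\widehat S^{k} \cap N$, which is compact as a closed subset of the compact set $\widehat S^{k}$.

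For (3), I would simply verify the hypotheses of Proposition \ref{boundedpresandSEC} and apply it. The two bounded presentations over compact generating sets are given. Continuity of $\pi$ is part of the data; openness of $\pi$ will follow from the Freudenthal corollary \ref{Freudenthalcor} once $G$ is known to be $\sigma$-compact, which in turn is ensured by Proposition \ref{stababcd}(4): compactly presented groups are compactly generated, so $N$ and $Q$ are both compactly generated, and therefore so is $G$. Proposition \ref{boundedpresandSEC} then directly produces a compact presentation of $G$. The main obstacle in the whole plan is the subtle step in (2): even though $G$ itself need not be compactly presented, no bounded-length generators for $\ker\rho$ are needed, since the combination of $\tilde R$ with the identification relators already normally generates $\ker(\pi\rho)$ within the required length bound; recognizing this is what allows Lemma \ref{lem:bp_to_quot}(2) to be invoked in its present form.
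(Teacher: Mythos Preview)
Your proof is correct and follows exactly the paper's approach: the paper's proof is the single sentence ``This follows from Lemma \ref{lem:bp_to_quot} and Proposition \ref{boundedpresandSEC},'' and you have filled in the routine details of how to invoke each result. Your handling of the subtlety in part (2) --- showing that $\ker(\pi\rho)$ is normally generated by bounded-length words via identification relators plus lifted relators --- is a correct and useful clarification that the paper leaves to the reader.
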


\begin{proof}
This follows from Lemma  \ref{lem:bp_to_quot}
and Proposition \ref{boundedpresandSEC}.
\end{proof}

\begin{defn}
\label{defgroupretract}
In an LC-group $G$, a \textbf{group retract} is a closed subgroup $H$
such that there exists a continuous homomorphism $p : G \twoheadrightarrow H$
of which the restriction to $H$ is the identity.
\index{Retract! group retract|textbf} \index{Group retract|textbf}
\end{defn}

\begin{prop}[retracts]
\label{retractcp}
Let $G$ be a $\sigma$-compact LC-group and $H$ a closed subgroup.
\par

If $G$ is compactly presented and $H$ a group retract, 
then $H$ is compactly presented.
\end{prop}

\begin{proof}
It follows from Propositions \ref{morphisms_grps_spaces}, 
\ref{coarseretractprop} and  \ref{cp iff csc}.
\end{proof}

\begin{exe}
\label{coarseretractex}
Let $H,N$ be two $\sigma$-compact LC-groups, 
$\alpha$ a continuous action of $H$ on $N$ by topological automorphisms,
and $G = N \rtimes_\alpha H$ the corresponding semidirect product.
\index{Semidirect product}
Consider $G$ as a metric space, for some adapted metric $d$;
observe that the restriction of $d$ to $H$ (identified with $\{1\} \times H$)
is an adapted metric.
\par

Then the canonical projection is a coarse retraction from $G$ to $H$,
in the sense of Definition \ref{coarseretractdef}.
Consequently, if $G$ is compactly presented, so is $H$.
\end{exe}

\begin{lem}
\label{lemnaneibh1}
Let $G$ be a group and $\mathcal V$ a set of subsets of $G$ containing $1$.
Then $\mathcal V$ is the set of neighbourhoods of $1$ 
of a group topology $\mathcal T$ on $G$
if and only if the following five conditions are satisfied:
\par

(F)
if $A \in \mathcal V$ and $A' \subset G$ with $A' \supset A$,
then $A' \in \mathcal V$;
\par

($\cap$)
for all $A', A'' \in \mathcal V$, there exists $A \in \mathcal V$ such that
$A \subset A' \cap A''$;
\par

(M$_1$)
for all $A \in \mathcal V$, there exist $A', A'' \in \mathcal V$ such that $A'A'' \subset A$;
\par

(I$_1$)
if $A \in \mathcal V$, then $A^{-1} \in \mathcal V$;
\par

(C)
$\mathcal V$ is stable by conjugation: $g_0^{-1} A g_0 \in \mathcal V$ 
for all $A \in \mathcal V$ and $g_0 \in G$.
\par\noindent
If these conditions are satisfied, such a $\mathcal T$ is unique.
\par

Moreover, if these conditions are satisfied, 
the topology $\mathcal T$ is Hausdorff 
if and only if $\bigcap_{A \in \mathcal V} A = \{1\}$.
\end{lem}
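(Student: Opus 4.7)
The plan is to establish the equivalence in the standard way: first verify necessity by translating the group-topology axioms into the five stated conditions, then construct $\mathcal T$ from $\mathcal V$ and verify sufficiency.

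For necessity, suppose $\mathcal V$ is the neighbourhood filter of $1$ for some group topology $\mathcal T$. Conditions (F) and $(\cap)$ are simply the filter axioms. For (M$_1$), continuity of the multiplication $G \times G \to G$ at $(1,1)$ gives, for any $A \in \mathcal V$, neighbourhoods $A', A''$ of $1$ with $A' A'' \subset A$. For (I$_1$), continuity of $g \mapsto g^{-1}$ at $1$ says that $A^{-1}$ is a neighbourhood of $1$ whenever $A$ is. For (C), the inner automorphism $g \mapsto g_0^{-1} g g_0$ is a homeomorphism fixing $1$, so it sends $\mathcal V$ into $\mathcal V$.

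For sufficiency, given $\mathcal V$ satisfying the five conditions, define
\begin{equation*}
\mathcal T \, = \, \{ U \subset G \mid \forall g \in U, \ \exists A \in \mathcal V \text{ with } gA \subset U \}.
\end{equation*}
First I would check that $\mathcal T$ is a topology, using (F) and $(\cap)$ together with (M$_1$) (to iterate: given $gA \subset U$, find $A'$ with $A' A' \subset A$, so that for every $h \in gA'$ one has $hA' \subset gA \subset U$). Next I would show that, for each $g_0 \in G$, the neighbourhoods of $g_0$ in $\mathcal T$ are exactly the sets containing some $g_0 A$ with $A \in \mathcal V$; the delicate direction uses the argument just mentioned to produce an actual $\mathcal T$-open set around $g_0$ inside $g_0 A$. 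Taking $g_0 = 1$, this identifies the $\mathcal T$-neighbourhoods of $1$ with $\mathcal V$.

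The main obstacle, and the key place where all five axioms interact, is verifying that $\mathcal T$ is a group topology. Continuity of multiplication at a point $(g_0,h_0)$ requires: given $A \in \mathcal V$, find $A', A'' \in \mathcal V$ with $(g_0 A')(h_0 A'') \subset g_0 h_0 A$. Writing $(g_0 A')(h_0 A'') = g_0 h_0 (h_0^{-1} A' h_0) A''$, one first applies (C) to $A'$ with conjugator $h_0$, then (M$_1$) to sandwich the product inside $A$; condition (F) lets us shrink $A'$ as needed. Continuity of inversion at $g_0$ is similar: the identity $(g_0 A)^{-1} = (g_0^{-1} A^{-1} g_0) g_0^{-1}$ reduces the problem to (I$_1$) and (C). Uniqueness of $\mathcal T$ is automatic, since in any group topology the neighbourhoods of an arbitrary point are the left translates of the neighbourhoods of $1$.

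Finally, for the Hausdorff statement, I would invoke Proposition \ref{GHausdorff}: a topological group is Hausdorff iff $\{1\}$ is closed, iff $\{1\} = \bigcap_{A \in \mathcal V} \overline{\{1\}\text{-nbhd}}$. More directly, $g \in \overline{\{1\}}$ iff every neighbourhood of $g$ meets $\{1\}$, iff $1 \in gA$ for every $A \in \mathcal V$, iff $g^{-1} \in \bigcap_{A \in \mathcal V} A$; by (I$_1$) this is equivalent to $g \in \bigcap_{A \in \mathcal V} A$. Thus $\{1\}$ is closed precisely when $\bigcap_{A \in \mathcal V} A = \{1\}$, which together with Proposition \ref{GHausdorff} completes the proof.
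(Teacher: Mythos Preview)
Your approach is correct and essentially the same as the paper's: you define the same topology (your condition ``$\exists A \in \mathcal V$ with $gA \subset U$'' is equivalent to the paper's ``$g^{-1}U \in \mathcal V$'' by (F)), and you verify the group-topology axioms by direct set manipulation where the paper uses nets. You are also more explicit than the paper on necessity and on the Hausdorff criterion, both of which are fine.

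One computational slip to fix: the identity you wrote for inversion is wrong. One has $(g_0 A)^{-1} = A^{-1} g_0^{-1} = g_0^{-1}\bigl(g_0 A^{-1} g_0^{-1}\bigr)$, not $(g_0^{-1} A^{-1} g_0) g_0^{-1}$ (the latter equals $g_0^{-1} A^{-1}$, which is the wrong order). With the correct identity the argument goes through exactly as you intend: given a neighbourhood $g_0^{-1} B$ of $g_0^{-1}$, set $A = g_0^{-1} B^{-1} g_0$, which lies in $\mathcal V$ by (I$_1$) and (C), and then $(g_0 A)^{-1} = g_0^{-1}(g_0 A^{-1} g_0^{-1}) = g_0^{-1} B$.
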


\emph{Note.}
For a set of subsets of $G$ containing $1$, Conditions (F) and ($\cap$) 
define $\mathcal V$ as a \emph{filter} of subsets of $G$.
The subscripts in (M$_1$) and (I$_1$) indicate that these conditions
express continuity of the multiplication at $(1,1)$ and of inversion at $1$.
Compare with Lemma \ref{LemTopsub}.
\par

The lemma is well-known, see e.g.\ Proposition 1 
in \cite[Page III.3]{BTG1-4}.

\begin{proof}
Both the necessity of the conditions and the uniqueness of $\mathcal T$ are clear.
Let us prove that the conditions are sufficient.
\par

Given $\mathcal V$ satisfying (F),  ($\cap$), (M$_1$), (I$_1$), and (C),
define $\mathcal T$ to be the set of subsets $U$ of $G$ such that,
for all $g \in U$, we have $g^{-1}U \in \mathcal V$.
Note that, by (C), the latter condition is equivalent to $Ug^{-1} \in \mathcal V$.
\par

Let us check that $\mathcal T$ is a topology on $G$.
It is obvious that $\emptyset, G \in \mathcal T$.
Let $U_1, U_2 \in \mathcal T$ be such that $U_1 \cap U_2 \ne \emptyset$,
and let $g \in U_1 \cap U_2$;
by ($\cap$), there exists $V \in \mathcal V$ such that $V \subset g^{-1}(U_1 \cap U_2)$;
hence $g^{-1}(U_1 \cap U_2) \in \mathcal V$ by (F);
it follows that $\mathcal T$ is stable by finite intersections.
Similarly, $\mathcal T$ is stable by unions.
\par

Ler us check that $\mathcal T$ is a group topology.
Consider two elements $g,h \in G$ and two nets $(g_i)_i, (h_j)_j$ in $G$
such that $\lim_i g_i = g$ and $\lim_j h_j = h$.
By definition of $\mathcal T$, the conditions are equivalent to
$\lim_i g^{-1}g_i  = 1$ and $\lim_j h^{-1} h_j = 1$. Then 
\begin{equation*}
\aligned
\lim_{i,j} h^{-1}g^{-1} g_i h_j 
\, &= \, 
\lim_{i,j} \left( h^{-1} \hskip.1cm \big( g^{-1}g_i \big) \hskip.1cm h \hskip.1cm  \big(h^{-1} h_j \big) \right) 
\\
&
\, = \, 
h^{-1}  \hskip.1cm \big( \lim_i g^{-1}g_i \big)  \hskip.1cm h  \hskip.1cm \big( \lim_j h^{-1}h_j \big)
\hskip.5cm \text{by (C) and (M$_1$)}
\\
&
\, = \, 1
\hskip.5cm \text{by hypothesis on $(g_i)_i$ and $(h_j)_j$} ,
\endaligned
\end{equation*}
so that $\lim_{i,j} g_i h_j = gh$;
hence, for $\mathcal T$,  the multiplication in $G$ is continuous.
Moreover, we have also $\lim_i g_i g^{-1} = 1$, by (C),
and $\lim_i gg_i^{-1} = 1$, by (I$_1$),
so that $\lim_i g_i^{-1} = g^{-1}$;
hence the inversion is also continuous.
\end{proof}

\begin{prop}
\label{cpcovering}
Let $G$ be a topological group.
\index{Topological group}
Let $A$ be an open symmetric neighbourhood of $1$ in $G$,
and $S$ a symmetric subset of $G$ such that $A \subset S \subset \overline A$.
Assume that $S$ generates $G$.
\par

Let $\widetilde S$ be a set given with a bijection
$\widetilde S \ni \widetilde s \leftrightarrow s \in S$.
Define a group by a presentation $\widetilde G_S = \langle \widetilde S \mid R \rangle$
with set of generators $\widetilde S$ and set of relators
\begin{equation*}
R \,  := \,  \{  \widetilde s \hskip.1cm \widetilde t \hskip.1cm 
\widetilde u \hskip.1cm {\phantom{}}^{{}^{-1}} 
\mid s,t,u \in S , \hskip.1cm st=u \} .
\end{equation*}
Denote by $p : \widetilde G_S \longrightarrow G$ the homomorphism defined by
$p(\widetilde s) = s$ for all $s \in S$.
\begin{enumerate}[noitemsep,label=(\arabic*)]
\item\label{1DEcpcovering}
The group $\widetilde G_S$ has a unique group topology
such that $p$ is a covering map.
\end{enumerate}
Suppose from now on that $\widetilde G_S$ 
is given the topology of \ref{1DEcpcovering}.
\begin{enumerate}[noitemsep,label=(\arabic*)]
\addtocounter{enumi}{1}
\item\label{2DEcpcovering}
If $G$ is locally compact, then so is $\widetilde G_S$.
\item\label{3DEcpcovering}
If $G$ is locally compact and $A$ is connected, then $\widetilde G_S$
is locally compact and connected.
\end{enumerate}
\end{prop}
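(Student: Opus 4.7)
The plan is to construct the topology on $\widetilde G_S$ by prescribing a neighborhood filter at the identity. For each open neighborhood $V$ of $1$ in $G$ with $V \subset A$, set $\widetilde V = \{\widetilde v : v \in V\} \subset \widetilde G_S$, and let $\mathcal V$ be the filter of subsets of $\widetilde G_S$ containing some such $\widetilde V$. I will apply Lemma \ref{lemnaneibh1} to obtain a group topology $\mathcal T$ on $\widetilde G_S$ having $\mathcal V$ as neighborhood filter of $1$. Axioms (F) and ($\cap$) are immediate. For (M$_1$), given $V$ pick open $V' \subset A$ with $V'V' \subset V$; for $v_1, v_2 \in V'$, the relator $\widetilde{v_1}\widetilde{v_2}\widetilde{v_1v_2}^{-1} \in R$ (since $v_1, v_2, v_1 v_2 \in S$) yields $\widetilde{v_1}\widetilde{v_2} = \widetilde{v_1v_2} \in \widetilde V$. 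For (I$_1$), the relations $\widetilde 1 \widetilde 1 \widetilde 1^{-1} \in R$ and $\widetilde s \widetilde{s^{-1}} \widetilde 1^{-1} \in R$ force $\widetilde 1 = 1$ and $\widetilde{s^{-1}} = \widetilde s^{-1}$ in $\widetilde G_S$, so symmetric $V$ gives $\widetilde V^{-1} = \widetilde V$.

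The main difficulty is axiom (C), conjugation invariance. I proceed by induction on word length, reducing to the case $g_0 = \widetilde s$ with $s \in S$. For $s \in A$, the set $A \cap s^{-1}A \cap s^{-1}Vs$ is an open neighborhood of $1$ by continuity of multiplication and conjugation in $G$; choose an open $V' \subset A$ inside it. For $v' \in V'$ we have $sv' \in A \subset S$ and $sv's^{-1} \in V \subset S$, and the relations $\widetilde s \widetilde{v'} = \widetilde{sv'}$ and $\widetilde{sv'}\widetilde{s^{-1}} = \widetilde{sv's^{-1}}$ yield $\widetilde s \widetilde{v'} \widetilde s^{-1} = \widetilde{sv's^{-1}} \in \widetilde V$. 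For $s \in S \setminus A \subset \overline A$, the crucial observation is that $sA \cap A \ne \emptyset$ since $sA$ is an open neighborhood of $s \in \overline A$; hence there exist $a \in A$ and $b = a^{-1}s \in A$ with $s = ab$, and the relation $\widetilde a \widetilde b \widetilde s^{-1} \in R$ gives $\widetilde s = \widetilde a \widetilde b$ in $\widetilde G_S$. Conjugation by $\widetilde s$ thus factors through successive conjugations by $\widetilde b$ and then $\widetilde a$, reducing to the case already treated.

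With the axioms of Lemma \ref{lemnaneibh1} verified, we obtain the group topology $\mathcal T$, and $p$ is continuous since $p(\widetilde V) = V$ for each basic neighborhood. By Proposition \ref{GtoHcovering}, $p$ is a covering once it is a local homeomorphism at $1$, which follows from the bijection $p|_{\widetilde A} : \widetilde A \to A$ being a homeomorphism: its inverse $a \mapsto \widetilde a$ is continuous at each $a \in A$, because whenever $V' \subset A$ is small enough that $aV' \subset A$, the same relations used for (M$_1$) give $\widetilde a \widetilde{V'} = \widetilde{aV'}$, identifying the basic neighborhood $\widetilde a \widetilde{V'}$ of $\widetilde a$ with the image of the open set $aV'$. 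Uniqueness of the topology in (1) follows from standard covering-space arguments: any group topology making $p$ a covering must have the sheet of $p^{-1}(V)$ through $1$ as a neighborhood of $1$, and this sheet coincides with $\widetilde V$ for $V$ small, because both it and the algebraic section $v \mapsto \widetilde v$ are local homomorphism-sections $V \to \widetilde G_S$ agreeing at $1$.

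For (2), assume $G$ is locally compact and choose a compact neighborhood $K$ of $1$ in $G$ with $K \subset A$; then $\widetilde K \subset \widetilde A$ is a neighborhood of $1$ in $\widetilde G_S$, compact because $p|_{\widetilde A}$ is a homeomorphism, so $\widetilde G_S$ is locally compact by translation. For (3), suppose moreover that $A$ is connected, so that $\widetilde A$ is connected via $p|_{\widetilde A}$; then the subgroup of $\widetilde G_S$ generated by $\widetilde A$ is connected (as the union of the connected sets $(\widetilde A \cup \widetilde A^{-1})^n$, all containing $1$), and it equals $\widetilde G_S$ because every generator $\widetilde s$ for $s \in S \subset \overline A$ already lies in it, via the decomposition $\widetilde s = \widetilde a \widetilde b$ with $a, b \in A$ established above in the treatment of (C). Hence $\widetilde G_S$ is connected.
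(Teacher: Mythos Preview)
Your proof is correct and follows essentially the same route as the paper: define the neighborhood filter at $1$ via the sets $\widetilde V$, verify the axioms of Lemma~\ref{lemnaneibh1} with the crucial reduction for (C) coming from $S \subset \overline A \subset AA$ so that each $\widetilde s$ factors as $\widetilde a\,\widetilde b$ with $a,b \in A$, and then observe that $p|_{\widetilde A}$ is a homeomorphism. One small omission is the Hausdorff check (required by convention (A1)), which is immediate from $\bigcap_V \widetilde V = \{1\}$ since $s \mapsto \widetilde s$ is injective (via $p$) and $G$ is Hausdorff. Your argument for (3) is a pleasant variant: you show directly that the subgroup generated by the connected set $\widetilde A$ is connected and equals $\widetilde G_S$, whereas the paper instead argues that every neighborhood of $1$ generates $\widetilde G_S$ by an equivalence-relation argument on $\widetilde S$; your route is shorter.
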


\begin{proof}
For every subset $B$ of $S$, 
set $\widetilde B = \{ \widetilde s \in \widetilde S \mid s \in B \}$.

\vskip.2cm

(Beginning of \ref{1DEcpcovering})
Recall that a group topology on a group is characterized
by nets converging to $1$. 
Let $B$ be a compact neighbourhood of $1$ contained in $A$;
it is easily seen that, for a group topology $\mathcal T$ on $\widetilde G_S$
such that $p$ is a covering map, 
a net $(g_i)_i$ in $\widetilde G_S$ converges to $g \in \widetilde G_S$ 
if and only if it satisfies the two following conditions:
\begin{equation}
\label{eqGhausdorff}
\text{eventually} \hskip.2cm g^{-1} g_i \in \widetilde B , 
\hskip.2cm \text{and} \hskip.2cm \lim_i p(g^{-1} g_i) = 1 \in G.
\end{equation}
This proves the uniqueness of $\mathcal T$.

\par

Let us now prove the existence of such a topology.
Define $\mathcal V$ to be the set of subsets $X$ in $\widetilde G_S$
such that $p(X \cap \widetilde S)$ is a neighbourhood of $1$ in $S$.
Conditions (F), ($\cap$), (M$_1$), and (I$_1$) 
of Lemma \ref{lemnaneibh1} are clearly satisfied.
To show that $\mathcal T$ is a group topology on $G$, 
it remains to show that Condition (C) is satisfied.

\vskip.2cm

(C)
Define $H$ to be the set of elements in $\widetilde G_S$ which normalize $\mathcal V$;
it is a subgroup of $\widetilde G$. We claim that the set $\widetilde A$ is inside the group $H$.
\par

Consider $X \in \mathcal V$,  $a \in A$,
and let us show that $\widetilde a X \widetilde a^{-1} \in \mathcal V$.
Upon replacing $X$ by $X \cap \widetilde S$,
we can assume without loss of generality that $X \subset \widetilde S$, 
i.e., that $X = \widetilde B$ for some neighbourhood $B$ of $1$ in $S$.
Since $A$ is a neighbourhood of both $1$ and $a$, 
there exists a neighbourhood $C$ of $1$ in $A$ 
such that $aC \subset A$ and $aCa^{-1} \subset A$.
Let $c \in C$; set $d = ac$ and $e = aca^{-1} = da^{-1}$; observe that $d,e \in A$.
Moreover, since $ac = d = ea \in A$, 
we have $\widetilde a \widetilde c = \widetilde d = \widetilde e \widetilde a$,
and therefore $\widetilde e = \widetilde a \widetilde c \widetilde a^{-1} \in \widetilde A$.
Hence  $\widetilde a \widetilde c \widetilde a^{-1} = \widetilde{ aca^{-1} }$
for all $c \in C$, and \emph{a fortiori} for all $c \in B \cap C$,
i.e., $p\big( \widetilde a (\widetilde B \cap \widetilde C) \widetilde a^{-1} \big) = 
a(B \cap C)a^{-1}$.
Since the latter is a neighbourhood of $1$ in $S$
contained in $p( \widetilde a \widetilde B \widetilde a^{-1} \cap \widetilde S)$,
it follows that $ \widetilde a \widetilde B \widetilde a^{-1} \in \mathcal V$.
Hence the claim is shown.
\par

Since $S \subset \overline{A}$, we have $S \subset AA$.
Thus every element of $S$ is of the form $s = ab$ with $a,b \in A$.
This implies $\widetilde s = \widetilde a \widetilde b$;
consequently $\widetilde A$ generates $\widetilde G_S$.
Hence, for $X \in \mathcal V$, the conclusion of the previous step, 
i.e., $h X h^{-1} \in \mathcal V$, holds not only for all $h \in \widetilde A$,
but indeed for all $h \in \widetilde G_S$. 
Consequently $H = \widetilde G_S$, i.e., Condition (C) of the previous lemma is satisfied.

\vskip.2cm      

(End of \ref{1DEcpcovering})
Recall our convention (see (A1)  Page \pageref{A1})
according to which the topological group $G$ is Hausdorff.
To check that the group topology $\mathcal T$ 
just defined on $\widetilde G_S$ is Hausdorff,
it suffices to check that, in a constant net converging to $1$,
every element is $1$.
This is true: let $h \in \widetilde G_S$, and $(g_i)$ a constant net, with $g_i = h$ for all $i$;
if the net is converging to $g=1$, the conditions of (\ref{eqGhausdorff}) above
imply that $1^{-1}h \in \widetilde S$ and $p(h) = 1$, so that $h=1$.
\par

It is immediate that the homomorphism $p$ is continuous,
and that $p$ induces a homeomorphism of $\widetilde A$ onto $A$.
It follows from Proposition \ref{GtoHcovering} that $p$ is a covering.

\vskip.2cm

\ref{2DEcpcovering}
If $G$ is locally compact, there exists a compact  neighbourhood $C$ of $1$ in $A$.
Then $\widetilde C$ is a compact  neighbourhood of $1$ in $\widetilde G_S$.

\vskip.2cm

\ref{3DEcpcovering}
Suppose that $G$ is locally compact and $A$ connected. 
As recalled in Remark \ref{remopensub},
it suffices to show that every neighbourhood of $1$ in $\widetilde G_S$,
say $W$, generates $\widetilde G_S$.
\par

Upon replacing $W$ by a smaller one, we can assume that $W$ is symmetric and contained in $A$.
Then $W = \widetilde V$ for an appropriate symmetric neighbourhood $V$ of $1$ in $G$.
Define an equivalence relation $\sim$ in $\widetilde S$ as follows:
for $\widetilde s, \widetilde{s'} \in \widetilde S$ (with $s,s' \in S$),
set $\widetilde s \sim \widetilde{s'}$ if there exist 
$t_0, t_1, \hdots , t_k \in S$ with $t_0 = s$, $t_k = s'$, and $t_i t_{i-1}^{-1} \in V$ for $i = 1, \hdots, k$.
Since $V$ is a neighbourhood of $1$,
the classes of this equivalence relation are open in $\widetilde S$.
Since $A$ is connected and $A \subset S \subset \overline A$, 
the set $S$ is also connected,
and it follows that this equivalence relation has exactly one class in $\widetilde S$;
in other terms, $\widetilde S$ is in the subgroup generated by $\widetilde V$,
and thus $\widetilde G_S$ itself is generated by $\widetilde V$.
\end{proof}

\begin{rem}
(a)
In the previous proposition,
the condition $S \subset \overline{A}$ cannot be deleted.
\par

Indeed, consider the example for which $G = \R$,
$A = \mathclose] -1,1 \mathopen[$, 
and $S = \mathopen[-1,1\mathclose] \cup \{-3,3\}$.
Define $R$ and $\widetilde G_S$ as in Proposition \ref{cpcovering}.
Then $\widetilde G_S$ is isomorphic to a free product $\widetilde \R \ast \widetilde \Z$,
where $\widetilde{\R}$ is a group isomorphic to $\R$ 
and generated by $\widetilde{\mathopen[-1,1\mathclose]}$,
and $\widetilde \Z$ an infinite cyclic group generated by $\widetilde 3$.
The surjective homomorphism $p : \widetilde G_S \longrightarrow \R$
applies $\widetilde{ \mathopen[-1,1\mathclose] }$ 
onto $\mathopen[-1,1\mathclose]$ and $\widetilde 3$ to $3$.
\par

The point is that, if $\mathcal V$ was defined as in the previous proof, 
then $\mathcal V$ would not be normalized by the generator $\widetilde 3$.  
Indeed, there does not exist any group topology on $\widetilde{\R} \ast \widetilde{\Z}$
for which the natural surjective homomorphism onto $\R$ would be a covering map.

\vskip.2cm

(b) Claim \ref{1DEcpcovering} of the previous proposition
is essentially Lemma 1.7 in  \cite{Abel--72}.
\end{rem}

\begin{cor}
\label{cpcoveringofcpG}
For every compactly generated LC-group $G$,
there exists a compactly presented LC-group $\widetilde G$
and a discrete normal subgroup $N$ of $\widetilde G$
such that the quotient $\widetilde G / N$ is topologically isomorphic to $G$.
\par

If moreover $G$ is connected, then $\widetilde G$ can be chosen to be connected as well.
\end{cor}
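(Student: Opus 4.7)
The plan is to build $\widetilde G$ by applying Proposition \ref{cpcovering} to a suitable compact generating set of $G$, and then take $N$ to be the kernel of the resulting covering map.

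First I would choose a compact symmetric generating set of $G$ containing $1$, embed it into a relatively compact, symmetric, open neighborhood $A$ of $1$, and put $S := \overline A$; then $S$ is compact symmetric and generates $G$, and $A \subset S \subset \overline A$. Proposition \ref{cpcovering} furnishes an LC-group $\widetilde G := \widetilde G_S$, presented abstractly as $\langle \widetilde S \mid R\rangle$ with relators of length at most $3$, together with a continuous covering homomorphism $p \colon \widetilde G \to G$. I would set $N := \ker p$; discreteness of $N$ is immediate from the covering property, and since covering maps are open, the induced continuous bijection $\widetilde G / N \to G$ is a topological isomorphism.

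The key step is to show that $\widetilde G$ is compactly presented, which reduces to checking that $\widetilde S \subset \widetilde G$ is compact. I would argue that the restriction $p|_{\widetilde S} \colon \widetilde S \to S$, a bijection by construction, is a homeomorphism. Continuity is automatic; for the inverse, if $s_n \to s$ in $S$ then $s^{-1}s_n \to 1$ in $G$, so $s^{-1}s_n \in A \subset S$ eventually, and the relators of $R$ give $\widetilde s^{-1} \widetilde{s_n} = \widetilde{s^{-1}s_n}$ for $n$ large. By the definition of the topology on $\widetilde G$ recalled in the proof of Proposition \ref{cpcovering}, the sets $\widetilde V$ form basic neighborhoods of $1$ as $V$ runs over neighborhoods of $1$ in $G$; hence $\widetilde{s^{-1}s_n} \to 1$, and $\widetilde{s_n} \to \widetilde s$. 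Thus $\widetilde S$ is compact, and $\langle \widetilde S \mid R\rangle$ is a compact presentation of $\widetilde G$ with relators of length at most $3$.

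For the connectedness assertion, if $G$ is connected and locally connected (as happens whenever $G$ is a Lie group), I would choose $A$ to be connected and appeal directly to Proposition \ref{cpcovering}(3). In the remaining case of a connected LC-group that is not locally connected (such as a $p$-adic solenoid), this route is unavailable, but Proposition \ref{connbycompcp} (quoted in Remark \ref{remopensub}) already asserts that every connected LC-group is compactly presented, so one may simply take $\widetilde G := G$ and $N := \{1\}$. I expect the main obstacle to be the compactness of $\widetilde S$, which hinges on a careful reading of the definition of the covering topology on $\widetilde G_S$ supplied in the proof of Proposition \ref{cpcovering}.
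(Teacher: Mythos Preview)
Your argument is correct and matches the paper's approach: choose a relatively compact open symmetric neighborhood $U$ of $1$ whose closure $S=\overline U$ generates $G$, apply Proposition~\ref{cpcovering}, and set $\widetilde G=\widetilde G_S$ and $N=\ker p$. Your verification that $p|_{\widetilde S}$ is a homeomorphism (hence $\widetilde S$ compact) fills in a step the paper leaves implicit, and your case split on the connectedness assertion is in fact more scrupulous than the paper's terse proof, which tacitly relies on Proposition~\ref{cpcovering}(3) without discussing whether $U$ can be chosen connected; your fallback via Proposition~\ref{connbycompcp} for non--locally-connected $G$ is legitimate, since that proposition is established later but without using the present corollary.
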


\begin{proof}
There exists in $G$ an open symmetric neighbourhood $U$ of $1$ 
with compact closure $\overline U$ which is generating.
With the notation of Proposition \ref{cpcovering},
the corollary holds for $\widetilde G = \widetilde G_{\overline U}$ and $N = \ker (p)$.
\end{proof}

\begin{cor}
\label{cpcoveringof1cLieG}
Let $G$ be a connected simply connected Lie group
and $U$ a relatively compact open symmetric neighbourhood of $1$ in $G$.
Set $S = \overline U$, 
let $\widetilde S$ be a set given with a bijection
$\widetilde S \ni \widetilde s \leftrightarrow s \in S$,
and define $R = \{  \widetilde s \hskip.1cm \widetilde t \hskip.1cm \widetilde u^{-1} 
\mid s,t,u \in S , \hskip.1cm st=u \}$.
\par

Then $\langle \widetilde S \mid R \rangle$
is a compact presentation of $G$.
\end{cor}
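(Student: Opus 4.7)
The plan is to apply Proposition \ref{cpcovering} directly, taking $A = U$ and $S = \overline U$. Since $G$ is connected and $U$ is a neighbourhood of $1$, the set $U$ generates $G$ by Proposition \ref{powersSincpgroup}\ref{2DEpowersSincpgroup}, and $S$ is the symmetric compact subset satisfying $A \subset S \subset \overline A$ required by the proposition. Part \ref{1DEcpcovering} of that proposition then endows the abstractly defined group $\widetilde G_S = \langle \widetilde S \mid R \rangle$ with a group topology for which the tautological homomorphism $p : \widetilde G_S \twoheadrightarrow G$ is a covering map.

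Next I would arrange that $U$ is connected. Every Lie group is locally connected, so one may assume this without loss of generality: if not, one first establishes the conclusion for the identity component $U_0$ of $U$, and then deduces the stated presentation by comparing the two compact generating sets $\overline{U_0} \subset \overline U$ via Lemma \ref{boundedpresentationforS1andS2}. Under the assumption that $U$ is connected, Proposition \ref{cpcovering}\ref{3DEcpcovering} says that $\widetilde G_S$ is connected and locally compact. Moreover, since $p$ is a covering of the locally path-connected space $G$ (a Lie group is locally Euclidean, hence locally path-connected), $\widetilde G_S$ inherits local path-connectedness as well.

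The crux is then a standard covering-space argument: a covering homomorphism $p : \widetilde G_S \to G$ whose base is simply connected and whose total space is connected and locally path-connected must be a homeomorphism. Consequently $p$ is an isomorphism of topological groups, so the abstract presentation $\langle \widetilde S \mid R \rangle$ is in fact a presentation of $G$ itself. Since $\widetilde S$ is in bijection with the compact set $S = \overline U$ and every relator in $R$ has length exactly $3$, this is by definition a compact presentation of $G$.

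The main obstacle is the appeal to covering space theory: one must verify that the topological group $\widetilde G_S$ built in Proposition \ref{cpcovering} is regular enough for the lifting criterion to apply, in particular that it is locally path-connected. This is the only non-formal point, and it is handled by transporting local path-connectedness from $G$ through the local homeomorphism $p$. Once this regularity is in hand, simple connectedness of $G$ forces the kernel of $p$ to be trivial, and the rest is bookkeeping.
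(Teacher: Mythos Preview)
Your approach is essentially the paper's own: apply Proposition \ref{cpcovering} to obtain a connected covering $p:\widetilde G_S\to G$, and then use that a connected covering of a simply connected Lie group is an isomorphism. The paper phrases this as an appeal to Corollary \ref{cpcoveringofcpG}, but that corollary is itself just Proposition \ref{cpcovering} packaged with $\widetilde G=\widetilde G_{\overline U}$, so the substance is identical.

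One small point: your reduction to connected $U$ via Lemma \ref{boundedpresentationforS1andS2} does not quite do what you need. That lemma tells you $G$ is \emph{boundedly presented over} $\overline U$ once it is over $\overline{U_0}$, but the corollary asserts that a \emph{specific} presentation (the triangle relations) works, which is a stronger statement. A cleaner fix stays within the covering picture: the inclusion $\overline{U_0}\subset\overline U$ induces a continuous homomorphism $\iota:\widetilde G_{\overline{U_0}}\to\widetilde G_{\overline U}$ with $p\circ\iota=p_0$; once you know $p_0$ is an isomorphism, $\iota\circ p_0^{-1}$ is a continuous section of the covering $p$, forcing $p$ to be an isomorphism as well. (The paper itself glosses over this connectedness issue entirely.)
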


\begin{proof}
This corollary follows from the previous one
and from the fact that any connected covering of a simply connected Lie group
is an isomorphism.
\end{proof}

\begin{exe}[compact presentations of Lie groups]
(1)
The additive group $\R$ is generated by the compact interval $\mathopen[-1,1\mathclose]$.
Let $\widetilde S$ be a set given with a bijection
$\widetilde S \ni \widetilde s \leftrightarrow s \in \mathopen[-1,1\mathclose]$;
then
\begin{equation*}
\left\langle
\widetilde S
\hskip.2cm \big\vert \hskip.2cm 
\widetilde s \widetilde t = \widetilde u 
\hskip.2cm \text{for all} \hskip.2cm 
s,t,u \in \mathopen[-1,1\mathclose] 
\hskip.2cm \text{such that} \hskip.2cm  s+t  = u
\right\rangle
\end{equation*}
is a compact presentation of $\R$.

\vskip.2cm

(2)
Let 
${\SL}^{\text{univ}}_2(\R)$ denote the universal cover of $\SL_2(\R)$,
and $\pi$ the corresponding covering map.
Let $c$ be a positive constant with $0 < c < 1$.
Set
\begin{equation*}
S \, = \,
\left\{
\begin{pmatrix}
t & x \\ y & z 
\end{pmatrix} \in \SL_2(\R)
\hskip.2cm \Big\vert \hskip.2cm
\vert t - 1 \vert \le c, \hskip.2cm \vert x \vert \le c,
\hskip.2cm \vert y \vert \le c, \hskip.2cm \vert z - 1 \vert \le c
\right\} .
\end{equation*}
Let $\widetilde S$ be a set given with a bijection
$\widetilde S \ni \widetilde s \leftrightarrow s \in S$, and set
\begin{equation*}
R \, = \, 
\{ \widetilde g \widetilde h \widetilde k^{-1} \hskip.1cm \mid \hskip.1cm 
g,h,k \in S \hskip.2cm \text{and} \hskip.2cm k = gh \in S \} .
\end{equation*}
As an example for Corollary \ref{cpcoveringofcpG},
$\langle \widetilde S \mid R \rangle$ 
is a compact presentation of ${\SL}^{\text{univ}}_2(\R)$.
\par

Set moreover
\begin{equation*}
\aligned
u \, &= \, \begin{pmatrix}
\phantom{-}\cos (\pi/n) & \sin(\pi/n) \\ -\sin(\pi/n) & \cos(\pi/n)
\end{pmatrix}
\hskip.2cm \text{with $n \in \N$ large enough so that $u \in S$},
\\
R' \, &= \,  R \hskip.1cm \cup \hskip.1cm  \{ (\widetilde u)^{2n} \} .
\endaligned
\end{equation*}
Recall that the kernel of the universal covering 
$\langle \widetilde S \mid R \rangle \longrightarrow \SL_2(\R)$
is an infinite cyclic group, and observe that $(\widetilde u)^{2n}$ is a generator of this kernel.
Hence, as an example of Corollary \ref{cpcoveringof1cLieG},
$\langle \widetilde S \mid R' \rangle$ 
is a compact presentation of $\SL_2(\R)$.
\par

This procedure of writing down compact presentations can be adapted at will
to other groups of real or complex matrices.

\vskip.2cm

(3)
We refer to \cite{Viro} for an other kind of presentations
of groups such as $\R^n \rtimes \textnormal{O}(n)$,
with $S$ the \emph{non-compact} set of all reflections of $\R^n$.
These presentations are bounded, but not compact.
\end{exe}

Here is a strengthening of Corollary \ref{cpcoveringofcpG}
for groups containing compact open subgroups, as in Proposition \ref{propCayleyAbels}.

\begin{thm}
\label{cpcovers}
Let $G$ be a compactly \emph{generated} LC-group containing a compact open subgroup.
\index{Open subgroup! compact}
\par

There exists an LC-group $\widetilde G$, an open surjective homomorphism
with discrete kernel $\widetilde G \twoheadrightarrow G$,
and a vertex-transitive proper continuous action of $\widetilde G$ 
on a regular tree $T$ of bounded valency. 
In particular, $\widetilde G$ is compactly presented.
\par

If $G$ is totally disconnected, then $\widetilde G$ is totally disconnected as well.
\par
   
If $G$ has no non-trivial compact normal subgroup, 
then the action of $\widetilde G$ on $T$ can be chosen to be faithful.
\end{thm}

\begin{proof}
Let $X$ be a Cayley-Abels graph for $G$,
\index{Cayley-Abels graph}
as in \ref{propCayleyAbels};
recall that $X$ is a graph of bounded valency.
Let $N$ denote the kernel of the action of $G$ on $X$;
it is a compact normal subgroup of $G$, 
and $H := G/N$ acts faithfully on $X$.
We denote by $\rho : G \twoheadrightarrow H$ the canonical projection.
Recall that the automorphism group of $X$ is naturally an isometry group,
that we denote by $\Isom (X)$; see Example \ref{Aut(X)=Isom(X,d)}.
In particular, $\Isom (X)$ has a natural topology that makes it an LC-group,
and we can identify $H$ with a closed 
subgroup of $\Isom (X)$; see Proposition \ref{GinIsom(X)}.
For $x \in X$, let $H_x$ denote the isotropy group $\{h \in H \mid h(x) = x \}$,
and similarly for $\Isom (X)_x$.
Since $\Isom (X)_x$ is compact (Lemma \ref{GinIsom(X)}\ref{2DEGinIsom(X)}), 
and contains $H_x$ as a closed subgroup, $H_x$ is compact. 
\par

Let $\widetilde X$ be the universal covering tree of $X$,
and $\pi : \widetilde X \longrightarrow X$ the covering projection.
\index{Universal cover}
Let $\Isom(\widetilde X)$ be the group of automorphisms of $\widetilde X$,
with the LC-topology defined in Section \ref{isometrygroups}. 
Define $\widetilde H$ as the subgroup of $\Isom(\widetilde X)$
of isometries covering automorphisms in $H$.
By covering theory, 
the natural homomorphism $p : \widetilde H \longrightarrow H$ is surjective.
Moreover, for all $x \in X$ and $\tilde x \in \pi^{-1}(x)$, 
there is a continuous section 
$s_x : H_x \longrightarrow \widetilde H_{\tilde x}$ of $p$ over $H_x$,
i.e., a continuous homomorphism such that $p s_x$ is the identity of $H_x$.
It follows that $\widetilde H_{\tilde x}$ is a compact subgroup for all $\tilde x \in \widetilde X$,
hence that $\widetilde H$ is closed in $\Isom (\widetilde X)$.
\par

In particular, $\widetilde H$ is an LC-group,
$\ker (p)$ has trivial intersection with $\widetilde H_{\tilde x}$ 
for one (equivalently all) $\tilde x \in \widetilde X$,
and therefore $\ker (p)$ is discrete in $\widetilde H$.
\par

Consider the fibre product 
$\widetilde G = G \times_H \widetilde H := \{ (g,\tilde h) \in G \times \widetilde H \mid
\rho (g) = p(\tilde h) \}$, see Definition \ref{deffibreproduct}. 
\index{Fibre product}
If $G$ is totally disconnected, so is $\widetilde G$.
\par

The projection $\pi_G : \widetilde G \longrightarrow G$ 
is surjective with discrete kernel, because $\ker (p)$ is discrete in $\widetilde H$
(see Remark \ref{remfibreproduct}(1)).
The projection $\pi_{\widetilde H} : \widetilde G \longrightarrow \widetilde H$
is surjective with compact kernel, indeed $\ker (\pi_{\widetilde H}) = N \times \{1\}$.
The action of $\Isom (\widetilde X)$ on $\widetilde X$ 
continuous, isometric, and proper;
hence so is the action of $\widetilde H$ on $\widetilde X$;
since the latter is vertex-transitive, it is also geometric.
Hence, similarly, the action of $\widetilde G$ on $\widetilde X$ is  geometric.
\par

Since $\widetilde X$ is a tree, it follows that $\widetilde G$
is coarsely simply connected. Hence $\widetilde G$ is compactly presented.
\par

Denote by $K$ the kernel of the action of $\widetilde G$ on $\widetilde X$.
Since this actions factors through $\widetilde H$, we have $K \subset N \times \{1\}$.
When $G$ has no compact normal subgroup, we have $N = \{1\}$,
and it follows that $K = 1$, 
i.e., that the action of $\widetilde G$ on $\widetilde X$ is faithful.
\end{proof}

The following proposition and corollary can be found in articles by Abels.
See in particular 2.1 in \cite{Abel--69},
as well as Satz A and 3.2 in \cite{Abel--72}.
Recall that connected-by-compact LC-groups are
\index{Connected-by-compact topological group}
compactly generated (Proposition \ref{almostconnectedgroups}).

\begin{prop}
\label{connbycompcp}
Every connected-by-compact LC-group is compactly presented.
\end{prop}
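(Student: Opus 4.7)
The plan is to reduce, in three successive steps, the general connected-by-compact case to the case of a connected simply connected Lie group, which is handled by Corollary \ref{cpcoveringof1cLieG}.

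First, I would use the Gleason-Yamabe theorem (Theorem \ref{Yamabe}) to choose a compact normal subgroup $N$ of $G$ such that $L := G/N$ is a Lie group with finitely many connected components. By the invariance of compact presentation under quotients by compact normal subgroups (Corollary \ref{hereditaritycp}\ref{2DEhereditaritycp}), it is enough to show that $L$ is compactly presented.

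Second, since $L/L_0$ is finite, the unit component $L_0$ is a cocompact closed subgroup of $L$. By the cocompact hereditarity statement in Corollary \ref{hereditaritycp}\ref{1DEhereditaritycp}, it is enough to show that $L_0$ is compactly presented. So the problem is reduced to the case of a connected Lie group $L_0$.

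Third, I would pass to the universal cover. Let $\widetilde{L_0}$ be the universal covering group of $L_0$ and let $p : \widetilde{L_0} \twoheadrightarrow L_0$ be the covering homomorphism; then $\widetilde{L_0}$ is a connected simply connected Lie group, hence compactly presented by Corollary \ref{cpcoveringof1cLieG}. The kernel $\ker(p)$ can be identified with $\pi_1(L_0)$, which is a discrete central subgroup of $\widetilde{L_0}$ by Proposition \ref{discretenormalcentral}, and which is finitely generated by the Corollary \ref{fundgpLietypefini} cited in the note following Proposition \ref{discretenormalcentral}. In particular $\ker(p)$ is compactly generated as a (normal) subgroup of $\widetilde{L_0}$. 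The conclusion that $L_0 = \widetilde{L_0}/\ker(p)$ is compactly presented then follows from Proposition \ref{cpforgroupsandquotients}\ref{1DEcpforgroupsandquotients} applied to the short exact sequence $\ker(p) \lhook\joinrel\relbar\joinrel\rightarrow \widetilde{L_0} \relbar\joinrel\twoheadrightarrow L_0$.

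The only nontrivial external input beyond the machinery already developed in the chapter is the finite generation of $\pi_1$ of a connected Lie group, which is the anticipated obstacle if one insists on a self-contained proof; everything else is a bookkeeping application of the three hereditarity results Corollary \ref{hereditaritycp}\ref{1DEhereditaritycp}, Corollary \ref{hereditaritycp}\ref{2DEhereditaritycp}, and Proposition \ref{cpforgroupsandquotients}\ref{1DEcpforgroupsandquotients}, together with the Gleason-Yamabe reduction and the explicit presentation for simply connected Lie groups.
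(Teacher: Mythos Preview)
Your argument has a circularity: you invoke Corollary \ref{fundgpLietypefini} for the finite generation of $\pi_1(L_0)$, but in the paper that corollary is deduced \emph{from} Proposition \ref{connbycompcp} (its proof reads ``Since $E$ and $L$ are compactly presented by Proposition \ref{connbycompcp}, Claim (1) follows\ldots''). So as written, step three begs the question. You do flag the finite generation of $\pi_1$ as ``external input'', and indeed it is a classical fact provable independently (e.g.\ via deformation retraction onto a maximal compact subgroup, which is a compact manifold); once you import that fact from outside the paper, your three-step reduction is correct.

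The paper's proof takes a different and more self-contained route: rather than climbing up to the universal cover, it uses Theorem \ref{deHochschild+} to find a maximal compact subgroup $K$ with $G/K$ diffeomorphic to $\R^n$, equips $G/K$ with a $G$-invariant Riemannian metric (Proposition \ref{alaKoszul}), observes that this geodesic simply connected space is coarsely simply connected (Proposition \ref{scimplycsc}), and concludes via the quasi-isometry $G \to G/K$ and Proposition \ref{cp iff csc}. This approach needs no separate input about $\pi_1$; in fact it \emph{yields} Corollary \ref{fundgpLietypefini} as a consequence rather than consuming it as a hypothesis. Your route trades that geometric step for structure theory plus an imported topological fact, which is a legitimate alternative but not self-contained within the paper's development.
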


\begin{proof}
Let $K$ be a maximal compact subgroup of $G$.
\index{Maximal! compact subgroup}
\index{Subgroup! maximal compact}
The space $G/K$ has a $G$-invariant structure of analytic manifold,
which is diffeomorphic  to $\R^n$ for some $n$ (Theorem \ref{deHochschild+}).
There exists a $G$-invariant Riemannian structure on $G/K$ (Proposition \ref{alaKoszul});
denote by $\underline d$ the Riemannian metric on $G/K$
and by $d$ the pseudo-metric on $G$ defined by 
$d(g_1,g_2) = \underline d(g_1K,g_2K)$.

Being geodesic and simply connected,
the space $(G/K, \underline d)$ is coarsely simply connected
(Proposition \ref{scimplycsc}).
Since the canonical map $(G,d) \longrightarrow (G/K, \underline d)$
is a quasi-isometry, the space $(G,d)$ is coarsely simply connected
(Proposition \ref{coarse1conninvbycoarseeq}).
Hence $G$ is compactly presented (Proposition \ref{cp iff csc}).
\end{proof}

\begin{cor}
\label{GcpintermsofG0}
An LC-group $G$ is compactly presented if and only
the quotient group $G/G_0$ is compactly presented.
\end{cor}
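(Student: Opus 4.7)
The strategy is to apply Proposition \ref{cpforgroupsandquotients} to the short exact sequence
\begin{equation*}
G_0 \,\lhook\joinrel\relbar\joinrel\rightarrow\, G \,\relbar\joinrel\twoheadrightarrow\, G/G_0 ,
\end{equation*}
where $G_0$ is the (closed, normal) unit component. Both directions reduce to checking a finiteness property of the middle term $G_0$, and each check is already encapsulated in a proposition from the excerpt.

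For the forward implication, I would assume $G$ is compactly presented and apply Proposition \ref{cpforgroupsandquotients}\ref{1DEcpforgroupsandquotients}. The hypothesis needed is that $G_0$ is compactly generated as a normal subgroup of $G$. But $G_0$ is connected, so by Proposition \ref{powersSincpgroup}\ref{2DEpowersSincpgroup} any compact neighbourhood $S$ of $1$ in $G_0$ generates $G_0$ as a subgroup. Since $S \subset G_0$, the same $S$ generates $G_0$ as a normal subgroup of $G$ a fortiori. Hence Proposition \ref{cpforgroupsandquotients}\ref{1DEcpforgroupsandquotients} applies and gives that $G/G_0$ is compactly presented.

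For the reverse implication, I would assume $G/G_0$ is compactly presented and apply Proposition \ref{cpforgroupsandquotients}\ref{3DEcpforgroupsandquotients} to the same short exact sequence. What is needed is that the kernel $G_0$ itself be compactly presented. Since $G_0$ is connected it is trivially connected-by-compact, so Proposition \ref{connbycompcp} guarantees that $G_0$ is compactly presented. Thus both $G_0$ and $G/G_0$ are compactly presented, and Proposition \ref{cpforgroupsandquotients}\ref{3DEcpforgroupsandquotients} yields that $G$ is compactly presented.

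There is no real obstacle here; the corollary is essentially an immediate combination of Proposition \ref{connbycompcp} (connected-by-compact $\Rightarrow$ compactly presented) with the three-way compatibility of compact presentability with short exact sequences recorded in Proposition \ref{cpforgroupsandquotients}. The only point worth verifying carefully is that ``compactly generated as a subgroup'' upgrades to ``compactly generated as a normal subgroup'' in the forward direction, which is immediate because the normal closure of a generating set of $G_0$ inside the normal subgroup $G_0$ coincides with $G_0$.
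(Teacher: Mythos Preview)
Your proof is correct and follows essentially the same approach as the paper: both invoke Proposition~\ref{connbycompcp} to handle $G_0$ and then appeal to Proposition~\ref{cpforgroupsandquotients} on the short exact sequence $G_0 \hookrightarrow G \twoheadrightarrow G/G_0$. The paper's version is simply terser, noting that $G_0$ is compactly presented (which covers both the compact-generation hypothesis for part~\ref{1DEcpforgroupsandquotients} and the compact-presentation hypothesis for part~\ref{3DEcpforgroupsandquotients}) and leaving the two directions implicit.
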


\begin{proof}
Since the connected component $G_0$ of $G$ 
is compactly presented by Proposition \ref{connbycompcp},
the corollary follows from Proposition \ref{cpforgroupsandquotients}.
\end{proof}

\begin{cor}
\label{fundgpLietypefini}
Let $L$ be a connected Lie group.
\par

(1) The fundamental group $\pi_1(L)$ is finitely generated.
\par

(2) Every discrete normal subgroup of $L$ is finitely generated.
\end{cor}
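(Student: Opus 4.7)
The plan is to apply Proposition \ref{cpforgroupsandquotients}(2) to an appropriate short exact sequence in each case, and to exploit Proposition \ref{discretenormalcentral}(1), which says that a discrete normal subgroup of a connected topological group is central. In both parts the conclusion will follow from the observation that if $N$ is discrete, central, and compactly generated as a normal subgroup of some ambient group $G$, then $N$ is finitely generated: a compact subset of a discrete group is finite, and the normal closure in $G$ of a subset of the centre of $G$ coincides with the subgroup it generates.

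For part (1), let $\widetilde{L}$ denote the universal cover of $L$, itself a connected Lie group, and consider the short exact sequence
\begin{equation*}
\pi_1(L) \, \lhook\joinrel\relbar\joinrel\rightarrow \, \widetilde{L} \, \relbar\joinrel\twoheadrightarrow \, L,
\end{equation*}
where $\pi_1(L)$ is identified with the discrete kernel of the covering projection. Both $\widetilde{L}$ and $L$ are connected LC-groups, hence compactly generated by Proposition \ref{almostconnectedgroups} and compactly presented by Proposition \ref{connbycompcp}. Proposition \ref{cpforgroupsandquotients}(2) then yields that $\pi_1(L)$ is compactly generated as a normal subgroup of $\widetilde{L}$; combined with the fact that $\pi_1(L)$ is discrete and, by Proposition \ref{discretenormalcentral}(1), central in $\widetilde{L}$, the observation of the previous paragraph gives that $\pi_1(L)$ is finitely generated.

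For part (2), let $N$ be a discrete normal subgroup of $L$. By Proposition \ref{discretenormalcentral}(1), $N$ is central (and in particular closed), and the quotient $L/N$ inherits the structure of a connected Lie group because the projection $L \twoheadrightarrow L/N$ is a covering (it is a local homeomorphism at $1$, since $N$ is discrete). Both $L$ and $L/N$ are therefore connected LC-groups, compactly presented by Proposition \ref{connbycompcp}. Applying Proposition \ref{cpforgroupsandquotients}(2) to the short exact sequence
\begin{equation*}
N \, \lhook\joinrel\relbar\joinrel\rightarrow \, L \, \relbar\joinrel\twoheadrightarrow \, L/N
\end{equation*}
and invoking the same discrete-plus-central argument as in (1), we conclude that $N$ is finitely generated.

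All the real work has been done in the earlier sections: the compact presentability of connected-by-compact LC-groups (Proposition \ref{connbycompcp}), obtained via the geometric action on the symmetric space $L/K$, and the extension-quotient statement of Proposition \ref{cpforgroupsandquotients}(2). No further obstacle arises; the only mild point worth stating explicitly is the passage from ``compactly generated as a normal subgroup'' to ``finitely generated'', which is immediate here because each relevant $N$ is discrete and central in its ambient connected group.
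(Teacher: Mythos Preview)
Your proof is correct and uses the same key ingredients as the paper: Proposition \ref{connbycompcp} to get compact presentability of the connected Lie groups in sight, Proposition \ref{cpforgroupsandquotients}\ref{2DEcpforgroupsandquotients} applied to the relevant short exact sequence, and Proposition \ref{discretenormalcentral}(1) to pass from ``compactly generated as a normal subgroup'' to ``finitely generated'' via centrality. Part (1) is identical to the paper's argument.

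For part (2) there is a small organizational difference worth noting. The paper reduces (2) to (1): given a discrete normal $\Gamma \subset L$, it lifts to $\pi^{-1}(\Gamma)$ in the universal cover $E$, observes that this is the fundamental group of the connected Lie group $E/\pi^{-1}(\Gamma)$, applies (1) to conclude $\pi^{-1}(\Gamma)$ is finitely generated, and then $\Gamma$ is a quotient of it. You instead run the argument of (1) directly on the sequence $N \hookrightarrow L \twoheadrightarrow L/N$, which works just as well since $L/N$ is again a connected Lie group. Your route is marginally more direct; the paper's route makes the logical dependence of (2) on (1) explicit. Neither approach requires anything the other doesn't.
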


\begin{proof}
Denote by $E$ the universal cover of $L$, by $\pi$ its projection onto $L$,
and consider the short exact sequence
\begin{equation*}
\pi_1(L) \lhook\joinrel\relbar\joinrel\rightarrow E 
\overset{\pi}{\relbar\joinrel\twoheadrightarrow} L ,
\end{equation*}
where $\pi_1(L)$ is viewed as a discrete central subgroup $E$
(see Proposition \ref{discretenormalcentral}).
For a central subgroup of $E$,
finite generation as a group coincides with finite generation as a normal subgroup.
Since $E$ and $L$ are compactly presented by Proposition \ref{connbycompcp},
Claim (1) follows from Proposition \ref{cpforgroupsandquotients}.
\par

Let $\Gamma$ be a discrete normal subgroup of $L$.
Since $\pi^{-1}(\Gamma)$ is a discrete normal subgroup of $E$,
it can be viewed as the fundamental group
of the connected Lie group $E/\pi^{-1}(\Gamma)$.
Claim (1) follows from Claim (2).
\end{proof}

The following proposition is Theorem 3.4 in \cite{Abel--72}.

\begin{prop}[abelian and nilpotent groups]
\label{abeliancg=cp}
Every abelian compactly generated LC-group is compactly presented.
\index{LCA-group}
\par

More generally, 
every nilpotent compactly generated LC-group is compactly presented.
\index{Nilpotent! group}
\end{prop}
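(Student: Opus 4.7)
The plan is to reduce to the abelian case via the lower central series, and then handle the abelian case by invoking the structure theorem for compactly generated LCA-groups and the closure properties of compact presentability under extensions.

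For the abelian case, let $A$ be a compactly generated LCA-group. By the structure theorem recalled in Example \ref{LCA}\ref{2DELCA}, $A$ is topologically isomorphic to $\R^{\ell} \times \Z^{m} \times K$ for some $\ell, m \ge 0$ and some compact abelian group $K$. Each of these three factors is compactly presented: $\R^{\ell}$ is connected, hence compactly presented by Proposition \ref{connbycompcp}; $\Z^{m}$ is finitely presented by the standard presentation with $m$ commuting generators; and $K$ is compact, hence compactly presented by Corollary \ref{hereditaritycp}\ref{aDEhereditaritycp}. Iterating Proposition \ref{cpforgroupsandquotients}\ref{3DEcpforgroupsandquotients} applied to the obvious short exact sequence $G_{1} \lhook\joinrel\relbar\joinrel\rightarrow G_{1} \times G_{2} \relbar\joinrel\twoheadrightarrow G_{2}$ yields that $A$ itself is compactly presented.

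For the nilpotent case, I would proceed by induction on the nilpotency class $k$. The case $k \le 1$ is the abelian case just settled. Assume $k \ge 2$ and the result holds for classes $< k$. Let $G$ be a compactly generated nilpotent LC-group of class $k$, and consider the short exact sequence
\begin{equation*}
\overline{C^{k} G} \, \lhook\joinrel\relbar\joinrel\rightarrow \, G \, \relbar\joinrel\twoheadrightarrow \, G / \overline{C^{k} G} .
\end{equation*}
Since $G$ is nilpotent of class $k$, the subgroup $C^{k} G$ lies in the centre of $G$; therefore $\overline{C^{k} G}$ lies in the centre as well (the centre being closed), and in particular is abelian. By Lemma \ref{GcgnilpkThenCkGcg} it is compactly generated, so by the abelian case already proved it is compactly presented. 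The quotient $G / \overline{C^{k} G}$ is a compactly generated nilpotent LC-group whose $k$-th term of the lower central series is the image of $C^{k} G$, which is trivial; hence it has nilpotency class at most $k-1$, and the induction hypothesis applies. One more application of Proposition \ref{cpforgroupsandquotients}\ref{3DEcpforgroupsandquotients} to the above exact sequence concludes that $G$ is compactly presented.

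The delicate point is the passage to the closure of $C^{k} G$: as noted in the remark preceding Proposition \ref{closubabcgiscg}, the term $C^{k} G$ itself need not be closed in $G$, so one cannot directly form an extension with quotient of lower class using $C^{k} G$. The remedy is exactly Lemma \ref{GcgnilpkThenCkGcg}, which ensures that taking the closure preserves compact generation; this is really the crux of the argument, as everything else is bookkeeping with short exact sequences. I expect no further obstacle, since once $\overline{C^{k} G}$ is known to be a compactly generated abelian closed normal subgroup whose quotient has strictly smaller nilpotency class, the induction runs cleanly.
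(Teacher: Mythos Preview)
Your proof is correct and follows essentially the same inductive scheme as the paper's, via the short exact sequence $\overline{C^{k}G} \lhook\joinrel\relbar\joinrel\rightarrow G \relbar\joinrel\twoheadrightarrow G/\overline{C^{k}G}$. The only minor variations are that the paper handles the abelian base case by invoking a cocompact free abelian lattice (Example~\ref{firstexampleslattices}(2) and Corollary~\ref{hereditaritycp}) rather than the structure theorem, and it deduces compact generation of $\overline{C^{k}G}$ indirectly from Proposition~\ref{cpforgroupsandquotients}\ref{2DEcpforgroupsandquotients} (using that the quotient is already known to be compactly presented and that a central subgroup compactly generated as a normal subgroup is compactly generated as a group) rather than appealing to Lemma~\ref{GcgnilpkThenCkGcg} as you do; your route is arguably the more direct one.
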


\begin{proof}
A compactly generated LCA-group contains a cocompact lattice that is
a finitely  generated free abelian group
(Example \ref{firstexampleslattices}(2)), 
hence is compactly presented by Corollary \ref{hereditaritycp}.
\index{Free abelian group}
\par

Consider now a nilpotent compactly generated LC-group $G$.
Let $i$ be the smallest integer such that $C^iG$ is central is $G$.
We proceed by induction on $i$.
Since the case $i=1$ is that of abelian groups, 
we can assume that $i \ge 2$ and that the proposition holds up to $i-1$.
We have a short exact sequence
\begin{equation*}
\overline{C^iG} \, \lhook\joinrel\relbar\joinrel\rightarrow \, G
\, \relbar\joinrel\twoheadrightarrow \, G / \overline{C^iG}  ,
\end{equation*}
with $G$ compactly generated.
Since $[G, C^{i-1}(G)] \subset \overline{C^iG}$,
the group $C^{i-1}(G / \overline{C^iG} )$ is central;
hence, by the induction hypothesis, 
$G / \overline{C^iG}$ is compactly presented.
It follows from Proposition \ref{cpforgroupsandquotients}\ref{2DEcpforgroupsandquotients}
that $\overline{C^iG}$ is compactly generated as a normal subgroup of $G$.
Since $\overline{C^iG}$ is central, it is compactly generated as a group,
and therefore (since it is abelian) it is compactly presented. 
Hence $G$ is compactly presented, 
by Proposition \ref{cpforgroupsandquotients}\ref{3DEcpforgroupsandquotients}.
\end{proof}

\begin{prop}[groups of polynomial growth]
\label{polgrowthcp}
Let $G$ be a locally compact, compactly generated group of polynomial growth. 
\index{Polynomial growth}\index{Growth! polynomial growth}
Then $G$ is compactly presented.
\end{prop}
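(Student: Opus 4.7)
The plan is to reduce the statement to the finitely generated nilpotent case using the structure theory of LC-groups together with Gromov's polynomial growth theorem, and then invoke Proposition~\ref{abeliancg=cp}.

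The first step is to reduce to a Lie group. By the structure theorem of Losert (the LC-analogue of Gromov's polynomial growth theorem), a compactly generated LC-group $G$ of polynomial growth admits a compact normal subgroup $K$ such that $G/K$ is a Lie group with finitely many connected components. By Corollary~\ref{hereditaritycp}\ref{2DEhereditaritycp}, $G$ is compactly presented if and only if $G/K$ is, so we may assume $G$ is a Lie group. An alternative route, avoiding a direct appeal to Losert, goes via Proposition~\ref{propCayleyAbels}: after applying van Dantzig (Corollary~\ref{vanDantzigCor}) to reduce the totally disconnected piece $G/G_0$ to one containing a compact open subgroup, one invokes Trofimov's theorem on vertex-transitive graphs of polynomial growth applied to a Cayley--Abels graph of $G/G_0$ to manufacture the required compact normal subgroup.

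The second step is to peel off the connected component. Since $G$ is now a Lie group, $G_0$ is open and connected, hence connected-by-compact and therefore compactly presented by Proposition~\ref{connbycompcp}. By Corollary~\ref{GcpintermsofG0}, compact presentation of $G$ is equivalent to compact presentation of $G/G_0$, so we are reduced to the case of a finitely generated discrete group $\Gamma = G/G_0$. Note that $\Gamma$ is compactly (hence finitely) generated by Proposition~\ref{stababcd}\ref{4DEstababcd}, and inherits polynomial growth from $G$ via Proposition~\ref{propgrowthXtoY}\ref{3DEpropgrowthXtoY}, since the quotient map $G \twoheadrightarrow \Gamma$ is a quasi-isometry with respect to geodesically adapted pseudo-metrics (Proposition~\ref{sigmac+compactgofquotients}).

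The third step is to invoke Gromov's polynomial growth theorem for finitely generated groups: $\Gamma$ is virtually nilpotent, so it contains a finite-index subgroup $N$ that is finitely generated and nilpotent. A finite-index subgroup is cocompact in the discrete setting, so by Corollary~\ref{hereditaritycp}\ref{1DEhereditaritycp} the compact presentation of $\Gamma$ is equivalent to the finite presentation of $N$. Finally, Proposition~\ref{abeliancg=cp} asserts exactly that every compactly generated nilpotent LC-group---in particular every finitely generated nilpotent discrete group---is compactly presented, which finishes the proof.

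The main obstacle is the first step: Losert's structure theorem (or the combined appeal to van Dantzig and Trofimov) is the deep input, and it cannot be bypassed by the elementary machinery of the preceding chapters. Once that reduction is available, the remainder of the argument is a smooth assembly of Proposition~\ref{connbycompcp}, Corollary~\ref{GcpintermsofG0}, the hereditarity Corollary~\ref{hereditaritycp}, Gromov's theorem, and Proposition~\ref{abeliancg=cp}, with the routine verification that polynomial growth passes to the relevant quotient and subgroup via the quasi-isometries provided by Propositions~\ref{sigmac+compactgofcocompact} and~\ref{sigmac+compactgofquotients}.
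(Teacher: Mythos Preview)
Your proof is correct and follows essentially the same route as the paper: Losert's structure theorem to reduce to a Lie group, Proposition~\ref{connbycompcp} (equivalently Corollary~\ref{GcpintermsofG0}) to peel off the identity component, Gromov's theorem on the discrete quotient, and Proposition~\ref{abeliancg=cp} for the nilpotent endgame. One small inaccuracy: Losert's theorem gives that $G/K$ is a Lie group, not a priori one with finitely many connected components; but since $G$ is compactly generated the discrete quotient $(G/K)/(G/K)_0$ is finitely generated anyway, so your argument goes through unchanged.
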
      

\begin{proof} By a result of Losert \cite{Lose--87}, 
$G$ has a compact normal subgroup $K$
such that $L := G/K$ is a Lie group of polynomial growth.
On the one hand, the connected component $L_0$ of $L$ 
is compactly presented by Proposition \ref{connbycompcp}.
On the other hand, the discrete group $L/L_0$ is finitely presented,
by Gromov's theorem \cite{Grom--81b} and Proposition \ref{abeliancg=cp}.
Hence Proposition \ref{polgrowthcp} follows from Proposition \ref{cpforgroupsandquotients}.
\end{proof}

\begin{rem}
\label{RemdeEB}
Groups of polynomial growth have been characterized by Gromov (case of discrete groups),
Guivarc'h and Jenkins (LC-groups with conditions),
and Losert (LC-groups in general).
See \cite{Grom--81b}, \cite{Guiv--73}, \cite{Jenk--73}, and \cite{Lose--01}.

It can be shown that any compactly generated LC-group of polynomial growth
is quasi-isometric to a simply connected solvable Lie group of polynomial growth
(but, in general,  ``solvable'' cannot be replaced by ``nilpotent'') \cite{Breu--14}.
Compare with Example \ref{excoarse}(8).
\end{rem}

\begin{prop}[Gromov-hyperbolic groups]
\label{GromovHypCP}
Let $G$ be a compactly generated LC-group.
If $G$ is Gromov-hyperbolic
(see Remark \ref{GromovHyp}),
\index{Gromov! hyperbolic group}
then $G$ is compactly presented.
\end{prop}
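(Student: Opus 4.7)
The plan is to exhibit a geometric action of $G$ on a coarsely simply connected metric space, and then apply Corollary \ref{analoguede4.D.5}. By the characterization of Gromov-hyperbolic compactly generated LC-groups recalled in Remark \ref{GromovHyp} (from \cite{CCMT}), $G$ admits a continuous proper cocompact isometric action on some proper geodesic Gromov-hyperbolic metric space $X$, with hyperbolicity constant $\delta \ge 0$. Since $X$ is proper, continuity together with properness and cocompactness of the action entail metric properness, local boundedness and coboundedness (Remark \ref{firstexamplesgeometricactions}(5)), so the action of $G$ on $X$ is geometric in the sense of Definition \ref{actions_mp_lb_cob}. By Theorem \ref{analogueof4.D.4}, it therefore suffices to prove that $X$ is coarsely simply connected.

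Since $X$ is geodesic, it is $c$-geodesic for every $c > 0$, and Proposition \ref{propositionpourc-geodesicspace} reduces the task to exhibiting some $c \ge 4\delta$ for which the Rips $2$-complex $\operatorname{Rips}^2_c(X)$ is simply connected. For this I would invoke the classical Rips-Gromov theorem: for a $\delta$-hyperbolic geodesic metric space $X$ and any $c \ge 4\delta$, the Rips $2$-complex $\operatorname{Rips}^2_c(X)$ is simply connected (indeed, the full Rips complex is contractible; see \cite{Grom--87} or \cite[Chap.~III.H, Proposition~3.23]{BrHa--99}).

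The concrete mechanism is the one I would spell out as the key step. Given a combinatorial loop $(x_0, x_1, \ldots, x_n = x_0)$ in $\operatorname{Rips}^2_c(X)$, one proceeds by induction on $n$: for $n \le 3$ the loop already bounds a $2$-simplex or is graph-homotopic to the constant loop; for $n \ge 4$, one uses the slim triangles property of $X$ to find, for each vertex $x_i$, a point $y_i$ on a chosen geodesic from $x_0$ to $x_i$ such that consecutive $y_i$'s are within $c$ of each other and of the corresponding $x_i$'s. The slim-triangle condition (valid once $c \ge 4\delta$) ensures that the quadrilateral $(x_{i-1}, x_i, y_i, y_{i-1})$ has all pairwise distances $\le c$, hence decomposes into two $2$-simplices in the Rips complex, and the new loop through the $y_i$'s has smaller combinatorial diameter along the base geodesic. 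Iterating collapses the loop to the constant loop at $x_0$.

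This Rips-type filling argument is the real content of the proof and is the step I expect to be the main obstacle to write out in complete detail; the passage from ``hyperbolic group $G$'' to ``coarse simple connectedness of $X$'' and thence to compact presentation of $G$ is then purely formal, using Milestone \ref{miles_compactpres} and Corollary \ref{qinv} (coarse simple connectedness is invariant under quasi-isometry) together with the Schwarz-Milnor lemma (Theorem \ref{ftggt}) which identifies $(G, d_S)$ with $X$ up to quasi-isometry for any compact generating set $S$ of $G$.
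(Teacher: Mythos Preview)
Your proof is correct and rests on the same key fact as the paper's: for a Gromov-hyperbolic geodesic space, the Rips $2$-complex $\operatorname{Rips}^2_c$ is simply connected for $c$ large enough (Gromov--Rips). The only difference is that the paper applies this directly to $(G,d_S)$, which is hyperbolic by the very definition given in Remark~\ref{GromovHyp}, and then concludes via Proposition~\ref{propositionpourc-geodesicspace} and Proposition~\ref{cp iff csc}; your detour through an auxiliary proper geodesic hyperbolic $G$-space $X$ and Theorem~\ref{analogueof4.D.4} is legitimate but unnecessary, since the word-metric characterization is already available.
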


\begin{proof}
Gromov has shown that, for a compact generating set $S$ of $G$ 
and a large enough constant $c$, the Rips complex $\operatorname{Rips}^2_c(G,S)$
is simply connected.
(The full Rips complex  $\operatorname{Rips}_c(G,S)$ 
is contractible, see \cite[1.7.A]{Grom--87}).
Hence the metric space $(G,d_S)$ is coarsely simply connected
(Proposition \ref{propositionpourc-geodesicspace}),
so that $G$ is compactly presented.
\end{proof}

\begin{prop}
\label{excgnotcp}
For every local field $\K$, 
the group $\K^2 \rtimes \SL_2(\K)$ is compactly generated but not compactly presented.
\index{Affine group! $\K^2 \rtimes \SL_2(\K)$}
\index{Special linear group $\SL$! $ \K^2 \rtimes \SL_2(\K)$}
\index{Compactly presented! LC-group, not compactly presented}
\end{prop}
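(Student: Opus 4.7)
The plan splits into the easy compact generation and the substantive non-compact presentation.

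For compact generation, let $\pi$ be a uniformiser of $\K$ and $\mathfrak{o} := \mathfrak{o}_\K$ its valuation ring. By Example \ref{exSL_2}, $\SL_2(\K)$ is compactly generated; enlarge a compact generating set to contain $s := \operatorname{diag}(\pi, \pi^{-1})$, call it $S_0$, and set $S := S_0 \cup \mathfrak{o}^2 \subset G$, which is compact. Since $s^k \cdot (u,0) \cdot s^{-k} = (\pi^k u, 0)$ and $s^k \cdot (0,u) \cdot s^{-k} = (0, \pi^{-k} u)$ for all $k \in \Z$ and $u \in \mathfrak{o}$, and since $\{\pi^k u : k \in \Z, u \in \mathfrak{o}\} = \K$, the subgroup $\langle S \rangle$ contains $\K \times \{0\}$ and $\{0\} \times \K$, hence all of $\K^2$. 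Combined with $\SL_2(\K) \subset \langle S \rangle$, this gives $\langle S \rangle = G$.

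For non-compact presentation I argue by contradiction. Suppose $G = \langle T \mid R \rangle$ is a compact presentation with $\sup_{r \in R}\vert r\vert_T \leq m$. By Lemma \ref{lem:boun_pres_def_gen}, after replacing $T$ by $\widehat T^{\lceil(m+2)/3\rceil}$, we may assume $T$ is a defining subset of $G$, so that every relation in $G$ among letters of $T$ is a consequence of the triangle set $R_T := \{stu^{-1} : s,t,u \in T,\ st = u \text{ in } G\}$. Enlarge $T$ so it also contains $a := (0,1)$, $b := (1,0) \in \K^2$ and $s$, $u := \bigl(\begin{smallmatrix}1 & 1 \\ 0 & 1\end{smallmatrix}\bigr) \in \SL_2(\K)$. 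The matrix identity $s^{-k} u s^{k} = \bigl(\begin{smallmatrix}1 & \pi^{-2k} \\ 0 & 1\end{smallmatrix}\bigr)$, applied to $(0,1)$, gives $[s^{-k} u s^k,\, a] = (\pi^{-2k},0) = s^{-2k}\, b\, s^{2k}$ in the abelian group $\K^2$. Consequently the word
\[
w_k \, := \, (s^{-k} u s^k)\, a\, (s^{-k} u^{-1} s^k)\, a^{-1}\, (s^{-2k}\, b^{-1}\, s^{2k}) \, \in \, F_T
\]
represents $1 \in G$ and has $T$-length $O(k)$.

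The hard part is to show that $w_k$ is \emph{not} a consequence of $R_T$ for $k$ sufficiently large. By the Rips complex criterion for bounded presentation (Proposition \ref{bpresrips}), this amounts to proving that, for every fixed $c > 0$, the closed loop traced by $w_k$ in $\operatorname{Rips}^2_c(G, d_T)$ fails to be null-homotopic once $k \gg c$. My intended approach is to construct a $G$-equivariant cocycle obstruction from the coordinate functional $b^* : \K^2 \to \K$, $(x,y) \mapsto x$: produce a $1$-cochain on the relevant Rips complex whose coboundary on every triangle relation of diameter $\leq 3c$ has $\K$-valuation bounded by a function of $c$, while its integral along $w_k$ has $\K$-valuation $-2k$ and so diverges with $k$. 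The existence of such a cochain certifies the non-nullhomotopy of $w_k$ and closes the contradiction. The core technicality, and the main obstacle, is constructing this cochain compatibly with all the triangle relations in $R_T$ that mix abelian letters from $\K^2$ and letters from $\SL_2(\K)$; this adapts the techniques developed by Abels \cite{Abel--87} for analogous non-presentability results in the solvable linear-algebraic setting.
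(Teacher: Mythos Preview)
Your compact generation argument is fine. The non-compact presentation argument, however, is not a proof: you set up a family of words $w_k$ and announce an ``intended approach'' via a $\K$-valued cochain whose construction you yourself identify as ``the core technicality, and the main obstacle'', then defer it to an unspecified adaptation of Abels's techniques. That missing construction is the entire content; without it you have only a heuristic.

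What you are groping for is precisely the Heisenberg $2$-cocycle. The symplectic form $\omega\bigl((x,y),(x',y')\bigr) = xy' - x'y$ on $\K^2$ is $\SL_2(\K)$-invariant and defines a central extension
\[
\K \;\lhook\joinrel\longrightarrow\; H(\K) \rtimes \SL_2(\K) \;\relbar\joinrel\twoheadrightarrow\; \K^2 \rtimes \SL_2(\K),
\]
where $H(\K)$ is the Heisenberg group on $\K^3$. The total space is easily seen to be compactly generated. Now invoke Proposition~\ref{cpforgroupsandquotients}\ref{2DEcpforgroupsandquotients}: if the quotient $\K^2 \rtimes \SL_2(\K)$ were compactly presented, the kernel $\K$ would be compactly generated as a normal subgroup of the total space; being central, it would be compactly generated as a group, contradicting Example~\ref{localfieldscg}. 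This is the paper's argument, and it packages your cochain obstruction cleanly: the $\K$-valued $1$-cochain you want is exactly the one whose coboundary is $\omega$, and the reason it cannot be bounded on triangles is that $\omega$ takes arbitrarily large values --- equivalently, that the central $\K$ is not compactly generated. Your words $w_k$ are, up to bookkeeping, commutators $[v_k, v'_k]$ in $\K^2$ that lift to $\omega(v_k,v'_k) \in \K$ in the Heisenberg group, with valuation $\to -\infty$.
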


\begin{proof}
Define here the Heisenberg group $H(\K)$ as the set $\K^3$
\index{Heisenberg group}
with the multiplication
\begin{equation*}
(x,y,t) (x',y',t') \, = \, (x+x', y+y', t+t'+xy'-x'y) .
\end{equation*}
The group $\SL_2(\K)$ acts on $H(\K)$ by 
\begin{equation*}
\begin{pmatrix}
a & b \\
c & d 
\end{pmatrix}
(x,y,t) = (ax+by, cx+dy, t) .
\end{equation*}
It is easy to check that the semidirect product $G := H(\K) \rtimes \SL_2(\K)$
is compactly generated.
\index{Semidirect product! $H(\K) \rtimes \SL_2(\K)$}
The centre $Z$ of $G$ coincides with the centre of $H(\K)$, isomorphic to $\K$;
in particular, $Z$ is not compactly generated as a normal subgroup of $G$.
Thus by Proposition \ref{cpforgroupsandquotients}\ref{2DEcpforgroupsandquotients},
$G/Z \simeq \K^2 \rtimes \SL_2(\K)$ is not compactly presented.
\end{proof}

\noindent
\emph{Note.}
If the characteristic of $\K$ is not $2$, the map
\begin{equation*}
H(\K) \ni (x,y,t) \, \longmapsto \,
\exp 
\begin{pmatrix}
0 & 2x & 2t \\
0 & 0 & 2y \\
0 & 0 & 0 
\end{pmatrix}
=
\begin{pmatrix}
1 & 2x & 2(t+xy) \\
0 & 1 & 2y \\
0 & 0 & 1 
\end{pmatrix}
\in \GL_3(\K)
\end{equation*}
provides a topological isomorphism of the group $H(\K)$ defined here
with the matrix group $H(\K)$ 
defined in Example  \ref{coarselyexpansivenotlargescaleexpansive}.

\begin{rem}
\label{remsurexcgnotcp}
Proposition \ref{excgnotcp} carries over to 
$\K^{n+1} \rtimes \SL_2(\K)$ for $n=2m-1$ odd,
with a similar proof; 
the group $H_{\K}$ has to be replaced by
a higher dimensional Heisenberg group
$\K^m \times \K^m \times \K$,
with product 
\begin{equation*}
(x,y,t)(x',y',t') \, = \,  (x+x', y+y', t+t'+\omega(x,y)) ,
\end{equation*}
where $\omega$ is a non-degenerate alternating bilinear form on $\K^m$.
The proof cannot be adapted to the case of $n$ even,
but there is another proof:
see Proposition \ref{K3semidirectSL2}.
\end{rem}

\begin{rem}
An LC-group containing a finitely presented dense subgroup 
need not be compactly presented.
Indeed, if $\K$ is a local field of characteristic zero, 
the group $\K^2 \rtimes \SL_2(\K)$ of Proposition \ref{excgnotcp}
is not compactly presented, 
although it has dense subgroups which are free of finite rank
(for example by \cite[Corollary 1.4]{BrGe--07}).
\index{Subgroup! finitely presented dense}
\par

Recall from Remark \ref{Gexplescgsigmacsc}(3) 
that an LC-group which has a finitely generated dense subgroup is compactly generated.
\end{rem}

\begin{rem}
It is known that there are uncountably many isomorphism classes
of large-scale simply connected vertex-transitive graphs \cite{SaTe}.
It is unknown whether there are uncountably many quasi-isometry classes
of compactly presented totally disconnected LC-groups.
\par

For comparison, let us quote two other known results.
(a) There are uncountably many quasi-isometry classes
of connected real Lie groups, indeed of connected real Lie groups of dimension $3$.
See  \cite{Pans--89}, in particular Corollary 12.4, 
as well as \cite{Corn}, in particular
Conjecture 6.B.2 and Remark 6.B.5.
(b) There are uncountably many quasi-isometry classes
of finitely generated groups; see Remark \ref{refpourcroissance}.
\end{rem}
 
\section[Amalgamated products and HNN-extensions]
{Topological amalgamated products and HNN-extensions}
\label{sectionamalgamHNN}

\begin{defn}
\label{DefTopsub}
Let $G$ be a group, $H$ a subgroup, and $\mathcal T_H$ a topology on $H$
making it a topological group. 
\par

For $g \in G$, the topology $\mathcal T_H$ is \textbf{$g$-compatible}
if there exist open subgroups $H_1, H_2$ of $H$ such that $gH_1g^{-1} = H_2$
and $H_1 \longrightarrow H_2, \hskip.1cm h \longmapsto ghg^{-1}$ 
is an isomorphism of topological groups.
\index{Open subgroup}
\par

The topology $\mathcal T_H$ is \textbf{$G$-compatible}
\index{Compatible! compatible topology on a subgroup|textbf}
\index{Topology! $G$-compatible on a subgroup of a group $G$|textbf}
if it is $g$-compatible for all $g \in G$.
\end{defn}

\begin{lem}
\label{compatiblesubgroup}
Let $G, H, \mathcal T_H$ be as in Definition \ref{DefTopsub}.
Then 
\begin{equation*}
\{ g \in G \mid \mathcal T_H \hskip.2cm \text{is $g$-compatible} \}
\end{equation*}
is a subgroup of $G$ containing $H$.
\end{lem}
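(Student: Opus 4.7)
Let $C$ denote the set $\{g \in G \mid \mathcal{T}_H \text{ is } g\text{-compatible}\}$. The plan is to check the three subgroup conditions ($1 \in C$, inversion, multiplication) and the containment $H \subseteq C$, in that order, as each step of the verification gets progressively more involved.

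The containment $H \subseteq C$ and the facts $1 \in C$ and $C = C^{-1}$ will be almost immediate from the definition. For $h \in H$, the choice $H_1 = H_2 = H$ works, because on any topological group inner automorphisms are automorphisms of topological groups. The element $1$ is treated the same way. For inversion, if $g \in C$ is witnessed by open subgroups $H_1, H_2$ of $H$ together with the topological isomorphism $\mathrm{ad}(g)\colon H_1 \to H_2$, then $\mathrm{ad}(g^{-1})\colon H_2 \to H_1$ is its inverse, hence a topological isomorphism, witnessing $g^{-1} \in C$.

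The substantive step is closure under multiplication, and this is where the argument has to actually use the openness hypothesis. Given $g_1, g_2 \in C$ with data $(K_1, K_2)$ for $g_1$ (so $g_1 K_1 g_1^{-1} = K_2$ via a topological isomorphism) and $(L_1, L_2)$ for $g_2$, the natural intermediate object is the open subgroup $L_2 \cap K_1$ of $H$. I would then define
\[
M_1 \,=\, g_2^{-1}(L_2 \cap K_1)\, g_2 \,\subseteq\, L_1, \qquad M_2 \,=\, g_1 (L_2 \cap K_1)\, g_1^{-1} \,\subseteq\, K_2.
\]
By $g_2$-compatibility, $\mathrm{ad}(g_2)$ restricts to a topological isomorphism $L_1 \to L_2$, so $M_1$ is an open subgroup of $L_1$, hence of $H$; symmetrically $M_2$ is an open subgroup of $H$. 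By construction $(g_1 g_2) M_1 (g_1 g_2)^{-1} = M_2$, and the conjugation map $M_1 \to M_2$ is the composition of $\mathrm{ad}(g_2)\colon M_1 \to L_2 \cap K_1$ and $\mathrm{ad}(g_1)\colon L_2 \cap K_1 \to M_2$, each of which is a restriction of a topological isomorphism to an open subgroup, and is therefore a topological isomorphism. Hence $g_1 g_2 \in C$.

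The one place that requires a moment's care, and which I would highlight as the main (mild) obstacle, is checking that $M_1$ and $M_2$ as defined really are open subgroups of $H$ and not merely of $L_1$ or $K_2$; this uses that $L_1, K_2$ are themselves open in $H$, so that openness in $L_1$ (respectively $K_2$) implies openness in $H$. Once this transitivity of openness is in hand, the composition-of-restrictions argument finishes the proof.
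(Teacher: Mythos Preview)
Your proof is correct and follows essentially the same approach as the paper: both take the intersection of the ``landing'' subgroup of one element with the ``source'' subgroup of the other (your $L_2 \cap K_1$), pull back and push forward to get the witnessing open subgroups for the product, and use the composition of the two conjugation isomorphisms. Your treatment is in fact slightly more explicit than the paper's, which leaves the openness verification you highlight to the reader.
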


\begin{proof}
Let $g,g' \in G$. 
Suppose that $\mathcal T_H$ is $g$-compatible and $g'$-compatible.
Then there are open subgroups $H_1, H_2, H'_1, H'_2$ of $H$ 
and topological isomorphisms 
\begin{equation*}
H_1 \longrightarrow H_2, \hskip.2cm h \longmapsto ghg^{-1} ,
\hskip.5cm
H'_1 \longrightarrow H'_2, \hskip.2cm h \longmapsto g'h{g'}^{-1} .
\end{equation*}
Hence $\mathcal T_H$ is $gg'$-compatible, because
\begin{equation*}
H_1 \cap g^{-1} (H_2 \cap H'_1)g \longrightarrow g'H_2{g'}^{-1} \cap H'_2 ,
\hskip.5cm
h \longmapsto g'g h (g'g)^{-1} 
\end{equation*}
is a topological isomorphism.
Obviously, $\mathcal T_H$ is $g^{-1}$-compatible.
\end{proof}

\begin{prop}
\label{remoncompatibletop}
In the situation of Definition \ref{DefTopsub}, 
suppose that there exists a topology $\mathcal T_G$ on $G$ 
inducing $\mathcal T_H$ on $H$,  
making $G$ a topological group with $H$ an open subgroup.
Then $\mathcal T_H$ is $G$-compatible. 
\end{prop}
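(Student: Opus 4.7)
The plan is to exhibit, for each $g \in G$, explicit open subgroups $H_1, H_2$ of $H$ and to show directly that conjugation by $g$ gives a topological isomorphism between them, using only the hypothesis that $\mathcal T_G$ induces $\mathcal T_H$ on the open subgroup $H$.

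Fix $g \in G$ arbitrary. Since $(G,\mathcal T_G)$ is a topological group, the conjugation map $c_g : G \to G$, $x \mapsto gxg^{-1}$, is a homeomorphism (indeed a topological automorphism of $G$). Define
\[
H_1 \, := \, H \cap g^{-1}Hg
\hskip.5cm \text{and} \hskip.5cm
H_2 \, := \, gHg^{-1} \cap H .
\]
Both are plainly subgroups of $H$, and $c_g$ carries $H_1$ bijectively onto $H_2$.

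Next I would verify that $H_1$ and $H_2$ are open in $(H,\mathcal T_H)$. Since $H$ is open in $G$ by hypothesis, its image $gHg^{-1}$ under the homeomorphism $c_g$ is also open in $G$; hence $H_2$, being the intersection of two open subsets of $G$, is open in $G$. As $H_2 \subset H$ and the subspace topology on $H$ induced by $\mathcal T_G$ coincides with $\mathcal T_H$, this shows $H_2$ is open in $(H,\mathcal T_H)$. The same argument applied to $c_{g^{-1}}$ shows $H_1$ is open in $(H,\mathcal T_H)$.

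Finally, the restriction $c_g|_{H_1} : H_1 \to H_2$ is a group isomorphism by construction. It is a homeomorphism because $c_g$ is a homeomorphism of $G$ and, by the observation on subspace topologies above, the topology on $H_i$ inherited from $(G,\mathcal T_G)$ coincides with that inherited from $(H,\mathcal T_H)$. Therefore $c_g|_{H_1}$ is an isomorphism of topological groups, proving that $\mathcal T_H$ is $g$-compatible; since $g$ was arbitrary, $\mathcal T_H$ is $G$-compatible. There is no real obstacle here: the whole argument is a formal unpacking of ``$H$ is an open subgroup'' plus continuity of inner automorphisms in $G$, and the only point requiring attention is the compatibility between the two ways of viewing $H_1, H_2$ as topological subspaces (via $\mathcal T_G$ and via $\mathcal T_H$), which follows immediately from the induced-topology hypothesis.
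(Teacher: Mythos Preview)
Your proof is correct and takes exactly the same approach as the paper: you choose $H_1 = H \cap g^{-1}Hg$ and $H_2 = gHg^{-1} \cap H$, which is precisely what the paper does. The paper's proof is a one-liner that just names these subgroups, while you have spelled out the routine verifications that they are open and that conjugation is a topological isomorphism between them.
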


\begin{proof}
With the notation of Definition \ref{DefTopsub}, for all $g \in G$, it suffices to choose
$H_1 = g^{-1}Hg \cap H$ and $H_2 = H \cap gHg^{-1}$.
\end{proof}

The following proposition can be seen as 
a converse of Proposition \ref{remoncompatibletop}.

\begin{prop}
\label{PropTopsub}
Let $G, H, \mathcal T_H$ be as in Definition \ref{DefTopsub}.
If $\mathcal T_H$ is $G$-compatible,
then there exists a unique topology $\mathcal T_G$ on $G$
inducing $\mathcal T_H$ on $H$,
and making $G$ a topological group with $H$ an open subgroup.
\index{Topological group}
\par

If, moreover, $H$ is an LC-group for $\mathcal T_H$, 
then $G$ is an LC-group f or $\mathcal T_G$.
\end{prop}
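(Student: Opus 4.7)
My plan is to apply Lemma \ref{lemnaneibh1} directly, using as $\mathcal V$ the collection of subsets of $G$ that contain a $\mathcal T_H$-neighborhood of $1$ (viewed as subsets of $G$ via $H \subset G$). Conditions (F) and ($\cap$) are immediate from the filter property of the $\mathcal T_H$-neighborhoods of $1$, and (M$_1$) and (I$_1$) follow at once from the fact that $\mathcal T_H$ already makes $H$ a topological group. Thus the only substantive verification is Condition (C), conjugation invariance, and this is precisely where the $G$-compatibility hypothesis is used.

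To verify (C), fix $g \in G$ and $A \in \mathcal V$; pick a $\mathcal T_H$-neighborhood $U$ of $1$ with $U \subset A$. By $g$-compatibility there exist open subgroups $H_1, H_2 \le H$ and a topological isomorphism $\varphi \colon H_1 \to H_2$, $h \mapsto ghg^{-1}$. Since $H_2$ is open in $H$, the intersection $U \cap H_2$ is a $\mathcal T_H$-neighborhood of $1$; then $\varphi^{-1}(U \cap H_2) = g^{-1}(U \cap H_2)g$ is a $\mathcal T_H$-neighborhood of $1$ in $H_1$, hence in $H$, and it is contained in $g^{-1}Ag$. So $g^{-1}Ag \in \mathcal V$, as required.

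Lemma \ref{lemnaneibh1} now delivers a unique group topology $\mathcal T_G$ on $G$ whose neighborhood filter at $1$ is $\mathcal V$. To conclude the first assertion, I would note the following three points. First, $H \in \mathcal V$ (since $H$ itself is a $\mathcal T_H$-neighborhood of $1$), so $H$ is a $\mathcal T_G$-neighborhood of $1$; being a subgroup, $H$ is then $\mathcal T_G$-open. Second, the $\mathcal T_G$-neighborhoods of $1$ lying inside $H$ are by construction exactly the $\mathcal T_H$-neighborhoods of $1$, so $\mathcal T_G$ induces $\mathcal T_H$ on $H$ (since $H$ is open, this suffices). Third, $\mathcal T_G$ is Hausdorff because $\bigcap_{A \in \mathcal V} A \subset \bigcap_{U} U = \{1\}$, where $U$ ranges over $\mathcal T_H$-neighborhoods of $1$ and we use that $\mathcal T_H$ is Hausdorff. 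Uniqueness is immediate: any group topology on $G$ inducing $\mathcal T_H$ on an open subgroup $H$ must have $\mathcal V$ as its neighborhood filter at $1$, and a group topology is determined by this datum.

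For the final clause, if $(H, \mathcal T_H)$ is locally compact, pick a $\mathcal T_H$-compact neighborhood $K$ of $1$ in $H$. Since $H$ is open in $(G, \mathcal T_G)$ and $\mathcal T_G$ restricts to $\mathcal T_H$ on $H$, the set $K$ is a neighborhood of $1$ in $G$ and is $\mathcal T_G$-compact (compactness is intrinsic, and the subspace topologies agree). Thus $G$ is locally compact. The main (and really only) obstacle is the verification of (C); everything else is essentially bookkeeping with neighborhood filters.
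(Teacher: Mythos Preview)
Your proof is correct and arguably cleaner than the paper's. The paper takes a different route: it chooses a right transversal $T$ for $H$ in $G$, transports the product topology from $H \times T$ (with $T$ discrete) to $G$ via the multiplication bijection, and then verifies the conditions of Lemma~\ref{LemTopsub} (continuity of multiplication at $(1,1)$, inversion at $1$, conjugation at $1$, and left translations). Your approach bypasses the transversal entirely by working directly with the neighborhood filter and Lemma~\ref{lemnaneibh1}. The crucial step---showing conjugation invariance of $\mathcal V$ from $G$-compatibility---is essentially the same computation in both proofs, but you reach it with less scaffolding. The paper's transversal description has the advantage of making the topology concretely visible as $H \times (G/H)_{\mathrm{disc}}$, which can be handy in applications; your filter approach is more economical and makes uniqueness immediate.
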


The proof is just after \ref{deftransversal}.

\begin{lem}
\label{LemTopsub}
Let $G$ be a group and $\mathcal T$ a topology (possibly non-Hausdorff) on $G$.
Consider the following continuity properties,
\index{Topological group}
with respect to $\mathcal T$ on $G$ and $\mathcal T \times \mathcal T$ on $G \times G$:
\par

(M)
the multiplication $G \times G \longrightarrow G$ 
is continuous at $(1,1)$;
\par

(I)
the inversion $G \longrightarrow G, \hskip.2cm g \longmapsto g^{-1}$ 
is continuous at $1$;
\par

(C)
for all $g_0 \in G$, the conjugation 
$G \longrightarrow G, \hskip.2cm g \longmapsto g_0^{-1} g g_0$
is continuous at $1$;
\par

(L)
for all $g_0 \in G$, the left multiplication $G \longrightarrow G, \hskip.2cm g \longmapsto g_0g$
is continuous;
\par

(R)
for all $g_0 \in G$, the right multiplication 
$G \longrightarrow G, \hskip.2cm g \longmapsto g g_0$ is continuous.
\par\noindent
The following conditions are equivalent:
\par

(i)
$\mathcal T$ is a group topology on $G$;
\par

(ii)
Properties (M), (I), (L), (R) hold;
\par

(iii)
Properties (M), (I), (C), (L) hold.
\end{lem}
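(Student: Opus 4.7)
The plan is to establish the cycle (i) $\Rightarrow$ (ii) $\Rightarrow$ (iii) $\Rightarrow$ (ii) $\Rightarrow$ (i), so that the main content reduces to two small calculations. The implication (i) $\Rightarrow$ (ii) is immediate: global continuity of multiplication gives (M) at the particular point $(1,1)$, and restricting to the slices $\{g_0\}\times G$ and $G\times\{g_0\}$ gives (L) and (R); (I) is part of (i). For (ii) $\Rightarrow$ (iii), the conjugation $g\mapsto g_0^{-1}gg_0$ factors as left multiplication by $g_0^{-1}$ followed by right multiplication by $g_0$, so (C) follows from (L) and (R) (indeed (C) then holds globally, not only at $1$).

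For (iii) $\Rightarrow$ (ii), the only condition to recover is (R). Given $g_1,g_0\in G$, I would use the identity
\[
gg_0 \,=\, (g_1g_0)\bigl(g_0^{-1}(g_1^{-1}g)g_0\bigr),
\]
valid for every $g\in G$. As $g\to g_1$, property (L) applied to the element $g_1^{-1}$ gives $g_1^{-1}g\to 1$; then (C) applied with $g_0$ gives $g_0^{-1}(g_1^{-1}g)g_0\to 1$; finally (L) applied to $g_1g_0$ gives $gg_0\to g_1g_0$, which is (R).

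The substantive step is (ii) $\Rightarrow$ (i), where one must promote the pointwise hypotheses (M) at $(1,1)$ and (I) at $1$ to global continuity. For multiplication at an arbitrary $(g_0,h_0)$, I would use the factorisation
\[
gh \,=\, g_0\cdot\bigl[(g_0^{-1}g)(hh_0^{-1})\bigr]\cdot h_0.
\]
As $(g,h)\to(g_0,h_0)$, (L) gives $g_0^{-1}g\to 1$ and (R) gives $hh_0^{-1}\to 1$; continuity of $(x,y)\mapsto(x,y)$ into $G\times G$ at $(1,1)$ combined with (M) then forces the bracketed product to tend to $1$; one final application of (L) for $g_0$ and (R) for $h_0$ yields $gh\to g_0h_0$. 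For inversion at $g_0$, I would use $g^{-1}=(g^{-1}g_0)g_0^{-1}$: (L) applied to $g_0^{-1}$ gives $g_0^{-1}g\to 1$ as $g\to g_0$, then (I) gives $(g_0^{-1}g)^{-1}=g^{-1}g_0\to 1$, and (R) applied with $g_0^{-1}$ concludes that $g^{-1}\to g_0^{-1}$.

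No step is a genuine obstacle; the lemma is a classical piece of general topology on groups, and the only care required is bookkeeping to confirm that every invocation of continuity is legitimate under the weak hypotheses of (ii) or (iii). In particular, the argument never needs global continuity of conjugation or of inversion as an input, only the pointwise versions (C) and (I) stated in the lemma.
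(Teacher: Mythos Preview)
Your proof is correct and follows essentially the same approach as the paper: the same factorisation $gh = g_0\bigl[(g_0^{-1}g)(hh_0^{-1})\bigr]h_0$ for multiplication, the same use of (I) then (R) for inversion, and the same reduction of (R) to (L) plus (C) via $gg_0 = (g_1g_0)\bigl(g_0^{-1}(g_1^{-1}g)g_0\bigr)$. The paper phrases everything with nets while you use the ``as $g\to g_1$'' idiom, but the content is identical.
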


\noindent
\emph{Note.}
Let $S$ be a generating set of $G$.
Condition (C) is equivalent to:
\par

\emph{
(C')
for all $g_0 \in S$, the conjugation 
$G \longrightarrow G, \hskip.2cm g \longmapsto g_0^{-1} g g_0$
is continuous at $1$.
}

\begin{proof}
Implications (i) $\Rightarrow$ (ii) and (i) $\Rightarrow$ (iii) are obvious.
\par

Assume that (ii) holds.
Let $(g_i)_{i \in I}$ and $(h_j)_{j \in J}$ be two nets in $G$ 
converging respectively to $g$ and $h$.
We have $\lim_i g^{-1}g_i = 1$ and $\lim_j h_jh^{-1} = 1$ by (R) and (L),
hence $\lim_{i,j} g^{-1}g_i h_jh^{-1} = 1$ by (M),
and therefore $\lim_{i,j} g_i h_j = gh$ by (L) and (R) again.
Hence the multiplication is continuous.
Since we have $\lim_i g^{-1}g_i = 1$,
we have also $\lim_i g_i^{-1} g = 1$ by (I),
and therefore $\lim_i g_i^{-1} = g^{-1}$ by (R).
Hence the inversion is continuous.
We have shown that (i) holds.
\par 

Assume now that Properties (M), (I), and (L) hold.
It is then obvious that (R) implies (C). Conversely, assume that, furthermore, (C) holds.
Let $(g_i)_{i \in I}$ be a net in $G$ converging to $g \in G$, and let $g_0 \in G$.
We have $\lim_i g^{-1}g_i = 1$ by (L), 
hence $\lim_i g_0^{-1} g^{-1} g_i g_0 = 1$ by (C),
hence $\lim_i  g_i g_0 = g g_0$ by (L).
Hence Condition (R) holds. 
This shows that Conditions (ii) and (iii) are equivalent.
\end{proof}

\begin{defn} 
\label{deftransversal}
Let $G$ be a group and $H$ a subgroup. 
A \textbf{right transversal} \index{Transversal for a subgroup|textbf}
for $H$ in $G$ is a subset $T$ of $G$ containing $1$
such that the multiplication $H \times T \longrightarrow G, \hskip.2cm (h,t) \longmapsto ht$
is a bijection.
\end{defn}

\begin{proof}[Proof of Proposition \ref{PropTopsub}]
Let us first show the existence of $\mathcal T_G$.
Let $T$ be a right transversal of $H$ in $G$.
Consider on $H \times T$ the product of the  topology $\mathcal T_H$ on $H$ 
with the discrete topology on $T$.
Let $\mathcal T_G$ be the topology such that the bijection 
$H \times T \longrightarrow G, \hskip.2cm (h,t) \longmapsto ht$
is a homeomorphism.
Then $H$ is open in $G$ for $\mathcal T_G$.
\par

Condition (I) of Lemma \ref{LemTopsub} holds, 
because $H$ is a topological group and 
an open neighbourhood of $1$ in $G$ for $\mathcal T_G$. 
Condition (C) holds because $\mathcal T_H$ is $G$-compatible.
\par

Let us check Condition (M) of Lemma \ref{LemTopsub}.
Let $(h_i)_{i \in I}$, $(k_j)_{j \in J}$ be nets in $H$
and $(t_i)_{i \in I}$, $(u_j)_{j \in J}$ be nets in $T$
such that $\lim_{i,j} h_i t_i = 1$ and $\lim_{i,j} k_j u_j = 1$ in $G$.
This implies $\lim_i t_i = 1 \in T$; since $T$ has the discrete topology,
there is no loss of generality in assuming $t_i = 1$ for all $i \in I$;
similarly, we assume $u_j = 1$ for all $j \in J$.
Hence $\lim_{i,j} h_it_i k_ju_j = \lim_{i,j} h_i k_j = 1$,
where the last equality holds because $H$ is a topological group.
\par

There is a particular case of Condition (R) of Lemma \ref{LemTopsub}
that follows from the definition of the topology $\mathcal T_G$ :
for every converging net $(h'_i)_{i \in I}$ in $H$ and every element $g_0 \in G$,
we have $\lim_i (h'_i g_0) = (\lim_i h'_i ) g_0$.
\par

Let us check Condition (L) of Lemma \ref{LemTopsub}.
Let $(h_i)_{i \in I}$ be a net in $H$
and $(t_i)_{i \in I}$ a net in $T$
such that $\lim_i h_i t_i = ht \in G$, for some $h \in H$ and $t \in T$.
Let $h_0 \in H$ and $t_0 \in T$.
We have to check that $\lim_i (h_0t_0h_it_i) = h_0t_0ht$.
As above, there is no loss of generality in assuming $t_i = t$ for all $i \in I$.
We have $\lim_i h_i^{-1}h = 1$ in $H$,
and therefore $\lim_i h_0t_0 h_i h^{-1} t_0^{-1} h_0^{-1} = 1$,
because $\mathcal T_H$ is $G$-compatible. Hence
\begin{equation*}
\aligned
\lim_i h_0 t_0 h_i t \, &= \,  
\lim_i  \left( (h_0 t_0 h_i h^{-1} t_0^{-1} h_0^{-1}) h_0t_0 ht \right)
\\
\, &=  \,   \left( \lim_i  h_0 t_0 h_i h^{-1} t_0^{-1} h_0^{-1}\right) h_0t_0 ht  
\, = \,  h_0 t_0 ht ,
\endaligned
\end{equation*}
where the second equality holds by the particular case already checked
of Condition~(R).
\par

By Lemma \ref{LemTopsub}, $G$ with $\mathcal T_G$ is a topological group.
\par

To check the uniqueness of $\mathcal T_G$, note that,
given \emph{any} right transversal  $T$ for $H$ in $G$,
the natural bijection from $H \times T$ to $G$ has to be a homeomorphism,
with $H \times T$ having the product of the topology $\mathcal T_H$ on $H$
and the discrete topology on $T$.
\end{proof}

\begin{exe}
\label{ExTopsub}
(1)
Let $N$ be a normal subgroup of a group $G$.
Any $G$-invariant group topology on $N$ is $G$-compatible.
\par

Consider in particular a group topology $\mathcal T_1$ on $G$.
The restriction $\mathcal T_N$ of $\mathcal T_1$ to $N$
is a $G$-compatible topology on $N$.
Hence, by Proposition \ref{PropTopsub}, 
there exists a unique group topology $\mathcal T_2$ on $G$
with respect to which $N$ is an open subgroup.
\par

Note that, if $(G, \mathcal T_1)$ is locally compact and $N$ is $\mathcal T_1$-closed,
then $(G, \mathcal T_2)$ is locally compact as well.
Also, if $G/N$ is uncountable,
then $(G, \mathcal T_2)$ is not $\sigma$-compact.
\index{Sigma-compact! LC-group, non-sigma-compact}

\vskip.2cm

(2)
Every group topology on the centre of a group $G$ is $G$-compactible.

\vskip.2cm

(3)
Other examples of $G$-compatible topologies appear
in Propositions \ref{topamalgamation} (amalgamated products)
and \ref{topHNN} (HNN-extensions).

\vskip.2cm

(4)
Let $P$ denote the set of prime integers.
Let $\mathbf A_\Q$ denote the subring of the direct product $\R \times \prod_{p \in P} \Q_p$
consisting of those $(a_\infty, (a_p)_{p \in P})$ for which $a_p \in \Z_p$ 
for all but possibly finitely many $p$ in $P$ (it is a ``restricted direct product'').
Let $H := \R \times \prod_{p \in P} \Z_p$ be given the product of the usual topologies,
so that $\{0\} \times \prod_{p \in P} \Z_p$ is a compact subring of $H$;
the ring $H$ is locally compact, because all factors are locally compact and 
all but finitely many are compact.
By a straightforward variation on Proposition \ref{PropTopsub},
there exists a unique topology on $\mathbf A_\Q$ inducing the given topology on $H$
and making it an open subring.
With this topology, $\mathbf A_\Q$ is the \textbf{ring of adeles} of $\Q$. 
\index{Adeles|textbf} 
\par

Similarly, the \textbf{idele group} \index{Idele group}
\begin{equation*}
\mathbf I_\Q \, = \, 
\left\{ 
(a_\infty, (a_p)_{p \in P}) \in \R^\times \times \prod_{p \in P} \Q_p^\times
\hskip.1cm \Big\vert \hskip.1cm
\aligned
& a_p \in \Z_p^\times
\\
& \text{for almost all} \hskip.2cm p \in P 
\endaligned
\right\}
\end{equation*}
is naturally an LC-group in which
$\R^\times \times \prod_{p \in P} \Z_p^\times$ (with the product topology)
is an open subgroup.
For this topology, the continuous map $\mathbf I_\Q \longrightarrow \mathbf A_\Q \times \mathbf A_\Q$,
$a \longmapsto (a,a^{-1})$ is a homeomorphism onto its image, but the inclusion of $\mathbf I_\Q$
in $\mathbf A_\Q$ is not.
\par

Similarly, the restricted direct product
\begin{equation*}
\SL_2(\mathbf A_\Q) \, = \, 
\left\{ 
(g_\infty, (g_p)_{p \in P}) \in \SL_2(\R) \times \prod_{p \in P} \SL_2(\Z_p)
\hskip.1cm \Big\vert \hskip.1cm
\aligned
& g_p \in \SL_2(\Q_p) 
\\
& \text{for almost all} \hskip.2cm p \in P 
\endaligned
\right\}
\end{equation*}
is naturally an LC-group in which
$\SL_2(\R) \times \prod_{p \in P} \SL_2(\Z_p)$
is an open subgroup.
\par
These groups appear naturally in the theory of automorphic functions \cite{GGPS--69}.

\vskip.2cm

(5)
Let us indicate examples of non-compatible topologies on subgroups.
\par

Consider a topological group $G$, a subgroup $H$,
and the restriction $\mathcal T_H$ to $H$ of the group topology given on $G$.
If $\mathcal T_H$ is $G$-compatible and $H$ is connected,
then $H$ is normal in $G$.
Hence, if $H$ is a non-normal connected subgroup of $G$,
then $\mathcal T_H$ is not $G$-compatible.
For example, the pair
\begin{equation*}
H := \begin{pmatrix}
1 & \R \\ 0 & 1 
\end{pmatrix}
\, \subset \, G = \SL_2(\R)
\end{equation*}
provides a non-G-compatible topology $\mathcal T_H$ on $H$.
\end{exe}

\begin{defn}
\label{ReminderAmaHNN}
Propositions \ref{topamalgamation} and \ref{topHNN} deal with
amalgamated products and HNN-extensions for \emph{topological groups}.
In preparation for these, we recall the relevant definitions for \emph{groups}.
\par

The original articles are \cite{Schr--27} for amalgamated products 
and \cite{HiNN--49} for HNN-extensions.
A standard reference is \cite[Chapter IV, \S~2]{LySc--77}.
The two notions are particular cases of a more general notion,
fundamental groups of graphs of groups, 
which is one of the main subjects in \cite{Serr--70}.

\vskip.2cm

(1) 
Consider two groups $A, B$, a subgroup $C$ of $A$,
and an isomorphism $\varphi$ of $C$ onto a subgroup of $B$.
There exists a group $G$, well-defined up to canonical isomorphisms,
and two monomorphisms $\iota_A : A  \lhook\joinrel\relbar\joinrel\rightarrow G$,
$\iota_B : B  \lhook\joinrel\relbar\joinrel\rightarrow G$, 
such that the following ``universal property'' holds:

\begin{enumerate}[noitemsep]
\item[]
for every group $G'$ and homomorphisms $\alpha : A \longrightarrow G'$,
$\beta : B \longrightarrow G'$
such that $\beta(\varphi(c)) = \alpha(c)$ for all $c \in C$,
there exists a unique homomorphism $\gamma : G \longrightarrow G'$
such that $\gamma \circ \iota_A =  \alpha$ and 
$\gamma \circ \iota_B = \beta$.
\end{enumerate}

The group $G$, denoted here by $A \Conv_C B$,  
is the \textbf{amalgamated product} of $A$ and $B$
with respect to $C$ and $\varphi$.
\index{Amalgamated product|textbf}
Often, $A, B$ are identified to subgroups of $G$, using $\iota_A, \iota_B$ respectively,
and $C$ is identified to a subgroup of \emph{both} $A$ and $B$.
\par

The existence of $G$ with the required properties has to be proved, 
but this is standard, and will not be repeated here.
Note that $A \Conv_C B$ is the free product $A \Conv B$ 
when $C$ is reduced to $\{1\}$, and that $A \Conv_C B \simeq A$ when $C = B$.
An amalgamated product $A \Conv_C B$ is \textbf{non-trivial}
if  $A \supsetneqq C \subsetneqq B$
\index{Amalgamated product! non-trivial}
\par

If $A$ and $B$ have presentations
$\langle S_A \mid R_A \rangle$ and $\langle S_B \mid R_B \rangle$
respectively, then $A \Conv_C B$ has the presentation
$\langle S_A \sqcup S_B \mid R_A \sqcup R_B \sqcup 
\left( c = \varphi (c) \hskip.2cm \forall c \in C \right) \rangle$,
that we write shortly (and abusively, see Definition \ref{presentation})
\begin{equation*}
A \Conv_{C} B \, = \, \langle A, B \mid c = \varphi (c) 
\hskip.2cm \forall \hskip.1cm c \in C \rangle . 
\end{equation*}

\vskip.2cm

(2)
Consider a group $H$, two subgroups $K, L$ of $H$,
and an isomorphism $\varphi$ of $K$ onto $L$.
There exists a group $G$, well-defined up to canonical isomorphisms,
a monomorphism $\iota_H : H \lhook\joinrel\relbar\joinrel\rightarrow G$,
and an element $t \in G$, 
such that the following ``universal property'' holds:
\begin{enumerate}[noitemsep]
\item[]
for every group $G'$ with an element $t' \in G'$
and homomorphism $\psi : H \longrightarrow G'$
such that $\psi(\varphi(k)) = t' \psi(k) {t'}^{-1}$ for all $k \in K$,
there exists a unique homomorphism $\gamma : G \longrightarrow G'$
such that $\gamma \circ \iota_H = \psi$ and $\gamma(t) = t'$.
\end{enumerate}
The group $G$, denoted here by $\HNN(H, K, L, \varphi)$,
is the \textbf{HNN-extension} corresponding to $H, K, L, \varphi$,
and the element $t \in G$ is the \textbf{stable letter}.
\index{Stable letter of an HNN-extension}
(Note that, in the literature, our $t$ is often $t^{-1}$.)
We will identify $H$ to a subgroup of $G$, using $\iota_H$. 
\index{HNN! extension|textbf}
Such an extension is \textbf{ascending} if at least one of $K=H$, $L=H$ holds,
and \textbf{non-ascending} if $K \ne H \ne L$
\index{HNN! extension, (non-)ascending}
\par

As in (1) above, the existence of $G$ requires a proof, 
but it is standard and will not be repeated here.
Note that, when $K$ (and therefore $L$) is reduced to $\{1\}$,
then $G$ is the free product of $H$ with the infinite cyclic group generated by~$t$.
\par

If $H$ has a presentation $\langle S_H \mid R_H \rangle$, 
then $\HNN(H, K, L, \varphi)$ has the presentation
$\langle S_H \sqcup \{t\} \mid R_H \sqcup
\left(t k t^{-1} = \varphi (k) \hskip.2cm \forall k \in K \right) \rangle$,
that we write shortly
\begin{equation*}
 \HNN(H, K, L, \varphi) \, = \, 
\langle H, t \mid t k t^{-1} = \varphi(k) \hskip.2cm \forall \hskip.1cm k \in K \rangle .
\end{equation*}
\end{defn}

\begin{prop}
\label{topamalgamation}
Let $A,B$ be topological groups, $C$ an open subgroup of $A$,
and $\varphi : C \overset{\simeq}{\longrightarrow} \varphi(C) \subset B$
a topological isomorphism of $C$ onto an open subgroup of $B$; let
\begin{equation*}
G \, = \, A \Conv_{C} B \, = \, \langle A, B \mid c = \varphi (c) 
\hskip.2cm \forall \hskip.1cm c \in C \rangle 
\end{equation*}
denote the corresponding amalgamated product.
\begin{enumerate}[noitemsep,label=(\arabic*)]
\item\label{1DEtopamalgamation}
There exists a unique group topology on $G$ 
such that the inclusion $C \lhook\joinrel\relbar\joinrel\rightarrow G$ is a topological isomorphism 
onto an open subgroup.
Moreover, the natural homomorphisms 
$A \lhook\joinrel\relbar\joinrel\rightarrow G$ and $B \lhook\joinrel\relbar\joinrel\rightarrow G$
are topological isomorphisms onto open subgroups of $G$.
\index{Topology! on amalgamated products}
\item\label{2DEtopamalgamation}
If $C$ is locally compact,
then $G$ is locally compact.
\item\label{3DEtopamalgamation}
If $A$ and $B$ are compactly generated [respectively $\sigma$-compact],
then $G$ has the same property.
\item\label{4DEtopamalgamation}
Assume that $A$ and $B$ are compactly presented.
Then $G$ is compactly presented if and only if $C$ is compactly generated.
\end{enumerate}
\end{prop}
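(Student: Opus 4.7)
The plan is to construct the group topology on $G$ via Proposition \ref{PropTopsub} and then to derive the remaining properties from the resulting structure. For part~(1), I view $C$ (identified with $\varphi(C) \subset B$) as a subgroup of $G$ carrying its given topological group structure $\mathcal{T}_C$. To apply Proposition \ref{PropTopsub}, I must verify that $\mathcal{T}_C$ is $G$-compatible; by Lemma \ref{compatiblesubgroup}, it suffices to check compatibility for elements of $A$ and of $B$ separately, since these generate $G$. For $a \in A$, both $aCa^{-1} \cap C$ and $C \cap a^{-1}Ca$ are open subgroups of $C$ (as $C$ is open in the topological group $A$), and conjugation by $a$ provides a topological isomorphism between them; the case of $b \in B$ is symmetric. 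Proposition \ref{PropTopsub} then produces the unique topology $\mathcal{T}_G$. To see that $A$ embeds as an open subgroup with its original topology, I observe that $A$ is a union of left translates of the open set $C$, hence open in $G$, and that both the induced and original topologies on $A$ restrict to $\mathcal{T}_C$ on $C$ and therefore agree on every $C$-coset by left-invariance; the case of $B$ is symmetric.

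Parts (2) and (3) follow quickly. If $C$ is locally compact, any compact neighborhood of $1$ in $C$ is a compact neighborhood of $1$ in $G$, so $G$ is locally compact. If $S_A, S_B$ are compact generating sets of $A$ and $B$, then $S_A \cup S_B$ is a compact generating set of $G$. For $\sigma$-compactness, write $A = \bigcup_n K_n$ and $B = \bigcup_n L_n$ as increasing unions of compact sets; then $G = \bigcup_n (K_n \cup L_n)^n$ is $\sigma$-compact.

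For the forward direction of (4), assume $C$ is compactly generated by a compact set $T$, and choose compact symmetric generating sets $S_A \supset T$ of $A$ and $S_B \supset \varphi(T)$ of $B$, together with compact presentations $\langle S_A \mid R_A \rangle$ and $\langle S_B \mid R_B \rangle$ whose relators have length at most some integer $m$. Then
\begin{equation*}
G \, = \, \langle \, S_A \sqcup S_B \, \mid \, R_A \sqcup R_B \sqcup \{\, t \varphi(t)^{-1} : t \in T \,\} \, \rangle
\end{equation*}
is a compact presentation of $G$ with relators of length at most $\max(m, 2)$: the tautological relations $c = \varphi(c)$ for arbitrary $c \in C$ are consequences of the single-generator relations for $t \in T$ combined with $R_A$ and $R_B$, since $\varphi$ is a group homomorphism.

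The main obstacle is the converse direction of (4): assuming $G$ is compactly presented, I must show $C$ is compactly generated. After enlarging if necessary, take a compact presentation of $G$ with generating set $S = S_A \cup S_B$, where $S_A \subset A$ and $S_B \subset B$ are compact symmetric neighborhoods of $1$, and with relators $R'$ of length at most some integer $N$. The crux is a combinatorial application of the normal form theorem for amalgamated products: a word of length $\le N$ in $S_A \cup S_B$ evaluating to $1$ in $G$ decomposes into alternating $A$- and $B$-syllables lying respectively in the compact sets $S_A^N$ and $S_B^N$, and the only reductions available come from amalgamation moves using elements of the compact set
\begin{equation*}
K \, := \, (C \cap S_A^N) \, \cup \, \varphi^{-1}(\varphi(C) \cap S_B^N) \, \subset \, C.
\end{equation*}
Setting $C_0 := \langle K \rangle$, a compactly generated open subgroup of $C$, it follows that every relator in $R'$ is a consequence of $R_A \sqcup R_B \sqcup \{\, c\varphi(c)^{-1} : c \in C_0 \,\}$, so the amalgamated product $G_0 := A *_{C_0} B$ (compactly presented by the forward direction just proved) admits mutually inverse surjections to and from $G$, giving $G \cong G_0$ and hence $C = C_0$ compactly generated. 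The delicate part will be making this syllable-reduction argument fully rigorous, in particular showing that reducing an arbitrary relator $r \in R'$ to the trivial word in $A *_{C_0} B$ really uses only amalgamation moves with elements of $K$.
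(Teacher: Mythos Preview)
Your treatment of parts \ref{1DEtopamalgamation}--\ref{3DEtopamalgamation} and the forward implication of \ref{4DEtopamalgamation} is correct and follows the paper's proof essentially verbatim.

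For the converse implication of \ref{4DEtopamalgamation}, your approach diverges from the paper's and contains a gap. The paper avoids normal-form combinatorics entirely: it filters $C$ by a nested sequence $C_0 \subset C_1 \subset \cdots$ of compactly generated open subgroups with union $C$, notes that the kernels $K_n$ of the natural maps $A *_{C_0} B \twoheadrightarrow A *_{C_n} B$ are discrete (since $K_n \cap C_0 = \{1\}$), and applies Proposition~\ref{cpforgroupsandquotients}\ref{2DEcpforgroupsandquotients} to conclude that the kernel $K = \bigcup_n K_n$ of $A *_{C_0} B \twoheadrightarrow G$ is finitely normally generated, hence equals some $K_m$, forcing $C = C_m$.

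Your combinatorial route is a reasonable idea, but your compact set $K = (C \cap S_A^N) \cup \varphi^{-1}(\varphi(C) \cap S_B^N)$ is too small. It controls only the \emph{first} reduction step: once you absorb a syllable $g_i \in C \cap S_A^N$ into its $B$-neighbours, the merged syllable $g_{i-1}\varphi(g_i)g_{i+1}$ lies in $S_B^{k_{i-1}} \cdot \varphi(C \cap S_A^N) \cdot S_B^{k_{i+1}}$, which is generally not contained in $S_B^N$. If this merged syllable later lands in $\varphi(C)$, the corresponding element of $C$ need not belong to your $K$. To salvage the argument you would have to enlarge $K$ iteratively, defining compact sets $M_{j+1} \supset M_j \cdot \varphi^{-1}(\varphi(C) \cap P_j) \cdot M_j$ and $P_{j+1} \supset P_j \cdot \varphi(C \cap M_j) \cdot P_j$ for $j$ up to $N$, and take $C_0$ to be generated by all the $C$-elements appearing at any stage. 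This works, but the paper's filtration argument is both shorter and cleaner, and reuses machinery (Proposition~\ref{cpforgroupsandquotients}) already established for exactly this purpose.
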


\begin{proof}
\ref{1DEtopamalgamation}
By Proposition \ref{remoncompatibletop}, the topology of $C$
is both $A$-compatible and $B$-compatible.
By Lemma \ref{compatiblesubgroup}, it is $G$-compatible.
By Proposition \ref{PropTopsub}, $G$ has a unique topology $\mathcal T_G$
making it a topological group inducing the given topology on $C$,
and in which $C$ is an open subgroup.
\par

Denote by $i_A$ the inclusion of $A$ into $G$.
Since $i_A$ induces a topological isomorphism 
$C \overset{\simeq}{\longrightarrow} i_A(C)$,
and $i_A(C)$ is open, it follows that it also induces
a topological isomorphism $A \overset{\simeq}{\longrightarrow} i_A(A)$, with open image. 
The same holds for $B$.

\vskip.2cm

Claims \ref{2DEtopamalgamation} and \ref{3DEtopamalgamation} are obvious. 

\vskip.2cm

For \ref{4DEtopamalgamation}, choose two compact presentations 
\begin{equation*}
A \, = \, \langle S_A \mid R_A \rangle
\hskip.5cm \text{and} \hskip.5cm
B \, = \, \langle S_B \mid R_B \rangle .
\end{equation*}
Suppose first that $C$ is compactly generated;
in this case, we assume moreover that $S_A$ 
contains a compact generating set $S_C$ of $C$
and that $S_B$ contains $\varphi(S_C)$.
For $c \in C$ we write $(c)_A$ for $c$ viewed in $A$ and $(c)_B$ for $\varphi(c)$ viewd in $B$.
We have presentations
\begin{equation*}
\aligned
A *_C B \, &= \, \langle A, B \mid (c)_A = (c)_B \hskip.2cm \forall \hskip.1cm c \in C \rangle
\\
\, &= \, \langle A, B \mid (c)_A = (c)_B \hskip.2cm \forall \hskip.1cm c \in S_C \rangle
\\
\, &= \, \langle S_A, S_B \mid R_A, \hskip.2cm R_B, \hskip.2cm
          (c)_A = (c)_B \hskip.2cm \forall \hskip.1cm c \in S_C \rangle .
\endaligned
\end{equation*}
The last one is a  compact presentation,
because $(c)_A = (c)_B$ is a relator of length $2$
for each $c \in S_C$.
\par

For the converse implication, consider a nested sequence 
of compactly generated open subgroups
\begin{equation*}
C_0 \, \subset \, C_1 \, \subset \, \cdots \, \subset \,
C_n \, \subset \, C_{n+1} \, \subset \, \cdots \, \subset C := \bigcup_{n \ge 0} C_n
\end{equation*}
(see Proposition \ref{powersSincpgroup}).
For each $n \ge 0$, let $K_n$ denote 
the kernel of the canonical surjective homomorphism  
$A *_{C_0} B \twoheadrightarrow A*_{C_n} B$.
Observe that $K_n \subset K_{n+1}$, 
and that the kernel of the canonical surjective homomorphism
$A *_{C_0} B \twoheadrightarrow A*_C B$ is $K := \bigcup_{n \ge 0} K_n$.
Since $K \cap C_0 = \{1\}$, and $C_0$ is open in $G$,
the kernels $K_n$ and $K$ are discrete in $G$.
\par

Suppose now that $G$ is compactly presented.
Then $K$ is finitely generated as a normal subgroup of $G$,
by Proposition \ref{cpforgroupsandquotients}.
It follows that there exists an integer $m \ge 0$ such that $K = K_m$,
and therefore such that $C = C_m$; hence $C$ is compactly generated.
\par

Claim \ref{4DEtopamalgamation} appears in 
\cite[Theorem 9, Page 117]{Baum--93}, for $G$ discrete.
\end{proof}

\begin{prop}
\label{topHNN}
Consider a topological group $H$, two open subgroups $K, L$ of $H$,
a topological isomorphism 
$\varphi : K \overset{\simeq}{\longrightarrow} L$,
and the resulting HNN-extension
\begin{equation*}
G \, = \,  \HNN(H, K, L, \varphi) \, = \, 
\langle H, t \mid t k t^{-1} = \varphi(k) \hskip.2cm \forall \hskip.1cm k \in K \rangle 
\end{equation*}
where $t$ is the stable letter.
\begin{enumerate}[noitemsep,label=(\arabic*)]
\item\label{1DEtopHNN}
There exists a unique topology on $G$ which makes it a topological group 
in which $H$ is an open subgroup.
\index{Topology! on HNN-extensions}
\item\label{2DEtopHNN}
If $H$ is locally compact [respectively $\sigma$-compact], then $G$ has the same property.
\item\label{3DEtopHNN}
If $H$ is  compactly generated, then $G$ has the same property.
\item\label{4DEtopHNN} 
Assume that $H$ is compactly presented.
Then $G$ is compactly presented if and only if $K$ is compactly generated.
\end{enumerate}
\end{prop}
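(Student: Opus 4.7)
The plan is to follow closely the strategy used for amalgamated products in Proposition \ref{topamalgamation}. For Claim (1), I would first verify that the topology on $H$ is $G$-compatible in the sense of Definition \ref{DefTopsub}. By Lemma \ref{compatiblesubgroup}, the set of $g \in G$ for which the given topology on $H$ is $g$-compatible is a subgroup containing $H$; since $G$ is generated by $H \cup \{t\}$, it suffices to check $t$-compatibility. Taking $H_1 = K$ and $H_2 = L$, both open subgroups of $H$, the conjugation $k \mapsto tkt^{-1}$ agrees with the topological isomorphism $\varphi : K \overset{\simeq}{\longrightarrow} L$, so the topology on $H$ is $t$-compatible. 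Proposition \ref{PropTopsub} then supplies the unique group topology on $G$ that induces the given topology on $H$ and makes $H$ open. Claims (2) and (3) are immediate: local compactness and $\sigma$-compactness pass from $H$ to $G$ because $H$ is an open subgroup, and if $S_H$ is a compact generating set of $H$, then $S_H \cup \{t, t^{-1}\}$ is a compact generating set of $G$.

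For Claim (4), the easy direction assumes that $K$ is compactly generated. I would pick a compact presentation $\langle S_H \mid R_H \rangle$ of $H$ and enlarge $S_H$ so that it contains a compact generating set $S_K$ of $K$ together with $\varphi(S_K)$. Then
\[
G \,=\, \langle S_H,\, t \,\mid\, R_H,\ tkt^{-1}\varphi(k)^{-1}\ \text{for all } k \in S_K \rangle
\]
is a compact presentation of $G$, the added relators having length $4$.

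The converse is the core of the argument. Following the amalgamated-product model, I would fix a nested sequence $K_0 \subset K_1 \subset \cdots$ of compactly generated open subgroups of $K$ with $K = \bigcup_n K_n$ (Proposition \ref{powersSincpgroup}), set $L_n := \varphi(K_n)$, which are compactly generated open subgroups of $L$ and hence of $H$, and form the HNN extensions $G_n := \HNN(H, K_n, L_n, \varphi|_{K_n})$. The universal property yields canonical continuous epimorphisms $G_0 \twoheadrightarrow G_n \twoheadrightarrow G$; denote their kernels $M_n$ and $M$ respectively, so that $M_0 \subset M_1 \subset \cdots$ and $M = \bigcup_n M_n$. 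The standard HNN normal form (Britton's lemma) ensures that $H$ embeds into each $G_n$, so $M \cap H = \{1\}$; since $H$ is open in $G_0$, this forces $M$ to be discrete in $G_0$, and normal generation of $M$ in $G_0$ then reduces to ordinary generation. Assuming $G$ is compactly presented, Proposition \ref{cpforgroupsandquotients}(2) gives that $M$ is compactly, hence finitely, generated as a normal subgroup of $G_0$. This forces $M = M_n$ for some $n$, whence the canonical map $G_n \to G$ is an isomorphism and $K = K_n$ is compactly generated.

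The main obstacle I anticipate is extracting the precise input from HNN theory needed to identify the kernels $M_n$ correctly and to confirm that $H$ sits faithfully inside each $G_n$ so that $M$ is discrete; once Britton's lemma is available in this topological setting, the remainder is a direct transcription of the proof of Proposition \ref{topamalgamation}(4).
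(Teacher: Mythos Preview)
Your proposal is correct and matches the paper's approach exactly: for Claims (1)--(3) and the forward direction of (4) your arguments coincide with the paper's, and for the converse of (4) the paper writes that the proof ``is similar to the end of the proof of Proposition \ref{topamalgamation}, and we leave the details to the reader'', details which you have supplied correctly. One minor wording issue: ``normal generation of $M$ in $G_0$ then reduces to ordinary generation'' should say that, $M$ being discrete, compact normal generation becomes \emph{finite} normal generation, which is what lets you absorb the finitely many normal generators into some $M_n$.
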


\begin{proof}
The topology of $H$ is both $H$-compatible and $t$-compatible, 
and therefore $G$-compatible by Lemma \ref{compatiblesubgroup}.
Claims \ref{1DEtopHNN} and \ref{2DEtopHNN}
are straightforward applications of Proposition \ref{PropTopsub},
and \ref{3DEtopHNN} is immediate.

\vskip.2cm

For \ref{4DEtopHNN}, 
choose a compact presentation $\langle S_H \mid R_H \rangle$ of $H$.
Suppose first that $K$ is compactly generated;
in this case, we assume moreover that $S_H$ contains
a compact generating set $S_K$ of $K$, as well as $\varphi(S_K)$.
We have presentations
\begin{equation*}
\aligned
G \, &= \, \langle H, t \mid tkt^{-1} = \varphi(k) \hskip.2cm \forall \hskip.1cm k \in K \rangle
\\
\, &= \, \langle H, t \mid tkt^{-1} = \varphi(k) \hskip.2cm \forall \hskip.1cm k \in S_K \rangle
\\
\, &= \, \langle S_H, t \mid R_H, \hskip.2cm  tkt^{-1} = \varphi(k) 
\hskip.2cm \forall \hskip.1cm k \in S_K \rangle .
\endaligned
\end{equation*}
The last one is a  compact presentation,
because $tkt^{-1} = \varphi(k)$ is a relator of length $4$
for each $k \in S_K$.
\par

The proof of the converse implication is similar to the end of the proof
of Proposition \ref{topamalgamation},
and we leave the details to the reader.
\par

Claim \ref{4DEtopHNN} appears in \cite[Page 118]{Baum--93}, for $G$ discrete.
\end{proof}

\begin{rem}
\label{Alperin}
(1)
By a result of Alperin, if $G$ is an LC-group, 
and if the abstract group $G$ is an amalgam $A \Conv_C B$,
then $C$ is an open subgroup in $G$.
Similarly, 
if the abstract group $G$ is an HNN-extension $\HNN(H, K, L, \varphi)$,
then $K$ and $L$ are open subgroups in $G$.
See Corollary 2 of Theorem 6 in \cite{Alpe--82}.
In particular, every homomorphism $G \longrightarrow \Z$ is continuous.

\vskip.2cm

(2)
The two constructions are closely related.
In particular, if $G = \HNN(H, K, L, \varphi)$ is as in Proposition \ref{topHNN}, then
\begin{equation*}
G \, = \, (\cdots \Conv_K H \Conv_K H \Conv_K H \Conv_K H \Conv_K H \Conv_K \cdots) 
\rtimes_{\operatorname{shift}} \Z ,
\end{equation*}
where each left embedding $H \hookleftarrow K$ is the inclusion
and each right embedding $K \hookrightarrow H$ is $\varphi$.
See \cite{HiNN--49} and  \cite[No I.1.4, Proposition 5]{Serr--77}.

\vskip.2cm

(3)
For an LC-group $G$ and two open subgroups $A,B$ of $G$, with $C = A \cap B$,
the following two conditions are equivalent
(see \cite[$\S$~I.4.1]{Serr--77}):
\begin{enumerate}[noitemsep,label=(\roman*)]
\item
$G = A \Conv_C B$ as in Proposition \ref{topamalgamation},
\item
$G$ acts continuously without edge-inversion on a tree $T$,
the action is transitive on the set of geometric edges of the tree,
the quotient $G \backslash T$ is a segment,
and $A,B$ are the stabilizers of two adjacent vertices of the tree.
\end{enumerate}
This is an application of \cite{Serr--77} to the abstract group $G$;
the continuity of the action in (ii) follows from the fact
that all vertex and edge stabilizers are open.
\par

For an analogue concerning HNN-extensions, see Remark \ref{remhomosplits}(4).

\vskip.2cm

(4)
Let $G = A \Conv_C B$ be as in Proposition \ref{topamalgamation}. 
Assume moreover that the amalgam is \textbf{non-degenerate},
\index{Amalgamated product! non-degenerate}
i.e., that $C$ is of index at least $2$ in $A$ or $B$,
and of index at least $3$ in the other.
Then $G$ contains non-abelian free groups as discrete subgroups.
\index{Free subgroup}
\par

Similarly, let $G = \HNN (H, K, L, \varphi)$ be as in Proposition \ref{topHNN}.
Suppose that the HNN-extension is non-ascending,
i.e., that $K  \subsetneqq H \supsetneqq L$.
Then $G$ contains non-abelian free subgroups as discrete subgroups.
\par

These facts can be checked by making use of Lemma 2.6 in \cite{CuMo--87}.

\vskip.2cm

(5)
Let $G$ be an LC-group that is an ascending HNN-extension;
we use the notation of Proposition \ref{topHNN}.
Then $N := \bigcup_{n \ge 1} t^{-n}Ht^n$ is an open normal subgroup of $G$,
and $G/N \simeq \Z$.
In  the particular case of a compact group $H$,
the group $N$ is locally elliptic,
and it follows that $N$ and $G$ do not contain any discrete non-abelian free subgroup.
\end{rem}

\begin{defn}
\label{defends}
Let $G$ be a compactly generated LC-group.
Choose a geodesically adapted pseudo-metric $d$ on $G$,
as in Proposition \ref{geodad+wordmetric},
and a connected graph $X$ of bounded valency that is quasi-isometric to $(G,d)$,
as in Proposition \ref{predefuniformlycoarselyproper}.
The \textbf{space of ends} of $G$ 
\index{Ends|textbf}
is the space of ends of $X$.
\par

This definition is independent on the choices of $d$ and $X$.
In case $G$ is totally disconnected, 
a convenient choice for $X$ is a Cayley-Abels graph of $G$,
\index{Cayley-Abels graph}
see Definition \ref{defCayleyAbels}.
\par

Similarly to what happens in the discrete case,
there are four cases to distinguish:
the space of ends of a compactly generated LC-group can be
\begin{enumerate}[noitemsep]
\item[(0)]
empty, if and only if $G$ is compact,
\item[(1)] 
a singleton space, in some sense the generic case,
\item[(2)]
a two-points space, if and only if $G$ has a compact open normal subgroup $N$ 
such that $G/N$ is isomorphic to  $\Z$, or the infinite dihedral group $\Isom (\Z)$,
or $\R$, or $\Isom (\R) = \R \rtimes (\Z / 2\Z)$,
\index{Dihedral group}
\item[($\infty$)]
an infinite space, in which case it is uncountable (see Theorem \ref{Stallings}).
\end{enumerate}
\par

For another definition of the space of ends 
of an LC-group (not necessarily compactly generated),
see \cite{Spec--50} and \cite{Houg--74}.
\end{defn}

For totally disconnected LC-groups,
we have an analogue of Stallings' splitting theorem \cite{Stal--68}, 
due to Abels \cite{Abel--74}; see also \cite{Abel--77}, and the review article \cite{Moll}.
\index{Stallings' splitting theorem}
\index{Theorem! Stallings' splitting}

\begin{thm}
\label{Stallings}
Let $G$ be a compactly generated totally disconnected LC-group,
with an infinite space of ends.
Then one of the following holds:
\index{Totally disconnected! topological group}
\begin{enumerate}[noitemsep,label=(\arabic*)]
\item
there are LC-groups $A,B$, a compact open subgroup $C$ of $A$, of index at least~$2$,
and a topological isomorphism of $C$ onto an open subgroup of $B$, of index at least~$3$,
such that $G$ is topologically isomorphic to $A \Conv_C B$,
as in Proposition \ref{topamalgamation};
\item
there are a topological group $H$, 
two compact open subgroups $K,L$ of $H$, with $K \ne H$,
and a topological isomorphism $\varphi$ of $K$ onto $L$
such that $G$ is topologically isomorphic to $\HNN (H, K, L, \varphi)$,
as in Proposition \ref{topHNN}.
\end{enumerate}
\end{thm}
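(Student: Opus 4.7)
The plan is to reduce Abels' splitting theorem to the classical Stallings--Dunwoody theory applied to a Cayley--Abels graph, and then translate the resulting tree action back into a topological amalgam or HNN-extension.

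First I would set up the geometric model. Since $G$ is totally disconnected and compactly generated, van Dantzig's theorem (Theorem~\ref{vanDantzig}) together with Corollary~\ref{vanDantzigCor} gives a compact open subgroup, and Proposition~\ref{propCayleyAbels} then produces a Cayley--Abels graph $X$: a connected graph of bounded valency on which $G$ acts continuously, properly, and vertex-transitively, with vertex stabilizers that are compact open subgroups of $G$. By Definition~\ref{defends}, the space of ends of $X$ coincides with the space of ends of $G$, and is therefore infinite by assumption.

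The crucial geometric step is then to extract a $G$-invariant simplicial tree $T$ from the end structure of $X$, on which $G$ acts continuously without edge inversions, with finite quotient $G\backslash T$ and with compact open vertex and edge stabilizers. This is carried out via a $G$-invariant nested system of ``cuts'' in $X$: finite sets of edges whose removal separates $X$ into two infinite pieces. The existence of one such cut is equivalent to $X$ having more than one end; the infinite-end hypothesis lets one refine the $G$-orbit of a cut into a Dunwoody-style nested, symmetric, $G$-invariant family, and the dual of this family is the desired tree $T$. Vertex stabilizers of $T$ are commensurable with stabilizers of infinite components of $X\setminus(\text{cut})$, and edge stabilizers are setwise stabilizers of cuts, which are intersections of compact open subgroups with themselves translated by finitely many group elements, hence compact open. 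Cocompactness of the $G$-action on $X$ forces $G\backslash T$ to be finite; choosing $T$ minimal, one arranges $G\backslash T$ to be either a single edge with distinct endpoints or a single loop.

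Once $T$ is produced, the conclusion is pure Bass--Serre theory combined with the topological amalgam and HNN constructions already developed in the chapter. If $G\backslash T$ is a segment with vertex stabilizers $A,B$ and edge stabilizer $C=A\cap B$, then as abstract groups $G=A\Conv_C B$; because $A$, $B$ are open in $G$ and $C$ is compact open in both, Proposition~\ref{topamalgamation} identifies this decomposition as a topological amalgamated product in our sense, and Remark~\ref{Alperin}(3) shows that the construction is consistent with the given LC-topology. The index conditions $[A:C]\ge 2$ and $[B:C]\ge 3$ (up to swapping) come from the requirement that the $G$-action on $T$ have infinitely many ends, i.e.\ that $T$ not be a point or a line: two ends would force the dihedral configurations excluded in~\ref{defends}(2). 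Similarly, if $G\backslash T$ is a loop with vertex stabilizer $H$ and edge stabilizers $K,L\subset H$ related by a $G$-conjugation $\varphi:K\to L$, then $G=\HNN(H,K,L,\varphi)$ as abstract group, Proposition~\ref{topHNN} lifts this to the topological HNN-extension, and the infinite-end hypothesis rules out the ascending case with $K=H$ (which yields at most two ends), giving the stated condition $K\ne H$.

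The main obstacle is the middle step: the production of a $G$-invariant tree with \emph{compact open} edge stabilizers. The Dunwoody tracks/cuts machinery was developed in the discrete setting, and its transposition to totally disconnected LC-groups requires being careful that the cuts chosen are stable not only under a discrete subgroup but under the full compact open stabilizers, so that their stabilizers in $G$ remain open. The cleanest way I see is to work directly on the locally finite Cayley--Abels graph $X$, where the $G$-action factors through the automorphism group of $X$: all stabilizers of finite subsets of vertices or edges are then automatically open, and compactness comes from properness of the action on $X$. This is essentially the route taken by Abels in~\cite{Abel--74}, and adapting that combinatorial argument, rather than re-deriving it, seems by far the most economical approach.
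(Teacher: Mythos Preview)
The paper does not actually prove this theorem: it is stated with attribution to Abels \cite{Abel--74}, with further references to \cite{Abel--77} and \cite{Moll}, and no proof is given. So there is no ``paper's own proof'' to compare against; the authors treat this as a quoted result.

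Your outline is the standard route and matches what one finds in Abels and in M\"oller's survey: pass to a Cayley--Abels graph (Proposition~\ref{propCayleyAbels}), use Dunwoody's structure-tree machinery on that locally finite graph to obtain a $G$-tree with compact open edge stabilizers and finite quotient, and then read off the decomposition via Bass--Serre theory, invoking Propositions~\ref{topamalgamation} and~\ref{topHNN} for the topological structure. You correctly identify the main technical content as the middle step (producing the tree with the right stabilizer properties), and your remark that working on the Cayley--Abels graph automatically makes all relevant stabilizers open is exactly the point that makes the totally disconnected case tractable.

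One small correction: in the HNN case, a strictly ascending extension with $K=H$ and $L\subsetneq H$ compact open has \emph{one} end, not two, so your parenthetical ``(which yields at most two ends)'' is slightly off, though the conclusion that infinitely many ends forces $K\ne H$ is correct. Also note that the theorem as stated does not exclude $L=H$; only $K\ne H$ is asserted, so you should not argue for full non-ascendingness.
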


\section{Homomorphisms to $\Z$ and splittings}
\label{sectionsplitting}

It will be convenient to call {\bf epimorphisms} surjective homomorphisms.
\index{Epimorphism (group)}

\subsection{A topological version of the Bieri-Strebel splitting theorem}
\label{subsection_8Ca}

\begin{defn}
\label{defhomosplits}
Let $G$ be a group, $H$ a subgroup, 
and $\pi : G \twoheadrightarrow \Z$ an epimorphism.
We say that $\pi$ \textbf{splits over $H$}, or has a \textbf{splitting over $H$}, if
\index{Splitting|textbf} 
\begin{enumerate}[noitemsep,label=(\arabic*)]
\item\label{1DEdefhomosplits}
$H \subset \ker (\pi)$
\end{enumerate} 
and there exist splitting data, namely:
\begin{enumerate}[noitemsep,label=(\arabic*)]
\addtocounter{enumi}{1}
\item\label{2DEdefhomosplits}
an element, called the \textbf{stable letter}, $s \in \pi^{-1}(\{1\}) \subset G$ ;
\index{Stable letter of an HNN-extension}
\item\label{3DEdefhomosplits}
two subgroups $K,L$ of $H$ such that  $L = sKs^{-1}$ ;
\item\label{4DEdefhomosplits}
an isomorphism 
\index{HNN! extension}
\begin{equation*}
\psi \, : \, 
\HNN(H,K,L,\varphi) = 
\langle H,t \mid t k t^{-1} = \varphi (k)
\hskip.2cm \text{for all} \hskip.2cm k \in K \rangle
\, 
\overset{\simeq}{\longrightarrow} 
\, G,
\end{equation*}
where $\varphi$ is the isomorphism of $K$ onto $L$ defined by $\varphi(k) = s k s^{-1}$,
such that the restriction of $\psi$ to $H$ is the identity, and $\psi (t) = s$.
\end{enumerate}
The splitting is
\begin{enumerate}[noitemsep]
\item[]
\textbf{ascending} if at least one of $K=H$, $L=H$ holds,
\item[]
\textbf{essential} if $K \ne H \ne L$.
\index{Splitting! ascending, essential}
\end{enumerate}
If $G$ is a group with a subgroup $H$, 
it is common to say that \textbf{$G$ splits over $H$}
if some epimorphism $G \twoheadrightarrow \Z$ splits over $H$.
\end{defn}

\begin{rem}
\label{remhomosplits}
For (2) to (5) below,
we continue with the notation of Definition \ref{defhomosplits}.

\vskip.2cm

(1)
A group $G$ for which there exists an epimorphism onto $\Z$
is called an \textbf{indicable} group.
\index{Indicable group}

\vskip.2cm

(2)
The homomorphism $\pi$ splits over $H$ if and only if
$-\pi$ splits over $H$.

\vskip.2cm

(3)
The homomorphism $\pi$ \emph{always} splits over $N := \ker (\pi)$.
Indeed since $\Z$ is free, $\pi$ has a section $\sigma : \Z \longrightarrow G$,
hence $G = N \rtimes_\tau \Z \simeq \HNN(N,N,N,\tau)$
for $\tau$ the automorphism of $N$ of conjugation by $\sigma(1)$.
However, when it holds, it is a non-trivial property that $G$ split
over a \emph{finitely generated} subgroup of $N$,
or a \emph{compactly generated} open subgroup in the LC-context.

\vskip.2cm

(4)
Let $G$ be a topological group, $H$ an open subgroup,
and $\pi : G \twoheadrightarrow \Z$ a continuous epimorphism.
Then $\pi$ splits over $H$ if and only if it satisfies the following:

There exists a continuous action of $G$ on a tree $T$, without edge-inversion,
\index{Tree}
with the following property:
it is transitive on both the vertex set $V$ 
and the set $E$ of geometric edges of $T$,
and there is a vertex in the tree whose stabilizer is $H$.

See \cite{Serr--77}; the continuity argument is the same as in Remark \ref{Alperin}(3).

\vskip.2cm

(5) 
Suppose that $G$ is a topological group,
that the epimorphism $\pi$ is continuous,
and that $H$ is an open subgroup of $G$.
Consider an action of $G$ on a tree $T = (V,E)$ as in (3) above. 
The stabilizer $H_x$ of any vertex $x \in V$ is an open subgroup in $G$,
because it is conjugate to $H$.
The stabilizer of an edge $e \in E$ with origin $x \in V$ and extremity $y \in V$
is also open, because it coincides with $H_x \cap H_y$.
It follows that the action of $G$ on $T$ is continuous.

\vskip.2cm

(6)
Let $G = \HNN (H,K,L,\tau)$, with presentation 
$\langle H,t \mid tkt^{-1} = \tau(k) \hskip.2cm \forall k \in K \rangle$,
be a group given by an HNN-extension;
define an epimorphism $\pi : G \twoheadrightarrow \Z$ 
by $\pi (H) = 0$ and $\pi (t) = 1$.
Then $\ker (\pi)$ is the normal subgroup of $G$ generated by $H$,
as it can be seen on the presentation  above;
it is also a particular case of \cite[Section I.5.4, Corollary 1]{Serr--77}. 
\end{rem}

The following result is essentially the extension to the LC-setting 
of a result  due to Bieri and Strebel \cite[Theorem A]{BiSt--78}.
A particular case has been used by Abels in
\cite[Proposition I.3.2]{Abel--87}.

\begin{thm}[Bieri-Strebel splitting theorem]
\label{BieriStrebelAbels}
Let $G$ be a compactly presented LC-group.
\par

Every continuous epimorphism $\pi : G \twoheadrightarrow \Z$
splits over a compactly generated open subgroup of $\ker (\pi)$.
\end{thm}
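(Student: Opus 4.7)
The plan is to adapt the classical Bieri-Strebel argument to the LC-setting, exploiting the bounded length of relators in a compact presentation.

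First, setting $N := \ker(\pi)$ (which is open and normal in $G$ since $\Z$ is discrete), I would choose a stable letter $s \in \pi^{-1}(1)$ and arrange a compact presentation of $G$ of a convenient form. Starting from any compact generating neighbourhood $U$ of $1$ containing $s^{\pm 1}$, the set $T$ obtained by symmetrizing $\{u s^{-\pi(u)} : u \in U\}$ is a compact symmetric subset of $N$ containing the neighbourhood $U \cap N$ of $1$ in $N$, and $S := \{s, s^{-1}\} \cup T$ still generates $G$ (since each $u \in U$ lies in $\langle s, T \rangle$). By Lemma \ref{boundedpresentationforS1andS2}\ref{4DEboundedpresentationforS1andS2}, $G$ admits a compact presentation $\langle S \mid R \rangle$ with relators of length at most some integer $N_0$.

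Next, I would define the candidate base of an HNN decomposition. Set $C := \bigcup_{|e| \le N_0} s^e T s^{-e}$, which is compact in $N$, and let $H$ be the subgroup of $N$ it generates. Since $C \supseteq T \supseteq U \cap N$ contains a neighbourhood of $1$ in $N$, the subgroup $H$ is open in $G$ and compactly generated. Let $K := H \cap s^{-1}Hs$ and $L := sKs^{-1} = H \cap sHs^{-1}$, both open in $H$, and $\varphi : K \to L$ conjugation by $s$. Proposition \ref{topHNN} makes $G^* := \HNN(H,K,L,\varphi)$ an LC-group, and the universal property yields a continuous homomorphism $\Psi : G^* \to G$ extending $H \hookrightarrow G$ and sending the stable letter $t$ to $s$; its image contains $T \cup \{s\} \supseteq S$, hence $\Psi$ is surjective.

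The main task is to construct a continuous inverse $\Phi : G \to G^*$ via the presentation $\langle S \mid R \rangle$, defined on generators by $s \mapsto t$ and $x \mapsto \bar x \in H$ for $x \in T$. For a relator $r = w_1 \cdots w_n \in R$ with $n \le N_0$, setting $e_i := \pi(w_1 \cdots w_i)$ gives $e_0 = e_n = 0$ and $|e_i| \le N_0$, and the standard rewriting yields
\begin{equation*}
r \,=\, \prod_i s^{e_{i-1}} w_i s^{-e_i} \,=\, \prod_{i\,:\,w_i \in T \cup T^{-1}} s^{e_{i-1}} w_i s^{-e_{i-1}}
\end{equation*}
in $G$, with each factor in $H$ by construction of $C$. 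The crucial and most delicate step --- the hard part of the argument --- is to show that the same rewriting lifts to $G^*$: using the HNN relation $tkt^{-1} = \varphi(k)$ inductively, one obtains $t^j \bar x t^{-j} = \overline{s^j x s^{-j}}$ in $G^*$, but this requires each intermediate conjugate $s^\ell x s^{-\ell}$ with $0 \le \ell \le j$ to actually lie in $H$ so that the conjugation is legal in the HNN extension at every step, which is precisely what the definition of $C$ guarantees for $|j| \le N_0$. Hence $\Phi(r)$ equals $\bar r = \bar 1 = 1$ in $G^*$, so $\Phi$ is a well-defined homomorphism; it coincides with the inclusion $H \hookrightarrow G^*$ on the open neighbourhood $H$ of $1$ in $G$, hence is continuous; and $\Phi, \Psi$ are mutually inverse on generators, so they are mutually inverse topological isomorphisms. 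The open compactly generated subgroup $H$ of $N$ therefore realizes the desired splitting of $\pi$.
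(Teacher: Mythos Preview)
Your argument is correct and follows the classical Bieri--Strebel route directly: pick the bound $N_0$ on relator lengths, enlarge the compact piece $T$ of the generating set by the $2N_0+1$ conjugates $s^eTs^{-e}$ with $|e|\le N_0$ to get $H$, and verify by hand that $G\cong\HNN(H,K,L,\varphi)$ via mutually inverse homomorphisms. The ``hard part'' you flag --- that $t^j\bar x\,t^{-j}=\overline{s^jxs^{-j}}$ in $G^*$ for $|j|\le N_0$ --- is handled correctly by your observation that each intermediate conjugate $s^\ell x s^{-\ell}$ lies in $H$, so the HNN relation can be applied step by step. One small wording issue: the telescoping rewriting $r=\prod_i s^{e_{i-1}}w_is^{-e_i}$ is a formal identity in the free group $F_S$, not merely ``in $G$''; this is what lets you push it through $\tilde\Phi:F_S\to G^*$ before you know $\Phi$ is well-defined.

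The paper takes a genuinely different, more indirect route. Rather than using the relator bound to pick the right $H$ at once, it builds an inductive system (an ``ISED'') of HNN extensions $G_n=\HNN(B_n,U_n,V_n,\tau_n)$ over the one-sided windows $B_n=\langle t^\ell St^{-\ell}:0\le\ell\le n\rangle$, shows the limit $G_\infty$ is $G$, and then invokes a lifting lemma (Lemma~\ref{encoreunlift}) --- itself a consequence of compact presentability via Proposition~\ref{cpforgroupsandquotients} --- to conclude that $G_m\cong G_\infty$ for $m$ large. Your proof is shorter and more self-contained for the theorem as stated; the paper's machinery, on the other hand, simultaneously yields the stronger Theorem~\ref{firststepproofBS} (valid for merely compactly \emph{generated} $G$) and Corollary~\ref{corBSA+BGH}, which your direct argument does not give.
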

\index{Theorem! Bieri-Strebel}

\begin{cor}
\label{ascHNN}
Let $G$ be a compactly presented LC-group.
Suppose that the free group of rank $2$ does not embed
as a discrete subgroup of $G$.
\index{Free group}
\par

For every continuous epimorphism $\pi : G \twoheadrightarrow \Z$,
either $\pi$ or $-\pi$ splits ascendingly over some
compactly generated open subgroup.
\end{cor}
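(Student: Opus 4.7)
The plan is to apply the Bieri-Strebel splitting theorem (Theorem \ref{BieriStrebelAbels}) to obtain an HNN decomposition of $G$, and then use Remark \ref{Alperin}(4) to rule out the essential case under the hypothesis on discrete free subgroups of rank $2$; the only remaining possibilities, ascending or descending splitting for $\pi$, translate directly into an ascending splitting for $\pi$ or for $-\pi$.

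First, I would apply Theorem \ref{BieriStrebelAbels} to the given continuous epimorphism $\pi : G \twoheadrightarrow \Z$. This produces splitting data in the sense of Definition \ref{defhomosplits}: a compactly generated subgroup $H$ of $\ker(\pi)$ that is open in $\ker(\pi)$, a stable letter $s \in \pi^{-1}(\{1\})$, subgroups $K, L$ of $H$ with $L = sKs^{-1}$, and an isomorphism $\psi : \HNN(H, K, L, \varphi) \overset{\simeq}{\longrightarrow} G$ where $\varphi$ is conjugation by $s$. Since $\pi$ is continuous and $\{0\}$ is open in $\Z$, the subgroup $\ker(\pi)$ is open in $G$; hence $H$ is open in $G$ as well. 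Proposition \ref{topHNN} then applies to this decomposition, and in particular $K$ and $L$ are open subgroups of $H$ (and of $G$), so that we are genuinely in the topological HNN setting of Remark \ref{Alperin}(4).

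Second, I would split into three cases according to Definition \ref{defhomosplits}: ascending ($K = H$), descending ($L = H$), or essential ($K \ne H$ and $L \ne H$). In the essential case, the HNN extension $\HNN(H, K, L, \varphi) \simeq G$ is non-ascending in the sense of Remark \ref{Alperin}(4); that remark then produces a non-abelian free subgroup of $G$ that is discrete, contradicting the hypothesis. Hence the essential case does not occur, and the splitting is either ascending or descending.

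Third, if $K = H$, the splitting obtained is already an ascending splitting of $\pi$ over the compactly generated open subgroup $H$, and we are done. If $L = H$, I would replace the stable letter $s$ by $s^{-1}$, which is a continuous epimorphic preimage of $-1 \in \Z$ under $\pi$, equivalently of $1 \in \Z$ under $-\pi$; conjugation by $s^{-1}$ maps $L = H$ onto $K$, producing an isomorphism $\HNN(H, H, K, \varphi^{-1}) \overset{\simeq}{\longrightarrow} G$ that fits the splitting data of Definition \ref{defhomosplits} for $-\pi$, and this is an ascending splitting of $-\pi$ over the compactly generated open subgroup $H$ (cf.\ Remark \ref{remhomosplits}(1)).

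There is no real obstacle here once Theorem \ref{BieriStrebelAbels} and Remark \ref{Alperin}(4) are granted: the argument is a case analysis on ascending versus descending HNN decompositions. The only point requiring a moment of care is the verification that the HNN data coming out of Bieri-Strebel meet the topological hypotheses of Remark \ref{Alperin}(4), and this is handled by the observation that $\ker(\pi)$ is open in $G$, so that $H$, $K$, and $L$ are all open in $G$.
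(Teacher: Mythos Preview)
Your proof is correct and follows essentially the same route as the paper's: apply Theorem \ref{BieriStrebelAbels}, then invoke Remark \ref{Alperin}(4) to exclude the essential case, leaving only ascending or descending splittings. One small wording issue: you write that ``Proposition \ref{topHNN} then applies \ldots and in particular $K$ and $L$ are open'', but this is backwards, since Proposition \ref{topHNN} \emph{assumes} $K$ and $L$ are open rather than implying it; the openness of $K$ and $L$ follows either from Alperin's result in Remark \ref{Alperin}(1) or directly from the construction in the proof of Theorem \ref{firststepproofBS}.
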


\begin{proof}
By Theorem \ref{BieriStrebelAbels},
we know that $G$ is isomorphic,
with an isomorphism commuting with the homomorphism onto $\Z$,
to $\HNN (H,K,L,\varphi)$
for some compactly generated open subgroups $H,K,L$ of $G$ 
with $K \subset H$ and $L \subset H$.
If we had $K \ne H \ne L$,
the group $G$ would contain a discrete non-abelian free subgroup
(see Remark \ref{Alperin}(4)). 
\end{proof}

The following corollary, a strengthening of Theorem \ref{BieriStrebelAbels},
is a consequence of its \emph{proof}.

\begin{cor}
\label{corBSA+BGH}
Let $G$ be a compactly \emph{generated} LC-group
and $\pi : G \twoheadrightarrow \Z$ a continuous epimorphism.
\par

For every compactly \emph{presented} LC-group $E$  
and continuous epimorphism 
$\rho : E \twoheadrightarrow G$,
there exist a compactly generated LC-group $G'$
and a factorization $E \twoheadrightarrow G' \twoheadrightarrow G$ of $\rho$
with $\ker (G' \twoheadrightarrow G)$ discrete in $G'$,
such that the composite epimorphism 
$G' \twoheadrightarrow G \twoheadrightarrow \Z$
splits over a compactly generated open subgroup of $G'$.
\end{cor}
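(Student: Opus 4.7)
\noindent\textbf{Proof plan for Corollary \ref{corBSA+BGH}.}

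The plan is to apply Theorem \ref{BieriStrebelAbels} not to $G$ itself, where the quality of the HNN base subgroup is out of our control, but to $E$ with respect to the continuous epimorphism $\pi\rho:E\twoheadrightarrow\Z$. The splitting of $\pi\rho$ obtained there will then be transported down to $G$ in a controlled way, and the required intermediate group $G'$ will be built as an HNN extension using the images in $G$ of the data for $E$.

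More precisely, here are the steps I would carry out. First, apply Theorem \ref{BieriStrebelAbels} to the compactly presented LC-group $E$ and the continuous epimorphism $\pi\rho$, obtaining a compactly generated open subgroup $H_E$ of $\ker(\pi\rho)$, subgroups $K_E,L_E$ of $H_E$, an element $t\in E$ with $\pi\rho(t)=1$ and $L_E=tK_Et^{-1}$, and an isomorphism $E\simeq\HNN(H_E,K_E,L_E,\varphi_E)$, where $\varphi_E$ is conjugation by $t$, which is the identity on $H_E$ and sends the stable letter to $t$. Second, set $H:=\rho(H_E)$, $K:=\rho(K_E)$, $L:=\rho(L_E)$, and $s:=\rho(t)\in G$. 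Since $E$ is $\sigma$-compact (being compactly generated), the epimorphism $\rho$ is open by Corollary \ref{Freudenthalcor}, so $H$ is a compactly generated open subgroup of $G$ contained in $\ker(\pi)$, and $K,L$ are open subgroups of $H$; furthermore $sKs^{-1}=L$, so conjugation by $s$ defines a topological isomorphism $\varphi:K\overset{\simeq}{\longrightarrow}L$. Third, form the topological HNN extension
\[
G'\;:=\;\HNN(H,K,L,\varphi)\;=\;\langle H,t'\mid t'kt'^{-1}=\varphi(k)\ \forall k\in K\rangle,
\]
using Proposition \ref{topHNN}: then $G'$ is an LC-group in which $H$ is a compactly generated open subgroup, so $G'$ itself is compactly generated.

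Fourth, I would construct the factorization. The universal property of HNN extensions (Definition \ref{ReminderAmaHNN}(2)) applied to the homomorphism $\rho|_{H_E}:H_E\to G'$ (which is continuous and whose restrictions to $K_E,L_E$ match up via $\varphi$ by construction) together with the choice of $t'\in G'$ for the stable letter produces a continuous epimorphism $q_1:E\twoheadrightarrow G'$ sending $H_E$ onto $H$ by $\rho$ and $t$ to $t'$. Similarly, the inclusion $H\hookrightarrow G$ together with the element $s\in G$ (which conjugates $K$ to $L$ by $\varphi$) yields, via the same universal property, a continuous epimorphism $q_2:G'\twoheadrightarrow G$ which is the identity on $H$ and sends $t'$ to $s$. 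By construction $q_2\circ q_1=\rho$, and the composition $\pi\circ q_2:G'\twoheadrightarrow\Z$ splits over the compactly generated open subgroup $H$ of $G'$ in the sense of Definition \ref{defhomosplits}, since $G'$ is literally presented as the corresponding HNN extension with stable letter $t'\in (\pi q_2)^{-1}(1)$ and with $H\subset\ker(\pi q_2)$.

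The step I expect to be the main technical obstacle is the verification that $\ker(q_2)$ is discrete in $G'$. The idea is that $q_2$ restricts to the identity on $H$, which is open in $G'$ by Proposition \ref{topHNN}; consequently $\ker(q_2)\cap H=\{1\}$, so $\ker(q_2)$ is a subgroup of $G'$ trivially intersecting an open neighbourhood of $1$, hence discrete. What needs a small extra check is that $q_2$ is well-defined and continuous with the claimed restriction, i.e.\ that the obvious map from the abstract HNN extension indeed descends to the topological HNN extension of Proposition \ref{topHNN}; this amounts to verifying that $q_2$ is continuous on the open subgroup $H$ (trivial, as $q_2|_H=\mathrm{id}_H$) and that conjugation by $t'$ intertwines with conjugation by $s=q_2(t')$ on $K$, which is immediate from $\varphi(k)=sks^{-1}$. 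Once this is in place, $G'$ together with the factorization $E\xrightarrow{q_1}G'\xrightarrow{q_2}G$ satisfies all the required properties.
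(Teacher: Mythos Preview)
Your argument is correct and complete; the verification that $\ker(q_2)$ is discrete is indeed immediate from $q_2|_H=\mathrm{id}_H$ with $H$ open in $G'$, exactly as you note.

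Your route is genuinely different from the paper's. The authors emphasize that Corollary~\ref{corBSA+BGH} is a consequence of the \emph{proof} of Theorem~\ref{BieriStrebelAbels}: they reuse the ISED $(G_n,\varphi_{n,n+1})_{n\ge 1}$ of Theorem~\ref{firststepproofBS} built \emph{from $G$} (so that $G_\infty\simeq G$ and each $\pi_n:G_n\twoheadrightarrow\Z$ already splits over a compactly generated open subgroup), and then invoke Lemma~\ref{encoreunlift}\ref{1DEencoreunlift} to lift $\rho$ through some $G_m$, taking $G'=G_m$. You instead apply the \emph{statement} of Theorem~\ref{BieriStrebelAbels} directly to the compactly presented group $E$ with respect to $\pi\rho$, push the resulting HNN data down through $\rho$, and rebuild $G'$ as an HNN extension over the image. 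Your approach is more self-contained --- it needs neither the ISED machinery nor Lemma~\ref{encoreunlift} --- and shows that the corollary is in fact a consequence of the theorem itself, not merely of its proof. The paper's approach, on the other hand, produces $G'$ within a canonical tower approximating $G$, which is conceptually closer to the surrounding development but logically heavier.
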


\subsection{Proofs}
\label{subsection_8Cb}

We introduce the following convenient shorthand:

\begin{defn}
\label{iseudk}
An \textbf{inductive system of epimorphisms with uniformly discrete kernels},
or for short \textbf{ISED}, is a sequence
\index{ISED|textbf}
\begin{equation*}
G_1  \overset{\varphi_{1,2}}{\twoheadrightarrow} 
G_2  \twoheadrightarrow \cdots \twoheadrightarrow 
G_n  \overset{\varphi_{n,n+1}}{\twoheadrightarrow} 
G_{n+1} \twoheadrightarrow \cdots ,
\end{equation*}
where $G_n$ is an LC-group and $\varphi_{n,n+1}$ a continuous epimorphism
for each $n \ge 1$,
such that $K_{1,\infty}$ (defined below) is a discrete subgroup of $G_1$.
\par

For $n \ge m \ge 1$, let $K_{m,n}$ denote the kernel of the composition
\begin{equation*}
\varphi_{m,n} := \varphi_{n-1,n} \circ \varphi_{n-2,n-1} \circ \cdots \circ \varphi_{m,m+1}
\, : \,  G_m \twoheadrightarrow G_n .
\end{equation*}
Observe that $K_{m,n} \subset K_{m,n+1}$.
Set $K_{m,\infty} := \bigcup_{n \ge m} K_{m,n}$.
Define the LC-group 
\begin{equation*}
G_\infty \, = \, G_1 / K_{1,\infty} .
\end{equation*}
Note that,  for each $m \ge 1$,
we have the equality $K_{m,\infty} = \varphi_{1,m}(K_{1,\infty})$, 
a canonical identification $G_\infty = G_m / K_{m,\infty}$,
and a canonical epimorphism 
$\varphi_{m,\infty} : G_m \twoheadrightarrow G_\infty$.
[Thus $G_\infty$ is an inductive limit of the system $(G_n, \varphi_{n,n+1})_{n \ge 1}$
in both the category of groups and the category of LC-groups.]
\end{defn}

\begin{lem}
\label{encoreunlift}
The notation being as in Definition \ref{iseudk}, 
suppose that  $G_1$ is compactly generated.
\begin{enumerate}[noitemsep,label=(\arabic*)]
\item\label{1DEencoreunlift}
Let $E$ be a compactly presented group 
and $\rho : E \twoheadrightarrow G_\infty$ a continuous epimorphism.
For $m$ large enough, there exists 
a continuous epimorphism $\rho_m : E \twoheadrightarrow G_m$
such that $\rho = \varphi_{m,\infty} \circ \rho_m$.
\item\label{2DEencoreunlift}
Suppose that $G_\infty$ is compactly presented.
Then $\varphi_{m,\infty} : G_m \longrightarrow G_\infty$ 
is an isomorphism for $m$ large enough.
\end{enumerate}
\end{lem}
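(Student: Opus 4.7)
I would prove (1) by the standard technique of lifting bounded-length relators through the covering $\varphi_{m,\infty}$, and then derive (2) as a formal consequence. Two basic observations underpin the argument. First, for each $m$, the kernel $K_{m,\infty}=\varphi_{1,m}(K_{1,\infty})$ is discrete in $G_m$: if $U$ is an open neighbourhood of $1$ in $G_1$ with $U\cap K_{1,\infty}=\{1\}$, then $\varphi_{1,m}(U)$ is an open neighbourhood of $1$ in $G_m$ (the quotient map is open by Corollary~\ref{Freudenthalcor}, since $G_1$ is $\sigma$-compact), and a direct check gives $\varphi_{1,m}(U)\cap K_{m,\infty}=\{1\}$. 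Hence each $\varphi_{m,\infty}$ is a covering morphism by Proposition~\ref{GtoHcovering}. Second, any compact subset $C\subset G_1$ meets the discrete subgroup $K_{1,\infty}$ in a finite set, and because $K_{1,\infty}=\bigcup_n K_{1,n}$ is an increasing union, $C\cap K_{1,\infty}\subset K_{1,m}$ for all $m$ sufficiently large.

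For (1), I would fix a compact presentation of $E$ and, by Lemma~\ref{lem:boun_pres_def_gen}, replace the generating set by $\widehat S^k$ so that $S$ is a compact symmetric defining generating set and the relator set $R=\{stu^{-1}:s,t,u\in S,\ st=u\text{ in }E\}$ consists of words of length at most $3$. After enlarging $S$, I may further assume it contains two prescribed compact pieces: a small symmetric neighbourhood $W$ of $1\in E$ such that $\rho(W)$ lies inside an elementary open neighbourhood $V$ of $1\in G_\infty$ admitting a continuous section $\tau\colon V\to G_1$ of $\varphi_{1,\infty}$ with $\tau(1)=1$ (which exists by the covering property); and a compact $T_E\subset E$ with $\rho(T_E)=\varphi_{1,\infty}(T_1)$ for some preselected compact symmetric generating set $T_1$ of $G_1$ (such $T_E$ exists by Lemma~\ref{KimagedeK} applied to the open surjection $\rho$). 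After shrinking $W$ so that $W\cap T_E\subset\{1\}$, I would define a set-map $\lambda\colon S\to G_1$ by taking $\lambda|_W=\tau\circ\rho|_W$, by choosing lifts on $T_E$ so that $\lambda(T_E)\supset T_1$ (possible since $\rho(T_E)=\varphi_{1,\infty}(T_1)$), and by choosing arbitrary lifts on the remainder, all taken to lie in a common compact subset of $G_1$ (another application of Lemma~\ref{KimagedeK}).

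Let $\tilde\psi\colon F_S\to G_1$ be the group homomorphism extending $\lambda$. Then $\tilde\psi(R)$ is contained in $(\lambda(S)\cup\lambda(S)^{-1})^3$, a relatively compact subset of $G_1$, and $\varphi_{1,\infty}\tilde\psi(r)=\rho(r)=1$ forces $\tilde\psi(R)\subset K_{1,\infty}$. By the second observation above, $\tilde\psi(R)\subset K_{1,m}$ for some $m$. The composite $\varphi_{1,m}\circ\tilde\psi\colon F_S\to G_m$ therefore kills $R$ and descends to a homomorphism $\rho_m\colon E\to G_m$ satisfying $\varphi_{m,\infty}\rho_m=\rho$. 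Surjectivity of $\rho_m$ is forced by $\lambda(T_E)\supset T_1$, since then $\rho_m(E)\supset\varphi_{1,m}(T_1)$ generates $\varphi_{1,m}(G_1)=G_m$. Continuity is the subtle point: on $W$ the homomorphism $\rho_m$ agrees with $\varphi_{1,m}\circ\tau\circ\rho|_W$, which is continuous, so $\rho_m$ is continuous at $1$; being an abstract homomorphism between topological groups, it is then continuous everywhere.

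The main obstacle I anticipate is coordinating the several simultaneous demands on $S$ and $\lambda$: compactness together with the defining condition, containment of a ``continuous lifting region'' $W$ near $1$, containment of a set whose lift surjects onto a generating set of $G_1$ (which is what yields surjectivity of $\rho_m$), and the internal consistency (via shrinking $W$) needed to make $\lambda$ well-defined. The bounded-length hypothesis on relators is essential in the key step, since it forces $\tilde\psi(R)$ into a \emph{compact} subset of $K_{1,\infty}$. Once (1) is in hand, (2) is immediate: if $G_\infty$ is compactly presented, apply (1) with $E=G_\infty$ and $\rho=\mathrm{id}_{G_\infty}$ to obtain, for some $m$, a continuous epimorphism $\rho_m\colon G_\infty\twoheadrightarrow G_m$ with $\varphi_{m,\infty}\rho_m=\mathrm{id}_{G_\infty}$. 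Surjectivity together with the section identity forces $\rho_m\varphi_{m,\infty}=\mathrm{id}_{G_m}$ as well, so $\rho_m$ and $\varphi_{m,\infty}$ are mutually inverse continuous group isomorphisms; hence $\varphi_{m,\infty}$ is an isomorphism of topological groups.
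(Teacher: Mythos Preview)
Your approach—lifting a compact defining presentation of $E$ letter by letter through the covering $\varphi_{1,\infty}$—is sound in spirit and different from the paper's, but the surjectivity step for $\rho_m$ has a gap. You assert that lifts on $T_E$ can be chosen so that $\lambda(T_E)\supset T_1$, justified only by $\rho(T_E)=\varphi_{1,\infty}(T_1)$. What you actually need is an injection $T_1\hookrightarrow T_E$ compatible with the fibres over $G_\infty$, and that need not exist: if two distinct $t_1,t_1'\in T_1$ satisfy $\varphi_{1,\infty}(t_1)=\varphi_{1,\infty}(t_1')$ (which can certainly happen, since $K_{1,\infty}$ may be nontrivial) while $E$ contains only one element with that $\rho$-image (e.g.\ when $\rho$ is injective), you are stuck, and no enlargement of $T_E$ inside $E$ helps. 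The remedy is to drop this requirement and pass to a larger index afterwards: your construction still yields a continuous homomorphism $\rho_m$ with \emph{open} image $H$ (by your continuity argument) satisfying $HK_{m,\infty}=G_m$; a compact generating set of $G_m$ then meets only finitely many left $H$-cosets, each representable by an element of $K_{m,\infty}$, and these finitely many elements lie in $K_{m,m'}$ for some $m'\ge m$; hence $\rho_{m'}:=\varphi_{m,m'}\circ\rho_m$ is surjective.

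For comparison, the paper avoids hand-lifting altogether. It takes a compactly generated open subgroup $L$ of the fibre product $E\times_{G_\infty}G_1$ (Proposition~\ref{pullbackCgLCgroups}), observes that $\ker(L\twoheadrightarrow E)$ is isomorphic to a subgroup of $K_{1,\infty}$ and hence discrete, and then invokes Proposition~\ref{cpforgroupsandquotients}(2) to conclude that this kernel is finitely normally generated in $L$; its image under $L\to G_1$ is therefore finitely normally generated in $G_1$, hence contained in some $K_{1,m}$, so $L\to G_m$ factors through $E$. Surjectivity of $\rho_m$ is automatic because $L\twoheadrightarrow G_1$ is onto by construction. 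Your route is more concrete and shows explicitly where bounded relator length enters; the paper's is shorter and sidesteps both the continuity and surjectivity bookkeeping via the fibre product. Your deduction of (2) from (1) is correct and matches the paper's.
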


\begin{proof}
\ref{1DEencoreunlift}
By Proposition \ref{pullbackCgLCgroups}, there exists a compactly generated group $L$
and two continuous epimorphisms $\pi_E, \pi_{G,1}$ such that the diagram
\begin{equation*}
\dia{
L
\ar@{->>}[dd]^{\pi_E}
\ar@{->>}[r]^{\pi_{G,1}}
& G_1 
\ar@{->>}[d]^{\varphi_{1,m}}
\\
& G_m
\ar@{->>}[d]^{\varphi_{m,\infty}}
\\
E
\ar@{->>}[r]^{\rho}
& G_\infty
}
\end{equation*}
commutes;
moreover, there exists a topological isomorphism of  $\ker (\pi_E)$
to a discrete subgroup of $K_{1,\infty}$;
in particular, $\ker (\pi_E)$ is discrete in $L$.
Since the quotient $E$ of $L$ is compactly presented,
the kernel of $\pi_E$ is finitely generated as a normal subgroup 
(Proposition \ref{cpforgroupsandquotients}).
Hence the image by $\pi_{G,1}$ of $\ker (\pi_E)$ in $G_1$
is also finitely generated as a normal subgroup of $G_1$.
It follows that $\pi_{G,1}( \ker (\pi_E))$ is in $K_{1,m}$ for some $m \ge 1$,
and therefore that $\rho$ factors by a continuous epimorphism
$\rho_m : E \twoheadrightarrow G_m$.

\vskip.2cm

\ref{2DEencoreunlift}
This is a particular case of \ref{1DEencoreunlift}, 
for $\rho$ the identity automorphism of $G_\infty$.
\end{proof}

The Bieri-Strebel Theorem \ref{BieriStrebelAbels} 
follows from the following stronger but a bit technical result.

\begin{thm}
\label{firststepproofBS}
Let $G$ be a compactly generated LC-group 
and $\pi : G \twoheadrightarrow \Z$ a continuous epimorphism.
\par
 
There exist an ISED $(G_n, \varphi_{n,n+1})_{n \ge 1}$
and a topological isomorphism 
$p_\infty : G_\infty \overset{\simeq}{\longrightarrow} G$,
such that, for every $m \ge 1$,
the continuous epimorphism
\begin{equation*}
\pi_m \, : \, 
G_m \, \overset{\varphi_{n,\infty}}{\relbar\joinrel\twoheadrightarrow} \,
G_\infty \, \overset{p_\infty}{\longrightarrow} \,
G \,  \overset{\pi}{ \relbar\joinrel\twoheadrightarrow} \,
\Z
\end{equation*}
splits over a compactly generated open subgroup of $G_m$
(where $G_\infty$ and $\varphi_{m,\infty} : G_m \twoheadrightarrow 
G_\infty = G_1 / K_{1, \infty} = G_m / K_{m,\infty}$
are as in Definition \ref{iseudk}).
\end{thm}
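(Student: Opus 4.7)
The plan is to realize $G$ as a direct limit of HNN extensions built from an exhaustion of $N := \ker(\pi)$ by compactly generated open subgroups. First, since $\Z$ is discrete, $N$ is an open (hence closed) subgroup of $G$, and the freeness of $\Z$ gives a continuous section $k \mapsto s^k$ of $\pi$ for any choice of $s \in \pi^{-1}(1)$, so that $G = N \rtimes_s \Z$. Because $G$ is compactly generated and $N$ open, I can fix a compact symmetric neighbourhood $W$ of $1$ in $N$ with $W \cup \{s, s^{-1}\}$ generating $G$; rewriting in the semidirect product then shows that $N$ is generated as an abstract group by $\bigcup_{k \in \Z} s^k W s^{-k}$.

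For each $n \ge 1$, set
\begin{equation*}
H_n \, := \, \langle s^k W s^{-k} : |k| \le n \rangle ,
\end{equation*}
a compactly generated open subgroup of $N$, so that $H_n \subset H_{n+1}$, $N = \bigcup_n H_n$, and $s^{\pm 1} H_n s^{\mp 1} \subset H_{n+1}$. Set $K_n := H_n \cap s^{-1} H_n s$ and $L_n := s K_n s^{-1} = H_n \cap s H_n s^{-1}$, open subgroups of $H_n$, and let $\sigma_n : K_n \overset{\simeq}{\longrightarrow} L_n$ be conjugation by $s$. By Proposition \ref{topHNN}, the group
\begin{equation*}
G_n \, := \, \HNN(H_n, K_n, L_n, \sigma_n), \quad \text{with stable letter } t_n ,
\end{equation*}
is a compactly generated LC-group in which $H_n$ sits as an open subgroup, and the epimorphism $\pi_n : G_n \twoheadrightarrow \Z$ defined by $\pi_n(H_n) = 0$ and $\pi_n(t_n) = 1$ splits over the compactly generated open subgroup $H_n$ by construction.

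Next, I will define $\varphi_{n,n+1} : G_n \to G_{n+1}$ via the inclusion $H_n \hookrightarrow H_{n+1}$ and $t_n \mapsto t_{n+1}$, and $\psi_n : G_n \to G$ via the inclusion $H_n \hookrightarrow G$ and $t_n \mapsto s$; both are well-defined continuous homomorphisms because $K_n \subset K_{n+1}$ and $K_n \subset N$ respectively, so the HNN relation is preserved in the target. Each $\psi_n$ is surjective because its image contains $W \cup \{s\}$. The hard part will be surjectivity of $\varphi_{n,n+1}$: its image manifestly contains $H_n$ and $t_{n+1}$, but one must still produce the extra generators $s^{\pm(n+1)} W s^{\mp(n+1)}$ of $H_{n+1}$. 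For $j = n+1$ this relies on $s^n W s^{-n} \subset K_{n+1}$, which holds because $s \cdot s^n W s^{-n} \cdot s^{-1} = s^{n+1} W s^{-n-1} \subset H_{n+1}$; the HNN relation in $G_{n+1}$ then yields
\begin{equation*}
t_{n+1} (s^n W s^{-n}) t_{n+1}^{-1} \, = \, s^{n+1} W s^{-n-1} ,
\end{equation*}
and the symmetric case $j = -n-1$ uses $s^{-n} W s^n \subset L_{n+1}$. This careful bookkeeping of which conjugates $s^k W s^{-k}$ lie in $K_n$ and $L_n$ is the main technical obstacle.

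It then remains to check that $(G_n, \varphi_{n,n+1})_{n \ge 1}$ is an ISED whose direct limit is $G$. Since $\psi_1|_{H_1}$ is the inclusion into $G$, hence injective, and $H_1$ is open in $G_1$, the subgroup $\ker(\psi_1)$ — and a fortiori $K_{1,\infty} \subset \ker(\psi_1)$ — is discrete in $G_1$. The reverse inclusion $\ker(\psi_1) \subset K_{1,\infty}$ expresses that $G$ is the direct limit of the $G_n$ as abstract groups: it holds because $\bigcup_n K_n = N$ (using $K_n \supset H_{n-1}$), so every defining relation $t y t^{-1} = \sigma(y)$ of the HNN presentation $G = \HNN(N, N, N, \mathrm{conj}_s)$ eventually takes place in some $G_n$. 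Hence $\psi_1$ factors as a continuous bijective homomorphism $p_\infty : G_\infty = G_1 / K_{1,\infty} \to G$ between $\sigma$-compact LC-groups, which is a topological isomorphism by Corollary \ref{Freudenthalcor}; and $\pi_m = \pi \circ p_\infty \circ \varphi_{m,\infty}$ splits over $H_m$ by construction.
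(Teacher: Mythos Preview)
Your proof is correct and follows essentially the same strategy as the paper's: exhaust $N$ by compactly generated open subgroups, form the associated HNN extensions, and show they constitute an ISED with limit $G$. The only noteworthy differences are cosmetic: the paper uses a one-sided exhaustion $B_n = \langle t^\ell S t^{-\ell} : 0 \le \ell \le n \rangle$ with the explicit choice $U_n = B_{n-1}$, $V_n = tB_{n-1}t^{-1}$ for the HNN subgroups, which yields the observation that each $G_n$ is already generated by $S \cup \{t_n\}$ and makes surjectivity of $\varphi_{n,n+1}$ immediate; and for the final identification $G_\infty \simeq G$ the paper carries out a direct word computation rather than your (equally valid) direct-limit-of-HNN-presentations argument.
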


\begin{proof}
Set $N = \ker (\pi)$, and choose $t \in G$ such that $\pi(t) = 1 \in \Z$.
Since $\Z$ is finitely presented, Proposition \ref{cpforgroupsandquotients}
implies that there exists a compact subset $S$ of $N$
such that $\bigcup_{j \in \Z} t^j S t^{-j}$ generates $N$;
observe that $S \cup \{t,t^{-1}\}$ is a compact generating set of $G$.
We assume moreover that $S$ is symmetric
and has a non-empty interior.
\par

The kernel $N$ contains the union $B_\infty$ of compactly generated open subgroups
\begin{equation*}
B_n \, := \, \operatorname{gp} \{ t^\ell s t^{-\ell} \mid 0 \le \ell \le n
\hskip.2cm \text{and} \hskip.2cm s \in S  \}, \hskip.5cm n \ge 1.
\end{equation*}
Consider for each $n \ge 1$ the two open subgroups  
\begin{equation*}
\aligned
U_n \, &:= \, \operatorname{gp} \{ t^\ell s t^{-\ell} \mid 0 \le \ell \le n-1
\hskip.2cm \text{and} \hskip.2cm s \in S  \} 
\\
V_n \, &:= \, \operatorname{gp} \{t^\ell s t^{-\ell} \mid 1 \le \ell \le n
\hskip.2cm \text{and} \hskip.2cm s \in S  \} 
\endaligned
\end{equation*}
of $B_n$, the continuous isomorphism
\begin{equation*}
\tau_n \, : \,  U_n \overset{\simeq}{\longrightarrow} V_n, \hskip.2cm
u \longmapsto t u t^{-1},
\end{equation*}
and the compactly generated LC-group HNN-extension
\begin{equation*}
G_n \, := \, \langle B_n, t_n \mid t_n u t_n^{-1} = \tau_n (u)
\hskip.2cm \text{for all} \hskip.2cm u \in U_n \rangle 
\end{equation*}
(see Proposition \ref{topHNN}).
By Britton's lemma, we can and do view $B_n$ as a subgroup of $G_n$.
Observe that $G_n$ is actually generated by $S$ (contained in $U_n$) and $t_n$;
indeed, the defining relations in $G_n$ imply that
\begin{equation*}
t_n^\ell s t_n^{-\ell} \, = \, (\tau_n)^\ell (s) \, = \,
t^{\ell} s t^{-\ell}
\end{equation*}
for all $\ell$ with $0 \le \ell \le n$
and $s \in S$.
There are canonical continuous epimorphisms
$\varphi_{n,n+1} :  G_n \twoheadrightarrow G_{n+1}$;
more precisely, $\varphi_{n,n+1}$ is defined by its restriction to $B_n$,
which is the inclusion $B_n \subset B_{n+1}$,
and by $\varphi_{n,n+1}(t_n) = t_{n+1}$.
As in Definition \ref{iseudk}, we have for all $m \ge 1$ a normal subgroup
$K_{m,\infty} = \bigcup_{n \ge m} K_{m,n}$ of $G_m$, and we set
and set
\begin{equation*}
G_\infty \, = \, G_1 / K_{1,\infty} .
\end{equation*}
Since $K_{1,\infty} \cap B_1 = \{1\}$, and $B_1$ is open in $G_1$,
the subgroup $K_{1,\infty}$ of $G_1$ is discrete.
Hence $(G_n, \varphi_{n,n+1})_{n \ge 1}$ is an ISED.
\par

The sequence of $n$th term $t_n$ provides an element that we denote by 
$t_\infty \in G_\infty$.
Moreover, there are
epimorphisms $\pi_m : G_m \twoheadrightarrow \Z$ such that the diagram
\begin{equation*}
\dia{
G_1
\ar@{->>}[d]^{\pi_1}
\ar@{->>}[r]^{\varphi_{1,2}}
& G_2 
\ar@{->>}[d]^{\pi_2}
\ar@{->>}[r]^{\varphi_{2,3}}
& \cdots 
\ar@{->>}[r]
& G_m
\ar@{->>}[d]^{\pi_m}
\ar@{->>}[r]^{\varphi_{m,m+1}}
& G_{m+1}
\ar@{->>}[d]^{\pi_{m+1}}
\ar@{->>}[r]
& \cdots 
\ar@{->>}[r]^{\varphi_{n,\infty}}
& G_\infty
\ar@{->>}[d]^{\pi_\infty}
\\
\Z
\ar@{->>}[r]^{\operatorname{id}}
& \Z
\ar@{->>}[r]^{\operatorname{id}}
& \cdots
\ar@{->>}[r]
& \Z
\ar@{->>}[r]^{\operatorname{id}}
& \Z
\ar@{->>}[r]
& \cdots
\ar@{->>}[r]^{\operatorname{id}}
& \Z
}
\end{equation*}
commutes.
More precisely, $\pi_m$ is defined by $\pi_m(B_m) = \{0\}$,
and by $\pi_m(t_m) = 1$.
\par

For each $m \ge 1$,
we have also an epimorphism $p_m : G_m \twoheadrightarrow G = \langle S,t \rangle$,
such that the restriction of $p_m$ to $B_m$ is the inclusion $B_m \subset N$,
and $p_m(t_m) = t$.
Since $p_m = p_{m+1}\varphi_{m,m+1}$, we have at the limit
an epimorphism 
\begin{equation*}
p_\infty \, : \,  G_\infty \twoheadrightarrow G
\end{equation*}
such that $p_\infty (t_\infty) = t$.
Observe that $p_m = p_\infty \circ \varphi_{m,\infty}$
and $\pi_\infty = \pi \circ p_\infty$,
and consequently 
$\pi \circ p_\infty \circ \varphi_{m,m+1} = \pi_\infty \circ \varphi_{m,\infty} = \pi_m$
for all $m \ge 1$.
\par

We claim that  $p_\infty$ is an isomorphism.
For this, it remains to show that $\ker (p_\infty) = \{1\}$, i.e., that $\ker (p_1) = K_{1,\infty}$.
\par

Let $w \in G_1$.
There exist $k \ge 0$, $n_1, \hdots, n_k \in \Z$, and $s_1, \hdots, s_k \in S$
such that $w = t_1^{n_1} s_1 \cdots t_1^{n_k} s_k$.
Assume that $w \in \ker (p_1 : G_1 \twoheadrightarrow G)$.
Since $\pi_\infty (\varphi_{1, \infty}(w)) = \pi ( p_1 (w)) = 0$, 
we have $n_1 + \cdots + n_k = 0$.
If $m_1 = n_1$, $m_2 = n_1+n_2$, $\hdots$, $m_{k-1} = n_1 + \cdots n_{k-1}$, $m_k = 0$,
we can write $w =\prod_{j=1}^m t_1^{m_j} s_j t_1^{-m_j}$.
Upon conjugating $w$ by a large enough power of $t$,
we can furthermore assume that $m_1, \hdots, m_k \ge 0$.
If $m := \max \{m_1, \hdots, m_k \} + 1$, 
we can view $w$ in $G_m$, indeed in $B_m$.
Since the restriction of $p_m$ to $B_m$ 
is the inclusion $B_m \subset N \subset G$,
we have $w=1$ in $G_m$, i.e., $w \in K_{1,\infty}$, and the claim is proved.
\end{proof}

\begin{proof}[\textbf{Proof of Theorem \ref{BieriStrebelAbels}}]
Our strategy of proof has two steps.
The first step holds for every compactly \emph{generated} LC-group $G$
given with a continuous epimorphism onto $\Z$:
we construct an ISED $(G_n, \varphi_{n,n+1})_{n \ge 1}$
as in Theorem \ref{firststepproofBS}.
For each $n \ge 1$, the group $G_n$ splits over $B_n$,
which is a compactly generated open subgroup of $\ker (\pi_n)$.
\par

For the second step, it is crucial that $G$ is compactly \emph{presented}.
By Lemma \ref{encoreunlift}\ref{2DEencoreunlift}, 
the group $G_m$ is isomorphic to $G_\infty$,
and therefore to $G$, for $m$ large enough. The conclusion follows.
\end{proof}

\begin{proof}[\textbf{Proof of Corollary \ref{corBSA+BGH}}]
The first step of the proof of Theorem \ref{BieriStrebelAbels}
applies without change.
We have an ISED
$(\varphi_{n,n+1} : G_n \twoheadrightarrow G_{n+1})_{n \ge 1}$ 
and epimorphisms $\rho, \varphi_{1,n}, \varphi_{n,\infty}$ 
as in the diagram below.
\par

By Lemma \ref{pullbackCgLCgroups},
there exist a compactly generated LC-group $D$ 
and two epimorphisms $\pi_E, \pi_{G,1}$ such that the diagram
\begin{equation*}
\dia{
D
\ar@{->>}[dd]^{\pi_E}
\ar@{->>}[r]^{\pi_{G,1}}
& G_1 
\ar@{->>}[d]^{\varphi_{1,n}}
\\
& G_n
\ar@{->>}[d]^{\varphi_{n,\infty}}
\\
E
\ar@{->>}[r]^{\rho}
& G
}
\end{equation*}
commutes.
By Lemma \ref{encoreunlift}\ref{1DEencoreunlift}, 
we can lift $\rho$ to a continuous epimorphism $\rho_m : E \twoheadrightarrow G_m$
for $m$ large enough, and $\rho = \varphi_{m,\infty} \circ \rho_m$.
The corollary holds with $G' = G_m$.
\end{proof}

\subsection{Engulfing automorphisms}
\label{subsection_8Cc}

\begin{defn}
\label{defcontracts}
Let $N$ be a group and $H$ a subgroup.
An automorphism $\sigma$ of $N$
\index{Engulfing automorphism|textbf}
\textbf{engulfs $N$ into $H$} 
if $\sigma(H) \subset H$ 
and $\bigcup_{k \ge 0} \sigma^{-k}(H) = N$.
\end{defn}

\begin{rem}
\label{remsurengulfing}
Let $N$ be an LC-group, $H$ an open subgroup of $N$,
and $\sigma$ a topological automorphism of $N$.

\vskip.2cm

(1)
The automorphism $\sigma$
engulfs $N$ into $H$ if and only if $\sigma (H) \subset H$
and, for every compact subset $K$ of $N$,
there exists $\ell \ge 0$ such that $\sigma^\ell (K) \subset H$.
\par

Indeed, let $K$ be a compact subset of $N$;
if $\sigma$ engulfs $N$ into $H$, we have an open covering
$\bigcup_{k \ge 0} (K \cap \sigma^{-k}(H))$ of $K$,
hence there exists $\ell \ge 0$ such that $K \subset \sigma^{-\ell}(H)$,
i.e.,  $\sigma^\ell(K) \subset H$.

\vskip.2cm

(2)
If $\sigma$ engulfs $N$ into $H$,
then $\sigma^k$ engulfs $N$ into $H$ for all $k \ge 1$.
If moreover $H \subsetneqq N$,
then $\sigma^{k+1}(H) \subsetneqq \sigma^k(H)$ for all $k \in \Z$;
in particular,  $\sigma$ is of infinite order.
\end{rem}

\begin{exe}
Let $\K$ be a local field, $V$ a finite-dimensional vector space over $\K$, 
and $\varphi \in \GL (V)$.
Then $\varphi$ engulfs $V$ into a compactly generated subgroup of $V$
if and only if its spectral radius is strictly less than $1$.
\index{Spectral radius of a matrix}
\end{exe}

We use the following notation in the next proposition, and further down:
for $g$ in some group $L$, or in a larger group in which $L$ is  normal,
$\alpha_g$ stands for the automorphism $x \longmapsto gxg^{-1}$ of $L$.

\begin{prop}
\label{8C10.5du21oct}
Let $G$ be an LC-group, $\pi : G \twoheadrightarrow \Z$ a continuous epimorphism,
and $s \in \pi^{-1}(\{1\})$; set $N = \ker (\pi)$;  let $H$ be an open subgroup of $N$.
The following properties are equivalent:
\begin{enumerate}[noitemsep,label=(\roman*)]
\item\label{iDE8C10.5du21oct}
$\pi$ splits ascendingly over $H$ with stable letter $s$,
\item\label{iiDE8C10.5du21oct}
$\alpha_s$ engulfs $N$ into $H$.
\end{enumerate}
\end{prop}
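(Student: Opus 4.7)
My plan is to prove the two implications separately, treating (i) $\Rightarrow$ (ii) as essentially a bookkeeping exercise and (ii) $\Rightarrow$ (i) as the substantive direction, where I construct the HNN isomorphism and verify that it is bijective and a homeomorphism.

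For (i) $\Rightarrow$ (ii): Suppose $\psi : \HNN(H,H,L,\varphi) \overset{\simeq}{\longrightarrow} G$ is an ascending splitting with stable letter $s$. Definition \ref{defhomosplits} gives $L = sHs^{-1}$, and the HNN construction requires $L \subset H$, yielding $\alpha_s(H)\subset H$. For the exhaustion $\bigcup_{k\ge 0}\alpha_s^{-k}(H) = N$, note (as in Remark \ref{remhomosplits}(5)) that $\ker(\tilde\pi)$ in $\HNN(H,H,L,\varphi)$, where $\tilde\pi$ sends $t\mapsto 1$ and $H\mapsto 0$, is the normal closure of $H$; using $tHt^{-1}\subset H$ this normal closure is simply $\bigcup_{k\ge 0} t^{-k}Ht^k$. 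Applying $\psi$ (which intertwines $\tilde\pi$ and $\pi$) gives $N=\bigcup_{k\ge 0} s^{-k}Hs^{k}=\bigcup_{k\ge 0}\alpha_s^{-k}(H)$.

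For (ii) $\Rightarrow$ (i): Assume $\alpha_s$ engulfs $N$ into $H$. Because $\alpha_s(H)\subset H$, the HNN extension
\[
E \;:=\; \HNN\!\bigl(H,H,\alpha_s(H),\alpha_s|_H\bigr)
\;=\;\langle H,t\mid tht^{-1}=\alpha_s(h)\ \forall h\in H\rangle
\]
is well-defined; by Proposition \ref{topHNN} it carries a canonical LC-topology in which $H$ is an open subgroup. The universal property of HNN extensions furnishes a continuous homomorphism $\psi:E\to G$ with $\psi|_H=\operatorname{id}_H$ and $\psi(t)=s$, because the defining relations of $E$ hold in $G$. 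I then show $\psi$ is a bijection. For surjectivity: any $n\in N$ satisfies $\alpha_s^k(n)\in H$ for some $k\ge 0$ (engulfing), so $n=s^{-k}\alpha_s^k(n)\,s^k=\psi(t^{-k}\alpha_s^k(n)t^k)$, and together with $\psi(t)=s$ this gives $\psi(E)=G$. For injectivity, let $\tilde N=\ker(\tilde\pi)\subset E$ with $\tilde\pi$ as above; since $\pi\circ\psi=\tilde\pi$, it suffices to show $\psi|_{\tilde N}$ is injective. Using the relations $th=\alpha_s(h)t$ and $ht^{-1}=t^{-1}\alpha_s(h)$ one pushes all $t$'s to the right and all $t^{-1}$'s to the left, so every element of $E$ can be written as $t^{-a}h\,t^{b}$ with $a,b\ge 0$ and $h\in H$; imposing $\tilde\pi=0$ forces $a=b$, so every element of $\tilde N$ has the form $t^{-k}h\,t^{k}$. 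If $\psi(t^{-k}ht^{k})=s^{-k}hs^{k}=1$ in $G$, then $h=1$, hence the element is trivial in $E$.

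Finally, to upgrade the abstract isomorphism $\psi$ to a topological one, observe that both $E$ and $G$ are topological groups in which $H$ is an open subgroup (in $G$ by hypothesis; in $E$ by Proposition \ref{topHNN}), and that $\psi$ restricts to the identity on $H$. Thus $\psi$ is a local homeomorphism at $1$, hence (by translation) a homeomorphism, yielding the ascending splitting required in (i).

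The main obstacle is the injectivity of $\psi|_{\tilde N}$, which relies on the ascending normal-form $t^{-k}h\,t^{k}$ in $E$. The asymmetry $\alpha_s(H)\subset H$ (but not $=H$) is essential: it lets one freely move $t$'s rightward and $t^{-1}$'s leftward, but not the reverse. Once the normal form is in hand, injectivity becomes immediate; the rest of the argument is essentially formal.
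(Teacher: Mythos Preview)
Your proof is correct and follows essentially the same approach as the paper: both directions hinge on the ascending normal form $t^{-m}ht^{n}$ (with $m,n\ge 0$, $h\in H$) in the HNN extension, and injectivity of $\psi$ reduces to the observation that $s^{-m}hs^{n}=1$ forces $m=n$ and $h=1$. Your write-up is in fact more explicit than the paper's (which dismisses (i)$\Rightarrow$(ii) as ``straightforward'' and omits the topological-isomorphism check), and your reduction of injectivity to $\psi|_{\tilde N}$ via $\pi\circ\psi=\tilde\pi$ is a clean variant of the paper's direct use of $s^{k}\notin N$ for $k\ne 0$.
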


\begin{proof}
If \ref{iDE8C10.5du21oct} holds, it is straightforward that \ref{iiDE8C10.5du21oct} holds.
\par

Suppose conversely that \ref{iiDE8C10.5du21oct} holds.
Since $sHs^{-1} \subset H$,
there exists by the universal property of HNN-extensions a homomorphism 
\begin{equation*}
\psi \, : \, \HNN (H,H,sHs^{-1},\alpha_s) 
= \langle H,t \mid t x t^{-1} = \alpha_s (x) \hskip.2cm \text{for all} \hskip.2cm x \in H \rangle
\longrightarrow G ,
\end{equation*}
such that $\psi(x) = x$ for all $x \in H$ and $\psi(t) = s$.
Since $\bigcup_{k \ge 0} s^{-k} H s^k = N$, 
the subset $H \cup \{s\}$ of $G$ is generating,
hence $\psi$ is surjective.
It remains to check that $\ker (\psi) = \{1\}$.
\par

Observe first that 
$s^k \notin \bigcup_{\ell \in \N} s^{-\ell} H s^\ell$
for every $k \in \Z \smallsetminus \{0\}$.
Every element in $\HNN (H,H,sHs^{-1},\alpha_s)$ can be written 
as $t^{-m} x t^{n}$, with $m,n \in \N$ and $x \in H$.
If $t^{-m} x t^{n} \in \ker (\psi)$, then $s^{-m} x s^{n} = 1 \in G$,
i.e., $x = s^{m-n}$, hence $m=n$ and $x = 1$ by the observation.
Hence $\ker (\psi) = \{1\}$.
\end{proof}

The engulfing property is not very sensitive to composition with inner automorphisms:

\begin{lem}
\label{engulf+inner}
Let $N$ be a group, $H$ a subgroup, and $\sigma$ an automorphism of $N$
engulfing $N$ into $H$.
Then, for every inner automorphism $\alpha$ of $N$, there exists $m \in \N$
such that $\alpha \circ \sigma$ engulfs $N$ into $\sigma^{-m}(H)$.
\end{lem}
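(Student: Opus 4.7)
The plan is to exhibit an explicit $m$ from the engulfing hypothesis, take $H' := \sigma^{-m}(H)$, and then check the two clauses of Definition \ref{defcontracts} for the automorphism $\alpha \circ \sigma$ with respect to $H'$.

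First I would write $\alpha = \alpha_g$ for some $g \in N$. Since $\sigma$ engulfs $N$ into $H$, we have $\bigcup_{k \ge 0} \sigma^{-k}(H) = N$, so there is an $m \in \mathbb N$ with $g \in \sigma^{-m}(H)$; fix this $m$ and set $H' := \sigma^{-m}(H)$, which is a subgroup of $N$. From $\sigma(H) \subset H$ one gets by applying $\sigma^{-m-1}$ (or equivalently by rearranging) that $\sigma(H') \subset H'$. Moreover, because $g \in H'$ and $H'$ is a subgroup, conjugation by $g$ preserves $H'$, i.e.\ $\alpha(H') = gH'g^{-1} = H'$. Combining these, $(\alpha \circ \sigma)(H') \subset \alpha(H') = H'$, which is the first condition of \ref{defcontracts}.

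For the second condition, the clean tool is the straightforward formula
\[
(\alpha \circ \sigma)^k \;=\; \alpha_{g_k} \circ \sigma^k,
\qquad g_k := g \cdot \sigma(g) \cdot \sigma^2(g) \cdots \sigma^{k-1}(g),
\]
verified by induction on $k$ using $\sigma \circ \alpha_h = \alpha_{\sigma(h)} \circ \sigma$. Since $g \in H'$ and $\sigma(H') \subset H'$, each factor $\sigma^{i}(g)$ lies in $H'$, hence $g_k \in H'$ for all $k \ge 0$.

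Now fix $x \in N$. By the engulfing hypothesis applied to $x$, there exists $\ell$ with $\sigma^{\ell}(x) \in H$; using $\sigma(H) \subset H$, for every $k$ with $k + m \ge \ell$ we have $\sigma^{k+m}(x) \in H$, i.e.\ $\sigma^{k}(x) \in H'$. Fix such a $k$. Then
\[
(\alpha \circ \sigma)^{k}(x) \;=\; g_k \, \sigma^{k}(x) \, g_k^{-1} \;\in\; H',
\]
because $g_k, \sigma^{k}(x) \in H'$ and $H'$ is a subgroup. Equivalently, $x \in (\alpha \circ \sigma)^{-k}(H')$, proving $\bigcup_{k \ge 0}(\alpha \circ \sigma)^{-k}(H') = N$. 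With the first condition this shows $\alpha \circ \sigma$ engulfs $N$ into $H'$, as required.

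There is no real obstacle here: the only point that requires a moment's care is the commutation identity $\sigma \circ \alpha_h = \alpha_{\sigma(h)} \circ \sigma$, which yields the formula for $(\alpha \circ \sigma)^k$; once that is recorded, both checks reduce to the facts that $g \in H'$, $\sigma(H') \subset H'$, and that $H$ is a subgroup stable under $\sigma$.
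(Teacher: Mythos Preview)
Your proof is correct and follows essentially the same route as the paper's: choose $m$ so that the conjugating element lies in $\sigma^{-m}(H)$, use the identity $(\alpha\circ\sigma)^k=\alpha_{g_k}\circ\sigma^k$ with $g_k=g\,\sigma(g)\cdots\sigma^{k-1}(g)$, and conclude from $g_k\in H'$ and $\sigma^k(x)\in H'$ for large $k$. Your version is in fact a bit more complete, since you explicitly verify the first engulfing condition $(\alpha\circ\sigma)(H')\subset H'$, which the paper's proof leaves implicit.
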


\begin{proof}
Let $h \in N$ be such that $\alpha = \alpha_h$.
There exists $m \in \N$ such that $h \in \sigma^{-m}(H)$.
Let us check that $\alpha \circ \sigma$ engulfs $N$ into $\sigma^{-m}(H)$.
\par

By a straightforward induction,
$(\alpha_h \circ \sigma)^n (g) = \left( \alpha_{w_n} \circ \sigma^n \right) (g)$
for every $n \ge 1$ and $g \in N$, where
$w_n = h \sigma (h) \cdots \sigma^{n-1} (g)$.
We have $w_n \in \sigma^{-m}(H)$ 
and $\sigma^n (g) \in H \subset \sigma^{-m}(H)$ for $n$ large enough,
so that $(\alpha_h \circ \sigma)^n (g) \in \sigma^{-m} (H)$ for $n$ large enough.
\end{proof}

\begin{prop}
\label{du21octobre}
Let $G$ be an LC-group and $\pi : G \twoheadrightarrow \Z$ a continuous epimorphism;
set $N = \ker (\pi)$;  let $H$ be an open subgroup of $N$.
The following properties are equivalent:
\begin{enumerate}[noitemsep,label=(\roman*)]
\item\label{idu21octobre}
$\pi$ splits ascendingly over $H$;
\item\label{iidu21octobre}
for all $t \in \pi^{-1}(\{1\})$, the epimorphism $\pi$ splits, with stable letter $t$,
over some conjugate of $H$ in $N$.
\end{enumerate}
\end{prop}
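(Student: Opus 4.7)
The plan is to reduce both directions to the correspondence established in Proposition \ref{8C10.5du21oct} between ascending splittings and engulfing automorphisms. Throughout I use that $N$ is normal in $G$, so that conjugation by any $g \in G$ preserves $N$; in particular a $G$-conjugate of $H \subset N$ automatically lies in $N$ and is open in $N$. For both directions I adopt the reading that ``conjugate of $H$ in $N$'' means a subgroup of the form $g H g^{-1}$ with $g \in G$, viewed inside $N$; this is forced by the computation below.

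For (ii) $\Rightarrow$ (i): I pick some fixed $t_{0} \in \pi^{-1}(\{1\})$ (which exists by surjectivity of $\pi$). By hypothesis, $\pi$ splits ascendingly with stable letter $t_{0}$ over some conjugate $g H g^{-1}$ of $H$, for some $g \in G$. Proposition \ref{8C10.5du21oct} turns this into the statement that $\alpha_{t_{0}}$ engulfs $N$ into $g H g^{-1}$. Conjugating by $g^{-1}$ on the left and $g$ on the right, the automorphism $\alpha_{g^{-1}t_{0}g} = \alpha_{g^{-1}} \circ \alpha_{t_{0}} \circ \alpha_{g}$ engulfs $N$ into $\alpha_{g^{-1}}(g H g^{-1}) = H$. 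Setting $s := g^{-1} t_{0} g$, I have $\pi(s) = \pi(t_{0}) = 1$ (since $\Z$ is abelian), so $s \in \pi^{-1}(\{1\})$, and Proposition \ref{8C10.5du21oct} applied in the reverse direction produces an ascending splitting of $\pi$ over $H$ with stable letter $s$.

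For (i) $\Rightarrow$ (ii): let $s \in \pi^{-1}(\{1\})$ be a stable letter realizing an ascending splitting of $\pi$ over $H$, so by Proposition \ref{8C10.5du21oct} the automorphism $\alpha_{s}$ of $N$ engulfs $N$ into $H$. Given an arbitrary $t \in \pi^{-1}(\{1\})$, the element $n := t s^{-1}$ lies in $N$, and $\alpha_{t} = \alpha_{n} \circ \alpha_{s}$. Now $\alpha_{n}$ is an inner automorphism of $N$, so Lemma \ref{engulf+inner} (applied with $\sigma = \alpha_{s}$ and $\alpha = \alpha_{n}$) produces an integer $m \in \N$ such that $\alpha_{t}$ engulfs $N$ into $H' := \alpha_{s}^{-m}(H) = s^{-m} H s^{m}$. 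The set $H'$ is an open subgroup of $N$ and a $G$-conjugate of $H$ (hence a ``conjugate of $H$ in $N$''), and a final application of Proposition \ref{8C10.5du21oct} yields the ascending splitting of $\pi$ with stable letter $t$ over $H'$.

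The argument is essentially bookkeeping once the two tools are in place; the only mildly delicate point is (i) $\Rightarrow$ (ii), where the conjugating element $s^{-m}$ lies in $G$ but \emph{not} in $N$ for $m \neq 0$ (indeed $\pi(s^{-m}) = -m$). This is what forces the interpretation of ``in $N$'' as a constraint on the location of the conjugate subgroup rather than on the conjugating element, which is harmless because normality of $N$ in $G$ ensures $s^{-m} H s^{m} \subset N$ automatically.
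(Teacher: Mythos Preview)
Your proof is correct and follows essentially the same route as the paper: both translate via Proposition \ref{8C10.5du21oct} to the engulfing language and then invoke Lemma \ref{engulf+inner} to pass from one stable letter to another, landing in $\alpha_s^{-m}(H) = s^{-m}Hs^{m}$. The paper treats (ii) $\Rightarrow$ (i) as the trivial direction and only writes out (i) $\Rightarrow$ (ii); your explicit conjugation argument for the easy direction and your remark that the conjugating element $s^{-m}$ lies in $G$ rather than $N$ are useful clarifications but do not constitute a different approach.
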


\begin{proof}
To show the non-trivial implication, assume that (i) holds.
By Proposition \ref{8C10.5du21oct},
the automorphism $\alpha_s$ engulfs $N$ into $H$
\emph{for some} $s \in \pi^{-1}(\{1\})$.
Set $\beta = \alpha_{ts^{-1}}$; it is an inner automorphism of $N$.
By Lemma \ref{engulf+inner}, there exists $m \in \N$ such that the automorphism
$\alpha_t = \beta \circ \alpha_s$ engulfs $N$ into $\alpha_s^{-m}(H)$.
\end{proof}

\begin{rem}
The interest of the previous proposition is that,
if $\mathcal P$ is an isomorphism-invariant property of LC-groups,
the condition that an LC-group $G$, 
given with a continuous epimorphism $\pi : G \twoheadrightarrow \Z$,
splits over an open subgroup satisfying $\mathcal P$,
with stable letter $t \in \pi^{-1}(\{1\})$,
does not depend on the choice of $t$.
\end{rem}

\begin{cor}
\label{engulfsemidirect}
Let $N$ be an LC-group, $\varphi$ a topological automorphism of $N$,
and $G = N \rtimes_\varphi \Z$ the corresponding semidirect product.
\index{Semidirect product}
\begin{enumerate}[noitemsep,label=(\arabic*)] 
\item\label{1DEengulfsemidirect}
The following properties are equivalent:
\begin{enumerate}[noitemsep,label=(\roman*)] 
\item\label{iDEengulfsemidirect}
the natural projection $G \twoheadrightarrow \Z$
splits ascendingly over some compactly generated open subgroup of $N$,
\item\label{iiDEengulfsemidirect}
$\varphi$ engulfs $N$ into some compactly generated open subgroup of $N$.
\end{enumerate}
If \ref{iDEengulfsemidirect} and \ref{iiDEengulfsemidirect} hold, 
then $G$ is compactly generated.
\vskip.2cm
\item\label{2DEengulfsemidirect}
The same holds, 
with ``compactly presented''
instead of ``compactly generated''.
\end{enumerate}
\end{cor}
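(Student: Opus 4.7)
The plan is to deduce the corollary from Propositions \ref{8C10.5du21oct} and \ref{du21octobre} together with the structural facts about HNN extensions in Proposition \ref{topHNN}. Fix the stable letter $t := (1_N, 1) \in G$ in $\pi^{-1}(\{1\})$, where $\pi : G \twoheadrightarrow \Z$ is the natural projection arising from the semidirect product structure. The key observation, essentially tautological from the definition of $N \rtimes_\varphi \Z$, is that conjugation by $t$ acts on $N$ as $\varphi$, so $\alpha_t|_N = \varphi$.

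For part (1), I would proceed as follows. Suppose (ii) holds: there exists a compactly generated open subgroup $H$ of $N$ such that $\varphi = \alpha_t$ engulfs $N$ into $H$. Then Proposition \ref{8C10.5du21oct} immediately gives that $\pi$ splits ascendingly over $H$ with stable letter $t$, which is (i). Conversely, if (i) holds, then by Proposition \ref{du21octobre} the epimorphism $\pi$ also splits ascendingly, now \emph{with stable letter $t$}, over some conjugate $H'$ in $N$ of the original compactly generated open subgroup; conjugation by an element of $N$ is a topological automorphism of $N$, so $H'$ is again compactly generated and open. Proposition \ref{8C10.5du21oct} applied to $s = t$ and $H'$ then yields (ii). When these conditions hold, the splitting identifies $G$ with the ascending HNN extension $\HNN(H', H', tH't^{-1}, \alpha_t|_{H'})$, and since its base group $H'$ is compactly generated, Proposition \ref{topHNN}\ref{3DEtopHNN} shows that $G$ itself is compactly generated.

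For part (2), the same argument establishes the equivalence of the analogues (i$'$) and (ii$'$) with ``compactly generated'' replaced throughout by ``compactly presented'', since Propositions \ref{8C10.5du21oct} and \ref{du21octobre} are purely about the engulfing/splitting equivalence and do not depend on the finiteness class of $H$. When these conditions hold, $G$ is realized as $\HNN(H', H', tH't^{-1}, \alpha_t|_{H'})$ with $H'$ compactly presented; since $H'$ is in particular compactly generated and plays the role of $K$ in Proposition \ref{topHNN}\ref{4DEtopHNN} (ascending case, where $K = H$), that proposition delivers compact presentation of $G$.

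There is essentially no deep obstacle: the corollary is a packaging of earlier results. The only mildly delicate point is the choice of stable letter, since the candidate $t = (1_N,1)$ is what realizes $\varphi$ as the inner action of $G$ on $N$, whereas an abstract ascending splitting of $\pi$ a priori provides some other stable letter $s$ with $\alpha_s$ engulfing into $H$. Proposition \ref{du21octobre} is precisely what allows us to transfer the stable letter to $t$ at the cost of replacing $H$ by a conjugate, which is a harmless change since the relevant properties (compactly generated, compactly presented, open in $N$) are preserved under $N$-conjugation.
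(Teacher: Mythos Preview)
Your proof is correct and follows essentially the same approach as the paper: both arguments invoke Proposition~\ref{8C10.5du21oct} for the direct translation between engulfing and ascending splitting with a fixed stable letter, Proposition~\ref{du21octobre} to handle the change of stable letter, and Proposition~\ref{topHNN}\ref{3DEtopHNN}--\ref{4DEtopHNN} for the finiteness conclusions. Your write-up is in fact a bit more careful than the paper's about which proposition is needed for which implication, and you correctly note that the conjugate $H'$ remains compactly generated (resp.\ compactly presented) and open in $N$.
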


\begin{proof}
\ref{1DEengulfsemidirect}
Implication \ref{iDEengulfsemidirect} $\Rightarrow$ \ref{iiDEengulfsemidirect} 
holds by Proposition \ref{8C10.5du21oct}.
Also, if  \ref{iDEengulfsemidirect} holds, then $G$ is compactly generated
by Proposition \ref{topHNN}\ref{3DEtopHNN}.
\par 

Assume now that \ref{iiDEengulfsemidirect} holds.
Let $s$ denote the positive generator of $\Z$.
By Proposition \ref{du21octobre}, $\pi$ splits ascendingly with stable letter $s$.
Then \ref{iDEengulfsemidirect} holds by Proposition \ref{8C10.5du21oct}.
\par

The proof of \ref{2DEengulfsemidirect} is analogous,
with reference to Proposition \ref{topHNN}\ref{4DEtopHNN}.
\end{proof}

\begin{exe}
\label{exsemidirect}
(1) 
Let $p$ be a prime.
The automorphism of multiplication by $p$
engulfs $\Q_p$ into $\Z_p$.
It follows from Corollary \ref{engulfsemidirect} that the group 
$\Q_p \rtimes_p \Z = \HNN (\Z_p, \Z_p, p\Z_p, \times p)$ is compactly presented.
\index{Semidirect product! $\Q_p \rtimes_p \Z$}
\par

More generally, for $\lambda \in \Q_p^\times$,
let $G_\lambda = \Q_p \rtimes_\lambda \Z$ be the semidirect product
for which $\Z$ acts on $\Q_p$ by $(n,x) \longmapsto \lambda^n x$.
If $\lambda$ is not a unit in $\Z_p$, 
the multiplication by $\lambda$ or $\lambda^{-1}$ 
engulfs $\Q_p$ into $\Z_p$, hence $G_\lambda$ is compactly presented.
If $\lambda \in \Z_p^\times = \Z_p \smallsetminus p\Z_p$ is a unit in $\Z_p$, 
then $G_\lambda$ is not compactly generated,
because $G_\lambda = \bigcup_{n \ge 0} (p^{-n} \Z_p \rtimes_\lambda \Z)$
is a union of a strictly increasing sequence of compact open subgroups.
\index{Compactly generated! LC-group, not compactly generated}

\vskip.2cm

(2)
The $p$-adic affine group $\Q_p \rtimes \Q_p^\times$
is compactly presented. 
\index{Affine group! $\K \rtimes {\K}^\times$}
Indeed, since $\{p^n \in \Q_p^\times \mid n \in \Z \} \simeq \Z$ 
is a cocompact lattice in $\Q_p^\times$,
the group $G_p =  \Q_p \rtimes_p \Z$ is closed and cocompact 
in $\Q_p \rtimes \Q_p^\times$;
the conclusion follows from Corollary \ref{hereditaritycp}.
\index{Lattice! in an LC-group} 

\vskip.2cm

(3)
For every positive integer $m$,
the automorphism of multiplication by $m$
engulfs $\Z [1/m]$ into $\Z$,
in accordance with the fact that the solvable Baumslag-Solitar group
\index{Baumslag-Solitar group} \index{Solvable group}
\begin{equation*}
\operatorname{BS}(1,m) \, = \,  \HNN(\Z, \Z, m\Z, \times m) \, = \,  \Z [1/m] \rtimes_m \Z
\end{equation*}
is finitely presented.
\end{exe}

We can rephrase the Bieri-Strebel splitting theorem in terms of engulfing automorphisms:

\begin{prop}
\label{BSrephrEngulf}
Let $N$ be an LC-group, $\varphi$ a topological automorphism of $N$,
and $G = N \rtimes_\varphi \Z$ the corresponding semidirect product.
Assume that $G$ is compactly presented. Then one of the following holds:
\begin{enumerate}[noitemsep,label=(\arabic*)]
\item\label{1DEBSrephrEngulf}
$\varphi$ or $\varphi^{-1}$ engulfs $N$ into some compactly generated open subgroup of $N$.
\item\label{2DEBSrephrEngulf}
The projection $\pi : G \twoheadrightarrow \Z$ has an essential splitting
over some compactly generated open subgroup;
in particular, $G$ has a non-abelian free discrete subgroup.
\end{enumerate}
\end{prop}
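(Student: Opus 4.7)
The plan is to apply the Bieri-Strebel splitting theorem (Theorem \ref{BieriStrebelAbels}) to the continuous epimorphism $\pi : G \twoheadrightarrow \Z$ with kernel $N$. This produces a splitting of $\pi$ over some compactly generated open subgroup $H$ of $N$, with stable letter $s \in \pi^{-1}(\{1\})$ and subgroups $K, L \subset H$ satisfying $L = sKs^{-1}$. According to Definition \ref{defhomosplits}, exactly one of three types occurs: ascending ($K = H$), descending ($L = H$), or essential ($K \ne H \ne L$). I would treat these three cases separately.

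The essential case is immediate: by Remark \ref{Alperin}(4), a non-ascending HNN extension contains a non-abelian free subgroup as a discrete subgroup, so conclusion (2) holds.

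For the ascending case, Proposition \ref{8C10.5du21oct} asserts that $\alpha_s$ engulfs $N$ into $H$. Let $t \in G$ be the canonical generator of the $\Z$-factor in $G = N \rtimes_\varphi \Z$, so $\pi(t) = 1$ and $\alpha_t = \varphi$. By Proposition \ref{du21octobre}, $\pi$ splits ascendingly, with stable letter $t$, over some conjugate $H' = gHg^{-1}$ (with $g \in N$) of $H$. Since conjugation by $g$ is a topological automorphism of $N$, the subgroup $H'$ is again compactly generated and open; invoking the equivalence of Proposition \ref{8C10.5du21oct} once more, $\varphi = \alpha_t$ engulfs $N$ into $H'$, yielding conclusion (1).

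The descending case reduces to the ascending case after replacing $\pi$ by $-\pi$ and $s$ by $s^{-1}$. Indeed, $L = H$ with $K \subset H$ translates to $s^{-1}Hs = K \subset H$, so the chain $H \subset sHs^{-1} \subset s^2Hs^{-2} \subset \cdots$ is increasing, while for $k \ge 0$ the conjugates $s^{-k}Hs^k$ sit inside $H$; as $N$ is generated by $\bigcup_{j \in \Z} s^j H s^{-j}$, this union already equals $\bigcup_{k \ge 0} s^k H s^{-k} = N$. Hence $\alpha_{s^{-1}}$ engulfs $N$ into $H$, and the same argument as in the ascending case, now applied to $-\pi$ with stable letter $t^{-1}$, shows that $\varphi^{-1}$ engulfs $N$ into some compactly generated open subgroup of $N$, again giving conclusion (1). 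The only point requiring care is the bookkeeping of sign conventions between $\pi$, $-\pi$, $s$ and $s^{-1}$; the substance of the argument is the combination of the Bieri-Strebel theorem with the engulfing characterisation in Proposition \ref{8C10.5du21oct} and the freedom to choose any stable letter granted by Proposition \ref{du21octobre}.
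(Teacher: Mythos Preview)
Your proof is correct and follows the same approach as the paper. The paper's argument is more terse: it applies the Bieri-Strebel theorem and then, for the ascending/descending cases, simply cites Corollary~\ref{engulfsemidirect} (whose proof is precisely the combination of Propositions~\ref{8C10.5du21oct} and~\ref{du21octobre} that you unpack by hand), while for the essential case it likewise invokes Remark~\ref{Alperin}(4).
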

\index{Free subgroup}

\begin{proof}
By the Bieri-Strebel theorem, there exists a splitting over
a compactly generated open subgroup.
If it is ascending,
Corollary \ref{engulfsemidirect} implies that \ref{1DEBSrephrEngulf} holds.
Otherwise, the splitting is essential;
see Remark \ref{Alperin}(4) for the additional statement in \ref{2DEBSrephrEngulf}.
\end{proof}

\begin{cor}
\label{cpfor(locell)semidirec(Z)}
Let $N$ be an LC-group,
\index{Locally elliptic LC-group}
$\varphi$ a topological automorphism of $N$,
and $G = N \rtimes_\varphi \Z$ the corresponding semidirect product.
Consider the following two conditions:
\begin{enumerate}[noitemsep,label=(\roman*)]
\item\label{iDEcpfor(locell)semidirec(Z)}
$G$ is compactly presented;
\item\label{iiDEcpfor(locell)semidirec(Z)}
$\varphi$ or $\varphi^{-1}$ engulfs $N$ into a compact open subgroup of $N$.
\end{enumerate}
Then (ii) implies (i).
\vskip.2cm
If $N$ is locally elliptic, then (i) implies (ii).
\end{cor}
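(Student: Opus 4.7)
The implication (ii) $\Rightarrow$ (i) follows immediately from Corollary \ref{engulfsemidirect}\ref{2DEengulfsemidirect}: a compact open subgroup of $N$ is compact, hence compactly presented by Corollary \ref{hereditaritycp}, and the engulfing hypothesis feeds directly into that corollary to yield that $G$ is compactly presented.

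For (i) $\Rightarrow$ (ii) with $N$ locally elliptic, the plan is to apply the Bieri--Strebel splitting theorem in the form of Proposition \ref{BSrephrEngulf}. This gives one of two possibilities: either $\varphi$ or $\varphi^{-1}$ engulfs $N$ into some compactly generated open subgroup $H \subset N$, or else $\pi : G \twoheadrightarrow \Z$ admits an essential splitting and $G$ contains a discrete non-abelian free subgroup $F$.

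The essential-splitting alternative can be ruled out as follows. Consider $F \cap N$, the kernel of the restriction $\pi\vert_F : F \to \Z$. Since the quotient $F/(F \cap N)$ embeds in $\Z$ and is therefore abelian, $F \cap N$ contains the commutator subgroup $[F,F]$. As $F$ is non-abelian free, $[F,F]$ itself is a free group of infinite rank (having infinite index in $F$, by the Nielsen--Schreier formula), and in particular contains a free subgroup $F'$ of rank $2$. Now $F'$ is finitely generated, non-abelian free, and discrete in the closed subgroup $N \subset G$. But $N$ is locally elliptic, so the finite generating set of $F'$ sits inside some compact open subgroup $K$ of $N$ (Proposition \ref{equivalences_locell}), whence $F' \subset K$. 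A discrete subgroup of the compact group $K$ must be finite, contradicting $F' \simeq F_2$.

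We are thus in the first case: after replacing $\varphi$ by $\varphi^{-1}$ if necessary, $\varphi$ engulfs $N$ into a compactly generated open subgroup $H$. The remaining task is to upgrade $H$ to a compact open subgroup with the same engulfing property. By local ellipticity, $H$ lies in some compact open subgroup $W$ of $N$. Applying the engulfing property of $\varphi$ to the compact set $W$ (Remark \ref{remsurengulfing}(1)) yields an integer $k \geq 1$ with $\varphi^k(W) \subset H \subset W$. Let $V$ be the closure of the subgroup of $N$ generated by the compact set $W \cup \varphi(W) \cup \cdots \cup \varphi^{k-1}(W)$; this subgroup is compactly generated, hence relatively compact in $N$ by local ellipticity, and since it contains $W$, the closure $V$ is a compact open subgroup of $N$. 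The inclusion $\varphi^k(W) \subset W \subset V$ gives $\varphi(V) \subset V$ (the generators of $\varphi(V)$ being $\varphi(W), \ldots, \varphi^{k-1}(W), \varphi^k(W)$, all contained in $V$), and $V \supset H$ gives $\bigcup_{j \geq 0} \varphi^{-j}(V) \supset \bigcup_{j \geq 0} \varphi^{-j}(H) = N$, so $\varphi$ engulfs $N$ into the compact open subgroup $V$. The main obstacle in this plan is the exclusion of the essential case, which requires pulling the discrete free subgroup from $G$ back into the locally elliptic $N$ and then extracting incompatibility with compactness.
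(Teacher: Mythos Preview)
Your proof is correct and follows the same strategy as the paper: apply Proposition \ref{BSrephrEngulf}, rule out the essential-splitting case by pushing the discrete free subgroup into the locally elliptic $N$, and conclude with the engulfing case. The one place where you do more work than needed is the final upgrade from the compactly generated open $H$ to a compact open $V$: in a locally elliptic group, any compactly generated open subgroup is already compact (its compact generating set lies in some compact open subgroup $K$ by Proposition \ref{equivalences_locell}, so $H \subset K$, and an open subgroup of a compact group is compact). The paper uses this implicitly, which is why it can jump directly from case \ref{1DEBSrephrEngulf} of Proposition \ref{BSrephrEngulf} to condition \ref{iiDEcpfor(locell)semidirec(Z)}; your construction of $V$ is correct but unnecessary.
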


\begin{proof}
That Condition \ref{iiDEcpfor(locell)semidirec(Z)} implies 
Condition \ref{iDEcpfor(locell)semidirec(Z)}
is part of Corollary 8.C.16.
\par

Conversely, suppose that $N$ is locally elliptic and that
Condition \ref{iDEcpfor(locell)semidirec(Z)} holds.
Then \ref{1DEBSrephrEngulf} or \ref{2DEBSrephrEngulf} 
of Proposition \ref{BSrephrEngulf} holds.
\par

Suppose by contradiction that \ref{2DEBSrephrEngulf} holds,
so that $G$ has a non-abelian free discrete subgroup $F$.
Upon replacing $F$ by its derived subgroup,
we can assume that $F \subset N$;
this is preposterous, by Example \ref{examplesoflocallyellipticgroups}(3).
\par

It follows that Condition \ref{iiDEcpfor(locell)semidirec(Z)} holds.
\end{proof}

\begin{prop}
\label{suffpournotengulfing} 
Given an LC-group $N$ 
and a topological automorphism $\varphi$ of $N$,
each of the following conditions implies that 
neither $\varphi$ nor $\varphi^{-1}$
engulfs $N$ into some compactly generated open subgroup of $N$.
\begin{enumerate}[noitemsep,label=(\alph*)]
\item\label{aDEsuffpournotengulfing} 
There exists $x \in N$ such that,
for every compactly generated open subgroup $H$ of $N$,
the sets $\{n \in \N \mid \varphi^n(x) \notin H \}$ 
and $\{n \in \N \mid \varphi^{-n}(x) \notin H \}$
are both unbounded.
\item\label{bDEsuffpournotengulfing} 
There exists $x \in N$ such that,
for every compactly generated open subgroup $H$ of $N$,
the set $\{n \in \Z \mid \varphi^n (x) \in H \}$ is finite.
\item\label{cDEsuffpournotengulfing} 
The group of fixed points $N^\varphi$ is not capped,
i.e., is not contained in any compactly generated subgroup of $N$.
\index{Capped subset of an LC-group}
\item\label{dDEsuffpournotengulfing}
The group $N$ is locally elliptic, not compact, 
and the automorphism $\varphi$ preserves the Haar measure.
\end{enumerate}
\end{prop}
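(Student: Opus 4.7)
The plan is to treat the four conditions separately, after extracting a common observation about engulfing. Suppose toward contradiction that $\varphi$ engulfs $N$ into some compactly generated open subgroup $H$. Since $\varphi(H) \subset H$, once $\varphi^{\ell}(x) \in H$ we have $\varphi^{n}(x) \in H$ for every $n \ge \ell$; and by Remark \ref{remsurengulfing}(1) applied to the compact singleton $\{x\}$, such an $\ell = \ell(x)$ exists for every $x \in N$. The analogous statement holds whenever $\varphi^{-1}$ engulfs $N$ into some (possibly different) compactly generated open subgroup.

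\emph{Cases \ref{aDEsuffpournotengulfing} and \ref{bDEsuffpournotengulfing}.} Condition \ref{bDEsuffpournotengulfing} trivially implies \ref{aDEsuffpournotengulfing}: if $\{n \in \Z \mid \varphi^{n}(x) \in H\}$ is finite, then each of the two sets appearing in \ref{aDEsuffpournotengulfing} is cofinite in $\N$, hence unbounded. So it suffices to derive a contradiction from \ref{aDEsuffpournotengulfing}. Letting $x$ be the witness in \ref{aDEsuffpournotengulfing}, the common observation shows that $\{n \in \N \mid \varphi^{n}(x) \notin H\}$ is bounded by $\ell(x)$, contradicting \ref{aDEsuffpournotengulfing}; the same argument applies to $\varphi^{-1}$.

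\emph{Case \ref{cDEsuffpournotengulfing}.} Every $\varphi$-fixed point $x$ satisfies $\varphi^{n}(x) = x$ for all $n$, so the observation forces $x \in H$; hence $N^{\varphi} \subset H$. Because $H$ is a closed (open subgroups being closed) and compactly generated subgroup of $N$, this means $N^{\varphi}$ is capped, contradicting \ref{cDEsuffpournotengulfing}. For $\varphi^{-1}$ the same argument applies, using $N^{\varphi^{-1}} = N^{\varphi}$.

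\emph{Case \ref{dDEsuffpournotengulfing}.} This is where the bulk of the work sits and is what I expect to be the main obstacle; the trick is to use local ellipticity to upgrade $H$ to a compact set, and then let Haar measure do the rest. A compact generating subset of $H$ lies in some compact open subgroup $K$ of $N$ by Definition \ref{deflocallyelliptic}, so $H \subset K$; since $H$ is closed in $N$, it follows that $H$ is compact. Fix a Haar measure $\mu$ on $N$, so that $0 < \mu(H) < \infty$. As $\varphi$ preserves $\mu$ and $H \subset \varphi^{-1}(H)$ (from $\varphi(H) \subset H$), we have $\mu(\varphi^{-1}(H) \smallsetminus H) = \mu(\varphi^{-1}(H)) - \mu(H) = 0$; but $\varphi^{-1}(H) \smallsetminus H$ is open (the closed set $H$ removed from the open set $\varphi^{-1}(H)$), so by positivity of Haar measure on non-empty open sets it must be empty, i.e., $\varphi^{-1}(H) = H$. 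Iterating yields $\varphi^{-k}(H) = H$ for all $k \ge 0$, and hence $N = \bigcup_{k \ge 0} \varphi^{-k}(H) = H$, contradicting the non-compactness of $N$. The case of $\varphi^{-1}$ is identical, since $\mu \circ \varphi = \mu$ implies $\mu \circ \varphi^{-1} = \mu$.
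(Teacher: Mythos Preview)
Your proof is correct and, for cases \ref{aDEsuffpournotengulfing}--\ref{cDEsuffpournotengulfing}, matches the paper almost verbatim: the same contrapositive, the same use of the eventual-containment observation (Remark \ref{remsurengulfing}(1)), and the same reduction \ref{bDEsuffpournotengulfing} $\Rightarrow$ \ref{aDEsuffpournotengulfing}.

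For \ref{dDEsuffpournotengulfing} the two arguments share the same core idea but are packaged differently. The paper routes through the HNN-extension structure (Proposition \ref{8C10.5du21oct}) and phrases the key step as ``the modulus of $\varphi$ is $1$, hence $\varphi(H) = H$'', then concludes $N = H$ is compactly generated and locally elliptic, hence compact. You instead extract compactness of $H$ directly from local ellipticity (a compact generating set sits in a compact open subgroup, so $H$ does too), and then run the Haar-measure argument explicitly: $\varphi^{-1}(H) \smallsetminus H$ is open of measure zero, hence empty. Your version is more self-contained and makes transparent why the measure step works (namely, $H$ has finite positive measure), something the paper leaves implicit. The paper's version has the virtue of tying the statement back into the HNN framework used throughout the section, but yours is cleaner as a standalone argument.
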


\begin{proof}
Let $x$ be as in \ref{aDEsuffpournotengulfing}.
If $\varphi$ or $\varphi^{-1}$
were to engulf $N$ into a compact open subgroup $H$ of $N$,
we would have $\varphi^n(x) \in H$ or $\varphi^{-n}(x) \in H$
for all $n$ large enough, 
i.e., one of
$\{ y \in N \mid y = \varphi^n (x) \hskip.2cm \text{for some} \hskip.2cm n \in \N \}$ 
and $\{ z \in N \mid z = \varphi^n (x) \hskip.2cm \text{for some} \hskip.2cm n \in \N \}$
would be bounded.
This is impossible if \ref{aDEsuffpournotengulfing} holds.
\par

Condition \ref{bDEsuffpournotengulfing} implies Condition  \ref{aDEsuffpournotengulfing}.
\par

Similarly, if one of $\varphi$, $\varphi^{-1}$ engulfs $N$ into 
a compact open subgroup $H$ of $N$,
then $N^\varphi \subset H$.
This is impossible when \ref{cDEsuffpournotengulfing} holds.

\vskip.2cm

Suppose now that $N$ is locally elliptic;
in particular, $N$ is unimodular.
As a comment on the statement of \ref{dDEsuffpournotengulfing},
and not as a step in the present proof,
observe that $\varphi$ preserves the Haar measure of $N$ 
if and only if the group $G := N \rtimes_\varphi \Z$ is unimodular
(see for example \cite[Chap.\ VIII, \S~2, no 10]{BInt7-8}).
 \index{Unimodular group}

\vskip.2cm

Suppose that \ref{dDEsuffpournotengulfing} holds.
Suppose also, by contradiction, that $\varphi$ or $\varphi^{-1}$
engulfs $N$ into some compactly generated open subgroup $H$ of $N$.
By Proposition \ref{8C10.5du21oct}, $G$ is isomorphic to an HNN-extension 
$\HNN (H, H, L, \tau)$, where $\tau$ is the restriction of $\varphi$ to $H$.
Since the modulus of $\varphi$ is $1$
we have $L = \tau (H) = \varphi(H) = H$.
As $N = \bigcup_{n \ge 0} \tau^{-n} (H)$, it follows that $N = H$,
in particular that $N$ is compactly generated.
Since $N$ is locally elliptic, $N$ is compact,
in contradiction with part of \ref{dDEsuffpournotengulfing}.
This ends the proof.
\end{proof}

\section[Further examples]
{Further examples}
\label{sectionsemidirect}

We now apply the Bieri-Strebel splitting theorem to various semidirect products.

\subsection{Semidirect products with $\Z$}
\label{subsection_8Da}
\index{Semidirect product}

\begin{exe}
(1)
Let $\operatorname{Alt}_f(\Z)$ denote 
the group of permutations of finite support of $\Z$ that are even on their support;
it is an infinite locally finite group.
Consider the group of permutations of $\Z$ 
generated by $\operatorname{Alt}_f(\Z)$
and the shift $\sigma : \Z \ni j \longmapsto  j+1 \in \Z$.
This group is the semidirect product
$\operatorname{Alt}_f(\Z) \rtimes_{\operatorname{shift}} \Z$,
with respect to the action of $\Z$ on $\operatorname{Alt}_f(\Z)$
for which $1 \in \Z$ acts by conjugation by $\sigma$.
\index{Semidirect product! $\operatorname{Alt}_f(\Z) \rtimes_{\operatorname{shift}} \Z$}
Observe that $\operatorname{Alt}_f(\Z) \rtimes_{\operatorname{shift}} \Z$ 
is generated by $\sigma$ and the $3$-cycle $(-1, 0, 1)$.
\par

It follows from Corollary \ref{cpfor(locell)semidirec(Z)} that 
$\operatorname{Alt}_f(\Z) \rtimes_{\operatorname{shift}} \Z$
is not finitely presented.

\vskip.2cm

(2)
In 1937, B.H.\ Neumann defined an uncountable family of pairwise non-isomorphic 
groups, all of them having a set of two generators;
since the number of finitely presented groups is countable,
he showed this way that there are finitely generated groups
that are not finitely presented.
\index{Finitely generated group}
\index{Finitely presented group}
In fact, none of the Neumann's groups is finitely presented.
\par

Indeed, from \cite{Neum--37}
(see also Complement III.35 in \cite{Harp--00}),
it is straightforward to deduce that each of these groups, say $G_{\text{Neum}}$,
has an infinite locally finite normal subgroup $N$ such that $G_{\text{Neum}}/N \simeq \Z$.
Hence $G_{\text{Neum}}$ is not finitely presented.
More generally:
\begin{center}
\emph{an (infinite locally finite)-by-$\Z$ group is not finitely presented:}
\end{center}
\index{Locally finite! group}
this follows from the Bieri-Strebel splitting theorem,
or more precisely from Proposition \ref{suffpournotengulfing} 
(see Condition \ref{dDEsuffpournotengulfing})
and Corollary \ref{cpfor(locell)semidirec(Z)}.
\end{exe}

\begin{exe}
\label{abcdefg}
Consider an integer $k \ge 1$, primes $p_1, \hdots, p_k$, 
and a semidirect product 
\begin{equation*}
G \, = \,  (\R \times \Q_{p_1} \times \cdots \times \Q_{p_k}) 
\rtimes_{(\lambda_\infty, \lambda_1, \hdots, \lambda_k)} \Z ,
\end{equation*}
for a diagonal action of $\Z$ 
by which $1$ acts on $\R$ by multiplication by some $\lambda_\infty \in \R^\times$,
and on the $j$th factor $\Q_{p_j}$
by multiplication by some $\lambda_j \in \Q_{p_j}^\times$, for $j = 1, \hdots, k$.
The group
\begin{equation*}
H \, = \,  (\Q_{p_1} \times \cdots \times \Q_{p_k}) 
\rtimes_{(\lambda_1, \hdots, \lambda_k)} \Z 
\end{equation*}
can be seen both as a closed subgroup of $G$,
and as the quotient of $G$ by its connected component $G_0 \simeq \R$.

\vskip.2cm

(1) 
To describe when $H$ is compactly generated or compactly presented,
we distinguish three cases:
\begin{enumerate}[noitemsep,label=(\alph*)]
\item\label{aDEabcdefg}
If $\vert \lambda_j \vert_{p_j} = 1$ for at least one $j \in \{1, \hdots, k\}$, 
then $H$ is not compactly generated.
Indeed, $H$ has a factor group of the form $H_j := \Q_{p_j} \rtimes_{\lambda_j} \Z$
and $H_j$ is not compactly generated
(Example \ref{exsemidirect}).
\item\label{bDEabcdefg}
If the $\vert \lambda_j \vert_{p_j}$ are all $\ne 1$, 
with at least one larger and one smaller than $1$,
then $H$ is compactly generated, and not compactly presented,
by Condition \ref{bDEsuffpournotengulfing} of Proposition \ref{suffpournotengulfing}
and Corollary \ref{cpfor(locell)semidirec(Z)}.
\index{Compactly presented! LC-group, not compactly presented}
\item\label{cDEabcdefg}
If the moduli $\vert \lambda_j \vert_{p_j}$ are either all larger than $1$ or all smaller than $1$,
then $G$ is compactly presented, by Corollary \ref{engulfsemidirect}.
\end{enumerate}
For example, the group
\begin{equation*}
(\Q_2 \times \Q_3) \rtimes_{(2/3)} \Z ,
\end{equation*}
for the diagonal action by which $1 \in \Z$ acts by multiplication by $\frac{2}{3}$
on each factor, is compactly generated and is not compactly presented,
because $\vert 2/3 \vert_2 < 1 < \vert 2/3 \vert_3$.
In contrast, the group
\begin{equation*}
(\Q_2 \times \Q_3) \rtimes_{(1/6)} \Z 
\end{equation*}
is compactly presented.

\vskip.2cm

(2)
The group $G$ is compactly generated if and only if $H$ is so
(Proposition \ref{almostconnectedgroups}),
and compactly presented if and only if $H$ is so
(Proposition \ref{GcpintermsofG0}),
because of the isomorphism $G/G_0 \simeq H$ noted above.
\par

Assume from now on that 
$\lambda_\infty = \lambda_1 = \cdots = \lambda_k$ is an invertible element
in the ring $\Z \left[ \frac{1}{p_1 \cdots p_k} \right]$.
\index{$z$@$\Z[1/n]$}
We write $\lambda$ for this common value, 
and $\rtimes_\lambda$ for $\rtimes_{(\lambda, \lambda, \cdots, \lambda)}$.
The diagonal embedding
\begin{equation*}
\Z \left[ \frac{1}{p_1 \cdots p_k} \right] \, 
\lhook\joinrel\relbar\joinrel\rightarrow
 \R \times \Q_{p_1} \times \cdots \times \Q_{p_k}
\end{equation*}
makes the left-hand side a cocompact lattice in the right-hand side
(see Example \ref{exlattcesinlocal}(2)).
\index{Lattice! in an LC-group} 
The group 
\begin{equation*}
\Gamma_\lambda \, := \, 
\Z \left[ \frac{1}{p_1 \cdots p_k} \right]  \rtimes_\lambda \Z 
\end{equation*}
is a cocompact lattice in $G$.
It follows from the considerations above that $\Gamma_\lambda$ is
\begin{enumerate}[noitemsep,label=(\alph*$'$)]
\item\label{a'DELipschitzbis}
not finitely generated if 
$\vert \lambda \vert_{p_j} = 1$ for at least one $j \in \{1, \hdots, k\}$,
\item\label{b'DELipschitzbis}
finitely generated $\vert \lambda \vert_{p_j} \ne 1$ for all $j \in \{1, \hdots, k\}$,
\item\label{c'DELipschitzbis}
finitely presented if either $\vert \lambda \vert_{p_j} < 1$ for all $j \in \{1, \hdots, k\}$,
or $\vert \lambda \vert_{p_j} > 1$ for all $j \in \{1, \hdots, k\}$.
\end{enumerate}
Note that, when the conditions of \ref{b'DELipschitzbis} hold,
the conditions of \ref{c'DELipschitzbis} hold if and only if
either $\lambda$ or $\lambda^{-1}$ is in $\Z$.

In particular:
\begin{enumerate}[noitemsep]
\item[$\bullet$]
$\Gamma_{2/3} = \Z[1/6] \rtimes_{2/3} \Z$
is not finitely presented,
\item[$\bullet$]
$\Gamma_{1/6} = \Z[1/6]  \rtimes_{1/6} \Z$
is finitely presented.
\end{enumerate}
\index{Semidirect product! $\Z[1/6] \rtimes_{2/3} \Z$}
\index{Semidirect product! $\Z[2/3] \rtimes_{2/3} \Z$}
This spectacular difference of behavior of two superficially similar-looking groups
was an important initial motivation of Bieri and Strebel (later joined by Neumann and others) 
for the work that led to their splitting theorem \cite{BiSt--78},
and later to the $\Sigma$-invariants; 
see \cite{Bier--79}.
\par

Note that $\Gamma_{1/6}$ is the solvable Baumslag-Solitar group $\operatorname{BS}(1,6)$.
The group $\Gamma_{2/3}$ can be described as the metabelianization
of the Baumslag-Solitar group $\operatorname{BS}(2,3)$.
The \textbf{metabelianization} of a group $G$ is its largest metabelian quotient,
i.e., the group  $G / [[G,G], [G,G]]$;
details in \cite[Appendix B]{BeGH--13}.
\index{Metabelianization of a group}

\index{Baumslag-Solitar group}
\end{exe}

\begin{exe}
\label{ExempleUnimodulaireMarrant}
Let $n \ge 2$; denote by $C_n$ the cyclic group of order $n$,
viewed here as a finite ring.
Recall that the ring $C_n \lp t \rp$  of formal Laurent series in $t$ with coefficients in $C_n$
is a totally disconnected LC-ring (see Example \ref{panoramalocalfield}).
The ring of Laurent polynomials $C_n [t,t^{-1}]$
embeds into the locally compact direct product
$C_n \lp t \rp \times C_n \lp t^{-1} \rp$,
by 
\begin{equation*}
\left\{
\aligned
C_n [t,t^{-1}] \,  & \lhook\joinrel\relbar\joinrel\rightarrow \, 
C_n \lp t \rp \times C_n \lp t^{-1} \rp 
\\
f(t) \hskip.5cm & \longmapsto \hskip.5cm  (f(t),f(t)) ,
\endaligned
\right.
\end{equation*}
as a discrete cocompact subring.
\par

Let the group $\Z$ act on the ring of Laurent polynomials by multiplication by $t$, 
and on the product ring diagonally,
by multiplication by $t$ on each factor.
Then the embedding above is $\Z$-equivariant.
This yields an embedding
\begin{equation*}
\Gamma := C_n [t,t^{-1}] \rtimes_t \Z \, \subset \,
G := \left( C_n \lp t \rp \times C_n \lp t^{-1} \rp \right) \rtimes_t \Z ,
\end{equation*}
for which $\Gamma$ is a uniform lattice in $G$.
\par

The group $\Gamma$ is not finitely presented,
and the locally compact group $G$ is not compactly presented;
both these claims follow again from
Condition \ref{aDEsuffpournotengulfing} of Proposition \ref{suffpournotengulfing}
and Corollary \ref{cpfor(locell)semidirec(Z)}.
That $\Gamma$ is not finitely presented follows also from its wreath product structure
$\Gamma \simeq \, (\Z / n\Z) \wr \Z$; see \cite{Baum--61}.
\end{exe}

\subsection{More general semidirect products, and $\SL_n(\K)$}
\label{subsection_8Db}

\begin{prop}
\label{K3semidirectSL2}
Let $\K$ be a local field.
The group $\K^3 \rtimes \SL_2(\K)$ is not compactly presented,
\index{Special linear group $\SL$! $ \K^3 \rtimes \SL_2(\K)$}
where $\K^3$ stands for the second symmetric tensor power of $\K^2$,
and the action of $\SL_2(\K)$ on $\K^3$ is the second symmetric tensor power
of its natural action on $\K^2$.
\end{prop}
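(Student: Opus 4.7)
My plan is to combine a cocompact reduction with the Bieri--Strebel splitting theorem applied to a natural $\Z$-quotient, and to block the resulting engulfing step via a rigidity property of closed subgroups of the LCA group $V := \mathrm{Sym}^2(\K^2)$. First I would fix the weight basis $(e_1^2, e_1 e_2, e_2^2)$ of $V$ under the split torus $T = \{\mathrm{diag}(a,a^{-1}) : a \in \K^\times\}$ of $\SL_2(\K)$, and let $B = T \ltimes U$ be the upper Borel subgroup, with unipotent radical $U \cong \K$. Since $\SL_2(\K)/B = \mathbb{P}^1(\K)$ is compact, $V \rtimes B$ is cocompact in $G := V \rtimes \SL_2(\K)$, and by Corollary~\ref{hereditaritycp}\ref{1DEhereditaritycp} it suffices to prove that $V \rtimes B$ is not compactly presented. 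The composite
\[
V \rtimes B \, \twoheadrightarrow \, B \, \twoheadrightarrow \, T \, \overset{v_{\K}}{\twoheadrightarrow} \, \Z
\]
is a continuous epimorphism $\pi$ with kernel $N = V \rtimes (\mathfrak{o}_\K^\times \ltimes U)$; choosing a uniformiser $\varpi$ and setting $s := \mathrm{diag}(\varpi, \varpi^{-1})$ splits $\pi$, so that $V \rtimes B = N \rtimes_\varphi \Z$ with $\varphi := \mathrm{Ad}_s$.

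I would then argue by contradiction, assuming that $V \rtimes B$ is compactly presented. Being solvable, it contains no discrete non-abelian free subgroup, so Corollary~\ref{ascHNN} combined with Corollary~\ref{engulfsemidirect} would force $\varphi$ or $\varphi^{-1}$ to engulf $N$ into a compactly generated open subgroup. A direct weight computation shows that $\varphi$ acts as multiplication by $(\varpi^2, 1, \varpi^{-2})$ on $(V_2, V_0, V_{-2})$, trivially on $\mathfrak{o}_\K^\times$, and as multiplication by $\varpi^2$ on $U$; hence the fixed subgroup of $\varphi$ (equivalently of $\varphi^{-1}$) is
\[
N^\varphi \, = \, V_0 \times \mathfrak{o}_\K^\times , \qquad V_0 \, := \, \K \cdot e_1 e_2 \, \cong \, \K .
\]
By Proposition~\ref{suffpournotengulfing}\ref{cDEsuffpournotengulfing}, the contradiction will follow from the claim that $V_0$ is not capped in $N$.

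The hard part will be this cappedness assertion. I would suppose $V_0 \subseteq L$ for some compactly generated open $L \subseteq N$. The quotient $N/V \cong \mathfrak{o}_\K^\times \ltimes U$ is the union of the increasing chain of compact open subgroups $\mathfrak{o}_\K^\times \ltimes \varpi^{-M} \mathfrak{o}_\K$, $M \ge 0$, hence is locally elliptic, and any compactly generated subgroup of a locally elliptic LC-group is relatively compact. The image of $L$ in $N/V$ is open (hence closed) and relatively compact, so it is compact; therefore $L \cap V$ is cocompact in $L$ and is thus compactly generated by Proposition~\ref{sigmac+compactgofcocompact}\ref{2DEsigmac+compactgofcocompact}. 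As a compactly generated, totally disconnected, closed LCA subgroup of $V = \K^3$, Example~\ref{LCA}\ref{2DELCA} gives $L \cap V \cong \Z^m \times K$ with $K$ compact; any closed subgroup of such a group is compact-by-(free abelian of finite rank) and in particular compactly generated. Yet $V_0 \cong \K$ is a closed subgroup of $L \cap V$ which is \emph{not} compactly generated (Example~\ref{localfieldscg}), a contradiction. This LCA rigidity is the crux of the argument; it is what compensates for the absence of a Heisenberg-type central cover, which is unavailable here because $\Lambda^2 \mathrm{Sym}^2(\K^2) \cong \mathrm{Sym}^2(\K^2)$ is a non-trivial $\SL_2(\K)$-module, so the construction used for $\K^2 \rtimes \SL_2(\K)$ in Proposition~\ref{excgnotcp} cannot be adapted.
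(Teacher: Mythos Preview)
Your proof is correct and follows essentially the same strategy as the paper: reduce to a cocompact solvable subgroup, write it as $N \rtimes_\varphi \Z$, observe (via solvability) that any compact presentation would force $\varphi^{\pm 1}$ to engulf $N$ into a compactly generated open subgroup, and block this using Proposition~\ref{suffpournotengulfing}\ref{cDEsuffpournotengulfing} by exhibiting the weight-zero line $V_0 \cong \K$ inside $N^\varphi$.

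Two minor remarks. First, the paper makes one further cocompact reduction inside $B$, replacing $\mathfrak{o}_\K^\times$ by the cyclic group $\varpi^{\Z}$, so that its $N$ is $\K^3 \rtimes \operatorname{Unip}_2(\K)$ with no compact torus factor; this is purely cosmetic and your version with $N = V \rtimes (\mathfrak{o}_\K^\times \ltimes U)$ works equally well. Second, what you call the ``hard part'' is in fact immediate once you observe (as the paper does) that $N$ itself is \emph{locally elliptic}: you already noted that $N/V$ is locally elliptic, and $V = \K^3$ is locally elliptic as well, so Proposition~\ref{stabilitelocalelliptic}\ref{2DEstabilitelocalelliptic} gives that $N$ is locally elliptic. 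In a locally elliptic group every compactly generated subgroup is relatively compact, so a non-compact closed subgroup such as $V_0 \cong \K$ is automatically uncapped. Your LCA structure argument (passing to $L \cap V \cong \Z^m \times K$ and invoking Proposition~\ref{closubabcgiscg}) is correct but unnecessary.
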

\index{Compactly presented! LC-group, not compactly presented}

\begin{proof}
Let $\{e_1, e_2\}$ be the canonical basis of $\K^2$,
and $\{e_1 \odot e_1, e_1 \odot e_2, e_2 \odot e_2 \}$ the corresponding basis of $\K^3$.
Let $g =
\begin{pmatrix} a & b  \\ c & d \end{pmatrix}
\in \SL_2(\K)$.
We have
\begin{equation*}
g ( e_1 \odot e_1 ) \, = \, (ae_1 + ce_2) \odot (ae_1 + ce_2) \, = \,
a^2 e_1 \odot e_1 + 2ac e_1 \odot e_2 + c^2 e_2 \odot e_2 ,
\end{equation*}
and similarly for $g ( e_1 \odot e_2 ), g ( e_2 \odot e_2 )$.
It follows that $g$ acts on $\K^3$ by the matrix
\begin{equation*}
S^2(g)
\, = \, 
\begin{pmatrix} 
a^2 & ab & b^2  \\  2ac & ad+bc & 2bd \\  c^2 & cd & d^2 
\end{pmatrix} .
\end{equation*}
Let us identify $\K^3 \rtimes \SL_2(\K)$ with its image by the homomorphism
\begin{equation*}
\left\{ \aligned
\K^3 \rtimes \SL_2(\K) \, &\longrightarrow \hskip.5cm  \GL_4(\K)
\\
(\xi, g) \hskip.8cm &\longmapsto \,
\begin{pmatrix} 
S^2(g) & \xi  \\  0 & 1
\end{pmatrix} .
\endaligned \right.
\end{equation*}
\par  

Let $\operatorname{ST}_2(\K)$ denote the closed subgroup of $\SL_2(\K)$
of matrices of the form
$\begin{pmatrix}  a & b  \\  0 & d \end{pmatrix}$
with $a,b,d \in \K$ and $ad = 1$,
and $\operatorname{Unip}_2(\K)$ the closed subgroup of $\operatorname{ST}_2(\K)$
of matrices of the form
$\begin{pmatrix}  1 & b  \\  0 & 1 \end{pmatrix}$
with $b \in \K$.
Since $\operatorname{ST}_2(\K)$ is an isotropy group for the natural action
of $\SL_2(\K)$ on the projective line, 
the subgroup $\operatorname{ST}_2(\K)$ is cocompact in $\SL_2(\K)$.
Fix $\lambda \in \K$ with $\vert \lambda \vert > 1$,  
set $s = \begin{pmatrix} \lambda & 0 \\  0 & \lambda^{-1} \end{pmatrix} \in \SL_2(\K)$,
and let $\sigma$ be the automorphism of $\operatorname{Unip}_2(\K)$
of conjugation by $s$.
We identify the corresponding semidirect product
$\operatorname{Unip}_2(\K) \rtimes_\sigma \Z$
with the closed subgroup of $\operatorname{ST}_2(\K)$ of matrices of the form
$\begin{pmatrix} \lambda^n & x \\  0 & \lambda^{-n} \end{pmatrix}$,
with $n \in \Z$ and $x \in \K$.
Since $\lambda^{\Z}$ is a closed cocompact subgroup of $\K^\times$, 
the subgroup $\operatorname{Unip}_2(\K) \rtimes_\sigma \Z$ is cocompact
in $\operatorname{ST}_2(\K)$, and therefore also in $\SL_2(\K)$.
\par

Let $N$ denote the closed subgroup of $\K^3 \rtimes \SL_2(\K)$
consisting of elements $(\xi, u)$ 
with $\xi \in \K^3$ and $u \in \operatorname{Unip}_2(\K)$.
We denote again by $\sigma$ the automorphism of $N$ 
of conjugation by $(0,s)$.
As above, we identify the semidirect product $N \rtimes_\sigma \Z$
with a cocompact closed subgroup of $\K^3 \rtimes \SL_2(\K)$.
By Corollary \ref{hereditaritycp}, 
it suffices to check that $N \rtimes_\sigma \Z$  is not compactly presented.
\par

On the one hand, since $\K$ is a local field,
the group $N$ is locally elliptic.
On the other hand, since $\sigma$ acts on the closed subgroup $\K^3$ of $N$
as the matrix
$\begin{pmatrix} 
\lambda^2 & 0 & 0  \\  0 & 1 & 0 \\  0 & 0 & \lambda^{-2} 
\end{pmatrix}$,
the subgroup $N^\sigma$ of $N$ of elements fixed by $\sigma$
contains a non-compact subgroup $\{0\} \times \K \times \{0\} \simeq \K$.
It follows from Proposition \ref{suffpournotengulfing} 
(Condition \ref{cDEsuffpournotengulfing})
that neither $\sigma$ nor $\sigma^{-1}$ engulfs $N$
into some compact open subgroup of $N$,
hence from Corollary \ref{cpfor(locell)semidirec(Z)}
that $N$ is not compactly presented.
\end{proof}

\begin{rem}
For $n \ge 0$, consider more generally the group
$G_n = \K^{n+1} \rtimes \SL_2(\K)$,
where $\K^{n+1}$ is identified with the $n$th symmetric power of $\K^2$.
Then $G_n$ is not compactly presented.
When $n$ is even, the method of Proposition \ref{K3semidirectSL2} 
can be adapted to $G_n$.
When $n$ is odd, this does not work, 
but there is an argument based on central extensions,
as already remarked in \ref{remsurexcgnotcp}.
\end{rem}

Our next target is Proposition \ref{cpet0notinintervalles},
for the proof of which we need two preliminary lemmas.

\begin{lem}
\label{AutransEx9.A.1}
Let $H$ be an LC-group, $n \ge 2$ an integer,
and $N_1, \hdots, N_n$ LC-groups,
each one given with a continuous action of $H$ by topological automorphisms.
Let $G = \Big( \prod_{i=1}^n N_i \Big) \rtimes H$
be the semidirect product with respect to the diagonal action of $H$
on the product of the $N_i$ 's.
For $i,j \in \{1, \hdots, n\}$ with $i \ne j$,
let $G_{i,j} = (N_i \times N_j) \rtimes H$ denote the natural semidirect product,
identified with a closed subgroup of $G$.
\par

Assume that $G_{i,j}$ is compactly presented for each pair $(i,j)$ with $i \ne j$.
Then $G$ is compactly presented.
\end{lem}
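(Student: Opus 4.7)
The plan is to build a compact presentation of $G$ explicitly, by assembling compatible compact presentations of $H$ and of each pair $G_{i,j}$. First, I would observe that $N_i \rtimes H$ is a retract of $G_{i,j}$ (via the projection killing $N_j$), so by Proposition \ref{retractcp} each $N_i \rtimes H$ and also $H$ itself are compactly presented. Next, I fix a compact generating set $U$ of $H$ and, for each $i$, a compact subset $T_i \subset N_i$ such that $U \cup T_i$ generates $N_i \rtimes H$. Using Lemma \ref{boundedpresentationforS1andS2}\ref{4DEboundedpresentationforS1andS2} and the fact that one may always enlarge a bounded relator set while preserving its boundedness, I may then choose bounded relator sets $R_H$, $R_i$, $R_{i,j}$ so that
\[
H = \langle U \mid R_H \rangle, \quad N_i \rtimes H = \langle U \cup T_i \mid R_H \cup R_i \rangle, \quad G_{i,j} = \langle U \cup T_i \cup T_j \mid R_H \cup R_i \cup R_j \cup R_{i,j} \rangle.
\]

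The candidate bounded presentation of $G$ is then
\[
\widetilde G := \Big\langle U \cup \bigcup_{i=1}^n T_i \;\Big|\; R_H \cup \bigcup_{i=1}^n R_i \cup \bigcup_{1 \le i < j \le n} R_{i,j} \Big\rangle,
\]
with the tautological surjection $\pi:\widetilde G \twoheadrightarrow G$. For each pair $i<j$, the subgroup $\widetilde G_{i,j} \subset \widetilde G$ generated by $U \cup T_i \cup T_j$ satisfies all relators of the chosen presentation of $G_{i,j}$, so the universal property yields a homomorphism $G_{i,j} \to \widetilde G_{i,j}$ whose composition with $\pi$ is the inclusion $G_{i,j} \hookrightarrow G$; hence $\pi$ restricts to an isomorphism $\widetilde G_{i,j} \overset{\simeq}{\to} G_{i,j}$, and by the same reasoning $\widetilde{N_i \rtimes H} \cong N_i \rtimes H$ and $\widetilde H \cong H$. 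Let $\widetilde N_i \subset \widetilde G$ be the subgroup corresponding to $N_i$ under these isomorphisms (well-defined because $\widetilde{N_i \rtimes H} \subset \widetilde G_{i,j}$ for every $j \ne i$). The identification $\widetilde G_{i,j} \cong G_{i,j}$ then forces $\widetilde N_i$ and $\widetilde N_j$ to commute for $i \ne j$, and $\widetilde H$ to normalize each $\widetilde N_i$ with the prescribed $H$-action.

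It remains to verify that $\widetilde G = \big(\prod_i \widetilde N_i\big) \rtimes \widetilde H$. If $g \in \widetilde N_i \cap \prod_{j \ne i} \widetilde N_j$, then $\pi(g) \in N_i \cap \prod_{j \ne i} N_j = \{1\}$, and since $\pi$ is injective on $\widetilde N_i$ we get $g = 1$; the same injectivity argument gives $\widetilde H \cap \prod_i \widetilde N_i = \{1\}$. Because every generator lies in $\widetilde H \cup \bigcup_i \widetilde N_i$ and the commutation and normalization relations from the pair presentations allow any word to be rewritten in the form $(n_1 \cdots n_n)h$ with $n_i \in \widetilde N_i$ and $h \in \widetilde H$, this proves $\widetilde G \cong \big(\prod_i \widetilde N_i\big) \rtimes \widetilde H \cong G$ via $\pi$, so the presentation displayed above is a compact presentation of $G$. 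The main obstacle is precisely the coordinated bookkeeping in the first step: one must arrange the pair presentations to extend a common presentation of each $N_i \rtimes H$ (and hence of $H$), so that the subgroups $\widetilde H$ and $\widetilde N_i$ inside $\widetilde G$ are unambiguously defined independently of the pair chosen. Once this compatibility is in place, the rest of the proof reduces to repeated use of the injectivity of $\pi$ on already-identified subgroups.
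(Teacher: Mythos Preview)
Your proof is correct and follows essentially the same strategy as the paper: build a single boundedly presented group from the pair presentations over a common alphabet, verify that each $G_{i,j}$ embeds, and then decompose a general element as a product $n_1\cdots n_n h$ to show the canonical map to $G$ is injective. The paper organizes the construction slightly differently---it first forms the free product of separate copies $G^{[i,j]}$ and then quotients by length-$2$ identification relators, and it phrases the decomposition step in terms of ``$i$-elementary elements'' $hsh^{-1}$ rather than the subgroups $\widetilde N_i$---but the resulting group is the same as your $\widetilde G$, and the injectivity argument is the same in substance.
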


\noindent \emph{Note.}
Conversely, if $G$ is compactly presented, so is every $G_{i,j}$,
because they are group retracts of $G$.

\vskip.2cm

In the proofs of Lemma \ref{AutransEx9.A.1} and Proposition \ref{cpsemidirectproduct},
we freely use the following immediate fact:\
an LC-group $G$ is compactly presented 
if and only if there exist a boundedly presented group $G_1 = \langle S \mid R\rangle$ 
and an isomorphism $\varphi : G_1 \longrightarrow G$ 
such that the image $\varphi(S)$ is a compact subset of $G$.

\begin{proof}
Let $T$ be a compact generating subset of $H$.
For every $i \in \{1, \hdots, n\}$, 
let $S_i$ be a compact symmetric subset of $N_i$
such that $S_i \cup T$ generates $N_i \rtimes H$.
For every pair $(i,j)$ of distinct elements of $\{1, \dots, n\}$,
consider a copy of $G_{i,j}$, 
denoted by $G^{[i,j]} = (N_i^{[i,j]} \times N_j^{[i,j]}) \rtimes H^{[i,j]}$. 
Let $S^{[i,j]}$ be the subset $S_i^{[i,j]} \cup S_j^{[i,j]}$ of  $G^{[i,j]}$
corresponding to $S_i \cup S_j \subset G_{i,j}$; 
let $T^{[i,j]}$ be the compact generating subset of $H^{[i,j]}$ 
corresponding to $T$ in $H$.
Clearly, $S^{[i,j]} \cup T^{[i,j]}$ is a compact generating subset of $G^{[i,j]}$.
\par

We consider the free product $G_0$ of the $G^{[i,j]}$ over all $n^2-n$ pairs $(i,j)$. 
Since by assumption $G_{i,j}$ is compactly presented, 
$G^{[i,j]}$ is boundedly presented over $S^{[i,j]} \cup T^{[i,j]}$.
It follows immediately that $G_0$ is boundedly presented 
by the generating subset $U := \bigcup \left( S^{[i,j]} \cup T^{[i,j]} \right)$.
The natural embeddings $G_{i,j} \subset G$ 
canonically define a surjective homomorphism
 $p_0 : G_0 \twoheadrightarrow G$.
Observe that $p_0(U)$ is compact and, since $n \ge 2$,  that $p_0$ is surjective.
\par

Define a quotient $G_1$ of $G_0$ by elements of length 2 with respect to $U$, 
by identifying all possible elements, namely
\begin{enumerate}[noitemsep]
\item[$\bullet$]
for all distinct pairs $(i,j)$ and $(i',j')$, all $k \in \{i,j\} \cap \{i',j'\}$, and every $s\in S_k$, 
we identify the corresponding elements 
$s^{[i,j]} \in S_k^{[i,j]}$ and $s^{[i',j']} \in S_k^{[i',j']}$~;
\item[$\bullet$]
for all distinct pairs $(i,j)$ and $(i',j')$ and every $t \in T$, 
we identify the corresponding elements 
$t^{[i,j]} \in T^{[i,j]}$ and $t^{[i',j']} \in T^{[i',j']}$.
\end{enumerate}
Obviously, $G_1$ has a bounded presentation,
and $p_0$ factors through a surjective homomorphism 
\begin{equation*}
p_1 \,  : \,  G_1 \twoheadrightarrow G .
\end{equation*}
Denote by $\rho$ the projection $G_0 \twoheadrightarrow G_1$;
we have $p_0 = p_1 \circ \rho$.
\par

For every $k \in \{1, \hdots, n\}$, the subset of $G_1$ defined as $\rho(S_k^{[i,j]})$ 
does not depend on the choice of the pair $(i,j)$ such that $k\in\{i,j\}$ 
(there are $2n-2$ such pairs), 
and we denote it by $\widehat {S}_k \subset G_1$. 
Also, the subset of $G_1$ defined as $\rho(T^{[i,j]})$ 
does not depend on the choice of the pair $(i,j)$ 
and we denote it by $\widehat {T}$.
Since $G_0$ is generated by the union 
of all possible $S_k^{[i,j]}$ (with $k \in\{i,j\}$) and $T^{[i,j]}$, 
it follows that $G_1$ is generated by the union 
of all $\widehat {S}_i$ (with $i \in \{1, \hdots, n\}$) and $\widehat {T}$.
\par

Let $\widehat {H}$ be the subgroup of $G_1$ generated by $\widehat {T}$. 
Define an $i$-elementary element of $G_1$ 
as an element of the form $h s_i h^{-1}$ with $h \in \widehat {H}$ and $s \in \widehat S_i$, 
and an elementary element as an $i$-elementary element for some $i$. 
Since $G_1$ is generated by  the symmetric subset 
$\big( \bigcup \widehat S_i \big) \cup \widehat {H}$,
every element of $G_1$ can be written as a product $uh$ 
where  $u$ is a product of elementary elements and $h \in \widehat {H}$.
\par

We have $[h_i \widehat {S}_i h_i^{-1}, h_j \widehat {S}_j h_j^{-1}] = \{1\} \subset G_1$ 
for all distinct $i,j$ and $h_i, h_j \in \widehat {H}$.
Indeed, picking lifts $\tilde{h}_i$ and $\tilde{h}_j$ of $h_i$ and $h_j$ in $G^{[i,j]}$
\begin{equation*}
\aligned
\left[ h_i \widehat {S}_i h_i^{-1}, h_j \widehat {S}_j h_j^{-1} \right] 
\, & = \,  
\left[ \rho(\tilde{h}_i S_i^{[i,j]} \tilde{h}_i^{-1}), \rho(\tilde{h}_j S_j^{[i,j]} \tilde{h}_j^{-1}) \right]
\\
\,  & = \,  
\rho\left( [\tilde{h}_i S_i^{[i,j]} \tilde{h}_i^{-1}, \tilde{h}_j S_j^{[i,j]} \tilde{h}_j^{-1}] \right) 
\, = \,
\rho(\{1\}) 
\, = \, 
\{1\}.
\endaligned
\end{equation*}
It follows that any element $g$ in $G_1$ can be written as $u_1\dots u_kh$ where each $u_i$ is a product of $i$-elementary elements and $h \in \widehat {H}$.

Now suppose that $g \in \ker(p_1)$; let us show that $g=1$. 
For all $i$, we have $p_1(u_i) \in N_i$ and $p_1(h) \in H$. 
It follows that $p_1(u_i) = 1$ and $p_1(h) = 1$.
Pick $j$ distinct from $i$, and lifts $\tilde{u}_i, \tilde{h}$ of $u_i, h$ in $G^{[i,j]}$;
since the restriction of $p_0 = p_1 \circ \rho$ to $G^{[i,j]}$ is injective,
we have $\tilde{u}_i = 1$ and $\tilde{h} = 1$.
Hence $u_i$ and $h$ are trivial. This proves that $p_1$ is an isomorphism.
As $G_1$ is boundedly presented, $G$ is compactly presented.
\end{proof}

\begin{prop}
\label{cpsemidirectproduct}
Let $A, N$ be two LC-groups, with $A$ compactly generated abelian, 
$\varphi : A \longrightarrow \operatorname{Aut}(N)$ a continuous action of $A$ on $N$,
and $G = N \rtimes_\varphi A$ the corresponding semidirect product.
Assume there exist $a_0 \in A$ 
and a compactly presented open subgroup $L \subset N$
such that $\varphi(a_0)$ engulfs $N$ into $L$.
\par

Then $G$ is compactly presented.
\end{prop}

\begin{proof}
Let $T$ be a symmetric compact generating subset of $A$ containing $a_0$. 
Define $S = \bigcup_{t \in T} t L t^{-1}$; 
note that $S$ is a compact  symmetric subset of $N$.
The semidirect product $N \rtimes_{a_0} \Z$ is generated by $S \cup \{\sigma^{\pm 1}\}$, 
where $\sigma$ denotes the positive generator of $\Z$.
\par

We consider the free product $G_0 = (N \rtimes_{a_0} \Z) \Conv A$, 
endowed with the generating subset $(S \cup \{\sigma^{\pm 1} \}) \cup T$. 
Since $N \rtimes_{a_0} \Z$ is compactly presented by 
Corollary \ref{engulfsemidirect}, 
and $A$ is compactly presented by Proposition \ref{abeliancg=cp}, 
the free product $G_0$ is boundedly presented over $(S \cup \{\sigma^{\pm 1}\}) \cup T$. 
The natural homomorphism $p_0 : G_0 \twoheadrightarrow G$ is obviously surjective, 
and its restriction to each of the two free factors is also injective.
\par

Let $G_1$ be the quotient of $G_0$ by
\begin{enumerate}[noitemsep]
\item[$\bullet$]
the relator of length $2$ identifying $\sigma \in \Z$ with $a_0 \in A$;
\item[$\bullet$]
the relators of length $4$ of the form $txt^{-1}y^{-1}$ for $t\in T$ and $x,y\in S$ 
whenever the corresponding relation holds in $G$.
\end{enumerate}
Obviously, $G_1$ has bounded presentation, 
and $p_0$ factors through a surjective homomorphism 
\begin{equation*}
p_1 \, : \, G_1 \twoheadrightarrow G  .
\end{equation*}
Denote by $\rho$ the projection $G_0 \twoheadrightarrow G_1$;
we have $p_0 = p_1 \circ \rho$.
\par

In $G_1$, the generator $\sigma$ is redundant;
since $G_0$ is generated by $(L \cup \{\sigma^{\pm 1} \}) \cup T$, 
the group $G_1$ is generated by $\rho(L) \cup \rho(T)$. 
Let $\widehat{A}$ be the subgroup of $G_1$ generated by $\rho(T)$. 
By the same argument as in the previous proof, 
$\widehat{A}$ is mapped by $p_1$ injectively into $G$ (actually, onto $A$).
\par

Define an elementary element in $G_1$ 
to be an element of the form $a s a^{-1}$ with $a \in \rho(L)$ and $a \in \widehat{A}$. 
Then, as in the previous proof, it is immediate that every element  $g \in G_1$ 
has the form $g = ua$ with $a \in \widehat{A}$ and $u$ a product of elementary elements.
\par

Write $\varphi$ for $\rho(\sigma)=\rho(a_0)$.
Now let us check that, in $G_1$, 
the set of elements of the form $\varphi^n \rho(x) \varphi^{-n}$, for $n\in\Z$ and $x\in L$, 
is stable under conjugation by $\widehat{A}$. 
It is enough to check that $\rho(t) \varphi^n \rho(x) \varphi^{-n} \rho(t)^{-1}$ 
has this form for every $t \in T$.
And, indeed, 
 \begin{equation*}
\rho(t) \varphi^n \rho(x) \varphi^{-n} \rho(t)^{-1} \, = \, 
\rho(ta_0^n) \rho(x) \rho(ta_0^n)^{-1} \, = \, 
\] \[
\rho(a_0^nt ) \rho(x) \rho(a_0^n t)^{-1} \, = \, 
\varphi^n \rho(t) \rho(x) \rho(t)^{-1} \varphi^{-n} ,
\end{equation*}
where the latter term is equal, by using the relators of the second kind, 
to some element of the form $\varphi^n \rho(y) \varphi^{-n}$ with $y \in S$. 
The engulfing property implies that we can write $y = \sigma^k x' \sigma^{-k}$ 
for some $x' \in L$ and $k \in \Z$. 
So
\begin{equation*}
\rho(t)\varphi^n\rho(x)\varphi^{-n}\rho(t)^{-1} \, = \,
\varphi^n\rho(\sigma^kx'\sigma^{-k})\varphi^{-n} \, = \,
\varphi^{n+k}\rho(x')\varphi^{-(n+k)}.
\end{equation*}
It follows that every elementary element belongs to the subgroup $\rho(N \rtimes_{a_0} \Z)$. 
\par

Let $g = uh \in G_1$ be as above. Suppose now that $g \in \ker (p_1)$,
and let us show that $g = 1$.
Because of the semidirect decomposition of $G$, both $u$ and $a$ belong to $\ker(p_1)$. 
Since $u$ belongs to $\rho(N \rtimes_{a_0} \Z)$ and $a$ belongs to $\rho(A)$, 
it follows that $u = 1$ and $a = 1$.
This proves that $p_1$ is an isomorphism.
\end{proof}

Let $A$ be a topological group.
The group $\operatorname{Hom}(A,\R)$ of continuous homomorphisms of $A$ to $\R$
is naturally a real vector space.
For $w,w' \in \operatorname{Hom}(A,\R)$, we denote by $[w,w']$
the straight line segment in $\operatorname{Hom}(A,\R)$
joining $w$ and $w'$;
note that $[w,w'] = \{w\}$ when $w'=w$.

\begin{prop}
\label{cpet0notinintervalles}
Consider a compactly generated LCA-group $A$, 
an integer $n \ge 2$, 
and local field $\K_1, \hdots, \K_n$.
For every $i \in \{1, \hdots, n\}$, 
let $\varphi_i : A \longrightarrow \K_i^\times$ be a continuous homomorphism;
let $\vert \cdot \vert_i$ on $\K_i$ be an absolute value on $\K_i$
(see Remark  \ref{onabsolutevalues}).
Define $w_i \in \operatorname{Hom}(A,\R)$
by $w_i (a) = \log \vert \varphi_i (a) \vert_i$ for all $a \in A$.
Set 
\begin{equation*}
G_\varphi \, = \, (\K_1 \times \cdots \times \K_n) \rtimes_\varphi A
\end{equation*}
where the semidirect product corresponds to the action of $A$ on $\prod_{i=1}^n \K_i$
by which $a \in A$ multiplies the $i$th factor of $\prod_{i=1}^n \K_i$ by $\varphi_i(a)$.
\par

Then we have:
\begin{enumerate}[noitemsep,label=(\arabic*)]
\item\label{1DEcpet0notinintervalles}
$G_\varphi$ is compactly generated if and only if 
$w_i \ne 0$ for all $i  \in \{1, \hdots, n\}$.
\item\label{2DEcpet0notinintervalles}
$G_\varphi$ is compactly presented if and only if 
$0 \notin [w_i, w_j] \subset \operatorname{Hom}(A, \R)$ for all $i,j \in \{1, \hdots, n\}$.
\end{enumerate}
\end{prop}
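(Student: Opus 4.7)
The plan is to establish both parts by first reducing Part~(2) to the case $n=2$. For any pair $i\neq j$, the natural projection $G_\varphi \twoheadrightarrow (\K_i\times\K_j)\rtimes A$ (erasing the other $\K_k$-factors) is a retraction, so compact presentation descends from $G_\varphi$ to each pair by Proposition~\ref{retractcp}; conversely, compact presentation of all pairs assembles into compact presentation of $G_\varphi$ via Lemma~\ref{AutransEx9.A.1}. For Part~(1), the ``if'' direction is direct: if all $w_i\neq 0$, choose $a_i\in A$ with $|\varphi_i(a_i)|_i>1$ and use $\K_i = \bigcup_n \varphi_i(a_i)^n\mathfrak{o}_i$ (with $\mathfrak{o}_i$ the maximal compact subring of $\K_i$) to see that $T\cup\bigcup_i\mathfrak{o}_i$ is a compact generating set of $G_\varphi$, where $T$ is a compact generating set of $A$ containing the $a_i$'s. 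Conversely, if $w_{i_0}=0$ then $\varphi_{i_0}(A)\subset\mathfrak{o}_{i_0}^\times$, so any compact subset of the quotient $\K_{i_0}\rtimes_{\varphi_{i_0}} A$ has its $\K_{i_0}$-coordinate confined to some $\varpi_{i_0}^{-N}\mathfrak{o}_{i_0}$, stable under the modulus-preserving $A$-action, and cannot generate $\K_{i_0}$.

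For the ``if'' direction of the pair case of Part~(2), the hypothesis $0\notin[w_1,w_2]$ forces both $w_i$ nonzero and non-antiparallel, so $w_1,w_2$ lie in a common open half-space of $\operatorname{Hom}(A,\R)$, producing $a_0\in A$ with $w_1(a_0),w_2(a_0)<0$. Then $|\varphi_i(a_0)|_i<1$ for $i=1,2$, hence $\varphi(a_0)^k(x_1,x_2)\to(0,0)$ for every $(x_1,x_2)\in\K_1\times\K_2$; in other words $\varphi(a_0)$ engulfs $\K_1\times\K_2$ into the compact open subgroup $\mathfrak{o}_1\times\mathfrak{o}_2$, and Lemma~\ref{cpsemidirectproduct} delivers compact presentation of $(\K_1\times\K_2)\rtimes A$.

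The main obstacle is the ``only if'' direction in the pair case. Assume $0\in[w_1,w_2]$; the degenerate case where some $w_i=0$ follows from Part~(1). Otherwise $w_2=-\lambda w_1$ with $\lambda>0$ and $w_1\neq 0$, and I plan to contradict compact presentation via the Bieri--Strebel splitting theorem~\ref{BieriStrebelAbels}. First I reduce $A$ to its discrete cocompact subgroup $\Z^{p+q}\subset A\cong\R^p\times\Z^q\times K$ (preserving compact presentation by Corollary~\ref{hereditaritycp}). Since the value group of the local field $\K_1$ is cyclic, $w_1(\Z^{p+q})$ has rank one, so $M:=\ker w_1\cap\Z^{p+q}$ is a rank-$(p+q-1)$ direct summand; pick a primitive $a_0$ mapping to a generator of $w_1(\Z^{p+q})$, giving $\Z^{p+q}=\Z a_0\oplus M$ with $w_1|_M=w_2|_M=0$. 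Let $\pi:(\K_1\times\K_2)\rtimes\Z^{p+q}\twoheadrightarrow\Z$ be the composition of the semi-direct projection with the projection along $M$ to $\Z a_0\cong\Z$. Under compact presentation, Bieri--Strebel yields a splitting over a compactly generated open subgroup $H\subset\ker\pi=(\K_1\times\K_2)\rtimes M$; the ambient group being metabelian, Corollary~\ref{ascHNN} forces the splitting to be ascending (after negating $\pi$ if needed), and Proposition~\ref{du21octobre} allows the stable letter $s=(0,a_0)$, so $\alpha_s$ acts on $\K_1\times\K_2$ as $\varphi(a_0)$ with $|\varphi_1(a_0)|_1>1$ and $|\varphi_2(a_0)|_2<1$ (WLOG). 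Engulfing then forces $H\cap(\K_1\times\{0\})$ to be an open subgroup of $\K_1$ containing elements of arbitrarily large $|\cdot|_1$, so it equals $\K_1$; consequently $H\cap(\K_1\times\K_2)=\K_1\times V$ for some open $V\subset\K_2$. Write $H=(\K_1\times V)\rtimes B$ where $B\subset M$ is the image of $H$ in $M$; modding out $V$ gives a retraction $H\twoheadrightarrow\K_1\rtimes B$, and since $w_1|_B=0$ (by $B\subset M=\ker w_1$), Part~(1) shows $\K_1\rtimes B$ is not compactly generated, so neither is $H$---contradicting the Bieri--Strebel splitting and completing the proof.
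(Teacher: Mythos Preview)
Your proof is correct. Part~(1) and the ``if'' direction of Part~(2) match the paper almost exactly. For the ``only if'' direction of Part~(2), your argument and the paper's are genuinely different, and it is worth noting the contrast.

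The paper does \emph{not} first reduce to pairs for the ``only if'' direction; it works with the full $G_\varphi$. It also avoids your passage to a cocompact lattice $\Z^{p+q}\subset A$: since the value group of a local field is discrete, $\operatorname{Im}(w_i)\subset\R$ is infinite cyclic, so $A$ (being abelian) decomposes directly as $\ker(w_i)\oplus\Z a$ for any $a$ hitting a generator. Writing $G_\varphi=\ker(\overline{w}_i)\rtimes\Z_a$ and invoking the Bieri--Strebel machinery (Proposition~\ref{BSrephrEngulf}), one of $\alpha_a^{\pm1}$ engulfs $\ker(\overline{w}_i)$ into some compactly generated $H$. The contradiction is then obtained by exhausting $\ker(\overline{w}_i)$ by the open subgroups $H_\ell=\{(x,b):|x_i|_i\le\ell,\ |x_j|_j\le\ell\}$ (which are subgroups precisely because $\ker w_i=\ker w_j$ acts by isometries on both factors), trapping $H$ in some $H_m$, and observing that for any $x$ with $x_i,x_j\neq0$ the forward and backward $\alpha_a$-orbits both escape $H_m$: this is Condition~\ref{aDEsuffpournotengulfing} of Proposition~\ref{suffpournotengulfing}.

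Your route instead analyses the internal structure of $H$: engulfing plus $\alpha_s$-stability forces $H\supset\K_1\times\{0\}$, hence $H\cap(\K_1\times\K_2)=\K_1\times V$, and then the quotient $H/(\{0\}\times V)\cong\K_1\rtimes B$ fails to be compactly generated since $B\subset\ker w_1$. Two small remarks: first, your claim $H=(\K_1\times V)\rtimes B$ need not be a semidirect product (there is no obvious section $B\to H$), but you only use the quotient map $H\twoheadrightarrow\K_1\rtimes B$, which exists regardless; call it a quotient rather than a retraction. Second, the reason $H\cap\K_1=\K_1$ is not merely that it contains large elements, but that it is open and stable under multiplication by $\varphi_1(a_0)$ with $|\varphi_1(a_0)|_1>1$. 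The paper's argument is shorter and avoids the lattice reduction; yours gives more explicit structural information about $H$.
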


\noindent
\emph{Note.} The situation of the proposition with $n=1$ is covered by
Proposition \ref{cpsemidirectproduct}.

\begin{proof}   
As a preliminary observation, note that $G_\varphi$ does not contain
any non-abelian free group, because it is solvable.

\vskip.2cm

\ref{1DEcpet0notinintervalles}
If $w_i = 0$ for some $i$, that is if $\vert \varphi_i (a) \vert_i = 1$ for all $a \in A$,
then $\K_i \rtimes_{\varphi_i} A$ is a quotient of $G_\varphi$,
hence $G_\varphi$ is not compactly generated,
by the argument of Example \ref{exsemidirect}(2).
If $w_i \ne 0$ for all $i  \in \{1, \hdots, n\}$, 
the argument of Example \ref{affinecg} can be adapted to the present situation, 
to show that each of the subgroups $K_i \rtimes A$ is compactly generated.
Since $G_\varphi$ is generated by these subgroups,
it follows that $G_\varphi$ is compactly generated.

\vskip.2cm

\ref{2DEcpet0notinintervalles}
For $i \in \{1, \hdots, n\}$, 
we denote by $\overline w _i : G_\varphi \longrightarrow \R_+^\times$
the composition of the canonical projection $G_\varphi \longrightarrow A$ with $\varphi_i$.
The image of $\overline w_i$, which is that of $w_i$,
is a non-trivial discrete subgroup of $\R_+^\times$, because so is 
$\{ t \in \R_+^\times \mid t = \vert x \vert_i \hskip.2cm \text{for some} \hskip.2cm x \in \K^\times \}$.
Note that 
\begin{equation*}
\ker (\overline w_i) = (\K_1 \times \cdots \times \K_n) \rtimes_\varphi \ker (w_i) .
\end{equation*}
Since $\ker (w_i)$ acts trivially on $\K_i$,
the group $\K_i$ is a direct factor of $\ker (\overline w_i)$.

\vskip.2cm

Suppose first that $G_\varphi$ is compactly presented. 
Then $w_i \ne 0$ for all $i$, by \ref{1DEcpet0notinintervalles}.
We assume ab absurdo that $0 \in [w_i,w_j]$ for two distinct indices $i$ and $j$.
There exists a number $\mu > 0$ such that $w_j = - \mu w_i$;
in particular, $\ker (w_1) = \ker (w_2)$.
We will arrive at a contradiction.
\par

Choose $a \in A$ such that $w_i(a) > 1$ generates
the cyclic subgroup $\operatorname{Im}(w_i)$ of $\R_+^\times$.
Denote by $\Z_a$ the discrete cyclic subgroup of $A$ generated by $a$.
We have a direct sum decomposition $A = \ker (w_i) \times \Z_a$, so that
\begin{equation*}
G_\varphi \, = \, \ker (\overline w_i) \rtimes  \Z_a .
\end{equation*}
In view of the preliminary observation,
it follows from Proposition \ref{BSrephrEngulf} that 
one of $\alpha_a, \alpha_{a^{-1}}$,
from now on denoted by $\alpha$,
engulfs $\ker (\overline w_i )$ into a compactly generated subgroup of $G_\varphi$,
say $H$.
(The notation $\alpha_{a}$ is that of Proposition \ref{8C10.5du21oct}.)
\par

For an integer $\ell \ge 1$, denote by $H_\ell$ the subset of $\ker (\overline w_i)$
of elements $(x,b)$ such that $\vert x_i \vert_i \le \ell$ and $\vert x_j \vert_j \le \ell$.
It is clearly an open subset of $\ker (\overline w_i)$;
it is also a subgroup, because every $b \in \ker (w_i) = \ker (w_j)$
preserves the norms in $\K_i$ and $\K_j$. We have
\begin{equation*}
\ker (\overline w_i) \, = \, \bigcup_{\ell \ge 1} H_\ell
\hskip.5cm \text{(increasing union).}
\end{equation*}
It follows that there exists $m \ge 1$ such that $H \subset H_m$.
(We do not mind whether or not $H_m$ is compactly generated,
but we know that $H$ is.)
\par

Let $x \in \K_1 \times \cdots \times \K_n$
be such that $x_i \ne 0$ and $x_j \ne 0$.
The set of $k \in \Z$ such that $\alpha^k (x) \in H_m$ is finite; indeed
\begin{equation*}
\lim_{k \to \infty} \left\vert \left(\alpha^k (x) \right)_i  \right\vert_i \, = \, \infty
\hskip.5cm \text{and} \hskip.5cm
\lim_{k \to \infty} \left\vert \left(\alpha^{-k} (x) \right)_j  \right\vert_j \, = \, \infty
\end{equation*}
By Proposition \ref{suffpournotengulfing} (Condition \ref{aDEsuffpournotengulfing}),
this implies that $\alpha$ does not engulf $\ker (\overline w_i)$
into a compactly presented subgroup of $G_\varphi$,
in contradiction with an intermediate conclusion above.
\par

We have shown that, 
if $G_\varphi$ is compactly presented, then $0 \notin [w_i, w_j]$
or all $i,j \in \{1, \hdots, n\}$.

\vskip.2cm

Conversely, suppose now that $0 \notin [w_i, w_j]$ for all $i,j \in \{1, \hdots, n\}$.
By Lemma \ref{AutransEx9.A.1}, 
it is enough to choose $i,j \in \{1, \hdots, n\}$, $i \ne j$,
and to show that
$(\K_i \times \K_j) \rtimes A$
is compactly presented.
\par 

We claim that 
\begin{center}
\emph{there exists $a \in A$ such that $w_i(a) > 0$ and $w_j(a) > 0$.}
\end{center}
To justify the claim, we distinguish two cases.
Suppose first that one of $\ker (w_i), \ker (w_j)$ is contained in the other;
the inclusion cannot be proper, because $\Z$ is not a proper quotient of itself;
hence $\ker (w_i) = \ker (w_j)$.
Since $0 \notin [w_i,w_j]$,  there exists $\mu > 0$
such that $w_j =  \mu w_i$.
For $b \in A, b \notin \ker (w_i)$, 
the claim holds for one of $a=b$, $a=b^{-1}$.
Suppose now that there exist $b \in \ker(w_i) \smallsetminus \ker(w_j)$
and $c \in \ker(w_j) \smallsetminus \ker(w_i)$.
For appropriate choices of $\varepsilon, \delta \in \{-1,1\}$,
the claim holds for $a = b^\varepsilon c^\delta$.
\par

Let $a \in A$ be as in the claim. Then $a$ acts on $\K_i \times \K_j$ by the automorphism
\begin{equation*}
\sigma \, : \, (x_i, x_j) \longmapsto (\varphi_i(a) x_i , \varphi_j(a) x_j) ,
\end{equation*}
with $\log \vert \varphi_i (a) \vert_i > 0$ and $\log \vert \varphi_j (a) \vert_j > 0$.
Hence $\sigma$ engulfs $\K_i \times \K_j$ into $\mathfrak o_i \times \mathfrak o_j$,
where $\mathfrak o_i$ [respectively $\mathfrak o_j$]
is the maximal compact subring of $\K_i$, [resp.\ of $\K_j$].
It follows from Proposition \ref{cpsemidirectproduct} that 
$(\K_i \times \K_j) \rtimes A$ is compactly presented.
This ends the proof.
\end{proof}

\begin{lem}
\label{lemmafromicc}
Let $G$ be an LC-group which is an essential HNN-extension $G = \HNN (H, K, L, \varphi)$,
for two proper open subgroups $K, L$ of $H$ 
and a topological isomorphism $\varphi$ of $K$ onto $L$. 
\par

Every normal subgroup of $G$ without any non-abelian free discrete subgroup
is contained in $K$.
\end{lem}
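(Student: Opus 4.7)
The plan is to interpret the conclusion as $N \subseteq H$ (the displayed ``contained in $G$'' is evidently a typo, since every subgroup of $G$ is contained in $G$), and to prove this by running the standard Bass--Serre tree argument in the locally compact setting. Let $T$ be the Bass--Serre tree of the HNN extension: vertices are the cosets $G/H$ and edges are the cosets $G/K$, with the natural $G$-action. Because $H$ and $K$ are open subgroups, vertex stabilizers and edge stabilizers are open in $G$. The action is vertex-transitive, and I would first record the following reduction: if a normal subgroup $N \trianglelefteq G$ fixes even a single vertex $v$, then for any other vertex $w = gv$ and any $n \in N$ one has $n \cdot w = g(g^{-1}ng)v = gv = w$ since $g^{-1}ng \in N$ also fixes $v$; hence $N$ fixes every vertex, and in particular $N \subseteq H$. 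It therefore suffices to show that any $N \trianglelefteq G$ with \emph{no} fixed vertex in $T$ contains a discrete non-abelian free subgroup.

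For this I would invoke the classical alternative for isometric group actions on trees (as used in the references \cite{Tits--70}, \cite{CuMo--87} cited in Remark \ref{Alperin}(4) and Section \ref{sectionsplitting}): a group acting on $T$ either fixes a vertex, fixes an end, stabilizes a bi-infinite geodesic line, or contains a pair of hyperbolic elements whose translation axes share no endpoint. In the last case, the standard ping-pong construction (as in \cite[Lemma 2.6]{CuMo--87}) produces a non-abelian free subgroup $F \subseteq N$ in which every non-trivial element is hyperbolic on $T$, and therefore $F$ intersects every vertex stabilizer trivially. Since vertex stabilizers are open in $G$, this intersection property forces $F$ to be discrete in $G$, which is exactly the conclusion sought.

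It remains to eliminate the lineal and end-fixing possibilities using the essentiality hypothesis $K \subsetneq H \supsetneq L$ together with the normality of $N$; this is where I expect the main obstacle to lie. Essentiality implies that every vertex of $T$ has valency $[H{:}K]+[H{:}L]\ge 4$, so $T$ is not a line and $G$ stabilizes no bi-infinite geodesic in $T$. If $N$ stabilized a line $\ell$, then by normality $N$ would stabilize $g\ell$ for every $g \in G$; choosing $g$ with $g\ell \neq \ell$ (possible because $G$ stabilizes no line), $N$ must then fix a vertex (the bridge between $\ell$ and $g\ell$, or an endpoint of the intersection segment, or the endpoint of a common ray), contradicting the no-fixed-vertex assumption. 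The horocyclic case, in which $N$ fixes a unique end $\xi \in \partial T$ with all of $N$ acting elliptically, is the hard one: one uses normality to argue that either some $g \in G$ moves $\xi$ (reducing to the lineal case already treated) or $G$ itself fixes $\xi$, in which case the stable letter $t$ -- being hyperbolic on $T$ with $\xi$ on its axis -- induces a horocyclic height function on $T$ compatible with the homomorphism $\pi : G \twoheadrightarrow \mathbb{Z}$ sending $H$ to $0$ and $t$ to $1$; combining this with the essential HNN structure and the Britton-normal-form description of $N \cap \ker(\pi)$ forces $N$ either into a vertex stabilizer or to contain two independent hyperbolic elements, bringing us back to the ping-pong case. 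Managing this last horocyclic case cleanly, without circularity, is the step I would spend the most care on.
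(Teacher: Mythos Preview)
Your reading of the typo (the conclusion should be ``contained in $H$'') is correct, and your Bass--Serre tree argument is the right approach and is essentially complete. Note that the paper does not actually prove this lemma: it simply refers to \cite[Proposition~8]{Corn--09}, and your argument is the standard one underlying that reference.

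Two places where your write-up can be tightened. First, in the lineal case there is no need to analyse bridges between $\ell$ and $g\ell$: if $N$ is lineal then its invariant axis is \emph{unique} (it is the axis of any hyperbolic element of $N$), so by normality this axis would be $G$-invariant, contradicting the fact that $G$ stabilises no line. Second, the end-fixing case is not ``the hard one''; it collapses as soon as you observe that $G$ itself fixes no end of $T$. Indeed, the edges at the base vertex $v_0$ form two $H$-orbits of respective cardinalities $[H{:}K]\ge 2$ and $[H{:}L]\ge 2$, on each of which $H$ acts transitively; hence $H$ fixes no edge incident to $v_0$, hence no end of $T$, hence neither does $G$. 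If $N$ had a unique fixed end it would be $G$-fixed by normality, a contradiction; if $N$ fixed two distinct ends it would stabilise the line joining them, and you are back in the lineal case. No Britton normal forms or horocyclic height functions are needed.
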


\begin{proof}
See Proposition 8 in \cite{Corn--09}.
\end{proof}

\begin{prop}
\label{actionF2versusZ2}
Let the free group $F=\langle t,u\rangle$ act on $\Q_p$,
with $t$ the multiplication by $p$ and $u$ the identity automorphism. 

Then the semidirect product $G=\Q_p\rtimes F$ is  compactly generated
and not compactly presented.
\end{prop}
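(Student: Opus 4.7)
The plan is to realize $G$ as a topological HNN extension and then read off both claims from Proposition \ref{topHNN}. Since $F = \langle t, u\rangle$ is the free product $\langle t\rangle * \langle u\rangle$, set $H = \Q_p \rtimes \langle t\rangle$, a copy of $\Q_p \rtimes_p \Z$ inside $G$. As $F$ is discrete and $\langle t\rangle \subset F$ is a subgroup, $H$ is open in $G$. Moreover $\Q_p$ is an open normal subgroup of $H$, and since $u$ acts trivially on $\Q_p$, conjugation by $u$ is the identity on $\Q_p$. These observations single out the candidate
\begin{equation*}
G \,\cong\, \HNN(H,\,\Q_p,\,\Q_p,\,\operatorname{id}_{\Q_p}),
\end{equation*}
with stable letter $u$.

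The first step is to verify this identification on the nose. Unwinding the HNN presentation $\langle H, u \mid uxu^{-1} = x\ \forall\, x \in \Q_p\rangle$ with the presentation $H = \langle \Q_p, t \mid txt^{-1} = px\ \forall\, x \in \Q_p\rangle$ yields
\begin{equation*}
\langle \Q_p,\, t,\, u \mid txt^{-1} = px,\ uxu^{-1} = x \ \forall\, x \in \Q_p\rangle,
\end{equation*}
which is exactly a presentation of $\Q_p \rtimes F$, since no further relations are imposed between $t$ and $u$ in the free group $F$. Proposition \ref{topHNN}\ref{1DEtopHNN} then equips the HNN with a unique topological group structure, giving a topological isomorphism with $G$; the hypothesis that the associated subgroup be open is satisfied because $\Q_p$ is open in $H$.

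Both conclusions follow at once. The group $H = \Q_p \rtimes_p \Z$ is compactly presented by Example \ref{exsemidirect}(1), or equivalently by Corollary \ref{engulfsemidirect} applied to multiplication by $p$, which engulfs $\Q_p$ into the compact open subgroup $\Z_p$. Proposition \ref{topHNN}\ref{3DEtopHNN} then gives compact generation of $G$. For the failure of compact presentation, Proposition \ref{topHNN}\ref{4DEtopHNN} states that, under the standing assumption that $H$ is compactly presented, $G$ is compactly presented if and only if the associated subgroup $K = \Q_p$ is compactly generated. But $\Q_p = \bigcup_{n\ge 0} p^{-n}\Z_p$ is a strictly ascending union of compact open subgroups, hence not compactly generated (Example \ref{localfieldscg}). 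Therefore $G$ is not compactly presented.

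The only non-routine step is isolating the correct HNN decomposition; after that the argument is an immediate application of Proposition \ref{topHNN}. The structural point is that the free product decomposition $F = \langle t\rangle * \langle u\rangle$, combined with the triviality of the $u$-action on $\Q_p$, forces the edge group in the HNN to be the full non-compactly-generated group $\Q_p$ — this is precisely the obstruction to compact presentation.
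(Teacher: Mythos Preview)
Your proof is correct and takes a genuinely different route from the paper's.

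The paper decomposes $G$ along the other generator: it uses the epimorphism $F \to \Z$ sending $t \mapsto 1$, $u \mapsto 0$, whose kernel $M$ is free on $\{t^n u t^{-n}\}_{n\in\Z}$; since $M$ acts trivially on $\Q_p$ one gets $G = (\Q_p \times M) \rtimes_t \Z$, and then the argument proceeds by showing that conjugation by $t$ cannot engulf $\Q_p \times M$ into any compactly generated open subgroup (Proposition~\ref{suffpournotengulfing}), invoking the Bieri--Strebel circle of ideas. You instead split along $u$, obtaining $G = \HNN(H,\Q_p,\Q_p,\operatorname{id})$ with $H = \Q_p \rtimes_p \Z$, and read everything off Proposition~\ref{topHNN}: since $H$ is compactly presented and the edge group $\Q_p$ is not compactly generated, $G$ is compactly generated but not compactly presented. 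Your argument is shorter and more self-contained, bypassing the engulfing machinery entirely; the paper's approach, on the other hand, is tailored to illustrate that machinery, which is the theme of the surrounding section. The crucial structural observation you isolate --- that the free factor acting trivially forces the HNN edge group to be all of $\Q_p$ --- is exactly what makes the obstruction transparent.
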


Note that if $F$ is replaced by $\Z^2$, 
the resulting group is compactly presented, by Proposition \ref{cpsemidirectproduct}.

\begin{proof}
For $s \in F$, $s \ne 1$, 
denote by $\langle s \rangle$ the cyclic subgroup of $F$ generated by $s$.
The group $G$ is generated by the compactly generated subgroups 
$\langle \Q_p,t \rangle = \Q_p \rtimes_p \langle t \rangle$ and $F$, 
and therefore is compactly generated.
\par

Let $M$ be the kernel of the unique homomorphism $F \twoheadrightarrow \Z$
mapping $t$ onto $1$ and $u$ onto $0$;
we can identify $M$ with the free product 
of the cyclic groups $\langle t^n u t^{-n} \rangle$ indexed by $n \in \Z$.
The subgroup of $G$ generated by $\Q_p$ and $M$ is a direct product $\Q_p \times M$.
We can write $G$ as the semidirect product $(\Q_p \times M) \rtimes  \langle t \rangle$.
With the notation of Corollary \ref{cpfor(locell)semidirec(Z)} in mind,
we set $N = \Q_p \times M$.
\par

Every compactly generated subgroup of $N$ is contained in a product of the form
$L \times (\Conv_{n = -\ell}^\ell \langle t^n u t^{-n} \rangle )$, where 
$L$ is a compact subgroup of $\Q_p$ 
and $\ell$ a positive integer. 
Hence the group $N$ and the automorphism of conjugation by $t$
satisfy Condition (a) of Proposition \ref{suffpournotengulfing}.
It follows from this proposition and Corollary \ref{cpfor(locell)semidirec(Z)}
that $G$ is not compactly presented.
\end{proof}

\begin{notat}
For an integer $n \ge 2$ and a locally compact field $\K$, we introduce the following 
closed subgroups of $\SL_n(\K)$:
\begin{equation*}
\text{the group} \hskip.2cm \operatorname{ST}_n(\K)
\hskip.2cm \text{of matrices of the form} \hskip.2cm
\begin{pmatrix}
x_{1,1} & x_{1,2}  & \cdots & x_{1,n}
\\
0 & x_{2,2}  & \cdots & x_{2,n}
\\
\vdots & \vdots   & \ddots & \vdots
\\
0 & 0 &  \cdots & x_{n,n}
\end{pmatrix},
\end{equation*}
\begin{equation*}
\text{the group} \hskip.2cm \operatorname{Unip}_n(\K) 
\hskip.2cm \text{of matrices of the form} \hskip.2cm
\begin{pmatrix}
1 & x_{1,2}  & \cdots & x_{1,n}
\\
0 & 1  & \cdots & x_{2,n}
\\
\vdots & \vdots   & \ddots & \vdots
\\
0 & 0 &  \cdots & 1
\end{pmatrix} ,
\end{equation*}
\begin{equation*}
\text{the group} \hskip.2cm \operatorname{SD}_n(\K) 
\hskip.2cm \text{of matrices of the form} \hskip.2cm
\begin{pmatrix}
x_{1,1} &  0 & \cdots & 0
\\
0 &  x_{2,2}  & \cdots & 0
\\
\vdots & \vdots  & \ddots & \vdots
\\
0 & 0 & \cdots & x_{n,n}
\end{pmatrix} .
\end{equation*}
Here, $x_{i,j} \in \K$ for $i,j  = \{ 1, \hdots, n\}$ with $i \le j$,
and $\prod_{i=1}^n x_{i,i} = 1$. 
The first of these is naturally a semidirect product of the other two:
\begin{equation*}
\operatorname{ST}_n(\K) \, = \, 
\operatorname{Unip}_n(\K) \rtimes \operatorname{SD}_n(\K) .
\end{equation*}
Note that $\operatorname{SD}_n(\K)$ is abelian, indeed isomorphic to $(\K^\times)^{n-1}$.
We know that  $\operatorname{SD}_n(\K)$ is compactly generated
(because $\K^\times$ is, see Exampe \ref{localfieldscg}),
and therefore compactly presented
(see Proposition \ref{abeliancg=cp}).
\end{notat}

\begin{thm}
\label{SLnKiscp}
Consider an integer $n \ge 2$ and a non-discrete locally compact field $\K$.
\begin{enumerate}[noitemsep,label=(\arabic*)]
\item\label{1DESLnKiscp}
The special triangular group $\operatorname{ST}_n(\K)$ is compactly presented.
\index{Triangular group}
\item\label{2DESLnKiscp}
The special linear group $\SL_n(\K)$ is compactly presented.
\end{enumerate}
\end{thm}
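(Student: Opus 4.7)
The plan is to prove (1) by applying Lemma~\ref{cpsemidirectproduct} to the natural semidirect decomposition
$\operatorname{ST}_n(\K) = \operatorname{Unip}_n(\K) \rtimes \operatorname{SD}_n(\K)$,
and then to deduce (2) by a cocompactness argument using Corollary~\ref{hereditaritycp}\ref{1DEhereditaritycp}.

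For (1), I would first record that $\operatorname{SD}_n(\K) \simeq (\K^\times)^{n-1}$
is a compactly generated abelian LC-group, by Example~\ref{localfieldscg}.
To invoke Lemma~\ref{cpsemidirectproduct} with $N := \operatorname{Unip}_n(\K)$
and $A := \operatorname{SD}_n(\K)$, I must produce a compactly presented open
subgroup $L$ of $N$ and an element $a_0 \in A$ whose conjugation action
engulfs $N$ into $L$ in the sense of Definition~\ref{defcontracts}.
If $\K$ is archimedean, $N$ is itself a connected simply connected nilpotent Lie group,
hence compactly presented by Proposition~\ref{connbycompcp}; so it suffices to take
$L = N$, the engulfing condition being trivial for any $a_0$.
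If $\K$ is a non-archimedean local field, with maximal compact subring $\mathfrak{o}$
and uniformizer $\pi$, then $L := \operatorname{Unip}_n(\mathfrak{o})$ is a compact
(hence compactly presented) open subgroup of $N$. Setting
$b_i = n - 2i + 1$ for $i = 1, \dots, n$, so that
$b_1 > b_2 > \cdots > b_n$ and $\sum b_i = 0$, the matrix
$a_0 := \operatorname{diag}(\pi^{b_1}, \dots, \pi^{b_n})$ lies in $\operatorname{SD}_n(\K)$,
and conjugation by $a_0$ scales the $(i,j)$-entry of any unipotent matrix
(for $i < j$) by $\pi^{b_i - b_j}$, whose absolute value equals
$|\pi|^{2(j-i)} < 1$. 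This yields simultaneously $a_0 L a_0^{-1} \subset L$ and
$\bigcup_{k \ge 0} a_0^{-k}(L) = N$, so Lemma~\ref{cpsemidirectproduct}
applies and gives the compact presentation of $\operatorname{ST}_n(\K)$.

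For (2), I would verify that $\operatorname{ST}_n(\K)$ is a closed cocompact subgroup
of $\SL_n(\K)$: indeed, the flag variety $\SL_n(\K)/\operatorname{ST}_n(\K)$ is
a projective variety over $\K$, hence compact in the $\K$-topology.
Equivalently, one has an Iwasawa-type decomposition
$\SL_n(\K) = K \cdot \operatorname{ST}_n(\K)$ where $K$ is a maximal compact subgroup
(namely $\SO(n)$ or $\operatorname{SU}(n)$ in the archimedean case,
and $\SL_n(\mathfrak{o})$ in the non-archimedean case).
Combined with (1), Corollary~\ref{hereditaritycp}\ref{1DEhereditaritycp}
then yields the compact presentation of $\SL_n(\K)$.

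I expect the main subtle point to lie in the non-archimedean part of (1):
the contraction estimate must hold uniformly for all strict-upper-triangular
root subgroups simultaneously, which forces the exponents $b_i$ to form a
strictly decreasing sequence. A coarser choice such as
$\operatorname{diag}(\pi, \pi, \ldots, \pi, \pi^{-(n-1)})$ would not suffice,
because then conjugation would fix the root subgroups with $1 \le i < j \le n-1$.
Once the correct $a_0$ is chosen, the engulfing condition reduces to an
elementary absolute value inequality on $\K$, and everything else is formal,
with the cocompactness in step (2) being standard but worth recording
via the projective-variety viewpoint.
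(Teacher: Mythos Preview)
Your proposal is correct and follows essentially the same route as the paper: for (1) in the non-archimedean case you apply Lemma~\ref{cpsemidirectproduct} with the compact open subgroup $\operatorname{Unip}_n(\mathfrak{o})$ and the same diagonal element $\operatorname{diag}(\pi^{n-1},\pi^{n-3},\ldots,\pi^{-(n-1)})$ the paper uses, and for (2) you invoke cocompactness of $\operatorname{ST}_n(\K)$ in $\SL_n(\K)$ together with Corollary~\ref{hereditaritycp}. The only cosmetic difference is in the archimedean case: the paper dispatches both groups at once by citing connectedness and Proposition~\ref{connbycompcp}, whereas you keep the semidirect framework and take $L=N$ with trivial engulfing; both are fine.
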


\begin{proof}
\emph{First case:} $\K$ is Archimedean.
These groups are connected,
hence both claims follow from Proposition \ref{connbycompcp}.
\par

\emph{Second case:} $\K$ is a local field.
Let $\vert \cdot \vert$ be an absolute value on $\K$ which makes it a complete field.
Recall that $\K$ has a maximal compact subring
\begin{equation*}
\mathfrak o_{\K} \, \, = \, 
\big\{ x \in \K    \hskip.1cm \big\vert \hskip.1cm     
\{x^n \mid n \ge 1 \}  \hskip.2cm \text{is relatively compact}  \big\} 
\, = \,
\big\{ x \in \K   \hskip.1cm \big\vert \hskip.1cm   \vert x \vert \le 1 \big\}
\end{equation*}
as in Example \ref{panoramalocalfield} and Remark \ref{onabsolutevalues}.
Denote by $\operatorname{Unip}_n(\mathfrak o_{\K})$ 
the compact open subgroup of $\operatorname{Unip}_n(\K)$
of matrices with coefficients in $\mathfrak o_{\K}$.
\par

Choose $\lambda \in \K$ with $\vert \lambda \vert < 1$.
The element 
\begin{equation*}
\operatorname{diag}(
\lambda^{n-1}, \lambda^{n-3}, \lambda^{n-5}, \hdots,
\lambda^{-n+1})
\, \in \,  \operatorname{SD}_n(\K)
\end{equation*}
engulfs $\operatorname{Unip}_n(\K)$ into the 
compact open group $\operatorname{Unip}_n( \mathfrak o_{\K} )$.
It follows from Proposition \ref{cpsemidirectproduct} that 
$\operatorname{ST}_n(\K)$ is compactly presented.
\par

Since $\operatorname{ST}_n(\K)$ is cocompact in $\SL_n(\K)$,
Claim \ref{2DESLnKiscp} follows from \ref{1DESLnKiscp} 
by Corollary \ref{hereditaritycp}.
\end{proof}

More generally, we have the following important generalization of \ref{2DESLnKiscp}:

\begin{thm}[Behr]
\index{Theorem! Behr}
\label{TheoremBehr}
Let $\mathbf G$ be a reductive group defined over a local field $\K$.
\index{Reductive group}
The group of $\K$-points of $\mathbf G$ is compactly presented.
\index{Local field}
\end{thm}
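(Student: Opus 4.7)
The plan is to deduce Behr's theorem from the geometric criterion for compact presentation (Corollary 8.A.8) applied to the Bruhat--Tits building of $\mathbf G$ over $\K$. Specifically, if $\K$ is Archimedean, then $G := \mathbf G(\K)$ is a real Lie group with finitely many connected components (Remark 2.E.8), hence connected-by-compact, hence compactly presented by Proposition 8.A.21. Thus the interesting case is $\K$ non-Archimedean, i.e., $\K$ a local field, and the plan is to exhibit a geometric action of $G$ on a non-empty simply connected geodesic metric space.

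First, I would reduce to the case where $\mathbf G$ is semisimple. Writing $\mathbf Z = \mathbf Z(\mathbf G)^\circ$ for the connected center (a torus defined over $\K$) and $\mathbf G^{\mathrm{der}}$ for the derived group (a semisimple $\K$-group), the multiplication $\mathbf Z \times \mathbf G^{\mathrm{der}} \to \mathbf G$ is a central $\K$-isogeny with finite kernel. Taking $\K$-points, there is a compactly generated (indeed abelian) group $\mathbf Z(\K)$ which is compactly presented by Proposition 8.A.22, a semisimple part $\mathbf G^{\mathrm{der}}(\K)$ which I will handle below, and a short exact sequence relating these to a cocompact subgroup of $G$. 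An application of Proposition 8.A.15(3) (compact presentation is preserved under extensions) together with Corollary 8.A.16(1) (invariance under cocompact inclusions) reduces the problem to compact presentation of semisimple groups over $\K$.

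For the semisimple case, I invoke Bruhat--Tits theory: there exists a Euclidean building $X = X(\mathbf G, \K)$, a polyhedral CAT(0) complex, equipped with a natural continuous action of $G$ by cellular isometries. Being CAT(0), the space $X$ is in particular a non-empty geodesic simply connected metric space. The stabilizers in $G$ of points (more generally of bounded subsets) of $X$ are the parahoric subgroups, which are compact open subgroups of $G$; this gives continuity, metric properness, and local boundedness. Cocompactness follows from the fact that a closed Weyl chamber in an apartment is a fundamental domain for the action of $G$, and is compact modulo a compact number of facets. Thus the action is geometric in the sense of Definition 4.C.1, and Corollary 8.A.8 (implication \ref{iiiDEanaloguede4.D.5} $\Rightarrow$ \ref{iDEanaloguede4.D.5}) yields that $G$ is compactly presented.

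The main obstacle is of course the input from Bruhat--Tits theory: the existence of the building $X$ and the verification of its fundamental properties (CAT(0) geometry, properness of stabilizers, existence of a compact fundamental domain) constitute a substantial body of theory (Bruhat--Tits 1972, 1984) which I would take as a black box, citing the standard sources. A more elementary but less conceptual alternative would be to imitate the proof of Theorem 8.D.9 for $\SL_n(\K)$: use Iwasawa or Bruhat decomposition to exhibit a cocompact solvable subgroup $B = U \rtimes T$ of $G$ with $U$ unipotent and $T$ a maximal $\K$-split torus, observe that some element of $T$ engulfs $U$ into the compact open subgroup $U(\mathfrak o_\K)$, apply Lemma 8.C.18 to conclude that $B$ is compactly presented, and then deduce compact presentation of $G$ from Corollary 8.A.16(1). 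The engulfing step requires checking that the roots of $\mathbf G$ relative to a maximal $\K$-split torus all have strictly positive values on a common one-parameter subgroup, which is a standard property of reductive $\K$-groups and is the main point where the hypothesis of reductiveness is used (compare Proposition 8.B.2 for the non-reductive case).
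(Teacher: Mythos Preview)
Your proposal is correct and aligns with the paper's own treatment: the paper does not carry out a proof but simply indicates the Bruhat--Tits building strategy and defers to \cite{Behr--67} and \cite{PlRa--94}, which is precisely your main line of argument via Corollary~\ref{analoguede4.D.5}. Your reduction to the semisimple case is valid but can be bypassed by working with the extended Bruhat--Tits building (which already accounts for the split center); your alternative route via a cocompact solvable subgroup $U \rtimes S$ and Lemma~\ref{cpsemidirectproduct} is also correct and is the natural generalization of the paper's Theorem~\ref{SLnKiscp} for $\SL_n$, though you should note that $U \rtimes S$ is cocompact in $G$ only after using that the anisotropic kernel has compact $\K$-points.
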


\begin{proof}[On the proof.] 
The proof goes beyond the scope of the present book.
A natural strategy is to use the natural continuous geometric action
of the group $\mathbf G (\K)$ on a geodesic contractible metric space,
more precisely on its Bruhat-Tits building.
See \cite{Behr--67}, or
 \cite[Th.\ 3.15, p.\ 152]{PlRa--94}.
 \end{proof}

\begin{rem}
Let $\mathbf G$ be as in the previous theorem, 
and $\Gamma$ a lattice in the group of $\mathbf K$-points of $\mathbf G$.
Assume that $\Gamma$ is cocompact
(this is always the case if $\mathbf K$ is a local field of characteristic $0$,
as already noted in Example \ref{exlattcesinlocal}(1).
It follows from Theorem \ref{TheoremBehr} that $\Gamma$ is finitely presented.
\index{Lattice! in an LC-group} 
\par

There exists an abundant literature 
(of which we quote here only \cite{BoSe--76})
on this and stronger finiteness properties
of these groups, and more generally on ``$S$-arithmetic groups''. 
\end{rem}

\addcontentsline{toc}{chapter}{Index}
\printindex


\begin{thebibliography}{AkGW--08}

\addcontentsline{toc}{chapter}{Bibliography}


\bibitem[Montaigne]{}

\begin{itemize}\item[]
\emph{Je 
feuillete les livres, je ne les estudie pas : Ce qui m'en demeure, c'est chose que je ne reconnoy plus estre d'autruy : C'est cela seulement, dequoy mon jugement a faict son profit : les discours et les imaginations, dequoy il s'est imbu. L'autheur, le lieu, les mots, et autres circonstances, je les oublie incontinent : Et suis si excellent en l'oubliance, que mes escripts mesmes et compositions, je ne les oublie pas moins que le reste. 
}
\item[]
\hfill
In \emph{Les Essais} \cite[Livre II, Chapitre 17]{M--1595}.
\end{itemize}

 
 \bibitem[Abel--69]{Abel--69} 
Herbert \textsc{Abels},
\emph{\"Uber eigentliche Transformationsgruppen},                            
Math.\  Z.\ 110 (1969), 75--110. 

\bibitem[Abel--72]{Abel--72} 
Herbert \textsc{Abels},
\emph{Kompakt definierbare topologische Gruppen},                            
Math.\ Ann.\  \textbf{197} (1972), 221--233. 

\bibitem[Abel--72']{Abel--72'} 
Herbert \textsc{Abels},
\emph{Normen auf freien topologischen Gruppen},
Math.\  Z.\ \textbf{129} (1972), 25--42.

\bibitem[Abel--74]{Abel--74} 
Herbert \textsc{Abels},
\emph{Specker-Kompaktifizierungen von lokal kompakten topologische Gruppen},
Math.\  Z.\ \textbf{135} (1973/74), 325--362.

\bibitem[Abel--77]{Abel--77} 
Herbert \textsc{Abels},
\emph{On a problem of Freudenthal's},
Compositio Math.\ \textbf{35} (1977), 39--47.

\bibitem[Abel--78]{Abel--78} 
Herbert \textsc{Abels},
\emph{Distal affine transformation groups},
J.\ reine angew.\ Math.\ \textbf{299/300} (1978), 294--300.

\bibitem[Abel--87]{Abel--87} 
Herbert \textsc{Abels},
\emph{Finite presentability of $S$-arithmetic groups,
compact presentability of solvable groups}.
Lecture Notes in Math.\ \textbf{1261}, 1987.

\bibitem[Abel--04]{Abel--04} 
Herbert \textsc{Abels},
\emph{Reductive groups as metric spaces},
London Math.\ Soc.\ Lecture Note Ser.\ \textbf{311}, Cambridge Univ.\ Press (2004), 1--20.

\bibitem[AbMa--04]{AbMa--04}
Herbert \textsc{Abels} and Gregori \textsc{Margulis},
\emph{Coarsely geodesic metrics on reductive groups},
in ``Modern dynamical systems and applications'', Cambridge Univ.\ Press (2004), 163--183.


\bibitem[AbMN--11]{AbMN--11}
Herbert \textsc{Abels}, Antonios \textsc{Manoussos}, and Gennady \textsc{Noskov},
\emph{Proper actions and proper invariant metrics},
J.\ London Math.\ Soc.\ \textbf{83} (2011), 619--636.

\bibitem[AbTi--97]{AbTi--97}
Herbert \textsc{Abels} and Andreas \textsc{Tiemeyer},
\emph{Compactness properties of locally compact groups},
Transform.\ Groups \textbf{2} (1997), 119--135.

\bibitem[Aber--94]{Aber--94}
Alexander G.\ \textsc{Abercrombie},
\emph{Subgroups and subrings of profinite rings},
Math.\ Proc.\ Cambridge Philos.\ Soc.\ \textbf{116} (1994), 209--222.

\bibitem[AkGW--08]{AkGW--08} 
Ethan \textsc{Akin}, Eli \textsc{Glasner}, and Benjamin \textsc{Weiss},
\emph{Generically there is but one self homeomorphism of the Cantor set},
Trans.\ Amer.\ Math.\ Soc.\ \textbf{360} (2008), 3613--3630.

\bibitem[AlCG--13]{AlCG--13}
Jos\'e \textsc{Aliste-Prieto}, Daniel \textsc{Coronel}, and Jean-Marc \textsc{Gambaudo},
\emph{Linearly repetitive Delone sets are rectifiable},
Ann.\ Inst.\ H.\ Poincar\'e Anal.\ Non Lin\'eaire \textbf{30} (2013), 275--290.

\bibitem[Alpe--82]{Alpe--82}
Roger \textsc{Alperin},
\emph{Locally compact groups acting on trees},
Pacific J.\ Math.\ \textbf{100} (1982), 23--32.


\bibitem[ArGS--12]{ArGS--12}
Goulnara \textsc{Arzhantseva}, Erik \textsc{Guentner}, and J\'an \textsc{Spakula},
\emph{Coarse non-amenability and coarse embeddings},
Geom.\ Funct.\ Anal.\ \textbf{22} (2012), 22--36.

\bibitem[Ausl--63]{Ausl--63}
Louis \textsc{Auslander},
\emph{On radicals of discrete subgroups of Lie groups},
Amer.\ J.\ Math.\ \textbf{85} (1963), 145--150.

\bibitem[Baer--56]{Baer--56}
Reinhold \textsc{Baer},
\emph{Noethersche Gruppen},
Math.\ Z.\ \textbf{66} (1956), 269--288.

\bibitem[BCGM--12]{BCGM--12}
Uri \textsc{Bader}, Pierre-Emmanuel \textsc{Caprace}, Tsachik \textsc{Gelander},
and Shahar \textsc{Mozes},
\emph{Simple groups without lattices},
Bull.\ London Math.\ Soc.\ \textbf{44} (2012), 55--67.

\bibitem[BaGS--85]{BaGS--85}
Werner \textsc{Ballmann}, Mikhael \textsc{Gromov}, and Viktor \textsc{Schroeder},
\emph{Manifolds of nonpositive curvature}.
Birkh\"auser, 1985.

\bibitem[BaHZ--10]{BaHZ--10}
Taras O.\ \textsc{Banakh}, J.\ \textsc{Higes}, and Ihor \textsc{Zarichnyi},
\emph{The coarse classification of countable abelian groups},
Trans.\ Amer.\ Math.\ Soc.\ \textbf{362} (2010), 4755--4780.

\bibitem[BCRZ--14]{BCRZ--14}
Taras O.\ \textsc{Banakh}, Matija \textsc{Cencelj}
Dusan \textsc{Repovs}, and Ihor \textsc{Zarichnyi},
\emph{Coarse classification of Abelian groups and amenable shift-homogeneous metric spaces},
Q.\ J.\ Math.\ \textbf{65} (2014), 1127--1144.

\bibitem[Bana--83]{Bana--83}
Wojciech \textsc{Banaszczyk},
\emph{On the existence of exotic Banach-Lie groups},
Math.\ Ann.\ \textbf{264} (1983), 485--493.

\bibitem[Bana--91]{Bana--91}
Wojciech \textsc{Banaszczyk},
\emph{Additive subgroups of topological vector spaces},
Lecture Notes in Math.\ \textbf{1466}, Springer 1991.

\bibitem[BaSh--97]{BaSh--97}
Yiftach \textsc{Barnea} and Aner \textsc{Shalev},
\emph{Hausdorff dimension, pro-$p$-groups, and Kac-Moody algebras},
Trans.\ Amer.\ Math. \ Soc.\ \textbf{349} (1997), 5073--5091.

\bibitem[Bart--98]{Bart--98}
Laurent \textsc{Bartholdi},
\emph{The growth of Grigorchuk's torsion group},
Int.\ Math.\ Res.\ Not.\ \textbf{20} (1998), 1049--1054.

\bibitem[BaKu--90]{BaKu--90}
Hyman \textsc{Bass} and Ravi \textsc{Kulkarni}, 
\emph{Uniform tree lattices},
J.\ Amer.\ Math.\ Soc.\ \textbf{3}
(1990), 843--902.

\bibitem[BaLu--01]{BaLu--01}
Hyman \textsc{Bass} and Alexander \textsc{Lubotzky},
\emph{Tree lattices},
with appendices by H.\ Bass, L.\ Carbone, A.\ Lubotzky, G.\ Rosenberg, and J.\ Tits,
Birkh\"auser, 2001.


\bibitem[Baum--61]{Baum--61}
Gilbert \textsc{Baumslag},
\emph{Wreath products and finitely presented groups},
Math.\ Zeitschr.\ \textbf{75} (1961), 22--28.

\bibitem[Baum--93]{Baum--93}
Gilbert \textsc{Baumslag},
\emph{Topics in combinatorial group theory}, 
Lectures in Mathematics, ETH Z\"urich, Birkh\"auser, 1993.

\bibitem[Bava--91]{Bava--91}
Christophe \textsc{Bavard},
\emph{Longueur stable des commutateurs},
L'Ens.\ Math.\ \textbf{37} (1991), 109--150.

\bibitem[Beck--98]{Beck--98}
Howard \textsc{Becker},
\emph{Polish group actions: dichotomies and generalized elementary embeddings},
J.\ Amer.\ Math.\ Soc.\ \textbf{11} (1998), 397--449.

\bibitem[Behr--62]{Behr--62} 
Helmut \textsc{Behr}, 
\emph{\"Uber die endliche Definierbarkeit verallgemeinerter Einheitgruppen},
J.\ reine angew.\ Math.\ \textbf{211} (1962), 123--135.

\bibitem[Behr--67]{Behr--67} 
Helmut \textsc{Behr}, 
\emph{\"Uber die endliche Definierbarkeit verallgemeinerter Einheitgruppen. II}, 
Invent.\ Math.\ {\bf 4} (1967), 265--274.

\bibitem[Behr--69]{Behr--69} 
Helmut \textsc{Behr}, 
\emph{Endliche Erzeugbarkeit arithmetischer Gruppen \"uber Funktionenk\"orpern}, 
Invent.\ Math.\ {\bf 7} (1969), 1--32.

\bibitem[Behr--79]{Behr--79}
Helmut \textsc{Behr},
\emph{$SL_3(\mathbf F_q[T])$ is not finitely presentable},
in ``Proceedings Durham, 1977'',
London Math.\ Soc.\ Lecture Notes \textbf{36}
(Cambridge Univ.\ Press 1979), 213--224.

\bibitem[BeHV--08]{BeHV--08}
Bachir \textsc{Bekka}, Pierre \textsc{de la Harpe}, and Alain \textsc{Valette},
\emph{Kazhdan's Property (T)}.
Cambridge University Press, 2008.

\bibitem[BeDr--08]{BeDr--08}
Greg \textsc{Bell} and Alexander \textsc{Dranishnikov},
\emph{Asymptotic dimension},
Topology Appl.\ \textbf{155} (2008), 1265--1296.

\bibitem[BeDr--11]{BeDr--11}
Greg \textsc{Bell} and Alexander \textsc{Dranishnikov},
\emph{Asymptotic dimension in Bedlewo},
Topology Proc.\ \textbf{38} (2011), 209--236.


\bibitem[BeGH--13]{BeGH--13}
Mustafa G\"okhan \textsc{Benli}, Rostislav \textsc{Grigorchuk}, and Pierre \textsc{de la Harpe},
\emph{Groups without finitely presented amenable covers},
Bull.\ Math.\ Sci.\ \textbf{3} (2013), 73--131.

\bibitem[BeQu--14]{BeQu--14}
Yves \textsc{Benoist} and Jean-Fran\c cois \textsc{Quint},
\emph{Lattices in $S$-adic Lie groups},
J.\ Lie Theory \textbf{24} (2014), 179--197.

\bibitem[Berg--06]{Berg--06}
George M.\  \textsc{Bergman},
\emph{Generating infinite symmetric groups},
Bull.\ London Math.\ Soc.\ \textbf{38} (2006), 429--440.

\bibitem[BeSh--06]{BeSh--06}
George M.\  \textsc{Bergman} and Saharon \textsc{Shelah},
\emph{Closed subgroups of the infinite symmetric groups},
Algebra Universalis \textbf{55} (2006), 137--173.

\bibitem[Bier--79]{Bier--79}
Robert \textsc{Bieri},
\emph{Finitely presented soluble groups},
S\'eminaire d'alg\`ebre Paul Dubreuil, 1977-78,
Springer Lecture Notes in Math.\ \textbf{740} (1979), 1--8.

\bibitem[BiSt--78]{BiSt--78}
Robert \textsc{Bieri} and Ralph \textsc{Strebel},
\emph{Almost finitely presented soluble groups},
Comment.\ Math.\ Helv.\ \text{53} (1978), 258--278.

\bibitem[Birk--36]{Birk--36}
Garrett \textsc{Birkhoff},
\emph{A note on topological groups},
Compositio Math.\ \textbf{3} (1936), 427--430.

\bibitem[BlFu--13]{BlFu--13}
J\'er\'emy \textsc{Blanc} and Jean-Philippe \textsc{Furter},
\emph{Topologies and structures of the Cremona groups},
Annals of Math.\  \textbf{178} (2013), 1173--1198.

\bibitem[BlWe-92]{BlWe--92}
Jonathan \textsc{Block} and Shmuel \textsc{Weinberger},
\emph{Aperiodic tilings, positive scalar curvature, and amenability of spaces},
J.\ Amer.\ Math.\ Soc.\ \textbf{5} (1992), 907--918.

\bibitem[Bolt--78]{Bolt--78}
Vladimir G.\ \textsc{Boltianskii},
\emph{Hilbert's third problem}.
J.\ Wiley, 1978.

\bibitem[Bore--50]{Bore--50}
Armand \textsc{Borel},
\emph{Groupes localement compacts},
S\'eminaire Bourbaki, expos\'e \textbf{29} (1950).

\bibitem[Bore--55]{Bore--55}
Armand \textsc{Borel},
\emph{Topology of Lie groups and characteristic classes},
Bull.\ Amer.\ Math.\ Soc.\ \textbf{61} (1955), 397--432. 

\bibitem[Bore--62]{Bore--62}
Armand \textsc{Borel},
\emph{Arithmetic properties of linear algebraic groups},
Proc.\ Internat.\ Congr.\ Mathematicians (Stockholm, 1962),
pp.\ 10--22.

\bibitem[Bore--63]{Bore--63}
Armand \textsc{Borel},
\emph{Compact Clifford-Klein forms of symmetric spaces},
Topology \textbf{2} (1963), 111--122.

\bibitem[Bore--69]{Bore--69}
Armand \textsc{Borel},
\emph{Introduction aux groupes arithm\'etiques}.
Hermann, 1969.

\bibitem[Bore--91]{Bore--91}
Armand \textsc{Borel},
\emph{Linear algebraic groups}, second enlarged edition.
Springer, 1991.

\bibitem[Bore--01]{Bore--01}
Armand \textsc{Borel},
\emph{Essays in the theory of Lie groups and algebraic groups},
History of Mathematics \textbf{21}, Amer.\ Math.\ Soc., 2001.

\bibitem[BoHa--61]{BoHa--61}
Armand \textsc{Borel} and \textsc{Harish-Chandra},
\emph{Arithmetic subgroups of algebraic groups},
Bull.\ Amer.\ Math.\ Soc.\  \textbf{67} (1961), 579--583.

\bibitem[BoHa--62]{BoHa--62}
Armand \textsc{Borel} and \textsc{Harish-Chandra},
\emph{Arithmetic subgroups of algebraic groups},
Annals of Math.\ \textbf{75} (1962), 485--535.

\bibitem[BoSe--76]{BoSe--76}
Armand \textsc{Borel} and Jean-Pierre \textsc{Serre},
\emph{Cohomologie d'immeubles et de groupes $S$-arithm\'etiques},
Topology \textbf{15} (1976), 211--232.

\bibitem[BoTi--65]{BoTi--65}
Armand \textsc{Borel} and Jacques \textsc{Tits},
\emph{Groupes r\'eductifs},
Publ.\ Math.\ IH\'ES \textbf{27} (1965), 55--151.

\bibitem[BoHa--78]{BoHa--78}
Armand \textsc{Borel} and G\"unter \textsc{Harder},
\emph{Existence of discrete cocompact subgroups 
of reductive groups over local fields},
J.\ reine angew.\ Math.\ \textbf{298} (1978), 53--64.

\bibitem[Bott--59]{Bott--59}
Raoul \textsc{Bott},
\emph{The stable homotopy of the classical groups},
Annals of Math.\ \textbf{70} (1959), 313--337.

\bibitem[BAlg4-7]{BAlg4-7}
Nicolas \textsc{Bourbaki},
\emph{Alg\`ebre, chapitres 4 \`a 7}.
Masson, 1981.

\bibitem[BEVT]{BEVT}
Nicolas \textsc{Bourbaki},
\emph{Espaces vectoriels topologiques, chapitres 1 et 2}.
Hermann, 1966.

\bibitem[BGLA2-3]{BGLA2-3}
Nicolas \textsc{Bourbaki},
\emph{Groupes et alg\`ebres de Lie, chapitres 2 et 3}.
Hermann, 1972.

\bibitem[BInt7-8]{BInt7-8}
Nicolas \textsc{Bourbaki},
\emph{Int\'egration, chapitres 7 et 8}.
Hermann, 1963.

\bibitem[BTG1-4]{BTG1-4}
Nicolas \textsc{Bourbaki},
\emph{Topologie g\'en\'erale, chapitres 1 \`a 4}.
 Hermann, 1971.

\bibitem[BTG5-10]{BTG5-10}
Nicolas \textsc{Bourbaki},
 \emph{Topologie g\'en\'erale, chapitres 5 \`a 10}.
Hermann, 1974.
   
\bibitem[BTS1-2]{BTS1-2}
Nicolas \textsc{Bourbaki},
\emph{Th\'eories spectrales, chapitres 1 et 2}. 
Hermann, 1967.

\bibitem[Bour--86]{Bour--86}
Jean \textsc{Bourgain}, 
\emph{The metrical interpretation of superreflexivity in Banach spaces}, 
Israel J.\ Math.\ \textbf{56}:2 (1986), 222--230. 

\bibitem[Brac--48]{Brac--48}
Jean \textsc{Braconnier},
\emph{Sur les groupes topologiques localement compacts},
J.\ Math.\ Pures Appl.\ \textbf{27} (1948), 1--85.

\bibitem[Brau--65]{Brau--65}
Richard \textsc{Brauer}, 
\emph{On the relation between the orthogonal group and the unimodular group},
Arch.\ Rational Mech.\ Anal.\ \textbf{18} (1965), 97--99.

\bibitem[Breu--14]{Breu--14} 
Emmanuel \textsc{Breuillard}, 
\emph{Geometry of locally compact groups of polynomial growth 
and shape of large balls},
Groups Geom.\ Dyn.\ \textbf{8} (2014), 669--732.

\bibitem[Breu]{Breu} 
Emmanuel \textsc{Breuillard}, 
\emph{Notes du cours Peccot 2006 : Propri\'et\'es qua\-litatives des groupes discrets}, 
\hfill \newline
\url{http://www.math.u-psud.fr/~breuilla/Peccot4.pdf}

\bibitem[BrGe--03]{BrGe--03} 
Emmanuel \textsc{Breuillard} and Tsachik \textsc{Gelander},
\emph{On dense free subgroups of Lie groups},
J.\ of Algebra \textbf{261} (2003), 448--467.

\bibitem[BrGe--07]{BrGe--07} 
Emmanuel \textsc{Breuillard} and Tsachik \textsc{Gelander},
\emph{A topological Tits alternative},
Annals of Math.\ \textbf{166} (2007), 427--474.

\bibitem[BrHa--99]{BrHa--99}
Martin \textsc{Bridson} and Andr\'e \textsc{Haefliger},
\emph{Metric spaces of non-positive curvature}.
Grundlehren der mathematischen Wissenschaften 
\textbf{319}, Springer, 1999.

\bibitem[Brow--82]{Brow--82}
Kenneth  S.~\textsc{Brown},
Cohomology of groups.
Graduate Texts in Math.\ \textbf{87}, Springer, 1982.

\bibitem[Bruh--64]{Bruh--64}
Fran\c cois \textsc{Bruhat},
\emph{Sous-groupes compacts maximaux des groupes semi-simples $p$-adiques},
S\'eminaire Bourbaki, 16e ann\'ee, 1963/64, expos\'e 271 (mai 1964).

\bibitem[BuKl--98]{BuKl--98}
Dimitri \textsc{Burago} and Bruce \textsc{Kleiner}, 
\emph{Separated nets in Euclidean space and Jacobians of biLipschitz maps},
Geom. Funct. Anal. \textbf{8} (1998), 273--282.

\bibitem[BuKl--02]{BuKl--02}
Dimitri \textsc{Burago} and Bruce \textsc{Kleiner}, 
\emph{Rectifying separated nets},
Geom. Funct. Anal. \textbf{12} (2002), 80--92.

\bibitem[Buse--55]{Buse--55}
Herbert \textsc{Busemann},
\emph{The geometry of geodesics}.
Academic Press, 1955.

\bibitem[CaFr--06]{CaFr--06}
Danny \textsc{Calegari} and Michael H.\ \textsc{Freedman},
\emph{Distortion in transformation groups},
with an appendix by Yves \textsc{Cornulier},
Geom.\ Topol.\ \textbf{10} (2006), 267--293.

\bibitem[Cann--02]{Cann--02}
James W.\ \textsc{Cannon},
\emph{Geometric group theory},
in ``Handbook of geometric topology'',
R.J.\ Daverman and R.B.\ Sher Editors
(North-Holland, 2002),
261--305.

\bibitem[Capr--09]{Capr--09}
Pierre-Emmanuel \textsc{Caprace}, 
\emph{Amenable groups and Hadamard spaces 
with a totally disconnected isometry group},
Comment.\ Math.\ Helv.\ \textbf{84} (2009), 437--455.

\bibitem[CaCo--14]{CaCo--14}
Pierre-Emmanuel \textsc{Caprace} and Yves \textsc{Cornulier},
\emph{On embeddings into compactly generated groups},
Pacific J.\ Math.\ \textbf{269} (2014), 305--321.

\bibitem[CaMo--11]{CaMo--11}
Pierre-Emmanuel \textsc{Caprace} and Nicolas \textsc{Monod}, 
\emph{Decomposing locally compact groups into simple pieces},
Math.\ Proc.\ Cambridge Philos.\ Soc.\ \textbf{150} (2011), 97--128.

\bibitem[CCMT--15]{CCMT--15}
Pierre-Emmanuel \textsc{Caprace}, Yves \textsc{Cornulier},
Nicolas \textsc{Monod}, and Romain \textsc{Tessera},
\emph{Amenable hyperbolic groups}, 
J.\ Eur.\ Math.\ Soc.\ \textbf{17} (2015), 2903--2947.

\bibitem[CaRW--I]{CaRW--I}
Pierre-Emmanuel \textsc{Caprace}, Colin D.\ \textsc{Reid} and George A.\ \textsc{Willis},
\emph{Locally normal subgroups of totally disconnected groups; 
Part I: general theory},
arXiv:1304:5144v1, 18 Apr 2013.

\bibitem[CaRW--II]{CaRW--II}
Pierre-Emmanuel \textsc{Caprace}, Colin D.\ \textsc{Reid} and George A.\ \textsc{Willis},
\emph{Locally normal subgroups of totally disconnected groups; 
Part II: compactly generated simple groups},
arxiv:1401:3142v1, 14 Jan 2014.

\bibitem[Cart--86]{Cart--86}
Pierre \textsc{Cartier},
\emph{D\'ecomposition des poly\`edres~: le point sur le trois\`eme probl\`eme de Hilbert},
S\'eminaire Bourbaki \textbf{646}, Ast\'erisque \textbf{133/134} (1986), 261--288.

\bibitem[Cass--86]{Cass--86}
John Eilliam Scott \textsc{Cassels},
\emph{Local fields}.
Cambridge University Press, 1986.
   
\bibitem[Comf--84]{Comf--84}
Wistar \textsc{Comfort},
\emph{Topological groups}.
In ``Handbook of set-theoretic topology'',
K.\ Kunen and J.E.\ Vaughan Editors,
North Holland (1984), 1143--1263.

\bibitem[CoIt--77]{CoIt--77}
Wistar \textsc{Comfort} and Gerald \textsc{Itzkowitz},
\emph{Density character in topological groups},
Math.\ Ann.\ \textbf{226} (1977), 223--227.

\bibitem[CoGu--74]{CoGu--74}
Jean-Pierre \textsc{Conze} and Yves \textsc{Guivarc'h},
\emph{Remarques sur la distalit\'e dans les espaces vectoriels},
C.R.\ Acad.\ Sci.\ Paris S\'er.\ I Math.\ \textbf{278} (1974), 1083--1086. 

\bibitem[Corn--06]{Corn--06}
Yves \textsc{Cornulier},
\emph{Strongly bounded groups and infinite powers of finite groups},
Comm.\ Algebra \textbf{34} (2006), 2337--2345.

\bibitem[Corn--08]{Corn--08}
Yves \textsc{Cornulier},
\emph{Dimension of asymptotic cones of Lie groups},
J.\ Topology \textbf{1} (2008), 343--361.

\bibitem[Corn--09]{Corn--09}
Yves \textsc{Cornulier},
\emph{Infinite conjugacy classes in groups acting on trees},
Groups Geom.\ Dyn.\ \textbf{3} (2009), 267--277.

\bibitem[Corn--11]{Corn--11}
Yves \textsc{Cornulier}, 
\emph{Asymptotic cones of Lie groups and cone equivalences},
Illinois J.\ Math.\ \textbf{55} (2011), 237--259.

\bibitem[Corn--15]{Corn--15}
Yves \textsc{Cornulier},
\emph{Commability and focal locally compact groups},
Indiana Univ.\ Math.\ J.\ \textbf{64} (2015), 115--150.

\bibitem[Corn]{Corn}
Yves \textsc{Cornulier},
\emph{On the quasi-isometric classification of locally compact groups},
arXiv:1212.2229v3, 11 Jun 2016.

\bibitem[CoVa--15]{CoVa--15} 
Yves \textsc{Cornulier} and Alain \textsc{Valette},
\emph{On equivariant embeddings of generalized Baumslag-Solitar groups},
Geom.\ Dedicata \textbf{175} (2015), 385--401.

\bibitem[CoHa]{CoHa}
Yves \textsc{Cornulier} and Pierre \textsc{de la Harpe},
\emph{Zooming in on the large-scale geometry of locally compact groups},
in ``Handbook of group actions'' (several volumes),
Lizhen Ji, A.\ Papadopoulos and Shing-Tung Yau Editors,
International Press, to appear.

\bibitem[CoNa]{CoNa}
Mar\'{\i}a Isabel \textsc{Cortez} and Andr\`es \textsc{Navas},
\emph{Some examples of repetitive, non-rectifiable Delone sets},
Geom.\ Topol., to appear.

\bibitem[CuMo--87]{CuMo--87}
Marc \textsc{Culler} and John W.\ \textsc{Morgan},
\emph{Group actions on $\R$-trees},
Proc.\ London Math.\ Soc.\ (3) \textbf{55} (1987), 571--604.

\bibitem[Dant--36]{Dant--36} 
David \textsc{van Dantzig},
\emph{Zur topologischen Algebra. III. Brouwersche und Cantorsche Gruppen},
Compositio Math.\ \textbf{3} (1936), 408--426.

\bibitem[DaWa--28]{DaWa--28} 
David \textsc{van Dantzig} and Batel Leendert  \textsc{van der Waerden}, 
\emph{\"Uber metrisch homogene R\"aume},
Abh.\ Math.\ Semin.\ Univ.\ Hamburg \textbf{6} (1928), 367--376. 

\bibitem[Dehn--10]{Dehn--10}
Max \textsc{Dehn},
\emph{ \"Uber die Topologie des dreidimensionalen Raumes},
Math. Ann. \textbf{69} (1910), 137--168.
See also
\emph{On the topology of three-dimensional space},
third article in \cite{DeSt--87}.

\bibitem[Dehn--11]{Dehn--11}
Max \textsc{Dehn},
\emph{ \"Uber unendliche diskontinuierliche Gruppen},
Math.\ Ann.\ \textbf{71} (1912), 116--144.
(Volume 71 is dated 1912, but the first part, Pages 1-144,
appeared on July 25, 1911.)
See also
\emph{On infinite discontinuous groups},
fourth article in \cite{DeSt--87}.


\bibitem[DeSt--87]{DeSt--87}
Max \textsc{Dehn},
\emph{Papers on group theory and topology,
translated and introduced by John Stillwell}.
Springer, 1987.

\bibitem[Dieu--44]{Dieu--44}
Jean \textsc{Dieudonn\'e},
\emph{Sur la compl\'etion des groupes topologiques},
C.R.\ Acad.\ Sciences Paris \textbf{218} (1944), 774--776.

\bibitem[Dixm--57]{Dixm--57}
Jacques \textsc{Dixmier},
\emph{Quelques propri\'et\'es des groupes ab\'eliens localement compacts},
Bull.\ Sci.\ Math.\ \textbf{81} (1957), 38--48.


\bibitem[Dixm--69]{Dixm--69}
Jacques \textsc{Dixmier},
\emph{Les C$^*$--alg\`ebres et leurs repr\'esentations}.
Gauthier--Villars, 1969.

\bibitem[DrGo--15]{DrGo--15}
Lou \textsc{van den Dries} and Isaac \textsc{Goldbring},
\emph{Hilbert's 5th problem},
L'Ens.\ Math.\ \textbf{61} (2015), 3--43.

\bibitem[Drut--01]{Drut--01}
Cornelia \textsc{Drutu},
\emph{C\^ones asymptotiques et invariants de quasi-isom\'etrie
pour les espaces m\'etriques hyperboliques},
Ann.\ Inst.\ Fourier \textbf{51} (2001), 81--97.

\bibitem[Dugu--66]{Dugu--66}
James \textsc{Dugundji},
\emph{Topology}.
Allyn and Bacon, 1966.

\bibitem[Dyma--10]{Dyma--10} 
Tullia \textsc{Dymarz}, 
\emph{Bilipschitz equivalence is not equivalent to quasi-isometric equivalence
for finitely generated groups},
Duke Math.\ J.\ \textbf{154} (2010), 509--526. 

\bibitem[DyPT--15]{DyPT--15} 
Tullia \textsc{Dymarz}, Irine \textsc{Peng}, and Jennifer \textsc{Taback},
\emph{Bilipschitz versus quasi-isometric equivalence
for higher rank lamplighter groups},
New York J.\ Math.\ \textbf{21} (2015), 129--150.

\bibitem[Ebin--70]{Ebin--70}
David G.\ \textsc{Ebin},
\emph{The manifold of Riemannian metrics}, 
in ``1970 Global Analysis, Berkeley, 1968'',
Proc.\ Sympos.\ Pure Math., Vol.\ XV (Amer.\ Math.\ Soc., 1968), 11--40.

\bibitem[Efre--53]{Efre--53} 
Vadim Arsenyevich \textsc{Efremovi\v c},
\emph{On proximity geometry of Riemannian manifolds},
Uspekhi Math.\ Nauk.\ \textbf{8} (1953), 189--191. 
[English translation:
Amer.\ Math.\ Transl.\ (2) \textbf{39} (1964), 167--170.]

\bibitem[ElWi]{ElWi}
Murray \textsc{Elder} and George \textsc{Willis},
\emph{Totally disconnected groups from Baumslag-Solitar groups},
arXiv:1301.4775v2, 10 Mar 2013.

\bibitem[Elli--57]{Elli--57}
Robert \textsc{Ellis},
\emph{Locally compact transformation groups}, 
Duke Math.\ J.\ \textbf{24} (1957), 119--125. 

\bibitem[EmGr--67]{EmGr--67}
William R.\ \textsc{Emerson} and Frederick P.\ \textsc{Greenleaf},
\emph{Covering properties and F\o lner conditions for locally compact groups},
Math.\ Z.\ \textbf{102} (1967), 370--384.

\bibitem[Enge--89]{Enge--89}
Ryszard \textsc{Engelking},
\emph{General topology}.
Heldermann Verlag (1989).

\bibitem[EsFa--08]{EsFa--08} 
Alex \textsc{Eskin} and Benson \textsc{Farb},  
\emph{Quasi-flats and rigidity in higher rank symmetric spaces},
J.\ Amer.\ Math.\ Soc.\ \textbf{10} (1997), 653--692.


\bibitem[Farb--97]{Farb--97}
Benson \textsc{Farb},
\emph{The quasi-isometry classification of lattices in semisimple Lie groups},
Math.\ Res.\ Lett.\ \textbf{4} (1997), 705--717.

\bibitem[FaHT--11]{FaHT--11}
Benson \textsc{Farb}, Chris \textsc{Hruska} and Anne \textsc{Thomas},
\emph{Problems on automorphism groups of nonpositively curved polyhedral complexes
and their lattices},
in ``Geometry, rigidity, and group actions'', Chicago Lectures in Math.,
Chicago University Press, 2011, pp.\ 515--560.

\bibitem[FaMo--02]{FaMo--02}
Benson \textsc{Farb} and Lee \textsc{Mosher},
\emph{Convex cocompact subgroups of mapping class groups},
Geometry \& Topology \textbf{6} (2002), 91--152.

\bibitem[FaWe--08]{FaWe--08}
Benson \textsc{Farb} and Shmuel \textsc{Weinberger},
\emph{Isometries, ridigidy and universal covers},
Annals of Math.\ \textbf{168} (2008), 915--940.

\bibitem[Foln--55]{Foln--55}
Erling \textsc{F\o lner},
\emph{On groups with full Banach mean value},
Math.\ Scand.\ \textbf{3} (1955), 243--254.

\bibitem[Freu--36]{Freu--36}
Hans \textsc{Freudenthal},
\emph{Einige S\"atze \"uber topologische Gruppen},
Annals of Math.\ \textbf{37} (1936), 46--56.

\bibitem[GaJa--03]{GaJa--03} 
\'Swiatos\l aw R.\ \textsc{Gal} and Tadeusz \textsc{Januszkiewicz},
\emph{New a-T-menable HNN-extensions}, 
J.\ Lie Theory \textbf{13} (2003), 383--385.

\bibitem[Galv--93]{Galv--93} 
Fred \textsc{Galvin}, 
\emph{Embedding countable groups in 2-generator groups},
Amer.\ Math.\ Monthly \textbf{100} (1993), 578--580. 

\bibitem[Galv--95]{Galv--95} 
Fred \textsc{Galvin}, 
\emph{Generating countable sets of permutations},
J.\ London Math.\ Soc.\ \textbf{51} (1995), 230--242.


\bibitem[GaKe--03]{GaKe--03} 
Su \textsc{Gao} and Alexander S.\ \textsc{Kechris},
\emph{The classification of Polish metric spaces up to isometry}.
Mem.\ Amer.\ Math.\ Soc.\ \textbf{161} (2003), no.\ 766.

\bibitem[GaGo--66]{GaGo--66}
Howard \textsc{Garland} and Morikuni \textsc{Goto},
\emph{Lattices and the adjoint group of a Lie group},
Trans.\ Amer.\ Math.\ Soc.\ \textbf{124} (1966), 450--460.

\bibitem[Gaug--67]{Gaug--67}
Edward D.\ \textsc{Gaughan},
\emph{Topological group structures of infinite symmetric groups},
Proc.\ Nat.\ Acad.\ Sci.\ U.S.A.\ \textbf{58} (1967), 907--910.

\bibitem[Gela--14]{Gela--14}
Tsachik \textsc{Gelander},
\emph{Lectures on lattices and locally symmetric spaces}, 
in ``Geometric group theory'', IAS/Park City Math.\ Ser.\ \textbf{51} 
(Amer.\ Math.\ Soc.\ 2014), 249--282.

\bibitem[GGPS--69]{GGPS--69}
Izrail Moiseevich \textsc{Gel'fand}, Mark Iosifovich \textsc{Graev}, 
and Ilya\ \textsc{Pyatetskii-Shapiro},
\emph{Representation theory and automorphic functions},
translated from the Russian by K.A.\ \textsc{Hirsch},
W.B.\ Saunders Company, 1969
[Russian original 1966].

\bibitem[GhHa--90]{GhHa--90}
Etienne \textsc{Ghys} and Pierre \textsc{de la Harpe},
\emph{Sur les groupes hyperboliques d'apr\`es Mikhael Gromov}.
Birkh\"auser, 1990.

\bibitem[GlTW--05]{GlTW--05}
Eli \textsc{Glasner}, Boris \textsc{Tsirelson}, and Benjamin \textsc{Weiss},
\emph{The automorphism group of the Gaussian measure cannot act pointwise},
Israel J. Math.\ \textbf{148} (2005), 305--329.

\bibitem[Glea--49]{Glea--49} 
Andrew M.\ \textsc{Gleason}, 
\emph{On the structure of locally compact groups},
Proc.\ Nat.\ Acad.\ Sci.\ U.S.A. \textbf{35} (1949), 384--386.

\bibitem[Glea--51]{Glea--51} 
Andrew M.\ \textsc{Gleason}, 
\emph{The structure of locally compact groups},
Duke Math.\ J.\  \textbf{18} (1951), 85--104.

\bibitem[Glea--52]{Glea--52} 
Andrew M.\ \textsc{Gleason}, 
\emph{Groups without small subgroups},
Annals of Math.\  \textbf{56} (1952), 193--212.

\bibitem[Glus--57]{Glus--57}
Viktor Mikhailovich \textsc{Glu\v{s}kov},
\emph{Structure of locally bicompact groups and Hilbert's fifth problem},
Uspehi Mat.\ Naud (N.S.) \textbf{12}(2) (1957), 3--41.
[English translation:
Amer.\ Math.\ Transl.\ (2) \textbf{15} (1960), 55--93.]

\bibitem[Glut--11]{Glut--11}
Alexey \textsc{Glutsyuk},
\emph{Instability of nondiscrete free subgroups in Lie groups},
Transform.\ Groups \textbf{16} (2011), 413--479.

\bibitem[GoPa--12]{GoPa--12}
Eddy \textsc{Godelle} and Luis \textsc{Paris},
\emph{Basic questions on Artin-Tits groups},
in ``Configuration spaces, geometry, combinatorics and topology'',
A.\ Bjorner, F.\ Cohen, C.\ De Concini, C.\ Provesi, M.\ Salvetti Editors, 
Springer (2012), 299--311.

\bibitem[Gree--69]{Gree--69}
Frederick P.\ \textsc{Greenleaf},
\emph{Invariant means on topological groups and their applications},
Van Nostrand, 1969.

\bibitem[Grig--84]{Grig--84}
Rostislav \textsc{Grigorchuk},
\emph{Degrees of growth of finitely generated groups 
and the theory of invariant means},
Izv.\ Akad.\ Nauk SSSR Ser.\ Mat.\ \textbf{48}(5) (1984), 939--985.
[In English Math.\ USSR Izv.\ \textbf{85} (1985) 259--300.]

\bibitem[Grom--81]{Grom--81} 
Mikhael \textsc{Gromov},
\emph{ Hyperbolic manifolds, groups and actions}.
In ``Riemann surfaces and related topics: 
Proceedings of the 1978 Stony Brook Conference'',
Annals of Math.\ Studies \textbf{97}  (Princeton Univ.\ Press, 1981), 183--213. 

\bibitem[Grom--81a]{Grom--81a} 
Mikhael \textsc{Gromov},
\emph{Hyperbolic manifolds (according to Thurston and J\o rgensen)},
Bourbaki Seminar \textbf{546}, 
Springer Lecture Notes in Math.\ \text{842} (1981), 40--53.
    
\bibitem[Grom--81b]{Grom--81b} 
Mikhael \textsc{Gromov},
\emph{Groups of polynomial growth and expanding maps},
Publ.\ Math.\ Inst.\ Hautes \'Etudes Sci.\ {\bf 53} (1981), 53--73.

\bibitem[Grom--84]{Grom--84} 
Mikhael \textsc{Gromov},
\emph{Infinite groups as geometric objects}.
In  ``Proc.\ Int.\ Cong.\ Math.\, Warszawa 1983'', Vol 1 (1984), 385--392.

\bibitem[Grom--87]{Grom--87} 
Mikhael \textsc{Gromov}, 
``Hyperbolic groups'',
in \emph{Essays in group theory}, S.\ Gersten Editor,
Math.\ Sci.\ Res.\ Inst.\ Publ.\, \textbf{8} (Springer, 1987), 75--263.

\bibitem[Grom--88]{Grom--88} 
Mikhael \textsc{Gromov},
\emph{Rigid transformation groups},
in  ``G\'eom\'etrie diff\'erentielle (Paris 1986)'', 
Travaux en cours \textbf{33} (Hermann, 1988), 65--139.

\bibitem[Grom--93]{Grom--93} 
Mikhael \textsc{Gromov},
\emph{Asymptotic invariants of infinite groups}.
In ``Geometric group theory, Volume 2'',
London Math.\ Soc.\ Lecture Note Ser.\ {\bf 182}, Cambridge Univ.\ Press, 1993.

\bibitem[Grom--99]{Grom--99} 
Mikhael \textsc{Gromov},
with appendices by J.\ \textsc{Katz}, P.\ \textsc{Pansu}, and S.\ \textsc{Semmes}, 
\emph{Metric structures for Riemannian and non-Riemannian spaces},
edited by J.\ \textsc{Lafontaine} and P\ \textsc{Pansu}.
Birkh\"auser, 1999.
[Based on the 1981 French original
\emph{Structures m\'etriques pour les vari\'et\'es riemanniennes}, 
Cedic, 1981.]

\bibitem[GrPa--91]{GrPa--91} 
Mikhael \textsc{Gromov} and Pierre \textsc{Pansu},
\emph{Rigidity of lattices, an introduction},
in ``Geometric topology: recent developments'',
Springer Lecture Notes in Math.\ \textbf{1504} (1991), 39--137.

\bibitem[Guiv--70]{Guiv--70}
Yves \textsc{Guivarc'h},
\emph{Groupes de Lie \`a croissance polynomiale},
C.R.\ Acad.\ Sci., Paris, S\'er.\ A-B \textbf{271} (1970), A237--A239.

\bibitem[Guiv--71]{Guiv--71}
Yves \textsc{Guivarc'h},
\emph{Groupes de Lie \`a croissance polynomiale},
C.R.\ Acad.\ Sci., Paris, S\'er.\ A-B \textbf{272} (1971), A1695--A1696.

\bibitem[Guiv--73]{Guiv--73}
Yves \textsc{Guivarc'h},
\emph{Croissance polynomiale et p\'eriodes des fonctions harmoniques},
Bull.\ Soc.\ Math.\ France \textbf{101} (1973), 333--379.

\bibitem[Haef--73]{Haef--73}
Andr\'e \textsc{Haefliger},
\emph{Sur les classes caract\'eristiques des feuilletages},
S\'eminaire Bourbaki \textbf{412}, Springer Lectures Notes in Math.\ \textbf{317} (1973), 239--260.
    
\bibitem[HaPa--98]{HaPa--98}
Fr\'ed\'eric \textsc{Haglund} and Fr\'ed\'eric \textsc{Paulin},
\emph{Simplicit\'e de groupes d'automorphismes d'espaces \`a courbure n\'egative},
in ``The Epstein Birthday Schrift'', 
Geom.\ Topol.\ Monogr.\ \textbf{1} (1998) 181--248.
    
\bibitem[Halm--74]{Halm--74} 
Paul R.\  \textsc{Halmos},
\emph{A Hilbert space problem book}.
Graduate Texts in Math.\ \textbf{19}, Springer, 1974.
[First edition: Van Nostrand, 1967.]

\bibitem[Harp--00]{Harp--00}
Pierre \textsc{de la Harpe},
\emph{Topics in geometric group theory}.
The University of Chicago Press, 2000.

\bibitem[Hatc--02]{Hatc--02}
Allen \textsc{Hatcher},
\emph{Algebraic topology}.
Cambridge University Press, 2002.

\bibitem[Haus--95]{Haus--95}
Jean-Claude \textsc{Hausmann},
\emph{On the Vietoris-Rips complexes and a cohomology theory for metric spaces},
in ``Prospects in topology (Princeton, NJ, 1994)'', 
Annals of Math.\ Studies \textbf{138} (Princeton Univ.\ Press, 1995), 175--188.

\bibitem[Helg--62]{Helg--62}
Sigurdur \textsc{Helgason},
\emph{Differential geometry and symmetric spaces}.
Academic Press, 1962.

\bibitem[HeRo--63]{HeRo--63} 
Edwin \textsc{Hewitt} and Kenneth A.\ \textsc{Ross},
\emph{Abstract harmonic analysis, Volume I}.
Die Grundlehren der mathematischen Wissenschaften 
\textbf{115}, Springer, 1963.

    
\bibitem[HiNN--49]{HiNN--49} 
Graham \textsc{Higman}, Bernhard H.\ \textsc{Neumann}, and Hanna \textsc{Neumann},
\emph{Embedding theorems for groups},
J.\ London Math.\ Soc.\ \textbf{24} (1949), 247--254.

\bibitem[HiPR--97]{HiPR--97}
Nigel \textsc{Higson}, Erik Kjaer \textsc{Pedersen}, and John \textsc{Roe},
\emph{C$^*$-algebras and controlled topology},
K-theory \textbf{11}-3 (1997), 209--239.
    
\bibitem[Hoch--65]{Hoch--65} 
Gerhard  \textsc{Hochschild},
\emph{The structure of Lie groups}.
Holden Day,  1965.

\bibitem[HoMo--06]{HoMo--06}
Karl Heinrich \textsc{Hofmann} and Sidney A.\ \textsc{Morris},
\emph{The structure of compact groups}, 2nd revised and augmented edition.
W.\ de Gruyter, 2006.

\bibitem[Houg--74]{Houg--74}
Chris H.\ \textsc{Houghton},
\emph{Ends of locally compact groups and their coset spaces},
J.\ Austral.\ Math.\ Soc.\ \textbf{17} (1974), 274--284.

\bibitem[Hure--36]{Hure--36}
Witold \textsc{Hurewicz},
\emph{Beitr\"age zur Topologie der Deformationen. IV. Asph\"arische R\"aume},
Proc.\ Akad.\ Wet.\ Amsterdam \textbf{39} (1936), 215--224.
[English translation, \emph{Contributions to the topology of deformations. IV.
Aspherical spaces}, Collected works of Witold Hurewicz,
Amer.\ Math.\ Soc.\ 1995, pages 438--447.]

\bibitem[Iver--92]{Iver--92} 
Birger  \textsc{Iversen},
\emph{Hyperbolic geometry}.
Cambridge Univ.\ Press, 1992.

\bibitem[Iwas--49]{Iwas--49}
Kenkichi \textsc{Iwasawa},
\emph{On some types of topological groups},
Annals of Math.\ \textbf{50} (1949), 507--558.

\bibitem[Jenk--73]{Jenk--73}
Joe W.\ \textsc{Jenkins},
\emph{Growth of connected locally compact groups},
J.\ Funct.\ Anal.\ \textbf{12} (1973), 113--127.
    
\bibitem[KaKo--44]{KaKo--44}
Shizuo \textsc{Kakutani} and Kunihiko \textsc{Kodaira},
\emph{\"Uber das Haarsche Mass in der lokal bikompakten Gruppe},
Proc.\ Imp.\ Acad.\ Tokyo \textbf{20} (1944), 444--450.
\emph{Selected Papers, Vol.\ 1}, R.R.\ Kallman Editor,
Birkh\"auser (1986), 68--74.

\bibitem[Kaku--36]{Kaku--36}
Shizuo \textsc{Kakutani},
\emph{\"Uber die Metrisation der topologischen Gruppen},
Proc.\ Imp.\ Acad.\ Tokyo \textbf{12} (1936), 82--84.
\emph{Selected Papers, Vol.\ 1}, R.R.\ Kallman Editor,
Birkh\"auser (1986), 60--62.

\bibitem[Kall--79]{Kall--79}
Robert R.\ \textsc{Kallman},
\emph{A uniqueness result for the infinite symmetric group},
in ``Studies in analysis'' (Academic Press, 1979), 321--322.

\bibitem[Kamp--35]{Kamp--35}
Egbert Rudolf \textsc{van Kampen},
\emph{Locally bicompact Abelian groups and their character groups},
Annals of Math.\ \textbf{36} (1935), 448--463. 

\bibitem[Karu--58]{Karu--58}
Takashi \textsc{Karube},
\emph{On the local cross-sections in locally compact groups},
J.\ Math.\ Soc.\ Japan \textbf{10}$^4$ (1958), 343--347.

\bibitem[KaYu--06]{KaYu--06}
Gennadi \textsc{Kasparov} and Guoliang \textsc{Yu},
\emph{The coarse geometric Novikov conjecture and uniform convexity},
Adv.\ Math\ \textbf{206} (2006), 1--56.

\bibitem[Kech--95]{Kech--95}
Alexander S.\  \textsc{Kechris},
\emph{Classical descriptive set theory}.
Graduate Texts in Math.\ {\bf 156}, Springer, 1995.

\bibitem[KeRo-07]{KeRo--07}
Alexander S.\ \textsc{Kechris} and Christian \textsc{Rosendal},
\emph{Turbulence, amalgamation, and generic automorphisms 
of homogeneous structures},
Proc.\ London Math.\ Soc.\ (3) \textbf{94} (2007), 302--350.


\bibitem[Kell--55]{Kell--55}
John L.\ \textsc{Kelley},
\emph{General topology}.
Van Nostrand, 1955
[Reprint, Springer, 1975].

\bibitem[KlLe--97]{KlLe--97}
Bruce \textsc{Kleiner} and Bernhard \textsc{Leeb},
\emph{Rigidity of quasi-isometries for symmetric spaces and Euclidean buildings},
Publ.\ Math.\ IH\'ES \textbf{86} (1998), 115--197.

\bibitem[KlLe--09]{KlLe--09}
Bruce \textsc{Kleiner} and Bernhard \textsc{Leeb},
\emph{Induced quasi-actions: a remark},
Proc.\ Amer.\ Math.\ Soc.\ \textbf{137} (2009), 1561--1567.

\bibitem[Knes--64]{Knes--64} 
Martin \textsc{Kneser}, 
\emph{Erzeugende und Relationen verallgemeinerter Einheitengruppen}, 
J.\ reine angew.\ Math.\  \textbf{214/15} (1964), 345--349. 

\bibitem[KoNo--63]{KoNo--63} 
Shoshichi \textsc{Kobayashy} and Katsumi \textsc{Nomizu},
\emph{Foundations of differential geometry}, Volume I.
Interscience Publishers,  1963.

\bibitem[Kosz--65]{Kosz--65}
Jean-Louis \textsc{Koszul},
\emph{Lectures on group of transformations}.
Tata Institute of Fundamental Research, 1965.

\bibitem[KoTi--74]{KoTi--74}
Sabine \textsc{Koppelberg} and Jacques \textsc{Tits},
\emph{Une propri\'et\'e des produits directs infinis de groupes finis isomorphes},
C.R.\ Acad.\ Sci.\ Paris S\'er.\ A \textbf{279} (1974), 583--585.


\bibitem[KrMo--08]{KrMo--08} 
Bernhard \textsc{Kr\"on} and R\"ognvaldur G.\ \textsc{M\"oller},
\emph{Analogues of Cayley graphs for topological groups},
Math.\ Z.\ \textbf{258} (2008), 637--675.

\bibitem[Kura--51]{Kura--51}
Masatake \textsc{Kuranishi},
\emph{On everywhere dense imbedding of free groups in Lie groups},
Nagoya Math.\ J.\ \textbf{2} (1951), 63--71.

\bibitem[Lee--97]{Lee--97}
Joo Sung \textsc{Lee},
\emph{Totally disconnected groups, $p$-adic groups
and the Hilbert-Smith conjecture},
Comm.\ Korean Math.\ Soc.\ \textbf{12} (1997), 691--698.

\bibitem[Levi--31]{Levi--31}
Jakob \textsc{Levitzki},
\emph{\"Uber nilpotente Unterringe},
Math.\ Ann.\ \textbf{105} (1931), 620--627.

\bibitem[LOSP--10]{LOSP--10}
Martin W.\ \textsc{Liebeck}, Eamonn A.\ \textsc{O'Brien}, Aner \textsc{Shalev},
Pham Huu \textsc{Tiep},
\emph{The Ore conjecture},
J.\ Eur.\ Math.\ Soc.\ \textbf{12} (2010) 939--1008.

\bibitem[Lose--87]{Lose--87}
Viktor \textsc{Losert},
\emph{On the structure of groups with polynomial growth},
Math. Z. {\bf 195} (1987) 109-117.

\bibitem[Lose--01]{Lose--01}
Viktor \textsc{Losert},
\emph{On the structure of groups with polynomial growth. II},
J.\ London Math.\ Soc.\ \textbf{63} (2001), 640--654.

\bibitem[LySc--77]{LySc--77}
Roger C. \textsc{Lyndon} and Paul E. \textsc{Schupp},
\emph{Combinatorial group theory},
Springer, 1977.

\bibitem[Macb--64]{Macb--64}
A.\ Murray \textsc{Macbeath},
\emph{Groups of homeomorphisms of a simply connected space},
Annals of Math.\  \textbf{79} (1964), 473--488.

\bibitem[MaSw--59]{MaSw--59} 
A.\ Murray \textsc{Macbeath} and Stanislaw \textsc{Swierczkowski},
\emph{On the set of generators of a subgroup},
Indag.\ Math.\ \textbf{21} (1959), 280--281.

\bibitem[Mack--57]{Mack--57}
George W.\ \textsc{Mackey},
\emph{Borel structure in groups and their duals},
Trans.\ Amer.\ Math.\ Soc.\ \textbf{85} (1957), 134--165.

\bibitem[MaNe--90]{MaNe--90}
H.D. {\sc Macpherson} and Peter M.\ {\sc Neumann},
\emph{Subgroups of infinite symmetric groups},
J.\ London Math.\ Soc.\ \textbf{42} (1990), 64--84.

\bibitem[MaKS--66]{MaKS--66}
Wilhelm \textsc{Magnus}, Abraham \textsc{Karrass},
and Donald \textsc{Solitar},
\emph{Combinatorial group theory:
Presentations of groups in terms of generators and relations}.
Interscience, 1966.

\bibitem[MaSo--09]{MaSo--09}
Maciej \textsc{Malicki} and Slawomir \textsc{Solecki},
\emph{Isometry groups of separable metric spaces},
Math.\ Proc.\ Cambridge Philos.\ Soc.\ \textbf{146} (2009), 67--81.

\bibitem[Mann--12]{Mann--12}
Avinoam \textsc{Mann},
\emph{How groups grow}.
London Math.\ Soc.\ Lecture Note Ser.\ {\bf 395}, Cambridge Univ.\ Press, 2012.
    
\bibitem[MaSt--03]{MaSt--03}
Antonios \textsc{Manoussos} and Polychronis \textsc{Strantzalos},
\emph{On the group of isometries on a locally compact metric space},
J.\ Lie Theory \textbf{13} (2003), 7--12.
    
\bibitem[Marg--70]{Marg--70}
Gregori \textsc{Margulis},
\emph{Isometry of closed manifolds of constant negative curvature with the same fundamental group},
Soviet Math.\ Dokl.\  \textbf{11}$^3$ (1970), 722--723.

\bibitem[Marg--91]{Marg--91}
Gregori \textsc{Margulis},
\emph{Discrete subgroups of semisimple Lie groups}.
Ergebnisse der Mathematik und ihrer Grenzgebiete \textbf{17},
Springer, 1991.

\bibitem[McMu--98]{McMu--98}
Curtis T.\ \textsc{McMullen},
\emph{Lipschitz maps and nets in Euclidean space},
Geom.\ Funct.\ Anal.\ \textbf{8} (1998), 304--314.


\bibitem[Miln--68]{Miln--68} 
John \textsc{Milnor},
\emph{A note on curvature and fundamental group},
J.\ Diff.\ Geom.\ \textbf{2} (1968), 1--7.
[Collected Papers, Volume 1, 53 and 55--61.]

\bibitem[Miln--71]{Miln--71} 
John \textsc{Milnor},
\emph{Introduction to algebraic K-theory}.
Annals of Math.\ Studies {\bf 72}, Princeton Univ.\ Press, 1971.

\bibitem[Miln--83]{Miln--83} 
John \textsc{Milnor},
\emph{On the homology of Lie groups made discrete},
Comment.\ Math.\ Helv.\ \textbf{58} (1983), 72--85. 

\bibitem[Mink--91]{Mink--91}
Hermann \textsc{Minkowski},
\emph{\"Uber die positiven quadratischen Formen 
und \"uber kettenbruch\"ahnliche Algorithmen},
J.\ reine angew.\ Math.\ \textbf{107} (1891), 278--297.

\bibitem[Moll--03]{Moll--03}
R\"ognvaldur G.\ \textsc{M\"oller},
\emph{$\operatorname{FC}^-$-elements in totally disconnected groups
and automorphisms of infinite graphs},
Math.\ Scand.\ \textbf{92} (2003) 261--268.

\bibitem[Moll]{Moll}
R\"ognvaldur G.\ \textsc{M\"oller}, 
\emph{Graphs, permutations and topological groups}, 
arXiv:1008.3062v2, 25 Aug 2010.


\bibitem[M--1595]{M--1595}
Michel de \textsc{Montaigne},
\emph{Les Essais}.
Version HTML d'apr\`es l'\'edition de 1595,
\url{http://www.bribes.org/trismegiste/montable.htm}

\bibitem[MoZi--52]{MoZi--52} 
Deane \textsc{Montgomery} and Leo \textsc{Zippin},
\emph{Small subgroups of finite-dimensional groups},
Annals of Math.\ \textbf{56} (1952), 213--241.
    
\bibitem[MoZi--55]{MoZi--55} 
Deane \textsc{Montgomery} and Leo \textsc{Zippin},
\emph{Topological transformation groups}.
Interscience Publishers,  1955.
    
\bibitem[MoPe--98]{MoPe--98}
Sidney A.\ \textsc{Morris} and Vladimir \textsc{Pestov},
\emph{A topological generalization of the Higman-Neumann-Neumann theorem},
J.\ Group Theory \textbf{2} (1998), 181--187.

\bibitem[MoSW--02]{MoSW--02}
Lee \textsc{Mosher}, Michah \textsc{Sageev}, and Kevin \textsc{Whyte},
\emph{Maximally symmetric trees},
Geom.\ Dedicata \textbf{92} (2002), 195--233.

\bibitem[Most--55]{Most--55} 
George Daniel \textsc{Mostow},
\emph{Some new decomposition theorems for semi-simple groups},
Mem.\ Amer.\ Math.\ Soc.\ \textbf{14} (1955), 31--54.

\bibitem[Most--68]{Most--68} 
George Daniel \textsc{Mostow},
\emph{Quasi-conformal mappings in $n$-space
and the rigidity of hyperbolic space forms},
Publ.\ Math.\ IH\'ES \textbf{34} (1968), 53--104.

\bibitem[Most--73]{Most--73} 
George Daniel \textsc{Mostow},
\emph{Strong rigidity of locally symmetric spaces},
Annals of Math.\ Studies {\bf 78}, Princeton Univ.\ Press, 1973.

\bibitem[MySt--39]{MySt--39}
Sumner \textsc{Myers} and Norman \textsc{Steenrod},
\emph{The group of isometries of a Riemannian manifold},
Annals of Math.\ \textbf{40} (1939), 400--416.

\bibitem[Nekr--98]{Nekr--98}
Volodymyr \textsc{Nekrashevych},
\emph{Nets of metric spaces},
Ph.D.\ Thesis, 1998.

\bibitem[Neuk--99]{Neuk--99} 
J\"urgen \textsc{Neukirch},
\emph{Algebraic number theory}.
Die Grundlehren der mathematischen Wissenschaften \textbf{322}, 
Springer, 1999.

\bibitem[Neum--37]{Neum--37}
Bernhard Hermann \textsc{Neumann},
\emph{Some remarks on infinite groups},
J.\ London Math.\ Soc.\ \textbf{12} (1937), 120--127.


\bibitem[Newm--87]{Newm--87}
Morris \textsc{Newman},
\emph{Unimodular commutators},
Proc.\ Amer.\ Math.\ Soc.\ \textbf{101} (1987), 605--609.
   
\bibitem[Nosk]{Nosk} 
Gennady \textsc{Noskov},
\emph{Word metrics on Lie groups and asymptotic cones of $2$-step nilpotent groups},
Preprint, 2008, 
\hfill \newline
\footnotesize
\url{http://www.math.uni-bielefeld.de/sfb701/files/preprints/sfb09009.pdf}
\normalsize

\bibitem[NoYu--12]{NoYu--12}
Piotr W.\  \textsc{Nowak} and Guoliang \textsc{Yu},
\emph{Large scale geometry}.
European Mathematical Society, 2012.

\bibitem[Ol'sh--91]{Ol'sh--91}
Alexander Yu.\ \textsc{Ol'shanskii},
\emph{Geometry of defining relations in groups}.
Kluwer, 1991.


\bibitem[Pala--61]{Pala--61}
Richard S.\ \textsc{Palais},
\emph{On the existence of slices for actions of non-compact Lie groups},
Annals of Math.\ {\bf 73} (1961), 295--323.

\bibitem[Palm--78]{Palm--78}
Theodore W.\ \textsc{Palmer},
\emph{Classes of nonabelian, noncompact, locally compact groups},
Rocky Mountain J.\ Math.\ {\bf 8} (1978), 683--741.

\bibitem[Pans--89]{Pans--89}
Pierre \textsc{Pansu},
\emph{M\'etriques de Carnot-Carath\'eodory et quasiisom\'etries 
des espaces sym\'etriques de rang un}, 
Annals of Math.\ \textbf{129} (1989), 1--60.

\bibitem[Papa--05]{Papa--05}
Athanase \textsc{Papadopoulos},
\emph{Metric spaes, convexity and nonpositive curvature},
European Mathematical Society, 2005.


\bibitem[Pard--13]{Pard--13}
John \textsc{Pardon},
\emph{The Hilbert-Smith conjecture for three-manifolds},
J.\ Amer.\ Math.\ Soc.\ \textbf{26} (2013), 879--899.

\bibitem[Pest--86]{Pest--86} 
Vladimir \textsc{Pestov},
\emph{Compactly generated topological groups},
Mathematical Notes {\bf 40}$^5$, (1986), 880--882.

\bibitem[PeSu--78]{PeSu--78}
Justin \textsc{Peters} and Terje \textsc{Sund},
\emph{Automorphisms of locally compact groups},
Pacific J.\ Math.\ \textbf{76} (1978), 143--156.

\bibitem[Plat--66]{Plat--66}
Vladimir \textsc{Platonov},
\emph{Locally projectively nilpotent subgroups and nilelements in topological groups}, 
Amer:\ Math.\ Soc.\, Translat.\, \textbf{66} (1968), 111--129; 
translation from Izv.\ Akad.\ Nauk SSSR, Ser.\ Mat.\ \textbf{30} (1966), 1257--1274.

\bibitem[PlRa--94]{PlRa--94}
Vladimir \textsc{Platonov} and Andrei \textsc{Rapinchuk},
\emph{Algebraic groups and number theory}.
Academic Press, 1994.

\bibitem[Poin--82]{Poin--82}
Henri \textsc{Poincar\'e},
\emph{Th\'eorie des groupes fuchsiens},
Acta Math.\ \textbf{1} (1982), 1--62.
Oeuvres, tome II, pages 108--168.

\bibitem[Pont--39]{Pont--39}
Lev 
\textsc{Pontryagin},
\emph{Topological groups}.
Princeton Univ.\ Press, 1939.

\bibitem[RaRo--00]{RaRo--00}
Heydar \textsc{Radjavi} and Peter \textsc{Rosenthal}, 
\emph{Simultaneous Triangularization}, 
Universitext, Springer, 2000. 
   
\bibitem[Ragh--72]{Ragh--72} 
Madabusi Santanam \textsc{Raghunathan}, 
\emph{Discrete subgroups of Lie groups}.  
Springer-Verlag, 1972. 

\bibitem[ReSo--76]{ReSo--76}
Ulf \textsc{Rehmann} and Christophe \textsc{Soul\'e},  
\emph{Finitely presented groups of matrices},
in ``Algebraic K-theory, Proceedings, Evanston 1976,
Springer Lecture Notes in Math.\ \textbf{551} (1976), 164--169.

\bibitem[Rhem--68]{Rhem--68}            
Akbar H.\ \textsc{Rhemtulla},
\emph{A problem of bounded expressibility in free products},
Proc.\ Cambridge Philos.\ Soc.\ \textbf{64} (1968), 573--584.

\bibitem[RiRo--07]{RiRo--07}
\'Eric \textsc{Ricard} and Christian \textsc{Rosendal},
\emph{On the algebraic structure of the unitary group}, 
Collect.\ Math.\ \textbf{58} (2007), 181--192.

\bibitem[Robe--00]{Robe--00} 
Alain \textsc{Robert},
\emph{A course in $p$-adic analysis}.
Springer, 2000.

\bibitem[Robi--96]{Robi--96}
Derek J.S.~\textsc{Robinson},
\emph{A course in the theory of Groups}, second edition.
Springer, 1996.


\bibitem[Roe--93]{Roe--93}
John \textsc{Roe},
\emph{Coarse cohomology and index theory 
on complete Riemannian manifolds},
Mem.\ Amer.\ Math.\ Soc.\ \textbf{104} (1993), no.\ 497.

\bibitem[Roe--03]{Roe--03}
John \textsc{Roe},
\emph{Lectures on coarse geometry},
University Lecture Series \textbf{31}, Amer.\ Math.\ Soc., 2003.

\bibitem[Rose--09]{Rose--09} 
Christian \textsc{Rosendal},
\emph{A topological version of the Bergman Property},
Forum Mathematicum {\bf 21} (2009),  299--332.

\bibitem[Rose--a]{Rose--a} 
Christian \textsc{Rosendal},
\emph{Large scale geometry of metrisable groups},
arXiv:1403.3106v1, 22 Feb 2014.

\bibitem[Rose--b]{Rose--b} 
Christian \textsc{Rosendal},
\emph{Large scale geometry of automorphism groups},
arXiv:1403.3107v1, 22 Feb 2014.

\bibitem[RoSt--64]{RoSt--64}
Kenneth A.\ \textsc{Ross} and Arthur Harold \textsc{Stone},
\emph{Products of separable spaces},
American Math.\ Monthly \textbf{71}(1964), 398--403.
   
\bibitem[Rotm--95]{Rotm--95}
Joseph J.\ \textsc{Rotman},
\emph{An introduction to the theory of groups}, fourth edition.
Graduate Texts in Math.\ {\bf 148}, Springer, 1995.

\bibitem[Same--52]{Same--52}
Hans \textsc{Samelson},
\emph{Topology of Lie groups},
Bull.\ Amer.\ Math.\ Soc.\ \textbf{58} (1952), 2--37. 

\bibitem[SaTe]{SaTe}
Mikael \textsc{de la Salle} and Romain \textsc{Tessera},
\emph{Characterizing a vertex-transitive graph by a large ball},
with an appendix by Jean-Claude \textsc{Sikorav},
arXiv:1508:02247v1, 10 Aug 2015.

\bibitem[Schr--25]{Schr--25}
Otto \textsc{Schreier},
\emph{Abstrakte kontinuierliche Gruppen},
Abh.\ Math.\ Semin.\ Univ.\ Hamburg \textbf{4} (1925), 15--32.

\bibitem[Schr--27]{Schr--27}
Otto \textsc{Schreier},
\emph{Die Untergruppen der freien Gruppen},
Abh.\ Math.\ Semin.\ Univ.\ Hamburg \textbf{5} (1927), 161--183. 

\bibitem[Sega--09]{Sega--09}
Dan \textsc{Segal},
\emph{Words, Notes on verbal width in groups},
London Math.\ Soc.\ Lecture Note Ser.\ {\bf 361}, Cambridge Univ.\ Press, 2009.
   
\bibitem[Serr--50]{Serr--50}
Jean-Pierre \textsc{Serre},
\emph{Extensions de groupes localement compacts},
S\'eminaire Bourbaki, expos\'e \textbf{27} (1950).

\bibitem[Serr--70]{Serr--70} 
Jean-Pierre  \textsc{Serre},
\emph{Cours d'arithm\'etique}.
Presses universitaires de France, 1970.

\bibitem[Serr--71]{Serr--71}
Jean-Pierre \textsc{Serre},
\emph{Cohomologie des groupes discrets},
in ``Prospects in mathematics (Princeton, 1970)'', 
Annals of Math.\ Studies \textbf{70} (Princeton Univ.\ Press, 1971), 77--169.

\bibitem[Serr--77]{Serr--77} 
Jean-Pierre  \textsc{Serre},
\emph{Arbres, amalgames, $\SL_2$}.
Ast\'erisque \textbf{46}, Soc.\ math.\ France, 1977.


\bibitem[Shal--04]{Shal--04}
Yehuda \textsc{Shalom},
\emph{Harmonic analysis, cohomology, 
and the large-scale geometry of amenable groups},
Acta Math.\ \textbf{192} (2004), 119--185.

\bibitem[Sipa--03]{Sipa--03}
Ol'ga V.\ \textsc{Sipach\"eva},
\emph{The topology of a free topological group},
Fundam.\ Prikl.\ Mat.\ \textbf{9} (2003), 99--204;
translation in J.\ Math.\ Sci.\ (N.Y.) \textbf{131} (2005), no 4, 5765--5838.

\bibitem[SkTY--02]{SkTY--02}
Georges \textsc{Skandalis}, Jean-Louis \textsc{Tu}, and Guoliang \textsc{Yu},
\emph{The coarse Baum-Connes conjecture and groupoids},
Topology \textbf{41} (2002), 807--834.



\bibitem[Sole--05]{Sole--05}
Slawomir \textsc{Solecki},
\emph{Local inverses of Borel homomorphisms and analytic P-ideals},
Abstr.\  Appl.\ Anal.\ \textbf{3} (2005), 207--219. 

\bibitem[Solo--11]{Solo--11}
Yaar \textsc{Solomon},
\emph{Substitution tilings and separated nets with similarities to the integer lattice},
Israel J.\ Math.\ \textbf{181} (2011), 445--460.

\bibitem[Span--66]{Span--66} 
Edwin H. \textsc{Spanier},
\emph{Algebraic topology}.
McGraw Hill, 1966.

\bibitem[Spec--50]{Spec--50}
Ernst \textsc{Specker},
\emph{Endenverb\"ande von R\"aumen und Gruppen},
Math.\ Ann.\ \textbf{122} (1950), 167--174.

\bibitem[Stal--68]{Stal--68}
John \textsc{Stallings},
\emph{On torsion-free groups with infinitely many ends},
Annals of Math.\ \textbf{88} (1968), 312--334.

\bibitem[Stru--74]{Stru--74} 
Raimond A.\ \textsc{Struble},
\emph{Metrics in locally compact groups},
 Compositio Mathematica {\bf 28} (1974),  217--222.

\bibitem[\v Svar--55]{Svar--55}
A.S.\ \textsc{\v Svarc}, 
\emph{A volume invariant of coverings},
Dokl.\ Akad.\ Nauk.\ SSSR \textbf{105} (1955), 32--34.
   
\bibitem[Tama--65]{Tama--65}
Tsuneo \textsc{Tamagawa},
\emph{On discrete subgroups of $p$-adic algebraic groups},
in ``Arithmetical algebraic geometry, Proceedings of a conference
held at Purdue University, 1963'',
Harper \& Row (1965), 11--17.

\bibitem[Tao--14]{Tao--14}
Terence \textsc{Tao},
\emph{Hilbert's fifth problem and related topics}, 
Graduate Studies in Math.\ \textbf{153}, Amer.\ Math.\ Soc., 2014.


\bibitem[Tess--08]{Tess--08} 
Romain \textsc{Tessera},
\emph{Large scale Sobolev inequalities on metric measure spaces and applications},
Rev.\ Math.\ Iberoam.\ \textbf{24} (2008), 825--864.

\bibitem[Tess]{Tess} 
Romain \textsc{Tessera},
\emph{Asymptotic isoperimetry on groups and uniform
embeddings into Banach spaces}, 
Comment.\ Math.\ Helv.\ \textbf{86} (2011), 499--535.

\bibitem[Thur--80]{Thur--80}
William P.\ \textsc{Thurston},
\emph{The geometry and topology of three-manifolds},
Electronic version 1.1 (March 2002).
\hfill \newline
\url{http://library.msri.org/books/gt3m/}

\bibitem[Tiem--97]{Tiem--97}
Andreas \textsc{Tiemeyer},
\emph{A local-global principle for finiteness properties 
of $S$-arithmetic groups over number fields},
Transform.\ Groups \textbf{2} (1997), 215--223.

\bibitem[Tits--64]{Tits--64}
Jacques \textsc{Tits},
\emph{Automorphismes \`a d\'eplacement born\'e des groupes de Lie},
Topology \textbf{3}, Suppl.\ \textbf{1}8 (1964), 97--107.

\bibitem[Tits--70]{Tits--70}
Jacques \textsc{Tits},
\emph{Sur le groupe des automorphismes d'un arbre},
in ``Essays on topology and related topics 
(M\'emoires d\'edi\'es \`a Georges de Rham)'',
pp.\ 188--211, Springer, 1970.

\bibitem[Todo--97]{Todo--97}
Stevo \textsc{Todorcevic},
\emph{Topics in topology}.
Lecture Notes in Math.\ \textbf{1652}, Springer, 1997.

\bibitem[Trof--85]{Trof--85} 
Vladimir
\textsc{Trofimov}, 
\emph{Automorphism groups of graphs as topological groups}, 
Math.\ Notes, \textbf{38} (1985), 717--720.
[Russian: Mat.\ Zametki \textbf{38} (1985), 378--385, 476.]

\bibitem[Tu--01]{Tu--01}
Jean-Louis \textsc{Tu},
\emph{Remarks on Yu's `property $A$' for discrete metric spaces and groups},
Bull.\ Soc.\ Math.\ France \textbf{129} (2001), 115--139.

\bibitem[Vaug--37]{Vaug--37}
Herbert Edward \textsc{Vaughan},
\emph{On locally compact metrizable spaces},
Bull.\ Amer.\ Math.\ Soc.\ \textbf{43} (1937), 532--535.

\bibitem[Viet--27]{Viet--27}
Leopold \textsc{Vietoris},
\emph{\"Uber den h\"oheren Zusammenhang kompakter R\"aume 
und eine Klasse von zusammenhangstreuen Abbildungen},
Math.\ Annalen \textbf{97} (1927), 454--472.

\bibitem[Viro]{Viro}
Oleg \textsc{Viro},
\emph{Defining relations for reflections, I}, 
arXiv:1405.1460v1, 6 May 2014.

\bibitem[Wang--69]{Wang--69}
Shu Ping \textsc{Wang},
\emph{The automorphism group of a locally compact group},
Duke Math.\ J.\ \textbf{36} (1969), 277--282.

\bibitem[Wang--63]{Wang--63}
Hsien-Chung \textsc{Wang},
\emph{On the deformations of lattice in a Lie group},
Amer.\ J.\ Math.\ \textbf{85} (1963), 189--212.


\bibitem[Weil--40]{Weil--40}
Andr\'e \textsc{Weil},
\emph{L'int\'egration dans les groupes topologiques et ses applications}.
Hermann, 1940.

\bibitem[Weil--67]{Weil--67} 
Andr\'e \textsc{Weil},
\emph{Basic number theory}.
Die Grundlehren der mathematischen Wissenschaften 
\textbf{144}, Springer, 1967.

\bibitem[Whit--57]{Whit--57}
Hassler \textsc{Whitney},
\emph{Elementary structure of real algebraic varieties},
Annals of Math.\ \textbf{66} (1957), 545--556.

\bibitem[Whyt--99]{Whyt--99}
Kevin \textsc{Whyte},
\emph{Amenability, bi-Lipschitz equivalence, and the von Neumann conjecture},
Duke Math.\ J.\ \textbf{99} (1999), 93--112.

\bibitem[WiJa--87]{WiJa--87}
Robert \textsc{Williamson} and Ludvik \textsc{Janos},
\emph{Constructing metrics with the Heine-Borel Property},
Proc.\ Amer.\ Math.\ Soc.\ \textbf{100} (1987), 567--573.

\bibitem[Will--94]{Will--94}
George A.\ \textsc{Willis},
\emph{The structure of totally disconnected, locally compact groups},
Math.\ Ann.\ \textbf{300} (1994), 341--463.

\bibitem[Will--97]{Will--97}
George A.\ \textsc{Willis},
\emph{Totally disconnected, nilpotent, locally compact groups},
Bull.\ Austral.\ Math.\ Soc.\ \textbf{55} (1997), no.\ 1, 143--146.

\bibitem[Will--01a]{Will--01a}
George A.\ \textsc{Willis},
\emph{Further properties of the scale function on a totally disconnected group},
J.\ of Algebra \textbf{237} (2001), 142--164.

\bibitem[Will--01b]{Will--01b}
George A.\ \textsc{Willis},
\emph{The number of prime factors of the scale function on a compactly generated group is finite},
Bull.\ London Math.\ Soc.\ \textbf{33} (2001), 168--174.

\bibitem[Will--07]{Will--07}
George A.\ \textsc{Willis},
\emph{Compact open subgroups in simple totally disconnected groups},
J.\ of Algebra \textbf{312} (2007), 405--417.

\bibitem[Wink--97]{Wink--97}
J\"org \textsc{Winkelmann},
\emph{Only countably many simply-connected Lie groups admit lattices},
in ``Complex analysis and geometry (Trento, 1995)'',
Pitman Res.\ Notes Math.\ Ser.\ \textbf{366} (1997), 182--185.

\bibitem[WuYu--72]{WuYu--72}
Ta Sun \textsc{Wu} and Y.K.\ \textsc{Yu},
\emph{Compactness properties of topological groups},
Michigan Math.\ J.\ \textbf{19} (1972), 299--313.

\bibitem[Yam--53b]{Yam--53b}
Hidehiko \textsc{Yamabe},
\emph{A generalization of a theorem of Gleason},
Annals of Math.\ \textbf{58} (1953), 351--365.

\bibitem[Yu--00]{Yu--00}
Guoliang \textsc{Yu},
\emph{The coarse Baum-Connes conjecture for spaces
which admit a uniform embedding into Hilbert space},
Invent.\ Math.\ \textbf{139} (2000), 201--240.

\bibitem[Zimm--84]{Zimm--84}
Robert J.\ \textsc{Zimmer},
Ergodic theory of semisimple groups.
Birkh\"auser, 1984.

\end{thebibliography}
\end{document}